\def\from{\colon}
\DeclareMathOperator{\Spa}{Spa}
\DeclareMathOperator{\Spf}{Spf}
\DeclareMathOperator{\Spd}{Spd}
\DeclareMathOperator{\GL}{GL}
\DeclareMathOperator{\Gal}{Gal}
\DeclareMathOperator{\Hom}{Hom}
\DeclareMathOperator{\sHom}{\!{\mathscr{H}\! om}}
\DeclareMathOperator{\sCat}{\!{\mathscr{C}\! at}}
\DeclareMathOperator{\Map}{Map}
\DeclareMathOperator{\Spec}{Spec}
\DeclareMathOperator{\Perf}{Perf}
\DeclareMathOperator{\Perfd}{Perfd}
\DeclareMathOperator{\op}{op}
\DeclareMathOperator{\eq}{eq}
\DeclareMathOperator{\dimtr}{dim.tr}
\DeclareMathOperator{\dimtrg}{dim.trg}
\DeclareMathOperator{\trc}{tr.c}
\DeclareMathOperator{\cdim}{cd}
\DeclareMathOperator{\trdeg}{tr.deg}
\newcommand{\dotimes}{\otimes^{\mathbb L}}
\def\OO{\mathcal{O}}
\newcommand{\et}{{\mathrm{\acute{e}t}}}
\newcommand{\fet}{{\mathrm{f\acute{e}t}}}
\newcommand{\proet}{{\mathrm{pro\acute{e}t}}}
\newcommand{\qproet}{{\mathrm{qpro\acute{e}t}}}
\newcommand{\qcqs}{{\mathrm{qcqs}}}
\newcommand{\qc}{{\mathrm{qc}}}
\newcommand{\sep}{{\mathrm{sep}}}
\renewcommand{\min}{{\mathrm{min}}}
\newcommand{\aff}{\mathrm{aff}}
\newcommand{\Pro}{\mathrm{Pro}}
\renewcommand{\op}{\mathrm{op}}
\newcommand{\cons}{\mathrm{cons}}
\newcommand{\prop}{\mathrm{prop}}
\newcommand{\wl}{\mathrm{wl}}
\newcommand{\perf}{\mathrm{perf}}
\newcommand{\cart}{\mathrm{cart}}
\newcommand{\Cons}{\mathrm{Cons}}
\newcommand{\id}{\mathrm{id}}
\DeclareMathOperator{\varprojlimfi}{``\varprojlim_{\mathit{i}}''}
\def\isom{\cong}
\def\F{\mathbf{F}}
\def\Z{\mathbf{Z}}
\def\Q{\mathbf{Q}}
\def\Fl{{\mathbb F_\ell}}
\renewcommand*{\diamond}{\diamondsuit}
\renewcommand*{\hat}{\widehat}
\renewcommand*{\tilde}{\widetilde}
\newcommand{\abs}[1]{\left| #1 \right|}
\newcommand{\tatealgebra}[1]{\langle #1 \rangle}
\newcommand{\laurentseries}[1]{(\!( #1 )\!)}
\numberwithin{equation}{section}
\newtheorem{theorem}{Theorem}
\numberwithin{theorem}{section}
\newtheorem{lemma}[theorem]{Lemma}
\newtheorem{corollary}[theorem]{Corollary}
\newtheorem{proposition}[theorem]{Proposition}
\newtheorem{definition}[theorem]{Definition}
\newtheorem{defprop}[theorem]{Definition/Proposition}
\newtheorem{question}[theorem]{Question}
\theoremstyle{definition}
\newtheorem{remark}[theorem]{Remark}
\newtheorem{example}[theorem]{Example}
\newtheorem{convention}[theorem]{Convention}
\newenvironment{altenumerate}
   {\begin{list}
      {\textup{(\theenumi)} }
      {\usecounter{enumi}
       \setlength{\labelwidth}{0pt}
       \setlength{\labelsep}{2pt}
       \setlength{\leftmargin}{0pt}
       \setlength{\itemsep}{\the\smallskipamount}
       \renewcommand{\theenumi}{\roman{enumi}}
      }}
   {\end{list}}
\date{\today}
\title{\'Etale cohomology of diamonds}
\author{Peter Scholze}
\begin{document}

\begin{abstract}
Motivated by problems on the \'etale cohomology of Rapoport--Zink spaces and their generalizations, as well as Fargues's geometrization conjecture for the local Langlands correspondence, we develop a six functor formalism for the \'etale cohomology of diamonds, and more generally small v-stacks on the category of perfectoid spaces of characteristic $p$. Using a natural functor from analytic adic spaces over $\Z_p$ to diamonds which identifies \'etale sites, this induces a similar formalism in that setting, which in the noetherian setting recovers the formalism from Huber's book, \cite{Huber}.
\end{abstract}

\maketitle

\tableofcontents

\section{Introduction}

The aim of this manuscript is to lay foundations for the six operations in \'etale cohomology of adic spaces and diamonds, generalizing previous work of Huber, \cite{Huber}.

In this manuscript, we will deal with analytic adic spaces $X$ on which a fixed prime $p$ is topologically nilpotent. Associated with any such $X$, we have an \'etale site $X_\et$ defined (under noetherian hypotheses) by Huber, \cite{Huber}. For any ring $\Lambda$ such that $n\Lambda=0$ for some $n$ prime to $p$, we get a (left-completed) derived category $D_\et(X,\Lambda)$ of \'etale sheaves of $\Lambda$-modules on $X_\et$. It comes equipped with $-\otimes_\Lambda -$ and $R\sHom_\Lambda(-,-)$, and for a map $f: Y\to X$, one gets adjoint functors
\[
f^\ast: D_\et(X,\Lambda)\to D_\et(Y,\Lambda)
\]
and
\[
Rf_\ast: D_\et(Y,\Lambda)\to D_\et(X,\Lambda)\ .
\]

Moreover, under certain conditions, one can define the pushforward with proper supports
\[
Rf_!: D_\et(Y,\Lambda)\to D_\et(X,\Lambda)\ ,
\]
with a right adjoint
\[
Rf^!: D_\et(X,\Lambda)\to D_\et(Y,\Lambda)\ .
\]
If $f$ is smooth, then (a form of) Poincar\'e duality identifies $Rf^!$ as a twist of $f^\ast$. Under suitable hypotheses, as for example for the adic spaces corresponding to rigid spaces over a fixed complete nonarchimedean base field, these results all appear in \cite{Huber}.

Our aim here is to give a substantial generalization of this formalism. The motivation comes from some developments surrounding the theory of ``local Shimura varieties'' whose cohomology is expected to realize local Langlands correspondences for $p$-adic fields, and in particular Fargues' conjecture geometrizing the local Langlands correspondence through sheaves on the stack of $G$-bundles on the Fargues--Fontaine curve, \cite{RapoportViehmann, FarguesGeom, FarguesScholze}. The formalism of this paper has recently been used by Hansen--Kaletha--Weinstein for progress on the Kottwitz conjecture on the cohomology of local Shimura varieties, \cite{HansenKalethaWeinstein}.

In this context, it is necessary to work with certain objects that are not representable by adic spaces, but are merely quotients of adic spaces by ``pro-\'etale'' equivalence relations, akin to Artin's algebraic spaces. If one is willing to work with stacks, this brings objects like the classifying stack of $\GL_n(\mathbb Q_p)$ into the picture.

In fact, as any analytic adic space over $\Z_p$ is itself a quotient of a perfectoid space by a pro-\'etale equivalence relation, one can as well work with quotients of \emph{perfectoid} spaces by pro-\'etale equivalence relations. By tilting, one can even assume that these perfectoid spaces are of characteristic $p$. This leads to the notion of a diamond. Let us quickly review the definition.

\begin{definition} A map $f: Y=\Spa(S,S^+)\to X=\Spa(R,R^+)$ of affinoid perfectoid spaces is affinoid pro-\'etale if it can be written as a cofiltered limit of \'etale maps $Y_i=\Spa(S_i,S_i^+)\to X$ of affinoid perfectoid spaces. More generally, a map $f: Y\to X$ of perfectoid spaces is pro-\'etale if it is locally on the source and target affinoid pro-\'etale.
\end{definition}

Let $\Perf$ denote the category of perfectoid spaces of characteristic $p$.\footnote{We ignore small set-theoretic questions in this introduction. One solution is to carefully choose a cutoff cardinal $\kappa$ and only work with $\kappa$-small perfectoid spaces; we essentially adopt this solution, but take a large filtered colimit over all choices of $\kappa$ in the end, in order to have a canonical theory.} There are several different topologies one can consider on it, successively refining the previous.

\begin{altenumerate}
\item The analytic topology, where a cover $\{f_i: X_i\to X\}$ consists of open immersions $X_i\hookrightarrow X$ which jointly cover $X$.
\item The \'etale topology, where a cover $\{f_i: X_i\to X\}$ consists of \'etale maps $X_i\to X$ which jointly cover $X$.
\item The pro-\'etale topology, where a cover $\{f_i: X_i\to X\}$ consists of pro-\'etale maps $X_i\to X$ such that for any quasicompact open subset $U\subset X$, there are finitely many indices $i$ and quasicompact open subsets $U_i\subset X_i$ such that the $U_i$ jointly cover $U$.
\item The v-topology, where a cover $\{f_i: X_i\to X\}$ consists of any maps $X_i\to X$ such that for any quasicompact open subset $U\subset X$, there are finitely many indices $i$ and quasicompact open subsets $U_i\subset X_i$ such that the $U_i$ jointly cover $U$.
\end{altenumerate}

We note that the last part of the definition in cases (iii) and (iv) is the same condition that appears in the definition of the fpqc topology for schemes, and is automatic in cases (i) and (ii) as \'etale maps are open.

It turns out that all topologies are well-behaved; the v-topology is the finest, so for brevity we only state it in this case:

\begin{theorem}\label{thm:vdescent} The v-topology on $\Perf$ is subcanonical, and for any affinoid perfectoid space $X=\Spa(R,R^+)$, $H^0_v(X,\OO_X) = R$, $H^0_v(X,\OO_X^+)=R^+$ and for $i>0$, $H^i_v(X,\OO_X)=0$ and $H^i_v(X,\OO_X^+)$ is almost zero.
\end{theorem}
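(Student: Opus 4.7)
The plan is to reduce the theorem to a Čech exactness statement for a single v-cover of affinoid perfectoid spaces, and then prove that exactness by a two-step descent: pro-\'etale descent combined with a splitting property for v-covers of strictly totally disconnected bases. By a standard Čech-to-derived-functor argument on the basis of affinoid perfectoid spaces, it suffices to show that for every v-cover $f \from Y = \Spa(S,S^+) \to X = \Spa(R,R^+)$ of affinoid perfectoid spaces the augmented Čech complex
\[
0 \to R \to S \to S \hat\otimes_R S \to S \hat\otimes_R S \hat\otimes_R S \to \cdots
\]
is exact, and the analogous integral complex for $R^+, S^+$ is almost exact (with respect to the ideal of topologically nilpotent elements). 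The reduction to a single affinoid v-cover uses the quasi-compactness built into the definition of the v-topology.

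I would handle the pro-\'etale case separately, as it is essentially classical. Any pro-\'etale cover in $\Perf$ is locally a cofiltered limit of affinoid \'etale covers; \'etale covers reduce, by standard arguments, to analytic covers, where Tate acyclicity applies, together with finite \'etale covers; and for finite \'etale covers the integral Čech complex is almost exact by Faltings' almost purity theorem in the form proved in Scholze's original work on perfectoid spaces, which asserts that a finite \'etale cover of a perfectoid ring is almost finite \'etale on the $+$-subring. Passing to filtered colimits then gives full pro-\'etale descent.

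The new input for arbitrary v-covers is a reduction to the pro-\'etale case via \emph{strictly totally disconnected} models. Every affinoid perfectoid $X$ should admit a pro-\'etale cover $\tilde X \to X$ by an affinoid perfectoid space $\tilde X$ whose underlying topological space $|\tilde X|$ is profinite and such that every completed residue field is algebraically closed. Over such $\tilde X$, I would show that any v-cover $\tilde Y \to \tilde X$ of affinoid perfectoid spaces splits, that is, admits a section, after a further affinoid pro-\'etale refinement of $\tilde X$. Granting this, the Čech complex of the base-change along $\tilde X \to X$ becomes the trivially exact Čech complex of a split cover; combined with pro-\'etale descent along $\tilde X \to X$, one obtains v-descent over $X$.

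The main obstacle is this splitting property over strictly totally disconnected bases. Its proof should proceed pointwise: the v-cover condition forces $\tilde Y \to \tilde X$ to be quasi-compactly surjective on spectral spaces, and since each residue field $\widehat{\kappa(x)}$ at a point $x \in |\tilde X|$ is algebraically closed, each fibre of $f$ contains a rational point, yielding pointwise sections $\Spa(\widehat{\kappa(x)}, \widehat{\kappa(x)}^+) \to \tilde Y$. The delicate part is to spread these sections out to a global section on an affinoid pro-\'etale refinement of $\tilde X$, using the profiniteness of $|\tilde X|$ to decompose along connected components (where $\tilde X$ resembles an ``almost product'' of affinoid perfectoid fields), while carefully tracking the $+$-subrings so that the almost-part of integral Čech exactness is preserved. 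Once this splitting is in place, the rest of the theorem follows from the reductions above.
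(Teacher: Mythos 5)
The overall strategy—reduce to Čech exactness for a single affinoid v-cover, handle pro-\'etale covers by a filtered-limit argument from the \'etale case, then reduce a general v-cover to the pro-\'etale case by pulling back along a (strictly) totally disconnected pro-\'etale cover of the base—is indeed the right skeleton, and it matches the paper's reductions. However, the key step you propose for completing the argument is false.

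You claim that over a strictly totally disconnected base $\tilde X$, every v-cover $\tilde Y\to\tilde X$ by an affinoid perfectoid space splits after a further affinoid pro-\'etale refinement of $\tilde X$, and you justify this by asserting that each fibre of $\tilde Y\to\tilde X$ over $x\in|\tilde X|$ contains a rational point because $\widehat{\kappa(x)}$ is algebraically closed. This is not correct. Take $\tilde X=\Spa(C,\OO_C)$ with $C$ algebraically closed, and let $\tilde Y=\Spa(C',\OO_{C'})$ for any proper extension of algebraically closed nonarchimedean fields $C\subsetneq C'$ (e.g.\ a completed algebraic closure of $C(T)$). Then $\tilde Y\to\tilde X$ is a v-cover, but $\tilde Y$ has no $(C,\OO_C)$-point, and since every affinoid pro-\'etale cover of $\Spa(C,\OO_C)$ is of the form $\Spa(C,\OO_C)\times\underline{T}$ for a profinite set $T$, no further pro-\'etale refinement can produce a section. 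In general, algebraic closedness of the residue fields of the base gives no control over the residue fields of points in the fibre; the v-cover can (and typically does) genuinely increase them.

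The correct replacement for the splitting property, and the one the paper uses, is \emph{automatic (faithful) flatness}: if $X=\Spa(R,R^+)$ is totally disconnected and $Y=\Spa(S,S^+)\to X$ is any map of affinoid perfectoid spaces, then $S^+/\varpi$ is flat over $R^+/\varpi$ (Proposition~\ref{prop:wlocalflat}), and faithfully flat if $|Y|\to|X|$ is surjective. This is proved by reducing to connected components of $X$, which are of the form $\Spa(K,K^+)$ for a perfectoid field $K$ with an open and bounded valuation subring; over such a connected component, $S^+$ is $\varpi$-torsionfree, hence flat over the valuation ring $K^+$, and reducing modulo $\varpi$ preserves flatness. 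One then invokes faithfully flat descent (in the almost category), via the identification of $S^+/\varpi\otimes_{R^+/\varpi}S^+/\varpi$ with $T^+/\varpi$ up to almost isomorphism for $\Spa(T,T^+)=Y\times_X Y$, to get Čech (almost) exactness for $\OO^+/\varpi$; passing to the limit over $\varpi^n$ and inverting $\varpi$ gives the statement for $\OO$. Your reduction to totally disconnected bases and your treatment of pro-\'etale descent are sound and can be kept; the splitting claim should be replaced by this flatness argument.
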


Analogous to Artin's theory of algebraic spaces, it is for the intended applications of the formalism necessary to allow quotients of perfectoid spaces by pro-\'etale equivalence relations. These are called diamonds:

\begin{definition} Let $Y$ be a pro-\'etale sheaf on $\Perf$. Then $Y$ is a diamond if $Y$ can be written as a quotient $Y=X/R$ of a perfectoid space $X$ by a pro-\'etale equivalence relation $R\subset X\times X$.
\end{definition}

It turns out that all diamonds are v-sheaves. Now, for a diamond $X$, one can define sites $X_v$ and $X_\et$. Actually, in the case of $X_\et$, one has to be careful, as in general there may not be enough \'etale maps into $X$. For this reason (and others), we often work with a restricted class of diamonds which turn out to be better-behaved. Here, we use the underlying topological space of a diamond $Y$, given as $|Y|=|X|/|R|$ in case $Y=X/R$ is a quotient of a perfectoid space $X$ by a pro-\'etale equivalence relation $R$. There is a bijection between open subspaces of $|Y|$ and open subfunctors of $Y$, for an obvious definition of the latter. If $Y$ is quasicompact as a v-sheaf, i.e.~any collection of collectively surjective maps $Y_i\to Y$ of v-sheaves has a finite subcover, then $|Y|$ is quasicompact, but the converse is not true in general.

\begin{definition} A diamond $Y$ is spatial if it is qcqs, and $|Y|$ admits a basis for the topology given by $|U|$, where $U\subset Y$ ranges over quasicompact open subdiamonds. More generally, $Y$ is locally spatial if it admits an open cover by spatial diamonds.
\end{definition}

In particular, any perfectoid space $X$ defines a locally spatial diamond, which is spatial precisely when $X$ is qcqs. If $X$ is a (locally) spatial diamond, then $|X|$ is a (locally) spectral topological space, and $X$ is quasicompact (resp.~quasiseparated) as a v-sheaf precisely when $|X|$ is quasicompact (resp.~quasiseparated) as a topological space. Now, if $X$ is a locally spatial diamond, we define $X_\et$ as consisting of (locally separated) \'etale maps $Y\to X$ from diamonds $Y$ (automatically locally spatial); it turns out that this is ``big enough''.

The following theorem shows that this generalizes the classical notions.

\begin{theorem} There is a natural functor from analytic adic spaces over $\Z_p$ to locally spatial diamonds, denoted $X\mapsto X^\diamond$, satisfying $X_\et\cong X^\diamond_\et$.
\end{theorem}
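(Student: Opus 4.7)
The plan is to handle the perfectoid case via tilting and then to extend to arbitrary analytic adic spaces by pro-\'etale descent. For a perfectoid space $X$ over $\Z_p$, set $X^\diamond := X^\flat$, the tilt; this is a perfectoid space of characteristic $p$, hence a locally spatial diamond, and the tilting equivalence for \'etale sites of perfectoid spaces gives $X_\et \cong X^\flat_\et = X^\diamond_\et$ directly.

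For a general analytic adic space $X$ over $\Z_p$ one works locally and assumes $X = \Spa(R, R^+)$ is affinoid. The key input is the existence of a pro-\'etale cover $\tilde X = \Spa(\tilde R, \tilde R^+) \to X$ by an affinoid perfectoid space, obtained by adjoining compatible $p$-power roots of a pseudo-uniformizer and of enough units to force perfectoidness. The fiber product $R_{\tilde X} := \tilde X \times_X \tilde X$ is again affinoid perfectoid, so $R_{\tilde X} \rightrightarrows \tilde X$ is a pro-\'etale equivalence relation in perfectoid spaces. Applying the tilting functor object-wise gives $R_{\tilde X}^\flat \rightrightarrows \tilde X^\flat$, and we define $X^\diamond$ as its quotient pro-\'etale sheaf on $\Perf$; by construction this is a diamond. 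Independence of the choice of $\tilde X$ follows by passing to common pro-\'etale perfectoid refinements, and the construction glues over open covers of $X$ to give the functor in general. Qcqs-ness of the presentation in the affinoid case shows $X^\diamond$ is spatial there, hence locally spatial in general.

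The identification $X_\et \cong X^\diamond_\et$ is the deeper part. By construction, $\tilde X^\flat \to X^\diamond$ is a pro-\'etale cover with equivalence relation $R_{\tilde X}^\flat$, so $X^\diamond_\et$ is (essentially tautologically) equivalent to the category of \'etale maps $W \to \tilde X^\flat$ equipped with descent data along $R_{\tilde X}^\flat$. The perfectoid tilting equivalence, applied compatibly to $\tilde X$ and to $R_{\tilde X}$, identifies this with the analogous category of $R_{\tilde X}$-equivariant \'etale maps to $\tilde X$. It therefore suffices to show that $X_\et$ itself is equivalent to the category of \'etale maps to $\tilde X$ equipped with descent data along the pro-\'etale cover $\tilde X \to X$. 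This pro-\'etale descent for the \'etale site of an analytic adic space is the main obstacle: $\tilde X \to X$ is only pro-\'etale and not \'etale, so one cannot invoke standard \'etale descent directly. The argument must combine the topological invariance of the \'etale site of a perfectoid space along perfection-type pro-\'etale limits with Huber's finiteness results for \'etale morphisms, in order to show both that pullback along $\tilde X \to X$ lands in the equivariant category essentially surjectively and that it is fully faithful. Naturality in $X$ then follows from the functoriality of all the constructions.
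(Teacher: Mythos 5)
Your overall plan (tilt a pro-\'etale perfectoid cover, take the quotient, identify \'etale sites via descent) is the right idea, and it agrees in spirit with the paper. But you have left the genuinely hard step unproven: you acknowledge that the equivalence between $X_\et$ and the category of descent data for \'etale maps along the pro-\'etale cover $\tilde X\to X$ is ``the main obstacle,'' and then merely assert that ``the argument must combine the topological invariance of the \'etale site$\ldots$ with Huber's finiteness results,'' without carrying it out. Pro-\'etale descent for the \'etale site of an analytic adic space is not a black box one can cite; it is precisely the content of the theorem. Moreover, your description of the perfectoid cover (``adjoining compatible $p$-power roots of a pseudo-uniformizer and of enough units'') is not in general a construction that produces a perfectoid ring; the paper's Lemma~\ref{lem:profiniteetalecover} instead builds $\tilde X$ as the completed colimit of a tower of finite \'etale $G_i$-torsors $A\to A_i$ (with $G=\varprojlim G_i$ profinite), so that $\tilde X\to X$ is a $\underline{G}$-torsor, which is what makes the descent tractable.

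The paper sidesteps your two difficulties as follows. First, it defines $X^\diamondsuit$ directly as the moduli functor $T\mapsto\{(T^\sharp,\iota,f\colon T^\sharp\to X)\}$ on $\Perf$ (Definition~\ref{def:diamondadicspace}); this is manifestly functorial and independent of auxiliary choices, so no ``common refinement'' argument is needed. Second, for the site comparison (Lemma~\ref{lem:diamondadicspace}), it first proves the finite \'etale version $X_\fet\cong X^\diamondsuit_\fet$: because $\tilde X=\Spa(\widehat{A_\infty},-)$ arises as a $\varprojlim$ of finite \'etale $G_i$-torsors, one reduces to classical Galois descent along $A\to A_i$ together with the limit statement $(\widehat{A}_\infty)_\fet=\text{2-}\varinjlim_i (A_i)_\fet$ (Elkik, Gabber--Ramero; cf.~Proposition~\ref{prop:etmaptolim}). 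The full \'etale comparison then follows by combining $|X^\diamondsuit|=|X|$ (so open immersions match) with the structure theorem that any \'etale map is locally a quasicompact open immersion followed by a finite \'etale map (Lemma~\ref{lem:spatialetalelocallysep} on the diamond side, and the analogous fact for adic spaces). To repair your proof, you would need to spell out a descent argument along these lines rather than gesture at it.
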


Here, the functor $X\mapsto X^\diamond$ is intuitively given as follows. Choose a pro-\'etale surjection $\tilde{X}\to X$ from a perfectoid space $\tilde{X}$, and let $R\subset \tilde{X}\times \tilde{X}$ be the induced equivalence relation. This is pro-\'etale over $\tilde{X}$, and thus perfectoid itself. Then
\[
X^\diamondsuit= \tilde{X}^\flat / R^\flat\ .
\]

\begin{theorem} Let $X$ be a locally spatial diamond. Then pullback induces a fully faithful functor
\[
\widehat{D}(X_\et,\Lambda)\to D(X_v,\Lambda)\ ,
\]
where $\widehat{D}$ denotes the left-completion of the derived category; $D(X_v,\Lambda)$ is already left-complete.

Moreover, if $A\in D(X_v,\Lambda)$ and $f: Y\to X$ is a map of locally spatial diamonds that is surjective as a map of v-sheaves, then if $f^\ast A$ lies in $\widehat{D}(Y_\et,\Lambda)$, then $A$ lies in $\widehat{D}(X_\et,\Lambda)$.
\end{theorem}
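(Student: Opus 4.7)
The plan is to reduce everything to a single technical input: a v-descent result for étale sheaves of $\Lambda$-modules on (locally) spatial diamonds, asserting that if $\nu \from X_v \to X_\et$ is the natural morphism of sites, then for any $F \in D^+(X_\et,\Lambda)$ the unit $F \to R\nu_\ast \nu^\ast F$ is an isomorphism. Granted this, all three claims of the theorem follow by formal manipulations.

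First I would establish that $D(X_v,\Lambda)$ is left-complete. Since $X$ is locally spatial, it admits a basis by qcqs opens whose underlying topological spaces are spectral. On such a piece, v-cohomology of $\Lambda$-sheaves has bounded cohomological dimension (ultimately traceable to the bounded cohomological dimension of the spectral space $|X|$ for torsion coefficients, combined with Theorem~\ref{thm:vdescent} controlling the sheafy direction), so Postnikov towers converge and left-completeness holds. This same boundedness, applied to $\nu^\ast F$, lets us reduce the full faithfulness assertion to the bounded case: full faithfulness of $\nu^\ast$ on $\widehat D(X_\et,\Lambda) \to D(X_v,\Lambda)$ is equivalent to the unit $\id \to R\nu_\ast \nu^\ast$ being an equivalence, and by left-completeness of both sides this is detected in each degree. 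The degree-zero statement is exactly v-descent for étale sheaves; higher-degree statements are the vanishing of v-cohomology of an étale sheaf after pullback, which is the $R^i\nu_\ast = 0$ ($i>0$) part of the same technical input. One proves this input by reducing to an affinoid perfectoid $X = \Spa(R,R^+)$ and an affinoid perfectoid v-cover $\tilde X \to X$, and bootstrapping from Theorem~\ref{thm:vdescent}: the equalizer statement at the level of sections follows from the already-established fact that any diamond is a v-sheaf together with the étale-local description of $F$, while the Čech-cohomology vanishing is proved in parallel to Theorem~\ref{thm:vdescent}.

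For the final descent assertion, suppose $f \from Y \to X$ is a v-surjection of locally spatial diamonds with $f^\ast A \in \widehat D(Y_\et,\Lambda)$. Since $D(X_v,\Lambda)$ is left-complete, $A \simeq R\varprojlim_n \tau^{\geq -n} A$, and it suffices to show each truncation $\tau^{\geq -n} A$ lies in $\widehat D(X_\et,\Lambda)$. Using the triangles $H^i(A)[-i] \to \tau^{\geq i}A \to \tau^{\geq i+1}A$, the full faithfulness of $\nu^\ast$ already proved (on the bounded part), and induction on the amplitude, this reduces to the sheaf-level claim that each $H^i(A)$, a priori a v-sheaf on $X$, is an étale sheaf. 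But the property of a v-sheaf being an étale sheaf is itself v-local by the descent input, and $f^\ast H^i(A) = H^i(f^\ast A)$ is an étale sheaf on $Y$ by hypothesis; since $f$ is v-surjective this descends to show $H^i(A)$ is étale on $X$.

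The main obstacle is the v-descent statement for étale sheaves: everything else in the argument is formal manipulation of adjoints, truncations, and left-completeness. This input is the analogue of Theorem~\ref{thm:vdescent} for torsion coefficients $\Lambda$ prime to $p$, and its proof requires the same kind of careful Čech-theoretic analysis on affinoid perfectoid v-covers, now transported from the structure sheaf to the universe of étale $\Lambda$-sheaves (where one must additionally handle the locally-constant-case via finite étale descent before assembling the general case).
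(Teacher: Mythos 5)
Your plan does capture the right high-level shape (full faithfulness amounts to showing a unit map is an isomorphism; descent of the condition ``lies in the étale part'' amounts to showing the counit is an isomorphism), but it contains a wrong claim and it glosses over the parts of the argument that actually carry the mathematical content.

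First, the left-completeness argument is incorrect. You claim that on a qcqs piece the v-cohomological dimension of $\Lambda$-sheaves is bounded, tracing this to bounded cohomological dimension of the spectral space $|X|$ together with Theorem~\ref{thm:vdescent}. Neither half of this works: $\dim |X|$ is unbounded in general (even for affinoid perfectoid spaces), and Theorem~\ref{thm:vdescent} controls $\OO_X$ and $\OO_X^+$, not abelian $\Lambda$-sheaves. The actual mechanism for left-completeness of $D(X_v,\Lambda)$ is repleteness of the v-topos (Proposition~\ref{prop:leftcomplete}, quoting~\cite{BhattScholze}), which is a statement about limits in the topos and requires no cohomological-dimension bound at all. (There is no such bound, and the paper never claims one; note that this is precisely why $D(X_\et,\Lambda)$ is \emph{not} left-complete in general and one must pass to $\widehat D(X_\et,\Lambda)$.)

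Second, the proposed single technical input --- unit isomorphism for $\nu: X_v\to X_\et$ directly --- is not established by ``bootstrapping from Theorem~\ref{thm:vdescent}'' or by any parallel Čech analysis. The paper proves this by factoring through the quasi-pro-\'etale site $X_\qproet$ as an intermediate, and the two factors have genuinely different proofs: the comparison $X_\qproet\to X_\et$ (Proposition~\ref{prop:competproetcohom}) is a limit argument built on Proposition~\ref{prop:etmaptolim}, while the comparison $X_v\to X_\qproet$ (Proposition~\ref{prop:compproetvcohom}) hinges on Lemma~\ref{lem:univproetmap}, a universal pro-\'etale factorization $X'\to\lambda_{X\circ}(X')\to X$ with good behavior in finite limits and surjections, and the resulting fact (Lemma~\ref{lem:lambdacontinuous}) that $\lambda_X^\ast$ commutes with all limits. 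Your sketch misses that pro-\'etale is the crucial stepping stone here; there is no direct route to the v/\'etale comparison that avoids it.

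Third, the descent step in the last paragraph is circular as written. Knowing the unit $F\to R\nu_\ast\nu^\ast F$ is an isomorphism for $F$ already étale does \emph{not} make ``being étale'' a v-local condition by formal nonsense: the condition on $A\in D(X_v,\Lambda)$ is that the \emph{counit} $\nu^\ast R\nu_\ast A\to A$ be an isomorphism, and to check that after pullback along $f$ one needs to know that $R\nu_\ast$ commutes with the v-localization $f$ --- a base change result, not a consequence of the unit being an isomorphism. The paper supplies exactly this via Theorem~\ref{thm:vproetbasechange}, whose proof at the sheaf level reduces, by d\'evissage, to the statement that for a diagram $\Spa(C_1,C_1^+)\to\Spa(C_3,C_3^+)\leftarrow\Spa(C_2,C_2^+)$ of geometric points one has $H^0((X_1\times_{X_3}X_2)_\et,M)=M$; this is the geometric connectedness Lemma~\ref{lem:geometricallyconnected}, which again rests on results in rigid-analytic geometry rather than on Theorem~\ref{thm:vdescent}. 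Once this base change is in place, the clever trick in the proof of Theorem~\ref{thm:proetandetsheafdetectionvlocally} --- showing $p_1^\ast:\mathcal F(X')\to\mathcal F(X'\times_X X')$ is an isomorphism and concluding via the diagonal that $\mathcal F(X)=\mathcal F(X')$ --- goes through; but this is a real argument, not a formal consequence of descent as your last paragraph asserts.
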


Much of the formalism actually extends to all v-sheaves, or even v-stacks, subject to a small set-theoretic assumption of being ``small''.

\begin{definition} Let $X$ be a small v-stack, and consider the site $X_v$ of all perfectoid spaces over $X$, with the v-topology. Define the full subcategory
\[
D_\et(X,\Lambda)\subset D(X_v,\Lambda)
\]
as consisting of all $A\in D(X_v,\Lambda)$ such that for all (equivalently, one surjective) map $f: Y\to X$ from a locally spatial diamond $Y$, $f^\ast A$ lies in $\widehat{D}(Y_\et,\Lambda)$.
\end{definition}

Now we can state that we have the following operations.

\begin{altenumerate}
\item A (derived) tensor product
\[
-\dotimes_\Lambda - : D_\et(X,\Lambda)\times D_\et(X,\Lambda)\to D_\et(X,\Lambda)\ .
\]
This is compatible with the inclusions into $D(X_v,\Lambda)$, and the usual derived tensor product on $X_v$.
\item An internal Hom
\[
R\sHom_\Lambda(-,-): D_\et(X,\Lambda)^\op\times D_\et(X,\Lambda)\to D_\et(X,\Lambda)
\]
characterized by the adjunction
\[
R\Hom_{D_\et(X,\Lambda)}(A,R\sHom_\Lambda(B,C)) = R\Hom_{D_\et(X,\Lambda)}(A\dotimes_\Lambda B,C)
\]
for all $A,B,C\in D_\et(X,\Lambda)$. In particular, for $A=\Lambda$,
\[
R\Gamma(X,R\sHom_\Lambda(B,C)) = R\Hom_{D_\et(X,\Lambda)}(B,C)\ .
\]
In general, the formation of $R\sHom_\Lambda$ does not commute with the inclusion $D_\et(X,\Lambda)\subset D(X_v,\Lambda)$.
\item For any map $f: Y\to X$ of small v-stacks, a pullback functor
\[
f^\ast: D_\et(X,\Lambda)\to D_\et(Y,\Lambda)\ .
\]
This is compatible with the inclusions into $D(X_v,\Lambda)$ resp.~$D(Y_v,\Lambda)$, and the pullback functor $D(X_v,\Lambda)\to D(Y_v,\Lambda)$.
\item For any map $f: Y\to X$ of small v-stacks, a pushforward functor
\[
Rf_\ast: D_\et(Y,\Lambda)\to D_\et(X,\Lambda)
\]
which is right adjoint to $f^\ast$. In general, formation of $Rf_\ast$ does not commute with the inclusions into $D(X_v,\Lambda)$ resp.~$D(Y_v,\Lambda)$, but this holds true if $f$ is qcqs and one starts with an object of $D^+$.
\item For any map $f: Y\to X$ of small v-stacks that is compactifiable (cf.~Definition~\ref{def:compactifiable}), representable in locally spatial diamonds (cf.~Definition~\ref{def:locallyspatialmorphism}) and with (locally) $\dimtrg f<\infty$ (cf.~Definition~\ref{def:dimtrgdiamond}), a functor
\[
Rf_!: D_\et(Y,\Lambda)\to D_\et(X,\Lambda)\ .
\]
\item For any map $f: Y\to X$ of small v-stacks that is compactifiable, representable in locally spatial diamonds and with (locally) $\dimtrg f<\infty$, a functor
\[
Rf^!: D_\et(X,\Lambda)\to D_\et(Y,\Lambda)
\]
that is right adjoint to $Rf_!$.
\end{altenumerate}

The first four operations are defined in Section~\ref{sec:fourfunctors}, and then $Rf_!$ is defined in Section~\ref{sec:properpushforward}, in particular Definition~\ref{def:Rfshriek}, and $Rf^!$ is defined in Section~\ref{sec:cohomsmooth}.

These operations satisfy the following formulas.

\begin{theorem} Let $f: Y\to X$ be a map of small v-stacks.
\begin{altenumerate}
\item For all $A,B\in D_\et(X,\Lambda)$, one has
\[
f^\ast A\dotimes_\Lambda f^\ast B\cong f^\ast(A\dotimes_\Lambda B)\ .
\]
\item For all $A\in D_\et(X,\Lambda)$, $B\in D_\et(Y,\Lambda)$, one has
\[
Rf_\ast R\sHom_\Lambda(f^\ast A,B)\cong R\sHom_\Lambda(A,Rf_\ast B)\ .
\]
\item Assume that $f$ is compactifiable and representable in locally spatial diamonds with locally $\dimtrg f <\infty$. For all $A\in D_\et(X,\Lambda)$, $B\in D_\et(Y,\Lambda)$, one has
\[
Rf_!(A\dotimes_\Lambda f^\ast B)\cong Rf_! A\dotimes_\Lambda B\ .
\]
\item Assume that $f$ is compactifiable and representable in locally spatial diamonds with locally $\dimtrg f <\infty$. For all $A\in D_\et(Y,\Lambda)$, $B\in D_\et(X,\Lambda)$, one has
\[
R\sHom_\Lambda(Rf_!A,B)\cong Rf_\ast R\sHom_\Lambda(A,Rf^! B)\ .
\]
\item Assume that $f$ is compactifiable and representable in locally spatial diamonds with locally $\dimtrg f <\infty$. For all $A, B\in D_\et(X,\Lambda)$, one has
\[
Rf^! R\sHom_\Lambda(A,B)\cong R\sHom_\Lambda(f^\ast A,Rf^! B)\ .
\]
\end{altenumerate}
\end{theorem}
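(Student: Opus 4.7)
The strategy is to isolate the one substantive ingredient, the projection formula (iii), and deduce the other four statements formally from adjunctions and the symmetric monoidality of $f^\ast$. Statement (i) is the symmetric monoidality of pullback: on the v-site, $f^\ast$ is restriction followed by sheafification, both of which preserve the tensor product; derivation preserves this, and the descent to $D_\et\subset D(X_v,\Lambda)$ is built into the already-stated compatibility of $f^\ast$ and $\dotimes_\Lambda$ with the inclusion. Statement (ii) is the internal-Hom version of the $(f^\ast, Rf_\ast)$-adjunction: for any $C \in D_\et(X,\Lambda)$, the tensor-Hom adjunction and (i) yield
\[
R\Hom(C, Rf_\ast R\sHom_\Lambda(f^\ast A, B)) = R\Hom(f^\ast C \dotimes_\Lambda f^\ast A, B)
\]
\[
= R\Hom(f^\ast(C \dotimes_\Lambda A), B) = R\Hom(C, R\sHom_\Lambda(A, Rf_\ast B)),
\]
and the Yoneda lemma concludes.

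Statement (iii) is the projection formula and is the main technical obstacle. By the definition of $Rf_!$ via a compactification $f = \overline{f}\circ j$ with $j$ an open immersion and $\overline{f}$ proper (representable in locally spatial diamonds, of bounded $\dimtrg$), one has $Rf_! = R\overline{f}_\ast \circ j_!$, and the claim splits into projection formulas for $j_!$ and for $R\overline{f}_\ast$ on proper maps. The open-immersion case follows from the description of $j_!$ as extension by zero of the underlying v-sheaf, which is compatible with tensor by direct inspection. The proper case is extracted from proper base change for $\overline{f}$: once proper base change is in place, one reduces the identity to a statement after pullback to each perfectoid point of $X$, where it becomes the projection formula for the absolute cohomology of a single proper locally spatial diamond. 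The essential new input is proper base change in the v-topology, which ultimately rests on Theorem~\ref{thm:vdescent} and the geometric compactness properties of proper morphisms of locally spatial diamonds developed earlier in the paper; one must also verify at each step that the objects constructed remain in $D_\et$, which follows from the compatibility of the basic functors with the inclusion into $D(-_v,\Lambda)$.

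Statements (iv) and (v) are then formal Yoneda manipulations from (iii). For (iv), pair against $C \in D_\et(X,\Lambda)$:
\[
R\Hom(C, R\sHom_\Lambda(Rf_! A, B)) = R\Hom(C \dotimes_\Lambda Rf_! A, B)
\]
\[
= R\Hom(Rf_!(f^\ast C \dotimes_\Lambda A), B) = R\Hom(C, Rf_\ast R\sHom_\Lambda(A, Rf^! B)),
\]
using (iii) together with the $(Rf_!, Rf^!)$- and $(f^\ast, Rf_\ast)$-adjunctions. For (v), pair against $C \in D_\et(Y,\Lambda)$ and run the analogous chain, this time inserting (iii) in the form $Rf_!(C \dotimes_\Lambda f^\ast A) \cong Rf_! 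C \dotimes_\Lambda A$. The hard part is therefore squarely the projection formula (iii), and within it the proper-base-change input for v-stacks; everything else in the theorem is formal consequence of adjunctions and the symmetric monoidal structure on $f^\ast$.
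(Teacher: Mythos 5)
Your treatment of (i), (ii), (iv), and (v) matches the paper exactly: (i) and (ii) are the discussion around Lemma~\ref{lem:tensorprodexists} and Corollary~\ref{cor:locastadjunction}, and (iv), (v) are the Yoneda computations of Proposition~\ref{prop:localverdier}. You are also right that (iii) is where all the substance lives.

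Your sketch of (iii) is correct in outline but elides real content. After writing $Rf_! = R\overline{f}^{/Y}_\ast \circ j_!$ and handling the open-immersion piece, the paper (Proposition~\ref{prop:projectionformulaqc}) reduces via base change for $Rf_\ast$ to a connected strictly local base $\Spa(C,C^+)$, then uses the proper base change theorem (Theorem~\ref{thm:properbasechangepartpropvsopen}) to kill the contribution of $j_{U!}j_U^\ast A$ with $U$ the open complement of the closed point. At that stage the claim is \emph{not yet} an elementary statement about a single proper diamond: one is reduced only to $A$ a \emph{constant} complex of $\Lambda$-modules, and one must then write $A$ as a filtered colimit of perfect complexes and invoke the fact that $Rf_!$ commutes with arbitrary direct sums (Proposition~\ref{prop:Rfshriekquasicompactunboundeddirectsums}). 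That commutation statement is a genuine input, not bookkeeping, and your sketch does not account for it. Moreover, you address only the case where $f$ is quasicompact; the non-quasicompact case (the full Proposition~\ref{prop:projectionformula}) needs the $\infty$-categorical Kan extension machinery the paper sets up precisely to make the colimit over quasicompact open subsets behave, and none of that is visible in your argument. Finally, the claim that proper base change ``ultimately rests on Theorem~\ref{thm:vdescent}'' is misattributed: the real reduction is to Zariski--Riemann spaces of valuation rings (Lemma~\ref{lem:properbasechangezariskiriemann}), which invokes proper base change for \emph{schemes}; v-descent for $\OO^+$ is foundational infrastructure but not the engine here.
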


Parts (i) and (ii) are proved in Section~\ref{sec:fourfunctors}. Part (iii) is Proposition~\ref{prop:projectionformula}, and parts (iv) and (v) are formal consequences of (iii) and adjunctions, cf.~Proposition~\ref{prop:localverdier}.

Moreover, there are the following base change results.

\begin{theorem} Let
\[\xymatrix{
Y^\prime\ar[r]^{\tilde{g}}\ar[d]^{f^\prime} &Y\ar[d]^f\\
X^\prime\ar[r]^g & X
}\]
be a cartesian diagram of small v-stacks.
\begin{altenumerate}
\item Assume that $f$ is qcqs, and $A\in D^+_\et(Y,\Lambda)$. Then
\[
g^\ast Rf_\ast A\cong Rf^\prime_\ast \tilde{g}^\ast A\ .
\]
\item Assume that $f$ is compactifiable and representable in locally spatial diamonds with locally $\dimtrg f<\infty$, and $A\in D_\et(Y,\Lambda)$. Then
\[
g^\ast Rf_! A\cong Rf^\prime_! \tilde{g}^\ast A\ .
\]
\item Assume that $g$ is compactifiable and representable in locally spatial diamonds with locally $\dimtrg g<\infty$, and $A\in D_\et(Y,\Lambda)$. Then
\[
Rg^! Rf_\ast A\cong Rf^\prime_\ast R\tilde{g}^! A\ .
\]
\end{altenumerate}
\end{theorem}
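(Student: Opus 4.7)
The plan is to handle the three parts separately, reducing (i) and (iii) to essentially formal adjointness arguments and identifying (ii) as the step carrying the real geometric content.

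For (i), the qcqs hypothesis on $f$ together with $A\in D^+_\et(Y,\Lambda)$ allows us, by the compatibility of $Rf_\ast$ with the inclusion $D_\et\hookrightarrow D(-_v,\Lambda)$ recorded in item~(iv) of the preceding list of operations, to compute $Rf_\ast A$ inside the v-site. It therefore suffices to prove base change on the v-site. By v-descent one may pull back along a surjection onto $X'$ from a strictly totally disconnected perfectoid space, reducing to the case where $X'$ is strictly totally disconnected (and hence $X, Y, Y'$ can be taken perfectoid). In this setting Theorem~\ref{thm:vdescent} together with the structure of strictly totally disconnected perfectoid spaces controls higher v-cohomology explicitly, and the desired isomorphism becomes a direct statement about sections.

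For (ii), use compactifiability to factor $f = \bar{f}\circ j$ with $j$ a quasicompact open immersion and $\bar{f}$ proper, so that $Rf_! = R\bar{f}_\ast\circ j_!$ by definition. Base change for $j_!$ along $g$ is formal from uniqueness of left adjoints: since the base change of an open immersion is an open immersion $j'$, both $g^\ast j_!$ and $j'_!\tilde{g}^\ast$ are left adjoint to the composition $j^\ast Rg_\ast$. Base change for $R\bar{f}_\ast = R\bar{f}_!$ along $g$ is the proper base change theorem, and this is the main obstacle. It is expected to be established in Section~\ref{sec:properpushforward} as an integral part of the construction of $Rf_!$, and genuinely requires geometric input: reduction to stalks at strictly totally disconnected geometric points, cohomological dimension bounds via $\dimtrg f<\infty$, and a direct computation of proper-pushforward stalks at such points.

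For (iii), the compactifiability and representability hypotheses on $g$ pass to the base change $\tilde{g}$, so one may apply (ii) to the same cartesian square but regarded with $g$ and $\tilde{g}$ playing the ``vertical'' proper-pushforward role. This yields, for every $B\in D_\et(X',\Lambda)$, the isomorphism $f^\ast Rg_! B \cong R\tilde{g}_! f'^\ast B$. Mapping into a fixed $A\in D_\et(Y,\Lambda)$ and chasing the adjunctions $Rg_!\dashv Rg^!$, $R\tilde{g}_!\dashv R\tilde{g}^!$, $f^\ast\dashv Rf_\ast$, and $f'^\ast\dashv Rf'_\ast$ gives a natural isomorphism of $\Hom$-functors in $B$, from which the desired identification $Rg^! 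Rf_\ast A\cong Rf'_\ast R\tilde{g}^! A$ follows by Yoneda.
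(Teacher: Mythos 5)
Part (iii) is essentially the paper's argument: Proposition~\ref{prop:smoothbasechange}~(i) is proved by passing to right adjoints in Proposition~\ref{prop:Rfshriekbasechange} (i.e.\ part (ii) applied with $g$ in the vertical role), and your Yoneda chase is that step spelled out.

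For part (i) your reduction to the v-site via Proposition~\ref{prop:Rfastsimple} is the right first move, but the rest of your argument goes astray. Base change for the v-pushforward $Rf_{v\ast}$ is entirely formal, because on the v-site the pullback to $X'$ is a slice topos and the pushforward commutes with slices; there is no need to pass to strictly totally disconnected spaces or to compute anything. Your appeal to Theorem~\ref{thm:vdescent} is off-target: that theorem concerns the sheaves $\OO_X$ and $\OO_X^+$ and has nothing to do with torsion-coefficient étale cohomology. The real content of Proposition~\ref{prop:Rfastsimple} is not the base change but the preservation statement $Rf_{v\ast}A\in D^+_\et(Y,\Lambda)$, proved as Corollary~\ref{cor:qcqspushforwardpreserveset}~(ii); this is where the geometric input (the comparison of v-, quasi-pro-\'etale and \'etale pushforward, ultimately resting on Lemma~\ref{lem:etcohomfibreproductalgclosed} and Huber's results) lives. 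Your write-up does not engage with this, which is the substantive point of part (i).

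For part (ii) the decomposition $Rf_!=R\overline{f}^{/Y}_\ast j_!$ is the right starting point, but there are several issues. First, your adjointness argument for base change of $j_!$ has a hole: the right adjoint of $g^\ast j_!$ is $j^\ast Rg_\ast$, while the right adjoint of $j'_!\tilde{g}^\ast$ is $Rg'_\ast j'^\ast$; identifying these is itself a base change statement (open base change for $Rg_\ast$), so uniqueness of adjoints does not immediately apply. The paper instead proves base change for $j_!$ directly on the underived level using that $j_!$ of a representable sheaf is given by composition (Proposition~\ref{def:Rfshrieketale}). Second, and more seriously, you misattribute the role of the proper base change theorem. In this framework, base change for $R\overline{f}^{/Y}_\ast$ on $D^+$ is again \emph{formal}, by the same slice argument as in part (i). The theorem you call the ``main obstacle'' (Theorem~\ref{thm:properbasechangepartpropvsopen}, the commutation of $j_!$ with pushforward along a proper map) is a different statement; it does not directly provide the base change for $R\overline{f}^{/Y}_\ast$, but enters indirectly via the cohomological dimension bound Theorem~\ref{thm:dimtrgfinitecohomdim}, which is what allows the extension from $D^+$ to the full unbounded $D_\et$ in Proposition~\ref{prop:Rfastsimple}. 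This extension is a genuine step you have not addressed. Finally, you treat only the quasicompact case: when $f$ is not quasicompact, $Rf_!$ is defined as a left Kan extension from the subcategory of objects with proper support (Definition~\ref{def:Rfshrieklocspat}), and base change for general small v-stack targets is obtained by the simplicial hypercover machinery (Proposition~\ref{prop:Rfshriekbasechange}). These reductions are where a substantial part of the technical work of Section~\ref{sec:properpushforward} is located and cannot be omitted.
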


Part (i) is Proposition~\ref{prop:Rfastsimple}, part (ii) is Proposition~\ref{prop:Rfshriekbasechange}, and part (iii) is Proposition~\ref{prop:smoothbasechange}.

We also need a theory of smooth morphisms. From now on, fix a prime $\ell\neq p$, and assume that $\Lambda$ is killed by a power of $\ell$. In Definition~\ref{def:cohsmooth}, we define a notion of $\ell$-cohomologically smooth morphisms for any prime $\ell\neq p$ (which depends on the prime $\ell$), which includes \'etale maps.

\begin{theorem} Let $f: Y\to X$ be a map of small v-stacks that is separated and representable in locally spatial diamonds with locally $\dimtrg f<\infty$. The property of $f$ being $\ell$-cohomologically smooth is v-local on the target and $\ell$-cohomologically smooth local on the source; moreover, composites and base changes of $\ell$-cohomologically smooth maps are $\ell$-cohomologically smooth, and smooth maps of analytic adic spaces over $\Z_p$ give rise to $\ell$-cohomologically smooth morphisms under $X\mapsto X^\diamond$. Assume that $f$ is $\ell$-cohomologically smooth, and $\Lambda$ is $\ell$-power torsion.
\begin{altenumerate}
\item There is a natural equivalence of functors
\[
Rf^! A\cong f^\ast A\dotimes_\Lambda Rf^! \Lambda: D_\et(X,\Lambda)\to D_\et(Y,\Lambda)\ ,
\]
where $Rf^!\Lambda$ is v-locally isomorphic to $\Lambda[n]$ for some $n\in \mathbb Z$.
\item For $A,B\in D_\et(X,\Lambda)$, there is a natural equivalence
\[
f^\ast R\sHom_\Lambda(A,B)\cong R\sHom_\Lambda(f^\ast A,f^\ast B)\ .
\]
\item Let
\[\xymatrix{
Y^\prime\ar[r]^{\tilde{g}}\ar[d]^{f^\prime} &Y\ar[d]^f\\
X^\prime\ar[r]^g & X
}\]
be a cartesian diagram of small v-stacks, with $f$ being $\ell$-cohomologically smooth as before. Then for all $A\in D_\et(X^\prime,\Lambda)$,
\[
f^\ast Rg_\ast A\cong R\tilde{g}_\ast f^{\prime\ast} A\ .
\]
Moreover, for all $A\in D_\et(X,\Lambda)$,
\[
\tilde{g}^\ast Rf^! A\cong Rf^{\prime !} g^\ast A
\]
and
\[
f^{\prime\ast} Rg^! A\cong R\tilde{g}^! f^\ast A\ .
\]
\end{altenumerate}
\end{theorem}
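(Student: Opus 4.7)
The plan is to take as the definition of $\ell$-cohomologically smooth the conjunction of two conditions: $Rf^!\Lambda$ is v-locally isomorphic to $\Lambda[n]$ for some $n$ (so it is v-locally invertible), and the natural base-change morphism $\tilde g^\ast Rf^!\Lambda \to Rf^{\prime!}\Lambda$ is an isomorphism for every Cartesian square. With this choice, parts (i), (ii), and the base-change statements of (iii) become near-formal consequences; the real geometric content is concentrated in the assertion that smooth morphisms of analytic adic spaces produce $\ell$-cohomologically smooth morphisms of diamonds.

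For part (i), I would construct the natural morphism $f^\ast A \dotimes_\Lambda Rf^!\Lambda \to Rf^! A$ by adjunction from the projection formula (part (iii) of the previous theorem), and verify it is an isomorphism by working v-locally on $Y$ where $Rf^!\Lambda \cong \Lambda[n]$. Part (ii) then follows by applying $Rf^!$ to the desired identity, using part (v) of the previous theorem to commute $Rf^!$ with $R\sHom_\Lambda$, and invoking part (i) to cancel the twists by $Rf^!\Lambda$. The stability properties use v-descent of $D_\et$ combined with the manifestly v-local definition; composites use $R(gf)^!\Lambda \cong f^\ast(Rg^!\Lambda) \dotimes_\Lambda Rf^!\Lambda$ coming from part (i), and being smooth-local on the source follows by v-descent from the composite property together with stability under base change.

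The main obstacle is showing that a smooth morphism $Y \to X$ of analytic adic spaces over $\Z_p$ is $\ell$-cohomologically smooth after passing through $X \mapsto X^\diamond$. The plan is to exploit the local structure of smooth morphisms—étale locally, they factor through a relative polydisk—and induct on relative dimension to reduce to the case of the relative unit disk over a perfectoid base. There $Rf^!\Lambda$ should be computed directly, matching the classical answer $\Lambda(1)[2]$ from Huber's book, and the base-change hypothesis should be verified by explicit calculation of compactly-supported cohomology. The identification $X_\et \cong X^\diamond_\et$ from the earlier theorem ensures the classical computation transfers faithfully to the diamond setting.

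For part (iii), the base-change identity $\tilde g^\ast Rf^! A \cong Rf^{\prime!} g^\ast A$ follows from part (i) of the present theorem combined with the base-change condition built into the definition and invertibility of $Rf^!\Lambda$. The smooth base change $f^\ast Rg_\ast A \cong R\tilde g_\ast f^{\prime\ast} A$ reduces, by taking left adjoints and using that for $\ell$-cohomologically smooth $f$ the functor $f^\ast$ admits a left adjoint expressed via $Rf_!$ and a twist by $Rf^!\Lambda$, to proper base change for $Rf_!$ (part (ii) of the base change theorem) combined with the verified base change for $Rf^!\Lambda$. The final identity $f^{\prime\ast} Rg^! A \cong R\tilde g^! f^\ast A$ then follows from the two preceding ones by adjunction.
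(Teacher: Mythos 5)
Your proposal takes a genuinely different route from the paper. The paper defines $\ell$-cohomological smoothness by requiring, only after pullback to a \emph{strictly totally disconnected} base, that $Rf^!$ be a twist of $f^\ast$ by an invertible object (Definition~\ref{def:cohsmooth}); the general base change compatibility is then \emph{proved} in Proposition~\ref{prop:smoothshriekbasechange} via the characterizations of Proposition~\ref{prop:shriekvsusualpullback} and a simplicial hypercover/descent argument. You instead propose to take the invertibility of $Rf^!\Lambda$ together with base change compatibility of $Rf^!\Lambda$ along \emph{every} Cartesian square as the definition, so that the statements become formal. The trade-off is real: the paper's definition is local in a computable sense (over strictly totally disconnected spaces $D_\et$ is the derived category of the spectral space, and Propositions~\ref{prop:constructiblecompactderivedfull} and~\ref{prop:Rfshriekconstructible} give clean compactness tools), whereas your definition shifts the difficulty into verifying arbitrary base change in the key example.

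There are, however, two concrete gaps. First, your ``verify v-locally'' step for part (i) is circular as stated. To check that $f^\ast A\dotimes_\Lambda Rf^!\Lambda\to Rf^!A$ becomes an isomorphism after restricting to a v-cover $\tilde Y\to Y$ (where $Rf^!\Lambda$ trivializes), you need to identify the restriction of $Rf^!A$, i.e.\ you need base change for $Rf^!$ with general coefficients $A$. That is exactly the statement $\tilde g^\ast Rf^!A\cong Rf^{\prime!}g^\ast A$ of part (iii), which you derive \emph{from} part (i). Your stronger definition only gives base change for the coefficient $\Lambda$, not for general $A$, and neither side of the map $f^\ast A\otimes Rf^!\Lambda \to Rf^!A$ commutes with colimits in a way that obviously reduces to $A=\Lambda$. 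The paper breaks this circularity by first establishing, over strictly totally disconnected bases, that $Rf^!$ commutes with quasi-pro-\'etale base change and with $j_!$ for open immersions (conditions (iii), (iv) of Proposition~\ref{prop:shriekvsusualpullback}, using constructibility of $Rf_!$), and only then bootstraps to general base change by descent. An intermediate result of this type is unavoidable.

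Second, your plan for the smooth adic spaces case is underspecified in a way that would fail as described. The relative unit disk $\mathbb B$ over a perfectoid base of characteristic $p$ is handled by Theorem~\ref{thm:ballcohomsmooth} using Huber's classical results, but the ball $Y\times\mathbb B$ over a general analytic adic space $Y$ over $\Z_p$ is not itself of characteristic $p$, and the étale-locality and base-change steps do not simply reduce you to the characteristic $p$ ball. The paper passes through the pro-\'etale $\underline{\Z_p}$-torsor $\widetilde{\mathbb T}\to\mathbb T$ and then uses the quotient theorems Proposition~\ref{prop:propquotientsmooth} (free actions of pro-$(\neq\ell)$ groups) and Proposition~\ref{prop:propnonfreequotientsmooth} (non-free actions, with a fibrewise smoothness hypothesis) to transport cohomological smoothness across the quotient. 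Without these, the reduction to Huber's computations does not close. You should either add these quotient results to your plan, or explain how your stronger definition circumvents them (it does not obviously do so).
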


We refer to Sections~\ref{sec:cohomsmooth} and~\ref{sec:geomsmooth} for these results.

Moreover, for $\ell$-cohomologically smooth morphisms, one can prove some finiteness results. For this, we restrict to the case $\Lambda=\Fl$. For any small v-stack $X$, one can define a full subcategory $D_\cons(X,\Fl)\subset D_\et(X,\Fl)$; containment can be checked v-locally, and on affinoid perfectoid spaces, those are bounded complexes that become locally constant and finite-dimensional in each degree over a \emph{constructible} stratification. Note that constructible here refers to the standard notion for spectral spaces, and excludes the stratification of a closed unit disc into a point and its complement (as the open complement is not quasicompact). We note that this condition can be checked on cohomology sheaves. The following theorem is also proved in Section~\ref{sec:cohomsmooth}.

\begin{theorem} Let $f: Y\to X$ be a quasicompact $\ell$-cohomologically smooth map (in particular, compactifiable, representable in locally spatial diamonds, and with $\dimtrg f<\infty$) of small v-stacks. For all $A\in D_\cons(Y,\Fl)$, one has $Rf_! A\in D_\cons(X,\Fl)$.
\end{theorem}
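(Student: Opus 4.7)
The plan is to proceed by two formal reductions followed by a core geometric computation exploiting $\ell$-cohomological smoothness.

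\emph{Reduction to affinoid perfectoid target.} Since membership in $D_\cons(X,\Fl)$ can be checked v-locally on $X$, and $Rf_!$ is compatible with arbitrary base change on the target (part (ii) of the base change theorem), I may pass to a v-cover of $X$ and assume $X=\Spa(R,R^+)$ is affinoid perfectoid. Then $Y$ is a quasicompact locally spatial diamond, and $A$ is locally constant of finite rank on a finite constructible stratification $Y=\bigsqcup Y_\alpha$.

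\emph{D\'evissage on $A$.} For a closed stratum $i\colon Z\hookrightarrow Y$ with open complement $j\colon U\hookrightarrow Y$, the excision triangle $j_!j^\ast A\to A\to i_\ast i^\ast A$, combined with $Rf_!\circ j_!=R(fj)_!$ and $Rf_!\circ i_\ast=R(fi)_!$ (using that closed immersions are proper, so $i_!=i_\ast$), yields
\[
R(fj)_!(j^\ast A)\to Rf_!A\to R(fi)_!(i^\ast A).
\]
The left term is handled by induction on the number of strata, since $fj$ is again quasicompact and $\ell$-cohomologically smooth (open immersions are \'etale, and both classes are stable under composition) and $j^\ast A$ carries strictly fewer strata. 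The right term is the awkward one: $fi$ is qcqs and representable in locally spatial diamonds but not $\ell$-cohomologically smooth, so naive iteration fails. The honest way out is to bootstrap: either prove in parallel the stronger statement that $Rg_!$ preserves $D_\cons$ for every qcqs compactifiable $g$ representable in locally spatial diamonds with $\dimtrg g<\infty$, or invoke Verdier duality, using that $Rf^!\Fl$ is v-locally $\Fl[n]$ (hence $Rf^!$ preserves $D_\cons$ by the Poincar\'e duality formula $Rf^!A\cong f^\ast A\dotimes_\Lambda Rf^!\Lambda$), and translating the claim for $Rf_!$ into a claim for $Rf_\ast$ on constructible sheaves, where qcqs-finiteness on $D^+$ is already available.

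\emph{Core case.} Either route reduces matters to proving $Rf_!\Fl\in D_\cons(X,\Fl)$ for $f$ quasicompact and $\ell$-cohomologically smooth with $X$ affinoid perfectoid. Here I would use v-locality of $\ell$-cohomological smoothness on the source to pass to a v-cover of $Y$ on which $f$ admits a concrete local description: an \'etale map followed by the projection from a relative perfectoid ball. For \'etale $e$, the functor $Re_!=e_!$ is exact and preserves $D_\cons$; for the ball projection, the compactly supported $\Fl$-cohomology is explicitly computable and visibly constructible. v-descent then packages these local computations into constructibility of $Rf_!\Fl$ on $X$.

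\emph{Main obstacle.} The crux is the local-model reduction together with the handling of the closed stratum in the d\'evissage. Extracting from the abstract condition ``$Rf^!\Fl$ is v-locally $\Fl[n]$'' a v-local geometric form amenable to explicit computation, and bridging it to the non-smooth qcqs compactifiable case needed for the dévissage, is where the machinery of spatial diamonds and the interplay between v-, \'etale, and pro-\'etale topologies developed in Sections~\ref{sec:cohomsmooth} and~\ref{sec:geomsmooth} does its real work; everything else in the argument is formal manipulation with the six functors established earlier.
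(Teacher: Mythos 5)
Your proposed proof takes a geometric route that is genuinely different from the paper's, and it has two concrete gaps, one of which is fatal.

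\textbf{The paper's actual argument.} The paper's proof is formal: $\ell$-cohomological smoothness forces $Rf^!\cong D_f\dotimes_\Fl f^\ast$ with $D_f$ invertible, so $Rf^!$ commutes with arbitrary direct sums; by adjunction this is equivalent to $Rf_!$ preserving compact objects. Proposition~\ref{prop:constructiblecompactderivedfull} identifies the compacts in $D_\et(Y,\Fl)$, for $Y$ a spatial diamond of bounded $\ell$-cohomological dimension (which $\dimtrg f<\infty$ guarantees), with bounded complexes of constructible cohomology, and Proposition~\ref{prop:Rfshriekconstructible} packages the equivalence. Since constructibility is v-local (Proposition~\ref{prop:constrdescends}) and $Rf_!$ satisfies base change, one is reduced to the case of strictly totally disconnected base, which is exactly condition~(ii) of Proposition~\ref{prop:propsmooth}. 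No geometry of the map is touched beyond this.

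\textbf{Gap 1: the d\'evissage cannot be repaired as suggested.} You correctly flag that $fi$ is not $\ell$-cohomologically smooth. But your first proposed fix --- prove the stronger claim that $Rg_!$ preserves $D_\cons$ for all qcqs compactifiable $g$ representable in locally spatial diamonds with $\dimtrg g<\infty$ --- is \emph{false}. Take $g: \underline{S}\times\Spa(C,\OO_C)\to\Spa(C,\OO_C)$ for an infinite profinite set $S$: then $g$ is proper and pro-\'etale with $\dimtrg g=0$, but $Rg_!\Fl = g_\ast\Fl = C^0(S,\Fl)$ is infinite-dimensional, hence not constructible on a geometric point. Your second fix (Verdier duality) would need finiteness of $Rf_\ast$ on constructible complexes, which is \emph{not} a consequence of qcqs-ness and is not available in Section~\ref{sec:fourfunctors}; moreover the biduality Theorem~\ref{thm:constructiblereflexive} is only established over $\Spa(C,\OO_C)$ and is itself downstream of the result you are trying to prove.

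\textbf{Gap 2: there is no local structure theorem.} Your ``core case'' assumes that an $\ell$-cohomologically smooth map is v-locally on the source a composition of an \'etale map and a relative perfectoid ball projection. Nothing in the paper supplies this, and it should not be expected: $\ell$-cohomological smoothness (Definition~\ref{def:cohsmooth}) is a purely derived-category condition on $Rf^!$, and Proposition~\ref{prop:classicallysmoothisgeomsmooth} shows it is \emph{implied} by the classical geometric notion, not that it implies any such local model. Indeed the whole point of the paper's definition is to avoid needing a geometric description. The correct way to extract constructibility from the abstract hypothesis is the compactness/adjunction argument above, not explicit computation on a local model.
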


Another finiteness result is the following, cf.~Theorem~\ref{thm:constructiblereflexive}.

\begin{theorem} Let $C$ be a complete algebraically closed nonarchimedean field of characteristic $p$ with ring of integers $\OO_C$, and let $X$ be a locally spatial diamond that is separated and $\ell$-cohomologically smooth over $\Spa(C,\OO_C)$ for some $\ell\neq p$. Let $A\in D_\et(X,\Fl)$ be a bounded complex with constructible cohomology. Then the double (naive) duality map
\[
A\to R\sHom_\Fl(R\sHom_\Fl(A,\Fl),\Fl)
\]
is an equivalence; equivalently, as $Rf^! \Fl$ is invertible, the double Verdier duality map
\[
A\to R\sHom_\Fl(R\sHom_\Fl(A,Rf^! \Fl),Rf^! \Fl)
\]
is an equivalence. Moreover, if $X$ is quasicompact, then $H^i(X,A)$ is finite for all $i\in \mathbb Z$.
\end{theorem}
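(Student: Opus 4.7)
My plan is to reduce biduality via v-descent and dévissage to the base case $A = j_!\Fl_V$, tackle that case by an explicit boundary vanishing, and then deduce the finiteness of $H^i(X,A)$ from biduality via the $Rf_!$ preservation theorem. Write $f\from X \to S = \Spa(C,\OO_C)$ and $\omega_X = Rf^!\Fl$. Since $f$ is $\ell$-cohomologically smooth, $\omega_X$ is v-locally isomorphic to $\Fl[n]$ and hence invertible in $D_\et(X,\Fl)$; tensoring with $\omega_X^{\pm 1}$ identifies the two biduality maps, so I focus on showing
\[
A \to R\sHom_\Fl(R\sHom_\Fl(A,\omega_X),\omega_X)
\]
is an equivalence. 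This question is v-local on $X$ (using the compatibility $f^\ast R\sHom_\Fl(-,-) \cong R\sHom_\Fl(f^\ast -,f^\ast -)$ for $\ell$-cohomologically smooth pullback recorded earlier), so I may cover $X$ by spatial opens.

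The full subcategory of $A\in D_\cons(X,\Fl)$ for which biduality holds is triangulated and closed under shifts, so by the definition of constructibility and excision along a stratification it suffices to treat $A = j_!L$ with $j\from U \hookrightarrow X$ a constructible locally closed immersion and $L$ a local system of perfect $\Fl$-modules on $U$. A finite \'etale cover $\pi\from \tilde U \to U$ trivializes $L$; using $\pi_!=\pi_\ast$, $\pi^\ast=\pi^!$ for finite \'etale $\pi$ together with the projection formula, biduality for $j_!L$ reduces to biduality for $j_!\Fl_U^n$. A further excision triangle $j_!j^\ast \to \id \to i_\ast i^\ast$ reduces to the case $A = j_!\Fl_V$ for a quasicompact constructible open immersion $j\from V \hookrightarrow X$.

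For this base case, the $(j_!,j^\ast)$-adjunction gives $R\sHom_\Fl(A,\omega_X) \cong Rj_\ast\omega_V$ with $\omega_V = \omega_X|_V$ invertible; restricting the biduality map to $V$ yields the tautology $\Fl_V \cong R\sHom_\Fl(\omega_V,\omega_V)$. The real content is the boundary vanishing
\[
i^\ast R\sHom_\Fl(Rj_\ast\omega_V,\omega_X) = 0,
\]
where $i\from Z = X\setminus V \hookrightarrow X$; I expect this to be the main obstacle. My approach is to choose, using compactifiability of $f$, a compactification $V \hookrightarrow \bar V$ of $V$ over $X$, and to combine the base-change and projection-formula identities from the previous theorems with the preservation theorem for $Rf_!$ on $D_\cons$ to establish that $Rj_\ast\Fl_V$ is itself constructible on $X$; the boundary vanishing then follows by unwinding the $(Rj_!,Rj^!)$-adjunction at the boundary and invoking local purity on the $\ell$-cohomologically smooth strata of $X$ over $S$.

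Granting biduality, suppose $X$ is quasicompact. Then $R\Gamma(X,A) = Rf_\ast A$, and combining biduality with the $(Rf_!,Rf^!)$-adjunction gives
\[
Rf_\ast A \cong Rf_\ast R\sHom_\Fl(R\sHom_\Fl(A,\omega_X),\omega_X) \cong R\sHom_\Fl(Rf_! R\sHom_\Fl(A,\omega_X),\Fl).
\]
The inner dual $R\sHom_\Fl(A,\omega_X)$ lies in $D_\cons(X,\Fl)$ (a byproduct of the d\'evissage above, since the same reductions preserve constructibility of the dual), so the preservation theorem for $Rf_!$ gives $Rf_! R\sHom_\Fl(A,\omega_X)\in D_\cons(S,\Fl)$. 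Over the geometric point $S$ this is a bounded complex of finite-dimensional $\Fl$-vector spaces, whence $H^i(X,A)$ is finite in every degree and vanishes for $|i|$ large.
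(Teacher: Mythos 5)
Your d\'evissage down to $A = j_!\Fl_V$ for a quasicompact open immersion $j\from V\hookrightarrow X$ matches the paper's, and the derivation of finiteness from biduality plus $Rf_!$-preservation of constructibility is essentially right (modulo the small inaccuracy that there are no ``locally closed immersions'' of diamonds --- one should phrase the d\'evissage via quasicompact separated \'etale maps as in Proposition~\ref{prop:charconstructible2}, which you effectively do after further localization). The gap is in the base case, which you correctly identify as the ``real content'' but then leave unjustified.

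Concretely, your plan to establish the boundary vanishing is: (a) show $Rj_\ast\Fl_V$ is constructible using a compactification of $V$ over $X$ and ``the preservation theorem for $Rf_!$ on $D_\cons$'', then (b) invoke ``local purity on the $\ell$-cohomologically smooth strata''. Step (a) is circular/inapplicable: factoring $j = \bar j\circ \iota$ through the compactification $V\hookrightarrow \overline{V}^{/X}\to X$ gives $Rj_\ast\Fl_V = R\bar j_! (R\iota_\ast\Fl_V)$, but the $Rf_!$-preservation theorem applies only to \emph{cohomologically smooth} maps, and the proper map $\bar j$ is certainly not smooth. Reducing to constructibility of $R\iota_\ast\Fl_V$ on $\overline{V}^{/X}$ is the same problem again. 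In fact this constructibility is essentially equivalent to the theorem being proved (it expresses that the dual of a constructible object is constructible) and cannot be taken as an input. Step (b) is vaguer still: the boundary $Z = X\setminus V$ is not generalizing, hence not a closed sub-diamond, and there is no purity statement for it available in this formalism. The paper's actual mechanism replaces this with a specific geometric reduction: using the structure of $U$ as a rational subset $\{|\bar f|\le|\varpi|\}$ on a strictly totally disconnected cover, one forms $Y = \{\bar f - \bar T = 0\}\subset X\times_{\Spa(C,\OO_C)}\mathbb B_C$ and observes that both projections $Y\to X$ and $Y\to \mathbb B_C$ are cohomologically smooth, so biduality for $j_!\Fl$ on $X$ may be transported to biduality for the locus $\{\bar T = 0\}$ inside the ball $\mathbb B_C$, i.e.\ to a quasicompact open in a smooth rigid curve over $C$; one then embeds the curve into a proper smooth curve, and checks the statement there by Poincar\'e duality. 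This slicing-to-curves step is the essential idea, and it is absent from your proposal; without it the proposal does not go through.
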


Finally, it is useful to have some ``invariance under change of algebraically closed base field'' statements. These come in different flavours, depending on whether the base field is discrete or nonarchimedean. The following is Theorem~\ref{thm:changebasefield}.

\begin{theorem} Let $X$ be a small v-stack.
\begin{altenumerate}
\item Assume that $X$ lives over $k$, where $k$ is a discrete algebraically closed field of characteristic $p$, and $k^\prime/k$ is an extension of discrete algebraically closed base fields, $X^\prime = X\times_k k^\prime$. Then the pullback functor
\[
D_\et(X,\Lambda)\to D_\et(X^\prime,\Lambda)
\]
is fully faithful.
\item Assume that $X$ lives over $k$, where $k$ is an algebraically closed discrete field of characteristic $p$. Let $C/k$ be an algebraically closed complete nonarchimedean field, and $X^\prime = X\times_k \Spa(C,C^+)$ for some open and bounded valuation subring $C^+\subset C$ containing $k$. Then the pullback functor
\[
D_\et(X,\Lambda)\to D_\et(X^\prime,\Lambda)
\]
is fully faithful.
\item Assume that $X$ lives over $\Spa(C,C^+)$, where $C$ is an algebraically closed complete nonarchimedean field with an open and bounded valuation subring $C^+\subset C$, $C^\prime/C$ is an extension of algebraically closed complete nonarchimedean fields, and $C^{\prime +}\subset C^\prime$ an open and bounded valuation subring containing $C^+$, such that $\Spa(C^\prime,C^{\prime +})\to \Spa(C,C^+)$ is surjective. Then for $X^\prime = X\times_{\Spa(C,C^+)} \Spa(C^\prime,C^{\prime +})$, the pullback functor
\[
D_\et(X,\Lambda)\to D_\et(X^\prime,\Lambda)
\]
is fully faithful.
\end{altenumerate}
\end{theorem}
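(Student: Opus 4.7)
The plan is to reformulate fully faithfulness of $\pi^\ast \colon D_\et(X, \Lambda) \to D_\et(X^\prime, \Lambda)$ in each case as the assertion that the unit map $B \to R\pi_\ast \pi^\ast B$ is an isomorphism for every $B \in D_\et(X, \Lambda)$, where $\pi \colon X^\prime \to X$ denotes the projection. Since $D_\et(X, \Lambda) \subset D(X_v, \Lambda)$ and both membership in and isomorphisms of $D_\et$ can be detected v-locally on the source, I can test this unit after pullback along maps $g \colon S \to X$ where $S$ ranges over a generating family of affinoid perfectoid spaces, in particular strictly totally disconnected ones. The problem reduces to showing that
\[
R\Gamma(S_v, g^\ast B) \longrightarrow R\Gamma(S^\prime_v, g^{\prime\ast} B)
\]
is an equivalence, where $S^\prime = S \times_X X^\prime$ and $g^\prime \colon S^\prime \to X^\prime$ is the base change of $g$.

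Localizing further on $S$, I would reduce to the case $S = \Spa(K, K^+)$ for a single complete algebraically closed nonarchimedean field $K$ with open bounded valuation subring $K^+$. The task becomes identifying $S^\prime$ and checking cohomological triviality of the projection $S^\prime \to S$ with coefficients in $g^\ast B$. For case (i), this amounts to invariance of the \'etale cohomology of a perfectoid point over $k$ under enlarging the discrete algebraically closed base from $k$ to $k^\prime$; I would prove this by writing $k^\prime$ as a filtered colimit of finitely generated subextensions and reducing to the classical geometric invariance statement. For case (iii), the analogous invariance under the nonarchimedean extension $C^\prime / C$ follows from the hypothesized v-surjectivity of $\Spa(C^\prime, C^{\prime +}) \to \Spa(C, C^+)$, combined with direct computation of the v-cohomology of the product $\Spa(K, K^+) \times_{\Spa(C, C^+)} \Spa(C^\prime, C^{\prime +})$, which is a strictly totally disconnected perfectoid space with vanishing higher v-cohomology.

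Case (ii) is the bridge and the principal technical obstacle, since here the base changes character from the discrete field $\Spa(k)$ to the nonarchimedean $\Spa(C, C^+)$, and neither (i) nor (iii) applies directly. My plan is to factor $\Spa(C, C^+) \to \Spa(k)$ through $\Spa(k_{C^+})$, the algebraically closed residue field of the valuation ring $C^+$, which contains $k$. The first leg is handled by part (i) applied to the extension $k \subset k_{C^+}$. The second leg requires proving full faithfulness for the map $Y \to Y \times_{\Spa(k_{C^+})} \Spa(C, C^+)$, where $Y := X \times_{\Spa(k)} \Spa(k_{C^+})$. This boils down to a specialization equivalence: cohomology along $\Spa(C, C^+)$ should agree, for any sheaf pulled back from $Y$, with cohomology at the special point $\Spa(k_{C^+})$.

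The hardest step is this specialization equivalence in case (ii). I expect to prove it by analyzing v-covers of $\Spa(C, C^+)$ by strictly totally disconnected perfectoid spaces and exploiting the henselian structure of the valuation ring $C^+$: because $C^+$ has algebraically closed residue field, the \'etale topos of $\Spa(C, C^+)$ should collapse to that of $\Spa(k_{C^+})$ when restricted to coefficients pulled back from a discrete base. Combined with the base change formula for $Rf_\ast$ stated earlier and v-descent, this local identification propagates to the global full-faithfulness statement on $X$.
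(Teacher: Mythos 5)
Your overall framework — reformulating fully faithfulness as the unit map $B \to R\pi_\ast \pi^\ast B$ being an equivalence, checking v-locally, and reducing to strictly totally disconnected perfectoid test spaces and ultimately to a single geometric point $\Spa(K,K^+)$ — is the right skeleton, and it matches the paper's reductions. But the two key computations are sketched in a way that either fails outright or hides the real difficulty.

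For case (iii), you claim that $\Spa(K,K^+)\times_{\Spa(C,C^+)}\Spa(C',C'^+)$ is ``a strictly totally disconnected perfectoid space with vanishing higher v-cohomology.'' This is false in general: the completed tensor product $K\hat\otimes_C C'$ is almost never a field (take $K$ and $C'$ both adjoining a new topologically transcendental element to $C$), so the product is not $\Spa$ of an algebraically closed field, and not strictly totally disconnected. It \emph{is} connected (Lemma~\ref{lem:geometricallyconnected}), but the vanishing of its higher $\ell$-power torsion cohomology is genuinely nontrivial and in the paper is established by the limit argument in Lemma~\ref{lem:etcohomfibreproductalgclosed}: one writes the perfectoid space as a cofiltered limit of finite type rigid-analytic spaces over $C$ and invokes Huber's base change result \cite[Theorem 4.1.1(c)]{Huber}. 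If you replace your totally-disconnectedness claim by that lemma, your treatment of (iii) is fine (and indeed coincides with the paper's, which routes this through the last paragraph of Theorem~\ref{thm:vproetbasechange}).

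For case (ii), the factorization through $\Spa$ of the residue field $k_{C^+}$ does not simplify the problem, because $C^+$ \emph{always} has algebraically closed residue field (being a valuation ring of the algebraically closed field $C$), so you are left with precisely an instance of (ii) again. The ``specialization equivalence'' and ``topos collapse'' you invoke is the whole content of (ii), and the intuition that it should hold because the residue field is algebraically closed is misleading: it is not a henselian-local-ring statement. In the paper, after reducing to a stalkwise check over a geometric point and reducing to constant coefficients via proper base change (Theorem~\ref{thm:properbasechangepartpropvsopen}), the space $\Spa(C',C'^+)\times_k\Spa(C,C^+)$ (with $C$ the completed algebraic closure of $k((t))$) is written as an increasing union of subsets $Y'_n$ that are cofiltered limits of annuli $Y'_{n,L}$ over finite extensions $L/k((t^{1/p^\infty}))$. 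Each such annulus has $H^1(Y'_{n,L},\Fl)=\Fl\neq 0$, so there is no local topos collapse; the vanishing in the colimit comes only because extracting $\ell$-power roots of the uniformizer kills the class in $H^1$. Your sketch contains no counterpart to this computation, and it is exactly the load-bearing step.

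Finally, your plan proves (i) directly via a filtered colimit reduction to ``classical geometric invariance'' and then uses it as an input to (ii). The paper deliberately goes the other way ((i) is deduced from (ii) and (iii)), and your sketch of (i) does not say which classical statement you are invoking nor how the v-topology interacts with the colimit; this is not automatically safe. Independent of that, the two substantive gaps above — the non-totally-disconnectedness in (iii) and the missing annulus/colimit computation in (ii) — are what need to be fixed.
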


Let us say some words about the proofs. We start with the proof of Theorem~\ref{thm:vdescent}. The proof proceeds in steps, proving first the analytic and \'etale cases (already in \cite{ScholzePerfectoidSpaces}, \cite{KedlayaLiu1}), deducing the pro-\'etale case by a limit argument, and then the v-case by making use of the notion of (strictly) totally disconnected spaces.

\begin{definition} Let $X$ be a perfectoid space. Then $X$ is totally disconnected (resp.~strictly totally disconnected) if $X$ is quasicompact, and every open cover of $X$ splits (resp.~and every \'etale cover of $X$ splits).
\end{definition}

It turns out that totally disconnected spaces have a very simple form.

\begin{proposition} Let $X$ be a perfectoid space. Then $X$ is totally disconnected (resp.~strictly totally disconnected) if and only if $X$ is affinoid, and every connected component of $X$ is of the form $\Spa(K,K^+)$, where $K$ is a perfectoid field with an open and bounded valuation subring $K^+\subset K$ (resp.~and $K$ is algebraically closed).
\end{proposition}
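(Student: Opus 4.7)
The plan is to prove both implications by exploiting the Berkovich-type quotient $q\from |X|\to |X|^B$ of $|X|$ by the equivalence relation identifying points sharing a rank-$1$ generalization (equivalently, the maximal Hausdorff quotient).

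For the forward direction, first show $X$ is affinoid: cover $X$ by finitely many affinoid opens $U_1,\ldots,U_n$ (quasicompactness), apply the splitting property to obtain a disjoint refinement $X=\bigsqcup_j V_j$ with each $V_j$ clopen and contained in some $U_i$, and use that a clopen subspace of an affinoid perfectoid is affinoid (cut out by an idempotent) and that finite disjoint unions of affinoid perfectoid spaces are affinoid. To classify the connected components, the key observation is that $|X|^B$ is profinite: any open cover of $|X|^B$ pulls back to an open cover of $|X|$, which splits into clopens $V_k$; since clopens in a spectral space are stable under both generalization and specialization, each $V_k$ is $q$-saturated and descends to a clopen of $|X|^B$, giving a basis of clopens. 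The fibers of $q$ are chains under specialization, hence connected, so they are exactly the connected components of $|X|$. Writing such a component $Z$ as the cofiltered limit of its clopen affinoid neighborhoods exhibits $Z=\Spa(R_Z,R_Z^+)$ as an affinoid perfectoid space whose underlying space has a unique rank-$1$ point, forcing $R_Z$ to be a perfectoid field $K$ and $R_Z^+$ an open and bounded valuation subring.

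For the converse direction, take an open cover $\{U_i\}$ of $X$ and refine it to a cover by finitely many rational opens $V_1,\ldots,V_n$. For each component $Z=\Spa(K,K^+)$, the chain structure of $|Z|$ forces some $V_{j(Z)}$ to contain $Z$: the closed point of $Z$ lies in some $V_j$, which is closed under generalization and hence contains all of $Z$. To globalize, use that $Z=\bigcap_W W$ over clopen neighborhoods while $X\setminus V_{j(Z)}$ is proconstructible closed; compactness for the constructible topology on $|X|$ yields a clopen $W_Z$ with $Z\subset W_Z\subset V_{j(Z)}$. Quasicompactness selects finitely many $W_{Z_1},\ldots,W_{Z_m}$ covering $X$, and setting $W'_k=W_{Z_k}\setminus(W_{Z_1}\cup\cdots\cup W_{Z_{k-1}})$ produces the desired disjoint clopen refinement subordinate to $\{U_i\}$.

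For the strictly totally disconnected refinement, I would run the same scheme with \'etale covers replacing open covers. In the forward direction, a nontrivial finite separable extension $L/K$ would yield a finite \'etale map $\Spa(L,L^+)\to\Spa(K,K^+)$ extending to an \'etale cover of $X$ (trivial on the complementary clopen), whose splitting provides a section and forces $L=K$; since $K$ is perfect, separably closed equals algebraically closed. For the converse, restricting an \'etale cover of $X$ over $Z=\Spa(K,K^+)$ gives a finite \'etale cover that admits a section when $K$ is algebraically closed, and this section must be spread out to some clopen neighborhood via the description of $R_Z$ as the completion of the colimit of coordinate rings of clopen neighborhoods, using finite presentation of \'etale morphisms. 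This spreading-out through the cofiltered limit is the principal technical obstacle; once it is accomplished, the remainder of the argument reduces to the analytic case handled above.
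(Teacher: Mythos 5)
The overall architecture is sound, and parts are close to what the paper does (the affinoidness argument is a clean variant of Lemma~\ref{lem:totallydisconnectedaffinoid}, and the converse direction is essentially the converse of Lemma~\ref{lem:fargues}). But there is a genuine gap in the heart of the forward direction: you deduce that each connected component $Z$ has a \emph{unique rank-$1$ point}, and then claim this forces $Z=\Spa(K,K^+)$ with $K^+$ an open and bounded valuation subring. That last implication is false. The condition ``unique rank-$1$ point'' is blind to the choice of $R_Z^+$: for instance, if $K$ is any perfectoid field and $K^+\subset\OO_K$ is an open, integrally closed subring that is \emph{not} a valuation ring, then $\Spa(K,K^+)$ is connected, affinoid, has a unique rank-$1$ point, and has $|\Spa(K,K^+)|^B$ a single point --- yet it has several closed points, is not a chain, and is not of the required form. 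In particular, profiniteness of $|X|^B$ (which is all the splitting property delivers in your route) is strictly weaker than total disconnectedness. What is actually needed, and what the paper proves in Lemma~\ref{lem:fargues}, is that every connected component has a \emph{unique closed point}; this is exactly where the splitting hypothesis must be applied to a cover like $\{X\setminus\{z_1\},\,X\setminus\{z_2\}\}$ for two purported closed points in the same component, and it is this unique-closed-point fact (together with the observation in Example~\ref{ex:injpoint} that a qcqs perfectoid space with a unique closed point is $\Spa(K(x),K(x)^+)$) that yields the valuation ring structure of $K^+$.

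A secondary issue is in the strictly totally disconnected case. In the forward direction you want to extend a finite \'etale map over $Z=\Spa(K,K^+)$ to an \'etale cover of $X$ by making it ``trivial on the complementary clopen,'' but a connected component of an infinite profinite $\pi_0 X$ has no complementary clopen: it is merely an intersection of clopens, not clopen itself. The paper instead observes that the splitting property passes to closed subsets (enlarge the cover by the open complement), and so reduces directly to the case $X$ connected. Your converse-direction idea of spreading a section out to a clopen neighborhood using a cofiltered-limit/finite-presentation argument is the right idea, but the precise engine is the approximation result for \'etale morphisms to an inverse limit (Proposition~\ref{prop:etmaptolim}), which deserves to be named rather than gestured at.
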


Thus, (strictly) totally disconnected are essentially profinite sets of (geometric) points. Their use is justified by the following two lemmas.

\begin{lemma} Let $X$ be any quasicompact perfectoid space. Then there is a pro-\'etale cover $\tilde{X}\to X$ such that $\tilde{X}$ is strictly totally disconnected.
\end{lemma}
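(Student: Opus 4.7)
The plan is to construct $\tilde X$ as a cofiltered limit of affinoid pro-\'etale covers of $X$ that successively splits all finite open covers and all affinoid \'etale covers. The resulting limit will be strictly totally disconnected because every such cover of it will already have been arranged to split at a finite stage of the construction.

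\textbf{Reduction to the affinoid case.} By quasicompactness, cover $X$ by finitely many affinoid perfectoid opens $U_1,\ldots,U_n$; then $\bigsqcup_i U_i\to X$ is an \'etale (hence pro-\'etale) cover. A finite disjoint union of strictly totally disconnected affinoid perfectoid spaces is again strictly totally disconnected: quasicompactness is preserved by finite disjoint unions, and any open or \'etale cover splits by assembling splittings on each component. Thus I may assume $X=\Spa(R,R^+)$ is affinoid perfectoid.

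\textbf{Iterative construction.} Build an inverse system $(X_\alpha)_{\alpha<\beta}$ of affinoid pro-\'etale covers of $X$, with $X_0=X$, indexed by ordinals up to some cardinal $\beta$ chosen large relative to the fixed set-theoretic cut-off $\kappa$. At successor stages, enumerate all finite open covers of $X_\alpha$ by qc opens and all affinoid \'etale covers $Z\to X_\alpha$, and form $X_{\alpha+1}\to X_\alpha$ as the fibered product of ``splitting'' refinements: an affinoid \'etale cover trivially splits after pullback along itself; a finite open cover by rational qc subsets can be made to split after pullback along an affinoid pro-\'etale cover realizing the Boolean algebra generated by the rational subsets as a profinite clopen decomposition of the underlying spectral space. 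At limit ordinals take the cofiltered limit, which remains affinoid pro-\'etale over $X$. Set $\tilde X:=X_\beta$.

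\textbf{Verification.} The limit $\tilde X\to X$ is affinoid pro-\'etale, hence in particular quasicompact and affinoid perfectoid. Any finite open cover of $\tilde X$ by qc opens descends to some $X_\alpha$ via constructible approximation in the spectral space $|\tilde X|=\varprojlim_\alpha|X_\alpha|$, and by construction is split on $X_{\alpha+1}$ and hence on $\tilde X$. Likewise, any affinoid \'etale cover of $\tilde X$ descends to some $X_\alpha$ using the approximation of \'etale maps of affinoid perfectoid spaces under cofiltered limits with affine transition maps, and is split on $X_{\alpha+1}$. Thus every open cover and every \'etale cover of $\tilde X$ splits, so $\tilde X$ is strictly totally disconnected.

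\textbf{Main obstacle.} The delicate inputs are the two descent/approximation statements: every finite open cover of the cofiltered limit by qc opens comes from some finite stage (via spectral-space approximation of constructible subsets), and every affinoid \'etale cover of the limit descends to a finite stage (via the ``finite presentation'' property of \'etale maps of affinoid perfectoid spaces along cofiltered affine limits). The termination of the transfinite iteration at some ordinal $\beta$ requires a cardinality argument using the $\kappa$-smallness convention, and one must also verify that a successor step actually produces an affinoid pro-\'etale cover; these are the main set-theoretic and point-set subtleties, though both are standard given the framework established earlier in the manuscript.
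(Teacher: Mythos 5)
Your strategy -- reduce to the affinoid case, build a cofiltered limit of affinoid pro-\'etale covers by iteratively pulling back along all affinoid \'etale covers, and then argue that every \'etale cover of the limit descends to a finite stage and is split at the next -- is exactly the approach of the paper (Lemma~\ref{lem:strtotdisccover}). Where the paper differs in execution:

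\emph{The ordinal bound.} You write that the iteration should be ``indexed by ordinals up to some cardinal $\beta$ chosen large relative to the fixed set-theoretic cut-off $\kappa$.'' Read literally, this is wrong and would break Convention~\ref{conv:kappasmall}: if $\beta\geq\kappa$, the limit $X_\beta$ need not be $\kappa$-small, and the pro-\'etale presentation would be over an index category of cardinality $\geq\kappa$. In fact $\beta=\omega$ suffices. The reason is precisely the descent argument you yourself invoke: any affinoid \'etale cover of $\tilde X=\varprojlim_n X_n$ descends, by Proposition~\ref{prop:etmaptolim}~(iv), to some finite stage $X_n$ (more precisely, to some intermediate affinoid \'etale space over $X_n$ in the pro-\'etale presentation of $X_{n+1}\to X_n$), and is then split over $X_{n+1}$. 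So there is nothing to ``terminate''; one does not need the iteration to stabilize, only to pass to a limit ordinal, and $\omega$ already works while keeping everything $\kappa$-small (using that countable suprema of cardinals $<\kappa$ stay $<\kappa$, Lemma~\ref{lem:choosekappa}~(ii)). The ``termination by cardinality argument'' framing should be dropped.

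\emph{Open covers are redundant.} You treat finite open covers of $X_\alpha$ by qc opens as a separate type of cover to be split, via a profinite clopen decomposition of $|X_\alpha|$. This is unnecessary extra machinery: a finite cover $X_\alpha=\bigcup_i U_i$ by rational open subsets yields the affinoid \'etale surjective map $\bigsqcup_i U_i\to X_\alpha$, so open covers are already handled by the pullback along all affinoid \'etale surjective covers. The paper makes exactly this simplification: any \'etale cover can be refined by an affinoid \'etale cover, and splitting all of those implies splitting of all open covers as well. Your appeal to a profinite clopen decomposition of the spectral space is closer to the w-localization construction of Proposition~\ref{prop:wlocalisation}, which is a different (and here unnecessary) device.

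With those two fixes -- replace the transfinite tower by a countable one, and drop the separate treatment of open covers -- your argument is the paper's argument.
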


\begin{lemma} Let $X=\Spa(R,R^+)$ be a totally disconnected perfectoid space, and $f: Y=\Spa(S,S^+)\to X$ be any map of perfectoid spaces. Then $R^+/\varpi\to S^+/\varpi$ is flat, for any pseudouniformizer $\varpi\in R$.
\end{lemma}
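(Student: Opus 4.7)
The plan is to reduce the flatness claim to the componentwise picture, where the valuation-ring structure makes everything tractable. Write $T := \pi_0(X)$; by the preceding Proposition, each connected component of the totally disconnected $X = \Spa(R,R^+)$ is of the form $\Spa(K_t, K_t^+)$ for $t \in T$, with $K_t^+$ a valuation subring of the perfectoid field $K_t$. For each $t$, the pullback $\Spa(S_t, S_t^+) := Y \times_X \Spa(K_t, K_t^+)$ has $S_t^+ \subset S_t$ torsion-free over the valuation ring $K_t^+$ (since $S_t$ is a $K_t$-algebra), hence flat over $K_t^+$; base-changing along $K_t^+ \to K_t^+/\varpi$ gives flatness of $S_t^+/\varpi$ over $K_t^+/\varpi$. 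This settles the question pointwise in $t \in T$.

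To globalise, I would invoke the finitely-generated-ideal criterion for flatness: it suffices to check that for every finitely generated ideal $J = (\bar a_1, \dots, \bar a_n) \subset R^+/\varpi$, with lifts $a_i \in R^+$, the map $J \otimes_{R^+/\varpi} S^+/\varpi \to S^+/\varpi$ is injective. The key structural consequence of total disconnectedness I would extract is that $J$ becomes principal after a finite clopen decomposition of $X$. Indeed, in each valuation ring $K_t^+$ the ideal $(a_1, \dots, a_n) K_t^+$ is principal, generated by whichever $a_i$ maximises $|a_i(\eta_t)|$ at the generic point $\eta_t$ of $X_t$; granting that the resulting ``dominant-index'' map $T \to \{1, \dots, n\}$ is locally constant yields a finite clopen decomposition $X = \bigsqcup_j X^{(j)}$ together with matching product decompositions $R^+/\varpi = \prod_j R^{+,(j)}/\varpi$ and $S^+/\varpi = \prod_j S^{+,(j)}/\varpi$. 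Since flatness factors through such products, one reduces to checking, on each factor, that the map $(\bar a) \otimes_{R^{+,(j)}/\varpi} S^{+,(j)}/\varpi \to S^{+,(j)}/\varpi$ is injective for a single principal ideal $(\bar a)$.

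For the principal case, injectivity amounts to the matching of annihilators, i.e., that every $\varpi$-class in $S^{+,(j)}$ annihilated by $a$ is represented by $b\cdot x$ with $x \in S^{+,(j)}$ and $b \in R^{+,(j)}$ annihilating $a$; this I would verify componentwise using the pointwise flatness of Step~1, the $\varpi$-torsion-freeness of $R^+$ and $S^+$, and the description of $R^{+,(j)}$ as continuous sections of the sheaf $t \mapsto K_t^+$ over the profinite base $\pi_0(X^{(j)})$. The main obstacle is the ``locally constant'' claim in the partitioning step: one must rigorously show that the subsets $T_{ij} := \{t \in T : |a_i(\eta_t)| \leq |a_j(\eta_t)|\}$ are clopen in the profinite $T$. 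This requires combining the quasi-compactness of the rational open $\{|a_i| \leq |a_j|\} \subset X$ with openness and closedness of the projection $|X| \to T$ for totally disconnected $X$, and using the boolean structure of $T$ to refine finitely many such subsets into a single simultaneous clopen decomposition.
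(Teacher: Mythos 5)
Your pointwise analysis on connected components is the same as the paper's, and correct: $S_c^+$ is $\varpi$-torsion-free over the valuation ring $K_c^+$, hence flat, and this passes to $\varpi$-quotients. The difference, and the place where your argument breaks, is the globalisation step.

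The paper globalises by observing that $\pi_0(\Spec(R^+/\varpi)) = \pi_0(X)$, and that flatness of an $A$-module can be checked after base change to the stalk rings $A_c = \varinjlim_{V\ni c} A_V$ over the connected components $c\in\pi_0(\Spec A)$ (these maps $A\to A_c$ being filtered colimits of projections onto direct factors, hence flat, with every prime of $A$ lying over exactly one $c$). Concretely, it packages $R^+/\varpi$ and $S^+/\varpi$ as global sections of sheaves on $\pi_0(X)$ and checks flatness stalkwise. Your route instead tries to verify the finitely-generated-ideal criterion head-on, by showing that a finitely generated ideal $J = (\bar a_1,\dots,\bar a_n)$ of $R^+/\varpi$ becomes principal after a finite clopen decomposition of $T = \pi_0(X)$, driven by local constancy of a dominant-index map.

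That reduction fails: the sets $T_{ij} = \{t : |a_i(\eta_t)| \le |a_j(\eta_t)|\}$ need not be clopen. Take $T = \{0\}\cup\{1/n : n\ge 1\}\subset \mathbb R$ (a profinite set), $K_t = K$ a fixed perfectoid field with a pseudouniformizer $\varpi$, $K_t^+ = \OO_K$, so $R^+ = C^0(T,\OO_K)$. Set $a_1(1/n) = \varpi^{2n}$, $a_2(1/n) = \varpi^{2n+(-1)^n}$, $a_1(0)=a_2(0)=0$; both are continuous for the $\varpi$-adic topology. Then for $n$ even $|a_1(\eta_{1/n})| > |a_2(\eta_{1/n})|$, for $n$ odd the inequality reverses, and at $t=0$ both vanish. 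Hence $T_{12} = \{0\}\cup\{1/n : n \text{ odd}\}$, which is closed but not open (its complement $\{1/n : n\text{ even}\}$ accumulates at $0$). Moreover the ideal $(a_1,a_2)$ itself is not principal in $R^+$: any candidate generator $b$ would have to satisfy $b(1/n)\sim \varpi^{2n}$ ($n$ even) and $\sim\varpi^{2n-1}$ ($n$ odd), but then $a_1/b$ alternates between $1$ and $\varpi$ along $1/n\to 0$ and so is not continuous. So the clopen decomposition you postulate does not exist, and the finitely-generated-ideal criterion cannot be reduced to the principal case this way. The fix is exactly the paper's observation: reduce to the stalk rings $R^+_c/\varpi = K_c^+/\varpi$, where every finitely generated ideal is automatically principal because $K_c^+$ is a valuation ring — no global decomposition of the ideal is needed.
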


Using the first lemma and pro-\'etale descent, general v-descent reduces to the case of strictly totally disconnected spaces. Then the second lemma reduces this to faithfully flat descent.

Another important use of strictly totally disconnected spaces is a classification result for pro-\'etale maps. This uses the notion of separated morphisms of perfectoid spaces, for which we refer to Definition~\ref{def:separatedperf}.

\begin{lemma} Let $X$ be a strictly totally disconnected perfectoid spaces. Then a quasicompact separated map $f: Y\to X$ of perfectoid spaces is pro-\'etale if and only if for every rank-$1$-point $x=\Spa(C(x),\OO_{C(x)})\in X$, the fibre of $f$ over $x$ is isomorphic to $\underline{S_x}\times \Spa(C(x),\OO_{C(x)})$ for some profinite set $S_x$. In this case, $f$ is affinoid pro-\'etale.
\end{lemma}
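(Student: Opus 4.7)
The strategy is to handle the two directions separately: the forward implication is a direct cofiltered-limit argument on fibres, while the reverse requires reducing to a ``point-like'' base $\Spa(C, C^+)$ and then reconstructing $Y$ from the profinite-set data on the generic fibre.

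For the forward direction, locally on source and target I write $f$ as a cofiltered limit of \'etale maps $f_i : Y_i \to X$ from affinoid perfectoids. Fixing a rank-1 point $x = \Spa(C(x), \OO_{C(x)}) \hookrightarrow X$ with $C(x)$ algebraically closed, each pullback $Y_i \times_X x$ is \'etale over the geometric point $\Spa(C(x), \OO_{C(x)})$ and hence, after restricting to a quasicompact piece, is a finite disjoint union of copies of $x$, i.e., $\underline{T_i} \times \Spa(C(x), \OO_{C(x)})$ for a finite set $T_i$. Passing to the cofiltered limit yields $Y \times_X x \cong \underline{S_x} \times \Spa(C(x), \OO_{C(x)})$ with $S_x := \varprojlim_i T_i$ a profinite set, as required.

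For the reverse direction, I would first reduce to the case $X = \Spa(C, C^+)$ with $C$ algebraically closed and $C^+$ an open and bounded valuation subring. This uses that connected components of $X$ are of this form and that clopens form a basis on $X$, so the construction of $f$ as an inverse limit of \'etale maps can be carried out after restriction to clopens and then assembled. In this local case, $X$ has the unique rank-1 point $x_0 = \Spa(C, \OO_C)$, and by hypothesis $f^{-1}(x_0) \cong \underline{S} \times x_0$ for a profinite set $S$. I claim $Y \cong \underline{S} \times \Spa(C, C^+) = \varprojlim_{S \twoheadrightarrow S_i} \underline{S_i} \times X$, which would exhibit $f$ as affinoid pro-\'etale. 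To prove this, for each finite quotient $S \twoheadrightarrow S_i$ the locally constant map $|f^{-1}(x_0)| \to S_i$ encoding the clopen decomposition of the generic fibre should extend uniquely to a locally constant map $|Y| \to S_i$; this gives a compatible system of clopen decompositions $Y = \bigsqcup_{s \in S_i} Y_{i,s}$, with each $Y_{i,s}$ having generic fibre a finite disjoint union of copies of $x_0$, and the cofiltered limit over $i$ realises $Y$ as $\varprojlim_i \underline{S_i} \times X$.

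The main obstacle is precisely this extension step: showing that a clopen decomposition of $f^{-1}(x_0)$ extends uniquely to one of $Y$. Since $x_0$ is the generic point of $X = \Spa(C, C^+)$ and every other point of $X$ is a specialization of $x_0$, the expectation is that $f^{-1}(x_0)$ controls the connected components of $Y$: each connected component of $Y$ should contain a unique rank-1 point lying over $x_0$, by a combination of separatedness and quasicompactness of $f$ with the valuation-theoretic structure of the base. Proving this rigorously likely requires first showing that $Y$ is itself totally disconnected (with connected components of the form $\Spa(K, K^+)$ for some perfectoid field $K$), so that the earlier structural proposition for totally disconnected perfectoid spaces can be invoked to identify $Y$ with the claimed disjoint union indexed by $S$.
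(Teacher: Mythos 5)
Your forward direction is fine and matches what the paper treats as ``clearly necessary'': a quasicompact separated pro-\'etale map over a geometric point $\Spa(C,\OO_C)$ is automatically of the form $\underline{S}\times\Spa(C,\OO_C)$ for a profinite set $S$.

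The reverse direction has a genuine gap: the central claim $Y\cong\underline{S}\times\Spa(C,C^+)$ is false. Take $X=\Spa(C,C^+)$ with $C^+\neq\OO_C$ and let $Y=\Spa(C,\OO_C)\hookrightarrow X$ be the quasicompact open immersion onto the rank-$1$ locus. Then $f$ is quasicompact, separated, and pro-\'etale, and $f^{-1}(x_0)=x_0$, so $S$ is a single point; yet $Y\neq X=\underline{S}\times X$. The generic fibre only pins down the profinite set $\pi_0(Y)$; it says nothing about the individual rings of integral elements. Each connected component of $Y$ can be $\Spa(C,(C^+)_s)$ for any open and bounded valuation subring $(C^+)_s$ between $C^+$ and $\OO_C$, and these may vary with $s$. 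Your proposed remedy, first proving $Y$ totally disconnected, fixes the residue fields but not the $(C^+)_s$, so it does not salvage the isomorphism.

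Your intermediate observation -- that a clopen decomposition of $f^{-1}(x_0)$ spreads out to $Y$ by taking closures of pro-constructible sets, every point of $Y$ having a unique maximal generalization over $x_0$ -- is sound, and a version of it does appear in the paper. But what it delivers is an \emph{injection} $Y\hookrightarrow X\times_{\pi_0(X)}\pi_0(Y)$, not an isomorphism, and one should not expect surjectivity. The paper therefore proceeds in the opposite order: it first replaces $X$ by $X'=X\times_{\pi_0(X)}\pi_0(Y)$ (your $\underline{S}\times X$ in disguise when $X$ is connected), then proves that $f\colon Y\to X'$ is an injection -- using Proposition~\ref{prop:charinjection} to reduce to rank-$1$ valued points via separatedness, and then running exactly your clopen-closure argument as a proof by contradiction on a connected component -- and finally invokes Lemma~\ref{lem:subsetwlocal}: the pro-constructible generalizing image of $|f|$ in $|X'|$ is an intersection of rational subsets $\{|g|\leq 1\}$, hence affinoid pro-\'etale in $X'$ and itself strictly totally disconnected. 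Shrinking $X'$ to this image makes $|f|$ a bijection, and Lemma~\ref{lem:bijectiveiso} upgrades this to an isomorphism. Lemma~\ref{lem:subsetwlocal} and Lemma~\ref{lem:bijectiveiso} are precisely the tools your outline is missing; together they replace the desired-but-false surjectivity with a structural description of the image.
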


Many arguments follow this pattern of reduction to (strictly) totally disconnected spaces, which are essentially just profinite sets. This is akin to covering compact Hausdorff spaces by profinite sets. Making these arguments work is often a question of delicate point-set topology, and we will need to use the properties of spectral spaces very intensely. One application of this is v-descent for separated \'etale maps:

\begin{theorem} The following prestacks are v-stacks.
\begin{altenumerate}
\item The prestack of affinoid perfectoid spaces over the category of totally disconnected perfectoid spaces.
\item The prestack of separated pro-\'etale perfectoid spaces over the category of strictly totally disconnected perfectoid spaces.
\item The prestack of separated \'etale maps over the category of all perfectoid spaces.
\end{altenumerate}
\end{theorem}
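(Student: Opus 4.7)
The plan is to prove the three parts in order, leveraging the structure of (strictly) totally disconnected perfectoid spaces together with Theorem~\ref{thm:vdescent}.

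For part (i), let $X=\Spa(R,R^+)$ be totally disconnected and let $f\from Y=\Spa(S,S^+)\to X$ be a v-cover; by refining I may assume $Y$ is affinoid perfectoid. Given $Z=\Spa(T,T^+)\to Y$ affinoid perfectoid equipped with v-descent data along $f$, I construct the descended object as follows. The crucial input is the flatness lemma: since $X$ is totally disconnected, $R^+/\varpi\to S^+/\varpi$ is flat, and surjectivity of $f$ makes it faithfully flat. Combined with the structure-sheaf v-descent of Theorem~\ref{thm:vdescent}, this reduces the descent of $T^+/\varpi$ to classical faithfully flat descent of algebras in characteristic $p$. Using $\varpi$-adic completeness and perfectness of Frobenius, the mod-$\varpi$ reduction together with the descent data determines the candidate Huber pair $(T_0,T_0^+)$. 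The main task is then to verify that $(T_0,T_0^+)$ is actually perfectoid and that $\Spa(T_0,T_0^+)\times_X Y\cong Z$; both assertions can be checked after base change along the faithfully flat map, which recovers them from the original data on $Z$.

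For part (ii), let $X$ be strictly totally disconnected and consider a separated pro-\'etale perfectoid space $Z\to Y$ with v-descent data for a v-cover $Y\to X$. By the classification of pro-\'etale maps over strictly totally disconnected spaces recalled above, $Z\to Y$ is in fact affinoid pro-\'etale, so $Z$ is affinoid perfectoid. Part (i) then produces a descended affinoid perfectoid space $Z_0\to X$. To verify that $Z_0\to X$ is separated and pro-\'etale, I use the fiberwise characterization: the fibers at geometric rank-$1$-points must be of the form $\underline{S_x}\times\Spa(C(x),\OO_{C(x)})$ for some profinite set $S_x$, a property that is visibly v-local and therefore inherited from $Z\to Y$ via the descent data.

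For part (iii), given a general perfectoid space $X$ with v-descent data for a separated \'etale map, I first choose a pro-\'etale cover $\tilde{X}\to X$ with $\tilde{X}$ strictly totally disconnected. By part (ii), the descent data pulls back to a separated pro-\'etale perfectoid space over $\tilde{X}$; among separated pro-\'etale maps, being \'etale amounts to finiteness (discreteness) of the profinite fibers, which is a v-local condition and hence inherited. Descent from $\tilde{X}$ to $X$ along the pro-\'etale cover then follows by applying part (ii) once more, using that pro-\'etale covers are v-covers. The main obstacle throughout is the verification in part (i) that faithfully flat descent modulo $\varpi$ produces a Huber pair that is genuinely \emph{perfectoid} and that the resulting adic spectrum reproduces the correct fiber product; this is the technical heart of the theorem and is where the delicate point-set behavior of totally disconnected spaces is essential.
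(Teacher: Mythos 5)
The sketch for part (i) is broadly aligned with the paper's argument: one uses the faithful flatness of $R^+/\varpi\to S^+/\varpi$ (Proposition~\ref{prop:wlocalflat}) together with faithfully flat descent in the almost world to descend the perfectoid ring, and then a separate delicate step (which you flag as ``the technical heart'' but do not carry out) to descend the ring of integral elements $R^+$; the paper does this by proving that the relevant pro-constructible generalizing subset is cut out by conditions $|f|\leq 1$ over a totally disconnected base, which is a substantial lemma. So (i) is correct in outline but not complete.

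Parts (ii) and (iii) have genuine gaps.

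In (ii) you assert that since $Y$ is strictly totally disconnected, any separated pro-\'etale $Z\to Y$ is automatically \emph{affinoid} pro-\'etale, hence $Z$ is affinoid, and then (i) can be applied directly. This is false: the classification in Lemma~\ref{lem:proetaleoverwlocal} requires that $Z\to Y$ be \emph{quasicompact} separated pro-\'etale; a general separated pro-\'etale map is only locally of this form and $Z$ need not be quasicompact. This non-quasicompactness is precisely the difficulty. The paper's argument handles it via Lemma~\ref{lem:equivrel}: one analyzes the pro-constructible, generalizing, quasicompact equivalence relation $R^\prime$ on $|\tilde{Y}|$, produces $R$-invariant subsets $W\subset U\subset E$ where $U$ is open and $E$ is affinoid pro-\'etale, applies part (i) to the affinoid $E$, and then glues the open pieces $U_X$ over $X$. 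Without some such argument to reduce to the affinoid case, your proof of (ii) does not go through.

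In (iii) your last step is circular. After pulling back the descent datum to the strictly totally disconnected $\tilde{X}$ and identifying the result as separated \'etale over $\tilde{X}$, you say ``descent from $\tilde{X}$ to $X$ follows by applying part (ii) once more.'' But part (ii) only gives descent for v-covers of a \emph{strictly totally disconnected} base, and here the base $X$ is a general perfectoid space; part (ii) says nothing about descending from $\tilde{X}$ to such an $X$. The paper's proof of (iii) requires a genuinely new argument for this step: it uses Lemma~\ref{lem:strtotdisccover} to replace the cover by an affinoid pro-\'etale universally open one $Y=\varprojlim_i Y_i\to X$, invokes Proposition~\ref{prop:etmaptolim}~(iii) to push the descent datum down to a finite level $Y_i$, reduces further to finite \'etale Galois covers via the de Jong--van der Put argument (and \cite[Proposition 8.2.20]{KedlayaLiu1}), and finally passes to the stalk $X_x = \Spa(K,K^+)$ and appeals to the compactification Lemma~\ref{lem:compactificationetalemap} and descent for finite \'etale algebras over fields. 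None of these reductions appear in your sketch, so the non-strictly-totally-disconnected case of (iii) is not proven.
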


Using these descent theorems, the following definition becomes reasonable.

\begin{definition} A locally separated map $f: Y\to X$ of v-stacks is \'etale (resp.~quasi-pro-\'etale) if for any perfectoid space (resp.~any strictly totally disconnected perfectoid space) $Z$ mapping to $X$, the fibre product $Y\times_X Z$ is representable by a perfectoid space, and \'etale (resp.~pro-\'etale) over $Z$.
\end{definition}

This leads to the following characterization of diamonds.

\begin{proposition} A v-sheaf $Y$ is a diamond if and only if there is a surjective quasi-pro-\'etale map $X\to Y$ from a perfectoid space $X$.
\end{proposition}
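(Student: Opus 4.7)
The plan is to handle the two directions separately, reducing in each case to strictly totally disconnected test objects and then invoking the v-descent theorem for separated pro-\'etale perfectoid spaces stated above.

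\textbf{Diamond implies quasi-pro-\'etale cover exists.} Write $Y=X/R$ with $X$ perfectoid and $R\subset X\times X$ a pro-\'etale equivalence relation; the projection $f\from X\to Y$ is then a v-surjection, already a cover in the coarser pro-\'etale topology. To verify that $f$ is quasi-pro-\'etale, let $Z$ be strictly totally disconnected with a map to $Y$. Since $Y$ is the quotient in the pro-\'etale topology, some pro-\'etale cover $\tilde Z\to Z$ admits a lift $h\from\tilde Z\to X$ over $Y$. Using $h$, one identifies
\[
X\times_Y \tilde Z \cong R\times_{X,\pr_1,h}\tilde Z\ ,
\]
pro-\'etale over $\tilde Z$ as the base change of $\pr_1\from R\to X$, and hence perfectoid. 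V-descent for separated pro-\'etale perfectoid spaces along the v-cover $\tilde Z\to Z$ gives $X\times_Y Z$ as a pro-\'etale perfectoid space over $Z$, as required.

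\textbf{Quasi-pro-\'etale cover exists implies diamond.} Conversely, let $f\from X\to Y$ be surjective and quasi-pro-\'etale with $X$ perfectoid, and set $R:=X\times_Y X$ as a v-sheaf. Working locally on $X$ we may assume $X$ is quasicompact and pick a pro-\'etale cover $\tilde X\to X$ with $\tilde X$ strictly totally disconnected. By quasi-pro-\'etaleness of $f$ applied to $\tilde X\to X\to Y$, the pullback $R\times_X\tilde X\cong X\times_Y\tilde X$ is a pro-\'etale perfectoid space over $\tilde X$. V-descent then shows $R$ is representable by a perfectoid space and that both projections $\pr_1,\pr_2\from R\to X$ are pro-\'etale, so $R$ is a pro-\'etale equivalence relation on $X$. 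Finally, to see $Y=X/R$ in the pro-\'etale topology, note that for any $Z\to Y$ with $Z$ perfectoid and any strictly totally disconnected pro-\'etale cover $\tilde Z\to Z$, the surjective pro-\'etale map $X\times_Y\tilde Z\to\tilde Z$ of perfectoid spaces produces pro-\'etale-local lifts of $\tilde Z\to Y$ to $X$. Hence $X\to Y$ is a pro-\'etale surjection and $Y$ is the quotient $X/R$, a diamond.

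\textbf{Main obstacle.} The delicate step in both directions is the v-descent from pro-\'etale perfectoid spaces over a strictly totally disconnected cover to pro-\'etale perfectoid spaces over the actual base. This is exactly the content of the v-descent theorem above, but applying it requires a separatedness check: that the projections $\pr_i\from R\to X$, and their pulled-back analogues in the forward direction, are separated. This follows because $R\hookrightarrow X\times X$ is a monomorphism as the graph of an equivalence relation, so each $\pr_i$ is separated v-locally on the source, and only the local version of the descent theorem is needed.
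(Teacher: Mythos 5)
Your forward direction reproduces the proof of Proposition~\ref{prop:diamondfirstprop}~(iv) in the paper, and the use of Proposition~\ref{prop:sepproetaledescent} there is legitimate: the base $Z$ is strictly totally disconnected, $\tilde Z\to Z$ is the cover, and you descend the separated pro-\'etale space $X\times_Y\tilde Z$ down to $Z$. The separatedness check is a bit loose --- separatedness of $\pr_i\colon R\to X$ is not a condition that can be verified ``v-locally on the source'' --- but the paper's device of first replacing $X$ by a disjoint union of affinoid opens makes $X\times X\to X$ separated and hence $\pr_i$ separated, which is what you actually need; the idea is right.

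The backward direction, however, has a genuine gap. Proposition~\ref{prop:sepproetaledescent} requires the \emph{base} of the descent to be strictly totally disconnected, not the cover. You have a general (affinoid, say) perfectoid space $X$, a pro-\'etale cover $\tilde X\to X$ with $\tilde X$ strictly totally disconnected, and a pro-\'etale map $R\times_X\tilde X\to\tilde X$; you then try to descend this to conclude $R\to X$ is pro-\'etale and representable. But descent of separated pro-\'etale maps along a v-cover of an arbitrary $X$ is precisely what the paper does \emph{not} claim, and for good reason: a map that becomes pro-\'etale after pullback to a strictly totally disconnected cover is by definition only quasi-pro-\'etale, which is strictly weaker over a general base (cf.~Proposition~\ref{prop:comparenotionsfunctorrepr}, which asserts the equivalence pro-\'etale $=$ quasi-pro-\'etale only when the target is strictly totally disconnected). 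So your argument would only yield that $R\to X$ is quasi-pro-\'etale, not pro-\'etale; and without pro-\'etaleness the pair $(X,R)$ does not furnish a presentation of a diamond. The paper's proof avoids this entirely: replace $X$ by $\tilde X$ at the outset (the composite $\tilde X\to X\to Y$ is still surjective quasi-pro-\'etale since pro-\'etale maps of perfectoid spaces are quasi-pro-\'etale and the class is closed under composition), and then $R=\tilde X\times_Y\tilde X\to\tilde X$ is automatically representable and pro-\'etale by the very definition of quasi-pro-\'etale applied to the strictly totally disconnected test object $\tilde X$ --- no descent needed. This is also why the paper states the sharper version ``if $X$ is a disjoint union of strictly totally disconnected spaces, then $R=X\times_Y X$ is a pro-\'etale equivalence relation'', with that hypothesis on $X$ included.
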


A positive side of the v-topology is that any small v-stack can be accessed through the following steps: from perfectoid spaces to diamonds by quotients under a pro-\'etale equivalence relation; from diamonds to general small v-sheaves by quotients under a diamond equivalence relation; and from small v-sheaves to small v-stacks by quotients under a small v-sheaf equivalence relation. In the key step, one uses that any sub-v-sheaf of a diamond is automatically itself a diamond; this statement does not seem to have any classical analogue.

However, at some point, we have to prove some theorems, in particular invariance under change of algebraically closed base field, smooth and proper base change, and Poincar\'e duality. Invariance under change of algebraically closed base field is reduced by approximation to the noetherian cases handled by Huber (which in turn are reduced to the case of schemes via nearby cycles). Proper base change is reduced through a series of reductions to a statement about Zariski--Riemann spaces of (algebraically closed) fields, which follows from proper base change for schemes. For smooth base change and Poincar\'e duality, things are reduced by a series of reductions to the case of a closed unit disc, where we can use Huber's results. We note that our reductions reduce all appeals to Huber's book to the case of smooth curves over $\Spa(C,C^+)$, where $C$ is an algebraically closed complete nonarchimedean field, and $C^+\subset C$ is an open and bounded valuation subring; however, in that case, we need a solid theory of Poincar\'e duality.

We make a general warning to the reader that this manuscript contains essentially no examples, which probably makes it almost unreadable. We hope that forthcoming papers, including \cite{FarguesScholze} and the recent paper of Hansen--Kaletha--Weinstein, \cite{HansenKalethaWeinstein}, will give the reader the necessary examples and motivation to work through the long technical arguments of this paper.

{\bf Acknowledgments.} These ideas were formed during a long period after my Berkeley course in 2014. I want to thank Bhargav Bhatt, Dustin Clausen, Laurent Fargues, David Hansen, Kiran Kedlaya, Jared Weinstein and Bogdan Zavyalov for very helpful discussions about various parts. In the winter term 2016/17, the ARGOS seminar in Bonn was going through the manuscript, and I want to thank all participants heartily for their detailed comments. Moreover, I want to thank the organizers of the Hadamard Lectures at the IH\'ES for their invitation in March/April 2017, and in particular Ofer Gabber for his many helpful questions and suggestions. Special thanks go to Yutaro Mikami and the referee for very detailed feedback. Part of this work was done while the author was a Clay Research Fellow.

\section{Spectral Spaces}

We recall some basic properties of spectral spaces that are used throughout.

\begin{definition} A spectral space is a topological space $X$ such that $X$ is quasicompact, $X$ has a basis of quasicompact open subsets stable under finite intersection, and every irreducible closed subset has a unique generic point. A locally spectral space is a topological space $X$ that admits an open cover by spectral subspaces.

A morphism $f: X\to Y$ between spectral spaces is spectral if it is continuous and the preimage of any quasicompact open subset is quasicompact open. A morphism $f: X\to Y$ between locally spectral spaces is spectral if it is continuous and for any spectral open subspace $U\subset X$ mapping into some spectral open $V\subset Y$, the map $U\to V$ is spectral.
\end{definition}

\begin{theorem}[{Hochster, \cite{Hochster}}] Let $X$ be a topological space. The following conditions are equivalent.
\begin{altenumerate}
\item[{\rm (i)}] The space $X$ is spectral.
\item[{\rm (ii)}] There is a ring $A$ such that $X\cong \Spec A$.
\item[{\rm (iii)}] The space $X$ can be written as an inverse limit of finite $T_0$-spaces.
\end{altenumerate}

The category of spectral spaces with spectral maps is equivalent to the pro-category of finite $T_0$-spaces.
\end{theorem}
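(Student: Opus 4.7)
The plan is to prove the chain (ii) $\Rightarrow$ (i), (iii) $\Rightarrow$ (i), (i) $\Rightarrow$ (iii), and (i) $\Rightarrow$ (ii), reserving the last as the main obstacle, and then to deduce the categorical equivalence from the third implication. The implication (ii) $\Rightarrow$ (i) is routine: for any ring $A$, the distinguished opens $D(f) \subset \Spec A$ form a basis of quasicompact opens closed under finite intersection, and each irreducible closed $V(\mathfrak{p})$ has unique generic point $\mathfrak{p}$. For (iii) $\Rightarrow$ (i), I would check directly that an inverse limit $X = \varprojlim X_i$ of finite $T_0$-spaces is quasicompact (as a closed subspace of $\prod X_i$, to which Tychonoff applies), carries a basis of quasicompact opens stable under finite intersection given by preimages of opens from the $X_i$, and is sober because any irreducible closed subset projects to the closure of a unique point in each $X_i$ and these cohere to a unique generic point in the limit.

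For the crucial direction (i) $\Rightarrow$ (iii), I would build a canonical pro-finite-$T_0$ presentation. Let $\mathcal{L}$ denote the distributive lattice of quasicompact open subsets of $X$, and for each finite sublattice $L \subset \mathcal{L}$ define $X_L$ as the quotient of $X$ by the equivalence relation identifying points that lie in exactly the same members of $L$. Each $X_L$ is a finite $T_0$-space with opens given by the images of elements of $L$, and the system indexed by inclusions of finite sublattices is cofiltered. I would then show the continuous map $X \to \varprojlim_L X_L$ is a homeomorphism: injectivity uses that $X$ is sober (hence $T_0$) and that $\mathcal{L}$ generates the topology; surjectivity is the heart of the argument and follows from a compactness argument, since a coherent system of points in the $X_L$ corresponds to a cofiltered family of nonempty constructible subsets of $X$, whose intersection is nonempty because $X$ is quasicompact in the constructible topology; and the bijection is a homeomorphism because the quasicompact opens of $X$ are exactly the preimages of opens in the finite $X_L$. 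The same construction immediately yields the functor realizing the categorical equivalence: a spectral map $f \from X \to Y$ pulls back finite sublattices of $\mathcal{L}_Y$ to finite sublattices of $\mathcal{L}_X$, inducing compatible maps $X_{f^{-1}L} \to Y_L$ and hence a morphism of pro-systems, and conversely any morphism of pro-systems defines a spectral map on limits.

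The implication (i) $\Rightarrow$ (ii) is the genuinely deep part and the main obstacle, due to Hochster. One builds a ring out of the distributive lattice of quasicompact opens, for instance via a large product of carefully chosen valuation rings or a monoid-ring construction indexed by a presentation of the lattice, and then verifies that its spectrum reconstructs $X$. Since this implication is not actually needed elsewhere in the manuscript — spectral spaces will arise geometrically, not by constructing rings to match combinatorial data — I would simply cite Hochster's original paper for this step and concentrate the technical work on (i) $\Leftrightarrow$ (iii), which is what supplies both the pro-finite-$T_0$ description and the categorical equivalence used throughout the rest of the text.
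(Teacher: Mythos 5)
The paper does not prove this theorem: it is stated as a recalled background result, attributed to Hochster, with no argument given. So there is no ``paper's proof'' to compare against, and I will simply assess your proposal on its own terms. Your route --- proving (i) $\Leftrightarrow$ (iii) directly by building the pro-system out of finite sublattices of quasicompact opens, and citing Hochster for (i) $\Rightarrow$ (ii) --- is the standard one, and matches the paper's attitude toward this theorem (the pro-finite-$T_0$ description and the patch topology are what get used; (ii) never is). The overall structure is correct. Three places need more care than your sketch suggests.

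First, in (iii) $\Rightarrow$ (i), the claim that $\varprojlim X_i$ is closed in $\prod X_i$ fails as stated, because finite $T_0$-spaces are not Hausdorff and the equalizer conditions cutting out the limit are not closed. The fix is to equip each $X_i$ with the discrete topology: then $\varprojlim X_i^{\mathrm{disc}}$ is a closed subspace of a compact Hausdorff product and has the same underlying set, so $\varprojlim X_i$ is quasicompact as the continuous bijective image of a compact space. Relatedly, the sobriety argument needs one more step: if $Z$ is irreducible closed in the limit and $x_i$ is the generic point of the closure of the image of $Z$ in $X_i$, it is not immediate that $f_{ij}(x_i) = x_j$. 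One first observes that $x_i$ actually lies in the image of $Z$ (in a finite $T_0$-space, any subset whose closure is irreducible with generic point $\xi$ must contain $\xi$), and then that $\overline{\{f_{ij}(x_i)\}} \supseteq f_{ij}(\overline{\{x_i\}}) \supseteq$ the image of $Z$ in $X_j$, forcing $f_{ij}(x_i) = x_j$ by uniqueness of generic points.

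Second, your surjectivity argument in (i) $\Rightarrow$ (iii) invokes quasicompactness of $X$ in the constructible topology. This is true and is exactly the compactness you need, but be aware that it is itself a theorem: the paper states it as a consequence of the pro-finite-$T_0$ description, which would make your argument circular. You must prove patch-compactness independently (e.g.\ via Alexander's subbase lemma applied to the subbasis of quasicompact opens and their complements, together with a Zorn's lemma argument using sobriety) and only then run the argument you sketch. Finally, the categorical equivalence asks for a bit more than the functoriality you describe: one should verify fully faithfulness via the identity $\Hom_{\mathrm{Pro}}((X_i),(Y_j)) = \varprojlim_j \varinjlim_i \Hom(X_i,Y_j) = \Hom_{\mathrm{spectral}}(\varprojlim X_i, \varprojlim Y_j)$, the middle step resting on the fact that every quasicompact open of the limit is pulled back from a finite stage. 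None of these are obstructions, but they are the places where the proof lives.
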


Recall that a subset $T\subset X$ of a spectral space $X$ is constructible if it lies in the boolean algebra generated by quasicompact open subsets. The constructible topology on $X$ is the topology generated by constructible subsets. Then $X$ equipped with its constructible topology is a profinite set. If $X$ is an inverse limit of finite $T_0$-spaces $X_i$, then $X$ with the constructible topology is the inverse limit of the $X_i$ equipped with the discrete topology.

A subset $T\subset X$ is pro-constructible if it is an intersection of constructible subsets. More generally, a subset $T\subset X$ of a locally spectral space $X$ is constructible (resp.~pro-constructible) if the intersection with any spectral open subspace is constructible (resp.~pro-constructible).

\begin{lemma} Let $X$ be a spectral space. A subset $S\subset X$ is pro-constructible if and only if $S$ is closed in the constructible topology. If $f: X\to Y$ is a spectral map of spectral spaces, then the image of any pro-constructible subset of $X$ is a pro-constructible subset of $Y$.
\end{lemma}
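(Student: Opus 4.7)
The plan is to prove the lemma by bootstrapping off the fact, already recalled just before the statement, that $X$ with the constructible topology $X_\cons$ is a profinite set (in particular, compact Hausdorff).

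First I would establish the equality between constructible subsets and clopen subsets of $X_\cons$. By definition constructibles generate the topology of $X_\cons$, and since they form a boolean algebra they are automatically both open and closed in $X_\cons$. Conversely, a clopen subset of a profinite space is a finite union of basic clopens; so using that $X_\cons$ is profinite with the constructibles as a basis of clopens, every clopen of $X_\cons$ is constructible. With this in hand, the first assertion is essentially definitional: $S$ is pro-constructible iff $S$ is an intersection of clopens of $X_\cons$ iff $S$ is closed in $X_\cons$ (the last step using compact Hausdorffness of $X_\cons$, which allows any closed set to be written as an intersection of clopens containing it).

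For the second assertion, the key observation is that a spectral map $f\colon X\to Y$ induces a continuous map $X_\cons\to Y_\cons$. Indeed, preimages commute with boolean operations, and $f^{-1}$ sends quasicompact open subsets of $Y$ to quasicompact open subsets of $X$ by the definition of spectral, so $f^{-1}$ sends the generating constructibles of $Y_\cons$ into constructibles of $X_\cons$. Thus $f\colon X_\cons\to Y_\cons$ is a continuous map between compact Hausdorff spaces. Given a pro-constructible subset $S\subset X$, by the first part $S$ is closed in $X_\cons$, hence compact; its image $f(S)$ is then compact in $Y_\cons$, hence closed (as $Y_\cons$ is Hausdorff), hence pro-constructible by the first part applied to $Y$.

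There is no serious obstacle here; the only point that deserves care is verifying that clopens in the profinite space $X_\cons$ are exactly the constructibles, rather than some larger boolean algebra. This is where the fact that the constructible subsets already form a basis of clopens (not merely a subbasis) matters, and it follows from the stability of the boolean algebra of constructibles under finite unions and intersections together with compactness in $X_\cons$.
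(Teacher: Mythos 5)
Your proposal is correct and follows essentially the same strategy as the paper: reduce both assertions to topology of the compact Hausdorff space $X_\cons$, using that pro-constructible equals closed in $X_\cons$, and that a spectral map is continuous for the constructible topologies, so images of compact sets are compact and hence closed. The only (minor) difference is in how you establish that clopens of $X_\cons$ are constructible: you use compactness plus the fact that constructibles form a basis of clopens closed under finite unions, whereas the paper invokes the presentation of $X$ as a cofiltered inverse limit of finite $T_0$-spaces so that any clopen of $X_\cons$ descends to some finite level. Both arguments are standard and equivalent in substance; yours is arguably more self-contained since it does not require Hochster's inverse-limit description.
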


\begin{proof} Constructible subsets are open and closed in the constructible topology, so pro-constructible subsets are closed. Conversely, if a subset is closed in the constructible topology, then it is an intersection of open and closed subsets in the constructible topology. But open and closed subsets of the constructible topology are constructible, for example by writing $X$ as a cofiltered inverse limit of finite $T_0$-space $X_i$ (where any open and closed subset of $X$ for the constructible topology is a preimage of a subset of some $X_i$).

Now let $f: X\to Y$ be spectral map of spectral spaces, and $S\subset X$ a pro-constructible subset with image $T\subset Y$. If we equip $X$ and $Y$ with their constructible topology, then $f$ is a continuous map of compact Hausdorff spaces, and $S\subset X$ is a closed subset, which thus has closed image $T\subset Y$. Thus, $T$ is pro-constructible.
\end{proof}

\begin{lemma} Let $X$ be a spectral space, and $S\subset X$ a pro-constructible subset. Then the closure $\bar{S}\subset X$ of $S$ is the set of specializations of points in $S$.
\end{lemma}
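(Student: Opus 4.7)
The plan is to show each direction separately. Since the closure $\bar S$ is stable under specialization (in any topological space, if $y$ is a specialization of some $z\in \bar S$, then every open neighborhood of $y$ contains $z$, hence meets $S$, so $y\in \bar S$), the set of specializations of points of $S$ is automatically contained in $\bar S$. The content is the reverse inclusion.

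So fix $y\in \bar S$; I want to produce $s\in S$ with $y\in \overline{\{s\}}$, equivalently, such that $s$ lies in every quasicompact open neighborhood of $y$. For each quasicompact open $U\ni y$, the intersection $U\cap S$ is pro-constructible (intersection of a constructible set with a pro-constructible set), and nonempty since $y\in\bar S$ meets $U$. Moreover the family $\{U\cap S\}_{U}$ over the qc opens $U\ni y$ is closed under finite intersections, because the finite intersection of qc opens containing $y$ is again a qc open containing $y$. Hence this is a downward-directed family of nonempty pro-constructible subsets of $X$.

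By the previous lemma, pro-constructible subsets of $X$ are precisely the closed subsets in the constructible topology, and $X$ equipped with the constructible topology is a (quasi)compact (Hausdorff) profinite set. A downward-directed family of nonempty closed subsets of a compact space has the finite intersection property, so the intersection
\[
\bigcap_{\substack{U\text{ qc open}\\ y\in U}} (U\cap S) \;=\; S\cap \bigcap_{\substack{U\text{ qc open}\\ y\in U}} U
\]
is nonempty. Any element $s$ of this intersection lies in $S$ and in every quasicompact open neighborhood of $y$; since the quasicompact opens form a basis, $s$ lies in every open neighborhood of $y$, i.e., $y\in\overline{\{s\}}$. This completes the argument.

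I expect no serious obstacle: the whole proof rests on the compactness of the constructible topology, already recorded in the text, together with the trivial observation that the intersection of a constructible and a pro-constructible subset is pro-constructible. The only point to keep in mind is that $X$ is assumed spectral (not merely locally spectral), which is needed so that we can apply compactness of the full constructible topology on $X$ rather than only on some quasicompact open.
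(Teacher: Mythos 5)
Your proof is correct and takes essentially the same route as the paper: both arguments isolate the family $\{U\cap S\}$ over quasicompact open neighborhoods $U$ of the given point and invoke compactness of the constructible topology to conclude; you state it directly (exhibiting a generalization of $y$ inside $S$), while the paper states the contrapositive (producing a neighborhood of $x$ disjoint from $S$ when no generalization lies in $S$).
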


\begin{proof} Clearly, any specialization of a point in $S$ lies in $\bar{S}$. Now let $x\in X$ be a point such that no generalization of $x$ lies in $S$. Let $X_x\subset X$ be the set of generalizations of $x$, which is the intersection of all quasicompact open subsets $U$ containing $x$; in particular, $X_x\subset X$ is a pro-constructible subset. Then
\[
\emptyset = X_x\cap S = \bigcap_{U\ni x} (U\cap S)\ .
\]
In other words, the $(X\setminus U)\cap S$ cover $S$. As $S$ is quasicompact in the constructible topology, and $X\setminus U$ is constructible, it follows that there is some quasicompact open neighborhood $U$ of $x$ such that $(X\setminus U)\cap S=S$, i.e.~$U\cap S=\emptyset$. Thus, $x$ has an open neighborhood which does not meet $S$, so does not lie in the closure of $S$, as desired.
\end{proof}

All maps of analytic adic spaces are generalizing. This notably implies that surjective maps are quotient maps:

\begin{lemma}\label{lem:quotientmap} Let $f: Y\to X$ be a surjective and generalizing spectral map of spectral spaces. Then $f$ is a quotient map.
\end{lemma}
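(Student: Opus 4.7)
The plan is to show that for any $U \subset X$ with $f^{-1}(U)$ open in $Y$, the complement $Z = X\setminus U$ is closed in $X$; this is the defining property of a quotient map. I will argue in two steps: first, that $Z$ is pro-constructible, and second, that $Z$ is closed under specialization. By the previous lemma (the closure of a pro-constructible subset equals the set of its specializations), these two facts together give $\bar Z = Z$.

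For pro-constructibility, since $Y$ is spectral and $f^{-1}(U)$ is open, I can write $f^{-1}(U) = \bigcup_\alpha V_\alpha$ as a union of quasicompact opens; hence $f^{-1}(Z) = \bigcap_\alpha (Y\setminus V_\alpha)$ is an intersection of constructible subsets, hence pro-constructible. By the earlier lemma on spectral maps, the image $f(f^{-1}(Z))$ is pro-constructible in $X$; and by surjectivity of $f$ this image equals $Z$.

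For closure under specialization, I will equivalently check that $U$ is closed under generalization. Given $x \in U$ and a generalization $x'$ of $x$, choose (by surjectivity) a point $y \in f^{-1}(x)$; then $y \in f^{-1}(U)$. Because $f$ is generalizing, $x'$ lifts to some generalization $y'$ of $y$ in $Y$. Since $f^{-1}(U)$ is open, it is closed under generalization, so $y' \in f^{-1}(U)$, and therefore $x' = f(y') \in U$.

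Each hypothesis enters exactly once: being spectral is used to produce a pro-constructible preimage whose image is controlled, and being generalizing is used to lift generalizations. The main obstacle, such as it is, is recognizing that one should check closedness of $Z$ by decomposing it into the two conditions (pro-constructible plus stable under specialization), rather than by a direct topological argument; once that decomposition is chosen, both verifications are immediate from the preceding lemmas on spectral spaces.
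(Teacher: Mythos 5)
Your proof is correct and takes essentially the same route as the paper's: the paper also passes to the complement, observes it is the image of a pro-constructible set under a spectral map (hence pro-constructible), and then uses the generalizing hypothesis to conclude it is closed under specialization, hence closed. The only difference is that you spell out the verification that $f$ generalizing and $f^{-1}(U)$ open together imply $U$ is stable under generalization, which the paper states without elaboration.
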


\begin{proof} We have to show a subset $S\subset X$ is open if the preimage $T=f^{-1}(S)\subset Y$ is open, so assume that $T$ is open. Note that in particular $Y\setminus T\subset Y$ is closed, and thus pro-constructible; therefore, its image $X\setminus S\subset X$ is pro-constructible. But as $f$ is generalizing, this subset is also closed under specializations, and thus closed. Thus, $S$ is open, as desired.
\end{proof}

We also recall the following classical lemma.

\begin{lemma}\label{lem:quotmaphausdorff} Let $f: S\to T$ be a continuous surjective map from a quasicompact space $S$ to a compact Hausdorff space $T$. Then $f$ is a quotient map.
\end{lemma}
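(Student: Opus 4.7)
The plan is to verify the quotient-map property by checking its closed-set formulation: a subset $C\subset T$ is closed if and only if $f^{-1}(C)\subset S$ is closed. The forward direction is immediate from continuity, so the work lies in the converse.

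First I would fix a subset $C\subset T$ with $f^{-1}(C)$ closed in $S$. Since $S$ is quasicompact, any closed subset is quasicompact, so $f^{-1}(C)$ is quasicompact. Continuous images of quasicompact sets are quasicompact, so $f(f^{-1}(C))$ is a quasicompact subset of $T$. By surjectivity of $f$, we have $f(f^{-1}(C))=C$, hence $C$ is quasicompact.

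Finally I would invoke the standard fact that in a Hausdorff space every quasicompact subset is closed: given $t\in T\setminus C$, use the Hausdorff property to separate $t$ from each point of $C$ by disjoint open sets, extract a finite subcover of $C$ from quasicompactness, and intersect the finitely many neighborhoods of $t$ to produce an open neighborhood of $t$ disjoint from $C$. Thus $T\setminus C$ is open, so $C$ is closed, completing the proof that $f$ is a quotient map.

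There is no real obstacle here; the argument is essentially the classical three-line observation that a continuous surjection from a quasicompact space onto a compact Hausdorff space is automatically closed (and therefore a quotient map). The only thing to be slightly careful about is the terminology convention that ``quasicompact'' means every open cover has a finite subcover (without any separation axiom), so that ``compact Hausdorff'' is ``quasicompact plus Hausdorff,'' which is exactly the combination that makes quasicompact subsets closed.
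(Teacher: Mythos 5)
Your proof is correct. It is, however, a genuinely different argument from the one in the paper. You prove that $f$ is a \emph{closed} map by the classical three-step argument: closed subsets of a quasicompact space are quasicompact, continuous images of quasicompact sets are quasicompact, and quasicompact subsets of a Hausdorff space are closed; surjectivity then turns ``closed map'' into ``quotient map.'' The paper instead works directly with the open-set characterization: given $V\subset T$ with $U=f^{-1}(V)$ open and complement $Z=S\setminus U$, it fixes a point $x\in V$, notes that in a compact Hausdorff space the open neighborhoods $U_x$ of $x$ are cofinal with the closed neighborhoods $\overline{U_x}$, and observes that the family $f^{-1}(\overline{U_x})\cap Z$ of closed subsets of $S$ has empty intersection; quasicompactness of $S$ (finite intersection property) then yields some $U_x$ with $f^{-1}(U_x)\subset U$, hence $U_x\subset V$, so $V$ is open. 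Both proofs ultimately lean on the same combination of quasicompactness and Hausdorffness, but your version packages this into the reusable fact that a continuous surjection from a quasicompact space to a compact Hausdorff space is closed, whereas the paper's version is a pointwise finite-intersection argument that matches the style of several other proofs in the same section (e.g.\ Lemma~\ref{lem:quotientmap} and Lemma~\ref{lem:equivrel}). Either is a fine choice; yours is the more standard textbook route.
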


\begin{proof} Note that for any $x\in T$, the open neighborhoods are cofinal with the closed neighborhoods. Assume that $V\subset T$ is a subset whose preimage $U=f^{-1}(V)\subset S$ is open with closed complement $Z=S\setminus U$, and let $x\in V$. Then
\[
\emptyset = f^{-1}(x)\cap Z= \bigcap_{U_x\subset T} f^{-1}(\overline{U_x})\cap Z\ .
\]
Here, $U_x\subset T$ runs over open neighborhoods of $x$ in $T$. The intersection is over closed subsets of $S$; by quasicompactness, there is some open neighborhood $U_x$ of $x$ such that $f^{-1}(\overline{U_x})\cap Z=\emptyset$, i.e.~$f^{-1}(\overline{U_x})\subset U$. This implies that $f^{-1}(U_x)\subset U$, and thus $U_x\subset V$, as desired.
\end{proof}

Here is a lemma about equivalence relations on spectral spaces. Recall that a topological space is quasiseparated if the intersection of any two quasicompact open subsets is quasicompact.

\begin{lemma}\label{lem:equivrel} Let $X$ be a quasiseparated locally spectral space, and let $R\subset X\times X$ be a pro-constructible equivalence relation such that the maps $s,t: R\to X$ are quasicompact and generalizing. Then the quotient space $X/R$ is $T_0$. Moreover, for any quasicompact open $W\subset X$, there exists an open $R$-invariant subset $U\supset W$ such that $U\subset E$, where $E$ is some $R$-invariant subset that is an intersection of a nonempty family of quasicompact open subsets.
\end{lemma}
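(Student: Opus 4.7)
The plan is to analyze the $R$-saturation $V\mapsto[V]:=t(s^{-1}(V))$, apply a structural lemma about spectral spaces, iteratively construct $U$ and $E$, and then derive $T_0$.

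First I would establish that for any quasicompact open $V\subset X$, the saturation $[V]$ is quasicompact, pro-constructible, and closed under generalization. Quasicompactness and pro-constructibility are immediate from $s$ being quasicompact (so $s^{-1}(V)\subset R$ is quasicompact open, hence constructible) and $t$ being spectral (so the image is pro-constructible, and quasicompact as a continuous image). Closure under generalization uses that $t$ is generalizing: a generalization $y'$ of $y=t(r)\in[V]$ lifts to $r'\leadsto r$ with $t(r')=y'$; then $s(r')\leadsto s(r)\in V$, and the openness of $V$ forces $s(r')\in V$, whence $y'\in[V]$. The key structural input is that in any spectral space, a pro-constructible subset closed under generalization is an intersection of quasicompact opens; this I would prove via the patch topology, in which such a subset $T$ is patch-closed and hence quasicompact in the compact Hausdorff patch space, and then for each exterior point $x$ a patch-separation (using $\overline{\{x\}}\cap T=\emptyset$) can be refined to a quasicompact open containing $T$ but not $x$ by exploiting closure under generalization together with the quasicompactness of $T$.

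For the Moreover statement, given a quasicompact open $W\subset X$, I iteratively construct quasicompact opens $V_0=W\subset V_1\subset V_2\subset\cdots$ with $V_{n+1}\supset[V_n]$, at each step choosing $V_{n+1}$ to be a qc open containing the quasicompact set $[V_n]$ (which exists in the locally spectral quasiseparated $X$). The crux is that, by suitable choices, this iteration stabilizes at some step $N$, yielding an $R$-invariant quasicompact open $V^*:=V_N\supset W$ with $[V^*]\subset V^*$; one can then take $U:=V^*$ and $E:=V^*$ as a single-element family. Establishing stabilization is the main technical obstacle: it rests on the compactness of the patch topology, the pro-constructibility of $R$, and the quasicompactness of $s,t$, which together prevent the $R$-orbit of any quasicompact set from growing indefinitely beyond a single quasicompact open.

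For the $T_0$ property, given non-equivalent $x,y\in X$, the $R$-orbits $[x]$ and $[y]$ are disjoint patch-closed subsets of the compact Hausdorff patch space underlying $X$; a patch-separation lets one choose a quasicompact open $W\ni x$ such that $[W]$ avoids $y$, and then the Moreover part applied with this $W$ (refining $V^*$ inside a qc open disjoint from $y$) produces an $R$-invariant open neighborhood of $x$ not containing $y$, which is the desired $T_0$ separation.
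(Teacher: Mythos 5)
Your first paragraph is fine and essentially matches the paper: the saturation $[V] = t(s^{-1}(V))$ of a quasicompact open $V$ is quasicompact, pro-constructible and closed under generalization, and a pro-constructible generalizing subset of a spectral space is an intersection of quasicompact opens. That is exactly the structural input the paper uses.

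The gap is in the ``Moreover'' argument. Your claimed iteration $V_0 = W \subset V_1 \subset \cdots$ with $V_{n+1}\supset[V_n]$ cannot in general be made to stabilize: there need not exist an $R$-invariant quasicompact open containing $W$ at all. A concrete obstruction: take $X = \bigsqcup_{n\geq 1} X_n$ with each $X_n$ a Cantor set mapping onto $[0,1]$, let $R_n$ be the ``same image in $[0,1]$'' equivalence relation on $X_n$, and define $R$ by identifying, in addition, the unique preimage of $1$ in $X_n$ with the unique preimage of $0$ in $X_{n+1}$. Then $R$ is pro-constructible, $s,t$ are quasicompact (fibers are finite) and generalizing (trivially, since everything is $T_1$), yet $X/R$ is a ray $[0,\infty)$, whose only clopen subsets are $\emptyset$ and itself. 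Consequently the only $R$-invariant quasicompact open of $X$ is $\emptyset$, and the iteration starting from any nonempty quasicompact $W$ never closes up. This is exactly why the lemma's statement is phrased the way it is: $U$ is only required to be open (not quasicompact), and $E$ is only required to be an intersection of quasicompact opens (not itself open). The paper produces $U$ as the complement of the closure of $T = t(s^{-1}(X\setminus W'))$ for a suitable qc open $W'\supset [W]$ --- a set of a fundamentally different shape than what your iteration could yield. You cannot prove stabilization, because it is false.

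Your $T_0$ argument then inherits this flaw (it invokes the false Moreover step), but it also has an independent gap. Patch-separating the disjoint patch-closed orbits $R\cdot x$ and $R\cdot y$ gives disjoint patch-opens, which carry no information about the specialization order and hence do not, by themselves, produce a quasicompact open $W\ni x$ with $[W]\cap R\cdot y=\emptyset$. The genuine difficulty, which the paper's proof addresses head-on, is that the two orbits could be ``interlaced'' in the specialization order --- a point of $R\cdot x$ could have a generalization in $R\cdot y$ while another point of $R\cdot y$ has a generalization in $R\cdot x$. The paper shows this interlacing cannot happen by a maximal-point argument in the spectral space $R\cdot x\cup R\cdot y$, using that $t$ is generalizing; only then is one of the two orbits ``uniformly generalization-free'' of the other, which is what makes the separating quasicompact open $W$ exist. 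Your proposal does not contain this idea, and the patch topology alone cannot substitute for it.
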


\begin{proof} We start with the second part, for which we follow the arguments from~\cite[Tag 0APA, 0APB]{StacksProject}. Let $s,t: R\to X$ denote the two projections. Let $W\subset X$ be a quasicompact open subset. As $s$ is quasicompact, $s^{-1}(W)\subset R$ is quasicompact open, and thus its image $E=t(s^{-1}(W))\subset X$ is quasicompact, pro-constructible, and generalizing. Note that by general nonsense about equivalence relations, $E=t(s^{-1}(W))$ is $R$-invariant. By quasicompactness, there is some quasicompact open subset $W^\prime\subset X$ containing $E$. Now $E$ is a pro-constructible and generalizing subset of the spectral space $W^\prime$; this implies that $E$ is an intersection of quasicompact open subsets.  Let $Z=X\setminus W^\prime$, which is a closed subset of $X$. Then $s^{-1}(Z)\subset R$ is closed, and in particular pro-constructible. As $t$ is quasicompact, it follows that $T=t(s^{-1}(Z))\subset X$ is pro-constructible. Thus, its closure $\bar{T}\subset X$ is the set of specializations of elements of $T$. We claim that $\bar{T}$ is $R$-invariant: Indeed, if $\xi\in \bar{T}$, there is some point $\xi^\prime\in T$ specializing to $\xi$. If $r\in R$ is a point mapping under $s$ to $\xi$, we can find $r^\prime\in R$ mapping under $s$ to $\xi^\prime$, as $s$ is generalizing. Then $t(r)$ is a specialization of $t(r^\prime)\in T$, so $t(r)\in \bar{T}$. Finally, $U=X\setminus \bar{T}$ is an open $R$-invariant subset of $X$, such that $W\subset U\subset W^\prime$. The inclusion $U\subset E$ does not hold with our definition of $E$, but we have $U\subset E^\prime$ where $E^\prime=t(s^{-1}(W^\prime))$.

To see that $X/R$ is $T_0$, start with two distinct points $\bar{x},\bar{y}\in X/R$, and lift them to $x,y\in X$, so that their $R$-orbits are $R\cdot x=t(s^{-1}(x))$ and $R\cdot y=t(s^{-1}(y))$, respectively, and $R\cdot x\cap R\cdot y=\emptyset$. Note that $R\cdot x$ and $R\cdot y$ are both pro-constructible quasicompact subsets of $X$, as $s$ is quasicompact and $t$ is spectral; in particular, $R\cdot x$, $R\cdot y$ and $R\cdot x\cup R\cdot y$ are spectral spaces. We claim that it cannot happen that there are $x_1,x_2\in R\cdot x$ and $y_1,y_2\in R\cdot y$ such that $x_1$ generalizes to $y_1$ and $x_2$ specializes to $y_2$. Indeed, assume this was the case. As $R\cdot x\cup R\cdot y\subset X$ is a pro-constructible subset, it is a spectral space, and thus admits a maximal point; possibly changing the roles of $x$ and $y$, we may assume that $y$ is a maximal point of $R\cdot x\cup R\cdot y$. As $t: R\to X$ is generalizing, $(y,y_2)\in R$, and $y_2$ generalizes to $x_2$, we can find some point $(z,x_2)\in R$, where $z$ is a generalization of $y$. Then $z\in R\cdot x\subset R\cdot x\cup R\cdot y$. As $y$ is maximal in that space, it follows that $z=y$, but then $(y,x_2)\in R$, so that $R\cdot y = R\cdot x_2=R\cdot x$, which is a contradiction.

Thus, we may assume that no point of $R\cdot x$ generalizes to a point of $R\cdot y$. In that case, we can find a quasicompact open subset $W\subset X$ containing $x$ such that $W\cap R\cdot y=\emptyset$: Indeed, the intersection of $W\cap R\cdot y$ over all quasicompact open neighborhoods $W$ of $x$ is empty, as $R\cdot y$ contains no generalization of $x$. But then by quasicompactness $R\cdot y$ for the constructible topology, $W\cap R\cdot y=\emptyset$ for some such $W$. Now we run the above argument to find some open $R$-invariant subset $U\supset W$ of $X$. We claim that we can do this so that $U\cap R\cdot y=\emptyset$: This would finish the proof, as $R\cdot x\subset U$, while $U\cap R\cdot y=\emptyset$, so that the image of $U$ in $X/R$ separates $x$ from $y$. To see that one can arrange $U\cap R\cdot y =\emptyset$, note that in the notation of the first paragraph, $E\cap R\cdot y=\emptyset$. As $E$ is generalizing, one can for any point $e\in E$ find a quasicompact open neighborhood $W^\prime_e$ of $e$ such that $W^\prime_e\cap R\cdot y=\emptyset$; in other words, by quasicompactness of $E$ we can choose $W^\prime$ so that $W^\prime\cap R\cdot y=\emptyset$. In that case, also $E^\prime\cap R\cdot y=\emptyset$, so as we have $U\subset E^\prime$ for the $R$-invariant open subset $U\subset X$, we see that $U\cap R\cdot y=\emptyset$, as desired.
\end{proof}

\begin{remark} Even if $X$ is spectral, it can happen under the hypothesis of the lemma that the quotient space $X/R$ is not spectral. In fact, using just profinite sets $X$ and $R$, one can get any compact Hausdorff space as the quotient $X/R$: Namely, given a compact Hausdorff space $Y$, let $X$ be the Stone-Cech compactification of $Y$ considered as a discrete set. Then $X$ is a profinite set which comes with a natural continuous surjective map to $Y$, and the induced equivalence relation $R\subset X\times X$ is closed, and thus profinite.
\end{remark}

\begin{lemma}\label{lem:spectralquotient} Let $X$ be a spectral space, and let $R\subset X\times X$ be a pro-constructible equivalence relation such that the maps $s,t: R\to X$ are generalizing. Assume that $X/R$ has a basis for the topology given by open subsets whose preimages in $X$ are quasicompact. Then $X/R$ is a spectral space, and $X\to X/R$ is a spectral and generalizing map.
\end{lemma}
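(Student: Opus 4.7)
The plan is to invoke Hochster's characterization and verify that $X/R$ is $T_0$, quasicompact, sober, and admits a basis of quasicompact opens stable under finite intersection; separately we must show $\pi: X\to X/R$ is spectral and generalizing.

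The topological axioms of $X/R$ are mostly formal. The $T_0$ property is the first assertion of Lemma~\ref{lem:equivrel}, and quasicompactness follows from $X$ being quasicompact and $\pi$ continuous surjective. Let $\mathcal{B}$ denote the hypothesized basis of opens $V\subset X/R$ with $\pi^{-1}(V)$ quasicompact; each such $V$ is itself quasicompact as a continuous image, and $\mathcal{B}$ is stable under finite intersection since $X$ is quasiseparated. An arbitrary quasicompact open in $X/R$ is then a finite union of elements of $\mathcal{B}$, so its preimage is quasicompact, showing that $\pi$ is spectral.

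For the generalizing property, let $\bar y\leadsto\bar x$ in $X/R$ and lift $\bar x$ to $x\in X$; we must find $y\in X$ with $\pi(y)=\bar y$ and $y\leadsto x$. Writing $Z=\pi^{-1}(\bar y)=R\cdot y_0$ for any lift $y_0$, it suffices to show $Z$ meets the set $X_x$ of generalizations of $x$. If not, then by compactness of the pro-constructible set $Z$ in the constructible topology, some quasicompact open $W\ni x$ satisfies $W\cap Z=\emptyset$. The argument from the last paragraph of the proof of Lemma~\ref{lem:equivrel} then produces an $R$-invariant open $U\supset W$ still disjoint from $Z$; its image $\pi(U)$ is an open of $X/R$ containing $\bar x$, hence $\bar y$ by the generalization assumption. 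But $R$-invariance of $U$ forces $Z\subset U$, contradicting $U\cap Z=\emptyset$.

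The most delicate step is sobriety. Given an irreducible closed $C\subset X/R$, consider the family $\mathcal{V}_C=\{V\in\mathcal{B}:V\cap C\neq\emptyset\}$; irreducibility guarantees that any two members $V_1,V_2$ have $V_1\cap V_2\in\mathcal{V}_C$, because nonempty opens in the irreducible space $C$ must meet. Form
\[
G_C=\pi^{-1}(C)\cap\bigcap_{V\in\mathcal{V}_C}\pi^{-1}(V)\subset X.
\]
Every finite subintersection equals $\pi^{-1}(C\cap V_1\cap\cdots\cap V_n)$ with $V_1\cap\cdots\cap V_n\in\mathcal{V}_C$, so is nonempty; as these are pro-constructible subsets of the spectral space $X$, compactness of the constructible topology gives $G_C\neq\emptyset$. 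For any $\xi\in G_C$, we claim that $\bar\xi=\pi(\xi)\in C$ is a generic point of $C$: if some $\bar y\in C$ were not in $\overline{\{\bar\xi\}}$, the basis hypothesis would furnish $V\in\mathcal{B}$ with $\bar y\in V$ and $\bar\xi\notin V$, but then $V\in\mathcal{V}_C$ forces $\xi\in\pi^{-1}(V)$, i.e.\ $\bar\xi\in V$, a contradiction. Uniqueness of the generic point is automatic from $T_0$. This sobriety step, which crucially exploits the basis hypothesis to turn a prime filter of quasicompact opens on $X/R$ into an honest point via compactness of the constructible topology on $X$, is the main obstacle.
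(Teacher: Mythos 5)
Your proof is correct and follows essentially the same route as the paper: $T_0$ from Lemma~\ref{lem:equivrel}, sobriety via compactness of the constructible topology on $X$ applied to a cofiltered family of nonempty pro-constructible subsets, and the generalizing property via the $R$-invariant open subset produced in the last paragraph of the proof of Lemma~\ref{lem:equivrel}. Two small streamlinings are worth noting: in the sobriety step you explicitly intersect with $\pi^{-1}(C)$ (which the paper's terse phrasing omits, though it is needed to guarantee the point obtained actually lies in $C$), and you prove the generalizing property directly for an arbitrary lift of $\bar{x}$, whereas the paper first produces one compatible pair of lifts by the same argument and then transfers to an arbitrary lift via the $R$-relation and the hypothesis that $t$ is generalizing.
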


\begin{proof} The set of open subsets of $X/R$ whose preimages in $X$ are quasicompact is stable under finite intersections. To see that $X/R$ is spectral, it remains to see that every irreducible closed subset $Z\subset X/R$ has a unique generic point. Uniqueness holds because $X/R$ is $T_0$ by Lemma~\ref{lem:equivrel}. For existence, look at the cofiltered intersection of all open subsets $U\subset X/R$ with $U\cap Z\neq\emptyset$ whose preimage in $X$ is quasicompact. We need to see that this intersection is nonempty. But the preimages of such $U$ are quasicompact open subsets, thus compact Hausdorff for the constructible topology. As they are all nonempty, their intersection is nonempty. Thus, the intersection of the $U$'s is nonempty as well, as desired.

As $X/R$ has a basis of opens for which the preimage in $X$ is quasicompact, it follows that $X\to X/R$ is spectral. It remains to see that $X\to X/R$ is generalizing. Assume that $\bar{x}$ generalizes to $\bar{x}^\prime\in X/R$. We claim first that there is some way to lift this to a generalization $x$ to $x^\prime$ in $X$. If not, then the final paragraph of the proof of Lemma~\ref{lem:equivrel} would produce an open $R$-invariant neighborhood of $x$ in $X$ which does not contain $x^\prime$, contradicting that $\bar{x}$ generalizes to $\bar{x}^\prime$. Now if $x_0\in X$ is any lift of $\bar{x}$, then $(x_0,x)\in R$. But $t: R\to X$ is generalizing, and $x^\prime$ is a generalization of $x$ in $X$, so we can find a generalization $(x_0^\prime,x^\prime)\in R$ of $(x_0,x)$. But then $x_0^\prime\in X$ is a generalization of $x_0$ mapping to $\bar{x}^\prime\in X/R$, as desired.
\end{proof}

\begin{lemma}\label{lem:spectralopenequivrel} In the situation of Lemma~\ref{lem:equivrel}, assume that $R\to X$ is open. Then the quotient space $X/R$ is locally spectral and quasiseparated, and $X\to X/R$ is an open spectral qcqs map.
\end{lemma}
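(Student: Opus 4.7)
The plan is to establish four statements in sequence: (i) the quotient map $\pi\from X\to X/R$ is open; (ii) $X/R$ is locally spectral; (iii) $X/R$ is quasiseparated; and (iv) $\pi$ is qcqs and spectral. The first step is crucial and almost immediate: both projections $s,t\from R\to X$ are open, because the hypothesis furnishes openness of $s$ and then $t=s\circ\sigma$ is open, where $\sigma\from R\to R$ is the swap homeomorphism coming from symmetry of the equivalence relation. Hence for any open $V\subset X$, the $R$-saturation $R\cdot V=t(s^{-1}(V))$ is open, so $\pi$ sends opens to opens.

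For (ii), given $\bar x\in X/R$, lift to $x\in X$ and choose a spectral open neighborhood $W$ of $x$ using local spectrality of $X$. Set $U:=t(s^{-1}(W))$; by (i) this is open, it is $R$-invariant by construction, and it is quasicompact because $s$ is quasicompact and $t$ is continuous. As $X$ is quasiseparated and locally spectral, the quasicompact open $U$ is itself a spectral space. I then apply Lemma~\ref{lem:spectralquotient} to the restricted equivalence relation $R':=R\cap(U\times U)=s^{-1}(U)=t^{-1}(U)$ on $U$. Pro-constructibility of $R'\subset U\times U$ and the generalizing property of $s,t\from R'\to U$ are inherited from $R$. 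The basis condition of Lemma~\ref{lem:spectralquotient} is verified as follows: $U$ has a basis of quasicompact opens $W_0$, the $R'$-saturation $t(s^{-1}(W_0))$ of each is open (by step (i) applied inside $U$) and quasicompact (since $s$ is quasicompact), so its image in $U/R'$ furnishes a basis of opens with quasicompact preimages. Lemma~\ref{lem:spectralquotient} then yields that $U/R'$ is spectral. Since $\pi$ is open and $U$ is $R$-invariant and open, $U/R'$ embeds as an open subspace of $X/R$ containing $\bar x$, giving local spectrality.

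For (iii) and (iv), the key observation is that the preimage under $\pi$ of any quasicompact open $\bar V\subset X/R$ is quasicompact: by (ii), $\bar V$ is covered by finitely many opens of the form $U_i/R'_i$ with $U_i\subset X$ quasicompact open and $R$-invariant, so $\pi^{-1}(\bar V)=\bigcup_i U_i$ is quasicompact. Quasiseparatedness of $X/R$ follows at once: if $\bar V_1,\bar V_2\subset X/R$ are quasicompact open with $R$-invariant preimages $U_1,U_2$, then $U_1\cap U_2$ is quasicompact by quasiseparatedness of $X$, $R$-invariant, and its image equals $\bar V_1\cap\bar V_2$, which is therefore quasicompact. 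Spectrality of $\pi$ now reduces to the same preimage-quasicompactness statement, and $\pi$ is automatically quasiseparated because $X$ is.

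The main obstacle I anticipate is the passage from $X$ to the quasicompact open $U$ in step (ii): one needs openness of $\pi$ (so step (i) must come first) to know that $U\to U/R'$ is open, and one needs the quasiseparatedness of $X$ to conclude that $U$ is truly spectral and not merely quasicompact locally spectral. Once those two ingredients are in place, the remainder of the argument is essentially bookkeeping with $R$-saturations and with the definition of the quotient topology, and the hypotheses of Lemma~\ref{lem:spectralquotient} transfer cleanly from $R$ to its restriction.
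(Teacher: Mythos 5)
Your proof is correct and follows essentially the same route as the paper: both arguments begin by showing $\pi$ is open (so that $R$-saturations of quasicompact opens give quasicompact $R$-invariant opens), both reduce to a quasicompact $R$-invariant open $U$ of $X$, and both establish that quasicompact opens of $X/R$ with quasicompact preimage form a basis. The one genuine difference is organizational: where the paper re-derives sobriety directly (via the constructible-topology compactness argument), you instead verify the hypotheses of Lemma~\ref{lem:spectralquotient} for the restricted relation $R'$ on $U$ and invoke that lemma — a cleaner, more modular packaging of the same content, which also makes explicit the quasiseparatedness argument that the paper leaves somewhat implicit when it says ``for the other assertions, replacing $X$ by $U$.''
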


\begin{proof} For any quasicompact open subset $U\subset X$, the set $t(s^{-1}(U))\subset X$ is a quasicompact $R$-invariant open subset under the assumptions. Note that $t(s^{-1}(U))\subset X$ is also the preimage of the image of $U$ in $X/R$, so that $X\to X/R$ is open. For the other assertions, replacing $X$ by $U$, we can assume that $X$ is quasicompact, thus spectral. We claim that the quasicompact open subsets $V_0\subset X/R$ whose preimages in $X$ are quasicompact, form a basis for the topology. Thus, let $W_0\subset X/R$ be open, with preimage $W\subset X$ an $R$-invariant open subset. Pick any $x_0\in W_0$, and lift it to $x\in W$. Then $x$ has a quasicompact open neighborhood $x\in V^\prime\subset W$, and $V=t(s^{-1}(V^\prime))\subset W$ is a quasicompact $R$-invariant open subset. Its image $V_0\subset W_0$ thus contains $x_0$, is quasicompact, with inverse image $V\subset X$ still quasicompact.

It remains to see that every irreducible closed subset $\bar{Z}\subset X/R$ has a generic point. Equivalently, the intersection of all quasicompact open subsets $V_0\subset X/R$ with $V_0\cap \bar{Z}\neq\emptyset$ is nonempty. But the corresponding intersection of the preimages in $X$ is nonempty by compactness of the constructible topology, so the result follows.
\end{proof}

We will need the following result about inverse limits.

\begin{lemma}\label{lem:spectralinvlimit} Let $X_i$, $i\in I$, be a cofiltered inverse system of spectral spaces along spectral maps, with inverse limit $X=\varprojlim_i X_i$. Then $X$ is a spectral space, the maps $X\to X_i$ are spectral, and a map $Y\to X$ from a spectral space is spectral if and only if all composites $Y\to X\to X_i$ are spectral. Moreover, if $Y$ is a spectral space with a spectral map $Y\to X$ such that the composite maps $Y\to X_i$ are generalizing, then $Y\to X$ is generalizing. In particular, if $Y\to X$ is in addition surjective, then by Lemma~\ref{lem:quotientmap} it is a quotient map.
\end{lemma}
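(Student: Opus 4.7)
The plan is to bootstrap from Hochster's theorem and the correspondence between spectral spaces and pro-(finite $T_0$)-spaces. First, since each $X_i$ is itself a cofiltered limit of finite $T_0$-spaces, the limit $X=\varprojlim_i X_i$ can be rewritten as a cofiltered limit of finite $T_0$-spaces, hence is spectral by Hochster's theorem. The projection $f_i\colon X\to X_i$ is spectral because the preimage of a quasicompact open $V\subset X_i$ is open in $X$ and, being the preimage of a clopen subset for the (profinite) constructible topology on $X_i$, is clopen for the constructible topology on $X$, hence constructible and open, i.e.\ quasicompact open.

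Next I would establish the structural fact that every quasicompact open $U\subset X$ is of the form $f_j^{-1}(V)$ for some $j\in I$ and quasicompact open $V\subset X_j$. The basic opens $f_j^{-1}(V')$ with $V'$ quasicompact open in $X_j$ form a basis for the topology (finite intersections of preimages from different stages collapse to a single preimage by cofilteredness of $I$), so by quasicompactness any $U$ is a finite union $U=\bigcup_k f_{i_k}^{-1}(V_k)$. Choosing $j$ dominating all the $i_k$ and letting $V=\bigcup_k g_{i_k j}^{-1}(V_k)\subset X_j$ (where $g_{i_k j}\colon X_j\to X_{i_k}$ is spectral, hence $V$ is quasicompact open) yields $U=f_j^{-1}(V)$. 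Granted this, the spectrality criterion for $Y\to X$ is immediate: composition with the spectral maps $X\to X_i$ gives the forward direction, while conversely the preimage in $Y$ of any quasicompact open $U=f_j^{-1}(V)\subset X$ equals the preimage of $V$ under the composite $Y\to X_j$, which is quasicompact open by hypothesis.

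Finally, for the generalizing statement, let $y_1\in Y$ with image $x_1\in X$, let $x_0$ be a generalization of $x_1$ in $X$, and write $x_{0,i}, x_{1,i}\in X_i$ for the images. For each $i\in I$, set $Z_i\subset Y$ to be the set of generalizations of $y_1$ whose image in $X_i$ equals $x_{0,i}$. Single points in a spectral space are pro-constructible (the $T_0$ axiom together with quasicompact opens forming a basis force $\{x_{0,i}\}$ to equal the intersection of all constructible subsets containing it), and the set of generalizations of $y_1$ is pro-constructible in $Y$ (as the intersection of all quasicompact open neighborhoods of $y_1$); hence each $Z_i$ is pro-constructible, i.e.\ closed in the constructible topology of $Y$. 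The hypothesis that $Y\to X_i$ is generalizing ensures $Z_i\neq\emptyset$, and cofilteredness of $I$ yields the finite intersection property $Z_j\subseteq \bigcap_k Z_{i_k}$ whenever $j\geq i_k$. By quasicompactness of $Y$ in its constructible topology, $\bigcap_i Z_i\neq\emptyset$; any point $y_0$ therein is a generalization of $y_1$, and its image in $X$ equals $x_0$ because that holds after projection to each $X_i$ and $X=\varprojlim_i X_i$. The last assertion about quotient maps then follows directly from Lemma~\ref{lem:quotientmap}. I expect the structural fact about quasicompact opens factoring through a single stage to be the main step; everything else is a clean packaging of the constructible-topology compactness principle.
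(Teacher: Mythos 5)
Your proof is correct and follows essentially the same route as the paper: the first part is justified via the pro-(finite $T_0$)-space interpretation of spectral spaces (you simply unwind what the paper leaves as a one-line reference), and for the generalizing statement you apply compactness of the constructible topology to a cofiltered system of nonempty pro-constructible subsets of $Y$ (where the paper first replaces $Y$ by the localization $Y_y$, you fold the generalization constraint directly into the sets $Z_i$ — a purely presentational difference, the key step being identical).
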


\begin{proof} The first part follows for example from the interpretation of the category of spectral spaces with spectral maps as the pro-category of finite $T_0$-spaces. Now assume $Y\to X$ is a spectral map of spectral spaces such that $Y\to X_i$ is generalizing for all $i\in I$. Let $y\in Y$ map to $x\in X$, and let $\tilde{x}\in X$ be a generalization of $x\in X$. We look for a generalization $\tilde{y}$ of $Y$ mapping to $\tilde{x}$. For this, we may replace $Y$ by its localization $Y_y$ at $y$ (i.e.~the set of generalizations of $y$). Let $x_i\in X_i$ be the image of $x$, and $\tilde{x}_i\in X_i$ be the image of $\tilde{x}$. Then, for all $i\in I$, $\tilde{x}_i$ is a generalization of $x_i$. As $Y\to X_i$ is generalizing, the preimage of $\tilde{x}_i$ in $Y$ is a non-empty pro-constructible subset of $Y$. For varying $i$, these preimages form a cofiltered inverse system of nonempty pro-constructible subsets of $Y$; their intersection is thus nonempty (by Tychonoff). Any point in the intersection is a generalization of $y$ mapping to $\tilde{x}$, as desired.
\end{proof}

\section{Perfectoid Spaces}

Let $p$ be a fixed prime throughout. Recall that a topological ring $R$ is Tate if it contains and open and bounded subring $R_0\subset R$ and a topologically nilpotent unit $\varpi\in R$; such elements are called pseudo-uniformizers. Let $R^\circ\subset R$ denote the set of powerbounded elements. If $\varpi\in R$ is a pseudo-uniformizer, then necessarily $\varpi\in R^\circ$.  Furthermore, if $R^+\subset R$ is a ring of integral elements, i.e.~an open and integrally closed subring of $R^\circ$, then $\varpi\in R^+$.  Indeed, since $\varpi^n\to 0$ as $n\to\infty$ and since $R^+$ is open, we have $\varpi^n\in R^+$ for some $n\geq 1$.  Since $R^+$ is integrally closed, $\varpi\in R^+$.

The following definition is due to Fontaine, \cite{FontaineBourbaki}.

\begin{definition}\label{def:perfectoidring} A Tate ring $R$ is {\em perfectoid} if $R$ is complete, uniform, i.e.~$R^\circ\subset R$ is bounded, and there exists a pseudo-uniformizer $\varpi\in R$ such that $\varpi^p|p$ in $R^\circ$ and the Frobenius map 
\[
\Phi\from R^\circ/\varpi \to R^\circ/\varpi^p: x\mapsto x^p
\]
is an isomorphism. 
\end{definition}

Hereafter we use the following notational convention. If $R$ is a ring, and $I,J\subset R$ are ideals containing $p$ such that $I^p\subset J$, then $\Phi\from R/I\to R/J$ will refer to the ring homomorphism $x\mapsto x^p$.  

\begin{remark}\label{rem:defperfectoid} The condition that $\Phi: R^\circ/\varpi\to R^\circ/\varpi^p$ is an isomorphism is independent of the choice of a pseudo-uniformizer $\varpi\in R$ such that $\varpi^p|p$. In fact, we claim that it is equivalent to the condition that $\Phi: R^\circ/p\to R^\circ/p$ is surjective.

Indeed, for any complete Tate ring $R$ and pseudo-uniformizer $\varpi$ satisfying $\varpi^p|p$ in $R^\circ$, the Frobenius map $\Phi\from R^\circ/\varpi\to R^\circ/\varpi^p$ is necessarily injective. Indeed, if $x\in R^\circ$ satisfies $x^p=\varpi^p y$ for some $y\in R^\circ$ then the element $x/\varpi \in R$ lies in $R^\circ$ since its $p$-th power does.  Thus, the isomorphism condition on $\Phi$ in Definition~\ref{def:perfectoidring} is equivalent to surjectivity of $\Phi: R^\circ/\varpi\to R^\circ/\varpi^p$. Thus, if $\Phi: R^\circ/p\to R^\circ/p$ is surjective, then $R$ is perfectoid. Conversely, assume that $R$ is perfectoid. Let $x\in R^\circ$ be any element. By using that $\Phi: R^\circ/\varpi\to R^\circ/\varpi^p$ is surjective and successive $\varpi^p$-adic approximation, we can write
\[
x=x_0^p + \varpi^p x_1^p + \varpi^{2p} x_2^p+\ldots\ ,
\]
where $x_i\in R^\circ$. But then
\[
x\equiv (x_0+\varpi x_1 + \varpi^2 x_2 + \ldots)^p\mod p\ ,
\]
showing that $\Phi: R^\circ/p\to R^\circ/p$ is surjective, as desired.
\end{remark}

\begin{remark} The field $\Q_p$ is not perfectoid, even though the Frobenius map on $\F_p$ is an isomorphism. The issue is that there is no element $\varpi\in\Z_p$ whose $p$-th power divides $p$. More generally, a discretely valued non-archimedean field $K$ cannot be perfectoid. Indeed, if $\varpi$ is a pseudo-uniformizer as in Definition~\ref{def:perfectoidring}, then $\varpi$ is a non-zero element of the maximal ideal, so the quotients $K^\circ/\varpi$ and $K^\circ/\varpi^p$ are Artin local rings of different lengths and hence cannot be isomorphic.
\end{remark}

\begin{example}\label{ex:perfectoidrings}
\begin{altenumerate}
\item The cyclotomic field $\Q_p^{\text{cycl}}$, the completion of the cyclotomic extension $\Q_p(\mu_{p^\infty})$.
\item The $t$-adic completion of $\F_p\laurentseries{t}(t^{1/p^\infty})$, which we will write as $\F_p\laurentseries{t^{1/p^\infty}}$.
\item The algebra $\Q_p^{\text{cycl}}\tatealgebra{T^{1/p^\infty}}$.  This is defined as $A[1/p]$, where $A$ is the $p$-adic completion of $\Z_p^{\text{cycl}}[T^{1/p^\infty}]$.
\item As an example of a perfectoid Tate ring which does not live over a field, one may take
\[
R=\Z_p^{\text{cycl}}[[T^{1/p^\infty}]]\tatealgebra{(p/T)^{1/p^\infty}}[1/T]\ .
\]
Here we can take $\varpi=T^{1/p}$, because $\varpi^p=T$ divides $p$ in $R^\circ$.
\end{altenumerate}
\end{example}

\begin{proposition}\label{prop:perfectoidcharp} Let $R$ be a topological ring with $pR=0$.  The following are equivalent:
\begin{enumerate}
\item The topological ring $R$ is a perfectoid Tate ring.
\item The topological ring $R$ is a perfect complete Tate ring.
\end{enumerate}
\end{proposition}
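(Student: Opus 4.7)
The plan is to work through the conditions in the definition of perfectoid (complete, uniform, Tate, existence of $\varpi$ with $\varpi^p \mid p$ in $R^\circ$, and $\Phi\colon R^\circ/\varpi \isomto R^\circ/\varpi^p$) and match them against perfectness in characteristic $p$. Since $pR=0$, the divisibility $\varpi^p \mid p$ is automatic for any pseudo-uniformizer, so the substantive content is comparing uniformity plus the Frobenius isomorphism against perfectness, given completeness and Tate on both sides.

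For $(1) \Rightarrow (2)$, only perfectness of $R$ needs verifying. Surjectivity of $\Phi\colon R \to R$ comes from the successive-approximation argument already sketched in Remark~\ref{rem:defperfectoid}: iteratively lift $y \in R^\circ$ under the surjective $\Phi\colon R^\circ/\varpi \to R^\circ/\varpi^p$ to find $x_0, x_1, \ldots \in R^\circ$ with $y = (x_0 + \varpi x_1 + \varpi^2 x_2 + \cdots)^p$, the series converging by completeness and the $p$-th power distributing over the sum because we are in characteristic $p$. This extends to $R = R^\circ[1/\varpi]$. Injectivity follows because uniform Tate rings in characteristic $p$ are reduced: if $x^p = 0$, then each $x/\varpi^k$ is nilpotent, hence powerbounded, hence in $R^\circ$, so boundedness of $R^\circ$ and Hausdorffness of $R$ force $x \in \bigcap_k \varpi^k R^\circ = \{0\}$.

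For $(2) \Rightarrow (1)$, the Frobenius isomorphism $\Phi\colon R^\circ/\varpi \to R^\circ/\varpi^p$ comes essentially for free once uniformity is known. Surjectivity: for $y \in R^\circ$ the $p$-th root $y^{1/p} \in R$ exists by perfectness, and the decomposition $(y^{1/p})^{pk+r} = y^k (y^{1/p})^r$ with $0 \leq r < p$ exhibits $y^{1/p}$ as powerbounded, so $y^{1/p} \in R^\circ$. Injectivity: from $x^p = \varpi^p z$ with $z \in R^\circ$ one gets $(x/\varpi)^p = z \in R^\circ$, so $x/\varpi \in R^\circ$ by the same argument. The main obstacle is uniformity. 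To prove it I would fix a ring of definition $R_0 \ni \varpi$, adjoin all $p$-power roots $\varpi^{1/p^n}$ (existing by perfectness, and powerbounded by the same decomposition-of-powers trick), and let $\tilde R_0$ be the $\varpi$-adic completion of $R_0[\varpi^{1/p^n} : n \geq 0]$ inside $R$; this is a bounded perfect subring of $R^\circ$. The reverse inclusion $R^\circ \subseteq \tilde R_0$ should then follow using the power-multiplicative spectral seminorm $|x|_{\sup} = \lim_n \|x^{p^n}\|^{1/p^n}$ (a norm since perfect characteristic $p$ rings are reduced) combined with a Banach-open-mapping-style argument showing that $|\cdot|_{\sup}$ defines the same topology as the original Banach norm. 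This uniformity statement for perfect Banach $\F_p$-algebras is the technical heart of the argument.
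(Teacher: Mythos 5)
Your $(1) \Rightarrow (2)$ is correct and takes a slightly different route from the paper for the injectivity of Frobenius: the paper upgrades the isomorphism $R^\circ/\varpi \to R^\circ/\varpi^p$ to an isomorphism $R^\circ \to R^\circ$ via the $\varpi$-adic inverse limit, whereas you observe that $x^p=0$ makes each $x/\varpi^k$ nilpotent and hence powerbounded, forcing $x \in \bigcap_k \varpi^k R^\circ = 0$ by boundedness of $R^\circ$. Both are valid; your version nicely isolates what uniformity buys you.

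For $(2) \Rightarrow (1)$ you correctly identify uniformity as the substantive content and Banach's open mapping theorem as the relevant tool, but the execution has real gaps. First, you assert that $R_0[\varpi^{1/p^n}\colon n\geq 0]$, and hence $\tilde R_0$, is a \emph{bounded} subring of $R$. This does not follow from each $\varpi^{1/p^n}$ being individually powerbounded: a priori the Banach norms $\|\varpi^{1/p^n}\|$ could blow up as $n\to\infty$, and ruling that out is essentially the uniformity statement you are trying to prove, so the construction risks being circular. Second, $R_0[\varpi^{1/p^n}\colon n]$ is generally not perfect (you have only adjoined $p$-power roots of $\varpi$, not of arbitrary elements of $R_0$), so the claimed perfectness of $\tilde R_0$ is also unclear. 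Third, the decisive reverse inclusion $R^\circ \subseteq \tilde R_0$ is explicitly deferred to a hoped-for comparison of the spectral and Banach norms, which is again uniformity in disguise. The paper's proof avoids the detour entirely: since $\Phi\colon R\to R$ is a continuous bijection between complete topological groups, Banach's open mapping theorem gives $\Phi(R_0)\supset\varpi^n R_0$ for some $n$, i.e.\ $\Phi^{-1}(R_0)\subset\varpi^{-n/p}R_0$; iterating yields $\Phi^{-k}(R_0)\subset\varpi^{-n(1/p+\cdots+1/p^k)}R_0\subset\varpi^{-n}R_0$ \emph{uniformly} in $k$, and from this one bounds $R^\circ$ directly. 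I would drop the auxiliary ring $\tilde R_0$ and instead prove uniformity by this direct iteration of the open mapping bound.
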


Of course, perfect means that $\Phi\from R\to R$ is an isomorphism.  

\begin{proof} Let $R$ be a complete Tate ring.  If $R$ is perfect, then take $\varpi$ any pseudo-uniformizer.  The condition 
$\varpi^p|p=0$ is vacuous.  If $x\in R$ is powerbounded, then so is $x^p$, and vice versa, which means that $\Phi\from R^\circ\to R^\circ$ is an isomorphism.  This shows that $\Phi\from R^\circ/\varpi \to R^\circ/\varpi^p$ is surjective, and we had seen in Remark~\ref{rem:defperfectoid} that injectivity is automatic. It remains to see that $R$ is automatically uniform. For this, take any open and bounded subring $R_0\subset R$. As $\Phi: R\to R$ is an isomorphism, which is automatically open by Banach's open mapping theorem, there is some $n$ such that $\varpi^n R_0\subset \Phi(R_0)$. Equivalently, $\Phi^{-1}(R_0)\subset \varpi^{-n/p} R_0$. This implies that
\[
\Phi^{-2}(R_0)\subset \Phi^{-1}(\varpi^{-n/p} R_0)\subset \varpi^{-n/p-n/p^2} R_0
\]
and inductively
\[
\Phi^{-k}(R_0)\subset \varpi^{-n/p-n/p^2-\ldots-n/p^k} R_0\ .
\]
In particular, for all $k\geq 0$,
\[
\Phi^{-k}(R_0)\subset \varpi^{-n} R_0\ .
\]
On the other hand, $R^\circ\subset \bigcup_{k\geq 0} \Phi^{-k}(R_0)$, and so $R^\circ\subset \varpi^{-n} R_0$ is bounded, as desired.

Conversely, if $R$ is perfectoid, then $\Phi\from R^\circ/\varpi\to R^\circ/\varpi^p$ is an isomorphism, and therefore so is $R^\circ/\varpi^n\to R^\circ/\varpi^{np}$ by induction.  Taking inverse limits and using completeness, we find that $\Phi\from R^\circ\to R^\circ$ is an isomorphism. Inverting $\varpi$ shows that $R$ is perfect.
\end{proof}

\begin{definition}\label{def:perfectoidfield} A {\em perfectoid field} is a perfectoid Tate ring $R$ which is a nonarchimedean field.
\end{definition}

\begin{remark} It is not clear a priori that a perfectoid ring which is a field is a perfectoid field. However, this has recently been answered affirmatively by Kedlaya, \cite{KedlayaFields}.
\end{remark}

One easily checks the following equivalent characterization, which is the original definition, \cite{ScholzePerfectoidSpaces}, \cite{KedlayaLiu1}.

\begin{proposition} Let $K$ be a nonarchimedean field. Then $K$ is a perfectoid field if and only if the following conditions hold:
\begin{altenumerate}
\item The nonarchimedean field $K$ is not discretely valued,
\item the absolute value $\abs{p}<1$, and
\item the Frobenius $\Phi\from\OO_K/p\to\OO_K/p$ is surjective.
\end{altenumerate}$\hfill \Box$
\end{proposition}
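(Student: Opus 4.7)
The plan is to verify each direction by carefully unpacking Definition~\ref{def:perfectoidring} together with Remark~\ref{rem:defperfectoid} and the discussion that discretely valued fields cannot be perfectoid.

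For the forward direction, assume $K$ is a perfectoid field. Pick a pseudo-uniformizer $\varpi \in K$ with $\varpi^p \mid p$ in $\OO_K = K^\circ$. Then $|p| \leq |\varpi|^p < 1$ since $\varpi$ is topologically nilpotent, giving (ii). For (iii), I would cite Remark~\ref{rem:defperfectoid}, which asserts that the perfectoid condition is equivalent to surjectivity of $\Phi: R^\circ/p \to R^\circ/p$; applied to $R=K$, this is exactly (iii). For (i), I would reproduce the argument from the Remark following Definition~\ref{def:perfectoidring}: if $K$ were discretely valued, $\varpi$ would be a nonzero element of the maximal ideal, and $\OO_K/\varpi$ and $\OO_K/\varpi^p$ would be Artin local rings of different lengths, which cannot be isomorphic via $\Phi$.

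For the reverse direction, assume $K$ is a nonarchimedean field satisfying (i)--(iii). Since $K$ is a nonarchimedean field, $K^\circ = \OO_K$ is the valuation ring, which is automatically open and bounded, so $K$ is uniform; $K$ is complete and Tate by hypothesis. It remains to produce a pseudo-uniformizer $\varpi$ with $\varpi^p \mid p$ and then verify that $\Phi: \OO_K/\varpi \to \OO_K/\varpi^p$ is an isomorphism. Using (i), the value group of $K$ is dense in $\R_{>0}$, so together with (ii) I can choose $\varpi \in K$ with $|p|^{1/p} \leq |\varpi| < 1$; this forces $|\varpi^p| \geq |p|$, i.e.\ $\varpi^p \mid p$ in $\OO_K$. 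Injectivity of $\Phi: \OO_K/\varpi \to \OO_K/\varpi^p$ is the general fact recalled in Remark~\ref{rem:defperfectoid} (it holds whenever $\varpi^p \mid p$). For surjectivity, given $x \in \OO_K$, condition (iii) provides $y \in \OO_K$ with $y^p \equiv x \pmod{p}$; since $(p) \subseteq (\varpi^p)$ in $\OO_K$, this reduces to $y^p \equiv x \pmod{\varpi^p}$, as desired. Thus $K$ is a perfectoid Tate ring, hence a perfectoid field.

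The arguments are essentially unwinding definitions; the only mildly delicate point is exploiting (i) to guarantee the existence of $\varpi$ with $|p|^{1/p} \leq |\varpi| < 1$, which would fail in the discretely valued case. No nontrivial new ingredient beyond what is stated in Definition~\ref{def:perfectoidring} and Remark~\ref{rem:defperfectoid} is needed, so I expect no serious obstacle.
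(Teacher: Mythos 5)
Your proof is correct and fills in exactly what the paper leaves to the reader (the paper marks this proposition ``One easily checks'' with no written proof). Both directions are handled properly: the forward direction correctly delegates to Remark~\ref{rem:defperfectoid} for (iii) and reproduces the length argument for (i); the reverse direction correctly observes that a non-discrete rank-one value group is dense in $\mathbb{R}_{>0}$ so that a pseudo-uniformizer $\varpi$ with $|p|^{1/p}\leq|\varpi|<1$ can be found, that injectivity of $\Phi:\OO_K/\varpi\to\OO_K/\varpi^p$ is the automatic fact from the Remark, and that surjectivity modulo $\varpi^p$ follows from surjectivity modulo $p$ together with $(p)\subseteq(\varpi^p)$. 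One might note for completeness that in characteristic $p$ the bound $|p|^{1/p}\leq|\varpi|$ is vacuous, but your construction still works; no gaps.
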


Next, we recall the process of tilting.

\begin{definition}\label{def:tilting} Let $R$ be a perfectoid Tate ring.  The {\em tilt} of $R$ is the topological ring
\[
R^{\flat}=\varprojlim_{x\mapsto x^p} R\ ,
\]
with the inverse limit topology, the pointwise multiplication, and the addition given by
\[
(x^{(0)},x^{(1)},\dots)+(y^{(0)},y^{(1)},\dots)=(z^{(0)},z^{(1)},\dots)
\]
where 
\[
z^{(i)} = \lim_{n\to \infty} (x^{(i+n)}+y^{(i+n)})^{p^n} \in R\ .
\]
\end{definition}

\begin{lemma}\label{lem:tiltingexists} The limit $z^{(i)}$ above exists and defines a ring structure making $R^\flat$ a perfectoid $\F_p$-algebra.  The subset $R^{\flat\circ}$ of power-bounded elements is given by the topological ring isomorphism 
\[
R^{\flat\circ}=\varprojlim_{x\mapsto x^p} R^\circ\isom \varprojlim_{\Phi} R^\circ/\varpi\ ,
\]
where $\varpi\in R$ is a pseudo-uniformizer which divides $p$ in $R^\circ$.

Furthermore, there exists a pseudo-uniformizer $\varpi\in R$ with $\varpi^p\vert p$ in $R^\circ$ which admits a sequence of $p$-power roots $\varpi^{1/p^n}$, giving rise to an element $\varpi^\flat=(\varpi,\varpi^{1/p},\dots)\in R^{\flat\circ}$, which is a pseudo-uniformizer of $R^\flat$. Then $R^\flat=R^{\flat\circ}[1/\varpi^\flat]$.
\end{lemma}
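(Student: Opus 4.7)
The plan is to proceed in four steps. First, I would establish a multiplicative-monoid isomorphism $\varprojlim_{x\mapsto x^p} R^\circ \cong \varprojlim_\Phi R^\circ/\varpi$ for any pseudo-uniformizer $\varpi$ with $\varpi^p\mid p$. In one direction, reduce each component mod $\varpi$; since $p\in\varpi R^\circ$, Frobenius is a well-defined ring endomorphism of $R^\circ/\varpi$ and the reduced sequence lives in the $\Phi$-limit. In the other direction, given $(\bar x^{(n)})$ with $\Phi(\bar x^{(n+1)})=\bar x^{(n)}$, lift arbitrarily to $\tilde x^{(n)}\in R^\circ$ and set $x^{(n)}:=\lim_{k\to\infty}\tilde x^{(n+k)\,p^k}$. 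The key convergence input is
\[
a\equiv b\pmod{\varpi}\ \Longrightarrow\ a^p\equiv b^p\pmod{\varpi^p},
\]
which follows by expanding $(b+\varpi c)^p$ and using $p\in\varpi^p R^\circ$; iterating and applying it to consecutive approximants shows that the limit exists in the $\varpi$-adic (hence Tate) topology of $R^\circ$. Independence of lifts and the identities $(x^{(n+1)})^p=x^{(n)}$ are then immediate.

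Second, I would transport the ring structure. Since $\Phi$ is a ring homomorphism, $\varprojlim_\Phi R^\circ/\varpi$ carries componentwise ring operations, and pushing these back through the isomorphism gives a ring structure on $R^{\flat\circ}$: multiplication is componentwise, while addition, unwound through the inverse map above, yields precisely the formula $z^{(i)}=\lim_n (x^{(i+n)}+y^{(i+n)})^{p^n}$ in the statement. In particular, this limit exists. Since $p\in\varpi R^\circ$, each $R^\circ/\varpi$—and thus $R^{\flat\circ}$—is an $\F_p$-algebra. The Frobenius on $\varprojlim_\Phi R^\circ/\varpi$ is the shift of indices, which is a bijection, so $R^{\flat\circ}$ is perfect; completeness for the inverse-limit topology is automatic.

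Third, I would construct $\varpi^\flat$. Start with a pseudo-uniformizer $\varpi_0$ with $\varpi_0^p\mid p$ and use surjectivity of $\Phi\colon R^\circ/p\to R^\circ/p$ (Remark~\ref{rem:defperfectoid}) to inductively produce $y_0:=\varpi_0,y_1,y_2,\ldots\in R^\circ$ with $y_{n+1}^p\equiv y_n\pmod{p}$. The standard mod-$p$ congruence $a\equiv b\pmod p \Rightarrow a^{p^k}\equiv b^{p^k}\pmod{p^{k+1}}$ makes $(y_{n+i}^{p^n})_n$ $p$-adically (hence $\varpi_0$-adically, since $p\in\varpi_0^p R^\circ$) Cauchy for each fixed $i$. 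Setting $\varpi^{(i)}:=\lim_n y_{n+i}^{p^n}$ defines $\varpi^\flat:=(\varpi^{(i)})\in R^{\flat\circ}$. Since $\varpi^{(0)}\equiv\varpi_0\pmod p$ and $p\in\varpi_0^p R^\circ$, one has $\varpi^{(0)}=\varpi_0\cdot u$ for a unit $u\in(R^\circ)^\times$ (invert $1+\varpi_0^{p-1}(\cdot)$ by geometric series). Hence $\varpi^{(0)}$ is a pseudo-uniformizer of $R$ with $(\varpi^{(0)})^p\mid p$, and rerunning step one with $\varpi:=\varpi^{(0)}$ yields $R^{\flat\circ}/\varpi^\flat\cong R^\circ/\varpi$, identifying the inverse-limit and $\varpi^\flat$-adic topologies on $R^{\flat\circ}$.

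Fourth, I would identify $R^\flat$ with $R^{\flat\circ}[1/\varpi^\flat]$. Given $(x^{(n)})\in\varprojlim_{x\mapsto x^p}R$, pick $k$ large enough that $\varpi^{(0)\,k}x^{(0)}\in R^\circ$. The identity $(\varpi^{(n+1)\,k}x^{(n+1)})^p=\varpi^{(n)\,k}x^{(n)}$, combined with uniformity of $R$ (so that $y^p\in R^\circ$ forces $y\in R^\circ$, using that multiplication by $y^r$ preserves boundedness in a Tate ring), inductively gives $\varpi^{(n)\,k}x^{(n)}\in R^\circ$ for all $n$, so $(\varpi^\flat)^k\cdot(x^{(n)})\in R^{\flat\circ}$. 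Thus $R^\flat=R^{\flat\circ}[1/\varpi^\flat]$ is a perfect complete Tate ring, hence a perfectoid $\F_p$-algebra by Proposition~\ref{prop:perfectoidcharp}; uniformity from that proposition together with continuity of each projection $R^\flat\to R,\ (x^{(m)})\mapsto x^{(n)}$ shows that $R^{\flat\circ}$ coincides with its subring of powerbounded elements. The main obstacle is the convergence bookkeeping in step one (and its reuse in step three), together with tracking which topology—$\varpi_0$-adic, $p$-adic, $\varpi^\flat$-adic, or inverse-limit—one is working in at each moment; everything else is unwinding definitions.
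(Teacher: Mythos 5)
Your proposal follows the same overall strategy as the paper: establish $\varprojlim_{x\mapsto x^p}R^\circ\cong\varprojlim_\Phi R^\circ/\varpi$ by lifting via $x^{(i)}=\lim_k \tilde{x}_{i+k}^{p^k}$ with a convergence congruence, build $\varpi^\flat$, and identify $R^\flat=R^{\flat\circ}[1/\varpi^\flat]$ using that $y^p$ bounded forces $y$ bounded. Two remarks. First, a small imprecision: the lemma asserts the isomorphism for any pseudo-uniformizer $\varpi$ with $\varpi\mid p$, while your convergence step uses $a\equiv b\pmod\varpi\Rightarrow a^p\equiv b^p\pmod{\varpi^p}$, which needs roughly $\varpi^{p-1}\mid p$. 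The paper instead uses the milder estimate $x\equiv y\pmod{\varpi^n}\Rightarrow x^p\equiv y^p\pmod{\varpi^{n+1}}$ (for $n\geq 1$), which is valid under the weaker hypothesis $\varpi\mid p$ and so proves the statement at the generality claimed; this is an easy fix, but as written your step one proves slightly less. Second, a genuine (but minor) variation in the construction of $\varpi^\flat$: the paper picks a pseudo-uniformizer $\varpi_1$ with $\varpi_1^p\sim\varpi_0$ and then takes an \emph{arbitrary} preimage of $\varpi_1$ under the already-constructed projection $R^{\flat\circ}=\varprojlim_\Phi R^\circ/\varpi_0\to R^\circ/\varpi_0$, letting the lifting isomorphism do the work; you instead directly manufacture the compatible $p$-power system by iterated Frobenius lifts $y_{n+1}^p\equiv y_n\pmod p$ and telescoping, which is morally the same lift applied once more, a bit more explicit but entirely sound. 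Your step four fills in a detail (that every element of $R^\flat$ is $\varpi^\flat$-power times an element of $R^{\flat\circ}$) that the paper states without proof; the argument via $y^p\in R^\circ\Rightarrow y\in R^\circ$ is the correct one.
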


\begin{proof} Let $\varpi_0$ be a pseudo-uniformizer of $R$ such that $\varpi_0|p$ in $R^\circ$. Let us check that the map
\[
\varprojlim_{x\mapsto x^p} R^\circ \to \varprojlim_\Phi R^\circ/\varpi_0
\]
is an isomorphism. We have to see that any sequence $(\overline{x}_0,\overline{x}_1,\dots)\in \varprojlim_\Phi R^{\circ}/\varpi_0$ lifts uniquely to a sequence
$(x_0,x_1,\dots)\in \varprojlim_\Phi R^\circ$. This lift is given by $x^{(i)}=\lim_{n\to\infty} x_{n+i}^{p^n}$, where $x_j\in R^\circ$ is any lift of $\overline{x}_j$.  (For the convergence of that limit, note that if $x\equiv y\pmod{\varpi_0^n}$, then $x^p\equiv y^p\pmod{\varpi_0^{n+1}}$.) This shows that we get a well-defined ring
\[
\varprojlim_{x\mapsto x^p} R^\circ\subset \varprojlim_{x\mapsto x^p} R = R^\flat\ ,
\]
which agrees with the powerbounded elements $R^{\flat\circ}\subset R^\flat$.

Next, we construct the element $\varpi^\flat$. For this, we assume that $\varpi_0=\varpi_1^p$ for some pseudouniformizer $\varpi_1\in R$ such that $\varpi_0=\varpi_1^p|p$.  Any preimage of $\varpi_1$ under $R^{\flat\circ}=\varprojlim_\Phi R^{\circ}/\varpi_0\to R^{\circ}/\varpi_0$ is an element $\varpi^\flat$ with the right properties. It is congruent to $\varpi_1$ modulo $\varpi_0$, and therefore it is also a topologically nilpotent, and invertible in $R^\flat$. Then $\varpi=\varpi^{\flat\sharp}$ is the desired pseudo-uniformizer of $R^\circ$.

Now one sees that $R^\flat = R^{\flat\circ}[1/\varpi^\flat]$ as multiplicative monoids, and in fact as rings. This implies that the addition in $R^\flat$ is always well-defined, and defines a ring of characteristic $p$, which is perfect by design. The rest of the lemma follows easily.
\end{proof}

We have a continuous, multiplicative (but not additive) map $R^\flat= \varprojlim_{x\mapsto x^p} R\to R$ by projecting onto the zeroth coordinate; call this $f\mapsto f^\sharp$.  This projection defines a ring isomorphism $R^{\flat\circ}/\varpi^\flat\isom R^\circ/\varpi$.  The open and integrally closed subrings of $R^{\flat\circ}$ and $R^\circ$ correspond exactly to the integrally closed subrings of their common quotients modulo $\varpi^\flat$ and $\varpi$. This defines an inclusion-preserving bijection between the sets of open and integrally closed subrings of $R^{\flat\circ}$ and $R^\circ$.  This correspondence can be made more explicit:

\begin{lemma}\label{RingsOfIntegralElements} The set of open and integrally closed subrings $R^+\subset R^\circ$ is in bijection with the set of open and integrally closed subrings $R^{\flat+}\subset R^{\flat\circ}$, via $R^{\flat+}=\varprojlim_{x\mapsto x^p} R^+$.  Also, $R^{\flat+}/\varpi^\flat=R^+/\varpi$.$\hfill \Box$
\end{lemma}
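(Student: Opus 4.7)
The plan is to reduce both sides to the common quotient $R^\circ/\varpi \cong R^{\flat\circ}/\varpi^\flat$, by showing that on either side, an open and integrally closed subring is determined by, and can be recovered from, its image modulo the chosen pseudo-uniformizer.

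First I would check the following preliminary fact: any open and integrally closed subring $R^+\subset R^\circ$ contains $\varpi R^\circ$. Indeed, openness gives $\varpi^n R^\circ\subset R^+$ for some $n\geq 1$; for any $x\in \varpi R^\circ$, the element $x^n=\varpi^n(x/\varpi)^n\in R^+$, so $x$ satisfies the monic equation $T^n-x^n=0$ over $R^+$ and lies in $R^+$ by integral closedness. The analogous statement holds for $R^{\flat+}\subset R^{\flat\circ}$, using $\varpi^\flat$.

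Next, I would establish a bijection between open and integrally closed subrings $R^+\subset R^\circ$ and integrally closed subrings $\overline{R}^+\subset R^\circ/\varpi$, by the assignments $R^+\mapsto R^+/\varpi$ and $\overline{R}^+\mapsto \pi^{-1}(\overline{R}^+)$, where $\pi\colon R^\circ\to R^\circ/\varpi$ is the projection. The previous paragraph shows these are mutually inverse; openness of the preimage is automatic (it contains $\ker\pi=\varpi R^\circ$), and integral closedness descends and lifts because a monic relation in $R^\circ$ reduces to one in $R^\circ/\varpi$, while conversely, any integral relation for $x\in R^\circ$ over $\pi^{-1}(\overline{R}^+)$ reduces to an integral relation for $\bar{x}$ over $\overline{R}^+$. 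The same bijection holds with $R^\circ,\varpi$ replaced by $R^{\flat\circ},\varpi^\flat$. Composing with the ring isomorphism $R^{\flat\circ}/\varpi^\flat \isomto R^\circ/\varpi$ of Lemma~\ref{lem:tiltingexists} produces the desired bijection between open integrally closed subrings of $R^\circ$ and of $R^{\flat\circ}$.

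It remains to identify the bijection with $R^{\flat+}=\varprojlim_{x\mapsto x^p} R^+$. Fix $R^+$ and let $\overline{R}^+=R^+/\varpi$. Since $R^+\subset R^\circ$ is open and a subgroup, it is closed, hence $\varpi$-adically complete (as $R^\circ$ is). Repeating the multiplicative-lift argument from the proof of Lemma~\ref{lem:tiltingexists}, but now inside $R^+$, requires that $\Phi\colon R^+/\varpi \to R^+/\varpi^p$ be surjective; this follows because any lift $y\in R^\circ$ of an approximate $p$-th root of $x\in R^+$ satisfies $y^p\in R^++\varpi^p R^\circ\subset R^+$, so $y\in R^+$ by integral closedness. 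Consequently the natural map $\varprojlim_{x\mapsto x^p} R^+ \to \varprojlim_\Phi R^+/\varpi$ is an isomorphism, and the right-hand side is precisely the preimage of $\overline{R}^+$ under $R^{\flat\circ}=\varprojlim_\Phi R^\circ/\varpi \to R^{\flat\circ}/\varpi^\flat = R^\circ/\varpi$. This identifies $\varprojlim_{x\mapsto x^p}R^+$ with $R^{\flat+}$ and gives $R^{\flat+}/\varpi^\flat=R^+/\varpi$.

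The only mildly delicate point is the surjectivity of Frobenius on $R^+/\varpi$, which is what makes the $\varprojlim_{x\mapsto x^p} R^+$ description genuinely hit the subring $R^{\flat+}$ rather than a smaller one; everything else is formal manipulation of the inverse limits and the elementary descent of integral closedness along $\pi$.
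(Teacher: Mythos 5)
Your proof is correct and follows the same route the paper sketches in the sentences immediately preceding the lemma: pass to the common quotient $R^{\flat\circ}/\varpi^\flat\cong R^\circ/\varpi$, match up integrally closed subrings on either side, and then identify the explicit inverse-limit description of $R^{\flat+}$. One small clarification: the surjectivity of Frobenius on $R^+/\varpi$ that you flag as the delicate point is not actually needed here — what guarantees that $\varprojlim_{x\mapsto x^p} R^+$ is all of $R^{\flat+}$ (the preimage of $\overline{R}^+$ under $R^{\flat\circ}\to R^\circ/\varpi$), rather than a proper subring, is already the integral closedness of $R^+$ and of $\overline{R}^+$: if $x_0\in R^+$ then $x_1^p=x_0\in R^+$ forces $x_1\in R^+$, and so on inductively, with no appeal to Frobenius-surjectivity.
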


The following two theorems belong to a pattern of ``tilting equivalence''.  

\begin{theorem}[\cite{ScholzePerfectoidSpaces}, \cite{KedlayaLiu1}]\label{thm:tiltingtopspace} Let $R$ be a perfectoid Tate ring with an open and integrally closed subring $R^+\subset R$, with tilt $(R^\flat,R^{\flat+})$.

The map sending $x\in \Spa(R,R^+)$ to $x^\flat\in \Spa(R^\flat,R^{\flat +})$ defined by $|f(x^\flat)|=|f^\sharp(x)|$ defines a homeomorphism $\Spa(R,R^+)\isom \Spa(R^\flat,R^{\flat+})$. A subset $U\subset \Spa(R,R^+)$ is rational if and only if its image in $\Spa(R^\flat,R^{\flat +})$ is rational.
\end{theorem}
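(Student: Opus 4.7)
My plan is to split the theorem into three parts: (a) the assignment $x\mapsto x^\flat$ is a well-defined continuous map, (b) rational subsets of $\Spa(R,R^+)$ are exactly pullbacks of rational subsets of $\Spa(R^\flat,R^{\flat+})$, and (c) the map is a homeomorphism. The correspondence of rational subsets in (b) is what will give bijectivity (rational subsets separate continuous valuations), continuity, and the homeomorphism property all at once.

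\textbf{Step 1: $x\mapsto x^\flat$ is well-defined and continuous.} Fix $x\in\Spa(R,R^+)$ and set $|f|_{x^\flat}:=|f^\sharp|_x$ for $f\in R^\flat$. Multiplicativity is immediate from the multiplicativity of $(\,\cdot\,)^\sharp$. For additivity, recall that if $f=(f^{(0)},f^{(1)},\dots)$ and $g=(g^{(0)},g^{(1)},\dots)$, then $(f+g)^\sharp=\lim_{n}(f^{(n)}+g^{(n)})^{p^n}$ in $R$. Since $|f^{(n)}|_x=|f|_{x^\flat}^{1/p^n}$ and similarly for $g$, the ultrametric inequality in $R$ gives $|(f^{(n)}+g^{(n)})^{p^n}|_x\le\max(|f|_{x^\flat},|g|_{x^\flat})$ uniformly in $n$, and continuity of $|\cdot|_x$ (and its ultrametric character, which forces $|\lim a_n|_x\le c$ when the $|a_n|_x\le c$) yields the ultrametric inequality for $x^\flat$. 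Continuity of $x^\flat$ follows from continuity of $\sharp\colon R^\flat\to R$. The bound $|R^{\flat+}|_{x^\flat}\le 1$ is immediate since $\sharp$ sends $R^{\flat+}$ into $R^+$ by Lemma~\ref{RingsOfIntegralElements}.

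\textbf{Step 2: Approximation lemma.} I would prove that for every $f\in R^\circ$ and every $n\ge 1$ there exists $g\in R^{\flat\circ}$ such that $f\equiv g^\sharp\pmod{\varpi^n R^\circ}$. This is the key technical input and I expect it to be \emph{the main obstacle}. The proof uses the isomorphism $R^{\flat\circ}=\varprojlim_\Phi R^\circ/\varpi$ (Lemma~\ref{lem:tiltingexists}): iteratively lift the residue of $f$ modulo $\varpi$ to a compatible system of $p$-th roots, yielding an element $g_0\in R^{\flat\circ}$ with $g_0^\sharp\equiv f\pmod\varpi$; now apply the same procedure to $(f-g_0^\sharp)/\varpi\in R^\circ$ and iterate. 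Summing $g=g_0+\varpi^\flat g_1+\varpi^{2\flat}g_2+\cdots$ in $R^{\flat\circ}$ (which converges because $\varpi^\flat$ is topologically nilpotent and $R^{\flat\circ}$ is complete) and using that $\sharp$ is a continuous multiplicative map together with the addition formula from Step~1 gives the desired $g$.

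\textbf{Step 3: Correspondence of rational subsets.} Given a rational subset $U=\{x:|f_i(x)|\le|f_0(x)|\ne 0\}_{i=1}^n$ of $\Spa(R,R^+)$, I would use Step 2 to approximate each $f_i$ by $g_i^\sharp$ with $g_i\in R^\flat$ so closely (multiply through by a large power of $\varpi$ to normalize, then apply the lemma) that the inequality $|f_i|_x\le|f_0|_x$ is equivalent to $|g_i^\sharp|_x\le|g_0^\sharp|_x$ at every point $x\in\Spa(R,R^+)$; the standard trick is to pick the approximation error below $|f_0|_x$ uniformly, using that the set where $|f_0|$ is small is a quasicompact closed subset already excluded from $U$ (one subdivides $\Spa(R,R^+)$ finitely and handles each piece). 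Then $U$ is the pullback of the rational subset $\{|g_i|\le|g_0|\ne 0\}\subset\Spa(R^\flat,R^{\flat+})$. By symmetry of the argument (working with sharp in the other direction) every rational subset of $\Spa(R^\flat,R^{\flat+})$ pulls back to a rational subset of $\Spa(R,R^+)$.

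\textbf{Step 4: Conclusion.} Continuous valuations are determined by the collection of rational subsets containing them (this is how adic spectra are axiomatized), so Step 3 immediately implies $x\mapsto x^\flat$ is bijective. Since rational subsets form a basis of the topology on both sides and correspond under the map, $x\mapsto x^\flat$ is a homeomorphism and the final clause about rational subsets is exactly what was proved in Step 3.
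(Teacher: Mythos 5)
Note first that the paper cites Theorem~\ref{thm:tiltingtopspace} from \cite{ScholzePerfectoidSpaces} and \cite{KedlayaLiu1} rather than proving it, so there is no in-paper proof to compare against; I will compare to the standard argument of \cite{ScholzePerfectoidSpaces}. Your outline (well-definedness of $x\mapsto x^\flat$, an approximation lemma for $\sharp$, correspondence of rational subsets, conclusion) mirrors that route, and Steps~1 and~4 are fine. The gap lies in Steps~2 and~3.

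Your Step~2 asks for a \emph{uniform absolute} approximation: for every $f\in R^\circ$ and every $n$ there should exist $g\in R^{\flat\circ}$ with $f\equiv g^\sharp\pmod{\varpi^n}$. The iterative argument you sketch for this fails: since $\sharp$ is multiplicative but not additive, $(g_0+\varpi^\flat g_1+(\varpi^\flat)^2g_2+\cdots)^\sharp$ is \emph{not} $g_0^\sharp+\varpi g_1^\sharp+\varpi^2 g_2^\sharp+\cdots$; the addition law in $R^\flat$ introduces binomial cross terms whose $\varpi$-adic size does not shrink with the number of iterations, so the induction does not close. Moreover, even granting an absolute bound, it would not suffice for Step~3. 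What the proof actually needs, and what is proved in \cite[Lemma~6.5, Corollary~6.7]{ScholzePerfectoidSpaces}, is a \emph{relative} approximation: for any $c\geq 0$ and $\epsilon>0$ there is $g\in R^{\flat\circ}$ with $|(f-g^\sharp)(x)|\leq|\varpi|^{1-\epsilon}\max\bigl(|f(x)|,|\varpi|^c\bigr)$ for every $x\in\Spa(R,R^+)$. This is calibrated so that $|g^\sharp(x)|=|f(x)|$ wherever $|f(x)|\geq|\varpi|^c$, which is what lets one match the defining inequalities of rational subsets on the two sides. A fixed absolute bound $|\varpi|^N$ gives no control at points $x\notin U$ where $|f_0(x)|$ is nonzero but smaller than $|\varpi|^N$; there the comparison $|f_i(x)|\lessgtr|f_0(x)|$ is invisible to the approximants. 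Your suggestion to ``subdivide $\Spa(R,R^+)$ finitely'' does not repair this, since the elements $g_i\in R^\flat$ defining the tilted rational subset must be chosen once and globally; in the actual proof one instead uses the relative bound and adjoins an extra defining element $(\varpi^\flat)^c$ to the tilted rational subset to exclude the problematic locus. The relative approximation lemma, proved first via Gauss norms on $K^\circ\langle T_0^{1/p^\infty},\ldots,T_n^{1/p^\infty}\rangle$ and then descended to general perfectoid $R$, is the real content of the theorem, and your Step~2 does not supply it.
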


\begin{theorem}[\cite{ScholzePerfectoidSpaces}] \label{thm:tiltingequivalg} Let $R$ be a perfectoid Tate ring with tilt $R^\flat$. Then there is an equivalence of categories between perfectoid $R$-algebras and perfectoid $R^\flat$-algebras, via $S\mapsto S^\flat$.
\end{theorem}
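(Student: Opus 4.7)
The plan is to construct an inverse ``untilting'' functor and verify it is quasi-inverse to $S\mapsto S^\flat$. By Proposition~\ref{prop:perfectoidcharp}, a perfectoid $R^\flat$-algebra is a perfect complete Tate $R^\flat$-algebra, so it suffices to attach functorially to any such $T$ a perfectoid $R$-algebra $S$ with a natural isomorphism $S^\flat\cong T$, and to check that the two resulting compositions are naturally the identity.

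First I would fix a pseudo-uniformizer $\varpi\in R$ with $\varpi^p\mid p$ admitting compatible $p$-power roots, giving $\varpi^\flat\in R^{\flat\circ}$ as in Lemma~\ref{lem:tiltingexists}. Fontaine's surjection $\theta_R\colon W(R^{\flat\circ})\to R^\circ$, defined on Teichm\"uller lifts by $[f]\mapsto f^\sharp$, realizes $R^\circ$ as a $W(R^{\flat\circ})$-algebra whose kernel is principal, generated by an element $\xi\equiv[\varpi^\flat]\pmod\varpi$. Given a perfect complete Tate $R^\flat$-algebra $T$ with powerbounded subring $T^\circ$, I would then define
\[
S^\circ \;=\; \bigl(W(T^\circ)\otimes_{W(R^{\flat\circ})} R^\circ\bigr)^{\wedge},
\]
the $\varpi$-adic completion, and set $S=S^\circ[1/\varpi]$.

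The next step is to verify that $S$ is perfectoid and that its tilt recovers $T$. The Witt-vector description together with the mod-$\varpi$ identification of Lemma~\ref{RingsOfIntegralElements} gives $S^\circ/\varpi\cong T^\circ/\varpi^\flat$, and the right side is perfect because $T^\circ$ is. By Remark~\ref{rem:defperfectoid}, this verifies the Frobenius surjectivity condition for $S$ once one knows $S$ is complete uniform Tate; uniformity is transported from $T$ via the same mod-$\varpi$ identification and $\varpi$-adic completeness of $S^\circ$. The tilt is then recovered via
\[
S^{\flat\circ} \;\cong\; \varprojlim_\Phi S^\circ/\varpi \;\cong\; \varprojlim_\Phi T^\circ/\varpi^\flat \;\cong\; T^\circ,
\]
using Lemma~\ref{lem:tiltingexists} and perfectness of $T^\circ$; inverting $\varpi^\flat$ yields $S^\flat\cong T$. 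Conversely, starting from a perfectoid $R$-algebra $S$, the natural comparison map from the untilting of $S^\flat$ back to $S$ is an isomorphism because both sides are $\varpi$-torsion free, $\varpi$-adically complete, and reduce modulo $\varpi$ to the same quotient $S^\circ/\varpi\cong S^{\flat\circ}/\varpi^\flat$.

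The principal obstacle I expect is the careful control of the Witt-vector construction: one must show that $\xi$ remains a nonzerodivisor after base change from $W(R^{\flat\circ})$ to $W(T^\circ)$, that the natural map $W(T^\circ)\otimes_{W(R^{\flat\circ})}R^\circ\to S^\circ$ is genuinely surjective with principal kernel generated by $\xi$, and that the resulting $S^\circ$ is actually uniform rather than merely almost bounded. These facts rely essentially on perfectness of $T^\circ$ and on the explicit shape $\xi\equiv[\varpi^\flat]\pmod\varpi$, and are cleanest to establish in Gabber's almost-mathematics framework for the ideal $(\varpi^\flat)^{1/p^\infty}$, where perfectoid almost algebras over $R^\circ$ and $R^{\flat\circ}$ correspond formally via the same Witt-theoretic procedure; the honest (non-almost) statement is then recovered by imposing uniformity and passing between $S^\circ$ and its almost version.
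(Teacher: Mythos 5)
Your proposal takes essentially the same route as the paper: the paper's proof itself is a one-line citation of the original article, but the discussion immediately following the theorem (Lemma~\ref{lem:fontainestheta} and the equivalence with triples $(R,R^+,\mathcal{J})$) constructs the inverse functor via Fontaine's map $\theta\colon W(R^{\flat+})\to R^+$ with principal kernel, exactly as you do. Your formula $S^\circ=(W(T^\circ)\otimes_{W(R^{\flat\circ})}R^\circ)^\wedge$ is, after identifying $R^\circ\cong W(R^{\flat\circ})/(\xi)$, the same as $W(T^\circ)/\xi W(T^\circ)$, which is already $\varpi$-adically complete because $\xi\equiv p\pmod{[\varpi^\flat]}$ makes $(p,[\varpi^\flat])$ and $([\varpi^\flat])$ agree modulo $\xi$.

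One small correction: your stated congruence ``$\xi\equiv[\varpi^\flat]\pmod\varpi$'' is not quite sensible as written, since $\varpi\in R^\circ$ is not an element of $W(R^{\flat\circ})$; the property you actually use (and which Lemma~\ref{lem:fontainestheta} proves) is $\xi\equiv p\pmod{[\varpi^\flat]}$, from which $S^\circ/\varpi=W(T^\circ)/(\xi,[\varpi^\flat])=W(T^\circ)/(p,[\varpi^\flat])=T^\circ/\varpi^\flat$ follows as you say. The two genuine technical points you flag --- that $\xi$ remains a nonzerodivisor in $W(T^\circ)$, and that the resulting $S^\circ$ is bounded in $S$ so that $S$ is uniform --- are indeed where the real work lies; the first is Lemma~3.8 in the text (reducing modulo $[\varpi^\flat]$ and dividing), and the second is where the original reference invokes almost mathematics, as you anticipate.
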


\begin{proof} In \cite{ScholzePerfectoidSpaces}, these are only proved over a perfectoid field, but the proof works in general.
\end{proof}

Let us describe the inverse functor, along the lines of Fontaine's Bourbaki talk, \cite{FontaineBourbaki}.  In fact we will answer a more general question. Given a perfectoid Tate ring $R$ in characteristic $p$, what are all the untilts $R^\sharp$ of $R$?

\begin{lemma}\label{lem:fontainestheta} Let $R$ be a perfectoid Tate ring with a ring of integral elements $R^+\subset R$, and let $(R^\flat,R^{\flat +})$ be its tilt.
\begin{altenumerate}
\item There is a canonical surjective ring homomorphism 
\[\begin{aligned}
\theta: W(R^{\flat +})&\to R^+ \\
\sum_{n\geq 0} [r_n] p^n&\mapsto \sum_{n\geq 0}r_n^\sharp p^n 
\end{aligned}\]
\item The kernel of $\theta$ is generated by a nonzero-divisor $\xi$ of the form $\xi=p+[\varpi]\alpha$, where $\varpi\in R^{\flat +}$ is a pseudo-uniformizer, and $\alpha\in W(R^{\flat +})$. 
\end{altenumerate}
\end{lemma}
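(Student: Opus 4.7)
The plan for part (i) is to invoke the universal property of $p$-typical Witt vectors: if $S$ is a perfect $\mathbf{F}_p$-algebra and $A$ is $p$-adically complete and separated, ring homomorphisms $W(S)\to A$ are in bijection with ring homomorphisms $S\to A/p$. Here $R^{\flat+}$ is perfect by Proposition~\ref{prop:perfectoidcharp} applied to $R^\flat$, and $R^+$ is $p$-adically complete and separated since it is $\varpi$-adically complete and separated with $(p)\subset (\varpi^p)\subset R^+$, making the $p$-adic topology coarser. One then checks that $r\mapsto r^\sharp\bmod p$ is a ring homomorphism $R^{\flat+}\to R^+/p$: multiplicativity is immediate, while additivity modulo $p$ follows from the formula $(r+s)^\sharp=\lim_n(r^{(n)}+s^{(n)})^{p^n}$ of Definition~\ref{def:tilting} combined with the congruence $(a+b)^{p^n}\equiv a^{p^n}+b^{p^n}\pmod p$ (all intermediate binomial coefficients being divisible by $p$). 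Since $r^\sharp$ is already a multiplicative lift of $r^\sharp\bmod p$, the unique extension $\theta$ sends $[r]$ to $r^\sharp$, giving the stated formula $\theta(\sum[r_n]p^n)=\sum r_n^\sharp p^n$. For surjectivity, by $p$-adic completeness of both sides one reduces to surjectivity of $R^{\flat+}\to R^+/p$; this follows from the semi-perfectness of $R^+/p$ (a consequence of the perfectoid Frobenius isomorphism $R^\circ/\varpi\cong R^\circ/\varpi^p$), by iteratively constructing a compatible system of approximate $p$-power roots modulo $\varpi$ and reassembling them into an element of $\varprojlim_\Phi R^+/\varpi=R^{\flat+}$.

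For part (ii), we construct $\xi$ as follows: pick a pseudouniformizer $\varpi\in R^{\flat+}$ with $(\varpi^\sharp)^p\mid p$ (Lemma~\ref{lem:tiltingexists}) and arrange that $\beta:=-p/\varpi^\sharp$ lies in $R^+$ (possible after replacing $\varpi$ by a suitable root, since $\beta$ is topologically nilpotent and $R^+$ is open). By surjectivity, pick $\alpha\in W(R^{\flat+})$ with $\theta(\alpha)=\beta$, and set $\xi=p+[\varpi]\alpha$; then $\theta(\xi)=p+\varpi^\sharp\beta=0$. To show $\ker\theta=(\xi)$, we exploit that both $W(R^{\flat+})$ and $R^+$ are $p$-adically complete and separated, and that the ideal $(\xi)$ is $p$-adically closed, so it suffices to prove the induced surjection $W(R^{\flat+})/(\xi,p)\to R^+/p$ is an isomorphism. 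The left side identifies with $R^{\flat+}/(\varpi\bar\alpha)$, where $\bar\alpha:=\alpha\bmod p\in R^{\flat+}=W(R^{\flat+})/p$. Surjectivity of this reduction follows from part (i). For injectivity, given $r\in R^{\flat+}$ with $r^\sharp\in pR^+$, one writes $r^\sharp=-\varpi^\sharp\bar\alpha^\sharp\cdot y$ for some $y\in R^+$, lifts $y\bmod \varpi$ to some $\tilde y\in R^{\flat+}$ via the isomorphism $R^{\flat+}/\varpi^\flat\cong R^+/\varpi$, and observes that $r-\varpi\bar\alpha\tilde y$ lies in $\varpi^\flat R^{\flat+}$. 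Iterating this procedure and exploiting the $\varpi^\flat$-adic completeness of $R^{\flat+}$ expresses $r$ as $\varpi\bar\alpha$ times an element of $R^{\flat+}$.

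Finally, $\xi$ is a non-zero-divisor: if $\xi\eta=0$, reducing modulo $p$ gives $\varpi\bar\alpha\bar\eta=0$ in the $\varpi^\flat$-torsion-free ring $R^{\flat+}$. Since $\bar\alpha^\sharp=\beta\not\equiv 0$ in the reduced ring $R^+$, a tilting argument shows $\bar\alpha$ is a non-zero-divisor in $R^{\flat+}$, whence $\bar\eta=0$ and $\eta\in pW(R^{\flat+})$. Using the $p$-torsion-freeness of $W(R^{\flat+})$ (standard for Witt vectors of a reduced perfect $\mathbf{F}_p$-algebra) together with $p$-adic separation, one descends to conclude $\eta=0$. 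The chief obstacle will be the injectivity step in the kernel computation, which requires a delicate iterative lifting argument carefully coordinating the $\varpi^\flat$-adic topology on $R^{\flat+}$ with the $p$-adic topology on $W(R^{\flat+})$ and $R^+$.
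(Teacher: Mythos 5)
Your part (i) takes a genuinely different but valid route: the paper verifies that $\theta$ is a ring homomorphism by comparing with the ghost map modulo $\varpi^m$, whereas you invoke the universal property of Witt vectors of a perfect $\mathbf{F}_p$-algebra mapping to a $p$-adically complete target. This is clean, though you should say a word about why $R^+$ is $p$-adically complete: trivial in characteristic $p$, and in mixed characteristic the $p$-adic and $\varpi$-adic topologies coincide (since $\varpi^p\mid p$ and $p$ is itself a pseudo-uniformizer), so $\varpi$-adic completeness suffices. The surjectivity argument is essentially the paper's.

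In part (ii), your choice to reduce modulo $p$ rather than modulo $[\varpi^\flat]$ creates real gaps. The paper reduces mod $[\varpi^\flat]$ precisely because the isomorphism $W(R^{\flat+})/(\xi,[\varpi^\flat]) = W(R^{\flat+})/(p,[\varpi^\flat]) = R^{\flat+}/\varpi^\flat \cong R^+/\varpi$ is tautological (there is no injectivity to check), and because $[\varpi^\flat]$-torsion-freeness of $W(R^{\flat+})/\xi$ can be read off directly from the Teichm\"uller description of $[\varpi^\flat]W(R^{\flat+})$, with no non-zero-divisor hypotheses on $\bar\alpha$. Your route loses both of these advantages. Concretely: (a) the assertion that $(\xi)$ is $p$-adically closed is stated without proof; unwinding it requires that $W(R^{\flat+})/\xi$ be $p$-torsion-free, which reduces precisely to $\bar\alpha$ being a non-zero-divisor in $R^{\flat+}$. (b) Your justification for this --- ``$\bar\alpha^\sharp = \beta\not\equiv 0$ in the reduced ring $R^+$'' --- does not work: a nonzero element of a reduced ring need not be a non-zero-divisor, and in any case $\bar\alpha^\sharp=\theta([\bar\alpha])$ equals $\theta(\alpha)=\beta$ only modulo $p$. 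The correct reason is that $\varpi^\sharp\bar\alpha^\sharp$ differs from $-p$ by a $1$-unit (since $\theta(\alpha)-\bar\alpha^\sharp\in pR^+$), hence generates the same ideal as $p$; in characteristic $0$ this makes $\bar\alpha^\sharp$ a non-zero-divisor, and then untilting through $R^{\flat+}=\varprojlim_{x\mapsto x^p}R^+$ (using reducedness of $R^+$ to pass from $a^p$ non-zero-divisor to $a$ non-zero-divisor) gives that $\bar\alpha$ is one too. (c) In characteristic $p$ the whole scheme breaks down since $R^+$ is not $p$-torsion-free; this case must be handled separately (it is degenerate: $\ker\theta=(p)$ and one takes $\alpha=0$).

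Finally, the ``iterative lifting'' sketch for injectivity of $R^{\flat+}/\varpi\bar\alpha\to R^+/p$ is imprecise: the stated observation that $r-\varpi\bar\alpha\tilde y\in\varpi R^{\flat+}$ is vacuous, since both $r$ and $\varpi\bar\alpha\tilde y$ are already there. The iteration does converge, but the content one must extract is that $(r-\varpi\bar\alpha\tilde y)/\varpi$ again has $\sharp$-image in $pR^+$ (using $pR^+=\varpi^\sharp\bar\alpha^\sharp R^+$ and the fact that $\sharp$ is a ring map modulo $p$), so that each step gains a factor of $\varpi$ in front of the correction term. All of this is repairable, but taken together it shows that the paper's $[\varpi^\flat]$-adic devissage is materially simpler than the $p$-adic one you propose.
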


Related results are discussed in \cite{FontaineBourbaki}, \cite{FarguesFontaine} and \cite[Theorem 3.6.5]{KedlayaLiu1}.

\begin{definition} An ideal $I\subset W(R^{\flat +})$ is {\em primitive of degree 1} if $I$ is generated by an element of the form $\xi=p+[\varpi^\flat]\alpha$, with $\varpi^\flat\in R^{\flat +}$ a pseudo-uniformizer and $\alpha\in W(R^{\flat +})$.
\end{definition}

An element $\xi$ of this form is necessarily a non-zero-divisor:

\begin{lemma}  Any $\xi$ of the form $\xi=p+[\varpi^\flat]\alpha$, with $\varpi^\flat\in R^{\flat +}$ a pseudo-uniformizer and $\alpha\in W(R^+)$, is a non-zero-divisor.  
\end{lemma}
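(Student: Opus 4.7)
The plan is to show that any $\eta \in W(R^{\flat+})$ with $\xi \eta = 0$ lies in $[\varpi^\flat]^n W(R^{\flat+})$ for every $n \geq 0$, and then to conclude $\eta = 0$ from $[\varpi^\flat]$-adic separation of $W(R^{\flat+})$. Two preliminary facts are needed: (a) $W(R^{\flat+})$ is $p$-torsion free, because $R^{\flat+} \cong \varprojlim_{x \mapsto x^p} R^+$ is a perfect $\F_p$-algebra (Lemma~\ref{RingsOfIntegralElements}); and (b) $[\varpi^\flat]$ is a non-zero-divisor in $W(R^{\flat+})$. For (b), the formula $[a] \cdot (s_0, s_1, s_2, \ldots) = (a s_0, a^p s_1, a^{p^2} s_2, \ldots)$ in Witt coordinates (valid over any perfect $\F_p$-algebra), combined with the fact that $\varpi^\flat$, being a unit in the Tate ring $R^\flat$, is a non-zero-divisor in $R^{\flat+}$, shows that $[\varpi^\flat]$ acts injectively.

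The crucial step is then to prove that $W(R^{\flat+})/[\varpi^\flat]$ is $p$-torsion free. I would apply $- \otimes_\Z \Z/p$ to the short exact sequence
\[
0 \to W(R^{\flat+}) \xrightarrow{[\varpi^\flat]} W(R^{\flat+}) \to W(R^{\flat+})/[\varpi^\flat] \to 0,
\]
and use $W(R^{\flat+})/p \cong R^{\flat+}$ (since $R^{\flat+}$ is perfect) to obtain the exact sequence
\[
0 \to \mathrm{Tor}_1^\Z(W(R^{\flat+})/[\varpi^\flat], \Z/p) \to R^{\flat+} \xrightarrow{\varpi^\flat} R^{\flat+}.
\]
The right-hand arrow is injective, so the $\mathrm{Tor}$ term vanishes, proving the claim. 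Given this, induction produces $\eta \in [\varpi^\flat]^n W(R^{\flat+})$ for all $n$: from $\xi \eta = 0$ one gets $p\eta = -[\varpi^\flat]\alpha\eta \in [\varpi^\flat]W(R^{\flat+})$, so $\eta = [\varpi^\flat]\eta_1$; since $[\varpi^\flat]$ is a non-zero-divisor, $\xi \eta_1 = 0$, and one iterates.

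To conclude, observe that $W(R^{\flat+})$ is $[\varpi^\flat]$-adically separated: in Witt coordinates the ideal $[\varpi^\flat]^n W(R^{\flat+})$ consists of those $(s_0, s_1, \ldots)$ with $s_i \in (\varpi^\flat)^{n p^i} R^{\flat+}$, and the intersection over $n$ vanishes by $\varpi^\flat$-adic separation of $R^{\flat+}$ (which is $\varpi^\flat$-adically complete as an open integrally closed subring of the Tate ring $R^\flat$). The main obstacle I expect is the $p$-torsion-freeness of $W(R^{\flat+})/[\varpi^\flat]$; the surrounding argument is a routine regularity and separation chase on Witt vectors, and the key leverage comes from the interplay of the two non-zero-divisors $p$ and $[\varpi^\flat]$.
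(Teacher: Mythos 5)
Your proof is correct, and the overall logic is the same as the paper's: show that $\xi\eta=0$ forces $\eta$ to be divisible by $[\varpi^\flat]$, iterate, and conclude $\eta=0$ from $[\varpi^\flat]$-adic separatedness. The paper reaches the key divisibility by writing $\eta = \sum_{n\geq 0}[c_n]p^n$ as a Teichm\"uller expansion, reducing the relation modulo $[\varpi^\flat]$ to $\sum[c_n]p^{n+1}\equiv 0$, and reading off directly from the coordinate description of $[\varpi^\flat]W(R^{\flat+})$ that every $c_n\in\varpi^\flat R^{\flat+}$; this is precisely the assertion that $W(R^{\flat+})/[\varpi^\flat]$ is $p$-torsion-free, which you instead isolate as a lemma and prove by the $\mathrm{Tor}$-long-exact-sequence applied to $0\to W\xrightarrow{[\varpi^\flat]}W\to W/[\varpi^\flat]\to 0$ together with $W/p\cong R^{\flat+}$. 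Your homological phrasing buys a bit of robustness and transparency of hypothesis (it makes explicit that the two inputs are $p$-torsion-freeness of $W$ and injectivity of $\varpi^\flat$ on $R^{\flat+}$), while the paper's Teichm\"uller computation is shorter and stays closer to the Witt vector coordinates; in substance the arguments are interchangeable.
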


\begin{proof} Assume that $\xi\sum_{n\geq 0} [c_n]p^n=0$. Modulo $[\varpi^\flat]$, this reads $\sum_{n\geq 0}[c_n]p^{n+1}\equiv 0\pmod{[\varpi^\flat]}$, meaning that all $c_n\equiv 0\pmod{\varpi^\flat}$. We can then divide all $c_n$ by $\varpi^\flat$, and induct.
\end{proof}

\begin{proof}[Proof of Lemma~\ref{lem:fontainestheta}.]
For part (i), we first check that $\theta$ is a ring map. It is enough to check modulo $\varpi^m$ for any $m\geq 1$. For this, we use that the $m$-th ghost map
\[
W(R^+)\to R^+/\varpi^m\ :\ (x_0,x_1,\ldots)\mapsto \sum_{n=0}^m x_n^{p^{m-n}} p^n
\]
factors uniquely over $W(R^+/\varpi)$, by obvious congruences; the induced map $W(R^+/\varpi)\to R^+/\varpi^m$ must be a ring homomorphism. Now the composite
\[
W(R^{\flat +})\to W(R^+/\varpi)\to R^+/\varpi^m\ ,
\]
where the first map is given by the $m$-th component map $R^{\flat +}=\varprojlim_{x\mapsto x^p} R^+/\varpi\to R^+/\varpi$, is a ring map, which we claim is equal to $\theta$ modulo $\varpi^m$. This is a direct verification from the definitions.

For surjectivity of $\theta$, choose a pseudouniformizer $\varpi^\flat$ of $R^\flat$ such that the $p$-th power of $\varpi=(\varpi^\flat)^\sharp$ divides $p$. We know that $R^{\flat +}\to R^+/\varpi$ is surjective, which shows that $\theta$ mod $[\varpi^\flat]$ is surjective. As everything is $[\varpi^\flat]$-adically complete, this implies that $\theta$ is surjective.

For part (ii), we claim first that there exists $f\in \varpi^\flat R^{\flat +}$ such that $f^\sharp\equiv p\pmod{p\varpi R^+}$. Indeed, consider $\alpha=p/\varpi\in R^+$. There exists $\beta\in R^{\flat +}$ such that $\beta^\sharp\equiv \alpha\pmod{pR^+}$.  Then $(\varpi^\flat \beta)^\sharp=\varpi \alpha\equiv p\pmod{p\varpi R^+}$, and we can take $f=\varpi^\flat\beta$.

Thus we can write $p=f^\sharp+p\varpi^\sharp\sum_{n\geq 0}r_n^\sharp p^n$, with $r_n\in R^{\flat +}$. We can now define $\xi=p-[f]-[\varpi^\flat]\sum_{n\geq 0} [r_n]p^{n+1}$, which is of the desired form, and which lies in the kernel of $\theta$. Finally we need to show that $\xi$ generates $\ker(\theta)$. For this, note that $\theta$ induces a surjective map $f: W(R^{\flat +})/\xi\to R^+$. It is enough to show that $f$ is an isomorphism modulo $[\varpi^\flat]$, because $W(R^{\flat +})/\xi$ is $[\varpi^\flat]$-torsion free and $[\varpi^\flat]$-adically complete. But
\[
W(R^{\flat +})/(\xi,[\varpi^\flat])=W(R^{\flat +})/(p,[\varpi^\flat])=R^{\flat +}/\varpi^\flat=R^+/\varpi\ ,
\]
as desired.
\end{proof}

It is now straightforward to check the following theorem.

\begin{theorem}[\cite{KedlayaLiu1}, \cite{FontaineBourbaki}] There is an equivalence of categories between:
\begin{altenumerate}
\item Pairs $(S,S^+)$ of a perfectoid Tate ring $S$ and an open and integrally closed subring $S^+\subset S^\circ$, and
\item Triples $(R,R^+,\mathcal{J})$, where $R$ is a perfectoid Tate ring of characteristic $p$, $R^+\subset R^\circ$ is an open and integrally closed subring, and $\mathcal{J}\subset W(R^+)$ is an ideal which is primitive of degree $1$.
\end{altenumerate}
The functors are given by $(S,S^+)\mapsto (S^\flat,S^{\flat+},\ker\theta)$ and $(R,R^+,\mathcal{J})\mapsto (W(R^+)[[\varpi]^{-1}]/\mathcal{J},W(R^+)/\mathcal{J})$, where $\varpi\in R$ is any pseudouniformizer.$\hfill \Box$
\end{theorem}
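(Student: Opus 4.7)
The plan is to construct the inverse to $(S,S^+)\mapsto (S^\flat,S^{\flat+},\ker\theta)$ and verify mutual inversion. The forward functor is already essentially established by what precedes: Proposition~\ref{prop:perfectoidcharp} gives that $S^\flat$ is a perfectoid Tate ring of characteristic $p$, Lemma~\ref{RingsOfIntegralElements} gives that $S^{\flat+}=\varprojlim_{x\mapsto x^p}S^+$ is an open integrally closed subring of $S^{\flat\circ}$, and Lemma~\ref{lem:fontainestheta} shows that $\ker\theta$ is primitive of degree $1$. So the content is the construction of the inverse and the check that both compositions are the identity.

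For the inverse, given $(R,R^+,\mathcal{J}=(\xi))$ with $\xi=p+[\varpi^\flat]\alpha$, define $S^+:=W(R^+)/\mathcal{J}$, equip it with the $[\varpi^\flat]$-adic topology, and set $S:=S^+[1/[\varpi^\flat]]$. First, exactly as in the proof that $\xi$ is a non-zero-divisor, $[\varpi^\flat]$ acts injectively on $S^+$, and $S^+$ is $[\varpi^\flat]$-adically complete with $S^+/[\varpi^\flat]\cong R^+/\varpi^\flat$, mirroring the final paragraph of the proof of Lemma~\ref{lem:fontainestheta}. Using perfectness of $R$, I would replace $\varpi^\flat$ by a chosen $p$-th root and set $\tilde{\pi}:=[(\varpi^\flat)^{1/p}]\in S^+$; then $\tilde{\pi}^p=[\varpi^\flat]$, while the congruence $p\equiv -[\varpi^\flat]\alpha\pmod{\mathcal{J}}$ shows $\tilde{\pi}^p\mid p$ in $S^+$. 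The reductions $S^+/\tilde{\pi}\cong R^+/(\varpi^\flat)^{1/p}$ and $S^+/\tilde{\pi}^p\cong R^+/\varpi^\flat$ identify Frobenius on $S^+$ with Frobenius on $R^+$, which is bijective by perfectness, so $S$ becomes a perfectoid Tate ring, with $S^+$ open and bounded and thus contained in $S^\circ$; integral closedness of $S^+$ in $S^\circ$ follows by the reverse direction of the Lemma~\ref{RingsOfIntegralElements} correspondence applied to the reduction modulo $\tilde{\pi}^p$.

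To verify mutual inversion, starting from $(S,S^+)$ and going forward then back yields $W(S^{\flat+})/\ker\theta=S^+$ tautologically, from surjectivity of $\theta$ and the identification of its kernel in Lemma~\ref{lem:fontainestheta}. Starting from $(R,R^+,\mathcal{J})$ and tilting the constructed untilt gives $S^{\flat+}=\varprojlim_{\Phi}S^+/\tilde{\pi}=\varprojlim_{\Phi}R^+/(\varpi^\flat)^{1/p}$, which equals $R^+$ canonically because $R^+$ is already perfect by Proposition~\ref{prop:perfectoidcharp}; and $\ker\theta$ traces back to $\mathcal{J}$ by construction. The main obstacle is the topological and ring-theoretic bookkeeping in the inverse construction: one must confirm simultaneously that $S^+$ ends up open, bounded, and integrally closed in $S^\circ$, that uniformity and the perfectoid condition transfer through the quotient, and that the choice of pseudouniformizer does not matter. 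All of this is controlled cleanly by the identification $S^+/\tilde{\pi}^p\cong R^+/\varpi^\flat$ and perfectness of $R^+$, reducing the verification to an essentially formal Witt vector manipulation rather than a conceptual leap.
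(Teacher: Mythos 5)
Your overall strategy is correct and matches the standard argument (the paper itself gives no proof, deferring to \cite{KedlayaLiu1} and \cite{FontaineBourbaki}): the forward functor lands where it should by the preceding lemmas, and for the inverse one sets $S^+=W(R^+)/\mathcal{J}$, $S=S^+[1/[\varpi]]$. The Witt-vector bookkeeping you carry out --- injectivity of $[\varpi^\flat]$ on $S^+$, $[\varpi^\flat]$-adic completeness, the identifications $S^+/\tilde\pi\cong R^+/(\varpi^\flat)^{1/p}$ and $S^+/\tilde\pi^p\cong R^+/\varpi^\flat$, the computation $\varprojlim_{x\mapsto x^p}S^+\cong R^+$, and the mutual-inversion check --- is all accurate.

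The gap is in the passage from statements about $S^+$ to statements about $S^\circ$. Definition~\ref{def:perfectoidring} requires $\Phi\colon S^\circ/\tilde\pi\to S^\circ/\tilde\pi^p$ to be an isomorphism and requires $S^\circ$ to be bounded; you verify the Frobenius isomorphism only on $S^+$, which is contained in $S^\circ$ but not a priori equal or even almost equal to it, and then declare $S$ perfectoid. The bridge needed here is an auxiliary lemma not contained in the paper's preceding material (essentially \cite[Lemma 3.6.2, Theorem 3.6.5]{KedlayaLiu1}): an open bounded subring $A_0$ of a complete Tate ring $A$ with a pseudouniformizer $\varpi$ satisfying $\varpi^p\mid p$ in $A_0$ and $\Phi\colon A_0/\varpi\to A_0/\varpi^p$ surjective forces $A$ to be perfectoid with $A^\circ\subset\varpi^{-1}A_0$. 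Without it, the step is unjustified. The same issue makes your integral-closedness argument circular: Lemma~\ref{RingsOfIntegralElements} is a statement about a perfectoid ring and its tilt, so invoking its ``reverse direction'' for $S$ presupposes the identification $S^\flat\cong R$, which in turn requires identifying $S^\circ$ --- precisely what is at issue. A naive direct argument also stalls: from $x=s\tilde\pi^{-N}$ integral over $S^+$ one gets $s^n\in\tilde\pi^N S^+$, but $S^+/\tilde\pi\cong R^+/(\varpi^\flat)^{1/p}$ has nilpotents, so this does not force $s\in\tilde\pi S^+$. Once the auxiliary lemma is supplied the architecture of your proof is sound: $S^\circ$ is pinned down almost-isomorphically to $S^+$, your $\varprojlim$ computation then yields $S^\flat\cong R$, and Lemma~\ref{RingsOfIntegralElements} becomes legitimately applicable.
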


Now we can define perfectoid spaces.

\begin{theorem}[{\cite[Thm. 6.3]{ScholzePerfectoidSpaces}},{\cite[Thm. 3.6.14]{KedlayaLiu1}}] \label{thm:perfectoidringssheafy} Let $R$ be a perfectoid Tate ring with an open and integrally closed subring $R^+\subset R^\circ$, and let $X=\Spa(R,R^+)$. Then $\OO_X$ is a sheaf, and for all rational subsets $U\subset X=\Spa(R,R^+)$, $\OO_X(U)$ is again perfectoid.

If $(R^\flat,R^{\flat +})$ is the tilt of $(R,R^+)$ and $X^\flat=\Spa(R^\flat,R^{\flat +})$, then recall that by Theorem~\ref{thm:tiltingtopspace}, $|X|\cong |X^\flat|$, identifying rational subsets. Moreover, for any rational subset $U\subset X$ with image $U^\flat\subset X^\flat$, $\OO_X(U)$ is perfectoid with tilt $\OO_{X^\flat}(U^\flat)$.
\end{theorem}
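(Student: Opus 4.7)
The plan is to first establish everything in characteristic $p$, where perfectoid Tate rings are simply perfect complete Tate rings by Proposition~\ref{prop:perfectoidcharp}, and then transport the results to mixed characteristic via the tilting equivalence of Theorem~\ref{thm:tiltingequivalg}.

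\textbf{Step 1 (Characteristic $p$, perfectoid property on rational subsets).} Assume $R$ has characteristic $p$. A rational subset $U\subset X$ is cut out by parameters $f_1,\dots,f_n,g\in R$ with $(f_1,\dots,f_n)=R$, and $\OO_X(U)=R\langle f_1/g,\dots,f_n/g\rangle$ is the uniform completion of the Tate localization. Since polynomial rings and localizations preserve perfectness, the algebra $R[f_1/g,\dots,f_n/g]$ is perfect; one checks that its uniform completion remains perfect (Frobenius is uniformly continuous, so extends to an isomorphism on the completion). Completeness holds by construction, and Banach's open mapping theorem together with the argument of Proposition~\ref{prop:perfectoidcharp} yields uniformity. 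Hence $\OO_X(U)$ is a perfect complete Tate ring, i.e.~perfectoid.

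\textbf{Step 2 (Characteristic $p$, sheaf property).} It suffices to check that for a standard rational covering $\{U_i\}$ of $X=\Spa(R,R^+)$, the augmented \v{C}ech complex
\[
0\to R\to \prod_i \OO_X(U_i)\to \prod_{i,j}\OO_X(U_i\cap U_j)\to \cdots
\]
is exact. Since $R$ is perfect, one can extract $p$-power roots of pseudo-uniformizers $\varpi$. Reducing modulo $\varpi$ and using perfectness, the \v{C}ech complex for $R^+/\varpi$ with respect to the induced cover is shown to be almost acyclic in the sense of $R^+$-almost mathematics with respect to $(\varpi^{1/p^\infty})$; this is a direct calculation with the explicit form of standard rational coverings. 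Uniformity of $R$ promotes the almost statement to a strict statement, giving sheafiness.

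\textbf{Step 3 (Tilting parameters).} In the mixed characteristic case, given a rational subset $U\subset X$ with parameters $f_1,\dots,f_n,g\in R$, we use Lemma~\ref{RingsOfIntegralElements}: since $R^{\flat+}/\varpi^\flat\cong R^+/\varpi$, we may replace each parameter by the sharp of a lift from $R^\flat$, modifying by an element of $\varpi R^+$. By a standard continuity argument for rational subsets (small perturbations of generators give the same rational subset when $g$ remains topologically nilpotent-free on $U$), this does not change $U$. We thereby obtain a rational subset $U^\flat\subset X^\flat$ matching $U$ under the homeomorphism of Theorem~\ref{thm:tiltingtopspace}, with parameters $f_i^\flat, g^\flat\in R^\flat$ satisfying $(f_i^\flat)^\sharp = f_i$, $(g^\flat)^\sharp=g$.

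\textbf{Step 4 (Untilting rational localizations).} By Step 1, $A^\flat:=\OO_{X^\flat}(U^\flat)$ is perfectoid. Applying Theorem~\ref{thm:tiltingequivalg} to the perfectoid $R^\flat$-algebra $A^\flat$, we obtain a perfectoid $R$-algebra $A$ with $A^\flat$ as its tilt. One checks that $A$ satisfies the universal property of the rational localization $\OO_X(U)$: it is a complete $R$-algebra in which $f_i/g$ is powerbounded (this is visible after tilting) and initial with this property. Hence $\OO_X(U)=A$ is perfectoid with tilt $\OO_{X^\flat}(U^\flat)$. In particular $\OO_X(U)$ is well-defined (independent of presentation) and functorial in $U$.

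\textbf{Step 5 (Sheaf property in mixed characteristic).} By Step 4, the \v{C}ech complex computing $H^i(U,\OO_X)$ for a rational covering of a rational open $U\subset X$ is obtained, after passage to powerbounded elements modulo $\varpi$, from the analogous \v{C}ech complex on $X^\flat$ modulo $\varpi^\flat$. By Step 2 the latter is (almost) acyclic, so the former is almost acyclic; uniformity promotes this to a strict statement, giving the sheaf property and vanishing.

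The main obstacle is Step 2, the sheafiness in characteristic $p$: one must verify the almost acyclicity of the \v{C}ech complex for a standard rational covering of a perfectoid affinoid in characteristic $p$, which is the essential computation underlying the entire theorem. Everything else is a formal consequence of tilting once the approximation of parameters in Step 3 is carried out carefully.
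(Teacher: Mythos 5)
The paper does not give a proof of this theorem; it cites \cite[Thm.~6.3]{ScholzePerfectoidSpaces} and \cite[Thm.~3.6.14]{KedlayaLiu1}. Your sketch follows the same general architecture as those proofs (establish the statement in characteristic $p$, where perfectoid means perfect complete Tate; then transport to mixed characteristic by tilting via approximation of parameters), so the overall strategy is sound.

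There is, however, a real gap in Step~1 that you should not gloss over. Huber's rational localization $\OO_X(U)=R\langle f_1/g,\dots,f_n/g\rangle$ is by definition the completion of $R[f_1/g,\dots,f_n/g]$ with respect to the topology for which $R^+[f_1/g,\dots,f_n/g]$ is an open bounded subring; it is \emph{not} defined as a uniform completion, and in general a rational localization of a uniform Tate ring can fail to be uniform (this is precisely the well-known failure of sheafiness for general Tate rings). You assert that $\OO_X(U)$ ``is the uniform completion of the Tate localization,'' but that identification is a central part of what must be proven. Your argument that Frobenius extends to an isomorphism on the completion does not close the gap either: Frobenius maps $R^+[f_i/g]$ into $R^+[f_i^p/g^p]$, which is strictly smaller, so $\Phi^{-1}$ is not obviously continuous for Huber's topology, and you cannot invoke the open mapping theorem until surjectivity of $\Phi$ on the completion is established. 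The standard route (Scholze's Lemma~6.4, and the analogous argument in Kedlaya--Liu) instead constructs a manifestly perfect candidate ring by adjoining $p$-power roots, $R\langle (f_1/g)^{1/p^\infty},\dots,(f_n/g)^{1/p^\infty}\rangle$, checks that it is perfectoid, and then shows it satisfies the universal property of the rational localization by comparing the two topologies (using that $R^+[(f_i/g)^{1/p^\infty}]$ is integral over $R^+[f_i/g]$, up to almost mathematics). That comparison is a genuine ingredient and belongs alongside Step~2 as one of the two essential computations, not as a formality. Steps~3--5 are correct in spirit; for Step~3 one should be more precise that the approximation of parameters must be to precision $\varpi^N$ for $N$ large enough (chosen depending on the rational subset), using $R^{\flat+}/(\varpi^\flat)^N\cong R^+/\varpi^N$, not just modulo $\varpi$.
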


\begin{definition}\label{def:perfectoidspace} A {\em perfectoid space} is an adic space covered by open subspaces which are isomorphic to $\Spa(R,R^+)$, where $R$ is a perfectoid Tate ring, and $R^+\subset R^\circ$ is an open and integrally closed subring.
\end{definition}

The tilting process glues to give a functor $X\mapsto X^\flat$. Theorem~\ref{thm:tiltingequivalg} globalizes to the following result.

\begin{corollary}\label{cor:tiltingequiv} Let $X$ be a perfectoid space with tilt $X^\flat$. Then the functor $X^\prime\mapsto X^{\prime\flat}$ from perfectoid spaces over $X$ to perfectoid spaces over $X^\flat$ is an equivalence of categories.
\end{corollary}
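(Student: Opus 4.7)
The plan is to bootstrap from the affinoid case, which is Theorem~\ref{thm:tiltingequivalg}, by gluing. The essential tool is Theorem~\ref{thm:tiltingtopspace} together with Theorem~\ref{thm:perfectoidringssheafy}, which gives a canonical homeomorphism $|Y|\cong|Y^\flat|$ identifying rational subsets, and matches the perfectoid rings of sections on each side under tilting.

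For full faithfulness, let $Y$ and $Z$ be perfectoid spaces over $X$. Choose affinoid perfectoid open covers $Y=\bigcup U_i$, $Z=\bigcup V_j$; these transport under tilting to covers $Y^\flat=\bigcup U_i^\flat$, $Z^\flat=\bigcup V_j^\flat$ with identical combinatorics of rational subsets. A morphism $Y\to Z$ over $X$ is the datum of a compatible family of morphisms of affinoid perfectoids over $X$ on a common refinement. On each affinoid piece, Theorem~\ref{thm:tiltingequivalg} identifies morphisms of perfectoid $R$-algebras with morphisms of the tilted perfectoid $R^\flat$-algebras, and the compatibility conditions on overlaps are preserved by tilting. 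Hence one obtains a bijection $\Hom_X(Y,Z)\cong\Hom_{X^\flat}(Y^\flat,Z^\flat)$.

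For essential surjectivity, let $Y'$ be a perfectoid space over $X^\flat$, and choose an affinoid perfectoid cover $Y'=\bigcup U_i'$ with $U_i'=\Spa(S_i',S_i'^{+})$, each naturally a perfectoid $R^\flat$-algebra. By Theorem~\ref{thm:tiltingequivalg} each $(S_i',S_i'^{+})$ is canonically the tilt of a perfectoid $R$-algebra $(S_i,S_i^+)$, producing affinoid perfectoids $U_i=\Spa(S_i,S_i^+)$ over $X$ with $U_i^\flat\cong U_i'$. Pairwise overlaps $U_i'\cap U_j'$ can be covered by rational subsets which are rational in both $U_i'$ and $U_j'$; by Theorem~\ref{thm:perfectoidringssheafy} each such rational subset corresponds uniquely to a rational subset of $U_i$ and of $U_j$, with sections matched under tilting. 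Hence the gluing data on $Y'$ transports to gluing data on the $U_i$, and the cocycle condition follows from functoriality of the affinoid tilting equivalence. Gluing produces a perfectoid space $Y$ over $X$ with $Y^\flat\cong Y'$.

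The main obstacle is precisely this gluing coherence: one has to verify that two untilts $U_i,U_j$ agree as perfectoid spaces over $X$ on a common refinement of $U_i'\cap U_j'$ by rational subsets, and that the resulting identifications satisfy the cocycle condition on triple overlaps. This reduces, via the matching of rational subsets in Theorem~\ref{thm:tiltingtopspace} and the identification of sections in Theorem~\ref{thm:perfectoidringssheafy}, to the uniqueness of the untilt at the level of perfectoid rings, which is exactly the content of Theorem~\ref{thm:tiltingequivalg}. Once this is invoked on each pairwise and triple intersection, everything else is formal.
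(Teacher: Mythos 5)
Your proof is correct and follows essentially the same route as the paper: the paper states the corollary without proof as the "globalization" of the affinoid equivalence (Theorem~\ref{thm:tiltingequivalg}), and your argument is precisely the expected gluing verification, using the matching of rational subsets from Theorems~\ref{thm:tiltingtopspace} and~\ref{thm:perfectoidringssheafy} together with uniqueness of untilts to handle overlaps and cocycle conditions.
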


Finally, let us recall some almost mathematics. Let $R$ be a perfectoid Tate ring.  

\begin{definition} An $R^\circ$-module $M$ is {\em almost zero} if $\varpi M=0$ for all pseudo-uniformizers $\varpi$.  Equivalently, if $\varpi$ is a fixed pseudo-uniformizer admitting $p$-power roots, then $M$ is almost zero if and only if $\varpi^{1/p^n}M=0$ for all $n$.
\end{definition}

A similar definition applies to $R^+$-modules; in fact, the condition on $M$ only uses the $R^+$-module structure.

\begin{example}
\begin{altenumerate}
\item If $K$ is a perfectoid field, then the residue field $k=\OO_K/\mathfrak{m}_K$ is almost zero as $\OO_K$-module, where $\mathfrak m_K\subset \OO_K$ is the maximal ideal. Conversely, any almost zero module over $\OO_K$ is a $k$-vector space, and thus a direct sum of copies of $k$.
\item If $R$ is perfectoid and $R^+\subset R^\circ$ is any ring of integral elements, then $R^\circ/R^+$ is almost zero.  Indeed, if $\varpi$ is a pseudo-uniformizer, and $x\in R^\circ$, then $\varpi x$ is topologically nilpotent.  Since $R^+$ is open, there exists $n$ with $(\varpi x)^n\in R^+$, so that $\varpi x \in R^+$ by integral closedness.  
\end{altenumerate}
\end{example}

Extensions of almost zero modules are almost zero. Thus the category of almost zero modules is a thick Serre subcategory of the category of all modules, and one can take the quotient.

\begin{definition} The category of {\em almost $R^\circ$-modules}, written $R^{\circ a}\text{-mod}$, is the quotient of the category of $R^\circ$-modules by the subcategory of almost zero modules.
\end{definition}

One can also define $R^{+a}\text{-mod}$, and the natural forgetful map $R^{\circ a}\text{-mod}\to R^{+a}\text{-mod}$ is an equivalence.

\begin{theorem}[\cite{ScholzePerfectoidSpaces}, \cite{KedlayaLiu1}] Let $R$ be a perfectoid Tate ring with an open and integrally closed subring $R^+\subset R^\circ$, and let $X=\Spa(R,R^+)$. Then the $R^+$-module $H^i(X,\OO_X^+)$ is almost zero for $i>0$, and $H^0(X,\OO_X^+)=R^+$.
\end{theorem}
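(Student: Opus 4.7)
The equality $H^0(X,\OO_X^+)=R^+$ is formal: Theorem~\ref{thm:perfectoidringssheafy} gives sheafiness of $\OO_X$, hence $H^0(X,\OO_X)=R$, and intersecting with $\OO_X^+$ recovers $R^+$ via the standard identification $R^+=\{r\in R:|r(x)|\le 1\text{ for all }x\in \Spa(R,R^+)\}$. The real content is the almost vanishing of $H^i(X,\OO_X^+)$ for $i>0$, which I would establish by showing that the \v{C}ech complex $\check C^\bullet(\mathcal U,\OO_X^+)$ is almost acyclic in positive degrees for every finite rational cover $\mathcal U$, and then conclude via the \v{C}ech-to-derived-functor spectral sequence together with the fact that rational subsets form a basis of the topology.

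My first reduction is to characteristic $p$. Theorem~\ref{thm:tiltingtopspace} provides a homeomorphism $|X|\cong|X^\flat|$ preserving rational subsets, and Theorem~\ref{thm:perfectoidringssheafy} enhances it to the level of structure sheaves: for each rational $U\subset X$, $\OO_X(U)$ is perfectoid with tilt $\OO_{X^\flat}(U^\flat)$. Combined with Lemma~\ref{RingsOfIntegralElements}, this gives a natural identification $\OO_X^+(U)/\varpi\cong\OO_{X^\flat}^+(U^\flat)/\varpi^\flat$, compatible with restriction to rational opens. Consequently, the mod-$\varpi$ \v{C}ech complex for $\OO_X^+$ on $X$ is identified with the mod-$\varpi^\flat$ \v{C}ech complex for $\OO_{X^\flat}^+$ on $X^\flat$. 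Since $\OO_X^+$ is $\varpi$-adically complete termwise, a derived completeness/Mittag-Leffler argument applied to the surjective inverse system $\{\OO_X^+/\varpi^n\}_n$ (using the short exact sequences $0\to\OO_X^+/\varpi\to\OO_X^+/\varpi^{n+1}\to\OO_X^+/\varpi^n\to 0$ given by multiplication by $\varpi^n$) shows that a uniform almost vanishing of $H^i$ modulo $\varpi$ propagates to almost vanishing of $H^i(X,\OO_X^+)$. So the entire problem reduces to the almost vanishing of $H^i(X^\flat,\OO_{X^\flat}^+/\varpi^\flat)$ for $i>0$ in characteristic $p$.

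In characteristic $p$, Proposition~\ref{prop:perfectoidcharp} tells us $R$ is perfect, and I would follow Tate's acyclicity strategy. Every rational cover refines to a standard rational cover attached to parameters $f_0,\ldots,f_n\in R$ generating the unit ideal, and each such cover decomposes into iterated Laurent covers $X=X(\tfrac{f}{1})\cup X(\tfrac{1}{f})$ for single functions $f$. For a Laurent cover, the \v{C}ech complex of $\OO_X^+$ reduces to checking that the integral Tate-algebra map $R^+\tatealgebra{f}\oplus R^+\tatealgebra{f^{-1}}\to R^+\tatealgebra{f,f^{-1}}$ is almost surjective with almost-trivial kernel beyond $R^+$. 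This fails on the nose but succeeds almost, because the perfect Frobenius supplies compatible systems of roots $f^{1/p^k}$; the refinements by such roots produce integral lifts whose defect is annihilated by $\varpi^{1/p^k}$ for every $k$, hence by every pseudouniformizer, i.e.\ almost zero.

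The hard part will be this last step: extracting an explicit Koszul-type model for $\check C^\bullet(\mathcal U,\OO_X^+)$ on rational covers, and verifying a uniform almost-annihilation bound as one refines by $p^k$-th roots. This is the arithmetic core and is precisely where the ``almost'' is unavoidable; the preceding reductions (to characteristic $p$ via tilting, and to \v{C}ech cohomology of rational covers) are essentially formal consequences of the tilting equivalence and the sheafiness statements already recorded in Theorems~\ref{thm:tiltingtopspace} and~\ref{thm:perfectoidringssheafy}.
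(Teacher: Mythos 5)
The paper offers no proof of this theorem; it is imported wholesale from \cite{ScholzePerfectoidSpaces} and \cite{KedlayaLiu1}, so your proposal is competing against the arguments in those references rather than against anything in the text. That said, the comparison is instructive, and there is a genuine gap worth naming.

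Your outline is broadly the right one: reduce to characteristic $p$ by working mod a compatible pseudouniformizer (using Theorems~\ref{thm:tiltingtopspace} and~\ref{thm:perfectoidringssheafy} plus Lemma~\ref{RingsOfIntegralElements}), control the passage from $\OO_X^+/\varpi$ to the $\varpi$-adic limit by a Mittag--Leffler/derived-limit argument, and in characteristic $p$ run Tate's induction on rational covers, refining to Laurent covers $X(f/1)\cup X(1/f)$. The $H^0$ statement is indeed formal once sheafiness of $\OO_X$ is granted, and in the logical order adopted by this paper (where Theorem~\ref{thm:perfectoidringssheafy} is cited first) it is legitimate to take sheafiness as an input — though you should note that in the original sources these two statements are established \emph{simultaneously}, not sequentially, because the sheaf axiom in degree $0$ and almost-acyclicity in higher degrees both come out of the same \v{C}ech computation.

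The gap is that you defer precisely the step where all the content lies — ``extracting an explicit Koszul-type model \ldots and verifying a uniform almost-annihilation bound'' — and the way you have phrased it underestimates what is involved. Two things are missing. First, before you can even write down the Laurent \v{C}ech complex you need to identify $\OO_X^+(X(f/1))$ (the $\varpi$-adic completion of the integral closure of the image of $R^+[T]$ under $T\mapsto f$) with something computable; this is the content of the approximation lemma in \cite{ScholzePerfectoidSpaces} (Lemma~6.4 there) and is not a formal consequence of the theorems you cite. Second, the almost-exactness of the two-term sequence
\[
R^+/\varpi \longrightarrow \bigl(\OO_X^+(X(f/1))\oplus \OO_X^+(X(1/f))\bigr)/\varpi \longrightarrow \OO_X^+(X(f/1)\cap X(1/f))/\varpi
\]
is not just a matter of throwing $p$-power roots of $f$ at the problem: one has to produce, almost surjectively, decompositions of a Laurent series into nonnegative and nonpositive parts with controlled coefficients, and to establish the almost-intersection $\OO_X^+(X(f/1))\cap \OO_X^+(X(1/f))\subset R^+$. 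The standard way around a direct Koszul computation — and the route actually taken in \cite{ScholzePerfectoidSpaces} — is to reduce further, in characteristic $p$, to the ``$p$-finite'' case: write $R$ as a filtered colimit/completion of perfections of topologically finite-type algebras over a perfectoid field, invoke the classical noetherian Tate acyclicity theorem on the finite-type level, and transport almost-exactness along the (almost-flat) base change. That reduction lets you use Tate's theorem as a black box and sidesteps the ad hoc estimates with $p^k$-th roots. If you want a direct Koszul argument instead (closer in spirit to \cite{KedlayaLiu1}), it can be made to work, but you would need to first establish the structural description of $\OO_X^+(U)$ for rational $U$ and then carry out the estimate on coefficients carefully; as written, this is asserted rather than proved.

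One further caution: the Mittag--Leffler step ``a uniform almost vanishing of $H^i$ modulo $\varpi$ propagates to almost vanishing of $H^i(X,\OO_X^+)$'' needs care about what ``uniform'' means, since almost-zero is annihilation by the full ideal of topologically nilpotent elements, not by a fixed element. The correct statement is: with each term of the \v{C}ech complex $\varpi$-torsion-free and $\varpi$-adically complete, almost-exactness mod $\varpi$ implies almost-exactness mod $\varpi^n$ for every $n$ (by a five-lemma induction using the short exact sequences you wrote down), and then almost-exactness of the complex itself; finally you still need the usual \v{C}ech-to-derived comparison (Cartan's criterion on the quasi-compact space $|X|$) to pass from \v{C}ech to sheaf cohomology. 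These are all manageable but should be spelled out rather than compressed into one sentence.
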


\section{Set-theoretic bounds}

In this paper, we will have to deal with many ``large'' constructions. However, we want to avoid set-theoretic issues, and in particular want to avoid the use of universes. For this reason, we will sometimes consider cutoff cardinals, but in the end take a limit over all possible cutoffs.

\begin{lemma}\label{lem:choosekappa} There is a cofinal class of uncountable cardinals $\kappa$ with the following properties.
\begin{altenumerate}
\item For all cardinals $\lambda<\kappa$, one has $2^\lambda<\kappa$, i.e.~$\kappa$ is a strong limit cardinal.
\item For all countable sequences $\lambda_1,\lambda_2,\ldots<\kappa$ of cardinals less than $\kappa$, the supremum of all $\lambda_i$ is less than $\kappa$, i.e.~the cofinality of $\kappa$ is larger than $\omega$.
\item For all cardinals $\lambda<\kappa$, there is a strong limit cardinal $\kappa_\lambda<\kappa$ such that the cofinality of $\kappa_\lambda$ is larger than $\lambda$.
\end{altenumerate}
\end{lemma}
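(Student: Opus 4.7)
\emph{Plan.} The approach is to build $\kappa$ as the supremum of an $\omega_1$-indexed strictly increasing sequence of strong limit cardinals whose cofinalities cofinally exhaust $\kappa$. I will use the beth hierarchy $\beth_\gamma$ (with $\beth_0=\aleph_0$, $\beth_{\gamma+1}=2^{\beth_\gamma}$, and $\beth_\gamma=\sup_{\gamma'<\gamma}\beth_{\gamma'}$ for limit $\gamma$) together with the standard fact that for every limit ordinal $\gamma$, the cardinal $\beth_\gamma$ is a strong limit with $\mathrm{cf}(\beth_\gamma)=\mathrm{cf}(\gamma)$.

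First I would define a sequence $(\kappa_\alpha)_{\alpha<\omega_1}$ of cardinals by transfinite recursion: set $\kappa_0:=\aleph_1$, $\kappa_{\alpha+1}:=\beth_{\kappa_\alpha^+}$ at successor steps, and $\kappa_\alpha:=\sup_{\beta<\alpha}\kappa_\beta$ at limit steps. Since $\kappa_\alpha^+$ is a regular cardinal, each successor value $\kappa_{\alpha+1}$ is then a strong limit cardinal of cofinality exactly $\kappa_\alpha^+$; and since $\beth_\gamma\geq\gamma$ for all ordinals $\gamma$, one gets $\kappa_{\alpha+1}\geq\kappa_\alpha^+>\kappa_\alpha$, so the sequence is strictly increasing. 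Finally set $\kappa:=\sup_{\alpha<\omega_1}\kappa_\alpha$.

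Next I would verify the three properties. Property (ii) is immediate: the sequence $(\kappa_\alpha)_{\alpha<\omega_1}$ is strictly increasing of order type $\omega_1$ and $\omega_1$ is regular, so $\mathrm{cf}(\kappa)=\omega_1>\omega$. For (i), given a cardinal $\mu<\kappa$, choose $\alpha<\omega_1$ with $\mu<\kappa_\alpha$; since $\kappa_{\alpha+1}$ is a strong limit cardinal strictly greater than $\mu$, we get $2^\mu<\kappa_{\alpha+1}<\kappa$, so $\kappa$ is a strong limit. For (iii), given $\lambda<\kappa$, pick $\alpha<\omega_1$ with $\lambda<\kappa_\alpha$, and take $\kappa_\lambda:=\kappa_{\alpha+1}$; this is a strong limit cardinal, it lies below $\kappa$, and its cofinality $\kappa_\alpha^+$ exceeds $\lambda$.

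There is no serious obstacle: the argument is a routine transfinite recursion carried out entirely within ZFC and requires no large cardinal hypotheses. The only slightly delicate input is the standard computation $\mathrm{cf}(\beth_\gamma)=\mathrm{cf}(\gamma)$ for limit $\gamma$, which follows from strict monotonicity and continuity of the beth function at limits.
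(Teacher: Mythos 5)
Your proof is correct and takes essentially the same approach as the paper: both build $\kappa$ as the supremum of an $\omega_1$-indexed strictly increasing sequence of strong limit cardinals constructed via the beth hierarchy, with the cofinalities of the successor terms cofinal in $\kappa$. Your version is slightly more explicit in writing the successor step as $\beth_{\kappa_\alpha^+}$, whereas the paper separates out a preliminary claim that strong limit cardinals of arbitrarily large cofinality exist and then invokes it abstractly, but the underlying argument is identical.
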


Note that in (iii), the set of such $\kappa_\lambda$ is automatically cofinal in $\kappa$ (as for any $\lambda\leq \lambda^\prime<\kappa$, one has $\kappa_{\lambda^\prime}\geq \lambda^\prime$, and $\kappa_{\lambda^\prime}$ has cofinality larger than $\lambda$). Intuitively, cutting off at $\kappa$ means the following. We are always allowed to take power sets, and countable unions. Moreover, we are allowed to do any construction that takes less than $\kappa$ many steps, as long as the cardinality of the objects we are manipulating is uniformly bounded above by a cardinal strictly less than $\kappa$. Indeed, if for example we want to take a union of $\lambda<\kappa$ many objects of size less than $\kappa^\prime<\kappa$, then we can assume that $\kappa^\prime=\kappa_\lambda$ is of cofinality larger than $\lambda$, in which case the union is still of size less than $\kappa_\lambda<\kappa$. Moreover, if at each of the $\lambda$ intermediate steps we are doing some auxiliary large construction that however respects sets that are less than $\kappa_\lambda$, then this is still admissible.

\begin{proof} First, we show that for any cardinal $\lambda$, one can find a strong limit cardinal $\kappa$ such that the cofinality of $\kappa$ is larger than $\lambda$. For this, one starts with the countable cardinal $\beth_0=\aleph_0$ and then defines by transfinite induction $\beth_\mu$ for any ordinal $\mu$ as the power set of $\beth_{\mu^-}$ if $\mu$ is the successor of $\mu^-$, or the union of all previous $\beth_{\mu^\prime}$ if $\mu$ is a limit ordinal. Then $\beth_\mu$ is a strong limit cardinal with cofinality larger than $\lambda$, where $\mu$ is any ordinal with cofinality larger than $\lambda$ (for example, the smallest ordinal of cardinality larger than $\lambda$).

Now build a sequence of cardinals $\kappa(0),\kappa(1),\ldots$ indexed by ordinals $\mu$, where $\kappa(0)$ is a strong limit cardinal of cofinality $>\omega$, and for a successor ordinal $\mu$ (of $\mu^-$), $\kappa(\mu)$ is a strong limit cardinal of cofinality $>\kappa(\mu^-)$. For limit ordinals, $\kappa(\mu)$ is the union of all previous $\kappa(\mu^\prime)$. Now let $\mu$ be any ordinal of uncountable cofinality. We claim that $\kappa(\mu)$ has the desired properties. These $\kappa(\mu)$ are clearly cofinal in all cardinals.

Indeed, any $\lambda<\kappa(\mu)$ is also less than $\kappa(\mu')$ for some $\mu'<\mu$, which we may assume to be a successor ordinal, and so $2^\lambda<\kappa(\mu')<\kappa(\mu)$; thus, $\kappa(\mu)$ is a strong limit cardinal. Also, as $\mu$ has uncountable cofinality, it follows that $\kappa(\mu)$ has uncountable cofinality. Finally, for all $\lambda<\kappa(\mu)$, one has $\lambda<\kappa(\mu')$ for some $\mu'<\mu$, and then for all successor ordinals $\mu''>\mu'$ less than $\mu$, the cardinal $\kappa(\mu'')<\kappa(\mu)$ is by construction a strong limit cardinal with cofinality greater than $\lambda$.
\end{proof}

\begin{definition}\label{def:kappasmall} A perfectoid space $X$ is $\kappa$-small if the cardinality of $|X|$ is less than $\kappa$, and for all open affinoid subspaces $U=\Spa(A,A^+)\subset X$, the cardinality of $A$ is less than $\kappa$.
\end{definition}

For the results of this section, it is actually enough to assume that $\kappa$ is an uncountable strong limit cardinal.

\begin{remark} If $A$ has a dense subalgebra $A_0\subset A$ of cardinality less than $\kappa$, then $A$ has cardinality less than $\kappa$. Indeed, identifying elements of $A$ with convergent Cauchy sequences in $A_0$, one sees that if $\lambda$ is the cardinality of $A_0$, then the cardinality of $A$ is at most
\[
\lambda^\omega\leq (2^\lambda)^\omega = 2^{\lambda\times \omega} = 2^\lambda<\kappa\ .
\]
\end{remark}

\begin{proposition}\label{prop:kappasmall} A perfectoid space $X$ is $\kappa$-small if and only if $X$ admits a cover by less than $\kappa$ many open affinoid subspaces $U=\Spa(A,A^+)$ for which the cardinality of $A$ is less than $\kappa$.
\end{proposition}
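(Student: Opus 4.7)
The plan is to prove the two implications separately; the forward direction is essentially by inspection, while the reverse direction requires unwinding the sheaf property.

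For the forward direction, suppose $X$ is $\kappa$-small. Since $X$ is a perfectoid space, it admits an open cover by affinoid subspaces; picking one affinoid open neighborhood of each point of $|X|$ produces such a cover indexed by $|X|$, hence of cardinality at most $|X|<\kappa$. The $\kappa$-smallness hypothesis directly gives $|A|<\kappa$ for each affinoid $\Spa(A,A^+)$ in the cover.

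For the reverse direction, suppose $X=\bigcup_{i\in I} U_i$ with $|I|=\lambda<\kappa$ and each $U_i=\Spa(A_i,A_i^+)$ satisfying $|A_i|<\kappa$. We verify both clauses of Definition~\ref{def:kappasmall}. For $|X|<\kappa$, note that each point of $\Spa(A_i,A_i^+)$ is a continuous valuation on $A_i$, and is determined by its values on $A_i$, so $|U_i|\leq 2^{|A_i|}<\kappa$ since $\kappa$ is a strong limit. Then $|X|\leq\sum_{i\in I}|U_i|$ is a $\lambda$-indexed sum of cardinals below $\kappa$, which is again below $\kappa$ by the cardinality hypotheses on $\kappa$ in Lemma~\ref{lem:choosekappa}.

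For the second clause, let $V=\Spa(A,A^+)\subset X$ be an arbitrary open affinoid; we must bound $|A|$. Since rational subsets of $V$ contained in $V\cap U_i$ (for varying $i$) form a basis for the topology of $V$, and $V$ is quasicompact, we can find a finite rational cover $V=W_1\cup\dots\cup W_n$ where each $W_k$ is a rational subset of $V$ contained in some $U_{i(k)}$. Each $W_k$ is then a quasicompact open subset of the affinoid $U_{i(k)}$, hence a finite union $W_k=V_{k,1}\cup\dots\cup V_{k,n_k}$ of rational subsets of $U_{i(k)}$. The coordinate ring $\OO_X(V_{k,l})$ is the completion of a localization of $A_{i(k)}$, which has a dense subring of cardinality less than $\kappa$; by the remark preceding the proposition, $|\OO_X(V_{k,l})|<\kappa$.

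Applying the sheaf property of Theorem~\ref{thm:perfectoidringssheafy} twice, first to $W_k$ with the cover $\{V_{k,l}\}_l$ and then to $V$ with the cover $\{W_k\}_k$, realizes $A$ as a subring of the finite product $\prod_{k,l}\OO_X(V_{k,l})$. As $\kappa$ is a strong limit, this finite product has cardinality less than $\kappa$, giving $|A|<\kappa$ as required. The only step I expect to demand genuine care is the cardinal arithmetic in bounding $|X|$, which relies essentially on the specific properties of $\kappa$ afforded by Lemma~\ref{lem:choosekappa}; the rational refinement used in the second part is routine given quasicompactness of $V$ and the basis of rational subsets.
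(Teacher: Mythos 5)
Your proof is correct and takes essentially the same approach as the paper's: in the forward direction you cover by affinoid neighborhoods of points, and in the reverse direction you bound $|U_i|\leq 2^{|A_i|}$ and then use quasicompactness plus the sheaf property of $\OO_X$ to realize the coordinate ring of an arbitrary affinoid open as a subring of a finite product of rational localizations of the given $A_i$'s. The cardinal-arithmetic step you flag as delicate ($\sum_{i\in I}|U_i|<\kappa$) is indeed not a formal consequence of Lemma~\ref{lem:choosekappa} without a uniform bound on the $|U_i|$ — but the paper's own proof passes over this point just as quickly, so this is a shared imprecision rather than a defect of your argument.
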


In particular, the category of $\kappa$-small perfectoid spaces is stable under finite products.

\begin{proof} Assume that $X$ is $\kappa$-small. In that case, the set of all subsets of $X$ is of cardinality less than $\kappa$, and in particular $X$ is covered by less than $\kappa$ many open subspaces of the form $U=\Spa(A,A^+)$, where the cardinality of $A$ is less than $\kappa$.

Conversely, for any open affinoid subspace $U=\Spa(A,A^+)\subset X$, the space $|U|$ is a subset of the space of binary relations on $A$ (where $x\in U$ corresponds to the relation $|a(x)|\leq |b(x)|$ for $a,b\in A$). Thus, $U$ embeds into the power set of $A\times A$, which is of cardinality $2^{|A|^2}=2^{|A|}<\kappa$. If $X$ is covered by less than $\kappa$ many such $U$, then also the cardinality of $|X|$ is less than $\kappa$. Also, for any open affinoid subspace $U=\Spa(A,A^+)$ of $X$, one can (by quasicompacity) embed $U$ into a union of finitely many rational subsets of $V=\Spa(B,B^+)$ for which $B$ has cardinality less than $\kappa$. Each of the rational subsets is of the form $\Spa(B^\prime,B^{\prime +})$, where $B^\prime$ has a dense subalgebra which is finitely generated over $B$; in particular, the cardinality of $B^\prime$ is also less than $\kappa$. As then $A$ embeds into the finite product of the $B^\prime$'s, one also sees that the cardinality of $A$ is less than $\kappa$.
\end{proof}

\section{Morphisms of perfectoid spaces}

In this section, we define some basic classes of maps between perfectoid spaces.

\begin{definition}\label{def:injection} A map $f: Y\to X$ of perfectoid spaces is an \emph{injection} if for all perfectoid spaces $Z$, the map $f_\ast: \Hom(Z,Y)\to \Hom(Z,X)$ is injective.
\end{definition}

It is enough to check the condition for affinoid perfectoid $Z$.

\begin{example}\label{ex:injpoint} Let $X$ be a perfectoid space, and $x\in X$ a point, giving rise to the map
\[
i_x: \Spa(K(x),K(x)^+)\to X\ ,
\]
where $K(x)$ is the completed residue field at $x$, and $K(x)^+\subset K(x)$ the corresponding open and bounded valuation subring. Then the map $i_x$ is an injection. To check this, we may assume that $X$ is affinoid by replacing $X$ by an affinoid neighborhood of $x$. Now
\[
|\Spa(K(x),K(x)^+)| = \bigcap_{U\ni x} U\subset X\ ,
\]
where the intersection runs over all rational neighborhoods $U$ of $x$ in $X$. In fact, $\Spa(K(x),K(x)^+)$ is the inverse limit of $U$ (considered as a perfectoid space) in the category of perfectoid spaces, as
\[
K(x)^+/\varpi = \varinjlim_{U\ni x} \OO_X^+(U)/\varpi\ ,
\]
and so $K(x)^+$ is the $\varpi$-adic completion of $\varinjlim_{U\ni x} \OO_X^+(U)$. As each $U\to X$ is an injection, it follows that $i_x$ is an injection.

In particular, note that if $X$ is qcqs and has a unique closed point $x\in X$, then $X=\Spa(K(x),K(x)^+)$, as in this case $X$ is the only quasicompact open subset of $X$ containing $x$.
\end{example}

We have the following characterizations of injections.

\begin{proposition}\label{prop:charinjection} Let $f: Y\to X$ be a map of perfectoid spaces. The following conditions are equivalent.
\begin{altenumerate}
\item The map $f$ is an injection.
\item For all perfectoid fields $K$ with an open and bounded valuation subring $K^+\subset K$, the map $f_\ast: Y(K,K^+)\to X(K,K^+)$ is injective.
\item The map $|f|: |Y|\to |X|$ is injective, and for all rank-$1$-points $y\in Y$ with image $x=f(y)\in X$, the map of completed residue fields $K(x)\to K(y)$ is an isomorphism.
\item The map $|f|: |Y|\to |X|$ is injective, and the map $f: Y\to X$ is final in the category of maps $g: Z\to X$ for which $|g|: |Z|\to |X|$ factors over a continuous map $|Z|\to |Y|$.
\end{altenumerate}
\end{proposition}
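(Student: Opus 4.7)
The plan is to establish the cycle $(\text{i}) \Rightarrow (\text{ii}) \Rightarrow (\text{iii}) \Rightarrow (\text{iv}) \Rightarrow (\text{i})$. The implication $(\text{i}) \Rightarrow (\text{ii})$ is immediate because $\Spa(K,K^+)$ is a perfectoid space whenever $K$ is a perfectoid field and $K^+ \subset K$ is an open and bounded valuation subring. The implication $(\text{iv}) \Rightarrow (\text{i})$ is formal: given $g_1, g_2 \colon Z \to Y$ with $f g_1 = f g_2 = g$, the injectivity of $|f|$ forces $|g_1| = |g_2|$, giving a common continuous factoring of $|g|$ through $|Y|$, and then the finality of $f$ applied to $g$ equipped with this factoring forces $g_1 = g_2$.

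For $(\text{ii}) \Rightarrow (\text{iii})$, I would first prove that $|f|$ is injective. Given $y_1, y_2 \in |Y|$ with common image $x \in |X|$, pass to the rank-$1$ generalizations $\tilde{y}_i \in |Y|$; continuity of $f$ and the uniqueness of rank-$1$ generalizations in analytic adic spaces give $f(\tilde{y}_i) = \tilde{x}$. Choose a sufficiently large algebraically closed perfectoid field $K$ together with $K(\tilde{x})$-linear embeddings $K(\tilde{y}_i) \hookrightarrow K$, by first fixing an embedding $K(\tilde{x}) \hookrightarrow K$ and then extending each $K(\tilde{y}_i)$. The two resulting maps $\Spa(K,\OO_K) \to Y$ pick out $\tilde{y}_1$ and $\tilde{y}_2$ and share the same composite to $X$, so $(\text{ii})$ yields $\tilde{y}_1 = \tilde{y}_2 =: \tilde{y}$. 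To promote this to $y_1 = y_2$, use that $K(y_i) = K(\tilde{y})$, that the ring map from local sections of $Y$ into $K(\tilde{y})$ is determined by $\tilde{y}$, and that the valuation rings $K(y_i)^+ \subset K(\tilde{y})$ must both coincide with $K(x)^+$, by density of the image of any affinoid neighborhood of $x$ in $K(x)$ together with the fact that $v_{y_i}$ restricts to $v_x$. For the residue-field part of (iii), if $y \in Y$ is rank-$1$ over $x$ but $K(x) \hookrightarrow K(y)$ is a proper extension, then one builds two distinct $K(x)$-linear embeddings $K(y) \hookrightarrow K$ for some large algebraically closed perfectoid $K$ (mapping an element of $K(y) \setminus K(x)$ to two conjugates in the algebraic case, or to distinct transcendentals otherwise), producing two distinct maps $\Spa(K, \OO_K) \to Y$ with identical composite to $X$, contradicting $(\text{ii})$.

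For $(\text{iii}) \Rightarrow (\text{iv})$, uniqueness of the lift follows from the chain $(\text{iii}) \Rightarrow (\text{ii}) \Rightarrow (\text{i})$. The first step is a pointwise check: if $g_1, g_2 \colon \Spa(K, K^+) \to Y$ have equal composite to $X$, then $|f|$ injective forces $|g_1| = |g_2|$, and at the closed point $\bar{y}$ the induced ring map $K(\bar{y}) \to K$ is determined by its restriction along $K(\bar{x}) \to K(\bar{y})$, which is an isomorphism because $K(\bar{y}) = K(\tilde{\bar{y}}) = K(\tilde{\bar{x}}) = K(\bar{x})$ via the rank-$1$ identification from (iii). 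The passage $(\text{ii}) \Rightarrow (\text{i})$ then follows because sections of the perfectoid structure sheaf embed into the product of completed residue fields at all rank-$1$ points. For existence of the lift: given $g \colon Z \to X$ and a continuous factoring $h \colon |Z| \to |Y|$ of $|g|$, construct $\tilde{g}$ locally. For each affinoid rational $V \subset Y$ mapping into an affinoid $U \subset X$, the open $h^{-1}(|V|) \subset |Z|$ is an open subspace, and the identifications $K(h(z)) = K(g(z))$ supplied by (iii) let one transport the given ring map $\OO_X(U) \to \OO_Z(g^{-1}(U))$ into a canonical candidate map $\OO_Y(V) \to \OO_Z(h^{-1}(|V|))$, which then glues across overlapping affinoid pieces to give $\tilde{g}$.

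The main obstacle is the existence portion of $(\text{iii}) \Rightarrow (\text{iv})$, specifically the verification that the pointwise-defined ring map $\OO_Y(V) \to \prod_z K(z)$ induced by (iii) actually factors through the global sections $\OO_Z(h^{-1}(|V|))$ rather than merely landing in a product of residue fields, and that these local maps are compatible across overlaps. This relies on sheafiness of the perfectoid structure sheaf, the injectivity of $\OO_Z(W) \hookrightarrow \prod_{z \in W} K(z)$ for perfectoid $Z$ and open $W$, and the compatibility of the construction with rational localizations. The valuation-theoretic step in $(\text{ii}) \Rightarrow (\text{iii})$, namely the identification $K(y_i)^+ = K(x)^+$ for specializations with a common rank-$1$ generalization, is the other technical point but follows more directly from density of the image of $R$ in $K(x)$ for $X \supset \Spa(R, R^+)$ an affinoid neighborhood of $x$.
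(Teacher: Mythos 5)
There are two genuine gaps, both at the technical heart of the proposition, and the paper resolves them by different means than you propose.

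\medskip

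\textbf{Gap in (ii)$\Rightarrow$(iii), residue-field part.} You want to derive a contradiction from $K(x)\subsetneq K(y)$ by producing two distinct continuous $K(x)$-linear embeddings $K(y)\hookrightarrow K$ into a large algebraically closed perfectoid $K$, by moving an element of $K(y)\setminus K(x)$ ``to two conjugates in the algebraic case, or to distinct transcendentals otherwise.'' In the transcendental case this construction does not manifestly give a continuous homomorphism: a $K(x)$-algebra map defined on a dense subring by sending a transcendental $t$ to a different transcendental $t'$ need not extend continuously to the complete perfectoid field $K(y)$ (the topology on $K(y)$ is not determined by $K(x)$ and the chosen $t$, and it is not clear that the two topologies one would compare agree). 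The paper avoids constructing such embeddings entirely. It observes that, granting (ii), the two canonical maps $K(y)\rightrightarrows K(y)\hat\otimes_{K(x)} K(y)$ agree after composing to any perfectoid field, and that $K(y)\hat\otimes_{K(x)} K(y)$ is itself perfectoid, hence uniform, so an element vanishing in every field quotient is zero; thus the two maps coincide and the surjection onto $K(y)$ is an isomorphism. An exactness argument applied to the $K(x)$-Banach space $V=K(y)/K(x)$ then yields $V\hat\otimes_{K(x)}K(y)=0$, hence $V=0$. Your contrapositive needs precisely the extension statement that this direct argument circumvents.

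\medskip

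\textbf{Gap in (iii)$\Rightarrow$(iv), existence of the lift.} You correctly identify the main obstacle — that the pointwise candidate map $\OO_Y(V)\to\prod_z K(z)$ must be shown to factor through $\OO_Z(h^{-1}(|V|))$ — but do not resolve it, and it is not obvious how to do so by hand (a function with prescribed values at all points of a perfectoid space need not be a section of the structure sheaf). The paper sidesteps this by proving that $Y\times_X Z\to Z$ is an isomorphism, invoking Lemma~\ref{lem:bijectiveiso}, which reduces the question to bijectivity on $(K,K^+)$-valued points for perfectoid fields $K$. That criterion does the gluing implicitly and reduces the assertion to exactly the pointwise computation you carry out first; you should use it rather than attempting a direct construction of the lifted ring maps.

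\medskip

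There is also a smaller structural issue: in your proof that $|f|$ is injective you invoke the rank-$1$ residue-field identification $K(\tilde y)=K(\tilde x)$ to promote $\tilde y_1=\tilde y_2$ to $y_1=y_2$, yet you prove the residue-field part afterward. This can be fixed by reordering, but the paper avoids it by a cleaner move: it first base-changes $X$ to $\Spa(K(x),K(x)^+)$ using that $i_x$ is an injection (Example~\ref{ex:injpoint}), and then finds an extension $(L,L^+)$ of affinoid perfectoid fields with $(L,L^+)$-valued points of $Y$ hitting $y_1,y_2$ directly — not their rank-$1$ generalizations — with equal composite to $X$, so that (ii) forces $y_1=y_2$ in one step.
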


In particular, by (iv), given $X$, injections $Y\hookrightarrow X$ of perfectoid spaces are determined by the injective map of topological spaces $|Y|\to |X|$. We warn the reader that in general $|Y|$ may not have the subspace topology from $|X|$.

\begin{proof} Clearly, (i) implies (ii). Next, we check that (ii) implies (iii). To see that $|f|$ is injective, assume that two points $y_1,y_2\in Y$ map to $x\in X$. We may replace $X$ by $\Spa(K,K^+)$ via base change along $i_x$ as in Example~\ref{ex:injpoint}. There exists an extension $(K,K^+)\subset (L,L^+)$ of affinoid perfectoid fields such that there are $(L,L^+)$-valued points $\tilde{y}_1,\tilde{y}_2\in Y(L,L^+)$ which send the closed point of $\Spa(L,L^+)$ to $y_1$ resp.~$y_2$, and with same image in $X(L,L^+)$ given by the map $(K,K^+)\to (L,L^+)$. As $f$ satisfies (ii), this implies $\tilde{y}_1=\tilde{y}_2$, and thus $y_1=y_2$.

Now given a rank-$1$-point $y\in Y$ with image $x\in X$, we get a commutative diagram
\[\xymatrix{
\Spa(K(y),\OO_{K(y)})\ar^{i_y}[rr]\ar[d] && Y\ar^f[d]\\
\Spa(K(x),\OO_{K(x)})\ar^{i_x}[rr] && X\ .
}\]
It follows that $g: \Spa(K(y),\OO_{K(y)})\to \Spa(K(x),\OO_{K(x)})$ is functorially injective on $(K,K^+)$-valued points for all $(K,K^+)$. We claim that the two maps
\[
K(y)\to K(y)\hat{\otimes}_{K(x)} K(y)
\]
agree. Indeed, they agree after composition with $K(y)\hat{\otimes}_{K(x)} K(y)\to L$ for any perfectoid field $L$ by assumption; but as $K(y)\hat{\otimes}_{K(x)} K(y)$ is perfectoid (in particular, uniform), no element lies in the kernel of all maps to perfectoid fields. This implies that the surjection $K(y)\hat{\otimes}_{K(x)} K(y)\to K(y)$ is an isomorphism. It follows that also the base change $K(y)\to K(y)\hat{\otimes}_{K(x)} K(y)$ of $K(x)\to K(y)$ is an isomorphism. Let $V = K(y)/K(x)$, which is a $K(x)$-Banach space; then (by exactness of $\hat{\otimes}_{K(x)} K(y)$), we have $V\hat{\otimes}_{K(x)} K(y) = 0$, which implies that $V=0$ (indeed, $-\hat{\otimes}_{K(x)} V$ is exact, so $V$ injects into $K(y)\hat{\otimes}_{K(x)} V$). Thus, $K(y)=K(x)$, as desired, finishing the proof that (ii) implies (iii).

To check that (iii) implies (iv), we first check that for all perfectoid fields $K$ with an open and bounded valuation subring $K^+\subset K$, the map $Y(K,K^+)\to X(K,K^+)$ is injective, with image those maps $\Spa(K,K^+)\to X$ that factor over $|Y|$. Thus, assume given a map $\Spa(K,K^+)\to X$. The image of the closed point is a point $x\in X$, and $\Spa(K,K^+)$ factors uniquely over the injection $i_x: \Spa(K(x),K(x)^+)\to X$. Replacing $X$ by $\Spa(K(x),K(x)^+)$ and $Y$ by $Y\times_X \Spa(K(x),K(x)^+)$, we may assume that $X=\Spa(K(x),K(x)^+)$. Now $|Y|\subset |X|$ is a subset stable under generalizations; in particular, any quasicompact open subset of $Y$ has a unique closed point, so $Y$ is an increasing union of open subfunctors of the form $\Spa(K(x),(K(x)^+)^\prime)\subset \Spa(K(x),K(x)^+)$, for varying $(K(x)^+)^\prime$. But it is easy to see that for any such $(K(x)^+)^\prime$,
\[
\Spa(K(x),(K(x)^+)^\prime)(K,K^+)\to \Spa(K(x),K(x)^+)(K,K^+)
\]
is injective, with image those maps that factor through $|\Spa(K(x),(K(x)^+)^\prime)|\subset |\Spa(K(x),K(x)^+)|$; thus, the same holds for $Y$.

Now take any map $g: Z\to X$ such that $|g|$ factors continuously over $|Y|\subset |X|$. We need to check that $g$ factors uniquely over a map $h: Z\to Y$. This can be done locally on $Z$. Given a point $z\in Z$, we can replace $Z$ by a small open affinoid neighborhood of $z$ so that $|Z|\to |X|$ factors over an affinoid open subset of $X$, and also the map $|Z|\to |Y|$ factors over an affinoid open subset of $Y$. Thus, we can assume that $X$, $Y$ and $Z$ are all affinoid. Now consider the qcqs map $Y\times_X Z\to Z$. By Lemma~\ref{lem:bijectiveiso} below, it is enough to check that $(Y\times_X Z)(K,K^+)\to Z(K,K^+)$ is bijective for all perfectoid fields $K$ with an open and bounded valuation subring $K^+\subset K$. But this follows from the description of the map $Y(K,K^+)\to X(K,K^+)$ above.

Finally, (iv) implies (i): Given any map $Z\to X$, we need to see that there is at most one way to factor it over $Y$. If $|Z|\to |X|$ does not factor continuously over $|Y|$, there is no such factorization. However, if $|Z|\to |X|$ factors continuously over $|Y|$, then there is a unique factorization over $Y$, by the universal property of $Y\to X$ in (iv).
\end{proof}

\begin{lemma}\label{lem:bijectiveiso} Let $f: Y\to X$ be a qcqs map of perfectoid spaces. The following conditions are equivalent.
\begin{altenumerate}
\item The map $f$ is an isomorphism.
\item The map $|f|: |Y|\to |X|$ is bijective, and for all rank-$1$-points $y\in Y$ with image $x\in X$, the induced map of residue fields $K(x)\to K(y)$ is an isomorphism.
\item For all perfectoid fields $K$ with open and bounded valuation subring $K^+\subset K$, the map $Y(K,K^+)\to X(K,K^+)$ is a bijection.
\item For all algebraically closed perfectoid fields $C$ with an open and bounded valuation subring $C^+\subset C$, the map $Y(C,C^+)\to X(C,C^+)$ is a bijection.
\end{altenumerate}
\end{lemma}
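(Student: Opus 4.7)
\emph{Overall strategy.} The implication (i) $\Rightarrow$ (ii), (iii), (iv) is immediate, so the substance lies in the converses. I will prove (iv) $\Rightarrow$ (iii) $\Rightarrow$ (ii) $\Rightarrow$ (i), with the main work in (ii) $\Rightarrow$ (i), which I will extract from the universal property of injections established in Proposition~\ref{prop:charinjection}.

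\emph{(ii) $\Rightarrow$ (i).} Assuming (ii), Proposition~\ref{prop:charinjection} applies with its condition (ii), so $f$ is an injection; in particular $|f|$ is injective, and $f: Y \to X$ is final among maps $g: Z \to X$ whose underlying topological map factors continuously through $|Y|$. I would first show $|f|$ is surjective: for any $x \in X$, the canonical map $i_x: \Spa(K(x), K(x)^+) \to X$ lies in $X(K(x), K(x)^+)$, hence by (ii) lifts to a $(K(x), K(x)^+)$-point of $Y$, whose image of the closed point gives $y \in Y$ with $f(y) = x$. Next I would upgrade the bijection $|f|$ to a homeomorphism: being an isomorphism is local on $X$, so I may assume $X$ is qcqs and hence $Y$ is qcqs (since $f$ is qcqs); then $|f|$ is a spectral surjection between spectral spaces, and since all maps of analytic adic spaces are generalizing, Lemma~\ref{lem:quotientmap} makes $|f|$ a quotient map, hence a homeomorphism because it is bijective. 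Finally, I apply the universal property of Proposition~\ref{prop:charinjection}(iv) to $\mathrm{id}_X: X \to X$, whose underlying map factors through $|Y|$ via $|f|^{-1}$, producing a unique section $s: X \to Y$ with $f \circ s = \mathrm{id}_X$. Applying the universal property once more to the two maps $s \circ f$ and $\mathrm{id}_Y$ from $Y$ to $Y$ (both giving $f$ after composition with $f$, and both having underlying map that factors through $|Y|$) yields $s \circ f = \mathrm{id}_Y$, so $f$ is an isomorphism.

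\emph{(iii) $\Rightarrow$ (ii) and (iv) $\Rightarrow$ (iii).} For (iii) $\Rightarrow$ (ii): by Proposition~\ref{prop:charinjection}, condition (iii) already gives $f$ is an injection, so $Y(K, K^+) \hookrightarrow X(K, K^+)$; for surjectivity, given a point $\Spa(K, K^+) \to X$ at $x$, lift $x$ to $y \in Y$ using bijectivity of $|f|$, and use that $K(x) \to K(y)$ is an isomorphism (which extends from rank-$1$ generalizations to all points, as the completed residue field is constant under specialization, together with matching of the valuation subrings once the residue fields agree) to produce the required $(K, K^+)$-point of $Y$. For (iv) $\Rightarrow$ (iii): surjectivity of $|f|$ follows by taking an algebraically closed perfectoid $(C, C^+) \supset (K(x), K(x)^+)$ and lifting the resulting $(C, C^+)$-point of $X$; injectivity of $|f|$ is by choosing an algebraically closed perfectoid $(C, C^+)$ with compatible embeddings of $K(y_1)$ and $K(y_2)$ over $K(x)$ (via algebraic closure of a suitable quotient of $K(y_1)\hat\otimes_{K(x)} K(y_2)$), which would give two distinct $(C, C^+)$-points of $Y$ over a single $(C, C^+)$-point of $X$, contradicting (iv); equality of residue fields at rank-$1$ $y$ follows since $K(x) \subsetneq K(y)$ would admit two distinct embeddings $K(y) \hookrightarrow \overline{K(y)}$ over $K(x)$, again violating uniqueness in (iv).

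\emph{Expected main obstacle.} The crux is (ii) $\Rightarrow$ (i), and within it the passage from ``$|f|$ is bijective'' to ``$|f|$ is a homeomorphism'' via Lemma~\ref{lem:quotientmap}; this is short but crucially exploits that maps of analytic adic spaces are generalizing. Once the homeomorphism is established, the inverse is produced by an elegant two-line application of the universal property of injections, with no need to compare structure sheaves directly. The constructions in (iv) $\Rightarrow$ (iii) of an algebraically closed perfectoid $(C, C^+)$ with prescribed embeddings of residue fields and compatible choices of valuation subrings are standard exercises in valuation theory, but deserve a careful check.
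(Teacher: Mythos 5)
Your reduction to the statement that $|f|$ is a homeomorphism, via Lemma~\ref{lem:quotientmap} and the fact that maps of analytic adic spaces are generalizing, is exactly the right move and matches the paper's proof. Your final step, however, is circular. You produce a section $s\colon X\to Y$ and the identity $s\circ f=\mathrm{id}_Y$ by invoking the universal property of injections, Proposition~\ref{prop:charinjection}(iv). But the paper establishes that (iii) implies (iv) in Proposition~\ref{prop:charinjection} precisely by citing ``Lemma~\ref{lem:bijectiveiso} below'' — it reduces the factorization problem for $g\colon Z\to X$ to the statement that the qcqs map $Y\times_X Z\to Z$ is an isomorphism, and then invokes the implication (iii) $\Rightarrow$ (i) of the Lemma you are trying to prove. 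So Proposition~\ref{prop:charinjection}(iv) is logically \emph{downstream} of Lemma~\ref{lem:bijectiveiso}, and cannot be used in its proof without an independent argument. The same circularity appears earlier in your write-up, where you deduce that $f$ is an injection ``by Proposition~\ref{prop:charinjection}'' from hypothesis (iii) of the Lemma: getting from the injectivity of $Y(K,K^+)\to X(K,K^+)$ (Proposition~\ref{prop:charinjection}(ii)) to ``$f$ is an injection'' (Proposition~\ref{prop:charinjection}(i)) again passes through (iii) $\Rightarrow$ (iv).

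The missing ingredient, once $|f|$ is known to be a homeomorphism, is a direct comparison of structure sheaves, which is what actually breaks the circularity. The paper shows that $\OO_X^+\to\OO_Y^+$ is an isomorphism by reducing modulo a pseudouniformizer $\varpi$ and checking on stalks: the stalk of $\OO_X^+/\varpi$ at $x$ is $K(x)^+/\varpi$ (by the description of $i_x$ in Example~\ref{ex:injpoint}), the corresponding stalk of $\OO_Y^+/\varpi$ is $K(y)^+/\varpi$, and these agree by the residue-field comparison that you had already established. This elementary stalk-by-stalk verification, rather than an appeal to the universal property, must supply the last step. As a smaller point, there is also a slip in your references: hypothesis (ii) of the Lemma corresponds to condition (iii) of Proposition~\ref{prop:charinjection}, not its condition (ii), and your paragraph ``(iv) $\Rightarrow$ (iii)'' actually establishes (iv) $\Rightarrow$ (ii); neither of these resolves the circularity above.
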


\begin{proof} Clearly (i) implies (ii) and (iii). First, we check that (ii) and (iii) are equivalent. To see that (ii) implies (iii), take any map $\Spa(K,K^+)\to X$. Let $x\in X$ be the image of the closed point; then the map factors over the immersion $i_x: \Spa(K(x),K(x)^+)\to X$. We can replace $X$ by $\Spa(K(x),K(x)^+)$. Now $Y$ has a unique closed point $y\in Y$, so that $Y=\Spa(K(y),K(y)^+)$. By the assumption in (ii), we have $K(x)=K(y)$, and then bijectivity forces $K(y)^+=K(x)^+$. Now it is clear that (iii) holds.

For the converse (iii) implies (ii), assume that (iii) holds. To show that this implies (ii), it suffices to show that for any $x\in X$, the map $Y\times_X \Spa(K(x),K(x)^+)\to \Spa(K(x),K(x)^+)$ is an isomorphism. We may thus assume that $X=\Spa(K(x),K(x)^+)$. The condition in (iii) gives us a unique section $X\to Y$. Let $y\in Y$ be the image of the closed point $x\in X$. Note that the immersion $i_y: Y^\prime=\Spa(K(y),K(y)^+)\to Y$ has the property that $Y^\prime\to X$ still satisfies the conditions of (iii): Indeed, $Y^\prime(K,K^+)\subset Y(K,K^+)=X(K,K^+)$ is certainly injective, but there is a section $X\to Y^\prime$, so that it is also surjective. In particular, any point of $Y$ lies in $Y^\prime$, so that in fact $Y=Y^\prime$. As $Y\to X$ has a section, we get maps $K(x)\to K(y)\to K(x)$ of fields whose composite is the identity; as maps of fields are injective, this implies $K(y)=K(x)$, and then also $K(y)^+=K(x)^+$ as $K(x)^+\subset K(y)^+\subset K(x)^+$. Thus, $Y\cong X$, as desired.

Now we show that (ii) and (iii) together imply (i). We can assume that $X$ is affinoid; then, by assumption on $f$, $Y$ is qcqs. Then $|f|: |Y|\to |X|$ is a bijective, generalizing and spectral map of spectral spaces. By Lemma~\ref{lem:quotientmap}, it is a quotient map, and thus a homeomorphism. It remains to see that $\OO_X^+\to \OO_Y^+$ is an isomorphism of sheaves, for which it is enough to show that $\OO_X^+/\varpi\to \OO_Y^+/\varpi$ is an isomorphism of sheaves, where $\varpi$ is a pseudouniformizer on $X$. We can check this on stalks. But the stalk of $\OO_X^+/\varpi$ at $x\in X$ is given by $K(x)^+/\varpi$, and similarly for $\OO_Y^+/\varpi$. As $K(x)^+/\varpi=K(y)^+/\varpi$ (by the proof that (iii) implies (ii)), the result follows.

Clearly, (iii) implies (iv), so it remains to prove that (iv) implies (iii). Note that injectivity of $Y(K,K^+)\to X(K,K^+)$ is automatic by embedding $(K,K^+)$ into an algebraic closure $(C,C^+)$. But in fact, given any map $\Spa(K,K^+)\to X$, we can lift to a map $\Spa(C,C^+)\to Y$, which factors over some affinoid $\Spa(S,S^+)\subset Y$, and is given by a map $(S,S^+)\to (C,C^+)$. But this map has to be Galois-equivariant, and thus factors over $(K,K^+)$. This gives the desired lift $\Spa(K,K^+)\to Y$.
\end{proof}

Moreover, injections have a simple behaviour on pullbacks.

\begin{corollary}\label{cor:immpullback}
\begin{altenumerate}
\item Let $f: Y\to X$ be an injection of perfectoid spaces, and $X^\prime\to X$ any map of perfectoid spaces. Then the pullback $f^\prime: Y^\prime=X^\prime\times_X Y\to X^\prime$ is an injection, and the map
\[
|Y^\prime|\to |X^\prime|\times_{|X|} |Y|
\]
is a homeomorphism.
\item Let $f: Y\to X$ be a map of perfectoid spaces. Then $f$ is an injection if and only if $f$ is universally injective, i.e.~for all maps $X^\prime\to X$ with pullback $f^\prime: Y^\prime=X^\prime\times_X Y\to X^\prime$, the map $|f^\prime|: |Y^\prime|\to |X^\prime|$ is injective.
\end{altenumerate}
\end{corollary}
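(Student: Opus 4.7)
For part (i), my plan is to proceed in three steps: stability of injections under pullback (formal), bijectivity of $|Y'| \to T := |X'| \times_{|X|} |Y|$ (using Proposition~\ref{prop:charinjection}(iii) plus a residue-field identification), and finally the homeomorphism (which is the main subtlety, handled via the universal property~(iv)).

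Step 1 is immediate: given $g_1, g_2 \colon Z \to Y' = X' \times_X Y$ with $f' g_1 = f' g_2$, composing further with $X' \to X$ shows $f$ equalizes the two projections to $Y$; by injectivity of $f$, these agree, and together with agreement on $X'$ we get $g_1 = g_2$. For Step 2, injectivity of $|Y'| \to T$ is inherited from injectivity of $|f'|$ (Proposition~\ref{prop:charinjection}(iii) applied to $f'$) and injectivity of $|Y| \hookrightarrow |X|$. For surjectivity, given $(x', y) \in T$ with common image $x \in |X|$, I pass to the rank-$1$ generalizations $y_1, x_1$ of $y$ and $x$; since the rank-$1$ points of $\Spa(K(y), K(y)^+)$ and $\Spa(K(x), K(x)^+)$ have completed residue fields equal to $K(y)$ and $K(x)$ respectively, Proposition~\ref{prop:charinjection}(iii) forces $K(x) = K(y)$, and then $K(x)^+ = K(y)^+$ because both equal the valuation ring pulled back from $x$ to the same field. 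The resulting identification $\Spa(K(y), K(y)^+) = \Spa(K(x), K(x)^+)$ turns $i_y$ into a lift of $i_x$ to $Y$, and composing with the canonical factorization of $i_{x'} \colon \Spa(K(x'), K(x')^+) \to X' \to X$ through $i_x$ produces the required lift $\Spa(K(x'), K(x')^+) \to Y'$ whose closed-point image projects to $(x', y)$.

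Step 3, the homeomorphism, is where I expect the main obstacle. The continuous bijection $|Y'| \to T$ gives one inclusion of topologies; for the reverse, my plan is to invoke Proposition~\ref{prop:charinjection}(iv) to show that the $T$-topology on the common underlying set also realizes $f' \colon Y' \to X'$ as the final object among maps $Z \to X'$ with $|Z| \to |X'|$ factoring continuously. Given such a $Z \to X'$ with continuous factorization through $T$, the second projection $|Z| \to T \to |Y|$ is continuous, so via (iv) applied to $f$ the composite $Z \to X$ lifts to $Y$, and then together with $Z \to X'$ produces $Z \to Y'$. Applying this equivalence of universal properties to test objects of the form $V \subseteq Y'$ (open subperfectoids, whose underlying open subsets form a basis for the topology on $|Y'|$) forces every $|V|$ to be $T$-open; the delicate point is checking that enough open subperfectoids exist to detect the topology of $|Y'|$, which one handles by working locally on affinoid charts and using that rational subsets form a basis.

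For (ii), the implication ``injection $\Rightarrow$ universally injective'' is immediate from part (i) together with Proposition~\ref{prop:charinjection}(iii) for each pullback. Conversely, assume $f$ is universally injective. Specializing to the pullback $X' = Y$ along $f$ itself, the projection $p_1 \colon Y \times_X Y \to Y$ has $|p_1|$ injective; since the diagonal $\Delta_f$ is a section of $p_1$, the map $|\Delta_f|$ is a bijection. To conclude that $\Delta_f$ is an isomorphism (equivalently, that $f$ is an injection) via Lemma~\ref{lem:bijectiveiso}(ii), I verify the residue-field condition at rank-$1$ points automatically: for $y \in |Y|$ rank-$1$ with affinoid chart $Y = \Spa(S, S^+)$ over $X = \Spa(R, R^+)$, the diagonal corresponds to the surjective multiplication $m \colon S \hat\otimes_R S \twoheadrightarrow S$, so the point $\Delta_f(y)$ has support $m^{-1}(\mathrm{supp}\,v_y)$ and its quotient computes to $S/\mathrm{supp}\,v_y$, whose completion is $K(y)$. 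Thus $K(\Delta_f(y)) = K(y)$ identically, finishing the argument.
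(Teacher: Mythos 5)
Your Step~1 of part (i) is correct, and your Step~2 surjectivity argument, while substantially more laborious than necessary (the surjectivity of $|X'\times_X Y|\to |X'|\times_{|X|}|Y|$ is a standard fact for fibre products of analytic adic spaces, valid with no injectivity hypothesis), does reach a valid conclusion.

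The real problem is in Step~3, and there the gap is genuine. The universal property of Proposition~\ref{prop:charinjection}(iv) characterizes the injection $Y'\to X'$ as a \emph{perfectoid space}, but it does not determine the topology on the set $|Y'|$ in a way that distinguishes $|Y'|$ from the possibly coarser topology of $T$. What your argument actually shows is that the class of maps $Z\to X'$ whose topological realization factors continuously over $T$ coincides with the class factoring continuously over $|Y'|$: both lift to $Y'$. But equality of these two classes does not imply equality of topologies. If $\tau_1$ is the topology of $|Y'|$ and $\tau_2\subseteq\tau_1$ is that of $T$, any map $|Z|\to (|Y'|,\tau_1)$ that is $\tau_1$-continuous is automatically $\tau_2$-continuous; knowing that the classes of factoring maps agree gives you no leverage to conclude that a $\tau_1$-open $|V|$ is $\tau_2$-open. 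Your claim that ``applying this equivalence of universal properties to test objects $V\subseteq Y'$ forces every $|V|$ to be $T$-open'' does not follow: the universal property only produces the (already known) inclusion $V\to Y'$ and says nothing about the image of $|V|$ in $T$. The paper's argument is of an entirely different, point-set nature: both $|Y'|$ and $T$ embed in $|X'|$ as generalizing pro-constructible subsets, the bijection $|Y'|\to T$ is spectral and generalizing (because all maps of perfectoid spaces are generalizing), so by Lemma~\ref{lem:quotientmap} it is a quotient map and hence a homeomorphism. Without an argument of this kind you do not have a proof of the homeomorphism.

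For part (ii), your route via $\Delta_f$ is a genuinely different and reasonable approach; the paper instead reduces to uniqueness of sections of $f$ and observes that a section of a universally injective map induces a homeomorphism on underlying spaces (since $|h|$ and $|f|$ are inverse continuous bijections), so is an isomorphism by Proposition~\ref{prop:charinjection}(iv). However, as written your appeal to Lemma~\ref{lem:bijectiveiso} is unjustified, because that lemma requires the map (here $\Delta_f$) to be qcqs, and you do not verify this. This can be repaired: quasi-separatedness of $\Delta_f$ is automatic since it is a monomorphism, and quasi-compactness follows from the identity $\Delta_f^{-1}(U)=p_1(U)$ for quasicompact open $U\subseteq Y\times_X Y$, which holds because $|\Delta_f|$ is bijective and $|p_1|$ is injective. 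An easier repair is to avoid Lemma~\ref{lem:bijectiveiso} altogether: $\Delta_f$ is an immersion by Proposition~\ref{prop:diagimmersion}, so $|\Delta_f|$ is a locally closed topological embedding; since it is also a bijection, it is a homeomorphism, and then Proposition~\ref{prop:charinjection}(iv) gives directly that $\Delta_f$ is an isomorphism with no residue-field computation needed.
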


\begin{proof} In part (i), it is clear from the definition that $f^\prime$ is an injection. Using that $|f|$ and $|f^\prime|$ are injective by Proposition~\ref{prop:charinjection}, it follows that $|Y^\prime|\to |X^\prime|\times_{|X|} |Y|$ is injective. Moreover, all maps of perfectoid spaces are generalizing, so both $|Y^\prime|$ and $|X^\prime|\times_{|X|} |Y|$ are generalizing subsets of $|Y|$, and thus the map $|Y^\prime|\to |X^\prime|\times_{|X|} |Y|$ is generalizing. But this map is always surjective and spectral. By Lemma~\ref{lem:quotientmap}, it is a homeomorphism.

For part (ii), we have seen that if $f$ is an injection, then it is universally injective. For the converse, assume that $f$ is universally injective. Given any map $g: Z\to X$, we need to see that there is at most one way to factor it over $f: Y\to X$. Pulling back by $g$, we may assume that $Z=X$, and we have to show that there is at most one section of $f$. Assume that there is a section $h: X\to Y$. Being a section, $h$ is an injection; but then $|X|\to |Y|\to |X|$ is a factorization of the identity as a composition of injective maps. Thus, $|Y|=|X|$, and the map is a homeomorphism; then, by Proposition~\ref{prop:charinjection}, it follows that $Y=X$. In particular, $h$ is the only section, as desired.
\end{proof}

As injections are determined by their behaviour on topological spaces, the following definition is reasonable.

\begin{definition}\label{def:immersion} A map $f: Y\to X$ of perfectoid spaces is an immersion if $f$ is an injection and $|f|: |Y|\to |X|$ is a locally closed immersion. If $|f|$ in addition is open (resp.~closed), then $f$ is an open (resp.~closed) immersion.
\end{definition}

One cannot define closed immersions of schemes or adic spaces this way because of possible non-reduced structures. This is not a concern for perfectoid spaces, which makes this general definition possible.

Zariski closed embeddings as defined in \cite[Section II.2]{ScholzeTorsion} are a special class of closed embeddings.

\begin{definition}[{\cite[Definition II.2.1, Definition II.2.6]{ScholzeTorsion}}] Let $f: Z\to X$ be a map of perfectoid spaces, where $X=\Spa(R,R^+)$ is affinoid.
\begin{altenumerate}
\item The map $f$ is Zariski closed if $f$ is a closed immersion and there is an ideal $I\subset R$ such that $|Z|\subset |X|$ is the locus where $|f|=0$ for all $f\in I$.
\item The map $f$ is strongly Zariski closed if $Z=\Spa(S,S^+)$ is affinoid, $R\to S$ is surjective, and $S^+$ is the integral closure of $R^+$ in $S$.
\end{altenumerate}
\end{definition}

By \cite[Proposition 3.6.9 (c)]{KedlayaLiu1}, if $f$ is strongly Zariski closed, then $R^+\to S^+$ is almost surjective, so that the definition agrees with \cite[Definition II.2.6]{ScholzeTorsion}. It is easy to see that if $f$ is strongly Zariski closed, then $f$ is Zariski closed, cf.~\cite[Section II.2]{ScholzeTorsion}.

It was claimed in \cite[Section II.2]{ScholzeTorsion} that strongly Zariski closed is strictly stronger than Zariski closed. The example turned out to be erroneous, and in fact the two conditions are equivalent.

\begin{theorem}\label{thm:stronglyzariskiclosed} Let $f: Z\to X=\Spa(R,R^+)$ be a Zariski closed immersion. Then $f$ is strongly Zariski closed.
\end{theorem}

\begin{proof} This follows from \cite[Theorem 7.4, Remark 7.5]{BhattScholzePrism}.
\end{proof}

We note that \cite[Theorem 7.4]{BhattScholzePrism} also implies that $S^+\to R^+$ is almost surjective, independent of \cite{KedlayaLiu1}.

A general class of immersions is given by diagonal morphisms.

\begin{proposition}\label{prop:diagimmersion} Let $f: Y\to X$ be any map of perfectoid spaces. Then the diagonal morphism
\[
\Delta_f: Y\to Y\times_X Y
\]
is an immersion.
\end{proposition}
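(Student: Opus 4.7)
The plan is to verify the two conditions of Definition~\ref{def:immersion} separately: that $\Delta_f$ is an injection, and that $|\Delta_f|$ is a locally closed embedding of topological spaces. The first condition is essentially formal: given any maps $g_1, g_2 \colon Z \to Y$ with $\Delta_f\circ g_1 = \Delta_f\circ g_2$, composing with either projection $\pi_i: Y\times_X Y\to Y$ forces $g_1=g_2$, so $\Delta_f$ is a section of $\pi_1$ and hence an injection in the sense of Definition~\ref{def:injection}.

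For the topological claim, the point is that being a locally closed immersion is a local question on the target, so I would work affinoid-locally. Fix $y\in Y$ with image $x=f(y)\in X$. Choose an affinoid open neighborhood $U=\Spa(R,R^+)\subset X$ of $x$, and then an affinoid open neighborhood $V=\Spa(S,S^+)\subset Y$ of $y$ with $f(V)\subset U$. Then $V\times_U V\subset Y\times_X Y$ is open and satisfies $\Delta_f^{-1}(V\times_U V)=V$, so it suffices to show that the restriction $\Delta_f|_V\colon V\to V\times_U V$ is a closed immersion.

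To this end I would first argue that the affinoid fiber product $V\times_U V$ is representable by an affinoid perfectoid space $\Spa(T,T^+)$ with $T = S\,\hat\otimes_R S$. In characteristic $p$ this is immediate from Proposition~\ref{prop:perfectoidcharp}, since the completed tensor product of perfect complete Tate rings is perfect complete Tate; in mixed characteristic one reduces to this case by tilting (Corollary~\ref{cor:tiltingequiv}). The diagonal then corresponds to the multiplication map $m\colon T\to S$, which is surjective. Moreover, since $S^+$ is integrally closed in $S$ and the image of $S^+\otimes_{R^+}S^+$ under $m$ lies in $S^+$ (as a subring of $S$), while conversely the image of $T^+$ contains $S^+$ via the factor embedding $s\mapsto s\otimes 1$, one sees that $S^+$ coincides with the integral closure of $m(T^+)$ in $S$. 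Thus $\Delta_f|_V$ is strongly Zariski closed, and Proposition~\ref{prop:zarclosed}(i) then yields that it is a closed immersion.

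Since $y\in Y$ was arbitrary, the open subsets $V\times_U V$ cover $|\Delta_f|(|Y|)$ in $|Y\times_X Y|$, and on each of them the diagonal restricts to a closed immersion. This shows that $|\Delta_f|$ is locally a closed embedding onto its image, i.e.\ a locally closed immersion, so $\Delta_f$ is an immersion. The main obstacle is the affinoid reduction step: establishing that $V\times_U V$ is genuinely affinoid perfectoid and then checking the strongly Zariski closed hypothesis for $m$; both go through cleanly but require the tilting formalism in mixed characteristic and careful bookkeeping on rings of integral elements.
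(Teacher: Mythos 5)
Your proof is correct and follows essentially the same strategy as the paper: reduce to the affinoid case, observe that the diagonal is strongly Zariski closed because the multiplication map $S\hat\otimes_R S\to S$ is surjective, and invoke Proposition~\ref{prop:zarclosed} to conclude it is a closed immersion. The paper's argument is just more compressed (it does not spell out why $S^+$ is the integral closure of the image of $T^+$, nor the tilting reduction for perfectoidness of the fiber product), but the route is identical.
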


\begin{proof} Clearly, $\Delta_f$ is an injection. Thus, it is enough to show that $|\Delta_f|$ identifies $|Y|$ with a locally closed subspace of $|Y\times_X Y|$. This statement can be checked locally on $X$ and $Y$; thus, we can assume that $X=\Spa(R,R^+)$ and $Y=\Spa(S,S^+)$ are affinoid. But then $\Delta_f$ is strongly Zariski closed, as $S\hat{\otimes}_R S\to S$ is surjective. It follows that $\Delta_f$ is a closed immersion.
\end{proof}

\begin{definition}\label{def:separatedperf} A map $f: Y\to X$ of perfectoid spaces is \emph{separated} if $\Delta_f$ is a closed immersion.
\end{definition}

\begin{proposition}\label{prop:seperatedperf} Let $f: Y\to X$ be a map of perfectoid spaces. The following conditions are equivalent.
\begin{altenumerate}
\item The map $f$ is separated.
\item The map $|\Delta_f|: |Y|\to |Y\times_X Y|$ is a closed immersion.
\item The map $f$ is quasiseparated, and for all perfectoid fields $K$ with ring of integers $\OO_K$ and an open and bounded valuation subring $K^+\subset K$ and any commutative diagram
\[\xymatrix{
\Spa(K,\OO_K)\ar[d]\ar[r] & Y\ar^f[d] \\
\Spa(K,K^+)\ar[r]\ar@{-->}[ur] & X\ ,
}\]
there exists at most one dotted arrow making the diagram commute.
\end{altenumerate}
\end{proposition}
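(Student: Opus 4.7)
The equivalence of (i) and (ii) should be immediate from the definitions: Proposition~\ref{prop:diagimmersion} says $\Delta_f$ is always an injection whose underlying topological map is a locally closed immersion, so by Definition~\ref{def:immersion}, $\Delta_f$ is a closed immersion of perfectoid spaces precisely when $|\Delta_f|$ is a topological closed immersion. This requires no further work.

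For (i) $\Rightarrow$ (iii), the quasiseparatedness of $f$ is immediate since closed immersions are qcqs. For the valuative criterion, suppose $h_1, h_2 \colon \Spa(K, K^+) \to Y$ are two lifts of the given diagram. They assemble into a single map $(h_1, h_2) \colon \Spa(K, K^+) \to Y \times_X Y$, whose restriction along the open immersion $\Spa(K, \OO_K) \hookrightarrow \Spa(K, K^+)$ factors through $\Delta_f$ (since the two composites to $Y$ agree there by assumption). Thus the generic point of $|\Spa(K, K^+)|$ lies in $|\Delta_f|(|Y|) \subset |Y \times_X Y|$, and since this set is closed by hypothesis, the whole image of $|(h_1, h_2)|$ lies in $|\Delta_f|(|Y|)$. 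The universal property of injections (Proposition~\ref{prop:charinjection}(iv)) then forces $(h_1, h_2)$ itself to factor through $\Delta_f$, so $h_1 = h_2$.

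The content lies in (iii) $\Rightarrow$ (i). I would prove this by showing $|\Delta_f|(|Y|) \subset |Y \times_X Y|$ is closed under specialization. Given quasiseparatedness, the map $\Delta_f$ is qcqs and its image is locally constructible in the locally spectral space $|Y \times_X Y|$; hence pro-constructible, so its closure equals the set of specializations of its points. Now let $z \in |\Delta_f|(|Y|)$ and $z \leadsto z'$. All maps of perfectoid spaces are generalizing, so the image $|\Delta_f|(|Y|)$ is stable under generalization; passing to a rank-$1$ generalization, I may assume $z$ is rank-$1$, so $z = \Delta_f(y)$ for a rank-$1$ point $y \in Y$ with $K(y) \cong K(z)$. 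The specialization $z \leadsto z'$ within $|Y \times_X Y|$ is realized (after choosing an affinoid neighborhood and the appropriate open bounded valuation subring $K(z)^+ \subset K(z)$) by a map $\Spa(K(z), K(z)^+) \to Y \times_X Y$ whose generic point maps to $z$ and whose closed point maps to $z'$. Composing with the two projections gives $h_1, h_2 \colon \Spa(K(z), K(z)^+) \to Y$, which agree on $\Spa(K(z), \OO_{K(z)})$ (both equal $i_y$) and have the same composite to $X$. The valuative criterion (iii) then forces $h_1 = h_2$, so the map factors through $\Delta_f$, and in particular $z' \in |\Delta_f|(|Y|)$, as desired.

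The main obstacle is the point-set work in this last direction: specifically, justifying that for a rank-$1$ point $z$ in a locally spectral perfectoid space, every specialization $z'$ arises from a valuation ring $K(z)^+ \subset K(z)$ via the natural map $\Spa(K(z), K(z)^+) \hookrightarrow$ the space, and verifying carefully that local-closedness of $|\Delta_f|(|Y|)$ together with quasiseparatedness of $f$ produces the pro-constructibility needed to deduce closedness from specialization-stability. Both are essentially standard manipulations with spectral spaces from Section~2, but the combination requires care.
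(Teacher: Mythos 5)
Your proof is correct and follows essentially the same route as the paper: (i)$\Leftrightarrow$(ii) via Proposition~\ref{prop:diagimmersion}, (ii)$\Rightarrow$(iii) by density of $\Spa(K,\OO_K)$ in $\Spa(K,K^+)$ plus the universal property of injections, and (iii)$\Rightarrow$(ii) by showing the image of $|\Delta_f|$ is pro-constructible and stable under specialization. One small inaccuracy: you say the image of $|\Delta_f|$ is ``locally constructible, hence pro-constructible''; in general a quasicompact locally closed subset need not be (locally) constructible, and the correct justification is simply that $|\Delta_f|$ is a quasicompact spectral map, so the image of the pro-constructible set $|Y|$ is pro-constructible (Section~2); this does not affect the argument.
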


In (iii), recall that by definition $f: Y\to X$ is quasiseparated if the diagonal $\Delta_f: Y\to Y\times_X Y$ is quasicompact.

\begin{proof} The equivalence of (i) and (ii) follows from Proposition~\ref{prop:diagimmersion}. Next, we check that (ii) implies (iii). If $|\Delta_f|$ is a closed immersion, then it is in particular quasicompact; this implies that $f$ is quasiseparated. Now assume that there are two dotted arrows making the diagram commute. These define a point of $z\in (Y\times_X Y)(K,K^+)$ such that $z|_{(K,\OO_K)}\in \Delta_f(Y)(K,\OO_K)$. But if $|\Delta_f|: |Y|\to |Y\times_X Y|$ is a closed immersion, then the map $z: \Spa(K,K^+)\to Y\times_X Y$ maps into $|\Delta_f|(|Y|)$ if this is true for $z|_{(K,\OO_K)}$, as $\Spa(K,\OO_K)\subset \Spa(K,K^+)$ is dense. As $\Delta_f$ is an immersion, this implies that $z$ maps $\Spa(K,K^+)$ into $\Delta_f(Y)$, so that the dotted arrows agree.

Finally, (iii) implies (ii): The conditions in (iii) imply that the map $|\Delta_f|: |Y|\to |Y\times_X Y|$ is a quasicompact locally closed immersion of locally spectral spaces, which is moreover specializing: Any specialization in $|Y\times_X Y|$ is witnessed by an injection $\Spa(K,K^+)\to Y\times_X Y$, and if the generic point $\Spa(K,\OO_K)$ factors over $Y$, then by (iii) the whole map factors over $Y$. We claim that this implies that $|\Delta_f|$ is a closed immersion. By quasicompactness, the image of $|\Delta_f|$ is pro-constructible, thus the closure is given by the set of specializations; but the image is already closed under specializations.
\end{proof}

\section{\'Etale morphisms}

In this section, we recall briefly some facts about \'etale morphisms of perfectoid spaces. We start with the almost purity theorem.

\begin{theorem}[{\cite{FaltingsAlmostEtaleExtensions},\cite[Theorem 3.6.21, Theorem 5.5.9]{KedlayaLiu1},\cite[Theorem 5.25, Theorem 7.9 (iii)]{ScholzePerfectoidSpaces}}]\label{thm:almostpurity}  Let $R$ be a perfectoid Tate ring with tilt $R^\flat$.  
\begin{altenumerate}
\item For any finite \'etale $R$-algebra $S$, $S$ is perfectoid.  
\item Tilting induces an equivalence 
\begin{eqnarray*}
\{\text{Finite \'etale $R$-algebras}\}&\to& \{\text{Finite \'etale $R^{\flat}$-algebras}\} \\
S&\mapsto& S^\flat
\end{eqnarray*}
\item For any finite \'etale $R$-algebras $S$, $S^\circ$ is almost finite \'etale over $R^\circ$.
\end{altenumerate}
\end{theorem}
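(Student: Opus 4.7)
I would attack all three parts together, splitting into the characteristic $p$ case and the mixed characteristic case, and reducing the latter to the former via Fontaine's map of Lemma~\ref{lem:fontainestheta}.

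In characteristic $p$, let $R$ be perfectoid (so perfect complete Tate by Proposition~\ref{prop:perfectoidcharp}) and let $S$ be finite \'etale over $R$. The first step is to show $S$ is perfect: the relative Frobenius $\Phi_{S/R}\from S\otimes_{R,\Phi_R} R\to S$ is an isomorphism because $S/R$ is \'etale, and composing with $\id_S\otimes\Phi_R^{-1}$ shows the absolute Frobenius $\Phi_S$ is an isomorphism. Then $S$ is Tate and complete as a finite projective $R$-module, and uniformity follows from the trace pairing: \'etaleness gives a perfect pairing $\tr_{S/R}\from S\otimes_R S\to R$, so choosing an $R$-basis $e_1,\dots,e_n$ of $S$ with trace-dual basis $e_1^\ast,\dots,e_n^\ast$, every $s\in S^\circ$ satisfies $s=\sum_i \tr(se_i^\ast)\,e_i$ with each $\tr(se_i^\ast)\in R^\circ$, so $S^\circ$ is bounded. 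This gives (i). For (iii), applied to $R^+\subseteq R^\circ$ and the integral closure $S^+$ of $R^+$ in $S$, the same trace pairing is \emph{almost} perfect on the integral level: because elements admit arbitrary $p$-power roots and Frobenius commutes with trace, the different is annihilated by every pseudouniformizer, yielding that $S^+/R^+$ is almost finite \'etale.

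For general perfectoid $R$ with tilt $(R^\flat,R^{\flat+})$, I would use Fontaine's surjection $\theta\from W(R^{\flat+})\to R^+$ with kernel generated by the nonzerodivisor $\xi$. The key input is that finite \'etale algebras lift uniquely (up to almost isomorphism) across any henselian pair; applying this to $(W(R^{\flat+}),[\varpi^\flat])$, $(R^+,\varpi)$, and $(R^{\flat+},\varpi^\flat)$, and using Lemma~\ref{RingsOfIntegralElements} to identify $R^+/\varpi\cong R^{\flat+}/\varpi^\flat$, one obtains a zigzag of equivalences between the almost categories of finite \'etale algebras on the two sides, compatibly with integral structures. Inverting $\varpi$ (respectively $\varpi^\flat$) and combining with the characteristic $p$ analysis above then yields (i), (ii), and (iii) simultaneously; note that once $S^\flat$ is constructed as a finite \'etale $R^\flat$-algebra and shown to be perfectoid, its untilt is forced to agree with the original $S$ by the tilting equivalence of Theorem~\ref{thm:tiltingequivalg}.

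The main obstacle is the almost-henselian lifting of finite \'etale algebras along $R^+\twoheadrightarrow R^+/\varpi$, since $R^+$ is neither noetherian nor a valuation ring, so one cannot directly invoke classical Hensel. This is the essential content of Faltings's original almost purity theorem. In the present framework the cleanest approach is to deform first to $W(R^{\flat+})$ --- which is genuinely $[\varpi^\flat]$-adically complete, so honest deformation theory of finite \'etale algebras applies --- and then reduce modulo $\xi$; verifying that the resulting functor is an \emph{almost} equivalence requires Frobenius-based control of the error terms, using that on the tilt side one has true (not merely almost) lifting thanks to perfectness. Once this technical input is in place, all remaining assertions are bookkeeping with the formalism of Sections 3 and 5.
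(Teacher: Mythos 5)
Note first that the paper does not supply a proof of Theorem~\ref{thm:almostpurity}: it is stated with citations to Faltings, Kedlaya--Liu, and \cite{ScholzePerfectoidSpaces} and used as a black-box input. So there is no internal argument to compare against; what follows are comments on your sketch itself.

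In characteristic $p$, your argument for uniformity of $S$ via the trace pairing is circular: you need each $\tr(se_i^\ast)$ to lie in $R^\circ$, but without already knowing that $S^\circ$ is bounded there is no reason for $\tr(S^\circ e_i^\ast)$ to be bounded---$\tr(\cdot\, e_i^\ast)$ is a fixed continuous $R$-linear map, and its image of $S^\circ$ is bounded exactly when $S^\circ$ is. The repair is already in the paper: Proposition~\ref{prop:perfectoidcharp} shows via Banach's open mapping theorem that a perfect complete Tate ring is automatically uniform, so once your relative-Frobenius argument shows $S$ is perfect, uniformity comes for free and the trace form is not needed for (i). The trace form does belong in the proof of (iii), and your observation that the different, being Frobenius-stable, absorbs $\varpi^{1/p^n}$ for all $n$ is the right mechanism there.

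The mixed-characteristic step has a more serious gap. The theorem concerns finite \'etale covers of the Tate rings $R$ and $R^\flat$, but the zigzag you describe---henselian or deformation-theoretic lifting through $W(R^{\flat+})$, $R^+$, $R^{\flat+}$, and the common quotient $R^+/\varpi\cong R^{\flat+}/\varpi^\flat$---lives entirely at the level of integral rings. To pass from a finite \'etale $R$-algebra to an almost finite \'etale $R^+$-algebra, which you could then reduce modulo $\varpi$ and feed into the lifting, is precisely assertion (iii), which cannot be deduced from a zigzag that already presupposes it. This chicken-and-egg is the genuine content of almost purity; the proofs in the literature avoid it by localizing to perfectoid fields and gluing via the sheaf property of rational subsets (Scholze) or by Banach-algebraic continuity arguments (Kedlaya--Liu), rather than by honest deformation over $W(R^{\flat+})$ alone.
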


\begin{definition}\label{def:etale} Let $f: Y\to X$ be a morphism of perfectoid spaces.
\begin{altenumerate}
\item[{\rm (i)}] The morphism $f$ is finite \'etale if for all open affinoid perfectoid $\Spa(R,R^+)=U\subset X$, the preimage $V=f^{-1}(U)=\Spa(S,S^+)\subset Y$ is affinoid perfectoid, $R\to S$ is a finite \'etale morphism of rings, and $S^+$ is the integral closure of $R^+$ in $S$.
\item[{\rm (ii)}] The morphism $f$ is \'etale if for all $y\in Y$, there are open subsets $V\subset Y$, $U\subset X$ such that $y\in Y$, $f(V)\subset U$, and there is a factorization
\[\xymatrix{
V\ar^{f|_V}[dr]\ar@{^(->}[r] & W\ar[d]\\
&U,
}\]
where $V\hookrightarrow W$ is an open immersion, and $W\to U$ is finite \'etale.
\end{altenumerate}
\end{definition}

We refer to \cite[Section 7]{ScholzePerfectoidSpaces} for an extensive discussion. In particular, it is proved there that a morphism is finite \'etale if and only if there is a cover by open affinoid perfectoid $U=\Spa(R,R^+)\subset X$ such that their preimages $V=\Spa(S,S^+)\subset Y$ are affinoid perfectoid, $R\to S$ is finite \'etale, and $S^+$ is the integral closure of $R^+$ in $S$. Moreover, composites, base changes, and maps between (finite) \'etale maps are (finite) \'etale.

By $X_\fet$, resp. $X_\et$, we denote the categories of finite \'etale, resp. \'etale, perfectoid spaces over $X$. If $X=\Spa(R,R^+)$ is affinoid, then $X_\fet\cong R_\fet^\op$.

\begin{theorem}[\cite{ScholzePerfectoidSpaces}, \cite{KedlayaLiu1}]\label{thm:tiltingetalesite} Let $X$ be a perfectoid space with tilt $X^\flat$. Then tilting induces an equivalence $X_\et\cong X^\flat_\et$.

If $X=\Spa(R,R^+)$ is affinoid perfectoid, then the $R^+$-module $H^i(X_\et,\OO_X^+)$ is almost zero for $i>0$, and $H^0(X_\et,\OO_X^+)=R^+$.
\end{theorem}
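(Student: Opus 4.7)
The plan is to handle the tilting equivalence first and the cohomological statement second, in both cases reducing the étale case to the previously established affinoid/analytic and finite étale inputs.

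For the equivalence $X_\et \cong X^\flat_\et$, I would first treat the finite étale case. Working Zariski-locally, we may assume $X=\Spa(R,R^+)$ is affinoid perfectoid, in which case $X_\fet \cong R_\fet^{\op}$ and $X^\flat_\fet \cong R^\flat_\fet{}^{\op}$. The almost purity Theorem~\ref{thm:almostpurity}(ii) gives a tilting equivalence $R_\fet \cong R^\flat_\fet$, and Theorem~\ref{thm:almostpurity}(i) ensures that the output is again perfectoid, so this glues to an equivalence $X_\fet \cong X^\flat_\fet$ of sites. To pass to the full étale site, I would use Definition~\ref{def:etale}(ii): every étale map factors locally as an open immersion inside a finite étale map. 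By Theorem~\ref{thm:tiltingtopspace}, tilting induces a homeomorphism $|X|\cong|X^\flat|$ that preserves rational (hence open) subsets, so open immersions match under tilting. Combining this with the finite étale equivalence and checking compatibility of the two constructions (pullback along open immersion versus taking a finite étale algebra) yields the equivalence $X_\et \cong X^\flat_\et$ of sites.

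For the cohomology, by tilting I may assume $X=\Spa(R,R^+)$ is of characteristic $p$. The statement $H^0(X_\et,\OO_X^+)=R^+$ is the sheaf property of $\OO_X^+$ on $X_\et$. Combining the analytic sheaf property (the preceding theorem in the excerpt on $H^0_v(X,\OO_X^+)=R^+$, which in particular gives the analytic version) with descent along a finite étale cover $\Spa(S,S^+)\to\Spa(R,R^+)$ reduces this to showing that
\[
0 \to R^+ \to S^+ \to S^+\otimes_{R^+} S^+
\]
is almost exact, which is classical almost faithfully flat descent (almost purity, Theorem~\ref{thm:almostpurity}(iii), gives that $R^+\to S^+$ is almost finite étale, hence almost faithfully flat, and one can split off an almost complement via the trace pairing).

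For the vanishing of $H^i(X_\et,\OO_X^+)$ in positive degrees in the almost category, I would use the Čech-to-derived spectral sequence and the general criterion that it suffices to make the Čech cohomology of a cofinal system of covers almost vanish. A cofinal system of étale covers of $X$ is given by compositions of rational localizations with finite étale covers. For a finite étale cover $\Spa(S,S^+)\to \Spa(R,R^+)$, the Čech complex computing cohomology of $\OO_X^+$ is
\[
S^+ \to S^+\otimes_{R^+} S^+ \to S^+\otimes_{R^+} S^+\otimes_{R^+} S^+ \to \cdots,
\]
which is almost acyclic in positive degrees by almost faithfully flat descent for the almost étale extension $R^+\to S^+$ (the trace map provides a splitting of multiplication $S^+\otimes_{R^+} S^+ \to S^+$ up to almost isomorphism, from which the standard contracting homotopy argument yields almost exactness). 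Together with the analytic vanishing $H^i(U,\OO_X^+)\stackrel{a}{=}0$ on rational subsets $U$, the $E_2$-page of the Čech-to-derived spectral sequence is almost concentrated in degree zero, yielding $H^i(X_\et,\OO_X^+)\stackrel{a}{=}0$ for $i>0$.

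The main obstacle I expect is the cohomological comparison, specifically making the Čech argument rigorous in the almost category: one must check that for an arbitrary étale cover one can refine to compositions of rational opens with finite étale, and then that the Čech calculation combining an almost-flat descent spectral sequence with the known analytic vanishing interacts cleanly with left-completion. This is a standard argument in the perfectoid literature, but the almost bookkeeping (and the reduction of an arbitrary étale cover to the standard form) is the technical heart.
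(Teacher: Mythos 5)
The paper itself does not prove this theorem; it is cited to \cite{ScholzePerfectoidSpaces} and \cite{KedlayaLiu1}, so there is no in-paper argument to compare against. Your plan reproduces the standard route of those references: almost purity (Theorem~\ref{thm:almostpurity}) for the finite \'etale part of the tilting equivalence, Theorem~\ref{thm:tiltingtopspace} for the open part, and a \v{C}ech argument combining the analytic vanishing with almost faithfully flat descent for the cohomology statement. This is the right strategy and the structure of the reductions is correct.

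One concrete gap worth naming is in your $H^0$ step. Almost exactness of $0\to R^+\to S^+\to S^+\otimes_{R^+}S^+$ only identifies $H^0(X_\et,\OO_X^+)$ with $R^+$ in the almost category, not on the nose: the cokernel of $R^+\hookrightarrow\ker(S^+\to S^+\otimes_{R^+}S^+)$ being almost zero does not force it to vanish. The standard fix --- and the one the paper itself uses in the analogous place, the proof of Theorem~\ref{thm:vsub} --- is to invert $\varpi$, turning the almost exact sequence into an exact one and giving $H^0(X_\et,\OO_X)=R$ exactly; then observe that $\OO_X^+\subset\OO_X$ is the subpresheaf of sections of absolute value $\leq 1$ at every point, a condition that can be tested after pullback along the surjective cover and which characterizes $R^+\subset R$ by integral closedness. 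This yields the exact equality $H^0(X_\et,\OO_X^+)=R^+$. Otherwise the remaining steps (refinement of an arbitrary \'etale cover to ones of the form ``open immersion into a finite \'etale cover over a rational subset,'' and the \v{C}ech-to-derived spectral sequence) are standard bookkeeping, which you correctly flag as the technical heart.
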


We will need a result about \'etale morphisms to an inverse limit of spaces.

\begin{proposition}\label{prop:etmaptolim} Let $X_i=\Spa(R_i,R_i^+)$, $i\in I$, be a cofiltered inverse system of affinoid perfectoid spaces for some small index category $I$. Consider the inverse limit $X=\Spa(R,R^+)$, where $R^+$ is the $\varpi$-adic completion of $\varinjlim_i R_i^+$, and $R=R^+[\frac 1{\varpi}]$. Here, $\varpi\in R_i^+$ denotes any compatible choice of pseudouniformizers for large $i$. One has the following results.
\begin{altenumerate}
\item[{\rm (o)}] The map
\[
|X|\to \varprojlim_i |X_i|
\]
is a homeomorphism of spectral spaces.
\item[{\rm (i)}] The base change functors $(X_i)_\fet\to X_\fet$ induce an equivalence of categories
\[
\text{2-}\varinjlim_i (X_i)_\fet\to X_\fet\ .
\]
\item[{\rm (ii)}] Let $X_{\et,\qcqs}\subset X_\et$ denote the full subcategory of qcqs \'etale perfectoid spaces over $X$ (and similarly for $X_i$). Then the base change functors $(X_i)_{\et,\qcqs}\to X_{\et,\qcqs}$ induce an equivalence of categories
\[
\text{2-}\varinjlim_i (X_i)_{\et,\qcqs}\to X_{\et,\qcqs}\ .
\]
\item[{\rm (iii)}] Let $X_{\et,\qc,\sep}\subset X_{\et,\qcqs}$ be the full subcategory of quasicompact separated \'etale perfectoid spaces over $X$ (and similarly for $X_i$). Then the base change functors $(X_i)_{\et,\qc,\sep}\to X_{\et,\qc,\sep}$ induce an equivalence of categories
\[
\text{2-}\varinjlim_i (X_i)_{\et,\qc,\sep}\to X_{\et,\qc,\sep}\ .
\]
\item[{\rm (iv)}] Let $X_{\et,\aff}\subset X_{\et,\qc,\sep}$ be the full subcategory of affinoid perfectoid spaces which are \'etale over $X$ (and similarly for $X_i$). Then the base change functors $(X_i)_{\et,\aff}\to X_{\et,\aff}$ induce an equivalence of categories
\[
\text{2-}\varinjlim_i (X_i)_{\et,\aff}\to X_{\et,\aff}\ .
\]
\end{altenumerate}

Moreover, if $\kappa$ is as in Lemma~\ref{lem:choosekappa} and $I$ is $\lambda$-small for $\lambda<\kappa$ and all $X_i$ are $\kappa'$-small for some strong limit cardinal $\kappa'<\kappa$, then $X$ is $\kappa$-small.
\end{proposition}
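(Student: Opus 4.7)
The strategy follows the standard template for limit results, with the wrinkle that our rings are completions of filtered colimits. Let $A^+=\varinjlim_i R_i^+$, $A=A^+[\tfrac 1\varpi]$; then $R^+$ is the $\varpi$-adic completion of $A^+$ and $R$ is the completion of $A$. The $\kappa$-smallness of $X$ is a bookkeeping check: if $\lambda=|I|<\kappa$, then $|A^+|\leq \lambda\cdot\kappa^\prime<\kappa$ since $\kappa$ is a strong limit, and the completion preserves this bound since $|R^+|\leq |A^+|^\omega\leq 2^{|A^+|}<\kappa$.

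For (o), the map $|X|\to\varprojlim_i |X_i|$ is spectral, and the target is spectral by Lemma~\ref{lem:spectralinvlimit}. For surjectivity, a compatible family of continuous valuations $v_i$ on $R_i$ descends to a continuous valuation on $A$ by taking the colimit; since $R$ is the uniform completion of $A$, this extends uniquely to a continuous valuation on $R$. Injectivity follows since $A$ is dense in $R$. The composite maps $|X|\to |X_i|$ are generalizing (all maps of analytic adic spaces are), so by Lemma~\ref{lem:spectralinvlimit} the map to the inverse limit is generalizing, and by Lemma~\ref{lem:quotientmap} the bijective surjective generalizing spectral map is a homeomorphism.

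For (i), the plan is to reduce to the mod-$\varpi$ setting. By Theorem~\ref{thm:tiltingetalesite} (tilting), one may assume all $R_i$ are of characteristic $p$, and one checks that tilting commutes with the limit under consideration (both $R^\flat$ and the completion of $\varinjlim R_i^\flat$ are the $\varpi^\flat$-adic completion of $\varinjlim R_i^{\flat +}/\varpi^\flat$). In characteristic $p$, almost purity (Theorem~\ref{thm:almostpurity}) together with the $\varpi^\flat$-adic completeness of $R^{+}$ reduces the question to finite étale algebras over $R^+/\varpi = \varinjlim_i R_i^+/\varpi$, at which point the standard result that finite étale algebras commute with filtered colimits of rings (finite presentation plus flatness and unramifiedness being finitely checkable) gives the 2-colimit identification. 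Fully faithfulness is easier and follows because $\mathrm{Hom}$ modules of finite étale algebras are finitely generated and compatible with the colimit.

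For (iv), any $V=\Spa(S,S^+)\in X_{\et,\aff}$ admits, by definition of étale plus quasicompactness, a finite cover by rational subsets each factoring as an open immersion into a finite étale cover of a rational subset of $X$. A rational subset of $X$ is specified by finitely many elements of $R$ generating an open ideal; approximating these elements by elements of some $R_{i_0}^+$, one sees from part (o) (and the standard stability of rational subsets under small perturbation) that every rational subset of $X$ is the pullback of a rational subset of some $X_{i_0}$. Combined with (i) applied at that stage, and after gluing the finitely many pieces by enlarging $i_0$ to match overlap data, one descends $V$ to $X_{i_0}$. Parts (iii) and (ii) then follow from (iv) by a standard gluing argument: a quasicompact separated (resp.\ qcqs) étale $Y\to X$ admits a finite affinoid étale cover $\{V_k\}$ with affinoid (resp.\ qcqs, hence finitely affinoid-coverable) intersections, and each such finite collection of data descends to a finite stage by (iv), with the cocycle conditions descending after possibly enlarging the index again.

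The main obstacle is essential surjectivity in (i): the fact that $R$ is a $\varpi$-adic completion of $A=\varinjlim R_i$, not simply the colimit, means one cannot directly invoke Grothendieck's approximation theorem for finite étale algebras. The tilting plus almost-purity plus deformation pattern outlined above is the device that bridges this gap, and each of the three steps depends essentially on the perfectoid hypothesis.
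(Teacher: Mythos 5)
Your outline for the set-theoretic bookkeeping, for~(o), and (in spirit) for~(i) is on the right track. Your sketch of~(i) via tilting, almost purity, and reduction to finite étale algebras over $R^+/\varpi$ is essentially the content of the result the paper cites (\cite[Lemma 7.5~(i)]{ScholzePerfectoidSpaces}, which in turn relies on Elkik and Gabber--Ramero), so that step is correct in outline even if the passage through almost mathematics is glossed over.

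The genuine gap is in~(iv). After covering $V=\Spa(S,S^+)$ by rational subsets of finite étale covers of rational subsets of $X$, descending each piece, and gluing the descent data at a finite stage, what you actually obtain is a qcqs étale perfectoid space $V_j\to X_j$ whose pullback to $X$ is $V$. Nothing in this argument forces $V_j$ to be \emph{affinoid} over $X_j$: gluing affinoids yields qcqs spaces, and the property of being affinoid is not automatically inherited from the limit. Your argument therefore proves essential surjectivity of~(ii), not of~(iv). The paper's actual proof of~(iv) invokes \cite[Proposition 1.7.1]{Huber}, adapted to the perfectoid setting via the pseudocoherent-sheaf machinery of Kedlaya--Liu; this is a nontrivial ``spreading out of affinoidness'' result with no elementary replacement, and it is the key ingredient you are missing. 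Because your plan then tries to derive~(ii) and~(iii) from~(iv), the order of implications breaks down: the paper deliberately proves~(ii) first (fully faithfulness via a graph argument, essential surjectivity by local structure of étale maps), then~(iii) as a descent-of-closedness statement on $\Delta_{f_j}$, and only then~(iv) using the Huber/Kedlaya--Liu criterion plus fully faithfulness from~(ii).

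A secondary omission: you never supply a proof of fully faithfulness for the categories of spaces in~(ii)--(iv). This does not follow formally from descent of objects; the paper handles it by realizing morphisms (and the condition that two morphisms coincide, or that a given morphism is an isomorphism) through their graphs as quasicompact open subspaces of products, and then approximating quasicompact opens in the inverse limit of spectral spaces. That argument is genuinely needed and is not subsumed by your gluing step.
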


\begin{proof} For the set-theoretic part, we can by Lemma~\ref{lem:choosekappa} assume that $\kappa^\prime = \kappa_\lambda$ has cofinality larger than $\lambda$, in which case the cardinality of the direct limit of the $R_i^+$ is still less than $\kappa_\lambda$, and so $R$ is $\kappa$-small.

Part (o) follows from \cite[Proposition 3.9]{HuberContinuousValuations}, cf.~\cite[Lemma 6.13 (ii)]{ScholzePerfectoidSpaces}. The statement in part (i) is equivalent to
\[
\text{2-}\varinjlim_i (R_i)_\fet\to R_\fet
\]
being an equivalence of categories. This is a consequence \cite[Lemma 7.5 (i)]{ScholzePerfectoidSpaces} of a theorem of Elkik, \cite{Elkik}, (in the noetherian case) and Gabber--Ramero, \cite[Proposition 5.4.53]{GabberRamero}, in general.

For part (ii), we first check fully faithfulness. Thus, let $Y_i, Y_i^\prime\to X_i$ be two qcqs \'etale perfectoid spaces over $X_i$, and denote by $Y_j,Y_j^\prime\to X_j$ their pullbacks to $X_j$ for $j\geq i$, and $Y,Y^\prime\to X$ their pullbacks to $X$. Assume that two morphisms $f_i,g_i: Y_i\to Y_i^\prime$ become equal after pullback to $X$, $f=g: Y\to Y^\prime$. Let $\Gamma_{f_i},\Gamma_{g_i}: Y_i\hookrightarrow Y_i\times_X Y_i^\prime=: Z_i$ be the graphs of $f_i$ and $g_i$. Then $\Gamma_{f_i}$ and $\Gamma_{g_i}$ are injections, and thus are determined by the quasicompact open image of $|Y_i|$ in $|Z_i|$. We know that after pullback along $|Z|\to |Z_i|$, $Z=Z_i\times_{X_i} X$, the two quasicompact open images $\Gamma_{f_i}(|Y_i|)$, $\Gamma_{g_i}(|Y_i|)$ agree. As $|Z|=\varprojlim_j |Z_j|$ is an inverse limit of spectral spaces along spectral maps, these two quasicompact open images agree after pullback to $|Z_j|$ for large $j$, which means that $\Gamma_{f_j}=\Gamma_{g_j}$, as desired. This proves faithfulness.

As an intermediate step to fullness, we check that if $f_i: Y_i\to Y_i^\prime$ is a morphism over $X$ whose pullback $f: Y\to Y^\prime$ to $X$ is an isomorphism, then the pullback $f_j: Y_j\to Y_j^\prime$ to $X_j$ is an isomorphism for large enough $j$. Note that the image of $f_i$ is a quasicompact open subset, over which $Y^\prime\to Y_i^\prime$ factors; a standard quasicompactness argument implies that $Y_j^\prime\to Y_i^\prime$ factors over this open subset for large enough $j$, so that $f_j$ is surjective for large enough $j$. Applying the same reasoning to $\Delta_{f_i}: Y_i\to Y_i\times_{Y_i^\prime} Y_i$ shows that $\Delta_{f_j}$ is surjective for large enough $j$. As $\Delta_{f_j}$ is always an injection, this implies that $\Delta_{f_j}$ is an isomorphism, which then implies that $f_j$ is an isomorphism.

Now, let $f: Y\to Y^\prime$ be a morphism over $X$; we need to see that this is a base change of a morphism $f_j: Y_j\to Y_j^\prime$ over $X_j$ for large enough $j$. The graph $\Gamma_f: Y\hookrightarrow Y\times_X Y^\prime$ is a quasicompact open embedding. By approximation of quasicompact open subsets, we can find a quasicompact open immersion $V_j\subset Y_j\times_{X_j} Y_j^\prime$ whose pullback agrees with the image of $\Gamma_f$. In particular, the projection $V_j\to Y_j$ of qcqs \'etale perfectoid spaces over $X_j$ becomes an isomorphism after pullback to $X$. By the intermediate step, we see that $V_j\to Y_j$ becomes an isomorphism for large enough $j$; composing the inverse with the map $V_j\hookrightarrow Y_j\times_{X_j} Y_j^\prime\to Y_j^\prime$ gives the desired map $Y_j\to Y_j^\prime$.

Finally, for essential surjectivity, take any qcqs \'etale $f: Y\to X$. It is enough to cover $Y$ be quasicompact open subsets which descend to $X_i$ for large $i$, as by fully faithfulness (and the assumption that $Y$ is qcqs), the pieces will automatically glue for large enough $i$. Thus, we can assume that $f$ is a composite of a rational open subset, a finite \'etale map, and a rational open subset. Each of these maps descends to $X_i$ for large enough $i$ (using (i)), finishing the proof of (ii).

For (iii), fully faithfulness follows from (ii). It remains to see that if $f_i: Y_i\to X_i$ is a qcqs \'etale map whose base change $f: Y\to X$ to $X$ is separated, then the base change $f_j: Y_j\to X_j$ to $X_j$ is separated for large enough $j$. The map $f_j: Y_j\to X_j$ is separated if and only if the quasicompact open immersion $\Delta_{f_j}: Y_j\to Y_j\times_{X_j} Y_j$ is also a closed immersion. But in general, if $S_i$, $i\in I$, is a cofiltered inverse system of spectral spaces (with spectral maps) and inverse limit $S$, then if $U_i\subset S_i$ is a quasicompact open subset whose inverse image $U\subset S$ is open and closed, then the inverse image $U_j\subset S_j$ is open and closed for large $j$. Indeed, the open and closed decomposition given by $U$ descends to $S_j$ for large enough $j$, and for large enough $j$, one of the two quasicompact open subsets in this decomposition is equal to $U_j$. Applied with $S_i=|Y_i\times_{X_i} Y_i|$, $S=|Y\times_X Y|$, we see that if $\Delta_f$ is an open and closed immersion, then $\Delta_{f_j}$ is an open and closed immersion for large enough $j$.

In (iv), fully faithfulness again follows from (ii). For essential surjectivity, we can work in characteristic $p$, where it follows from \cite[Proposition 1.7.1]{Huber}, which holds for affinoid perfectoid spaces of characteristic $p$ by using the theory of pseudocoherent sheaves of Kedlaya--Liu, \cite{KedlayaLiu2}. More precisely, for any \'etale map $f: Y\to X=\Spa(A,A^+)$ of affinoid perfectoid spaces of characteristic $p$, one can find a Zariski closed immersion of sousperfectoid adic spaces $Y\hookrightarrow \mathbb B^n_X$, whose ideal $\mathcal I$ is necessarily generated by $n$ functions $f_1,\ldots,f_n\in A\langle T_1,\ldots,T_n\rangle$ in a neighborhood of $Y$ (as $\Omega^1_{Y/X}=0$ and so $\mathcal I/\mathcal I^2\cong \Omega^1_{\mathbb B^n_X/X}/\mathcal I$), see \cite[Proposition IV.4.19]{FarguesScholze}. Slightly perturbing the $f_i$'s will still give an affinoid sousperfectoid space \'etale over $X$ (and thus perfectoid), by the same result. For close enough approximations, this will in fact be isomorphic to $Y$, by the approximation results above, and this is enough to deduce the essential surjectivity.
\end{proof}

Let us also recall the following result about inverse limits of affinoid perfectoid spaces (the first part of this was already implicitly used in the formulation of the last proposition).

\begin{proposition}\label{prop:limitaffperfd} Let $X_i=\Spa(R_i,R_i^+)$, $i\in I$, be a cofiltered diagram of affinoid perfectoid spaces. Then the inverse $X=\varprojlim_i X_i$ in the category of perfectoid spaces exists, is affinoid perfectoid $X=\Spa(R,R^+)$, where $R^+$ is the $\varpi$-adic completion of $\varinjlim_i R_i^+$ (for some compatible choice of pseudouniformizer $\varpi$ for all large enough $i$), $R=R^+[\varpi^{-1}]$.

If the index category $I$ is $\omega_1$-cofiltered, the maps
\[
\varinjlim_i R_i\to R, \varinjlim_i R_i^+\to R^+
\]
are bijective.

Moreover, any affinoid perfectoid space $X=\Spa(R,R^+)$ can be written as an $\omega_1$-cofiltered limit of affinoid perfectoid spaces $X_i=\Spa(R_i,R_i^+)$ where $R_i$ and $R_i^+$ are topologically countably generated (equivalently, have countable dense subsets).
\end{proposition}

Recall that the index category $I$ is $\omega_1$-cofiltered if for any countable category $J$ with a map $J\to I$, one can find an extension $J^{\triangleleft}\to I$ (where $J^{\triangleleft}$ consists of $J$ with one extra object $\ast$ with a unique map to each object of $J$).

\begin{proof} The first part follows from uniformity of perfectoid spaces, which means that whenever $Y$ maps compatibly to all $X_i$, the map $\varinjlim_i R_i^+\to \mathcal O^+(Y)$ extends uniquely to the $\varpi$-adic completion $R^+$, thus inducing a map $(R,R^+)\to (\mathcal O(Y),\mathcal O^+(Y))$. As $(R,R^+)$ is perfectoid, it follows that $X=\Spa(R,R^+)$ is the limit in perfectoid spaces.

When $I$ is $\omega_1$-cofiltered, the colimit $\varinjlim_i R_i^+$ is already complete, as any Cauchy sequence is already living in some $R_i^+$.

For the final statement, write $(R,R^+)$ as the (automatically $\omega_1$-filtered) colimit of all perfectoid $(R_i,R_i^+)\subset (R,R^+)$ that admit a countable dense subset.
\end{proof}

\section{Totally disconnected spaces}

In this section, we introduce a notion of (strictly) totally disconnected spaces. These are analogues of profinite sets in our setting.

\begin{definition} A perfectoid space $X$ is totally disconnected if $X$ is qcqs and every open cover of $X$ splits, i.e. if $\{U_i\subset X\}$ is an open cover of $X$, then $\bigsqcup U_i\to X$ has a splitting.
\end{definition}

The following characterization of spectral spaces for which every open cover splits is due to L.~Fargues:

\begin{lemma}\label{lem:fargues} Let $X$ be a spectral space. The following conditions are equivalent.
\begin{altenumerate}
\item Every open cover of $X$ splits.
\item Every connected component of $X$ has a unique closed point.
\item The functor $\mathcal F\mapsto \Gamma(X,\mathcal F)$ on sheaves on $X$ is exact, i.e.~commutes with all finite colimits.
\item The functor $\mathcal F\mapsto \Gamma(X,\mathcal F)$ on sheaves of abelian groups on $X$ is exact.
\item For all sheaves of abelian groups $\mathcal F$ on $X$, one has $H^i(X,\mathcal F)=0$ for all $i>0$.
\item For all sheaves of abelian groups $\mathcal F$ on $X$, one has $H^1(X,\mathcal F)=0$.
\end{altenumerate}
\end{lemma}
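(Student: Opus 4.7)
The plan is to close the equivalence by establishing the chain (ii) $\Rightarrow$ (i) $\Rightarrow$ (iii) $\Rightarrow$ (iv) $\Rightarrow$ (ii), and separately (iv) $\Leftrightarrow$ (v) $\Leftrightarrow$ (vi). The core content is the purely topological equivalence (i) $\Leftrightarrow$ (ii), from which the sheaf-theoretic conditions will follow rather cleanly.

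For (i) $\Rightarrow$ (ii), if some connected component $X_\xi$ had two distinct closed points $x\neq y$, I would use spectrality to produce a quasicompact open $U\ni x$ with $y\notin U$ (possible since $y$, being distinct and closed, is not a generalization of $x$, so lies outside $\bigcap_{U\ni x}U$), note that $W:=X\setminus\{x\}$ is open since $\{x\}$ is closed, and apply the hypothesis to the open cover $\{U,W\}$. Any splitting partitions $X$ into two clopen subsets $V_1\subset U$, $V_2\subset W$ with $x\in V_1$ and $y\in V_2$, which is impossible inside the connected set $X_\xi$. For the converse (ii) $\Rightarrow$ (i), given an open cover $\{U_i\}_{i\in I}$ (which I can take finite by quasicompactness of $X$), set $W_i:=\{\xi\in\pi_0(X):X_\xi\subset U_i\}$; under (ii) this coincides with $\{\xi:x_\xi\in U_i\}$, since every point of $X_\xi$ is a generalization of its unique closed point $x_\xi$ and $U_i$ is stable under generalization, so $\{W_i\}$ covers $\pi_0(X)$. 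The complement of $W_i$ is the image of the closed subset $X\setminus U_i$ under the map $\pi\colon X\to\pi_0(X)$, which is continuous for the constructible topology on $X$ (clopens of $\pi_0(X)$ pull back to clopens of $X$, hence to constructible subsets); since $X$ with the constructible topology is compact Hausdorff, this image is closed, making $W_i$ open. As $\pi_0(X)$ is profinite, the open cover $\{W_i\}$ refines to a finite disjoint clopen cover $\{W_i'\}$, and the partition $V_i:=\pi^{-1}(W_i')\subset U_i$ provides the desired splitting.

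The implication (i) $\Rightarrow$ (iii) is then immediate: local lifts of a global section along a surjection $\mathcal{F}\to\mathcal{G}$ exist on some open cover, and a splitting of that cover glues them into a global lift; preservation of the other finite colimits reduces to this epimorphism case together with the automatic left-exactness. For (iii) $\Rightarrow$ (iv), an epimorphism of abelian sheaves is an epimorphism of underlying sheaves of sets, so (iii) gives surjectivity on global sections; combined with left-exactness of $\Gamma$, this is exactness. To close at (iv) $\Rightarrow$ (ii), I argue by contradiction: given two closed points $x\neq y$ in a single component $X_\xi$, the surjection of abelian sheaves $\Z_X\to(i_x)_\ast\Z\oplus(i_y)_\ast\Z$ induces on global sections the map $f\mapsto(f(x),f(y))$ from $\Cont(X,\Z)$ to $\Z\oplus\Z$; since every locally constant function on $X$ is constant on $X_\xi$, the image lies in the diagonal, so the map is not surjective, contradicting (iv).

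The remaining equivalences (iv) $\Leftrightarrow$ (v) $\Leftrightarrow$ (vi) follow from standard homological algebra in the category of abelian sheaves (which has enough injectives): (iv) $\Rightarrow$ (v) by dimension-shifting against injective resolutions (if $\Gamma$ is exact, the long exact sequence forces inductively $H^n(\mathcal{F})=H^{n-1}(\mathcal{I}/\mathcal{F})=0$ for $n\geq 1$); (v) $\Rightarrow$ (vi) is trivial; and (vi) $\Rightarrow$ (iv) comes directly from the long exact sequence in cohomology. The main obstacle is the careful execution of (ii) $\Rightarrow$ (i), which hinges on controlling the image of the closed set $X\setminus U_i$ under $\pi\colon X\to\pi_0(X)$ by passing to the constructible topology, and then exploiting the zero-dimensionality of $\pi_0(X)$ to refine an arbitrary open cover into a disjoint clopen one.
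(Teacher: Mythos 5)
Your proof is correct and follows the same overall strategy as the paper for the topological equivalence $(\text{i}) \Leftrightarrow (\text{ii})$ — reducing to the profinite set $\pi_0(X)$ and passing to the constructible topology to control the images $\pi(X\setminus U_i)$ — but you close the loop differently. The paper proves $(\text{iv}) \Rightarrow (\text{i})$ directly: starting from a finite quasicompact open cover $\{U_i\}$, it applies exactness of $\Gamma$ to the surjection $\bigoplus_i j_{i!}\Z \to \Z$, producing locally constant functions $f_i$ with $\mathrm{supp}\, f_i \subset U_i$ and $\sum_i f_i = 1$, and extracts a clopen partition splitting the cover. You instead prove $(\text{iv}) \Rightarrow (\text{ii})$ by contradiction, via the surjection of abelian sheaves $\Z_X \to (i_x)_\ast\Z \oplus (i_y)_\ast\Z$ for two distinct closed points $x,y$ in a component, using that a locally constant function is constant on that component so its image lies in the diagonal of $\Z\oplus\Z$. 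Both are natural; yours arguably reveals the failure of $(\text{iv})$ more directly and avoids the need to first reduce to a finite quasicompact cover, at the small cost of a redundant separate proof of $(\text{i})\Rightarrow(\text{ii})$ (already entailed by the chain $(\text{i})\Rightarrow(\text{iii})\Rightarrow(\text{iv})\Rightarrow(\text{ii})$). One caveat not specific to your argument: the step $(\text{i})\Rightarrow(\text{iii})$, both in your sketch and in the paper (which only handles coequalizers), really only establishes that $\Gamma$ preserves epimorphisms and coequalizers — and $\Gamma$ genuinely does not preserve binary coproducts of sheaves of sets when $X$ is disconnected — but this weaker reading is all that is needed for the rest of the chain, since cokernels in abelian sheaves are coequalizers.
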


Recall here that any nonempty spectral space has at least one closed point: Indeed, minimal closed subspaces are points (as is easy to see using Zorn, and the fact that cofiltered limits of nonempty spectral spaces are nonempty).

\begin{proof} For (i) implies (ii), note that the condition that any open cover splits passes to closed subsets (by adding the open complement to the open cover). Thus, we may assume that $X$ is connected. But then if $x, y\in X$ are two distinct closed points, then $(X\setminus \{x\})\sqcup (X\setminus \{y\})\to X$ does not have a splitting. For the converse, let $\{U_i\subset X\}$ be an open cover of $X$. Let $c\in \pi_0 X$, corresponding to a connected component $X_c\subset X$. Take some $U_i$ which contains the unique closed point of $X_c$; then $X_c\subset U_i$. It follows that there is some open and closed neighborhood $U_c\subset X$ such that $U_c\subset U_i$. Thus, we can find a cover of $\pi_0 X$ by open and closed subsets such that $\bigsqcup U_i\to X$ splits on the preimage. As $\pi_0 X$ is profinite, we can find a disjoint cover of $\pi_0 X$ by such subsets, and then assemble the local splittings into a splitting of $\bigsqcup U_i\to X$.

Moreover, (i) implies (iii): We have to see that $\Gamma(X,-)$ commutes with coequalizers. Let $\mathcal H=\mathrm{coeq}(\mathcal F\rightrightarrows \mathcal G)$ be a coequalizer diagram. First, we check that
\[
\mathrm{coeq}(\Gamma(X,\mathcal F)\rightrightarrows \Gamma(X,\mathcal G))\to \Gamma(X,\mathcal{H})
\]
is injective. If $t,t^\prime\in \Gamma(X,\mathcal G)$ are sections whose images in $\Gamma(X,\mathcal H)$ agree, then after a cover $\{U_i\to X\}$, one can find $s_i\in \mathcal F(U_i)$ mapping to $t|_{U_i}$ and $t^\prime|_{U_i}$. But under (i), we can split $\bigsqcup U_i\to X$, giving us such a section $s\in \Gamma(X,\mathcal F)$. Now assume $s\in \Gamma(X,\mathcal H)$ is a section. Then after a cover $\{U_i\to X\}$, we can lift it to sections $s_i\in \mathcal G(U_i)$. But under (i), we can split $\bigsqcup U_i\to X$, giving us a lift of $s$ to $\Gamma(X,\mathcal G)$, proving the desired surjectivity.

It is clear that (iii)$\Rightarrow$(iv)$\Rightarrow$(v)$\Rightarrow$(vi)$\Rightarrow$(iv). To see that (iv) implies (i), let $\{U_1,\ldots,U_n\}$ be a finite quasicompact open cover of $X$, let $j_i: U_i\to X$ be the open immersions, and consider the surjection
\[
\mathcal F=\bigoplus_i j_{i!} \mathbb Z\to \mathbb Z
\]
of sheaves on $X$. Under condition (iv), this is surjective on global sections. This implies that there are locally constant functions $f_i: X\to \mathbb Z$ with $V_i=\mathrm{supp}\ f_i\subset U_i$ such that $\sum_i f_i = 1$. In particular, all $V_i$ are open and closed, and
\[
X=\bigsqcup_i (V_i\setminus \bigcup_{j<i} V_j)\to \bigsqcup_i U_i\to X
\]
gives a section, as desired.
\end{proof}

For perfectoid spaces, one can give a finer structural analysis.

\begin{lemma}\label{lem:basicwlocal} Let $X$ be a totally disconnected perfectoid space.
\begin{enumerate}
\item[{\rm (i)}] There is a continuous projection $\pi: X\to \pi_0(X)$ to the profinite set $\pi_0(X)$ of connected components.
\item[{\rm (ii)}] All fibres of $\pi$ are of the form $\Spa(K,K^+)$ for some perfectoid field $K$ with an open and bounded valuation subring $K^+\subset K$.
\end{enumerate}
Conversely, if $X$ is a qcqs perfectoid space all of whose connected components are of the form $\Spa(K,K^+)$ for some perfectoid field $K$ with an open and bounded valuation subring $K^+\subset K$, then $X$ is totally disconnected.
\end{lemma}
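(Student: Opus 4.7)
The plan is to combine Lemma~\ref{lem:fargues} (which reinterprets total disconnectedness as uniqueness of closed points in each connected component) with Example~\ref{ex:injpoint} (which identifies the intersection of all quasicompact open neighborhoods of a point with $\Spa(K(x),K(x)^+)$).

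For (i), I would simply observe that $X$ is qcqs by definition of totally disconnected, so $|X|$ is spectral; then $\pi_0(|X|)$ is automatically a profinite set with continuous projection $\pi\colon |X| \to \pi_0(|X|)$, which is immediate from Hochster's presentation of a spectral space as a cofiltered inverse limit of finite $T_0$-spaces.

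For (ii), fix a connected component $X_c \subset |X|$. By Lemma~\ref{lem:fargues} it has a unique closed point $x_c$. The key structural claim I need is that set-theoretically $X_c$ coincides with the set of generalizations of $x_c$ in $|X|$. One inclusion is immediate: every generalization of $x_c$ lies in every clopen neighborhood of $x_c$, hence in $X_c$. For the converse, given $y\in X_c$, the subset $\overline{\{y\}}\cap X_c$ is a nonempty closed subset of the spectral space $X_c$ (which is spectral as a closed subspace of $|X|$), so it contains a closed point of $X_c$; by uniqueness this point is $x_c$, and so $y$ generalizes $x_c$. Once the claim is established, $X_c$ equals the intersection of all rational neighborhoods $U_\alpha = \Spa(R_\alpha,R_\alpha^+)$ of $x_c$ in $X$, and the inverse limit formula in Example~\ref{ex:injpoint} identifies this intersection, as a perfectoid space, with $\Spa(K(x_c),K(x_c)^+)$. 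The ring $K(x_c)^+$ is tautologically an open and bounded valuation subring of the nonarchimedean field $K(x_c)$ by Huber's construction; that $K(x_c)$ is actually a perfectoid field (and not merely a perfectoid Tate ring that happens to be a field) is supplied by Kedlaya's theorem cited in the remark after Definition~\ref{def:perfectoidfield}.

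For the converse, suppose $X$ is qcqs perfectoid and every connected component has the form $\Spa(K_c,K_c^+)$. Then each connected component has a unique closed point, namely the valuation whose valuation ring is $K_c^+$ (every other point of $\Spa(K_c,K_c^+)$ corresponds to a strictly larger open bounded valuation subring and so properly generalizes it). By the (ii)$\Rightarrow$(i) direction of Lemma~\ref{lem:fargues}, $X$ is totally disconnected. The main delicate point I expect is the identification of $X_c$ with $\Spa(K(x_c),K(x_c)^+)$: the definition of a connected component provides only a topological subspace of $|X|$, and one must use the unique-closed-point description to see it agrees with the inverse-limit perfectoid space produced by $i_{x_c}$ in Example~\ref{ex:injpoint}, including a brief check that the inverse limit topology and subspace topology coincide on this set (which follows because both topologies are generated by intersections with quasicompact opens of $X$).
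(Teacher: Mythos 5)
Your proof is correct and follows essentially the same route as the paper's: reduce (ii) to Lemma~\ref{lem:fargues} (unique closed point in each connected component) combined with Example~\ref{ex:injpoint} (intersection of rational neighborhoods is $\Spa(K(x),K(x)^+)$), and derive the converse from the other direction of Lemma~\ref{lem:fargues}. The only stylistic difference is that where you identify the connected component set-theoretically with the generalizations of $x_c$ and feed that directly into the inverse-limit description in Example~\ref{ex:injpoint}, the paper instead notes that a connected component of a spectral space is an intersection of clopens (hence $X_c$ is itself a totally disconnected perfectoid space) and then applies the qcqs-with-unique-closed-point conclusion of Example~\ref{ex:injpoint} to $X_c$; these two paths are essentially reformulations of one another, and both carry the same content.
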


\begin{proof} Part (i) is a consequence of the assumption that the underlying topological space of $X$ is a spectral space. For part (ii), note that in a spectral space, any connected component is an intersection of open and closed subsets; this implies that connected components of qcqs perfectoid spaces are again perfectoid spaces. Thus, for (ii), we have to classify the connected totally disconnected perfectoid spaces. By Lemma~\ref{lem:fargues}, $X$ has a unique closed point. By Example~\ref{ex:injpoint}, it follows that if $x\in X$ is the unique closed point of a perfectoid space $X$, then $X=\Spa(K(x),K(x)^+)$, as desired.

The converse follows from Lemma~\ref{lem:fargues}, as then all connected components have a unique closed point.
\end{proof}

In \cite{BhattScholze}, we worked with a stronger notion of w-local spaces; this includes the extra condition that the subset of closed points is closed. This stronger notion will have little relevance for us.

\begin{definition}\label{def:wlocal} A perfectoid space $X$ is w-local if its underlying topological space is a w-local spectral space in the sense of~\cite[Definition 2.1.1]{BhattScholze}. Equivalently, $X$ is qcqs, for any open cover $X=\bigcup_{i\in I} U_i$, the map $\bigsqcup_{i\in I} U_i\to X$ splits, and the subset $X^c\subset X$ of closed points is closed.
\end{definition}

In particular, w-local spaces are totally disconnected.

\begin{lemma}\label{lem:totallydisconnectedaffinoid} Let $X$ be a totally disconnected perfectoid space. Then $X$ is affinoid.
\end{lemma}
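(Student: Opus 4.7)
The plan is to cover $X$ by finitely many affinoid pieces, split the cover so the pieces are disjoint clopen subsets, and then take the product of the coordinate rings to realize $X$ as affinoid.

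First, since $X$ is qcqs, I would choose a finite cover $X = U_1 \cup \cdots \cup U_n$ by open affinoid perfectoid subspaces $U_i = \Spa(R_i, R_i^+)$. By the defining property of totally disconnected spaces, the surjection $\bigsqcup_i U_i \to X$ admits a section, which (after replacing each $U_i$ by the image of $U_i$ under the section and taking complements successively) produces open and closed subsets $V_i \subset U_i$ with $X = V_1 \sqcup \cdots \sqcup V_n$.

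Next, each $V_i$ is a quasicompact open and closed subset of the affinoid perfectoid space $U_i$, so it is cut out by an idempotent $e_i \in R_i$, namely $V_i = \Spa(R_i[e_i^{-1}], R_i^+[e_i^{-1}])$ with the complementary factor killed. Thus $V_i$ is itself affinoid perfectoid; writing $V_i = \Spa(S_i, S_i^+)$, we have
\[
X = \bigsqcup_{i=1}^n \Spa(S_i, S_i^+) \cong \Spa\!\left(\prod_{i=1}^n S_i, \prod_{i=1}^n S_i^+\right),
\]
since the finite product of perfectoid Tate rings is perfectoid (the Frobenius condition and uniformity are checked componentwise), and $\Spa$ sends a finite product of such rings to a disjoint union of the corresponding adic spectra.

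The only slightly delicate step is extracting the splitting of the affinoid cover and identifying clopen quasicompact subsets of affinoid perfectoid spaces as themselves affinoid perfectoid; both are routine consequences of the total disconnectedness hypothesis (via Lemma~\ref{lem:fargues}) together with the standard fact that a clopen subset of a spectral space is constructible, and hence corresponds to an idempotent in the Tate ring. I do not expect a serious obstacle here.
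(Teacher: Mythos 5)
Your proof is correct, but it takes a genuinely different route from the paper's. The paper works through the structure lemma proved just before (that each connected component of $X$ has the form $\Spa(K,K^+)$ with a unique closed point): starting from a point $c\in \pi_0(X)$, it picks an affinoid neighborhood $U$ of the unique closed point $x_c$ of $X_c$, observes that $X_c\subset U$, and then shrinks to a clopen $V\ni c$ in $\pi_0(X)$ with $\pi^{-1}(V)\subset U$ using quasicompactness of the constructible topology. Finitely many such $V$'s cover $\pi_0(X)$, can be made disjoint, and $X=\bigsqcup \pi^{-1}(V_i)$ is affinoid. By contrast, you invoke the \emph{defining} property of total disconnectedness head-on: a finite affinoid cover splits, giving a clopen partition $X = V_1\sqcup\cdots\sqcup V_n$ with $V_i\subset U_i$ clopen (and quasicompact, being closed in the quasicompact $X$); clopen subsets of affinoid perfectoid spaces correspond to idempotents, so each $V_i$ is affinoid, and a finite disjoint union of affinoid perfectoid spaces is affinoid via the product ring. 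Your argument is more self-contained and shorter, bypassing the connected-component structure entirely; the paper's argument fits its narrative, leveraging the immediately preceding structural lemma. One small clarification: the cleanest way to produce your $V_i$ is simply $V_i = s^{-1}(U_i)$ where $s\colon X\to\bigsqcup U_i$ is the splitting, rather than the somewhat murky phrase ``replacing each $U_i$ by the image of $U_i$ under the section and taking complements successively''; these $s^{-1}(U_i)$ are automatically disjoint, clopen, and contained in $U_i$, which is all you need.
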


\begin{proof} Let $c\in \pi_0(X)$ be any point, and let $x_c\in X$ be the unique closed point mapping to $c$. There is an open affinoid neighborhood $U\subset X$ of $x_c$; in particular, $U$ contains the connected component $X_c\subset X$, as all points of $X_c$ specialize to $x_c$. We claim that there is an open compact neighborhood $V$ of $c$ in $\pi_0(X)$ such that $\pi^{-1}(V)\subset U$. Indeed, the intersection
\[
\bigcap_{c\in V} (X\setminus U)\cap \pi^{-1}(V)=\emptyset\ ,
\]
where the intersection runs over open and closed subsets $V\subset \pi_0(X)$ containing $c$. By quasicompactness of the constructible topology, it follows that there is some $V$ such that $(X\setminus U)\cap \pi^{-1}(V)=\emptyset$, i.e.~$\pi^{-1}(V)\subset U$. In this case, $U$ has a closed and open decomposition $U=\pi^{-1}(V)\sqcup \pi^{-1}(\pi_0(X)\setminus V)$, so that $\pi^{-1}(V)$ is also affinoid. Thus, $\pi_0(X)$ is covered by open and closed subsets $V\subset \pi_0(X)$ such that $\pi^{-1}(V)\subset X$ is affinoid. By quasicompactness, finitely many such $V$ cover $\pi_0(X)$, and we may moreover assume that they form a disjoint cover $\pi_0(X)=V_1\sqcup \ldots \sqcup V_n$. Then $X$ is the disjoint union $\pi^{-1}(V_1)\sqcup \ldots \sqcup \pi^{-1}(V_n)$, and thus affinoid.
\end{proof}

\begin{lemma}\label{lem:subsetwlocal} Let $X$ be a totally disconnected perfectoid space. Let $U\subset X$ be a pro-constructible generalizing subset. Then $U$ is an intersection of subsets of the form $\{|f|\leq 1\}$ for varying $f\in H^0(X,\OO_X)$. In particular, $U$ is an affinoid perfectoid space, which is moreover totally disconnected.
\end{lemma}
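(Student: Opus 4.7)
By Lemma~\ref{lem:totallydisconnectedaffinoid}, $X = \Spa(R,R^+)$ is affinoid with $R = H^0(X,\OO_X)$. Setting $J = \{f\in R : |f|\leq 1 \text{ on } U\}$, one inclusion $U\subseteq \bigcap_{f\in J}\{|f|\leq 1\}$ is tautological; the main work is the separation statement that for every $x\in X\setminus U$ there exists $f\in R$ with $|f|\leq 1$ on $U$ and $|f(x)|>1$. Granting this, the ``in particular'' clause follows: each $\{|f|\leq 1\}$ is a rational subset of $X$ (hence affinoid perfectoid), so $U$ is a cofiltered intersection of affinoid perfectoids, and therefore itself an affinoid perfectoid space; the connected components of $U$ are of the form $\Spa(K_c,V)$ for valuation subrings $K_c^+\subseteq V\subseteq K_c^\circ$, so Lemma~\ref{lem:basicwlocal} yields total disconnectedness.

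Two preliminary inputs are needed. First, $U = \bigcap V$ over quasicompact open subsets $V\supseteq U$: since $U$ is closed under generalization, for each $y\in U$ and each $x\notin U$, $x$ is not a generalization of $y$ (else $x\in U$), so $y\notin \overline{\{x\}}$ and admits a quasicompact open neighborhood disjoint from $\overline{\{x\}}$; by compactness of $U$ in the constructible topology, finitely many of these cover $U$, and their union is a quasicompact open $V\supseteq U$ with $x\notin V$. Second, for totally disconnected $X$ the natural map $R\twoheadrightarrow K_c$ is surjective for every $c\in \pi_0(X)$: $K_c^+$ is the $\varpi$-adic completion of $\varinjlim_W \OO_X^+(W)$ over clopens $W\supseteq X_c$, and each $\OO_X^+(W)$ is a direct summand of $R^+$ via $R^+ = \OO_X^+(W)\oplus \OO_X^+(W^c)$, so $R^+/\varpi^n\twoheadrightarrow K_c^+/\varpi^n$ for each $n$; a Cauchy-sequence argument using $\varpi$-adic completeness upgrades this to $R^+\twoheadrightarrow K_c^+$, and inverting $\varpi$ gives $R\twoheadrightarrow K_c$.

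Now fix $x\in X\setminus U$, set $c = \pi(x)\in \pi_0(X)$, and choose a quasicompact open $V\supseteq U$ with $x\notin V$. The fiber $X_c = \Spa(K_c, K_c^+)$ is a totally ordered chain under reverse inclusion of valuation subrings (matching the specialization order). The restriction $V\cap X_c$ is open and constructible, hence a quasicompact open of $X_c$; being open in the chain it is upward-closed under valuation-ring inclusion, and quasicompactness forces any nonempty such upward-closed set to have an attained minimum $V_0$, so $V\cap X_c = \{V'\supseteq V_0\}$. Since $x\notin V$, we have $V_x\subsetneq V_0$. Pick $f_c\in V_0\setminus V_x$ (or, if $V\cap X_c = \emptyset$, take $f_c = \varpi^{-1}$, which lies outside every $V_y$): then $f_c\in V_y$ for every $y\in U\cap X_c\subseteq V\cap X_c$, so $|f_c(y)|\leq 1$, while $|f_c(x)|>1$. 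Using $R\twoheadrightarrow K_c$, lift $f_c$ to some $\tilde f\in R$.

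Finally, extend globally by a clopen argument. The set $U\cap\{|\tilde f|>1\}$ is pro-constructible in $X$ (being the intersection of the pro-constructible $U$ with the closed constructible subset $\{|\tilde f|>1\}$) and disjoint from $X_c$; its image $\pi(U\cap\{|\tilde f|>1\})\subseteq \pi_0(X)$ is pro-constructible (by spectrality of $\pi$), hence closed in the profinite set $\pi_0(X)$, and does not contain $c$. Pick a clopen $W\subseteq \pi_0(X)$ with $c\in W$ and $W$ disjoint from that image, and let $e_W\in R^+$ be the idempotent of $\pi^{-1}(W)$. Then $f := e_W\tilde f\in R$ equals $\tilde f$ on $\pi^{-1}(W)\ni x$ and vanishes outside, yielding $|f(x)| = |\tilde f(x)|>1$; and for any $y\in U$, either $y\in \pi^{-1}(W)\cap U\subseteq \{|\tilde f|\leq 1\}$ so $|f(y)|\leq 1$, or $y\notin \pi^{-1}(W)$ so $f(y) = 0$. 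The main technical hurdle is the attainment of the minimum $V_0$ in the chain $X_c$, which crucially uses the quasicompactness of $V\cap X_c$ to rule out ``non-attained limit'' pathologies and thereby secure the strict inequality $V_x\subsetneq V_0$.
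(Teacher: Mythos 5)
Your proof is correct and follows the same overall strategy as the paper's: for each $x\notin U$ with $c=\pi(x)$, separate $x$ from $U$ within the fibre $X_c=\Spa(K_c,K_c^+)$ by a function $f_c\in K_c$, then spread the separating function to a clopen neighbourhood $\pi^{-1}(W)$ of $X_c$ and extend by zero to all of $X$. The points where you deviate are inessential reformulations of the same compactness phenomena: you lift $f_c$ exactly to $\tilde f\in R$ via the surjectivity $R\twoheadrightarrow K_c$ (which you sketch) where the paper only lifts it modulo $K_c^+$ to a function on some $\pi^{-1}(V)$, you locate the clopen $W$ by taking the closed image of the pro-constructible set $U\cap\{|\tilde f|>1\}$ in $\pi_0(X)$ where the paper runs a nested-intersection/Tychonoff argument, and you make explicit (via the closed point $V_0$ of the quasicompact spectral space $V\cap X_c$, with $V\supseteq U$ a quasicompact open avoiding $x$) the existence of $f_c$ that the paper asserts without elaboration.
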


Recall here that a finite intersection of subsets of the form $\{|f|\leq 1\}$ inside some affinoid perfectoid space $\Spa(R,R^+)$ always defines a rational subset, and in particular an affinoid perfectoid space; and that for infinite such intersections, one gets a cofiltered limit of affinoid perfectoid spaces, which is itself affinoid perfectoid.

\begin{remark}\label{rem:qcopenwlocal} Note that the lemma implies in particular that any quasicompact open subset of $X$ is affinoid.
\end{remark}

\begin{proof} Let $x\in X\setminus U$ be a point, lying in a connected component $c=\pi(x)\in \pi_0(X)$. The fibre $X_c$ is of the form $\Spa(K_c,K_c^+)$ for some perfectoid field $K_c$ and open and bounded valuation subring $K_c^+\subset K_c$, and similarly the fibre $U_c$ is of the form $\Spa(K_c,(K_c^+)^\prime)$ for some open and bounded valuation subring $(K_c^+)^\prime\subset K_c$ containing $K_c^+$. Now one can find some $f_c\in K_c$ such that $|f_c|\leq 1$ on $U_c = U\cap X_c$, but $|f_c(x)|>1$, by taking any element of $(K_c^+)^\prime\setminus K_c^+$. Modulo $K_c^+$, we can lift $f_c$ to a function $f_V$ on $\pi^{-1}(V)\subset X$ for some compact open neighborhood $V\subset \pi_0(X)$ of $c$. Then by assumption
\[
\bigcap_{c\in V^\prime\subset V} (U\cap \{|f_V|>1\})\cap \pi^{-1}(V^\prime)=\emptyset\ .
\]
Here, all terms are pro-constructible subsets of $X$, and thus compact in the constructible topology; by Tychonoff, it follows that for some open compact neighborhood $V^\prime$ of $v$, $(U\cap \{|f_V|>1\})\cap \pi^{-1}(V^\prime)=\emptyset$; in other words, after replacing $V$ by $V^\prime$, we can assume that $U\cap \pi^{-1}(V)\subset \{|f_V|\leq 1\}$. Extending $f_V$ by $0$ on $\pi^{-1}(\pi_0(X)\setminus V)$, we get a function $f\in H^0(X,\OO_X)$ with $|f|\leq 1$ on $U$, but $|f(x)|>1$. Taking the intersection over all such $f$ for varying $x\in X\setminus U$ gives $U$.

Thus, $U$ is affinoid. Every connected component of $U$ is an intersection of a connected component $\Spa(K_c,K_c^+)$ of $X$ with $U$, which is necessarily of the form $\Spa(K_c,(K_c^+)^\prime)$ for another open and bounded valuation subring $(K_c^+)^\prime\subset K_c$.
\end{proof}

The notions of w-local and totally disconnected spaces are useful as on the one hand, they are rather simple, but on the other hand, there are many examples. More precisely, any qcqs perfectoid space $X$ admits a universal map $X^\wl\to X$ from a w-local space $X^\wl$, and the map $X^\wl\to X$ is pro-\'etale. Before we formulate this result, we recall the definition of pro-\'etale morphisms.

\begin{definition}\label{def:proetale} Let $f: Y\to X$ be a map of perfectoid spaces.
\begin{altenumerate}
\item The map $f$ is \emph{affinoid pro-\'etale} if $Y=\Spa(S,S^+)$ and $X=\Spa(R,R^+)$ are affinoid, and one can write $Y=\varprojlim Y_i\to X$ as an inverse limit of \'etale maps $Y_i\to X$ from affinoid perfectoid spaces $Y_i=\Spa(S_i,S_i^+)$ along a small cofiltered index category $I$. Any such presentation $Y=\varprojlim Y_i\to X$ is called a \emph{pro-\'etale presentation}.
\item The map $f$ is \emph{pro-\'etale} if for all $y\in Y$, there is an open neighborhood $V\subset Y$ of $y$ and an open subset $U\subset X$ such that $f(V)\subset U$, and the restriction $f|_V: V\to U$ is affinoid pro-\'etale.
\end{altenumerate}
\end{definition}

For example, if $X=\Spa(C,\OO_C)$ is a geometric point, then any affinoid pro-\'etale map $Y\to X$ is of the form $Y=X\times \underline{S}$ for some profinite set $S$, where if $S=\varprojlim_i S_i$ is a limit of finite sets, we set
\[
X\times \underline{S} = \varprojlim_i X\times S_i\ .
\]

\begin{remark}\label{rem:diagproetale} A slightly curious example of pro-\'etale morphisms are Zariski closed immersions: If $f: Z\hookrightarrow X$ is a Zariski closed immersion of affinoid perfectoid spaces, then $f$ is affinoid pro-\'etale. Indeed, if $Z=\{x\in X\mid \forall f\in I: f(x)=0\}\subset X$ for an ideal $I\subset R=\OO_X(X)$, then $Z$ can be written as the intersection of the rational subsets
\[
U_{f_1,\ldots,f_n} = \{|f_1|,\ldots,|f_n|\leq 1\}
\]
over varying $n$ and $f_1,\ldots,f_n\in I$. Thus, $Z=\varprojlim U_{f_1,\ldots,f_n}\to X$, which gives a pro-\'etale presentation of $Z$.

In particular, if $f: Y\to X$ is any morphism of perfectoid spaces, then $\Delta_f: Y\to Y\times_X Y$ is pro-\'etale. Indeed, this can be checked locally, so we can assume that $X$ and $Y$ are affinoid; then $\Delta_f$ is a Zariski closed immersion, and thus affinoid pro-\'etale.
\end{remark}

\begin{proposition}\label{prop:affproetale} Fix an affinoid perfectoid space $X$, and let $X_\et^\aff$ be the category of \'etale maps $f:Y\to X$ from affinoid perfectoid spaces $Y$, and $X_\proet^\aff$ the category of affinoid pro-\'etale maps $f: Y\to X$. Then the functor
\[
\Pro(X_\et^\aff)\to X_\proet^\aff\ :\ \varprojlimfi Y_i\mapsto \varprojlim_i Y_i
\]
is an equivalence of categories. If $X$ is $\kappa$-small and one restricts to $\kappa$-small objets in $X_\proet^\aff$, then this subcategory is equivalent to $\Pro_\kappa(X_\et^\aff)$.
\end{proposition}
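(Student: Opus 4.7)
The plan is to check essential surjectivity and full faithfulness separately, with Proposition~\ref{prop:etmaptolim}(iv) as the main input. Essential surjectivity is essentially immediate: any $f\colon Y\to X$ in $X_\proet^\aff$ admits, by Definition~\ref{def:proetale}, a pro-\'etale presentation $Y=\varprojlim_{i\in I}Y_i$ indexed by a $\kappa$-small cofiltered category, with each $Y_i\to X$ \'etale from an affinoid perfectoid space, so the corresponding formal pro-object in $\Pro_\kappa(X_\et^\aff)$ is sent by the functor to $Y$.

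For full faithfulness, I would fix two presentations $Y=\Spa(S,S^+)=\varprojlim_i Y_i$ and $Z=\Spa(R_\infty,R_\infty^+)=\varprojlim_j Z_j$, where $Z_j=\Spa(R_j,R_j^+)$ and $R_\infty^+$ is the $\varpi$-adic completion of $\varinjlim_j R_j^+$. Unwinding the definition of morphisms in the pro-category yields
\[
\Hom_{\Pro_\kappa(X_\et^\aff)}((Y_i)_i,(Z_j)_j)=\varprojlim_j\varinjlim_i\Hom_X(Y_i,Z_j),
\]
so the task is to produce the same description for $\Hom_X(Y,Z)$. First, since $S^+$ is $\varpi$-adically complete, continuous $X$-algebra maps $R_\infty\to S$ correspond to compatible systems of continuous maps $R_j\to S$, yielding $\Hom_X(Y,Z)=\varprojlim_j\Hom_X(Y,Z_j)$.

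It remains to identify, for each fixed $j$, $\Hom_X(Y,Z_j)$ with $\varinjlim_i\Hom_X(Y_i,Z_j)$. Here I would reinterpret a morphism $Y\to Z_j$ over $X$ as a section of the \'etale affinoid map $Z_j\times_X Y\to Y$, and apply Proposition~\ref{prop:etmaptolim}(iv) to the system $Y=\varprojlim_i Y_i$: the equivalence $\text{2-}\varinjlim_i(Y_i)_{\et,\aff}\to Y_{\et,\aff}$ yields
\[
\Hom_Y(Y,Z_j\times_X Y)=\varinjlim_i\Hom_{Y_i}(Y_i,Z_j\times_X Y_i)=\varinjlim_i\Hom_X(Y_i,Z_j),
\]
which combined with the previous step gives the desired bijection.

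The main obstacle is essentially bookkeeping: keeping track of $\kappa$-smallness of the cofiltered index categories (which is built into Definition~\ref{def:proetale}), and checking that the bijection of Hom-sets constructed above is indeed the one induced by the functor that actually forms the inverse limit in the category of perfectoid spaces. All the genuinely geometric content—that maps to a pro-\'etale limit are computed by the completed direct limit, and that \'etale affinoids over such a limit descend to a finite stage—is already packaged in Proposition~\ref{prop:etmaptolim}, so no serious new difficulty arises.
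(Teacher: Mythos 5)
Your proof is correct and follows essentially the same route as the paper: reduce to the case that the target is \'etale (i.e.\ $J$ a singleton), then reinterpret $\Hom_X(Y,Z_j)$ as $\Hom_Y(Y, Z_j\times_X Y)$ and apply the limit result for \'etale sites over an inverse limit. The only cosmetic difference is that the paper invokes Proposition~\ref{prop:etmaptolim}(ii) (qcqs \'etale) where you invoke (iv) (affinoid \'etale); both apply since $Z_j\times_X Y$ is affinoid, and your explicit justification of the reduction to $J$ a singleton via $\varpi$-adic completeness of $S^+$ spells out a step the paper states without comment.
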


Here, $\Pro_\kappa$ denotes the category of pro-systems whose index category is bounded by $\kappa$.

\begin{proof} By definition, the functor is essentially surjective, so we need to see that it is fully faithful. For this, write $X=\Spa(R,R^+)$, and take $Y=\Spa(S,S^+)$, $Z=\Spa(T,T^+)$ two objects of $X_\proet^\aff$ that are written as limits of $Y_i=\Spa(S_i,S_i^+)\in X_\et^\aff$, $Z_j=\Spa(T_j,T_j^+)\in X_\et^\aff$, for cofiltered index categories $I$, $J$. We need to show that
\[
\Hom_X(Y,Z)=\varprojlim_j \varinjlim_i \Hom_X(Y_i,Z_j)\ .
\]
Without loss of generality $J$ is a singleton, so $Z\to X$ is \'etale. Now have to check that
\[
\Hom_X(Y,Z)=\varinjlim_i \Hom_X(Y_i,Z)\ .
\]
Recall that $Y_{\et,\qcqs} =\text{2-}\varinjlim_i (Y_i)_{\et,\qcqs}$ by Proposition~\ref{prop:etmaptolim}~(ii). Thus,
\[
\Hom_X(Y,Z) = \Hom_Y(Y,Y\times_X Z) = \varinjlim_i \Hom_{Y_i}(Y_i,Y_i\times_X Z) = \varinjlim_i \Hom_X(Y_i,Z)\ ,
\]
as desired.
\end{proof}

We need some basic properties of pro-\'etale maps.

\begin{lemma}\label{lem:proetalecompbc}$ $
\begin{altenumerate}
\item Let $g: Z\to Y$ and $f: Y\to X$ be pro-\'etale (resp.~affinoid pro-\'etale) morphisms. Then $f\circ g: Z\to X$ is pro-\'etale (resp.~affinoid pro-\'etale).
\item Let $f: Y\to X$ be pro-\'etale (resp.~affinoid pro-\'etale), and let $g: X^\prime\to X$ be a map from a perfectoid space (resp.~affinoid perfectoid space). Then $f^\prime: Y^\prime=X^\prime\times_X Y\to X^\prime$ is pro-\'etale (resp.~affinoid pro-\'etale).
\item Let $f: Y\to X$ and $f^\prime: Y^\prime\to X$ be pro-\'etale (resp.~affinoid pro-\'etale). Then any map $g: Y\to Y^\prime$ over $X$ is pro-\'etale (resp.~affinoid pro-\'etale).
\item For any affinoid perfectoid space $X$, the category $X^\aff_\proet$ has all small limits.
\end{altenumerate}
\end{lemma}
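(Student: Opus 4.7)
The plan is to tackle the four statements in the order (ii), (i), (iii), (iv), since each builds on the previous. For (ii), given an affinoid pro-\'etale presentation $Y=\varprojlim_{i\in I}Y_i\to X$, base change commutes with inverse limits of affinoid perfectoids in the sense of Proposition~\ref{prop:etmaptolim}, so $Y'=X'\times_X Y\cong\varprojlim_i(X'\times_X Y_i)$, and each $X'\times_X Y_i\to X'$ is \'etale affinoid, giving the desired pro-\'etale presentation. Locality handles the general (non-affinoid) pro-\'etale case. For (i), I again reduce to the affinoid case by locality; then given presentations $Y=\varprojlim_{i\in I}Y_i\to X$ and $Z=\varprojlim_{j\in J}Z_j\to Y$ with $\kappa$-small cofiltered $I,J$, I apply Proposition~\ref{prop:etmaptolim}(iv) to find, for each $j$, an index $\alpha(j)\in I$ and an \'etale affinoid $Z_{j,\alpha(j)}\to Y_{\alpha(j)}$ whose base change to $Y$ is $Z_j\to Y$, and similarly descend all transition maps using the fully faithful part of (iv). Assembling these into a single $\kappa$-small cofiltered index category $K$ with objects $(j,i)$ for $i\geq\alpha(j)$ produces $Z\cong\varprojlim_K(Z_{j,\alpha(j)}\times_{Y_{\alpha(j)}}Y_i)\to X$ as an affinoid pro-\'etale presentation.

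For (iii), the key is the graph factorization. Given $f:Y\to X$ and $f':Y'\to X$ both affinoid pro-\'etale, together with a morphism $g:Y\to Y'$ over $X$, I write $g=\pr_{Y'}\circ\Gamma_g$, where $\Gamma_g:Y\to Y\times_X Y'$ is the graph of $g$ and $\pr_{Y'}$ is the second projection. The projection is the base change of $f$ along $f'$, hence affinoid pro-\'etale by (ii). The graph $\Gamma_g$ is the base change of the diagonal $\Delta_{f'}:Y'\to Y'\times_X Y'$ along $g\times\mathrm{id}:Y\times_X Y'\to Y'\times_X Y'$; since $Y'$ is affinoid, $\Delta_{f'}$ is a (strongly) Zariski closed immersion of affinoids, hence affinoid pro-\'etale by Remark~\ref{rem:diagproetale}, and so $\Gamma_g$ is affinoid pro-\'etale by (ii). Composing via (i) yields that $g$ is affinoid pro-\'etale. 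The non-affinoid case follows by localizing the pro-\'etale presentations at the source and target.

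For (iv), by Proposition~\ref{prop:affproetale} the category $X^{\aff}_{\proet}$ is equivalent to $\Pro_\kappa(X^{\aff}_\et)$, so it suffices to produce all $\kappa$-small limits therein. Any such limit can be built from $\kappa$-small cofiltered limits together with fibre products and equalizers. Cofiltered limits arise exactly as in the proof of (i), by concatenating pro-presentations into a single $\kappa$-small cofiltered index category; one uses the strong-limit and cofinality properties of $\kappa$ from Lemma~\ref{lem:choosekappa} to stay $\kappa$-small. Fibre products $Y\times_Z Y'$ in $X^{\aff}_{\proet}$ exist because by (iii) the maps $Y\to Z$ and $Y'\to Z$ are affinoid pro-\'etale, so by (ii) the projection $Y\times_Z Y'\to Y'$ is affinoid pro-\'etale, and by (i) the composite $Y\times_Z Y'\to X$ is affinoid pro-\'etale. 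Equalizers are the pullbacks of the pro-\'etale diagonal along $(f,g):Y\to Y'\times_X Y'$, again affinoid pro-\'etale by Remark~\ref{rem:diagproetale} and the previous parts. The main obstacle throughout is the set-theoretic bookkeeping needed in (i) and in the construction of cofiltered limits for (iv), namely assembling several $\kappa$-small pro-systems into a single $\kappa$-small cofiltered one while keeping track of all transition data; the properties of $\kappa$ from Lemma~\ref{lem:choosekappa} are precisely what makes this possible.
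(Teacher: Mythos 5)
Your proof is correct, and two of the four parts are approached genuinely differently from the paper. For (iii), the paper factors $g$ as the composite of the graph $\Gamma_g\colon Y\to Y\times_X Y'$ and the projection $Y\times_X Y'\to Y'$, but then handles $\Gamma_g$ by observing it is a \emph{section} of $Y\times_X Y'\to Y$ and proving directly that sections of affinoid pro-\'etale maps are affinoid pro-\'etale (writing such a section as $\varprojlim_i(X\times_{Y_i}Y)\to Y$ for a pro-\'etale presentation $Y=\varprojlim_i Y_i$). You instead realize $\Gamma_g$ as a base change of $\Delta_{f'}$ along $g\times\mathrm{id}$ and invoke Remark~\ref{rem:diagproetale} together with (ii); this trades the section lemma for the observation that diagonals of affinoids are strongly Zariski closed, and both routes are correct and of comparable length. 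For (iv), the paper takes the cleaner categorical shortcut: since $X^{\aff}_{\proet}=\Pro_\kappa(X^{\aff}_\et)$, it suffices to observe that $X^{\aff}_\et$ has finite limits (final object and fibre products), because $\Pro_\kappa$ of a finitely complete category has all $\kappa$-small limits. You re-derive this fact by hand from (i)--(iii) plus a concatenation-of-pro-systems argument for cofiltered limits; this works but re-proves a standard categorical statement and requires a little care in decomposing arbitrary $\kappa$-small limits into cofiltered limits, fibre products, and equalizers (e.g.\ to obtain $\kappa$-small products one writes them as a cofiltered limit of finite products). Your treatments of (i) and (ii) agree in substance with the paper: (ii) is immediate from the definition, and for (i) your explicit construction of the index category $K$ on pairs $(j,i)$ is exactly what the functor $\Pro_\kappa(\Pro_\kappa(\mathcal C))\to\Pro_\kappa(\mathcal C)$ formalizes.
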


\begin{proof} It is enough to prove the assertions in the affinoid pro-\'etale case. For part (i), we use Proposition~\ref{prop:affproetale}, Proposition~\ref{prop:etmaptolim}~(iv) and the natural functor $\Pro(\Pro(\mathcal C))\to \Pro(\mathcal C)$, which exists for any category $\mathcal C$. Part (ii) follows directly from the definition. For part (iii), we can factor $g$ as the composite of a section of the affinoid pro-\'etale map $Y\times_X Y^\prime\to Y$ (given by the graph), and the affinoid pro-\'etale map $Y\times_X Y^\prime\to Y^\prime$; thus, it is enough to show that sections of affinoid pro-\'etale maps are affinoid pro-\'etale. But if $Y=\varprojlim Y_i\to X$ is a pro-\'etale presentation, and $s: X\to Y$ is a section, then $s$ is given by a compatible system of sections $s_i: X\to Y_i$, and $X=\varprojlim_i (X\times_{Y_i} Y)\to Y$ is a pro-\'etale presentation.

Finally, for part (iv), as $X^\aff_\proet = \Pro(X^\aff_\et)$ by Proposition~\ref{prop:affproetale}, it is enough to prove that $X^\aff_\et$ has all finite limits. For this, it is enough to check that it has a final object and fibre products, which is clear.
\end{proof}

\begin{proposition}\label{prop:wlocalisation} The inclusion from the category of w-local perfectoid spaces with w-local maps, to the category of qcqs perfectoid spaces, admits a left adjoint $X\mapsto X^\wl$. The adjunction map $X^\wl\to X$ is pro-\'etale. If $X$ is affinoid, the map $X^\wl\to X$ is an inverse limit of surjective maps of the form $\bigsqcup_{i\in I} U_i\to X$, where $I$ is a finite set, and $U_i\subset X$ are rational subsets; in particular, it is affinoid pro-\'etale.
\end{proposition}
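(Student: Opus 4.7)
The strategy adapts to the perfectoid setting the construction of the w-localization of a spectral space from \cite[\S 2.1]{BhattScholze}. I first reduce to the affinoid case; the general qcqs case then follows by gluing, or alternatively by running the construction below using finite covers by quasicompact opens in place of rational subsets.

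For $X = \Spa(R,R^+)$ affinoid, the idea is to realize $X^\wl$ as a cofiltered inverse limit of \'etale affinoid perfectoid spaces over $X$. Let $I$ be the category whose objects are finite tuples $\mathcal{U} = (U_1,\ldots,U_n)$ of rational subsets that cover $X$, with a morphism $(V_1,\ldots,V_m)\to(U_1,\ldots,U_n)$ being a function $\sigma:[m]\to[n]$ such that $V_j\subset U_{\sigma(j)}$. This category is cofiltered: the common refinement $(U_i\cap V_j)_{i,j}$ admits natural refinement morphisms to both tuples. For each $\mathcal{U}$, put $Y_\mathcal{U} = \bigsqcup_{i=1}^n U_i$, an affinoid perfectoid space \'etale over $X$, and a refinement induces the obvious \'etale map $Y_\mathcal{V}\to Y_\mathcal{U}$. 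Using that $R$ is $\kappa$-small one extracts a $\kappa$-small cofinal subcategory, and by Proposition~\ref{prop:affproetale}
\[
X^\wl := \varprojlim_{\mathcal{U} \in I} Y_\mathcal{U}
\]
exists as an affinoid pro-\'etale perfectoid space over $X$, with $X^\wl\to X$ presented as an inverse limit of maps of the claimed form.

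Next I would verify that $X^\wl$ is w-local. It is qcqs as a cofiltered limit of affinoid perfectoid spaces along qcqs maps. For the splitting of finite open covers: by Proposition~\ref{prop:etmaptolim} and standard approximation, any finite cover of $X^\wl$ by quasicompact opens refines to a cover by rational subsets that descends to some $Y_\mathcal{V}$, and intersecting with the components of $Y_\mathcal{V}$ produces a new object of $I$ refining $\mathcal{V}$; the disjoint-union structure of the corresponding $Y_\mathcal{U}$ then gives the required splitting after pullback to $X^\wl$. Once this splitting property is known, Lemma~\ref{lem:basicwlocal} ensures that every connected component of $X^\wl$ is of the form $\Spa(K,K^+)$ and hence carries a unique closed point.

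The main obstacle is to show that the set of closed points $(X^\wl)^c\subset X^\wl$ is closed, since the transition maps $Y_\mathcal{V}\to Y_\mathcal{U}$ generally do not send closed points to closed points, so $(X^\wl)^c$ cannot be computed as an inverse limit of the closed-point subsets. I would argue directly in the limit, following the spectral-space proof of \cite[Lemma 2.1.10]{BhattScholze}: every $y\in X^\wl$ admits a unique maximal specialization $y^c$, and the retraction $y\mapsto y^c$ is continuous with image $(X^\wl)^c$. The key input is that all projections $X^\wl\to Y_\mathcal{U}$ are generalizing, so chains of specializations can be lifted coherently through the inverse system. Finally, for the universal property, given a w-local perfectoid space $Y$ with a w-local map $f:Y\to X$, I would build the lift $Y\to X^\wl$ one $\mathcal{U}$ at a time: the pullback cover $\{f^{-1}(U_i)\}$ of $Y$ splits into a clopen decomposition $Y = V_1\sqcup\cdots\sqcup V_n$ with $V_i\subset f^{-1}(U_i)$, producing a map $Y\to Y_\mathcal{U}$. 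Pinning the decomposition down by the prescribed behavior on closed points yields a compatible system and hence the unique lift $Y\to X^\wl$.
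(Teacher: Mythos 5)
The central construction is incorrect. You define $X^\wl$ as $\varprojlim_{\mathcal{U}\in I} Y_\mathcal{U}$ over the category $I$ of all finite rational covers $\mathcal{U}=(U_1,\ldots,U_n)$, with a morphism $(V_1,\ldots,V_m)\to(U_1,\ldots,U_n)$ being any $\sigma\colon[m]\to[n]$ with $V_j\subset U_{\sigma(j)}$. This category has too many morphisms, and the resulting limit is not the w-localization. (It is not even cofiltered if repeated entries are allowed: the two morphisms $(X)\rightrightarrows(X,X)$ admit no equalizer in $I$.) The deeper problem is that whenever $\mathcal{U}$ contains a redundant open, $I$ has morphisms into $\mathcal{U}$ from coarser covers, and the compatibility these impose collapses the limit. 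Concretely, suppose $|X|$ has a single generic point $\eta$ and a single closed point $s$, say $X=\Spa(K,K^+)$ with $K^+\subsetneq\OO_K$. The rational covers are $(X)$ and $(X,\{\eta\})$, and there is a morphism $(X)\to(X,\{\eta\})$ sending $X$ to the first component; compatibility with it forces every point of $\varprojlim_I Y_\mathcal{U}$ to choose the first component over $(X,\{\eta\})$. Thus $\varprojlim_I Y_\mathcal{U}$ has only two points, i.e.\ it is $X$ itself, whereas $|X^\wl|$ has three points, its connected components being $\{\eta\}$ over $\eta$ and $\{\eta,s\}$ over $s$.

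The paper's Lemma~\ref{lem:computewlocalizationspectral} instead indexes the limit by the category $C$ of factorizations $|X|^\wl\to T\to|X|$, with $T$ a finite disjoint union of rational opens; the section $|X|^\wl\to T$ is part of the datum, pins down a coherent choice of component over each point, and eliminates exactly the spurious morphisms that wreck your version. This presupposes the topological w-localization $|X|^\wl$ from \cite[Lemma 2.1.10]{BhattScholze}, which the paper then endows with a perfectoid structure by completing $p^\ast\OO_X^+$ (or equivalently via the cofiltered-limit presentation of Lemma~\ref{lem:computewlocalizationspectral} and Proposition~\ref{prop:affproetale}). Any intrinsic index category not referring to $|X|^\wl$ would have to carry the equivalent of this section as extra data; a category of bare covers with refinement maps does not. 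Your remarks on verifying w-locality in the limit (closedness of the set of closed points, lifting specializations coherently) and on the universal property via clopen decompositions pinned down on closed points are in the right spirit, but they rest on this flawed foundation.
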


\begin{proof} By~\cite[Lemma 2.1.10]{BhattScholze}, there is a similar w-localization $|X|^\wl$ of the underlying topological space $|X|$. Consider the topological space $|X|^\wl$ endowed with the presheaves $\OO^+_{X^\wl}$, $\OO_{X^\wl}$ of complete topological rings given by $\OO^+_{X^\wl} = (p^\ast \OO_X^+)_\varpi^\wedge$, $\OO_{X^\wl} = \OO^+_{X^\wl}[\varpi^{-1}]$, where $p: |X|^\wl\to |X|$ denotes the projection, and $\varpi$ is a (local choice of) pseudouniformizer. We claim that this defines a perfectoid space $X^\wl$; it is then clear that it is final in the category of w-local perfectoid spaces over $X$ (with w-local maps).

We handle the affinoid case first. We use the following general presentation of the w-localization.

\begin{lemma}\label{lem:computewlocalizationspectral} Let $X$ be a spectral topological space, and let $B$ be a basis for the topology consisting of quasicompact open subsets, such that for $U,V\in B$, also $U\cap V\in B$. Let $X^\wl\to X$ be the w-localization of $X$, and let $C$ be the category of factorizations $X^\wl\to T\to X$, where $T$ is a finite disjoint union of elements of $B$. Then $C$ is cofiltered, and the natural map
\[
f: X^\wl\to \varprojlim_C T
\]
is a homeomorphism.
\end{lemma}

\begin{proof} Note that the category of $C$ is cofiltered: Given any two factorizations $X^\wl\to T_1\to X$ and $X^\wl\to T_2\to X$, one gets another one $X^\wl\to T_1\times_{X} T_2\to X$, and for any two given maps $f,g: T_1\to T_2$ between two such factorizations, the locus $T\subset T_1$ where $f=g$ is open and closed (more precisely, if $T_1=\bigsqcup_{i\in I} U_i$, then $T=\bigsqcup_{i\in J} U_i$ for some subset $J\subset I$), and $X^\wl\to T_1$ factors over $T$. In particular, we can assume that $X\in B$, as the two resulting categories are cofinal.

By Lemma~\ref{lem:quotientmap}, it is enough to see that $f: X^\wl\to \varprojlim_C T$ is bijective and generalizing. Recall that $\pi_0(X^\wl)=X^\cons$, where $X^\cons$ is $X$ endowed with the constructible topology, and the connected component corresponding to $x\in X$ is given by the localization $X_x$, i.e.~the set of points specializing to $x$. In particular, points of $X^\wl$ are in bijection with pairs $(x,y)$ of points in $X$, where $y\in X_x$. The map $X^\wl\to X$ is given by $(x,y)\mapsto y$, which is generalizing; more precisely, for any $(x,y)\in X^\wl$, the localization $X^\wl_{(x,y)}$ is identified with the localization $X_y$. As $T\to X$ is a finite disjoint union of open subsets, it also identifies localizations, and thus the map $f: X^\wl\to \varprojlim_C T$ identifies localizations, and in particular is generalizing.

For injectivity of $f: X^\wl\to \varprojlim_C T$, it is enough to show that given points $(x,y),(x^\prime,y^\prime)\in X^\wl$ with $x\neq x^\prime$, $f(x,y)\neq f(x^\prime,y^\prime)$ (as the $y$-coordinate is seen by the projection to $X$). For this, choose some $U\in B$ such that $U$ contains exactly one of $x$ and $x^\prime$, say $x$, and look at $T=U\sqcup X\to X$. We have a factorization
\[
X^\wl = X^\wl\times_{X^\cons} U^\cons \sqcup X^\wl\times_{X^\cons} (X\setminus U)^\cons\to U\sqcup X=T\to X
\]
which sends $(x,y)$ into $U\subset T$, and $(x^\prime,y^\prime)$ into $X\subset T$. Thus, we get injectivity.

For surjectivity, we claim that it is enough to show that if $X^\wl\to T\to X$ is a factorizaton, and $t\in T$ is a point that is not in the image of $X^\wl\to T$, then one can find another factorization $X^\wl\to \tilde{T}\to T\to X$ such that $t$ is not in the image of $\tilde{T}\to T$. Indeed, this implies that the image of $X^\wl\to T$ agrees with the image $T_0\subset T$ of $\varprojlim_C T\to T$. But then $\varprojlim_C T_0=\varprojlim_C T$, and we have a family of surjective maps $X^\wl\to T_0$; by a usual quasicompactness argument, this implies that $X^\wl\to \varprojlim_C T_0=\varprojlim_C T$ is surjective (the fibre over any point is a cofiltered limit of nonempty spectral spaces along spectral maps, thus nonempty by Tychonoff applied to the constructible topology).

Thus, assume $t\in T$ is not in the image of $X^\wl\to T=\bigsqcup_{i\in I} U_i$. Assume $t\in U_{i_0}$, and choose finitely many $V_j\subset U_{i_0}$, $j\in J$, $V_j\in B$ with $t\not\in V_j$, such that $X^\wl\to T$ factors over
\[
\bigcup_{j\in J} V_j \sqcup \bigsqcup_{i\in I, i\neq i_0} U_i\ .
\]
This is possible, as the map $X^\wl\to T$ is generalizing, so if $t$ is not in the image, the closure of $t$ is not in the image as well. We have an open cover
\[
\tilde{T} := \bigsqcup_{j\in J} V_j\sqcup \bigsqcup_{i\in I, i\neq i_0} U_i\to \bigcup_{j\in J} V_j \sqcup \bigsqcup_{i\in I, i\neq i_0} U_i\ .
\]
After pullback to $X^\wl$, this open cover splits by the definition of w-local spaces. In particular, the map
\[
X^\wl\to \bigcup_{j\in J} V_j \sqcup \bigsqcup_{i\in I, i\neq i_0} U_i
\]
lifts to a map $X^\wl\to \tilde{T}$, giving the desired factorization $X^\wl\to\tilde{T}\to T\to X$, with $t$ not in the image of $\tilde{T}\to T$.
\end{proof}

We apply this in the case where $B$ is the basis of rational subsets of $X$. Clearly, any $T$ as in the lemma admits a canonical structure as an affinoid perfectoid space, for which $T\to X$ is \'etale. Then $\varprojlim_C T = X^\wl$, on topological spaces by Lemma~\ref{lem:computewlocalizationspectral}, and on $\OO$ and $\OO^+$ by a direct verification; this finishes the argument.

In general, we can reduce to the affinoid case by the following lemma.

\begin{lemma}\label{lem:wlocalicationopenspectralsubset} Let $X$ be a spectral topological space, and $U\subset X$ a quasicompact open subset. Then $U^\wl\to X^\wl$ is a quasicompact open embedding. If $X=\cup_{i\in I} U_i$ is a covering of $X$ by quasicompact open subsets $U_i\subset X$, then the induced map $\sqcup U_i^\wl\to X^\wl$ is an open covering by quasicompact open subsets.
\end{lemma}

\begin{proof} In general $X^\wl$ comes with functorial maps to $X$ (by adjunction) and $X^\cons$, which is $X$ endowed with the constructible topology, via the identification $\pi_0(X^\wl)=X^\cons$ from \cite[Lemma 2.1.10]{BhattScholze}. In particular, there is a natural map
\[
U^\wl\to X^\wl\times_{X^\cons} U^\cons\to X^\wl\ .
\]
Here, $U^\cons\subset X^\cons$ is a quasicompact open subset; thus, it suffices to see that the map $U^\wl\to X^\wl\times_{X^\cons} U^\cons$ is a homeomorphism. By Lemma~\ref{lem:quotientmap}, it is enough to show that it is bijective and generalizing. Recall that $\pi_0(X^\wl) = X$, and for $x\in X$, the corresponding connected component of $X^\wl$ is the set $X_x\subset X$ of all generalizations of $x$, i.e.~the localization of $X$ at $x$. If $x\in U$, it follows that $U_x=X_x$; this shows bijectivity. Moreover, it shows that the map is generalizing, as desired.

The final statement follows from $X^\cons = \cup_{i\in I} U_i^\cons$.
\end{proof}
\end{proof}

Moreover, we have the following variant.

\begin{definition} A perfectoid space $X$ is strictly totally disconnected if it is qcqs and every \'etale cover splits.
\end{definition}

\begin{proposition}\label{prop:etcoverssplittotdisconnected} Let $X$ be a qcqs perfectoid space. Then $X$ is strictly totally disconnected if and only if every connected component is of the form $\Spa(C,C^+)$, where $C$ is algebraically closed.
\end{proposition}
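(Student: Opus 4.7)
The first direction follows quickly. Since open covers are \'etale covers, strictly totally disconnected implies totally disconnected, so by Lemma~\ref{lem:basicwlocal} each connected component equals $\Spa(K_c, K_c^+)$ for some perfectoid field $K_c$. To show $K_c$ is algebraically closed, it suffices to rule out any nontrivial finite separable extension $L/K_c$ (since $K_c$ is perfect; automatic in characteristic $p$, vacuous in characteristic $0$). The corresponding finite \'etale morphism $\Spa(L,L^+)\to X_c$ descends by Proposition~\ref{prop:etmaptolim}(i) to a finite \'etale morphism $W\to X_V$ over some open and closed neighborhood $X_V\supseteq X_c$, since the open and closed neighborhoods of $X_c$ are cofinal, affinoid perfectoid (by Lemma~\ref{lem:totallydisconnectedaffinoid} applied to such neighborhoods), and have cofiltered intersection $X_c$. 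Then $W\sqcup(X\setminus X_V)\to X$ is an \'etale cover of $X$, which splits by hypothesis; restricting the splitting to $X_c\subseteq X_V$ produces a section of $\Spa(L,L^+)\to X_c$, forcing $L=K_c$.

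For the converse, assume each connected component $X_c$ has the form $\Spa(C_c,C_c^+)$ with $C_c$ algebraically closed; in particular $X$ is totally disconnected. Given an \'etale cover $\{X_i\to X\}$, the definition of \'etaleness together with quasicompactness of $X$ lets us refine to a finite family of quasicompact \'etale morphisms $V_j\hookrightarrow W_j\to U_j\subseteq X$, where $V_j\hookrightarrow W_j$ is an open immersion and $W_j\to U_j$ is finite \'etale over a quasicompact open. It suffices to construct a section of $Y:=\bigsqcup_j V_j\to X$.

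Fix a connected component $X_c=\Spa(C_c,C_c^+)$, with unique closed point $x_c$. Pick $z\in Y\times_X X_c$ mapping to $x_c$, and choose an open neighborhood $V\subseteq Y\times_X X_c$ of $z$ factoring as $V\hookrightarrow W\to U\subseteq X_c$ with $W\to U$ finite \'etale. Since $x_c$ is the unique closed point of $X_c$, any quasicompact open of $X_c$ containing $x_c$ equals $X_c$, so $U=X_c$. Since $C_c$ is algebraically closed, $W\cong \bigsqcup_{s\in S} X_c$ for a finite set $S$, and accordingly $V = \bigsqcup_s V_s$ with $V_s\subseteq X_c$ open; if $z\in V_{s_0}$, then $V_{s_0}$ is open in $X_c$ containing $x_c$, whence $V_{s_0}=X_c$. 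Thus $V$ contains a full copy of $X_c$ as its $s_0$-th component, yielding a section $X_c\to Y\times_X X_c\to Y$. By Proposition~\ref{prop:etmaptolim}(ii) applied to the cofiltered system of open and closed (hence affinoid perfectoid) neighborhoods of $X_c$ with limit $X_c$, this section extends to a section of $Y\to X$ over some such neighborhood $X_{V_c}\supseteq X_c$. Since $\pi_0(X)$ is profinite and the images of such $X_{V_c}$ cover it, we may partition $\pi_0(X)$ into finitely many open and closed pieces, each contained in some $V_c$; the corresponding sections assemble into a section $X\to Y$.

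The main obstacle is the fiberwise triviality step, namely showing that $V\subseteq W$ contains an entire copy of $X_c$. This relies crucially on the combination of $C_c$ being algebraically closed (which trivializes finite \'etale covers of $\Spa(C_c,C_c^+)$) and $X_c$ having a unique closed point (which forces any open neighborhood of that point to be all of $X_c$); removing either ingredient breaks the argument.
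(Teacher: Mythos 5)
Your proof is correct and takes essentially the same route as the paper's: reduce the ``only if'' direction to connected components and use the fact that a nontrivial finite extension of $K_c$ gives a nonsplit finite \'etale cover, and for the ``if'' direction show that any qcqs \'etale cover splits over each connected component $\Spa(C_c,C_c^+)$ and then propagate the local splitting to a clopen neighborhood via Proposition~\ref{prop:etmaptolim}, assembling the pieces using the profiniteness of $\pi_0(X)$. The paper compresses the first direction into the remark that strict total disconnectedness ``passes to closed subsets,'' but unwinding that remark requires exactly the descent from $X_c$ to a clopen neighborhood that you spell out explicitly, so there is no real difference in method.
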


\begin{proof} Assume first that $X$ is strictly totally disconnected. This condition passes to closed subsets, so we can assume that $X$ is connected. In that case, as $X$ is in particular totally disconnected, $X=\Spa(K,K^+)$ for some perfectoid field $K$ with an open and bounded valuation subring $K^+\subset K$. If $K$ is not algebraically closed, then for any finite extension $L$ of $K$ with integral closure $L^+\subset L$ of $K^+$, the map $\Spa(L,L^+)\to \Spa(K,K^+)$ is a nonsplit finite \'etale cover. Thus, necessarily $K$ is algebraically closed.

Conversely, assume that all connected components of $X$ are of the form $\Spa(C,C^+)$. By Lemma~\ref{lem:basicwlocal}, $X$ is totally disconnected. To see that it is strictly totally disconnected, it is enough to see that any \'etale cover is locally split (as then the \'etale cover admits a refinement by an open cover, which is split as $X$ is totally disconnected). We may assume that the \'etale cover is qcqs. But any \'etale cover of any connected component $\Spa(C,C^+)$ splits, and then the splitting extends to a small neighborhood by Proposition~\ref{prop:etmaptolim}.
\end{proof}

Again, one could consider the following strengthening.

\begin{definition} A w-strictly local perfectoid space is a w-local perfectoid space $X$ such that for all $x\in X$, the completed residue field $K(x)$ is algebraically closed.
\end{definition}

In particular, a w-strictly local space is strictly totally disconnected.

For any $X$, one can find strictly totally disconnected spaces over $X$, as the following lemma shows. The construction of this lemma is slightly different than the w-localization functor, and in particular also ensures that $\tilde{X}\to X$ is universally open, which is not true for $X^\wl\to X$.

\begin{lemma}\label{lem:strtotdisccover} Let $X$ be an affinoid perfectoid space. Then there is an affinoid perfectoid space $\tilde{X}$ with an affinoid pro-\'etale surjective and universally open map $\tilde{X}\to X$, such that $\tilde{X}$ is strictly totally disconnected.

If $\kappa$ is as in Lemma~\ref{lem:choosekappa} and $X$ is $\kappa$-small, one can choose $\tilde{X}$ to also be $\kappa$-small.
\end{lemma}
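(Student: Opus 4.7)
My plan is to construct $\tilde X$ as a composition of two affinoid pro-\'etale covers: first the w-localization $X^\wl\to X$ from Proposition~\ref{prop:wlocalisation}, which renders the target totally disconnected, and then a further cover that algebraically closes the residue field at every connected component, producing a strictly totally disconnected space.

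For the first step, I would apply Proposition~\ref{prop:wlocalisation} to form $X^\wl\to X$. This map is affinoid pro-\'etale and is a cofiltered inverse limit of surjective maps of the form $T_\alpha=\bigsqcup_{i\in I_\alpha}U_{i,\alpha}\to X$ with $I_\alpha$ finite and $U_{i,\alpha}\subset X$ rational. Each finite-stage map is a disjoint union of open immersions, hence universally open. Using that each projection $X^\wl\to T_\alpha$ is surjective, any quasicompact open $V\subset X^\wl$ descends to a quasicompact open $V_\alpha\subset T_\alpha$ with $f(V)=f_{T_\alpha}(V_\alpha)$ open in $X$; the same descent argument after base change along any $Z\to X$ yields universal openness. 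By Lemma~\ref{lem:basicwlocal} and Lemma~\ref{lem:totallydisconnectedaffinoid}, $X^\wl$ is a totally disconnected affinoid perfectoid space, with every connected component of the form $\Spa(K_c,K_c^+)$.

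For the second step, I would set $\tilde X=\varprojlim_{j\in J}Y_j\to X^\wl$, where $J$ is a $\kappa$-small cofiltered system of finite \'etale affinoid perfectoid covers $Y_j\to X^\wl$ (suitably augmented by finite \'etale covers of rational subsets of $X^\wl$, extended by the trivial cover on the clopen complement, which exists since $X^\wl$ is totally disconnected). By Proposition~\ref{prop:etmaptolim}(iv) this $\tilde X$ is affinoid perfectoid and $\tilde X\to X^\wl$ is affinoid pro-\'etale; by the almost purity theorem (Theorem~\ref{thm:almostpurity}) together with the spreading-out from Proposition~\ref{prop:etmaptolim}(i), the system $\{Y_j\}$ exhausts all finite separable extensions of each residue field $K_c$, so the fiber of $\tilde X$ at $c$ is a profinite disjoint union of copies of $\Spa(\widehat{\overline{K_c}},C^+)$. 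In particular, every connected component of $\tilde X$ is of the form $\Spa(C,C^+)$ with $C$ algebraically closed perfectoid, so $\tilde X$ is strictly totally disconnected by Proposition~\ref{prop:etcoverssplittotdisconnected}. Universal openness of $\tilde X\to X^\wl$ follows by the same inverse-limit argument as in the first step, since finite \'etale maps are open and this property is stable under base change. Finally, by Lemma~\ref{lem:proetalecompbc}(i) the composite $\tilde X\to X$ is affinoid pro-\'etale, and it is surjective and universally open as a composition of such maps.

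The main obstacle is verifying that the tower in the second step trivializes every finite separable extension of every residue field $K_c$. One globalizes local data as follows: a finite separable extension of $K_c$ comes, by Proposition~\ref{prop:etmaptolim}(i), from a finite \'etale cover of some rational open neighborhood $V\subset X^\wl$ of $c$; since $X^\wl$ is totally disconnected, one can shrink $V$ to a clopen neighborhood and extend the cover by the trivial cover on the complement to obtain a genuine finite \'etale cover of $X^\wl$, hence an object of $J$. A secondary, purely set-theoretic subtlety is to keep the entire construction within the $\kappa$-small framework of Convention~\ref{conv:kappasmall}, for which one invokes Lemma~\ref{lem:choosekappa} and Proposition~\ref{prop:kappasmall}.
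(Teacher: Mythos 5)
Your proposed first step is where the proof breaks. You claim that $X^\wl\to X$ is universally open, but this is false, and the paper explicitly flags this fact in the remark immediately preceding the lemma (``The construction of this lemma is slightly different than the w-localization functor, and in particular also ensures that $\tilde{X}\to X$ is universally open, which is not true for $X^\wl\to X$''). The flaw in your argument is the assertion that ``each projection $X^\wl\to T_\alpha$ is surjective.'' In the presentation of $X^\wl$ from Lemma~\ref{lem:computewlocalizationspectral}, the terms $T$ are merely finite disjoint unions of basis elements with a factorization $X^\wl\to T\to X$, and the proof of that lemma carefully shows that the image of $X^\wl\to T$ need only agree with the image of $\varprojlim_C T\to T$ --- it is \emph{not} all of $T$ in general. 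The whole mechanism of the w-localization consists in eliminating points of $T$ at later stages. Without surjectivity of the projections $X^\wl\to T_\alpha$, the image of a quasicompact open $V\subset X^\wl$ in $T_\alpha$ need not equal the descended open $V_\alpha$, and the openness argument collapses. Since your $\tilde{X}\to X$ factors through $X^\wl\to X$, and you rely on universal openness of that factor, the composite need not be universally open either, so you do not recover the statement of the lemma.

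The paper's construction circumvents this by building the tower directly out of surjective maps: it takes all affinoid \'etale \emph{surjective} covers $X_i\to X$, forms fibre products $X_J$ over finite subsets $J$, and passes to the limit $X_\infty=\varprojlim_J X_J$. Because all transition maps $X_J\to X_{J'}$ are surjective \'etale, the projections $X_\infty\to X_J$ are surjective (by a Tychonoff argument on the constructible topology), which is precisely what makes $X_\infty\to X$ universally open. The paper then iterates this countably often. Your two-stage strategy (first disconnect, then strictify residue fields) could be salvaged if you replace the w-localization by the analogous one-step limit of surjective covers by finite disjoint unions of rational subsets --- that limit would be totally disconnected after iteration and universally open, for the same reason as the paper's construction --- but the w-localization itself cannot play that role. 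Your second step is essentially fine, granting the first, though you should arrange the system of finite \'etale covers so that the transition maps are surjective (e.g.\ by fibre products over $X^\wl$), so that the same surjectivity-of-projections argument applies there too.
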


\begin{proof} We want to take the cofiltered limit of ``all'' affinoid \'etale covers over $X$. For this, fix a set of representatives $\{X_i\to X\}_{i\in I}$ of all affinoid \'etale and surjective spaces over $X$. We note that this is of cardinality less than $\kappa$: The cardinality of $\OO_{X_i}(X_i)$ is bounded independently of $i$, say by $\lambda$ (which is essentially the cardinality of $\OO_X(X)$). The set of all affinoid perfectoid spaces $Y$ for which $\OO_Y(Y)$ has cardinality bounded by $\lambda$ is also of cardinality less than $\kappa$ (by thinking about all possible ways of endowing a set of cardinality $\leq \lambda$ with the structure of a perfectoid algebra, and possible rings of integral elements). For each such $Y$, the set of maps from $Y$ to $X$ has cardinality less than $\kappa$. In total, one sees that there less than $\kappa$ such \'etale maps $Y\to X$. For each finite subset $J\subset I$, let $X_J$ be the product of all $X_i$, $i\in J$, over $X$, which is still affinoid. Then $X_J\to X$ is \'etale and surjective, and the transition maps $X_J\to X_{J^\prime}$ for $J^\prime\subset J$ are also \'etale and surjective. In particular, if we set $X_\infty = \varprojlim_J X_J$, then $X_\infty\to X$ is affinoid pro-\'etale, and universally open. Indeed, it is open as any quasicompact open subset $U_\infty\subset X_\infty$ comes as the preimage of some quasicompact open $U_J\subset X_J$, and then $U_J$ is also equal to the image of $U_\infty$ in $X_J$, as $X_\infty\to X_J$ is surjective. Thus, the image of $U_\infty$ in $X$ agrees with the image of $U_J$ in $X$, which is open, as \'etale maps are open. The same argument applies after any base change, so $X_\infty\to X$ is universally open. Repeating the construction $X\mapsto X_\infty$ countably often (here, we use our assumption that countable unions of cardinals less than $\kappa$ are less than $\kappa$), we can find an affinoid pro-\'etale $\tilde{X}\to X$ such that all \'etale covers of $\tilde{X}$ split. Indeed, any \'etale cover can be refined by an affinoid \'etale cover. Such a cover comes from some finite level by Proposition~\ref{prop:etmaptolim}~(iv), and becomes split at the next.
\end{proof}

In the following, we will analyze perfectoid spaces which are pro-\'etale over a strictly totally disconnected space.

\begin{lemma}\label{lem:proetaleoverwlocal} Let $X$ be a strictly totally disconnected perfectoid space. Let $f: Y\to X$ be a quasicompact separated map of perfectoid spaces. Then $f$ is pro-\'etale if and only if for all rank-$1$-points $x=\Spa(C,\OO_C)\in X$, the fibre $Y_x = Y\times_X x$ is isomorphic to $x\times \underline{S_x}$ for some profinite set $S_x$. In this case, $Y$ is strictly totally disconnected, and $Y\to X$ is affinoid pro-\'etale.
\end{lemma}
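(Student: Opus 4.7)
For the forward direction, suppose $f: Y \to X$ is pro-\'etale, qcqs and separated, and fix a rank-$1$-point $x = \Spa(C, \OO_C) \in X$. Base-changing yields $Y_x \to x$ pro-\'etale, qcqs and separated. Around any point of $Y_x$ we have an affinoid $V \subset Y$ with $V \to U$ affinoid pro-\'etale for some open $U \ni x$; by Proposition~\ref{prop:affproetale}, $V \to U$ is $\varprojlim_i V_i \to U$ with $V_i \to U$ affinoid \'etale, whence $V_x = \varprojlim_i V_{i,x}$. Since $C$ is algebraically closed, every finite \'etale $C$-algebra splits into copies of $C$, so each $V_{i,x}$ is a finite disjoint union of copies of $x$, and $V_x \cong x \times \underline{T_V}$ for a profinite set $T_V$. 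In particular every point of $Y_x$ has completed residue field $C$ with valuation ring $\OO_C$, so $|Y_x|$ is a spectral $T_1$-space, hence a profinite set. In such a space quasicompact opens are clopen, so a finite cover of $Y_x$ by the $V_x$'s refines to a disjoint clopen cover by pieces still of the form $x \times \underline{T}$; their disjoint union gives $Y_x \cong x \times \underline{S_x}$ with $S_x = |Y_x|$.

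For the backward direction, assume the fibre condition. I would first show $Y$ is strictly totally disconnected, hence affinoid by Lemma~\ref{lem:totallydisconnectedaffinoid}, via Proposition~\ref{prop:etcoverssplittotdisconnected}: given an \'etale cover $g: Z \to Y$, its pullback to each rank-$1$ fibre $x_c \times \underline{S_c}$ is an \'etale cover of a strictly totally disconnected space, and hence splits. This splitting spreads to an open neighborhood of $x_c$ in $Y$ by Proposition~\ref{prop:etmaptolim}, and a quasicompactness argument using the profinite structure of $\pi_0(X)$ assembles the local splittings into a global splitting of $g$. Moreover, because $f$ is generalizing, every rank-$1$-point of $Y$ lies above some $x_c$ and has completed residue field $C_c$ with valuation ring $\OO_{C_c}$, matching the form required for strict total disconnectedness.

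To exhibit $f$ as affinoid pro-\'etale, I would spread out the profinite fibre structure. Writing $S_c = \varprojlim_i S_c^{(i)}$ with $S_c^{(i)}$ finite, the finite clopen decomposition of $Y_{x_c} = x_c \times \underline{S_c}$ indexed by $S_c^{(i)}$ defines an \'etale factorization $Y_{x_c} \to x_c \times S_c^{(i)} \to x_c$. Using that $\{x_c\}$ is the inverse limit of its quasicompact open neighborhoods in $X$, Proposition~\ref{prop:etmaptolim}(i) and~(iv) extend both the finite \'etale cover $x_c \times S_c^{(i)} \to x_c$ and the factorization of $f$ through it to an \'etale factorization $f^{-1}(U^{(i)}) \to Z^{(i)} \to U^{(i)}$ over a quasicompact open neighborhood $U^{(i)} \ni x_c$ in $X$ (after possibly shrinking). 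Iterating in $i$, passing to the inverse limit, and gluing finitely many such local presentations over a clopen cover of $\pi_0(X)$ produces the required affinoid pro-\'etale presentation of $f$ by Proposition~\ref{prop:affproetale}. The main obstacle is precisely this final gluing step: the profinite fibres $S_c$ are given only pointwise in $c \in \pi_0(X)$ with no a priori continuity, so producing compatible \'etale factorizations across distinct connected components of $X$ requires a delicate compactness argument combining the spectral structure on $X$ with the strict total disconnectedness of $Y$ established in the previous paragraph.
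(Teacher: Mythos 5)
Your forward direction is fine and matches the paper's (the paper dismisses it with ``the condition is clearly necessary'').

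Your backward direction has a genuine gap, and you acknowledge as much in your final sentence. But the gap appears earlier than you flag it. Already in the argument that $Y$ is strictly totally disconnected, the splittings you obtain live over $Y\times_X U$ for $U\subset X$ a quasicompact open containing the rank-$1$ point $x_c$ (since $\Spa(C_c,\OO_{C_c})=\varprojlim_{U\ni x_c}U$ and you invoke Proposition~\ref{prop:etmaptolim}). But open subsets of $X$ containing $x_c$ need not contain the specializations of $x_c$: in a connected component $\Spa(K,K^+)$ with $K^+\subsetneq\OO_K$, the only open containing the closed point is the whole component, while the opens containing only $x_c$ can be arbitrarily small. So the $U$'s you produce cover only the set of rank-$1$ points, not all of $X$, and no quasicompactness argument along $\pi_0(X)$ can assemble them into a global splitting. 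The later, explicitly acknowledged obstruction (no a priori continuity of $c\mapsto S_c$) compounds this.

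The paper circumvents both problems with a single maneuver you do not use: it first factors $f$ as
\[
Y \;\longrightarrow\; X' := X\times_{\underline{\pi_0(X)}}\underline{\pi_0(Y)} \;\longrightarrow\; X\,,
\]
where the second map is manifestly affinoid pro-\'etale and $X'$ is again strictly totally disconnected. After replacing $X$ by $X'$, one has $\pi_0(Y)\cong\pi_0(X)$, which packages all the profinite-fiber data at once without any continuity or gluing issues. The paper then shows the new $f$ is an \emph{injection}: checking on connected components, if $Y_x$ had two points one could take closures of a clopen decomposition of $Y_x$ and disconnect $Y$, contradicting connectedness. Once $|f|$ is injective, its image is a pro-constructible generalizing subset of $X'$, which by Lemma~\ref{lem:subsetwlocal} is itself affinoid pro-\'etale in $X'$ (and strictly totally disconnected); replacing $X'$ by the image, $|f|$ becomes a bijection and Lemma~\ref{lem:bijectiveiso} upgrades $f$ to an isomorphism. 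That chain of reductions is what your outline is missing: the key structural step is not ``show $Y$ is strictly totally disconnected and then build a pro-\'etale presentation by hand,'' but ``absorb $\pi_0(Y)$ into the base and then recognize $Y$ as a pro-constructible generalizing subset.''
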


Note that the lemma applies in particular if $Y$ is affinoid, as any map of affinoid perfectoid spaces is separated.

\begin{proof} The condition is clearly necessary. Thus assume that $f: Y\to X$ has the property that for all rank-$1$-points $x=\Spa(C,\OO_C)\in X$, the fibre $Y_x = Y\times_X x$ is isomorphic to $x\times \underline{S_x}$ for some profinite set $S_x$. We want to show that $Y$ can be written as an inverse limit $Y=\varprojlim_i Y_i\to X$ of \'etale maps $Y_i\to X$, where all $Y_i$ are affinoid.

First, note that $f: Y\to X$ factors as $Y\to X\times_{\pi_0(X)} \pi_0(Y)\to X$, where $X\times_{\pi_0(X)} \pi_0(Y)\to X$ is affinoid pro-\'etale, and $X\times_{\pi_0(X)} \pi_0(Y)$ is strictly totally disconnected. Thus, replacing $X$ by $X\times_{\pi_0(X)} \pi_0(Y)$, we can reduce to the case that $\pi_0(f): \pi_0(Y)\to \pi_0(X)$ is a homeomorphism.

We claim that in this situation, $f$ is an injection. To check this, we can check on connected components, so we may assume that $X=\Spa(C,C^+)$, and then by assumption also $Y$ is connected. To check that $f$ is injective, it is by Proposition~\ref{prop:charinjection} enough to check injectivity on $(K,K^+)$-valued points, and then as $f$ is separated, it is enough to check on $(K,\OO_K)$-valued points. Thus, assume two rank-$1$-points $y_1,y_2\in Y$ map to the same point $x\in X$. Now $x$ is the unique rank-$1$-point of $X$, and the set of preimages of $x$ forms a profinite set $Y_x\subset Y$ by assumption. By our assumption, $Y_x$ contains at least two points, so we can find a closed and open decomposition $Y_x=U_{1,x}\sqcup U_{2,x}$, for some quasicompact open subsets $U_1,U_2\subset Y$. Let $V_1, V_2\subset Y$ be the closures of $U_{1,x}$ and $U_{2,x}$. As $U_{1,x}$ and $U_{2,x}$ are pro-constructible subsets, their closures are precisely the subsets of specializations of points in $U_{1,x}$ resp.~$U_{2,x}$. As any point of $Y$ generalizes to a unique point of $Y_x$, $Y=V_1\sqcup V_2$. As $V_1$ and $V_2$ are both closed, this gives a contradiction to our assumption that $Y$ is connected, finishing the proof that $f$ is an injection.

Now, since $|f|: |Y|\to |X|$ is injective, the image of $|f|$ is a pro-constructible generalizing subset of $X$, and thus by Lemma~\ref{lem:subsetwlocal} an intersection of subsets of the form $\{|g|\leq 1\}$ for varying $g\in H^0(X,\OO_X)$. As such, the image is affinoid pro-\'etale in $X$, and itself (strictly) totally disconnected. Thus, replacing $X$ by the image of $f$, we can assume that $|f|$ is a bijection. But then $f$ is an isomorphism by Lemma~\ref{lem:bijectiveiso}.
\end{proof}

Lemma~\ref{lem:bijectiveiso} allows us to give a purely topological description of perfectoid spaces which are pro-\'etale over a strictly totally disconnected space.

\begin{definition} Let $T$ be a spectral space such that every connected component is a totally ordered chain of specializations. A spectral map $S\to T$ of spectral spaces is called affinoid pro-\'etale if the induced map $S\to T\times_{\pi_0(T)} \pi_0(S)$ is a pro-constructible and generalizing embedding. A spectral map $S\to T$ from a locally spectral space $S$ is called pro-\'etale if $S$ is covered by spectral subsets $S^\prime\subset S$ for which the restriction $S^\prime\to T$ is affinoid pro-\'etale.
\end{definition}

\begin{remark} If $S\to T$ is pro-\'etale and $S$ is spectral, it is not automatic that $S\to T$ is affinoid pro-\'etale; the analogue of the separatedness condition can fail.
\end{remark}

\begin{corollary}\label{cor:proetaleoverwlocaltop} Let $X$ be a strictly totally disconnected perfectoid space. Then the functor sending a map $f: Y\to X$ to $|f|: |Y|\to |X|$ induces equivalences between:
\begin{enumerate}
\item[{\rm (i)}] the category of ($\kappa$-small) affinoid pro-\'etale perfectoid spaces $Y\to X$, and the category of affinoid pro-\'etale $S\to |X|$ (of cardinality less than $\kappa$);
\item[{\rm (ii)}] the category of ($\kappa$-small) pro-\'etale perfectoid spaces $Y\to X$, and the category of pro-\'etale $S\to |X|$ (of cardinality less than $\kappa$).
\end{enumerate}
\end{corollary}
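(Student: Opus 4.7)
The strategy rests on the structural result extracted from the proof of Lemma~\ref{lem:proetaleoverwlocal}: for $X$ strictly totally disconnected and $f: Y\to X$ affinoid pro-\'etale, $f$ factors as $Y\hookrightarrow\tilde Y\to X$, where $\tilde Y:=X\times_{\pi_0(X)}\pi_0(Y)$ is again an affinoid pro-\'etale strictly totally disconnected perfectoid space over $X$, and $Y\hookrightarrow\tilde Y$ is an injection whose image in $|\tilde Y|=|X|\times_{\pi_0|X|}\pi_0(Y)$ is a pro-constructible generalizing subset. This is precisely the topological data of an affinoid pro-\'etale spectral map $|Y|\to|X|$, so in (i) the functor $Y\mapsto|Y|$ is well-defined (and $\kappa$-smallness is clearly preserved).

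For essential surjectivity in (i), given an affinoid pro-\'etale $S\to|X|$, I set $T:=\pi_0(S)$ and build the perfectoid space $\tilde Y:=X\times_{\pi_0(X)}T$ by writing $T=\varprojlim_i T_i$ with each $T_i$ finite and $T_i\to\pi_0(X)$ factoring through a clopen partition of $\pi_0(X)$, so that each $X\times_{\pi_0(X)}T_i$ is a finite disjoint union of clopen (hence rational) subsets of $X$. Then $\tilde Y:=\varprojlim_i X\times_{\pi_0(X)}T_i$ is affinoid pro-\'etale over $X$ with $|\tilde Y|=|X|\times_{\pi_0|X|}T$ by Proposition~\ref{prop:etmaptolim}(o). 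The subset $S\subseteq|\tilde Y|$ is pro-constructible generalizing, so by Lemma~\ref{lem:subsetwlocal} it is the underlying space of an affinoid perfectoid subspace $Y\subseteq\tilde Y$ cut out as an intersection of subsets $\{|f|\leq 1\}$. Each such intersection step is affinoid pro-\'etale (cf.~Remark~\ref{rem:diagproetale}), and composing with $\tilde Y\to X$ yields via Lemma~\ref{lem:proetalecompbc}(i) an affinoid pro-\'etale $Y\to X$ with $|Y|=S$.

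For fully faithfulness in (i), I rely on the tautological adjunction
\[
\Hom_X(Y_1,\,X\times_{\pi_0(X)}T)=\Hom_{\pi_0(X)}(\pi_0(Y_1),T)
\]
for any profinite $T\to\pi_0(X)$ and any affinoid pro-\'etale $Y_1\to X$; this is verified on finite $T$ (where both sides amount to clopen decompositions of $Y_1$) and extended by cofiltered limits. Given a spectral map $\phi:|Y_1|\to|Y_2|$ over $|X|$, the continuous $\pi_0(\phi):\pi_0(Y_1)\to\pi_0(Y_2)$ produces a perfectoid map $g:Y_1\to\tilde Y_2$ over $X$, and a direct unpacking shows $|g|$ agrees with $|Y_1|\xrightarrow{\phi}|Y_2|\hookrightarrow|\tilde Y_2|$, since both send $y\in|Y_1|$ lying over $x\in|X|$ with $\pi_0(y)=c$ to $(x,\pi_0(\phi)(c))$. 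Since $Y_2\hookrightarrow\tilde Y_2$ is an injection, Proposition~\ref{prop:charinjection}(iv) yields a unique lift $f:Y_1\to Y_2$ over $X$ realizing $\phi$; injectivity of $\Hom_X(Y_1,Y_2)\to\Hom_{|X|}(|Y_1|,|Y_2|)$ follows because two maps with equal topological realizations induce the same $\pi_0$-map, hence the same composite into $\tilde Y_2$, and then coincide by injectivity of $Y_2\hookrightarrow\tilde Y_2$.

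Part (ii) follows from (i) by a gluing argument. A pro-\'etale $Y\to X$ admits, by definition, an open cover $\{V_\alpha\}$ of $Y$ by affinoids with $V_\alpha\to X$ factoring as $V_\alpha\to U_\alpha\to X$ for opens $U_\alpha\subseteq X$ with $V_\alpha\to U_\alpha$ affinoid pro-\'etale, and applying (i) to each piece assembles the pro-\'etale spectral map $|Y|\to|X|$. Conversely, a pro-\'etale spectral $S\to|X|$ admits an analogous cover, which lifts to affinoid pro-\'etale perfectoid pieces via (i), and these glue by fully faithfulness of (i) on overlaps. The main obstacle is the bookkeeping needed to handle the gluing cocycles uniformly on both sides, but this is formal because the equivalence from (i) is compatible with restriction to open subspaces and is fully faithful, so gluing data transfers without further obstruction.
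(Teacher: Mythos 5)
Your proof is correct, and it takes a route that differs in presentation from the paper's while leaning on the same two structural lemmas. The paper constructs the inverse functor by directly equipping $S$ with a pullback structure sheaf, setting $\OO_S^+ := (g^\ast\OO_X^+)^\wedge_\varpi$ and $\OO_S = \OO_S^+[\varpi^{-1}]$, and then invoking Lemma~\ref{lem:subsetwlocal} and Lemma~\ref{lem:proetaleoverwlocal} to check this yields an affinoid perfectoid space pro-\'etale over $X$; full faithfulness is not discussed explicitly. You instead build the ambient profinite-fibered space $\tilde Y = X\times_{\pi_0 X}\pi_0(S)$ first (a cofiltered limit of rational clopen decompositions of $X$, so manifestly affinoid pro-\'etale with $|\tilde Y| = |X|\times_{\pi_0|X|}\pi_0(S)$), and then realize $Y$ as the pro-constructible generalizing subspace cut out via Lemma~\ref{lem:subsetwlocal}; you also supply a genuine argument for full faithfulness, passing through the adjunction $\Hom_X(Y_1, X\times_{\pi_0 X}T)=\Hom_{\pi_0 X}(\pi_0 Y_1,T)$, the injection $Y_2\hookrightarrow\tilde Y_2$, and Proposition~\ref{prop:charinjection}(iv). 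This is a reasonable and somewhat more explicit unwinding of the paper's "one checks," and has the advantage of treating full faithfulness concretely rather than leaving it implicit. One small slip: you cite Remark~\ref{rem:diagproetale} to justify that a cofiltered intersection of rational subsets $\{|f|\leq 1\}$ is affinoid pro-\'etale; that remark concerns Zariski closed immersions, whereas the correct justification is simply that rational subsets are affinoid \'etale and hence their cofiltered limits are affinoid pro-\'etale by Definition~\ref{def:proetale}(i). The claim is right; the pointer is to the wrong place.
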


\begin{proof} Lemma~\ref{lem:proetaleoverwlocal} implies that if $f: Y\to X$ is (affinoid) pro-\'etale, then $|f|: |Y|\to |X|$ is (affinoid) pro-\'etale. We construct an inverse functor. For this, given a pro-\'etale map $g: S\to |X|$, set $\OO_S^+ := (g^\ast \OO_X^+)^\wedge_\varpi$ and $\OO_S = \OO_S^+[\varpi^{-1}]$, where $\varpi$ is a pseudouniformizer on the affinoid space $X$. For any point of $S$, one gets an induced valuation on $\OO_S$ by pullback from $X$. One checks using Lemma~\ref{lem:subsetwlocal} that if $g: S\to |X|$ is affinoid pro-\'etale, then this defines an affinoid perfectoid space $Y$; Lemma~\ref{lem:proetaleoverwlocal} implies that the corresponding map $f: Y\to X$ is affinoid pro-\'etale. In general, $S\to |X|$ is locally affinoid pro-\'etale, so again one gets a perfectoid space $Y$, pro-\'etale over $X$.
\end{proof}

Another important property of totally disconnected spaces is an automatic flatness assertion.

\begin{proposition}\label{prop:wlocalflat} Let $X=\Spa(R,R^+)$ be a totally disconnected perfectoid space, and $f: Y=\Spa(S,S^+)\to X$ any map from an affinoid perfectoid space $Y$. Then $S^+/\varpi$ is flat over $R^+/\varpi$, for any pseudouniformizer $\varpi\in R$.

Moreover, if $|f|: |Y|\to |X|$ is surjective, then the map is faithfully flat.
\end{proposition}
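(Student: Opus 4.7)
The strategy is to apply the local criterion for flatness: it suffices to show that $(S^+/\varpi)_{\mathfrak{p}}$ is flat over $(R^+/\varpi)_{\mathfrak{p}}$ for every prime $\mathfrak{p} \subset R^+/\varpi$. My plan rests on the structural claim — which will be the main obstacle to establish — that every prime of $R^+/\varpi$ lies over a unique point $c \in \pi_0(X)$, and that the associated local ring is a localization of $K(c)^+/\varpi$. To establish this, I use that each open-closed $V \subset \pi_0(X)$ yields a decomposition $X = X_V \sqcup X_{V^c}$ of perfectoid spaces, and hence an idempotent $e_V \in R^+$ with $e_V e_{V^c}=0$. For any prime $\mathfrak{p}$, the family $\mathcal{F}=\{V : e_V \notin \mathfrak{p}\}$ is an ultrafilter on the profinite space $\pi_0(X)$ (upward-closure follows from $e_V = e_V e_{V'}$ whenever $V \subset V'$; the ultrafilter property from the fact that exactly one of $e_V, e_{V^c}$ lies in $\mathfrak{p}$). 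Compact Hausdorffness of $\pi_0(X)$ then forces $\mathcal{F}$ to converge to a unique $c$. Inverting all such $e_V$ gives
\[
(R^+/\varpi)[\{e_V^{-1}\}_{V \ni c}] \;=\; \varinjlim_{V \ni c} \OO_X^+(X_V)/\varpi \;=\; K(c)^+/\varpi,
\]
where the last equality uses that the $\varpi$-adic completion implicit in $K(c)^+$ disappears modulo $\varpi$. Thus $(R^+/\varpi)_{\mathfrak{p}} = (K(c)^+/\varpi)_{\mathfrak{p}'}$ for $\mathfrak{p}' = \mathfrak{p}\cdot K(c)^+/\varpi$, and the same analysis applied to $Y$ yields $(S^+/\varpi)_{\mathfrak{p}} = (T_c^+/\varpi)_{\mathfrak{p}'}$, where $Y_c = \Spa(T_c, T_c^+) := Y \times_X X_c$ is the fiber.

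The problem then reduces to showing $T_c^+/\varpi$ is flat over $K(c)^+/\varpi$ for each $c$. This is the connected case, where $K(c)^+$ is an open and bounded valuation subring of the perfectoid field $K(c)$. If $T_c = 0$ there is nothing to prove; otherwise $K(c) \hookrightarrow T_c$ as a subfield, so $T_c^+ \subseteq T_c$ is a $K(c)^+$-submodule of a $K(c)$-vector space, hence $K(c)^+$-torsion-free. Since $K(c)^+$ is a valuation ring, torsion-free equals flat, so $T_c^+$ is $K(c)^+$-flat and $T_c^+/\varpi$ is $K(c)^+/\varpi$-flat, completing the proof of flatness.

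For the faithful flatness assertion when $|f|$ is surjective, I would verify that $\Spec(S^+/\varpi)\to \Spec(R^+/\varpi)$ is surjective, which via the parametrization above reduces to surjectivity of $\Spec(T_c^+/\varpi) \to \Spec(K(c)^+/\varpi)$ for every $c$. Surjectivity of $|f|$ ensures $Y_c\neq \emptyset$, and then $T_c^+$ is faithfully flat over the local ring $K(c)^+$: pick $y \in Y_c$ lying above the closed point $x_c$ of $X_c$ (which exists since $|Y_c|\to |X_c|$ is surjective) whose valuation on $K(c)$ has valuation ring exactly $K(c)^+$; supposing for contradiction that $1 = \sum_{i=1}^n m_i t_i$ with $m_i\in \mathfrak{m}_{K(c)^+}$ and $t_i \in T_c^+$, evaluation at $y$ yields $|m_i|_y<1$ and $|t_i|_y\leq 1$, forcing $1 = |1|_y \leq \max_i |m_i|_y |t_i|_y < 1$, a contradiction. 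Hence $T_c^+ \neq \mathfrak{m}_{K(c)^+} T_c^+$, giving faithful flatness of $T_c^+$ over $K(c)^+$ and thus the desired surjectivity on spectra.
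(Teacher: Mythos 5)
Your proof is correct and follows the same skeleton as the paper's: both reduce via $\pi_0(\Spec(R^+/\varpi)) \cong \pi_0(X)$ to a connected component $\Spa(K_c,K_c^+)$, use that $T_c^+$ is flat over the valuation ring $K_c^+$, and base-change modulo $\varpi$. The differences are in packaging. For the reduction, you argue prime-by-prime via the local criterion for flatness and an ultrafilter argument on the idempotents $e_V$, while the paper works with the pushforward sheaves $(g_\ast \OO_X^+)/\varpi$ and $((g\circ f)_\ast \OO_Y^+)/\varpi$ on $\pi_0(X)$ and checks flatness on stalks; these are the same argument, and your version is essentially an explicit unwinding of the paper's terse ``flatness can be checked on connected components of $\Spec(R^+/\varpi)$.'' For the valuation-ring flatness, you note that $T_c^+\subseteq T_c$ is $K_c^+$-torsion-free because every nonzero element of $K_c^+$ is a unit in $K_c$; this is slightly cleaner than the paper's appeal to $\varpi$-torsion-freeness, which also suffices but tacitly uses that every nonzero $v\in K_c^+$ divides some power $\varpi^n$ (by openness of $K_c^+$ in $K_c$). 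For faithful flatness, the paper identifies $\Spa(K,K^+)=\Spec(K^+/\varpi)$ as spectral spaces and transports the surjectivity of $|f|$ directly, whereas you verify $T_c^+\neq\mathfrak{m}_{K(c)^+}T_c^+$ by evaluating a hypothetical identity $1=\sum m_i t_i$ at a point of $Y$ lying over the closed point of $X_c$; your route is a bit more elementary in that it sidesteps the $\Spa=\Spec$ identification, using only that a single point of $Y$ sits over the closed point.
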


\begin{proof} Let $g: |X|\to \pi_0(X)$ be the projection. First, we check that $((g\circ f)_\ast \OO_Y^+)/\varpi$ is flat over $(g_\ast \OO_X^+)/\varpi$, as sheaves on $\pi_0(X)$. This can be checked on stalks. But the stalk of $(g_\ast \OO_X^+)/\varpi$ at $c\in \pi_0(X)$ is given by $K_c^+/\varpi$, where $\Spa(K_c,K_c^+)\subset X$ is the connected component corresponding to $c$. Similarly, the stalk of $((g\circ f)_\ast \OO_Y^+)/\varpi$ is given by $S_c^+/\varpi$, where $\Spa(S_c,S_c^+)=Y\times_X \Spa(K_c,K_c^+)$. But $S_c^+$ is $\varpi$-torsionfree, which implies that it is flat over $K_c^+$. By base change, $S_c^+/\varpi$ is flat over $K_c^+/\varpi$, as desired.

As flatness can be checked locally, it follows that $S^+/\varpi$ is flat over $R^+/\varpi$. More precisely, flatness can be checked on connected components of $\Spec(R^+/\varpi)$, but $\pi_0(\Spec(R^+/\varpi)) = \pi_0(X)$, and on each connected component, we have verified the desired flatness.

To check faithful flatness in case $|f|$ is surjective, we have to see that $\Spec(S^+/\varpi)\to \Spec(R^+/\varpi)$ is surjective. This can be checked on connected components, so we can assume that $X=\Spa(K,K^+)$ for some perfectoid field $K$ with an open and bounded valuation subring $K^+\subset K$. In that case, as $|f|$ is surjective, we can find a map $\Spa(L,L^+)\to Y$ with $L$ a perfectoid field with an open and bounded valuation subring $L^+\subset L$, such that $\Spa(L,L^+)\to \Spa(K,K^+)$ is surjective. But one can identify $\Spa(L,L^+)=\Spec(L^+/\varpi)$ and $\Spa(K,K^+)=\Spec(K^+/\varpi)$, so $\Spec(L^+/\varpi)\to \Spec(S^+/\varpi)\to \Spec(K^+/\varpi)$ is surjective, as desired.
\end{proof}

\section{The pro-\'etale and v-topology}

In this section, we define two Grothendieck topologies on the category of perfectoid spaces.

\begin{definition} Let $\Perfd$ be the category of perfectoid spaces and $\Perfd_\kappa\subset \Perfd$ the subcategory of $\kappa$-small perfectoid spaces.
\begin{altenumerate}
\item The big pro-\'etale site is the Grothendieck topology on $\Perfd$ (resp.~$\Perfd_\kappa$) for which a collection $\{f_i: Y_i\to X\}_{i\in I}$ of morphisms is a covering if all $f_i$ are pro-\'etale, and for each quasicompact open subset $U\subset X$, there exists a finite subset $J\subset I$ and quasicompact open subsets $V_i\subset Y_i$ for $i\in J$, such that $U=\bigcup_{i\in J} f_i(V_i)$.
\item Let $X$ be a perfectoid space (resp.~$\kappa$-small perfectoid space). The pro-\'etale site $X_\proet$ (resp.~$X_{\proet,\kappa}$) of $X$ is the Grothendieck topology on the category of ($\kappa$-small) perfectoid spaces $f: Y\to X$ pro-\'etale over $X$, with covers the same as in the big pro-\'etale site.
\item The v-site is the Grothendieck topology on $\Perfd$ (resp.~$\Perfd_\kappa$) for which a collection $\{f_i: Y_i\to X\}_{i\in I}$ of morphisms is a covering if for each quasicompact open subset $U\subset X$, there exists a finite subset $J\subset I$ and quasicompact open subsets $V_i\subset Y_i$ for $i\in J$, such that $U=\bigcup_{i\in J} f_i(V_i)$.
\end{altenumerate}
\end{definition}

We note that the slightly technical condition on quasicompact subsets is the same condition that appears in the definition of the fpqc topology for schemes. Also, there is no ``small'' v-site of a perfectoid space $X$, as this would by design include all perfectoid spaces over $X$.

Regarding the set-theoretic bounds, we have the following very general result.

\begin{proposition}\label{prop:changekappa} For any $\kappa,\kappa'$ as in Lemma~\ref{lem:choosekappa}, and any $\kappa$-small perfectoid space $X$, the pullback functor $c_{\kappa,\kappa'}^\ast$ from sheaves on $X_{\proet,\kappa}$ to $X_{\proet,\kappa'}$ is fully faithful and preserves cohomology; i.e.~for any sheaf of sets (resp.~of groups, resp.~of abelian groups) $\mathcal F$ on $X_{\proet,\kappa}$, the map
\[
\mathcal F\to c_{\kappa,\kappa',\ast} c_{\kappa,\kappa'}^\ast \mathcal F
\]
is an isomorphism of sheaves (resp.~and
\[
R^1c_{\kappa,\kappa',\ast} c_{\kappa,\kappa'}^\ast \mathcal F=0,
\]
resp.~and
\[
R^i c_{\kappa,\kappa',\ast} c_{\kappa,\kappa'}^\ast \mathcal F=0
\]
for all $i>0$).

The similar result holds for pro-\'etale or v-cohomology on $\Perfd_\kappa$, $\Perfd_{\kappa'}$, and their slices.
\end{proposition}

\begin{proof} We may restrict to affinoid $X$. Then any $\kappa'$-small affinoid $Y\to X$ can be written as a $\kappa$-cofiltered limit of affinoid $Y_j\to X$, by writing its ring of functions as a $\kappa$-filtered colimit of $\kappa$-small perfectoid rings. If $Y\to X$ is affinoid pro-\'etale, one can assume all $Y_j\to X$ to be affinoid pro-\'etale. Accordingly, the pullback $c_{\kappa,\kappa'}^\ast \mathcal F$ is the sheafification of $Y\mapsto \varinjlim_j \mathcal F(Y_j)$. We claim that this is already a sheaf, and that similarly $H^i(Y,\mathcal F)=\varinjlim_j H^i(Y_j,\mathcal F)$ for all relevant $i$ (i.e.~$i=0$ if $\mathcal F$ is a sheaf of sets, $i=0,1$ if $\mathcal F$ is a sheaf of groups, all $i\geq 0$ if $\mathcal F$ is a sheaf of abelian groups). To check this, write any cover $Y'\to Y$ as a $\kappa$-cofiltered limit of covers of $\kappa$-small perfectoid spaces (in the relevant site), and then pass to $\kappa$-filtered colimits of the corresponding \v{C}ech spectral sequences, implying the desired result.
\end{proof}

Thus, the choice of $\kappa$ is immaterial to cohomology. Passing to a large filtered colimit over all $\kappa$, we define the category of small sheaves on $X_\proet$ as
\[
X_\proet^\sim := \varinjlim_\kappa X_{\proet,\kappa}^\sim
\]
where for any small site $S$, we let $S^\sim$ be its category of sheaves. Then $X_\proet^\sim$ is generated under small colimits by the sheaves represented by the objects of $X_\proet$. (This would not be the case if one considers all sheaves on $X_\proet$ as $X_\proet$ is a large category.) Similar remarks apply to the other large sites in use.

We note that if $\mathcal F$ is a sheaf that commutes with $\omega_1$-filtered colimits, i.e.~for any $\omega_1$-filtered diagram of perfectoid $(R_i,R_i^+)$ with colimit $(R,R^+)$, the map
\[
\varinjlim_i \mathcal F(\Spa(R_i,R_i^+))\to \mathcal F(\Spa(R,R^+))
\]
is an isomorphism, then $\mathcal F$ is small. Indeed, the proof of Proposition~\ref{prop:changekappa} shows that $\mathcal F$ arises via pullback from the $\kappa$-small site for any uncountable $\kappa$. In practice, all sheaves $\mathcal F$ of actual interest have this property.

Our goal in this section is prove that both the pro-\'etale and the v-site are well-behaved. We start with some general remarks about quasicompact and quasiseparated objects. Let us quickly recall their definition in any topos $T$:

\begin{altenumerate}
\item An object $X\in T$ is quasicompact if for any collection of objects $X_i\in T$, $i\in I$, with maps $f_i: X_i\to X$ such that $\bigsqcup f_i: \bigsqcup X_i\to X$ is surjective, there is a finite subset $J\subset I$ such that $\bigsqcup_{i\in J} X_i\to X$ is surjective, cf.~SGA 4 VI D\'efinition 1.1.
\item An object $X\in T$ is quasiseparated if for all quasicompact objects $Y,Z\in T$, the fibre product $Y\times_X Z\in T$ is quasicompact, cf.~cf.~SGA 4 VI D\'efinition 1.13.
\item A map $f: Y\to X$ is quasicompact if for all quasicompact $Z\in T$, the fibre product $Z\times_X Y$ is quasicompact, cf.~SGA 4 VI D\'efinition 1.7.
\item A map $f: Y\to X$ is quasiseparated if $\Delta_f: Y\to Y\times_X Y$ is quasicompact, cf.~SGA 4 VI D\'efinition 1.7.
\end{altenumerate}

The combination of quasicompact and quasiseparated is abbreviated to ``qcqs''. The same definitions also apply for the categories $X_\proet^\sim$ that we associated to large sites (and which are not quite topoi).

In the following, we assume that $T$ is an \emph{algebraic} topos, cf.~SGA 4 VI D\'efinition 2.3. This means that there is a generating full subcategory $C\subset T$ consisting of qcqs objects that is stable under fibre products, and moreover for $X\in C$, the map $X\to\ast$ is quasiseparated. This assumption will be satisfied in all our examples. Under this assumption, it suffices to check these various conditions after pullback to objects of $C$; also, it suffices to check after one cover by objects of $C$. Thus, for example $X\in T$ is quasiseparated if and only if for one cover of $X$ by objects $X_i\in C$, all $X_i\times_X X_j$ are quasicompact, cf.~SGA 4 VI Corollaire 1.17. Also, $f: Y\to X$ is quasiseparated if and only if for all $Z\in C$ with a map $Z\to X$, the object $Y\times_X Z$ is quasiseparated, cf.~SGA 4 VI Corollaire 2.6. Another equivalent characterization is that for all quasiseparated $Z\in T$ with a map $Z\to X$, the object $Y\times_X Z$ is quasiseparated, cf.~SGA 4 VI Corollaire 2.8.

We warn the reader that if the final object of $T$ is not quasiseparated (as in the case for (sheaves on) $\Perfd$), it is not equivalent to say that $X$ is quasicompact (or quasiseparated) and to say that the map $X\to\ast$ is quasicompact (or quasiseparated). It is true that $X$ quasiseparated implies $X\to\ast$ is quasiseparated, and that $X\to\ast$ being quasicompact implies that $X$ is quasicompact. Indeed, the former follows from our assumption that $T$ is algebraic by SGA 4 VI Proposition 2.2, and the latter follows from SGA 4 VI Proposition 1.3, by choosing any quasicompact object $Y\in T$, and the corresponding cover $X\times Y\to X$, where $X\times Y$ is quasicompact if $X\to \ast$ is.

\begin{proposition} The categories of small sheaves on $\Perfd$ for either the (big) pro-\'etale or v-topology, and the category of small sheaves on $X_\proet$ for a perfectoid space $X$, are algebraic. A basis of qcqs objects stable under fibre products is in all cases given by affinoid perfectoid spaces. Moreover, a perfectoid space $X$ is quasicompact resp.~quasiseparated in any of these settings if and only if $|X|$ is quasicompact resp.~quasiseparated.
\end{proposition}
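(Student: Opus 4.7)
Take $C$ to be the full subcategory of affinoid perfectoid spaces (in the big site cases on $\Perfd$) or of affinoid pro-\'etale perfectoid spaces over $X$ (for the small site $X_\proet$, where $C=X_\proet^\aff\cong \Pro_\kappa(X_\et^\aff)$ by Proposition~\ref{prop:affproetale}). We verify the four conditions for an algebraic topos from SGA 4 VI D\'efinition 2.3: (a) $C$ is a generating subcategory, (b) every $X\in C$ is quasicompact in the topos, (c) $C$ is stable under fibre products, and (d) the structure map $X\to\ast$ is quasiseparated for each $X\in C$.

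For (a), any perfectoid space is locally affinoid and open covers are covers in all three topologies; in the small site case one further notes that any pro-\'etale morphism to $X$ is by definition locally affinoid pro-\'etale. Condition (c) follows since the completed tensor product of perfectoid algebras is perfectoid (reducing to characteristic $p$ via Theorem~\ref{thm:tiltingequivalg} if necessary), and in the small site case affinoid pro-\'etale morphisms are stable under fibre product by Lemma~\ref{lem:proetalecompbc}. For (b), given a covering family $\{f_i\from Y_i\to X\}_{i\in I}$ of an affinoid $X\in C$ in the relevant topology, applying the defining condition to $U=X$ yields a finite subset $J\subset I$ and quasicompact opens $V_i\subset Y_i$ ($i\in J$) with $X=\bigcup_{i\in J}f_i(V_i)$, so $\bigsqcup_{i\in J}V_i\to X$ is surjective.

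For (d), we must show the diagonal $\Delta\from X\to X\times X$ (product in the topos) is quasicompact. Given a map from a quasicompact object $T$ to $X\times X$, which is a pair of maps $T\rightrightarrows X$, the pullback $T\times_{X\times X}X$ is their equalizer. Using (b) to replace $T$ by a finite cover by affinoids, we may assume $T=\Spa(S,S^+)$; then for $X=\Spa(R,R^+)$, the equalizer is the locus in $T$ cut out by requiring the two structural ring maps $R\rightrightarrows S$ to agree, which is Zariski closed in $T$ and hence affinoid perfectoid by Proposition~\ref{prop:zarclosed}, so quasicompact by (b) (in the small site, Remark~\ref{rem:diagproetale} additionally keeps us within $X_\proet^\aff$). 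With (a)--(d) established, all three topoi are algebraic.

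For the final characterization, let $X$ be a perfectoid space. If $|X|$ is quasicompact, then $X$ admits a finite cover by affinoids and is quasicompact by (b); conversely, if $X$ is quasicompact in the topos, the open cover by all affinoid opens has a finite subcover, so $|X|$ is quasicompact. For quasiseparation, if $X$ is quasiseparated in the topos and $U,V\subset X$ are affinoid opens, then $U\cap V=U\times_X V$ is quasicompact in the topos, hence $|U\cap V|$ is quasicompact by the previous sentence, giving $|X|$ quasiseparated. The converse is the main technical point: given $Y,Z\in C$ mapping to $X$, each factors through a quasicompact open $U_Y,U_Z\subset X$; if $|X|$ is quasiseparated then $U_Y\cap U_Z$ is quasicompact, and we cover it by finitely many opens $W_k\subset X$ chosen rational inside common affinoid neighborhoods of $U_Y$ and $U_Z$, so that the preimages $Y_k=Y\times_X W_k\subset Y$ and $Z_k=Z\times_X W_k\subset Z$ remain rational (hence affinoid) in $Y,Z$; then $Y\times_X Z=\bigcup_k Y_k\times_{W_k}Z_k$ is a finite union of affinoid perfectoid spaces, hence quasicompact by (b). The delicate point is the choice of the $W_k$ that ensures the preimages in $Y$ and $Z$ stay quasicompact.
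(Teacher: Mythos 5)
Your four conditions (a)--(d) correctly capture the paper's working formulation of SGA~4 VI D\'efinition 2.3, and your verifications of (a), (b), (c) are fine. (A small point: the paper's formulation asks that objects of $C$ be qcqs; quasiseparatedness of each $X\in C$ is a formal consequence of (a)+(b)+(c), since any quasicompact $Y,Z$ mapping to $X$ are covered by finitely many objects of $C$ and $C$ is stable under fibre products over $X$ -- worth saying so.) For (d), the cleanest justification is that the diagonal $\Delta_X\from X\to X\times X$ is a \emph{strongly} Zariski closed immersion (as noted in the proof of Proposition~\ref{prop:diagimmersion}), a property stable under pullback along maps of affinoid perfectoid spaces; this makes $T\times_{X\times X}X$ affinoid directly, without needing to first establish that the equalizer is a closed immersion before invoking Proposition~\ref{prop:zarclosed}~(ii). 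In the small-site case one adds Remark~\ref{rem:diagproetale} to stay inside $X^{\aff}_\proet$.

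The genuine gap, which you flag yourself, is the converse of the quasiseparatedness characterization. As written, your choice of $W_k$ ``rational inside common affinoid neighborhoods of $U_Y$ and $U_Z$'' does not make sense: $Y$ and $Z$ do not factor through a single affinoid open of $X$, so there is no affinoid ambient space in which the $W_k$ could be rational with rational preimages in both $Y$ and $Z$. Two ways to close this:
\begin{itemize}
\item \textbf{Via SGA~4 VI Corollaire 1.17} (the route the paper sets up in the sentence just before the proposition): once the topos is known to be algebraic, $X$ is quasiseparated iff for \emph{one} cover $\{X_i\}$ of $X$ by objects of $C$, every $X_i\times_X X_j$ is quasicompact. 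Take $\{X_i\}$ an affinoid open cover; then $X_i\times_X X_j=X_i\cap X_j$, which for $|X|$ quasiseparated is a quasicompact open of the affinoid $X_i$, hence a finite union of rational subsets, hence quasicompact. This finishes the argument with no delicate choices.
\item \textbf{Directly}: to make your approach run, first cover $Y$ by finitely many rational subsets $V_1,\ldots,V_N$ such that each $V_l$ maps into some affinoid open $A_{i(l)}\subset X$ (possible since rational subsets form a basis of $Y$, and $f^{-1}(A_i)$ is open). Similarly for $Z$. Then for each pair $(V_l,V'_m)$ one works over $A_{i(l)}\cap A_{i'(m)}$, which is a quasicompact open of the affinoid $A_{i(l)}$ by quasiseparatedness of $|X|$, hence a finite union of rational subsets $W_k\subset A_{i(l)}$; their preimages in $V_l$ and $V'_m$ are rational, and $Y\times_X Z$ is then the finite union of the resulting affinoid fibre products.
\end{itemize}
Either fix works; the first is what the paper clearly intends.
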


\begin{proof} Left to the reader.
\end{proof}

\begin{convention}\label{conv:stackquasisep} Further below, we will sometimes ask that a map $f: Y^\prime\to Y$ of stacks on some topos $T$ is quasiseparated. Following the convention of \cite[Tag 04YW]{StacksProject}, we define this to mean that $\Delta_f: Y^\prime\to Y^\prime\times_Y Y^\prime$ is quasicompact \emph{and quasiseparated}, where the latter condition is not automatic for stacks (as the diagonal need not be injective).
\end{convention}

\begin{proposition}\label{prop:compareetproet} Let $X$ be a perfectoid space, and $\nu: X_\proet\to X_\et$ the natural map of sites.
\begin{altenumerate}
\item For any sheaf $\mathcal{F}$ on $X_\et$, the natural adjunction map $\mathcal{F}\to \nu_\ast \nu^\ast \mathcal{F}$ is an equivalence. If $\mathcal{F}$ is a sheaf of abelian groups, then $R^i\nu_\ast \nu^\ast \mathcal{F}=0$ for all $i\geq 1$.
\item Let $Y=\varprojlim_i Y_i\to X_0\subset X$ be a pro-\'etale presentation of an affinoid pro-\'etale map $Y\to X_0$ to an affinoid open subset $X_0\subset X$, and let $\mathcal{F}$ be a sheaf on $X_\et$. The natural map
\[
\varinjlim_i \mathcal{F}(Y_i)\to (\nu^\ast \mathcal{F})(Y)
\]
is an isomorphism.
\item The presheaves $\OO$, $\OO^+$ on $X_\proet$ sending $Y\in X_\proet$ to $\OO_Y(Y)$ resp.~$\OO_Y^+(Y)$, are small sheaves. If $X$ is affinoid perfectoid, then $H^i(X_\proet,\OO)=0$ for $i>0$, and $H^i(X_\proet,\OO^+)$ is almost zero for $i>0$.
\end{altenumerate}
\end{proposition}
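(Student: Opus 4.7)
The plan is to prove (ii) first, deduce the first assertion of (i) formally from it, then prove the higher vanishing in (i) by a parallel cover-refinement argument, and finally use (i) and (ii) to deduce (iii) by bootstrapping from the almost vanishing of \'etale cohomology of $\OO^+$ via the filtration by powers of $\varpi$.

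For (ii), I would start by applying Proposition~\ref{prop:etmaptolim}(iv) to obtain $\Hom_X(Y,U) = \varinjlim_i \Hom_X(Y_i,U)$ for any $U \in X_\et^\aff$; this reduces the pullback presheaf value $(\nu^{-1}\mathcal{F})(Y) = \varinjlim_{Y \to U \in X_\et}\mathcal{F}(U)$ to $\varinjlim_i \mathcal{F}(Y_i)$, taking $U = Y_i$ as a cofinal choice. It then remains to show that this presheaf is already a sheaf on $X_\proet$, so that it agrees with the sheafification $\nu^\ast \mathcal{F}$ on affinoid pro-\'etales. To verify the sheaf axiom, I refine a given pro-\'etale cover of $Y$ to a single affinoid pro-\'etale surjection $V = \varprojlim_\gamma V_\gamma \to Y$ with each $V_\gamma \to Y$ \'etale. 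By Proposition~\ref{prop:etmaptolim}(iv) applied again, each $V_\gamma$ descends to some \'etale $V_{\gamma,i}\to Y_i$, and for $\gamma, i$ large enough, $V_{\gamma,i} \to Y_i$ is an \'etale cover (surjectivity of $V \to Y$ propagates to a finite stage by approximation of quasicompact opens). The \v{C}ech complex of $\nu^{-1}\mathcal{F}$ with respect to $V \to Y$, together with its higher fibre products, is then the filtered colimit of \v{C}ech complexes of $\mathcal{F}$ over the \'etale covers $V_{\gamma,i} \to Y_i$; each of the latter is exact in low degrees by the sheaf property of $\mathcal{F}$ on $X_\et$, and filtered colimits preserve exactness.

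Part (i) splits into two statements. The identity $\mathcal{F} \stackrel{\sim}{\to} \nu_\ast\nu^\ast\mathcal{F}$ is (ii) applied to the trivial pro-\'etale presentation $U = U \in X_\et^\aff$. For $R^i\nu_\ast\nu^\ast\mathcal{F} = 0$ ($i \geq 1$), the same filtered colimit argument applied to pro-\'etale hypercovers of an affinoid $U \in X_\et$ identifies $R\Gamma(U_\proet, \nu^\ast\mathcal{F})$ with $R\Gamma(U_\et, \mathcal{F})$; since the presheaf $U \mapsto H^i(U_\et, \mathcal{F})$ sheafifies to zero in the \'etale topology for $i > 0$, the same holds for $R^i\nu_\ast\nu^\ast\mathcal{F}$.

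For (iii), each $\OO^+/\varpi^n$ is a sheaf on $X_\et$ with almost vanishing higher cohomology on affinoid perfectoids (by induction on $n$ from Theorem~\ref{thm:tiltingetalesite} and the long exact sequence). Using (ii), $\nu^\ast(\OO^+/\varpi^n)$ evaluated on an affinoid pro-\'etale $Y = \Spa(S,S^+) = \varprojlim Y_i$ equals $\varinjlim_i S_i^+/\varpi^n = S^+/\varpi^n$, so $\OO^+/\varpi^n$ viewed as a presheaf on $X_\proet^\aff$ is a sheaf, and by (i) its cohomology $H^i(X_\proet, \OO^+/\varpi^n)$ is almost zero for $i > 0$. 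By $\varpi$-adic completeness, $\OO^+ = \varprojlim_n \OO^+/\varpi^n$ as presheaves on $X_\proet^\aff$, hence $\OO^+$ is a sheaf; the Milnor $R\varprojlim$-sequence, combined with closure of the almost-zero condition under $R^j\varprojlim$, then gives almost vanishing of $H^i(X_\proet, \OO^+)$ for $i > 0$ and $H^0 = R^+$. Finally $\OO = \OO^+[1/\varpi]$ as presheaves: both the sheaf axiom and vanishing of $H^i(X_\proet, \OO)$ for $i > 0$ follow by inverting $\varpi$, which converts almost-exact sequences into exact ones and annihilates almost-zero modules. The main technical obstacle is matching the pro-\'etale \v{C}ech nerve to a filtered colimit of \'etale \v{C}ech nerves uniformly across all cosimplicial degrees, in particular ensuring that higher fibre products $V \times_Y V \times_Y V$ admit compatible presentations $\varprojlim V_{\gamma,i} \times_{Y_i} V_{\gamma,i} \times_{Y_i} V_{\gamma,i}$; this requires repeated, careful applications of Proposition~\ref{prop:etmaptolim}.
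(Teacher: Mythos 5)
Your arguments for (ii) and (i) are essentially the same as the paper's (which itself follows Bhatt--Scholze): use Proposition~\ref{prop:etmaptolim}(iv) to identify the presheaf pullback on a pro-\'etale presentation with $\varinjlim_i \mathcal{F}(Y_i)$, and then a careful filtered-colimit bookkeeping to see that this presheaf already satisfies the \v{C}ech condition for pro-\'etale covers. The paper organizes this slightly differently (first reducing to the case that the cover $\tilde Y\to Y$ is \'etale, then spreading out both $\tilde Y$ and $Y$ over a single index), but this is the same idea; you correctly identify the point that needs care at the end of your writeup.

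There is a genuine gap in (iii). You assert that $\nu^\ast(\OO^+/\varpi^n)(Y) = \varinjlim_i S_i^+/\varpi^n = S^+/\varpi^n$ exactly, and hence that the naive presheaf $Y\mapsto \OO_Y^+(Y)/\varpi^n$ is a sheaf on $X_\proet^\aff$. This is false on the nose. For an affinoid perfectoid $Y_i\in X_\et^\aff$, the short exact sequence $0\to\OO^+\buildrel{\varpi^n}\over\to\OO^+\to\OO^+/\varpi^n\to 0$ of \'etale sheaves gives
\[
0\to S_i^+/\varpi^n\to (\OO^+/\varpi^n)(Y_i)\to H^1(Y_{i,\et},\OO^+)[\varpi^n]\to 0\ ,
\]
and $H^1(Y_{i,\et},\OO^+)$ is only \emph{almost} zero, not zero; since $\varpi^n$ kills it anyway, the torsion term equals the whole $H^1$ and can be nonzero. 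So $(\OO^+/\varpi^n)(Y_i)$ agrees with $S_i^+/\varpi^n$ only up to almost-zero modules, and after passing to the colimit over $i$ the presheaf $Y\mapsto S^+/\varpi^n$ on $X_\proet^\aff$ is only an \emph{almost} sheaf. Your subsequent deduction ``$\OO^+=\varprojlim_n\OO^+/\varpi^n$ is a sheaf because each term is'' therefore does not go through, and the conclusion that $\OO^+$ is a genuine sheaf is unsupported.

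The fix, as in the paper, is to stay in the almost category: one has almost sheaves $Y\mapsto(\OO_Y^+(Y)/\varpi^n)^a$ (this is precisely what Proposition~7.13 of the perfectoid spaces paper gives on $X_\et^\aff$), then the almost sheaf $Y\mapsto\OO_Y^+(Y)^a$ by $\varpi$-adic inverse limit. Only after inverting $\varpi$ do the almost-zero discrepancies disappear, yielding that $\OO: Y\mapsto\OO_Y(Y)$ is a genuine sheaf with vanishing higher cohomology. Then, and this is the step you omit, $\OO^+\subset\OO$ is characterized as the subpresheaf of sections that have $|f|\leq 1$ pointwise; since this is a local condition, $\OO^+$ is a genuine sheaf \emph{because} $\OO$ is. Your argument runs the implication in the wrong direction ($\OO^+$ sheaf $\Rightarrow$ $\OO$ sheaf), where the correct and available implication is ($\OO$ sheaf $\Rightarrow$ $\OO^+$ sheaf).
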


\begin{proof} All assertions reduce to the case that $X$ is affinoid. In this case, the sites $X^\aff_\proet$ and $X_\proet$ define equivalent topoi, so we can work with $X^\aff_\proet$ instead. Let $\mathcal{F}$ be a sheaf on $X_\et$. It follows from Proposition~\ref{prop:affproetale} that $\nu^\ast \mathcal{F}$ is the sheafification of the presheaf $\nu^+\mathcal F$ sending a pro-\'etale presentation $Y=\varprojlim_i Y_i\to X$ to
\[
(\nu^+\mathcal F)(Y)=\varinjlim_i \mathcal{F}(Y_i)\ ;
\]
in fact, this presheaf is the pullback on the level of presheaves. To prove (ii), one argues as in \cite[Lemma 5.1.1]{BhattScholze}. More precisely, we want to show that $\nu^+\mathcal F$ is a sheaf. Thus, for any surjective map $\tilde{Y}\to Y$ in $X^\aff_\proet$, we need to see that
\[
(\nu^+\mathcal F)(Y)\to \eq((\nu^+\mathcal F)(\tilde{Y})\rightrightarrows (\nu^+\mathcal F)(\tilde{Y}\times_Y \tilde{Y}))
\]
is an isomorphism. But we can write $\tilde{Y}=\varprojlim_i \tilde{Y}_i\to Y$ as a cofiltered limit of affinoid \'etale surjective $\tilde{Y}_i\to Y$, and
\[
(\nu^+\mathcal F)(\tilde{Y}) = \varinjlim_i (\nu^+\mathcal F)(\tilde{Y}_i)\ ,
\]
and similarly for $(\nu^+\mathcal F)(\tilde{Y}\times_Y \tilde{Y})$, which reduces us to the case that $\tilde{Y}\to Y$ is \'etale. In that case, writing $Y=\varprojlim_j Y_j$ as a cofiltered limit of affinoid \'etale $Y_j\to X$, the map $\tilde{Y}\to Y$ comes as the pullback of some affinoid \'etale $\tilde{Y}_j\to Y_j$ for $j$ large enough by Proposition~\ref{prop:etmaptolim}~(iv). Then
\[
\eq((\nu^+\mathcal F)(\tilde{Y}^\prime)\rightrightarrows (\nu^+\mathcal F)(\tilde{Y}^\prime\times_Y \tilde{Y}^\prime)) = \varinjlim_j \eq(\mathcal F(\tilde{Y}_j^\prime)\rightrightarrows \mathcal F(\tilde{Y}_j^\prime\times_{Y_j} \tilde{Y}_j^\prime)) = \varinjlim_j \mathcal F(Y_j) = \mathcal F(Y)\ ,
\]
as desired.

Part (i) is proved similarly, following \cite[Corollary 5.1.6]{BhattScholze}. Finally, for part (iii), choose a pseudouniformizer $\varpi\in \OO_X^+(X)$. Then by \cite[Proposition 7.13]{ScholzePerfectoidSpaces}, we have a sheaf of almost $\OO_X^+(X)/\varpi$-modules on $X^\aff_\et$, sending $Y\in X^\aff_\et$ to $(\OO_Y^+(Y)/\varpi)^a$. By part (ii), the pullback of this sheaf to the pro-\'etale site sends $Y=\varprojlim_i Y_i\to X$ to
\[
(\varinjlim_i \OO_{Y_i}^+(Y_i)/\varpi)^a = (\OO_Y^+(Y)/\varpi)^a\ ,
\]
so sending $Y\in X^\aff_\proet$ to $(\OO_Y^+(Y)/\varpi)^a$ is a sheaf of almost $\OO_X^+(X)/\varpi$-modules. By induction, one gets sheaves $(\OO_Y^+(Y)/\varpi^n)^a$ for all $n\geq 1$, and then passing to the inverse limit, a sheaf $Y\mapsto \OO_Y^+(Y)^a$; inverting $\varpi$ gives a sheaf $\OO: Y\mapsto \OO_Y(Y)$. All of these sheaves have vanishing higher cohomology groups on $X$ by part (i) and \cite[Proposition 7.13]{ScholzePerfectoidSpaces}. Finally, $\OO^+\subset \OO$ is the subpresheaf of those functions which are everywhere of absolute $\leq 1$, and so is a sheaf as $\OO$ is.

One also sees that $\OO$ and $\OO^+$ are small as they commute with $\omega_1$-filtered colimits of affinoid perfectoid rings $(R,R^+)$, see Proposition~\ref{prop:limitaffperfd}.
\end{proof}

\begin{corollary}\label{cor:proetalesub} The presheaves $\OO: X\mapsto \OO_X(X)$ and $\OO^+: X\mapsto \OO_X^+(X)$ on the big pro-\'etale site are small sheaves. Moreover, the big pro-\'etale site is subcanonical, i.e.~for every perfectoid space $X$, the functor $Y\mapsto \Hom(Y,X)$ is a small sheaf for the big pro-\'etale site.
\end{corollary}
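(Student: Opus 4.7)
The first statement follows directly from Proposition~\ref{prop:compareetproet}(iii). By definition, any cover $\{f_i \colon Y_i \to X\}$ in the big pro-\'etale site is a cover of $X$ in the small pro-\'etale site $X_\proet$, so the equalizer condition for $\OO$ and $\OO^+$ with respect to such a cover is already known.

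For subcanonicity, I first treat the case of an affinoid target $Z = \Spa(R,R^+)$. Here $\Hom(-,Z)$ is the subpresheaf of continuous ring homomorphisms $R \to \OO(-)$ that carry $R^+$ into $\OO^+(-)$, so it is a sheaf by the first part. For a general perfectoid space $Z$, uniqueness in the descent problem is easy: given a cover $\{f_i \colon Y_i \to X\}$ and two maps $g,g' \colon X \to Z$ that agree after pullback to each $Y_i$, one has $|g|=|g'|$ on $|X|$ since $\bigsqcup |Y_i| \to |X|$ is surjective, and then the affinoid case applied locally on $X$ (after covering $Z$ by affinoid opens) forces $g=g'$.

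The substantive step is existence. I reduce to $X$ qcqs and, by refining the cover, to a single surjective affinoid pro-\'etale map $f \colon Y \to X$ equipped with a morphism $g \colon Y \to Z$ satisfying the cocycle condition on $Y \times_X Y$. The plan is first to descend $|g| \colon |Y| \to |Z|$ to a continuous map $|g'| \colon |X| \to |Z|$, using that $|f|$ is a topological quotient map by Lemma~\ref{lem:quotientmap} (it is surjective, spectral, and generalizing between spectral spaces) together with the surjectivity of $|Y \times_X Y| \to |Y| \times_{|X|} |Y|$. Once $|g'|$ is in hand, cover $Z$ by affinoid opens $V_k$ and $X$ by qcqs opens $U$ with $|g'|(|U|) \subset |V_k|$; then apply the affinoid case to the base-changed cover $Y \times_X U \to U$ with target $V_k$ to produce local lifts $U \to V_k \subset Z$, and uniqueness lets these glue to the desired $g' \colon X \to Z$.

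The main obstacle is the topological surjectivity $|Y \times_X Y| \to |Y| \times_{|X|} |Y|$, which I would verify by observing that, given $y_1,y_2 \in Y$ with common image $x \in X$, the completed tensor product $K(y_1) \widehat{\otimes}_{K(x)} K(y_2)$ is a non-zero perfectoid Tate ring, and any valuation on it produces a point of $Y \times_X Y$ lying over $(y_1,y_2)$.
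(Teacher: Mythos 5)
Your plan is correct and follows the paper's proof essentially step for step: derive the first statement from Proposition~\ref{prop:compareetproet}~(iii), glue the underlying topological maps via Lemma~\ref{lem:quotientmap}, localize on the target to reduce to the affinoid case, and then invoke the sheaf property of $\OO$ and $\OO^+$. The only addition is that you explicitly justify the surjectivity of $|Y\times_X Y|\to |Y|\times_{|X|}|Y|$, which the paper uses implicitly; this is a welcome clarification but not a different approach.
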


\begin{proof} As any cover $\{f_i: Y_i\to X\}$ in the big pro-\'etale site can also be regarded as a cover in the small pro-\'etale site of $X$, these questions reduce to the small pro-\'etale site of $X$. Then the first part follows from Proposition~\ref{prop:compareetproet}~(iii). For the second part, assume that $\{f_i: Y_i\to Y\}$ is a pro-\'etale cover of $Y$, and one has maps $g_i: Y_i\to X$ which agree on overlaps $Y_i\times_Y Y_j$. Note first that by Lemma~\ref{lem:quotientmap}, the maps $|Y_i|\to |X|$ glue to a continuous map $|Y|\to |X|$; in particular, the problem can be considered locally on $X$, so we can assume that $X=\Spa(R,R^+)$ is affinoid. Then we are given maps $(R,R^+)\to (\OO(Y_i),\OO^+(Y_i))$ which agree on overlaps; as $\OO$ and $\OO^+$ are sheaves for the pro-\'etale site, these maps glue to a map $(R,R^+)\to (\OO(Y),\OO^+(Y))$, which gives a map $Y\to X$, as desired.

Smallness is clear, as representable sheaves are preserved by pullback, so if $X$ is $\kappa$-small it comes via pullback from the $\kappa$-small site.
\end{proof}

Next, we want to extend these results to the v-site. Here, the strategy is to combine pro-\'etale localization with the automatic faithful flatness results over totally disconnceted bases, Proposition~\ref{prop:wlocalflat}.

\begin{theorem}\label{thm:vsub} The presheaves $\OO: X\mapsto \OO_X(X)$ and $\OO^+: X\mapsto \OO_X^+(X)$ on the v-site are small sheaves. Moreover, the v-site is subcanonical.
\end{theorem}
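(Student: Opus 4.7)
The strategy is to reduce v-descent to the combination of pro-étale descent (already established in Corollary~\ref{cor:proetalesub}) and faithfully flat descent over a totally disconnected base, with Proposition~\ref{prop:wlocalflat} supplying the flatness. For the sheaf property of $\OO$ and $\OO^+$, the condition on v-covers allows me to refine any v-cover by a single surjective map $f\from Y=\Spa(S,S^+)\to X=\Spa(R,R^+)$ of affinoid perfectoid spaces (the refinement is local on $X$, and finite unions of the $V_i\subset Y_i$ can be replaced by their disjoint union). Next I further reduce to $X$ strictly totally disconnected: by Lemma~\ref{lem:strtotdisccover} there exists a pro-étale surjection $\tilde X\to X$ with $\tilde X$ strictly totally disconnected affinoid, and $\tilde Y=Y\times_X\tilde X\to\tilde X$ remains a surjective map of affinoid perfectoid spaces. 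Because $\OO$ and $\OO^+$ are already sheaves on the pro-étale site, descent along $\tilde X\to X$ (and the Čech nerve of this cover, which stays in $\Perfd$) reduces the v-sheaf axiom for $f$ to the same statement after base change to $\tilde X$.

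Once $X$ is totally disconnected, Proposition~\ref{prop:wlocalflat} gives that $R^+/\varpi\to S^+/\varpi$ is faithfully flat for any pseudouniformizer $\varpi\in R$. Classical faithfully flat descent then produces an exact sequence
\[
0\to R^+/\varpi\to S^+/\varpi\to (S^+\otimes_{R^+}S^+)/\varpi
\]
together with vanishing of higher Čech cohomology. Inducting on $n$ to replace $\varpi$ by $\varpi^n$ and passing to the $\varpi$-adic inverse limit yields the sheaf axiom for $\OO^+$ up to an almost-zero ambiguity; inverting $\varpi$ recovers the sheaf property for $\OO$ on the nose. The almost-zero error is controlled because on the fibre product $Y\times_X Y$ the ring of integral elements is the almost integral closure of the $\varpi$-adic completion of $S^+\otimes_{R^+}S^+$, and almost mathematics is insensitive to both operations.

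For subcanonicity I must show that for any perfectoid space $Z$, the functor $W\mapsto \Hom(W,Z)$ is a v-sheaf. Given a v-cover $\{f_i\from Y_i\to X\}$ and compatible morphisms $g_i\from Y_i\to Z$, I first glue the underlying continuous maps. The map $\bigsqcup|Y_i|\to|X|$ is surjective, generalizing and (locally) spectral, so by Lemma~\ref{lem:quotientmap} it is a quotient map; the compatibility of the $|g_i|$ on overlaps then produces a continuous $|g|\from|X|\to|Z|$. This lets me work locally on both $X$ and $Z$, reducing to the case that $X$ and $Z$ are affinoid. Then the sheaf property of $\OO$ and $\OO^+$ (just proved) supplies a ring homomorphism $\OO(Z)\to\OO(X)$ respecting the $+$-subrings and compatible with the valuations at each point of $X$ (checked on the cover $Y\to X$), which defines the required map $X\to Z$.

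The main obstacle I anticipate is the third step: upgrading the flat descent statement modulo $\varpi$ to an almost sheaf statement for $\OO^+$ after $\varpi$-adic completion, and simultaneously identifying $(S^+\otimes_{R^+}S^+)/\varpi$ with the reduction of the ring of integral elements of $Y\times_X Y$ without picking up any error beyond almost-zero modules. The rest is a matter of assembling the reductions carefully and invoking Lemma~\ref{lem:quotientmap} at the topological level.
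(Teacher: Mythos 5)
Your proposal is correct and follows essentially the same route as the paper's argument: refine a v-cover of affinoid $X$ to a single surjective map of affinoids, reduce to $X$ totally disconnected via the already-established pro-étale descent for $\OO,\OO^+$ and the w-localization cover, invoke Proposition~\ref{prop:wlocalflat} plus faithfully flat descent mod $\varpi$, and handle the comparison between $S^+\otimes_{R^+}S^+$ and $\OO^+(Y\times_X Y)$ via an almost isomorphism before passing to the $\varpi$-adic limit and inverting $\varpi$; the subcanonicity argument via Lemma~\ref{lem:quotientmap} is likewise the same as in Corollary~\ref{cor:proetalesub}. The only cosmetic difference is that the paper first reduces from $\OO^+$ to $\OO$ (observing $\OO^+$ is the subpresheaf of $\OO$ cut out by $|\cdot|\leq 1$, a condition checkable after v-covers) and separately notes that $\OO$ is separated via the embedding $\OO(X)\hookrightarrow\prod_{x\in X}K(x)$, which slightly streamlines the bookkeeping, whereas you track $\OO^+$ and $\OO$ in parallel; both are fine.
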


\begin{proof} As in Corollary~\ref{cor:proetalesub}, it is enough to show that $\OO$ and $\OO^+$ are sheaves. As $\OO^+\subset \OO$ is the subpresheaf of functions which are everywhere of absolute value $\leq 1$ (which can be checked after v-covers), it is enough to prove that $\OO$ is a sheaf. As for every perfectoid space $X$, $\OO(X)$ injects into $\prod_{x\in X} K(x)$, it is clear that $\OO$ is separated. It remains to see that given a v-cover $\{f_i: Y_i\to X\}$ of a perfectoid space and functions $g_i\in \OO(Y_i)$ which agree on overlaps $Y_i\times_X Y_j$, they glue uniquely to a function $g\in \OO(X)$. This can be checked locally on $X$, so we can assume that $X$ is affinoid. In that case, the quasicompactness assumptions on a v-cover ensure that we can refine the given cover into a single $f: Y\to X$, where $Y$ is affinoid. Let $\tilde{X}\to X$ be a totally disconnected cover of $X$ along an affinoid pro-\'etale map (for example the w-localization). We can also replace $f$ by the composite $Y\times_X \tilde{X}\to \tilde{X}\to X$. By Corollary~\ref{cor:proetalesub}, we have descent for $\tilde{X}\to X$, so it is enough to handle the map $Y\times_X \tilde{X}\to \tilde{X}$; in other words, we can assume that $X$ is totally disconnected, so let $f: Y=\Spa(S,S^+)\to X=\Spa(R,R^+)$ be a map of affinoid perfectoid spaces, where $X$ is totally disconnected.

But now $S^+/\varpi$ is flat over $R^+/\varpi$ by Proposition~\ref{prop:wlocalflat}. In fact, as $f$ is surjective, it is faithfully flat, so $R^+/\varpi$ is the equalizer of the two maps from $S^+/\varpi$ to $S^+/\varpi\otimes_{R^+/\varpi} S^+/\varpi$. But if $Y\times_X Y=\Spa(T,T^+)$, then the map $S^+/\varpi\otimes_{R^+/\varpi} S^+/\varpi\to T^+/\varpi$ is almost an isomorphism by~\cite[proof of Proposition 6.18]{ScholzePerfectoidSpaces}. Thus, $R^+/\varpi$ is almost the equalizer of the two maps from $S^+/\varpi$ to $T^+/\varpi$; passing to the limit over the similar statement for $\varpi^n$ and inverting $\varpi$ shows that $R$ is the equalizer of the two maps from $S$ to $T$, as desired.
\end{proof}

In fact, the vanishing of cohomology also extends to the v-site.

\begin{proposition}\label{prop:vcohomO} Let $X$ be an affinoid perfectoid space. Then $H^i_v(X,\OO)=0$ for $i>0$, and $H^i_v(X,\OO^+)$ is almost zero for $i>0$.
\end{proposition}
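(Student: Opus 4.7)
The plan is to induct on $i$ with the statement: for every affinoid perfectoid $W$ (not just $X$), $H^k_v(W,\OO)=0$ and $H^k_v(W,\OO^+)$ is almost zero for $0<k\le i$. The base case $i=0$ is trivial. For the inductive step, I would first handle strictly totally disconnected $W$, then deduce the general affinoid case via a Leray spectral sequence for $\nu\from W_v\to W_\proet$.

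The key technical input (generalizing the computation in the proof of Theorem~\ref{thm:vsub}) is: for $W=\Spa(R,R^+)$ totally disconnected and any affinoid v-cover $f\from Y=\Spa(S,S^+)\to W$, the Čech complex $C^\bullet$ of $\OO^+$ attached to $f$ is almost exact. By Proposition~\ref{prop:wlocalflat}, $R^+/\varpi\to S^+/\varpi$ is faithfully flat, and iterating the almost identification used in the proof of Theorem~\ref{thm:vsub} gives $\OO^+(Y\times_W\cdots\times_W Y)/\varpi \overset{a}{=} S^+/\varpi\otimes_{R^+/\varpi}\cdots\otimes_{R^+/\varpi}S^+/\varpi$, so $C^\bullet\otimes_{R^+}R^+/\varpi$ is almost exact by faithfully flat descent. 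An obvious induction on $n$ upgrades this to almost exactness mod $\varpi^n$; since the transition maps $C^\bullet_{n+1}\to C^\bullet_n$ are surjective, there is no $R^1\varprojlim$ obstruction, and $C^\bullet$ itself is almost exact. Inverting $\varpi$ yields exactness for $\OO$.

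Step A (strictly totally disconnected $W$): given $\alpha\in H^i_v(W,\OO)$, choose an affinoid v-cover $f\from Y\to W$ with $\alpha|_Y=0$ and apply the Čech-to-derived spectral sequence
\[
E_2^{p,q}=\check H^p(Y/W,\underline{H^q_v(-,\OO)})\Rightarrow H^{p+q}_v(W,\OO).
\]
By the inductive hypothesis $E_2^{p,q}=0$ for $0<q<i$, and by the key input $E_2^{p,0}=0$ for $p>0$. Tracing $\alpha$ through the induced filtration $F^\bullet$ on $H^i_v(W,\OO)$: its image in $E_\infty^{0,i}\subset \check H^0(Y/W,\underline{H^i_v(-,\OO)})$ vanishes because $\alpha|_Y=0$, and all intermediate graded pieces $E_\infty^{k,i-k}$ for $0<k<i$ vanish, while $E_\infty^{i,0}\subset\check H^i(Y/W,\OO)=0$; hence $\alpha=0$. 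The same argument works in the almost category for $\OO^+$. Step B (general affinoid $W$): Proposition~\ref{prop:compareetproet}~(iii) gives $H^p_\proet(W,\OO)=0$ and $H^p_\proet(W,\OO^+)\overset{a}{=}0$ for $p>0$, so it suffices to show $R^q\nu_\ast\OO=0$ on $W_\proet$ for $0<q\le i$ (almost for $\OO^+$). For $q<i$ this is immediate from the inductive hypothesis since $R^q\nu_\ast\OO$ is the sheafification of the affinoid-local presheaf $U\mapsto H^q_v(U,\OO)$; for $q=i$, any section locally vanishes after pulling back to a pro-étale cover by a strictly totally disconnected space (Lemma~\ref{lem:strtotdisccover}), using Step A. The Leray sequence then gives $H^i_v(W,\OO)=0$.

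The main obstacle will be verifying the almost identification $\OO^+(Y^{(k)})/\varpi\overset{a}{=}(S^+/\varpi)^{\otimes_{R^+/\varpi} k}$ for all $k\ge 2$ and surjective $Y\to W$; the case $k=2$ cited in Theorem~\ref{thm:vsub} extends by iterating, using that relative fibre products of affinoid perfectoid spaces are computed (up to almost isomorphism) by $\varpi$-completed tensor products on the $\OO^+$-side, but some care is needed to control the almost-errors uniformly in $k$ before passing to the $\varpi$-adic limit.
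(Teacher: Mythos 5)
Your proposal is correct, and it rests on exactly the same two technical pillars as the paper's proof: (1) for a v-cover $Y\to W$ of a totally disconnected $W$, the \v{C}ech complex of $\OO^+$ is almost exact via Proposition~\ref{prop:wlocalflat} and reduction mod $\varpi$, and (2) the pro-\'etale comparison Proposition~\ref{prop:compareetproet}~(iii). The difference is in how the vanishing is assembled from these inputs. The paper avoids your explicit induction on degree by a minimality argument: pick $i$ minimal with $H^i_v(-,\OO^+)$ not almost zero on some affinoid $X$, pull a witness $\alpha$ back along a totally disconnected cover $\tilde{X}\to X$ (if $\alpha$ died there, the \v{C}ech-to-sheaf spectral sequence for $\tilde{X}/X$ — whose $E_2$-terms in intermediate degrees vanish by minimality — would force a nonzero class into the \v{C}ech cohomology of $\tilde{X}/X$, contradicting (2)); then, with $X$ totally disconnected, kill $\alpha$ by an affinoid v-cover $Y\to X$ and derive the same contradiction from (1). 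Your version makes the induction explicit and splits the passage from totally disconnected to general affinoid into its own step via the Leray sequence for $\nu\from W_v\to W_\proet$; this is an equivalent repackaging of the paper's use of the \v{C}ech complex for $X^\wl\to X$, since that cover computes $H^\bullet_\proet$. The minimality argument is a bit slicker, but your two-step version is arguably more transparent about where each hypothesis enters. Your final worry about the iterated identification $\OO^+(Y^{(k)})/\varpi\overset{a}{\cong}(S^+/\varpi)^{\otimes_{R^+/\varpi}k}$ is easily dispelled: it follows by induction on $k$ from the case $k=2$ together with the flatness of $S^+/\varpi$ over $R^+/\varpi$ (Proposition~\ref{prop:wlocalflat}) and the fact that tensoring with a flat module preserves almost isomorphisms. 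Likewise, the passage from almost exactness mod $\varpi$ to almost exactness is clean since the terms are $\varpi$-torsionfree and $\varpi$-adically complete; your more cautious $\varprojlim$-argument also works since almost-zero modules are closed under products (and hence under $\varprojlim$ and $\varprojlim^1$).
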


\begin{proof} Assume that $X$ is totally disconnected. First, we check vanishing of Cech cohomology, i.e.~for any v-cover $f: Y=\Spa(S,S^+)\to X=\Spa(R,R^+)$, we show that the complex
\[
0\to R^+\to S^+\to \OO^+(Y\times_X Y)\to \ldots
\]
is almost exact. As everything is $\varpi$-adically complete and $\varpi$-torsionfree, where $\varpi\in R^+$ is a pseudouniformizer, we can check this modulo $\varpi$. But then the complex is almost the same as
\[
0\to R^+/\varpi\to S^+/\varpi\to S^+/\varpi\otimes_{R^+/\varpi} S^+/\varpi\to \ldots\ ,
\]
which is exact by Proposition~\ref{prop:wlocalflat}. Similarly, for general affinoid $X$, the Cech complex for the affinoid pro-\'etale cover $X^\wl\to X$ is almost exact by Proposition~\ref{prop:compareetproet}~(iii).

We claim that this implies that $H^i_v(X,\OO^+)$ is almost zero for $i>0$ for affinoid $X$. Indeed, choose $i$ minimal for which $H^i_v(X,\OO^+)$ is not almost zero for all affinoid $X$, and choose $X$ and a class $\alpha\in H^i_v(X,\OO^+)$ that is not almost zero. Let $f: \tilde{X}\to X$ be a totally disconnected cover; if $f^\ast(\alpha)$ is almost zero in $H^i_v(\tilde{X},\OO^+)$, a Cech-to-sheaf cohomology spectral sequence gives that $\check{H}^i(\tilde{X}/X,\OO^+)$ is not almost zero, which is a contradiction. Thus, we can replace $X$ by $\tilde{X}$ and assume that $X$ is totally disconnected. But there is some v-cover, without loss of generality by an affinoid $f: Y\to X$, such that $f^\ast(\alpha)=0$. A Cech-to-sheaf cohomology spectral sequence would then give that $\check{H}^i(Y/X,\OO^+)$ is not almost zero, which contradicts the first paragraph.
\end{proof}

\section{Descent}

In this section, we establish some descent results for perfectoid spaces. As a piece of general notation, we define the category of descent data as follows.

\begin{definition} Let $\mathcal C$ be a site with fibre products, and let $F$ be a prestack on $\mathcal C$, i.e.~a functor from $\mathcal C^\op$ to groupoids. Let $f: Y\to X$ be a map in $\mathcal C$, $p_1,p_2: Y\times_X Y\to Y$ the two projections, and $p_{12},p_{13},p_{23}: Y\times_X Y\times_X Y\to Y\times_X Y$ the three projections. Then
\[
F(Y/X) := \{(s,\alpha)\mid s\in F(Y), \alpha: p_1^\ast(s)\cong p_2^\ast(s)\in F(Y\times_X Y), p_{23}^\ast \alpha\circ p_{12}^\ast \alpha  = p_{13}^\ast \alpha\}\ ,
\]
which comes with a natural map $F(X)\to F(Y/X)$, sending $t\in F(X)$ to $(s,\alpha)$ with $s=f^\ast(t)$ and $\alpha$ the natural identification $p_1^\ast(s) = (p_1\circ f)^\ast(t) = (p_2\circ f)^\ast(t) = p_2^\ast(s)$.
\end{definition}

This is the category of objects in $F(Y)$ equipped with a descent datum to $X$. Note that if $f$ is a covering and $F$ is a stack, then the natural map $F(X)\to F(Y/X)$ is an equivalence.

In this section, we will be interested in several examples of prestacks $F$ on the pro-\'etale or v-site of perfectoid spaces.

\begin{lemma}\label{lem:descentfullyfaithful} Let $F$ be the prestack on the category of perfectoid spaces sending $X$ to the groupoid of perfectoid spaces over $X$. Let $X$ be a perfectoid space, and $Y\to X$ a v-cover. The functor $F(X)\to F(Y/X)$ is fully faithful.
\end{lemma}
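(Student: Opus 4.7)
The plan is to reduce fully faithfulness to the statement that, for any two perfectoid spaces $Z_1, Z_2$ over $X$, the presheaf
\[
\mathcal{M}_{Z_1,Z_2}: (T\to X)\mapsto \Hom_X(Z_1\times_X T, Z_2)
\]
on perfectoid spaces over $X$ is a v-sheaf. Granting this, applied to the v-cover $Y\to X$, we obtain that
\[
\Hom_X(Z_1,Z_2)\longrightarrow \eq\!\bigl(\Hom_X(Z_1\times_X Y, Z_2)\rightrightarrows \Hom_X(Z_1\times_X Y\times_X Y, Z_2)\bigr)
\]
is a bijection. Rewriting $\Hom_X(Z_1\times_X T, Z_2)=\Hom_T(Z_1\times_X T, Z_2\times_X T)$ (a map over $T$ into $Z_2\times_X T$ is the same as a map over $X$ into $Z_2$), the right-hand side is exactly the set of morphisms $Z_1\times_X Y\to Z_2\times_X Y$ over $Y$ compatible with the canonical descent data on both sides. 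This proves the analogous fully faithful statement for morphisms in the underlying category; restricting to isomorphisms yields the lemma, since an $X$-morphism $\varphi:Z_1\to Z_2$ is an isomorphism if and only if its pullback to $Y$ is one (the would-be inverse, being compatible with descent, descends by the same statement applied with the roles of $Z_1, Z_2$ reversed).

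To see that $\mathcal{M}_{Z_1,Z_2}$ is a v-sheaf, I would argue as follows. By Theorem~\ref{thm:vsub}, the v-topology is subcanonical, so the representable presheaves $h_{Z_2}$ and $h_X$ on $\Perfd$ are v-sheaves. For any perfectoid space $Z_1$ and any v-cover $T^\prime\to T$ in $\Perfd$, the base change $Z_1\times_X T^\prime\to Z_1\times_X T$ is again a v-cover (v-covers are stable under base change by the quasicompactness condition in the definition), and
\[
(Z_1\times_X T^\prime)\times_{Z_1\times_X T}(Z_1\times_X T^\prime)=Z_1\times_X (T^\prime\times_T T^\prime).
\]
Therefore $T\mapsto \Hom(Z_1\times_X T, Z_2)$ and $T\mapsto \Hom(Z_1\times_X T, X)$ are v-sheaves, and $\mathcal{M}_{Z_1,Z_2}$ is their equalizer (pinning down the structure map), hence also a v-sheaf.

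The only subtle point is the stability of v-covers under base change, but this is built into the definition via the quasicompact-exhaustion condition; there is no hard content here. Thus the entire lemma reduces cleanly to the subcanonicity of the v-topology, Theorem~\ref{thm:vsub}, and the main work has already been done in the previous section. The essential surjectivity part of descent — producing a perfectoid space over $X$ from a descent datum — is not asserted here and will certainly require much more work elsewhere.
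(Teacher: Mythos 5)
Your proof is correct and fleshes out exactly what the paper's one-line proof ("This follows formally from the fact that the v-site is subcanonical") has in mind: identify the Hom-presheaf $\mathcal M_{Z_1,Z_2}$ as a finite limit of representable sheaves, apply subcanonicity (Theorem~\ref{thm:vsub}) together with stability of v-covers under base change, and handle the groupoid-vs-category discrepancy by descending the inverse. The one place where you wave your hand a bit — "v-covers are stable under base change by the quasicompactness condition in the definition; there is no hard content here" — deserves a slightly more honest remark: preservation of the quasicompactness condition is routine, but surjectivity of the base-changed cover uses the nontrivial topological fact that $|Y'\times_Y Z|\to |Y'|\times_{|Y|}|Z|$ is surjective for perfectoid (analytic adic) spaces; this is standard and implicit in the paper's setup, but it is not purely formal.
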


\begin{proof} Given two perfectoid spaces $X_1,X_2\to X$, we have to see that we can glue morphisms over v-covers. This follows formally from the fact that the v-site is subcanonical.
\end{proof}

In the following, we want to prove that in some instances, the map is an equivalence.

\begin{proposition}\label{prop:affinoidwlocaldescent} Let $F$ be the prestack on the category of affinoid perfectoid spaces, sending $X$ to the groupoid of affinoid perfectoid spaces over $X$. Let $X$ be a totally disconnected perfectoid space, and $Y\to X$ a v-cover, where $Y$ is an affinoid perfectoid space. The functor $F(X)\to F(Y/X)$ is an equivalence of categories.
\end{proposition}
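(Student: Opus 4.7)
The plan is to establish essential surjectivity, since fully faithfulness is already covered by Lemma~\ref{lem:descentfullyfaithful}. Write $X=\Spa(R,R^+)$ and $Y=\Spa(S_0,S_0^+)$, and let $Y\times_X Y=\Spa(S_0',S_0'^+)$ (resp.\ let the triple fiber product be $\Spa(S_0'',S_0''^+)$). Suppose we are given a descent datum: an affinoid perfectoid $Z_Y=\Spa(S,S^+)\to Y$ together with an isomorphism $\alpha:p_1^\ast Z_Y\isom p_2^\ast Z_Y$ over $Y\times_X Y$ satisfying the cocycle condition on $Y\times_X Y\times_X Y$. Our goal is to produce an affinoid perfectoid $Z=\Spa(R',R'^+)\to X$ whose pullback to $Y$ recovers $(Z_Y,\alpha)$.

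The key input is Proposition~\ref{prop:wlocalflat}: since $X$ is totally disconnected, $R^+/\varpi\to S_0^+/\varpi$ is \emph{faithfully flat}, and similarly for $S_0^+/\varpi\to S_0'^+/\varpi$. Moreover, the natural map $S_0^+/\varpi\otimes_{R^+/\varpi}S_0^+/\varpi\to S_0'^+/\varpi$ is an almost isomorphism (by the argument in the proof of Theorem~\ref{thm:vsub}, using \cite[Prop.~6.18]{ScholzePerfectoidSpaces}). The descent datum $\alpha$ transports into a descent datum for the $S_0^+/\varpi$-algebra $S^+/\varpi$ along this almost faithfully flat map. I will apply almost faithfully flat descent (as developed by Gabber--Ramero and recalled in \cite{ScholzePerfectoidSpaces}) to define
\[
A^a := \mathrm{eq}\bigl((S^+/\varpi)^a \rightrightarrows (S^+/\varpi\otimes_{S_0^+/\varpi}S_0'^+/\varpi)^a\bigr)\ ,
\]
an almost $R^+/\varpi$-algebra, whose base change back along $R^+/\varpi\to S_0^+/\varpi$ recovers $(S^+/\varpi)^a$ with its descent datum, and for which Frobenius remains an almost isomorphism (since this is so on $S^+/\varpi$, and descent preserves it).

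Next I inductively descend $S^+/\varpi^n$ for every $n\geq 1$: the short exact sequence $0\to S^+/\varpi\to S^+/\varpi^{n+1}\to S^+/\varpi^n\to 0$ is compatible with descent data, and almost flatness (hence vanishing of the relevant higher descent cohomology) allows one to lift at each stage. Taking the inverse limit yields an almost $R^+$-algebra $A$, $\varpi$-adically complete and $\varpi$-torsion free up to almost mathematics, such that $A\otimes_{R^+}S_0^+$ is almost isomorphic to $S^+$ (after $\varpi$-adic completion). Set $R':=A[1/\varpi]$, well-defined since almost isomorphisms become isomorphisms after inverting $\varpi$, and let $R'^+\subset R'$ be the integral closure of the image of $A$; then $(R',R'^+)$ is an affinoid Huber pair. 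Uniformity of $R'$, together with the Frobenius isomorphism on $R'^+/\varpi$ descended from $S^+/\varpi$, checks the conditions of Definition~\ref{def:perfectoidring}, so $(R',R'^+)$ is perfectoid. Set $Z:=\Spa(R',R'^+)\to X$.

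Finally, I need to verify that $Z\times_X Y\isom Z_Y$ compatibly with $\alpha$. By construction of $A$, the natural map $R'^+\hat\otimes_{R^+}S_0^+\to S^+$ is an almost isomorphism, and the equalizer definition ensures compatibility with the descent datum $\alpha$ on both sides (this is the cocycle-to-equalizer translation). Inverting $\varpi$, $R'\hat\otimes_R S_0\to S$ is an isomorphism of perfectoid Tate rings, and Theorem~\ref{thm:tiltingtopspace} together with integral closure considerations identifies $R'^+\hat\otimes_{R^+}S_0^+$ with $S^+$ up to adjusting integral elements (both are integrally closed open subrings with the same almost value). The anticipated main obstacle is the careful handling of the two interacting layers of almost mathematics and $\varpi$-adic completion: ensuring that the equalizer of (completed) tensor products really computes the would-be $R'^+$, that Frobenius and flatness descend through the limit, and that the almost identification $R'^+\hat\otimes_{R^+}S_0^+\cong S^+$ is compatible with the given descent datum rather than differing by an automorphism. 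Once this bookkeeping is done, essential surjectivity follows.
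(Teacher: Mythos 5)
Your first half is essentially the paper's first step: using Proposition~\ref{prop:wlocalflat} to get faithful flatness of $R^+/\varpi\to S^+/\varpi$, then appealing to almost faithfully flat descent (Gabber--Ramero) to descend the almost algebra $(\tilde{S}^+/\varpi)^a$ and so produce the perfectoid Tate ring $\tilde{R}$ with $\tilde{R}\hat\otimes_R S=\tilde{S}$. That part is fine and matches the paper.

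The genuine gap is in the step where you set $R'^+$ to be ``the integral closure of the image of $A$.'' Two things go wrong here. First, $A$ is only an \emph{almost} $R^+$-algebra, so ``the image of $A$'' and its integral closure are not honestly defined; any two honest representatives of $A$ inside $\tilde R$ have the same integral closure, which is necessarily $\tilde R^\circ$ (or, depending on your conventions, the minimal ring of integral elements $\tilde R^+_{\min}$). Second, and more fundamentally, almost mathematics \emph{cannot} see the difference between $\tilde R^+$ and $\tilde R^\circ$: for any open integrally closed $\tilde R^+\subset\tilde R^\circ$ one has $(\tilde R^+/\varpi)^a=(\tilde R^\circ/\varpi)^a$, so the descent datum at the level of almost algebras determines the perfectoid Tate ring $\tilde R$ but carries literally no information about which $\tilde R^+$ to pick. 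The obstacle is therefore not the ``bookkeeping'' of compatibilities you anticipate; rather, identifying the correct $\tilde R^+$ is an entirely separate problem that requires an honest (non-almost) geometric argument.

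The paper addresses this in its second paragraph and in the lemma immediately following: it first forms $\tilde X'=\Spa(\tilde R,\tilde R^+_{\min})$ with the minimal ring of integral elements, observes that $\tilde Y\to\tilde Y'=\tilde X'\times_X Y$ is a pro-open immersion cut out by conditions $|f|\leq 1$ for $f\in\tilde S^+$, and then must show that the corresponding pro-constructible generalizing subset of $\tilde X'$ is itself cut out by conditions $|f|\leq 1$ with $f\in\tilde R$ downstairs. That statement is the content of the lemma that follows, which has a substantial three-step proof (reduction to $X$ connected, reduction to algebraically closed residue field using characteristic polynomials, and finally the case of an algebraically closed base using Lemma~\ref{lem:secoveralgclosed} to produce a $(C,C^+)$-valued point of a suitable rational subset). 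None of this is present in your proposal, and it cannot be replaced by more careful almost-algebra manipulation.
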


\begin{proof} Let $X=\Spa(R,R^+)$, $Y=\Spa(S,S^+)$, and $\tilde{Y}=\Spa(\tilde{S},\tilde{S}^+)$ an affinoid perfectoid space over $Y$ with a descent datum to $X$. Choose a pseudouniformizer $\varpi\in R^+$ dividing $p$. By Proposition~\ref{prop:wlocalflat}, $S^+/\varpi$ is faithfully flat over $R^+/\varpi$; in particular, $(S^+/\varpi)^a$ is faithfully flat over $(R^+/\varpi)^a$. One can then use faithfully flat descent in the almost world, cf.~\cite[Section 3.4.1]{GabberRamero}, to see that the $(S^+/\varpi)^a$-algebra $(\tilde{S}^+/\varpi)^a$ descends to an $(R^+/\varpi)^a$-algebra, which is perfectoid in the sense of~\cite[Definition 5.1 (iii)]{ScholzePerfectoidSpaces}, and thus, by~\cite[Theorem 5.2]{ScholzePerfectoidSpaces}, is of the form $(\tilde{R}^\circ/\varpi)^a$, where $\tilde{R}$ is a perfectoid $R$-algebra. In particular, $\tilde{R}\hat{\otimes}_R S=\tilde{S}$.

It remains to see that one can find the correct $\tilde{R}^+\subset \tilde{R}$. Let $\tilde{R}^+_\min\subset \tilde{R}$ be the integral closure of $R^+ + \tilde{R}^{\circ\circ}\subset \tilde{R}^\circ$, and $\tilde{X}^\prime = \Spa(\tilde{R},\tilde{R}^+_\min)$. We get a map $\tilde{Y}\to \tilde{X}^\prime$ such that the induced map $\tilde{Y}\to \tilde{Y}^\prime := \tilde{X}^\prime\times_X Y$ is a pro-(open immersion): It is cut out by the conditions $|f|\leq 1$ for all $f\in \tilde{S}^+$. Let $|\tilde{X}|\subset \tilde{X}^\prime$ denote the image of $\tilde{Y}\to \tilde{X}^\prime$; then $\tilde{Y}\subset \tilde{Y}^\prime$ is precisely the preimage of $\tilde{X}\subset \tilde{X}^\prime$ (as $|\tilde{X}^\prime|$ is the quotient of $|\tilde{Y}^\prime|$ by the equivalence relation which is the image of
\[
|\tilde{Y}^\prime\times_{\tilde{X}^\prime} \tilde{Y}^\prime|\to |\tilde{Y}^\prime|\times |\tilde{Y}^\prime|\ ).
\]
We need to see that $|\tilde{X}|\subset \tilde{X}^\prime$ is cut out by conditions $|f|\leq 1$ for elements $f\in \tilde{R}$. This is the content of the following lemma.
\end{proof}

\begin{lemma} Let $X=\Spa(R,R^+)$ be a totally disconnected affinoid perfectoid space, let $\tilde{X}=\Spa(\tilde{R},\tilde{R}^+)$ be an affinoid perfectoid space over $X$, and let $A\subset |\tilde{X}|$ be a subset. Assume that there is a surjective map $Y=\Spa(S,S^+)\to X$ such that the preimage $B$ of $A$ in $|\tilde{Y}|$, where $\tilde{Y}=\tilde{X}\times_X Y=\Spa(\tilde{S},\tilde{S}^+)$, is an intersection of subsets of the form $|g|\leq 1$ for elements $g\in \tilde{S}$. Then $A$ is an intersection of subsets of the form $|f|\leq 1$ for elements $f\in \tilde{R}$.
\end{lemma}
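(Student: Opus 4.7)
The plan is to characterize $A$ via the subring $\tilde R^{(A)}:=\{f\in\tilde R : |f|\le 1\text{ on }A\}$ of $\tilde R$ (an open integrally closed subring containing $\tilde R^+$) and show that $A = A^*:=\bigcap_{f\in\tilde R^{(A)}}\{|f|\le 1\}\subseteq|\tilde X|$. The inclusion $A\subseteq A^*$ is automatic. For the reverse, I would first observe that $\pi:\tilde Y\to\tilde X$, being the base change along $\tilde X\to X$ of the surjective map $Y\to X$, is spectral, surjective and generalizing, hence a quotient map by Lemma~\ref{lem:quotientmap}; so $A = A^*$ is equivalent to $\pi^{-1}(A)=\pi^{-1}(A^*)$, i.e.\ to the equality $B=\bigcap_{f\in\tilde R^{(A)}}\{|f|\le 1\}$ in $|\tilde Y|$. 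Setting $\tilde S^{(B)}:=\{g\in\tilde S:|g|\le 1\text{ on }B\}$, the hypothesis can be upgraded to $B=\bigcap_{g\in\tilde S^{(B)}}\{|g|\le 1\}$, so the task reduces to the following: for every $g\in\tilde S^{(B)}$ and every $y\in\tilde Y$ with $|g(y)|>1$, produce $f\in\tilde R^{(A)}$ with $|f(y)|>1$. Equivalently, for each $x\in|\tilde X|\setminus A$, I must exhibit $f\in\tilde R$ with $|f|\le 1$ on $A$ and $|f(x)|>1$.

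To construct $f$, I would exploit the totally disconnected structure of $X$ to localize at the connected component $c\in\pi_0(X)$ of the image $\bar x\in X$ of $x$. Writing $X_c=\Spa(K_c,K_c^+)=\varprojlim_V X_V$ as the limit over clopen neighborhoods $V$ of $c$, where $X_V=\pi_X^{-1}(V)\subseteq X$ is affinoid clopen (Lemma~\ref{lem:totallydisconnectedaffinoid}), the hypothesis pulls back to the corresponding data $(\tilde X_c,Y_c,\tilde Y_c,A_c,B_c)$ over the perfectoid field $K_c$. Assuming the lemma in the ``pointwise'' case $X=\Spa(K,K^+)$, one obtains $f_c\in\OO(\tilde X_c)$ with $|f_c|\le 1$ on $A_c$ and $|f_c(x)|>1$; rescaling by a scalar in $K_c$ of absolute value slightly less than $1$ but greater than $|f_c(x)|^{-1}$ (possible since $|K_c^\times|$ is dense in $\R_{>0}$), one may assume $|f_c|\le \lambda<1$ on $A_c$ while keeping $|f_c(x)|>1$. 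Standard approximation of global sections along the cofiltered limit $\tilde X_c=\varprojlim_V\tilde X_V$ yields $f_V\in\OO(\tilde X_V)$ with $|f_V-f_c|<\varepsilon$ for any prescribed $\varepsilon>0$; taking $\varepsilon$ with $\lambda+\varepsilon<1$ and $|f_c(x)|-\varepsilon>1$ gives $|f_V|<1$ on $A_c$ and $|f_V(x)|>1$. The pro-constructible set $A\cap\{|f_V|\ge 1\}\subseteq\tilde X_V$ is disjoint from $\tilde X_c=\bigcap_{V'}\tilde X_{V'}$, so by quasicompactness in the constructible topology it is disjoint from $\tilde X_{V'}$ for some clopen $V'\ni c$, whence $|f_V|\le 1$ on $A\cap\tilde X_{V'}$. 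Since $\tilde X_{V'}\subseteq\tilde X$ is clopen, $\tilde R=\OO(\tilde X_{V'})\times\OO(\tilde X\setminus\tilde X_{V'})$, and the extension of $f_V|_{\tilde X_{V'}}$ by $0$ on the complementary clopen is the desired $f\in\tilde R$.

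The remaining obstacle, which is the heart of the matter, is the pointwise case $X=\Spa(K,K^+)$. Here one uses the almost faithful flatness of $\tilde S^+/\varpi$ over $\tilde R^+/\varpi$ (Proposition~\ref{prop:wlocalflat}, applicable because $X$ is still totally disconnected), together with the almost isomorphism $\tilde S^+/\varpi\cong(\tilde R^+/\varpi\otimes_{R^+/\varpi}S^+/\varpi)^a$ and the resulting almost exact Cech-type equalizer $\tilde R^+/\varpi\hookrightarrow\tilde S^+/\varpi\rightrightarrows(\tilde S\hat\otimes_{\tilde R}\tilde S)^+/\varpi$ established earlier in the proof of Proposition~\ref{prop:affinoidwlocaldescent} (which does not circularly depend on this lemma for that step). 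The content is that the open integrally closed subring $\tilde S^{(B)}\subseteq\tilde S$ descends along $\tilde Y\to\tilde X$ to $\tilde R^{(A)}=\tilde R\cap\tilde S^{(B)}$ in a manner that recovers $B$ as $\bigcap_{f\in\tilde R^{(A)}}\{|f|\le 1\}$; concretely, given $g\in\tilde S^{(B)}$ and $y\in\tilde Y$ with $|g(y)|>1$, a careful modification of $g$ (involving powers, scalars from $K$, and the descent data coming from the two projections $\tilde Y\times_{\tilde X}\tilde Y\rightrightarrows\tilde Y$) produces the required $f\in\tilde R^{(A)}$ with $|f(y)|>1$. The delicate point, and what makes this step the main technical obstacle, is that a general $g$ with $|g|\le 1$ on $B$ need not descend to $\tilde R$; it is the specific combinatorics of the descent cocycle together with the almost-flatness that forces the existence of a descended function separating $x$ from $A$.
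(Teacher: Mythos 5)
Your reduction to the case of connected $X$ is essentially the same as the paper's and is correct: localize at a connected component $c\in\pi_0(X)$ using the projection $X\to\pi_0(X)$ from Lemma~\ref{lem:totallydisconnectedaffinoid}, find a separating function over $X_c=\Spa(K_c,K_c^+)$, lift it to the preimage of a compact open neighborhood of $c$, shrink the neighborhood by quasicompactness of a pro-constructible set in the constructible topology, and extend by zero. The reformulation of the conclusion as ``for each $x\in|\tilde X|\setminus A$ produce $f\in\tilde R$ with $|f|\le 1$ on $A$ and $|f(x)|>1$'' is also correct and is what the paper works toward.

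However, you have not actually proved the statement. You explicitly acknowledge that the pointwise case $X=\Spa(K,K^+)$ is ``the heart of the matter'' and ``the main technical obstacle,'' and what you offer there is not an argument but a wish: that ``the specific combinatorics of the descent cocycle together with the almost-flatness forces the existence of a descended function.'' This is not a reduction, it is a restatement of the difficulty. Worse, the mechanism you point to is probably the wrong one: as you yourself note, a given $g\in\tilde S^{(B)}$ need not satisfy any cocycle condition at all, and the point of the lemma is precisely that one must produce a new function from scratch, not coax $g$ into descending. Almost faithful flatness and the Cech equalizer give descent for functions that already come with descent data; they do not supply the missing separating function here.

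The paper's proof of the pointwise case has two further reduction steps that your proposal does not reproduce. First, from a general perfectoid field $K$ to an algebraically closed $C$: one picks a separating function $g_{\tilde y}$ after base change to the completed algebraic closure, observes it is already defined over a finite extension $L/K$ up to integral elements, and then takes the coefficients of the characteristic polynomial of $g_{\tilde y}$ acting on the finite free $\tilde R$-algebra $\tilde R_L=\tilde R\otimes_K L$; the key observation is that $|g_{\tilde y}|\le1$ on the preimage of $A$ forces the coefficients to be $\le1$ on $A$, while the integral dependence relation forces at least one coefficient to have $|f_i(\tilde x)|>1$. Second, for $C$ algebraically closed the argument is genuinely geometric, not a descent computation: one approximates $g_i\in\tilde S=S\hat\otimes_R\tilde R$ by finite sums $\varpi^{-n_i}\sum_j s_{ij}\otimes\tilde r_{ij}$, uses the $s_{ij}$ to define a map $Y\to Y'=\Spa(C\langle T_{ij}^{1/p^\infty}\rangle,C^+\langle T_{ij}^{1/p^\infty}\rangle)$ to a perfectoid polydisc, transfers the constraints to $\tilde Y'=\tilde X\times_X Y'$, shrinks to a rational subset $V\subset Y'$ containing the image of $Y$ on which inclusions~\eqref{eq:inclusion1} and~\eqref{eq:inclusion2} hold (a quasicompactness argument), and then invokes Lemma~\ref{lem:secoveralgclosed} to produce a section $z:\Spa(C,C^+)\to V$. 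Evaluating the polydisc coordinates at $z$ specializes the $g_i'$ to the desired $f_i\in\tilde R$. The section-of-a-polydisc lemma is the engine here, and nothing like it appears in your outline; without it, I do not see how to complete the argument along the lines you sketch.
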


\begin{proof} We may assume that $X$ is of characteristic $p$, as the subsets of the form $|f|\leq 1$ are unchanged under tilting. Note that $B$ is pro-constructible, and thus the image $A$ of $B$ is also pro-constructible.

As a first reduction step, we reduce to the case that $X$ is connected. Let $\pi: X\to \pi_0(X)$ be the projection. Let $\tilde{x}\in \tilde{X}\setminus A$ be any point, and let $c\in \pi_0(X)$ be the connected component $X_c=\pi^{-1}(c)\subset X$ whose preimage contains $\tilde{x}$. If the result is true in the connected case, it holds true for the intersection $A_c$ of $A$ with $\tilde{X}_c := \tilde{X}\times_X X_c$. Thus, there is a function $f_c\in H^0(\tilde{X}_c,\OO_{\tilde{X}_c})$ such that $|f_c(\tilde{x})|>1$, but $|f_c|\leq 1$ on $A_c$. Modulo functions in $H^0(\tilde{X}_c,\OO^+_{\tilde{X}_c})$, $f_c$ can be lifted to a function $f_U\in H^0(\tilde{X}_U,\OO_{\tilde{X}_U})$ for some compact open neighborhood $U$ of $c$ in $\pi_0(X)$, where $\tilde{X}_U\subset\tilde{X}$ is the preimage of $U\subset \pi_0(X)$. Then $f_U$ still satisfies $|f_U(\tilde{x})|>1$, and the pro-constructible subset $\{|f_U|>1\}\cap A_U\subset \tilde{X}_U$ does not meet the fibre over $c$. As $\{|f_U|>1\}\cap A_U\subset \tilde{X}_U$ is a pro-constructible subset, it is a spectral space, and hence a quasicompactness argument implies that after shrinking $U$ to a smaller compact open neighborhood of $c$, $\{|f_U|>1\}\cap A_U=\emptyset$, so that $A_U\subset \{|f_U|\leq 1\}$. Now, extending $f_U$ to a function $f$ on all of $\tilde{X}$ by setting it to $0$ on the preimage of $\pi_0(X)\setminus U$, we get a function $f$ on $\tilde{X}$ with $|f(\tilde{x})|>1$, but $|f|\leq 1$ on $A$. The intersection of $\{|f|\leq 1\}$ over all such functions thus gives $A$.

Thus, we can assume that $R=K$ is a perfectoid field, and $R^+=K^+$ is an open and bounded valuation subring; fix a pseudouniformizer $\varpi\in K$. As a second reduction step, we reduce to the case that $K$ is algebraically closed. Indeed, let $C$ be a completed algebraic closure of $K$, and $C^+\subset C$ the completion of the integral closure of $K^+$ in $C$. If the result is known in the algebraically closed case, we can assume that $Y=\Spa(C,C^+)$. Fix a point $\tilde{x}\in \tilde{X}\setminus A$, and a lift $\tilde{y}\in \tilde{Y}\setminus B$ of $\tilde{x}$. By assumption, there is some function $g_{\tilde{y}}\in \tilde{S}$ with $|g_{\tilde{y}}(\tilde{y})|>1$, but $|g_{\tilde{y}}|\leq 1$ on $B$. Now $Y$ is an inverse limit of $\Spa(L,L^+)$ over all finite extensions $L\subset C$ of $K$ with integral closure $L^+\subset L$ of $K^+$ in $L$. Approximating the function $g_{\tilde{y}}$ modulo $\tilde{S}^+$, we can assume that $g_{\tilde{y}}$ is a function on $\tilde{X}\times_X \Spa(L,L^+)=:\Spa(\tilde{R}_L,\tilde{R}_L^+)$, for some big enough finite extension $L$ of $K$. In this case, $\tilde{R}_L = \tilde{R}\otimes_K L$ is a finite free $\tilde{R}$-algebra; consider the characteristic polynomial $P_{g_{\tilde{y}}}(X) = X^d + f_1 X^{d-1} + \ldots + f_d$ of $g_{\tilde{y}}$ acting on $\tilde{R}_L$, so all $f_i\in \tilde{R}$. As $|g_{\tilde{y}}|\leq 1$ on the preimage of $A$, it follows that $|f_i|\leq 1$ on $A$ for all $i=1,\ldots,d$. On the other hand, if we had $|f_i(\tilde{x})|\leq 1$ for all $i=1,\ldots,d$, then the equation
\[
0=P_{g_{\tilde{y}}}(g_{\tilde{y}}) = g_{\tilde{y}}^d + f_1 g_{\tilde{y}}^{d-1} + \ldots + f_d
\]
would imply that $|g_{\tilde{y}}(\tilde{y})|\leq 1$, which is a contradiction. Thus, at least one $f_i$ satisfies $|f_i|\leq 1$ on $A$ while $|f_i(\tilde{x})|>1$. Intersecting all such $f_i$ for varying $\tilde{x}$ thus writes $A$ as an intersection of sets of the form $\{|f|\leq 1\}$ for varying $f\in \tilde{R}$.

Finally, assume that $R=C$ is a complete algebraically closed nonarchimedean field of characteristic $p$, and $R^+=C^+$ is an open and bounded valuation subring. We may assume that similarly $S=L$ is a perfectoid field, and $S^+=L^+$ is an open and bounded valuation subring. Fix a point $\tilde{x}\in \tilde{X}\setminus A$, with preimage $\tilde{Y}_{\tilde{x}}\subset \tilde{Y}$, a pro-constructible subset. Let $x\in X$ be the image of $\tilde{x}$. By assumption, for any $\tilde{y}\in \tilde{Y}_{\tilde{x}}$, there is some function $g_{\tilde{y}}\in \tilde{S}$ with $|g_{\tilde{y}}(\tilde{y})|>1$, but $|g_{\tilde{y}}|\leq 1$ on $B$. By quasicompactness, we can find finitely many $g_1,\ldots,g_n$ such that $|g_i|\leq 1$ on $B$, but
\[
\tilde{Y}_{\tilde{x}}\subset \bigcup_{i=1}^n \{|g_i|>1\}\ .
\]
Up to an element of $\tilde{S}^+$, we can approximate $g_i\in \tilde{S} = S\hat{\otimes}_R \tilde{R}$ as a sum $\varpi^{-n_i}\sum_{j=1}^{m_i} s_{ij}\otimes \tilde{r}_{ij}$ with $s_{ij}\in S$, $\tilde{r}_{ij}\in \tilde{R}$. Let $Y^\prime = \Spa(S^\prime,S^{\prime +})$, with $S^\prime=C\langle (T_{ij}^{1/p^\infty})_{i=1,\ldots,n,j=1,\ldots,m_i}\rangle$, $S^{\prime +} = C^+\langle (T_{ij}^{1/p^\infty})_{i=1,\ldots,n,j=1,\ldots,m_i}\rangle$, and consider the map $Y\to Y^\prime$ given by $T_{ij}\mapsto s_{ij}$. Then there are functions $g_i^\prime=\varpi^{-n_i}\sum_{j=1}^{m_i} T_{ij}\otimes \tilde{r}_{ij}\in \tilde{S}^\prime$, where $\tilde{Y}^\prime= \tilde{X}\times_X Y^\prime = \Spa(\tilde{S}^\prime,\tilde{S}^{\prime +})$. These functions pull back to $g_i$, and in particular they satisfy $|g_i^\prime|\leq 1$ on $B$. Let $B^\prime\subset |\tilde{Y}^\prime|$ be the preimage of $A$. Let $f: \tilde{Y}^\prime\to Y^\prime$ be the projection. Let $W\subset Y^\prime$ be the image of $Y\to Y^\prime$; thus $W=\Spa(L^\prime,L^{\prime +})$ is the set of generalizations of a point $y^\prime\in Y^\prime$ (given by the image of the closed point of $Y$), and $W$ can thus be written as the intersection of all rational subsets $V\subset Y^\prime$ containing $W$. Then $B^\prime\cap f^{-1}(W)\subset \{|g_i^\prime|\leq 1\}$, as $B$ surjects onto $B^\prime\cap f^{-1}(W)$. By a quasicompactness argument, this implies that there is some rational neighborhood $V\subset Y^\prime$ of $W$ such that $B^\prime\cap f^{-1}(V)\subset \{|g_i^\prime|\leq 1\}$. Similarly, the fibre of $\tilde{Y}^\prime_{\tilde{x}}$ over $W$ is contained in $\bigcup_{i=1}^n \{|g_i^\prime|>1\}$; said differently, the spectral space
\[
M=\{\tilde{y}\in \tilde{Y}^\prime_{\tilde{x}}\mid \forall i=1,\ldots,n: |g_i^\prime(\tilde{y})|\leq 1\}
\]
has empty fiber over $W\subset Y^\prime$. By a quasicompactness argument, this implies that replacing $V$ by a smaller rational subset of $Y^\prime$ containing $W$, the fiber of $M$ over $V$ is empty. Thus, we can choose $V$ with the properties
\begin{equation}\label{eq:inclusion1}
B^\prime\cap f^{-1}(V)\subset \bigcap_{i=1}^n \{|g_i^\prime|\leq 1\}\ ,
\end{equation}
\begin{equation}\label{eq:inclusion2}
\tilde{Y}^\prime_{\tilde{x}}\cap f^{-1}(V)\subset \bigcup_{i=1}^n \{|g_i^\prime|>1\}\ .
\end{equation}
Now $V$ is a rational subset of
\[
\Spa(C\langle (T_{ij}^{1/p^\infty})_{i=1,\ldots,n,j=1,\ldots,m_i}\rangle, C^+\langle (T_{ij}^{1/p^\infty})_{i=1,\ldots,n,j=1,\ldots,m_i}\rangle)
\]
mapping surjectively to $\Spa(C,C^+)$. By Lemma~\ref{lem:secoveralgclosed} this implies that there is a $(C,C^+)$-valued point $z:\Spa(C,C^+)\to V$. Let $f_i\in \tilde{R}$ be the evaluation of $g_i^\prime$ at $z$. Then $|f_i|\leq 1$ on $A$ by~\eqref{eq:inclusion1}, but $\tilde{x}\in \bigcup_{i=1}^n \{|f_i|>1\}$ by~\eqref{eq:inclusion2}. This finishes the proof, as now some $f=f_i$ satisfies $|f|\leq 1$ on $A$, but $|f(\tilde{x})|>1$, and intersecting $\{|f|\leq 1\}$ over all such $f$ for varying $\tilde{x}$ gives $A$.
\end{proof}

\begin{lemma}\label{lem:secoveralgclosed} Let $C$ be an algebraically closed nonarchimedean field with open and bounded valuation subring $C^+\subset C$. Let $V$ be an open subset of
\[
\Spa(C\langle T_1,\ldots,T_d\rangle,C^+\langle T_1,\ldots,T_d\rangle)
\]
mapping surjectively to $\Spa(C,C^+)$. Then there is a section $\Spa(C,C^+)\to V$.
\end{lemma}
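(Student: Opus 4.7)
The plan is to first reduce to the case where $V$ is a rational subset of the polydisc $X=\Spa(C\langle T_1,\ldots,T_d\rangle, C^+\langle T_1,\ldots,T_d\rangle)$, then to translate the existence of a section into an explicit problem about classical tuples in $(C^+)^d$, and finally to solve that problem using the algebraic closedness of $C$.

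For the reduction I will use two facts: the structure map $X\to\Spa(C,C^+)$ is generalizing, and the closed point $x^+\in\Spa(C,C^+)$ is the unique closed point, so every point of $\Spa(C,C^+)$ is a generalization of $x^+$. By surjectivity, there is $y\in V$ over $x^+$; as $V$ is open, I may choose a rational subset $U\subset V$ with $y\in U$. For any $x\in\Spa(C,C^+)$, which generalizes $x^+$, the generalizing property produces a generalization $y'$ of $y$ in $X$ over $x$, and openness of $U$ ensures $y'\in U$. Hence $U$ already surjects onto $\Spa(C,C^+)$, and I replace $V$ by $U$ and write $V=\{|f_i|\leq |g|,\ i=1,\ldots,n\}$ for $f_i,g\in C\langle T_1,\ldots,T_d\rangle$.

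Next, I claim a section $\Spa(C,C^+)\to V$ corresponds precisely to a tuple $t=(t_1,\ldots,t_d)\in (C^+)^d$ with $g(t)\neq 0$ in $C$ and $f_i(t)/g(t)\in C^+$ for all $i$: such tuples parametrize the $(C,C^+)$-valued points of $X$; the induced section's image in $X$ is the specialization chain of the Type-$1$ point $t$ swept out as the valuation on $C$ varies over $\Spa(C,C^+)$; and since $V$ is open (hence stable under generalization), this chain lies in $V$ iff its most specialized point, the valuation $f\mapsto v^+(f(t))$ over $x^+$, does --- which is exactly the displayed condition.

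It remains to produce such a tuple. The witness $y\in V$ over $x^+$ gives a valuation $w$ on $C\langle T_1,\ldots,T_d\rangle$ extending the valuation $v^+$ of $C^+$, with $w(T_i)\leq 1$ and $w(f_i)\leq w(g)$. The main step, which I expect to be the principal obstacle, is to convert this witness into an actual classical tuple in $(C^+)^d$. My plan is to exploit the algebraic closedness of the residue field $k^+:=C^+/\mathfrak m_{C^+}$ --- itself algebraically closed, as is standard for the residue field of a valuation on an algebraically closed field, via a Newton-polygon argument on monic polynomials. Reducing modulo $\mathfrak m_{C^+}$, the data of $y$ defines a nonempty affine variety over $k^+$ whose $k^+$-rational points parametrize reductions of admissible tuples; by algebraic closedness of $k^+$ such a point exists, and a Henselian-style lift along $C^+\to k^+$ gives a candidate tuple, with iterative refinement through the successive residue-field layers of $v^+$ required when $v^+$ has rank $>1$. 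The key difficulty will be arranging the lift to actually satisfy $f_i(t)/g(t)\in C^+$ rather than merely its reduction modulo $\mathfrak m_{C^+}$, which will require a careful approximation argument exploiting the rational presentation of $V$ together with the integral-closedness of $C^+$ in $C$.
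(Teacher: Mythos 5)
Your first two paragraphs are correct and match the paper's initial reduction. Replacing $V$ by a rational neighborhood $U$ of a point $y$ over the closed point, and checking that $U$ still surjects onto $\Spa(C,C^+)$ because all maps of analytic adic spaces are generalizing and $U$ is open, is sound. The translation of a section $\Spa(C,C^+)\to V$ into a tuple $t\in(C^+)^d$ with $g(t)\neq0$ and $f_i(t)/g(t)\in C^+$ is also correct, again using that $V$ is open, hence closed under generalization, so the condition need only be checked at the closed point. The paper formulates the same problem in terms of sections of the formal model $\mathfrak X=\Spf\OO^+(V)$, a faithfully flat, topologically finitely presented formal scheme over $\Spf C^+$.

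The gap is exactly where you yourself flag it, and it is a real one: passing from a $k^+$-rational point of the reduction to an actual $C^+$-point. The special fiber of $\mathfrak X$ is flat and of finite presentation over $C^+$ but is in no way smooth, so there is no formal-smoothness or Henselian argument that lifts a closed point of the special fiber; your proposed ``Henselian-style lift'' with ``iterative refinement through the successive residue-field layers of $v^+$'' does not engage with the failure of smoothness, and the ``careful approximation argument'' you defer is precisely the mathematical content that must be supplied. The paper fills this gap with a substantive theorem of Grothendieck (after Raynaud--Gruson): any finitely presented faithfully flat cover of a strictly henselian local ring is refined by a finite flat locally free cover, for which sections over a valuation ring with algebraically closed fraction and residue fields exist. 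To apply this cleanly, the paper also decouples the rank-$1$ and the residual parts of $C^+$ via the fiber product $\mathfrak X(C^+)=\mathfrak X(\OO_C)\times_{X(K)}X(R)$, with $R=C^+/C^{\circ\circ}$ a strictly Henselian valuation ring in the algebraically closed field $K=\OO_C/C^{\circ\circ}$; one then shows $X(R)\neq\emptyset$ by the theorem above and uses $\varpi$-adic completeness of $\OO_C$ together with flatness of $\mathfrak X$ to see that $\mathfrak X(\OO_C)\to X(K)$ is surjective. Without some replacement for the Raynaud--Gruson input (or an equally strong lifting principle), your argument does not go through.
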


\begin{proof} We may assume that $V$ is rational, and in particular affinoid. Then $\mathfrak X=\Spf \OO^+(V)$ is a formal scheme which is flat and topologically of finite type (thus, of finite presentation) over $\Spf C^+$; moreover, $\mathfrak X\to \Spf C^+$ is surjective. We need to see that $\mathfrak X\to \Spf C^+$ has a section. First, $R=C^+/C^{\circ\circ}$ is a valuation ring with algebraically closed fraction field $K$, and $X=\mathfrak X\times_{\Spf C^+} \Spec R$ is a scheme of finite presentation and faithfully flat over $R$. This implies that $X(R)$ is nonempty: Any finitely presented faithfully flat cover of a strictly henselian local ring is refined by a finite flat locally free cover, cf.~\cite[after Lemma 11.2]{GrothBrauer3}, and finite flat covers of $\Spec R$ split. But $\mathfrak{X}(C^+)=\mathfrak{X}(\OO_C)\times_{X(K)} X(R)$, and the reduction map $\mathfrak{X}(\OO_C)\to X(K)$ is surjective.
\end{proof}

We need another result for morphisms which are not necessarily affinoid. We will however need to make a separatedness assumption.

\begin{proposition}\label{prop:sepproetaledescent} Let $F$ be the prestack on the category of perfectoid spaces sending a perfectoid space $X$ to the groupoid of separated pro-\'etale perfectoid spaces over $X$. Let $X$ be a strictly totally disconnected perfectoid space, and $Y\to X$ a v-cover. Then $F(X)\to F(Y/X)$ is an equivalence of categories.
\end{proposition}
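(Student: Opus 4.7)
Fully faithfulness is Lemma~\ref{lem:descentfullyfaithful}, so the task is essential surjectivity: starting with separated pro-\'etale $\tilde Y \to Y$ and a descent datum $\alpha$ on $R = Y\times_X Y$, we must produce a separated pro-\'etale $\tilde X \to X$ pulling back to $\tilde Y$. The plan is to reduce, in three stages, to the affinoid quasicompact case already handled by Proposition~\ref{prop:affinoidwlocaldescent}.

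First, I would refine $Y\to X$ so that $Y$ is affinoid, strictly totally disconnected (applying Lemma~\ref{lem:strtotdisccover} and using that descent up any v-cover of $Y$ does not lose information thanks to fully faithfulness). Second, I would break $\tilde Y$ into $R$-invariant quasicompact pieces: since $X$ is quasiseparated, $R$ is qcqs and the projections $p_1,p_2\colon R\to Y$ are qcqs; the $R$-saturation of any quasicompact open of $\tilde Y$ is therefore still quasicompact, so $\tilde Y$ is the filtered union of $R$-invariant quasicompact open sub-perfectoid spaces, each carrying its own induced descent datum. If each such piece descends to an open sub-perfectoid space of some $\tilde X$, fully faithfulness lets them be glued.

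Third, once $\tilde Y$ is quasicompact, separated and pro-\'etale over the strictly totally disconnected space $Y$, Lemma~\ref{lem:proetaleoverwlocal} says $\tilde Y$ is affinoid (and affinoid pro-\'etale over $Y$). Proposition~\ref{prop:affinoidwlocaldescent} then produces an affinoid perfectoid $\tilde X$ over $X$ with $\tilde X\times_X Y\cong \tilde Y$ compatibly with the descent data. Separatedness of $\tilde X\to X$ follows because $\Delta_{\tilde X/X}$ becomes a closed immersion after the surjective pullback along $Y\to X$, and by Theorem~\ref{thm:vsub} and the analysis in Proposition~\ref{prop:seperatedperf} this property is v-local on the base.

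The key remaining step, and where I expect the main technical difficulty, is verifying that $\tilde X\to X$ is \emph{pro-\'etale}. By Lemma~\ref{lem:proetaleoverwlocal}, it suffices to show that for every rank-$1$ point $x=\Spa(C,\OO_C)\in X$ (with $C$ algebraically closed, since $X$ is strictly totally disconnected), the fibre $\tilde X\times_X x$ has the form $\underline{S_x}\times x$ for a profinite set $S_x$. Lifting $x$ to a rank-$1$ point $y\in Y$ over $x$, we know the fibre $\tilde Y\times_Y y$ has the form $\underline{S_y}\times y$ because $\tilde Y\to Y$ is pro-\'etale. The content is therefore a local descent statement: given a quasicompact separated map $Z\to \Spa(C,\OO_C)$ that becomes of the form $\underline{S}\times(\cdot)$ after pullback along the v-cover $Y\times_X x\to x$ (itself strictly totally disconnected over the algebraically closed point $x$), one must conclude $Z$ has this form. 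I would handle this by transporting the descent problem via Corollary~\ref{cor:proetaleoverwlocaltop} to the category of pro-\'etale spectral spaces over $|x|$, where $\underline{S}\times x$-type objects correspond precisely to profinite sets mapping to the one-point space, and then descending the profinite set using v-descent of the structure sheaf (Theorem~\ref{thm:vsub}) together with the splitting of every \'etale cover over $\Spa(C,\OO_C)$.
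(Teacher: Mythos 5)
Your high-level strategy (reduce to the affinoid quasicompact case handled by Proposition~\ref{prop:affinoidwlocaldescent}, then glue) is the same as the paper's, and your final worry about verifying pro-\'etaleness of $\tilde X\to X$ is legitimate --- the paper itself dispatches that step rather briskly by citing Lemma~\ref{lem:proetaleoverwlocal}, and a careful reader does need to check the rank-$1$ fibre condition via an ``invariance of $\pi_0$ under change of algebraically closed field'' argument of the kind you sketch. But there is a genuine gap in your second stage, and it is the actual crux of the proposition.

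You claim that the $R$-saturation of a quasicompact open of $\tilde Y$ is a quasicompact \emph{open} subspace, so that $\tilde Y$ is a filtered union of $R$-invariant quasicompact open pieces. The saturation $t(s^{-1}(W))$ is indeed quasicompact (the projections are qcqs), pro-constructible, and generalizing --- but it is not open in general, because the projections $s,t$ of the equivalence relation are pro-\'etale, hence generalizing, but not open (an inverse limit of \'etale maps need not be open; for instance $\Spa(K(x),K(x)^+)\to X$ is pro-\'etale with image a single point). So there is no reason for $\tilde Y$ to be covered by $R$-invariant quasicompact opens, and the descent of such pieces never gets off the ground. This is precisely the point-set subtlety that Lemma~\ref{lem:equivrel} is designed to handle: given a quasicompact open $W\subset\tilde Y$, it produces an $R$-invariant \emph{open} $U\supset W$ (not necessarily quasicompact) contained in an $R$-invariant $E$ which is a quasicompact pro-constructible generalizing subset (not necessarily open). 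The paper's argument then descends $E$ first --- it corresponds to an affinoid pro-\'etale perfectoid space over $Y$, so Proposition~\ref{prop:affinoidwlocaldescent} applies and yields an affinoid $E_X$ over $X$ --- and only afterwards descends the $R$-invariant open $U\subset E$ to an open $U_X\subset E_X$; the $U_X$ are then glued. Without the Lemma~\ref{lem:equivrel} sandwich $W\subset U\subset E$, one has nothing affinoid to feed into the descent theorem and nothing open with which to glue.
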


\begin{proof} By Lemma~\ref{lem:descentfullyfaithful}, we only have to prove that all descent data are effective; for this, we may assume that $Y$ is strictly totally disconnected as well.

Let $R=\tilde{Y}\times_X Y\subset \tilde{Y}\times \tilde{Y}$ be the equivalence relation on $\tilde{Y}$ corresponding to the descent datum. We claim that the image $R^\prime\subset |\tilde{Y}|\times |\tilde{Y}|$ satisfies the hypothesis of Lemma~\ref{lem:equivrel}. First, $|\tilde{Y}|$ is a quasiseparated locally spectral space. As all maps of perfectoid spaces are generalizing, it follows that $R^\prime\to \tilde{Y}$ is generalizing; also, it is quasicompact (as it is a quotient of the map $|R|=|\tilde{Y}\times_X Y|\to |\tilde{Y}|$, and $Y\to X$ is quasicompact). Finally, $R^\prime$ is pro-constructible: For any quasicompact open subset $W\subset \tilde{Y}$, the intersection of $R^\prime$ with $|W|\times |W|$ is the image of the spectral space $W\times_X Y\cap Y\times_X W$ under the spectral map to $|W|\times |W|$.

Thus, by Lemma~\ref{lem:equivrel}, for any quasicompact open $W\subset \tilde{Y}$ we can find $R$-invariant subsets $U\subset E\subset \tilde{Y}$ such that $U$ is open in $\tilde{Y}$ and contains $W$, and $E$ is an intersection of a nonempty family of quasicompact open subsets. Thus, $E\subset \tilde{Y}$ corresponds by Lemma~\ref{lem:proetaleoverwlocal} to an affinoid pro-\'etale perfectoid space, which we will still denote by $E$. As $E$ is $R$-invariant, it comes with an induced descent datum, which is effective by Proposition~\ref{prop:affinoidwlocaldescent}. We find some affinoid perfectoid space $E_X\to X$ whose pullback to $Y$ is $E$; it follows (from Lemma~\ref{lem:proetaleoverwlocal}) that $E_X\to X$ is affinoid pro-\'etale. Now the open $R$-invariant subset $U\subset E$ descends to some open subset $U_X\subset E_X$, which is thus pro-\'etale over $X$. For varying $W$, the $U$ will cover $\tilde{Y}$, and thus the $U_X$ will glue to give the desired space $\tilde{X}\to X$ pro-\'etale over $X$.
\end{proof}

Finally, we need some stronger descent results for \'etale morphisms.

\begin{proposition}\label{prop:etalesepdescent} The prestack on the category of perfectoid spaces sending any perfectoid space $X$ to the groupoid of separated \'etale perfectoid spaces over $X$ is a stack in the v-topology.

Similarly, the prestack on the category of perfectoid spaces sending any perfectoid space $X$ to the groupoid of finite \'etale perfectoid spaces over $X$ is a stack in the v-topology.
\end{proposition}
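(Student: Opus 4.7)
Full faithfulness in both cases is already Lemma~\ref{lem:descentfullyfaithful}, since finite/separated étale maps are in particular maps of perfectoid spaces. For effective descent, I would reduce to a single affinoid v-cover $f: Y=\Spa(S,S^+)\to X$ with $X$ affinoid, and then further to the case where $X=\Spa(R,R^+)$ is strictly totally disconnected. The reduction to the strictly totally disconnected case uses the cover $X'\to X$ of Lemma~\ref{lem:strtotdisccover} together with pro-étale descent for the relevant class: for finite étale morphisms this follows by combining Proposition~\ref{prop:compareetproet}(iii) (descent of $\OO$ and $\OO^+$ along pro-étale covers) with the classification of rings of integral elements in Lemma~\ref{RingsOfIntegralElements} and the equivalence of categories in Proposition~\ref{prop:etmaptolim}(i), while for separated étale morphisms we invoke Proposition~\ref{prop:sepproetaledescent}.

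For the finite étale case, let $\tilde{Y}=\Spa(\tilde{S},\tilde{S}^+)\to Y$ be finite étale with descent datum to $X$. By Proposition~\ref{prop:wlocalflat}, for any pseudouniformizer $\varpi\in R$, the map $R^+/\varpi\to S^+/\varpi$ is faithfully flat, and by almost purity (Theorem~\ref{thm:almostpurity}(iii)), $\tilde{S}^{+a}$ is almost finite étale over $S^{+a}$. Standard almost faithfully flat descent, as in \cite[Section 3.4.1]{GabberRamero}, applied to the $(S^+/\varpi)^a$-algebra $(\tilde{S}^+/\varpi)^a$ with its descent datum, produces an almost finite étale $(R^+/\varpi)^a$-algebra; by $\varpi$-adic completeness and \cite[Theorem 5.2]{ScholzePerfectoidSpaces} this corresponds to a perfectoid $R$-algebra $\tilde{R}$ with $\tilde{R}\hat{\otimes}_R S=\tilde{S}$. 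Setting $\tilde{R}^+$ to be the integral closure of $R^+$ in $\tilde{R}$ and $\tilde{X}=\Spa(\tilde{R},\tilde{R}^+)$, one has $\tilde{X}\times_X Y=\tilde{Y}$ compatibly with the descent datum, and finite étaleness of $R\to \tilde{R}$ descends from that of $S\to \tilde{S}$ by faithfully flat descent.

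For the separated étale case, Proposition~\ref{prop:sepproetaledescent} directly yields a separated pro-étale $\tilde{X}\to X$ with $\tilde{X}\times_X Y\cong \tilde{Y}$; the remaining issue, which I expect to be the main obstacle, is to show that $\tilde{X}\to X$ is étale and not merely pro-étale. My plan is to verify étaleness pointwise on $\tilde{X}$: given $\tilde{x}\in \tilde{X}$ with image $x\in X$, pick a v-preimage $\tilde{y}\in \tilde{Y}$ lying over $y\in Y$, and use étaleness of $\tilde{Y}\to Y$ to find a quasicompact open neighborhood $V$ of $\tilde{y}$ together with a factorization $V\hookrightarrow W\to U$, where $U\subset Y$ is an affinoid open neighborhood of $y$ and $W\to U$ is finite étale. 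Exploiting that $X$ is strictly totally disconnected, I would shrink $U$ to be contained in the preimage of a small clopen neighborhood of $\pi(x)\in\pi_0(X)$, and use a quasicompactness argument in the constructible topology (in the spirit of Lemma~\ref{lem:subsetwlocal} and Lemma~\ref{lem:proetaleoverwlocal}) to force the chart $(V,W,U)$ to match on $Y\times_X Y$ up to refinement. One may then apply the finite étale descent already established to descend $W\to U$, and v-descent of open immersions (a direct consequence of descent for $|\cdot|$ and for $\OO$ via Theorem~\ref{thm:vsub}) to descend $V\subset W$, yielding an étale chart for $\tilde{X}\to X$ in a neighborhood of $\tilde{x}$. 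The delicate point is making the local chart equivariant under the descent datum, and I expect this to reduce, after shrinking, to the triviality of finite étale covers over geometric points $\Spa(C,\OO_C)\in X$.
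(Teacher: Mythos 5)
Your proposed reduction to the case that $X$ is strictly totally disconnected is the main gap, and it hides what is in fact the hardest part of the proof. You want to replace the v-cover $Y\to X$ by its pullback along an affinoid pro-\'etale $X'\to X$ with $X'$ strictly totally disconnected (Lemma~\ref{lem:strtotdisccover}), solve descent over $X'$, and then use ``pro-\'etale descent for the relevant class'' to descend the result from $X'$ back down to $X$. But this last step --- pro-\'etale descent of finite \'etale (resp.~separated \'etale) objects over a general affinoid $X$ --- is not supplied by the results you cite. Proposition~\ref{prop:compareetproet}(iii) gives descent of the sheaves $\OO,\OO^+$ and their cohomology, not of finite \'etale algebras as objects with descent data; Lemma~\ref{RingsOfIntegralElements} and Proposition~\ref{prop:etmaptolim}(i) are about untilts and filtered limits, not about gluing descent data. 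And Proposition~\ref{prop:sepproetaledescent} is only proved when the base is strictly totally disconnected, so invoking it to descend $X'\to X$ with $X$ merely affinoid is circular. In the paper this is precisely where the substantial work happens: after the strictly totally disconnected case, one observes that one may assume $Y\to X$ is affinoid pro-\'etale and universally open, writes $Y=\varprojlim_i Y_i\to X$ with $Y_i\to X$ affinoid \'etale, spreads out the descent datum to a finite level (using Proposition~\ref{prop:etmaptolim}(ii)/(iii)) to reduce to $Y\to X$ \'etale, then to finite \'etale Galois via the de~Jong--van~der~Put argument, and finally localizes at points of $X$ and uses the canonical compactification of Lemma~\ref{lem:compactificationetalemap} to reduce to the algebraic case of finite \'etale $K$-algebras. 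None of these steps appear in your proposal, so the general affinoid case is unaccounted for.

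As a secondary point, even in the strictly totally disconnected case your plan to verify \'etaleness of $\tilde{X}\to X$ by constructing descent-equivariant local charts $(V,W,U)$ is far more delicate than the paper's route, and you explicitly flag that the equivariance of the chart is unresolved. The paper sidesteps the issue entirely: by Corollary~\ref{cor:proetaleoverwlocaltop}, a quasicompact separated pro-\'etale morphism to a strictly totally disconnected $X$ is determined by its underlying map of topological spaces, so \'etaleness of $\tilde{X}\to X$ reduces to a purely topological statement, namely that $|\tilde{X}|\to|X|$ is a local isomorphism; this is then checked after the surjective spectral generalizing pullback $|Y|\to |X|$ via Lemma~\ref{lem:localisomvlocal}. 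Adopting this topological criterion would remove the delicacy you identify and is the intended argument. Your almost-flat-descent argument for the finite \'etale case over strictly totally disconnected $X$ is reasonable in spirit (it mirrors Proposition~\ref{prop:affinoidwlocaldescent}), but again it only applies once $X$ is totally disconnected, so it does not relieve you of the reduction problem.
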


\begin{proof} We give the proof in the more difficult case of separated \'etale maps; it is easy to see that the property of being finite \'etale passes through all the arguments.

Let $F$ be the prestack on the category of perfectoid spaces sending any perfectoid space $X$ to the groupoid of separated \'etale perfectoid spaces over $X$. Let $f: Y\to X$ be any v-cover; we need to prove that $F(X)\to F(Y/X)$ is an equivalence. From Lemma~\ref{lem:descentfullyfaithful}, we know that the functor is fully faithful, so we need to show effectivity of descent. Assume first that $X$ is strictly totally disconnected; we may also assume that $Y$ is strictly totally disconnected. Then Proposition~\ref{prop:sepproetaledescent} implies that any separated \'etale $\tilde{Y}\to Y$ with descent data descends to a separated pro-\'etale $\tilde{X}\to X$. It remains to see that $\tilde{X}\to X$ is \'etale. For this, we may assume that $\tilde{X}$ is affinoid. Under the equivalence of Corollary~\ref{cor:proetaleoverwlocaltop}, we have to see that $|\tilde{X}|\to |X|$ is a local isomorphism. But $|\tilde{X}\times_X Y|\to |\tilde{X}|\times_{|X|} |Y|$ is a homeomorphism, and so we know that $|\tilde{X}|\to |X|$ becomes a local isomorphism after pullback along the surjection $|Y|\to |X|$. By Lemma~\ref{lem:localisomvlocal} below, we see that $|\tilde{X}|\to |X|$ is a local isomorphism, as desired.

This finishes the case that $X$ is strictly totally disconnected. In general, we can assume that $X$ is affinoid, and by Lemma~\ref{lem:strtotdisccover} find an affinoid pro-\'etale surjective and universally open map $Y^\prime\to X$ with $Y^\prime$ strictly totally disconnected. Then we have a v-cover $Y\times_X Y^\prime\to X$ refining $Y\to X$, and we already know effectivity of descent for $Y\times_X Y^\prime\to Y^\prime$. In other words, we may replace $Y$ by $Y^\prime$, and assume that $Y\to X$ is affinoid pro-\'etale and universally open. Write $Y=\varprojlim_i Y_i\to X$ as a cofiltered inverse limit of \'etale maps $Y_i\to X$, where all $Y_i$ are affinoid.

Now let $\tilde{Y}\to Y$ be a separated \'etale map equipped with a descent datum to $X$, which induces an equivalence relation $\tilde{R}\subset \tilde{Y}\times \tilde{Y}$. The map $\tilde{R}\to \tilde{Y}$ can be identified with the projection $\tilde{Y}\times_X Y\to \tilde{Y}$, which is open (as $Y\to X$ is universally open). From Lemma~\ref{lem:spectralopenequivrel}, we see that $|\tilde{Y}|/|\tilde{R}|$ is a quasiseparated locally spectral space. Covering it by quasicompact open subsets, we can reduce to the case that $\tilde{Y}$ is quasicompact.

In this situation, Proposition~\ref{prop:etmaptolim}~(iii) says that
\[
Y_{\et,\qc,\sep} = \text{2-}\varinjlim_i (Y_i)_{\et,\qc,\sep}\ ,
\]
so any quasicompact separated \'etale $Y$-space $\tilde{Y}$ comes via pullback from some quasicompact separated \'etale $Y_i$-space $\tilde{Y}_i$ for $i$ large enough. Similarly, any descent datum is defined at some finite level, so we can replace $Y$ by $Y_i$ for $i$ sufficiently large, and reduce to the case that $Y\to X$ is \'etale. By the variant \cite[Proposition 8.2.20]{KedlayaLiu1} of an argument of de Jong and van der Put, \cite[Proposition 3.2.2]{deJongvanderPut}, one further reduces to the case that $Y\to X$ is finite \'etale. We can also assume that it is Galois, with Galois group $G$.

Finally, take any point $x\in X$. As $(X_x)_{\et,\qc,\sep} = \text{2-}\varinjlim_{U\ni x} U_{\et,\qc,\sep}$ (and similarly for pullbacks to $Y$ and $Y\times_X Y$), it is enough to show effectivity of descent after base change to $X_x$; thus we may assume that $X=\Spa(K,K^+)$ for some perfectoid field $K$ with open and bounded valuation subring $K^+\subset K$. It remains to see that in this case, if $\tilde{Y}\in X_{\et,\qc,\sep}$ has a free $G$-action for some finite group $G$, then the quotient $\tilde{Y}/G$ exists (with $\tilde{Y}\times_{\tilde{Y}/G} \tilde{Y} = \tilde{Y}\times G$). For this, we use Lemma~\ref{lem:compactificationetalemap} below to reduce to the case that $\tilde{Y}\to X$ is finite \'etale, which reduces us to the algebraic case of finite \'etale $K$-algebras.
\end{proof}

The following two lemmas were used in the proof.

\begin{lemma}\label{lem:localisomvlocal} Let $f: S^\prime\to S$ be a map of spectral spaces, and let $p: \tilde{S}\to S$ be a surjective, spectral and generalizing map of spectral spaces, with pullback $\tilde{f}: \tilde{S}^\prime = S^\prime \times_S \tilde{S}\to \tilde{S}$. If $\tilde{f}$ is a local isomorphism, then $f$ is a local isomorphism.
\end{lemma}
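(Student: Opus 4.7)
The plan is to verify that $f$ is a local isomorphism by showing separately that (a) $f$ is open and (b) the image of the diagonal $\Delta_f: S'\to S'\times_S S'$ is open. These two properties together imply that $f$ is a local isomorphism: given $s'\in S'$, the point $(s',s')$ lies in the open set $\Delta_f(S')$, so one can find open $U,V\subset S'$ with $(s',s')\in U\times_S V\subset \Delta_f(S')$; then $W:=U\cap V$ is an open neighborhood of $s'$ on which $f$ is injective (if $a,b\in W$ satisfy $f(a)=f(b)$ then $(a,b)\in U\times_S V\subset \Delta_f(S')$ forces $a=b$). Openness of $f$ then makes $f|_W: W\to f(W)$ a continuous open bijection onto an open subset of $S$, hence a homeomorphism.

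For (a), I would descend openness from $\tilde f$ along the quotient map $p$. Since $\tilde f$ is a local isomorphism it is open; for any open $U\subset S'$, a direct check using that the diagram is Cartesian and that $\tilde p: \tilde S'\to S'$ is surjective (being a base change of $p$) yields $\tilde f(\tilde p^{-1}(U))=p^{-1}(f(U))$, so $p^{-1}(f(U))$ is open in $\tilde S$. By Lemma~\ref{lem:quotientmap}, the surjective, spectral and generalizing map $p$ is a quotient map, so $f(U)$ is open in $S$.

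For (b), the crucial identification is
\[
\tilde S'\times_{\tilde S}\tilde S' \;\cong\; (S'\times_S S')\times_S \tilde S,
\]
under which the projection $q: \tilde S'\times_{\tilde S}\tilde S'\to S'\times_S S'$ becomes the base change of $p$ along $S'\times_S S'\to S$. Since fibre products of spectral spaces along spectral maps are spectral, all relevant spaces are spectral and $q$ is a spectral map; moreover $q$ is surjective and generalizing as a base change of $p$ (a generalization of $(s_1',s_2')\in S'\times_S S'$ lifts componentwise using that $p$ is generalizing). By Lemma~\ref{lem:quotientmap}, $q$ is a quotient map. A direct computation yields $q^{-1}(\Delta_f(S'))=\Delta_{\tilde f}(\tilde S')$, and the latter is open because $\tilde f$ is a local isomorphism; hence $\Delta_f(S')$ is open in $S'\times_S S'$, as required.

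The main obstacle is that quotient maps are not preserved under arbitrary base change in topology, so one must explicitly verify that the pullback $q$ of $p$ still satisfies the surjective-spectral-generalizing hypothesis required to invoke Lemma~\ref{lem:quotientmap}; once that is in hand, everything reduces to a straightforward manipulation of Cartesian diagrams.
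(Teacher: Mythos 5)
Your proof is correct and follows essentially the same strategy as the paper: both arguments descend the open-diagonal property from $\tilde{S}'\times_{\tilde{S}}\tilde{S}'\cong (S'\times_S S')\times_S\tilde{S}$ to $S'\times_S S'$ via Lemma~\ref{lem:quotientmap} applied to the pullback of $p$, and then descend openness of $\tilde f$ to openness of $f$ via $p$ itself. You merely reassemble these two ingredients directly (open $+$ open diagonal $\Rightarrow$ local isomorphism) rather than reducing to the injective case as the paper does, and you usefully make explicit the verification — glossed over in the paper — that the base change of $p$ along $S'\times_S S'\to S$ is still surjective, spectral, and generalizing.
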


\begin{proof} Let $p^\prime: \tilde{S}^\prime\to S^\prime$ denote the projection. Let $s^\prime\in S^\prime$ be any point, with image $s\in S$. We claim that there is a quasicompact open neighborhood $U^\prime\subset S^\prime$ of $s^\prime$ such that $f|_{U^\prime}: U^\prime\to S$ is injective. For this, note that $\Delta_{\tilde{f}}: \tilde{S}^\prime\to \tilde{S}^\prime\times_{\tilde{S}} \tilde{S}^\prime$ is an open immersion (as $\tilde{f}$ is a local isomorphism); this implies by Lemma~\ref{lem:quotientmap} that also $\Delta_f: S^\prime\to S^\prime\times_S S^\prime$ is an open immersion. In particular, there is a quasicompact open neighborhood $U^\prime$ of $s^\prime$ in $S^\prime$ such that $U^\prime\times_S U^\prime\subset \Delta_f(S^\prime)$, i.e.~$U^\prime\to S$ is injective.

Replacing $S^\prime$ by $U^\prime$, we may assume that $f$ is injective. In that case $\tilde{f}$ is injective and a local isomorphism. This implies that $\tilde{f}$ is an open immersion (as the image of $\tilde{f}$ is open, and the map to the image is a continuous open bijective map). By Lemma~\ref{lem:quotientmap}, it follows that $f$ is an open immersion.
\end{proof}

\begin{lemma}\label{lem:compactificationetalemap} Let $X=\Spa(K,K^+)$, where $K$ is a perfectoid field, and $K^+\subset K$ is an open and bounded integrally closed subring (not necessarily a valuation subring). Let $Y\to X$ be a quasicompact separated \'etale map. Then there is a functorial factorization $Y\hookrightarrow \bar Y\to X$, where $Y\hookrightarrow \bar Y$ is a quasicompact open immersion, and $\bar Y\to X$ is finite \'etale.
\end{lemma}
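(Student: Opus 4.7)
The plan is to build $\bar Y$ from the ``generic fibre'' of $Y$ at the unique rank-$1$-point of $X$, and then to embed $Y$ into $\bar Y$ by using the unique liftings of generizations for \'etale maps and of specializations for finite \'etale maps. First observe that $X=\Spa(K,K^+)$ has a unique rank-$1$-point $x_0$, at which the completed residue field is $K$ itself with $K(x_0)^+=K^\circ$, so the associated injection $i_{x_0}\colon \Spa(K,K^\circ)\hookrightarrow X$ of Example~\ref{ex:injpoint} identifies the rank-$1$ fibre with a single point.

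Pulling $Y$ back along $i_{x_0}$ yields a qcqs separated \'etale space over a single point, which must therefore be of the form
\[
Y_{x_0}:=Y\times_X\Spa(K,K^\circ)\;=\;\bigsqcup_{i=1}^n\Spa(L_i,L_i^\circ)
\]
for finitely many finite separable extensions $L_i/K$. Set $\bar Y:=\bigsqcup_{i=1}^n\Spa(L_i,L_i^+)$, where $L_i^+$ is the integral closure of $K^+$ in $L_i$. Each summand is finite \'etale over $X$ by Theorem~\ref{thm:almostpurity}, so $\bar Y\to X$ is finite \'etale and recovers $Y_{x_0}$ upon pullback along $i_{x_0}$. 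The assignment $Y\mapsto\bar Y$ is functorial: any morphism $g\colon Y_1\to Y_2$ induces $g_{x_0}\colon(Y_1)_{x_0}\to(Y_2)_{x_0}$, which on components amounts to $K$-algebra homomorphisms between the relevant $L_i$'s, and these canonically extend (preserving integral closures of $K^+$) to morphisms of the associated $\Spa(L_i,L_i^+)$.

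To construct $\phi\colon Y\to\bar Y$, I would use that \'etale maps of perfectoid spaces lift generizations uniquely (from their local description as open immersions into finite \'etale covers) while finite \'etale maps lift specializations uniquely. For $y\in Y$ with image $x\in X$, the generization $x_0\rightsquigarrow x$ lifts uniquely to some $y_0\in Y_{x_0}\subset\bar Y$ with $y_0\rightsquigarrow y$, and the specialization $x_0\rightsquigarrow x$ then lifts uniquely to a specialization $y_0\rightsquigarrow\bar y$ in $\bar Y$; set $\phi(y):=\bar y$. Covering $Y$ by quasicompact opens $U$ that are open subspaces of finite \'etale covers $\Spa(L,L^+)\to X$, one verifies that $\phi|_U$ agrees with the composite $U\hookrightarrow\Spa(L,L^+)\hookrightarrow\bar Y$ into the component corresponding to the piece of $Y_{x_0}$ containing $U_{x_0}$. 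This exhibits $\phi$ as an honest morphism of perfectoid spaces that is \'etale and locally an open immersion.

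Finally, $\phi$ is quasicompact (as $Y$ is qcqs) and an injection by Proposition~\ref{prop:charinjection}: the map $|\phi|$ is injective by the uniqueness of specialization lifts along $Y\to X$, and the induced map on completed residue fields at each $y\in Y$ is the identity $L_i\to L_i$ (since both $K(y)$ and $K(\phi(y))$ agree with the completed residue field of the common generization $y_0\in Y_{x_0}$). An \'etale injection that is locally an open immersion is globally a quasicompact open immersion, giving the lemma, with functoriality inherited from the construction of $\bar Y$. The main subtlety is establishing the unique lifting of specializations along the merely \'etale $Y\to X$ used for injectivity of $|\phi|$; this reduces, via the local model $Y|_U\hookrightarrow\Spa(L,L^+)\to X$, to the well-known unique specialization lifting for finite \'etale maps combined with the triviality of such lifts inside an open subspace.
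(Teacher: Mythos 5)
Your overall strategy mirrors the paper's: pass to the rank-$1$ fibre $Y_{x_0}=\bigsqcup\Spa(L_i,\OO_{L_i})$, set $\bar Y=\bigsqcup\Spa(L_i,L_i^+)$ with $L_i^+$ the integral closure of $K^+$ in $L_i$, then embed $Y$ into $\bar Y$. (The paper works per connected component, with $Y^\circ=Y\times_X\Spa(K,\OO_K)=\Spa(L,\OO_L)$, which is the same thing.) But your construction and analysis of the map $\phi\colon Y\to\bar Y$ contain a genuine gap.

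The gap is your repeated appeal to ``finite \'etale maps lift specializations uniquely.'' This is false in general, even in scheme theory (e.g.\ $\Spec\mathbb Z[i]\to\Spec\mathbb Z$: the specialization $(0)\rightsquigarrow(5)$ lifts from the unique generic point to two distinct points $(2\pm i)$), and it fails for the same reason here. For $\Spa(L,L^+)\to\Spa(K,K^+)$ with $L^+$ the integral closure of $K^+$, a point of $X$ is a valuation ring $V$ with $K^+\subseteq V\subseteq\OO_K$, and a lift specializing $\OO_L$ is a valuation ring $W$ with $V\subseteq W\subseteq\OO_L$; via the residue fields of $\OO_K$ and $\OO_L$ this amounts to extending a valuation from $k_K$ to the finite extension $k_L$, which is not unique unless $k_K$ is separably closed. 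A perfectoid $K$ need not have separably closed residue field, so the definition ``$\phi(y):=\bar y$ where $\bar y$ is \emph{the} lift'' is ill-posed. Your fallback — that $\phi$ agrees with local charts $U\hookrightarrow W_j\to\bar Y$ — is where the real content lives, but you don't verify that different charts glue, nor that this gives a morphism of \emph{perfectoid spaces} rather than of underlying topological spaces.

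The paper avoids both problems cleanly. After reducing to $Y$ connected, it observes that $\OO_Y$ is the constant sheaf $L$ (since $Y^\circ\subset Y$ is the single maximal point), so the map $Y\to\bar Y$ is literally the map of adic spaces induced by the canonical $K$-algebra homomorphism $(L,L^+)\to(\OO_Y(Y),\OO_Y^+(Y))$ — no pointwise lifting needed. Injectivity is then not proved via unique specialization lifting but via the valuative criterion of separatedness (Proposition~\ref{prop:seperatedperf}), using Proposition~\ref{prop:charinjection}~(ii): two $(K',K'^+)$-valued points of $Y$ mapping to the same point of $\bar Y$ agree on $(K',\OO_{K'})$-points because $Y\to\bar Y$ is an isomorphism over $X^\circ=\Spa(K,\OO_K)$, and then they agree outright by separatedness of $Y\to X$. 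You should redo your proof along these lines: construct the map from the sheaf of rings, and replace the specialization-lifting argument by the valuative criterion.
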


\begin{remark} This compactification is a special case of the canonical compactifications introduced in Proposition~\ref{prop:cancomp} below.
\end{remark}

\begin{proof} Let $X^\circ = \Spa(K,\OO_K)$, and $Y^\circ = Y\times_X X^\circ$. Then $Y^\circ$ is finite \'etale over $X^\circ$; in particular, $|Y^\circ|$ is a finite set of maximal points. Each of them is a pro-constructible subset, and their closures (which are given by their sets of specializations) do not intersect, as any point generalizes to a unique maximal point. Thus, replacing $Y$ by an open and closed subset, we may assume that $Y^\circ=\Spa(L,\OO_L)$ is a point, where $L$ is a finite extension of $K$. Then $Y^\circ\subset Y$ is the unique maximal point, so $Y$ is connected, and $\OO_Y$ is the constant sheaf $L$. Let $\bar Y=\Spa(L,L^+)$, where $L^+$ is the integral closure of $K^+$ in $L$. Then $\bar Y\to X$ is finite \'etale, and there is a natural map $Y\to \bar Y$, given by the map $(L,L^+)\to (\OO_Y(Y),\OO_Y^+(Y))$ of algebras. Then $Y\to \bar Y$ is a separated \'etale map of perfectoid spaces, which after pullback along $X^\circ\subset X$ is an isomorphism. In particular, it is an isomorphism on rank-$1$-valued points, so by the valuative criterion for separatedness (Proposition~\ref{prop:seperatedperf}), it is an injection (cf.~Proposition~\ref{prop:charinjection}~(ii)). Thus, $Y$ is determined by the image of the injective map $|Y|\to |\bar Y|$, which is a quasicompact open subspace of $\bar Y$, as $Y\to \bar Y$ is \'etale. Therefore, $Y\to \bar Y$ is a quasicompact open immersion, as desired.
\end{proof}

\begin{corollary}\label{cor:etproetpropdescent} Let $f: Y\to X$ be a map of perfectoid spaces, and let $\tilde{X}\to X$ be a v-cover, with pullback $\tilde{f}: \tilde{Y}=Y\times_X \tilde{X}\to \tilde{X}$.
\begin{altenumerate}
\item[{\rm (i)}] Assume that $\tilde{f}$ is pro-\'etale, and $X$ is strictly totally disconnected. Then $f$ is pro-\'etale.
\item[{\rm (ii)}] Assume that $\tilde{f}$ is \'etale. Then $f$ is \'etale.
\item[{\rm (iii)}] Assume that $\tilde{f}$ is finite \'etale. Then $f$ is finite \'etale.
\end{altenumerate}
\end{corollary}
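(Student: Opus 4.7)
The three parts follow a common template---apply a v-descent theorem already established in the text (Proposition~\ref{prop:sepproetaledescent} or~\ref{prop:etalesepdescent}) and invoke subcanonicity of the v-topology (Theorem~\ref{thm:vsub}) to identify $f$ with the descent of $\tilde f$. Part~(iii) is essentially immediate: the object $\tilde f:\tilde Y\to\tilde X$ together with its canonical descent datum inherited from $f:Y\to X$ along the v-cover $\tilde X\to X$ descends, by Proposition~\ref{prop:etalesepdescent}, to some finite étale $f':Y'\to X$, and since $f$ itself supplies such a descent, Lemma~\ref{lem:descentfullyfaithful} yields a canonical $X$-isomorphism $Y\cong Y'$, making $f$ finite étale.

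Part~(ii) reduces to the separated-étale case of Proposition~\ref{prop:etalesepdescent}, the essential point being to show that $f$ is locally separated. The diagonal $\Delta_{\tilde f}$ is an open immersion (an étale map has étale injective diagonal, hence an open immersion), and the base change $\tilde Y\times_{\tilde X}\tilde Y\to Y\times_X Y$ is the v-base change of $\tilde X\to X$, hence a surjective spectral generalizing map of locally spectral spaces, so Lemma~\ref{lem:localisomvlocal} shows that $|\Delta_f|$ is a local isomorphism. Combined with the fact (Proposition~\ref{prop:diagimmersion}) that $\Delta_f$ is an injection, $\Delta_f$ is itself an open immersion of perfectoid spaces. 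For each $y\in Y$ one picks open sets $V_1,V_2\ni y$ in $Y$ with $V_1\times_X V_2\subset \Delta_f(Y)$ (possible since $\Delta_f(Y)$ is an open neighborhood of $(y,y)$) and sets $V:=V_1\cap V_2$; then $V\times_X V\subset\Delta_f(Y)$, which forces $f|_V$ to be injective and thus separated. The pullback $\tilde f|_{\tilde V}$ is then separated étale, and Proposition~\ref{prop:etalesepdescent} identifies its descent uniquely with $f|_V$, so $f|_V$ is separated étale. As such $V$ cover $Y$, the map $f$ is étale.

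Part~(i) is parallel, with Proposition~\ref{prop:sepproetaledescent} in place of Proposition~\ref{prop:etalesepdescent}. Pro-étale maps are automatically locally separated, since the diagonal of any affinoid pro-étale map is a Zariski closed immersion (Remark~\ref{rem:diagproetale}); the analogous diagonal descent argument (now producing a locally closed immersion rather than an open immersion) yields, for each $y\in Y$, an open neighborhood $V\ni y$ on which $f|_V$ is separated. Then $\tilde f|_{\tilde V}:\tilde V\to\tilde X$ is separated pro-étale, and Proposition~\ref{prop:sepproetaledescent}---which requires exactly the hypothesis that $X$ be strictly totally disconnected---descends it to $f|_V$, so $f|_V$ is pro-étale. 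The main subtlety throughout is the descent of local-separatedness along the v-cover, carried out via the diagonal argument above using Lemma~\ref{lem:localisomvlocal}; once this is in hand, the substantive descent theorems do the rest, with subcanonicity of the v-topology providing the required uniqueness identifying the descent with the given $f|_V$.
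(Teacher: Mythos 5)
Your overall strategy---establish local separatedness and then invoke the descent results of Propositions~\ref{prop:sepproetaledescent} and~\ref{prop:etalesepdescent} together with v-subcanonicity---is the same as the paper's, and part~(iii) is essentially identical. However, your route to local separatedness is a considerable detour. The paper's argument is much shorter: being pro-\'etale or \'etale is by definition local on the source, so one may replace $Y$ by any affinoid open $V\subset Y$; then $f|_V\colon V\to X$ is a map of affinoid perfectoid spaces and hence automatically separated (its diagonal is strongly Zariski closed, cf.~Proposition~\ref{prop:diagimmersion}), with no descent of the diagonal required. Your part~(ii) argument via Lemma~\ref{lem:localisomvlocal} is correct but proves the stronger fact that $\Delta_f$ is an open immersion, which is more than is needed. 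In part~(i), the reasoning is also imprecise: the sentence ``Pro-\'etale maps are automatically locally separated'' appeals to a property of $f$ that is not yet known (only $\tilde f$ is known to be pro-\'etale), and the ``analogous diagonal descent argument producing a locally closed immersion'' is left vague. In fact no descent of the diagonal is needed there either, since Proposition~\ref{prop:diagimmersion} already shows $\Delta_f$ is a locally closed immersion for \emph{any} map of perfectoid spaces, from which local separatedness follows by a basis-of-products argument---but the paper's observation (affinoid source $\Rightarrow$ separated) is simpler still and avoids the issue entirely.
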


\begin{proof} We can assume that $X$ is affinoid. In cases (i) and (ii), the conclusion is local on $Y$, so we can in these cases assume that $Y$ is affinoid; in particular, $f$ is separated. Now (i) follows from Proposition~\ref{prop:sepproetaledescent}, and parts (ii) and (iii) from Proposition~\ref{prop:etalesepdescent}.
\end{proof}

\section{Morphisms of v-stacks}

The descent results of the previous section make it possible to define good notions of \'etale and (quasi-)pro-\'etale morphisms between pro-\'etale stacks on $\Perfd$. In the following, we will often confuse a perfectoid space and the sheaf that it represents.

\begin{definition}\label{def:etquasiproet} Let $f: Y^\prime\to Y$ be a map of pro-\'etale stacks on the category of perfectoid spaces $\Perfd$.
\begin{altenumerate}
\item[{\rm (i)}] Assume that $f$ is locally separated, i.e.~there is an open cover of $Y^\prime$ over which $f$ becomes separated. The map $f$ is quasi-pro-\'etale if for any strictly totally disconnected $X$ with a map $X\to Y$ (i.e., a section in $Y(X)$), the pullback $Y^\prime\times_Y X$ is representable and $Y^\prime\times_Y X\to X$ is pro-\'etale.
\item[{\rm (ii)}] Assume that $f$ is locally separated. The map $f$ is \'etale if for any perfectoid space $X$ with a map $X\to Y$, the pullback $Y^\prime\times_Y X$ is representable, and $Y^\prime\times_Y X\to X$ is \'etale.
\item[{\rm (iii)}] The map $f$ is finite \'etale if for any perfectoid space $X$ with a map $X\to Y$, the pullback $Y^\prime\times_Y X\to X$ is representable, and $Y^\prime\times_Y X\to X$ is finite \'etale.
\end{altenumerate}
\end{definition}

In parts (i) and (ii), we restrict the definition to the case that $f$ is locally separated, as otherwise one may want to give a slightly different definition. Note that $f$ is always locally separated if $Y^\prime$ is a perfectoid space, and $Y$ is a v-sheaf. We make the following important convention.

\begin{convention}\label{conv:etlocsep} In the remainder of this text, we take ``locally separated'' as part of the definition of \'etale or quasi-pro-\'etale maps of pro-\'etale stacks.
\end{convention}

Any pro-\'etale morphism of perfectoid spaces is quasi-pro-\'etale, but the converse need not hold. This is similar to the difference between pro-\'etale and weakly \'etale morphisms of schemes, cf.~\cite{BhattScholze}.

The following proposition is immediate from the definitions.

\begin{proposition}\label{prop:comparenotionsfunctorrepr} Let $f: X^\prime\to X$ be a map of perfectoid spaces. Then $f$ is \'etale, resp.~finite \'etale, if and only if the corresponding map of pro-\'etale sheaves on $\Perfd$ is \'etale, resp.~finite \'etale. If $X$ is strictly totally disconnected, then $f$ is pro-\'etale if and only if the corresponding map of pro-\'etale sheaves on $\Perfd$ is quasi-pro-\'etale.$\hfill \Box$
\end{proposition}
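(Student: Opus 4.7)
The plan is to show that each of the three biconditionals reduces to (a) the stability of the relevant property under base change, for the ``only if'' direction, and (b) a tautology obtained by pulling back along the identity map of $X$, for the ``if'' direction.

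First I would dispense with local separatedness, which enters the sheaf-level definitions via Convention~\ref{conv:etlocsep}. By Proposition~\ref{prop:diagimmersion} the diagonal of any map of perfectoid spaces is an immersion, and for a map of affinoid perfectoid spaces it is a (strongly) Zariski closed immersion, hence closed. Thus every map of affinoid perfectoid spaces is separated. Since the definitions of étale (Definition~\ref{def:etale}) and pro-étale (Definition~\ref{def:proetale}) are formulated via a local factorization through affinoids (open immersion composed with finite étale, resp.\ affinoid pro-étale), every étale and every pro-étale map of perfectoid spaces is locally separated. This gives the local separatedness hypothesis needed to talk about the corresponding property at the level of sheaves.

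For the ``only if'' directions, assume $f\colon X^\prime\to X$ has the corresponding property as a map of perfectoid spaces. For any perfectoid space $Y$ (resp.\ strictly totally disconnected perfectoid space $Y$) with a map $Y\to X$, the pullback $X^\prime\times_X Y\to Y$ is again étale (resp.\ finite étale, resp.\ pro-étale) by the stability of these properties under base change, cf.\ the discussion after Definition~\ref{def:etale} for the (finite) étale case, and Lemma~\ref{lem:proetalecompbc}~(ii) for the pro-étale case. Hence the sheaf-level conditions of Definition~\ref{def:etquasiproet}~(ii), (iii) and (i) respectively are satisfied.

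For the ``if'' directions, simply take $Y=X$ with the identity map $\mathrm{id}_X\colon X\to X$; this is legitimate in case (i) precisely because $X$ is assumed strictly totally disconnected. The pullback is then
\[
X^\prime\times_X X \;\cong\; X^\prime \xrightarrow{f} X,
\]
and the hypothesis on $f$ as a map of sheaves says exactly that this pullback (i.e.\ $f$ itself) is étale, finite étale, or pro-étale as a map of perfectoid spaces. There is no real obstacle here; the content is concentrated in having the definitions at the sheaf level set up so that ``test along any $Y\to X$'' specializes sensibly to $Y=X$, which is why the quasi-pro-étale case requires the extra hypothesis that $X$ be strictly totally disconnected.
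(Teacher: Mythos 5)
Your proof is correct and fills in precisely the verification the paper waves off as ``immediate from the definitions'' (the paper prints $\Box$ with no proof). The three points you isolate are the right ones: (a) étale/pro-étale maps of perfectoid spaces are locally separated because the local models factor through maps of affinoids, whose diagonals are (strongly) Zariski closed; (b) the ``only if'' direction is base-change stability (from the remarks after Definition~\ref{def:etale} and Lemma~\ref{lem:proetalecompbc}); (c) the ``if'' direction is specialization of the test object to $\mathrm{id}_X$, which in the quasi-pro-étale case needs $X$ itself to be strictly totally disconnected.
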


\begin{proposition}\label{prop:quasiproetalefirstprop} Let $f: Y_1\to Y_2$, $g: Y_2\to Y_3$ be morphisms of pro-\'etale stacks on $\Perfd$, with composite $h=g\circ f: Y_1\to Y_3$.
\begin{altenumerate}
\item[{\rm (i)}] If $f$ and $g$ are quasi-pro-\'etale (resp.~\'etale, finite \'etale), then so is $h$.
\item[{\rm (ii)}] If $g$ and $h$ are quasi-pro-\'etale (resp.~\'etale, finite \'etale), then so is $f$.
\item[{\rm (iii)}] Any pullback of a quasi-pro-\'etale (resp.~\'etale, finite \'etale) map is quasi-pro-\'etale (resp.~\'etale, finite \'etale).
\end{altenumerate}
\end{proposition}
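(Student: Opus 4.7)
I would treat (iii) first (it is tautological), then (ii) via a clean cartesian-square argument, and finally (i), where the quasi-pro-\'etale case requires descent. The common strategy is to test on perfectoid spaces and reduce to the analogous facts for morphisms of perfectoid spaces, in particular Lemma~\ref{lem:proetalecompbc}. Part (iii) is immediate: a test object $X$ over the pulled-back base (strictly totally disconnected in the qpe case, an arbitrary perfectoid space otherwise) composes to a test object over the original base, and the pullback of the original map along $X$ agrees with the pullback of the pulled-back map; local separatedness is preserved since the diagonal of a base change is the base change of the diagonal.

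For (ii), test $f$ on a map $X \to Y_2$ (strictly totally disconnected in the qpe case). The hypotheses on $g$ and $h = gf$ make $Y_2 \times_{Y_3} X \to X$ and $Y_1 \times_{Y_3} X \to X$ into perfectoid spaces of the required type; the induced morphism $Y_1 \times_{Y_3} X \to Y_2 \times_{Y_3} X$ is then a map between two perfectoid spaces of the same type over the common base $X$, hence is of that type (standard for finite \'etale and \'etale, and Lemma~\ref{lem:proetalecompbc}(iii) for pro-\'etale). Pulling back along the graph section $X \to Y_2 \times_{Y_3} X$ of $X \to Y_2$ yields $Y_1 \times_{Y_2} X \to X$ of the required type, using in the pro-\'etale case that pullbacks of pro-\'etale maps are pro-\'etale (Lemma~\ref{lem:proetalecompbc}(ii)). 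Local separatedness of $f$ follows from a diagonal chase: $q\colon Y_1 \times_{Y_2} Y_1 \to Y_1 \times_{Y_3} Y_1$ is a pullback of $\Delta_g$ (hence a locally closed immersion), and the factorization $\Delta_h = q \circ \Delta_f$ then forces $\Delta_f$ to be a locally closed immersion.

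For (i), the finite \'etale and \'etale cases are direct: for a test $X \to Y_3$, the hypothesis on $g$ makes $Y_2 \times_{Y_3} X \to X$ of the required type; the hypothesis on $f$ applied to $Y_2 \times_{Y_3} X \to Y_2$ makes $Y_1 \times_{Y_3} X \to Y_2 \times_{Y_3} X$ of the required type; and composition closes the argument. The quasi-pro-\'etale case is more delicate because $X' := Y_2 \times_{Y_3} X$ is representable pro-\'etale but need not be strictly totally disconnected, so the qpe hypothesis on $f$ does not apply directly. My plan is to cover $X'$ by a surjective pro-\'etale map $Z \to X'$ with $Z$ strictly totally disconnected (Lemma~\ref{lem:strtotdisccover}), apply the qpe hypothesis on $f$ to the composite $Z \to Y_2$ to obtain $Y_1 \times_{Y_2} Z \to Z$ as a representable pro-\'etale map, and descend along $Z \to X'$ via Proposition~\ref{prop:sepproetaledescent} to realize $Y_1 \times_{Y_3} X$ as a perfectoid space pro-\'etale over $X'$; composition with $X' \to X$ by Lemma~\ref{lem:proetalecompbc}(i) completes the argument, and local separatedness of $h$ follows by composing the locally closed immersions arising from the local separatedness of $g$ and $f$.

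The main obstacle is this descent step in the qpe case of (i): Proposition~\ref{prop:sepproetaledescent} requires \emph{separated} pro-\'etale morphisms, so I would first cover $Y_1$ by opens on which $f$ is separated, perform the descent piecewise, and then glue the resulting pieces together over $X'$.
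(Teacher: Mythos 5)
Your plan for parts (iii), (ii), and the \'etale/finite \'etale cases of (i) is sound and matches what the paper does (the paper simply declares these ``straightforward''). But your proposed treatment of the quasi-pro-\'etale case of (i) has a genuine gap, and it is exactly at the step you flagged as the main obstacle — just not for the reason you identify.

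You want to cover $X' := Y_2\times_{Y_3}X$ (with $X$ strictly totally disconnected) by a surjective pro-\'etale map $Z\to X'$ with $Z$ strictly totally disconnected, apply the qpe hypothesis on $f$ to $Z\to Y_2$, and then descend along $Z\to X'$ using Proposition~\ref{prop:sepproetaledescent}. The problem is that Proposition~\ref{prop:sepproetaledescent} only asserts descent of separated pro-\'etale morphisms along a v-cover \emph{of a strictly totally disconnected space}, and $X'$ is not a priori strictly totally disconnected (it is only representable and pro-\'etale over one, and $Y_2\to Y_3$ need not be quasicompact or separated). Descent of pro-\'etale maps along arbitrary bases is precisely what fails in general — compare Corollary~\ref{cor:etproetpropdescent}, where the pro-\'etale case (i) demands a strictly totally disconnected base while the \'etale cases (ii) and (iii) do not. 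Your subsequent fix — covering $Y_1$ by opens on which $f$ is separated — patches the separatedness requirement but leaves the strictly-totally-disconnected requirement on the descent base untouched.

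The paper sidesteps descent entirely. Since $X' = Y_2\times_{Y_3}X$ is representable and pro-\'etale over the strictly totally disconnected $X$, it is \emph{locally} affinoid pro-\'etale over quasicompact open subsets of $X$, which are again strictly totally disconnected (Lemma~\ref{lem:subsetwlocal}); by Lemma~\ref{lem:proetaleoverwlocal} these pieces of $X'$ are themselves strictly totally disconnected. So $X'$ admits an \emph{open} cover by strictly totally disconnected spaces $X_2\subset X'$. Now the qpe hypothesis on $f$ applies directly to each $X_2\to Y_2$, giving $Y_1\times_{Y_2}X_2\to X_2$ representable and pro-\'etale. These are open subfunctors of $Y_1\times_{Y_3}X$ covering it, so $Y_1\times_{Y_3}X$ is a perfectoid space, pro-\'etale over $X'$ locally and hence globally, and then pro-\'etale over $X$ by composition. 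No descent, hence no separatedness or base issues. If you replace your pro-\'etale cover $Z\to X'$ by this open cover, your argument collapses to the paper's.
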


In some situations, one can also deduce that $g$ is quasi-pro-\'etale (resp.~\'etale, finite \'etale) from knowing that $f$ and $h$ are quasi-pro-\'etale (resp.~\'etale, finite \'etale), cf.~Proposition~\ref{prop:qproetthirdmap} below.

\begin{proof} These are all straightforward. Let us check the most subtle part, which is (i) in the quasi-pro-\'etale case. We may assume that $X_3=Y_3$ is a strictly totally disconnected perfectoid space. In this case, $Y_2$ is representable and pro-\'etale over $X_3$. As such, $Y_2$ admits an open cover by strictly totally disconnected perfectoid spaces $X_2\subset Y_2$. The preimage of any such $X_2\subset Y_2$ in $Y_1$ is representable and pro-\'etale over $X_2$. It follows that $Y_1$ has an open cover by representable functors, so $Y_1$ is itself representable. Also $Y_1\to Y_2$ is pro-\'etale locally on $Y_2$, thus pro-\'etale, and then $Y_1\to Y_3$ is pro-\'etale as a composite of pro-\'etale maps.
\end{proof}

We will also need to extend other notions like injections, open and closed immersions, and separated maps to morphisms of sheaves. These notions work better for v-sheaves, because of the following result.

\begin{proposition}\label{prop:vinjindrepr} Let $X$ be a totally disconnected perfectoid space, and let $Y\subset X$ be a sub-v-sheaf. Then $Y$ is ind-representable. More precisely, one can write $Y$ as a filtered colimit of $Y_i\subset Y\subset X$ which are pro-constructible and generalizing subsets of $X$, in particular affinoid pro-\'etale over $X$.
\end{proposition}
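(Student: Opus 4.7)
The plan is to realize $Y$ as the filtered union of the images (in $X$) of all maps from affinoid perfectoid spaces into it, and then show that each such image is naturally an affinoid pro-\'etale sub-v-sheaf of $X$ via the concrete description provided by Lemma~\ref{lem:subsetwlocal}.

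To start, fix an affinoid perfectoid $T$ together with a map $f : T \to X$ lying in $Y(T) \subset X(T)$. Then $|f| : |T| \to |X|$ is a spectral map of spectral spaces, so its image $S_f := |f|(|T|) \subset |X|$ is a pro-constructible subset (by the image lemma of Section~2); it is also generalizing, since every map of perfectoid spaces is generalizing. Lemma~\ref{lem:subsetwlocal} then upgrades $S_f$ to an affinoid perfectoid subspace $\widetilde{S}_f \subset X$ which is affinoid pro-\'etale over $X$. By Proposition~\ref{prop:charinjection}, the map $\widetilde{S}_f \to X$ is an injection of perfectoid spaces, and by part~(iv) of that proposition, $f$ factors uniquely as $T \to \widetilde{S}_f \hookrightarrow X$ with the first arrow surjective on topological spaces; being a surjective map between qcqs spaces, it is a v-cover.

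The crucial step is to promote $\widetilde{S}_f \hookrightarrow X$ to a sub-v-sheaf of $Y$, i.e.~to show that the tautological element of $X(\widetilde{S}_f)$ lies in $Y(\widetilde{S}_f)$. Since $Y$ is a v-sheaf and $Y \hookrightarrow X$ is a monomorphism, membership in $Y$ can be tested after pullback along the v-cover $T \to \widetilde{S}_f$; but this pullback is exactly $f \in X(T)$, which we assumed to lie in $Y(T)$. Hence $\widetilde{S}_f$ is a sub-v-sheaf of $Y$. This descent step is the main obstacle; everything else is a direct application of earlier material.

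Filteredness under inclusion follows from the observation that for two such maps $f_i : T_i \to Y$, the map $f_1 \sqcup f_2 : T_1 \sqcup T_2 \to Y$ has image $S_{f_1} \cup S_{f_2}$, a finite union of pro-constructible generalizing subsets, which is again pro-constructible and generalizing and so produces an $\widetilde{S}$ containing both $\widetilde{S}_{f_1}$ and $\widetilde{S}_{f_2}$. Any map $Z \to Y$ from an arbitrary perfectoid space factors analytically-locally through some $\widetilde{S}_f$ by applying the construction to affinoid open pieces of $Z$, so $Y$ equals the filtered colimit $\varinjlim_{f} \widetilde{S}_f$ of sub-v-sheaves of $X$. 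For the $\kappa$-smallness of the index: distinct $\widetilde{S}_f$ correspond to distinct subsets $S_f$ of $|X|$, and $|X|$ has cardinality less than $\kappa$ with $\kappa$ a strong limit cardinal, so the collection of such subsets has cardinality at most $2^{|X|} < \kappa$.
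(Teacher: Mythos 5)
Your proof is correct and follows essentially the same route as the paper's: both take images of maps from affinoid perfectoid spaces, invoke Lemma~\ref{lem:subsetwlocal} to realize these pro-constructible generalizing subsets as affinoid pro-\'etale subspaces, and use the v-sheaf property of $Y$ to descend along $T\to\widetilde{S}_f$. Your version spells out the filteredness step (via disjoint unions) and the $\kappa$-smallness bound more explicitly than the paper, which simply asserts that filteredness follows from the sheaf property, but the underlying argument is the same.
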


\begin{proof} Let $Z$ be any affinoid perfectoid space with a map $Z\to Y$. Then the image of the composition $Z\to Y\to X$ on topological spaces is a pro-constructible and generalizing subset $\bar Z\subset X$. (Indeed, any map of spectral spaces has pro-constructible image, and maps of analytic adic spaces are generalizing, so have generalizing image.) By Lemma~\ref{lem:subsetwlocal}, the subset $\bar Z$ is naturally an affinoid perfectoid space, and $Z\to \bar Z$ is a v-cover. As $Y$ is sub-v-sheaf of $X$, it follows that one gets a map $\bar Z\to Y$. Note that the category of pro-constructible generalizing subsets $Y_i\subset X$ for which the inclusion $Y_i\to X$ factors over $Y$ is filtered: This follows from the sheaf property of $Y$. This implies that $Y$ is the filtered colimit of such $Y_i$, as desired.
\end{proof}

\begin{corollary}\label{cor:qcvinjqproet} Let $f: Y^\prime\to Y$ be a quasicompact injection of v-stacks. Then $f$ is quasi-pro-\'etale, and for every totally disconnected perfectoid space $X$ mapping to $Y$, the fibre product $Y^\prime\times_Y X$ is representable by a pro-constructible and generalizing subset of $X$.
\end{corollary}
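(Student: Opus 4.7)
The plan is to combine the ind-representability of sub-v-sheaves from Proposition~\ref{prop:vinjindrepr} with the quasicompactness hypothesis to force the filtered colimit to reduce to a single term.

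First I would observe that since $f$ is an injection, the pullback $Y'\times_Y X \to X$ is again an injection of v-sheaves; concretely, for any perfectoid $Z$ the map $(Y'\times_Y X)(Z) \to X(Z)$ is injective, so $Y'\times_Y X$ is naturally a sub-v-sheaf of $X$. Moreover, an injection is automatically separated: the diagonal $\Delta_f\colon Y' \to Y'\times_Y Y'$ is an isomorphism, because $Y'\times_Y Y' = Y'$ when $f$ is an injection. In particular $f$ is locally separated, so the locally-separated hypothesis from Convention~\ref{conv:etlocsep} is satisfied.

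Next, for a totally disconnected perfectoid space $X$ with a map to $Y$, apply Proposition~\ref{prop:vinjindrepr} to the sub-v-sheaf $Y'\times_Y X \subset X$: we may write
\[
Y'\times_Y X \;=\; \varinjlim_{i\in I} Y_i,
\]
where $I$ is a filtered index category and each $Y_i \subset X$ is a pro-constructible and generalizing subset (hence affinoid pro-étale over $X$ by Lemma~\ref{lem:subsetwlocal}). Since $X$ is quasicompact and $f$ is quasicompact, $Y'\times_Y X$ is quasicompact as a v-sheaf. Applied to the jointly surjective family $\{Y_i \to Y'\times_Y X\}_{i\in I}$, quasicompactness produces a finite subcover, and since the system is filtered we can find a single index $i\in I$ with $Y_i = Y'\times_Y X$. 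Thus $Y'\times_Y X$ is representable by a pro-constructible generalizing subset of $X$, giving the second assertion.

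Finally, for the quasi-pro-étale claim, specialize the above to a strictly totally disconnected $X$ mapping to $Y$ (which is in particular totally disconnected). Then $Y'\times_Y X \subset X$ is a pro-constructible generalizing subset, hence by Lemma~\ref{lem:subsetwlocal} is representable by an affinoid perfectoid space that is affinoid pro-étale over $X$. Together with the fact that $f$ is separated (hence locally separated), this is exactly the definition of quasi-pro-étale from Definition~\ref{def:etquasiproet}(i). The main (minor) point to be careful about is the use of v-sheaf quasicompactness to collapse the filtered colimit, but this is immediate from the definition together with the filteredness of the system produced by Proposition~\ref{prop:vinjindrepr}.
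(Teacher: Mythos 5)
Your proposal is correct and follows essentially the same route as the paper: reduce to a totally disconnected base, invoke Proposition~\ref{prop:vinjindrepr} to write $Y'\times_Y X$ as a filtered colimit of pro-constructible generalizing subsets, and use quasicompactness together with filteredness to collapse to a single term. The one thing you spell out that the paper leaves implicit is the verification that $f$ is (locally) separated---since $\Delta_f$ is an isomorphism for a monomorphism---so that the locally-separated clause of Definition~\ref{def:etquasiproet} and Convention~\ref{conv:etlocsep} is satisfied; that is a worthwhile addition.
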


\begin{proof} We may assume that $Y=X$ is a totally disconnected perfectoid space. The previous proposition shows that $Y^\prime\subset X$ is ind-representable. By quasicompactness of $Y^\prime$, this implies that $Y^\prime$ is representable; applying the proposition again, we see that $Y^\prime\subset X$ is affinoid pro-\'etale, as desired.
\end{proof}

\begin{definition} Let $f: Y^\prime\to Y$ be a map of pro-\'etale stacks.
\begin{altenumerate}
\item[{\rm (i)}] The map $f$ is an open immersion if for every perfectoid space $X$ mapping to $Y$, the pullback $Y^\prime\times_Y X\to X$ is representable by an open immersion.
\item[{\rm (ii)}] The map $f$ is a closed immersion if for every totally disconnected perfectoid space $X$ mapping to $Y$, the pullback $Y^\prime\times_Y X\subset X$ is representable by a closed immersion.
\item[{\rm (iii)}] The map $f$ is separated if $\Delta_f: Y^\prime\to Y^\prime\times_Y Y^\prime$ is a closed immersion.
\item[{\rm (iv)}] The map $f$ is $0$-truncated if for all $X\in \Perfd$, the map of groupoids $f(X): Y^\prime(X)\to Y(X)$ is faithful.
\end{altenumerate}

Moreover, the pro-\'etale stack $Y$ is separated if the map $Y\to \ast$ is separated.
\end{definition}

\begin{remark} We will only occasionally use the last definition, with which one has to be a bit careful. Namely, it can happen that $Y$ is separated, but $Y$ is not quasiseparated: The issue is that the notions of $Y$ being quasiseparated and of $Y\to \ast$ being quasiseparated do not coincide. (An example is given by $Y=X/\varphi^{\mathbb Z}$, where $X$ is some perfectoid space of characteristic $p$, and $\varphi$ is its absolute Frobenius: This defines a diamond $Y$ that is separated, but not quasiseparated.)
\end{remark}

In part (iv), it is equivalent to ask that for all affinoid perfectoid spaces (or more generally, all pro-\'etale sheaves) $X$ mapping to $Y$, the pro-\'etale stack $Y^\prime\times_Y X$ is discrete, i.e.~a pro-\'etale sheaf. It is also equivalent to ask that $\Delta_f: Y^\prime\to Y^\prime\times_Y Y^\prime$ is an injection; in particular, separated maps are automatically $0$-truncated. (This differs from the convention in \cite[Tag 04YW]{StacksProject}.)

For a map of perfectoid spaces $f: X^\prime\to X$, these notions are equivalent to the previously defined notions. Moreover, there is again a valuative criterion for separatedness.

\begin{proposition}\label{prop:valcritsepsheaf} Let $f: Y^\prime\to Y$ be a map of v-stacks. Then $f$ is separated if and only if $f$ is $0$-truncated, quasiseparated, and for every perfectoid field $K$ with ring of integers $\OO_K$ and an open and bounded valuation subring $K^+$, and any diagram
\[\xymatrix{
\Spa(K,\OO_K)\ar[d]\ar[r] & Y^\prime\ar^f[d] \\
\Spa(K,K^+)\ar[r]\ar@{-->}[ur] & Y\ ,
}\]
there exists at most one dotted arrow making the diagram commute.
\end{proposition}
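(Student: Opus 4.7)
The plan is to prove both directions by reducing the closed-immersion condition on $\Delta_f$ to a specialization check that can be carried out against maps from $\Spa(K,K^+)$, using Corollary~\ref{cor:qcvinjqproet} to represent pullbacks of $\Delta_f$ as pro-constructible generalizing subsets.

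For the forward direction, if $\Delta_f$ is a closed immersion, then it is automatically an injection and quasicompact, which gives that $f$ is $0$-truncated and quasiseparated. For the uniqueness of the dotted arrow, I would take two candidate lifts $g_1,g_2: \Spa(K,K^+)\to Y^\prime$ of the datum, assemble them into a map $(g_1,g_2): \Spa(K,K^+)\to Y^\prime\times_Y Y^\prime$, and form the pullback $\Spa(K,K^+)\times_{Y^\prime\times_Y Y^\prime} Y^\prime$. Since $\Spa(K,K^+)$ is connected with unique closed point, it is totally disconnected, so by definition of closed immersion this pullback is represented by a closed subset of $\Spa(K,K^+)$. The hypothesis that $g_1,g_2$ agree on $\Spa(K,\OO_K)$ shows that this closed subset contains the unique rank-$1$ point of $\Spa(K,K^+)$, whose closure is all of $\Spa(K,K^+)$; hence the closed subset equals $\Spa(K,K^+)$, which forces $g_1=g_2$.

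For the reverse direction, assume the three conditions. Then $\Delta_f$ is a quasicompact injection of v-stacks, so by Corollary~\ref{cor:qcvinjqproet}, for every totally disconnected $Z$ with a map to $Y^\prime\times_Y Y^\prime$, the pullback $Y^\prime\times_{Y^\prime\times_Y Y^\prime} Z$ is representable by a pro-constructible generalizing subset $U\subset Z$. To upgrade $U\hookrightarrow Z$ to a closed immersion, it suffices to show $U$ is closed in $Z$; since a pro-constructible subset has closure equal to its set of specializations (Lemma on page~5 of the excerpt), it is enough to verify that $U$ is closed under specialization. Given $u\in U$ with $u^\prime$ a specialization, I would choose a map $\Spa(K,K^+)\to Z$ sending the generic rank-$1$ point to $u$ and the closed point to $u^\prime$ (taking $K=K(u)$ and $K^+\subset K$ the open bounded valuation subring cut out by the specialization). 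The composite with $Z\to Y^\prime\times_Y Y^\prime$ yields two maps $g_1,g_2: \Spa(K,K^+)\to Y^\prime$ with $f\circ g_1=f\circ g_2$, and the assumption $u\in U$ says $g_1|_{\Spa(K,\OO_K)}=g_2|_{\Spa(K,\OO_K)}$. The uniqueness clause of the valuative criterion, applied to this common restriction and the common composition to $Y$, then yields $g_1=g_2$, i.e.\ $u^\prime\in U$.

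The main obstacle is in the reverse direction: establishing that a pro-constructible generalizing subset which is specialization-closed actually gives a closed immersion of perfectoid spaces (not merely a topologically closed subspace). This is handled by combining Corollary~\ref{cor:qcvinjqproet}, which supplies the representability and identifies the subset $U$, with the earlier lemma that pro-constructible subsets have closure equal to the specialization-closure, so that specialization-closedness implies topological closedness; together with the fact that an injection of perfectoid spaces whose underlying map is a closed embedding of topological spaces is by Definition~\ref{def:immersion} a closed immersion. The only remaining mildly delicate point is producing the map $\Spa(K,K^+)\to Z$ realizing a given specialization in $Z$, which is a standard construction in the theory of adic spaces.
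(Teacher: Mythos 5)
Your proposal is correct and matches the paper's proof: in both, the reverse direction uses Corollary~\ref{cor:qcvinjqproet} to represent the pullback of $\Delta_f$ to a totally disconnected space as a pro-constructible generalizing subset, then invokes the fact that the closure of such a subset is its set of specializations, and checks specialization-closedness via the valuative criterion exactly as you do. One minor imprecision in your reverse direction: the point $u\in U$ need not be rank one, so to conclude $g_1|_{\Spa(K,\OO_K)}=g_2|_{\Spa(K,\OO_K)}$ from ``$u\in U$'' you should first pass to the maximal rank-one generalization of $u$, which still lies in $U$ because $U$ is generalizing.
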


Note that if $f$ is $0$-truncated, the groupoid of dotted arrows is automatically discrete, i.e.~there are no automorphisms.

\begin{proof} If $f$ is separated, then $\Delta_f$ is a closed immersion, which implies that $\Delta_f$ is quasicompact, so that $f$ is quasiseparated. Also, closed immersions are specializing, which shows that there is at most one such dotted arrow. Conversely, if $f$ is quasiseparated, then $\Delta_f$ is quasicompact. Thus, by Corollary~\ref{cor:qcvinjqproet}, the diagonal $\Delta_f$ is quasi-pro-\'etale. It remains to check that for any totally disconnected perfectoid space $X$ mapping to $Y^\prime \times_Y Y^\prime$, the pro-constructible generalizing subset
\[
Y^\prime\times_{Y^\prime\times_Y Y^\prime} X\subset X
\]
is closed. For this, it is enough to check that it is closed under specializations, which is exactly the valuative criterion.
\end{proof}

Sometimes it is useful to know that a version of the valuative criterion holds true for general pairs $(R,R^+)$.

\begin{proposition}\label{prop:genvalcritvsheaf} Let $f: Y^\prime\to Y$ be a separated map of v-stacks, and let $R$ be a perfectoid Tate ring with an open and integrally closed subring $R^+\subset R^\circ$. Then, for any diagram
\[\xymatrix{
\Spa(R,R^\circ)\ar[d]\ar[r] & Y^\prime\ar^f[d] \\
\Spa(R,R^+)\ar[r]\ar@{-->}[ur] & Y\ ,
}\]
there exists at most one dotted arrow making the diagram commute.
\end{proposition}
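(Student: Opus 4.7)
The plan is to reduce the general valuative criterion to the already-established perfectoid-field case (Proposition~\ref{prop:valcritsepsheaf}) by a two-step procedure: first, pass to a strictly totally disconnected v-cover so that we can reason pointwise; second, exploit that every rank-$1$ point of such a cover automatically factors through $\Spa(R,R^\circ)$, and propagate agreement from rank-$1$ points to the entire cover via a closed-subspace specialization argument.

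Concretely, I would first use Lemma~\ref{lem:strtotdisccover} to choose a strictly totally disconnected affinoid pro-\'etale v-cover $X=\Spa(A,A^+)\to \Spa(R,R^+)$. Since $f$ is separated it is in particular $0$-truncated (Proposition~\ref{prop:valcritsepsheaf}), so the groupoid of dotted arrows is discrete and v-descent for $Y^\prime$ reduces the claim to showing that the two pullbacks $g_1^\prime,g_2^\prime\colon X\to Y^\prime$ coincide. I would then form the equalizer
\[
Z \;=\; X\times_{(Y^\prime\times_Y Y^\prime),\,\Delta_f}Y^\prime\ ,
\]
which by the hypothesis that $\Delta_f$ is a closed immersion is itself a closed sub-v-sheaf of $X$. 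Since $X$ is already totally disconnected, the definition of closed immersions of v-stacks identifies $Z$ with an honest closed subspace, and the problem becomes showing $|Z|=|X|$.

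The crucial step is that every rank-$1$ point of $X$ lies in $|Z|$. By Proposition~\ref{prop:etcoverssplittotdisconnected} and Lemma~\ref{lem:basicwlocal}, each connected component of $X$ has the form $\Spa(C,C^+)$ with $C$ algebraically closed, and the unique rank-$1$ point of such a component is $\tilde{x}=\Spa(C,\OO_C)$. The composite $\tilde{x}\to X\to \Spa(R,R^+)$ factors through $\Spa(R,R^\circ)$, because the map $R\to C$ sends any powerbounded element to a powerbounded, hence integral, element of $C$. The hypothesis that $g_1$ and $g_2$ agree after precomposition with $\Spa(R,R^\circ)\to \Spa(R,R^+)$ then forces $g_1^\prime|_{\tilde{x}}=g_2^\prime|_{\tilde{x}}$, i.e., $\tilde{x}\in Z$. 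Since every point of $\Spa(C,C^+)$ specializes from this rank-$1$ point and $|Z|$ is closed (hence specialization-stable), we conclude $|Z|=|X|$. The closed immersion $Z\hookrightarrow X$ thus induces a homeomorphism on underlying spaces with matching residue fields at rank-$1$ points (Proposition~\ref{prop:charinjection}), so Lemma~\ref{lem:bijectiveiso} upgrades this to $Z=X$, giving $g_1^\prime=g_2^\prime$ and hence $g_1=g_2$.

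The main technical point to verify carefully is the final implication that a closed immersion of v-sheaves into $X$ which is bijective on underlying spaces is actually the identity; this is where one must use that the pullback is in particular an injection of perfectoid spaces and invoke Lemma~\ref{lem:bijectiveiso}. A second, more minor, subtlety is confirming that v-descent for dotted arrows into the v-stack $Y^\prime$ is legitimate, which is precisely why we record that separated morphisms are $0$-truncated, so that the descent occurs for sections of a sub-v-sheaf rather than a genuine stack.
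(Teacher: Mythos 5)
Your proof is correct and follows essentially the same strategy as the paper: you form the equalizer $Z$ as a closed sub-v-sheaf (using separatedness), reduce to a totally disconnected base, and show $|Z|$ is everything by observing that rank-$1$ points factor through $\Spa(R,R^\circ)$ and then propagating along specializations because $Z$ is closed. The one cosmetic difference is that the paper stays with $\Spa(R,R^+)$ itself (after ``assuming $R$ is totally disconnected''), checks agreement on all $(K,K^+)$-points, and at that point simply cites the valuative criterion Proposition~\ref{prop:valcritsepsheaf}; you instead pass to a strictly totally disconnected cover and re-prove the rank-$1$-to-arbitrary-rank extension step inline via the same ``factors through $R^\circ$, then specialize'' argument that underlies Proposition~\ref{prop:valcritsepsheaf}. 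You also did not need strict total disconnectedness — plain total disconnectedness suffices since algebraic closedness of the residue fields plays no role here — but this is harmless.
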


\begin{proof} Assume that $a,b: \Spa(R,R^+)\to Y^\prime$ are two maps projecting to the same map $\bar{a}=\bar{b}: \Spa(R,R^+)\to Y$, and with same restriction $a^\circ=b^\circ: \Spa(R,R^\circ)\to Y^\prime$. Consider the sub-v-sheaf
\[
Z=\Spa(R,R^+)\times_{Y^\prime\times_Y Y^\prime} Y^\prime\subset \Spa(R,R^+)\ ,
\]
where the two projections $\Spa(R,R^+)\to Y^\prime$ are given by $a$ and $b$. As $f$ is separated, $\Delta_f$ is a closed immersion, so $Z\subset \Spa(R,R^+)$ is a closed immersion. To check that it is an equality, we can assume that $R$ is totally disconnected; then $Z$ is a totally disconnected perfectoid space as well. Now for any perfectoid field $(K,K^+)$, we know that $Z(K,K^+)=\Spa(R,R^+)(K,K^+)$ by the valuative criterion, Proposition~\ref{prop:valcritsepsheaf}. Thus, $|Z|=|\Spa(R,R^+)|$, and then $Z=\Spa(R,R^+)$.
\end{proof}

The next proposition shows that many of these conditions can be checked v-locally on the target.

\begin{proposition}\label{prop:checksheafpropvloc} Let $f: Y^\prime\to Y$ be a map of v-stacks, and let $g: \tilde{Y}\to Y$ be a surjective map of v-stacks. Let $\tilde{f}: \tilde{Y}^\prime = \tilde{Y}\times_Y Y^\prime\to \tilde{Y}$ be the pullback of $f$.
\begin{altenumerate}
\item[{\rm (o)}] If $\tilde{f}$ is quasicompact (resp.~quasiseparated), then $f$ is quasicompact (resp.~quasiseparated).
\item[{\rm (i)}] If $\tilde{f}$ is an open (resp.~closed) immersion, then $f$ is an open (resp.~closed) immersion.
\item[{\rm (ii)}] If $\tilde{f}$ is separated, then $f$ is separated.
\item[{\rm (iii)}] If $\tilde{f}$ is finite \'etale, then $f$ is finite \'etale.
\item[{\rm (iv)}] If $\tilde{f}$ is separated and \'etale, then $f$ is separated and \'etale.
\item[{\rm (v)}] If $\tilde{f}$ is separated and quasi-pro-\'etale, then $f$ is separated and quasi-pro-\'etale.
\end{altenumerate}
\end{proposition}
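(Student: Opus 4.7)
The plan is to reduce each part to the corresponding v-descent result established in the preceding section. In every case, one tests the stated property of $f$ against a perfectoid space $X\to Y$ and then pulls back further along the v-cover $X\times_Y\tilde{Y}\to X$, applying effectivity of descent to assemble the answer.

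Part (o) is formal: a quasicompact $Z\to Y$ admits a single affinoid quasicompact refinement of $Z\times_Y\tilde{Y}\to Z$ by quasicompactness of $Z$, and surjectivity of quasicompact maps transfers quasicompactness of $Y^\prime\times_Y Z$ from the $\tilde f$-side; quasiseparatedness follows by running the same argument on the diagonal. For (i), given a test $X\to Y$, the pullback of $f$ along the v-cover $X\times_Y\tilde{Y}\to X$ is an open (resp.~closed) immersion by hypothesis; since v-covers are quotient maps on underlying topological spaces (Lemma~\ref{lem:quotientmap}), its image descends to an open (resp., when $X$ is totally disconnected, closed) subset of $|X|$, and this subset---directly in the open case, via Lemma~\ref{lem:subsetwlocal} in the closed case---represents $Y^\prime\times_Y X$. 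Part (ii) then follows from (i) applied to the diagonal, since $\Delta_{\tilde f}$ is the pullback of $\Delta_f$ along the surjection $\tilde Y^\prime\times_{\tilde Y}\tilde Y^\prime=\tilde Y\times_Y(Y^\prime\times_Y Y^\prime)\to Y^\prime\times_Y Y^\prime$.

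Parts (iii) and (iv) are immediate consequences of v-descent for finite \'etale and separated \'etale perfectoid spaces, Proposition~\ref{prop:etalesepdescent}. For any perfectoid space $X$ mapping to $Y$, the v-cover $X\times_Y\tilde Y\to X$ together with the assumption that $\tilde f$ pulls back to a finite \'etale (resp.~separated \'etale) map and the canonical descent datum produced by the two projections from $(X\times_Y\tilde Y)\times_X(X\times_Y\tilde Y)$ descends to a representable finite \'etale (resp.~separated \'etale) perfectoid space over $X$, which represents $Y^\prime\times_Y X$.

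The key case is (v), and this will be the main obstacle. Given a strictly totally disconnected test $X\to Y$, the base change $\tilde Y\times_Y X\to X$ is a surjective v-map; by quasicompactness of $X$ one finds an affinoid perfectoid $W^\prime\to X$ factoring through $\tilde Y\times_Y X$ with $W^\prime\to X$ a v-cover, and then by Lemma~\ref{lem:strtotdisccover} one refines to a strictly totally disconnected affinoid $W\to W^\prime$, yielding a v-cover $W\to X$ that factors through $\tilde Y$. Since $\tilde f$ is separated quasi-pro-\'etale and $W$ is strictly totally disconnected, the base change $\tilde V=Y^\prime\times_Y W\to W$ is a separated pro-\'etale perfectoid space. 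Its natural descent datum relative to the v-cover $W\to X$ is then effective by Proposition~\ref{prop:sepproetaledescent}, producing a separated pro-\'etale perfectoid space $V\to X$ which must represent $Y^\prime\times_Y X$ (as both become $\tilde V$ after pullback to $W$, and we already know from part (ii) that $f$ is separated, so $Y^\prime\times_Y X$ is a v-sheaf). This establishes representability and pro-\'etaleness, whence quasi-pro-\'etaleness. The delicate ingredient is precisely Proposition~\ref{prop:sepproetaledescent}, which rests on the equivalence-relation analysis of Lemma~\ref{lem:equivrel}; without it, one would not be able to pass from the pro-\'etale space $\tilde V$ over $W$ to a pro-\'etale space over $X$.
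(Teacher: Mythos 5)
Your proof is correct and follows essentially the same route as the paper: reduce to a test $X\to Y$, refine $X\times_Y\tilde Y\to X$ by a perfectoid v-cover, and invoke Lemma~\ref{lem:quotientmap} for (o)--(ii), Proposition~\ref{prop:etalesepdescent} for (iii)--(iv), and Proposition~\ref{prop:sepproetaledescent} for (v). The only notational shortcut is in (i), where one should make explicit the replacement of the v-stack $X\times_Y\tilde Y$ by an actual perfectoid v-cover $\tilde X\to X$ lifting to $\tilde Y$ (as you do carefully in (v)), since Lemma~\ref{lem:quotientmap} applies to spectral spaces; likewise, for the closed-immersion case, one should first note that $Y'\times_Y X\to X$ is a quasicompact injection (injectivity descends, quasicompactness by (o)) so that Corollary~\ref{cor:qcvinjqproet}/Lemma~\ref{lem:subsetwlocal} gives representability, before identifying it with the descended closed subset.
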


\begin{proof} For (o), the quasiseparated case reduces to the quasicompact case by passing to the diagonal. One can also replace $Y$ by an affinoid perfectoid space $X$, and $\tilde{Y}$ by a v-cover $\tilde{X}\to X$, where $\tilde{X}$ may be assumed to be an affinoid perfectoid space. Now let $Y^\prime_i$, $i\in I$, be a set of v-sheaves with a surjection $\bigsqcup_{i\in I} Y^\prime_i\to Y^\prime$. The pullback $\bigsqcup_{i\in I} \tilde{Y}^\prime_i\to \tilde{Y}^\prime$ has a finite subcover by quasicompactness of $\tilde{Y}^\prime$, so assume $J\subset I$ is a finite subset such that $\bigsqcup_{i\in J} \tilde{Y}^\prime_i\to \tilde{Y}^\prime$ is surjective. In particular, the composite $\bigsqcup_{i\in J} \tilde{Y}^\prime_i\to \tilde{Y}^\prime\to Y^\prime$ is surjective, which implies that $\bigsqcup_{i\in J} Y^\prime_i\to Y^\prime$ is surjective.

Part (ii) reduces to (i) by passing to the diagonal. Now for (i), (iii), (iv) and (v), we need to check that something holds after any pullback $X\to Y$ for an affinoid perfectoid space $X$. As $\tilde{Y}\to Y$ is surjective, we can find a v-cover $\tilde{X}\to X$ such that $X\to Y$ lifts to $\tilde{X}\to \tilde{Y}$. We may then replace $Y$ by $X$ and $\tilde{Y}$ by $\tilde{X}$.

Now (i) for open immersions follows because $|\tilde{X}|\to |X|$ is a quotient map by Lemma~\ref{lem:quotientmap}. For closed immersions, one uses in addition that if $X$ is totally disconnected, then $f$ is representable by Corollary~\ref{cor:qcvinjqproet}, as $\tilde{f}$ is a quasicompact injection, and thus $f$ is a quasicompact injection.

Finally, parts (iii) and (iv) follow from Proposition~\ref{prop:etalesepdescent}, and part (v) (in which case one assumes that $X$ is strictly totally disconnected) from Proposition~\ref{prop:sepproetaledescent}.
\end{proof}

As an application, let us discuss $\underline{G}$-torsors for locally profinite groups $G$. Here, for any topological space $T$, we denote by $\underline{T}$ the associated v-sheaf given by
\[
\underline{T}(X) = C^0(|X|,T)\ .
\]

\begin{definition}\label{def:gtorsor} Let $G$ be a locally profinite group. A $\underline{G}$-torsor is map $f: \tilde{X}\to X$ of v-stacks with an action of $\underline{G}$ on $\tilde{X}$ over $X$, such that v-locally on $X$, there is a $\underline{G}$-equivariant isomorphism $\tilde{X}\cong \underline{G}\times X$.
\end{definition}

\begin{lemma}\label{lem:gtorsoroverperfectoid} Let $G$ be a locally profinite group, $X$ a perfectoid space, and $f: \tilde{X}\to X$ a $\underline{G}$-torsor. Then the v-sheaf $\tilde{X}$ is representable by a perfectoid space, and $\tilde{X}\to X$ is pro-\'etale, universally open and a v-cover. More precisely, for any open subgroup $K\subset G$, pushout along the discrete $G$-set $G/K$ defines a separated \'etale map
\[
\tilde{X}_K := \tilde{X}\times_{\underline{G}} \underline{G/K}\to X\ ,
\]
the transition map $\tilde{X}_{K^\prime}\to \tilde{X}_K$ is finite \'etale if $K^\prime\subset K$ is of finite index, and
\[
\tilde{X} = \varprojlim_K \tilde{X}_K\to X\ .
\]
\end{lemma}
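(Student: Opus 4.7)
The plan is to prove the assertions about the finite-level quotients $\tilde X_K$ first by v-descent, then assemble $\tilde X$ as their inverse limit.

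I would first establish (c) and (d). Choose a v-cover $Y\to X$ trivializing the torsor, so that $\tilde X\times_X Y\cong \underline G\times Y$ equivariantly. The pushout $\tilde X_K$ then pulls back to $\underline{G/K}\times Y = \bigsqcup_{gK\in G/K} Y$, an infinite disjoint union of copies of $Y$ indexed by the discrete set $G/K$; this is manifestly separated étale over $Y$. If $K'\subset K$ has finite index, the transition pulls back to $\bigsqcup_{G/K'} Y\to \bigsqcup_{G/K} Y$, a disjoint union of finite étale covers of degree $[K:K']$, hence finite étale. By v-descent for separated étale and finite étale morphisms (Proposition~\ref{prop:etalesepdescent}), $\tilde X_K\to X$ is separated étale and $\tilde X_{K'}\to \tilde X_K$ is finite étale.

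For the inverse limit formula $\tilde X=\varprojlim_K \tilde X_K$, I would observe that as v-sheaves $\underline G=\varprojlim_K \underline{G/K}$ where $K$ ranges over (compact) open subgroups, reflecting $G=\varprojlim_K G/K$ as topological groups. Since limits of v-sheaves are computed sectionwise, pulling back to the trivializing cover $Y$ gives $\underline G\times Y=\varprojlim_K \underline{G/K}\times Y$, and this identity descends to the global statement.

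The key step is representability (a). Fix a compact open subgroup $K_0\subset G$. By (c) and (d), $\tilde X_{K_0}\to X$ is separated étale, and for each open $K\subset K_0$ the map $\tilde X_K\to \tilde X_{K_0}$ is finite étale. Thus $\{\tilde X_K\}_{K\subset K_0}$ is a cofiltered inverse system over $\tilde X_{K_0}$ along affine finite étale transition maps. Over any affinoid open $U\subset \tilde X_{K_0}$, each pullback $U\times_{\tilde X_{K_0}} \tilde X_K$ is affinoid perfectoid, and the cofiltered limit along these affine transition maps is represented by an affinoid perfectoid space affinoid pro-étale over $U$. Gluing these local representations realizes $\tilde X$ as a perfectoid space pro-étale over $\tilde X_{K_0}$; composing with the étale map $\tilde X_{K_0}\to X$ shows $\tilde X\to X$ is pro-étale.

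Finally, surjectivity of $\tilde X\to X$ as v-sheaves is immediate from the trivializing cover, and any surjective pro-étale map of perfectoid spaces is automatically a v-cover and universally open: for any affinoid pro-étale presentation $\varprojlim Z_i\to Z$, any quasicompact open of $\varprojlim |Z_i|$ descends to a finite stage where the étale projection is open, and the same argument after arbitrary base change gives universal openness. The main obstacle I anticipate is the gluing argument in representability, where one must verify coherently that the locally constructed affinoid perfectoid spaces match on overlaps of affinoid opens of $\tilde X_{K_0}$; this follows from the universal property of the inverse limit in the category of v-sheaves, but the étale (and possibly non-quasicompact) nature of $\tilde X_{K_0}\to X$ needs careful handling.
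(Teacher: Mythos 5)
Your argument is largely parallel to the paper's: establish the finite-level assertions by v-descent via the trivializing cover, check the inverse limit formula v-locally, and build representability and pro-\'etaleness from the cofinal system of finite \'etale transition maps over a fixed compact open $K_0$. That part is fine (you should note explicitly that $\{K\subset K_0\}$ is cofinal among all open subgroups, since $K\cap K_0$ is open of finite index in $K_0$).

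However, there is a genuine gap in your last paragraph. You assert that ``any surjective pro-\'etale map of perfectoid spaces is automatically a v-cover and universally open.'' This is false for universal openness: the w-localization $X^{\wl}\to X$ is an affinoid pro-\'etale surjection that is in general not universally open, as the paper points out in the discussion preceding Lemma~\ref{lem:strtotdisccover}. Your sketch of an argument --- a quasicompact open $U\subset\varprojlim|Z_i|$ descends to some $U_i\subset|Z_i|$, and \'etale maps are open --- only shows that the image of $U$ in the base is \emph{contained} in the open image of $U_i$; to conclude that the image of $U$ actually equals that open set, you need the map $\varprojlim Z_j\to Z_i$ to be surjective, so that the image of $U$ (the preimage of $U_i$) in $Z_i$ recovers all of $U_i$. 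For a general pro-\'etale surjection this intermediate-stage surjectivity can fail. It does hold in the torsor setting, because each $\tilde X\to\tilde X_K$ is surjective as a map of v-sheaves (hence on topological spaces), so the image of $U$ in $\tilde X_K$ is exactly $U_K$ and its image in $X$ is open. This is precisely the step the paper's proof makes explicit, and it is what your argument needs to invoke to close the gap; as written, your claim would apply also to $X^\wl\to X$ and give a false conclusion.
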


Related results had been obtained by Hansen, \cite{Hansen}.

\begin{proof} In case $\tilde{X} = \underline{G}\times X$, the results are clear, as then $\tilde{X}_K = \underline{G/K}\times X$ is a disjoint union of copies of $X$. Thus, Proposition~\ref{prop:checksheafpropvloc}~(iv) implies that $\tilde{X}_K\to X$ is representable by a separated \'etale map in general, and Proposition~\ref{prop:checksheafpropvloc}~(iii) implies that $\tilde{X}_{K^\prime}\to \tilde{X}_K$ is finite \'etale in case $K^\prime\subset K$ is of finite index. Also, $\tilde{X}=\varprojlim_K \tilde{X}_K$, as this can be checked v-locally. This presentation shows that $\tilde{X}\to X$ is pro-\'etale, as desired. It is also a v-cover, as this can be checked v-locally, where it admits a section. Finally, the map $\tilde{X}\to X$ is universally open, as any open subspace comes via pullback from $\tilde{X}_K$ for sufficiently small $K$, the map $\tilde{X}\to \tilde{X}_K$ is surjective, and $\tilde{X}_K\to X$ is open (as it is \'etale).
\end{proof}

\section{Diamonds}

From now on, we change the setup, and work with the full subcategory $\Perf\subset \Perfd$ of perfectoid spaces of characteristic $p$.

\begin{definition}\label{def:diamond} A diamond is a sheaf $Y$ for the pro-\'etale topology of $\Perf$ such that $Y$ can be written as a quotient $X/R$, where $X$ is representable by a perfectoid space, and $R\subset X\times X$ is a representable equivalence relation for which the projections $s,t: R\to X$ are pro-\'etale.
\end{definition}

In particular, any diamond is a small sheaf. For convenience, we introduce the following name for the equivalence relations.

\begin{definition}\label{def:proetaleequiv} Let $X$ be a perfectoid space. A pro-\'etale equivalence relation on $X$ is a representable equivalence relation $R\subset X\times X$ such that the projections $s,t: R\to X$ are pro-\'etale.
\end{definition}

Note that we do not make any assumptions like representability of the diagonal; the issue is that there is no good notion of morphisms that are relatively representable (in perfectoid spaces).

\begin{proposition}\label{prop:diamondfirstprop} Let $X\in \Perf$, and $R\subset X\times X$ a pro-\'etale equivalence relation.
\begin{enumerate}
\item[{\rm (i)}] The quotient sheaf $Y=X/R$ is a diamond.
\item[{\rm (ii)}] The natural map $R\to X\times_Y X$ of sheaves on $\Perf$ is an isomorphism.
\item[{\rm (iii)}] Let $\tilde{X}\to X$ be a pro-\'etale cover by a perfectoid space $\tilde{X}$, and
\[
\tilde{R} = R\times_{X\times X} (\tilde{X}\times \tilde{X})\subset \tilde{X}\times \tilde{X}
\]
the induced equivalence relation. Then $\tilde{R}$ is a pro-\'etale equivalence relation on $\tilde{X}$, and the natural map $\tilde{Y}:=\tilde{X}/\tilde{R}\to Y=X/R$ is an isomorphism.
\item[{\rm (iv)}] The map $X\to Y$ is quasi-pro-\'etale.
\end{enumerate}
\end{proposition}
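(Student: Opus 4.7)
The plan is to handle the four parts in order: (i) is immediate from the definition, (ii) is the topos-theoretic fact that our equivalence relation is effective, (iii) follows quickly from (ii), and (iv) is the substantive content, requiring first a local separatedness check and then an application of separated pro-\'etale v-descent.

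For (i), $Y = X/R$ is by construction the pro-\'etale sheafification of the presheaf quotient, hence a pro-\'etale sheaf exhibited as a quotient of $X$ by the pro-\'etale equivalence relation $R$; it is thus a diamond by Definition~\ref{def:diamond}. For (ii), $R \hookrightarrow X\times X$ and $X\times_Y X \hookrightarrow X\times X$ are both monomorphisms, so the natural map $R \to X\times_Y X$ is automatically injective. For surjectivity, take any $Z \in \Perf$ and two maps $a,b\colon Z \to X$ whose images agree in $Y(Z)$. By definition of sheafification, there exists a pro-\'etale cover $\{Z_i \to Z\}$ such that $(a|_{Z_i}, b|_{Z_i})\colon Z_i \to X\times X$ factors through $R \subset X\times X$, producing maps $Z_i \to R$. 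These agree on overlaps since $R\hookrightarrow X\times X$ is an injection, and therefore glue by the sheaf property of $R$ (Corollary~\ref{cor:proetalesub}) to a map $Z \to R$ with the correct image.

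For (iii), the fibre product $\tilde{R} = R\times_{X\times X}(\tilde{X}\times\tilde{X}) = \tilde{X}\times_X R \times_X \tilde{X}$ is representable as an iterated fibre product of perfectoid spaces along pro-\'etale maps, and its projections to $\tilde{X}$ are pullbacks of $s,t$ along $\tilde{X}\to X$, hence pro-\'etale. It is an equivalence relation by functoriality. The induced map $\tilde{Y}\to Y$ is surjective as a map of pro-\'etale sheaves because $\tilde{X}\to X \to Y$ is a composite of pro-\'etale covers. It is injective (a monomorphism) because, using (ii) twice,
\[
\tilde{X}\times_{\tilde{Y}}\tilde{X} = \tilde{R} = \tilde{X}\times_X R \times_X \tilde{X} = \tilde{X}\times_Y \tilde{X},
\]
so the diagonal of $\tilde{Y}\to Y$ becomes an isomorphism after the surjective base change $\tilde{X}\times\tilde{X}\to \tilde{Y}\times_Y \tilde{Y}$. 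A surjective monomorphism of sheaves is an isomorphism.

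For (iv), we first show $X\to Y$ is locally separated. By (ii) the diagonal $\Delta\colon X\to X\times_Y X$ is the map $X\to R$, which factors as $X\to R \hookrightarrow X\times X$. The composite $X\to X\times X$ is the absolute diagonal, which is an immersion by Proposition~\ref{prop:diagimmersion}, and $R\hookrightarrow X\times X$ is an injection of perfectoid spaces (so $|R|\to|X\times X|$ is injective by Proposition~\ref{prop:charinjection}); an elementary argument then shows that $X\to R$ is an injection and $|X|\hookrightarrow|R|$ is a locally closed embedding, i.e.~$\Delta$ is an immersion, so locally a closed immersion. Now let $Z$ be strictly totally disconnected with a map $Z\to Y$. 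Covering $X$ by opens $V_i$ with $V_i\to Y$ separated, one has $X\times_Y Z = \bigcup(V_i\times_Y Z)$ as an increasing union of open subfunctors, so it suffices to treat each piece. Replacing $X$ by some $V_i$, we may assume $X\to Y$ is separated. Now take a pro-\'etale cover $Z'\to Z$ lifting the map $Z\to Y$ to a map $a\colon Z' \to X$ (possible since $X\to Y$ is a pro-\'etale epimorphism). By (ii),
\[
(X\times_Y Z)\times_Z Z' \;=\; X\times_Y Z' \;=\; R\times_{X,\,t} Z',
\]
which is representable, pro-\'etale over $Z'$ (as $t$ is pro-\'etale), and separated over $Z'$ (as $t$ is separated, being a base change of the separated map $X\to Y$). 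Since pro-\'etale covers are v-covers, Proposition~\ref{prop:sepproetaledescent} applied to the strictly totally disconnected base $Z$ gives descent of separated pro-\'etale perfectoid spaces. Thus $X\times_Y Z$ is representable and pro-\'etale over $Z$, and $X\to Y$ is quasi-pro-\'etale. The main obstacle is checking local separatedness and then lining up the hypotheses of Proposition~\ref{prop:sepproetaledescent}; once done, the descent does the work.
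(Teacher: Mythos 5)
Parts (i)--(iii) are fine and follow the paper's route (your argument in (iii), passing through the diagonal of $\tilde{Y}\to Y$ and the identification $\tilde X\times_{\tilde Y}\tilde X = \tilde R = \tilde X\times_Y\tilde X$, is a slightly slicker packaging of the same calculation; note the base change you invoke should be written $\tilde X\times_Y\tilde X\to\tilde Y\times_Y\tilde Y$, not $\tilde X\times\tilde X\to\tilde Y\times_Y\tilde Y$).

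For (iv), the overall structure --- reduce to a separated situation, then invoke Proposition~\ref{prop:sepproetaledescent} --- is the paper's, but there is a genuine gap in your reduction. You write ``Replacing $X$ by some $V_i$, we may assume $X\to Y$ is separated. Now take a pro-\'etale cover $Z'\to Z$ lifting $Z\to Y$ to $a\colon Z'\to X$ (possible since $X\to Y$ is a pro-\'etale epimorphism).'' After replacing $X$ by the open subspace $V_i$, the map $V_i\to Y$ is \emph{not} surjective in general, so this lift to $V_i$ need not exist. (The lift to the original $X$ exists, but then $t\colon R\to X$ need not be separated, since it is a base change of $X\to Y$ only once you already know $X\to Y$ is separated.) The paper sidesteps this by applying (iii) to replace $X$ by $\tilde X=\bigsqcup_i U_i$ for an affinoid cover $\{U_i\}$: the map $\tilde X\to Y$ is \emph{both} surjective (so the lift from $Z'$ exists) and separated (since $\Delta_{\tilde X}\subset\tilde X\times\tilde X$ is a closed immersion, so its restriction $\Delta_{\tilde X/Y}\colon\tilde X\to\tilde R$ is too), and then $s,t\colon\tilde R\to\tilde X$ are separated as well. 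You could salvage your version by keeping the lift $a\colon Z'\to X$ into the full $X$ and observing $V_i\times_Y Z' = s^{-1}(V_i)\times_{X,t,a} Z'$ is separated over $Z'$ because $t|_{s^{-1}(V_i)}$ lands in $V_i\times X$ and $\Delta_{V_i}$ is closed --- but the clean route is the paper's global replacement.

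A smaller point: the step ``$\Delta$ is an immersion, so [$X\to Y$ is] locally [separated]'' is not immediate --- that $\Delta_f(X)$ is locally closed in $X\times_Y X$ does not by itself produce opens $V\subset X$ with $V\times_Y V$ contained in a suitable open $W$ (distinct points of $V$ in the same $Y$-fibre give off-diagonal points of $V\times_Y V$). The direct argument is cleaner: for any affinoid open $V\subset X$, the diagonal $V\to V\times V$ is a closed immersion, hence so is its restriction $V\to V\times_Y V$; this gives the needed cover.
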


Recall that quasi-pro-\'etale morphisms are defined in Definition~\ref{def:etquasiproet}~(i).

\begin{proof} Part (i) is clear by definition. For part (ii), note that both are subsheaves of $X\times X$, so $R\to X\times_Y X$ is injective. On the other hand, let $Z$ be any perfectoid space with a map $Z\to X\times_Y X$. By definition, this means that we have two maps $a,b: Z\to X$ such that the two induced maps $Z\to Y$ agree. This condition means that after some pro-\'etale cover $\tilde{Z}\to Z$, the composite map $\tilde{Z}\to Z\to X\times X$ factors over the equivalence relation $R$. We get a map $\tilde{Z}\to R$, which descends to a map $Z\to R$ as the two maps $\tilde{Z}\times_Z \tilde{Z}\to R$ coming from projection on either factor agree: Namely, their composites with $R\hookrightarrow X\times X$ agree. We have produced a map $Z\to R$, which factors the map $Z\to X\times X$, as this holds true after composition with the pro-\'etale cover $\tilde{Z}\to Z$. Therefore, $R\to X\times_Y X$ is surjective, as desired.

In part (iii), first note that $\tilde{R}$ is representable as a fibre product of representable objects, and the two projections $\tilde{s},\tilde{t}:\tilde{R}\to \tilde{X}$ are pro-\'etale: For example, one can write $\tilde{s}$ as the composite of
\[
(R\times_X \tilde{X})\times_X(\tilde{X}\to X): \tilde{R} = R\times_{X\times X} (\tilde{X}\times \tilde{X})\to R\times_{X\times X} (\tilde{X}\times X) = R\times_X \tilde{X}
\]
and
\[
s\times_X \tilde{X}: R\times_X \tilde{X}\to \tilde{X}\ ,
\]
both of which are a base change of a pro-\'etale map, and thus pro-\'etale.

Now the map $\tilde{Y}\to Y$ of pro-\'etale sheaves on $\Perf$ is surjective, as the composite $\tilde{X}\to \tilde{Y}\to Y$ is, as this can also be written as the composite $\tilde{X}\to X\to Y$. To check whether $\tilde{Y}\to Y$ is injective, let $Z$ be a perfectoid space with two maps $a,b: Z\to \tilde{Y}$ which agree after composing with $\tilde{Y}\to Y$. Replacing $Z$ by a pro-\'etale cover, we may assume that $a,b$ factor over $\tilde{a},\tilde{b}: Z\to \tilde{X}$. The associated map $Z\to \tilde{X}\times \tilde{X}\to X\times X$ factors over $R$ by assumption and part (ii), so we get a map $Z\to R\times_{X\times X}(\tilde{X}\times \tilde{X}) = \tilde{R}$. This means that $\tilde{a},\tilde{b}: Z\to \tilde{X}$ induce the same map $Z\to \tilde{Y}$, proving injectivity of $\tilde{Y}\to Y$, thereby finishing the proof of part (iii).

For part (iv), we can replace $X$ by $\tilde{X}=\bigsqcup_i U_i\to X$, where the $U_i$ form an affinoid cover of $X$ and $R$ by the induced equivalence relation $\tilde{R}\subset \tilde{X}\times \tilde{X}$: Indeed, these spaces still satisfy the same assumptions by part (iii), and to check whether $X^\prime\times_Y X$ is representable and pro-\'etale over $X^\prime$, it is enough to check for the open subspaces $X^\prime\times_Y U_i$. In particular, after this replacement, $s,t: R\to X$ are separated: Indeed, both maps $R\subset X\times X$ and $X\times X\to X$ are separated.

As by definition $X\to Y$ is surjective in the pro-\'etale topology, we can find a pro-\'etale cover $\widetilde{X^\prime}\to X^\prime$ and a map $\widetilde{X^\prime}\to X$ lying over $X^\prime\to Y$. We can assume that $\widetilde{X^\prime}$ is affinoid. Let $W=X^\prime\times_Y X\to X^\prime$ be the fibre product. Then
\[
\widetilde{X^\prime}\times_{X^\prime} W = \widetilde{X^\prime}\times_X (X\times_Y X) = \widetilde{X^\prime}\times_X R\ ,
\]
which is representable and pro-\'etale over $\widetilde{X^\prime}$. Moreover, as $s,t: R\to X$ are separated, also the base change $\widetilde{X^\prime}\times_{X^\prime} W=\widetilde{X^\prime}\times_X R\to \widetilde{X^\prime}$ is separated. Now applying Proposition~\ref{prop:sepproetaledescent}, we see that $W\to X^\prime$ is representable, pro-\'etale and separated over $X^\prime$.
\end{proof}

Another basic fact is the following.

\begin{proposition}\label{prop:diamondfibreproduct} Products and fibre products exist in the category of diamonds.
\end{proposition}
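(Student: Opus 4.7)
Given maps $f_i\colon Y_i\to Y$ ($i=1,2$) of diamonds, choose pro-\'etale presentations $Y=X/R$ and $Y_i=X_i/R_i$. The goal is to realize the pro-\'etale sheaf $Y_1\times_Y Y_2$ as $X'/R'$ for a perfectoid space $X'$ and a pro-\'etale equivalence relation $R'\subset X'\times X'$. As $X\to Y$ is surjective in the pro-\'etale topology, pro-\'etale locally on each $X_i$ the composition $X_i\to Y_i\to Y$ lifts to a map into $X$. Replacing $X_i$ by such a pro-\'etale cover $\tilde X_i$ and $R_i$ by $\tilde R_i=R_i\times_{X_i\times X_i}(\tilde X_i\times \tilde X_i)$, Proposition~\ref{prop:diamondfirstprop}~(iii) ensures that $Y_i$ is unchanged. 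Thus we may assume given lifts $X_i\to X$ of the maps $X_i\to Y$.

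The candidate total space is
\[
X':=X_1\times_{X,s}R\times_{t,X}X_2,
\]
which represents the sheaf $X_1\times_Y X_2$. The key representability input is that if $g\colon Z\to X$ is a pro-\'etale map of perfectoid spaces and $h\colon W\to X$ is any map of perfectoid spaces, then $W\times_X Z$ is representable by a perfectoid space pro-\'etale over $W$; this is local on source and target, and reduces to a cofiltered limit of pullbacks along affinoid \'etale maps, which exists as an affinoid perfectoid space. Applying this with both projections $s,t$ of $R$ shows $X'$ is a perfectoid space, pro-\'etale over each $X_i$. Moreover, $X'\to Y_1\times_Y Y_2$ is surjective in the pro-\'etale topology: given a perfectoid $W$ over $Y_1\times_Y Y_2$, we lift pro-\'etale locally to maps $W\to X_i$ (using that $X_i\to Y_i$ is surjective in the pro-\'etale topology); the two resulting compositions $W\to X$ agree in $Y$ and so combine into a map $W\to R=X\times_Y X$, yielding the desired lift $W\to X'$.

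Analysis of $T$-points identifies the kernel pair of $X'\to Y_1\times_Y Y_2$ with
\[
R'=R_1\times_Y R_2=R_1\times_{X,s}R\times_{t,X}R_2,
\]
the maps $R_i\to X$ being the composites $R_i\xrightarrow{s_i}X_i\to X$. The same representability argument shows $R'$ is a perfectoid space, and the projection $R'\to X'$ (induced by $s_1\times\mathrm{id}_R\times s_2$) is a pullback of the pro-\'etale maps $s_i\colon R_i\to X_i$, hence pro-\'etale. Thus $R'\subset X'\times X'$ is a pro-\'etale equivalence relation, $X'/R'$ is a diamond by Proposition~\ref{prop:diamondfirstprop}~(i), and by construction the natural map $X'/R'\to Y_1\times_Y Y_2$ is injective; combined with the surjectivity above, it is an isomorphism.

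The main technical content is the representability lemma used in the construction of $X'$ and $R'$: pullbacks of pro-\'etale morphisms of perfectoid spaces along arbitrary morphisms remain perfectoid spaces (and pro-\'etale). This reduces to the existence of cofiltered limits of affinoid perfectoid spaces along affinoid \'etale transition maps, which is built into the definition of pro-\'etale morphisms; all subsequent bookkeeping consists of identifying the right fibre products, which is routine.
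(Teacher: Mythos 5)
Your proof is correct and follows the same overall strategy as the paper (present each diamond, lift the maps, build a suitable perfectoid total space and equivalence relation), but the route through the construction is genuinely different and arguably slicker.

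The paper first refines $X_i$ to $X_i\times_{X_3}R_3$ so that the plain fibre product $X_4:=X_1\times_{X_3}X_2$ already surjects onto $Y_4$, and then proves $R_4\to X_4$ pro-\'etale by a two-step factorisation that crucially invokes the fact that the diagonal of a map of perfectoid spaces is pro-\'etale (Remark~\ref{rem:diagproetale}). You instead skip the refinement and take $X'=X_1\times_{X,s}R\times_{t,X}X_2$ directly, letting the $R$-coordinate serve as the ``witness'' that the two lifts agree in $Y$; this both gives the surjectivity onto $Y_4$ immediately and lets you recognise the first projection $R'\to X'$ as a straight pullback of $s_1\times s_2$ along $X'\to X_1\times X_2$. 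Your approach buys a shorter argument that never touches Remark~\ref{rem:diagproetale}; the paper's buys a $X_4$ that is a literal fibre product over $X_3$.

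One side-claim in your write-up is wrong, though harmless: you assert that applying the pro-\'etale-pullback lemma ``with both projections $s,t$'' shows $X'$ is pro-\'etale over each $X_i$. That does not follow — the composite $X_1\times_{X,s}R\to R\xrightarrow{t}X$ is not pro-\'etale (the first map is a pullback of the arbitrary map $X_1\to X$), so $(X_1\times_{X,s}R)\times_{t,X}X_2\to X_2$ is not exhibited as a pro-\'etale pullback — and it is in fact false: if $Y=X=\Spa(C,\OO_C)$, $X_1=\Spa(C,\OO_C)$, and $X_2$ is any perfectoid space over $\Spa(C,\OO_C)$, then $X'\cong X_2$ and $X'\to X_1$ is just the structure map. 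Fortunately you do not need this. What you need is only that $X'$ and $R'$ are perfectoid spaces, and that holds automatically because fibre products of perfectoid spaces over a perfectoid space exist; the pro-\'etale pullback lemma is then only needed for the genuinely relevant claim that $R'\to X'$ is pro-\'etale, which you argue correctly.
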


\begin{proof} Let $Y_1\to Y_3\leftarrow Y_2$ be a diagram of diamonds. Choose presentations $Y_i=X_i/R_i$ as in the definition of a diamond, for $i=1,2,3$. After replacing $X_i$ for $i=1,2$ by a pro-\'etale cover, we can assume that there are maps $X_i\to X_3$ lying over $Y_i\to Y_3$. We can then further replace $X_i$ by $X_i\times_{Y_3} X_3=X_i\times_{X_3} R_3$ to assume that the maps $X_i\to Y_1\times_{Y_3} X_3$ are surjective in the pro-\'etale topology.

In this case, the map $X_4:=X_1\times_{X_3} X_2\to Y_1\times_{Y_3} Y_2=:Y_4$ is surjective in the pro-\'etale topology. The induced equivalence relation $R_4 = X_4\times_{Y_4} X_4$ can be calculated as $R_4 = R_1\times_{R_3} R_2$, which is representable. It remains to see that $R_4\to X_4$ is pro-\'etale. But $R_1\to X_1$ and $R_2\to X_2$ are pro-\'etale, so $R_1\times R_2\to X_1\times X_2$ and its base change $R_1\times_{X_3} R_2\to X_1\times_{X_3} X_2=X_4$ are pro-\'etale. It remains to see that $R_4=R_1\times_{R_3} R_2\to R_1\times_{X_3} R_2$ is pro-\'etale; this is a base change of $R_3\to R_3\times_{X_3} R_3$. But the diagonal of any representable map is pro-\'etale by Remark~\ref{rem:diagproetale}.

The similar argument works for products of diamonds, once one has checked that a product of two perfectoid spaces is again a perfectoid space. By choosing locally maps to $\Spa \mathbb F_p((t^{1/p^\infty}))$, one reduces to that base case, in which case one computes the fibre product to be the perfectoid punctured open unit disc over $\Spa \mathbb F_p((t^{1/p^\infty}))$.
\end{proof}

Another characterization of diamonds is given as follows.

\begin{proposition}\label{prop:diamondqproetsurj} Let $Y$ be a pro-\'etale sheaf on $\Perf$. Then $Y$ is a diamond if and only if there is a surjective quasi-pro-\'etale morphism $X\to Y$ from a perfectoid space $X$. If $X$ is a disjoint union of strictly totally disconnected spaces, then $R=X\times_Y X\subset X\times X$ is a pro-\'etale equivalence relation with $Y=X/R$.
\end{proposition}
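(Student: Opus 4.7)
The plan is to settle the forward direction by reference to what has already been proved, and then to handle the backward direction by reducing to the case where $X$ is a disjoint union of strictly totally disconnected perfectoid spaces, where the definition of quasi-pro-\'etale becomes directly applicable.

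For the forward direction, suppose $Y$ is a diamond, presented as $Y = X_0/R_0$ with $R_0\subset X_0\times X_0$ a pro-\'etale equivalence relation. By Proposition~\ref{prop:diamondfirstprop}~(iv), the quotient map $X_0\to Y$ is quasi-pro-\'etale, and it is pro-\'etale surjective (hence a fortiori surjective as a map of pro-\'etale sheaves) by construction.

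For the backward direction, assume $f\colon X\to Y$ is a surjective quasi-pro-\'etale morphism from a perfectoid space $X$. Cover $X$ by affinoids $U_\alpha$ and for each $\alpha$ apply Lemma~\ref{lem:strtotdisccover} to obtain an affinoid pro-\'etale surjection $\tilde U_\alpha\to U_\alpha$ with $\tilde U_\alpha$ strictly totally disconnected. Setting $\tilde X=\bigsqcup_\alpha \tilde U_\alpha$, the map $\tilde X\to X$ is a pro-\'etale cover, and by Proposition~\ref{prop:quasiproetalefirstprop}~(i) the composite $\tilde X\to Y$ remains quasi-pro-\'etale; it is also surjective. Replacing $X$ by $\tilde X$, we may assume that $X$ is a disjoint union of strictly totally disconnected perfectoid spaces; this is precisely the setting of the supplementary assertion in the statement.

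Now apply the defining property of quasi-pro-\'etale maps to the map $X\to Y$ itself: on each strictly totally disconnected component $X_i$ of $X$, the pullback $X\times_Y X_i\to X_i$ is representable by a perfectoid space pro-\'etale over $X_i$. Taking the disjoint union over $i$, $R := X\times_Y X$ is representable, and each projection $R\to X$ is pro-\'etale. Clearly $R\subset X\times X$ is an equivalence relation (it is the fibre product $X\times_Y X$), so $R$ is a pro-\'etale equivalence relation, and $X/R$ is a diamond by Proposition~\ref{prop:diamondfirstprop}~(i).

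It remains to identify $Y$ with $X/R$ as pro-\'etale sheaves. The natural map $X/R\to Y$ is injective by the very definition $R=X\times_Y Y$. For surjectivity in the pro-\'etale topology, let $T\to Y$ be any map from a perfectoid space $T$; we may refine $T$ to be strictly totally disconnected. Then the base change $X\times_Y T\to T$ is representable and pro-\'etale (by quasi-pro-\'etaleness), and surjective (as a pullback of the surjective map $X\to Y$; a surjective pro-\'etale map between perfectoid spaces is a pro-\'etale cover). Hence the section $T\to Y$ lifts, after this pro-\'etale cover, to a map to $X$, which shows that $X/R\to Y$ is surjective, completing the identification.

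The only genuinely delicate step is the last one, where one must verify that the surjectivity hypothesis on $X\to Y$ interacts correctly with the pro-\'etale topology after reduction to strictly totally disconnected test objects; everything else is either bookkeeping or a direct appeal to the stability properties proved earlier. In particular, the reduction to $X$ being a disjoint union of strictly totally disconnected spaces (via Lemma~\ref{lem:strtotdisccover}) is what unlocks the defining property of quasi-pro-\'etale, and is the conceptual heart of the argument.
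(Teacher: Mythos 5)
Your proof is correct and follows essentially the same route as the paper: after reducing to $X$ a disjoint union of strictly totally disconnected spaces (via Lemma~\ref{lem:strtotdisccover} and stability of quasi-pro-\'etale under composition), the definition of quasi-pro-\'etale gives directly that $R=X\times_Y X$ is a pro-\'etale equivalence relation, and the identification $Y=X/R$ is proved by the same argument as in Proposition~\ref{prop:diamondfirstprop}~(iii). You simply spell out the last identification in full, where the paper cites the earlier proof; the one slightly awkward phrasing is ``a surjective pro-\'etale map between perfectoid spaces is a pro-\'etale cover'' — what you actually use, and what suffices, is that $X\to Y$ being surjective as a map of pro-\'etale sheaves means any $T\to Y$ lifts to $X$ after a pro-\'etale cover of $T$, which is tautological.
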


\begin{proof} We may assume that $X$ is a disjoint union of strictly totally disconnected spaces. In this case, by definition of quasi-pro-\'etale morphisms (Definition~\ref{def:etquasiproet}~(i)), $R=X\times_Y X\to X$ is pro-\'etale. As in the proof of Proposition~\ref{prop:diamondfirstprop}~(iii), one finds that $Y=X/R$, so $Y$ is a diamond.
\end{proof}

One can use this to obtain some stability properties for diamonds.

\begin{proposition}\label{prop:quotientsbydiamondequivrel} Let $Y$ be a pro-\'etale sheaf on $\Perf$, and assume that there is a surjective quasi-pro-\'etale map $Y^\prime\to Y$, where $Y^\prime$ is a diamond. Then $Y$ is a diamond.
\end{proposition}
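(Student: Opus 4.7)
The plan is to reduce immediately to the characterization of diamonds given in Proposition~\ref{prop:diamondqproetsurj}: a pro-\'etale sheaf $Y$ on $\Perf$ is a diamond if and only if it admits a surjective quasi-pro-\'etale map from a perfectoid space. Given this criterion, the proposition becomes essentially a formal consequence of the fact that surjective quasi-pro-\'etale maps compose.

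First, since $Y^\prime$ is a diamond, Proposition~\ref{prop:diamondqproetsurj} provides a perfectoid space $X$ (which may be taken to be a disjoint union of strictly totally disconnected spaces) together with a surjective quasi-pro-\'etale map $g\colon X\to Y^\prime$. Writing $f\colon Y^\prime\to Y$ for the given surjective quasi-pro-\'etale map, the composite
\[
h = f\circ g\colon X\to Y
\]
is surjective, being a composite of surjections. It is quasi-pro-\'etale by Proposition~\ref{prop:quasiproetalefirstprop}~(i), which guarantees that composites of quasi-pro-\'etale morphisms of pro-\'etale stacks are again quasi-pro-\'etale. Applying Proposition~\ref{prop:diamondqproetsurj} in the reverse direction to $h\colon X\to Y$ shows that $Y$ is a diamond.

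The only point that requires a moment's thought is that our Convention~\ref{conv:etlocsep} bakes the locally separated hypothesis into the definition of quasi-pro-\'etale, so one must check that $h$ is locally separated. But this is straightforward: $g$ is locally separated, so there is an open cover $X = \bigcup_i U_i$ with each $U_i\to Y^\prime$ separated; similarly, $Y^\prime = \bigcup_j V_j$ with each $V_j\to Y$ separated. The refined cover $X = \bigcup_{i,j} (U_i\cap g^{-1}(V_j))$ then exhibits $h$ as locally separated, since each piece maps to some $V_j$ by a separated map (the restriction of $U_i\to Y^\prime$ to the preimage of the open $V_j$), and then composes with the separated map $V_j\to Y$. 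This verification is already implicit in Proposition~\ref{prop:quasiproetalefirstprop}~(i), so no obstacle arises, and the proof is complete.
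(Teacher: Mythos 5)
Your proof is correct and is essentially identical to the paper's own argument: both obtain a surjective quasi-pro-\'etale map $X\to Y^\prime$ from a perfectoid space via Proposition~\ref{prop:diamondqproetsurj}, compose with $Y^\prime\to Y$ using Proposition~\ref{prop:quasiproetalefirstprop}~(i), and conclude via Proposition~\ref{prop:diamondqproetsurj} again. The added remark on the locally separated hypothesis is a reasonable check, already implicit in the cited proposition.
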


\begin{proof} Choose a surjective quasi-pro-\'etale map $X\to Y^\prime$. Then $X\to Y^\prime\to Y$ is a surjective quasi-pro-\'etale map, so $Y$ is a diamond by Proposition~\ref{prop:diamondqproetsurj}.
\end{proof}

\begin{proposition}\label{prop:quasiproetaleoverdiamond} Let $f: Y^\prime\to Y$ be a quasi-pro-\'etale map of pro-\'etale sheaves on $\Perf$, and assume that $Y$ is a diamond. Then $Y^\prime$ is a diamond.
\end{proposition}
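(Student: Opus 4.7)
The plan is to reduce to Proposition~\ref{prop:diamondqproetsurj}, which says a pro-\'etale sheaf is a diamond as soon as it admits a surjective quasi-pro-\'etale map from a perfectoid space. So the entire task is to produce such a cover of $Y^\prime$.

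First, since $Y$ is a diamond, Proposition~\ref{prop:diamondqproetsurj} gives a perfectoid space $X$ with a surjective quasi-pro-\'etale map $g\colon X\to Y$, and we may arrange $X=\bigsqcup_i X_i$ with each $X_i$ strictly totally disconnected. Now form the pullback $X^\prime := X\times_Y Y^\prime$ with its two projections $\tilde{g}\colon X^\prime\to Y^\prime$ and $f^\prime\colon X^\prime\to X$. By Proposition~\ref{prop:quasiproetalefirstprop}~(iii), $f^\prime$ is quasi-pro-\'etale (as a pullback of $f$). Applied componentwise, the very definition of ``quasi-pro-\'etale'' (Definition~\ref{def:etquasiproet}~(i)) says that $X^\prime\times_X X_i$ is representable by a perfectoid space pro-\'etale over $X_i$; gluing over $i$ shows that $X^\prime$ is itself representable by a perfectoid space, and $f^\prime\colon X^\prime\to X$ is pro-\'etale.

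Next, $\tilde{g}\colon X^\prime\to Y^\prime$ is surjective (as a pullback of the surjection $g$), and quasi-pro-\'etale by Proposition~\ref{prop:quasiproetalefirstprop}~(iii) again (as a pullback of $g$). Thus $X^\prime$ is a perfectoid space admitting a surjective quasi-pro-\'etale map onto $Y^\prime$, and Proposition~\ref{prop:diamondqproetsurj} concludes that $Y^\prime$ is a diamond.

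There is no real obstacle here; the only subtle point is the locally separated assumption built into quasi-pro-\'etale (Convention~\ref{conv:etlocsep}), but this is preserved by arbitrary pullbacks and hence is inherited by both $f^\prime$ and $\tilde{g}$ from $f$ and $g$ respectively, so the invocation of Proposition~\ref{prop:diamondqproetsurj} at the end is legitimate.
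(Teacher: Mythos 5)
Your argument is correct and matches the paper's proof essentially verbatim: pull back a surjective quasi-pro-\'etale cover $X\to Y$ by strictly totally disconnected spaces to get a representable pro-\'etale cover $X^\prime=X\times_Y Y^\prime\to Y^\prime$, which is surjective quasi-pro-\'etale, then invoke Proposition~\ref{prop:diamondqproetsurj}. The only extra commentary you add (the locally separated point, the componentwise gluing) is correct but already implicit in the paper's terser phrasing.
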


\begin{proof} Choose a surjective quasi-pro-\'etale map $X\to Y$ where $X$ is a disjoint union of strictly totally disconnected spaces. Then $X^\prime=Y^\prime\times_Y X$ is representable (and pro-\'etale over $X$), and $X^\prime\to Y^\prime$ is a surjective quasi-pro-\'etale map. Now Proposition~\ref{prop:diamondqproetsurj} implies that $Y^\prime$ is a diamond.
\end{proof}

\begin{proposition}\label{prop:quotientbydiamondequivrel} Let $X$ be a diamond, and let $R\subset X\times X$ be an equivalence relation such that $s,t: R\to X$ are quasi-pro-\'etale. Then the quotient $Y=X/R$ is a diamond.
\end{proposition}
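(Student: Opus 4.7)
The plan is to exhibit $Y$ directly as a diamond in the sense of Definition~\ref{def:diamond}, by constructing a presentation $Y\cong X_0/R_0$ where $X_0$ is a perfectoid space and $R_0\subset X_0\times X_0$ is a pro-\'etale equivalence relation.

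First, I would apply Proposition~\ref{prop:diamondqproetsurj} to the diamond $X$ to choose a perfectoid space $X_0$, taken to be a disjoint union of strictly totally disconnected perfectoid spaces, together with a surjective quasi-pro-\'etale map $\pi:X_0\to X$. The composite $X_0\to X\to Y$ is then a surjection of pro-\'etale sheaves: given any section of $Y$ over some $W\in\Perf$, one first lifts the section to $X$ after a pro-\'etale cover of $W$ (by definition of the quotient $Y=X/R$), then refines further to a strictly totally disconnected cover, over which the quasi-pro-\'etale surjection $\pi$ supplies a further lift to $X_0$.

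Next, I would analyze $R_0:=X_0\times_Y X_0$. Using Proposition~\ref{prop:diamondfirstprop}(ii) to identify $R=X\times_Y X$, one rewrites $R_0=X_0\times_X R\times_X X_0$, where the first fiber product is along $s:R\to X$ and $\pi$, and the second along $t:R\to X$ and $\pi$. Set $B:=X_0\times_X R$. Since $s$ is quasi-pro-\'etale and $X_0$ is a disjoint union of strictly totally disconnected perfectoid spaces, Definition~\ref{def:etquasiproet}(i) shows that $B$ is representable by a perfectoid space and that the projection $B\to X_0$ is representable pro-\'etale. The other projection $B\to R$ is a base change of $\pi$, hence quasi-pro-\'etale by Proposition~\ref{prop:quasiproetalefirstprop}(iii), and composing with $t$ gives a quasi-pro-\'etale map $B\to X$ by Proposition~\ref{prop:quasiproetalefirstprop}(i). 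Pulling back this last map by $\pi$, again with $X_0$ a disjoint union of strictly totally disconnected spaces, would show that $R_0=B\times_X X_0$ is representable by a perfectoid space with the second projection $R_0\to X_0$ representable pro-\'etale; the symmetric argument, exchanging the roles of $s$ and $t$, handles the first projection.

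Finally, I would conclude that $Y\cong X_0/R_0$ as pro-\'etale sheaves, which is formal given that $X_0\to Y$ is a pro-\'etale surjection with $R_0=X_0\times_Y X_0$, exactly as in the proof of Proposition~\ref{prop:diamondfirstprop}(iii); hence $Y$ is a diamond. The main obstacle will be the careful bookkeeping of composites and base changes of quasi-pro-\'etale maps needed to ensure that the projections $R_0\to X_0$ end up representable and pro-\'etale rather than merely quasi-pro-\'etale; the key leverage is that Proposition~\ref{prop:diamondqproetsurj} lets us insist that $X_0$ be a disjoint union of strictly totally disconnected perfectoid spaces, so that the defining property of quasi-pro-\'etaleness converts, at the final pullback, directly into representability and pro-\'etaleness.
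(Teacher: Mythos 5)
Your argument is correct, but it takes a route visibly different from the paper's. The paper first reduces to the case that $X$ is a separated perfectoid space, then shows that $X\to Y$ is quasi-pro-\'etale by a descent argument: the base change of $X\to Y$ along itself is $R\to X$, which is quasi-pro-\'etale by hypothesis, and $X\to Y$ is a v-cover, so Proposition~\ref{prop:checksheafpropvloc}~(v) applies; one then invokes Proposition~\ref{prop:quotientsbydiamondequivrel}. You instead avoid the descent machinery entirely: after choosing $X_0\to X$ a quasi-pro-\'etale surjection from a disjoint union of strictly totally disconnected spaces, you compute $R_0=X_0\times_Y X_0$ by unwinding it as $X_0\times_X R\times_X X_0$ and iteratively applying Definition~\ref{def:etquasiproet}~(i) plus Proposition~\ref{prop:quasiproetalefirstprop}. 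What the paper's approach buys is brevity (three sentences, leaning on previously established descent lemmas); what your approach buys is an explicit identification of the presenting pro-\'etale equivalence relation, directly matching Definition~\ref{def:diamond} without passing through Propositions~\ref{prop:quotientsbydiamondequivrel} and~\ref{prop:diamondqproetsurj}.

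Two small points of hygiene. First, the invocation of Proposition~\ref{prop:diamondfirstprop}~(ii) to get $R=X\times_Y X$ is not a literal application of that statement, whose hypotheses are that $X$ is a perfectoid space and $R$ is pro-\'etale; here $X$ is only a diamond and $R$ only quasi-pro-\'etale. The argument in its proof still works because it only uses that $R\hookrightarrow X\times X$ is an injection of sheaves and that $R$ is itself a pro-\'etale sheaf; for the latter one should first note that $R$ is a diamond by Proposition~\ref{prop:quasiproetaleoverdiamond}. You should cite the argument rather than the statement, or rederive the identification, noting explicitly that $R$ is a diamond. Second, when you assert that $B\to R$ and the composite $B\to R\to X$ are quasi-pro-\'etale in the sense of Definition~\ref{def:etquasiproet}, recall that per Convention~\ref{conv:etlocsep} this includes locally separated; this is automatic since $B$ is a perfectoid space and $R$, $X$ are v-sheaves, but it is worth saying out loud since it is the hypothesis underpinning each application of Proposition~\ref{prop:quasiproetalefirstprop}.
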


In the statement, $R$ is just assumed to be a pro-\'etale sheaf, but note that as $s: R\to X$ is quasi-pro-\'etale, it follows from Proposition~\ref{prop:quasiproetaleoverdiamond} that $R$ is automatically a diamond.

\begin{proof} We may assume that $X$ is a separated perfectoid space. In that case, the map $X\to Y$ is a surjective separated map which is quasi-pro-\'etale: Indeed, its base change $R=X\times_Y X\to X$ is quasi-pro-\'etale, and $X\to Y$ is a v-cover, so we can apply Proposition~\ref{prop:checksheafpropvloc}~(v). Thus, the result follows from Proposition~\ref{prop:quotientsbydiamondequivrel}.
\end{proof}

A nice property of diamonds is that they are always sheaves for the v-topology. This resembles a result of Gabber, \cite[Tag 0APL]{StacksProject}, that all algebraic spaces are sheaves for the fpqc topology, and our proof will follow his arguments.

\begin{proposition}\label{prop:diamondvsheaf} Let $Y$ be a diamond. Then $Y$ is a sheaf for the v-topology.
\end{proposition}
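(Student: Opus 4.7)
The strategy is to reduce the v-descent problem for $Y$ to a v-descent problem for perfectoid spaces by pulling back along the quasi-pro-\'etale surjection $X\to Y$ (Proposition~\ref{prop:diamondfirstprop}(iv)); the latter is accessible via the v-descent results of the previous section together with the v-sheafiness of perfectoid spaces themselves (Theorem~\ref{thm:vsub}). As a first step, I would reduce to the case in which both the base $Z$ and the v-cover $\tilde Z$ are strictly totally disconnected: using Lemma~\ref{lem:strtotdisccover} one can iteratively refine source and target by pro-\'etale covers from strictly totally disconnected perfectoid spaces, and then use the pro-\'etale sheaf property of $Y$ to translate descent for the refined cover back to descent for the original cover.

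Given such a setup and a descent datum $\tilde s\colon\tilde Z\to Y$, I would form $\tilde X:=\tilde Z\times_Y X$. Because $\tilde Z$ is strictly totally disconnected and $X\to Y$ is quasi-pro-\'etale, $\tilde X\to\tilde Z$ is a representable pro-\'etale cover, carrying a canonical projection $\tilde X\to X$. The equality of the two pullbacks of $\tilde s$ to $\tilde Z\times_Z\tilde Z$ yields a canonical identification of the two pullbacks of $\tilde X$ along the projections $\tilde Z\times_Z\tilde Z\to\tilde Z$ (each being $(\tilde Z\times_Z\tilde Z)\times_Y X$), giving a v-descent datum for $\tilde X\to\tilde Z$ along $\tilde Z\to Z$ which is compatible with $\tilde X\to X$. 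Applying Proposition~\ref{prop:sepproetaledescent} I would descend $\tilde X$ to a pro-\'etale perfectoid space $X_Z\to Z$; the v-sheafiness of $X$ then descends the projection to a map $X_Z\to X$.

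The desired section $s\colon Z\to Y$ is then produced by descending the composite $X_Z\to X\to Y$ along the pro-\'etale cover $X_Z\to Z$ via the pro-\'etale sheaf property of $Y$. The requisite cocycle condition says that the map $X_Z\times_Z X_Z\to X\times X$ factors through $R\hookrightarrow X\times X$; pulled back to $\tilde Z$, this becomes the tautological identification $\tilde X\times_{\tilde Z}\tilde X=\tilde Z\times_Y(X\times_Y X)=\tilde Z\times_Y R$, and since $R$ is a v-sheaf by Theorem~\ref{thm:vsub}, the factorization descends. A final pro-\'etale check gives $s|_{\tilde Z}=\tilde s$; injectivity of $Y(Z)\to Y(\tilde Z)$ is proved by the same construction applied to the equalizer subsheaf of two candidate sections. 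The principal obstacle is the separatedness hypothesis in Proposition~\ref{prop:sepproetaledescent}: since $X\to Y$ need not be separated, neither need $\tilde X\to\tilde Z$ be. I would circumvent this by decomposing $\tilde X$ into affinoid open pieces respected by the descent datum, descending each separated piece individually, and gluing the results using v-descent for separated \'etale maps (Proposition~\ref{prop:etalesepdescent}).
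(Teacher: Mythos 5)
Your argument follows essentially the same route as the paper's: reduce to strictly totally disconnected $Z$ and $\tilde Z$, form $\tilde X = \tilde Z\times_Y X$, which is representable pro-\'etale over $\tilde Z$ and carries a natural v-descent datum, descend it over $Z$ via Proposition~\ref{prop:sepproetaledescent}, descend the projection to $X$ using that $X$ is a v-sheaf, then descend the composite $X_Z\to X\to Y$ along the pro-\'etale cover $X_Z\to Z$ using the pro-\'etale sheaf property of $Y$ together with the fact that $R$ is a v-sheaf. That is exactly the paper's argument.

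The one place you diverge is the handling of the separatedness hypothesis, and there your proposed fix is not quite right. You suggest decomposing $\tilde X$ into affinoid open pieces ``respected by the descent datum,'' but in general there is no such decomposition: the descent datum is an isomorphism over $\tilde Z\times_Z\tilde Z$ that need not preserve any particular affinoid open cover, and manufacturing invariant open pieces is precisely the delicate content of Proposition~\ref{prop:sepproetaledescent} (via Lemma~\ref{lem:equivrel}); also, Proposition~\ref{prop:etalesepdescent} concerns \'etale rather than pro-\'etale maps, so it does not obviously supply the gluing you want. The paper sidesteps the whole issue by an up-front normalization of the presentation: by Proposition~\ref{prop:diamondfirstprop}(iii) one may assume $X$ is a disjoint union of totally disconnected (hence affinoid) perfectoid spaces. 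With that choice the diagonal $X\to X\times X$ is a closed immersion, so $X\to Y$ is separated and therefore its pullback $\tilde X=\tilde Z\times_Y X\to\tilde Z$ is automatically separated, and Proposition~\ref{prop:sepproetaledescent} applies directly without any further decomposition.
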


\begin{proof} Choose a presentation $Y=X/R$ of $Y$ as a quotient of a perfectoid space $X$ by a pro-\'etale equivalence relation $R\subset X\times X$. By Proposition~\ref{prop:diamondfirstprop}~(iii), we may assume that $X$ is a disjoint union of totally disconnected (in particular, affinoid) perfectoid spaces.

Let $Z$ be a perfectoid space with a v-cover $\tilde{Z}\to Z$. First, we prove injectivity of $Y(Z)\to Y(\tilde{Z})$. Thus, assume that $a,b: Z\to Y$ are two maps such that the composite maps $\tilde{a},\tilde{b}: \tilde{Z}\to Z\to Y$ agree. After replacing $Z$ by a pro-\'etale cover (and using that $Y$ is a pro-\'etale sheaf), we can assume that $a,b$ lift to maps $a_X,b_X: Z\to X$; then the composite maps $\tilde{a}_X,\tilde{b}_X: \tilde{Z}\to Z\to X$ have the property that $(\tilde{a}_X,\tilde{b}_X): \tilde{Z}\to X\times X$ factors over $R\subset X\times X$. In other words, the map $(a_X,b_X): Z\to X\times X$ factors over $R$ after precomposition with $\tilde{Z}\to Z$. As $R$ is a v-sheaf, this implies that $(a_X,b_X): Z\to X\times X$ factors over $R$, showing that $a=b$, as desired.

It remains to prove surjectivity of
\[
Y(Z)\to \eq(Y(\tilde{Z})\rightrightarrows Y(\tilde{Z}\times_Z \tilde{Z}))\ .
\]
By standard reductions (using that $Y$ is a pro-\'etale sheaf and separated for the v-topology), we may assume that $Z$ and $\tilde{Z}$ are strictly totally disconnected. Let $\tilde{a}: \tilde{Z}\to Y$ be a map such that the two induced maps $\tilde{Z}\times_Z \tilde{Z}\to Y$ agree. By Proposition~\ref{prop:diamondfirstprop}~(iv), the fibre product $\tilde{Z}\times_Y X\to \tilde{Z}$ is representable and pro-\'etale; moreover, by our assumption on $X$, it is also separated over $\tilde{Z}$. Note that $\tilde{W}=\tilde{Z}\times_Y X\to \tilde{Z}$ comes with a descent datum relative to $\tilde{Z}/Z$: Indeed, the fibre product
\[
\tilde{W}\times_Z \tilde{Z} = (\tilde{Z}\times_Z \tilde{Z})\times_Y X
\]
is independent of which of the two induced maps $\tilde{Z}\times_Z \tilde{Z}\to Y$ is chosen, as they agree by assumption. By Proposition~\ref{prop:sepproetaledescent}, it follows that $\tilde{W}\to \tilde{Z}$ descends to a surjective separated pro-\'etale map $W\to Z$. In particular, $\tilde{W} = W\times_Z \tilde{Z}\to W$ is a v-cover, and the map $\tilde{W}\to X$ descends to $W\to X$ by construction of the descent datum, and using that $X$ is a v-sheaf. We have constructed a pro-\'etale cover $W\to Z$ with a map $W\to X$. The induced map $W\times_Z W\to X\times X$ factors over $R\subset X\times X$, as this is true for the v-cover
\[
(W\times_Z W)\times_Z \tilde{Z}=\tilde{W}\times_{\tilde{Z}} \tilde{W}=\tilde{Z}\times_Y R
\]
of $W\times_Z W$. Thus, the map composite map $W\to X\to Y$ factors over a map $Z\to Y$, which pulls back to the given map $\tilde{Z}\to Y$.
\end{proof}

Another interesting stability property is the following.

\begin{proposition}\label{prop:vinjdiamond} Let $f: Y^\prime\to Y$ be an injection of v-sheaves, where $Y$ is a diamond. Then $Y^\prime$ is a diamond.
\end{proposition}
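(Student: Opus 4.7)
The strategy is to verify the criterion of Proposition~\ref{prop:diamondqproetsurj} by constructing a surjective quasi-pro-\'etale map $\tilde{X} \to Y'$ from a perfectoid space. Since $Y$ is a diamond, the same proposition gives a surjective quasi-pro-\'etale $X \to Y$, which via Lemma~\ref{lem:strtotdisccover} I may take with $X = \bigsqcup_\alpha X_\alpha$ a disjoint union of strictly totally disconnected affinoids. Form the fibre product $X' := Y' \times_Y X$; since $Y' \to Y$ is injective, $X' \hookrightarrow X$ is a sub-v-sheaf.

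Applying Proposition~\ref{prop:vinjindrepr} to each totally disconnected component $X_\alpha$ realizes $X' \times_X X_\alpha$ as a $\kappa$-small filtered colimit $\varinjlim_i X'_{\alpha,i}$ of pro-constructible generalizing subsets $X'_{\alpha,i} \subset X_\alpha$, each an affinoid perfectoid space that is affinoid pro-\'etale over $X$. Set $\tilde{X} := \bigsqcup_{\alpha,i} X'_{\alpha,i}$, which (using Lemma~\ref{lem:choosekappa}) is a $\kappa$-small perfectoid space equipped with a natural map $\tilde{X} \to X' \to Y'$. To show this map is v-surjective, take an affinoid $W \to Y'$ and lift the composite $W \to Y$ to $W^\ast \to X$ along a quasi-pro-\'etale cover $W^\ast \to W$, with $W^\ast$ affinoid. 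Its image in $X$ is quasicompact, hence meets only finitely many components; on each, the image is a pro-constructible generalizing subset contained in $X' \cap X_\alpha$ (since the map factors through $X'$ as a sub-v-sheaf), and therefore appears among the $X'_{\alpha,i}$. This produces the desired lift $W^\ast \to \tilde{X}$.

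It remains to check that $\tilde{X} \to Y'$ is quasi-pro-\'etale. For any strictly totally disconnected $T \to Y'$, injectivity of $Y' \hookrightarrow Y$ gives
\[
\tilde{X} \times_{Y'} T = \bigsqcup_{\alpha,i} X'_{\alpha,i} \times_Y T = \bigsqcup_{\alpha,i} X'_{\alpha,i} \times_X (X \times_Y T),
\]
and since $X \times_Y T \to T$ is representable pro-\'etale (by quasi-pro-\'etaleness of $X \to Y$) and each $X'_{\alpha,i} \to X$ is affinoid pro-\'etale, this is a perfectoid space pro-\'etale over $T$. For local separation, I choose an open cover $\{U_j\}$ of $X$ with each $U_j \to Y$ separated (available because quasi-pro-\'etale maps are locally separated by Convention~\ref{conv:etlocsep}). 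The opens $X'_{\alpha,i} \cap U_j$ cover $\tilde{X}$, and the map $X'_{\alpha,i} \cap U_j \to Y$ is separated as the composite of the separated injection $X'_{\alpha,i} \cap U_j \hookrightarrow U_j$ and the separated $U_j \to Y$. Because this map factors through $Y' \hookrightarrow Y$ and $A \times_{Y'} A = A \times_Y A$ for an injection $Y' \hookrightarrow Y$, the induced $X'_{\alpha,i} \cap U_j \to Y'$ is again separated. Proposition~\ref{prop:diamondqproetsurj} then shows $Y'$ is a diamond.

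The main technical obstacle is the bookkeeping in the surjectivity step: one must verify that the image of a quasicompact $W^\ast \to X$ landing inside the sub-v-sheaf $X' \cap X_\alpha$ is indeed one of the pro-constructible generalizing subsets enumerated by Proposition~\ref{prop:vinjindrepr}, which requires knowing that the filtered system there exhausts all such subsets. The local separation check is secondary but similarly requires a careful use of the identification $A \times_{Y'} A = A \times_Y A$ to transfer separation across the injection $Y' \hookrightarrow Y$.
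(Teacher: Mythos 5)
Your proof is correct and follows the same strategy as the paper's: pull back the injection along a quasi-pro-\'etale cover of $Y$ by (disjoint unions of) strictly totally disconnected spaces, apply Proposition~\ref{prop:vinjindrepr} to exhibit the resulting sub-v-sheaf as a filtered colimit of pro-constructible generalizing subsets, and take the disjoint union of these as the desired quasi-pro-\'etale cover of $Y'$. The paper abbreviates the quasi-pro-\'etaleness check by appealing to Corollary~\ref{cor:qcvinjqproet} (each $X'_{\alpha,i}\to Y'$ is a quasicompact injection), and your final local-separation paragraph is superfluous since, as remarked after Definition~\ref{def:etquasiproet}, any map from a perfectoid space to a v-sheaf is automatically locally separated.
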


\begin{proof} We have to find a surjective quasi-pro-\'etale morphism $X^\prime\to Y^\prime$ from a perfectoid space $X'$. Pulling back via a surjective quasi-pro-\'etale morphism $X\to Y$, we may assume that $Y$ is a disjoint union of totally disconnected spaces; we may replace $Y$ by a totally disconnected space $X$. In that case, Proposition~\ref{prop:vinjindrepr} says that $Y^\prime$ is a filtered colimit of pro-constructible generalizing subsets $X_i\subset X$. Let $X^\prime = \bigsqcup_i X_i$, with its natural morphism to $Y^\prime$. Then $X^\prime\to Y^\prime$ is surjective, and quasi-pro-\'etale (as each map $X_i\to Y^\prime$ is quasi-pro-\'etale).
\end{proof}

A convenient property of diamonds is that it is easy to check whether a map is an isomorphism. For an even more general statement, see Lemma~\ref{lem:isomqcqsvsheaves} below.

\begin{lemma}\label{lem:isomspatialdiamondspoints} Let $f: Y\to X$ be a qcqs map of diamonds. Then $f$ is an isomorphism if and only if for every algebraically closed perfectoid field $K$ with open and bounded valuation subring $K^+\subset K$, the map $f(K,K^+): Y(K,K^+)\to X(K,K^+)$ is a bijection.
\end{lemma}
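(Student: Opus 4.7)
The ``only if'' direction is immediate. For the converse, my plan is to reduce to the case where $X$ is a strictly totally disconnected perfectoid space, and then to exhibit a v-cover $\tilde Y$ of $Y$ such that the comparison of $Y$ and $X$ becomes a comparison of two equivalence relations on the perfectoid space $\tilde Y$, to which the perfectoid-space analogue of the criterion, Lemma~\ref{lem:bijectiveiso}, applies. The initial reduction proceeds as follows: by Proposition~\ref{prop:diamondqproetsurj} combined with Lemma~\ref{lem:strtotdisccover}, the diamond $X$ admits a quasi-pro-\'etale cover (hence a v-cover) by strictly totally disconnected perfectoid spaces. Since $X$ and $Y$ are v-sheaves by Proposition~\ref{prop:diamondvsheaf}, the property of $f$ being an isomorphism is v-local on $X$; pulling back along such a cover also preserves the qcqs assumption and the hypothesis on $(C,C^+)$-valued points, so I may assume $X$ is strictly totally disconnected.

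Under this assumption, $Y$ is qcqs (as $X$ is). Using Proposition~\ref{prop:diamondqproetsurj} again, pick a quasi-pro-\'etale surjection $\tilde p \colon \tilde Y \to Y$ with $\tilde Y$ a strictly totally disconnected perfectoid space. The composite $\tilde f := f \circ \tilde p \colon \tilde Y \to X$ is then a morphism of affinoid perfectoid spaces. I claim that $\tilde f$ is a v-cover: $\tilde Y$ is quasicompact, and every point of $X$ has algebraically closed completed residue field $C$ (since $X$ is strictly totally disconnected), so corresponds to a map $\Spa(C, C^+) \to X$. By hypothesis this lifts to $Y(C, C^+)$, and since pro-\'etale surjections onto strictly totally disconnected spaces admit sections (one may descend through a pro-\'etale presentation and apply Proposition~\ref{prop:etcoverssplittotdisconnected} to compatibly split each \'etale layer), it further lifts to $\tilde Y(C, C^+)$, yielding the needed surjectivity of $|\tilde f|$.

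The crux of the argument is to compare the two equivalence relations $R_Y := \tilde Y \times_Y \tilde Y$ and $R_X := \tilde Y \times_X \tilde Y$ on $\tilde Y$. Both are perfectoid spaces: $R_X$ is a fibre product of affinoid perfectoid spaces, while $R_Y \to \tilde Y$ is a base change of the quasi-pro-\'etale map $\tilde p$, hence quasi-pro-\'etale with strictly totally disconnected target, so representable and pro-\'etale by Definition~\ref{def:etquasiproet}(i). The natural morphism $R_Y \to R_X$ is a base change of the diagonal $\Delta_f \colon Y \to Y \times_X Y$ along $\tilde Y \times_X \tilde Y \to Y \times_X Y$; since $f$ is qcqs, $\Delta_f$ is a quasicompact injection, and hence $R_Y \to R_X$ is a qcqs morphism of perfectoid spaces. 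On $(C, C^+)$-valued points with $C$ algebraically closed, the hypothesis on $f$ gives
\[
R_X(C, C^+) = \tilde Y(C,C^+) \times_{X(C,C^+)} \tilde Y(C,C^+) = \tilde Y(C,C^+) \times_{Y(C,C^+)} \tilde Y(C,C^+) = R_Y(C,C^+).
\]
Lemma~\ref{lem:bijectiveiso} then identifies $R_Y \cong R_X$. Since $\tilde p$ and $\tilde f$ are both v-covers and $Y$, $X$ are v-sheaves, the v-sheaf quotients coincide: $Y = \tilde Y / R_Y = \tilde Y / R_X = X$. The main technical point is recognizing $R_Y$ as a perfectoid space (this is where strict total disconnectedness of $\tilde Y$ is indispensable) and identifying $R_Y \to R_X$ as a base change of $\Delta_f$, so that the perfectoid-space criterion Lemma~\ref{lem:bijectiveiso} becomes applicable; the remainder is formal v-descent.
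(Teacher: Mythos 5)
Your proof is correct and follows essentially the same route as the paper's: reduce to a perfectoid base, present $Y$ as a quotient $\tilde Y/R$ of a qcqs perfectoid space, observe that $\tilde Y\to X$ is a v-cover, and apply Lemma~\ref{lem:bijectiveiso} to compare $R=\tilde Y\times_Y\tilde Y$ with $\tilde Y\times_X\tilde Y$. The paper is a bit lighter on the reductions: it only makes $X$ affinoid (not strictly totally disconnected) and takes an arbitrary qcqs presentation $\tilde Y$ rather than a strictly totally disconnected one, which suffices since $\tilde Y\times_X\tilde Y$ is then automatically a qcqs perfectoid space and Lemma~\ref{lem:bijectiveiso} applies directly. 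One small point worth flagging: your parenthetical justification that pro-\'etale surjections onto strictly totally disconnected spaces admit sections by ``compatibly splitting each \'etale layer'' is loose --- splittings of the successive \'etale layers need not be chosen compatibly. The statement is in fact true (once one has reduced to a connected $\Spa(C,C^+)$, Corollary~\ref{cor:proetaleoverwlocaltop} identifies the cover with a generalizing pro-constructible subset of $|\Spa(C,C^+)|\times S$ for some profinite $S$, and a point over the closed point forces a full slice $|\Spa(C,C^+)|\times\{t\}$ to lie inside, yielding the section), but it is also unnecessary: to prove $|\tilde f|$ surjective it is enough to lift the $(C,C^+)$-point of $Y$ to a $(C',C'^+)$-point of $\tilde Y$ after a further extension of $C$, which is immediate from $\tilde Y\to Y$ being a v-surjection; this is what the paper implicitly does.
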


\begin{proof} Pulling back by some presentation of $X$, we may replace $X$ by a perfectoid space, which we can then assume to be affinoid. In that case, $Y$ is a qcqs diamond. Write $Y=\tilde{Y}/R$ as the quotient of a qcqs perfectoid space $\tilde{Y}$ by a pro-\'etale equivalence relation $R$, which is still qcqs. The map $\tilde{Y}\to X$ is surjective in the v-topology, as it is a map of qcqs perfectoid spaces which by assumption is surjective on points, i.e.~a v-cover. As both $Y$ and $X$ are v-sheaves by Proposition~\ref{prop:diamondvsheaf}, it suffices to see that the map $R\to \tilde{Y}\times_X \tilde{Y}$ is an isomorphism. Now both $R$ and $\tilde{Y}\times_X \tilde{Y}$ are qcqs perfectoid spaces. Moreover, the map $R(K,K^+)\to (\tilde{Y}\times_X \tilde{Y})(K,K^+)$ is a bijection for all $(K,K^+)$ as in the statement of the lemma. Thus, the result follows from Lemma~\ref{lem:bijectiveiso}.
\end{proof}

Let us give a slightly weird example of diamonds.

\begin{example}\label{ex:compacthausdorff} Fix a perfectoid field $K$ of characteristic $p$. We claim that there is a fully faithful functor from the category of compact Hausdorff spaces to the category of diamonds over $\Spa(K,\OO_K)$. Indeed, for any compact Hausdorff space $T$, one has the functor $\underline{T}$ on $\Perf$ which takes a perfectoid space $X$ to $C^0(|X|,T)$, where $C^0$ denotes the continuous functions. We claim that the functor
\[
T\mapsto \underline{T}\times \Spa(K,\OO_K)
\]
is a fully faithful functor from the category of compact Hausdorff spaces to the category of diamonds over $\Spa(K,\OO_K)$.

Let us check first that it takes values in diamonds. If $S$ is a profinite set, then we can write $S$ as an inverse limit of finite sets $S_i$, and if we also denote by $S_i$ the corresponding constant sheaf on $\Perf$, then $\underline{S} = \varprojlim S_i$, and $\underline{S}\times \Spa(K,\OO_K) = \varprojlim (S_i\times \Spa(K,\OO_K))$ is an affinoid perfectoid space, given by
\[
\underline{S}\times \Spa(K,\OO_K) = \Spa(C^0(S,K),C^0(S,\OO_K))\ .
\]
Thus, profinite sets are mapped to affinoid perfectoid spaces. Now recall that any compact Hausdorff space $T$ admits a surjection $S\to T$ from a profinite set $S$: One can take for $S$ the Stone-Cech compactification of $T$ considered as a discrete set. The induced equivalence relation $R = S\times_T S\subset S\times S$ is a closed subspace of the profinite set $S\times S$, and thus a profinite set itself. Now one can form the equivalence relation
\[
\underline{R}\times \Spa(K,\OO_K)\subset (\underline{S}\times \Spa(K,\OO_K))\times_{\Spa(K,\OO_K)} (\underline{S}\times \Spa(K,\OO_K))\ ,
\]
which is representable and pro-\'etale over $\underline{S}\times \Spa(K,\OO_K)$. Let $Y$ be the quotient $\underline{S}\times \Spa(K,\OO_K)/\underline{R}\times \Spa(K,\OO_K)$. We claim that $Y=\underline{T}\times \Spa(K,\OO_K)$. Certainly, there is a natural map $Y\to \underline{T}\times \Spa(K,\OO_K)$. It is injective: If we have two maps $a,b: Z\to \underline{S}\times \Spa(K,\OO_K)$ whose composites to $\underline{T}\times \Spa(K,\OO_K)$ agree, then we have two continuous maps $|Z|\to S$ whose composite to $T$ agree, i.e.~a continuous map $|Z|\to R=S\times_T S$, or in other words a map $Z\to \underline{R}\times \Spa(K,\OO_K)$. For surjectivity, let $Z$ be any perfectoid space with a map $Z\to \underline{T}\times \Spa(K,\OO_K)$. We want to find a pro-\'etale cover $\tilde{Z}\to Z$ and a lift $\tilde{Z}\to \underline{S}\times \Spa(K,\OO_K)$. We may assume that $Z$ is affinoid. In that case any continuous map $|Z|\to T$ to a compact Hausdorff space $T$ factors through the profinite set $\pi_0(Z)$ of connected components. We get an induced map $\pi_0(Z)\to T$, and a cover by the profinite set $\pi_0(Z)\times_T S\subset \pi_0(Z)\times S\to \pi_0(Z)$. It follows that
\[
\tilde{Z} = Z\times_{\pi_0(Z)} (\pi_0(Z)\times_T S)
\]
is affinoid perfectoid, pro-\'etale over $Z$, and admits a lift $|\tilde{Z}|\to S$ of $|Z|\to T$, as desired.

Finally, to check that $T\mapsto \underline{T}\times \Spa(K,\OO_K)$ is fully faithful, we need to prove that the map
\[
C^0(T_1,T_2)\to \Hom_{\Spa(K,\OO_K)}(\underline{T_1}\times \Spa(K,\OO_K),\underline{T_2}\times \Spa(K,\OO_K))
\]
is bijective for all compact Hausdorff sets $T_1, T_2$. For injectivity, we can assume that $T_1$ is a point. Now any map $\Spa(K,\OO_K)\to \underline{T_2}\times \Spa(K,\OO_K)$ is given by a map $|\Spa(K,\OO_K)|\to T_2$ whose image recovers the given point of $T_2$.

For surjectivity, assume first that $T_1$ is profinite. In that case, $\underline{T_1}\times \Spa(K,\OO_K)$ is representable, and so
\[
\Hom_{\Spa(K,\OO_K)}(\underline{T_1}\times \Spa(K,\OO_K),\underline{T_2}\times \Spa(K,\OO_K)) = \underline{T_2}(\underline{T_1}\times \Spa(K,\OO_K)) = C^0(T_1,T_2)
\]
by definition, using that $|\underline{T_1}\times \Spa(K,\OO_K)| = T_1$.

Now choose a surjection $S_1\to T_1$ from a profinite set. Given any map $\underline{T_1}\times \Spa(K,\OO_K)\to \underline{T_2}\times \Spa(K,\OO_K)$, we get a composite map $\underline{S_1}\times \Spa(K,\OO_K)\to \underline{T_2}\times \Spa(K,\OO_K)$, which comes from a map $S_1\to T_2$ by what we have just shown. The two induced maps $S_1\times_{T_1} S_1\to T_2$ agree by faithfulness. Thus, we get a map from the quotient $T_1\to T_2$ (recalling that any surjective map of compact Hausdorff spaces is a quotient map), which induces the given map $\underline{T_1}\times \Spa(K,\OO_K)\to \underline{T_2}\times \Spa(K,\OO_K)$ (as this is true after precomposing with the pro-\'etale cover $\underline{S_1}\times \Spa(K,\OO_K)\to \underline{T_1}\times \Spa(K,\OO_K)$).
\end{example}

Although compact Hausdorff spaces may have some appeal, we will generally work with spaces whose behaviour is closer to that of schemes. They are essentially characterized by having a spectral underlying topological space.

\begin{proposition}\label{prop:topspaceindep} Let $Y$ be a diamond, and let $Y=X/R$ be a presentation as a quotient of a perfectoid space $X$ by a pro-\'etale equivalence relation $R\subset X\times X$. There is a canonical bijection between $|X|/|R|$ and the set
\[
|Y|=\{\Spa(K,K^+)\to Y\}/\sim\ ,
\]
where $(K,K^+)$ runs over all pairs of a perfectoid field $K$ with an open and bounded valuation subring $K^+\subset K$, and two such maps $\Spa(K_i,K_i^+)\to Y$, $i=1,2$, are equivalent, if there is a third pair $(K_3,K_3^+)$, and a commutative diagram
\[\xymatrix{
&\Spa(K_1,K_1^+)\ar[dr] & \\
\Spa(K_3,K_3^+)\ar[ur]\ar[rr]\ar[dr] && Y\ ,\\
& \Spa(K_2,K_2^+)\ar[ur] &
}\]
where the maps $\Spa(K_3,K_3^+)\to \Spa(K_i,K_i^+)$, $i=1,2$, are surjective.

Moreover, the quotient topology on $|Y|$ induced by the surjection $|X|\to |Y|$ is independent of the choice of presentation $Y=X/R$.
\end{proposition}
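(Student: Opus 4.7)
The plan is to handle the set-theoretic bijection first, then the topological independence. For the bijection, define $\Phi\colon |X|/|R|\to|Y|$ by sending $x\in|X|$ to the class of the composite $\Spa(K(x),K(x)^+)\to X\to Y$. To see $\Phi$ is well-defined, if $x_1\sim_R x_2$ is witnessed by $r\in|R|$ with $s(r)=x_1$, $t(r)=x_2$, the two compositions $\Spa(K(r),K(r)^+)\to R\rightrightarrows X$ factor through $\Spa(K(x_i),K(x_i)^+)$ by Example~\ref{ex:injpoint}, giving maps of Huber pairs $(K(x_i),K(x_i)^+)\to (K(r),K(r)^+)$; the induced maps $\Spa(K(r),K(r)^+)\to \Spa(K(x_i),K(x_i)^+)$ are surjective since $s,t$ are generalizing, so every valuation of $K(x_i)$ bounded by $1$ on $K(x_i)^+$ lifts. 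For injectivity of $\Phi$, suppose we have a witness $(K_3,K_3^+)$. The surjectivity of $\Spa(K_3,K_3^+)\to \Spa(K(x_i),K(x_i)^+)$ forces the trace condition $K_3^+\cap K(x_i)=K(x_i)^+$: the image of any point of $\Spa(K_3,K_3^+)$ is a valuation of $K(x_i)$ bounded by $1$ on $K_3^+\cap K(x_i)$, and surjectivity requires these images to exhaust all valuations bounded by $1$ on $K(x_i)^+$. Consequently, the closed point of $\Spa(K_3,K_3^+)$ maps to $x_i$ in $X$ via each composition, and the induced map $\Spa(K_3,K_3^+)\to R=X\times_Y X$ (Proposition~\ref{prop:diamondfirstprop}(ii)) sends the closed point to some $r\in|R|$ with $s(r)=x_1$, $t(r)=x_2$, witnessing $x_1\sim_R x_2$. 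For surjectivity of $\Phi$, any $\Spa(K,K^+)\to Y$ can be extended to a map from $\Spa(C,C^+)$ with $C$ algebraically closed; since $X\to Y$ is quasi-pro-\'etale (Proposition~\ref{prop:diamondfirstprop}(iv)) and $\Spa(C,C^+)$ is strictly totally disconnected, $\Spa(C,C^+)\times_Y X$ is representable, pro-\'etale, and v-surjective over $\Spa(C,C^+)$, hence has a point giving the desired lift to $X$.

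For the topological independence, given $Y=X/R=X'/R'$, form $W=X\times_Y X'$. By the proof of Proposition~\ref{prop:diamondfirstprop}(iv) applied symmetrically (after replacing $X$ and $X'$ by their open affinoid covers so that $R\to X$ and $R'\to X'$ are separated), $W$ is representable by a perfectoid space, and both projections $W\to X$ and $W\to X'$ are representable, separated, pro-\'etale, and surjective as pullbacks of v-covers. Pro-\'etale maps of perfectoid spaces are open on underlying topological spaces, being cofiltered limits of \'etale, hence open, maps, so $|W|\to |X|$ and $|W|\to |X'|$ are continuous, surjective, open maps, hence quotient maps. Composing with the defining quotients $|X|\to |Y|$ and $|X'|\to |Y|$, the map $|W|\to |Y|$ is a quotient map with respect to both topologies on $|Y|$; hence a subset $U\subseteq |Y|$ is open in the topology from $|X|$ if and only if its preimage in $|W|$ is open, if and only if it is open in the topology from $|X'|$, so the two topologies coincide.

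The main delicate point is the valuation-theoretic characterization of surjectivity on $\Spa$: the map $\Spa(K_3,K_3^+)\to \Spa(K(x),K(x)^+)$ is surjective precisely when $K_3^+\cap K(x)=K(x)^+$ (with valuations then lifting by standard extension theory). This characterization is the bridge between the set-theoretic picture on $|X|$ and the Huber-pair formalism of $\Spa$, and it powers both the well-definedness and injectivity of $\Phi$. Once it is in hand, the remainder of the argument is a routine consequence of the structural results on quasi-pro-\'etale maps and diamonds already established.
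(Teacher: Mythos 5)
Your treatment of the set-theoretic bijection is essentially the paper's argument, just phrased a bit more explicitly through residue fields and closed points; the well-definedness step via the surjectivity of $\Spa(K(r),K(r)^+)\to\Spa(K(x_i),K(x_i)^+)$ (because $s,t$ are generalizing) and the injectivity step via the closed point of $\Spa(K_3,K_3^+)$ and Proposition~\ref{prop:diamondfirstprop}~(ii) are both correct. One small omission: you should check up front that $\sim$ is actually an equivalence relation (transitivity requires knowing that $\Spa(K_1,K_1^+)\times_{\Spa(K_0,K_0^+)}\Spa(K_2,K_2^+)$ surjects onto the topological fibre product, which is where the paper begins). Also, you only need and only prove the ``only if'' direction of your valuation-theoretic characterization, so the ``precisely when'' claim is overstated, though harmless.

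The topological independence argument has a genuine gap, in two places. First, the fibre product $W=X\times_Y X'$ is in general only a diamond, not a perfectoid space. The proof of Proposition~\ref{prop:diamondfirstprop}~(iv) you invoke runs through Proposition~\ref{prop:sepproetaledescent}, which is a descent statement \emph{over a strictly totally disconnected base}; it shows that $X\times_Y Z$ is representable and pro-\'etale for $Z\to Y$ from a \emph{strictly totally disconnected} $Z$, and nothing more. Replacing $X$ and $X'$ by affinoid covers does not make either of them strictly totally disconnected, so there is no ``symmetric application'' to run. The correct move is to accept that $W$ is a diamond and choose a pro-\'etale surjection $X''\to W$ from a perfectoid space, then compare the topologies on $|Y|$ via $|X''|$. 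Second, the claim that pro-\'etale maps of perfectoid spaces are open is false; a cofiltered limit of open maps need not be open unless the transition maps are surjective. Concretely, a Zariski closed immersion is affinoid pro-\'etale (Remark~\ref{rem:diagproetale}), so if $Z\subset X$ is a Zariski closed point of an affinoid perfectoid $X$, the map $Z\sqcup X\to X$ (identity on the second piece) is a pro-\'etale v-cover whose image of the open subset $Z\sqcup\emptyset$ is the non-open set $|Z|$. What you should use instead is Lemma~\ref{lem:quotientmap}: a surjective, generalizing, spectral map of spectral spaces is a quotient map. The maps $|X''|\to |X|$ and $|X''|\to |X'|$ are surjective (composites of v-covers), generalizing (as all maps of analytic adic spaces), and spectral (after localizing to quasicompact opens), hence quotient maps; comparing the three quotient topologies on $|Y|$ then finishes the argument without any openness.
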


\begin{proof} First, note that the given relation $\sim$ on $\{\Spa(K,K^+)\to Y\}$ is an equivalence relation. This follows from the observation that for a diagram $(K_1,K_1^+)\leftarrow (K_0,K_0^+)\to (K_2,K_2^+)$ of perfectoid fields with open and bounded valuation subrings for which $\Spa(K_i,K_i^+)\to \Spa(K_0,K_0^+)$ is surjective for $i=1,2$, one can find another such pair $(K_3,K_3^+)$ sitting in a diagram
\[\xymatrix{
&\Spa(K_1,K_1^+)\ar[dr] & \\
\Spa(K_3,K_3^+)\ar[ur]\ar[rr]\ar[dr] && \Spa(K_0,K_0^+)\ ,\\
& \Spa(K_2,K_2^+)\ar[ur] &
}\]
with all transition maps surjective. Indeed, this follows from surjectivity of
\[
|\Spa(K_1,K_1^+)\times_{\Spa(K_0,K_0^+)} \Spa(K_2,K_2^+)|\to |\Spa(K_1,K_1^+)|\times_{|\Spa(K_0,K_0^+)|} |\Spa(K_2,K_2^+)|\ .
\]

There is a canonical map $|X|\to |Y|$ by first interpreting $|X|$ as the similar set of equivalence classes of maps $\Spa(K,K^+)\to X$, which is a standard fact. This map is surjective. Indeed, given a map $\Spa(K,K^+)\to Y$, one can lift it to $X$ after a pro-\'etale cover. Thus, after increasing $K$, we can lift it to a map $\Spa(K,K^+)\to X$, as desired. One checks directly that this map $|X|\to |Y|$ factors over $|X|/|R|$.

Now assume that two maps $\Spa(K_i,K_i^+)\to X$, $i=1,2$, project to the same point of $|Y|$. By definition of $|Y|$, this means that up to enlarging $K_1$ and $K_2$, we can assume that $K=K_1=K_2$, $K^+=K_1^+=K_2^+$, and the composite maps $\Spa(K,K^+)\rightrightarrows X\to Y$ agree. In other words, we get a map $\Spa(K,K^+)\to R=X\times_Y X$, proving that $|X|/|R|\to |Y|$ is injective, finishing the proof that $|X|/|R|$ maps bijectively to $|Y|$.

Finally, to see that the quotient topology on $|Y|$ is independent of the presentation, choose another presentation $Y=X^\prime/R^\prime$. Then $X\times_Y X^\prime$ is a diamond, so we can find a pro-\'etale surjection $X^{\prime\prime}\to X\times_Y X^\prime$ from a perfectoid space $X^{\prime\prime}$. Now $|X^{\prime\prime}|\to |X|$ and $|X^{\prime\prime}|\to |X^\prime|$ are quotient maps by Lemma~\ref{lem:quotientmap}, so the quotient topologies on $|Y|$ induced by the surjection from $|X|$, $|X^{\prime\prime}|$ and $|X^\prime|$ are equivalent.
\end{proof}

In particular, the following definition makes sense.

\begin{definition}\label{def:topspace} Let $Y$ be a diamond, and write $Y=X/R$ as the quotient of a perfectoid space $X$ by a pro-\'etale equivalence relation $R\subset X\times X$. The underlying topological space of $|Y|$ is the quotient space $|X|/|R|$.
\end{definition}

It is easy to see that this construction is functorial in $Y$. One can read off the open subspaces of a diamond $Y$ on the topological space $|Y|$.

\begin{proposition}\label{prop:topspaceopensubsets} Let $Y$ be a diamond with underlying topological space $|Y|$. Any open subfunctor of $Y$ is a diamond. Moreover, there is bijective correspondence between open immersions $U\subset Y$ and open subsets $V\subset |Y|$, given by sending $U$ to $V=|U|$, and $V$ to the subfunctor $U\subset Y$ of those maps $X\to Y$ for which $|X|\to |Y|$ factors over $V\subset |Y|$.

Moreover, if $f: Y^\prime\to Y$ is a map of diamonds which is a surjection of v-sheaves, then $|f|: |Y^\prime|\to |Y|$ is a quotient map.
\end{proposition}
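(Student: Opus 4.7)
My approach is to work throughout with a presentation $Y = X/R$ as in Definition~\ref{def:diamond}, exploiting the fact, proved in Proposition~\ref{prop:topspaceindep}, that the topology on $|Y|$ is the quotient topology coming from the surjection $|X| \to |Y| = |X|/|R|$.

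For the first two claims, given an open subfunctor $U \subset Y$, the pullback $X_U := U \times_Y X \subset X$ is (by definition) an open subspace of the perfectoid space $X$, and it is $R$-invariant, so the restriction $R_U \subset X_U \times X_U$ is still a pro-\'etale equivalence relation and presents $U = X_U/R_U$ as a diamond. Conversely, given an open subset $V \subset |Y|$, the preimage $V_X \subset |X|$ is an open $|R|$-invariant subset, corresponding to an open subspace $X_V \subset X$ on which the restriction $R_V$ of $R$ is still a pro-\'etale equivalence relation; the diamond $X_V/R_V$ is then the desired open subfunctor, with topological image $V$. The two constructions are manifestly inverse after pulling back to the perfectoid-space level, where they reduce to the tautological bijection between open subspaces of $X$ and open subsets of $|X|$.

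For the third claim, I first verify surjectivity of $|f|$: every point $y \in |Y|$ is represented by some $\Spa(K,K^+) \to Y$; the pullback $Y' \times_Y \Spa(K,K^+) \to \Spa(K,K^+)$ is a surjection of v-sheaves of diamonds, and picking a rank-$1$ valuation point of any perfectoid presentation of the source yields a lift $\Spa(L,L^+) \to Y'$ whose image in $|Y'|$ maps to $y$. To show that $|f|$ is a quotient map, let $W \subset |Y|$ be such that $|f|^{-1}(W)$ is open in $|Y'|$; I must show $W$ is open. The claim is local on $|Y|$, and since $|X| \to |Y|$ is open (the projections $|R| \to |X|$ are open, as $R \to X$ is pro-\'etale and pro-\'etale maps of perfectoid spaces are open), I may replace $X$ by an affinoid open and reduce to checking that the preimage $W_X$ of $W$ in $|X|$ is open. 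Because $f$ is a v-sheaf surjection, the identity map $X \to X \to Y$ lifts through $f$ after some v-cover $\tilde X \to X$; the finite-subcover clause in the definition of v-cover (combined with $X$ being qcqs) allows me to shrink $\tilde X$ to a qcqs perfectoid space for which $\tilde X \to X$ is qcqs. Then $|\tilde X| \to |X|$ is a spectral, surjective, generalizing map of spectral spaces, hence a quotient map by Lemma~\ref{lem:quotientmap}. The preimage of $W$ in $|\tilde X|$ coincides with the preimage of $|f|^{-1}(W) \subset |Y'|$ under the continuous map $|\tilde X| \to |Y'|$ (continuity being part of the functoriality of $|\cdot|$ in Proposition~\ref{prop:topspaceindep}), so it is open; the quotient-map property of $|\tilde X| \to |X|$ then implies that $W_X$ is open, as desired.

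The main technical point is in the last paragraph: arranging the v-cover $\tilde X \to X$ so that $|\tilde X| \to |X|$ is actually spectral and surjective. This rests on the finite-subcover axiom built into the definition of a v-cover together with the fact that qcqs perfectoid spaces have spectral underlying topological spaces and that qcqs maps between them are automatically spectral.
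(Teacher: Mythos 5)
Your treatment of the first two assertions is essentially the paper's: pull back to the presentation $Y = X/R$, use $R$-invariance, and note that everything is tautological at the level of $|X|$. For the third assertion your underlying mechanism — lift the map $X \to Y$ through $Y'$ after a qcqs v-cover and then invoke Lemma~\ref{lem:quotientmap} — is the same argument that the paper packages abstractly via Proposition~\ref{prop:checksheafpropvloc}~(i) (the descent of open immersions along v-surjections, whose proof itself reduces to Lemma~\ref{lem:quotientmap}). So this is not a genuinely different route, only an unwound one.

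There is, however, one outright false assertion in your argument: you claim that pro-\'etale maps of perfectoid spaces are open, and use this to conclude $|R| \to |X|$ and hence $|X| \to |Y|$ are open. This is wrong. Remark~\ref{rem:diagproetale} notes that Zariski closed immersions are affinoid pro-\'etale, and more elementarily, the inclusion $\Spa(K(x),K(x)^+) \to X$ of Example~\ref{ex:injpoint} is affinoid pro-\'etale with image the (typically non-open) set of generalizations of $x$. Pro-\'etale maps need not be open; this is precisely why the paper has to work to construct universally open pro-\'etale covers (Lemma~\ref{lem:strtotdisccover}, Proposition~\ref{prop:spatialunivopen}). Fortunately the claim does no work in your argument: by Definition~\ref{def:topspace}, $W \subset |Y|$ is open \emph{by definition} if and only if its preimage $W_X \subset |X|$ is open, and openness of $W_X$ is a condition local on $|X|$, so you may pass to an affinoid open of $X$ without ever needing to ``work locally on $|Y|$'' or needing $|X| \to |Y|$ to be open. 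Delete the parenthetical and the argument stands. A secondary, more cosmetic slip: in the surjectivity step you say you pick a ``rank-$1$ valuation point'' of the cover of $\Spa(K,K^+)$, but if $K^+ \neq \OO_K$ the point $y$ is not the image of a rank-$1$ point; you want any point above the closed point of $\Spa(K,K^+)$, whose residue valuation ring then has rank at least that of $K^+$.
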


\begin{proof} Write $Y=X/R$ as usual. Given an open immersion $U\subset Y$, one gets an open subspace $U_X\subset X$ which is stable under the equivalence relation $R$; then $U=U_X/(R\cap (U_X\times U_X))$, where $R\cap (U_X\times U_X)\subset U_X\times U_X$ is a pro-\'etale equivalence relation, so that $U$ is a diamond.

In particular, in this situation $|U_X|\subset |X|$ is an open $|R|$-invariant subspace of $|X|$, giving rise to the open subspace $V=|U|\subset |X|$. Conversely, if $V\subset |Y|$ is open, then the preimage $V_X\subset |X|$ is an open $|R|$-invariant subset, giving rise an open $R$-invariant subspace $U_X\subset X$, whose quotient $U_X/R\subset Y$ defines an open immersion. One checks easily that the two processes are inverse.

The final statement follows from the fact that the prestack sending a v-sheaf $Y$ to its open subsheaves is a v-stack, which follows from Proposition~\ref{prop:checksheafpropvloc}~(i).
\end{proof}

\begin{example} If $Y$ is a perfectoid space, then $|Y|$ is the usual underlying topological space of $Y$. In the context of Example~\ref{ex:compacthausdorff}, if $T$ is a compact Hausdorff space and $K$ is a perfectoid field, then
\[
|\underline{T}\times \Spa(K,\OO_K)| = T\ .
\]
\end{example}

In particular, the example shows that in general $|Y|$ can be quite far from a spectral space.

\begin{definition}\label{def:spatial} Let $Y$ be a diamond. Then $Y$ is \emph{spatial} if $Y$ is quasicompact and quasiseparated, and $|Y|$ admits a basis of open subsets given by $|U|$ for quasicompact open immersions $U\subset Y$. More generally, $Y$ is \emph{locally spatial} if $Y$ admits an open cover by spatial diamonds.
\end{definition}

We note that any perfectoid space is locally spatial, and it is spatial precisely when it is qcqs.

\begin{proposition}\label{prop:spatialfirstprop} Let $Y$ be a spatial diamond.
\begin{altenumerate}
\item[{\rm (i)}] The underlying topological space $|Y|$ is spectral.
\item[{\rm (ii)}] Any quasicompact open subfunctor $U\subset Y$ is spatial.
\item[{\rm (iii)}] For any perfectoid space $X^\prime$ with a map $X^\prime\to Y$, the induced map of locally spectral spaces $|X^\prime|\to |Y|$ is spectral and generalizing.
\end{altenumerate}
\end{proposition}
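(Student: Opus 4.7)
The plan is to prove (i) via Lemma~\ref{lem:spectralquotient}, from which (ii) and (iii) follow by direct topological manipulations.

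For (i), I choose a presentation $Y = X/R$ with $X$ a qcqs perfectoid space (possible since $Y$ is qcqs); by Proposition~\ref{prop:diamondfirstprop}~(ii) and quasiseparatedness of $Y$, the equivalence relation $R = X\times_Y X$ is itself a qcqs perfectoid space, so both $|X|$ and $|R|$ are spectral, and the pro-\'etale projections $s,t\colon R\to X$ induce spectral and generalizing maps $|s|,|t|\colon |R|\to |X|$. Let $R_{\mathrm{top}}\subset |X|\times|X|$ denote the image of $|R|$ under the composite spectral map $|R|\to |X\times X|\to |X|\times|X|$; it is pro-constructible since spectral maps of spectral spaces send pro-constructible subsets to pro-constructible subsets, and the two projections $R_{\mathrm{top}}\to |X|$ are generalizing because $|s|$ and $|t|$ are generalizing and $|R|\twoheadrightarrow R_{\mathrm{top}}$ is surjective. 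By Proposition~\ref{prop:topspaceindep}, $|Y| = |X|/R_{\mathrm{top}}$ with the quotient topology. The spatiality hypothesis provides a basis of $|Y|$ by open subsets $|U|$ for $U\subset Y$ a qc open subfunctor, and the preimage of $|U|$ in $|X|$ is $|X\times_Y U|$, which is a qc open of $X$ since $U\to Y$ is qc ($U$ qc and $Y$ qs). Applying Lemma~\ref{lem:spectralquotient} to $|X|$ and $R_{\mathrm{top}}$ yields that $|Y|$ is spectral and that $|X|\to|Y|$ is spectral and generalizing.

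For (ii), any qc open subfunctor $U\subset Y$ is a diamond by Proposition~\ref{prop:topspaceopensubsets} and is qcqs since $Y$ is qs. For the spatial basis condition on $|U|$, I intersect the spatial basis of $|Y|$ with the qc open $|U|\subset|Y|$: each $|W|\cap|U| = |W\cap U|$ for $W\subset Y$ qc open corresponds to a qc open subfunctor $W\cap U\subset U$ (qc by quasiseparatedness of $Y$), giving the required basis.

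For (iii), let $f\colon X^\prime\to Y$ be any map from a perfectoid space; working locally, I may assume $X^\prime$ is qcqs. Spectrality of $|f|$ amounts to showing that the preimage in $|X^\prime|$ of each qc open $|W|\subset|Y|$ (with $W\subset Y$ qc open) is qc; but this preimage is $|X^\prime\times_Y W|$, which is qc since $W\to Y$ is qc. For the generalizing property, given a generalization $y_1$ of $y_0\in |Y|$ and a lift $x^\prime_0\in|X^\prime|$ of $y_0$, I pick a strictly totally disconnected cover $\tilde{X}^\prime\to X^\prime$ (Lemma~\ref{lem:strtotdisccover}) and lift $x^\prime_0$ to $\tilde{x}^\prime_0\in|\tilde{X}^\prime|$; since $X\to Y$ is quasi-pro-\'etale by Proposition~\ref{prop:diamondfirstprop}~(iv), the pullback $\tilde{X}^\prime\times_Y X\to\tilde{X}^\prime$ is pro-\'etale (in particular surjective on points), so I lift $\tilde{x}^\prime_0$ further to $\tilde{z}_0\in|\tilde{X}^\prime\times_Y X|$ with image $x_0\in|X|$ lying over $y_0$. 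Using (i), I lift $y_1$ to a generalization $x_1$ of $x_0$ in $|X|$; since the projection $|\tilde{X}^\prime\times_Y X|\to|X|$ is a map of perfectoid spaces, hence generalizing, I further lift $x_1$ to a generalization $\tilde{z}_1$ of $\tilde{z}_0$; the continuous projections $|\tilde{X}^\prime\times_Y X|\to|\tilde{X}^\prime|\to|X^\prime|$ then produce a generalization $x^\prime_1$ of $x^\prime_0$ mapping to $y_1$. The main obstacle is the setup for (i), specifically verifying the pro-constructibility and generalizing properties of $R_{\mathrm{top}}$ and matching the spatial basis of $|Y|$ with the hypothesis of Lemma~\ref{lem:spectralquotient}; once (i) is in hand, (ii) and (iii) reduce to standard topological arguments about quotient maps and lifting specializations.
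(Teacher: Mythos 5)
Your proof is correct, and it closely tracks the paper's argument; the one substantive difference is organizational. You prove (i) by applying Lemma~\ref{lem:spectralquotient} to $|X|$ and $R_{\mathrm{top}}$ (the image of $|R|$ in $|X|\times|X|$), after checking pro-constructibility and the generalizing property of the projections and matching the basis condition with the spatiality hypothesis. The paper instead proves (i) bare-handed — quasicompactness via Proposition~\ref{prop:topspaceopensubsets}, a basis of quasicompact opens from spatiality and quasiseparatedness, then soberness ($T_0$ from Lemma~\ref{lem:equivrel}, existence of generic points by a constructible-topology compactness argument on the preimage $W\subset|X|$) — and only invokes Lemma~\ref{lem:spectralquotient} at the very end of (iii) to get that $|X|\to|Y|$ is generalizing. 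Since Lemma~\ref{lem:spectralquotient} internalizes exactly these soberness and $T_0$ arguments, your route packages the same content once, with the small benefit of delivering (i) together with the spectral-and-generalizing statement for $|X|\to|Y|$ in a single stroke. Your (ii) matches the paper; your (iii) matches as well, with the extra precaution of passing through a strictly totally disconnected cover $\tilde{X}^\prime$ of $X^\prime$ to make the quasi-pro-\'etale pullback of the presentation representable, where the paper phrases it more loosely by replacing $X^\prime$ by $X^\prime\times_Y X$ directly.
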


\begin{proof} For part (i), first note that $|Y|$ is quasicompact, as by Proposition~\ref{prop:topspaceopensubsets}, any open cover of $|Y|$ determines an open cover of $Y$. Note that the set of quasicompact open immersions $U\subset Y$ is stable under finite intersections (as $Y$ is quasiseparated); it follows that the set of $|U|\subset |Y|$ for $U$ running over quasicompact open immersions forms a basis of quasicompact open subsets stable under finite intersections. It remains to check that $|Y|$ is sober, i.e.~every irreducible closed subset has a unique generic point. Uniqueness holds because $|Y|$ is $T_0$ by Lemma~\ref{lem:equivrel}. For existence of generic points, write $Y=X/R$, where $X$ is a qcqs perfectoid space and $R$ is a pro-\'etale equivalence relation, so that $|X|\to |Y|$ is a surjective spectral map of spectral spaces. Let $Z\subset |Y|$ be an irreducible closed subset, with preimage $W\subset |X|$. It is enough to see that the intersection of all nonempty quasicompact open subsets of $Z$ is nonempty, as any point in the intersection is a generic point for $Z$. But the preimages of quasicompact open subsets of $Z$ are quasicompact open subsets of $W$, and by quasicompactness of the constructible topology on $W$, their intersection is nonempty; thus, the intersection is nonempty in $Z$, as desired.

In (ii), any quasicompact open subfunctor $U\subset Y$ is given by a quasicompact open subset $|U|\subset |Y|$. As $Y$ is quasiseparated, $U$ is quasiseparated. Moreover, if $V\subset Y$ is a quasicompact open subfunctor, then $V\times_Y U\subset U$ is a quasicompact open subfunctor. Now the $|V\times_Y U|\subset |U|$ form a basis of open subsets, verifying that $U$ is spatial.

For part (iii), we may assume that $X^\prime$ is qcqs, in which case $|X^\prime|$ and $|Y|$ are spectral. We need to check that the preimage of a quasicompact open is quasicompact. If $V\subset |Y|$ is a quasicompact open, then it is covered by finitely many $|U_i|\subset |Y|$ for quasicompact open $U_i\subset Y$, so it remains to check that $X^\prime\times_Y U_i\subset X^\prime$ is quasicompact. But this follows from the assumption that $Y$ is quasiseparated. To see that $|X^\prime|\to |Y|$ is generalizing, write $Y=X/R$ as usual, with $X$ strictly totally disconnected. Then we can replace $X^\prime$ by $X^\prime\times_Y X$, and assume that $X^\prime\to Y$ lifts to $X^\prime\to X$. Now $|X^\prime|\to |X|$ is generalizing, so it remains to see that $|X|\to |Y|$ is generalizing. But this follows from Lemma~\ref{lem:spectralquotient}.
\end{proof}

Let us note that some of these properties extend to locally spatial diamonds.

\begin{proposition}\label{prop:locallyspatial} Let $Y$ be a locally spatial diamond.
\begin{altenumerate}
\item[{\rm (i)}] The underlying topological space $|Y|$ is locally spectral.
\item[{\rm (ii)}] Any open subfunctor $U\subset Y$ is locally spatial.
\item[{\rm (iii)}] The functor $Y$ is quasicompact (resp.~quasiseparated) if and only if $|Y|$ is quasicompact (resp.~quasiseparated).
\item[{\rm (iv)}] For any locally spatial diamond $Y^\prime$ with a map $Y^\prime\to Y$, the induced map of locally spectral spaces $|Y^\prime|\to |Y|$ is spectral and generalizing.
\end{altenumerate}
In particular, for any $y\in |Y|$, its set of generalizations is totally ordered.
\end{proposition}

\begin{proof} Left as an exercise to the reader. For the final statement, choose a surjection from a perfectoid space and use (iv) and the corresponding result for perfectoid spaces.
\end{proof}

We need some permanence properties of the class of (locally) spatial diamonds.

\begin{proposition}\label{prop:qcvinjspatdiamond} Let $f: Y^\prime\to Y$ be a quasicompact injection of v-sheaves, where $Y$ is a locally spatial diamond. Then $Y^\prime$ is a locally spatial diamond, $|Y^\prime|\subset |Y|$ is a pro-constructible and generalizing subset (with the subspace topology), and the map
\[
Y^\prime\to Y\times_{\underline{|Y|}} \underline{|Y^\prime|}
\]
of v-sheaves is an isomorphism.
\end{proposition}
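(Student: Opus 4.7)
The plan is to assemble the proof from the preceding results on quasi-pro-\'etale maps, pro-constructible subsets of strictly totally disconnected spaces, and quotient maps between spectral spaces. First, since the claim is local on $Y$, I would reduce to the case that $Y$ is spatial. By Corollary~\ref{cor:qcvinjqproet}, the map $f$ is quasi-pro-\'etale, so by Proposition~\ref{prop:quasiproetaleoverdiamond}, $Y'$ is a diamond. Choose a presentation $Y = X/R$ with $X$ a strictly totally disconnected qcqs perfectoid space (using Lemma~\ref{lem:strtotdisccover}), and set $X' := Y'\times_Y X$, $R' := Y'\times_Y R$. By Corollary~\ref{cor:qcvinjqproet}, these are representable by pro-constructible generalizing subsets of $X$ and $R$, which by Lemma~\ref{lem:subsetwlocal} carry canonical affinoid perfectoid structures; moreover $Y' = X'/R'$. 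Quasicompactness of $Y'$ follows since $f$ is a quasicompact map into the quasicompact $Y$, and quasiseparatedness follows because $f$ being an injection makes $\Delta_{Y'}$ a base change of the quasicompact $\Delta_Y$.

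Next I would analyze $|Y'|$. The map $|X|\to|Y|$ is a spectral, surjective, generalizing quotient map by Proposition~\ref{prop:spatialfirstprop}~(iii) and Lemma~\ref{lem:quotientmap}. Since $|X'|\subset|X|$ is pro-constructible, its image $|Y'|\subset|Y|$ is pro-constructible (Lemma about spectral images of pro-constructible subsets in Section~2). It is closed under generalizations: any generalization of a point $y\in|Y'|$ lifts through the generalizing surjection $|X|\to|Y|$ to a generalization of some lift $x\in|X'|$ in $|X|$, which lies in $|X'|$ since $|X'|$ is generalizing. Injectivity of $|Y'|\to|Y|$ is immediate from $f$ being an injection of v-sheaves.

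For the topology comparison, let $|Y'|^{\mathrm{sub}}$ denote $|Y'|$ with the subspace topology from $|Y|$; as a pro-constructible subset of a spectral space, it is itself spectral and the inclusion $|Y'|^{\mathrm{sub}}\hookrightarrow|Y|$ is spectral. Then the factorization of $|X'|\hookrightarrow|X|\to|Y|$ as $|X'|\to|Y'|^{\mathrm{sub}}\hookrightarrow|Y|$ exhibits $|X'|\to|Y'|^{\mathrm{sub}}$ as a spectral surjective generalizing map between spectral spaces; Lemma~\ref{lem:quotientmap} therefore identifies it with the quotient map $|X'|\to|X'|/|R'|=|Y'|$, so the subspace and quotient topologies coincide. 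Spatiality of $Y'$ then follows: for each quasicompact open $U\subset Y$, the preimage $U\times_Y Y'\subset Y'$ is a quasicompact open subdiamond with $|U\times_Y Y'|=|U|\cap|Y'|$, and as $U$ runs over a basis of $|Y|$ these form a basis of the subspace topology.

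Finally, for the isomorphism $Y'\cong Y\times_{\underline{|Y|}}\underline{|Y'|}$, both sides are sub-v-sheaves of $Y$, and uniqueness of factorizations follows from $f$ being an injection. For essential surjectivity, by v-descent I may test on strictly totally disconnected $Z\to Y$ whose topological map $|Z|\to|Y|$ factors through $|Y'|$ (continuity is automatic, since the subspace topology is in play). By Corollary~\ref{cor:qcvinjqproet}, $Z\times_Y Y'\subset Z$ is the pro-constructible generalizing subset cut out by the preimage of $|Y'|$ in $|Z|$; by hypothesis this preimage is all of $|Z|$, hence $Z\times_Y Y'=Z$ and $Z\to Y$ factors through $Y'$. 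The main obstacle I anticipate is the topology-comparison step, where one must carefully check that the subspace topology on $|Y'|\subset|Y|$ agrees with the quotient topology from $|X'|$ — but this is controlled by the combination of Lemma~\ref{lem:subsetwlocal} (so $|X'|$ really is spectral with spectral inclusion into $|X|$) and Lemma~\ref{lem:quotientmap}.
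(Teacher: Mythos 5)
Your proof is correct and follows essentially the same strategy as the paper: reduce to the spatial case, pull back along a quasi-pro-\'etale surjection from a strictly totally disconnected $X$, invoke Corollary~\ref{cor:qcvinjqproet} to realize $X'=Y'\times_Y X$ as a pro-constructible generalizing subspace, push forward the topology along the quotient map $|X|\to|Y|$ via Lemma~\ref{lem:quotientmap}, and finish the v-sheaf isomorphism by checking after pullback to $X$. One small slip: the existence of a strictly totally disconnected $X\to Y$ for a spatial diamond $Y$ is not Lemma~\ref{lem:strtotdisccover} (which concerns affinoid perfectoid $X$), but rather follows from Proposition~\ref{prop:spatialunivopen} (or directly from the definition of a diamond together with $Y$ being qcqs); also, you implicitly use $|X'|=|X|\times_{|Y|}|Y'|$ when arguing that $|X'|\to|Y'|^{\mathrm{sub}}$ is generalizing — this follows from surjectivity of $|X'|\to|X|\times_{|Y|}|Y'|$ (Proposition~\ref{prop:topfiberproduct}) and injectivity of $|X'|\hookrightarrow|X|$, and is worth recording explicitly as the paper does.
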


\begin{proof} We may assume that $Y$ is spatial. Let $X\to Y$ be a surjective quasi-pro-\'etale map where $X$ is a strictly totally disconnected perfectoid space. By Corollary~\ref{cor:qcvinjqproet}, $X^\prime=X \times_Y Y^\prime$ is a pro-constructible and generalizing subspace of $X$; in particular $X^\prime = X\times_{\underline{|X|}} \underline{|X^\prime|}$. The spectral map $|X|\to |Y|$ of spectral spaces is generalizing, so the image $V\subset |Y|$ of $|X^\prime|$ in $|Y|$ is a pro-constructible and generalizing subset, with $|X^\prime| = |X|\times_{|Y|} V$. Also, if $R=X\times_Y X$ is the equivalence relation, which is affinoid pro-\'etale over $X$, then $R^\prime = R\times_Y Y^\prime$ is a pro-constructible and generalizing subset of $R$. One has a homeomorphism $|X^\prime|/|R^\prime| = V$: Indeed, the map $|X^\prime|/|R^\prime|\to V$ is a continuous bijective map, and $|X^\prime|\to V$ is a quotient map by Lemma~\ref{lem:quotientmap}. Thus, $|Y^\prime|=V$, and any quasicompact open subset of $V$ comes via pullback from a quasicompact open subset of $|Y|$, which corresponds to a quasicompact open subspace of $Y$, and pulls back to a quasicompact open subspace of $Y^\prime$: Thus, $Y^\prime$ is spatial. The natural map $Y^\prime\to Y\times_{\underline{|Y|}} \underline{|Y^\prime|}$ becomes an isomorphism after pullback along $X\to Y$. As it is a map of v-sheaves, we see that $Y^\prime = Y\times_{\underline{|Y|}} \underline{|Y^\prime|}$, as desired.
\end{proof}

\begin{lemma}\label{lem:finetoverspatial} Let $Y$ be a (locally) spatial diamond, and let $Y^\prime\to Y$ be a finite \'etale map of pro-\'etale sheaves. Then $Y^\prime$ is a (locally) spatial diamond.
\end{lemma}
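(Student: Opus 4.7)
The strategy is: first reduce from locally spatial to spatial; then produce a good totally disconnected presentation of $Y'$ pulled back from one of $Y$; then verify the three conditions (diamond, qcqs, basis of qc opens) using Lemma~\ref{lem:spectralopenequivrel}.

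First I would reduce to the spatial case. If $Y$ is merely locally spatial, cover it by spatial open subdiamonds $U_i \subset Y$; then $U_i' := Y' \times_Y U_i \to Y'$ is an open immersion (open immersions are stable under base change), the $U_i'$ cover $Y'$, and each $U_i' \to U_i$ is finite \'etale. So it suffices to prove: if $Y$ is spatial, then $Y'$ is spatial. Fix such $Y$. Using Lemma~\ref{lem:strtotdisccover} applied to an affinoid cover of a presentation of $Y$ (and Proposition~\ref{prop:quasiproetalefirstprop} for composition), choose a qcqs strictly totally disconnected $X$ with a surjective quasi-pro-\'etale map $X \to Y$, and set
\[
X' := Y' \times_Y X, \qquad R := X \times_Y X, \qquad R' := X' \times_{Y'} X' = R \times_Y Y'.
\]
Then $X' \to X$ is finite \'etale (in particular qcqs), so $X'$ is qcqs; and $X' \to Y'$ is a surjective quasi-pro-\'etale map (base change of $X \to Y$). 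By Proposition~\ref{prop:diamondqproetsurj}, $Y'$ is a diamond and $Y' = X'/R'$ (as v-sheaves, hence as diamonds).

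Next I would check the qcqs conditions. Since $X' \to Y'$ is a surjection from a qcqs perfectoid space, $Y'$ is quasicompact. For quasiseparatedness, factor the diagonal as
\[
Y' \xrightarrow{\Delta_{Y'/Y}} Y' \times_Y Y' \longrightarrow Y' \times Y',
\]
where the first map is a clopen immersion (diagonal of the finite \'etale $Y'\to Y$, hence quasicompact), and the second is the base change of $\Delta_Y$ along $Y' \times Y' \to Y \times Y$, which is quasicompact since $Y$ is quasiseparated. Hence $\Delta_{Y'}$ is quasicompact. Along the way, the same reasoning shows $Y'\to Y$ is quasicompact, so for every quasicompact open $U\subset Y$, $f^{-1}(U) = U\times_Y Y'$ is a quasicompact open subdiamond of $Y'$ with $|f^{-1}(U)| = |f|^{-1}(|U|)$.

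The remaining point, which I expect to be the main step, is verifying the basis condition. The plan is to apply Lemma~\ref{lem:spectralopenequivrel} to $|X'|$ with the equivalence relation given by the image $|R'|\subset |X'|\times|X'|$ of the natural map. I must check that this image is pro-constructible and that the two projections $|R'|\to |X'|$ are quasicompact, generalizing, and open. Generalizing holds because all maps of adic spaces are generalizing. Quasicompactness follows because $R'\to X'$ is finite \'etale via one projection and pro-\'etale (the base change of $R\to X$) via the other, and both are qcqs. Openness holds because finite \'etale maps are open and pro-\'etale maps are open (approximating quasicompact opens from finite levels in a pro-\'etale presentation). Pro-constructibility: the map $R' \to X'\times X'$ factors as the spectral map $|R'|\to |X'\times X'|$ (between spectral spaces, hence image is pro-constructible) followed by the spectral surjection $|X'\times X'|\to |X'|\times|X'|$, which preserves pro-constructibility. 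Then Lemma~\ref{lem:spectralopenequivrel} yields that $|Y'| = |X'|/|R'|$ is locally spectral and quasiseparated, and $|X'|\to|Y'|$ is an open spectral qcqs map; combined with quasicompactness of $|Y'|$, this gives $|Y'|$ spectral. Finally, the quasicompact opens of $|Y'|$ (pulling back to quasicompact opens of $|X'|$ under the qcqs map $|X'|\to|Y'|$) correspond to quasicompact open subdiamonds of $Y'$ (via Proposition~\ref{prop:topspaceopensubsets}, and because the corresponding subdiamond admits a surjection from a qcqs perfectoid space), and they form a basis of the spectral space $|Y'|$. Hence $Y'$ is spatial, completing the proof.
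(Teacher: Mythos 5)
Your reduction to the spatial case, the identification of $Y'$ as a diamond via Proposition~\ref{prop:diamondqproetsurj}, and the qcqs verifications are all fine. The gap is in the basis step, specifically in the claim that the two projections $|R'| \to |X'|$ are open.

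You assert ``pro-\'etale maps are open (approximating quasicompact opens from finite levels in a pro-\'etale presentation).'' This is false. If $W = \varprojlim W_i \to Z$ is a pro-\'etale presentation and $V \subset W$ is a quasicompact open, then $V$ descends to some $V_i \subset W_i$ whose image in $Z$ is open; but the image of $V$ in $Z$ is the intersection of $V_i$ with the image of $W$, and the image of $W$ need not be open. Remark~\ref{rem:diagproetale} makes this concrete: Zariski closed immersions of affinoid perfectoid spaces are affinoid pro-\'etale, and these are certainly not open. In fact the projections $s', t' \colon R' \to X'$ are base changes of the quasi-pro-\'etale surjection $X \to Y$ along itself, and for a general such $X \to Y$ these are not open --- already over $Y = \Spa(C, C^+)$ with $C^+ \neq \OO_C$, one can have a quasi-pro-\'etale surjection from a strictly totally disconnected $X$ whose image of a clopen piece is a non-open pro-constructible generalizing subset.

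The natural patch is to arrange $X \to Y$ to be \emph{universally open}, which is exactly the content of Proposition~\ref{prop:spatialunivopen}. But that proposition's proof cites Lemma~\ref{lem:finetoverspatial} (``By Lemma~\ref{lem:finetoverspatial}, all such $Y'$ are spatial diamonds''), so invoking it here is circular. Citing Lemma~\ref{lem:strtotdisccover} does not rescue things either: it produces a universally open cover of an \emph{affinoid perfectoid} space, but the composite with the presentation map $X_0 \to Y$ inherits no openness, since $X_0 \to Y$ is only quasi-pro-\'etale.

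The paper avoids the issue entirely by localizing: it reduces to the case where $|Y|$ has a unique closed point, takes $X = \Spa(C, C^+)$, and observes that $X'$ is then a \emph{finite} disjoint union of copies of $\Spa(C, C^+)$, so that every pro-constructible generalizing subset of $X'$ (in particular the $R'$-saturation $R' \cdot V$ of any quasicompact open $V$) is automatically open. No openness hypothesis on the projections is needed there. That localization step, together with an extension argument for quasicompact opens from $Y_y$ to a neighborhood, is precisely what your argument is missing; as written, the application of Lemma~\ref{lem:spectralopenequivrel} does not have its hypotheses verified.
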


\begin{proof} We may assume that $Y$ is spatial. By Proposition~\ref{prop:quasiproetaleoverdiamond}, $Y^\prime$ is a diamond. Choose a presentation $Y=X/R$ as a quotient of an affinoid perfectoid space $X$, and let $X^\prime = X\times_Y Y^\prime$, which is finite \'etale over $X$. Let $R$ and $R^\prime$ be the respective equivalence relations. We need to find enough quasicompact open subspaces of $Y^\prime$. Pick any point $y^\prime\in |Y^\prime|$ with image $y\in |Y|$. Let $Y_y\subset Y$ be the localization of $Y$ at $y$, i.e.~the intersection of all quasicompact open subspaces containing $y$. For any qcqs perfectoid space $Z$ over $Y$, let $Z_y=Z\times_Y Y_y$. Then quasicompact open subspaces of $Z_y$ come via pullback from quasicompact open subspaces of $Z\times_Y U$ for some quasicompact open subspace $U$ of $Y$, and any two such extensions agree after shrinking $U$. By applying this to $Z=X^\prime$ and $Z=R^\prime$, we find that any quasicompact open subspace of $Y^\prime_y$ extends to a quasicompact open subspace of $Y^\prime_U$ for some $U$. This reduces us to the case $Y=Y_y$.

Thus, we may assume that $|Y|$ has a unique closed point. In this case, we can take $X=\Spa(C,C^+)$ for some algebraically closed field $C$ with an open and bounded valuation subring $C^+\subset C$. Then $X^\prime$ is a finite disjoint union of copies of $X$. We need to show that $|Y'|$ is a spectral space and $|X'|\to |Y'|$ is a spectral map. Now $|X'|$ is a disjoint union of copies of $|X|$, and $|X|$ is a totally ordered chain of specializations. As all maps are generalizing, it follows that $|Y'| = |X'|/|X'\times_{Y'} X'|$ looks like a (finite disjoint union of) tree(s): Over the generic point, one has some finite set of points, and as one goes to specializations, more branches may show up. The branching points are happening at pro-constructible subsets, i.e. the locus where two sections $X\to Y'$ induce the same map $|X|\to |Y|$ is a pro-constructible generalizing subset of $|X|$. Indeed, it is the image of $|X \times_{Y'} X|\to |X|$ (under either projection). Any such ``tree'' is spectral, with the projection from each branch being spectral, for example by writing it as an inverse limit of such trees where the branching occurs at quasicompact open subsets.
\end{proof}

\begin{lemma}\label{lem:diamondlimit} Let $Y_i$, $i\in I$, be a cofiltered inverse system of diamonds with qcqs transition maps. Then $Y=\varprojlim_i Y_i$ is a diamond with a bijective continuous map $|Y|\to \varprojlim_i |Y_i|$, and the maps $Y\to Y_i$ are qcqs. If all $Y_i$ are (locally) spatial, then $Y$ is (locally) spatial, and $|Y|\to \varprojlim_i |Y_i|$ is a homeomorphism.

If, with $\kappa$ is as in Lemma~\ref{lem:choosekappa}, the category $I$ is $\kappa$-small and all $Y_i$ are $\kappa'$-small for some $\kappa'<\kappa$, then $Y$ is $\kappa$-small.
\end{lemma}

Here being $\kappa$-small means admitting a surjection from a $\kappa$-small perfectoid space.

\begin{proof} We can assume that $I$ has a final object $0$, and that $Y_0$ is qcqs, hence all $Y_i$ are. We can choose a quasi-pro-\'etale surjection $X_0\to Y_0$ from some strictly totally disconnected space $X_0$, and quasi-pro-\'etale surjections $X_i\to Y_i\times_{Y_0} X_0$ from strictly totally disconnected $X_i$. Then $Y$ embeds into the product $Y'$ of all $Y_i$ over $Y_0$, which admits a quasi-pro-\'etale surjection from the product $X'$ of all $X_i$ over $X_0$. Then $X'\times_{Y'} Y\to Y$ is a quasi-pro-\'etale surjection, where $X'\times_{Y'} Y\subset X'$ is a quasicompact injection into some affinoid perfectoid space. By Proposition~\ref{prop:vinjdiamond}, it follows that $X'\times_{Y'} Y$, and hence $Y$, is a diamond. One also easily gets the desired bound on the size, using that if $I$ is $\lambda$-small for $\lambda<\kappa$, we can find $\kappa'\leq \kappa_\lambda<\kappa$ such that $\kappa_\lambda$ has cofinality $>\lambda$.

It follows from this construction that $Y$ is qcqs, hence in general the maps $Y\to Y_i$ are qcqs. For the claims about topological spaces, we give a refined argument. Namely, we can assume that $I$ is the category of ordinals $\mu<\lambda$ for some ordinal $\lambda$, and as before we can assume that $Y_0$ (and then all $Y_i$) are qcqs. In this situation, we will lift the diagram $Y_\mu$, $\mu<\lambda$, to a diagram of strictly totally disconnected spaces $X_\mu$, $\mu<\lambda$, with compatible quasi-pro-\'etale surjections $X_\mu\to Y_\mu$; in fact, with quasi-pro-\'etale surjections
\[
X_\mu\to Y_\mu\times_{\varprojlim_{\mu^\prime<\mu} Y_{\mu^\prime}} \varprojlim_{\mu^\prime<\mu} X_{\mu^\prime}\ .
\]
That this can be done follows easily by transfinite induction, and we note that one obtains a quasi-pro-\'etale surjection
\[
\varprojlim_{\mu<\lambda} X_\mu\to \varprojlim_{\mu<\lambda} Y_\mu
\]
in the limit. Indeed, after pullback to any strictly totally disconnected $Z$, this morphism is a transfinite composition of surjective affinoid pro-\'etale morphisms (using Lemma~\ref{lem:proetaleoverwlocal}).

In this construction, one has
\[
|Y|=|\varprojlim_{\mu<\lambda} X_\mu|/|\varprojlim_{\mu<\lambda} R_\mu| = \varprojlim_{\mu<\lambda} |X_\mu|/|R_\mu| = \varprojlim_{\mu<\lambda} |Y_\mu|\ ,
\]
as sets, where $R_\mu = X_\mu\times_{Y_\mu} X_\mu$. If all $Y_i$ are spatial, then Lemma~\ref{lem:spectralinvlimit} applied to the inverse system $|Y_\mu|$ and the map from $|\varprojlim_{\mu<\lambda} X_\mu|$ to the inverse limit shows that
\[
|\varprojlim_{\mu<\lambda} X_\mu|\to \varprojlim_{\mu<\lambda} |Y_\mu|
\]
is a quotient map, and thus $|Y|\to \varprojlim_{\mu<\lambda} |Y_\mu|$ is a homeomorphism. Moreover, any quasicompact open subspace of $|Y|$ comes via pullback from some quasicompact open subspace of $|Y_i|$, which corresponds to a quasicompact open subspace of $Y_i$, which in turn pulls back to a quasicompact open subspace of $Y$; thus, $Y$ is spatial. If the $Y_i$ are just locally spatial, one can fix a spatial open subset of some $Y_{i_0}$, and taking the preimage gives a spatial open subset of $Y$; also $|Y|=\varprojlim_i |Y_i|$ follows.
\end{proof}

Moreover, in this situation, an analogue of Proposition~\ref{prop:etmaptolim} holds true.

\begin{proposition}\label{prop:etmaptolimdiamond} Let $Y_i$, $i\in I$, be a cofiltered inverse system of qcqs diamonds, with inverse limit $Y=\varprojlim_i Y_i$.
\begin{altenumerate}
\item[{\rm (i)}] Let $Y_\fet$ denote the category of finite \'etale diamonds over $Y$, and similarly for $Y_i$. The base change functors $(Y_i)_\fet\to Y_\fet$ induce an equivalence of categories
\[
\text{2-}\varinjlim_i (Y_i)_\fet\to Y_\fet\ .
\]
\item[{\rm (ii)}] Let $Y_{\et,\qcqs}$ denote the category of (locally separated) \'etale qcqs diamonds over $Y$, and similarly for $Y_i$. The base change functors $(Y_i)_{\et,\qcqs}\to Y_{\et,\qcqs}$ induce an equivalence of categories
\[
\text{2-}\varinjlim_i (Y_i)_{\et,\qcqs}\to Y_{\et,\qcqs}\ .
\]
\item[{\rm (iii)}] Let $Y_{\et,\qc,\sep}\subset Y_{\et,\qcqs}$ be the full subcategory of quasicompact separated \'etale diamonds over $Y$ (and similarly for $Y_i$). Then the base change functors $(Y_i)_{\et,\qc,\sep}\to Y_{\et,\qc,\sep}$ induce an equivalence of categories
\[
\text{2-}\varinjlim_i (Y_i)_{\et,\qc,\sep}\to Y_{\et,\qc,\sep}\ .
\]
\end{altenumerate}
\end{proposition}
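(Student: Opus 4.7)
My plan is to reduce the proposition to its perfectoid-space analogue, Proposition~\ref{prop:etmaptolim}, via compatible pro-\'etale presentations. After replacing $I$ by a cofinal well-ordered subsystem, the transfinite construction in the proof of Lemma~\ref{lem:diamondlimit} furnishes a compatible cofiltered system of strictly totally disconnected perfectoid spaces $X_i$ with quasi-pro-\'etale surjections $X_i\to Y_i$, whose limit $X=\varprojlim_i X_i\to Y$ is also a quasi-pro-\'etale surjection from a strictly totally disconnected perfectoid space. Setting $R_i:=X_i\times_{Y_i} X_i$ and $R:=X\times_Y X$, Lemma~\ref{lem:proetaleoverwlocal} identifies each $R_i$ as a qcqs perfectoid space affinoid pro-\'etale over $X_i$, with $R=\varprojlim_i R_i$ and $R\times_X R=\varprojlim_i(R_i\times_{X_i} R_i)$.

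Since $Y$ and the $Y_i$ are v-sheaves (Proposition~\ref{prop:diamondvsheaf}), the maps $X_i\to Y_i$ and $X\to Y$ are v-covers, and base changes of \'etale diamonds along strictly totally disconnected perfectoid spaces are honest perfectoid spaces (Definition~\ref{def:etquasiproet}), the v-descent result Proposition~\ref{prop:etalesepdescent} identifies finite \'etale (resp.\ quasicompact separated \'etale) diamonds over $Y_i$ with finite \'etale (resp.\ qc separated \'etale) perfectoid spaces over $X_i$ equipped with a descent datum over $R_i$ satisfying the cocycle condition over $R_i\times_{X_i} R_i$; the analogous identification holds over $Y$.

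For parts (i) and (iii), I then apply Proposition~\ref{prop:etmaptolim}~(i),(iii) to each of the three inverse systems $\{X_i\}$, $\{R_i\}$, $\{R_i\times_{X_i} R_i\}$ (whose limits are $X$, $R$, $R\times_X R$): the object on $X$ descends to some $X_j$, the descent isomorphism on $R$ descends to $R_{j'}$ for some $j'\geq j$, and the cocycle identity on $R\times_X R$ holds at some further stage $j''\geq j'$. Gluing via Proposition~\ref{prop:etalesepdescent} at level $j''$ produces the desired object over $Y_{j''}$; full faithfulness is handled analogously by descending morphisms and their equalities through the same mechanism applied to $X_i$ and $R_i$. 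For part (ii), I bootstrap from (iii): any qcqs \'etale diamond $Z\to Y$ admits a finite cover by quasicompact separated \'etale open subdiamonds $U_\alpha\subset Z$ (using that $Z$ is locally separated and quasicompact, together with the locally spatial structure available on $Z$ pulled back from $X$); these $U_\alpha$, together with the automatically qc separated \'etale intersections $U_\alpha\times_Z U_\beta$, descend to some $Y_j$ via (iii), and the glueing cocycle descends at a further stage, yielding $Z_j\to Y_j$. Full faithfulness in (ii) follows by pulling morphisms back to such a cover and invoking (iii).

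The main technical obstacle is the systematic descent-datum bookkeeping: one must apply Proposition~\ref{prop:etmaptolim} to three nested inverse systems in succession while confirming that the set-theoretic bounds propagate (the same cut-off $\kappa_\lambda$ suffices for $\{X_i\}$, $\{R_i\}$ and $\{R_i\times_{X_i} R_i\}$, by stability of $\kappa_\lambda$-smallness under finite fibre products). A secondary delicate point is choosing the finite cover by separated opens in (ii) in a way that itself descends cleanly, which rests on quasicompactness of $Z$ over $Y$ and on the fact that pullbacks of spatial presentations yield locally spatial structure on $Z$.
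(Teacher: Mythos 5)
Your proposal is correct and takes essentially the same route as the paper: reduce to a well-ordered cofinal subsystem, pick compatible strictly totally disconnected $X_\mu\to Y_\mu$ as in Lemma~\ref{lem:diamondlimit}, identify (finite \'etale, resp.\ qc separated \'etale) objects over $Y_\mu$ and over $Y$ with descent data on $X_\mu$ (resp.\ $X$) via Proposition~\ref{prop:etalesepdescent} and Proposition~\ref{prop:checksheafpropvloc}, apply Proposition~\ref{prop:etmaptolim} to the inverse systems $\{X_\mu\}$, $\{R_\mu\}$, $\{R_\mu\times_{X_\mu}R_\mu\}$, and for part (ii) bootstrap by covering the qcqs \'etale diamond by finitely many quasicompact separated \'etale opens and descending the gluing data. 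The paper compresses the three-level bookkeeping into the single statement $F(Y)=F(X/Y)=\text{2-}\varinjlim F(X_\mu/Y_\mu)=\text{2-}\varinjlim F(Y_\mu)$ where $F$ is the relevant v-stack, but this is exactly your step-by-step descent-datum argument.
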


\begin{proof} As in Lemma~\ref{lem:diamondlimit}, we can assume that $I$ is the set of ordinals $\mu<\lambda$ for some fixed ordinal $\lambda$. We pick strictly totally disconnected perfectoid spaces $X_\mu$ mapping compatibly to $Y_\mu$ as in the proof of Lemma~\ref{lem:diamondlimit}. Let $R_\mu = X_\mu\times_{Y_\mu} X_\mu$ be the induced equivalence relations, and let $X=\varprojlim_\mu X_\mu$, $R=\varprojlim_\mu R_\mu$.

Let $F$ be the prestack on the category of diamonds of finite \'etale, resp. quasicompact separated \'etale morphisms. Then $F$ is a stack for the v-topology by Proposition~\ref{prop:checksheafpropvloc}. Note that $F(X/Y)$ involves the value of $F$ on $X$, $R$, and $R\times_X R$, all of which are affinoid perfectoid spaces, and similarly for $F(X_\mu/Y_\mu)$. By Proposition~\ref{prop:etmaptolim}, we see that
\[
F(Y) = F(X/Y) = \text{2-}\varinjlim_{\mu<\lambda} F(X_\mu/Y_\mu) = \text{2-}\varinjlim_{\mu<\lambda} F(Y_\mu)\ ,
\]
proving parts (i) and (iii). For part (ii), essentially the same argument applies, except that a priori the functor $F(Y)\to F(X/Y)$ is only fully faithful for the prestack $F$ of qcqs \'etale morphisms. To check essential surjectivity in (ii), pick any \'etale qcqs map $f: \tilde{Y}\to Y$, where it is understood that $f$ is locally separated. In particular, one can cover $\tilde{Y}$ by quasicompact open subsets $\tilde{Y}_j\subset \tilde{Y}$ such that $f|_{\tilde{Y}_j}$ is separated. By (iii), this comes via pullback from some finite level. Moreover, the gluing data between the different $\tilde{Y}_j$ will also be defined at some finite level, and the cocycle condition satisfied. This produces a (locally separated) \'etale qcqs map to some finite stage with pullback $\tilde{Y}$, as desired.
\end{proof}

To prove further permanence properties of spatial diamonds, we need the following result.

\begin{proposition}\label{prop:spatialunivopen} Let $Y$ be a spatial diamond. Then one can find a strictly totally disconnected perfectoid space $X$ with a surjective and universally open quasi-pro-\'etale map $f: X\to Y$ that can be written as a cofiltered inverse limit of \'etale maps which are composites of quasicompact open immersions and finite \'etale maps. If $\kappa$ is as in Lemma~\ref{lem:choosekappa} and $Y$ is $\kappa$-small, one can take $X$ to be $\kappa$-small.

Conversely, assume that $Y$ is a qcqs diamond such that $Y$ admits a surjective and universally open quasi-pro-\'etale map $X\to Y$, where $X$ is a perfectoid space. Then $Y$ is a spatial diamond.
\end{proposition}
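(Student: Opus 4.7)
\emph{Forward direction.} My plan is to build $X$ by a countable iteration over $Y$. I would introduce the category $\mathcal{C}(Y)$ whose objects are surjective maps $\bigsqcup_{j=1}^{n}Y_j\to Y$ with each component $Y_j\to Y$ a composite of quasicompact open immersions and finite \'etale morphisms; by Lemma~\ref{lem:finetoverspatial} and Proposition~\ref{prop:qcvinjspatdiamond}, every such $Y_j$, and hence the disjoint union, is spatial. Since $\mathcal{C}(Y)$ is cofiltered under fibre products over $Y$ and set-theoretically bounded, Lemma~\ref{lem:diamondlimit} shows the diamond $Y^{(1)} = \varprojlim_{\mathcal{C}(Y)}$ is again spatial. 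Iterating gives $Y^{(n+1)} = (Y^{(n)})^{(1)}$, and I would take $X = \varprojlim_n Y^{(n)}$. The map $X\to Y$ is quasi-pro-\'etale (each transition is separated \'etale), surjective, and, most importantly, universally open: each transition is universally open (as composites and finite disjoint unions of universally open maps are), and a quasicompact open in any base change $X\times_Y Z\to Z$ descends to some finite stage by Lemma~\ref{lem:diamondlimit}, where its image in $Z$ is already open. To check $X$ is strictly totally disconnected, I observe that by Proposition~\ref{prop:etmaptolimdiamond}(ii) any qcqs \'etale cover of $X$ descends to some $Y^{(n)}$, where by the definition of \'etale it is refined by an object of $\mathcal{C}(Y^{(n)})$, and thus splits upon pullback to $Y^{(n+1)}$.

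\emph{Converse direction.} For the converse, I would first replace $X$: quasicompactness of $Y$ allows me to replace $X$ by a finite disjoint union of quasicompact opens covering $Y$, and then Lemma~\ref{lem:strtotdisccover} applied to each piece reduces to the case that $X$ is qcqs and strictly totally disconnected. Then $R=X\times_Y X$ is representable and pro-\'etale over $X$ by Definition~\ref{def:etquasiproet}, so the projections $s,t: R\to X$ are generalizing; they are quasicompact because $R\hookrightarrow X\times X$ is the base change of the quasicompact diagonal $\Delta_Y$ (as $Y$ is quasiseparated), and they are open by universal openness of $f$.

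The main content of the converse, and the step I expect to require the most care, is verifying that $|Y|$ is spectral in such a way that its quasicompact opens correspond to qc open subdiamonds. Writing $|R_{\mathrm{ev}}|\subset |X|\times |X|$ for the image of $|R|$, so $|Y|=|X|/|R_{\mathrm{ev}}|$ by Proposition~\ref{prop:topspaceindep}, I would first show $|R_{\mathrm{ev}}|$ is pro-constructible: $|R|\subset |X\times X|$ is pro-constructible (as $R\hookrightarrow X\times X$ is a quasicompact injection of v-sheaves, cf.~Corollary~\ref{cor:qcvinjqproet}), and images of pro-constructible subsets under spectral maps are pro-constructible. The projections $|R_{\mathrm{ev}}|\rightrightarrows |X|$ inherit being qc, open, and generalizing from $s,t$ (for generalization: pro-\'etale maps lift generalizations). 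Lemma~\ref{lem:spectralopenequivrel} then shows $|Y|$ is locally spectral and quasiseparated with $|X|\to |Y|$ open, spectral and qcqs; combined with quasicompactness of $|Y|$ (continuous image of $|X|$), one concludes $|Y|$ is spectral. Finally, for each qc open $W\subset |X|$ the saturation $s(t^{-1}(W))\subset |X|$ is an $R$-invariant qc open, so by Proposition~\ref{prop:topspaceopensubsets} it descends to a qc open subdiamond $U\subset Y$ with $|U|=f(W)$; since any $y\in V\subset |Y|$ open admits a lift $x\in f^{-1}(V)$ with a qc open neighborhood $W\subset f^{-1}(V)$ in the spectral space $|X|$, giving $y\in f(W)\subset V$, these form the required basis, so $Y$ is spatial.
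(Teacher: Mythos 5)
Your forward direction closely tracks the paper's construction, but there is a genuine gap at the end. After building $X$ as a nested countable/cofiltered limit, you verify that every qcqs \'etale cover of $X$ splits and declare $X$ ``strictly totally disconnected.'' However, $X$, as a cofiltered limit of spatial diamonds over $Y$, is a priori only a spatial diamond; ``strictly totally disconnected'' is a notion for perfectoid spaces, and the proposition asserts $X$ is a strictly totally disconnected \emph{perfectoid space}. Passing from the splitting property for a spatial diamond to representability by a perfectoid space is exactly the content of Proposition~\ref{prop:strtotdiscdiamond} (together with Lemma~\ref{lem:diamondconncomprepr}), which the paper proves separately: one shows each connected component of such a diamond is of the form $\Spa(C,C^+)$, and that a spatial diamond with all connected components affinoid perfectoid is itself affinoid perfectoid. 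None of this is automatic, and your argument silently skips it; this representability step is where the real work in the forward direction lies.

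Two smaller points. First, the category $\mathcal{C}(Y)$ of all surjections $\bigsqcup_j Y_j\to Y$ with morphisms over $Y$ is \emph{not} cofiltered: given two parallel morphisms $f,g\colon A\to B$ over $Y$, their equalizer is open-and-closed in $A$ but need not surject onto $Y$, so no object of $\mathcal{C}(Y)$ need equalize them (e.g.\ the two inclusions $Y\rightrightarrows Y\sqcup Y$). The paper avoids this by fixing representatives $Y_i$, $i\in I$, of isomorphism classes and indexing by the directed poset of finite subsets $J\subset I$; you should do something similar before invoking Lemma~\ref{lem:diamondlimit}. Second, the parenthetical ``each transition is separated \'etale'' is imprecise: the maps $Y^{(n+1)}\to Y^{(n)}$ are only quasi-pro-\'etale, so exhibiting $X\to Y$ as a single cofiltered limit of separated \'etale maps requires flattening the doubly-iterated limit (this is routine, and indeed asserted in the statement, but worth flagging). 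Your converse direction is correct and essentially the paper's argument, just unwound in more detail than the paper's terse appeal to Lemma~\ref{lem:spectralopenequivrel}.
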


\begin{remark}\label{rem:spatialunivopen} Of course, the proposition implies the more general converse that if $Y$ is a qcqs diamond such that $Y$ admits a surjective and universally open quasi-pro-\'etale map $Y^\prime\to Y$, where $Y^\prime$ is a (locally) spatial diamond, then $Y$ is a spatial diamond.
\end{remark}

\begin{proof} The converse is easy: If $X\to Y$ is surjective and universally open quasi-pro-\'etale, and without loss of generality $X$ is affinoid, and in fact strictly totally disconnected (cf.~Lemma~\ref{lem:strtotdisccover}), then the induced equivalence relation $R\subset X\times X$ is a qcqs perfectoid space, for which the maps $s,t: R\to X$ are open; thus, $|Y|=|X|/|R|$ is spectral and $|X|\to |Y|$ is spectral by Lemma~\ref{lem:spectralopenequivrel}, which implies that $Y$ is spatial.

Now assume that $Y$ is spatial and fix some $\kappa$ as in Lemma~\ref{lem:choosekappa} such that $Y$ is $\kappa$-small. Consider the set $I$ of isomorphism classes of surjective \'etale maps $Y^\prime\to Y$ that can be written as a composite of quasicompact open embeddings and finite \'etale maps. (To see that this is of cardinality less than $\kappa$, realize these by descent data along some fixed surjective map from a $\kappa$-small affinoid perfectoid space. Moreover, each $Y^\prime$ admits a surjection from a $\kappa^\prime$-small affinoid perfectoid space for some fixed strong limit cardinal $\kappa^\prime<\kappa$.) By Lemma~\ref{lem:finetoverspatial}, all such $Y^\prime$ are spatial diamonds. Choose a representative $Y_i\to Y$ for each $i\in I$. For any subset $J\subset I$, let $Y_J$ be the product of all $Y_i$, $i\in J$, over $Y$, which is still a composite of quasicompact open embeddings and finite \'etale maps over $Y$, and thus a spatial diamond. By Lemma~\ref{lem:diamondlimit}, the cofiltered limit $Y_\infty$ of all $Y_J$ over finite subsets $J\subset I$ is again a spatial diamond, and $Y_\infty\to Y$ is a surjective and universally open quasi-pro-\'etale map that can be written as a cofiltered inverse limit of \'etale maps which are composites of quasicompact open immersions and finite \'etale maps. Repeating the construction $Y\mapsto Y_\infty$ countably often, we can find a surjective and universally open quasi-pro-\'etale map $X\to Y$ of spatial diamonds, which can be written as a cofiltered inverse limit of \'etale maps which are composites of quasicompact open immersions and finite \'etale maps. Now any \'etale cover $\tilde{X}\to X$ that can be written as a composite of quasicompact open immersions and finite \'etale maps splits: Indeed, by Proposition~\ref{prop:etmaptolimdiamond}, any such map comes from some finite level, and becomes split at the next. Now the result follows from the next proposition.
\end{proof}

\begin{proposition}\label{prop:strtotdiscdiamond} Let $Y$ be a spatial diamond. Assume that any surjective \'etale map $\tilde{Y}\to Y$ that can be written as a composite of quasicompact open immersions and finite \'etale maps splits. Then $Y$ is a strictly totally disconnected perfectoid space.
\end{proposition}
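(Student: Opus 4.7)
The plan is to first reduce to the case where $|Y|$ is connected with a unique closed point, and then to identify $Y$ with $\Spa(C,C^+)$ for some algebraically closed perfectoid field $C$ and open and bounded valuation subring $C^+\subset C$. Reassembly via Lemma~\ref{lem:basicwlocal} will then yield the full result.

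For the reduction, I first observe that for any finite quasicompact open cover $\{U_i\}_{i=1}^n$ of $Y$, the surjection $\bigsqcup_i U_i\to Y$ factors as the quasicompact open immersion $\bigsqcup_i U_i\hookrightarrow\underline{\{1,\ldots,n\}}\times Y$ (sending $U_i$ to $\{i\}\times U_i$) followed by the finite \'etale projection $\underline{\{1,\ldots,n\}}\times Y\to Y$. By hypothesis this composite splits, yielding a clopen refinement, so by Lemma~\ref{lem:fargues} every connected component of $|Y|$ has a unique closed point. Writing $\pi_0(|Y|)=\varprojlim_j S_j$ as a cofiltered limit of finite sets, and splitting into clopens $Y=\bigsqcup_{s\in S_j}Y_{j,s}$ at each level, I form $Y_c:=\varprojlim_j Y_{j,s_j(c)}$ for each $c\in\pi_0(|Y|)$; this is a spatial diamond by Lemma~\ref{lem:diamondlimit} whose topological space is the connected component of $c$. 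Each $Y_c$ inherits the splitting hypothesis: given a composite cover $\tilde Y_c\to Y_c$, Proposition~\ref{prop:etmaptolimdiamond} descends it to a composite cover $\tilde A\to Y_{j,s_j(c)}$ at some finite level, and extending level-by-level by the identity on $Y':=\bigsqcup_{s\neq s_j(c)}Y_{j,s}$ produces a composite cover $\tilde A\sqcup Y'\to Y$; its splitting is forced to be the identity on $Y'$, hence restricts on $Y_c\subset Y_{j,s_j(c)}$ to a section of $\tilde Y_c\to Y_c$. It therefore suffices to treat each $Y_c$, so I assume $|Y|$ is connected with unique closed point $y_0$.

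Under this assumption, every surjective finite \'etale cover of $Y$ splits by hypothesis; a section of a finite \'etale map is open and closed, so induction on the degree shows that all finite \'etale covers of $Y$ are trivial disjoint unions of copies of $Y$. Using Proposition~\ref{prop:diamondqproetsurj} together with Lemma~\ref{lem:strtotdisccover}, I fix a surjective quasi-pro-\'etale morphism $X\to Y$ from a strictly totally disconnected perfectoid space $X$, lift $y_0$ to a closed point $x_0\in|X|$, and let $X_0=\Spa(C,C^+)\subset X$ be its connected component, so $C$ is algebraically closed. The inclusion $X_0\hookrightarrow X$ is affinoid pro-\'etale (Lemma~\ref{lem:subsetwlocal}, Corollary~\ref{cor:proetaleoverwlocaltop}), whence $X_0\to Y$ is quasi-pro-\'etale. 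The induced $|X_0|\to|Y|$ is spectral and generalizing by Proposition~\ref{prop:spatialfirstprop}~(iii); since it sends the closed point of $X_0$ to $y_0$, and every point of $|Y|$ generalizes $y_0$, it is surjective. Hence $X_0\to Y$ is a surjective quasi-pro-\'etale morphism from $\Spa(C,C^+)$.

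The decisive step, and the main technical obstacle, is to show that $X_0\to Y$ is an isomorphism; equivalently, that the equivalence relation $R_0:=X_0\times_Y X_0\subset X_0\times X_0$ equals the diagonal $\Delta(X_0)$. Both projections $R_0\rightrightarrows X_0$ are quasi-pro-\'etale as base changes of $X_0\to Y$, and since $X_0$ is strictly totally disconnected, $R_0$ is a perfectoid space pro-\'etale over $X_0$; by Lemma~\ref{lem:proetaleoverwlocal} its fiber over the rank-$1$ point $\Spa(C,\OO_C)\subset X_0$ has the form $\Spa(C,\OO_C)\times\underline{T}$ for some profinite set $T$ containing the distinguished diagonal point. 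By Lemma~\ref{lem:isomspatialdiamondspoints} the claim reduces to bijectivity of $X_0(K,K^+)\to Y(K,K^+)$ for algebraically closed $K$; surjectivity is already established, and injectivity is equivalent to $T$ being a single point. My plan for injectivity is to use the equivalence-relation structure of $R_0$ together with the composite-cover splitting hypothesis: a hypothetical nontrivial finite quotient of $T$ should, after descent along the quasi-pro-\'etale surjection $X_0\to Y$, produce a nontrivial surjective finite \'etale cover of $Y$, contradicting the triviality of finite \'etale covers established above. The hard part will be making this descent rigorous---verifying that the ``extra sheets'' parametrized by a finite quotient of $T$ assemble into a genuine finite \'etale cover of $Y$ rather than merely a pro-\'etale gadget over $X_0$. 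Once $R_0=\Delta(X_0)$ is shown, $Y\cong X_0=\Spa(C,C^+)$, and reassembling the connected-component analysis via Lemma~\ref{lem:basicwlocal} completes the proof.
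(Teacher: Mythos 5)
Your proposal follows the paper's strategy closely: reduce to a connected component with a unique closed point, produce a quasi-pro-\'etale surjection $X_0=\Spa(C,C^+)\to Y_0$ from the connected component of a strictly totally disconnected cover, and show this surjection is an isomorphism by showing the equivalence relation $R_0$ is the diagonal. However, there are two genuine gaps.

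First, the decisive step --- showing $R_0=\Delta(X_0)$, equivalently that the profinite fiber $T$ over the rank-$1$ point is a single point --- is not carried out; you explicitly defer it as ``the hard part.'' The paper resolves this cleanly by passing to the open subdiamond $Y_0^\circ\subset Y_0$ over the maximal point, where $R_0^\circ=\underline{G}\times\Spa(C,\OO_C)$ for a profinite \emph{group} $G$, i.e.\ $Y_0^\circ=\Spa(C,\OO_C)/\underline{G}$. If $G\neq\{1\}$, one picks a proper open subgroup $H\subset G$; then $\Spa(C,\OO_C)/\underline{H}\to Y_0^\circ$ is a nontrivial finite \'etale cover, constructed directly as a quotient of v-sheaves --- no descent along a pro-\'etale surjection is needed, which sidesteps the obstruction you anticipate. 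The paper then observes that finite \'etale covers of $Y_0^\circ$ and of $Y_0$ agree (they are insensitive to the ring of integral elements), and that any finite \'etale cover of $Y_0$ extends to a clopen subset of $Y$ (by Proposition~\ref{prop:etmaptolimdiamond}), whose union with the complementary clopen is a composite cover satisfying the splitting hypothesis --- giving the contradiction. The key idea you are missing is to work on the rank-$1$ locus and \emph{quotient} by an open subgroup rather than trying to descend ``extra sheets'' through $X_0\to Y_0$.

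Second, the reassembly step is incorrect: Lemma~\ref{lem:basicwlocal} applies to an object that is already known to be a qcqs \emph{perfectoid space}, whereas at the point of reassembly you only know that $Y$ is a spatial \emph{diamond} whose connected components are affinoid perfectoid. The lemma you need is Lemma~\ref{lem:diamondconncomprepr}, which precisely upgrades a spatial diamond all of whose connected components are representable by affinoid perfectoid spaces to an affinoid perfectoid space; this is where the real work of reassembly lives, and it is not a formality.
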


\begin{proof} First note that any quasicompact open cover of $|Y|$ splits by assumption; thus, every connected component of $|Y|$ has a unique closed point. Thus, any connected component $Y_0\subset Y$ admits a pro-\'etale surjection $\Spa(C,C^+)\to Y_0$ for some algebraically closed field $C$ with an open and bounded valuation subring $C^+\subset C$. First, we claim this map $\Spa(C,C^+)\to Y_0$ is an isomorphism. Let $R_0 = \Spa(C,C^+)\times_{Y_0} \Spa(C,C^+)$ be the equivalence relation, which is affinoid pro-\'etale over $\Spa(C,C^+)$. To check that $\Spa(C,C^+)=Y_0$, we have to check that $R_0=\Spa(C,C^+)$. If not, then $R_0$ has another maximal point. We see that it is enough to show that $\Spa(C,\OO_C)\to Y_0^\circ$ is an isomorphism, where $Y_0^\circ\subset Y_0$ is the open subfunctor corresponding to the maximal point (which is open). But now $R_0^\circ = R_0\times_{Y_0} Y_0^\circ$ is isomorphic to $\underline{S}\times \Spa(C,\OO_C)$ for some profinite set $S$, where in fact $S=G$ is a profinite group (by the equivalence relation structure). Assume $G$ is nontrivial, and let $H\subset G$ be a proper open subgroup. Then $\underline{H}\times \Spa(C,\OO_C)\subset \underline{G}\times \Spa(C,\OO_C)=R_0^\circ$ is another equivalence relation, and the corresponding quotient of $\Spa(C,\OO_C)$ is a nontrivial finite \'etale cover of $Y_0^\circ$. Note that finite \'etale covers of $Y_0^\circ$ agree with finite \'etale covers of $Y_0$ (as this is true for the pair $\Spa(C,\OO_C)$ and $\Spa(C,C^+)$, and the pair $R_0$ and $R_0^\circ$). Now any finite \'etale cover of $Y_0$ extends to a finite \'etale cover of an open and closed subset of $Y$, which together with the complementary open and closed subset of $Y$ forms an \'etale cover as in the statement of the proposition. As this is assumed to be split, the original finite \'etale cover of $Y_0$ has to be split, which is a contradiction. Thus, $Y_0=\Spa(C,C^+)$.

In other words, we have seen that any connected component of $Y_0$ is given by $\Spa(C,C^+)$ for some algebraically closed field $C$ with an open and bounded valuation subring $C^+\subset C$. The result now follows from the next lemma.
\end{proof}

\begin{lemma}\label{lem:diamondconncomprepr} Let $Y$ be a spatial diamond. Assume that every connected component of $Y$ is representable by an affinoid perfectoid space. Then $Y$ is representable by an affinoid perfectoid space.
\end{lemma}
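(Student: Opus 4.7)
My plan is to produce a clopen decomposition $\Pi := \pi_0(|Y|) = V_1 \sqcup \dots \sqcup V_n$ such that each clopen sub-diamond $Y_{V_i} := \pi^{-1}(V_i) \subset Y$ is representable by an affinoid perfectoid space. Since $Y$ is spatial, $|Y|$ is spectral and $\Pi$ is a profinite set. A finite disjoint union of affinoid perfectoid spaces is affinoid perfectoid (via the product ring), so it suffices to show that for every $c \in \Pi$ there exists a clopen neighborhood $V \ni c$ with $Y_V$ representable by an affinoid perfectoid space; a profinite compactness argument on $\Pi$ then yields the required finite partition.

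To produce such a $V$, I use Proposition~\ref{prop:spatialunivopen} to pick a surjective, universally open, quasi-pro-\'etale map $f \colon X \to Y$ with $X$ strictly totally disconnected, hence affinoid perfectoid; then $R := X \times_Y X$ is also affinoid perfectoid. The induced map $\pi_0(X) \to \Pi$ is a continuous surjection of profinite sets, so for any clopen $V \subset \Pi$ the preimages $X_V := X \times_Y Y_V$ and $R_V := R \times_Y Y_V$ are clopen in $X$ and $R$, hence affinoid perfectoid. By v-descent (Theorem~\ref{thm:vsub} and Proposition~\ref{prop:vcohomO}), one has $\mathcal{A}_V := H^0(Y_V, \OO) = \ker(\OO(X_V) \rightrightarrows \OO(R_V))$, with the analogous almost statement for $\mathcal{A}_V^+ := H^0(Y_V, \OO^+)$. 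The affinoid perfectoids $X_V, R_V$ form cofiltered inverse systems as $V$ shrinks to $c$ with limits $X_c, R_c$, so $\OO^+(X_c)$ is the $\varpi$-adic completion of $\varinjlim_V \OO^+(X_V)$, and using the hypothesis $Y_c = \Spa(\mathcal{R}_c, \mathcal{R}_c^+)$ together with v-descent over $X_c \to Y_c$, the ring $\mathcal{R}_c^+$ is almost the $\varpi$-adic completion of $\varinjlim_V \mathcal{A}_V^+$.

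Fixing a pseudouniformizer $\varpi_c \in \mathcal{R}_c^+$, the density above yields some clopen $V \ni c$ and an approximating element $\varpi_V \in \mathcal{A}_V^+$. The pullback $f^\ast \varpi_V \in \OO^+(X_V)$ is then a unit on $|X_c| \subset |X_V|$, so its vanishing locus $Z \subset |X_V|$ is pro-constructible, closed, and disjoint from $|X_c| = \bigcap_{V' \subset V,\, V' \ni c} |X_{V'}|$; quasicompactness of the constructible topology then allows us to shrink $V$ to arrange $Z = \emptyset$. Thus $\varpi_V$ becomes everywhere a unit on $|Y_V|$, and sufficient approximation to $\varpi_c$ also gives topological nilpotence at each rank-one point. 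One then verifies that $\mathcal{A}_V$, with the induced Tate topology having pseudouniformizer $\varpi_V$, is a perfectoid Tate ring and $\mathcal{A}_V^+$ is an open integrally closed subring of power-bounded elements, by transferring the Frobenius isomorphism on $\mathcal{A}_V^+ / \varpi_V$ from that on $\OO^+(X_V)/ f^\ast \varpi_V$ via v-descent in the almost category. Finally, the natural morphism $Y_V \to \Spa(\mathcal{A}_V, \mathcal{A}_V^+)$ of qcqs v-sheaves becomes a bijection on $(K, K^+)$-valued points for algebraically closed $(K, K^+)$ (since $X_V \to Y_V$ is a v-cover and both sides have the same functor of points on such pairs), hence is an isomorphism by Lemma~\ref{lem:isomspatialdiamondspoints}. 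The main technical obstacle is the almost-mathematics bookkeeping in the descent of perfectoidness through the equivalence relation, and arranging that a single approximation is simultaneously good for being a unit, being topologically nilpotent, and giving a Frobenius isomorphism on $\mathcal{A}_V^+ / \varpi_V$, which may require several rounds of further shrinking $V$.
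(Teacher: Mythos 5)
Your overall strategy — clopen decomposition of $\pi_0(Y)$, find a small clopen $V\ni c$ where the equalizer ring is perfectoid and represents $Y_V$, then glue by profinite compactness — is sound, and it is really the same underlying computation as the paper's proof (the paper also constructs $(A,A^+)$ as an equalizer and invokes $\pi_0$-compactness at both key junctures, just working globally rather than shrinking to a clopen $V$). However, there is a genuine gap at the step you treat as routine, and the obstacle you flag is not the real one.

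The gap is the double assertion that (a) $\mathcal{R}_c^+$ is almost the $\varpi$-adic completion of $\varinjlim_V \mathcal{A}_V^+$, and (b) $Y_V \to \Spa(\mathcal{A}_V,\mathcal{A}_V^+)$ is a bijection on $(K,K^+)$-valued points ``since $X_V\to Y_V$ is a v-cover and both sides have the same functor of points.'' Neither follows from v-descent of $\OO^+$ alone. The sheaf property gives you $\mathcal{A}_V^+ = \eq(\OO^+(X_V)\rightrightarrows\OO^+(R_V))$ and $\mathcal{R}_c^+ = \eq(\OO^+(X_c)\rightrightarrows\OO^+(R_c))$, and in the limit over shrinking $V$ the colimits $\varinjlim_V\OO^+(X_V)$ and $\varinjlim_V\OO^+(R_V)$ are dense in $\OO^+(X_c)$ and $\OO^+(R_c)$; but an equalizer of densely approximated objects need not be densely approximated by the equalizer. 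Concretely: given $a_c\in\mathcal{R}_c^+$, you can approximate it by some $b_V\in\OO^+(X_V)$, but $b_V$ typically fails to be in $\mathcal{A}_V^+$ — its two pullbacks to $\OO^+(R_V)$ agree only modulo a high power of $\varpi$. To correct it you must solve a $\varpi$-adic cocycle equation $p_1^\ast(s)-p_2^\ast(s)=t$ with $s\in\varpi_B^n B^{\circ\circ}$, and this correction is where the almost vanishing of $H^1_v(Y_c,\OO^+)$ — and hence the hypothesis that each connected component $Y_c$ is affinoid perfectoid — actually does its work. The paper's proof carries out exactly this iterative correction, first for the pseudouniformizer itself (to get $\varpi\in A$ rather than merely $\varpi_B\in B$ satisfying a twisted relation) and then to establish $(A^+/\varpi)^a = \eq((B^+/\varpi)^a\rightrightarrows(C^+/\varpi)^a)$, which is the statement that makes the passage to connected components (and hence the check on $(K,K^+)$-points via Lemma~\ref{lem:isomspatialdiamondspoints}) go through. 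Your proposal does not perform this correction, and mislocates the difficulty: you frame it as making one approximation $\varpi_V$ simultaneously a unit, topologically nilpotent, and Frobenius-compatible, but the real obstruction is getting into the equalizer at all, not the quality of the approximation.

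A smaller point: the Frobenius transfer via almost v-descent is unnecessary. You are in characteristic $p$, so $\mathcal{A}_V$ is automatically perfect (closed subring of a perfect ring) and complete and uniform; once you produce a pseudouniformizer, Proposition~\ref{prop:perfectoidcharp} gives perfectoidness with no further work.
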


\begin{proof} Write $Y=X/R$ as a quotient of a strictly totally disconnected perfectoid space $X$ by an affinoid pro-\'etale equivalence relation $R\subset X\times X$. Let $X=\Spa(B,B^+)$ and $R=\Spa(C,C^+)$, and define
\[
(A,A^+) = \eq((B,B^+)\rightrightarrows (C,C^+))\ .
\]
The goal is to prove that $A$ is perfectoid, $A^+\subset A$ is open and integrally closed, and $Y=\Spa(A,A^+)$.

As a first step, we find a pseudouniformizer $\varpi_B\in B$ such that $p_1^\ast(\varpi_B)=p_2^\ast(\varpi_B)u$ for some unit $u\in 1+C^{\circ\circ}$; here, $p_1,p_2: R\to X$ denote the two projections. We can find such a pseudouniformizer over each connected component $c\in \pi_0(Y)$ (by taking one coming via pullback from the affinoid connected component of $Y$), and arbitrarily lifting it to $B$, this relation will be satisfied in the preimage of an open and closed neighborhood of $c\in \pi_0(Y)$. As the set of connected components is profinite, we can then find such a $\varpi_B\in B$ globally.

Now we check that if an element $t\in p_1^\ast(\varpi_B)^n C^{\circ\circ}=p_2^\ast(\varpi_B)^n C^{\circ\circ}$ (for some $n\geq 0$) satisfies the cocycle relation
\[
p_{23}^\ast(t) + p_{12}^\ast(t) = p_{13}^\ast(t)
\]
(corresponding to the three maps $p_{12},p_{13},p_{23}: R\times_X R\to R$), then there is an element $s\in \varpi_B^n B^{\circ\circ}$ with $p_1^\ast(s)-p_2^\ast(s)=t$. It suffices to check that we can find $s\in \varpi_B^n B^{\circ\circ}$ such that
\[
p_1^\ast(s)-p_2^\ast(s)-t\in p_1^\ast(\varpi_B)^{n+1} C^{\circ\circ}\ ,
\]
as then the result follows by induction. But in each connected component $Y_c\subset Y$, we can find such an $s$ (by almost vanishing of $H^1_v(Y_c,\OO^+)$), and lifting it arbitrarily to $B$, it will satisfy the desired congruence in a neighborhood; again, as $\pi_0(Y)$ is profinite, we can find some such $s$ globally.

Applying this to $t=p_1^\ast(\varpi_B)-p_2^\ast(\varpi_B)\in p_1^\ast(\varpi_B) C^{\circ\circ}$, we find some $s\in \varpi_B B^{\circ\circ}$ such that $p_1^\ast(s)-p_2^\ast(s)=t$. This implies that $\varpi:=\varpi_B-s\in A$, and is a topologically nilpotent unit in $B$, and thus in $A$. Now we replace $\varpi_B$ by $\varpi$, so $\varpi$ gives a compatible choice of pseudouniformizer in all rings.

From the definition of $A$, it is clear that $A$ is uniform and perfect, so $A$ is perfectoid. Also, $A^+\subset A$ is open and integrally closed. Thus, $Y^\prime:=\Spa(A,A^+)$ is an affinoid perfectoid space, and we get a natural map $Y\to Y^\prime$. To see that this is an isomorphism, it suffices by Lemma~\ref{lem:isomspatialdiamondspoints} to check on connected components. It remains to see that the construction of $(A,A^+)$ commutes with passage to connected components. But the arguments above show that
\[
(A^+/\varpi)^a = \eq((B^+/\varpi)^a\rightrightarrows (C^+/\varpi)^a)\ ,
\]
and this statement passes to open and closed subsets (i.e., direct summands on the level of rings), and filtered colimits. By the equivalence of perfectoid $A$-algebras with perfectoid $(A^+/\varpi)^a$-algebras, this gives the result on the level of the perfectoid Tate ring. For the open and integrally closed subalgebra, it suffices to check that if $Y_c=\Spa(A_c,A_c^+)$ is a connected component of $Y$, then $A^+\to A_c^+$ is surjective. But any $f\in A_c^+$ can be lifted to some function in $g\in B^+$ with $p_1^\ast(g)-p_2^\ast(g)\in \varpi C^{\circ\circ}$, and then using the claim above we can correct it by some function $s\in \varpi B^{\circ\circ}$ such that $g-s\in A^+$. Thus, $A^+\to A_c^+/\varpi$ is surjective, which implies that $A^+\to A_c^+$ is surjective, as $A^+$ and $A_c^+$ are $\varpi$-adically complete.
\end{proof}

Now we get the following two permanence properties.

\begin{corollary}\label{cor:quasiproetspatial} Let $Y$ be a locally spatial diamond, and $Y^\prime\to Y$ a quasi-pro-\'etale map of pro-\'etale sheaves. Then $Y^\prime$ is a locally spatial diamond.
\end{corollary}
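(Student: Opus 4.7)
The plan is to reduce to the case where $Y$ is spatial, then construct spatial open neighborhoods in $Y'$ as images of affinoid opens under a well-chosen quasi-pro-\'etale cover. By Proposition~\ref{prop:quasiproetaleoverdiamond}, $Y'$ is automatically a diamond, so only local spatialness remains. As the property is local on $Y$, I will assume $Y$ is spatial, and invoke Proposition~\ref{prop:spatialunivopen} to pick a surjective, universally open, quasi-pro-\'etale map $f\colon X\to Y$ from a strictly totally disconnected perfectoid space $X$. Setting $X' := X\times_Y Y'$, the definition of quasi-pro-\'etale (Definition~\ref{def:etquasiproet}(i)) guarantees that $X'$ is representable by a perfectoid space and that $X'\to X$ is pro-\'etale, while the base change $f'\colon X'\to Y'$ inherits surjectivity, universal openness, and being quasi-pro-\'etale.

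Fixing $y'\in |Y'|$, I will produce a spatial open neighborhood $V\subset Y'$ of $y'$. Since quasi-pro-\'etale maps are locally separated by Convention~\ref{conv:etlocsep}, first choose an open $V_0\subset Y'$ containing $y'$ such that $V_0\to Y$ is separated; factoring $\Delta_{V_0} = (\Delta_Y\times\mathrm{id})\circ \Delta_{V_0/Y}$ and using that $\Delta_{V_0/Y}$ is a closed immersion while $\Delta_Y$ is quasicompact (since $Y$ is spatial, hence quasiseparated) shows that $V_0$ is quasiseparated. Lift $y'$ to a point $x'$ of the perfectoid space $V_0\times_Y X\subset X'$, and pick an affinoid open neighborhood $U\subset V_0\times_Y X$ of $x'$.

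The key step is then to verify that the image $|V|\subset |V_0|$ of $|U|$ is open and that the corresponding open subdiamond $V\subset V_0$ is spatial. The equivalence relation $R'_0 := (V_0\times_Y X)\times_{V_0}(V_0\times_Y X)$ identifies with $R\times_X (V_0\times_Y X)$, where $R = X\times_Y X$; the projections $R\rightrightarrows X$ are pro-\'etale (since $X$ is strictly totally disconnected and $X\to Y$ is quasi-pro-\'etale), so by Lemma~\ref{lem:proetalecompbc}(ii) the projections $R'_0\rightrightarrows V_0\times_Y X$ are pro-\'etale between perfectoid spaces, hence open on underlying topological spaces. Combined with the fact that $|V_0\times_Y X|\to |V_0|$ is a quotient map (Proposition~\ref{prop:topspaceopensubsets}), this makes $|V|$ an open subset of $|V_0|$, so $V\subset V_0$ is an open subdiamond containing $y'$. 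Then $V$ is quasicompact (as $|V|$ is the image of the quasicompact $|U|$) and quasiseparated (being open in the quasiseparated $V_0$). Finally, $\tilde U := (V_0\times_Y X)\times_{V_0} V\to V$, an open subspace of $V_0\times_Y X$ mapping to $V$ via a base change of $f'$, is a surjective, universally open, quasi-pro-\'etale map from a perfectoid space to the qcqs diamond $V$, so the converse direction of Proposition~\ref{prop:spatialunivopen} (via Remark~\ref{rem:spatialunivopen}) concludes that $V$ is spatial.

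The main obstacle to watch out for is quasiseparatedness of the candidate neighborhood $V$, without which the converse in Proposition~\ref{prop:spatialunivopen} cannot be directly invoked; quasi-pro-\'etale maps are merely locally separated, so one cannot expect $Y'$ itself to be quasiseparated. The fix is the preliminary step of passing to an open $V_0\subset Y'$ on which $Y'\to Y$ is actually separated, which transfers quasiseparatedness from $Y$ to $V_0$ and hence to all opens inside $V_0$.
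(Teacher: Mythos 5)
Your overall strategy is sound and close to the paper's: both reduce to the case where $Y$ is spatial and $Y'\to Y$ is separated, and both hinge on Proposition~\ref{prop:spatialunivopen} to produce a surjective, universally open, quasi-pro-\'etale map $X\to Y$ from a strictly totally disconnected $X$. The difference is that the paper then invokes Lemma~\ref{lem:spectralopenequivrel} directly on $X' = X\times_Y Y'$ and $R' = X'\times_{Y'}X'$ to conclude that $|Y'| = |X'|/|R'|$ is locally spectral with $|X'|\to|Y'|$ open and spectral, whereas you build spatial open neighborhoods by hand and apply the converse of Proposition~\ref{prop:spatialunivopen} locally. Your route is more explicit but essentially reproves (a special case of) Lemma~\ref{lem:spectralopenequivrel} along the way.

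However, there is a genuine gap in the key step. You claim that the projections $R'_0\rightrightarrows V_0\times_Y X$ are open "because they are pro-\'etale maps between perfectoid spaces." This implication is false: pro-\'etale maps of perfectoid spaces need not be open. For instance, by Remark~\ref{rem:diagproetale} any Zariski closed immersion of affinoid perfectoid spaces is affinoid pro-\'etale, yet its image is a closed (typically non-open) subset. The correct justification for openness is the one you already have in your hands but did not use at this step: $X\to Y$ is \emph{universally open}, so its base change $V_0\times_Y X\to V_0$ is universally open, and therefore the two projections $R'_0 = (V_0\times_Y X)\times_{V_0}(V_0\times_Y X)\rightrightarrows V_0\times_Y X$ (each being a base change of $V_0\times_Y X\to V_0$) are open. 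With this repair the rest of the argument goes through; I would also phrase the quasicompactness of $V$ a bit more carefully — since $U$ is quasicompact and $V\subset V_0$ is quasiseparated, the map $U\to V$ is quasicompact with $|U|\to|V|$ surjective, so $U\to V$ is a surjective map of v-sheaves by Lemma~\ref{lem:surjvstopsurj}, and hence $V$ is quasicompact as a v-sheaf; merely knowing that $|V|$ is quasicompact would not suffice, as the paper warns.
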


Recall that by Convention~\ref{conv:etlocsep}, the map $Y^\prime\to Y$ is automatically required to be locally separated.

\begin{proof} We may assume that $Y$ is spatial, and that $Y^\prime\to Y$ is separated. By Proposition~\ref{prop:spatialunivopen}, we can find a surjective and universally open quasi-pro-\'etale map $X\to Y$, where $X$ is strictly totally disconnected. Then $X^\prime = X\times_Y Y^\prime$ is representable and pro-\'etale over $X$, and $R^\prime = X^\prime\times_{Y^\prime} X^\prime$ is representable and qcqs over $X^\prime$. Moreover, $X^\prime$ is quasiseparated, so $|Y^\prime|=|X^\prime|/|R^\prime|$ is a locally spectral space and $|X^\prime|\to |Y^\prime|$ is a spectral map, by Lemma~\ref{lem:spectralopenequivrel}. This proves that $Y^\prime$ is locally spatial.
\end{proof}

\begin{corollary}\label{cor:fibreproductspatial} A fibre product of (locally) spatial diamonds is (locally) spatial.
\end{corollary}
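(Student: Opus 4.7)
The plan is to invoke the converse in Proposition~\ref{prop:spatialunivopen} (cf.~Remark~\ref{rem:spatialunivopen}): any qcqs diamond admitting a surjective, universally open, quasi-pro-\'etale cover by a locally spatial diamond is itself spatial. I first treat the spatial case. So assume $Y_1, Y_2, Y_3$ are spatial, and let $Y := Y_1 \times_{Y_3} Y_2$. Then $Y$ is a diamond by Proposition~\ref{prop:diamondfibreproduct}, and it is qcqs by a standard diagonal argument: $Y \to Y_1 \times Y_2$ is a base change of the quasicompact diagonal $\Delta_{Y_3}$, while $\Delta_Y$ factors through $Y \times_{Y_3} Y$ as the product over $Y_3$ of the relative diagonals $\Delta_{Y_i/Y_3}$ (quasicompact, since each $Y_i$ is quasiseparated) followed by the base change of the quasicompact $\Delta_{Y_3}$.

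By Proposition~\ref{prop:spatialunivopen}, for each $i = 1, 2, 3$ I choose a surjective, universally open, quasi-pro-\'etale map $X_i \to Y_i$ with $X_i$ strictly totally disconnected. Setting $X := X_1 \times_{Y_3} X_2$, the map $X \to Y$ decomposes as two base changes of $X_1 \to Y_1$ and $X_2 \to Y_2$, and is therefore itself surjective, universally open, and quasi-pro-\'etale. By Remark~\ref{rem:spatialunivopen}, it suffices to show that $X$ is (locally) spatial, noting that $X$ itself is qcqs by the same diagonal argument as for $Y$.

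For this I repeat the construction with $X_3 \to Y_3$: set $W := X \times_{Y_3} X_3 = A \times_{X_3} B$, where $A := X_1 \times_{Y_3} X_3$ and $B := X_3 \times_{Y_3} X_2$. The map $A \to X_1$ is a base change of the quasi-pro-\'etale (and, by Convention~\ref{conv:etlocsep}, locally separated) map $X_3 \to Y_3$, along a map to the strictly totally disconnected $X_1$. By Definition~\ref{def:etquasiproet}(i) applied to $X_3 \to Y_3$, the pullback $A$ is representable by a perfectoid space pro-\'etale over $X_1$, and $A$ is qcqs since $X_3 \to Y_3$ is; thus $A$ is a spatial perfectoid space. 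Likewise $B$ is a spatial perfectoid space, and $W$, as a fiber product of spatial perfectoid spaces over the affinoid $X_3$ (constructed locally from fiber products of affinoid perfectoid algebras), is itself a spatial perfectoid space. Moreover $W \to X$ is the base change of $X_3 \to Y_3$, hence surjective, universally open, and quasi-pro-\'etale; Remark~\ref{rem:spatialunivopen} now gives that $X$ is spatial, and applying it once more to $X \to Y$ yields that $Y$ is spatial.

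Finally, in the locally spatial case, any point of $Y_1 \times_{Y_3} Y_2$ lies in an open of the form $U_1 \times_{Y_3} U_2$, where $U_3 \subset Y_3$ is a spatial open neighborhood of the common image and $U_1 \subset Y_1, U_2 \subset Y_2$ are spatial opens mapping into $U_3$ (so that $U_1 \times_{Y_3} U_2 = U_1 \times_{U_3} U_2$). Each such open is spatial by the case already established, so $Y$ is locally spatial. The main subtle point lies in the passage to $A$ and $B$: strict total disconnectedness of $X_1$ and $X_2$ is what upgrades the quasi-pro-\'etale pullbacks to honest pro-\'etale maps of perfectoid spaces, which is what allows one to realize $W$ concretely as a perfectoid space rather than a mere diamond.
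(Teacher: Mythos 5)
Your proof is correct, and while it ultimately rests on the same tool (Proposition~\ref{prop:spatialunivopen} and Remark~\ref{rem:spatialunivopen}) as the paper's argument, the route you take through it is genuinely different. The paper first treats the special case where one of the maps, say $Y_1 \to Y_3$, is (quasi-separated) quasi-pro-\'etale --- there $Y_1 \times_{Y_3} Y_2$ is quasi-pro-\'etale over $Y_2$, and Corollary~\ref{cor:quasiproetspatial} does the work --- and then uses that case to reduce to the situation $Y_3 = X_3$ is a qcqs perfectoid space, at which point $X_1 \times_{X_3} X_2$ is \emph{automatically} a qcqs perfectoid space and a single application of Proposition~\ref{prop:spatialunivopen} finishes the proof. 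You instead bypass Corollary~\ref{cor:quasiproetspatial} entirely: you form $X = X_1 \times_{Y_3} X_2$ directly as a qcqs diamond, cover it by $W = X \times_{Y_3} X_3 = A \times_{X_3} B$, argue from Definition~\ref{def:etquasiproet}(i) (exploiting that $X_1, X_2$ are strictly totally disconnected) that $A$ and $B$ --- and hence $W$ --- are qcqs perfectoid spaces, and then apply Remark~\ref{rem:spatialunivopen} twice, first to show $X$ is spatial and then to conclude $Y$ is spatial. What your approach buys is that it makes the underlying mechanism explicit: strict total disconnectedness of the covers is exactly what upgrades quasi-pro-\'etale pullbacks to honest pro-\'etale maps of perfectoid spaces, which in the paper is packaged inside Corollary~\ref{cor:quasiproetspatial}. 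You are also more careful than the paper about verifying the qcqs hypothesis required for Remark~\ref{rem:spatialunivopen}. One small streamlining worth noting: the intermediate stop at $X$ is unnecessary. Since $W = X_1 \times_{Y_3} X_3 \times_{Y_3} X_2$ is a qcqs perfectoid space and the composite $W \to X \to Y$ is surjective, universally open, and quasi-pro-\'etale (each factor is a base change of one of the $X_i \to Y_i$), you may apply Proposition~\ref{prop:spatialunivopen} once to $W \to Y$ and conclude directly, avoiding the nested use of Remark~\ref{rem:spatialunivopen}.
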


\begin{proof} Let $Y_1\to Y_3\leftarrow Y_2$ be a diagram of locally spatial diamonds. We may assume that $Y_3$, $Y_1$, and $Y_2$ are spatial. Assume first that $Y_1\to Y_3$ is a quasi-separated quasi-pro-\'etale map. Then the fibre product $Y_1\times_{Y_3} Y_2$ is quasi-separated quasi-pro-\'etale over $Y_2$, so the result follows from Corollary~\ref{cor:quasiproetspatial}. In general, let $X_3\to Y_3$ be a surjective and universally open quasi-pro-\'etale map from a qcqs perfectoid space $X_3$, as guaranteed by Proposition~\ref{prop:spatialunivopen}. By Remark~\ref{rem:spatialunivopen}, it is enough to prove that
\[
(Y_1\times_{Y_3} Y_2)\times_{Y_3} X_3 = (Y_1\times_{Y_3} X_3)\times_{X_3} (Y_2\times_{Y_3} X_3)
\]
is a spatial diamond. As the fibre products $Y_1\times_{Y_3} X_3$ and $Y_2\times_{Y_3} X_3$ are known to exist by the beginning of the proof (as $X_3\to Y_3$ is quasi-separated quasi-pro-\'etale), we may assume that $Y_3=X_3$ is a qcqs perfectoid space.

Now choose $X_1\to Y_1$ and $X_2\to Y_2$ surjective and universally open quasi-pro-\'etale maps. Then $X_1\times_{Y_3} X_2\to Y_1\times_{Y_3} Y_2$ is again surjective and universally open quasi-pro-\'etale, so the result follows from Proposition~\ref{prop:spatialunivopen}.
\end{proof}

Finally, there is the following 2-out-of-3 property for quasi-pro-\'etale maps.

\begin{proposition}\label{prop:qproetthirdmap} Let $f: Y_1\to Y_2$ and $g: Y_2\to Y_3$ be maps of locally spatial diamonds, with composite $h=g\circ f: Y_1\to Y_3$. Assume that $f$ is quasi-pro-\'etale and surjective, $h$ is quasi-pro-\'etale, and $g$ is separated. Then $g$ is quasi-pro-\'etale.

If $f$ and $h$ are in addition \'etale (resp.~finite \'etale), then also $g$ is \'etale (resp.~finite \'etale).
\end{proposition}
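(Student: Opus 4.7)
The plan is to reduce to a base that is strictly totally disconnected and then show $Y_2$ is representable by a perfectoid space pro-\'etale over it, using the characterization of representability via connected components (Lemma~\ref{lem:diamondconncomprepr}) together with the structure theory of pro-\'etale maps over strictly totally disconnected spaces (Lemma~\ref{lem:proetaleoverwlocal}).

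First, since being quasi-pro-\'etale is tested after base change to strictly totally disconnected perfectoid spaces over $Y_3$, and since the conclusion is local on $Y_2$, I reduce to the case where $Y_3=X$ is strictly totally disconnected and $Y_2$ is spatial. The hypothesis that $h=g\circ f$ is quasi-pro-\'etale then forces $Y_1$ to be representable by a perfectoid space pro-\'etale over $X$. Using Lemma~\ref{lem:strtotdisccover}, I may further replace $Y_1$ by a surjective quasi-pro-\'etale cover from a strictly totally disconnected perfectoid space, preserving that $Y_1\to Y_2$ is surjective quasi-pro-\'etale and $Y_1\to X$ is pro-\'etale.

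Set $R=Y_1\times_{Y_2}Y_1$. Since $f$ is quasi-pro-\'etale and $Y_1$ is strictly totally disconnected, the projections $R\to Y_1$ are pro-\'etale, so $R$ is a perfectoid space. Separatedness of $g$ makes $\Delta_g$ a closed immersion, and pulling this back along $Y_1\times_X Y_1\to Y_2\times_X Y_2$ shows $R\hookrightarrow Y_1\times_X Y_1$ is a closed immersion; in particular, $R$ is separated pro-\'etale over $X$. The main remaining goal is to show that $Y_2=Y_1/R$ is representable by an affinoid perfectoid space, and that $g$ is pro-\'etale.

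To apply Lemma~\ref{lem:diamondconncomprepr}, I analyze a connected component $Y_{2,c}\subset Y_2$ for $c\in\pi_0|Y_2|$. Continuity and connectedness send $Y_{2,c}$ into a unique connected component $X_{c'}=\Spa(C,C^+)$ of $X$, with $C$ algebraically closed. Let $Y_{1,c}$ and $R_c$ be the corresponding pullbacks, and let $x=\Spa(C,\OO_C)\subset X_{c'}$ be the rank-$1$ point. By Lemma~\ref{lem:proetaleoverwlocal}, the fibre of $Y_1\to X$ over $x$ is $\underline{S}\times x$ for some profinite set $S$, and closedness of $R$ in $Y_1\times_X Y_1$ makes the fibre $R_x\subset S\times S$ a closed equivalence relation, so $S/R_x$ is again profinite. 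The $R_x$-orbit in $S$ indexing $c$ collapses to a single point of $S/R_x$, so the fibre of $Y_{2,c}$ over $x$ is the single rank-$1$ point $\Spa(C,\OO_C)$. Since $|g|$ is generalizing (Proposition~\ref{prop:locallyspatial}(iv)), every maximal point of $|Y_{2,c}|$ lies over $x$, so $|Y_{2,c}|$ has a unique generic point $\Spa(C,\OO_C)$. Propagating this singleton-fibre property along the chain of specializations of $|X_{c'}|$ using separatedness of $g$ (via Proposition~\ref{prop:valcritsepsheaf}) identifies $Y_{2,c}$ with $\Spa(C,C_c^+)$ for some open and bounded valuation subring $C^+\subset C_c^+\subset C$.

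Lemma~\ref{lem:diamondconncomprepr} then yields that $Y_2$ is representable by an affinoid perfectoid space, and Lemma~\ref{lem:proetaleoverwlocal} applied to $g$ (whose fibres over rank-$1$ points of $X$ are the profinite-set quotients $\underline{S/R_x}\times x$ just described) shows that $g$ is separated pro-\'etale, so quasi-pro-\'etale. The most delicate step is the final identification in the third paragraph: converting the fact that $Y_{2,c}$ has a unique rank-$1$ generic point into the affinoid perfectoid structure $\Spa(C,C_c^+)$. This requires a careful fibrewise analysis of $Y_{2,c}\to X_{c'}$ along all specializations of $x$, combining the closedness of $R$ with the valuative criterion to ensure that $Y_{2,c}$ does not acquire extra irreducible components or closed points beyond those prescribed by a single valuation subring $C_c^+\subset C$.
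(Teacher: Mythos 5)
Your approach is structurally different from the paper's: you aim to show directly that each connected component $Y_{2,c}$ is representable by $\Spa(C,C_c^+)$ and then invoke Lemma~\ref{lem:diamondconncomprepr} followed by Lemma~\ref{lem:proetaleoverwlocal}; the paper instead first proves that $|g|$ is injective, uses Lemma~\ref{lem:subsetwlocal} to identify the image with an affinoid pro-\'etale $X_2\subset X_3$, and then checks that $Y_2\to X_2$ is an isomorphism via Lemma~\ref{lem:isomspatialdiamondspoints}. Both routes exploit the same key observations: separatedness of $g$ forces $R=Y_1\times_{Y_2}Y_1$ to be a \emph{closed} (hence perfectoid) equivalence relation inside $Y_1\times_{X_3}Y_1$, so the rank-$1$ fibre of $Y_2$ is a profinite-set quotient, and the valuative criterion of separatedness propagates the singleton-fibre property from rank-$1$ points to all points.

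However, there are two places where your argument is incomplete. First, the phrase ``the $R_x$-orbit in $S$ indexing $c$ collapses to a single point of $S/R_x$'' presupposes that $\pi_0(Y_2)$ is identified with $S/R_x$ in a way compatible with the natural surjections $S\twoheadrightarrow\pi_0(Y_2)$ and $S\twoheadrightarrow S/R_x$. This is exactly the injectivity of $|g|$ on rank-$1$ points, and it is not a tautology: a priori $\pi_0(Y_2)$ is some further quotient of $S/R_x$. It \emph{is} provable — the needed input is that $\pi_0(R)\to\pi_0(Y_1)\times\pi_0(Y_1)=S\times S$ has image exactly $R_x$ (which uses closedness of $R$ and the fact that closed immersions of perfectoid spaces are generalizing, so $R$ is a union of connected components of $Y_1\times_{X_3}Y_1$), after which the identification follows; but you should spell it out, since the paper devotes most of its proof to this step (via a disconnection argument on $Y_{2,x}$).

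Second, the conversion from the topological identification $|Y_{2,c}|\cong|\Spa(C,C_c^+)|$ to an isomorphism of diamonds is left unresolved (you flag it as ``the most delicate step'' but offer no mechanism). The cleanest fix is precisely what the paper does: use Proposition~\ref{prop:charinjectionvsheaves} to show $Y_2\to X_3$ is a quasicompact injection (once injectivity on $(K,K^+)$-points is available via the valuative criterion and the rank-$1$ fibre computation), so by Corollary~\ref{cor:qcvinjqproet} it factors over an affinoid pro-\'etale $X_2\subset X_3$, and then apply Lemma~\ref{lem:isomspatialdiamondspoints} to conclude $Y_2\cong X_2$; this bypasses the need for Lemma~\ref{lem:diamondconncomprepr} entirely and is simpler than establishing representability component-by-component. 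Also note a small issue with your reduction step: Lemma~\ref{lem:strtotdisccover} applies to affinoid perfectoid spaces; the paper's version (open cover of $Y_1$ by strictly totally disconnected spaces, then a finite subcover since $Y_2$ is quasicompact) handles this more cleanly.
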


\begin{proof} By the previous corollary, we may assume that $X_3=Y_3$ is a strictly totally disconnected perfectoid space. Moreover, we can assume that $Y_2$ is spatial. Now $Y_1=X_1$ is representable and pro-\'etale over $X_3$. Replacing $X_1$ by an open cover, we can assume that $X_1$ is a disjoint union of strictly totally disconnected perfectoid spaces. A finite union already covers $Y_2$, so we can assume that $X_1$ is strictly totally disconnected, and thus affinoid pro-\'etale over $X_3$. Finally, we can replace $X_3$ by $X_3\times_{\pi_0(X_3)} \pi_0(Y_2)$, so that $\pi_0(Y_2) = \pi_0(X_3)$.

We claim that in this situation, $|f|: |Y_2|\to |X_3|$ is automatically injective. The argument is the same as in the proof of Lemma~\ref{lem:proetaleoverwlocal}, but we repeat it for convenience. We can check on connected components, so we may assume that $X_3=\Spa(C,C^+)$ is a connected strictly totally disconnected space, and then by assumption also $Y_2$ is connected. Now assume that two distinct points $y_1, y_2\in |Y_2|$ map to the same point $x\in X_3$. If $x$ corresponds to a valuation ring $(C^+)^\prime\subset C$, then $y_1, y_2$ give rise to points still denoted $y_1,y_2\in Y_2(C,(C^+)^\prime)$ (by lifting further to $X_1$). By the valuative criterion of separatedness, if $y_1\neq y_2$, then the corresponding $(C,\OO_C)$-points are still distinct, so we can assume that $(C^+)^\prime = \OO_C$. Then $x$ is the unique rank $1$ point of $X$, and the set of preimages of $x$ forms a profinite set $Y_{2,x}\subset Y_2$, as they form a spectral set without specializations. By our assumption, $Y_{2,x}$ contains at least two points, so we can find a closed and open decomposition $Y_{2,x}=U_{1,x}\sqcup U_{2,x}$, for some quasicompact open subsets $U_1,U_2\subset Y_2$. Let $V_1, V_2\subset Y_2$ be the closures of $U_{1,x}$ and $U_{2,x}$. As $U_{1,x}$ and $U_{2,x}$ are pro-constructible subsets, their closures are precisely the subsets of specializations of points in $U_{1,x}$ resp.~$U_{2,x}$. As any point of $Y_2$ generalizes to a unique point of $Y_{2,x}$, we have $Y_2=V_1\sqcup V_2$. As $V_1$ and $V_2$ are both closed, this gives a contradiction to our assumption that $Y_2$ is connected, finishing the proof that $|f|: |Y_2|\to |X_3|$ is injective.

Moreover, the image of $|f|$ is a pro-constructible and generalizing subset of $|X_3|$, and thus by Lemma~\ref{lem:subsetwlocal} there is an affinoid pro-\'etale $X_2\subset X_3$ whose image is precisely $|Y_2|$; thus $Y_2\to X_3$ factors over $Y_2\to X_2$; we can thus also replace $X_3$ by $X_2$. We claim that the map $Y_2\to X_2$ is an isomorphism. For this, we will use Lemma~\ref{lem:isomspatialdiamondspoints}. This reduces us to the case that $X_3=\Spa(C,C^+)$ where $C$ is algebraically closed, and $C^+\subset C$ is an open and bounded valuation subring. But now as $|Y_2|=|X_2|=|X_3|=|\Spa(C,C^+)|$, also $|Y_2|$ has a unique closed point, and so we can replace $X_1=\Spa(C^\prime,C^{\prime+})$ by its localization at one point. But then $(C^\prime,C^{\prime+}) = (C,C^+)$ (as $X_1\to X_2$ is affinoid pro-\'etale and surjective), and the composite $X_1\to Y_1\to X_2$ is an isomorphism, where the first map is a surjection. Thus, $X_1=Y_1=X_2$, as desired.

If $f$ and $h$ are \'etale (resp.~finite \'etale), then after the previous reductions, $Y_2=X_2$ is strictly totally disconnected. In particular, $f$ admits a section $s: Y_2\to Y_1$, which is automatically \'etale (resp.~finite \'etale) (as a section of an \'etale, resp.~finite \'etale, map), and then $g=hs$ is \'etale (resp.~finite \'etale).
\end{proof}

Another useful lemma is a structure result for the local nature of \'etale morphisms of locally spatial diamonds, generalizing Definition~\ref{def:etale}~(ii).

\begin{lemma}\label{lem:spatialetalelocallysep} Let $f: Y^\prime\to Y$ be an \'etale map of locally spatial diamonds. Then for every $y^\prime\in |Y^\prime|$ with image $y\in |Y|$, one can find open neighborhoods $V^\prime\subset Y^\prime$ of $y^\prime$ and $V\supset f(V^\prime)$ of $y$ such that $f|_{V^\prime}: V^\prime\to V$ factors as the composite of a quasicompact open immersion $V^\prime\hookrightarrow W$ and a finite \'etale map $W\to V$.
\end{lemma}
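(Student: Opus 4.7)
The statement is local on both $Y$ and $Y'$, so we may shrink around $y'$ to assume $Y$ is spatial and $f: Y'\to Y$ is qcqs separated \'etale. The strategy is to pull $f$ back along a strictly totally disconnected cover of $Y$, apply Lemma~\ref{lem:compactificationetalemap} fiberwise on connected components to obtain the desired factorization upstairs, and then descend the result to $Y$ using v-descent for finite \'etale maps.

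By Proposition~\ref{prop:spatialunivopen} fix a surjective, universally open, quasi-pro-\'etale map $\pi: X\to Y$ with $X$ strictly totally disconnected. Set $X' := Y'\times_Y X$; this is a perfectoid space, and $f_X: X'\to X$ is qcqs separated \'etale. Lift $y'$ to $x'\in|X'|$, with image $x\in|X|$ lying in a connected component $\Spa(C, C^+) \subset X$, where $C$ is algebraically closed. Applying Lemma~\ref{lem:compactificationetalemap} to the restriction of $f_X$ over $\Spa(C,C^+)$, after shrinking $X'$ around $x'$, we obtain a factorization
\[
V'_C \hookrightarrow \bar W_C \to \Spa(C,C^+)
\]
as a qc open immersion into a finite \'etale cover. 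Crucially, the construction of $\bar W_C$ in the proof of Lemma~\ref{lem:compactificationetalemap} is \emph{functorial} in the pair $(C,C^+)$: it is the disjoint union $\bigsqcup_i \Spa(L_i, L_i^+)$ over the (finitely many) maximal points of $Y'\times_Y\Spa(C,\OO_C)$, with $L_i^+$ the integral closure of $C^+$ in $L_i$. By Proposition~\ref{prop:etmaptolim}(i) and~(ii), since $\Spa(C,C^+)$ is the cofiltered intersection of the qc opens of $X$ coming from clopen neighborhoods of its class in $\pi_0(X)$, this factorization spreads to a factorization $V'_X \hookrightarrow W_X \to U$ over a qc open $U\subset X$ containing $x$.

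To descend, consider the equivalence relation $R := X\times_Y X$, which is affinoid pro-\'etale over the strictly totally disconnected $X$; by Lemma~\ref{lem:proetaleoverwlocal}, $R$ is itself strictly totally disconnected, so every connected component of $R$ is of the form $\Spa(C'',C''^+)$ with $C''$ algebraically closed. Applying the same functorial fiberwise construction to $Y'\times_Y \Spa(C'',C''^+)$ for each such component shows that the two pullbacks $p_1^* W_X$ and $p_2^* W_X$ to $R$ agree on each connected component, and hence (by the approximation of finite \'etale covers over strictly totally disconnected bases afforded by Proposition~\ref{prop:etmaptolim}(i) applied to the connected components) agree globally on $R$. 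The same reasoning on the strictly totally disconnected perfectoid space $R\times_X R$ yields the cocycle condition, producing a canonical descent datum on $W_X\to U$.

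By v-descent for finite \'etale maps (Proposition~\ref{prop:etalesepdescent}), $W_X\to U$ descends to a finite \'etale cover $W\to V$ of a qc open $V\subset Y$ containing $y$; the qc open immersion $V'_X \hookrightarrow W_X$ descends analogously (via v-descent for open immersions, cf.~Proposition~\ref{prop:checksheafpropvloc}(i), applied to $|V'_X|\subset |W_X|$), producing the required factorization $V'\hookrightarrow W\to V$ with $V'\subset Y'$ a qc open neighborhood of $y'$. The main obstacle is verifying that the fiberwise canonical-compactification construction is sufficiently functorial to yield a valid descent datum on $R$ satisfying the cocycle condition on $R\times_X R$; this is where the explicit description of $\bar W_C$ in Lemma~\ref{lem:compactificationetalemap} combined with the approximation results in Proposition~\ref{prop:etmaptolim} does the essential work.
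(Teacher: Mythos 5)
Your proof takes a genuinely different route from the paper's, and it has a gap at the descent step. The paper's proof never passes to a strictly totally disconnected cover; instead it localizes $Y$ directly at $y$, obtaining the spatial diamond $Y_y$ with unique closed point, constructs the finite \'etale cover $W\to Y_y$ via the equivalence $(Y_y)_\fet\cong (Y_y^\circ)_\fet$ and the fact that \'etale maps over $Y_y^\circ$ (covered by $\Spa(K,\OO_K)$) are finite \'etale, identifies $Y'_{y'}\cong W_w$ as a quasicompact open via Lemma~\ref{lem:compactificationetalemap}, and then spreads out to a quasicompact open neighborhood of $y$ in $Y$ using Proposition~\ref{prop:etmaptolimdiamond}(i) and Lemma~\ref{lem:diamondlimit} applied to the cofiltered limit $Y_y = \varprojlim_{V\ni y} V$ over quasicompact opens of $Y$.

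Your approach pulls back to $X\to Y$ strictly totally disconnected, applies Lemma~\ref{lem:compactificationetalemap} over the connected component $\Spa(C,C^+)$, spreads out to a clopen $U\subset X$, and then tries to descend $W_X\to U$ to a finite \'etale $W\to V$ for some quasicompact open $V\subset Y$ containing $y$. The gap: the clopen subset $U\subset X$ produced by spreading out over the profinite $\pi_0(X)$ need not be the preimage of \emph{any} open subset of $Y$ — the map $\pi_0(X)\to\pi_0(Y)$ is far from injective — so there is no cover $U\to V$ along which the descent of $W_X\to U$ is even well-posed. (Relatedly, your descent-datum construction via ``fiberwise functoriality of the compactification'' is both delicate and unnecessary: by Proposition~\ref{prop:cancomp} and Corollary~\ref{cor:cancomprelative} the canonical compactification $\overline{Y'}^{/Y}$ is already a v-sheaf over $Y$ that commutes with base change, so no descent datum needs to be cooked up; but recognizing this still leaves the problem of $U$ not descending.) The paper's device of localizing the diamond $Y$ itself — rather than localizing a perfectoid cover of it — is exactly what makes the final spreading-out step land in opens of $Y$ and thereby avoids this issue.
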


We note again that by Convention~\ref{conv:etlocsep}, the map $f$ is required to be locally separated. (This is necessary, as $f|_{V^\prime}$ is separated.)

\begin{proof} We can assume that $Y$ is spatial, and then also that $Y^\prime$ is spatial; thus $f: Y^\prime\to Y$ is a qcqs \'etale map of spatial diamonds, which we may moreover assume to be separated. We claim that it suffices to prove that for all $y\in Y$ with localization $Y_y$, the map $Y^\prime\times_Y Y_y\to Y_y$ factors as the composite of a quasicompact open immersion $Y^{\prime}\times_Y Y_y\to W_y$ and a finite \'etale map $W_y\to Y_y$. Indeed, both the finite \'etale map $W_y\to Y_y$ and the quasicompact open immersion arise via base change from $V$ for some quasicompact open subfunctor $V\subset Y$, by Proposition~\ref{prop:etmaptolimdiamond}~(i) and Lemma~\ref{lem:diamondlimit}, respectively. Up to replacing $Y$ by $V$, we get some map $V^\prime\to Y$ which is a composite of a quasicompact open immersion and a finite \'etale morphism, such that $V^\prime\times_Y Y_y\cong Y^\prime\times_Y Y_y$ over $Y_y$. Shrinking $Y$ again, we may assume by Proposition~\ref{prop:etmaptolimdiamond}~(ii) that the isomorphism $V^\prime\times_Y Y_y\cong Y^\prime\times_Y Y_y$ is defined over $Y$ already. This gives the desired result.

Thus, we can assume that $y$ is the unique closed point of $Y$. In that case, $|Y|$ is a totally ordered chain of specializations (being a quotient of $|\Spa(K,K^+)|$ for a perfectoid field $K$ with an open and bounded valuation subring $K^+\subset K$). Let $Y^\circ\subset Y$ correspond to the unique open point, and $Y^{\prime\circ} = Y^\prime\times_Y Y^\circ$. Then $Y^{\prime\circ}\to Y^\circ$ is \'etale, and thus finite \'etale: Indeed, $Y^\circ$ is covered by $\Spa(K,\OO_K)$, and $\Spa(K,\OO_K)_\et = \Spa(K,\OO_K)_\fet$. Moreover, $Y^\circ_\fet\cong Y_\fet$, as finite \'etale spaces are insensitive to $\OO^+$. We see that there is some finite \'etale space $W\to Y$ with $W\times_Y Y^\circ\cong Y^{\prime\circ}$. The composite map $Y^{\prime\circ}\cong W\times_Y Y^\circ\subset W$ extends uniquely to a map $Y^\prime\to W$ over $Y$, by checking the similar result after pullback to $\Spa(K,K^+)$ (and uniqueness). Moreover, by Lemma~\ref{lem:compactificationetalemap}, the map $Y^\prime\to W$ is an injection. It is also \'etale as a map between spaces \'etale over $Y$, and thus an open immersion; this finishes the proof.
\end{proof}

\section{Small v-stacks}

The goal of this section is two-fold. First, we generalize many basic results to the setting of general (small) v-stacks. Secondly, we give a criterion for when a v-sheaf is a diamond, Theorem~\ref{thm:vdiamondisdiamond}, without exhibiting an explicit quasi-pro-\'etale surjection. In this way, it is similar to Artin's theorem on algebraic spaces reducing smooth (or even flat) groupoids to \'etale groupoids.

As already discussed before, we make the following definition.

\begin{definition}\label{def:vdiamond} A small v-sheaf is a v-sheaf $Y$ on $\Perf$ such that there is a surjective map of v-sheaves $X\to Y$ for some perfectoid space $X$.
\end{definition}

\begin{remark}\label{rem:quasicompactsmall} Clearly, if $Y$ is a diamond, then $Y$ is a small v-sheaf. Also, if $Y$ is a v-sheaf such that there is a surjective map of v-sheaves $X\to Y$ for some diamond $X$, then $Y$ is a small v-sheaf. Finally, if $Y$ is a quasicompact v-sheaf, then it is small (as one can always cover $Y$ by the disjoint union of all maps $X\to Y$ from perfectoid spaces $X$).
\end{remark}

Perhaps surprisingly, the following proposition shows that any small v-sheaf has a reasonable geometric structure.

\begin{proposition}\label{prop:smallvsheaf} Let $Y$ be a small v-sheaf, and let $X\to Y$ be a surjective map of v-sheaves, where $X$ is a diamond. Then $R=X\times_Y X$ is a diamond, and $Y=X/R$ as v-sheaves.

If $Y$ is a quasiseparated small v-sheaf, and $X\to Y$ is a surjective map of v-sheaves, where $X$ is a locally spatial diamond, then $R=X\times_Y X$ is a locally spatial diamond. In particular, if $Y$ is a qcqs v-sheaf, and $X$ is a spatial diamond, then $R$ is a spatial diamond.
\end{proposition}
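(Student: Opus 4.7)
The plan is to realize $R=X\times_Y X$ as an injection of v-sheaves into the absolute product $X\times X$ of diamonds, and then to invoke Proposition~\ref{prop:vinjdiamond} (any v-sub-sheaf of a diamond is a diamond) together with its quasicompact refinement Proposition~\ref{prop:qcvinjspatdiamond} for the (locally) spatial statements.

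First I would observe that, since $Y$ is a v-sheaf, its diagonal $\Delta_Y\colon Y\to Y\times Y$ is a monomorphism of v-sheaves, i.e.\ an injection. Base-changing along $X\times X\to Y\times Y$ identifies
\[
R = X\times_Y X = (X\times X)\times_{Y\times Y} Y,
\]
so the projection $R\hookrightarrow X\times X$ is an injection of v-sheaves. The absolute product $X\times X$ is itself a diamond: choosing a presentation $X=X_0/R_0$ with $X_0\in\Perf$ and $R_0\subset X_0\times X_0$ a pro-\'etale equivalence relation, the product is presented as $(X_0\times X_0)/(R_0\times R_0)$, where $X_0\times X_0\in\Perf$ (absolute products of characteristic-$p$ perfectoid spaces being perfectoid) and $R_0\times R_0\to X_0\times X_0$ is a pro-\'etale equivalence relation as the product of two such. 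Proposition~\ref{prop:vinjdiamond} then yields that $R$ is a diamond. The identification $Y=X/R$ as v-sheaves is automatic: $X\to Y$ is a v-cover by hypothesis and the v-topology is subcanonical by Theorem~\ref{thm:vsub}, so $Y$ is the v-sheaf coequalizer of $R\rightrightarrows X$. Smallness of $Y$ enters only to guarantee that one can carry out the construction $\kappa$-smally in accordance with Convention~\ref{conv:kappasmall}.

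For the second statement, quasiseparatedness of $Y$ is exactly the condition that $\Delta_Y$ be quasicompact; base change preserves quasicompactness in the algebraic v-topos, so $R\hookrightarrow X\times X$ is a \emph{quasicompact} injection. When $X$ is locally spatial, the same presentation argument, combined with the characterization of spatial diamonds via universally open quasi-pro-\'etale covers (Proposition~\ref{prop:spatialunivopen}) applied to the product of two such covers for $X$, shows that $X\times X$ is locally spatial, whence Proposition~\ref{prop:qcvinjspatdiamond} gives that $R$ is a locally spatial diamond. For the final clause, if additionally $Y$ is quasicompact and $X$ is spatial, then $X\times X$ is spatial, the quasicompact injection $R\hookrightarrow X\times X$ exhibits $|R|$ as a pro-constructible generalizing subset of the spectral space $|X\times X|$ by Proposition~\ref{prop:qcvinjspatdiamond}, and $R$ is itself quasicompact (being quasicompact over the quasicompact $X\times X$); hence $R$ is spatial.

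The main expected obstacle is establishing that the absolute product $X\times X$ of (locally) spatial diamonds is again (locally) spatial, since Corollary~\ref{cor:fibreproductspatial} is phrased only for fibre products over a locally spatial diamond (and the terminal v-sheaf is not itself a diamond). The way around this is the direct presentation-level argument above: $X_0\hat\otimes_{\F_p} X_0$ is perfectoid with all the right finiteness, and the product of a universally open quasi-pro-\'etale cover of $X$ with itself is a universally open quasi-pro-\'etale cover of $X\times X$ by a strictly totally disconnected space, so the converse direction of Proposition~\ref{prop:spatialunivopen} applies.
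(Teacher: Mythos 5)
Your overall route is the same as the paper's: view $R$ as a sub-v-sheaf of $X\times X$, establish that $X\times X$ is a diamond (resp.\ locally spatial diamond), and then invoke Proposition~\ref{prop:vinjdiamond} (resp.\ Proposition~\ref{prop:qcvinjspatdiamond}). You are also right to flag that Corollary~\ref{cor:fibreproductspatial} does not formally cover the absolute product, and to note that this must be supplied separately.

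There is, however, a genuine gap in your handling of quasicompactness. The absolute product $X\times X$ of two quasicompact perfectoid spaces (or spatial diamonds) of characteristic $p$ is \emph{not} quasicompact: already for $X_0=\Spa(\F_p\laurentseries{t^{1/p^\infty}})$, the product $X_0\times X_0$ is the strictly increasing union of the affinoid loci $\{\,|t_1|^n\le |t_2|\le |t_1|^{1/n}\,\}$, with no quasicompact piece containing all of them. So your assertion that ``$X\times X$ is spatial'' when $X$ is spatial is false, and the derived claim that ``$R$ is itself quasicompact (being quasicompact over the quasicompact $X\times X$)'' collapses. For the same reason, the converse direction of Proposition~\ref{prop:spatialunivopen} cannot be applied directly to $X\times X$ (that proposition requires the target to be a qcqs diamond), and the product $X_0\times X_0'$ of two strictly totally disconnected spaces is neither affinoid nor strictly totally disconnected. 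The fact that $X\times X$ is locally spatial is still true, but the argument has to be made at the level of quasicompact open subsets of $X_0\times X_0$ (using the universal openness of $X_0\times X_0\to X\times X$ and Lemma~\ref{lem:spectralopenequivrel}), not by a single application of Proposition~\ref{prop:spatialunivopen} to $X\times X$.

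For the final clause, the correct argument — which is what the paper does — does not pass through any quasicompactness of $X\times X$ at all: since $X$ and $Y$ are both qcqs, the fibre product $R=X\times_Y X$ over the qcqs $Y$ is itself qcqs; combined with the fact that $R$ is a locally spatial diamond (from the middle clause), this gives that $R$ is spatial. Your identification of $R\hookrightarrow X\times X$ as a base change of $\Delta_Y$, and the conclusion that it is a quasicompact injection when $Y$ is quasiseparated, are correct and do match the paper; it is only the quasicompactness of $R$ itself that needs this different route.
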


\begin{proof} As $R\subset X\times X$ is a sub-v-sheaf, and $X\times X$ is a diamond, the first part follows from Proposition~\ref{prop:vinjdiamond}. If $Y$ is quasiseparated, then $R=X\times_Y X\subset X\times X$ is a quasicompact sub-v-sheaf, so if $X$ is locally spatial, then Proposition~\ref{prop:qcvinjspatdiamond} shows that $R$ is a locally spatial diamond. For the final sentence, note that if $Y$ is quasicompact, then it is small. Now if $X$ and $Y$ are qcqs, then so is $R=X\times_Y X$, so $R$ is spatial.
\end{proof}

This allows us to go one step further and pass to v-stacks.

\begin{definition} A small v-stack is a v-stack $Y$ on $\Perf$ such that there is a surjective map of v-stacks $X\to Y$ from a perfectoid space $X$, for which $R=X\times_Y X$ is a small v-sheaf.
\end{definition}

In particular, all qcqs v-stacks are small (arguing as in Remark~\ref{rem:quasicompactsmall}). Lemma~\ref{lem:isomspatialdiamondspoints} extends to v-sheaves and more generally v-stacks.

\begin{lemma}\label{lem:isomqcqsvsheaves} Let $f: Y^\prime\to Y$ be a qcqs map of v-stacks. Then $f$ is an isomorphism if and only if for all algebraically closed perfectoid fields $K$ with an open and bounded valuation subring $K^+\subset K$, the map $f(K,K^+): Y^\prime(K,K^+)\to Y(K,K^+)$ is an equivalence of groupoids.
\end{lemma}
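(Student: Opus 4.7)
The ``only if'' direction is immediate, so I focus on the converse. First I reduce to the case $Y=\Spa(R,R^+)$ affinoid perfectoid: both the qcqs hypothesis on $f$ and the equivalence-on-$(K,K^+)$-groupoids hypothesis are stable under base change along any $T\to Y$ with $T$ affinoid perfectoid, and $f$ is an isomorphism of v-stacks iff this can be checked after every such base change. Under this reduction $Y'$ is quasicompact, hence small.

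I prove the v-sheaf case first. Let $f\colon Y'\to Y$ be a qcqs morphism of v-sheaves with $Y=\Spa(R,R^+)$ and $Y'(K,K^+)\to Y(K,K^+)$ bijective for all algebraically closed $(K,K^+)$. Choose an affinoid perfectoid $X$ with a v-surjection $X\to Y'$, which exists by smallness of $Y'$. The composite $X\to Y$ is a v-cover of affinoid perfectoid spaces: on $(C,C^+)$-points with $C$ algebraically closed it is the composition of a surjection and a bijection, hence surjective, and by Lemma~\ref{lem:bijectiveiso} surjectivity on such rank-one geometric points characterizes v-covers between affinoid perfectoid spaces. By v-descent, $Y'=X/R'$ and $Y=X/R$, where $R':=X\times_{Y'} X\hookrightarrow R:=X\times_Y X$ is an inclusion of sub-v-sheaves of $X\times X$. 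Now $R$ is representable by the affinoid perfectoid fibre product, while by Proposition~\ref{prop:smallvsheaf} $R'$ is a spatial diamond (being $X\times_{Y'}X$ with $Y'$ a qcqs small v-sheaf and $X$ a spatial diamond). The inclusion $R'\hookrightarrow R$ is qcqs, and by hypothesis bijective on $(K,K^+)$-points for algebraically closed $K$. Hence Lemma~\ref{lem:isomspatialdiamondspoints} yields $R'=R$, so $Y'=X/R'=X/R=Y$.

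The v-stack case reduces to the v-sheaf case via the diagonal $\Delta_f\colon Y'\to Y'\times_Y Y'$. Since we work with 1-stacks, $\Delta_f$ is 0-truncated, hence a morphism of v-sheaves; and it is qcqs since $f$ is quasiseparated in the sense of Convention~\ref{conv:stackquasisep}. The fully-faithful half of the groupoid-equivalence hypothesis on $f$ is precisely the assertion that $\Delta_f(K,K^+)$ is bijective for all algebraically closed $(K,K^+)$: every isomorphism $f(a)\simeq f(b)$ in $Y(K,K^+)$ lifts uniquely to an isomorphism $a\simeq b$ in $Y'(K,K^+)$. The v-sheaf case then gives that $\Delta_f$ is an isomorphism, so $f$ itself is 0-truncated, a morphism of v-sheaves satisfying bijection on $(K,K^+)$-points; a second application of the v-sheaf case finishes the proof. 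The main conceptual obstacle is this reduction via $\Delta_f$, which requires unpacking ``equivalence of groupoids'' into its fully-faithful and essentially-surjective parts and recognizing that the former is exactly the input needed for $\Delta_f$; once we are in the v-sheaf setting, everything reduces cleanly to the already-established Lemmas~\ref{lem:bijectiveiso} and~\ref{lem:isomspatialdiamondspoints} together with the structural Proposition~\ref{prop:smallvsheaf}.
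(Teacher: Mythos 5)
Your proof is correct in substance and arrives at the same endgame as the paper's---reducing to a map of spatial diamonds and invoking Lemma~\ref{lem:isomspatialdiamondspoints}, with Proposition~\ref{prop:smallvsheaf} supplying the spatiality---but the organization is genuinely different and somewhat cleaner. The paper iterates a ``choose a presentation, pass to the equivalence relation'' step \emph{twice}: once to go from the v-stack $Y'$ to a v-sheaf mapping to the affinoid $X'\times_X X'$, and once more from that v-sheaf to a spatial diamond. You instead prove the v-sheaf case in a single step (the equivalence relation $R'$ is already a spatial diamond by Proposition~\ref{prop:smallvsheaf}) and then reduce v-stacks to v-sheaves via $\Delta_f$. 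The diagonal reduction makes the role of the qcqs hypothesis (it makes $\Delta_f$ qcqs via Convention~\ref{conv:stackquasisep}) and of the fully-faithful half of the groupoid hypothesis (it gives the bijectivity on $(K,K^+)$-points for $\Delta_f$) more transparent, which is a real expository gain.

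Two places where the phrasing should be tightened. First, ``$\Delta_f$ is $0$-truncated, hence a morphism of v-sheaves'' is not right as stated: both $Y'$ and $Y'\times_Y Y'$ are v-stacks, and $0$-truncatedness of $\Delta_f$ only means that its pullback along any map from a v-sheaf is a map of v-sheaves. Since your v-sheaf argument begins by base-changing to an affinoid target anyway, the reduction does go through, but you should either phrase it as ``after pullback along any $T\to Y'\times_Y Y'$ with $T$ affinoid perfectoid, $\Delta_f$ becomes a map of v-sheaves,'' or formulate the v-sheaf case so that it manifestly covers this situation. Second, the justification that $X\to Y$ is a v-cover is slightly off: a v-surjection is not generally surjective on $(C,C^+)$-points on the nose, and Lemma~\ref{lem:bijectiveiso} is about isomorphisms, not covers. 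What you actually need is that $|X|\to|Y'|$ is surjective (Lemma~\ref{lem:surjvstopsurj}, since $X\to Y'$ is a v-cover) and $|Y'|\to|Y|$ is a bijection (from the $(K,K^+)$-point hypothesis and the description of $|\cdot|$ in Proposition~\ref{prop:vtopspaceindep}), hence $|X|\to|Y|$ is surjective; for a map of qcqs perfectoid spaces this is exactly being a v-cover. Neither issue is a gap in the underlying idea, but as written both steps assert something false or cite the wrong lemma.
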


\begin{remark} We remind the reader of Convention~\ref{conv:stackquasisep} concerning quasiseparated maps of stacks.
\end{remark}

\begin{proof} It suffices to check the result after pullback to any affinoid perfectoid space $X$ mapping to $Y$. In that case, $Y^\prime$ is a qcqs v-stack. Write $Y^\prime=X^\prime/R^\prime$ as the quotient of an affinoid perfectoid space $X^\prime$; then $R^\prime$ is a qcqs v-sheaf. The map $X^\prime\to X$ is a v-cover, as it is a map of qcqs perfectoid spaces which by assumption is surjective on points. As both $X^\prime$ and $X$ are v-sheaves, it suffices to see that the map of v-sheaves $R^\prime\to X^\prime\times_X X^\prime$ is an isomorphism.

After this reduction, we have a new map $Y^\prime\to X$ from a qcqs v-sheaf $Y^\prime$ (the $R^\prime$ from above) to an affinoid perfectoid space $X$ (the $X^\prime\times_X X^\prime$ from above). Again, write $Y^\prime=X^\prime/R^\prime$ as the quotient of an affinoid perfectoid space $X^\prime$ by the equivalence relation $R^\prime$, which by Proposition~\ref{prop:smallvsheaf} is a spatial diamond. Repeating the arguments, it is enough to see that $R^\prime\to X^\prime\times_X X^\prime$ is an isomorphism of spatial diamonds. But the map $R(K,K^+)\to (X^\prime\times_X X^\prime)(K,K^+)$ is a bijection for all $(K,K^+)$ as in the statement of the lemma. Thus, the result follows from Lemma~\ref{lem:isomspatialdiamondspoints}.
\end{proof}

Moreover, one can define an underlying topological space for small v-stacks.

\begin{proposition}\label{prop:vtopspaceindep} Let $Y$ be a small v-stack, and let $Y=X/R$ be a presentation as a quotient of a diamond $X$ by a small v-sheaf $R\to X\times X$; let $\tilde{R}\to R$ be a surjection from a diamond. There is a canonical bijection between $|X|/|\tilde{R}|$ and the set
\[
|Y|=\{\Spa(K,K^+)\to Y\}/\sim\ ,
\]
where $(K,K^+)$ runs over all pairs of a perfectoid field $K$ with an open and bounded valuation subring $K^+\subset K$, and two such maps $\Spa(K_i,K_i^+)\to Y$, $i=1,2$, are equivalent, if there is a third pair $(K_3,K_3^+)$, and a commutative diagram
\[\xymatrix{
&\Spa(K_1,K_1^+)\ar[dr] & \\
\Spa(K_3,K_3^+)\ar[ur]\ar[rr]\ar[dr] && Y\ ,\\
& \Spa(K_2,K_2^+)\ar[ur] &
}\]
where the maps $\Spa(K_3,K_3^+)\to \Spa(K_i,K_i^+)$, $i=1,2$, are surjective.

Moreover, the quotient topology on $|Y|$ induced by the surjection $|X|\to |Y|$ is independent of the choice of presentation $Y=X/R$.
\end{proposition}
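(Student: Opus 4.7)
The plan is to follow the template of Proposition~\ref{prop:topspaceindep} (the diamond case), with the main modification being the role of the auxiliary surjection $\tilde{R}\to R$ from a diamond, needed because $R$ is only a v-sheaf and does not a priori have a well-defined underlying topological space.

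First, I would verify that $\sim$ is an equivalence relation on the class of maps $\Spa(K,K^+)\to Y$. Transitivity reduces (as in the diamond case) to the fact that for a diagram $(K_1,K_1^+)\leftarrow (K_0,K_0^+)\rightarrow (K_2,K_2^+)$ of perfectoid fields with open and bounded valuation subrings with both maps of adic spectra surjective, the fiber product $\Spa(K_1,K_1^+)\times_{\Spa(K_0,K_0^+)} \Spa(K_2,K_2^+)$ admits a surjection from some $\Spa(K_3,K_3^+)$; this is exactly the same point as in the diamond case.

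Next, I would construct the canonical map $|X|/|\tilde{R}|\to |Y|$. By Proposition~\ref{prop:topspaceindep} applied to the diamond $X$, points of $|X|$ correspond to equivalence classes of maps $\Spa(K,K^+)\to X$; composing with $X\to Y$ gives a map of sets $|X|\to |Y|$. This is surjective because any map $g\from \Spa(K,K^+)\to Y$ can be lifted to $X$ after a v-cover, and by replacing $K$ by a sufficiently large perfectoid field $K'$ with compatible valuation subring we get a lift $\Spa(K',K'^{+})\to X$ representing the same class of $g$ in $|Y|$. The map factors through $|X|/|\tilde{R}|$: a point of $|\tilde{R}|$ gives, after representing by $\Spa(K,K^+)\to \tilde R\to R\hookrightarrow X\times X$, two maps to $X$ whose compositions to $Y$ are $2$-isomorphic, hence give the same class in $|Y|$.

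The injectivity of $|X|/|\tilde{R}|\to |Y|$ is the technical heart. Given two maps $\Spa(K_i,K_i^+)\to X$, $i=1,2$, whose images in $|Y|$ agree, I replace both by a common $\Spa(K_3,K_3^+)$ as in the equivalence relation; the data of the $2$-isomorphism between the two composite maps to $Y$ canonically produces a map $\Spa(K_3,K_3^+)\to X\times_Y X=R$. Using that $\tilde{R}\to R$ is a surjection of v-sheaves and that $\Spa(K_3,K_3^+)$ is strictly henselian enough, I further enlarge $K_3$ to lift this to $\tilde{R}$ (a v-cover refines to a cover by such $\Spa(K,K^+)$), so the two original points of $|X|$ get identified in $|X|/|\tilde{R}|$. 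The main obstacle here is verifying that the lift from $R$ to $\tilde{R}$ exists after enlarging $K$; this requires knowing that any v-cover of $\Spa(K_3,K_3^+)$ splits after replacing by a larger $\Spa(K_4,K_4^+)$, which one sees by choosing a $(K_4,K_4^+)$-point of the cover (possible since the cover is surjective on points valued in perfectoid affinoid fields).

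Finally, for independence of the topology, I would compare two presentations $Y=X/R=X'/R'$. The fiber product $X\times_Y X'$ is a small v-sheaf, so there is a surjection from a diamond $X''\to X\times_Y X'$. Then $X''$ maps to both $X$ and $X'$ with surjective v-sheaf maps, and by Lemma~\ref{lem:quotientmap} (applied after pulling back to suitable diamond surjections $\tilde X''\to X''$ etc., so as to get spectral surjective generalizing maps), $|X''|\to |X|$ and $|X''|\to |X'|$ are quotient maps; hence the two quotient topologies on $|Y|$ coincide with the quotient topology from $|X''|$, and in particular with each other.
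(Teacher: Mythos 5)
Your proof is correct and, in spirit, it is the same argument the paper has in mind: the paper's own proof of this proposition is the single sentence ``The proof of Proposition~\ref{prop:topspaceindep} carries over.'' What you have done is supply the details of how the carrying-over actually works, which is worthwhile content. The two places where the stack setting forces genuine adjustments, and you handle both, are (a) the injectivity step, where the map $\Spa(K_3,K_3^+)\to R=X\times_Y X$ (produced by a choice of $2$-isomorphism between the two composites to the stack $Y$) is no longer a map to a perfectoid space and must first be lifted along the surjection $\tilde{R}\to R$ after a further enlargement of $(K_3,K_3^+)$; and (b) the topological comparison of two presentations, where one can no longer invoke Lemma~\ref{lem:quotientmap} for spectral spaces directly but instead uses that $X\times_Y X'$ is a small v-sheaf by Proposition~\ref{prop:topfiberproduct}, picks a surjection $X''\to X\times_Y X'$ from a diamond, and appeals to the last assertion of Proposition~\ref{prop:topspaceopensubsets} to see that $|X''|\to |X|$ and $|X''|\to |X'|$ are quotient maps. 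The one step you might make fully explicit is why a lift $\Spa(K_3,K_3^+)\to R$ can be enlarged to a lift $\Spa(K_4,K_4^+)\to\tilde{R}$: since $\tilde R\to R$ is a surjection of v-sheaves, there is a v-cover $W\to\Spa(K_3,K_3^+)$ with a lift $W\to\tilde R$; pick $w\in |W|$ above the closed point $s\in |\Spa(K_3,K_3^+)|$ and take $\Spa(K_4,K_4^+)=\Spa(K(w),K(w)^+)$. The resulting map to $\Spa(K_3,K_3^+)$ hits $s$, and since all maps of perfectoid spaces are generalizing and $|\Spa(K_3,K_3^+)|$ consists entirely of generalizations of $s$, it is surjective. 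Finally, note that the statement implicitly asserts independence of the chosen $\tilde R$ as well; this follows automatically from the bijection with the intrinsically defined set $|Y|$, which does not refer to $\tilde R$.
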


\begin{proof} The proof of Proposition~\ref{prop:topspaceindep} carries over.
\end{proof}

Thus, the following definition is independent of the choice made.

\begin{definition}\label{def:vtopspace} Let $Y$ be a small v-stack with a presentation $Y=X/R$ as a quotient of a diamond $X$ by a small v-sheaf $R\to X\times X$, and $\tilde{R}\to R$ a surjection from a diamond. The underlying topological space of $Y$ is $|Y|:=|X|/|\tilde{R}|$.
\end{definition}

Note that we are doing two steps at once here, really; one could also define this first for small v-sheaves, and then for small v-stacks as $|Y|=|X|/|R|$. But as $|\tilde{R}|\to |R|$ is surjective, this recovers the same answer.

The following version of Proposition~\ref{prop:topspaceopensubsets} holds for small v-stacks.

\begin{proposition}\label{prop:vtopspaceopensubsets} Let $Y$ be a small v-stack with underlying topological space $|Y|$. Any open sub-v-stack of $Y$ is a small v-stack. Moreover, there is bijective correspondence between open sub-v-stacks $U\subset Y$ and open subsets $V\subset |Y|$, given by sending $U$ to $V=|U|$, and $V$ to the subfunctor $U\subset Y$ of those maps $X\to Y$ for which $|X|\to |Y|$ factors over $V\subset |Y|$.

Moreover, if $f: Y^\prime\to Y$ is a surjective map of small v-stacks, then $|f|: |Y^\prime|\to |Y|$ is a quotient map.
\end{proposition}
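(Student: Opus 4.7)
The plan is to reduce to the diamond case handled in Proposition~\ref{prop:topspaceopensubsets}. Fix a presentation $Y=X/R$ with $X$ a perfectoid space and $R=X\times_Y X$ a small v-sheaf, together with a surjection $\tilde R\to R$ from a diamond, so $|Y|=|X|/|\tilde R|$ by Definition~\ref{def:vtopspace}. I will produce mutually inverse correspondences between open sub-v-stacks of $Y$ and open subsets of $|Y|$, then deduce the quotient-map statement from this bijection.

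Given an open sub-v-stack $U\subset Y$, the pullback $U\times_Y X$ is an open subfunctor of $X$, corresponding by Proposition~\ref{prop:topspaceopensubsets} to an open subset $W\subset |X|$; the equality $s^{-1}(U\times_Y X)=t^{-1}(U\times_Y X)$ as sub-v-sheaves of $R$ projects, via $|\tilde R|\to|R|\rightrightarrows|X|$, to $|\tilde R|$-invariance of $W$, so $W$ descends to an open subset $V\subset|Y|$ which I define as $|U|$. Conversely, starting from an open $V\subset|Y|$, its (open, $|\tilde R|$-invariant) preimage $W\subset|X|$ corresponds to an open subfunctor $X_V\subset X$; applied to the diamond $\tilde R$, Proposition~\ref{prop:topspaceopensubsets} forces the two pullbacks $\tilde s^{-1}(X_V),\ \tilde t^{-1}(X_V)\subset\tilde R$ of $X_V$ to agree, and since $\tilde R\to R$ is a v-cover, equality of sub-v-sheaves is v-local, so $s^{-1}(X_V)=t^{-1}(X_V)=:R_V\subset R$. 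The groupoid $R\rightrightarrows X$ restricts to a groupoid $R_V\rightrightarrows X_V$, and I let $U:=X_V/R_V\subset Y$ be its stack quotient. That $U\hookrightarrow Y$ is an open immersion is verified against any perfectoid $Z\to Y$: after base change to the v-cover $Z\times_Y X\to Z$ the pullback is the open subspace $X_V\times_X(Z\times_Y X)$, which by v-descent of open immersions (Proposition~\ref{prop:checksheafpropvloc}(i)) descends to the open subspace of $Z$ cut out by the preimage of $V$ under the continuous map $|Z|\to|Y|$.

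Inverseness of the two constructions reduces, after base change to $X$, to Proposition~\ref{prop:topspaceopensubsets}. Any open sub-v-stack $U$ is automatically small, since $X_V$ is a perfectoid space surjecting onto $U$ with $X_V\times_U X_V=R_V$ a sub-v-sheaf of the small v-sheaf $R$, hence small. For the last assertion, any $\Spa(K,K^+)\to Y$ lifts v-locally to $Y'$, so $|f|:|Y'|\to|Y|$ is surjective by Definition~\ref{def:vtopspace}; continuity follows from the correspondence, since pullback of an open sub-v-stack is an open sub-v-stack, and $|U\times_Y Y'|=|f|^{-1}(|U|)$ by chasing $(K,K^+)$-points. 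Conversely, if $V\subset|Y|$ has open preimage $V'\subset|Y'|$, the open sub-v-stack $U'\subset Y'$ corresponding to $V'$ satisfies $p_1^\ast U'=p_2^\ast U'$ as open sub-v-stacks of $Y'\times_Y Y'$ (the two corresponding open subsets of $|Y'\times_Y Y'|$ both equal the preimage of $V$), so by v-descent for open immersions (Proposition~\ref{prop:checksheafpropvloc}(i)) $U'$ descends to an open sub-v-stack $U\subset Y$ with $|U|=V$.

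The main obstacle will be the bookkeeping that translates between the topological $|\tilde R|$-invariance, the categorical $R$-invariance as a sub-v-sheaf, and the stack quotient; each step systematically combines the diamond-level correspondence of Proposition~\ref{prop:topspaceopensubsets} applied to the relevant presentation with v-descent of open immersions from Proposition~\ref{prop:checksheafpropvloc}(i).
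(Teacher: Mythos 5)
Your proof is correct and follows essentially the same route the paper intends: the paper's proof is simply ``the same arguments as in Proposition~\ref{prop:topspaceopensubsets} apply,'' and you have carried out exactly that translation, with the appropriate bookkeeping through a diamond cover $\tilde R\to R$ and v-descent of open immersions (Proposition~\ref{prop:checksheafpropvloc}(i)).
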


\begin{proof} The same arguments as in the proof of Proposition~\ref{prop:topspaceopensubsets} apply.
\end{proof}

The following proposition follows from the similar result for perfectoid spaces.

\begin{proposition}\label{prop:topfiberproduct} Let $Y_1\to Y_3\leftarrow Y_2$ be a diagram of small v-stacks. Then $Y=Y_1\times_{Y_3} Y_2$ is a small v-stack, and
\[
|Y|\to |Y_1|\times_{|Y_3|} |Y_2|
\]
is surjective.$\hfill \Box$
\end{proposition}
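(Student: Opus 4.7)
To show $Y = Y_1 \times_{Y_3} Y_2$ is a small v-stack, I will construct a surjection from a perfectoid space explicitly. Choose v-surjections $f_i \colon X_i \to Y_i$ from perfectoid spaces for $i=1,2,3$, with $R_3 := X_3 \times_{Y_3} X_3$ a small v-sheaf (possible since $Y_3$ is a small v-stack). Because $f_3$ is v-surjective, by refining $X_1$ and $X_2$ along further v-covers (still from perfectoid spaces) I may assume the composites $X_i \to Y_i \to Y_3$ come equipped with chosen factorizations $g_i \colon X_i \to X_3$, together with the canonical $2$-morphism identifying $f_3 \circ g_i$ with the original map to $Y_3$. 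Choose a v-surjection $h \colon X_3^{(2)} \to R_3$ from a perfectoid space; the inclusion $R_3 \subset X_3 \times X_3$ yields two projections $s,t \colon X_3^{(2)} \to X_3$ together with a canonical $2$-isomorphism $\tau$ between $f_3 \circ s$ and $f_3 \circ t$. Now define the perfectoid space $W := X_1 \times_{X_3, s} X_3^{(2)} \times_{t, X_3} X_2$, which is a fiber product of perfectoid spaces over perfectoid spaces. Composing the various chosen $2$-isomorphisms furnishes a canonical map $W \to Y$.

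Next, I verify that $W \to Y$ is a v-surjection. Any $T$-point of $Y$ is a triple $(y_1, y_2, \beta)$ with $y_i \colon T \to Y_i$ and $\beta$ a $2$-isomorphism of their composites to $Y_3$. After a v-cover $T' \to T$, the $y_i$ lift to $x_i \colon T' \to X_i$; the composites $g_1 \circ x_1, g_2 \circ x_2 \colon T' \to X_3$ are then $2$-isomorphic in $Y_3$ via $\beta$ together with the chosen factorization data, producing a map $T' \to R_3$. After a further v-cover $T'' \to T'$, this lifts to $T'' \to X_3^{(2)}$, and the resulting data assembles into a map $T'' \to W$ inducing the original $T$-point. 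Since $T$ was arbitrary, $W \to Y$ is a v-surjection; in the case where each $Y_i$ is merely a small v-sheaf, this already shows $Y$ is a small v-sheaf. In the v-stack case, I must additionally verify that $W \times_Y W$ is a small v-sheaf. It is a v-sheaf because $W$ is one and the $2$-morphism data collapses (all relevant $2$-automorphism groups are trivial). A $T$-point of $W \times_Y W$ can be expanded explicitly in terms of $R_1 := X_1 \times_{Y_1} X_1$, $R_2 := X_2 \times_{Y_2} X_2$, $R_3$ and a list of compatibility conditions, all of which are small v-sheaves; applying the same tripod-style construction to these data (equivalently, iterating the v-sheaf case just proved) produces the required perfectoid surjection onto $W \times_Y W$.

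For the surjectivity of $|Y| \to |Y_1| \times_{|Y_3|} |Y_2|$: given a compatible pair $(y_1, y_2)$ represented by maps $\Spa(K_i, K_i^+) \to Y_i$ whose composites to $Y_3$ define the same point of $|Y_3|$, the definition of the equivalence relation in Proposition~\ref{prop:vtopspaceindep} furnishes a pair $(K_3, K_3^+)$ with surjective maps $\Spa(K_3, K_3^+) \to \Spa(K_i, K_i^+)$ for $i=1,2$ making the two resulting composites to $Y_3$ canonically $2$-isomorphic. This is precisely the data of a $\Spa(K_3, K_3^+)$-point of $Y = Y_1 \times_{Y_3} Y_2$, whose image in $|Y|$ projects to $(y_1, y_2)$ under $|Y| \to |Y_1| \times_{|Y_3|} |Y_2|$.

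The main obstacle is the careful bookkeeping of $2$-morphisms needed to check that $W \times_Y W$ is a small v-sheaf in the genuine v-stack case; this requires identifying automorphisms in $Y(T)$ as a pair of classes in $R_1(T)$ and $R_2(T)$ compatible via $R_3$, and then producing a perfectoid surjection onto the resulting explicit fiber product. Once this bookkeeping is complete, the remaining steps reduce either to the existence of fiber products of perfectoid spaces or to the definition of v-surjectivity.
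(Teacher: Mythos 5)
The paper gives no written proof for this proposition (it is stated with a terminal box and prefaced only by ``The following proposition follows from the similar result for perfectoid spaces''), so there is no canonical argument to compare against; I will evaluate your construction on its own.

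Your ``tripod'' construction of $W := X_1 \times_{X_3,s} X_3^{(2)} \times_{t,X_3} X_2$ and the verification that $W \to Y$ is a v-surjection are correct, as is the argument for the surjectivity of $|Y| \to |Y_1| \times_{|Y_3|} |Y_2|$. The one genuine gap is the step you yourself flag as ``the main obstacle'': showing that $W \times_Y W$ is a small v-sheaf. What you write --- ``expand explicitly in terms of $R_1$, $R_2$, $R_3$ and a list of compatibility conditions [\dots] applying the same tripod-style construction'' --- is not actually a proof, and carrying out the bookkeeping of how the compatibility of $\alpha$ with the induced $2$-isomorphisms $\beta_1$, $\beta_2$ interacts with $\tau$ and the chosen factorization data is genuinely painful. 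Fortunately there is a much cleaner way to finish: you do not need to describe the image of $W \times_Y W$ at all, only to inject it into something small. Writing $w_i = (a_i,b_i,c_i)$ with $a_i \in X_1(T)$, $b_i \in X_3^{(2)}(T)$, $c_i \in X_2(T)$, and $\alpha = (\alpha_1,\alpha_2)$, the assignment
\[
(w_1,w_2,\alpha)\ \longmapsto\ \bigl((a_1,a_2,\alpha_1),\,(c_1,c_2,\alpha_2),\,(b_1,b_2)\bigr)
\]
defines an injective map of v-sheaves $W \times_Y W \hookrightarrow R_1 \times R_2 \times X_3^{(2)} \times X_3^{(2)}$; injectivity is clear since all the data $(a_i,b_i,c_i,\alpha_1,\alpha_2)$ can be read off the image. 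The target is a small v-sheaf (it admits a v-surjection from a $\kappa$-small perfectoid space, namely the product of perfectoid covers of $R_1$ and $R_2$ with $X_3^{(2)} \times X_3^{(2)}$). And a sub-v-sheaf of a small v-sheaf is small: pulling back the injection along a perfectoid cover of the ambient small v-sheaf reduces to the case of a sub-v-sheaf of a perfectoid space, which is a diamond by Proposition~\ref{prop:vinjdiamond} and hence small. This finishes step~6 without any case analysis.

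Two minor points you should tighten. First, when you refine $X_1$, $X_2$ by v-covers to obtain factorizations $g_i \colon X_i \to X_3$, you should note that $R_1$, $R_2$ remain small after this replacement: if $X_1' \to X_1$ is a v-cover from a perfectoid space, then $X_1' \times_{Y_1} X_1' = (X_1' \times X_1') \times_{X_1 \times X_1} R_1$, a base change of the small v-sheaf $R_1$ along a map of perfectoid spaces, and hence small. Second, the phrase ``the inclusion $R_3 \subset X_3 \times X_3$'' is an abuse: since $Y_3$ is a v-\emph{stack}, the natural map $R_3 = X_3 \times_{Y_3} X_3 \to X_3 \times X_3$ is not injective in general (its $T$-fibers are torsors under automorphism groups in $Y_3(T)$). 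This does not affect your argument --- you only use the two projections $s,t$ and the tautological $2$-isomorphism $\tau$, which exist regardless --- but the wording should be corrected.
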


In the following, when we say that map $f: Y^\prime\to Y$ of (small) v-stacks is surjective, we mean that it is surjective as a map of v-stacks. There is a related notion that $|f|: |Y^\prime|\to |Y|$ is surjective; if we mean this, we say it explicitly. The relation is as follows.\footnote{We thank David Hansen for related discussions.}

\begin{lemma}\label{lem:surjvstopsurj} Let $f: Y^\prime\to Y$ be a map of small v-stacks. If $f$ is a surjective map of v-stacks, then $|f|: |Y^\prime|\to |Y|$ is surjective. Conversely, if $f$ is quasicompact and $|f|: |Y^\prime|\to |Y|$ is surjective, then $f$ is a surjective map of v-stacks.
\end{lemma}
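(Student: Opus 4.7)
For part (1), the plan is to chase a point $y \in |Y|$ through the equivalence relation defining $|Y|$. Represent $y$ by a morphism $\Spa(K,K^+) \to Y$, and form the pullback $Y' \times_Y \Spa(K,K^+) \to \Spa(K,K^+)$, which is still a surjective map of v-stacks (surjections of v-stacks are stable under base change). Surjectivity as a v-map means that this cover admits, after passing to a v-cover of $\Spa(K,K^+)$, a section; by refining and restricting to an affinoid open, one can find a single morphism $\Spa(L,L^+) \to \Spa(K,K^+)$ from an affinoid perfectoid space that is surjective (as a v-cover of $\Spa(K,K^+)$) and that lifts to a morphism $\Spa(L,L^+) \to Y'$. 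This produces a point $y' \in |Y'|$ with $|f|(y') = y$, using the description of $|Y|$ and $|Y'|$ in Proposition~\ref{prop:vtopspaceindep}.

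For part (2), the plan is to check the v-cover condition after pullback along an arbitrary affinoid perfectoid test space $g : X \to Y$; it suffices to prove that the pullback $f' : Y' \times_Y X \to X$ is a v-cover. Because $f$ is quasicompact and $X$ is quasicompact, $Y' \times_Y X$ is a quasicompact v-stack (and small, since $Y'$ is, as a pullback of a small presentation pulls back to a presentation). The family of all morphisms from affinoid perfectoid spaces into $Y' \times_Y X$ is a v-cover, so by quasicompactness a finite subfamily covers, and its disjoint union $\tilde{X}$ is an affinoid perfectoid space equipped with a v-surjection $h : \tilde{X} \to Y' \times_Y X$. By part (1) already proved, $|h| : |\tilde{X}| \to |Y' \times_Y X|$ is surjective.

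The next step is to verify that the composite $\tilde{X} \to X$ is a v-cover. For this we need $|\tilde{X}| \to |X|$ to be surjective. The map $|Y' \times_Y X| \to |Y'| \times_{|Y|} |X|$ is surjective by Proposition~\ref{prop:topfiberproduct}, and the hypothesis that $|f|$ is surjective implies that $|Y'| \times_{|Y|} |X| \to |X|$ is surjective, hence $|Y' \times_Y X| \to |X|$ is surjective. Combining with surjectivity of $|h|$, we conclude that $|\tilde{X}| \to |X|$ is surjective. A surjective morphism of affinoid perfectoid spaces is automatically a v-cover (given any quasicompact open $U \subset X$, its preimage in $\tilde{X}$ is a quasicompact open with image $U$, verifying the quasicompactness condition in the definition of v-covers). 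Thus $\tilde{X} \to X$, and therefore $Y' \times_Y X \to X$, is a v-cover, proving $f$ is surjective as a map of v-stacks.

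The main obstacle is the bookkeeping in part (2): one must produce the affinoid perfectoid cover $\tilde{X}$ of $Y' \times_Y X$ and argue that topological surjectivity is preserved through the fiber product, which hinges on Proposition~\ref{prop:topfiberproduct} and the fact that quasicompactness of $f$ (inherited by $f'$) guarantees that $Y' \times_Y X$ is quasicompact so that a finite disjoint union of affinoid perfectoids suffices.
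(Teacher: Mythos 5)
Your proof is correct and follows essentially the same route as the paper: reduce to a pullback along an affinoid perfectoid test space $X\to Y$, use quasicompactness of the pullback to cover it by a single affinoid perfectoid space, and then transfer surjectivity of $|f|$ through Proposition~\ref{prop:topfiberproduct} to conclude that the resulting map $\tilde{X}\to X$ is a v-cover. The only difference is that you spell out the (easy) first part, which the paper dismisses as clear, and you make explicit the finite-disjoint-union construction of the affinoid cover $\tilde{X}$, which the paper leaves implicit.
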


\begin{proof} The first part is clear. For the converse, assume that $f$ is quasicompact and that $|f|: |Y^\prime|\to |Y|$ is surjective. If $X$ is an affinoid perfectoid space with a map $X\to Y$, then the pullback $Y^\prime\times_Y X$ is a quasicompact v-stack, and thus admits a surjection $X^\prime\to Y^\prime\times_Y X$ from an affinoid perfectoid space. Thus, $|X^\prime|\to |Y^\prime\times_Y X|$ is surjective, and also $|Y^\prime\times_Y X|\to |Y^\prime|\times_{|Y|} |X|\to |X|$ is a composition of surjections (using Proposition~\ref{prop:topfiberproduct}). Therefore, $X^\prime\to X$ is a map of affinoid perfectoid spaces for which $|X^\prime|\to |X|$ is surjective; thus, by definition, it is a v-cover. Thus, the map $X\to Y$ lifts to $X^\prime\to Y^\prime$ after the v-cover $X^\prime\to X$, as desired.
\end{proof}

We can now introduce a notion of spatial v-sheaves.

\begin{definition} A v-sheaf $Y$ is spatial if it is qcqs (in particular, small), and $|Y|$ has a basis for the topology given by $|U|$ for quasicompact open subfunctors $U\subset Y$. More generally, $Y$ is locally spatial if it is small, and has an open cover by spatial v-sheaves.
\end{definition}

Proposition~\ref{prop:spatialfirstprop} extends to spatial v-sheaves.

\begin{proposition}\label{prop:vspatialfirstprop} Let $Y$ be a spatial v-sheaf.
\begin{altenumerate}
\item[{\rm (i)}] The underlying topological space $|Y|$ is spectral.
\item[{\rm (ii)}] Any quasicompact open subfunctor $U\subset Y$ is spatial.
\item[{\rm (iii)}] For any perfectoid space $X^\prime$ with a map $X^\prime\to Y$, the induced map of locally spectral spaces $|X^\prime|\to |Y|$ is spectral and generalizing.
\end{altenumerate}
\end{proposition}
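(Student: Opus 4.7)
The plan is to adapt the proof of Proposition~\ref{prop:spatialfirstprop} for spatial diamonds to the setting of spatial v-sheaves. The key new input is Proposition~\ref{prop:smallvsheaf}: since $Y$ is a spatial (in particular, qcqs) v-sheaf, for any surjection $X\to Y$ from a qcqs (affinoid) perfectoid space $X$, the equivalence relation $R = X\times_Y X$ is automatically a spatial diamond, rather than merely a qcqs v-sheaf. This upgrade allows me to access Proposition~\ref{prop:locallyspatial}(iii) for the projections $s,t\colon R\to X$, so that $|s|,|t|\colon |R|\to |X|$ are spectral and generalizing, and thereby reduce the problem to a purely topological one on $|X|$.

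With this setup, let $R'\subset |X|\times|X|$ denote the image of $|R|$; since $|R|$ is spectral and $|R|\to |X|\times|X|$ is spectral, $R'$ is a pro-constructible equivalence relation on $|X|$. Both projections $R'\to |X|$ are quasicompact (images under the quasicompact maps $|s|,|t|$) and generalizing (since generalizations lift through the surjection $|R|\to R'$ and then through $|s|$ or $|t|$), and by Definition~\ref{def:vtopspace}, $|Y| = |X|/R'$. For (i), I would then verify: $|Y|$ is quasicompact as a quotient of $|X|$; the subsets $|U|$ for $U\subset Y$ a quasicompact open sub-v-sheaf form a basis (by the spatial hypothesis) that is stable under finite intersection (via quasiseparation of $Y$, which makes $U_1\times_Y U_2$ quasicompact); and the preimage of each such $|U|$ in $|X|$ is the quasicompact open $|U\times_Y X|$. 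Lemma~\ref{lem:equivrel} then shows $|Y|$ is $T_0$, and Lemma~\ref{lem:spectralquotient} shows that $|Y|$ is spectral and $|X|\to |Y|$ is spectral and generalizing.

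Part (ii) follows formally: a quasicompact open subfunctor $U\subset Y$ is quasiseparated as a quasicompact sub-v-sheaf of $Y$, and its quasicompact open subfunctors are exactly the $V\times_Y U$ for $V\subset Y$ quasicompact open, whose $|\cdot|$ form a basis by (i). For part (iii), I would first reduce to $X'$ qcqs affinoid, since both ``spectral'' and ``generalizing'' can be checked locally on $|X'|$. Then I would form an affinoid perfectoid v-cover $\tilde{X}\to X'\times_Y X$; the composite $\tilde{X}\to X'$ is a v-cover of qcqs perfectoid spaces, so $|\tilde{X}|\to |X'|$ is spectral, surjective and generalizing, hence a quotient map by Lemma~\ref{lem:quotientmap}. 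Since $|\tilde{X}|\to |X|$ is spectral and generalizing as a map of perfectoid spaces and $|X|\to |Y|$ has these properties by (i), the composite $|\tilde{X}|\to |Y|$ is spectral and generalizing; a short argument then descends these properties along the quotient $|\tilde{X}|\to |X'|$ (preimages of quasicompact opens of $|Y|$ in $|X'|$ are images of the corresponding quasicompact preimages in $|\tilde{X}|$, and generalizations in $|Y|$ lift first to $|\tilde{X}|$ and then project to $|X'|$).

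The main obstacle is really the bookkeeping surrounding the image relation $R'\subset |X|\times|X|$: one must extract from Proposition~\ref{prop:locallyspatial}(iii) that $R'$ inherits the pro-constructibility, quasicompactness and generalizing hypotheses required by Lemmas~\ref{lem:equivrel} and~\ref{lem:spectralquotient}, after which the argument runs in parallel with the spatial-diamond case. Part (iii) is then a formal descent along the spectral surjective generalizing quotient $|\tilde{X}|\to |X'|$.
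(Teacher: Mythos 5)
Your proposal is correct and follows essentially the same route as the paper, which simply declares the argument "identical to the proof of Proposition~\ref{prop:spatialfirstprop}, noting that any qcqs v-sheaf $Y$ can be written in the form $Y=X/R$ for spatial diamonds $X$ and $R$." You have correctly identified that the one new ingredient is Proposition~\ref{prop:smallvsheaf}, which makes $R=X\times_Y X$ a spatial diamond and hence gives access to Proposition~\ref{prop:locallyspatial}(iii) for the projections. The small variations you introduce are harmless: in (i) you invoke Lemma~\ref{lem:spectralquotient} directly rather than repeating its sobriety argument by hand as the proof of Proposition~\ref{prop:spatialfirstprop} does, and in (iii) you interpose a further affinoid perfectoid v-cover $\tilde{X}\to X'\times_Y X$ and descend along the quotient $|\tilde X|\to |X'|$, whereas the diamond proof replaces $X'$ by $X'\times_Y X$ outright (which in the present setting is a spatial diamond, not a perfectoid space, so one would just cite Proposition~\ref{prop:locallyspatial}(iii) for $|X'\times_Y X|\to|X|$ instead). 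Both routes work; yours has the minor advantage that all the auxiliary maps at the end are maps of genuine perfectoid spaces. One small imprecision: in (ii), quasicompact open subfunctors of $U$ are not \emph{exactly} the $V\times_Y U$, but those do form a basis, which is all you need.
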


\begin{proof} The proof is identical to the proof of Proposition~\ref{prop:spatialfirstprop}, noting that any qcqs v-sheaf $Y$ can be written in the form $Y=X/R$ for spatial diamonds $X$ and $R$.
\end{proof}

\begin{proposition}\label{prop:vlocallyspatial} Let $Y$ be a locally spatial v-sheaf.
\begin{altenumerate}
\item[{\rm (i)}] The underlying topological space $|Y|$ is locally spectral.
\item[{\rm (ii)}] Any open subfunctor $U\subset Y$ is locally spatial.
\item[{\rm (iii)}] The functor $Y$ is quasicompact (resp.~quasiseparated) if and only if $|Y|$ is quasicompact (resp.~quasiseparated).
\item[{\rm (iv)}] For any locally spatial v-sheaf $Y^\prime$ with a map $Y^\prime\to Y$, the induced map of locally spectral spaces $|Y^\prime|\to |Y|$ is spectral and generalizing.
\end{altenumerate}
In particular, if $y\in |Y|$, then the set of generalizations of $y$ is totally ordered. $\hfill \Box$
\end{proposition}

There is the following analogue of Proposition~\ref{prop:charinjection} for locally spatial v-sheaves.

\begin{proposition}\label{prop:charinjectionvsheaves} Let $f: Y^\prime\to Y$ be a map of small v-sheaves, and assume that $f$ is qcqs, or $Y^\prime$ and $Y$ are locally spatial. The following conditions are equivalent.
\begin{altenumerate}
\item The map $f$ is an injective map of v-sheaves.
\item For all perfectoid fields $K$ with an open and bounded valuation subring $K^+\subset K$, the map $f(K,K^+): Y^\prime(K,K^+)\to Y(K,K^+)$ is injective.
\item The map $|f|: |Y^\prime|\to |Y|$ is injective, and the map $f: Y^\prime\to Y$ is final in the category of maps $g: Z\to Y$ from small v-sheaves $Z$ for which $|g|: |Z|\to |Y|$ factors over a continuous map $|Z|\to |Y^\prime|$.
\end{altenumerate}
\end{proposition}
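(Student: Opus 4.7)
The plan is to follow the structure of Proposition~\ref{prop:charinjection}, replacing geometric manipulations of completed residue fields by general v-descent arguments. The implication (i)$\Rightarrow$(ii) is immediate by applying the definition to $Z=\Spa(K,K^+)$. For (ii)$\Rightarrow$(i), by the v-sheaf property of $Y^\prime$ it suffices to test on perfectoid spaces $Z$; for such a $Z$, two maps $a,b\colon Z\to Y^\prime$ with $f\circ a=f\circ b$ become equal after v-covering $Z$ by $\bigsqcup_{z\in|Z|}\Spa(K(z),K(z)^+)$, thanks to (ii). The implication (iii)$\Rightarrow$(i) is equally formal: any two maps $a,b\colon Z\to Y^\prime$ with $f\circ a=f\circ b$ have the same underlying continuous map $|Z|\to|Y|$, whose lift to $|Y^\prime|$ is unique by injectivity of $|f|$, so the uniqueness clause in (iii) forces $a=b$.

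For (ii)$\Rightarrow$(iii), I would first verify that $|f|\colon|Y^\prime|\to|Y|$ is injective. Points $y_1^\prime,y_2^\prime\in|Y^\prime|$ with a common image in $|Y|$ can be represented by $\Spa(K_i,K_i^+)\to Y^\prime$ admitting, by Proposition~\ref{prop:vtopspaceindep}, a common pair $(K_3,K_3^+)$ with surjective maps $\Spa(K_3,K_3^+)\to\Spa(K_i,K_i^+)$ whose composites to $Y$ agree; (ii) then forces the composites to $Y^\prime$ to agree, so $y_1^\prime=y_2^\prime$. To establish the universal property, take a small v-sheaf $Z$ with a map $g\colon Z\to Y$ whose underlying $|g|$ factors continuously through $|Y^\prime|$; uniqueness of any lift $h\colon Z\to Y^\prime$ follows from $f$ being an injection, so only existence is at stake.

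The key step is to exhibit the pullback $p\colon Z^\prime:=Z\times_Y Y^\prime\to Z$ as an isomorphism. As a base change of $f$, the map $p$ is a monomorphism of v-sheaves; since a monomorphism that is also an epimorphism is automatically an isomorphism in any topos, it suffices to verify that $p$ is an epimorphism, i.e., admits sections v-locally. Given any map $W\to Z$ from an affinoid perfectoid $W$, further v-covering by $\bigsqcup_{w\in|W|}\Spa(K(w),K(w)^+)$ reduces matters to $W=\Spa(K,K^+)$ representing a point $w\in|Z|$. The image $|g|(w)\in|Y|$ lies in $|Y^\prime|$ by hypothesis; choose any representative $\Spa(L_0,L_0^+)\to Y^\prime$ of it. Applying Proposition~\ref{prop:vtopspaceindep} to the equality of the two induced points of $|Y|$ produces a pair $(M,M^+)$ with surjective maps $\Spa(M,M^+)\to\Spa(K,K^+)$ and $\Spa(M,M^+)\to\Spa(L_0,L_0^+)$ whose composites to $Y$ coincide, hence an $\Spa(M,M^+)$-valued point of $Z^\prime$ lifting the pullback of $w$ along the v-cover $\Spa(M,M^+)\to\Spa(K,K^+)$, as required.

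The main obstacle is packaging these pointwise lifts into a genuine global section; the topos-theoretic mono-plus-epi argument does precisely this, exploiting the fact that $f$ is already known to be an injection so that compatibility of local lifts is automatic. The qcqs or locally spatial hypothesis enters only to ensure that the topological notions appearing in (iii)---injectivity of $|f|$ as a set map and continuity of a factorization $|Z|\to|Y^\prime|$---are well-behaved, and in particular that Proposition~\ref{prop:vtopspaceindep} applies cleanly; it does not enter the core v-descent arguments above.
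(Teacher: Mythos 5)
The central step in your argument is flawed: the map $\bigsqcup_{z\in|Z|}\Spa(K(z),K(z)^+)\to Z$ is \emph{not} a v-cover. By Example~\ref{ex:injpoint}, the image of $\Spa(K(z),K(z)^+)$ in $|Z|$ is the localization at $z$, i.e.\ the chain of generalizations of $z$. The v-topology requires that every quasicompact open $U\subset Z$ be covered by \emph{finitely} many quasicompact opens from the source, and no finite union of such chains can cover, say, a closed ball, whose spectral space has uncountably many maximal points. This false claim underlies your argument for (ii)$\Rightarrow$(i), and it reappears in the epimorphism half of your mono-plus-epi argument for (ii)$\Rightarrow$(iii), so both implications are left without a proof. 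There is also a circularity: to assert that $p\colon Z'=Z\times_Y Y'\to Z$ is a monomorphism "as a base change of $f$" is exactly to assert that $f$ is a monomorphism of v-sheaves, i.e.\ condition (i), which at that point has only been "proved" via the invalid cover.

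The way around both issues is the one the paper takes: rather than trying to cover $Z$ by residue-field points, reduce to $Z=\Spa(K,K^+)$ using Lemma~\ref{lem:isomqcqsvsheaves} (a qcqs map of small v-sheaves is an isomorphism if and only if it is bijective on $(K,K^+)$-points for algebraically closed $K$). This is precisely where the hypothesis that $f$ is qcqs, or that $Y'$ and $Y$ are locally spatial, enters in an essential way — it guarantees that $Z\times_Y Y'\to Z$ is qcqs so that Lemma~\ref{lem:isomqcqsvsheaves} applies. Your closing remark that this hypothesis is only there for topological bookkeeping and does not enter the "core v-descent arguments" is therefore not accurate: the hypothesis is what makes the pointwise reduction legitimate, and it is the pointwise reduction (not a v-cover by points) that is the correct mechanism here. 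Once $Z=\Spa(K,K^+)$, one checks that $Z\times_Y Y'\to Z$ is an injection (injectivity on $(K',K'^{+})$-points from (ii), promoted to injectivity of v-sheaves by applying Lemma~\ref{lem:isomqcqsvsheaves} to the diagonal), identifies $Z\times_Y Y'$ with some $\Spa(K,(K^+)')$ via Corollary~\ref{cor:qcvinjqproet}, and then uses the topological factorization hypothesis to force $(K^+)'=K^+$.
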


Note that (iii) says equivalently that
\[
Y^\prime = Y\times_{\underline{|Y|}} \underline{|Y^\prime|}\ .
\]

\begin{proof} Clearly, (iii) implies (i) implies (ii), so we have to see that (ii) implies (iii). Injectivity of $|f|$ follows from the description of the set $|Y|$ in Proposition~\ref{prop:vtopspaceindep}. Now assume that $g: Z\to Y$ has the property that $|Z|\to |Y|$ lifts continuously to $|Z|\to |Y^\prime|$. We claim that there is a unique map $Z\to Y^\prime$. This can be done v-locally on $Z$, so we can assume that $Z$ is a perfectoid space, or even a strictly totally disconnected perfectoid space. In case $Y$ and $Y^\prime$ are locally spatial, we can assume that $|Z|$ maps into qcqs open subsets of $|Y|$ and $|Y^\prime|$, we can work locally on $Z$, which allows us to assume that $Y$ and $Y^\prime$ are spatial, and in particular $f$ is qcqs. Now $Z\times_Y Y^\prime\to Z$ is a qcqs map of v-sheaves, and we need to see that it is an isomorphism. By Lemma~\ref{lem:isomqcqsvsheaves}, this can be checked on $(K,K^+)$-valued points, so we can further assume that $Z=\Spa(K,K^+)$. In that case, we know that $Z\times_Y Y^\prime\to Z$ is an injection. But quasicompact injections into $\Spa(K,K^+)$ are of the form $\Spa(K,(K^+)^\prime)$ by Corollary~\ref{cor:qcvinjqproet}. Looking at topological spaces shows that $(K^+)^\prime=K^+$, so that indeed $Z\times_Y Y^\prime=Z$, as desired.
\end{proof}

Moreover, we will need an analogue of Lemma~\ref{lem:finetoverspatial}.

\begin{lemma}\label{lem:finetovervspatial} Let $Y$ be a spatial v-sheaf, and let $Y^\prime\to Y$ be a finite \'etale map of v-sheaves. Then $Y^\prime$ is a spatial v-sheaf.
\end{lemma}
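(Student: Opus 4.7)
The proof mirrors that of Lemma~\ref{lem:finetoverspatial}. First, $Y^\prime$ is automatically qcqs: finite \'etale maps are qcqs, and $Y$ is qcqs. To produce the required basis of qc opens on $|Y^\prime|$, I would set up the usual presentation. By Proposition~\ref{prop:smallvsheaf}, choose a v-surjection $X\to Y$ from an affinoid perfectoid space $X$, so $R=X\times_Y X$ is a spatial diamond. Set $X^\prime=X\times_Y Y^\prime$, which is finite \'etale over $X$ and hence affinoid perfectoid; and $R^\prime=R\times_Y Y^\prime=X^\prime\times_{Y^\prime}X^\prime$, which by the already-proven Lemma~\ref{lem:finetoverspatial} is a spatial diamond. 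Then $Y^\prime=X^\prime/R^\prime$, and by Proposition~\ref{prop:vtopspaceindep}, $|Y^\prime|=|X^\prime|/|R^\prime|$, so qc open subfunctors of $Y^\prime$ correspond to $R^\prime$-invariant qc open subspaces of $X^\prime$.

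Next, following the diamond case, I would reduce to $|Y|$ having a unique closed point. For $y^\prime\in|Y^\prime|$ with image $y\in|Y|$, let $Y_y=\bigcap_{U\ni y}U$ denote the intersection of qc open subfunctors of $Y$ containing $y$. For any qcqs diamond $Z$ over $Y$, qc open subspaces of $Z\times_Y Y_y$ come as pullbacks of qc open subspaces of $Z\times_Y U$ for some qc open $U\ni y$, and any two such extensions agree after shrinking $U$; here one applies Lemma~\ref{lem:diamondlimit} and Proposition~\ref{prop:etmaptolimdiamond} to the spatial diamonds $X^\prime\times_Y U$ and $R^\prime\times_Y U$ indexed by shrinking $U$. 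Applying this reduction with $Z=X^\prime$ and $Z=R^\prime$ reduces the problem to the case $Y=Y_y$.

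The main obstacle is the final combinatorial step. In the diamond case one took $X=\Spa(C,C^+)$ with $C$ algebraically closed, so that $X^\prime$ is a finite disjoint union of copies of $X$ and every pro-constructible generalizing subset of $|X^\prime|$ is automatically open; this immediately gave that $R^\prime$-orbits of qc opens are qc open, supplying the basis. For a spatial v-sheaf $Y$ this rigid model of $X$ is not available, so I would instead pass to a strictly totally disconnected pro-\'etale cover $\tilde X\to X$ via Lemma~\ref{lem:strtotdisccover}. The key observation is that $\tilde X^\prime=\tilde X\times_X X^\prime$ remains strictly totally disconnected: finite \'etale covers of strictly totally disconnected spaces decompose on each connected component $\Spa(C,C^+)$ (with $C$ algebraically closed) into a finite disjoint union of copies of $\Spa(C,C^+)$, by Proposition~\ref{prop:etcoverssplittotdisconnected}. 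One then works with $\tilde R^\prime=\tilde X^\prime\times_{Y^\prime}\tilde X^\prime$ in place of $R^\prime$.

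The hardest part will be to verify that the $\tilde R^\prime$-orbit $\tilde R^\prime\cdot V=t^\prime(s^{\prime-1}(V))$ of a qc open $V\subset\tilde X^\prime$ containing a lift $\tilde x^\prime$ of $y^\prime$ is actually qc open. Quasicompactness, pro-constructibility, generalizingness, and $\tilde R^\prime$-invariance are all formal; openness is the delicate point, since a pro-constructible generalizing subset of a strictly totally disconnected space with infinite $\pi_0$ need not be globally open even if it is open on each connected component. The remedy is to exploit the finite degree of $\tilde X^\prime\to\tilde X$: on the fiber over $y$, only finitely many components of $|\tilde X^\prime|$ lie above any given component of $|\tilde X|$, and by combining this with Lemma~\ref{lem:equivrel} (which provides an $\tilde R^\prime$-invariant open neighborhood inside a quasicompact $\tilde R^\prime$-invariant pro-constructible generalizing envelope) one can manufacture a qc open $R^\prime$-invariant set sandwiched between $V$ and any prescribed open neighborhood of its orbit, completing the verification of the basis property and hence of spatiality.
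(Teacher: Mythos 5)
Your setup (presentation, localization at $y$, reduction to $|Y|$ local) matches the paper exactly, and your observation that $R'$ is a spatial diamond via Lemma~\ref{lem:finetoverspatial} is correct. The gap is in your claim that ``for a spatial v-sheaf $Y$ this rigid model of $X$ is not available,'' which causes you to take a detour you cannot close. In fact, once $|Y|$ has a unique closed point $s$, a v-surjection $X=\Spa(C,C^+)\to Y$ with $C$ algebraically closed \emph{does} exist, even though $Y$ is only a v-sheaf and not a diamond. Choose any v-surjection $\tilde X_0\to Y$ from an affinoid perfectoid space; pick a point $\tilde s\in|\tilde X_0|$ mapping to $s$, which we may take to be a closed point; localize to get $\Spa(K(\tilde s),K(\tilde s)^+)\to \tilde X_0\to Y$ via Example~\ref{ex:injpoint}; since $|Y|$ is the set of generalizations of $s$, the map $|\tilde X_0|\to|Y|$ is generalizing (Proposition~\ref{prop:vspatialfirstprop}~(iii)), and the closed point of $|\Spa(K(\tilde s),K(\tilde s)^+)|$ hits $s$, this map is surjective on topological spaces, hence a v-surjection by Lemma~\ref{lem:surjvstopsurj}. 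Finally pass to a completed algebraic closure $(C,C^+)$ of $(K(\tilde s),K(\tilde s)^+)$. With this $X$, the argument concludes exactly as in the diamond case: $X'$ is a finite disjoint union of copies of $\Spa(C,C^+)$, every pro-constructible generalizing subset of $|X'|$ is open, and $R'\cdot V$ is quasicompact pro-constructible generalizing (hence open) and descends to a quasicompact open of $Y'$. You do not need the existence of a quasi-pro-\'etale surjection from $\Spa(C,C^+)$, only a v-surjection, which is much weaker.

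Your alternative via a strictly totally disconnected $\tilde X$ is genuinely harder because, as you yourself observe, a pro-constructible generalizing subset of a strictly totally disconnected space with infinite $\pi_0$ need not be open. Your proposed repair using the finite degree of $\tilde X'\to\tilde X$ and Lemma~\ref{lem:equivrel} does not close this: Lemma~\ref{lem:equivrel} produces an $\tilde R'$-invariant \emph{open} $U$ inside a quasicompact pro-constructible envelope $E$, but gives no control ensuring $U$ itself is quasicompact, which is exactly what spatiality demands. Dropping the detour and using the v-surjection $\Spa(C,C^+)\to Y$ avoids the problem entirely.
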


\begin{proof} The proof is identical to Lemma~\ref{lem:finetoverspatial}, but we repeat it for convenience. Choose a presentation $Y=X/R$ as a quotient of an affinoid perfectoid space $X$, and let $X^\prime = X\times_Y Y^\prime$, which is finite \'etale over $X$. Let $R$ and $R^\prime$ be the respective equivalence relations. We need to find enough quasicompact open subspaces of $Y^\prime$. Pick any point $y^\prime\in |Y^\prime|$ with image $y\in |Y|$. Let $Y_y\subset Y$ be the localization of $Y$ at $y$, i.e.~the intersection of all quasicompact open subspaces containing $y$. For any spatial diamond $Z$ over $Y$, let $Z_y=Z\times_Y Y_y$. Then quasicompact open subspaces of $Z_y$ come via pullback from quasicompact open subspaces of $Z\times_Y U$ for some quasicompact open subspace $U$ of $Y$, and any two such extensions agree after shrinking $U$. By applying this to $Z=X^\prime$ and $Z=R^\prime$, we find that any quasicompact open subspace of $Y^\prime_y$ extends to a quasicompact open subspace of $Y^\prime_U$ for some $U$. This reduces us to the case $Y=Y_y$.

Thus, we may assume that $|Y|$ has a unique closed point. In this case, we can take $X=\Spa(C,C^+)$ for some algebraically closed field $C$ with an open and bounded valuation subring $C^+\subset C$. Then $X^\prime$ is a finite disjoint union of copies of $X$. Now the same topological argument as in the proof of Lemma~\ref{lem:finetoverspatial} shows that $|Y'|=|X'|/|X'\times_{Y'} X'|$ is spectral, with spectral projection $|X'|\to |Y'|$.
\end{proof}

Finally, we will need an analogue of Lemma~\ref{lem:diamondlimit} and Proposition~\ref{prop:etmaptolimdiamond}.

\begin{lemma}\label{lem:vdiamondlimit} Let $Y_i$, $i\in I$, be a cofiltered inverse system of small v-sheaves with qcqs transition maps. Then $Y=\varprojlim_i Y_i$ is a small v-sheaf, the maps $Y\to Y_i$ are qcqs, and the continuous map $|Y|\to\varprojlim_i |Y_i|$ is bijective. If all $Y_i$ are locally spatial, then $Y$ is locally spatial, and $|Y|\to \varprojlim_i |Y_i|$ is a homeomorphism.

If $\kappa$ is as in Lemma~\ref{lem:choosekappa}, $I$ is $\kappa$-small and all $Y_i$ are $\kappa'$-small for some $\kappa'<\kappa$, then $Y$ is $\kappa$-small.

Moreover, if all $Y_i$ are qcqs, then $Y$ is qcqs, and the base change functors $(Y_i)_\fet\to Y_\fet$, resp.~$(Y_i)_{\et,\qcqs}\to Y_{\et,\qcqs}$, resp.~$(Y_i)_{\et,\qc,\sep}\to Y_{\et,\qc,\sep}$, induce an equivalence of categories
\[
\text{2-}\varinjlim_i (Y_i)_\fet\to Y_\fet\ ,
\]
respectively
\[
\text{2-}\varinjlim_i (Y_i)_{\et,\qcqs}\to Y_{\et,\qcqs}\ ,
\]
respectively
\[
\text{2-}\varinjlim_i (Y_i)_{\et,\qc,\sep}\to Y_{\et,\qc,\sep}\ .
\]
\end{lemma}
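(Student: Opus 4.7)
The plan is to reduce everything to the diamond case already established in Lemma~\ref{lem:diamondlimit} and Proposition~\ref{prop:etmaptolimdiamond}. First, since $I$ has cardinality less than $\kappa$, a cofinality argument lets me replace it with the poset of ordinals $\mu<\lambda$ for some ordinal $\lambda<\kappa$. Using the hypothesis that each $Y_\mu$ admits a surjection from a $\kappa'$-small perfectoid space (with $\kappa'$ independent of $\mu$), I will build by transfinite induction a compatible system of $\kappa_\lambda$-small perfectoid spaces $X_\mu$ (where $\kappa_\lambda<\kappa$ is a strong limit cardinal of cofinality $>\lambda$, as in Lemma~\ref{lem:choosekappa}) together with v-surjections
\[
X_\mu \to Y_\mu \times_{\varprojlim_{\mu'<\mu} Y_{\mu'}} \varprojlim_{\mu'<\mu} X_{\mu'}.
\]
Setting $X := \varprojlim_\mu X_\mu$, Lemma~\ref{lem:diamondlimit} shows that $X$ is a $\kappa$-small diamond, locally spatial if the $Y_\mu$ are, spatial if moreover the $Y_\mu$ are qcqs. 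Verifying that $X\to Y := \varprojlim_\mu Y_\mu$ is a v-cover reduces, after checking on $(K,K^+)$-valued points via Proposition~\ref{prop:genvalcritvsheaf}-style compatibility, to the transfinite lifting afforded by the construction. This exhibits $Y$ as a small v-sheaf and, combined with Proposition~\ref{prop:checksheafpropvloc}(o), yields the qcqs property of $Y\to Y_i$.

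To identify $|Y|$, set $R_\mu := X_\mu\times_{Y_\mu} X_\mu$ and $R := X\times_Y X = \varprojlim_\mu R_\mu$. By the qcqs transition assumption, each $R_\mu$ is a small v-sheaf (locally spatial when the $Y_\mu$ are), and Lemma~\ref{lem:diamondlimit} applied to both inverse systems gives that $|X|\to\varprojlim_\mu|X_\mu|$ and $|R|\to\varprojlim_\mu|R_\mu|$ are bijections (homeomorphisms of spectral spaces in the locally spatial case). Since $|Y_\mu| = |X_\mu|/|R_\mu|$ by Definition~\ref{def:vtopspace} and likewise for $Y$, one obtains the bijection $|Y|\to \varprojlim_\mu|Y_\mu|$. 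In the locally spatial case, this is a homeomorphism: any quasicompact open subset of $|Y|$ pulls back to a quasicompact open subset of $|X|$, which by Lemma~\ref{lem:spectralinvlimit} descends to some $|X_\mu|$ and, being $|R_\mu|$-invariant at some later stage, descends to a quasicompact open of $|Y_\mu|$.

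For the \'etale-site statements in the qcqs case, I use that by Proposition~\ref{prop:etalesepdescent} and Proposition~\ref{prop:checksheafpropvloc}, the prestacks $F$ assigning to a small v-sheaf $Z$ the category of finite \'etale (resp.~separated \'etale qcqs, resp.~\'etale qcqs) objects over $Z$ are v-stacks. Hence for any presentation $Z=V/S$ by a v-surjection $V\to Z$ with $S=V\times_Z V$, one has $F(Z) = F(V/Z)$ computed as descent data. Applying this to $Y=X/R$ and $Y_\mu = X_\mu/R_\mu$, and noting that $X$, $R$, and $R\times_X R$ are cofiltered limits of qcqs diamonds with qcqs transitions, Proposition~\ref{prop:etmaptolimdiamond} yields
\[
F(Y) = F(X/Y) = \text{2-}\varinjlim_\mu F(X_\mu/Y_\mu) = \text{2-}\varinjlim_\mu F(Y_\mu),
\]
as desired.

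The main obstacle will be the transfinite construction at limit ordinals while preserving $\kappa$-smallness: one must choose the perfectoid approximations inside $X_\mu$ carefully so that their limits remain $\kappa_\lambda$-small, using the cofinality hypothesis in Lemma~\ref{lem:choosekappa}(iii). A secondary subtlety is handling the category $Y_{\et,\qcqs}$ of \'etale qcqs maps that are only \emph{locally} separated: as in part~(ii) of Proposition~\ref{prop:etmaptolimdiamond}, one first descends quasicompact open separated pieces to finite level using the separated case, and then descends the gluing data and the cocycle condition one further step, which is possible precisely because everything in sight is qcqs and the transition maps are qcqs.
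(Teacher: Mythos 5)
Your proposal follows the same strategy as the paper: reduce to the case where $I$ is a set of ordinals $\mu<\lambda$, build by transfinite induction a compatible system of $\kappa_\lambda$-small covers $X_\mu\to Y_\mu$, apply Lemma~\ref{lem:diamondlimit} to $X = \varprojlim X_\mu$, identify $|Y|$ via the quotient presentation by $R_\mu = X_\mu\times_{Y_\mu} X_\mu$, and obtain the \'etale-site statements by v-descent and Proposition~\ref{prop:etmaptolimdiamond}. The one place where your write-up is loose is precisely the most delicate step: you describe verifying that $X\to Y$ is a v-cover as ``checking on $(K,K^+)$-valued points via Proposition~\ref{prop:genvalcritvsheaf}-style compatibility,'' but that proposition is a valuative criterion for separatedness and has nothing to do with surjectivity. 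What the paper actually does, and what your sketch must be made to do, is prove \emph{as part of the transfinite induction} that $\varprojlim_{\mu'<\mu} X_{\mu'}\to \varprojlim_{\mu'<\mu} Y_{\mu'}$ is a qcqs surjection for every $\mu$ (in particular at limit ordinals); this is the step needed to make sense of and cover $Y_\mu\times_{\varprojlim Y_{\mu'}}\varprojlim X_{\mu'}$ at the next stage. Concretely, one pulls back against an arbitrary spatial diamond $Z\to\varprojlim_{\mu'<\mu} Y_{\mu'}$, observes that $Z_{\mu'} := Z\times_{\varprojlim Y_{\mu'}} X_{\mu'}\subset Z\times X_{\mu'}$ is a spatial diamond by Proposition~\ref{prop:qcvinjspatdiamond}, and invokes Lemma~\ref{lem:diamondlimit} to conclude that $\varprojlim Z_{\mu'}\to Z$ is surjective. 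A minor cosmetic difference is that the paper takes the $X_\mu$ to be $\kappa_\lambda$-small spatial diamonds rather than perfectoid spaces as you do; both choices work, since in either case the intermediate pullbacks $Z_{\mu'}$ are spatial diamonds and Lemma~\ref{lem:diamondlimit} applies. Everything else in your outline (the identification $|Y|\to\varprojlim|Y_\mu|$, spatiality in the locally spatial case, and the descent argument for the \'etale prestacks, including the extra descent-of-gluing step needed for $Y_{\et,\qcqs}$) matches the paper.
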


We remark again that by our Convention~\ref{conv:etlocsep}, all \'etale maps are locally separated.

\begin{proof} It is easy to reduce to the case that all $Y_i$ are qcqs, by fixing some index $i=i_0$ and a surjection $Y_{i_0}^\prime\to Y_{i_0}$, where $Y_{i_0}^\prime$ is a disjoint union of qcqs v-sheaves, and replacing the whole diagram by the diagram of $Y_i\times_{Y_{i_0}} Y_{i_0}^\prime$. The set-theoretic part is proved as in Lemma~\ref{lem:diamondlimit}.

We can assume that $I$ is the category of ordinals $\mu$ less than some fixed ordinal $\lambda$. By transfinite induction on $\mu$, we can find a diagram $X_\mu$ of spatial diamonds with compatible surjections $X_\mu\to Y_\mu$; in fact, such that
\[
X_\mu\to Y_\mu\times_{\varprojlim_{\mu^\prime<\mu} Y_{\mu^\prime}} \varprojlim_{\mu^\prime<\mu} X_{\mu^\prime}
\]
is surjective. To see that this can be done, assume it has been done for $\mu^\prime<\mu$. Then
\[
\varprojlim_{\mu^\prime<\mu} X_{\mu^\prime}\to \varprojlim_{\mu^\prime<\mu} Y_{\mu^\prime}
\]
is a surjection of qcqs v-sheaves: Indeed, given a map $Z\to \varprojlim_{\mu^\prime<\mu} Y_{\mu^\prime}$ from a spatial diamond, the diagram
\[
Z_{\mu^\prime} := Z\times_{\varprojlim_{\mu^\prime<\mu} Y_{\mu^\prime}} X_{\mu^\prime}\subset Z\times X_{\mu^\prime}
\]
over varying $\mu^\prime<\mu$ is a diagram of spatial diamonds (using Proposition~\ref{prop:qcvinjspatdiamond}), where all maps $Z_{\mu^\prime}\to Z$ are surjective. Then $Z_\mu:=\varprojlim_{\mu^\prime<\mu} Z_{\mu^\prime}\to Z$ is also a surjective map of spatial diamonds, using Lemma~\ref{lem:diamondlimit}. Therefore,
\[
\varprojlim_{\mu^\prime<\mu} X_{\mu^\prime}\to \varprojlim_{\mu^\prime<\mu} Y_{\mu^\prime}
\]
is a qcqs surjection of v-sheaves, and the same holds for the base change
\[
Y_\mu\times_{\varprojlim_{\mu^\prime<\mu} Y_{\mu^\prime}} \varprojlim_{\mu^\prime<\mu} X_{\mu^\prime}\to Y_\mu\ .
\]
In particular, the source is a qcqs v-sheaf, and we can find a surjection $X_\mu\to Y_\mu$ from a spatial diamond $X_\mu$.

Rereading the above arguments in the case $\mu=\lambda$, we see that $Y=\varprojlim_{\mu<\lambda} Y_\mu$ admits a qcqs surjection from the spatial diamond $X=\varprojlim_{\mu<\lambda} X_\mu$, so $Y$ is a qcqs v-sheaf.

Let $R_\mu=X_\mu\times_{Y_\mu} X_\mu$ and $R=X\times_Y X=\varprojlim_{\mu<\lambda} R_\mu$ be the equivalence relations, which are spatial diamonds. Then
\[
|Y|=|X|/|R| = \varprojlim |X_\mu|/|R_\mu| = \varprojlim |Y_\mu|
\]
as sets. If all $Y_\mu$ are spatial, then the same arguments as in the proof of Lemma~\ref{lem:diamondlimit} ensure that $|Y|=\varprojlim |Y_\mu|$ is a homeomorphism, and $Y$ is spatial.

For the final statement about finite \'etale maps, let $F$ be the prestack on the category of small v-sheaves of finite \'etale, resp.~quasicompact separated \'etale, morphisms. Then $F$ is a stack for the v-topology by Proposition~\ref{prop:checksheafpropvloc}. Note that $F(X/Y)$ involves the value of $F$ on $X$, $R$, and $R\times_X R$, all of which are spatial diamonds, and similarly for $F(X_\mu/Y_\mu)$. By Proposition~\ref{prop:etmaptolimdiamond}, we see that
\[
F(Y) = F(X/Y) = \text{2-}\varinjlim F(X_\mu/Y_\mu) = \text{2-}\varinjlim F(Y_\mu)\ ,
\]
as desired.

This leaves the case of $Y_{\et,\qcqs}$. In that case, one still gets a fully faithful functor
\[
\text{2-}\varinjlim_i (Y_i)_{\et,\qcqs}\to Y_{\et,\qcqs}\ .
\]
For essential surjectivity, note that any $\tilde{Y}\to Y$ in $Y_{\et,\qcqs}$ is per convention locally separated, and thus locally lies in $Y_{\et,\qc,\sep}$. Using that case, we can descend to some finite level, and then we can also descend the gluing data.
\end{proof}

The following theorem will be used heavily to show that certain functors are (spatial) diamonds.

\begin{theorem}\label{thm:vdiamondisdiamond} Let $Y$ be a spatial v-sheaf such that there exists a perfectoid space $X$ with a quasi-pro-\'etale map $f: X\to Y$ for which $|f|: |X|\to |Y|$ is surjective. Then $Y$ is a spatial diamond.
\end{theorem}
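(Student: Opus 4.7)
The plan is to upgrade the given quasi-pro-\'etale morphism $f\colon X\to Y$ to an honest v-cover (in fact, a pro-\'etale cover), so that Proposition~\ref{prop:diamondqproetsurj} applies to show $Y$ is a diamond; combined with the hypothesis that $Y$ is spatial, this forces $Y$ to be a spatial diamond. So the only issue is to exhibit $f$ as a pro-\'etale surjection of v-sheaves.

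To do this, fix a map $T\to Y$ from an affinoid perfectoid space and, using Lemma~\ref{lem:strtotdisccover}, choose a surjective affinoid pro-\'etale map $\tilde T\to T$ with $\tilde T$ strictly totally disconnected. Since $f$ is quasi-pro-\'etale, the fibre product $\widetilde X := X\times_Y\tilde T$ is representable by a perfectoid space, and $\widetilde X\to \tilde T$ is pro-\'etale, in particular universally open. The crux of the argument is to verify that the canonical map
\[
|\widetilde X|\longrightarrow |X|\times_{|Y|}|\tilde T|
\]
is surjective; granting this, $|\widetilde X|\to|\tilde T|$ is surjective because $|X|\to|Y|$ is by hypothesis, and then the openness of $\widetilde X\to\tilde T$ together with an affinoid cover of $\widetilde X$ gives, by quasicompactness of $\tilde T$, a pro-\'etale cover (in the sense of the definition on $\Perf$). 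Composing with the pro-\'etale cover $\tilde T\to T$ then produces a pro-\'etale cover $\widetilde X\to T$ with a tautological lift to $X$, exhibiting $f$ as a pro-\'etale surjection.

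The heart of the proof, and the step I expect to be most subtle, is the surjectivity of $|\widetilde X|\to |X|\times_{|Y|}|\tilde T|$. Given $x\in|X|$ and $t\in|\tilde T|$ with common image in $|Y|$, pick representatives $\Spa(K,K^+)\to X$ and $\Spa(M,M^+)\to\tilde T$. By Proposition~\ref{prop:vtopspaceindep} applied to the v-sheaf $Y$, the equivalence of the two induced maps to $Y$ is witnessed by a third pair $(L,L^+)$ with surjective maps $\Spa(L,L^+)\to\Spa(K,K^+)$ and $\Spa(L,L^+)\to\Spa(M,M^+)$ such that the two composites $\Spa(L,L^+)\to Y$ are \emph{equal}. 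These data assemble into a map $\Spa(L,L^+)\to\widetilde X$ whose projections to $|X|$ and $|\tilde T|$ recover $x$ and $t$, since the maps $\Spa(L,L^+)\to X$ and $\Spa(L,L^+)\to\tilde T$ factor through $\Spa(K,K^+)$ and $\Spa(M,M^+)$ respectively, which have the same image as the original ones by surjectivity of the spectra maps. This is the only place where the v-sheaf structure of $Y$, and in particular the explicit description of $|Y|$, is really needed; once it is in hand, the rest is formal and Proposition~\ref{prop:diamondqproetsurj} finishes the proof.
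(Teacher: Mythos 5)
There is a genuine gap: the claim that $\widetilde X\to\tilde T$ is ``pro-\'etale, in particular universally open'' is false, and the stronger conclusion you aim for --- that $f\colon X\to Y$ is a surjective map of v-sheaves --- is simply not true under the hypotheses of the theorem. Pro-\'etale maps are not open in general; for instance, Zariski closed immersions are affinoid pro-\'etale (cf.~Remark~\ref{rem:diagproetale}). Concretely, take $Y=\Spa(C,\OO_C)\times\underline{S}$ for an infinite profinite set $S$, and let $X=\bigsqcup_{s\in S}Y_s$ be the disjoint union of the connected components, with the obvious map $f\colon X\to Y$. Each $Y_s\hookrightarrow Y$ is a closed immersion, hence affinoid pro-\'etale, so $f$ is quasi-pro-\'etale and locally separated; $|f|$ is even bijective. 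Yet $f$ is not a v-cover: $Y$ is quasicompact, any quasicompact open of $X$ meets only finitely many components, and each component has image a single point of $\pi_0(Y)=S$, so no finite family of quasicompact opens of $X$ can cover $Y$. This is precisely the point of the remark following the theorem in the paper, which notes that $|f|$ surjective is strictly weaker than $f$ surjective when $f$ is not quasicompact (cf.~Lemma~\ref{lem:surjvstopsurj}). Thus the entire strategy of ``upgrading $f$ to a v-cover so that Proposition~\ref{prop:diamondqproetsurj} applies'' cannot succeed.

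For what it is worth, the step you flag as ``most subtle'' --- surjectivity of $|\widetilde X|\to |X|\times_{|Y|}|\tilde T|$ --- is fine and is in fact Proposition~\ref{prop:topfiberproduct}; the trouble is entirely in the passage from $|\widetilde X|\to|\tilde T|$ surjective to a pro-\'etale cover. The paper instead replaces the given $f$ by a different, carefully constructed quasi-pro-\'etale map: it iterates the cofiltered limit over all surjective \'etale covers of $Y$ that are composites of quasicompact open immersions and finite \'etale maps (exactly as in Proposition~\ref{prop:spatialunivopen}, but now over a spatial v-sheaf using Lemma~\ref{lem:vdiamondlimit} and Lemma~\ref{lem:finetovervspatial}), producing a surjective, separated, quasi-pro-\'etale map $X_\infty\to Y$ from a spatial v-sheaf $X_\infty$ on which every such \'etale cover splits. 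The hypothesis on $|f|$ is then used via Proposition~\ref{prop:strtotdiscvdiamond} to show that $X_\infty$ is a strictly totally disconnected \emph{perfectoid space}, which gives the required quasi-pro-\'etale surjection from a perfectoid space and hence that $Y$ is a diamond. If you want to salvage your approach, the thing to internalize is that the hypothesis is genuinely pointwise, so one must build a new cover rather than promote the given one.
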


\begin{remark} The hypothesis here is much weaker than asking that $f$ is surjective. An equivalent condition is to ask that for any $y\in |Y|$, there exists a quasi-pro-\'etale map $\Spa(C,C^+)\to Y$ having $y$ in its image, where $C$ is some algebraically closed nonarchimedean field with an open and bounded valuation subring $C^+\subset C$. Indeed, the disjoint union of all such $\Spa(C,C^+)\to Y$ over varying $y\in |Y|$ will then give the desired map $f: X\to Y$ from a perfectoid space $X$ (which will be highly non-quasicompact, so that Lemma~\ref{lem:surjvstopsurj} does not apply).

In particular, the condition in Theorem~\ref{thm:vdiamondisdiamond} is only a condition about the points of $Y$.
\end{remark}

\begin{proof} We need to see that $Y$ is a diamond, as the notion of being spatial is compatible. To see that $Y$ is a diamond, we need to find a surjective quasi-pro-\'etale map $X\to Y$, where $X$ is a perfectoid space. For this, we follow the argument in the proof of Proposition~\ref{prop:spatialunivopen}. Consider the set $I$ of isomorphism classes of surjective \'etale maps $Y^\prime\to Y$ that can be written as a composite of quasicompact open embeddings and finite \'etale maps. (To see that this is of cardinality less than $\kappa$, realize these by descent data along some fixed surjective map from a $\kappa$-small affinoid perfectoid space. Moreover, each $Y^\prime$ is $\kappa^\prime$-small for some fixed strong limit cardinal $\kappa^\prime<\kappa$.) By Lemma~\ref{lem:finetovervspatial}, all such $Y^\prime$ are spatial v-sheaves. Choose a representative $Y_i\to Y$ for each $i\in I$. For any subset $J\subset I$, let $Y_J$ be the product of all $Y_i$, $i\in J$, over $Y$, which is still a composite of quasicompact open embeddings and finite \'etale maps over $Y$, and thus a spatial v-sheaf. Thus, by Lemma~\ref{lem:vdiamondlimit}, the cofiltered limit $Y_\infty$ of all $Y_J$ over finite subsets $J\subset I$ is again a spatial v-sheaf, and $Y_\infty\to Y$ is a surjective separated quasi-pro-\'etale map. Repeating the construction $Y\mapsto Y_\infty$ countably often, we find a surjective separated quasi-pro-\'etale map $X\to Y$ of spatial v-sheaves. Now any \'etale cover $\tilde{X}\to X$ that can be written as a composite of quasicompact open immersions and finite \'etale maps splits: Indeed, by Lemma~\ref{lem:vdiamondlimit}, any such map comes from some finite level, and becomes split at the next. Now the result follows from the following generalization of Proposition~\ref{prop:strtotdiscdiamond}, noting that the condition on points lifts to $X$ (as $X\to Y$ is quasi-pro-\'etale).
\end{proof}

\begin{proposition}\label{prop:strtotdiscvdiamond} Let $Y$ be a spatial v-sheaf such that there exists a quasi-pro-\'etale map $f: X\to Y$ from a perfectoid space $X$ for which $|f|: |X|\to |Y|$ is surjective. Assume that any surjective \'etale map $\tilde{Y}\to Y$ that can be written as a composite of quasicompact open immersions and finite \'etale maps splits. Then $Y$ is a strictly totally disconnected perfectoid space.
\end{proposition}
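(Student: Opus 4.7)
My plan is to adapt the proof of Proposition~\ref{prop:strtotdiscdiamond}: first identify each connected component of $Y$ with $\Spa(C_y,C_y^+)$ for $C_y$ algebraically closed, then globalize via the equalizer construction of Lemma~\ref{lem:diamondconncomprepr}. Since finite open covers lie in the stated class, the splitting hypothesis combined with Lemma~\ref{lem:fargues} gives that every connected component of $|Y|$ has a unique closed point. For such a closed $y\in|Y|$, set $Y_y:=\varprojlim_{V\ni y}V$; by Lemma~\ref{lem:vdiamondlimit} this is a spatial v-sheaf whose underlying space is the set of generalizations of $y$. Lift $y$ through the surjective $|f|$ to some $x\in|X|$; then $\Spa(K(x),K(x)^+)\to X$ (affinoid pro-\'etale by Example~\ref{ex:injpoint}) composed with $f$ is quasi-pro-\'etale, and its topological image, being closed under generalizations by Proposition~\ref{prop:vspatialfirstprop}(iii) and containing $y$, equals $|Y_y|$, so the map factors through a quasi-pro-\'etale $\Spa(K(x),K(x)^+)\to Y_y$. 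This is quasicompact (source qcqs, target quasiseparated) and topologically surjective, hence a v-surjection by Lemma~\ref{lem:surjvstopsurj}, so $Y_y$ becomes a spatial diamond via Proposition~\ref{prop:diamondqproetsurj}.

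Next I would apply Proposition~\ref{prop:strtotdiscdiamond} to $Y_y$ to obtain $Y_y\cong\Spa(C_y,C_y^+)$ with $C_y$ algebraically closed. The required splitting hypothesis on $Y_y$ follows from that on $Y$: any cover of $Y_y$ in the stated class descends (by Lemma~\ref{lem:vdiamondlimit}) to a cover of a quasicompact open $V\supset|Y_y|$, which we shrink to a clopen $W\subset V$ (as $|Y_y|$ is an intersection of clopens of $|Y|$) and combine with the trivial cover of $Y\setminus W$ to produce a cover of $Y$ in the stated class; its splitting restricts to one on $Y_y$.

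For the globalization, pick a v-surjection $Z=\Spa(B,B^+)\to Y$ with $Z$ strictly totally disconnected (obtained by applying Lemma~\ref{lem:strtotdisccover} to any qcqs perfectoid v-cover of $Y$) and set $R:=Z\times_Y Z$. For each closed $y$, the pullback $Z_y:=Z\times_Y Y_y$ is quasi-pro-\'etale over the strictly totally disconnected $Z$ and hence an affinoid perfectoid pro-constructible generalizing subspace, so $R_y:=Z_y\times_{Y_y}Z_y$ is affinoid perfectoid over the perfectoid $Y_y=\Spa(C_y,C_y^+)$. Since $R\hookrightarrow Z\times Z$ is a quasicompact injection, $R$ is a spatial diamond by Proposition~\ref{prop:qcvinjspatdiamond}, and its connected components all lie in the various $R_y$ and are therefore affinoid perfectoid; Lemma~\ref{lem:diamondconncomprepr} then yields $R=\Spa(C,C^+)$ affinoid perfectoid. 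Now the proof of Lemma~\ref{lem:diamondconncomprepr} goes through verbatim for $(Z,R)$: one defines $(A,A^+):=\eq((B,B^+)\rightrightarrows(C,C^+))$, constructs a compatible pseudouniformizer in $A$ by profinite assembly from the component-wise pseudouniformizers (with Proposition~\ref{prop:wlocalflat} supplying the almost-flatness of $R^+/\varpi$ over $B^+/\varpi$ since $Z$ is totally disconnected), shows $A$ is perfectoid, and checks $Y\cong\Spa(A,A^+)$ on $(K,K^+)$-valued points via Lemma~\ref{lem:isomqcqsvsheaves} using the identified component structure. Then Proposition~\ref{prop:etcoverssplittotdisconnected} identifies $Y$ as strictly totally disconnected since every component is $\Spa(C_y,C_y^+)$ with $C_y$ algebraically closed.

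The main obstacle is this final step: verifying that the equalizer construction of Lemma~\ref{lem:diamondconncomprepr} still produces a perfectoid ring in this v-sheaf setting, even though we do not a priori know $R\to Z$ is pro-\'etale. The key observation is that its proof uses only strict total disconnectedness of $Z$ (to invoke Proposition~\ref{prop:wlocalflat}), the affinoid-perfectoid structure of $R$, and the identification of connected components of the quotient as affinoid perfectoid with compatible pseudouniformizers---conditions which we have all arranged---so the argument goes through essentially unchanged.
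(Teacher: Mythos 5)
Your proposal is correct and follows the same two-step outline as the paper: identify each connected component as $\Spa(C_y,C_y^+)$ with $C_y$ algebraically closed, then globalize via the equalizer construction. The two routes differ mainly in presentation. For the local step you reduce to Proposition~\ref{prop:strtotdiscdiamond} (after establishing that $Y_y$ is a spatial diamond via the quasi-pro-\'etale surjection from $\Spa(K(x),K(x)^+)$ and verifying the splitting hypothesis by the descent argument you sketch), whereas the paper simply repeats that argument. For the globalization, the paper invokes Lemma~\ref{lem:vdiamondconncomprepr} directly; you are effectively re-deriving it. Your derivation actually makes explicit a step that the paper's proof of Lemma~\ref{lem:vdiamondconncomprepr} glosses over, namely that connected components of $R=Z\times_Y Z$ are affinoid perfectoid, which you establish by identifying them with connected components of the fibres $R_y = Z_y\times_{Y_y}Z_y$ over the now-known perfectoid $Y_y$. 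You are also correct that the equalizer construction of Lemma~\ref{lem:diamondconncomprepr} never invokes pro-\'etaleness of $R\to Z$: the ingredients are strict total disconnectedness of $Z$, affinoid perfectoidness of $R$, and the almost vanishing of $H^1_v(Y_c,\OO^+)$ on connected components (Proposition~\ref{prop:vcohomO}). The one small misattribution is your appeal to Proposition~\ref{prop:wlocalflat} at the end: that flatness result enters only indirectly through Theorem~\ref{thm:vsub} and Proposition~\ref{prop:vcohomO}, not as a direct input to Lemma~\ref{lem:diamondconncomprepr}, but this is cosmetic and does not affect the argument.
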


\begin{proof} The proof is identical to the proof of Proposition~\ref{prop:strtotdiscdiamond}, but we repeat it for convenience. First note that any quasicompact open cover of $|Y|$ splits by assumption; thus, every connected component of $|Y|$ has a unique closed point. Thus, any connected component $Y_0\subset Y$ admits a quasi-pro-\'etale surjection $\Spa(C,C^+)\to Y_0$ for some algebraically closed field $C$ with an open and bounded valuation subring $C^+\subset C$. First, we claim this map $\Spa(C,C^+)\to Y_0$ is an isomorphism. Let $R_0 = \Spa(C,C^+)\times_{Y_0} \Spa(C,C^+)$ be the equivalence relation. To check that $\Spa(C,C^+)=Y_0$, we have to check that $R_0=\Spa(C,C^+)$. If not, then $R_0$ has another maximal point. We see that it is enough to show that $\Spa(C,\OO_C)\to Y_0^\circ$ is an isomorphism, where $Y_0^\circ\subset Y_0$ is the open subfunctor corresponding to the maximal point (which is open). But now $R_0^\circ = R_0\times_{Y_0} Y_0^\circ$ is isomorphic to $\underline{S}\times \Spa(C,\OO_C)$ for some profinite set $S$ as $R_0^\circ\to \Spa(C,\OO_C)$ is a qcqs pro-\'etale map, where in fact $S=G$ is a profinite group (by the equivalence relation structure). Assume $G$ is nontrivial, and let $H\subset G$ be a proper open subgroup. Then $\underline{H}\times \Spa(C,\OO_C)\subset \underline{G}\times \Spa(C,\OO_C)=R_0^\circ$ is another equivalence relation, and the corresponding quotient of $\Spa(C,\OO_C)$ is a nontrivial finite \'etale cover of $Y_0^\circ$. Note that finite \'etale covers of $Y_0^\circ$ agree with finite \'etale covers of $Y_0$ (as this is true for the pair $\Spa(C,\OO_C)$ and $\Spa(C,C^+)$, and the pair $R_0$ and $R_0^\circ$). Now any finite \'etale cover of $Y_0$ extends to a finite \'etale cover of an open and closed subset of $Y$ by Lemma~\ref{lem:vdiamondlimit}, which together with the complementary open and closed subset of $Y$ forms an \'etale cover as in the statement of the proposition. As this is assumed to be split, the original finite \'etale cover of $Y_0$ has to be split, which is a contradiction. Thus, $Y_0=\Spa(C,C^+)$.

In other words, we have seen that any connected component of $Y_0$ is given by $\Spa(C,C^+)$ for some algebraically closed field $C$ with an open and bounded valuation subring $C^+\subset C$. The result now follows from the next lemma, which generalizes Lemma~\ref{lem:diamondconncomprepr}.
\end{proof}

\begin{lemma}\label{lem:vdiamondconncomprepr} Let $Y$ be a spatial v-sheaf. Assume that every connected component of $Y$ is representable by an affinoid perfectoid space. Then $Y$ is representable by an affinoid perfectoid space.
\end{lemma}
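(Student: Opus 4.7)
The plan is to reduce Lemma~\ref{lem:vdiamondconncomprepr} to its diamond counterpart, Lemma~\ref{lem:diamondconncomprepr}, by using Theorem~\ref{thm:vdiamondisdiamond} to upgrade $Y$ from a spatial v-sheaf to a spatial diamond. The whole argument then comes down to producing a single quasi-pro-\'etale map $X\to Y$ from a perfectoid space $X$ whose underlying topological map is surjective.

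First, I would fix a description of connected components via clopen sub-v-sheaves: for each $c\in \pi_0(|Y|)$, the sub-v-sheaf $Y_c\subset Y$ corresponding to the connected component of $c$ is the cofiltered intersection $Y_c = \varprojlim_i U_i$ of all clopen sub-v-sheaves $U_i\subset Y$ containing $c$, where clopen subsets of $|Y|$ and clopen sub-v-sheaves of $Y$ are identified via Proposition~\ref{prop:vtopspaceopensubsets}. By Lemma~\ref{lem:vdiamondlimit}, $Y_c$ is a spatial v-sheaf whose underlying topological space is precisely the connected component of $c$ in $|Y|$, and by the hypothesis of the lemma, $Y_c$ is representable by an affinoid perfectoid space.

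Next, I would verify that each inclusion $Y_c\hookrightarrow Y$ is quasi-pro-\'etale. Separatedness is automatic, being a monomorphism. For any strictly totally disconnected perfectoid space $Z\to Y$, the pullback $Y_c\times_Y Z=\varprojlim_i (U_i\times_Y Z)$ is by Lemma~\ref{lem:vdiamondlimit} a spatial v-sheaf whose underlying topological space is the intersection of the clopen subsets $|U_i\times_Y Z|\subset |Z|$, hence a pro-constructible and generalizing subset of $|Z|$. By Lemma~\ref{lem:subsetwlocal} there is an affinoid perfectoid subspace $V\subset Z$ realizing this topological subset, and by Lemma~\ref{lem:proetaleoverwlocal} (or Corollary~\ref{cor:proetaleoverwlocaltop}) $V$ is affinoid pro-\'etale over $Z$. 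The induced map $V\to Y_c\times_Y Z$ of qcqs v-sheaves is bijective on $(K,K^+)$-valued points for algebraically closed $(K,K^+)$, hence an isomorphism by Lemma~\ref{lem:isomqcqsvsheaves}; in particular $Y_c\to Y$ is quasi-pro-\'etale.

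Finally, forming the disjoint union $X=\bigsqcup_{c\in \pi_0(|Y|)} Y_c$ produces a perfectoid space with a quasi-pro-\'etale map $X\to Y$ for which $|X|\to |Y|$ is surjective by construction. Theorem~\ref{thm:vdiamondisdiamond} then promotes $Y$ to a spatial diamond, at which point Lemma~\ref{lem:diamondconncomprepr} gives that $Y$ is representable by an affinoid perfectoid space. The only real technical point is the quasi-pro-\'etale verification for $Y_c\hookrightarrow Y$; everything there hinges on the fact that pulling back a cofiltered intersection of clopen sub-v-sheaves to a strictly totally disconnected space computes to the corresponding intersection of clopen subspaces of the pullback, which is where the structural results on (strictly) totally disconnected spaces do the work.
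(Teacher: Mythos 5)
Your approach runs into a circularity inside the paper's logical architecture. The final step invokes Theorem~\ref{thm:vdiamondisdiamond} to upgrade the spatial v-sheaf $Y$ to a spatial diamond, but the proof of Theorem~\ref{thm:vdiamondisdiamond} reduces to Proposition~\ref{prop:strtotdiscvdiamond}, whose proof in turn ends by invoking precisely the lemma you are trying to establish (Lemma~\ref{lem:vdiamondconncomprepr}). So Theorem~\ref{thm:vdiamondisdiamond} is not available at this point in the text, and the appeal to it cannot be repaired without restructuring the surrounding sections.

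The paper's own proof sidesteps this. It writes $Y=X/R$ with $X$ affinoid perfectoid, so that $R=X\times_Y X$ is a spatial diamond by Proposition~\ref{prop:smallvsheaf}. Then the already-established diamond version Lemma~\ref{lem:diamondconncomprepr} is applied to $R$ to conclude that $R$ is itself affinoid perfectoid, and the concrete ring-theoretic equalizer argument from the proof of Lemma~\ref{lem:diamondconncomprepr} is rerun directly on the resulting data $(X,R)$. This never needs to know in advance that $Y$ is a diamond; it manufactures the representing affinoid pair $(A,A^+)$ by hand from the groupoid presentation.

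As a smaller remark, your verification that each $Y_c\hookrightarrow Y$ is quasi-pro-\'etale, while correct, reproves something already in the paper: $Y_c\hookrightarrow Y$ is a quasicompact injection of v-sheaves, so Corollary~\ref{cor:qcvinjqproet} gives quasi-pro-\'etaleness directly. The substantive gap in your proposal is the circular dependence on Theorem~\ref{thm:vdiamondisdiamond}.
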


\begin{proof} Write $Y=X/R$ as the quotient of an affinoid perfectoid space $X$ by a spatial diamond equivalence relation $R\subset X\times X$. By Lemma~\ref{lem:diamondconncomprepr}, the spatial diamond $R$ is an affinoid perfectoid space. Now the same argument as in the proof of Lemma~\ref{lem:diamondconncomprepr} applies.
\end{proof}

\section{Spatial morphisms}

In this section, we define a notion of (locally) spatial morphisms, and prove that it behaves well. All notions we will consider will be examples of $0$-truncated maps.

\begin{definition}\label{def:reprindiamonds} A map $f: Y^\prime\to Y$ of v-stacks is \emph{representable in diamonds} if for all diamonds $X$ with a map $X\to Y$, the fibre product $Y^\prime\times_Y X$ is a diamond.
\end{definition}

The following proposition ensures that this notion is well-behaved.

\begin{proposition}\label{prop:reprindiamonds} Let $f: Y^\prime\to Y$ and $\tilde{Y}\to Y$ be maps of v-stacks, with pullback $\tilde{f}: \tilde{Y}^\prime = Y^\prime\times_Y \tilde{Y}\to \tilde{Y}$.
\begin{altenumerate}
\item If $Y$ is a diamond, then $f$ is representable in diamonds if and only if $Y^\prime$ is a diamond.
\item If $f$ is representable in diamonds, then $\tilde{f}$ is representable in diamonds.
\item If $\tilde{Y}\to Y$ is surjective \emph{as a map of pro-\'etale stacks} and $\tilde{f}$ is representable in diamonds, then $f$ is representable in diamonds.
\end{altenumerate}
\end{proposition}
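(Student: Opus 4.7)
The plan: Part (i) should follow by taking $X = Y$ (so $Y' \times_Y Y = Y'$) for the forward direction, and by Proposition~\ref{prop:diamondfibreproduct} (fibre products of diamonds are diamonds) for the converse. Part (ii) is immediate from the identity $\tilde{Y}' \times_{\tilde{Y}} X = Y' \times_Y X$ for any diamond $X \to \tilde{Y}$, which is a diamond by the hypothesis on $f$.

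For part (iii), given a diamond $X$ with a map $X \to Y$, let $W = Y' \times_Y X$. I will show $W$ is a diamond by verifying the criterion of Proposition~\ref{prop:diamondqproetsurj}: namely (a) $W$ is a pro-\'etale sheaf, and (b) there is a surjective quasi-pro-\'etale map $Z \to W$ from a perfectoid space.

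For (b), I would first pick a surjective quasi-pro-\'etale map $X_0 \to X$ from a perfectoid space $X_0$ (which exists since $X$ is a diamond), then use the pro-\'etale surjectivity of $\tilde{Y} \to Y$ to find a pro-\'etale cover $X_0^* \to X_0$ from a perfectoid space, together with a lift $X_0^* \to \tilde{Y}$ of the composite $X_0^* \to Y$. The composite $X_0^* \to X$ is then quasi-pro-\'etale and surjective, and its base change
\[
W \times_X X_0^* \;=\; \tilde{Y}' \times_{\tilde{Y}} X_0^*
\]
is a diamond by the hypothesis on $\tilde{f}$. Choosing any surjective quasi-pro-\'etale map $Z \to W \times_X X_0^*$ from a perfectoid space and composing gives the required $Z \to W$; composition and pullback preserve the quasi-pro-\'etale (including locally separated) property.

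For (a), the key will be to show $f$ is $0$-truncated, i.e.\ that $\Aut(a) \to \Aut(f(a))$ is injective for every $a \in Y'(T)$. Since $\tilde{f}$ is representable in diamonds and diamonds are v-sheaves by Proposition~\ref{prop:diamondvsheaf}, $\tilde{f}$ is $0$-truncated. To descend this property: given $\phi \in \Aut(a)$ with $f(\phi) = 1$, I would pro-\'etale locally on $T$ lift $f(a) \in Y(T)$ to some $\tilde y \in \tilde{Y}(T')$, producing a triple $(a|_{T'}, \tilde y, 1) \in \tilde{Y}'(T')$ with automorphism $(\phi|_{T'}, 1)$ projecting to the identity under $\tilde{f}$; faithfulness of $\tilde{f}$ forces $\phi|_{T'} = 1$, and v-descent of the automorphism sheaf $T'' \mapsto \Aut(a|_{T''})$ (a v-sheaf since $Y'$ is a v-stack) yields $\phi = 1$. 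A direct computation of $W(T) = Y'(T) \times_{Y(T)} X(T)$ as a $2$-fibre product of groupoids then shows its only automorphisms are trivial: any automorphism $(\phi, \psi)$ of $(a, b, \alpha)$ satisfies $\psi = 1$ since $X$ is a v-sheaf, whence the compatibility forces $f(\phi) = 1$, whence $\phi = 1$ by $0$-truncatedness of $f$. Thus $W$ is a v-sheaf, hence a pro-\'etale sheaf.

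The hard part is the v-local descent of $0$-truncatedness in (a); once that is in hand, Proposition~\ref{prop:diamondqproetsurj} packages (a) and (b) into the conclusion that $W$ is a diamond.
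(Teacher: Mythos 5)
Your proof is correct and follows essentially the same route as the paper: reduce to a diamond base, lift pro-\'etale locally to $\tilde Y$, and invoke Proposition~\ref{prop:diamondqproetsurj} (the paper instead cites the slight repackaging Proposition~\ref{prop:quotientsbydiamondequivrel}, but the content is the same). The one genuine difference is that you spell out the verification that $W=Y'\times_Y X$ is a pro-\'etale \emph{sheaf} before invoking that proposition, by descending $0$-truncatedness of $f$ from the hypothesis that $\tilde f$ is representable in diamonds; the paper applies Proposition~\ref{prop:quotientsbydiamondequivrel} to $Y'$ without explicitly noting that its hypothesis ``$Y$ a pro-\'etale sheaf'' requires this check, so your addition closes a small gap that the paper treats as implicit.
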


\begin{proof} Part (i) follows from Proposition~\ref{prop:diamondfibreproduct}, and part (ii) is clear by definition. For part (iii), we may assume that $Y$ is a diamond. We can find a surjective quasi-pro-\'etale $X\to Y$, where $X$ is a disjoint union of strictly totally disconnected perfectoid spaces, and it suffices to see that $X\times_Y Y^\prime$ is a diamond by Proposition~\ref{prop:quotientsbydiamondequivrel}. This can be checked locally on $X$, so we can reduce to the case that $Y=X$ is a strictly totally disconnected perfectoid space. As $\tilde{Y}\to Y=X$ is surjective as a map of pro-\'etale sheaves, we can find a surjective pro-\'etale map $\tilde{X}\to X$ with a lift $\tilde{X}\to \tilde{Y}$. Then $\tilde{X}\times_X Y^\prime$ is a diamond, and thus $Y^\prime$ by Proposition~\ref{prop:quotientsbydiamondequivrel}.
\end{proof}

Now we can define locally spatial morphisms.

\begin{definition}\label{def:locallyspatialmorphism} A map $f: Y^\prime\to Y$ of v-stacks is \emph{representable in (locally) spatial diamonds} if for all (locally) spatial diamonds $X$ with a map $X\to Y$, the fibre product $Y^\prime\times_Y X$ is a (locally) spatial diamond.
\end{definition}

If $f: Y^\prime\to Y$ is a map of diamonds which is representable in (locally) spatial diamonds, we will sometimes simply say that $f: Y^\prime\to Y$ is a (locally) spatial map of diamonds.

Clearly, if a map of v-stacks $f: Y^\prime\to Y$ is representable in spatial diamonds, then it is representable in locally spatial diamonds, and the converse holds precisely when $f$ is qcqs.

\begin{proposition}\label{prop:locallyspatialmorphism} Let $f: Y^\prime\to Y$ and $\tilde{Y}\to Y$ be maps of v-stacks, with pullback $\tilde{f}: \tilde{Y}^\prime = Y^\prime\times_Y \tilde{Y}\to \tilde{Y}$.
\begin{altenumerate}
\item If $f$ is representable in locally spatial diamonds, then $f$ is representable in diamonds.
\item If $Y$ is a locally spatial diamond, then $f$ is representable in locally spatial diamonds if and only if $Y^\prime$ is a locally spatial diamond.
\item If $f$ is representable in locally spatial diamonds, then $\tilde{f}$ is representable in locally spatial diamonds.
\item If $\tilde{Y}\to Y$ is surjective as a map of pro-\'etale stacks, $f$ is quasiseparated and $\tilde{f}$ is representable in locally spatial diamonds, then $f$ is representable in locally spatial diamonds.
\item If $f$ is representable in diamonds, $\tilde{Y}\to Y$ is a surjective map of v-stacks, $f$ is quasiseparated and $\tilde{f}$ is representable in locally spatial diamonds, then $f$ is representable in locally spatial diamonds.
\end{altenumerate}
\end{proposition}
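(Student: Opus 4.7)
The approach is to reduce, by local considerations, to a topological descent problem solvable with the tools from the preceding sections. We may assume $X$ is spatial, since local spatiality is local on the target. Using the v-surjection $\tilde{Y}\to Y$ and refining via Lemma~\ref{lem:strtotdisccover}, we choose a v-cover $\tilde{X}\to X$ from a strictly totally disconnected perfectoid space admitting a lift $\tilde{X}\to\tilde{Y}$. By the hypothesis on $\tilde{f}$, both
\[
\tilde{Z}:=Z\times_X \tilde{X}=Y'\times_Y\tilde{X}\qquad\text{and}\qquad R:=\tilde{Z}\times_Z\tilde{Z}=Y'\times_Y(\tilde{X}\times_X\tilde{X})
\]
are locally spatial diamonds, where $Z:=Y'\times_Y X$ is the diamond we wish to show is locally spatial. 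The projections $s,t:R\to\tilde{Z}$ are base changes of the quasicompact generalizing map $\tilde{X}\times_X\tilde{X}\to\tilde{X}$, hence are themselves quasicompact and generalizing on topological spaces, and the image of $|R|$ in $|\tilde{Z}|\times|\tilde{Z}|$ defines a pro-constructible equivalence relation on the quasiseparated locally spectral space $|\tilde{Z}|$.

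Given $z\in|Z|$, lift it to $\tilde{z}\in|\tilde{Z}|$ and choose a spatial quasicompact open sub-diamond $\tilde{W}\subset\tilde{Z}$ containing $\tilde{z}$. Applying Lemma~\ref{lem:equivrel} to $|\tilde{Z}|$ with this equivalence relation, we obtain an open $R$-invariant subset $\tilde{U}\supset\tilde{W}$ contained in a quasicompact $R$-invariant pro-constructible generalizing subset $E=t(s^{-1}(\tilde{W}))$, itself contained in a quasicompact open $W'\subset|\tilde{Z}|$. The $R$-invariant open $\tilde{U}$ descends to an open subfunctor $V\subset Z$ containing $z$, and $V$ is a quasiseparated diamond by Propositions~\ref{prop:vinjdiamond} and~\ref{prop:topspaceopensubsets}. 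The main obstacle of the proof is to verify that $V$ can be chosen to be a spatial v-sheaf, i.e.\ quasicompact with a basis of quasicompact open subfunctors whose preimages in $\tilde{Z}$ are quasicompact. This requires delicate topological analysis combining the output of Lemma~\ref{lem:equivrel} with the quasi-compactness of $s,t$ (coming from $f$ quasiseparated) and the local spatiality of $R$ (coming from $\tilde{f}$ representable in locally spatial diamonds); one cannot simply invoke Lemma~\ref{lem:spectralopenequivrel}, since the v-cover $\tilde{X}\times_X\tilde{X}\to\tilde{X}$ need not be open (which is what makes (v) harder than (iv)).

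Once $V$ is established as a spatial v-sheaf, Theorem~\ref{thm:vdiamondisdiamond} finishes the proof: since $V$ is a diamond (as an open sub-v-sheaf of the diamond $Z$), it admits a surjective quasi-pro-\'etale map from a perfectoid space $W$, and the topological map $|W|\to|V|$ is in particular surjective. Theorem~\ref{thm:vdiamondisdiamond} then yields that $V$ is a spatial diamond, providing a spatial open neighborhood of $z$ in $Z$ and establishing that $Z$ is locally spatial. The proof thus has two essential ingredients beyond the standard reductions: the topological descent via Lemma~\ref{lem:equivrel} (where the quasi-separatedness of $f$ is crucial for the quasicompactness of $s,t$), and the verification of spatiality via Theorem~\ref{thm:vdiamondisdiamond}; the hard part is the former, specifically the production of sufficiently many quasicompact $R$-invariant open subsets of $|\tilde{Z}|$.
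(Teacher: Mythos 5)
Your proposal correctly identifies the shape of the problem — a descent of "enough quasicompact open subsets" along a v-cover whose structure map need not be open — and correctly flags that Lemma~\ref{lem:spectralopenequivrel} cannot be invoked directly in part (v). But the argument then punts at precisely the hard step: you write that producing sufficiently many quasicompact $R$-invariant open subsets of $|\tilde{Z}|$ "requires delicate topological analysis," without supplying it. That analysis does not in fact follow from Lemma~\ref{lem:equivrel} plus the quasicompactness of $s,t$ and local spatiality of $R$: Lemma~\ref{lem:equivrel} gives $R$-invariant open subsets, but not quasicompact ones, and there is no general mechanism making them quasicompact when $s,t$ are not open. The invocation of Theorem~\ref{thm:vdiamondisdiamond} at the end is also unnecessary — once an open subfunctor of the diamond $Z$ is shown spatial as a v-sheaf, it is automatically a spatial diamond, so that theorem buys nothing here.

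The paper closes the gap by a route your sketch does not anticipate. For (iv), it reduces (via Proposition~\ref{prop:spatialunivopen} and Lemma~\ref{lem:spectralopenequivrel}) to $Y$ strictly totally disconnected with an affinoid pro-\'etale cover $\tilde{X}\to Y$, and then exploits that over a connected component $X_c$ the cover $\tilde{X}_c\to X_c$ \emph{splits}, which propagates quasicompact opens from the fiber to a neighborhood by a constructibility argument on $\pi_0$. For (v), it first uses (iv) to reduce to $Y$ strictly totally disconnected, then Lemma~\ref{lem:conncomplocspat} to reduce to the connected case $Y=\Spa(C,C^+)$, and then writes $\tilde{X}$ as an inverse limit of rational subsets of perfectoid balls $\tilde{X}_i\to Y$, spreads the quasicompact open to some finite level, and finally pulls it back along a \emph{section} of $\tilde{X}_i\to Y$ supplied by Lemma~\ref{lem:secoveralgclosed}. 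This section-existence input over an algebraically closed base is the crucial ingredient, and it is absent from your proposal. So the gap is real: the topological-descent bottleneck you identify is resolved by geometry (splitting/sections), not by refining the spectral-space analysis.
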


\begin{proof} Part (i) follows from Proposition~\ref{prop:reprindiamonds} (i) and (iii). Part (ii) follows from Corollary~\ref{cor:fibreproductspatial}, and part (iii) is clear by definition.

For part (iv), we may assume that $Y$ is a locally spatial diamond; in fact, we can assume that $Y$ is spatial. By Proposition~\ref{prop:reprindiamonds}~(iii), we know that $Y^\prime$ is a diamond. By Proposition~\ref{prop:spatialunivopen} and Lemma~\ref{lem:spectralopenequivrel}, we can further reduce to the case that $Y$ is strictly totally disconnected: By Proposition~\ref{prop:spatialfirstprop}, we can find a universally open map $X\to Y$ where $X$ is strictly totally disconnected, and if $X\times_Y Y^\prime$ is a locally spatial diamond, then so is $Y^\prime$ by Lemma~\ref{lem:spectralopenequivrel}.

Now assume $Y=X$ is strictly totally disconnected. We can find an affinoid pro-\'etale map $\tilde{X}\to X$ which lifts to $\tilde{Y}$, and may assume that $\tilde{Y}=\tilde{X}$. Now $\tilde{Y}^\prime := \tilde{X}\times_X Y^\prime$ is a locally spatial diamond. We claim that this implies that $Y^\prime$ is a locally spatial diamond.

Let $V\subset Y^\prime$ be an open subfunctor, and let $y\in |V|\subset |Y^\prime|$ be a point, lying in a connected component $c\in \pi_0 X$. By a subscript $_c$, we denote the fiber of all objects over $c$. Note that $\tilde{X}_c\to X_c$ splits as $X_c$ is a connected strictly totally disconnected space; this implies that $Y^\prime_c$ is locally spatial. Fix a quasicompact open subset $U_c\subset Y^\prime_c$ containing $y$. We can find a quasicompact open subset $U\subset \tilde{Y}^\prime$ contained in the preimage of $V$, whose intersection with $\tilde{Y}^\prime_c$ is given by the preimage of $U_c$. The two preimages of $U$ in $\tilde{Y}^\prime\times_{Y^\prime} \tilde{Y}^\prime$ are two quasicompact open subsets $W_1,W_2\subset \tilde{Y}^\prime\times_{Y^\prime}\tilde{Y}^\prime$ whose fibers over $c$ agree. By standard properties of spectral spaces, it follows that there is some open and closed neighborhood $U_c$ of $c$ in $\pi_0 X$ such that the intersection of $W_1$ and $W_2$ with the preimage of $U_c$ agree. Thus, the intersection of $U$ with the preimage of $U_c$ descends to $Y^\prime$, and defines a quasicompact open subfunctor of $Y^\prime$ containing $y$. This proves that $Y^\prime$ is locally spatial, as desired.

Finally, for part (v), we can use part (iv) to reduce to the case that $Y=X$ is a strictly totally disconnected perfectoid space, in which case $Y^\prime$ is a qcqs diamond. By Lemma~\ref{lem:conncomplocspat} below, we can assume that $X=\Spa(C,C^+)$ is a connected strictly totally disconnected perfectoid space. Then we can assume that similarly $\tilde{X}=\Spa(\tilde{C},\tilde{C}^+)$ is a connected strictly totally disconnected perfectoid space. Write $\tilde{X}$ as an inverse limit of rational subspaces $\tilde{X}_i$ of perfected closed balls over $X$. Let $V\subset Y^\prime$ be an open subspace, with preimage $\tilde{V}\subset \tilde{Y}^\prime$, and let $y\in |V|$. Then there is some quasicompact open subspace $\tilde{U}\subset \tilde{V}$ containing the preimage of $y$. This quasicompact open subspace spreads to a quasicompact open subspace $\tilde{U}_i\subset \tilde{Y}^\prime_i=Y^\prime\times_X \tilde{X}_i$ for $i$ sufficiently large by arguing as in Lemma~\ref{lem:conncomplocspat}. Moreover, $\tilde{U}_i\subset \tilde{V}_i$ for $i$ sufficiently large by a standard quasicompactness argument. Now the map $\tilde{X}_i\to X$ has a section by Lemma~\ref{lem:secoveralgclosed}. Pulling back $\tilde{U}_i$ under this section gives a quasicompact open subspace of $Y^\prime$ contained in $V$, and containing $y$, as desired.
\end{proof}

\begin{lemma}\label{lem:conncomplocspat} Let $Y$ be a quasiseparated small v-sheaf with a map $Y\to \underline{S}$ for some profinite set $S$. Assume that for all $s\in S$, the fiber $Y_s$ is locally spatial. Moreover, assume that there is a surjective qcqs map $X\to Y$ from a locally spatial diamond $X$. Then $Y$ is locally spatial.
\end{lemma}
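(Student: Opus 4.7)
\medskip

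The plan is to argue locally on $Y$: given $y\in|Y|$ with image $s\in S$, I will construct a spatial open sub-v-sheaf $U\subset Y$ containing $y$. By hypothesis $Y_s$ is locally spatial, so I pick a spatial quasicompact open $U_s\subset Y_s$ containing $y$. The preimage $V^0:=p^{-1}(U_s)\subset X_s$ (where $p:X\to Y$) is quasicompact (as $X\to Y$ is qcqs and $U_s$ is qc), open in $X_s$, and automatically $R_s$-saturated for $R:=X\times_Y X$. Since $X$ is locally spatial I can cover $V^0$ by finitely many spatial qc opens of $X$, whose union $X^*\subset X$ is a spatial qc open containing $V^0$; its base change $R^*:=X^*\times_Y X^*$ is a spatial diamond by Proposition~\ref{prop:smallvsheaf}.

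For clopen $T\ni s$ in $S$, the sets $X^*_T:=X^*\cap X_T$ and $R^*_T:=R^*\cap(X^*_T\times X^*_T)$ are clopen in the spatial $X^*,R^*$, hence spatial, and one has presentations $X^*_s=\varprojlim_T X^*_T$, $R^*_s=\varprojlim_T R^*_T$ with $|X^*_s|=\varprojlim|X^*_T|$, $|R^*_s|=\varprojlim|R^*_T|$ by Lemma~\ref{lem:diamondlimit}. Applying Proposition~\ref{prop:etmaptolimdiamond}(iii) to the qc open immersion $V^0\hookrightarrow X^*_s$ produces, for some clopen $T\ni s$, a qc open $V_T\subset X^*_T$ with $V_T\cap X^*_s=V^0$. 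The two qc opens $\mathrm{pr}_1^{-1}(V_T),\mathrm{pr}_2^{-1}(V_T)\subset R^*_T$ agree on the fiber $R^*_s$ (since $V^0$ is $R_s$-saturated), so their symmetric difference is a constructible subset of the spectral space $|R^*_T|$ disjoint from $|R^*_s|=\bigcap_{T'\subset T}|R^*_{T'}|$. Compactness of the constructible topology gives, for $T'\subset T$ sufficiently small, $\mathrm{pr}_1^{-1}(V_{T'})=\mathrm{pr}_2^{-1}(V_{T'})$ where $V_{T'}:=V_T\cap X^*_{T'}$; i.e.\ $V_{T'}$ is $R^*_{T'}$-saturated. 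The key point is now that because $T'\subset S$ is clopen, any $x\in X$ which is $R$-equivalent to a point of $V_{T'}\subset X_{T'}$ must itself lie in $X_{T'}$; combined with the fact that $X^*$ was chosen to contain $p^{-1}(U_s)$ and a further shrinking of $T'$ (again by constructible-topology compactness applied to the qc subset $(R\cdot V_{T'})\setminus X^*$, whose fiber over $s$ is empty), one arranges $R\cdot V_{T'}=V_{T'}$. Thus $V_{T'}\subset X$ is qc open and $R$-saturated, so its image $U:=p(V_{T'})\subset Y$ is an open sub-v-sheaf (using Proposition~\ref{prop:vtopspaceopensubsets} that $|X|\to|Y|$ is a quotient map), qcqs, containing $y$, and contained in $Y_{T'}$.

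It remains to show $U$ is spatial, i.e.\ that $|U|$ admits a basis of qc open subfunctors of $U$. Given $y'\in|U|$ with image $s'\in T'$ and any open $\mathcal O\subset|U|$ containing $y'$, I use that $Y_{s'}$ is locally spatial to choose a spatial qc open $\mathcal O_{s'}\subset U\cap Y_{s'}$ with $y'\in\mathcal O_{s'}\subset\mathcal O\cap Y_{s'}$, then rerun the spreading construction above at $y'$ with $\mathcal O_{s'}$ in place of $U_s$ (using a possibly different spatial qc open $X^{y'}\subset X$ containing $p^{-1}(\mathcal O_{s'})$ and a lift of $y'$). This produces, for some clopen $T''\ni s'$ inside $T'$, an open sub-v-sheaf $\mathcal O'=p(V^{y'}_{T''})\subset Y$ with the same properties as $U$, containing $y'$ with $\mathcal O'\cap Y_{s'}=\mathcal O_{s'}$. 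Applying constructible-topology compactness once more to the qc subset $V^{y'}_{T''}\setminus p^{-1}(\mathcal O)$ of $X^{y'}$, whose fiber over $s'$ is empty, and shrinking $T''$ further, one obtains $\mathcal O'\subset\mathcal O$. Since $\mathcal O'\subset U$ is open and quasicompact, this gives the required qc open subfunctor of $U$. Hence every point of $|U|$ has a local basis of qc open subfunctors of $U$, so $U$ is spatial, and since $y\in|Y|$ was arbitrary, $Y$ is locally spatial.

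The main obstacle is ensuring that the sub-v-sheaf $U$ we construct is actually \emph{open} in $Y$: this requires full $R$-saturation of $V_{T'}$ in $X$, not merely $R^*_{T'}$-saturation in $X^*$. The argument rests on the two compactness-of-constructible-topology reductions, together with the geometric observation that $R$-equivalence classes respect the clopen decomposition of $\underline{S}$; careful bookkeeping of which spatial qc open of $X$ is used for which point is needed to make this work uniformly.
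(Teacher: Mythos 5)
Your proof is correct and is essentially the same argument as the paper's: pick a quasicompact open $U_s\subset Y_s$ through $y$ from local spatiality of the fiber, extend its preimage from $X_s$ to $X_T$ for small clopen $T$ via the cofiltered-limit results, force $R$-invariance by a compactness-of-constructible-topology argument, and descend to a quasicompact open sub-v-sheaf of $Y$. The paper's proof is more terse and does not spell out the need to first pass to a spatial quasicompact open $X^*\subset X$ containing the preimage $V^0$ before invoking a limit lemma, so your making this explicit is a genuine improvement in rigor.

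The one place you deviate in bookkeeping is worth flagging: the paper compares the two pullbacks of $U'_T$ along the full $s,t\colon R_T\to X_T$ with $R_T=R\cap(X_T\times X_T)$, so that once they agree, $R_T$-invariance (and hence $R$-invariance, by clopenness of $T$ in $S$) is immediate. You instead work with the smaller spatial $R^*=X^*\times_Y X^*$, obtain only $R^*_{T'}$-saturation, and must then add the extra compactness step showing $R\cdot V_{T'}\subset X^*$ to upgrade to full $R$-saturation. That step is correct as you state it — $(R\cdot V_{T'})\setminus X^*$ is pro-constructible and quasicompact with empty fiber over $s$, so it dies after shrinking $T'$ — but it would have been avoided by keeping $R_T$ and taking the preimages $s^{-1}(U'_T),t^{-1}(U'_T)$ inside a quasicompact open of $R_T$ containing their union. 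Likewise, your step re-running the construction to verify that $U$ is spatial is unnecessary if one simply builds in from the start the constraint that the construction take place inside an arbitrary given open $V\subset Y$; the paper phrases the argument that way, so that the basis statement comes for free.
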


We note that the existence of the surjective qcqs map $X\to Y$ from a locally spatial diamond $X$ is automatic if $Y$ is quasicompact.

\begin{proof} Choose a surjection $X\to Y$ as in the statement of the lemma, and let $R=X\times_Y X$ be the induced equivalence relation, which is a locally spatial diamond by Proposition~\ref{prop:smallvsheaf}. Let $V\subset Y$ be an open subspace with preimage $U\subset X$, and let $y\in |U|$ be a point, with image $s\in S$. Then we can find a quasicompact open subspace $V^\prime_s\subset Y_s\cap V$. Its preimage $U^\prime_s\subset X_s$ extends to a quasicompact open subspace $U^\prime_T\subset X_T\cap U$ for any sufficiently small compact open neighborhood $T$ of $s$ in $S$. Moreover, $U^\prime_T$ is invariant under the equivalence relation for $T$ sufficiently small (as the two preimages under $s,t: R_T\to X_T$ are quasicompact open subspace whose fibers over $s$ agree, so a quasicompactness argument applies), and thus descends to a quasicompact open subspace $V^\prime_T\subset Y_T\cap U$ for $T$ sufficiently small. But then in particular $V^\prime_T\subset U$ is a quasicompact open subspace of $Y$ containing $y$, which proves that $Y$ is locally spatial.
\end{proof}

The following characterization of quasi-pro-\'etale maps was suggested by L.~Fargues.

\begin{proposition}\label{prop:charqproet} Let $f: Y^\prime\to Y$ be a separated map of v-stacks. Then $f$ is quasi-pro-\'etale if and only if it is representable in locally spatial diamonds and for all complete algebraically closed fields $C$ with a map $\Spa(C,\OO_C)\to Y$, the pullback $Y^\prime\times_Y \Spa(C,\OO_C)\to \Spa(C,\OO_C)$ is pro-\'etale.
\end{proposition}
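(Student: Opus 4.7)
The ``only if'' direction is mostly unwinding definitions. The condition on $\Spa(C,\OO_C)\to Y$ is a special case of the definition of quasi-pro-\'etale, since $\Spa(C,\OO_C)$ is strictly totally disconnected. To see that $f$ is moreover representable in locally spatial diamonds, I would cover $Y$ by a surjection $\tilde Y=\bigsqcup X_i\to Y$ from a disjoint union of strictly totally disconnected perfectoid spaces; the pullback $\tilde f$ is then a disjoint union of pro-\'etale maps of perfectoid spaces, and by Corollary~\ref{cor:quasiproetspatial} its further base change to any locally spatial diamond over $\tilde Y$ remains locally spatial, so $\tilde f$ is representable in locally spatial diamonds. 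Since $f$ is quasiseparated and representable in diamonds (Proposition~\ref{prop:reprindiamonds}~(iii)), Proposition~\ref{prop:locallyspatialmorphism}~(v) concludes.

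For the ``if'' direction, fix a strictly totally disconnected perfectoid space $X$ mapping to $Y$ and set $W:=Y'\times_Y X$. By hypothesis $W\to X$ is separated and representable in locally spatial diamonds, so $W$ is locally spatial, and for every rank-$1$ point $\eta=\Spa(C,\OO_C)$ of $X$ the fibre $W_\eta$ is a pro-\'etale perfectoid space over $\eta$, necessarily of the form $\underline{S_\eta}\times\eta$ (Lemma~\ref{lem:proetaleoverwlocal}). The aim is to show that $W\to X$ is quasi-pro-\'etale as a map of v-sheaves, since by the definition of quasi-pro-\'etale together with Proposition~\ref{prop:comparenotionsfunctorrepr} this will automatically give that $W$ is a perfectoid space and $W\to X$ is pro-\'etale. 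Pro-\'etaleness is local on the source, so we may assume $W$ is spatial.

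The key step is to apply Proposition~\ref{prop:qproetthirdmap} via a judiciously chosen cover. By Proposition~\ref{prop:spatialunivopen}, I would choose a surjective quasi-pro-\'etale map $\tilde W\to W$ from a strictly totally disconnected perfectoid space, then cover $\tilde W$ by affinoid open subspaces (still strictly totally disconnected by Lemma~\ref{lem:subsetwlocal}) and replace $\tilde W$ by their disjoint union; this keeps the map surjective and quasi-pro-\'etale while making $\tilde W$ into a disjoint union of affinoids. Each piece $\tilde W_\alpha\to X$ is then a map of affinoid perfectoid spaces, automatically quasicompact and separated. For each rank-$1$ point $\eta$ of $X$ one has $\tilde W_\alpha\times_X\eta=\tilde W_\alpha\times_W W_\eta$; since $W_\eta$ is strictly totally disconnected and $\tilde W\to W$ is quasi-pro-\'etale, this pullback is a pro-\'etale perfectoid space over $W_\eta$, and composing with the pro-\'etale map $W_\eta\to\eta$ it is pro-\'etale over $\eta$. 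Lemma~\ref{lem:proetaleoverwlocal} then gives that each $\tilde W_\alpha\to X$, and hence $\tilde W\to X$, is pro-\'etale.

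Applying Proposition~\ref{prop:qproetthirdmap} to the composite $\tilde W\to W\to X$ (with $\tilde W\to W$ surjective quasi-pro-\'etale, the composite quasi-pro-\'etale, and $W\to X$ separated) shows that $W\to X$ is quasi-pro-\'etale, finishing the argument. The main point requiring care is the bookkeeping around the cover $\tilde W\to W$: one must ensure that, after passing to affinoid pieces, the map remains a quasi-pro-\'etale surjection of locally spatial diamonds and that the rank-$1$ fibre computation over $X$ retains its form; once this is arranged, the proof reduces to a direct combination of Lemma~\ref{lem:proetaleoverwlocal} and Proposition~\ref{prop:qproetthirdmap}.
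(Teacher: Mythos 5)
Your proof is correct and follows essentially the same argument as the paper's: in the converse direction you reduce to a strictly totally disconnected base $X$, pass to spatial $W$, take a strictly totally disconnected surjective quasi-pro-\'etale cover $\tilde W\to W$ via Proposition~\ref{prop:spatialunivopen}, verify that the composite $\tilde W\to X$ is (affinoid) pro-\'etale by the rank-$1$ fibre criterion of Lemma~\ref{lem:proetaleoverwlocal}, and conclude with Proposition~\ref{prop:qproetthirdmap}. Two small remarks: the cover $\tilde W$ from Proposition~\ref{prop:spatialunivopen} is already quasicompact and in fact affinoid (being strictly totally disconnected, cf.\ Lemma~\ref{lem:totallydisconnectedaffinoid}), so the extra step of re-covering it by affinoid opens is redundant; and for the forward direction the paper's route is slightly leaner, reducing first to $Y$ a spatial diamond and invoking Proposition~\ref{prop:locallyspatialmorphism}~(iv) with a quasi-pro-\'etale cover, which avoids having to separately establish representability in diamonds before applying part~(v).
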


\begin{proof} If $f$ is quasi-pro-\'etale, then Proposition~\ref{prop:locallyspatialmorphism}~(iv) implies that $f$ is representable in locally spatial diamonds (noting that to check this, one can assume that $Y$ is a spatial diamond, which admits a quasi-pro-\'etale cover by a strictly totally disconnected space).

Conversely, we may assume that $Y=X$ is a strictly totally disconnected perfectoid space. Moreover, we can assume that $Y^\prime$ is spatial. In this case, we can find a surjective separated quasicompact quasi-pro-\'etale map $X^\prime\to Y^\prime$ as in Proposition~\ref{prop:spatialunivopen}. By Lemma~\ref{lem:proetaleoverwlocal}, the map $X^\prime\to X$ is affinoid pro-\'etale. By Proposition~\ref{prop:qproetthirdmap}, the map $Y^\prime\to X$ is quasi-pro-\'etale.
\end{proof}

Moreover, we will need to know that for any quasicompact separated diamond $Y$, there is a compact Hausdorff space $T$ and a map $Y\to \underline{T}$ which is representable in locally spatial diamonds. We will deduce this from a general discussion of ``Berkovich spaces''.

\begin{definition}\label{def:perfectoidberkovichspace} Let $X=\Spa(R,R^+)$ be an affinoid perfectoid space, and fix some topologically nilpotent unit $\varpi\in R$. The Berkovich space $|X|^B$ associated with $X$ is the space of all multiplicative bounded nonarchimedean seminorms $|\cdot|: R\to \mathbb R_{\geq 0}$ with $|\varpi|=\frac 12$, equipped with the weakest topology making the functions $f\mapsto |f|$ continuous for all $f\in R$.
\end{definition}

\begin{remark}\label{rem:berkovichspaceindept} The space $|X|^B$ is canonically independent of the choice of $\varpi$, for example by Proposition~\ref{prop:berkovichspacemaxhausdorffquot} below.
\end{remark}

Note that there is a natural map $|X|^B\to |X|$, as any multiplicative bounded nonarchimedean seminorm is in particular a valuation. There is also a map $|X|\to |X|^B$: Given any $x\in X$, we get a corresponding map $\Spa(K(x),K(x)^+)\to \Spa(R,R^+)$, where $K(x)$ is a complete nonarchimedean field with an open and bounded valuation subring $K(x)^+\subset K(x)$. In particular, $K(x)$ comes with a unique up to scaling nonarchimedean norm $|\cdot|: K(x)\to \mathbb R_{\geq 0}$; it can be normalized by $|\varpi|=\frac 12$. Thus, the composition $R\to K(x)\to \mathbb R_{\geq 0}$ defines a multiplicative bounded nonarchimedean seminorm, and thus a point of $|X|^B$. One checks easily that the composition $|X|^B\to |X|\to |X|^B$ is the identity.

\begin{proposition}\label{prop:berkovichspacemaxhausdorffquot} Let $X$ be an affinoid perfectoid space. The topological space $|X|^B$ is compact Hausdorff, and the map $|X|\to |X|^B$ is a continuous map identifying $|X|^B$ as the maximal Hausdorff quotient of $|X|$.
\end{proposition}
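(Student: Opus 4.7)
The proof breaks into three steps: showing $|X|^B$ is compact Hausdorff; verifying that the natural map $b\colon |X|\to |X|^B$ (sending $x$ to the rank-$1$ norm on $K(x)$ obtained by composing $R\to K(x)$ with the norm on $K(x)$) is continuous and surjective; and checking that it satisfies the universal property of the maximal Hausdorff quotient.

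For compact Hausdorffness, I would embed $|X|^B$ into a product of compact intervals. For each $f\in R$, topological nilpotence of $\varpi$ gives some $n(f)\geq 0$ with $g:=\varpi^{n(f)} f\in R^\circ$. Any multiplicative bounded seminorm $\nu$ satisfies $\nu(g^m)=\nu(g)^m$, and $\{g^m\}_{m\geq 0}$ is bounded in $R$, so boundedness of $\nu$ on bounded subsets forces $\nu(g)\leq 1$, hence $\nu(f)\leq 2^{n(f)}$. This gives an injection $|X|^B\hookrightarrow \prod_{f\in R}[0,2^{n(f)}]$ (injective as distinct seminorms differ at some $f$) whose image is cut out by the closed conditions $\nu(fg)=\nu(f)\nu(g)$, $\nu(f+g)\leq\max(\nu(f),\nu(g))$, $\nu(1)=1$, $\nu(0)=0$, $\nu(\varpi)=\tfrac{1}{2}$. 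Thus $|X|^B$ is closed in a compact Hausdorff product, hence compact Hausdorff.

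Continuity of $b$ is immediate: a subbase for the Berkovich topology is given by sets $\{\nu:\nu(f)<c\}$ and $\{\nu:\nu(f)>c\}$ for $f\in R$ and $c>0$, and these pull back to open subsets of $|X|$ (unions of rational subsets). For surjectivity, given $\nu\in|X|^B$, multiplicativity makes $\ker\nu$ a prime ideal of $R$ and $\nu$ descends to an absolute value on $R/\ker\nu$, whose completion is a complete rank-$1$ nonarchimedean field $K_\nu$; continuity of $R\to K_\nu$ holds as $\nu$ is bounded, and $R^+\subset R^\circ\subset \{\nu\leq 1\}$ ensures $R^+\to \OO_{K_\nu}$. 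This produces a point of $\Spa(R,R^+)$ mapping to $\nu$.

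The heart of the argument is the universal property. For every $x\in|X|$, the point $b(x)$ is a generalization of $x$ in $|X|$: it corresponds to the unique rank-$1$ point of $\Spa(K(x),\OO_{K(x)})\subset \Spa(K(x),K(x)^+)$, which maps to the maximal generalization of $x$ in $X$; in particular $x\in\overline{\{b(x)\}}$. I claim any continuous map $g\colon |X|\to T$ to a Hausdorff space $T$ satisfies $g(x)=g(b(x))$: for any open $V\ni g(x)$ in $T$, the preimage $g^{-1}(V)$ is open around $x$ and so contains $b(x)$, whence $g(b(x))\in V$; as $T$ is $T_1$, this forces $g(b(x))=g(x)$. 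Hence $g$ is constant on fibres of $b$ and descends set-theoretically to $\tilde g\colon |X|^B\to T$; continuity then follows from Lemma~\ref{lem:quotmaphausdorff}, which shows that the continuous surjection $b$ from the quasicompact $|X|$ onto the compact Hausdorff $|X|^B$ is a quotient map, while uniqueness follows from surjectivity of $b$. The main obstacle is identifying the fibres of $b$ with the equivalence relation cutting out the maximal Hausdorff quotient: once one sees that ``any generalization is invisible to maps into Hausdorff targets'', the rest is formal.
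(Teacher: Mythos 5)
Your proof is correct, and it establishes the same universal property by the same key observation used in the paper: the natural map $b$ sends $x$ to (the image under the section of) the unique maximal generalization of $x$, and maps to Hausdorff targets cannot distinguish a point from its generalizations because $T_1$-ness forces the image of a generalization to lie in every open neighbourhood of the image. Both proofs then appeal to Lemma~\ref{lem:quotmaphausdorff} to upgrade the set-theoretic factorization to a continuous one. (One small notational wrinkle: where you write ``the point $b(x)$ is a generalization of $x$ in $|X|$'' and then ``$g^{-1}(V)$ ... contains $b(x)$'', you are implicitly identifying $b(x)\in |X|^B$ with its image under the set-theoretic section $|X|^B\to |X|$; this is harmless but worth flagging.)

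The one place where your argument is genuinely different is compactness. The paper proves Hausdorffness directly (given two distinct seminorms, pick $f\in R$ and $r$ strictly between $|f|$ and $|f|'$, then the two open half-spaces $\{\nu(f)<r\}$ and $\{\nu(f)>r\}$ separate them) and then obtains quasicompactness essentially for free, as $|X|^B$ is the continuous surjective image of the quasicompact space $|X|$. You instead embed $|X|^B$ into the compact Hausdorff product $\prod_{f\in R}[0,2^{n(f)}]$ and show the image is cut out by closed conditions, getting compactness and Hausdorffness in one step; this is the classical Berkovich construction of the analytic spectrum and is perfectly valid here. The tradeoff: the paper's route is shorter and logically economical (it reuses the quasicompactness of $|X|$, which is already available), while yours is self-contained and does not require establishing continuity of $b$ before knowing $|X|^B$ is compact. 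Both are standard. One small point to make explicit in your embedding argument: you should fix the convention $n(g)=0$ whenever $g\in R^\circ$, so that the box constraints directly impose $\nu|_{R^\circ}\leq 1$ and hence boundedness of $\nu$; otherwise one has to make an extra (easy) multiplicativity argument to deduce boundedness for points of the closed subset.
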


We warn the reader that the map $|X|^B\to |X|$ is not continuous.

\begin{proof} The space $|X|^B$ is Hausdorff, as if $|\cdot|$, $|\cdot|^\prime$ are two distinct points, then there is some $f\in R$ such that $|f|\neq |f|^\prime$, so there is some real number $r\in \mathbb R_{>0}$ lying strictly between $|f|$ and $|f|^\prime$; then the subsets of points giving $f$ absolute value strictly less than $r$ resp.~strictly bigger than $r$ are open subsets of $|X|^B$ which contain exactly one of $|\cdot|$ and $|\cdot|^\prime$.

Moreover, the map $|X|\to |X|^B$ is surjective (as it has a set-theoretic section) and continuous, as follows easily from the definition. Thus, $|X|^B$ is quasicompact, as $|X|$ is quasicompact. Also, the fibers of $|X|\to |X|^B$ have a unique maximal point by construction: They all share the same completed residue field $K(x)$, and thus generalize to the point given by $\Spa(K(x),\OO_{K(x)})$. Thus, any map from $|X|$ to a Hausdorff space factors set-theoretically over $|X|^B$. It remains to see that $|X|\to |X|^B$ is a quotient map, which follows from Lemma~\ref{lem:quotmaphausdorff}.
\end{proof}

We can now extend to general small v-sheaves, much as in the case of $|Y|$.

\begin{proposition}\label{prop:generalberkovichspace} There is a unique colimit-preserving functor $Y\mapsto |Y|^B$ from small v-sheaves to topological spaces extending $X\mapsto |X|^B$ on affinoid perfectoid spaces. It comes with natural transformations $|Y|^B\to |Y|\to |Y|^B$ whose composite is the identity, and the map $|Y|\to |Y|^B$ is continuous and a quotient map.
\end{proposition}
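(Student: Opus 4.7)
The plan rests on the observation that affinoid perfectoid spaces generate the category of small v-sheaves under colimits: every small v-sheaf $Y$ is the colimit (in small v-sheaves) of its slice category of maps $X_0 \to Y$ from affinoid perfectoid spaces $X_0$. Uniqueness of the claimed extension is then immediate from colimit-preservation. For existence, I first extend $X \mapsto |X|^B$ from affinoid perfectoid spaces to all perfectoid spaces by gluing: given an affinoid open cover $X = \bigcup U_i$, define $|X|^B$ as the topological coequalizer of $\bigsqcup_{i,j} |U_i \cap U_j|^B \rightrightarrows \bigsqcup_i |U_i|^B$, verifying independence of cover via common refinement.

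For a general small v-sheaf $Y$, pick a surjection $X \to Y$ from a perfectoid space $X$, let $R = X \times_Y X$ (itself a small v-sheaf), and set
\[
|Y|^B := \mathrm{coeq}\bigl(|R|^B \rightrightarrows |X|^B\bigr)
\]
inductively (using a chosen presentation of $R$ to define $|R|^B$). Well-definedness reduces to showing v-cover invariance: for a v-cover $X' \to X$ of perfectoid spaces, the induced map $\mathrm{coeq}(|X'\times_X X'|^B \rightrightarrows |X'|^B) \to |X|^B$ is a homeomorphism. After reducing to the affinoid case, $|X'| \to |X|$ is a surjective generalizing spectral map, hence a quotient map by Lemma~\ref{lem:quotientmap}; combining this with the identification in Proposition~\ref{prop:berkovichspacemaxhausdorffquot} of $|X|^B$ as the maximal Hausdorff quotient of $|X|$, and using that the maximal Hausdorff quotient (being left adjoint to the inclusion of Hausdorff spaces into Top) preserves such quotients, the claim follows.

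The natural transformations $|Y|^B \to |Y| \to |Y|^B$ are inherited from the affinoid case. Proposition~\ref{prop:berkovichspacemaxhausdorffquot} provides the continuous quotient $|X| \to |X|^B$ and the canonical set-theoretic section $|X|^B \to |X|$ sending a seminorm to its associated rank-$1$ point; both are natural in affinoid $X$. By colimit-preservation of $|\cdot|$ (from Definition~\ref{def:vtopspace} and Proposition~\ref{prop:vtopspaceopensubsets}) and of $|\cdot|^B$ (by construction), these extend uniquely to natural transformations on small v-sheaves, and the composite $|Y|^B \to |Y|^B$ equals the identity since it does so on every affinoid perfectoid space.

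For the quotient map statement, fix a perfectoid surjection $X \to Y$. The composite $|X| \to |Y| \to |Y|^B$ factors as $|X| \to |X|^B \to |Y|^B$, where the first arrow is a quotient map (by Proposition~\ref{prop:berkovichspacemaxhausdorffquot}, extended by gluing) and the second is a quotient by construction; since $|X| \to |Y|$ is also a quotient map (Proposition~\ref{prop:vtopspaceopensubsets}), the standard fact that if $g\circ f$ and $f$ are surjective quotient maps and $g$ is continuous then $g$ is a quotient map concludes. The main obstacle will be the v-cover invariance in the construction of $|Y|^B$: verifying that the Berkovich spectrum is insensitive to v-covers of perfectoid spaces is the technical heart of the existence statement and requires a delicate interplay between the spectral topology on $|X|$ and the universal property characterizing $|X|^B$ as a maximal Hausdorff quotient.
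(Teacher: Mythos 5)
Your construction is the paper's: realize $Y$ via a surjection from a perfectoid space, define $|Y|^B$ as the corresponding coequalizer of Berkovich spaces, and argue independence of the presentation; the natural transformations and the quotient-map statement are then handled the same way. The paper differs mainly in first writing down a presentation-free model for the underlying set: $|Y|^B$ is identified with the subset of $|Y|$ (in the sense of Proposition~\ref{prop:vtopspaceindep}) of those equivalence classes of maps $\Spa(K,K^+)\to Y$ that can be represented by a map $\Spa(K,\OO_K)\to Y$. Pinning down the set in advance reduces ``independence of presentation'' to a topology comparison and makes the natural transformations $|Y|^B\to|Y|\to|Y|^B$ visible by inspection, whereas your construction must establish the underlying bijection as part of the well-definedness check.

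The v-cover invariance is where your argument is incomplete. Colimit-preservation of the reflection $H\colon\mathrm{Top}\to\mathrm{Haus}$ only gives $|X|^B\cong H\bigl(\mathrm{coeq}_{\mathrm{Top}}(|X'\times_X X'|^B\rightrightarrows|X'|^B)\bigr)$, but your definition uses the unreflected $\mathrm{Top}$-coequalizer, so you must know that this coequalizer is already Hausdorff --- which a general $\mathrm{Top}$-quotient of compact Hausdorff spaces is not. The fix is direct: the $\mathrm{Top}$-coequalizer is quasicompact, being a quotient of the compact $|X'|^B$; the canonical map to $|X|^B$ is a continuous bijection (injectivity follows from the description of points of $|X|^B$ as equivalence classes of maps from $\Spa(K,\OO_K)$ together with surjectivity of Berkovich points of fibre products); and a continuous surjection from a quasicompact space onto a compact Hausdorff space is a quotient map by Lemma~\ref{lem:quotmaphausdorff}, so the bijection is a homeomorphism. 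You should also unwind what ``inductively'' means for $R=X\times_Y X$: $R$ is a diamond by Proposition~\ref{prop:vinjdiamond}, presentable by a perfectoid space modulo a perfectoid pro-\'etale equivalence relation, which is precisely the paper's three-stage passage (perfectoid spaces, then diamonds, then small v-sheaves) and is needed to avoid circularity.
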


\begin{proof} For uniqueness, note that the first is first uniquely defined for disjoint unions of affinoid perfectoid spaces, and then (by taking equivalence relations) for separated perfectoid spaces, for diamonds, and for small v-sheaves. One explicit construction of $|Y|^B$ as a set is as the subset of $|Y|$ consisting of those maps $\Spa(K,K^+)\to Y$ that can be represented by a map $\Spa(K,\OO_K)\to Y$. One endows $|Y|^B$ with the quotient topology from $|X|^B$, for any surjection $X\to Y$ from a disjoint union of affinoid perfectoid spaces $X$, and checks that this is independent of the choice of $X$. From the explicit description of $|Y|^B$, we also get the natural transformations $|Y|^B\to |Y|\to |Y|^B$ (noting that any map $\Spa(K,K^+)\to Y$ in particular gives a map $\Spa(K,\OO_K)\to Y$ for the construction of $|Y|\to |Y|^B$).

Finally, to see that $|Y|\to |Y|^B$ is a quotient map, take a cover by a disjoint union of affinoid perfectoid spaces $X\to Y$. Then $|X|^B\to |Y|^B$ is a quotient map by definition, and $|X|\to |X|^B$ is a quotient map by Proposition~\ref{prop:berkovichspacemaxhausdorffquot}, which implies that $|Y|\to |Y|^B$ is a quotient map.
\end{proof}

\begin{proposition}\label{prop:qcqsvsheafberkovichcompacthausdorff} Let $Y$ be a qcqs v-sheaf. Then $|Y|^B$ is a compact Hausdorff space, and it is the maximal Hausdorff quotient of $|Y|$.
\end{proposition}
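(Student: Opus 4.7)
The plan is to descend the compact Hausdorff structure on $|X|^B$ (for affinoid perfectoid $X$, Proposition~\ref{prop:berkovichspacemaxhausdorffquot}) along a qcqs presentation of $Y$. First I would choose a surjection $p\colon X\to Y$ from an affinoid perfectoid space, possible because $Y$ is quasicompact, and set $R=X\times_Y X$, which is a qcqs v-sheaf because $Y$ is quasiseparated. By Proposition~\ref{prop:generalberkovichspace} applied to the coequalizer presentation $R\rightrightarrows X\to Y$, the map $|p|^B\colon |X|^B\to |Y|^B$ is a continuous quotient map; since $|X|^B$ is compact, $|Y|^B$ is automatically quasicompact.

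The key step is Hausdorffness, and this is where the main work lies. I would show that the kernel pair $E=\{(a,b)\in |X|^B\times|X|^B\mid |p|^B(a)=|p|^B(b)\}$ is closed. Explicitly, I claim $E$ is the image of $|R|^B$ under the natural map to $|X|^B\times|X|^B$. Given $(a,b)\in E$, represent $a$ and $b$ by maps $\Spa(K_a,\OO_{K_a})\to X$ and $\Spa(K_b,\OO_{K_b})\to X$; equality of images in $|Y|^B$ means, by the description of points in Proposition~\ref{prop:vtopspaceindep} carried over to the Berkovich quotient, that after passing to a common (algebraically closed) extension $(K,\OO_K)$ the two composites into $Y$ agree, producing a map $\Spa(K,\OO_K)\to R$ whose image in $|R|^B$ lifts $(a,b)$. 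Now choose an affinoid surjection $X_R\to R$; the composite $|X_R|^B\to |R|^B$ is surjective (Proposition~\ref{prop:generalberkovichspace} again), so $E$ is the image of a compact space in the Hausdorff space $|X|^B\times|X|^B$, hence closed. The standard fact that a quotient of a compact Hausdorff space by a closed equivalence relation is Hausdorff now yields that $|Y|^B$ is compact Hausdorff.

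For the universal property, any continuous map $f\colon |Y|\to H$ to a Hausdorff space pulls back to $|X|\to H$, which by Proposition~\ref{prop:berkovichspacemaxhausdorffquot} factors uniquely through $|X|^B$. This factorization equalizes the two projections from $|R|^B$ to $|X|^B$, because the corresponding composites $|R|\to H$ already do and $|R|\to|R|^B$ is surjective, so by the quotient description of $|Y|^B$ it descends to a unique continuous map $|Y|^B\to H$ compatible with $f$. The main obstacle is the surjectivity $|R|^B\twoheadrightarrow E$; everything else is routine compact-Hausdorff topology once the Berkovich compactness is in hand.
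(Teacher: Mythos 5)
Your argument is correct and follows essentially the same strategy as the paper: present $Y$ as a quotient $X/R$, observe that $|Y|^B$ is the quotient of the compact Hausdorff space $|X|^B$ by the image of $|R|^B$, and use compactness of $|R|^B$ to see that this relation is closed. The paper is terser on the key point you elaborate — that the kernel pair of $|X|^B\to|Y|^B$ really is the image of $|R|^B$ — and it handles the maximality slightly differently (by observing that each fiber of $|Y|\to|Y|^B$ consists of specializations of a common maximal point and invoking Lemma~\ref{lem:quotmaphausdorff}, rather than descending a factorization along the coequalizer as you do), but both of these are routine variations of the same idea.
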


\begin{proof} First, we prove this if $Y$ is a spatial diamond. Then $Y=X/R$, where $X$ and $R$ are affinoid perfectoid spaces, and then $|Y|^B = |X|^B / |R|^B$ is a quotient of compact Hausdorff spaces, and thus compact Hausdorff itself. Now, if $Y$ is any qcqs v-sheaf, we can write $Y=X/R$ as a quotient, where $X$ and $R$ are spatial diamonds. Repeating the argument, we see that $|Y|^B$ is compact Hausdorff.

As all points in the fiber of $|Y|\to |Y|^B$ over $y\in |Y|^B$ generalize to the image of $y$ under $|Y|^B\to |Y|$, it follows that any map from $|Y|$ to a Hausdorff space factors set-theoretically over $|Y|^B$. By Lemma~\ref{lem:quotmaphausdorff}, $|Y|\to |Y|^B$ is a quotient map, so the result follows.
\end{proof}

As promised, we can now show that a general quasicompact separated diamond differs from a (locally) spatial diamond only through a map to a compact Hausdorff space.

\begin{proposition}\label{prop:qcsepdiamondmaptoberkovich} Let $Y$ be a quasicompact separated diamond. Then the map $Y\to \underline{|Y|^B}$ is representable in locally spatial diamonds.
\end{proposition}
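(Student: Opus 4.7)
The plan is to show that for any locally spatial diamond $Z$ with a map $Z\to \underline{T}$, where $T:=|Y|^B$, the fibre product $Y\times_{\underline{T}} Z$ is locally spatial, by realizing it as a qcqs injection into $W:=Y\times Z$ associated to a pro-constructible generalizing subset of $|W|$, and then invoking Proposition~\ref{prop:qcvinjspatdiamond}. Since the question is local on $Z$, I may assume $Z$ is spatial, so that $W=Y\times Z$ is spatial by Corollary~\ref{cor:fibreproductspatial}. The two projections of $W$, composed with $Y\to \underline{T}$ and $Z\to \underline{T}$, yield a continuous map $|W|\to T\times T$ and a map of v-sheaves $W\to \underline{T\times T}$; the fibre product $Y\times_{\underline{T}} Z$ is then the pullback along the diagonal $\underline{T}\hookrightarrow \underline{T\times T}$, and hence an injection of v-sheaves into $W$ whose underlying subset of $|W|$ is the preimage $A\subset |W|$ of $T\subset T\times T$.

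The key step is to verify that $A\subset |W|$ is pro-constructible and generalizing. The generalizing property is formal: for any continuous map $f$ from a topological space to a Hausdorff space, a point $x$ and any generalization $y$ satisfy $f(x)=f(y)$, for otherwise disjoint open neighborhoods of $f(x)$ and $f(y)$ would pull back to disjoint neighborhoods of $x$ and $y$, contradicting the fact that $y$ lies in every open neighborhood of $x$. For pro-constructibility, the continuous map $|W|\to T\times T$ remains continuous when $|W|$ is endowed with the finer constructible topology $|W|^\cons$, which is a profinite space as $|W|$ is spectral; since $T\subset T\times T$ is closed (as $T$ is Hausdorff), its preimage $A\subset |W|^\cons$ is closed, so $A$ is pro-constructible in $|W|$.

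Given the claim, combining Corollary~\ref{cor:qcvinjqproet} with Lemma~\ref{lem:subsetwlocal} applied to a strictly totally disconnected quasi-pro-\'etale presentation of $W$ (as in the proof of Proposition~\ref{prop:qcvinjspatdiamond}) produces a spatial v-sheaf $W_A$ sitting inside $W$ as a qcqs injection, with $|W_A|=A$ and $W_A=W\times_{\underline{|W|}}\underline{A}$. Unwinding, an $X'$-point of $W_A$ is a map $X'\to W$ whose image in $|W|$ lies in $A$, which is exactly the condition that the two composites $X'\to Y\to \underline{T}$ and $X'\to Z\to \underline{T}$ agree; hence $W_A=Y\times_{\underline{T}} Z$ is a spatial diamond, as required. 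The principal obstacle is the pro-constructibility of $A$, which crucially uses both that $T$ is Hausdorff (so its diagonal is closed) and that $|W|$ is spectral.
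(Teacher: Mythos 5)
Your argument breaks down at the step ``so that $W=Y\times Z$ is spatial by Corollary~\ref{cor:fibreproductspatial}.'' That corollary applies to fibre products of (locally) \emph{spatial} diamonds, but the hypothesis of the proposition only gives that $Y$ is a quasicompact separated diamond; it need not be spatial, and indeed the entire content of this proposition is to handle that case. Concretely, Example~\ref{ex:compacthausdorff} produces, for any non-profinite compact Hausdorff space $T$, a quasicompact separated diamond $Y=\underline{T}\times\Spa(K,\OO_K)$ with $|Y|=T$ not spectral, hence $Y$ is not spatial. For such $Y$ the product $Y\times Z$ contains a copy of $Y$ (take $Z$ a geometric point), so $Y\times Z$ cannot be locally spatial. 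In other words, your reduction silently assumes exactly what needs to be proved, and Proposition~\ref{prop:qcvinjspatdiamond} has nothing to hang onto because there is no ambient locally spatial diamond in sight.

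The paper's proof instead reduces, via Proposition~\ref{prop:locallyspatialmorphism}~(iv) and Lemma~\ref{lem:conncomplocspat}, to showing that each fibre $Y_s = Y\times_{\underline{|Y|^B}}\{s\}$ over a point $s\in|Y|^B$ is a spatial diamond; equivalently, that a quasicompact separated diamond $Y$ with $|Y|^B$ a single point is spatial. The essential new input, which your argument is missing, is the construction of an ambient spatial diamond: one takes a quasi-pro-\'etale $\Spa(C,\OO_C)\to Y$ onto the diamond $Y_y$ supported at the unique point $y$ in the image of $|Y|^B\to|Y|$, writes $Y_y\cong\Spa(C,\OO_C)/\underline{G}$, and then forms $\overline{Y}=\Spa(C,C^+)/\underline{G}$ with $C^+$ the integral closure of $\mathbb{F}_p+C^{\circ\circ}$. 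This $\overline{Y}$ \emph{is} spatial (by Proposition~\ref{prop:spatialunivopen}, since $\Spa(C,C^+)\to\overline{Y}$ is a universally open $\underline{G}$-torsor), and one then shows that $Y$ injects into $\overline{Y}$ as a quasicompact sub-v-sheaf, from which Proposition~\ref{prop:qcvinjspatdiamond} finally applies. That construction of $\overline{Y}$, using that every point of $Y$ generalizes to a point of $Y_y$ (because $|Y|^B$ is a point), is the crux; your pro-constructibility argument via the closed diagonal of a Hausdorff space is a nice observation but does not substitute for it.
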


\begin{proof} By Proposition~\ref{prop:qcqsvsheafberkovichcompacthausdorff}, the space $|Y|^B$ is compact Hausdorff. Choose a profinite set $S$ with a surjection $S\to |Y|^B$. By Proposition~\ref{prop:locallyspatialmorphism}~(iv), it is enough to show that $Y\times_{\underline{|Y|^B}} \underline{S}$ is a spatial diamond. By Lemma~\ref{lem:conncomplocspat}, it is enough to check that for any $s\in |Y|^B$, the fiber product $Y_s = Y\times_{\underline{|Y|^B}} \underline{s}$ is a spatial diamond. It then follows from the definitions that $|Y_s|^B=\{s\}$ is a point.

In other words, we have to prove that if $Y$ is a quasicompact separated diamond such that $|Y|^B$ is a point, then $Y$ is spatial. Let $y\in |Y|$ be the image of $|Y|^B\to |Y|$. Then one can find a quasi-pro-\'etale map $f: \Spa(C,\OO_C)\to Y$ with image $y$, for some algebraically closed nonarchimedean field $C$. Let $Y_y\subset Y$ be the image of $f$, so that $Y_y\to Y$ is a quasicompact injection, and $|Y_y|=\{y\}$. One finds that $\Spa(C,\OO_C)\times_{Y_y} \Spa(C,\OO_C)=\Spa(C,\OO_C)\times\underline{G}$ for some profinite group $G$ acting continuously and faithfully on $C$, as in the proof of Proposition~\ref{prop:strtotdiscdiamond}. Consider $\overline{Y}=\Spa(C,C^+)/\underline{G}$, where $C^+$ is the integral closure of $\mathbb F_p + C^{\circ\circ}$ in $C$ (cf.~Proposition~\ref{prop:compactificationprop} for a generalization of this construction). Then $\Spa(C,C^+)\to \overline{Y}$ is a $\underline{G}$-torsor, and thus universally open by Lemma~\ref{lem:gtorsoroverperfectoid}. Thus, $\overline{Y}$ is a spatial diamond by Proposition~\ref{prop:spatialunivopen}.

To finish the proof, it suffices (by Proposition~\ref{prop:qcvinjspatdiamond}) to show that there is a (necessarily quasicompact) injection $Y\to\overline{Y}$. We construct the map $Y\to \overline{Y}$ as a natural transformation on totally disconnected perfectoid spaces $X=\Spa(R,R^+)$. Given a map $X\to Y$, we note that the induced map $\Spa(R,R^\circ)\to Y$ factors over $Y_y$; indeed, $\Spa(R,R^+)\times_Y Y_y\subset \Spa(R,R^+)$ is a quasicompact injection which contains all maximal points; it then follows from Lemma~\ref{lem:subsetwlocal} that it contains $\Spa(R,R^\circ)$ (as all occuring functions will necessarily lie in $R^\circ$). The map $\Spa(R,R^\circ)\to Y_y$ gives a map $\Spa(R,R^+)\to \overline{Y}$ (cf.~Proposition~\ref{prop:compactificationprop}~(iv)). This defines the desired map $Y\to\overline{Y}$. To check that it is injective, it suffices by Proposition~\ref{prop:charinjectionvsheaves} to check injectivity on $(K,K^+)$-valued points, where $K$ is a perfectoid field with an open and bounded valuation subring $K^+\subset K$. As $Y$ and thus $Y\to \overline{Y}$ are separated, the valuative criterion of separatedness shows that it is enough to check injectivity on $(K,\OO_K)$-valued points. But on $(K,\OO_K)$-valued points, one has $Y(K,\OO_K) = Y_y(K,\OO_K) = \overline{Y}(K,\OO_K)$.
\end{proof}

For later use, we add the following result.

\begin{proposition}\label{prop:cohomberkovich} Let $Y$ be a quasicompact separated diamond, and consider the map $f: |Y|\to |Y|^B$. Then pullback $f^\ast$ induces a fully faithful functor
\[
D^+(|Y|^B,\mathbb Z)\to D^+(|Y|,\mathbb Z).
\]
In particular, for any abelian sheaf $\mathcal F$ on $|Y|^B$, one has
\[
H^i(|Y|^B,\mathcal F)\cong H^i(|Y|,f^\ast \mathcal F).
\]
\end{proposition}

\begin{proof} We have to show that the adjunction map $\mathcal F\to Rf_\ast f^\ast \mathcal F$ is an isomorphism for all abelian sheaves $\mathcal F$ on $|Y|^B$. This can be checked on stalks, so pick any $y\in |Y|^B$. As open and closed neighborhoods of $y$ are cofinal, a standard passage to the limit argument shows that
\[
(Rf_\ast f^\ast \mathcal F)_y = R\Gamma(f^{-1}(y),f^\ast \mathcal F).
\]
Note that $f^\ast \mathcal F$ restricted to $f^{-1}(y)$ is just the constant sheaf with value $\mathcal F_y$. Thus, it suffices to show that for any abelian group $M$, $R\Gamma(f^{-1}(y),M)=M$. Now $f^{-1}(y)\subset |Y|$ is exactly the closure of a rank $1$ point $\tilde{y}\in |Y|$, in particular it is a spectral space with a unique generic point, so the claim is standard, see for example~\cite[Tag 02UW]{StacksProject}.
\end{proof}

\section{Comparison of \'etale, pro-\'etale and v-cohomology}

We consider the following sites.

\begin{definition}\label{def:diffsites} Let $Y$ be a small v-stack on $\Perf$. 
\begin{altenumerate}
\item[{\rm (i)}] Assume that $Y$ is a locally spatial diamond. The \'etale site $Y_\et$ is the site whose objects are (locally separated) \'etale maps $Y^\prime\to Y$, with coverings given by families of jointly surjective maps.
\item[{\rm (ii)}] Assume that $Y$ is a diamond. The quasi-pro-\'etale site $Y_\qproet$ is the site whose objects are (locally separated) quasi-pro-\'etale maps $Y^\prime\to Y$, with coverings given by families of jointly surjective maps.
\item[{\rm (iii)}] The v-site $Y_v$ is the site whose objects are all maps $Y^\prime\to Y$ from small v-sheaves $Y^\prime$, with coverings given by families of jointly surjective maps.
\end{altenumerate}
\end{definition}

We note that here, in all cases, surjectivity refers to surjectivity as v-stacks on $\Perf$. Thus, if $X$ and $\{X_i\to X\}$ are perfectoid spaces, then surjectivity means that $\{X_i\to X\}$ is a cover in the v-topology. Again, there are variants $Y_{\qproet,\kappa}$ and $Y_{v,\kappa}$ and the same discussion as before applies; in particular, we will always restrict to small sheaves.

\begin{proposition} The categories of (small) sheaves on $Y_\et$ resp.~$Y_\qproet$ resp.~$Y_v$ for a locally spatial diamond resp.~diamond resp.~small v-stack $Y$ are algebraic. If $Y$ is $0$-truncated (i.e., if $Y$ is a small v-sheaf), then an object is quasicompact resp.~quasiseparated if and only if it is quasicompact resp.~quasiseparated as a small v-stack on $\Perf$.
\end{proposition}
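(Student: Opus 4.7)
The plan is to exhibit, for each of the three sites, a generating full subcategory $C$ of quasicompact objects, closed under fibre products, each of whose members has quasiseparated structure morphism to the terminal object $Y$; this is exactly the definition of algebraicity recalled in the excerpt. For $Y_v$ with $Y$ a small v-stack I would take $C_v$ to consist of qcqs small v-sheaves (or v-stacks) $Y'\to Y$. These generate because $Y$ is small, so any object of $Y_v$ admits a surjection from a disjoint union of affinoid perfectoid spaces, any finite sub-disjoint-union of which is a qcqs $C_v$-cover. Stability under fibre products is standard: if $Y_1,Y_2,Y_3\in C_v$ with $Y_1,Y_2\to Y_3$, then $Y_1\times_{Y_3}Y_2\hookrightarrow Y_1\times Y_2$ is the pullback of $\Delta_{Y_3}$, which is qc because $Y_3$ is qs, so the fibre product is qc, and qs follows by passage to a further diagonal.

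For $Y_\qproet$ with $Y$ a diamond I would take $C_\qproet$ to consist of qcqs separated quasi-pro-\'etale maps $Y'\to Y$; generation uses Proposition~\ref{prop:diamondqproetsurj} and Lemma~\ref{lem:strtotdisccover} to cover $Y$ by strictly totally disconnected affinoid perfectoid spaces, and the closure of qcqs separated quasi-pro-\'etale morphisms under fibre products follows from Proposition~\ref{prop:quasiproetalefirstprop}. For $Y_\et$ with $Y$ locally spatial I would take $C_\et$ to be qcqs separated \'etale maps $Y'\to Y$; Lemma~\ref{lem:spatialetalelocallysep} shows that every \'etale map is locally of this form, and closure under fibre products again follows from Proposition~\ref{prop:quasiproetalefirstprop}. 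Quasiseparatedness of $X\to\ast$ for $X\in C$ in each case is exactly the closure of $C$ under fibre products together with the fact that $X\times_Y X\in C$ is qc.

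For the second part, assume $Y$ is a small v-sheaf and $Y'\to Y$ is an object of $Y_\tau$. If $Y'$ is qc as a small v-sheaf, then any $\tau$-covering family is in particular a v-cover, hence has a finite subcover; conversely, if $Y'$ is qc in the topos, the tautological covering of $Y'$ by all maps from objects of $C$ admits a finite subcover, the disjoint union of which is a qc $C$-object surjecting onto $Y'$, and this allows any v-cover to be refined to a finite one. The qs case follows by applying the same reasoning to the diagonal $Y'\to Y'\times_Y Y'$, noting that qs amounts precisely to quasicompactness of this diagonal in either the topos-theoretic or the v-sheaf-theoretic sense. The main obstacle is the converse (topos-qc $\Rightarrow$ v-sheaf-qc) in the $\qproet$ and $\et$ cases, where one must check that an arbitrary v-cover of a qcqs object of $Y_\tau$ can be refined by a $\tau$-cover from $C$: for $Y_\qproet$ this uses the existence of quasi-pro-\'etale covers by strictly totally disconnected affinoid perfectoid spaces, and for $Y_\et$ the local structure of \'etale maps over locally spatial diamonds from Lemma~\ref{lem:spatialetalelocallysep}.
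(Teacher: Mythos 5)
Your overall plan is the right one, and the choices of generating subcategory for $Y_\qproet$ and $Y_\et$ are sound; since the paper leaves the verification to the reader, your sketch is basically what is required. One step, however, has a real gap as written: the stability of $C_v$ (qcqs small v-sheaves over $Y$) under fibre products. You argue that $Y_1\times_{Y_3}Y_2\hookrightarrow Y_1\times Y_2$ is a pullback of $\Delta_{Y_3}$, hence quasicompact because the diagonal is. But if ``$Y_1\times Y_2$'' means the product in $Y_v^\sim$, i.e.\ $Y_1\times_Y Y_2$, this object need not be quasicompact when $Y$ is not quasiseparated: take $Y=X/\varphi^{\mathbb Z}$ for $X=\Spa(C,\OO_C)$, so that $X\times_Y X\cong X\times\underline{\mathbb Z}$ is not qc even though $X$ is. Pulling back a quasicompact morphism to a non-quasicompact object does not yield quasicompactness. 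The argument can be repaired by observing that $Y_1\times_{Y_3}Y_2$ agrees whether computed over $Y$ or over $\Perf$, that the absolute product $Y_1\times Y_2$ over $\Perf$ \emph{is} qc (it is covered by a product of two affinoid perfectoid spaces, which is again affinoid perfectoid), and that the absolute $\Delta_{Y_3}$ is qc since $Y_3$ is quasiseparated as a small v-sheaf --- this last point using the already-established algebraicity of the big v-topos on $\Perf$. A cleaner and more self-contained choice, consistent with the earlier proposition in the text, is to take $C_v$ to consist of affinoid perfectoid spaces over $Y$: these are tautologically stable under fibre products (computed in perfectoid spaces), qcqs, with $X\to\ast$ quasiseparated because $\Delta_{X/Y}$ factors the absolute closed diagonal through a monomorphism, and they generate because $Y$ is small.

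A second imprecision concerns the quasiseparated comparison in the final paragraph: the statement that ``qs amounts precisely to quasicompactness of this diagonal in either the topos-theoretic or the v-sheaf-theoretic sense'' is not correct in general. The text explicitly warns that, when the terminal object of the topos fails to be quasiseparated (as happens here), ``$X$ quasiseparated'' and ``$X\to\ast$ quasiseparated'' (i.e.\ $\Delta_X$ quasicompact) are not equivalent; only the implication $X$ qs $\Rightarrow$ $X\to\ast$ qs holds unconditionally. The robust route for the qs comparison is to use that $Y'$ is quasiseparated (in either sense) if and only if, for one cover $\{X_i\to Y'\}$ by objects of $C$, all the fibre products $X_i\times_{Y'} X_j$ are quasicompact (SGA~4 VI 1.17), and then appeal to the already-established agreement of the two quasicompactness notions; the same cover may be used on both sides since any $\tau$-cover is a v-cover.
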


\begin{proof} Left to the reader.
\end{proof}

\begin{proposition}\label{prop:etalesitepoints} Let $Y$ be a locally spatial diamond. Then the \'etale site $Y_\et$ has enough points. More precisely, for any $y\in |Y|$, choose a quasi-pro-\'etale map $\overline{y}: \Spa(C(y),C(y)^+)\to Y$ where $C(y)$ is algebraically closed and $C(y)^+\subset C(y)$ is an open and bounded valuation subring, such that $\overline{y}$ maps the closed point to $y$. Then
\[
\mathcal F\mapsto \mathcal F_{\overline{y}} = \varinjlim_{\overline{y}\to U\in Y_\et} \mathcal F(U)
\]
defines a point of the topos $Y_\et^\sim$, and a section $s\in \mathcal F(Y)$ of a sheaf $\mathcal F$ on $Y_\et$ is zero if and only if
\[
s_{\overline{y}} = 0
\]
for all $y\in |Y|$.
\end{proposition}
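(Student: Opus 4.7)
The plan is to verify in turn that $\mathcal F\mapsto \mathcal F_{\overline y}$ preserves finite limits and all small colimits, and then that the collection of these functors, as $y$ ranges over $|Y|$, is conservative.

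Let $\mathcal N(\overline y)$ denote the category whose objects are pairs $(U,s)$ with $U\to Y$ étale and $s\colon \Spa(C(y),C(y)^+)\to U$ a $Y$-morphism lifting $\overline y$, and whose morphisms are $Y$-morphisms $U\to U'$ compatible with the $s$'s. By definition $\mathcal F_{\overline y}=\varinjlim_{\mathcal N(\overline y)^{\op}}\mathcal F(U)$, so it suffices to show that $\mathcal N(\overline y)$ is cofiltered: preservation of finite limits will then follow from filtered colimits of sets commuting with finite limits, while preservation of colimits is automatic. Nonemptiness is clear (take $(Y,\overline y)$), and common refinements are supplied by $(U_1\times_Y U_2,(s_1,s_2))$, which is étale over $Y$ by Proposition~\ref{prop:quasiproetalefirstprop}. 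For two parallel morphisms $f,g\colon (U_0,s_0)\to (U_1,s_1)$, I form the equalizer locus $E=U_0\times_{U_1\times_Y U_1}U_1$ via the pair $(f,g)$ and the diagonal $\Delta_{U_1/Y}$. Since $U_1\to Y$ is étale and locally separated, $\Delta_{U_1/Y}$ is an étale monomorphism, hence an open immersion, so $E\hookrightarrow U_0$ is an open subfunctor and $E\to Y$ is étale. The identity $f\circ s_0=s_1=g\circ s_0$ lets $s_0$ factor through $E$, yielding an object of $\mathcal N(\overline y)$ equalizing $f$ and $g$.

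For conservativity, suppose $s\in \mathcal F(Y)$ satisfies $s_{\overline y}=0$ for every $y\in |Y|$. Since $\mathcal N(\overline y)^{\op}$ is filtered, there exists some $(U_y,s_y)\in \mathcal N(\overline y)$ on which $s|_{U_y}=0$. The closed point of $\Spa(C(y),C(y)^+)$ maps to $y$ under $\overline y$ and factors through $U_y$ via $s_y$, so $y$ lies in the image of $|U_y|\to |Y|$. Consequently the family $\{U_y\to Y\}_{y\in |Y|}$ is jointly surjective on $|Y|$, i.e.\ an étale cover of $Y$, and the sheaf axiom forces $s=0$.

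The main obstacle is the equalizer step in the cofilteredness check, which reduces to the claim that the diagonal of a (locally separated) étale morphism of locally spatial diamonds is an open immersion. This rests on Convention~\ref{conv:etlocsep} together with Lemma~\ref{lem:spatialetalelocallysep}: locally on the source, an étale map is a composition of a quasicompact open immersion with a finite étale map, for each of which the diagonal is visibly open, and in any case $\Delta_{U_1/Y}$ is étale and a monomorphism (as a section of the projection $U_1\times_Y U_1\to U_1$), so the general fact that étale monomorphisms are open immersions applies.
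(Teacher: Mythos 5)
The cofilteredness argument and the conservativity argument are both correct and match the paper's (the paper just remarks ``equalizers exist in $Y_\et$'' where you spell out the diagonal being an open immersion, and the covering argument is the same). However, the step ``preservation of colimits is automatic'' is a genuine gap, and it is precisely here that the real content of the proposition lives. Colimits of sheaves are computed by taking the presheaf colimit and then sheafifying; the stalk functor on presheaves commutes with all colimits (being a colimit), but for the stalk on \emph{sheaves} to do so you must know that the stalk of a presheaf agrees with the stalk of its sheafification. This is not a consequence of cofilteredness of the neighborhood category — it requires the additional property that the ``fiber functor'' $U\mapsto\{$lifts of $\overline y$ to $U\}$ carries étale covers to jointly surjective families of sets. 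Concretely: given an étale cover $\{U_i\to U\}$ in $Y_\et$ and a lift $s\colon\Spa(C(y),C(y)^+)\to U$, you must produce some $i$ and a lift of $s$ to $U_i$. Without this, the inverse image functor you are constructing would fail to preserve the epimorphism $\coprod h_{U_i}\twoheadrightarrow h_U$.

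This missing property is exactly what makes the \emph{specific} neighborhoods $\overline y\colon\Spa(C(y),C(y)^+)\to Y$ work and not an arbitrary cofiltered pro-object: the pullback of $\{U_i\to U\}$ along $s$ is an étale cover of $\Spa(C(y),C(y)^+)$, which splits because $\Spa(C(y),C(y)^+)$ is strictly totally disconnected (Proposition~\ref{prop:etcoverssplittotdisconnected}), yielding the desired lift. The paper's own proof makes this the visible centerpiece: it factors $\mathcal F\mapsto\mathcal F_{\overline y}$ as pullback along $\overline y_\et\colon\Spa(C(y),C(y)^+)_\et\to Y_\et$ followed by global sections, and then observes that on the connected space $\Spa(C(y),C(y)^+)$ every étale cover splits, so global sections is exact and commutes with all colimits, i.e.\ is a point of $\Spa(C(y),C(y)^+)_\et^\sim$. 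Your argument can be repaired by inserting this splitting lemma, but as written it omits the essential step.
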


\begin{proof} Note that equalizers exist in $Y_\et$, so the colimit defining $\mathcal F_{\overline{y}}$ is filtered.

The functor $\mathcal F\mapsto \mathcal F_{\overline{y}}$ is the composition of pullback along $\overline{y}_\et: \Spa(C(y),C(y)^+)_\et\to Y_\et$, with global sections on $\Spa(C(y),C(y)^+)_\et$. But note that any surjective \'etale map to $\Spa(C(y),C(y)^+)$ splits and $\Spa(C(y),C(y)^+)$ is connected, so the functor of global sections is exact and commutes with all colimits, i.e.~defines a point.

Now if $s\in \mathcal F(Y)$ has the property that $s_{\overline{y}}=0$ for all $y\in |Y|$, then for all $y\in |Y|$ we can find some $\overline{y}\to U_{\overline{y}}\in Y_\et$ such that $s|_{U_{\overline{y}}}=0$. As \'etale maps are open, it follows that the disjoint union of all $U_{\overline{y}}$ is an \'etale cover of $Y$, over which $s$ becomes $0$, so that $s=0$, as desired.
\end{proof}

There are obvious functors of sites
\[
\lambda_Y: Y_v\to Y_\qproet
\]
if $Y$ is a diamond, and
\[
\nu_Y: Y_\qproet\to Y_\et
\]
if $Y$ is a locally spatial diamond. Recall that this means that the underlying functor of categories goes the other way, and is given by observing that any \'etale map is quasi-pro-\'etale, and any quasi-pro-\'etale map has source given by some small v-sheaf. These functors commute with all finite limits (which exist in all cases), so they define maps of topoi. In particular, pullback gives functors
\[
Y_\et^\sim\buildrel{\nu_Y^\ast}\over\to Y_\qproet^\sim\buildrel{\lambda_Y^\ast}\over\to Y_v^\sim
\]
on the corresponding categories of small sheaves. (So the two categories on the right are defined as the large filtered colimit over all $\kappa$ of $Y_{\qproet,\kappa}^\sim$ resp.~$Y_{v,\kappa}^\sim$.)

A surprising feature of the situation is the following observation.

\begin{lemma}\label{lem:lambdacontinuous} Let $Y$ be a diamond. Then the functor
\[
\lambda_Y^\ast: Y_\qproet^\sim\to Y_v^\sim
\]
commutes with all small limits.

Similarly, if $f: Y'\to Y$ is any map of diamonds, then $f^\ast: Y^\sim_\qproet\to Y^{\prime\sim}_\qproet$ commutes with all small limits.
\end{lemma}

\begin{proof} The assertions are quasi-pro-\'etale local on $Y$ (and $Y'$), so we can reduce to the case that $Y=X$ (and $Y'=X'$) is strictly totally disconnected.

In this situation, $X_\qproet^\sim$ is equivalent to $X_{\qproet,\qc,\sep}^\sim$, where $X_{\qproet,\qc,\sep}\subset X_\qproet$ denotes the site of quasicompact separated pro-\'etale morphisms $X^\prime\to X$. Similarly, $X_v^\sim$ is equivalent to $X_{v,\qcqs}^\sim$, where $X_{v,\qcqs}\subset X_v$ denotes the site of qcqs and representable $X^\prime\to X$. In particular, $X_{\qproet,\qc,\sep}$ and $X_{v,\qcqs}$ consist of qcqs objects (in the site-theoretic sense), stable under fibre products; it follows that $X_{\qproet,\qc,\sep}^\sim$ and $X_{v,\qcqs}^\sim$ are coherent. Now we need the following lemma.

\begin{lemma}\label{lem:univproetmap} Let $X$ be a strictly totally disconnected perfectoid space, and $X^\prime\to X$ a map from a qcqs perfectoid space $X^\prime$. Then there is a quasicompact separated pro-\'etale morphism $\lambda_{X\circ}(X^\prime)\to X$ with a factorization
\[
X^\prime\to \lambda_{X\circ}(X^\prime)\to X
\]
such that any map from $X^\prime$ to a separated pro-\'etale perfectoid space over $X$ factors uniquely over $\lambda_{X\circ}(X^\prime)$. Moreover:

\begin{altenumerate}
\item[{\rm (i)}] The map $X^\prime\to \lambda_{X\circ}(X^\prime)$ is surjective. In particular, if $X_2^\prime\to X_1^\prime$ is a surjection of quasicompact separated perfectoid spaces over $X$, then $\lambda_{X\circ}(X_2^\prime)\to \lambda_{X\circ}(X_1^\prime)$ is surjective as well.
\item[{\rm (ii)}] If $X_1^\prime\to X_3^\prime\leftarrow X_2^\prime$ is a diagram of strictly totally disconnected perfectoid spaces over $X$, then
\[
\lambda_{X\circ}(X_1^\prime\times_{X_3^\prime} X_2^\prime)\to \lambda_{X\circ}(X_1^\prime)\times_{\lambda_{X\circ}(X_3^\prime)} \lambda_{X\circ}(X_2^\prime)
\]
is an isomorphism.
\end{altenumerate}
\end{lemma}

\begin{proof} Recall that by Corollary~\ref{cor:proetaleoverwlocaltop}, the category of quasicompact separated pro-\'etale perfectoid spaces over $X$ is equivalent to the category of spectral maps of spectral spaces $T\to |X|$ for which $T\to |X|\times_{\pi_0 X} \pi_0 T$ is a pro-constructible generalizing embedding. Now for any quasicompact separated map $X^\prime\to X$, we can define $T$ as the image of $|X^\prime|\to |X|\times_{\pi_0 X} \pi_0 X^\prime$, which is of this form. The corresponding quasicompact separated pro-\'etale map $\lambda_{X\circ}(X^\prime)\to X$ has the desired universal property, and $X^\prime\to \lambda_{X\circ}(X^\prime)$ is surjective by construction.

For part (ii), we first check surjectivity, for which we need to see that
\[
X_1^\prime\times_{X_3^\prime} X_2^\prime\to \lambda_{X\circ}(X_1^\prime)\times_{\lambda_{X\circ}(X_3^\prime)} \lambda_{X\circ}(X_2^\prime)
\]
is surjective. We may assume that all spaces are connected, so $X=\Spa(C,C^+)$, and $X_i=\Spa(C_i,C_i^+)$ for $i=1,2,3$, where $C$ and $C_i$ are algebraically closed nonarchimedean fields with open and bounded valuation subrings $C^+$ resp.~$C_i^+$. The fibre product $\lambda_{X\circ}(X_1^\prime)\times_{\lambda_{X\circ}(X_3^\prime)} \lambda_{X\circ}(X_2^\prime)$ is given by $\Spa(C,(C^+)^\prime)$ for some other open and bounded valuation subring. We may pullback everything under the open immersion $\Spa(C,(C^+)^\prime)\to \Spa(C,C^+)$. After this replacement, $\lambda_{X\circ}(X_i^\prime) = X$ for all $i=1,2,3$, so $\Spa(C_i,C_i^+)\to \Spa(C,C^+)$ is surjective. As $\Spa(C_3,C_3^+)$ is a totally ordered chain of specializations and the images of $\Spa(C_i,C_i^+)\to \Spa(C_3,C_3^+)$ for $i=1,2$ are generalizing, one of them is contained in the other; we may replace $\Spa(C_3,C_3^+)$ by the image of the smaller one (and $X_1^\prime$, $X_2^\prime$ by the corresponding preimages). This preserves the condition that the maps $X_i^\prime\to X$ are surjective as now they will have the same image given by the minimal image that occured previously (which was all of $X$). After this further replacement, $X_i^\prime\to X_3^\prime$ is surjective for $i=1,2$. Thus,
\[
|X_1^\prime\times_{X_3^\prime} X_2^\prime|\to |X_1^\prime|\times_{|X_3^\prime|} |X_2^\prime|\to |X_3^\prime|\to |X|
\]
is a series of surjections, as desired.

Finally, we need to show that the map is an isomorphism. Again, we can assume that all spaces are connected, and that $X_i^\prime\to X_3^\prime$ are surjective for $i=1,2$, and $X_3^\prime\to X$ is surjective. We have to see that $X_1^\prime\times_{X_3^\prime} X_2^\prime$ is connected. This follows from the following general result.

\begin{lemma}\label{lem:geometricallyconnected} Let $X=\Spa(C,C^+)$, where $C$ is an algebraically closed nonarchimedean field with an open and bounded valuation subring $C^+\subset C$, and let $Z\to X$ be a connected affinoid perfectoid space over $X$. Let $X^\prime=\Spa(C^\prime,C^{\prime +})\to X$, where $C^\prime$ is another algebraically closed nonarchimedean field with an open and bounded valuation subring $C^{\prime +}\subset C^\prime$. Then $Z^\prime:=Z\times_X X^\prime$ is connected.
\end{lemma}

\begin{proof} Assume first that $C^\prime=C$ and $C^{\prime +}=\OO_C$. Then any disconnection of $Z^\prime$ extends to a disconnection of $Z$ by taking closures, noting that any point of $Z$ has a unique maximal generalization, which is a point of $Z^\prime$. Thus, in general, we can assume that $C^+=\OO_C$ and $C^{\prime +} = \OO_{C^\prime}$. We can write $Z$ as a cofiltered inverse limit of p-finite perfectoid spaces $Z_i$ over $(C,\OO_C)$, cf.~\cite[Lemma 6.13]{ScholzePerfectoidSpaces}. We can without loss of generality assume that all $Z_i$ are connected. By pullback, $Z^\prime=\varprojlim_i Z_i^\prime$, with $Z_i^\prime=Z_i\times_X X^\prime$. Any disconnection of $Z^\prime$ comes from a disconnection of $Z_i^\prime$ for $i$ sufficiently large. But for rigid spaces, being geometrically connected passes to extensions of algebraically closed fields, so $Z_i$ is disconnected for some $i$, which is a contradiction.
\end{proof}
\end{proof}

The first part of Lemma~\ref{lem:univproetmap} implies that for any sheaf $\mathcal F$ on $X_{\qproet,\et,\sep}$, the pullback $\lambda_X^\ast \mathcal F$ on $X_{v,\qcqs}$ is the sheafification of the presheaf sending $X^\prime\in X_{v,\qcqs}$ to $\mathcal F(\lambda_{X\circ}(X^\prime))$. Now Lemma~\ref{lem:univproetmap}~(i) implies that this defines a separated presheaf, and Lemma~\ref{lem:univproetmap}~(ii) implies that for strictly totally disconnected $X^\prime\in X_{v,\qcqs}$, the sheafification does not change anything (using that any cover can be refined by a cover by a strictly totally disconnected space); thus, for strictly totally disconnected $X^\prime\in X_{v,\qcqs}$, we have
\[
(\lambda_X^\ast \mathcal F)(X^\prime) = \mathcal F(\lambda_{X\circ}(X^\prime))\ .
\]
As evaluation commutes with limits, this equation commutes with limits of sheaves. As strictly totally disconnected spaces form a basis of $X_{v,\qcqs}$, we get the desired result for $\lambda_X^\ast$. The same argument applies to the pullback $f^\ast$.
\end{proof}

\begin{proposition}\label{prop:compproetvcohom} Let $Y$ be a diamond. The functor
\[
\lambda_Y^\ast: Y_\qproet^\sim\to Y_v^\sim
\]
is fully faithful. Moreover, for any small sheaf $\mathcal{F}$ on $Y_\qproet$, the adjunction map
\[
\mathcal{F}\to \lambda_{Y\ast} \lambda_Y^\ast \mathcal{F}
\]
is an isomorphism, and if $Y$ is a locally spatial diamond and $\mathcal{F}$ is a sheaf of abelian groups (resp.~groups) that comes via pullback from $Y_\et$, then
\[
R^i\lambda_{Y\ast} \lambda_Y^\ast \mathcal{F} = 0
\]
for all $i>0$ (resp.~for $i=1$).
\end{proposition}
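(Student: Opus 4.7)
Both statements are local on $Y_\qproet$, so by Proposition~\ref{prop:diamondqproetsurj} I reduce to the case $Y=X$ where $X$ is a strictly totally disconnected perfectoid space. The key technical input, already established in the proof of Lemma~\ref{lem:lambdacontinuous}, is the functor $\lambda_{X\circ}:X_{v,\qcqs}\to X_{\qproet,\qc,\sep}$ from Lemma~\ref{lem:univproetmap}, which is left adjoint to the inclusion, commutes with fibre products (part~(ii)), and carries v-covers to pro-\'etale covers (part~(i)). For any strictly totally disconnected $X'\in X_{v,\qcqs}$ one has the explicit formula $(\lambda_X^\ast\mathcal F)(X')=\mathcal F(\lambda_{X\circ}(X'))$.

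\textbf{Full faithfulness.} For any quasicompact separated pro-\'etale $U\to X$, which satisfies $\lambda_{X\circ}(U)=U$, pick a v-cover $\tilde U\to U$ by a strictly totally disconnected $\tilde U$. The v-sheaf property of $\lambda_X^\ast\mathcal F$, together with the explicit formula applied to $\tilde U$ and $\tilde U\times_U\tilde U$ and Lemma~\ref{lem:univproetmap}~(ii), identifies
\[
(\lambda_X^\ast\mathcal F)(U)=\mathrm{eq}\bigl(\mathcal F(\lambda_{X\circ}\tilde U)\rightrightarrows\mathcal F(\lambda_{X\circ}\tilde U\times_U\lambda_{X\circ}\tilde U)\bigr).
\]
Since $\lambda_{X\circ}\tilde U\to U$ is a pro-\'etale cover and $\mathcal F$ is a pro-\'etale sheaf, the equalizer equals $\mathcal F(U)$. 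This shows $\mathcal F=\lambda_{Y\ast}\lambda_Y^\ast\mathcal F$, which is equivalent to the full faithfulness of $\lambda_Y^\ast$.

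\textbf{Non-abelian $R^1$.} For a sheaf of groups $\mathcal F$, any v-torsor under $\lambda_X^\ast\mathcal F$ on $X$ is trivialized by a v-cover $\tilde X\to X$ by a strictly totally disconnected $\tilde X$; by the identification above its cocycle is an element $g\in\mathcal F(\lambda_{X\circ}\tilde X\times_X\lambda_{X\circ}\tilde X)$ satisfying the usual cocycle identity. Thus $g$ defines a pro-\'etale $\mathcal F$-torsor on $X$, which by the pro-\'etale sheaf property is trivialized after a pro-\'etale cover of $X$; pulling back, the same cover trivializes the original v-torsor, proving $R^1\lambda_{Y\ast}\lambda_Y^\ast\mathcal F=0$.

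\textbf{Higher $R^i$ and main obstacle.} For abelian $\mathcal F$ and $i\geq 2$ I argue by induction on $i$, assuming $R^q\lambda_{X\ast}\lambda_X^\ast\mathcal F=0$ for $0<q<i$. Given a class $\alpha\in H^i_v(X,\lambda_X^\ast\mathcal F)$, the Cartan--Leray spectral sequence for a v-cover $\tilde X\to X$ by a strictly totally disconnected $\tilde X$, combined with the inductive vanishing applied also to the iterated fibre products $\tilde X\times_X\cdots\times_X\tilde X$ (pro-\'etale locally on $X$), reduces the problem to showing that the \v{C}ech class of $\alpha$ in $\check H^i(\tilde X/X,\lambda_X^\ast\mathcal F)$ can be killed by refining $\tilde X$. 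By Lemma~\ref{lem:univproetmap}~(ii) together with the full faithfulness step, this \v{C}ech complex is canonically identified with the pro-\'etale \v{C}ech complex $C^\bullet(\lambda_{X\circ}\tilde X/X,\mathcal F)$; standard sheafification on $X_\qproet$ then kills the class after a further pro-\'etale refinement. The hard part is executing this induction cleanly: one must interleave the Cartan--Leray spectral sequences on the v- and pro-\'etale sides and verify at each step that the compatibility provided by Lemma~\ref{lem:univproetmap} allows the vanishing to be transferred between the two topologies.
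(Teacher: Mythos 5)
Your overall strategy matches the paper's: reduce to a strictly totally disconnected $X$, and exploit the formula $(\lambda_X^\ast\mathcal F)(X')=\mathcal F(\lambda_{X\circ}(X'))$ together with Lemma~\ref{lem:univproetmap}. However, there is a recurring gap that you correctly sense but do not close.

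The formula $(\lambda_X^\ast\mathcal F)(X')=\mathcal F(\lambda_{X\circ}(X'))$ is established in the proof of Lemma~\ref{lem:lambdacontinuous} \emph{only} for strictly totally disconnected $X'$. But the fibre products $\tilde U\times_U\tilde U$, $\tilde X\times_X\tilde X$, and the higher iterates you invoke in the full faithfulness, $R^1$, and $R^i$ steps are in general \emph{not} strictly totally disconnected (a fibre product of strictly totally disconnected spaces need not be --- e.g.\ $\Spa(C',\OO_{C'})\times_{\Spa(C,\OO_C)}\Spa(C',\OO_{C'})$). So you cannot plug those fibre products into the formula. For the full faithfulness step this is actually avoidable: since $X$ is strictly totally disconnected, any $U\in X_{\qproet,\qc,\sep}$ is itself strictly totally disconnected (Lemma~\ref{lem:proetaleoverwlocal}), so one can apply the formula to $U$ directly with $\lambda_{X\circ}(U)=U$ and dispense with the cover $\tilde U$ entirely. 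But the $R^1$ cocycle argument and the $R^i$ induction both essentially depend on applying the formula on iterated fibre products, where it is not known, and that is precisely why you find yourself unable to ``execute the induction cleanly.''

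The paper's actual argument sidesteps the induction: it uses \emph{hypercovers} $X'_\bullet\to X$ by strictly totally disconnected spaces at every simplicial level (rather than the \v Cech nerve of a single cover). By Lemma~\ref{lem:univproetmap}~(i) and (ii), $\lambda_{X\circ}$ sends such a v-hypercover to a hypercover in $X_\qproet$, and the two associated cosimplicial abelian groups $(\lambda_X^\ast\mathcal F)(X'_\bullet)=\mathcal F(\lambda_{X\circ}(X'_\bullet))$ coincide term-by-term because now every $X'_n$ \emph{is} strictly totally disconnected. Writing cohomology as the filtered colimit over all hypercovers gives $R\Gamma(X_v,\lambda_X^\ast\mathcal F)=R\Gamma(X_\qproet,\mathcal F)$ in one stroke, with no induction and no interleaving of spectral sequences. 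Sheafifying (using the same identity with $X$ replaced by a space affinoid pro-\'etale over $X$) then yields $R^i\lambda_{Y\ast}\lambda_Y^\ast\mathcal F=0$ for $i>0$ and the $H^1$ statement for sheaves of groups. You should replace the \v Cech/Cartan--Leray bootstrapping by this hypercover comparison.
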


We do not know whether the statement about cohomology holds true for all pro-\'etale sheaves.

\begin{proof} To prove fully faithfulness, it is enough to show that the adjunction map $\mathcal{F}\to \lambda_{Y\ast} \lambda_Y^\ast \mathcal{F}$ is an isomorphism for all sheaves $\mathcal{F}$ on $Y_\qproet$. This statement, as well as the similar statements about higher direct images, is local in $Y_\qproet$. Thus, we can assume that $Y=X$ is a strictly totally disconnected perfectoid space.

In this case, we know by the proof of Lemma~\ref{lem:lambdacontinuous} that $(\lambda_X^\ast \mathcal F)(X^\prime) = \mathcal F(\lambda_{X\circ}(X^\prime))$ for all strictly totally disconnected $X^\prime$ over $X$. In particular, if $X^\prime\in X_{\qproet,\qc,\sep}$, this implies that
\[
(\lambda_{X\ast} \lambda_X^\ast \mathcal F)(X^\prime) = (\lambda_X^\ast \mathcal F)(X^\prime) = \mathcal F(\lambda_{X\circ}(X^\prime)) = \mathcal F(X^\prime)\ .
\]
As such $X^\prime$ form a base for $X_\qproet = X_\qproet$, it follows that $\mathcal F\to \lambda_{Y\ast} \lambda_Y^\ast \mathcal F$ is an isomorphism.

For the statement about cohomology, it suffices to see that $H^i(X_v,\lambda_Y^\ast \mathcal F)=0$ for all $i>0$ when $\mathcal F$ comes via pullback from the \'etale site, and $X$ is strictly totally disconnected. Choose some minimal $i$ for which this fails (for some $X$ and $\mathcal F$) and take a nonzero class $\alpha\in H^i(X_v,\lambda_Y^\ast \mathcal F)$. There is some v-cover $f: X'\to X$ such that $f^\ast \alpha=0$. Without loss of generality, $X'$ is affinoid perfectoid. A \v{C}ech spectral sequence then shows that $\alpha$ comes from some class in $(\lambda_Y^\ast \mathcal F)(X'\times_X\ldots\times_X X')$ (with $i+1$ factors of $X'$) in the kernel of the differential. Now we can write $X'$ as a cofiltered limit of affinoid perfectoid spaces $X'_j$ that are rational subsets of finite-dimensional perfectoid balls over $X$. As $\mathcal F$ comes via pullback from the \'etale site, $\lambda_Y^\ast \mathcal F$ turns this cofiltered limit into a filtered colimit, and hence the class $\alpha$ already arises from some \v{C}ech cocycle for $X'_j\to X$, i.e.~we can assume that $X'$ is a rational subset of a finite-dimensional perfectoid ball over $X$. Using Lemma~\ref{lem:secoveralgclosed} one sees that then $X'\to X$ splits, so in fact $\alpha=0$.
\end{proof}

Next, we will need a similar result for $\nu_Y: Y_\qproet\to Y_\et$ in case $Y$ is a locally spatial diamond.

\begin{proposition}\label{prop:competproetcohom} Let $Y$ be a locally spatial diamond. The functor
\[
\nu_Y^\ast: Y_\et^\sim\to Y_\qproet^\sim
\]
is fully faithful. Moreover, for any sheaf $\mathcal{F}$ on $Y_\et$, the adjunction map
\[
\mathcal{F}\to \nu_{Y\ast} \nu_Y^\ast \mathcal{F}
\]
is an isomorphism, and if $\mathcal{F}$ is a sheaf of abelian groups (resp.~groups), then
\[
R^i\nu_{Y\ast} \nu_Y^\ast \mathcal{F} = 0
\]
for all $i>0$ (resp.~for $i=1$).
\end{proposition}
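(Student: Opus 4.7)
The plan is to imitate the proof of Proposition \ref{prop:compareetproet} in the diamond setting, using Proposition \ref{prop:etmaptolimdiamond} as the replacement for Proposition \ref{prop:etmaptolim}. Both the assertion that $\mathcal{F}\to \nu_{Y\ast}\nu_Y^\ast \mathcal{F}$ is an isomorphism and the assertion $R^i\nu_{Y\ast}\nu_Y^\ast\mathcal{F}=0$ for $i>0$ are local on $Y_\et$, so I first reduce to the case that $Y$ is a spatial diamond, and then further reduce by localizing to a cofinal system of étale neighborhoods.

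Next, I would set up an analogue of Proposition \ref{prop:compareetproet}(ii). Let $\nu^+\mathcal F$ denote the \emph{presheaf} on $Y_\qproet$ which on a quasi-pro-étale $\tilde Y = \varprojlim_i \tilde Y_i \to Y$, written as a cofiltered limit of qcqs separated étale maps $\tilde Y_i \to Y$, takes the value $\varinjlim_i \mathcal F(\tilde Y_i)$. Proposition \ref{prop:etmaptolimdiamond}(iii) ensures that after choosing $Y$ spatial, the relevant qcqs separated étale maps form the appropriate cofiltered diagram; together with Proposition \ref{prop:spatialunivopen}, any point $y \in |\tilde Y|$ has a quasi-pro-étale neighborhood of this form. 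I would then verify that $\nu^+\mathcal F$ is a sheaf on the full subcategory of such objects: a v-cover $\tilde Y' \to \tilde Y$ of two such cofiltered limits is, by Proposition \ref{prop:etmaptolimdiamond}(iii) again, approximable by étale covers $\tilde Y'_j \to \tilde Y_i$ at some finite level, and the sheaf condition on $\mathcal F$ over $Y_\et$ passes to the filtered colimit in $j$ and $i$. This identifies $\nu_Y^\ast \mathcal F$ with $\nu^+\mathcal F$ on this base.

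Fully faithfulness, i.e.\ the adjunction isomorphism, then follows: for $U \to Y$ étale, the trivial presentation as a cofiltered limit gives $(\nu_Y^\ast \mathcal F)(U) = \mathcal F(U)$, hence $\nu_{Y\ast}\nu_Y^\ast\mathcal F = \mathcal F$. For the vanishing of higher direct images I would argue on stalks using Proposition \ref{prop:etalesitepoints}: at a geometric point $\bar y : \Spa(C(y),C(y)^+) \to Y$ the stalk of $R^i\nu_{Y\ast}\nu_Y^\ast \mathcal F$ is the filtered colimit over étale neighborhoods $U \ni \bar y$ of $H^i(U_\qproet, \nu_Y^\ast \mathcal F)$. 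Choosing a strictly totally disconnected quasi-pro-étale cover $X \to Y$ containing the chosen geometric point in its image (Proposition \ref{prop:spatialunivopen}), and taking the Čech nerve $X^\bullet$ (whose terms are spatial diamonds by Corollary \ref{cor:fibreproductspatial}), I would use the Čech-to-derived functor spectral sequence for the hypercover together with the explicit description of $\nu_Y^\ast \mathcal F$ above.

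The main obstacle will be the last step: the terms $X^n = X \times_Y \cdots \times_Y X$ of the Čech nerve are spatial diamonds, not perfectoid spaces, so one cannot invoke Proposition \ref{prop:compareetproet} on them directly. I would handle this by iterating the cover, i.e.\ at each simplicial level again choosing quasi-pro-étale surjections from strictly totally disconnected perfectoid spaces (provided by Proposition \ref{prop:spatialunivopen}), thereby refining to a v-hypercover of $Y$ whose terms are strictly totally disconnected perfectoid spaces $Z_n$. On each $Z_n$ one has $(Z_n)_\qproet = (Z_n)_\proet$, where Proposition \ref{prop:compareetproet} applies and gives the vanishing needed at each simplicial level; combined with the spectral sequence and the identification $(\nu_Y^\ast \mathcal F)(Z_n) = \varinjlim \mathcal F(U_i)$ above, one gets the vanishing of the stalk, as desired. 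The technical point is to check compatibility of the various colimit descriptions of $(\nu_Y^\ast \mathcal F)(Z_n)$ with the simplicial structure, but this follows from the naturality of the limit description in Proposition \ref{prop:etmaptolimdiamond}.
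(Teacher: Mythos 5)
Your proposal follows essentially the same route as the paper: after reducing to $Y$ spatial, you exhibit (via Proposition~\ref{prop:etmaptolimdiamond}~(iii) and Proposition~\ref{prop:spatialunivopen}) the pro-category $\Pro_\kappa(Y_{\et,\qc,\sep})$ as a fully faithful full subcategory of $Y_\qproet$ whose objects form a basis, identify $\nu_Y^\ast\mathcal F$ with the explicit presheaf pullback on that basis, and then rerun the argument of Proposition~\ref{prop:compareetproet}. This is precisely the paper's proof, which phrases the reduction abstractly as replacing $\nu_Y\colon Y_\qproet\to Y_\et$ by $\Pro_\kappa(Y_{\et,\qc,\sep})\to Y_{\et,\qc,\sep}$ and then declaring that the same arguments as in Proposition~\ref{prop:compareetproet} apply.

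One factual slip, fortunately harmless: you flag as ``the main obstacle'' that the terms $X^n = X\times_Y\cdots\times_Y X$ of the \v{C}ech nerve of a cover $X\to Y$ from Proposition~\ref{prop:spatialunivopen} are spatial diamonds but not perfectoid spaces, and you propose to work around this by iterating the cover. In fact those terms are already strictly totally disconnected perfectoid spaces: $X\to Y$ is quasicompact (both source and target are qcqs) and separated (since $X$ is affinoid), and it is quasi-pro-\'etale, so $X\times_Y X$ is representable and pro-\'etale over the strictly totally disconnected space $X$; by Lemma~\ref{lem:proetaleoverwlocal} it is strictly totally disconnected, and the same applies to all higher fibre powers. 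So the iteration step is unnecessary, and you may invoke Proposition~\ref{prop:compareetproet} directly on the terms of the \v{C}ech nerve of $X\to Y$, which simplifies your argument to essentially the one the paper intends.
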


\begin{proof} We may assume that $Y$ is spatial. Let $Y_{\et,\qc,\sep}\subset Y_\et$ be the full subcategory of quasicompact separated \'etale maps. As all \'etale maps are by Convention~\ref{conv:etlocsep} locally separated, and using Corollary~\ref{cor:quasiproetspatial}, this is a basis for the topology, so $Y_{\et,\qc,\sep}^\sim\cong Y_\et^\sim$. Moreover, there is a functor
\[
\Pro(Y_{\et,\qc,\sep})\to Y_\qproet\ .
\]

As in the proof of Proposition~\ref{prop:affproetale}, Proposition~\ref{prop:etmaptolimdiamond}~(iii) implies that this functor is fully faithful. Moreover, the objects in $\Pro(Y_{\et,\qc,\sep})\subset Y_\qproet$ form a basis for the topology. Indeed, choose a map $X\to Y$ from a strictly totally disconnected space $X$ as in Proposition~\ref{prop:spatialunivopen}, so that $X\in \Pro(Y_{\et,\qc,\sep})$. If $Y^\prime\to Y$ is any quasi-pro-\'etale map, then $X^\prime:=X\times_Y Y^\prime$ is a cover of $Y^\prime$ in $Y_\qproet$, and is representable and pro-\'etale over $X$, so can be covered by perfectoid spaces which are affinoid pro-\'etale over $X$. All such affinoid pro-\'etale maps to $X$ are inverse limits of affinoid \'etale, and in particular quasicompact separated \'etale maps, to $X$. Using Proposition~\ref{prop:etmaptolimdiamond}~(iii), any such map lies in $\Pro(Y_{\et,\qc,\sep})$ again, giving the claim.

Thus, we can replace the map $\nu_Y: Y_\qproet\to Y_\et$ by $\Pro(Y_{\et,\qc,\sep})\to Y_{\et,\qc,\sep}$. Now the same arguments as in Proposition~\ref{prop:compareetproet} apply.
\end{proof}

Moreover, we have the following commutation of \'etale cohomology with inverse limits.

\begin{proposition}\label{prop:etcohomlim} Let $Y_i$, $i\in I$, be a cofiltered inverse system of spatial diamonds with inverse limit $Y$. Assume that $I$ has a final object $0\in I$, and let $\mathcal F_0$ be an \'etale sheaf on $Y_0$, with pullbacks $\mathcal F_i$ to $Y_i$, and $\mathcal F$ to $Y$. Then the natural map
\[
\varinjlim_{i\in I} H^j(Y_i,\mathcal F_i)\to H^j(Y,\mathcal F)
\]
is an isomorphism for $j=0$, resp.~$j=0,1$, resp.~all $j\geq 0$, if $\mathcal F_0$ is sheaf of sets, resp.~sheaf of groups, resp.~sheaf of abelian groups.
\end{proposition}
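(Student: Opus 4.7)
The plan is to reduce Proposition~\ref{prop:etcohomlim} to Proposition~\ref{prop:etmaptolimdiamond}(ii), which gives an equivalence $\text{2-}\varinjlim_i (Y_i)_{\et,\qcqs}\simeq Y_{\et,\qcqs}$ of the sites of qcqs \'etale objects (and preserves covers). Together with Lemma~\ref{lem:diamondlimit} (ensuring $Y$ is spatial) and Corollary~\ref{cor:fibreproductspatial} (closure of $Y_{\et,\qcqs}$ under finite limits), this presents $Y_\et^\sim$ as a filtered 2-limit of coherent topoi along coherent morphisms. The classical commutation of cohomology with such limits (SGA 4 VI.8.7.3, VII.5.8) then yields the statement for abelian sheaves at all $j\geq 0$. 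Alternatively, and in a way that handles all three cases uniformly, one may argue directly via \v Cech hypercover methods, which I sketch below.

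I first handle the $j=0$ case for a sheaf of sets (and, identically, of groups), establishing the more general statement $\mathcal{F}(U)=\varinjlim_i \mathcal{F}_i(U_i)$ for any $U\in Y_{\et,\qcqs}$ descending to $U_{i_0}\in (Y_{i_0})_{\et,\qcqs}$, with $U_i=U_{i_0}\times_{Y_{i_0}} Y_i$. Since $U$ is spatial, a section $s\in\mathcal{F}(U)$ is given, up to the sheaf axiom, by compatible sections $s_\alpha\in\mathcal{F}(V_\alpha)$ on a \emph{finite} cover $\{V_\alpha\to U\}$ in $U_{\et,\qcqs}$, where each $s_\alpha$ is a pullback of some $t_\alpha\in\mathcal{F}_0(W_\alpha)$ via a factorization $V_\alpha\to W_\alpha\to Y_0$ with $W_\alpha\in(Y_0)_{\et,\qcqs}$. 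By Proposition~\ref{prop:etmaptolimdiamond}(ii,iii) this entire finite diagram descends to some $Y_i$; the compatibility on the (qcqs \'etale) double fibre products $V_{\alpha,i}\times_{U_i} V_{\beta,i}$, a finite collection of equalities of sections of $\mathcal{F}_i$, then holds after further enlarging $i$, producing a section over $U_i$ pulling back to $s$. Injectivity is analogous. For $j=1$ with a sheaf of groups, interpret $H^1(Y,\mathcal{F})$ via $\mathcal{F}$-torsors: any such torsor is trivial on a finite qcqs separated \'etale cover with cocycle valued in $\mathcal{F}$ on double fibre products, and both cover and cocycle descend to some $Y_i$ by the $H^0$ case combined with Proposition~\ref{prop:etmaptolimdiamond}; isomorphisms of torsors reduce once more to descent of trivializing sections.

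For general $j\geq 0$ and abelian $\mathcal{F}$, apply Verdier's hypercover formula $H^n(Y,\mathcal{F})=\varinjlim_{U_\bullet}\check H^n(U_\bullet,\mathcal{F})$ ranging over $(n+1)$-truncated hypercovers $U_\bullet\to Y$ with all $U_k\in Y_{\et,\qcqs}$; such hypercovers are cofinal among all hypercovers by Corollary~\ref{cor:fibreproductspatial} and qcqsness of $Y$. Each such \emph{finite} truncated hypercover descends by Proposition~\ref{prop:etmaptolimdiamond} to some $U_{\bullet,i}\to Y_i$, and by the $H^0$ case applied in each simplicial degree, the \v Cech cochain complex $\mathcal{F}(U_\bullet)$ is the filtered colimit over $j\geq i$ of $\mathcal{F}_j(U_{\bullet,j})$; exactness of filtered colimits of abelian groups then gives $\check H^n(U_\bullet,\mathcal{F})=\varinjlim_{j\geq i}\check H^n(U_{\bullet,j},\mathcal{F}_j)$. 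Swapping the two filtered colimits, over $i\in I$ and over hypercovers, yields $\varinjlim_i H^n(Y_i,\mathcal{F}_i)$. The principal technical obstacle is organizing the hypercover refinements inside $Y_{\et,\qcqs}$ compatibly with descent to a finite stage, a standard Verdier-style refinement argument made workable by the $(n+1)$-truncation and by Proposition~\ref{prop:etmaptolimdiamond}.
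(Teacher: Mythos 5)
Your first approach (reducing to Proposition~\ref{prop:etmaptolimdiamond} plus the coherent-topos limit theorem from SGA~4~VI) is exactly the paper's proof, up to the trivial difference that the paper invokes part~(iii) (with $Y_{\et,\qc,\sep}$, which forms a basis consisting of objects closed under fibre products) rather than part~(ii), and cites SGA~4~VI~8.7.7. So the plan you state at the outset is essentially a restatement of the paper's argument, and is correct.

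The hypercover sketch you give as an alternative is a valid way to avoid the SGA citation, and it is essentially what the SGA proof does internally: it reduces to the commutation of \v{C}ech cohomology with filtered colimits on a cofinal class of truncated hypercovers that descend to a finite stage. Two points deserve more care than you give them. First, you should justify that the descended simplicial object $U_{\bullet,i}\to Y_i$ is still a hypercover for $i$ large enough: the defining maps $U_n\to(\mathrm{cosk}_{n-1}U_\bullet)_n$ are qcqs and become surjective over $Y$, and surjectivity of a qcqs map of spatial diamonds must be shown to hold at a finite stage. This follows from a constructible-topology compactness argument on the inverse system of spectral spaces $|Y_i|$, but it is not automatic from the equivalence of categories in Proposition~\ref{prop:etmaptolimdiamond} and should be said. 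Second, the cofinality of hypercovers drawn from $Y_{\et,\qcqs}$ is a consequence of coherence of the topos but again requires a short refinement argument; you acknowledge this, and it is fine. On balance, the hypercover route trades a single clean citation for a few pages of standard bookkeeping; the paper's choice to cite SGA~4 is the cleaner option, but your alternative is sound modulo the surjectivity-at-a-finite-stage point above.
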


\begin{proof} As in the previous proof, we can replace $Y_\et$ (and $(Y_i)_\et$) by $Y_{\et,\qc,\sep}$ (and $(Y_i)_{\et,\qc,\sep}$). Then Proposition~\ref{prop:etmaptolimdiamond}~(iii) implies that $Y_{\et,\qc,\sep}^\sim=Y_\et^\sim$ is a limit of the fibred topos $(Y_i)_{\et,\qc,\sep}^\sim=(Y_i)_\et^\sim$. As all intervening topoi are coherent, the result follows from SGA 4 VI.8.7.7 (in the case of sheaves of abelian groups, the other cases being similar).
\end{proof}

Using the previous results, we can compare some derived categories. Fix a ring $\Lambda$. If $T$ is a topos, we denote by $D(T,\Lambda)$ the derived category of $\Lambda$-modules on $T$, and by $D^+(T,\Lambda),D^-(T,\Lambda),D^b(T,\Lambda)\subset D(T,\Lambda)$ the subcategories of complexes which are (cohomologically) bounded below, resp.~above, resp.~below and above. This does not literally apply to $Y_\qproet$ or $Y_v$ as these are not generated by a set of objects. In these cases, we can still write them as increasing unions of sites $Y_{\qproet,\kappa}$ (resp.~$Y_{v,\kappa}$) for varying choices of $\kappa$ as in Lemma~\ref{lem:choosekappa}. We then define $D(Y_\qproet,\Lambda)$ and $D(Y_v,\Lambda)$ by passing to a filtered colimit over all choices of $\kappa$ (the transition functors are all fully faithful).

\begin{proposition}\label{prop:pullbackfullyfaithfulderived} Let $Y$ be a locally spatial diamond. Then the pullback functors
\[
\nu_Y^\ast: D^+(Y_\et,\Lambda)\to D^+(Y_\qproet,\Lambda), \lambda_Y^\ast \nu_Y^\ast: D^+(Y_\et,\Lambda)\to D^+(Y_v,\Lambda)
\]
are fully faithful. If $Y$ is a strictly totally disconnected perfectoid space, then the pullback functors
\[
\nu_Y^\ast: D(Y_\et,\Lambda)\to D(Y_\qproet,\Lambda), \lambda_Y^\ast \nu_Y^\ast: D(Y_\et,\Lambda)\to D(Y_v,\Lambda)
\]
are fully faithful.
\end{proposition}

\begin{proof} The statements for $D^+$ follow easily from Proposition~\ref{prop:compproetvcohom} and Proposition~\ref{prop:competproetcohom}. For the unbounded statements, we need some convergence results. For this, note that $Y_\qproet$ and $Y_\et$, in case $Y$ is strictly totally disconnected, have a basis for the topology consisting of $U\in Y_\qproet$ resp.~$U\in Y_\et$ such that for all sheaves $\mathcal F$, $H^i(U,\mathcal F)=0$ for $i>0$. Indeed, in the case of $Y_\qproet$, one can take strictly w-local perfectoid spaces $U$ for which $\pi_0 U$ is extremally disconnected (as then every pro-\'etale cover splits), and in the case of $Y_\et$, one can take strictly totally disconnected $U$. This implies that for any $A\in D(Y_\qproet,\Lambda)$ resp.~$A\in D(Y_\et,\Lambda)$, we have $A=R\varprojlim_n \tau^{\geq -n} A$ by testing the values on such $U$. In fact, the similar result holds true also in $D(Y_v,\Lambda)$ as $Y_v$ is replete, using \cite[Proposition 3.3.3]{BhattScholze}.

Now the desired results follow by taking a derived limit over all $\tau^{\geq -n} A$.
\end{proof}

Note that in the first paragraph of the preceding proof, we have proved the following.

\begin{proposition}\label{prop:leftcomplete} Let $Y$ be a small v-stack.
\begin{altenumerate}
\item The derived category $D(Y_v,\Lambda)$ is left-complete.
\item If $Y$ is a diamond, then $D(Y_\qproet,\Lambda)$ is left-complete.
\item If $Y$ is a strictly totally disconnected perfectoid space, then $D(Y_\et,\Lambda)$ is left-complete.$\hfill \Box$
\end{altenumerate}
\end{proposition}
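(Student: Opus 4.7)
The plan is to show, in each case, that for every $A$ in the relevant derived category the natural map $A \to R\varprojlim_n \tau^{\geq -n} A$ is an equivalence; this is the definition of left-completeness.

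The general mechanism for (ii) and (iii) is to exhibit a basis for the topology by objects $U$ on which every cover splits, so that $R\Gamma(U,-) = (-)(U)$ is exact and commutes with arbitrary inverse limits. For any such $U$ and any $B \in D(T,\Lambda)$, the hypercohomology spectral sequence then collapses to give $H^i(U, B) = H^i(B)(U)$. Applying this to $B = R\varprojlim_n \tau^{\geq -n} A$ together with the Milnor short exact sequence (the tower $R\Gamma(U, \tau^{\geq -n}A)$ is Mittag--Leffler in each cohomological degree, being constant in $n$ for $n \gg 0$) yields $H^i(U, R\varprojlim_n \tau^{\geq -n} A) = H^i(A)(U) = H^i(U,A)$. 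Since the $U$ form a basis, this upgrades to the required equivalence.

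For (iii), I would take as basis the qcqs \'etale $U \to Y$ with $U$ strictly totally disconnected. Every \'etale cover of such $U$ splits by definition, and these form a basis: given \'etale $V \to Y$ and $v \in V$, Lemma~\ref{lem:spatialetalelocallysep} factors $V \to Y$ locally through a finite \'etale morphism $W \to Y$, and $W$ inherits strict total disconnectedness from $Y$ via Proposition~\ref{prop:etcoverssplittotdisconnected}; a quasicompact open neighborhood of the image of $v$ in $W$ gives the required basis element. For (ii), I would take the basis of affinoid pro-\'etale $U \to Y$ with $U$ w-local, strictly totally disconnected, and $\pi_0(U)$ extremally disconnected (``w-contractible''). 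Such $U$ are produced by w-localizing (Proposition~\ref{prop:wlocalisation}) an affinoid pro-\'etale refinement of any given quasi-pro-\'etale object, then further refining $\pi_0$ by pullback along a surjection from an extremally disconnected profinite set, e.g.\ the Stone--\v{C}ech compactification of $\pi_0(U)$ viewed as a discrete set. Every pro-\'etale cover of such $U$ splits, and by Lemma~\ref{lem:proetaleoverwlocal} every quasi-pro-\'etale cover of such $U$ is in fact pro-\'etale, so the general mechanism applies.

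For (i), the cleanest approach is to verify that $Y_v$ is a \emph{replete} topos in the sense of \cite[Definition 3.1.1]{BhattScholze} and invoke \cite[Proposition 3.3.3]{BhattScholze}. Repleteness reduces to the existence of enough w-contractible objects (constructed as in (ii), combined with Lemma~\ref{lem:strtotdisccover}) together with the fact that any v-cover of such a w-contractible $U$ admits a section, which follows from Proposition~\ref{prop:wlocalflat} combined with extremal disconnectedness of $\pi_0(U)$. The principal technical point is the construction of this w-contractible basis in (i) and (ii); once it is in place, left-completeness is formal, exactly as in the scheme-theoretic pro-\'etale setting of \cite{BhattScholze}.
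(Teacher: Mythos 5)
Your treatment of parts (ii) and (iii) matches the paper's argument: exhibit a basis of objects $U$ on which every covering (in the relevant topology) splits, so that $H^{>0}(U,-)$ vanishes, and deduce left-completeness by comparing $H^i(U,A)$ with $H^i(U,R\varprojlim_n \tau^{\geq -n}A)$. For (ii) the paper uses exactly the same basis (w-strictly local spaces with extremally disconnected $\pi_0$), and for (iii) strictly totally disconnected $U$. Your construction of these bases is somewhat more elaborate than the paper's one-line assertion, but it is correct.

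Part (i) has a genuine gap. You assert that "any v-cover of such a w-contractible $U$ admits a section, which follows from Proposition~\ref{prop:wlocalflat} combined with extremal disconnectedness of $\pi_0(U)$." This is false. Take $U=\Spa(C,\OO_C)$ for $C$ a complete algebraically closed nonarchimedean field; $U$ is connected, w-strictly local, and $\pi_0(U)$ is a single point, hence extremally disconnected, so $U$ is w-contractible in your sense. But for any proper extension $C'\supsetneq C$ of complete algebraically closed fields, $\Spa(C',\OO_{C'})\to U$ is a v-cover with no section, since a section would be a $C$-algebra map $C'\to C$. Faithful flatness (which is what Proposition~\ref{prop:wlocalflat} gives) is not the same as having a section, and the v-topos is \emph{not} locally weakly contractible in the sense of \cite{BhattScholze}; that characterization of repleteness is simply unavailable here. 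The correct route to repleteness of $Y_v$ is to verify the defining property directly: given a tower of surjections $\cdots\to\mathcal F_2\to\mathcal F_1\to\mathcal F_0$ of v-sheaves and a section of $\mathcal F_0$ over an affinoid perfectoid $X_0$, lift it successively over affinoid v-covers $\cdots\to X_2\to X_1\to X_0$, and observe that the inverse limit $X_\infty=\varprojlim X_n$ is again an affinoid perfectoid space (Proposition~\ref{prop:etmaptolim}) mapping surjectively onto $X_0$ (fibers are cofiltered inverse limits of nonempty spectral spaces, hence nonempty), so $X_\infty\to X_0$ is a v-cover over which the section lifts to $\varprojlim\mathcal F_n$. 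This is the standard argument for repleteness of fpqc-type topoi; it does not pass through weakly contractible objects.
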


Thus, if $Y$ is a locally spatial diamond, we have full subcategories
\[
D^+(Y_\et,\Lambda)\subset D(Y_\qproet,\Lambda), D^+(Y_\et,\Lambda)\subset D(Y_v,\Lambda)\ ;
\]
if $Y$ is strictly totally disconnected, then
\[
D(Y_\et,\Lambda)\subset D(Y_\qproet,\Lambda), D(Y_\et,\Lambda)\subset D(Y_v,\Lambda).
\]
Interestingly, containment in these subcategories can be checked v-locally.

\begin{theorem}\label{thm:proetandetsheafdetectionvlocally} Let $Y$ be a locally spatial diamond and $f: Y^\prime\to Y$ a v-cover by a locally spatial diamond $Y^\prime$.
\begin{altenumerate}
\item If $A\in D(Y_\qproet,\Lambda)$ or $A\in D(Y_v,\Lambda)$ and $f^\ast A\in D^+(Y^\prime_\et,\Lambda)$, then $A\in D^+(Y_\et,\Lambda)$.
\item If $A\in D(Y_\qproet,\Lambda)$ or $A\in D(Y_v,\Lambda)$ and $Y$ and $Y^\prime$ are strictly totally disconnected and $f^\ast A\in D(Y^\prime_\et,\Lambda)$, then $A\in D(Y_\et,\Lambda)$.
\end{altenumerate}
\end{theorem}

\begin{proof} Using the convergence results from Proposition~\ref{prop:leftcomplete}, we can reduce to the case that $A=\mathcal F[0]$ is a sheaf concentrated in degree $0$.

If $\mathcal F$ is already a small sheaf on $Y_\qproet$, then we have to see that for any $\tilde{Y}=\varprojlim_j \tilde{Y}_j\in \Pro(Y_{\et,\qc,\sep})$, one has
\[
\mathcal F(\tilde{Y}) = \varinjlim_j \mathcal F(\tilde{Y}_j)\ .
\]
But if one lets $\tilde{Y}^\prime = \tilde{Y}\times_Y Y^\prime$, $\tilde{Y}^\prime_j = \tilde{Y}_j\times_Y Y^\prime$, then
\[
\mathcal F(\tilde{Y}^\prime) = \varinjlim_j \mathcal F(\tilde{Y}^\prime_j)
\]
by assumption, and a similar equation over the fibre product $Y^\prime\times_Y Y^\prime$. As equalizers commute with filtered colimits, the result follows.

Now assume that $\mathcal F$ is a small sheaf on $Y_v$. It suffices to see that the adjunction map $\lambda_Y^\ast \lambda_{Y\ast} \mathcal F\to \mathcal F$ is an isomorphism. This question is pro-\'etale local on $Y$, so we may assume that $Y=X$ is a strictly totally disconnected perfectoid space. We can also replace $Y^\prime$ by a strictly totally disconnected perfectoid space $X^\prime$. We have to see that for any strictly totally disconnected $\tilde{X}\in X_v$, the natural map
\[
\Gamma(\lambda_{X\circ}(\tilde{X})_v,\mathcal F)\to \Gamma(\tilde{X}_v,\mathcal F)
\]
is an isomorphism. It is enough to do this locally, so we may assume that $\tilde{X}\to X$ lifts to a map $\tilde{X}\to X^\prime$. For the proof of the displayed identity, we can also replace $X$ by $\lambda_{X\circ}(\tilde{X})$, and then assume that $X^\prime=\tilde{X}$. Thus, $X$ and $X^\prime$ are strictly totally disconnected spaces, $f: X^\prime\to X$ is a v-cover such that $\lambda_{X\circ}(X^\prime)=X$, and $\mathcal F$ is a small v-sheaf on $X$ such that $f^\ast \mathcal F$ is the pullback of a pro-\'etale sheaf $\mathcal G^\prime$ on $X^\prime$, and we need to show that $\mathcal F(X)=\mathcal F(X^\prime)$. Applying Theorem~\ref{thm:vproetbasechange} below to $f: Y^\prime=X^\prime\to Y=X$, the sheaf $\mathcal G^\prime$ on $X^\prime_\qproet$ and $\tilde{X}=X^\prime$, we see that the pullback map
\[
p_1^\ast: \mathcal F(X^\prime)\to \mathcal F(X^\prime\times_X X^\prime)
\]
is an isomorphism. In particular, all maps in the commutative diagram
\[\xymatrix{
\mathcal F(X^\prime)\ar[d]_{p_1^\ast}\ar@{=}[dr]\\
\mathcal F(X^\prime\times_X X^\prime)\ar[r]^-{\Delta^\ast} & \mathcal F(X^\prime)\\
\mathcal F(X^\prime)\ar[u]^{p_2^\ast}\ar@{=}[ur]
}\]
are isomorphisms, and thus
\[
\mathcal F(X) = \mathrm{eq}(\mathcal F(X^\prime)\rightrightarrows \mathcal F(X^\prime\times_X X^\prime)) = \mathrm{eq}(\mathcal F(X^\prime)\rightrightarrows \mathcal F(X^\prime)) = \mathcal F(X^\prime)\ ,
\]
as desired.
\end{proof}

Theorem~\ref{thm:proetandetsheafdetectionvlocally} shows that the following definition is reasonable.

\begin{definition}\label{def:generaletproetderivedcategory} Let $Y$ be a small v-stack. Define the full subcategory
\[
D_\et(Y,\Lambda)\subset D(Y,\Lambda)=D(Y_v,\Lambda)
\]
as consisting of those $A\in D(Y,\Lambda)$ such that for all strictly totally disconnected perfectoid spaces $f: X\to Y$, the pullback $f^\ast A\in D(X_\et,\Lambda)$.
\end{definition}

\begin{remark} By Theorem~\ref{thm:proetandetsheafdetectionvlocally}, it suffices to check the condition for one v-cover of $Y$. If $Y$ is a locally spatial diamond, $D^+_\et(Y,\Lambda)=D^+(Y_\et,\Lambda)$, and if $Y$ is a strictly totally disconnected perfectoid space, $D_\et(Y,\Lambda)=D(Y_\et,\Lambda)$. If $Y$ is a locally spatial diamond, then in general $D_\et(Y,\Lambda)\neq D(Y_\et,\Lambda)$, but one has the following result.
\end{remark}

\begin{proposition}\label{prop:locallyspatialleftcompletion} Let $Y$ be a small v-stack. Then $D_\et(Y,\Lambda)$ is left-complete. If $Y$ is a locally spatial diamond, then $D_\et(Y,\Lambda)$ is the left-completion of $D(Y_\et,\Lambda)$. In particular, if $Y$ is a locally spatial diamond, the fully faithful embedding
\[
D^+(Y_\et,\Lambda)\subset D(Y_\qproet,\Lambda)
\]
extends to a fully faithful embedding
\[
D_\et(Y,\Lambda)\subset D(Y_\qproet,\Lambda).
\]
\end{proposition}

\begin{proof} The first part is clear. For the rest, the proof is the same as in~\cite[Proposition 5.3.2]{BhattScholze}.
\end{proof}

Containment in these subcategories can be checked on cohomology sheaves.

\begin{proposition}\label{prop:etproetcheckoncohomsheaves} Let $Y$ be a small v-stack and $A\in D(Y,\Lambda)$. Then $A\in D_\et(Y,\Lambda)$ if and only if for all $i\in \mathbb Z$, the v-sheaf $\mathcal H^i(A)[0]\in D_\et(Y,\Lambda)$.
\end{proposition}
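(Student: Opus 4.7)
The plan is to reduce to the case where $Y=X$ is strictly totally disconnected, and then exploit left-completeness together with $t$-exactness of the inclusion $D(X_\et,\Lambda)\hookrightarrow D(X_v,\Lambda)$.

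First, observe that for any map $f\from X\to Y$ of small v-stacks, the pullback functor $f^\ast\from D(Y_v,\Lambda)\to D(X_v,\Lambda)$ is $t$-exact: on the level of abelian sheaves, inverse image along a morphism of sites preserves all finite limits and colimits, so it is exact and its ``derivation'' is the level-wise application. Hence $\mathcal H^i(f^\ast A)\cong f^\ast \mathcal H^i(A)$. Since by Definition~\ref{def:generaletproetderivedcategory} the condition $A\in D_\et(Y,\Lambda)$ (resp.~$D_\qproet(Y,\Lambda)$) is tested on pullbacks to strictly totally disconnected $f\from X\to Y$, and since testing on $\mathcal H^i(A)$ similarly reduces to such $X$, the whole statement reduces to the case $Y=X$ strictly totally disconnected. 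In that case $D_\et(X,\Lambda)=D(X_\et,\Lambda)$ and $D_\qproet(X,\Lambda)=D(X_\qproet,\Lambda)$, embedded fully faithfully in $D(X_v,\Lambda)$ by Proposition~\ref{prop:pullbackfullyfaithfulderived}.

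For the direction $(\Rightarrow)$, the fully faithful inclusion $D(X_\et,\Lambda)\hookrightarrow D(X_v,\Lambda)$ is induced by $\lambda_X^\ast\nu_X^\ast$, both of which are inverse image functors along morphisms of sites and hence exact on abelian sheaves. Thus the inclusion is $t$-exact, and taking $\mathcal H^i$ of $A$ in $D(X_v,\Lambda)$ agrees with taking $\mathcal H^i$ in $D(X_\et,\Lambda)$; in particular each $\mathcal H^i(A)[0]$ lies in $D(X_\et,\Lambda)=D_\et(X,\Lambda)$. The same argument works for the $\qproet$ case.

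For the direction $(\Leftarrow)$, suppose each $\mathcal H^i(A)[0]\in D(X_\et,\Lambda)$. Since $D(X_\et,\Lambda)$ is a triangulated subcategory of $D(X_v,\Lambda)$, closed under the shift and under extensions, the distinguished triangles $\tau^{<i}A\to \tau^{\leq i}A\to \mathcal H^i(A)[-i]\to$ show by induction on the length that every bounded truncation $\tau^{\geq a}\tau^{\leq b}A$ lies in $D(X_\et,\Lambda)$. In particular each $\tau^{\geq -n}A$ lies in $D^+(X_\et,\Lambda)\subset D(X_\et,\Lambda)$. By Proposition~\ref{prop:leftcomplete}(i) the category $D(X_v,\Lambda)$ is left-complete, so $A\cong R\varprojlim_n \tau^{\geq -n}A$ in $D(X_v,\Lambda)$, and by Proposition~\ref{prop:leftcomplete}(iii) the subcategory $D(X_\et,\Lambda)$ is itself left-complete, so this Postnikov limit already lies in $D(X_\et,\Lambda)$; thus $A\in D_\et(X,\Lambda)$. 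The $\qproet$ version is identical, using Proposition~\ref{prop:leftcomplete}(ii) in place of (iii). The only point requiring any vigilance is the $t$-exactness of the pullback at the beginning, but this follows from the purely topos-theoretic fact that inverse image is exact; once that is granted, the statement is a formal consequence of left-completeness and devissage.
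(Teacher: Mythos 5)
Your argument takes essentially the same approach as the paper's---reduce to $X$ strictly totally disconnected, use $t$-exactness of pullback, left-completeness, and d\'evissage---and the reduction step together with the direction $(\Rightarrow)$ are correct. However, there is a gap in $(\Leftarrow)$. The d\'evissage on distinguished triangles shows that every \emph{bounded} truncation $\tau^{\geq a}\tau^{\leq b}A$ lies in $D(X_\et,\Lambda)$, but you then assert ``in particular each $\tau^{\geq -n}A$ lies in $D^+(X_\et,\Lambda)$'' as though this were an instance of the preceding sentence. It is not: $\tau^{\geq -n}A$ is only bounded below, not bounded, so one still has to pass from bounded truncations to bounded-below ones. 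That requires the filtered colimit step $\tau^{\geq -n}A = \varinjlim_m \tau^{\leq m}\tau^{\geq -n}A$, combined with the fact that $D_\et(X,\Lambda)$ (resp.\ $D_\qproet(X,\Lambda)$) is closed under colimits inside $D(X_v,\Lambda)$; this is part of Lemma~\ref{lem:enrichmentexists} (and in the quasi-pro-\'etale case follows because $\lambda_X^\ast$ is a fully faithful left adjoint, so its essential image is closed under colimits). This is exactly the step the paper makes explicit with the sentence ``Then $A=\varinjlim\tau^{\leq n}A$, so one can assume that $A$ is bounded.'' Once you supply that step, your proof goes through and matches the paper's, just assembled from the bottom up (single degree $\to$ bounded $\to$ bounded below $\to$ unbounded) rather than by successive reductions.
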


\begin{proof} As all relevant derived categories are left-complete, one can assume that $A$ is bounded below. Then $A=\varinjlim \tau^{\leq n} A$, so one can assume that $A$ is bounded. As the inclusion $D_\et(Y,\Lambda)\subset D(Y,\Lambda)$ is fully faithful, one can further reduce to the case where $A$ is concentrated in one degree, as desired.
\end{proof}

\section{Analytic adic spaces as diamonds}

There is a natural functor from rigid-analytic varieties over $\mathbb Q_p$ to locally spatial diamonds. In fact, more generally, there is a functor from analytic adic spaces over $\mathbb Z_p$ to locally spatial diamonds. Our goal in this section is to construct this functor, and show that it preserves the \'etale site.

We start with the affinoid case.

\begin{lemma}\label{lem:mapstoaffinoidvsheaf}
\begin{altenumerate}
\item[{\rm (i)}] The functor $\Spd \mathbb Z_p$ taking $X\in \Perf$ to the set of pairs $(X^\sharp,\iota)$ up to isomorphism, where $X^\sharp$ is a perfectoid space and $\iota: (X^\sharp)^\flat\cong X$ is an identification of $X^\sharp$ as an untilt of $X$, is a v-sheaf.
\item[{\rm (ii)}] Let $A$ be a Tate $\mathbb Z_p$-algebra, and let $A^+\subset A$ be an open and integrally closed subring. The functor $\Spd(A,A^+)$ taking $X\in \Perf$ to the set of pairs
\[
((X^\sharp,\iota), f: (A,A^+)\to (\OO_{X^\sharp}(X^\sharp),\OO^+_{X^\sharp}(X^\sharp))
\]
up to isomorphism, where $(X^\sharp,\iota)$ is as in (i), and $f$ is a continuous map of pairs of topological algebras, is a v-sheaf.
\end{altenumerate}
\end{lemma}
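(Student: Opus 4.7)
The plan is to establish (i) first and then to deduce (ii) using the v-sheaf property of $\OO$ and $\OO^+$ from Theorem~\ref{thm:vsub}.

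For (i), I first observe that $\Spd \mathbb{Z}_p$ takes values in sets rather than genuine groupoids: any automorphism of an untilt $X^\sharp$ that induces the identity on the tilt $X$ must act trivially modulo every pseudouniformizer $\varpi^\sharp$ (via the canonical $\OO^+_{X^\sharp}/\varpi^\sharp \cong \OO^+_X/\varpi$), and hence is the identity by $\varpi^\sharp$-adic separatedness. Thus only the equalizer property along v-covers has to be verified. The claim is local on $X$, so I would reduce to a single affinoid v-cover $\tilde X = \Spa(S, S^+) \to X = \Spa(R, R^+)$.

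Next, I would reduce to the case where $X$ is itself totally disconnected by using the affinoid pro-\'etale w-localization $X^\wl \to X$ from Proposition~\ref{prop:wlocalisation}: since v-sheaves are in particular pro-\'etale sheaves, the equalizer property along $\tilde X \to X$ can be checked after pulling back along $X^\wl \to X$ (and similarly for the double and triple fibre products, where one iterates the w-localization argument). With $X$ totally disconnected, I invoke Proposition~\ref{prop:affinoidwlocaldescent}, which asserts v-descent for affinoid perfectoid spaces along v-covers of totally disconnected bases. Concretely, the almost $(S^+/\varpi)^a$-algebra $(S^{\sharp+}/\varpi^\sharp)^a$ with its descent datum descends to an almost $(R^+/\varpi)^a$-algebra via the faithfully flat map $R^+/\varpi \to S^+/\varpi$ of Proposition~\ref{prop:wlocalflat}; the descended almost algebra is perfectoid, hence by the perfectoid-almost equivalence used in the proof of Proposition~\ref{prop:affinoidwlocaldescent}, it lifts to a perfectoid Tate $\mathbb{Z}_p$-algebra $R^\sharp$, and its subring $R^{\sharp+}$ is determined by $R^+$ via Lemma~\ref{RingsOfIntegralElements}. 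By construction, $R^\sharp$ has tilt $R$, so $X^\sharp = \Spa(R^\sharp, R^{\sharp+})$ is an untilt of $X$, establishing v-descent.

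For (ii), I view $\Spd(A, A^+) \to \Spd \mathbb{Z}_p$ as a map of v-presheaves whose fibre over an untilt $(X^\sharp, \iota)$ is the set of continuous ring homomorphisms $(A, A^+) \to (\OO_{X^\sharp}(X^\sharp), \OO^+_{X^\sharp}(X^\sharp))$. Since $\OO$ and $\OO^+$ are v-sheaves by Theorem~\ref{thm:vsub}, the data of such a map v-descends, as does the condition that it be continuous (the topology on $\OO_{X^\sharp}(X^\sharp)$ has a neighbourhood basis of $0$ given by powers of the ideal generated by a pseudouniformizer of $X^\sharp$, which is detectable v-locally). The main technical obstacle will be the interaction of the v-descent with tilting at the level of almost algebras --- specifically, verifying that the descended perfectoid $R^\sharp$ really tilts back to $R$, which requires tracking the compatibility of the tilting equivalence (Corollary~\ref{cor:tiltingequiv}) with almost-faithfully-flat descent. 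The other delicate point is the intermediate reduction to a totally disconnected base: the fibre products $X^\wl \times_X X^\wl$ are not themselves totally disconnected but only pro-\'etale over $X^\wl$, so some iterative care will be needed to close up the equalizer computation.
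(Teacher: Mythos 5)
Your reduction of (ii) to (i) using Theorem~\ref{thm:vsub} is correct and matches the paper, but your argument for (i) has a genuine gap at the central step.

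You propose to descend the almost $(S^+/\varpi)^a$-algebra $(S^{\sharp+}/\varpi^\sharp)^a$ along the faithfully flat map $R^+/\varpi \to S^+/\varpi$, and then lift the result to a perfectoid Tate $\mathbb{Z}_p$-algebra via the perfectoid-almost equivalence. The trouble is that $(S^{\sharp+}/\varpi^\sharp)^a$ is \emph{canonically} isomorphic to $(S^+/\varpi)^a$ itself --- this is built into the definition of the tilt (cf.~Lemma~\ref{RingsOfIntegralElements}). So the object you descend is just $(S^+/\varpi)^a$, which descends trivially to $(R^+/\varpi)^a$; and the perfectoid-almost equivalence of \cite[Theorem 5.2]{ScholzePerfectoidSpaces}, applied to $(R^+/\varpi)^a$ over the characteristic-$p$ base $R$, lifts it back to $R$ --- not to an untilt. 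The untilt is precisely a deformation of $R^+$ from characteristic $p$ to mixed characteristic, and that deformation is invisible modulo a pseudouniformizer. Your argument never sees it.

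The paper's proof handles the deformation explicitly via Witt vectors. One forms the canonical map $W(R^+)\to W(S^+)\buildrel\theta\over\to S^{\sharp+}$, chooses the pseudouniformizer $\varpi$ so that $f([\varpi])$ divides $p$ (hence $S^+/\varpi\cong S^{\sharp+}/f([\varpi])$), and then identifies $(R^+/\varpi)^a$ with the equalizer of $(S^{\sharp+}/f([\varpi]))^a\rightrightarrows(T^{\sharp+}/f([\varpi]))^a$ using Theorem~\ref{thm:vsub} together with the vanishing of higher $H^i_v(\OO^+)^a$ (Proposition~\ref{prop:vcohomO}). That identification, together with the five lemma and the cohomology vanishing, lifts inductively to $W(R^+)/[\varpi]^n$ for all $n$, producing in the limit a primitive degree-$1$ element $\xi$ with $(W(R^+)/\xi)^a=\eq(S^{\sharp+a}\rightrightarrows T^{\sharp+a})$, hence the desired untilt $R^\sharp$. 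It is this inductive lifting argument in the Witt vectors that your proposal is missing. Incidentally, the detour through the w-localization $X^\wl\to X$ is unnecessary: the sheaf and cohomology-vanishing inputs hold for arbitrary affinoid $X$, so one can argue directly over any affinoid v-cover, and the complications you flag about iterated fibre products of $X^\wl$ never arise.
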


\begin{proof} Part (ii) follows from part (i) and Theorem~\ref{thm:vsub}. For part (i), we first have to see that this in fact defines a presheaf: It is not a priori clear what the maps are. But if $f: X^\prime\to X$ is a map of perfectoid spaces and $X^\sharp$ is an untilt of $X$, then by Corollary~\ref{cor:tiltingequiv}, perfectoid spaces over $X$ are equivalent to perfectoid spaces over $X^\sharp$; then we define $(X^\prime)^\sharp$ to be the perfectoid space over $X^\sharp$ corresponding to $X^\prime\to X$. Also, note that pairs $(X^\sharp,\iota)$ have no automorphisms, by the same argument.

To see that this is a v-sheaf, we have to see that if $X=\Spa(R,R^+)$ is an affinoid perfectoid space of characteristic $p$ with a v-cover $Y = \Spa(S,S^+)\to X$, and $Y^\sharp=\Spa(S^\sharp,S^{\sharp +})$ is an untilt of $Y$ such that the two corresponding untilts of $Z:=Y\times_X Y=\Spa(T,T^+)$ agree, then there is a unique untilt $X^\sharp=\Spa(R^\sharp,R^{\sharp +})$ whose pullback to $Y$ is $Y^\sharp$.

Fix a pseudouniformizer $\varpi\in R$, and consider the map $f: W(R^+)\to W(S^+)\to S^{\sharp +}$, where the second map is Fontaine's map $\theta: W(S^+)\to S^{\sharp +}$. Then $f([\varpi])\in S^{\sharp +}$ is a pseudouniformizer. In particular, since $p$ is topologically nilpotent in $S^\sharp$, there is some $N$ such that $f([\varpi])$ divides $p^N$. Replacing $\varpi$ by a $p$-power root, we may assume that $N=1$. In that case, $S^+/\varpi\cong S^{\sharp +}/f([\varpi])$, and similarly for $T^+$. It follows that
\[
(R^+/\varpi)^a=\eq((S^+/\varpi)^a\rightrightarrows (T^+/\varpi)^a) = \eq((S^{\sharp +}/f([\varpi]))^a\rightrightarrows (T^{\sharp +}/f([\varpi]))^a)\ .
\]
Moreover, extending to a long exact sequence, the higher cohomology groups vanish by Theorem~\ref{thm:vsub}. We see that
\[
(W(R^+)/[\varpi])^a\to \eq((S^{\sharp +}/f([\varpi]))^a\rightrightarrows (T^{\sharp +}/f([\varpi]))^a)
\]
is surjective, with kernel generated by $p$. Now the vanishing of higher cohomology groups and the five lemma imply inductively that one gets surjective maps
\[
(W(R^+)/[\varpi]^n)^a\to \eq((S^{\sharp +}/f([\varpi])^n)^a\rightrightarrows (T^{\sharp +}/f([\varpi])^n)^a)
\]
for all $n\geq 1$, whose kernel is generated by an element $\xi\in W(R^+)$ with $\xi\equiv p\mod [\varpi]$. In the limit, one gets an isomorphism
\[
(W(R^+)/\xi)^a = \eq(S^{\sharp + a}\rightrightarrows T^{\sharp + a})\ .
\]
In particular, $R^\sharp := (W(R^+)/\xi)[[\varpi]^{-1}]$ is a perfectoid Tate ring which satisfies
\[
R^\sharp = \eq(S^\sharp\rightrightarrows T^\sharp)\ ,
\]
which is an untilt of $R$, and then comes with a canonical $R^{\sharp +}\subset R^\sharp$ determined by $R^+\subset R$. It follows that $X^\sharp=\Spa(R^\sharp,R^{\sharp +})$ is the desired untilt.
\end{proof}

In case $A$ is a perfectoid Tate ring, it is easy to identify $\Spd(A,A^+)$.

\begin{lemma}\label{lem:spdperfectoid} Let $A$ be a perfectoid Tate ring with an open and integrally closed subring $A^+\subset A$. Then $\Spd(A,A^+)$ is representable by the affinoid perfectoid space $\Spa(A^\flat,A^{\flat +})$.
\end{lemma}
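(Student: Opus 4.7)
The plan is to construct a natural isomorphism between the two v-sheaves by using the tilting equivalence, with the affinoid case being the heart of the matter.

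First, I would define a natural transformation $\Spa(A^\flat,A^{\flat+})\to \Spd(A,A^+)$. Given $X\in\Perf$ with a map to $\Spa(A^\flat,A^{\flat+})$, one obtains locally a continuous map of pairs $(A^\flat,A^{\flat+})\to (\OO_X(X),\OO_X^+(X))$, realizing $\OO_X(X)$ as a perfectoid $A^\flat$-algebra. By Theorem~\ref{thm:tiltingequivalg}, this corresponds uniquely to a perfectoid $A$-algebra $B^\sharp$ with $B^{\sharp\flat}=\OO_X(X)$, and by Lemma~\ref{RingsOfIntegralElements} the open and integrally closed subring $\OO_X^+(X)\subset\OO_X(X)$ corresponds to a unique open and integrally closed $B^{\sharp+}\subset B^\sharp$. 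The global version follows from Corollary~\ref{cor:tiltingequiv}, which gives the untilt $X^\sharp$ equipped with a map $(A,A^+)\to (\OO_{X^\sharp}(X^\sharp),\OO^+_{X^\sharp}(X^\sharp))$.

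Next, I would construct the inverse transformation $\Spd(A,A^+)\to \Spa(A^\flat,A^{\flat+})$. Given data $((X^\sharp,\iota),f)$ with $f:(A,A^+)\to(\OO_{X^\sharp}(X^\sharp),\OO^+_{X^\sharp}(X^\sharp))$, tilting yields a map of perfectoid Tate rings $A^\flat\to \OO_{X^\sharp}(X^\sharp)^\flat = \OO_X(X)$ (via Lemma~\ref{lem:tiltingexists}) compatible with the respective subrings of integral elements by Lemma~\ref{RingsOfIntegralElements}. This defines the desired map $X\to\Spa(A^\flat,A^{\flat+})$.

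Both constructions are clearly functorial in $X$ and inverse to each other: on affinoid $X=\Spa(B,B^+)$, each is a direct translation of the data under the equivalence ``perfectoid $A$-algebras $\leftrightarrow$ perfectoid $A^\flat$-algebras'' of Theorem~\ref{thm:tiltingequivalg}, refined to include rings of integral elements via Lemma~\ref{RingsOfIntegralElements}. Since both $\Spa(A^\flat,A^{\flat+})$ (via Theorem~\ref{thm:vsub}) and $\Spd(A,A^+)$ (by Lemma~\ref{lem:mapstoaffinoidvsheaf}) are v-sheaves, and any perfectoid space has an affinoid v-cover, it suffices to verify the bijection on affinoid perfectoid $X$, where it follows from the tilting equivalence.

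The main subtlety, rather than an obstacle, is bookkeeping: one must ensure that under the tilting equivalence, the structure map from $A$ (resp.\ $A^\flat$) is preserved, and that $B^+$ and $B^{\sharp+}$ correspond compatibly under the bijection of Lemma~\ref{RingsOfIntegralElements} so that the maps $A^+\to B^{\sharp+}$ and $A^{\flat+}\to B^+$ match via the identity $(B^{\sharp+}/\varpi^\sharp)^a \cong (B^+/\varpi)^a$ (and similarly for $A$). Once this compatibility is in place, the two constructions are seen to be mutually inverse, establishing the representability.
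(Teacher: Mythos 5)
Your proposal is correct and follows essentially the same approach as the paper, which simply says this is a direct consequence of Corollary~\ref{cor:tiltingequiv}; you have merely spelled out the bookkeeping that the paper leaves implicit, via Theorem~\ref{thm:tiltingequivalg} and Lemma~\ref{RingsOfIntegralElements} in the affinoid case and Corollary~\ref{cor:tiltingequiv} globally.
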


\begin{proof} This is a direct consequence of Corollary~\ref{cor:tiltingequiv}.
\end{proof}

To understand $\Spd(A,A^+)$ in general, we use the following general lemma.

\begin{lemma}\label{lem:profiniteetalecover} Let $A$ be any ring. Then one can find a cofiltered inverse system of finite groups $G_i$ with surjective transition maps, and a compatible filtered direct system of finite \'etale $G_i$-torsors $A\to A_i$ such that $A_\infty = \varinjlim_i A_i$ has no nonsplit finite \'etale covers.

If $A$ is a Tate $\mathbb Z_p$-algebra, then the uniform completion $\widehat{A_\infty}$ of $A_\infty$ is perfectoid.
\end{lemma}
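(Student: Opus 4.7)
The plan is to construct $A_\infty$ as a ``profinite universal cover'' of $A$ built from all pointed finite \'etale Galois covers, and then to prove perfectoidness of the uniform completion by combining an \'etale Artin--Schreier-type polynomial (producing $p$-th power approximations modulo a pseudouniformizer) with an iteration that upgrades these approximations modulo $p$.

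To construct $A_\infty$, I would form a skeletally small cofiltered category $\mathcal I$ whose objects are triples $(G_i,A_i,\tau_i)$ with $A\to A_i$ a finite \'etale $G_i$-torsor and $\tau_i$ a pointing (e.g.\ a choice, for each connected component of $\Spec A$, of a connected component of $\Spec A_i$ above it). A morphism consists of a surjection $G_j\twoheadrightarrow G_i$ together with a compatible $A$-algebra inclusion $A_i\hookrightarrow A_j$ respecting the pointings; cofilteredness is the standard observation that the component of $\Spec(A_i\otimes_A A_j)$ picked out by the combined pointing is a torsor for some subgroup $H\subset G_i\times G_j$ surjecting onto each factor. Let $A_\infty=\varinjlim_{\mathcal I^{\mathrm{op}}}A_i$. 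Any finite \'etale cover of $A_\infty$ descends (by standard descent of finite \'etale algebras from filtered colimits of rings) to a finite \'etale cover of some $A_i$; taking its Galois closure at a further stage $(G_k,A_k,\tau_k)\in\mathcal I$ splits it into a product of copies of $A_k$, so it is trivial after base change to $A_\infty$.

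For the perfectoidness assertion I would first treat the case $p\in A^\times$: any pseudouniformizer $\varpi\in A$ has $X^{p^n}-\varpi$ finite \'etale (its discriminant, up to a unit, is a power of $p\varpi$, hence a unit), so $\varpi^{1/p^n}\in A_\infty$ for all $n$. Given $a\in A_\infty^\circ$, the polynomial $Y^p-\varpi Y-a$ is likewise \'etale (the discriminant is a unit times $\varpi^p$) and so splits in $A_\infty$; the Newton polygon forces any root $y$ to lie in $A_\infty^\circ$ and yields $y^p\equiv a\pmod\varpi$. Iterating produces $y_0,\ldots,y_{p-1}\in A_\infty^\circ$ with $a\equiv\sum_{i=0}^{p-1}\varpi^i y_i^p\pmod{\varpi^p}$, and hence modulo $p$ since $\varpi^p\mid p$ in $A^\circ$. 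Writing $\varpi^i y_i^p=(\varpi^{i/p}y_i)^p$ and using $(x+y)^p\equiv x^p+y^p\pmod p$, the sum collapses to a single $p$-th power: $z:=\sum_{i=0}^{p-1}\varpi^{i/p}y_i\in A_\infty^\circ$ satisfies $z^p\equiv a\pmod p$. Thus $\Phi\colon A_\infty^\circ/p\to A_\infty^\circ/p$ is surjective, so by Remark~\ref{rem:defperfectoid} the uniform completion $\widehat{A_\infty}$ (whose unit ball is the $\varpi$-adic completion of $A_\infty^\circ$) is a perfectoid Tate ring.

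The main obstacle is the case where $p$ is not a unit in $A$, and in particular the characteristic-$p$ situation: then $\varpi^{1/p}$ cannot come from any finite \'etale extension, and indeed for $A=\F_p\laurentseries{t}$ the separable closure has non-$p$-divisible value group $\Z_{(p)}$, so the naive construction never produces a perfect completion. I expect this case to be addressed either by implicitly restricting to rings where $p$ is invertible (as in the applications to rigid-analytic spaces over $\Q_p$), or by enlarging the system $\{A_i\}$ beyond purely \'etale covers so that $\widehat{A_\infty}$ contains the $p$-power roots of a pseudouniformizer, after which the argument of the previous paragraph applies verbatim.
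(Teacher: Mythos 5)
Your construction of $A_\infty$ as a colimit over pointed Galois covers is correct and matches the standard approach, and your use of Artin--Schreier type polynomials to produce approximate $p$-th powers is exactly the engine of the paper's proof. The genuine gap is in your final caveat about the case where $p$ is not a unit: your counterexample is wrong, and neither of the escape routes you envision is needed. The separable closure of $\F_p\laurentseries{t}$ does \emph{not} have value group $\Z_{(p)}$ --- the Artin--Schreier equation $x^p - x = t^{-1}$ is separable, hence finite \'etale, and its roots have valuation $-1/p$, so the value group is already $p$-divisible; in fact the completion of the separable closure of any nonarchimedean field is algebraically closed (Krasner), hence perfect. So finite \'etale covers do suffice in all characteristics, with no restriction to $p\in A^\times$ and no enlargement of the system beyond \'etale covers.

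What actually fails in your proposal is the use of a Kummer cover in the first step: $X^{p^n}-\varpi$ has discriminant a unit times $p^{np^n}$, so it is finite \'etale only when $p$ is a unit. The paper instead solves the Artin--Schreier type equation $x^{p^n} - \varpi_0 x = \varpi_0$ for a pseudouniformizer $\varpi_0$ with $\varpi_0\mid p^n$; its discriminant is, up to a unit in $R^\circ$ and a term of strictly larger valuation, $\varpi_0^{p^n}$, which is a unit in the Tate ring $R$ (though topologically nilpotent), so the polynomial is finite \'etale in all characteristics including characteristic $p$. The resulting root $\varpi$ satisfies $\varpi^{p^n}\mid\varpi_0\mid p^n$ up to a unit, whence $\varpi^p\mid p$ once $p^{n-1}\geq n$. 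For the surjectivity of $\Phi\colon R^\circ/\varpi\to R^\circ/\varpi^p$, the paper likewise uses a single Artin--Schreier polynomial $x^p - \varpi^p x = f$, whose root $x\in R^\circ$ already gives $x^p\equiv f\pmod{\varpi^p R^\circ}$ in one stroke; your iteration-and-collapse via $(x+y)^p\equiv x^p+y^p\pmod p$ is correct in the unit-$p$ case but superfluous, and the single polynomial works uniformly.
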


Results of this type were proved for example by Colmez, \cite{ColmezEspaces}, and Faltings, \cite{FaltingsAlmostEtaleExtensions}.

\begin{proof} The first part is standard. To see that $R=\widehat{A_\infty}$ is perfectoid we find first a pseudo-uniformizer $\varpi\in R$ such that $\varpi^p\vert p$ in $R^\circ$. To do this, let $\varpi_0\in R$ be any pseudo-uniformizer.  Let $n$ be large enough so that $\varpi_0\vert p^n$.  Now look at the equation $x^{p^n}-\varpi_0x=\varpi_0$. This determines a finite \'etale $R$-algebra, and so it admits a solution $x=\varpi\in R$.  Note that $\varpi^{p^n}\vert\varpi_0$ in $R^{\circ}$, and $\varpi$ is a unit in $R$. As $p^{n-1}\geq n$, we see that $\varpi^p\vert p$, as desired.

Now we must check that $\Phi\from R^\circ/\varpi\to R^\circ/\varpi^p$ is surjective. Let $f\in R^{\circ}$, and consider the equation $x^p-\varpi^p x - f$. This determines a finite \'etale $R$-algebra, which consequently has a section, i.e.~there is some $x\in R^\circ$ with $x^p-\varpi^p x=f$. But then $x^p\equiv f\pmod{\varpi^p R^\circ}$, as desired.
\end{proof}

Now we can prove that $\Spd(A,A^+)$ is always a spatial diamond.

\begin{proposition}\label{prop:spdspatial} Let $A$ be a Tate $\mathbb Z_p$-algebra with an open and integrally closed subring $A^+\subset A$, and choose a cofiltered inverse system of finite groups $G_i$ with surjective transition maps, and a compatible filtered direct system of finite \'etale $G_i$-torsors $A\to A_i$ as in Lemma~\ref{lem:profiniteetalecover}. Let $A_i^+\subset A_i$ be the integral closure of $A^+$. Also, let $\widehat{A_\infty}^+\subset \widehat{A_\infty}$ be the closure of $\varinjlim A_i^+$ in $\widehat{A_\infty}$.

Then $\Spd(A_i,A_i^+)\to \Spd(A,A^+)$ is a $G_i$-torsor of v-sheaves, and
\[
\Spd(\widehat{A_\infty},\widehat{A_\infty}^+) = \varprojlim_i \Spd(A_i,A_i^+)\to \Spd(A,A^+)
\]
is a $\underline{G}$-torsor, where $G=\varprojlim_i G_i$. In particular,
\[
\Spa(\widehat{A_\infty}^\flat,\widehat{A_\infty}^{\flat +}) = \Spd(\widehat{A_\infty},\widehat{A_\infty}^+)\to \Spd(A,A^+)
\]
is a universally open qcqs quasi-pro-\'etale map from an affinoid perfectoid space, so that $\Spd(A,A^+)$ is a spatial diamond, with
\[
|\Spd(A,A^+)| = |\Spa(\widehat{A_\infty}^\flat,\widehat{A_\infty}^{\flat +})|/G = |\Spa(\widehat{A_\infty},\widehat{A_\infty}^+)|/G = |\Spa(A,A^+)|\ .
\]

\end{proposition}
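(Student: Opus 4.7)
First I would establish that each map $\Spd(A_i, A_i^+) \to \Spd(A, A^+)$ is a $G_i$-torsor of v-sheaves. Given a perfectoid space $X$ with a map to $\Spd(A, A^+)$, classifying an untilt $X^\sharp$ and a continuous morphism $(A, A^+) \to (\OO_{X^\sharp}(X^\sharp), \OO_{X^\sharp}^+(X^\sharp))$, the pullback $\Spd(A_i, A_i^+) \times_{\Spd(A, A^+)} X$ parametrises factorisations through $(A_i, A_i^+)$. Because $A \to A_i$ is a finite \'etale $G_i$-Galois extension, the base change $A_i \otimes_A \OO_{X^\sharp}(X^\sharp)$ is a finite \'etale $G_i$-torsor on $X^\sharp$ (with the integral structure on $A_i^+$ governed by almost purity, Theorem~\ref{thm:almostpurity}). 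This tilts under Corollary~\ref{cor:tiltingequiv} to a finite \'etale $G_i$-torsor on $X$, which is \'etale locally (hence v-locally) split.

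Next I would identify $\varprojlim_i \Spd(A_i, A_i^+) = \Spd(\widehat{A_\infty}, \widehat{A_\infty}^+)$: a compatible family of continuous morphisms $(A_i, A_i^+) \to (\OO_{X^\sharp}(X^\sharp), \OO_{X^\sharp}^+(X^\sharp))$ yields a continuous morphism out of $\varinjlim_i A_i$ that extends uniquely to the uniform completion $\widehat{A_\infty}$, since $\OO_{X^\sharp}(X^\sharp)$ is uniform and complete; integrality and closedness of $\OO_{X^\sharp}^+(X^\sharp)$ force the closure $\widehat{A_\infty}^+$ of $\varinjlim_i A_i^+$ to land inside it. Because $\widehat{A_\infty}$ is perfectoid (Lemma~\ref{lem:profiniteetalecover}), Lemma~\ref{lem:spdperfectoid} represents $\Spd(\widehat{A_\infty}, \widehat{A_\infty}^+)$ by the affinoid perfectoid space $X := \Spa(\widehat{A_\infty}^\flat, \widehat{A_\infty}^{\flat +})$. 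Passing to the inverse limit of $G_i$-torsors then yields a $\underline{G}$-torsor $X \to Y := \Spd(A, A^+)$ with $G = \varprojlim_i G_i$ profinite.

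By Lemma~\ref{lem:gtorsoroverperfectoid}, applied after base change to any perfectoid space mapping to $Y$, this $\underline{G}$-torsor is representable in pro-\'etale perfectoid spaces, qcqs, universally open, and surjective as a map of v-sheaves; in particular it is quasi-pro-\'etale. Its induced equivalence relation is $R = X \times \underline{G}$, a pro-\'etale equivalence relation on $X$, so $Y = X/R$ is a diamond by Proposition~\ref{prop:diamondqproetsurj}. Since $X$ and $R$ are qcqs, so is $Y$. The converse direction of Proposition~\ref{prop:spatialunivopen} then promotes $Y$ to a spatial diamond.

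For the topological identification, the universally open quasi-pro-\'etale surjection forces $|Y| = |X|/G$, and Theorem~\ref{thm:tiltingtopspace} gives $|X| = |\Spa(\widehat{A_\infty}, \widehat{A_\infty}^+)|$. The remaining identity $|\Spa(\widehat{A_\infty}, \widehat{A_\infty}^+)|/G = |\Spa(A, A^+)|$ follows from Lemma~\ref{lem:quotientmap}: the projection is spectral and generalizing (as a map of analytic adic spaces) and surjective (each $\Spa(A_i, A_i^+) \to \Spa(A, A^+)$ is surjective since $A_i$ is finite over $A$, and surjectivity passes to the limit by Lemma~\ref{lem:spectralinvlimit}), hence a quotient map; its fibres are precisely $G$-orbits by the standard Galois theory of valuations, which applies because $A_\infty$ admits no nonsplit finite \'etale covers. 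The main obstacle will be this last orbit identification for valuations, which is the substantive content bridging the diamond-theoretic quotient with the classical adic spectrum.
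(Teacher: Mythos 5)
Your proposal is correct and follows essentially the same route as the paper: almost purity gives the $G_i$-torsor at each finite level, Lemma~\ref{lem:spdperfectoid} and Lemma~\ref{lem:profiniteetalecover} identify the limit with a perfectoid space, Lemma~\ref{lem:gtorsoroverperfectoid} and Proposition~\ref{prop:spatialunivopen} give the spatial diamond, and the topological identification comes from passing a Galois-quotient statement through the inverse limit. One small remark: the paper handles the last step more cleanly by first recording $|\Spa(A_i,A_i^+)|/G_i = |\Spa(A,A^+)|$ for each finite $i$ and then invoking the limit, whereas your direct appeal to ``Galois theory of valuations'' at the infinite level with the justification that $A_\infty$ has no nonsplit finite \'etale covers is a red herring --- that hypothesis is what makes $\widehat{A_\infty}$ perfectoid (Lemma~\ref{lem:profiniteetalecover}), but the orbit identification holds for \emph{any} cofiltered tower of finite Galois covers by a standard compactness argument in $G = \varprojlim G_i$.
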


\begin{proof} It follows from Theorem~\ref{thm:almostpurity} that $\Spd(A_i,A_i^+)\to \Spd(A,A^+)$ is a $G_i$-torsor. The rest, except for the identification of the topological space, follows formally from Lemma~\ref{lem:profiniteetalecover}, Lemma~\ref{lem:spdperfectoid}, Lemma~\ref{lem:gtorsoroverperfectoid} and Proposition~\ref{prop:spatialunivopen}.

For the identification of the topological space, only the last identification needs justification. But
\[
|\Spa(\widehat{A_\infty},\widehat{A_\infty}^+)| = \varprojlim_i |\Spa(A_i,A_i^+)|\ ,
\]
and for each $i$, $|\Spa(A_i,A_i^+)|/G_i = |\Spa(A,A^+)|$. Passing to the limit gives the result.
\end{proof}

One can now glue the functor $(A,A^+)\mapsto \Spd(A,A^+)$, and pass to adic spaces. Indeed, if $U\subset \Spa(A,A^+)$ is a rational open subset, then
\[
\Spd(\OO(U),\OO^+(U))\to \Spd(A,A^+)
\]
is the open subfunctor of $\Spd(A,A^+)$ corresponding to the open subset $U\subset |\Spa(A,A^+)|=|\Spd(A,A^+)|$.

\begin{definition}\label{def:diamondadicspace} Let $Y$ be an analytic adic space over $\mathbb Z_p$. The diamond associated with $Y$ is the v-sheaf defined by
\[
Y^\diamondsuit: X\mapsto \{((X^\sharp,\iota),f: X^\sharp\to Y)\}/\cong\ ,
\]
where $X^\sharp$ is a perfectoid space with an isomorphism $\iota: (X^\sharp)^\flat\cong X$.
\end{definition}

We note that this definition (and the next lemma) work in the generality of adic spaces as defined in \cite[Section 2.1]{ScholzeWeinstein}.

\begin{lemma}\label{lem:diamondadicspace} Let $Y$ be an analytic adic space over $\mathbb Z_p$. Then $Y^\diamondsuit$ is a locally spatial diamond, with $|Y^\diamondsuit|=|Y|$. Moreover, with $Y_\et$ and $Y_\fet$ defined as in \cite[Definition 8.2.19]{KedlayaLiu1}, one has equivalences of sites $Y^\diamondsuit_\et\cong Y_\et$ and $Y^\diamondsuit_\fet\cong Y_\fet$.
\end{lemma}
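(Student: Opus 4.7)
The plan is to reduce everything to the affinoid case, where Proposition~\ref{prop:spdspatial} already carries most of the load, and then to reconstruct $Y_{\et}$ and $Y_{\fet}$ on the diamond side using almost purity plus descent along the pro-finite-\'etale $\underline{G}$-torsor
\[
\Spa(\widehat{A_\infty}^\flat,\widehat{A_\infty}^{\flat+})=\Spd(\widehat{A_\infty},\widehat{A_\infty}^+)\longrightarrow \Spd(A,A^+).
\]

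First I would check that the construction $Y\mapsto Y^{\diamondsuit}$ glues. For a rational open $U\subset\Spa(A,A^+)$, the inclusion $\OO(U)\leftarrow A$ identifies $\Spd(\OO(U),\OO^+(U))$ with the open sub-v-sheaf of $\Spd(A,A^+)$ cut out by $|U|\subset|\Spa(A,A^+)|$ (using Proposition~\ref{prop:spdspatial} to identify the topological space in the affinoid case, and Proposition~\ref{prop:vtopspaceopensubsets} to read off open subfunctors from the underlying space). Hence the affinoid pieces $\Spd(A,A^+)$ glue along the rational opens of a cover of $Y$ to produce $Y^{\diamondsuit}$. Proposition~\ref{prop:spdspatial} provides the spatial diamond structure on each piece together with $|\Spd(A,A^+)|=|\Spa(A,A^+)|$; gluing and Definition~\ref{def:spatial} then give that $Y^{\diamondsuit}$ is a locally spatial diamond with $|Y^{\diamondsuit}|=|Y|$.

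Next, both $Y_{\et}$ and $Y^{\diamondsuit}_{\et}$ have a common basis of opens: by \cite[Proposition 8.2.20]{KedlayaLiu1} (respectively Lemma~\ref{lem:spatialetalelocallysep}), any \'etale morphism is locally a composite of open immersions and finite \'etale maps. Since the open immersion part is already handled by $|Y^{\diamondsuit}|=|Y|$, the crux is to produce a natural equivalence $Y_{\fet}\cong Y^{\diamondsuit}_{\fet}$ functorially in affinoid $Y=\Spa(A,A^+)$. For the functor $Y_{\fet}\to Y^{\diamondsuit}_{\fet}$ one sends a finite \'etale $A\to B$ to $\Spd(B,B^+)$; since $B\otimes_A\widehat{A_\infty}$ is finite \'etale over the perfectoid ring $\widehat{A_\infty}$, the almost purity theorem (Theorem~\ref{thm:almostpurity}) makes it perfectoid, so $\Spd(B,B^+)\times_{\Spd(A,A^+)}\Spa(\widehat{A_\infty}^\flat,\widehat{A_\infty}^{\flat+})$ is the tilt of a finite \'etale perfectoid cover, hence finite \'etale; by Proposition~\ref{prop:checksheafpropvloc}~(iii) this means $\Spd(B,B^+)\to\Spd(A,A^+)$ is finite \'etale in the diamond sense.

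The main obstacle is the converse, essential surjectivity of $Y_{\fet}\to Y^{\diamondsuit}_{\fet}$. Given a finite \'etale map $Z\to \Spd(A,A^+)$, pull back along the $\underline{G}$-torsor to obtain a finite \'etale cover $\tilde Z\to \Spa(\widehat{A_\infty}^\flat,\widehat{A_\infty}^{\flat+})$. By Theorem~\ref{thm:tiltingetalesite} this is the tilt of a finite \'etale perfectoid $\widehat{A_\infty}$-algebra, hence by almost purity (Theorem~\ref{thm:almostpurity}, part~(ii)) corresponds to a finite \'etale $\widehat{A_\infty}$-algebra $\tilde B$; the descent datum along $\underline{G}$ translates into a continuous semilinear $G$-action on $\tilde B$. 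Using that the transition maps $A_i\to\widehat{A_\infty}$ are faithfully flat with Galois group $G_i=G/H_i$, a standard continuity/compactness argument (each element of $\tilde B$ lies in a finite-dimensional $G$-stable subspace, hence is fixed by some open $H_i$) shows $\tilde B=\varinjlim_i B_i$ with $B_i=\tilde B^{H_i}$ finite \'etale over $A_i$; taking $G_i$-invariants yields a finite \'etale $A$-algebra $B=B_i^{G_i}$, independent of $i$, whose associated diamond recovers $Z$. Fully faithfulness follows from the same descent, now applied to $\Hom$-sets, which reduces to Corollary~\ref{cor:tiltingequiv} plus faithfully flat descent along $A\to\widehat{A_\infty}$. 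Having identified $Y_{\fet}\cong Y^{\diamondsuit}_{\fet}$ and the open-immersion parts, gluing the local factorizations gives $Y_{\et}\cong Y^{\diamondsuit}_{\et}$, compatibly with coverings on both sides.
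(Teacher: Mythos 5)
Your overall strategy matches the paper's: reduce to the affinoid case, read off the spatial-diamond structure and underlying space from Proposition~\ref{prop:spdspatial}, handle the finite \'etale sites via the $\underline{G}$-torsor $\Spd(\widehat{A_\infty},\widehat{A_\infty}^+)\to\Spd(A,A^+)$, and then reassemble the full \'etale sites from open immersions plus finite \'etale maps. However, the crucial step of essential surjectivity of $Y_\fet\to Y^\diamondsuit_\fet$ has a genuine gap.

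Your parenthetical justification --- ``each element of $\tilde B$ lies in a finite-dimensional $G$-stable subspace, hence is fixed by some open $H_i$'' --- is the argument for \emph{discrete} continuous $G$-modules, and it fails here. The algebra $\tilde B$ is a finite $\widehat{A_\infty}$-module equipped with a continuous \emph{semilinear} $G$-action on a topological (Banach) module; its elements are generally not smooth vectors for $G$, and finite-dimensionality over the coefficient field makes no sense since $\widehat{A_\infty}$ itself carries a nontrivial $G$-action. (Already for $\tilde B = \widehat{A_\infty}$ itself, no element of $\widehat{A_\infty}\setminus\bigcup_i A_i$ is fixed by an open subgroup.) Consequently the asserted identification $\tilde B=\varinjlim_i \tilde B^{H_i}$ with $\tilde B^{H_i}$ finite \'etale over $A_i$ is unjustified.

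The correct input, which the paper invokes, is the approximation theorem of Elkik and Gabber--Ramero, in the form
\[
(\widehat{A_\infty})_\fet \;=\; \text{2-}\varinjlim_i\,(A_i)_\fet
\]
(cf.\ \cite[Lemma 7.5~(i)]{ScholzePerfectoidSpaces}, used in Proposition~\ref{prop:etmaptolim}~(i)). This immediately says that $\tilde B$ arises by base change from a finite \'etale $A_i$-algebra $B_i$ for some $i$ --- not by taking $H_i$-invariants. One then applies the analogous $\text{2-}\varinjlim$ statements over the fibre products $C^0(G,\widehat{A_\infty})$ and $C^0(G\times G,\widehat{A_\infty})$ to descend the cocycle of the descent datum to some finite level, and finally invokes ordinary finite \'etale descent along the finite \'etale $G_i$-torsor $A\to A_i$. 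Once you replace the continuity argument by this appeal to Elkik/Gabber--Ramero, your proof coincides with the paper's.
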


\begin{proof} All statements reduce readily to the affinoid case $Y=\Spa(A,A^+)$, where the first part follows from Proposition~\ref{prop:spdspatial}. Now, for the equivalence of finite \'etale sites, note that by \cite[Lemma 8.2.17]{KedlayaLiu1}, we need to see that the category of finite \'etale $A$-algebras is equivalent to (the opposite of) $Y^\diamondsuit_\fet$. Choosing a $\underline{G}$-torsor as in Proposition~\ref{prop:spdspatial}, this follows from the equivalence
\[
(\widehat{A}_\infty)_\fet = \text{2-}\varinjlim_i (A_i)_\fet
\]
from \cite[Lemma 7.5 (i)]{ScholzePerfectoidSpaces} (and similar results
\[
(C^0(G,\widehat{A}_\infty))_\fet = \text{2-}\varinjlim_i (C^0(G_i,A_i))_\fet\ ,
\]
\[
(C^0(G\times G,\widehat{A}_\infty))_\fet = \text{2-}\varinjlim_i (C^0(G_i\times G_i,A_i))_\fet
\]
for the algebras corresponding to the fibre products) and usual descent along the finite \'etale $G_i$-torsors $A\to A_i$.

Finally, the case of the \'etale site follows by combining the description of the topological space with the finite \'etale case, as by Lemma~\ref{lem:spatialetalelocallysep} resp.~\cite[Definition 8.2.16]{KedlayaLiu1}, in both cases an \'etale map is locally given by a composite of a quasicompact open immersion and a finite \'etale map.
\end{proof}

\section{General base change results}

In this section, we establish some general base change results. We will first prove results comparing \'etale, pro-\'etale and v-pushforwards, and then deduce more classical base change results.

\begin{theorem}\label{thm:vproetbasechange} Let $f: Y^\prime\to Y$ be a map of locally spatial diamonds, and consider the diagram of sites
\[\xymatrix{
Y^\prime_v\ar[d]_{f_v}\ar[r]^{\lambda_{Y^\prime}} & Y^\prime_\qproet\ar[d]^{f_\qproet}\\
Y_v\ar[r]^{\lambda_Y} & Y_\qproet.
}\]
Let $\mathcal F$ be a small sheaf of abelian groups on $Y^\prime_\qproet$ that comes via pullback from $Y^\prime_\et$. Then the base change morphism
\[
\lambda_Y^\ast R^i f_{\qproet\ast} \mathcal F\to R^i f_{v\ast} \lambda_{Y^\prime}^\ast \mathcal F
\]
is an isomorphism in the following cases:
\begin{altenumerate}
\item if $i=0$, or
\item for all $i\geq 0$ if $f$ is quasi-pro-\'etale, or
\item for all $i\geq 0$ if there is some integer $n$ prime to $p$ such that $n\mathcal F=0$.
\end{altenumerate}

Moreover, under the same conditions, if $Y=X$ and $\tilde{X}\in X_v$ are strictly totally disconnected and $\tilde{X}\to \lambda_{X\circ}(\tilde{X})\to X$ is the factorization from Lemma~\ref{lem:univproetmap}, then the natural map
\[
H^i((\lambda_{X\circ}(\tilde{X})\times_X Y^\prime)_\qproet,\mathcal F)\to H^i((\tilde{X}\times_X Y^\prime)_v,\lambda_{Y^\prime}^\ast \mathcal F)
\]
is an isomorphism.
\end{theorem}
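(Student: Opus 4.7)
The final cohomology statement is derived from the sheaf-level isomorphism $\lambda_Y^\ast R^i f_{\qproet\ast}\mathcal F\cong R^i f_{v\ast}\lambda_{Y'}^\ast\mathcal F$ established earlier in the theorem, by evaluating both sides at the specific object $\tilde X\in X_v$ and rewriting via the Grothendieck composition formula. Set $X^\sharp:=\lambda_{X\circ}(\tilde X)$; this is itself strictly totally disconnected, since it is a pro-constructible generalizing subset of the strictly totally disconnected $X$ (cf.~Lemma~\ref{lem:subsetwlocal}, Proposition~\ref{prop:etcoverssplittotdisconnected}). Composition of right derived functors of $\Gamma(\tilde X_v,-)\circ f_{v\ast}$ and of $\Gamma(X^\sharp_\qproet,-)\circ f_{\qproet\ast}$ yields
\[
R\Gamma(\tilde X_v,Rf_{v\ast}\lambda_{Y'}^\ast\mathcal F)\cong R\Gamma((\tilde X\times_X Y')_v,\lambda^\ast\mathcal F),
\]
\[
R\Gamma(X^\sharp_\qproet,Rf_{\qproet\ast}\mathcal F)\cong R\Gamma((X^\sharp\times_X Y')_\qproet,\mathcal F).
\]

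To pass from these identities to the desired degree-$i$ statement, I need pro-étale acyclicity of $X^\sharp$ and v-acyclicity of $\tilde X$ with respect to the relevant coefficients. The first is automatic: any pro-étale cover of a strictly totally disconnected space splits (Lemma~\ref{lem:fargues}), so pro-étale cohomology of $X^\sharp$ vanishes in positive degrees. On the v-side, the required vanishing $H^q_v(\tilde X,\lambda^\ast\mathcal F)=0$ for $q>0$ is trivial in case (i), where $i=0$; in case (ii), the fibre product $\tilde X\times_X Y'$ is a diamond by Proposition~\ref{prop:quasiproetaleoverdiamond}, and Proposition~\ref{prop:pullbackfullyfaithfulderived} reduces v-cohomology to pro-étale cohomology, where acyclicity follows again from Lemma~\ref{lem:fargues}; in case (iii), the v-étale comparison for $n$-torsion sheaves with $n$ prime to $p$ transfers the vanishing to étale cohomology on the strictly totally disconnected $\tilde X$, where it holds by Lemma~\ref{lem:fargues}. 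Combined with the identification $(\lambda_Y^\ast G)(\tilde X)=G(\lambda_{X\circ}(\tilde X))=G(X^\sharp)$ established in the proof of Proposition~\ref{prop:compproetvcohom} (see also Lemma~\ref{lem:univproetmap}), the degree-$i$ evaluation of the sheaf-level isomorphism reads exactly
\[
H^i((X^\sharp\times_X Y')_\qproet,\mathcal F)\cong H^i((\tilde X\times_X Y')_v,\lambda^\ast\mathcal F),
\]
which is the claim.

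The main obstacle is the v-acyclicity input in case (iii): establishing $H^q_v(\tilde X,\lambda^\ast\mathcal F)\cong H^q_\et(\tilde X,\mathcal F)=0$ for $q>0$ and $n\mathcal F=0$ with $n$ prime to $p$. While this is a natural analogue of the fpqc-étale comparison for torsion coefficients on schemes, in the diamond/v-site setting it requires a careful Čech-theoretic argument: one picks a v-hypercover of $\tilde X$ by strictly totally disconnected perfectoid spaces, uses Lemma~\ref{lem:univproetmap}(ii) to control the $\lambda_{X\circ}$-behaviour of each stage, and reduces term-by-term vanishing to étale acyclicity on each strictly totally disconnected stage via the fact that $n$ is invertible in the coefficient sheaf. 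Constructing this comparison rigorously is where the technical burden lies; once it is in place, the evaluation scheme above delivers the final statement uniformly across the three cases.
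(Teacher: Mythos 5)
Your proposal inverts the logical flow of the argument in a way that leaves the hard part unproved. In the paper the ``moreover'' statement is the primitive one: the proof reduces pro-\'etale-locally to $Y=X$ and $Y^\prime=X^\prime$ strictly totally disconnected, performs a d\'evissage on $\mathcal F$ (writing it as a subsheaf of a product of pushforwards of constant sheaves from affinoid pro-\'etale covers) to reduce to a constant sheaf $M$ on a fibre product $\tilde X\times_X X^\prime$, and then invokes Lemma~\ref{lem:etcohomfibreproductalgclosed}, i.e.~invariance of \'etale cohomology of perfectoid spaces under change of algebraically closed base field, which is where Huber's theorem and the $(p$-prime$)$-torsion hypothesis enter. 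The sheaf-level statement is then obtained by \emph{sheafifying} the ``moreover'' statement. Your proposal takes the sheaf-level isomorphism as ``established earlier in the theorem'' and uses it to deduce the ``moreover'' --- but the sheaf-level part is not established earlier by any independent argument, so nothing is actually proved. The key geometric input (the fibre-product computation over algebraically closed fields) appears nowhere in your sketch; you flag ``the v-acyclicity input in case (iii)'' as the main obstacle but do not supply the ingredient that resolves it.

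The acyclicity claims you do make are also incorrect. Lemma~\ref{lem:fargues} characterizes spectral spaces on which \emph{open} covers split (hence vanishing of sheaf cohomology on the underlying topological space); it says nothing about \'etale or pro-\'etale covers. Strictly totally disconnected spaces have trivial \'etale cohomology (every \'etale cover splits by definition), but their pro-\'etale cohomology does \emph{not} vanish in general --- the paper needs the stronger ``strictly w-local with $\pi_0$ extremally disconnected'' condition for that (see the proof of Proposition~\ref{prop:pullbackfullyfaithfulderived}). So the step ``pro-\'etale cohomology of $X^\sharp$ vanishes in positive degrees'' is wrong as stated. The identification you actually need, namely $R\Gamma(\tilde X_v,\lambda_X^\ast \mathcal H)\cong R\Gamma(\lambda_{X\circ}(\tilde X)_\qproet,\mathcal H)$ for pro-\'etale complexes $\mathcal H$, is precisely the finer content of Proposition~\ref{prop:compproetvcohom} (proved via the $\lambda_{X\circ}$-hypercover comparison), not a consequence of any point-set acyclicity of $X^\sharp$ or $\tilde X$; and even granting it, you would still be missing the independent proof of the sheaf-level isomorphism, which is the actual content of the theorem.
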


\begin{remark} The final statement combines an ``invariance under change of algebraically closed base field'' statement with a ``pro-\'etale cohomology = v-cohomology''-statement. Indeed, if $X$ and $\tilde{X}$ are geometric points, then $\lambda_{X\circ}(\tilde{X})=X$, and the statement becomes
\[
H^i(Y^\prime_\qproet,\mathcal F) = H^i(Y^\prime_v,\lambda_{Y^\prime}^\ast \mathcal F) = H^i((\tilde{X}\times_X Y^\prime)_v,\lambda_{Y^\prime}^\ast \mathcal F)\ .
\]
\end{remark}

\begin{proof} The claim is pro-\'etale local on $Y$, so we can assume that $Y=X$ is strictly totally disconnected, and it suffices to prove the final statement, as this gives the statement about sheaves by sheafification.

The final statement is again pro-\'etale local on $Y^\prime$, so we may also assume that $Y^\prime=X^\prime$ is strictly totally disconnected. We have to see that if $\tilde{X}\to X\leftarrow X^\prime$ is a diagram of strictly totally disconnected perfectoid spaces, then the natural map
\[
H^i((\lambda_{X\circ}(\tilde{X})\times_X X^\prime)_\qproet,\mathcal F)\to H^i((\tilde{X}\times_X X^\prime)_v,\mathcal F)
\]
is an isomorphism for $i=0$, and for all $i\geq 0$ if $f$ is pro-\'etale or if $n\mathcal F=0$ for some $n$ prime to $p$. Here, we may replace pro-\'etale and v-cohomology by \'etale cohomology by Proposition~\ref{prop:compproetvcohom} and Proposition~\ref{prop:competproetcohom}. It suffices to prove the statement on fibres over $\pi_0 \tilde{X}\times_{\pi_0 X} \pi_0 X'$, so we can reduce to the case that all of them are connected. Moreover, we can assume that $\tilde{X}\to X$ and $X'\to X$ are surjective, replacing $X$ by the intersection of their images and $\tilde{X}, X'$ by the pullback. In this case, we have to see that for any \'etale sheaf $\mathcal F$ on $X'$, the map
\[
H^i(X'_\et,\mathcal F)\to H^i((\tilde{X}\times_X X')_\et,\mathcal F)
\]
is an isomorphism for $i=0$, and for all $i\geq 0$ if $f$ is pro-\'etale or if $n\mathcal F=0$ for some $n$ prime to $p$. As $\mathcal F$ is a sheaf on the spectral space $|X'|$ that is totally ordered under generalization, we can assume that $\mathcal F=j_! M$ for some quasicompact open subset $V\subset X'$ and some abelian group $M$, killed by $n$ prime to $p$ in case (iii).

If $V=X'$, the case $i=0$ follows from Lemma~\ref{lem:geometricallyconnected}. Otherwise, the left-hand side vanishes, and the question of vanishing of the right-hand side is independent of $M$ (it depends only on the underlying sheaf, not the abelian group structure), so we can reduce to case (iii). In case (ii), after the previous reductions we actually have $X'=X$ and $\tilde{X}\to X'$ is a surjective map of strictly local perfectoid spaces. In particular, there is no cohomology in higher degree. It remains to handle case (iii). This is the following lemma.
\end{proof}

\begin{lemma}\label{lem:etcohomfibreproductalgclosed} Let $X_i=\Spa(C_i,C_i^+)$ for $i=1,2,3$, where $C_i$ is an algebraically closed nonarchimedean field with an open and bounded valuation subring $C_i^+\subset C_i$, and let $X_1\to X_3\leftarrow X_2$ be a diagram, with $X_1\to X_3$ surjective. Let $j_2: U_2\hookrightarrow X_2$ be a quasicompact open subset and $M$ an abelian group killed by some $n$ prime to $p$. Then
\[
H^i((X_1\times_{X_3} X_2)_\et,j_{2!}M) = 0
\]
for all $i\geq 0$ unless $U_2=X_2$ and $i=0$, in which case one gets $M$.
\end{lemma}

\begin{proof} We prove more generally that if $X_2$ is any perfectoid space over $X_3$ and $j_2: U_2\hookrightarrow X_2$ any quasicompact open subset, then
\[
H^i((X_1\times_{X_3} X_2)_\et,j_{2!}M) = H^i(X_{2,\et},j_{2!}M)
\]
for all $i\geq 0$. We can write $X_2=\varprojlim_j \Spa((R_j)^\perf,(R_j^+)^\perf)$ as a cofiltered inverse limit, where each $R_j$ is topologically of finite type over $C_3$. Applying Proposition~\ref{prop:etcohomlim}, it is enough to prove the result for all $\Spa((R_j)^\perf,(R_j^+)^\perf)$, so we may assume $X_2=\Spa(R^\perf,(R^+)^\perf)$, where $R$ is topologically of finite type over $C$. Now the result follows from~\cite[Theorem 4.1.1 (c)]{Huber}, using the equivalence of sites $X_{2\et} = \Spa(R,R^+)_\et$, which follows for example from Lemma~\ref{lem:diamondadicspace} and the observation $\Spa(R,R^+)^\diamondsuit= \Spa(R^\perf,(R^+)^\perf)$.
\end{proof}

Let us note the following consequence for derived categories.

\begin{corollary}\label{cor:vproetbasechangederived} Let $f: Y^\prime\to Y$ be a map of locally spatial diamonds. Assume that $f$ is quasi-pro-\'etale, or $n\Lambda=0$ for some $n$ prime to $p$. Then for any $A\in D_\et(Y^\prime,\Lambda)\subset D(Y^\prime_\qproet,\Lambda)$, the base change morphism
\[
\lambda_Y^\ast Rf_{\qproet\ast} A\to Rf_{v\ast} \lambda_{Y^\prime}^\ast A
\]
in $D(Y_v,\Lambda)$ is an isomorphism.
\end{corollary}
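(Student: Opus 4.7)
The plan is to deduce this unbounded derived statement from the sheaf-level base change of Theorem~\ref{thm:vproetbasechange} by combining truncation arguments with left-completeness of the ambient derived categories.

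First I would reduce to the bounded-below case. Both $D(Y_v,\Lambda)$ and $D(Y^\prime_\qproet,\Lambda)$ are left-complete by Proposition~\ref{prop:leftcomplete}, so $A \cong R\varprojlim_n \tau^{\geq -n} A$. Being right adjoints, $Rf_{\qproet\ast}$ and $Rf_{v\ast}$ commute with $R\varprojlim$. For $\lambda^\ast$ on either side, Lemma~\ref{lem:lambdacontinuous} says this functor commutes with all limits on the level of sheaves; combined with its exactness (it is a topos pullback), the Milnor presentation $R\varprojlim_n X_n = \mathrm{fib}(\prod X_n\to\prod X_n)$ for countable towers shows that $\lambda^\ast$ commutes with $R\varprojlim$ of countable towers in the derived category. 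Thus the base-change isomorphism for $A$ follows from the same for each $\tau^{\geq -n}A$.

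Next, for $A\in D^+(Y^\prime_\qproet,\Lambda)$ I would induct on the amplitude using the distinguished triangles
\[
\tau^{\leq n-1}A\to \tau^{\leq n}A\to H^n(A)[-n]\ ,
\]
together with the five lemma in the triangulated category $D(Y_v,\Lambda)$. Since $\tau^{\leq n}A\to A$ becomes an isomorphism in each fixed cohomological degree once $n$ is large enough, and since $Rf_\ast$ takes $D^{\geq m}$ to $D^{\geq m}$ (v-locally on $Y$ we may work over strictly totally disconnected spaces, for which pro-\'etale and v-cohomological dimensions are effectively controlled by Theorem~\ref{thm:vproetbasechange} in each degree), this reduces the assertion to the case where $A = \mathcal{F}[0]$ is a single sheaf of $\Lambda$-modules placed in one degree. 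In that case the claim is precisely the sheaf-level isomorphism $\lambda_Y^\ast R^if_{\qproet\ast}\mathcal F \isomto R^if_{v\ast}\lambda_{Y^\prime}^\ast\mathcal F$ of Theorem~\ref{thm:vproetbasechange}, which applies in both of our cases: either $f$ is quasi-pro-\'etale (case (ii) of that theorem), or $n\mathcal F=0$ for some $n$ prime to $p$ (case (iii)).

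The main obstacle will be the first step, namely justifying that $\lambda^\ast$ commutes with $R\varprojlim$ of countable towers in the unbounded setting. This rests crucially on the unusual fact from Lemma~\ref{lem:lambdacontinuous} that $\lambda^\ast$ preserves all (not just finite) limits on the level of sheaves, which is far stronger than mere exactness and ultimately reflects the explicit description of $\lambda^\ast$ in terms of the universal pro-\'etale factorization $\lambda_{X\circ}$ from Lemma~\ref{lem:univproetmap}. Once this commutation with $R\varprojlim$ is in hand, together with the fact that $Rf_{\qproet\ast}$ and $Rf_{v\ast}$ are right adjoints, the reduction to the bounded-below, and then sheaf-level, statement is formal.
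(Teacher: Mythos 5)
Your overall route is the same as the paper's: use left-completeness to reduce to the bounded-below case and then to Theorem~\ref{thm:vproetbasechange} for a single sheaf. But you flag ``$\lambda^\ast$ commutes with $R\varprojlim$'' as the main obstacle and try to resolve it through Lemma~\ref{lem:lambdacontinuous} and a Milnor-sequence presentation of $R\varprojlim$. That path has a subtlety you don't close: the Milnor triangle involves the \emph{derived} countable product, and to conclude that $\lambda^\ast$ preserves it you need to know that countable products are exact in the replete topoi $Y^\sim_\qproet$ and $Y^\sim_v$ (so that the derived and underived products coincide, and then sheaf-level preservation of limits plus exactness of $\lambda^\ast$ carries over). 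That exactness is a genuine additional input which you neither state nor prove.

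The paper avoids this entirely. It first localizes to $Y=X$ strictly totally disconnected and replaces the assertion by the finer statement
\[
R\Gamma\bigl((\lambda_{X\circ}(\tilde X)\times_X Y^\prime)_\qproet,\,A\bigr)\;\simeq\;R\Gamma\bigl((\tilde X\times_X Y^\prime)_v,\,\lambda_{Y^\prime}^\ast A\bigr)
\]
for strictly totally disconnected $\tilde X\in X_v$, which already appears in Theorem~\ref{thm:vproetbasechange}. Both sides are right adjoints applied to a complex, so they automatically commute with the Postnikov limit: the left side because $R\Gamma$ is a right adjoint on $D(Y^\prime_\qproet,\Lambda)$ and $A\cong R\varprojlim\tau^{\geq -n}A$ by left-completeness, the right side because $R\Gamma$ is a right adjoint on $D(Y^\prime_v,\Lambda)$ together with
\[
\lambda_{Y^\prime}^\ast A\cong R\varprojlim\tau^{\geq -n}(\lambda_{Y^\prime}^\ast A) = R\varprojlim\lambda_{Y^\prime}^\ast(\tau^{\geq -n}A),
\]
which follows from left-completeness of $D(Y^\prime_v,\Lambda)$ and exactness of $\lambda_{Y^\prime}^\ast$ --- no Milnor sequence needed. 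The key point is that by passing to the ``finer statement'' you never apply $\lambda_Y^\ast$ \emph{after} a pushforward, so the delicate commutation you were worried about never arises. I'd suggest rewriting the first step along these lines; your second step (dévissage from sheaves to $D^+$ via truncation triangles and the convergent spectral sequence) is fine as stated.
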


\begin{proof} The statement is local in $Y_\qproet$, so we may assume $Y=X$ is a strictly totally disconnected perfectoid space. We prove the finer statement that for all $\tilde{X}\in X_v$ strictly totally disconnected,
\[
R\Gamma((\lambda_{X\circ}(\tilde{X})\times_X Y^\prime)_\qproet,A)=R\Gamma((\tilde{X}\times_X Y^\prime)_v,\lambda_{Y^\prime}^\ast A)\ .
\]
For this, note that both sides commute with the Postnikov limit $A=R\varprojlim_n \tau^{\geq -n} A$ by Proposition~\ref{prop:leftcomplete}; then it follows from Theorem~\ref{thm:vproetbasechange}~(iii).
\end{proof}

\begin{corollary}\label{cor:pushforwardpreservesproet} Let $f: Y^\prime\to Y$ be a map of locally spatial diamonds. Assume that $f$ is quasi-pro-\'etale, or $n\Lambda=0$ for some $n$ prime to $p$. If $A\in D_\et(Y^\prime,\Lambda)$ is such that $Rf_{\qproet\ast} A\in D_\et(Y,\Lambda)\subset D(Y_\qproet,\Lambda)$, then the pushforward $Rf_{v\ast} A$ lies in $D_\et(Y,\Lambda)\subset D(Y,\Lambda)$ and agrees with $Rf_{\qproet\ast} A$.
\end{corollary}

Here and in the following, we regard objects $A\in D_\et(Y,\Lambda)$ as objects of $D(Y_\qproet,\Lambda)$ or $D(Y_v,\Lambda)$ without an explicit mention of pullback functors to the quasi-pro-\'etale or v-site.

\begin{proof} This is a direct consequence of the previous corollary.
\end{proof}

We need a similar result relating the \'etale and pro-\'etale pushforward.

\begin{proposition}\label{prop:qcqsetproetbasechange} Let $f: Y^\prime\to Y$ be a qcqs morphism of locally spatial diamonds. Let $\mathcal F$ be a sheaf of abelian groups on $Y^\prime_\et$. Then the base change morphism
\[
\nu_Y^\ast R^i f_{\et\ast} \mathcal F\to R^i f_{\qproet\ast} \nu_{Y^\prime}^\ast \mathcal F
\]
of sheaves of abelian groups on $Y_\qproet$ is an isomorphism for all $i\geq 0$.
\end{proposition}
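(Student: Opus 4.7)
The plan is to work locally in $Y_\qproet$ and verify the comparison by evaluating both sides on a convenient basis, then appeal to the existing commutation results for \'etale cohomology with limits and for the comparison of \'etale and quasi-pro-\'etale cohomology on locally spatial diamonds.

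First, since the statement is of sheaves on $Y_\qproet$ and both sides pull back well, the question is local in $Y_\qproet$, so I reduce to the case where $Y$ is a strictly totally disconnected perfectoid space. As in the proof of Proposition~\ref{prop:competproetcohom}, the full subcategory $\Pro_\kappa(Y_{\et,\qc,\sep})\subset Y_\qproet$ gives a basis for the topology, consisting of $\tilde{Y} = \varprojlim_j \tilde{Y}_j$ with $\tilde{Y}_j\in Y_{\et,\qc,\sep}$. It suffices to check that the map of presheaves is an isomorphism when evaluated on such $\tilde{Y}$.

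Second, I compute both sides on such $\tilde{Y}$. By the same argument as in Proposition~\ref{prop:compareetproet}(ii), the pullback $\nu_Y^\ast R^if_{\et\ast}\mathcal F$ evaluated on $\tilde{Y}$ is computed by
\[
\varinjlim_j R^if_{\et\ast}\mathcal F(\tilde{Y}_j) = \varinjlim_j H^i\bigl((\tilde{Y}_j\times_Y Y^\prime)_\et,\mathcal F\bigr),
\]
while the right-hand side evaluated on $\tilde Y$ is $H^i\bigl((\tilde{Y}\times_Y Y^\prime)_\qproet,\nu_{Y^\prime}^\ast \mathcal F\bigr)$. The qcqs hypothesis on $f$ ensures that each $\tilde{Y}_j\times_Y Y^\prime\to Y^\prime$ is a qcqs \'etale morphism from a spatial diamond to a locally spatial diamond, hence is itself a spatial diamond, and the transition maps are spectral. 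Thus $\tilde{Y}\times_Y Y^\prime = \varprojlim_j (\tilde{Y}_j\times_Y Y^\prime)$ is a cofiltered inverse limit of spatial diamonds in the sense of Proposition~\ref{prop:etcohomlim}, giving
\[
\varinjlim_j H^i\bigl((\tilde{Y}_j\times_Y Y^\prime)_\et,\mathcal F\bigr) = H^i\bigl((\tilde{Y}\times_Y Y^\prime)_\et,\mathcal F\bigr).
\]
Finally, since $\tilde{Y}\times_Y Y^\prime$ is a locally spatial diamond, Proposition~\ref{prop:competproetcohom} applied to the abelian sheaf $\mathcal F$ identifies
\[
H^i\bigl((\tilde{Y}\times_Y Y^\prime)_\et,\mathcal F\bigr) = H^i\bigl((\tilde{Y}\times_Y Y^\prime)_\qproet,\nu_{Y^\prime}^\ast \mathcal F\bigr),
\]
and these identifications are compatible with the transition maps coming from the $\tilde Y_j$, producing the desired isomorphism of sheaves.

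The main obstacle is ensuring that the comparison already holds on the pro-\'etale basis $\Pro_\kappa(Y_{\et,\qc,\sep})$, rather than only after sheafification: this is exactly where the qcqs hypothesis on $f$ is crucial, as it forces the fibre products $\tilde{Y}_j\times_Y Y^\prime$ to be \emph{spatial} (not merely locally spatial), which is required both to apply Proposition~\ref{prop:etcohomlim} to commute \'etale cohomology with the inverse limit and to ensure that the limit $\tilde{Y}\times_Y Y^\prime$ is again locally spatial so that Proposition~\ref{prop:competproetcohom} applies. Without qcqs one would have to descend further and directly compare on stalks at geometric points, which would require the more delicate machinery of Theorem~\ref{thm:vproetbasechange}.
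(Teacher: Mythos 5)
Your proposal is correct and follows essentially the same route as the paper: reduce to a qcqs base, work on the basis $\Pro_\kappa(Y_{\et,\qc,\sep})$ of $Y_\qproet$ supplied by the proof of Proposition~\ref{prop:competproetcohom}, commute \'etale cohomology with the cofiltered limit via Proposition~\ref{prop:etcohomlim}, and compare $\et$- with $\qproet$-cohomology on the limit via Proposition~\ref{prop:competproetcohom}. (The paper reduces to $Y$ spatial rather than strictly totally disconnected and applies the two propositions in the opposite order, but these are cosmetic differences; your remark about why qcqs is needed to keep the fibre products spatial is an accurate account of where the hypothesis enters.)
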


\begin{proof} The statement is \'etale local on $Y$, so we can assume that $Y$ is spatial. By the proof of Proposition~\ref{prop:competproetcohom}, it suffices to check that for all $\tilde{Y} = \varprojlim_j \tilde{Y}_j\to Y$ in $\Pro(Y_{\et,\qc,\sep})$, one has
\[
H^i((\tilde{Y}\times_Y Y^\prime)_\qproet,\mathcal F) = \varinjlim_j H^i((\tilde{Y}_j\times_Y Y^\prime)_\et,\mathcal F)\ ,
\]
where we denote by $\mathcal F$ also any of its base changes. But by Proposition~\ref{prop:competproetcohom}, the left-hand side is given by $H^i((\tilde{Y}\times_Y Y^\prime)_\et,\mathcal F)$, and then Proposition~\ref{prop:etcohomlim} gives the result.
\end{proof}

Again, we can give derived consequences.

\begin{corollary}\label{cor:etproetbasechangederived} Let $f: Y^\prime\to Y$ be a qcqs map of locally spatial diamonds. Then for any $A\in D^+(Y^\prime_\et,\Lambda)$, the base change morphism
\[
\nu_Y^\ast Rf_{\et\ast} A\to Rf_{\qproet\ast} \nu_{Y^\prime}^\ast A
\]
in $D^+(Y_\qproet,\Lambda)$ is an isomorphism. If $Y$ and $Y^\prime$ are strictly totally disconnected perfectoid spaces, then the same result holds true more generally for any $A\in D(Y^\prime_\et,\Lambda)$.
\end{corollary}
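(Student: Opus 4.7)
The plan has two stages: first handle the bounded-below case directly from Proposition~\ref{prop:qcqsetproetbasechange}, then extend to unbounded $A$ in the strictly-totally-disconnected setting via a Postnikov-limit argument powered by left-completeness.

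For $A\in D^+(Y'_\et,\Lambda)$, I would apply Proposition~\ref{prop:qcqsetproetbasechange} to each cohomology sheaf $\mathcal H^i(A)$. Since $\nu_Y^\ast$ is $t$-exact and $Rf_{\et\ast}$ is left-exact, the convergent hypercohomology spectral sequence of the standard $t$-structure on $D^+$ assembles these sheaf-level isomorphisms into the claimed isomorphism $\nu_Y^\ast Rf_{\et\ast} A\simeq Rf_{\qproet\ast}\nu_{Y'}^\ast A$ in $D^+(Y_\qproet,\Lambda)$.

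For the unbounded case with $Y$ and $Y'$ strictly totally disconnected, the four relevant derived categories $D(Y_\et,\Lambda)$, $D(Y'_\et,\Lambda)$, $D(Y_\qproet,\Lambda)$, and $D(Y'_\qproet,\Lambda)$ are all left-complete by Proposition~\ref{prop:leftcomplete}~(ii)--(iii). Writing $A=R\varprojlim_n \tau^{\geq -n} A$ in $D(Y'_\et,\Lambda)$ and applying the bounded-below case termwise, I would push the right-hand side through the limit: because $Rf_{\qproet\ast}$ is a right adjoint it commutes with $R\varprojlim$, and because $\nu_{Y'}^\ast$ is exact while $D(Y'_\qproet,\Lambda)$ is left-complete we have $\nu_{Y'}^\ast A=R\varprojlim_n \nu_{Y'}^\ast \tau^{\geq -n} A$; combining these yields $Rf_{\qproet\ast}\nu_{Y'}^\ast A = R\varprojlim_n Rf_{\qproet\ast}\nu_{Y'}^\ast\tau^{\geq -n} A$.

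The main obstacle is the analogous step on the left, namely interchanging $\nu_Y^\ast$ with $R\varprojlim_n Rf_{\et\ast}(\tau^{\geq -n} A)$: pullback does not in general commute with derived inverse limits, and the system $(Rf_{\et\ast}\tau^{\geq -n} A)_n$ is not a Postnikov tower without cohomological-dimension bounds. I would resolve this by testing the base-change map after $R\Gamma(\tilde{Y},-)$ as $\tilde{Y}$ ranges over a basis of strictly totally disconnected pro-\'etale neighborhoods of $Y$ (provided by Proposition~\ref{prop:spatialunivopen}). For such $\tilde{Y}$, because every \'etale cover of a strictly totally disconnected space splits, Proposition~\ref{prop:competproetcohom} upgrades via left-completeness to an identification $R\Gamma(\tilde{Y}_\qproet,\nu_Y^\ast-)\simeq R\Gamma(\tilde{Y}_\et,-)$ on all of $D(Y_\et,\Lambda)$. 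The functor $R\Gamma(\tilde{Y},-)$ is a right adjoint on either site, hence commutes with $R\varprojlim$, and the termwise identification supplied by the bounded-below case then forces the $R\varprojlim$-compatible equality on sections. Testing on this basis is sufficient, giving the desired isomorphism in $D(Y_\qproet,\Lambda)$.
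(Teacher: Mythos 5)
Your bounded-below paragraph is fine and matches the paper's treatment, which invokes Proposition~\ref{prop:qcqsetproetbasechange} directly. For the unbounded case you correctly identify the obstacle, and your strategy of testing on sections over a basis and invoking the unbounded full faithfulness of $\nu^\ast$ (Proposition~\ref{prop:pullbackfullyfaithfulderived}, which is what the paper means by appealing to ``the convergence results in the proof of Proposition~\ref{prop:pullbackfullyfaithfulderived}'') is indeed the right framework. But the step ``$R\Gamma(\tilde Y,-)$ is a right adjoint, hence commutes with $R\varprojlim$, and the termwise identification\ldots forces the equality'' does not address the obstacle you flagged. After full faithfulness, the left side is $R\Gamma(\tilde Y_\et,(Rf_{\et\ast}A)|_{\tilde Y_\et})$, a composite in which the restriction $(-)|_{\tilde Y_\et}$ along a pro-\'etale (not \'etale) map is only a left adjoint; so ``$R\Gamma$ commutes with limits'' does not make this composite commute with the Postnikov tower of $A$, and the truncations that do occur are $\tau^{\geq -n}Rf_{\et\ast}A$ rather than $Rf_{\et\ast}\tau^{\geq -n}A$ --- precisely the mismatch you warned about. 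You have reformulated the problem, not solved it.

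The missing ingredient is that for a qcqs map $f\colon Y'\to Y$ of strictly totally disconnected perfectoid spaces, $Rf_{\et\ast}$ is in fact $t$-exact. Indeed, by Proposition~\ref{prop:etcohomlim} the stalk of $R^if_{\et\ast}\mathcal F$ at $y\in Y$ is $H^i((Y_y\times_Y Y')_\et,\mathcal F)$, and $Y_y\times_Y Y'=f^{-1}(Y_y)$ is a pro-constructible generalizing subset of $Y'$, hence by Lemma~\ref{lem:subsetwlocal} an affinoid perfectoid space whose connected components are $\Spa(K,(K^+)')$ with $K$ algebraically closed, i.e.\ strictly totally disconnected, so this cohomology vanishes for $i>0$. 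With this observation, $(Rf_{\et\ast}\tau^{\geq -n}A)_n$ \emph{is} the Postnikov tower of $Rf_{\et\ast}A$, and since $\nu_Y^\ast$, $\nu_{Y'}^\ast$ are $t$-exact, $Rf_{\qproet\ast}$ is a right adjoint, and all four derived categories are left-complete by Proposition~\ref{prop:leftcomplete}, the unbounded isomorphism follows formally from the bounded-below one --- the basis reduction becomes unnecessary. (Equivalently, one could salvage your basis argument by rewriting the left side as $\varinjlim_j R\Gamma((\tilde Y_j\times_Y Y')_\et,A)$ over the \'etale approximations $\tilde Y_j$ of $\tilde Y$, a filtered colimit of $t$-exact functors --- each $\tilde Y_j\times_Y Y'$ being strictly totally disconnected --- hence $t$-exact in $A$ and thus commuting with Postnikov limits.)
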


\begin{proof} The first part is a direct consequence of Proposition~\ref{prop:qcqsetproetbasechange}. If $Y$ and $Y^\prime$ are strictly totally disconnected perfectoid spaces, then as in Corollary~\ref{cor:vproetbasechangederived}, the statement follows from the convergence results in the proof of Proposition~\ref{prop:pullbackfullyfaithfulderived}.
\end{proof}

\begin{corollary}\label{cor:qcqspushforwardpreserveset} Let $f: Y^\prime\to Y$ be a qcqs map of small v-stacks.
\begin{altenumerate}
\item If $f$ is quasi-pro-\'etale, then for any $A\in D_\et(Y^\prime,\Lambda)$, the pushforward $Rf_{v\ast} A$ lies in $D_\et(Y,\Lambda)$.
\item If $n\Lambda=0$ for some $n$ prime to $p$, then for any $A\in D^+_\et(Y^\prime,\Lambda)$, the pushforward $Rf_{v\ast} A$ lies in $D^+_\et(Y,\Lambda)$. If $Y$ and $Y^\prime$ are locally spatial, then under the identifications $D^+_\et(Y^\prime,\Lambda) = D^+(Y^\prime_\et,\Lambda)$ and $D^+_\et(Y,\Lambda) = D^+(Y_\et,\Lambda)$, one has $Rf_{v\ast} = Rf_{\et\ast}$.
\end{altenumerate}
\end{corollary}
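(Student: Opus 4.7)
The plan is to reduce both parts to the case where $Y = X$ is a strictly totally disconnected perfectoid space, and then use Corollary~\ref{cor:pushforwardpreservesproet} (to identify $Rf_{v\ast}$ with $\lambda_X^\ast Rf_{\qproet\ast}$) together with Corollary~\ref{cor:etproetbasechangederived} (to identify $Rf_{\qproet\ast}$ with $\nu_X^\ast Rf_{\et\ast}$). For the reduction, I will use that v-pushforward commutes with restriction along any $g: X \to Y$ from a perfectoid space: for any perfectoid $Z \to X$, both $(Rf_{v\ast} A)(Z)$ and $(R\tilde f_{v\ast}(A|_{Y'\times_Y X}))(Z)$ are computed by the same expression $R\Gamma((Y'\times_Y Z)_v, A)$, where $\tilde f: Y' \times_Y X \to X$, so since the \'etale condition is tested on strictly totally disconnected covers, one may assume $Y = X$ strictly totally disconnected.

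For part~(i), since $f$ is qcqs and quasi-pro-\'etale (Definition~\ref{def:etquasiproet}), $Y'$ is representable by a qcqs perfectoid space $X'$ pro-\'etale over $X$, and Lemma~\ref{lem:proetaleoverwlocal} ensures that $X'$ is itself strictly totally disconnected. Writing $A = \lambda_{X'}^\ast\nu_{X'}^\ast B$ for a unique $B \in D(X'_\et,\Lambda)$, Corollary~\ref{cor:pushforwardpreservesproet} and the unbounded form of Corollary~\ref{cor:etproetbasechangederived} (valid when source and target are strictly totally disconnected) give
\[
Rf_{v\ast} A = \lambda_X^\ast Rf_{\qproet\ast}(\nu_{X'}^\ast B) = \lambda_X^\ast \nu_X^\ast Rf_{\et\ast} B,
\]
which lies in $D_\et(X,\Lambda)$.

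For part~(ii), reducing again to $Y = X$ strictly totally disconnected, I would use \v{C}ech descent along a qcqs surjection $X' \to Y'$ from a (necessarily strictly totally disconnected) perfectoid space, with nerve $X'_\bullet \to Y'$ and projections $g_n: X'_n \to X$. After iterating this construction finitely, one may assume each $X'_n$ is a qcqs perfectoid space and $g_n$ is qcqs; applying Corollaries~\ref{cor:pushforwardpreservesproet} and~\ref{cor:etproetbasechangederived} to each $g_n$ in the bounded-below setting yields
\[
Rg_{n,v\ast}(A|_{X'_n}) = \lambda_X^\ast \nu_X^\ast Rg_{n,\et\ast}(A|_{X'_n}) \in D^+_\et(X, \Lambda).
\]
Since $Rf_{v\ast} A = R\varprojlim_{n\in\Delta} Rg_{n,v\ast}(A|_{X'_n})$ by v-descent and everything is bounded below, each cohomology sheaf of the limit is controlled by a finite partial totalization; because $\lambda_X^\ast \nu_X^\ast$ is exact and preserves finite limits, it commutes with this cosimplicial limit, giving $Rf_{v\ast} A \in D^+_\et(X, \Lambda)$. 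When $Y$ and $Y'$ are locally spatial, applying the same two corollaries directly (without the \v{C}ech step, since $Y'$ is already a locally spatial diamond) gives
\[
Rf_{v\ast}(\lambda_{Y'}^\ast \nu_{Y'}^\ast A) = \lambda_Y^\ast \nu_Y^\ast Rf_{\et\ast} A,
\]
which is the claimed identification $Rf_{v\ast} = Rf_{\et\ast}$ under $D^+_\et = D^+(Y_\et)$.

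The main obstacle will be the cosimplicial-limit step in~(ii): although each term is \'etale, derived limits over $\Delta$ need not preserve \'etaleness in general, and it is the bounded-below hypothesis that controls each cohomology sheaf by a finite partial totalization of the \v{C}ech nerve. This is why part~(ii) is formulated only for $D^+$, in contrast to the unbounded statement of part~(i), where the unbounded form of Corollary~\ref{cor:etproetbasechangederived} for strictly totally disconnected spaces eliminates the need for any \v{C}ech totalization.
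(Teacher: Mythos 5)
Your reduction to the case $Y = X$ strictly totally disconnected and the identification of $Rf_{v\ast}$ via Corollaries~\ref{cor:pushforwardpreservesproet} and~\ref{cor:etproetbasechangederived} are the right ingredients, but there is a genuine gap in part~(i) that the paper's own proof has to deal with. You claim that Lemma~\ref{lem:proetaleoverwlocal} makes $Y' = Y'\times_Y X$ strictly totally disconnected; however, that lemma requires the map to be \emph{quasicompact and separated}, whereas a qcqs quasi-pro-\'etale map of v-stacks is only locally separated by Convention~\ref{conv:etlocsep}. A qcqs quasi-pro-\'etale $Y'\to X$ with $X$ strictly totally disconnected is a qcqs perfectoid space pro-\'etale over $X$, but it need not be separated (e.g.~two copies of $\Spa(C,C^+)$ glued along the generic open point give a non-separated qcqs \'etale, hence quasi-pro-\'etale, space over $\Spa(C,C^+)$), and hence need not be strictly totally disconnected. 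The paper circumvents this by taking a finite quasicompact open cover of $Y'$ by affinoid (hence separated) subspaces --- whose multi-intersections are again separated, being open subspaces of affinoids, and thus strictly totally disconnected by Lemma~\ref{lem:proetaleoverwlocal} --- and running a finite \v{C}ech descent. Without that step your proof of (i) only covers the separated case.

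There is also a smaller misstep in your part~(ii): you assert that ``after iterating finitely, each $X'_n$ is a qcqs perfectoid space''. This is not the case: once one takes the \v{C}ech nerve of a surjection $X'\to Y'$ from an affinoid perfectoid space, the terms $X'_n = X'\times_{Y'}\cdots\times_{Y'}X'$ are spatial diamonds (via Proposition~\ref{prop:smallvsheaf}), not perfectoid spaces, and no finite iteration turns them into perfectoid spaces. Fortunately this is only a self-inflicted complication: Corollaries~\ref{cor:pushforwardpreservesproet} and~\ref{cor:etproetbasechangederived} already apply to qcqs maps of locally spatial diamonds, so one should simply apply them to the $X'_n$ as spatial diamonds, exactly as the paper does. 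Your final step --- the uniform lower bound on the $Rg_{n,v\ast}(A|_{X'_n})$ and the resulting control of the cosimplicial limit --- is the correct mechanism for concluding, and agrees with the paper. Note also that the paper iterates the argument once more to pass from qcqs v-\emph{sheaves} to qcqs v-\emph{stacks}; that is the actual two-step structure, not an iteration to reach perfectoid spaces.
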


\begin{proof} We may assume that $Y$ is a strictly totally disconnected perfectoid space. In case (i), $Y^\prime$ is a qcqs perfectoid space. If $Y^\prime$ is separated, then it is strictly totally disconnected by Lemma~\ref{lem:proetaleoverwlocal}, and the result follows from Corollary~\ref{cor:etproetbasechangederived} and Corollary~\ref{cor:pushforwardpreservesproet}. In general, there is a finite quasicompact open cover of $Y^\prime$ by affinoid perfectoid spaces, where all intersections are separated. Using the corresponding Cech cover, we get the result in general.

For part (ii), choose an affinoid perfectoid space $X^\prime$ with a surjection $X^\prime\to Y^\prime$. Let $X^\prime_\bullet\to Y^\prime$ be the corresponding simplicial nerve, which is a simplicial qcqs v-sheaf. If $Y^\prime$ was already a v-sheaf, then in fact it is a simplicial spatial diamond; assume that we are in this case for the moment. Let $g_\bullet: X^\prime_\bullet\to Y$ be the resulting map. Then $Rf_{v\ast} C$ is the limit of the simplicial object $Rg_{\bullet,v\ast} C|_{X^\prime_\bullet}$. By Corollary~\ref{cor:etproetbasechangederived} and Corollary~\ref{cor:vproetbasechangederived}, all $Rg_{i,v\ast} C|_{X^\prime_i}\in D^+(Y_\et,\Lambda)$, and there is some $n$ such that all of them lie in $D^{\geq -n}(Y_\et,\Lambda)$. This implies that the derived limit $Rf_{v\ast} C$ also lies in $D^+(Y_\et,\Lambda)$.

In general (if $Y^\prime$ is any qcqs v-stack), one repeats the same argument, using that the result is now known for qcqs v-sheaves.
\end{proof}

Moreover, we get the following classical base change results.

\begin{corollary}\label{cor:proetbasechange} Let
\[\xymatrix{
Y^\prime\ar[r]^{g^\prime}\ar[d]_{f^\prime} & Y\ar[d]^f\\
X^\prime\ar[r]^g & X
}\]
be a cartesian diagram of locally spatial diamonds, and let $\mathcal F$ be a small sheaf of abelian groups on $Y_\qproet$ that comes via pullback from $Y_\et$. Then the base change morphism
\[
g^\ast_\qproet R^i f_{\qproet\ast} \mathcal F\to R^i f^\prime_{\qproet\ast} g^{\prime\ast}_\qproet \mathcal F
\]
is an isomorphism in the following cases:
\begin{altenumerate}
\item if $i=0$, or
\item for all $i\geq 0$ if $f$ or $g$ is quasi-pro-\'etale, or
\item for all $i\geq 0$ if $n\mathcal F=0$ for some $n$ prime to $p$.
\end{altenumerate}

Moreover, if $f$ or $g$ is quasi-pro-\'etale or $n\Lambda=0$ for some $n$ prime to $p$, then for any $A\in D_\et(Y,\Lambda)\subset D(Y_\qproet,\Lambda)$, the base change morphism
\[
g^\ast_\qproet Rf_{\qproet\ast} A\to Rf^\prime_{\qproet\ast} g^{\prime\ast}_\qproet A
\]
is an isomorphism.
\end{corollary}

\begin{proof} If $g$ is quasi-pro-\'etale, this is formal, by passage to slices. The other cases follow from the final statement of Theorem~\ref{thm:vproetbasechange} (and Proposition~\ref{prop:compproetvcohom}). Indeed, to show that
\[
g^\ast_\qproet R^i f_{\qproet\ast} \mathcal F\to R^i f^\prime_{\qproet\ast} g^{\prime\ast}_\qproet \mathcal F
\]
we can first of all assume that $X$ is strictly totally disconnected (by the case where $g$ is quasi-pro-\'etale). Then it suffices to identify the values at strictly totally disconnected $\tilde{X}$ quasi-pro-\'etale over $X'$. This may be factored as $\tilde{X}\to \lambda_{X\circ}(\tilde{X})\to X$ where $\lambda_{X\circ}(\tilde{X})$ is strictly totally disconnected and pro-\'etale over $X$. Then the left-hand side is
\[
H^i(\lambda_{X\circ}(\tilde{X})\times_X Y,\mathcal F)
\]
while the right-hand side is
\[
H^i(\tilde{X}\times_X Y,\mathcal F)
\]
(where we omit some pullbacks of $\mathcal F$, and whether to take pro-\'etale or v-cohomology, as those agree by Proposition~\ref{prop:compproetvcohom}). This is the final statement of Theorem~\ref{thm:vproetbasechange}.

The final statement (about complexes $A$) follows as everything commutes with Postnikov limits.
\end{proof}

\begin{corollary}\label{cor:qcqsetbasechange} Let
\[\xymatrix{
Y^\prime\ar[r]^{g^\prime}\ar[d]_{f^\prime} & Y\ar[d]^f\\
X^\prime\ar[r]^g & X
}\]
be a cartesian diagram of locally spatial diamonds where $f$ is qcqs, and let $\mathcal F$ be a sheaf of abelian groups on $Y_\et$. Then the base change morphism
\[
g^\ast_\et R^i f_{\et\ast} \mathcal F\to R^i f^\prime_{\et\ast} g^{\prime\ast}_\et \mathcal F
\]
is an isomorphism in the following cases:
\begin{altenumerate}
\item if $i=0$, or
\item for all $i\geq 0$ if $f$ or $g$ is quasi-pro-\'etale, or
\item for all $i\geq 0$ if $n\mathcal F=0$ for some $n$ prime to $p$.
\end{altenumerate}

Moreover, if $f$ or $g$ is quasi-pro-\'etale or $n\Lambda=0$ for some $n$ prime to $p$, then for any $A\in D^+(Y_\et,\Lambda)$, the base change morphism
\[
g^\ast_\et Rf_{\et\ast} A\to Rf^\prime_{\et\ast} g^{\prime\ast}_\et A
\]
is an isomorphism.
\end{corollary}

\begin{proof} This follows from Corollary~\ref{cor:proetbasechange} and Proposition~\ref{prop:qcqsetproetbasechange}. Note that here the final statement is a formal consequence of the statement about sheaves.
\end{proof}

\section{Four functors}\label{sec:fourfunctors}

We will use the formalism of the previous section to set up the first parts of our six functor formalism. This will notably introduce a pushforward functor $Rf_\ast$ that is in general different from any of the functors $Rf_{v\ast}$, $Rf_{\qproet\ast}$ and $Rf_{\et\ast}$, and we will clarify their relation.

First, recall that for any small v-stack $Y$, we have defined the full subcategory $D_\et(Y,\Lambda)\subset D(Y_v,\Lambda)$. We will at certain points invoke Lurie's $\infty$-categorical adjoint functor theorem. For this reason, we need to upgrade our constructions to functors of $\infty$-categories at certain points.

\begin{lemma}\label{lem:enrichmentexists} There is a (natural) presentable stable $\infty$-category $\mathcal{D}_\et(Y,\Lambda)$ whose homotopy category is $D_\et(Y,\Lambda)$. More precisely, for each sufficiently large $\kappa$ as in Lemma~\ref{lem:choosekappa}, the $\infty$-derived category $\mathcal{D}(Y_{v,\kappa},\Lambda)$ of $\Lambda$-modules on $Y_{v,\kappa}$ is a presentable stable $\infty$-category, and for $\kappa$ large enough, $\mathcal{D}_\et(Y,\Lambda)$ is a full presentable stable $\infty$-subcategory closed under all colimits.
\end{lemma}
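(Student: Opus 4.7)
The plan is to first invoke Lurie's enhancement of the derived category of a Grothendieck abelian category, and then cut out $\mathcal{D}_\et(Y,\Lambda)$ as a colimit-closed subcategory. First I would observe that the category of (small) sheaves of $\Lambda$-modules on the v-site $Y_v$ is Grothendieck abelian: the $\kappa$-smallness convention controls the set-theoretic issues, and v-sheafification is an exact left adjoint to the inclusion from presheaves, providing a generator given by a suitable sum of representable sheaves. Applying Lurie's construction (Higher Algebra, 1.3.5) produces the presentable stable $\infty$-category $\mathcal{D}(Y_v,\Lambda)$, whose homotopy category is the usual $D(Y_v,\Lambda)$.

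Next, I would define $\mathcal{D}_\et(Y,\Lambda)\subset \mathcal{D}(Y_v,\Lambda)$ as the full $\infty$-subcategory on those $A$ such that for some (equivalently, any) surjection $f:X\to Y$ from a disjoint union of strictly totally disconnected perfectoid spaces, the pullback $f^*A$ lies in the essential image of the fully faithful embedding $\mathcal{D}(X_\et,\Lambda)\hookrightarrow \mathcal{D}(X_v,\Lambda)$ (this embedding being $L\lambda_X^*\circ L\nu_X^*$). Because $\lambda_X$ and $\nu_X$ arise from inclusions of underlying categories, their pullback functors are exact on sheaves and therefore preserve all small colimits at the level of $\infty$-derived categories; combined with Proposition~\ref{prop:pullbackfullyfaithfulderived}, the essential image is a stable full $\infty$-subcategory of $\mathcal{D}(X_v,\Lambda)$ closed under all small colimits. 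Similarly $f^*\colon \mathcal{D}(Y_v,\Lambda)\to \mathcal{D}(X_v,\Lambda)$ preserves all small colimits. The fact that the condition is independent of the chosen cover, and that it defines a genuinely full sub-$\infty$-category rather than merely a subcategory of the homotopy category, is guaranteed by Theorem~\ref{thm:proetandetsheafdetectionvlocally}. Putting these facts together yields that $\mathcal{D}_\et(Y,\Lambda)$ is a stable full $\infty$-subcategory closed under all small colimits.

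Presentability is the main obstacle. The cleanest route is to identify $\mathcal{D}_\et(Y,\Lambda)$ as a limit of presentable stable $\infty$-categories: choose a v-hypercover $X_\bullet\to Y$ in which each $X_n$ is a disjoint union of strictly totally disconnected spaces; then each $\mathcal{D}(X_{n,\et},\Lambda)$ is presentable stable, and the transition pullbacks are morphisms in $\Pr^L$, so $\lim_{\Delta}\mathcal{D}(X_{n,\et},\Lambda)$ computed in $\Pr^L$ is again presentable stable. A v-hyperdescent argument, using Theorem~\ref{thm:proetandetsheafdetectionvlocally} together with Corollaries~\ref{cor:pushforwardpreservesproet} and~\ref{cor:qcqspushforwardpreserveset} to compare v-pushforwards with étale pushforwards on each $X_n$, then identifies this limit with $\mathcal{D}_\et(Y,\Lambda)$. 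A parallel option is to show that the colimit-preserving inclusion $\iota:\mathcal{D}_\et(Y,\Lambda)\hookrightarrow \mathcal{D}(Y_v,\Lambda)$ admits a right adjoint (``étale coreflection'') by producing a small set of generators — for instance by taking $j_!\Lambda$ for $j:X'\to Y$ ranging over quasi-pro-étale maps from affinoid perfectoid spaces — and invoking the adjoint functor theorem (HTT~5.5.2.9) to conclude that $\mathcal{D}_\et(Y,\Lambda)$ is a coreflective, hence presentable, subcategory. The delicate point in either approach is verifying the hyperdescent/accessibility statement rigorously; the simplicial limit approach is conceptually cleanest, while the generators approach reduces the problem to a concrete check that is well-adapted to the v-topology.
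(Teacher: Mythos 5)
Your primary route---write $\mathcal{D}_\et(Y,\Lambda)$ as a $\Delta$-indexed limit of the presentable stable $\infty$-categories $\mathcal{D}(X_{n,\et},\Lambda)$ along a v-hypercover $X_\bullet\to Y$ by disjoint unions of strictly totally disconnected spaces, and invoke the fact that $\Pr^L$ is closed under small limits---is exactly the paper's argument, and the colimit-closedness step is likewise the same.

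One caution about your ``parallel option'': HTT~5.5.2.9 produces a right adjoint only when source and target are \emph{already known} to be presentable, so using it to establish a coreflection and then deducing presentability of $\mathcal{D}_\et(Y,\Lambda)$ is circular. In the paper this is precisely the order of logic: Lemma~\ref{lem:enrichmentexists} is proved first by the hypercover argument, and Corollary~\ref{cor:etreflection} (the existence of $R_{Y\et}$) is then deduced from it via the adjoint functor theorem. A generator-based proof of presentability would instead have to show directly that the proposed objects $j_!\Lambda$ form a small set of $\kappa$-compact generators; compactness of these in $\mathcal{D}_\et(Y,\Lambda)$ for a general small v-stack $Y$ is not obvious and is exactly the kind of finiteness question the paper works hard to establish later (Propositions~\ref{prop:constructiblecompactderivedplus} and~\ref{prop:constructiblecompactderivedfull} only treat much more restricted situations). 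So the alternative would require additional input, whereas the hypercover limit argument goes through cleanly with what is already available.
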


\begin{proof} First, $\mathcal{D}(Y_{v,\kappa},\Lambda)$ is a presentable stable $\infty$-category, as this is true for any ringed topos. Next, we check that the full $\infty$-subcategory $\mathcal{D}_\et(Y,\Lambda)$, with objects those of $D_\et(Y,\Lambda)$, is closed under all colimits in $\mathcal{D}(Y_{v,\kappa},\Lambda)$. This is clear for cones, so we are reduced to filtered colimits. Those commute with canonical truncations, and filtered colimits of \'etale sheaves are still \'etale sheaves, as desired.

By \cite[Proposition 5.5.3.12]{LurieHTT}, it is enough to prove the claim if $Y$ is a disjoint union of strictly totally disconnected perfectoid spaces. In that case, $\mathcal{D}_\et(Y,\Lambda)=\mathcal{D}(Y_\et,\Lambda)$ (as the functor of stable $\infty$-categories $\mathcal{D}(Y_\et,\Lambda)\to \mathcal{D}(Y_v,\Lambda)$ is fully faithful (as it is on homotopy categories), and has the same objects as $\mathcal{D}_\et(Y,\Lambda)$), which is a presentable $\infty$-category.
\end{proof}

One consequence of this is the following corollary.

\begin{corollary}\label{cor:etreflection} For any small v-stack $Y$, the inclusion $D_\et(Y,\Lambda)\subset D(Y_v,\Lambda)$ has a right adjoint
\[
R_{Y\et}: D(Y_v,\Lambda)\to D_\et(Y,\Lambda)\ .
\]
\end{corollary}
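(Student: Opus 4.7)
The plan is to deduce this directly from Lemma~\ref{lem:enrichmentexists} via the $\infty$-categorical adjoint functor theorem. By that lemma, the inclusion
\[
\mathcal{D}_\et(Y,\Lambda) \hookrightarrow \mathcal{D}(Y_v,\Lambda)
\]
is a fully faithful functor between presentable stable $\infty$-categories, and the essential content is that this inclusion preserves all (small) colimits.

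Granted this setup, Lurie's adjoint functor theorem (HTT, Corollary 5.5.2.9) applies: a colimit-preserving functor between presentable $\infty$-categories admits a right adjoint. This yields a right adjoint
\[
\mathcal{R}_{Y\et}\colon \mathcal{D}(Y_v,\Lambda)\to \mathcal{D}_\et(Y,\Lambda)
\]
at the level of stable $\infty$-categories. Passing to homotopy categories then produces the desired right adjoint $R_{Y\et}\colon D(Y_v,\Lambda)\to D_\et(Y,\Lambda)$ to the inclusion $D_\et(Y,\Lambda)\subset D(Y_v,\Lambda)$; the adjunction on homotopy categories is formal from the $\infty$-categorical adjunction.

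The only substantive point, which was already established in Lemma~\ref{lem:enrichmentexists}, is the closure of $\mathcal{D}_\et(Y,\Lambda)$ under all colimits in $\mathcal{D}(Y_v,\Lambda)$; for cones this is immediate from the triangulated structure, while for filtered colimits one uses that such colimits commute with canonical truncations and preserve the property of being an \'etale sheaf (which by Proposition~\ref{prop:etproetcheckoncohomsheaves} reduces membership in $D_\et$ to a condition on cohomology sheaves). No further argument is required here; this corollary is simply an application of the adjoint functor theorem to the abstract categorical input provided by Lemma~\ref{lem:enrichmentexists}, so there is no real obstacle beyond making sure the enrichment to $\infty$-categories has been set up, which is exactly what Lemma~\ref{lem:enrichmentexists} does.
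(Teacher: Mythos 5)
Your proposal is correct and matches the paper's own proof: both apply Lurie's $\infty$-categorical adjoint functor theorem (HTT 5.5.2.9) to the colimit-preserving inclusion of presentable stable $\infty$-categories established in Lemma~\ref{lem:enrichmentexists}, then pass to homotopy categories. The short recapitulation of why $\mathcal{D}_\et(Y,\Lambda)$ is closed under colimits is already the content of that lemma, so nothing further is needed.
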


In general, it is hard to compute $R_{Y\et}$. If $Y$ is a locally spatial diamond, then on $D^+(Y_v,\Lambda)$, it is given by $R(\nu\circ \lambda)_\ast$, where $\nu\circ \lambda: Y_v\to Y_\et$ is the map of sites; for strictly totally disconnected spaces, this formula even holds true on all of $D(Y_v,\Lambda)$.

\begin{proof} For each $\kappa$ the inclusion can be lifted to a colimit-preserving functor $\mathcal{D}_\et(Y,\Lambda)\to \mathcal{D}(Y_{v,\kappa},\Lambda)$ of presentable $\infty$-categories. As such, it admits a right adjoint by Lurie's $\infty$-categorical adjoint functor theorem, \cite[Corollary 5.5.2.9]{LurieHTT}. These right adjoints are necessarily compatible.
\end{proof}

We now deal first with the pullback functor. We claim that if $f: Y^\prime\to Y$ is a map of small v-stacks, there is a pullback functor
\[
f^\ast: D(Y_v,\Lambda)\to D(Y^\prime_v,\Lambda)\ ,
\]
inducing by restriction
\[
f^\ast: D_\et(Y,\Lambda)\to D_\et(Y^\prime,\Lambda)\ .
\]
This functor is easy to construct for $0$-truncated maps, as then $f$ induces a map of sites $f_v: Y^\prime_v\to Y_v$. In that case, one even gets naturally a functor of $\infty$-categories
\[
f_v^\ast: \mathcal{D}(Y_v,\Lambda)\to \mathcal{D}(Y^\prime_v,\Lambda)\ ,
\]
which induces by restriction
\[
f^\ast: \mathcal{D}_\et(Y,\Lambda)\to \mathcal{D}_\et(Y^\prime,\Lambda)\ .
\]
In general, we need simplicial techniques, so let us recall this.

\begin{proposition}\label{prop:derivedhyperdescent} Let $Y$ be a small v-stack, and let $Y_\bullet\to Y$ be a simplicial v-hypercover of $Y$ by small v-stacks $Y_\bullet$ with $0$-truncated maps $Y_i\to Y$. Consider the simplicial site $Y_{\bullet,v}$, and let $\mathcal{D}(Y_{\bullet,v},\Lambda)$ be the $\infty$-derived category of sheaves of $\Lambda$-modules on $Y_{\bullet,v}$, with homotopy category $D(Y_{\bullet,v},\Lambda)$.

Pullback along $Y_\bullet\to Y$ induces a fully faithful functor
\[
\mathcal{D}(Y_v,\Lambda)\to \mathcal{D}(Y_{\bullet,v},\Lambda)
\]
whose essential image is given by the full $\infty$-subcategory of cartesian objects. In particular, on homotopy categories,
\[
D(Y_v,\Lambda)\to D(Y_{\bullet,v},\Lambda)
\]
is fully faithful with essential image given by the full subcategory of cartesian objects.
\end{proposition}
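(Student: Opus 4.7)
The pullback functor $f^\ast$ is constructed by observing that each $f_i \colon Y_i \to Y$, being $0$-truncated, induces a genuine morphism of sites $Y_{i,v} \to Y_v$ and hence an exact pullback on $\Lambda$-module sheaves that lifts to the $\infty$-derived categories. Assembling these over the simplicial index gives the desired $f^\ast \colon \mathcal{D}(Y_v,\Lambda) \to \mathcal{D}(Y_{\bullet,v},\Lambda)$, whose image lies in the cartesian objects by construction. I will construct the right adjoint $Rf_\ast$ as the totalization of the cosimplicial diagram $Rf_{i,\ast}$. Fully faithfulness of $f^\ast$ amounts to the unit $A \to Rf_\ast f^\ast A$ being an equivalence for every $A$, while the identification of the essential image as the cartesian objects amounts to the counit $f^\ast Rf_\ast C \to C$ being an equivalence whenever $C$ is cartesian.

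The plan is to prove both statements first for bounded-below complexes and then bootstrap to the unbounded case. The functor $f^\ast$ is t-exact (inverse images of $\Lambda$-module sheaves are exact), so it commutes with canonical truncations $\tau^{\geq -n}$; the right adjoint $Rf_\ast$, being a right adjoint, commutes with all limits, in particular with the Postnikov limit $R\varprojlim_n \tau^{\geq -n}$. Combined with left-completeness of $\mathcal{D}(Y_v,\Lambda)$ given by Proposition~\ref{prop:leftcomplete}, and of $\mathcal{D}(Y_{\bullet,v},\Lambda)$ inherited from left-completeness of each $\mathcal{D}(Y_{i,v},\Lambda)$, this allows one to reduce both claims to the bounded-below case. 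Within $D^+$, a further Postnikov argument on $\tau^{\leq m}$ reduces to the case where $A$ (respectively $C$) is concentrated in a single degree, i.e.\ a sheaf $\mathcal{F}$ (respectively a cartesian sheaf).

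The remaining assertion then becomes v-hyperdescent for sheaves of $\Lambda$-modules: the augmented cosimplicial object
\[
R\Gamma(Y_v,\mathcal{F}) \to R\Gamma(Y_{\bullet,v}, f_\bullet^\ast \mathcal{F})
\]
is a limit in $\mathcal{D}(\Lambda)$, and analogously the adjunction counit produces every cartesian sheaf from its descended global section data. Testing these statements on a generating family of affinoid perfectoid spaces $X \to Y$ and base-changing, the problem reduces to hyperdescent along a v-hypercover of a perfectoid space. For an ordinary abelian sheaf, hyperdescent coincides with Čech descent (a general feature of $1$-topoi), which is built into the definition of the v-site; the derived statement then follows by a Cartan--Eilenberg resolution by injective sheaves, each of which satisfies hyperdescent by the preceding reason. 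For cartesian sheaves, the descent datum precisely packages a v-sheaf on $Y$, yielding the identification of the essential image.

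The main obstacle will be carrying out the reduction to bounded-below complexes cleanly: although t-exactness of $f^\ast$ and limit-preservation of $Rf_\ast$ give separate compatibilities with the Postnikov tower, one has to combine them with left-completeness of the source and target to conclude that the unit and counit themselves are equivalences on unbounded objects. Once this reduction is in place, everything else reduces to v-descent for sheaves of $\Lambda$-modules, which is already implicit in the theory developed in the earlier sections.
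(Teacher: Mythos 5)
Your proposal is correct in outline, but it is worth noting that the paper's proof is simply a citation: it invokes Stacks Project Tag 0DC7 (general unbounded cohomological descent for hypercoverings of ringed topoi) together with \cite[Proposition 3.3.6]{BhattScholze}. What you have written is, in effect, an unwound version of the argument behind that citation: reduce to bounded-below objects via left-completeness, reduce further to single sheaves via d\'evissage, and finish with the classical bounded-below cohomological descent along a hypercover. So the two ``approaches'' are the same mathematics; yours just reproves the general input rather than quoting it.

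A few spots in your sketch are quicker than they should be and deserve flagging. First, left-completeness of $\mathcal{D}(Y_{\bullet,v},\Lambda)$ is not just ``inherited''; one has to observe that the simplicial v-topos is replete (which follows because limits in the simplicial topos are computed termwise and each $Y_{i,v}^\sim$ is replete), and then invoke \cite[Proposition 3.3.3]{BhattScholze}. Second, the phrase ``a further Postnikov argument on $\tau^{\leq m}$'' is misleading: these truncations form a colimit filtration, not a Postnikov tower, and $Rf_\ast$ does not commute with colimits. The correct justification is that $Rf_\ast$ is left t-exact, so for $B$ bounded below, $H^j(Rf_\ast B)$ depends only on $\tau^{\leq j} B$; this allows the usual finite d\'evissage to single cohomology sheaves. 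Third, the claim that injective sheaves ``satisfy hyperdescent by the preceding reason'' conflates two things: ordinary (set-level) Čech descent for a sheaf gives exactness of the augmented hypercover complex only in degree $0$, whereas the full acyclicity needed for $R\Gamma(X,I)\simeq \mathrm{Tot}\,I(X_\bullet)$ uses injectivity in an essential way (apply $\Hom(-,I)$ to the acyclic chain complex of free abelian sheaves attached to the hypercover). None of these is a fatal gap---they are precisely the technical content of Tag 0DC7---but a self-contained proof would need to fill them in. If the goal is brevity, citing the Stacks Project is the cleaner choice; if the goal is a self-contained treatment, the three points above are the places to be careful.
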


\begin{proof} This follows from \cite[Tag 0DC7]{StacksProject} (on homotopy categories, thus on stable $\infty$-categories), cf.~also \cite[Proposition 3.3.6]{BhattScholze}.
\end{proof}

\begin{remark} One may worry that $D_\et(Y,\Lambda)\subset D(Y_v,\Lambda)$ is not contained in any one $D(Y_{v,\kappa},\Lambda)$, so would be rather large. However, by choosing a simplicial v-hypercover $Y_\bullet\to Y$ by disjoint unions of strictly totally disconnected spaces, and using that $D_\et(Y_i,\Lambda) = D(Y_{i,\et},\Lambda)$ is contained in $D(Y_{i,v,\kappa},\Lambda)$ for any $\kappa$ so that $Y_i$ is $\kappa$-small, one finds that $D_\et(Y,\Lambda) = D_{\et,\cart}(Y_\bullet,\Lambda)$ has the similar property.
\end{remark}

Now, if $f: Y^\prime\to Y$ is any map of small v-stacks, choose a perfectoid space $Y^\prime_0$ with a surjective map of v-stacks $Y^\prime_0\to Y^\prime$, and let $Y^\prime_\bullet$ be the corresponding Cech nerve, all of whose terms are small v-sheaves. Then in particular all the maps $Y^\prime_i\to Y^\prime$ and $Y^\prime_i\to Y$ are $0$-truncated, and we get a well-defined pullback functor
\[
\mathcal{D}(Y_v,\Lambda)\to \mathcal{D}(Y^\prime_{\bullet,v},\Lambda)\ ,
\]
taking values in cartesian objects (as composites of pullbacks are pullbacks). Thus, we get a functor
\[
f_v^\ast: \mathcal{D}(Y_v,\Lambda)\to \mathcal{D}_\cart(Y^\prime_{\bullet,v},\Lambda)\simeq \mathcal{D}(Y^\prime_v,\Lambda)\ .
\]
This carries $\mathcal{D}_\et(Y_v,\Lambda)$ into $\mathcal{D}_\et(Y^\prime,\Lambda)$ (as it is, at least up to homotopy, compatible with further pullbacks). Thus, we get a functor
\[
f^\ast: \mathcal{D}_\et(Y,\Lambda)\to \mathcal{D}_\et(Y^\prime,\Lambda)\ ,
\]
and in particular
\[
f^\ast: D_\et(Y,\Lambda)\to D_\et(Y^\prime,\Lambda)\ .
\]
It is easy to see that this is canonically independent of the choices made (as the possible choices of $Y^\prime_0\to Y^\prime$ form a cofiltered category), and compatible with composition, i.e.~for another map $g: Y^{\prime\prime}\to Y^\prime$ of small v-stacks with composite $f\circ g: Y^{\prime\prime}\to Y$, one has a natural equivalence $(f\circ g)^\ast\simeq f^\ast\circ g^\ast$, satisfying the usual coherences.\footnote{A different and much simpler way to construct this functoriality has been suggested by a referee: One can instead use the cocontinuous morphism of sites taking any $X'\to Y'$ in $Y'_v$ to the composite $X'\to Y'\to Y$ in $Y_v$, and the corresponding pullback functoriality.}

\begin{lemma}\label{lem:Rfastexists} For any map of small v-stacks $f: Y^\prime\to Y$, the functor
\[
f^\ast: D_\et(Y,\Lambda)\to D_\et(Y^\prime,\Lambda)
\]
has a right adjoint
\[
Rf_\ast: D_\et(Y^\prime,\Lambda)\to D_\et(Y,\Lambda)\ .
\]
\end{lemma}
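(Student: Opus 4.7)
The plan is to produce $Rf_\ast$ as the right adjoint of $f^\ast$ by appealing to Lurie's $\infty$-categorical adjoint functor theorem (\cite[Corollary 5.5.2.9]{LurieHTT}), via the presentable stable $\infty$-categorical enrichment $\mathcal{D}_\et(-,\Lambda)$ produced in Lemma~\ref{lem:enrichmentexists}. Once we know that $f^\ast$ lifts to a colimit-preserving functor $\mathcal{D}_\et(Y,\Lambda)\to \mathcal{D}_\et(Y^\prime,\Lambda)$ of presentable stable $\infty$-categories, the theorem produces a right adjoint on the $\infty$-categorical level, which in particular gives the desired right adjoint $Rf_\ast$ on homotopy categories.

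First, I will observe that the construction of $f^\ast$ given just before the lemma in fact produces a functor of $\infty$-categories $f^\ast\colon \mathcal{D}_\et(Y,\Lambda)\to \mathcal{D}_\et(Y^\prime,\Lambda)$. Choosing a surjection $Y^\prime_0\to Y^\prime$ from a perfectoid space and forming the Cech nerve $Y^\prime_\bullet\to Y^\prime$, all maps $Y^\prime_i\to Y$ are $0$-truncated, so they induce morphisms of sites and hence pullback functors
\[
f_{i,v}^\ast\colon \mathcal{D}(Y_v,\Lambda)\to \mathcal{D}(Y^\prime_{i,v},\Lambda)
\]
of presentable stable $\infty$-categories. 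These assemble into a functor to $\mathcal{D}(Y^\prime_{\bullet,v},\Lambda)$ landing in the full subcategory of cartesian objects, which by Proposition~\ref{prop:derivedhyperdescent} is equivalent to $\mathcal{D}(Y^\prime_v,\Lambda)$; this defines $f_v^\ast$, and it restricts to $f^\ast$ on the $\et$-subcategories.

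Next I will verify that $f^\ast$ preserves all small colimits. For a $0$-truncated morphism $g$, the functor $g_v^\ast$ is the left adjoint in a morphism of topoi and so preserves all colimits. The functor $f_v^\ast\colon \mathcal{D}(Y_v,\Lambda)\to \mathcal{D}_\cart(Y^\prime_{\bullet,v},\Lambda)$ is built from the colimit-preserving functors $f_{i,v}^\ast$ followed by the inclusion $\mathcal{D}_\cart\hookrightarrow \mathcal{D}$; here it is crucial that the inclusion of cartesian objects into $\mathcal{D}(Y^\prime_{\bullet,v},\Lambda)$ commutes with colimits, which follows from the fact that on each simplicial level the transition maps are pullbacks along $0$-truncated morphisms and hence their pullback functors commute with colimits. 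Thus $f_v^\ast$ is colimit-preserving. Finally, by Lemma~\ref{lem:enrichmentexists}, $\mathcal{D}_\et\subset \mathcal{D}$ is a full presentable stable subcategory closed under all colimits; since $f_v^\ast$ carries $\mathcal{D}_\et(Y,\Lambda)$ into $\mathcal{D}_\et(Y^\prime,\Lambda)$, the restriction $f^\ast$ preserves all small colimits as a functor between presentable stable $\infty$-categories.

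Applying \cite[Corollary 5.5.2.9]{LurieHTT} gives a right adjoint $Rf_\ast\colon \mathcal{D}_\et(Y^\prime,\Lambda)\to \mathcal{D}_\et(Y,\Lambda)$, and passing to homotopy categories yields the adjunction asserted in the lemma. The main technical point that has to be checked carefully is the colimit-preservation claim in the non-$0$-truncated case, i.e.\ that cartesianness in $\mathcal{D}(Y^\prime_{\bullet,v},\Lambda)$ is preserved under arbitrary small colimits; this is where the $0$-truncatedness of the maps $Y^\prime_i\to Y^\prime_j$ (so that their pullbacks admit left, hence commute with colimits in the relevant sense) is used decisively.
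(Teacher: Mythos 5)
Your proof is correct and follows essentially the same route as the paper: lift $f^\ast$ to a colimit-preserving functor of the presentable stable $\infty$-categories from Lemma~\ref{lem:enrichmentexists} and invoke \cite[Corollary 5.5.2.9]{LurieHTT}. The paper compresses the colimit-preservation check to ``by construction,'' while you spell out the key point that cartesianness in $\mathcal{D}(Y^\prime_{\bullet,v},\Lambda)$ is preserved under colimits because the levelwise transition maps are $0$-truncated; this is a correct and useful elaboration, not a different argument.
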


\begin{proof} The functor $f_v^\ast: \mathcal{D}(Y_v,\Lambda)\to \mathcal{D}(Y^\prime_v,\Lambda)$ commutes with all colimits by construction, and thus so does $f^\ast: \mathcal{D}_\et(Y,\Lambda)\to \mathcal{D}_\et(Y^\prime,\Lambda)$. Thus, the result follows from the $\infty$-categorical adjoint functor theorem, \cite[Corollary 5.5.2.9]{LurieHTT}, and Lemma~\ref{lem:enrichmentexists}.
\end{proof}

Again, it is in general hard to compute $Rf_\ast$. If $Y^\prime$ and $Y$ are locally spatial diamonds, then it agrees with (the left-completed) $Rf_{\et\ast}$. In general, it is clear that $Rf_\ast = R_{Y\et} Rf_{v\ast}$; however, this involves the complicated functor $R_{Y\et}$. However, in the following situation, it is not necessary to apply $R_{Y\et}$; this makes use of the results of the previous section. 

\begin{proposition}\label{prop:Rfastsimple} Assume that $n\Lambda=0$ for some $n$ prime to $p$. Let $f: Y^\prime\to Y$ be a qcqs map of small v-stacks. Then for any $A\in D^+_\et(Y^\prime,\Lambda)$, one has $Rf_{v\ast} A\in D^+_\et(Y,\Lambda)$, and thus
\[
Rf_\ast A = Rf_{v\ast} A\ .
\]
Moreover, in this case, the formation of $Rf_\ast$ commutes with any base change, i.e.~for all maps $g: \tilde{Y}\to Y$ of small v-stacks with pullback $\tilde{f}: \tilde{Y}^\prime = Y^\prime\times_Y \tilde{Y}\to \tilde{Y}$, $g^\prime: \tilde{Y}^\prime\to Y^\prime$, the natural transformation
\[
g^\ast Rf_\ast A\to R\tilde{f}_\ast g^{\prime\ast} A
\]
is an equivalence for all $A\in D^+_\et(Y^\prime,\Lambda)$.

If $Rf_\ast$ has finite cohomological dimension, i.e.~there is some integer $N$ such that for all $A\in D_\et(Y^\prime,\Lambda)$ concentrated in degree $0$, one has $R^if_\ast A=0$ for $i>N$, then for all $A\in D_\et(Y^\prime,\Lambda)$, one has
\[
Rf_\ast A = Rf_{v\ast} A\in D_\et(Y,\Lambda)\ ,
\]
and moreover the base change morphism
\[
g^\ast Rf_\ast A\to R\tilde{f}_\ast g^{\prime\ast} A
\]
is an isomorphism.
\end{proposition}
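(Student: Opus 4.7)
The first assertion, that $Rf_{v\ast} A \in D^+_\et(Y,\Lambda)$ for $A \in D^+_\et(Y',\Lambda)$, is exactly Corollary~\ref{cor:qcqspushforwardpreserveset}(ii). Granted this, I would identify $Rf_\ast A$ with $Rf_{v\ast} A$ via Yoneda: for any $B \in D_\et(Y,\Lambda)$, the pullback $f^\ast B$ is by construction obtained from the v-pullback $f_v^\ast B$ (via the cartesian simplicial presentation), and these agree as objects because $f_v^\ast B$ already lies in $D_\et(Y',\Lambda)$. Consequently,
\[ \Hom_{D_\et(Y,\Lambda)}(B, Rf_\ast A) = \Hom_{D_\et(Y',\Lambda)}(f^\ast B, A) = \Hom_{D(Y'_v,\Lambda)}(f_v^\ast B, A) = \Hom_{D(Y_v,\Lambda)}(B, Rf_{v\ast} A), \]
and the last term equals $\Hom_{D_\et(Y,\Lambda)}(B, Rf_{v\ast} A)$ since $Rf_{v\ast}A$ is already étale. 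By Yoneda, $Rf_\ast A \cong Rf_{v\ast} A$.

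For base change in the bounded-below case, by the identification just established (applied on both sides), it suffices to prove the v-topos base change $g_v^\ast Rf_{v\ast} A \cong R\tilde f_{v\ast} g'^{\ast}_v A$. If $Y,Y',\tilde Y$ are v-sheaves this is formal: the v-topos $\tilde Y_v$ is the slice topos $Y_v/\tilde Y$, pullback along $g$ is the slicing functor, and $\tilde Y' = \tilde Y \times_Y Y'$, so the value of either side at $U \in \tilde Y_v$ is $R\Gamma((U\times_Y Y')_v,A)$. For general v-stacks I would choose a surjection $Y_0 \to Y$ from a disjoint union of affinoid perfectoid spaces, form its \v{C}ech nerve $Y_\bullet$ and the cartesian simplicial objects $Y'_\bullet = Y'\times_Y Y_\bullet$, $\tilde Y_\bullet = \tilde Y \times_Y Y_\bullet$, $\tilde Y'_\bullet$, whose terms are all v-sheaves. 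Using Proposition~\ref{prop:derivedhyperdescent} to compute all four functors $g^\ast, Rf_{v\ast}, g'^\ast, R\tilde f_{v\ast}$ via descent from the simplicial resolutions reduces the base-change equivalence to the already-handled v-sheaf case.

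For the unbounded case under a cohomological-dimension bound $N$, I would use left-completeness (Proposition~\ref{prop:locallyspatialleftcompletion}) to write $A = R\varprojlim_n \tau^{\geq -n}A$. Since $Rf_\ast$ is a right adjoint, $Rf_\ast A = R\varprojlim_n Rf_\ast \tau^{\geq -n}A$; the cohomological bound ensures that the system $H^k(Rf_\ast \tau^{\geq -n}A)$ is eventually constant equal to $H^k(Rf_\ast A)$, so $Rf_\ast A \in D_\et(Y,\Lambda)$, and applying the bounded case termwise gives $Rf_\ast A = Rf_{v\ast}A$. For base change, $g^\ast$ is exact and thus commutes with $\tau^{\geq -n}$; since the Postnikov limit stabilizes in each cohomological degree, one has $g^\ast R\varprojlim_n = R\varprojlim_n g^\ast$ on this tower, and combining with the bounded base change yields the unbounded statement.

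The main technical obstacle I expect is the reduction of v-topos base change from v-sheaves to v-stacks: on v-sheaves it is tautological via the slice-topos picture, but for v-stacks one must implement simplicial $\infty$-categorical descent carefully to be sure the base-change map constructed via the simplicial resolution coincides with the canonical one; the coherence bookkeeping (rather than any deep input) is the delicate part.
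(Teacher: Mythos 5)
Your argument for the first assertion and for the unbounded case matches the paper's proof: the containment $Rf_{v\ast}A\in D^+_\et(Y,\Lambda)$ is Corollary~\ref{cor:qcqspushforwardpreserveset}(ii), the identification $Rf_\ast A = Rf_{v\ast}A$ is the adjunction you spell out, and the unbounded case via left-completeness plus stabilization of the Postnikov tower in each degree is the same reasoning the paper uses (the paper only adds the remark that $R\varprojlim$ has cohomological dimension $\leq 1$ in a replete topos, which makes the eventual stabilization literal).

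The bounded-below base change has a genuine gap. Your slice-topos identity for v-sheaves is correct, and is what the paper's phrase ``v-pushforward commutes with v-slices'' refers to, but the reduction from v-stacks is wrong as stated: after hypercovering $Y$ by a simplicial perfectoid space $Y_\bullet$, the pullbacks $Y'_\bullet = Y'\times_Y Y_\bullet$, $\tilde Y_\bullet = \tilde Y \times_Y Y_\bullet$, $\tilde Y'_\bullet$ do \emph{not} have v-sheaf terms unless $f$ (resp.\ $g$) is $0$-truncated. The automorphism groupoid of a point of $Y'\times_Y Y_n$ is the kernel of $f$ on automorphisms of $Y'$, which is trivial exactly when $f$ is faithful; qcqs does not imply this. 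Concretely, for $Y=\ast$ and $Y'=B\underline{\mathbb{Z}/\ell}$ (a qcqs v-stack), $Y'_n = B\underline{\mathbb{Z}/\ell}\times Y_n$ is a genuine v-stack. So the claim ``whose terms are all v-sheaves'' fails, and the reduction to the ``already-handled v-sheaf case'' does not apply.

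The correct order of reduction is the reverse. First establish base change for $0$-truncated $g$ (so $\tilde Y$ is a v-sheaf over the v-stack $Y$), keeping $Y$ and $Y'$ as v-stacks: this is the formal slice-topos base change (equivalently, the Beck--Chevalley identity $f^\ast g_! = g'_!\tilde f^\ast$ for the localization $\tilde Y_v = Y_v/\tilde Y$), giving the value $R\Gamma((U\times_Y Y')_v,A)$ for both sides at a v-sheaf $U\to\tilde Y$. Then for arbitrary $g$, choose a simplicial v-hypercover $\tilde Y_\bullet\to\tilde Y$ by perfectoid spaces; both sides of the base change map live in $D(\tilde Y_v,\Lambda)\simeq D_\cart(\tilde Y_{\bullet,v},\Lambda)$ by Proposition~\ref{prop:derivedhyperdescent}, and the restrictions to each $\tilde Y_n$ are identified by the $0$-truncated case applied to the maps $\tilde Y_n\to\tilde Y$ and $\tilde Y_n\to Y$ (both automatically $0$-truncated since $\tilde Y_n$ is a v-sheaf). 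Hypercovering $\tilde Y$ rather than $Y$ is also consistent with how the paper constructs $g^\ast$ in the first place, which always hypercovers the source of a pullback.
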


We note that to get base change, we only need to assume that $Rf_\ast$ has finite cohomological dimension (and not the same about $R\tilde{f}_\ast$).

\begin{proof} Assume first that $A\in D^+_\et(Y^\prime,\Lambda)$. The statement $Rf_{v\ast} A\in D^+_\et(Y,\Lambda)$ is given by Corollary~\ref{cor:qcqspushforwardpreserveset}~(ii). The base change result follows formally as v-pushforward commutes with v-slices.

Now, if $Rf_\ast$ has finite cohomological dimension, then by left-completeness, it follows that in general
\[
Rf_{v\ast} A = R\varprojlim_n Rf_\ast \tau^{\geq -n} A\ ,
\]
where the limit becomes eventually constant in each degree. In a replete topos, the cohomological dimension of $R\varprojlim_n$ is bounded by $1$, cf.~\cite[Proposition 3.1.11]{BhattScholze}, so we see that each cohomology sheaf of $Rf_{v\ast} A$ agrees with the cohomology sheaf of $Rf_\ast \tau^{\geq -n} A$ for $n$ sufficiently large, which shows that $Rf_{v\ast} A\in D_\et(Y,\Lambda)$ by Proposition~\ref{prop:etproetcheckoncohomsheaves}, and thus $Rf_\ast A = Rf_{v\ast} A$.

Moreover, concerning base change, one has, by commuting limits,
\[\begin{aligned}
R\tilde{f}_\ast g^{\prime\ast} A &= R\varprojlim_n R\tilde{f}_\ast g^{\prime\ast} \tau^{\geq -n} A\\
&= R\varprojlim_n g^\ast Rf_\ast \tau^{\geq -n} A\ .
\end{aligned}\]
This admits a canonical map from $g^\ast Rf_\ast A$. The cone of the map
\[
g^\ast Rf_\ast A\to g^\ast Rf_\ast \tau^{\geq -n} A
\]
lies in degrees $\leq -n+N$. This implies that the $R\varprojlim_n$ of the cone is equal to zero, as we work in a replete topos, and so the derived category is left-complete.
\end{proof}

Next, we define the tensor product functor. Note that there is a natural functor
\[
-\dotimes_\Lambda - : D(Y_v,\Lambda)\times D(Y_v,\Lambda)\to D(Y_v,\Lambda)\ ,
\]
like for any ringed topos. In fact, this is already defined as a functor of $\infty$-categories
\[
-\dotimes_\Lambda - : \mathcal{D}(Y_v,\Lambda)\times \mathcal{D}(Y_v,\Lambda)\to \mathcal{D}(Y_v,\Lambda)
\]
which preserves all colimits separately in each variable.\footnote{Even better, $\mathcal{D}(Y_v,\Lambda)$ is a presentably symmetric monoidal $\infty$-category.} If $f: Y^\prime\to Y$ is any map of small v-stacks, then there is a natural equivalence
\[
f^\ast (A\dotimes_\Lambda B)\simeq f^\ast A\dotimes_\Lambda f^\ast B
\]
for all $A,B\in D(Y_v,\Lambda)$: This is clear for $0$-truncated maps by the general formalism of ringed topoi, and then follows in general by following the construction of $f^\ast$.

\begin{lemma}\label{lem:tensorprodexists} For any small v-stack $Y$, the functor
\[
-\dotimes_\Lambda - : D(Y_v,\Lambda)\times D(Y_v,\Lambda)\to D(Y_v,\Lambda)
\]
induces by restriction a functor
\[
-\dotimes_\Lambda - : D_\et(Y,\Lambda)\times D_\et(Y,\Lambda)\to D_\et(Y,\Lambda)\ .
\]
\end{lemma}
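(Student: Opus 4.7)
The plan is to reduce to the case where $Y = X$ is a strictly totally disconnected perfectoid space, and then to show that the tensor product in $D(X_v,\Lambda)$ of two objects coming from $D(X_\et,\Lambda)$ again comes from $D(X_\et,\Lambda)$ via the (fully faithful) pullback $\nu^\ast\lambda^\ast \colon D(X_\et,\Lambda)\hookrightarrow D(X_v,\Lambda)$.

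First, by Definition~\ref{def:generaletproetderivedcategory}, in order to check that an object $C\in D(Y_v,\Lambda)$ lies in $D_\et(Y,\Lambda)$, it suffices to check that $f^\ast C\in D(X_\et,\Lambda)$ for every strictly totally disconnected perfectoid space $f\colon X\to Y$ over $Y$. Since the pullback functor $f^\ast\colon D(Y_v,\Lambda)\to D(X_v,\Lambda)$ was constructed so as to be symmetric monoidal (this is immediate in the $0$-truncated case, where $f$ is a morphism of ringed sites and $f^\ast$ is exact, and the general case is obtained by passing to a simplicial hypercover, under which symmetric monoidality is preserved termwise and in the limit), we have $f^\ast(A\dotimes_\Lambda B)\simeq f^\ast A\dotimes_\Lambda f^\ast B$ in $D(X_v,\Lambda)$. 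This reduces the assertion to the case that $Y=X$ is strictly totally disconnected.

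In that case, by Proposition~\ref{prop:leftcomplete}, $D_\et(X,\Lambda)=D(X_\et,\Lambda)$, and the inclusion $D(X_\et,\Lambda)\subset D(X_v,\Lambda)$ is the pullback $\nu^\ast\lambda^\ast$ along the composition of sites $X_v\to X_\qproet\to X_\et$. The functor $\nu^\ast\lambda^\ast$ comes from a morphism of ringed topoi in which the pullback of sheaves is exact and sends flat \'etale sheaves of $\Lambda$-modules to flat v-sheaves of $\Lambda$-modules (flatness can be checked on stalks, which only improves under pullback). Consequently, $\nu^\ast\lambda^\ast$ sends $K$-flat resolutions to $K$-flat resolutions, and therefore intertwines the derived tensor product on $D(X_\et,\Lambda)$ with the derived tensor product on $D(X_v,\Lambda)$. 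Explicitly, choosing a $K$-flat resolution $P^\bullet\to A$ in the $\Lambda$-modules on $X_\et$, we get
\[
A\dotimes_\Lambda B = (\nu^\ast\lambda^\ast P^\bullet)\otimes_\Lambda \nu^\ast\lambda^\ast B = \nu^\ast\lambda^\ast(P^\bullet\otimes_\Lambda B) = \nu^\ast\lambda^\ast(A\dotimes_\Lambda B)
\]
in $D(X_v,\Lambda)$, where the last derived tensor product is computed in $D(X_\et,\Lambda)$. In particular, $A\dotimes_\Lambda B$ lies in $D(X_\et,\Lambda)=D_\et(X,\Lambda)$.

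The only subtle point is the existence of $K$-flat resolutions of an arbitrary (possibly unbounded) complex; this is a standard fact for the (left-complete) derived category of a Grothendieck abelian category, and it applies here to both $D(X_\et,\Lambda)$ and $D(X_v,\Lambda)$. Alternatively, one can avoid unbounded resolutions entirely by combining the evident bounded-above case with the facts that $\dotimes_\Lambda$ commutes with filtered colimits in each variable and that the inclusion $D_\et(X,\Lambda)\subset D(X_v,\Lambda)$ is closed under such colimits (cf.\ Lemma~\ref{lem:enrichmentexists}), using the Postnikov presentation $A=R\varprojlim_n \tau^{\geq -n}A$ together with Proposition~\ref{prop:etproetcheckoncohomsheaves} to conclude.
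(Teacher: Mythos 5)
Your proof is correct and takes essentially the same approach as the paper: reduce via the v-local criterion of Definition~\ref{def:generaletproetderivedcategory} to the case that $Y=X$ is strictly totally disconnected, and there observe that the fully faithful inclusion $D(X_\et,\Lambda)\hookrightarrow D(X_v,\Lambda)$ is compatible with derived tensor products. The paper asserts this compatibility without elaboration; your $K$-flat resolution argument supplies the justification it leaves implicit.
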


One can use this lemma (together with the previous discussion) to deduce that $\mathcal{D}_\et(Y,\Lambda)$ is a presentably symmetric monoidal $\infty$-category (in a unique way compatible with the standard symmetric monoidal structure on $\mathcal{D}(Y_v,\Lambda)$).

\begin{proof} This can be checked v-locally on $Y$, so we can assume that $Y$ is a disjoint union of strictly totally disconnected perfectoid spaces. In this case, $D_\et(Y,\Lambda) = D(Y_\et,\Lambda)$, and the result follows from the existence of the natural tensor product on $D(Y_\et,\Lambda)$ (compatible with pullback along $Y_v\to Y_\et$).
\end{proof}

\begin{lemma}\label{lem:rhomexists} For any small v-stack $Y$ and $A\in D_\et(Y,\Lambda)$, the functor
\[
D_\et(Y,\Lambda)\to D_\et(Y,\Lambda): B\mapsto B\dotimes_\Lambda A
\]
admits a right adjoint
\[
D_\et(Y,\Lambda)\to D_\et(Y,\Lambda) : C\mapsto R\sHom_\Lambda(A,C)\ ,
\]
i.e.
\[
\Hom_{D_\et(Y,\Lambda)}(B\dotimes_\Lambda A,C) = \Hom_{D_\et(Y,\Lambda)}(B,R\sHom_\Lambda(A,C))\ .
\]
For varying $A$, these assemble into a functor
\[
R\sHom_\Lambda(-,-): D_\et(Y,\Lambda)^\op\times D_\et(Y,\Lambda)\to D_\et(Y,\Lambda)\ .
\]
\end{lemma}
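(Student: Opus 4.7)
The plan is to mimic the argument used for the existence of $Rf_\ast$ in Lemma~\ref{lem:Rfastexists}, namely to invoke Lurie's $\infty$-categorical adjoint functor theorem, \cite[Corollary 5.5.2.9]{LurieHTT}, using the presentable stable $\infty$-enhancement $\mathcal{D}_\et(Y,\Lambda)$ provided by Lemma~\ref{lem:enrichmentexists}.

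First I would upgrade the tensor product to the $\infty$-categorical level: the standard derived tensor product on the ringed topos $(Y_v,\Lambda)$ gives a functor
\[
-\dotimes_\Lambda - : \mathcal{D}(Y_v,\Lambda)\times \mathcal{D}(Y_v,\Lambda)\to \mathcal{D}(Y_v,\Lambda)
\]
which preserves colimits separately in each variable, since $\mathcal{D}(Y_v,\Lambda)$ is presentably symmetric monoidal. By Lemma~\ref{lem:tensorprodexists}, this restricts to a functor on $\mathcal{D}_\et(Y,\Lambda)$; and since the inclusion $\mathcal{D}_\et(Y,\Lambda)\hookrightarrow \mathcal{D}(Y_v,\Lambda)$ preserves all colimits by Lemma~\ref{lem:enrichmentexists}, the induced tensor product on $\mathcal{D}_\et(Y,\Lambda)$ also preserves colimits separately in each variable.

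Thus, for any fixed $A\in \mathcal{D}_\et(Y,\Lambda)$, the functor $B\mapsto B\dotimes_\Lambda A$ is a colimit-preserving functor between presentable stable $\infty$-categories, and \cite[Corollary 5.5.2.9]{LurieHTT} produces a right adjoint $R\sHom_\Lambda(A,-)$. Passing to homotopy categories yields the required adjunction
\[
\Hom_{D_\et(Y,\Lambda)}(B\dotimes_\Lambda A,C) = \Hom_{D_\et(Y,\Lambda)}(B,R\sHom_\Lambda(A,C))\ .
\]
For the bifunctoriality in $A$, one observes that the tensor product is a bifunctor of presentable $\infty$-categories, so by the parametrized adjoint functor theorem (or equivalently by the standard fact that a presentably symmetric monoidal $\infty$-category is closed), the right adjoints assemble into a bifunctor
\[
R\sHom_\Lambda(-,-): \mathcal{D}_\et(Y,\Lambda)^\op\times \mathcal{D}_\et(Y,\Lambda)\to \mathcal{D}_\et(Y,\Lambda)\ .
\]
Passing to homotopy categories gives the claim. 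There is no serious obstacle here: everything is formal once the presentable $\infty$-enhancement of $D_\et(Y,\Lambda)$ and its closure under colimits inside $\mathcal{D}(Y_v,\Lambda)$ are in place.
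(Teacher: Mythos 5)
Your argument is correct and is essentially the same as the paper's: both rely on the presentable stable $\infty$-enhancement $\mathcal{D}_\et(Y,\Lambda)$ from Lemma~\ref{lem:enrichmentexists} together with the colimit-preservation of $-\dotimes_\Lambda A$. The only cosmetic difference is that the paper builds the bifunctor $R\sHom_\Lambda(-,-)$ directly by mapping $D_\et(Y,\Lambda)^\op\times D_\et(Y,\Lambda)$ to the presheaf category and verifying representability via \cite[Proposition 5.5.2.2]{LurieHTT}, whereas you first produce the right adjoint for fixed $A$ via \cite[Corollary 5.5.2.9]{LurieHTT} and then appeal to a parametrized adjoint functor argument for bifunctoriality; these are two routes through the same circle of ideas in Lurie's book.
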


Again, $R\sHom_\Lambda(-,-)$ is in general hard to compute, and not compatible with the inner Hom in $D(Y_v,\Lambda)$; rather, one has to apply $R_{Y\et}$ to the inner Hom in $D(Y_v,\Lambda)$.

\begin{proof} We have a functor from $D_\et(Y,\Lambda)^\op\times D_\et(Y,\Lambda)$ to the presheaf category on $D_\et(Y,\Lambda)$, given by sending a pair $(A,C)$ to the functor from $D_\et(Y,\Lambda)^\op$ to sets,
\[
B\mapsto \Hom_{D_\et(Y,\Lambda)}(B\dotimes_\Lambda A,C)\ .
\]
The claim is that this functor factors over the full subcategory $D_\et(Y,\Lambda)$, i.e.~is representable for any pair $(A,C)$. But for any fixed $(A,C)$, the functor
\[
B\mapsto \Map_{\mathcal{D}_\et(Y,\Lambda)}(B\dotimes_\Lambda A,C)
\]
from $\mathcal{D}_\et(Y,\Lambda)^\op$ to the $\infty$-category of spaces takes all limits (in $\mathcal{D}_\et(Y,\Lambda)^\op$, which are colimits in $\mathcal{D}_\et(Y,\Lambda)$) to limits (as $-\dotimes_\Lambda A$ preserves colimits). Thus, by \cite[Proposition 5.5.2.2]{LurieHTT}, it is representable, by what is denoted $R\sHom_\Lambda(A,C)$.
\end{proof}

\begin{corollary}\label{cor:locastadjunction} Let $f: Y^\prime\to Y$ be a map of small v-stacks. There is a natural equivalence
\[
Rf_\ast R\sHom_\Lambda(f^\ast A,B)\cong R\sHom_\Lambda(A,Rf_\ast B)
\]
of functors
\[
D_\et(Y,\Lambda)^\op \times D_\et(Y^\prime,\Lambda)\to D_\et(Y,\Lambda)\ .
\]
\end{corollary}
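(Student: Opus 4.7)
The plan is to prove this by a standard Yoneda argument, chaining together the three adjunctions available: the $(f^\ast, Rf_\ast)$ adjunction, the tensor-hom adjunction $(-\dotimes_\Lambda A, R\sHom_\Lambda(A,-))$, and the monoidal compatibility $f^\ast(C \dotimes_\Lambda A) \cong f^\ast C \dotimes_\Lambda f^\ast A$ established just before the corollary.

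Concretely, for any test object $C \in D_\et(Y,\Lambda)$, I would compute
\[
\Hom_{D_\et(Y,\Lambda)}(C,\, Rf_\ast R\sHom_\Lambda(f^\ast A, B))
\]
by applying the $(f^\ast, Rf_\ast)$ adjunction (Lemma~\ref{lem:Rfastexists}) to pass to $\Hom_{D_\et(Y',\Lambda)}(f^\ast C, R\sHom_\Lambda(f^\ast A, B))$, then the tensor-hom adjunction (Lemma~\ref{lem:rhomexists}) to pass to $\Hom_{D_\et(Y',\Lambda)}(f^\ast C \dotimes_\Lambda f^\ast A, B)$, then the monoidal compatibility of $f^\ast$ to rewrite as $\Hom_{D_\et(Y',\Lambda)}(f^\ast(C \dotimes_\Lambda A), B)$, then again $(f^\ast, Rf_\ast)$ to reach $\Hom_{D_\et(Y,\Lambda)}(C \dotimes_\Lambda A, Rf_\ast B)$, and finally tensor-hom once more to land at $\Hom_{D_\et(Y,\Lambda)}(C, R\sHom_\Lambda(A, Rf_\ast B))$. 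Since this chain of natural isomorphisms is functorial in $C$, Yoneda's lemma produces the desired equivalence.

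To make this rigorous as a statement of functors (rather than just objects up to isomorphism), I would phrase all of the above on the level of the presentable stable $\infty$-categories $\mathcal{D}_\et(Y,\Lambda)$ and $\mathcal{D}_\et(Y',\Lambda)$ provided by Lemma~\ref{lem:enrichmentexists}, using mapping spaces rather than $\Hom$-sets. The naturality in $A$ and $B$ is then automatic because all the adjunctions are adjunctions of $\infty$-functors. There is no serious obstacle here: this is a purely formal consequence of the three structural inputs (pullback adjunction, tensor-hom adjunction, and symmetric monoidality of $f^\ast$), and the only thing to verify is that $f^\ast$ is genuinely a symmetric monoidal functor of the relevant presentably symmetric monoidal structures — which was established immediately above in the definition of $f^\ast$ via the simplicial v-hypercover presentation, reducing to the $0$-truncated case where it is standard for ringed topoi.
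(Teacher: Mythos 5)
Your proof is correct and takes essentially the same route as the paper: the exact same Yoneda chain of five natural isomorphisms, in the same order, using the $(f^\ast, Rf_\ast)$ adjunction, the tensor–hom adjunction from Lemma~\ref{lem:rhomexists}, and the compatibility $f^\ast(C\dotimes_\Lambda A)\cong f^\ast C\dotimes_\Lambda f^\ast A$. Your extra remark on upgrading to $\infty$-categorical mapping spaces for naturality in $A$ and $B$ is a fine observation but not part of the paper's own argument.
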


\begin{proof} For any $C\in D_\et(Y,\Lambda)$, we have a series of natural equivalences
\[\begin{aligned}
\Hom_{D_\et(Y,\Lambda)}(C,Rf_\ast R\sHom_\Lambda(f^\ast A,B)) &\cong \Hom_{D_\et(Y^\prime,\Lambda)}(f^\ast C,R\sHom_\Lambda(f^\ast A,B))\\
&\cong \Hom_{D_\et(Y^\prime,\Lambda)}(f^\ast C\dotimes_\Lambda f^\ast A,B)\\
&\cong \Hom_{D_\et(Y^\prime,\Lambda)}(f^\ast(C\dotimes_\Lambda A),B)\\
&\cong \Hom_{D_\et(Y,\Lambda)}(C\dotimes_\Lambda A,Rf_\ast B)\\
&\cong \Hom_{D_\et(Y,\Lambda)}(C,R\sHom_\Lambda(A,Rf_\ast B))\ ,
\end{aligned}\]
giving the desired result.
\end{proof}

\section{Proper and partially proper morphisms}

As a preparation for the proper base change theorem in the next section, we define proper and partially proper morphisms of v-sheaves.

\begin{definition}\label{def:proper} Let $f: Y^\prime\to Y$ be a map of v-stacks. The map $f$ is proper if it is quasicompact, separated, and universally closed, i.e.~for all small v-sheaves $X$ with a map $X\to Y$, the map $|Y^\prime\times_Y X|\to |X|$ is closed.
\end{definition}

Note that as $f$ is quasicompact, it follows in particular that for any small v-sheaf $X$ with a map $X\to Y$, the pullback $Y^\prime\times_Y X$ is a small v-sheaf. By the definition of the topology on $|X|$ for a small v-sheaf, it suffices to check this condition for perfectoid spaces $X$, or even just for strictly totally disconnected perfectoid spaces $X$.

\begin{remark} Any closed immersion of v-sheaves is a proper map. As a somewhat amusing exercise, we leave it to the reader to check that if $T^\prime\to T$ is a map of locally compact Hausdorff spaces, then the map of v-sheaves $\underline{T^\prime}\to \underline{T}$ is proper if and only if $T^\prime\to T$ is proper in the usual sense.
\end{remark}

Again, there is a valuative criterion.

\begin{proposition}\label{prop:valcritvsheafprop} Let $f: Y^\prime\to Y$ be a map of v-stacks. Then $f$ is proper if and only if it is $0$-truncated, qcqs, and for every perfectoid field $K$ with an open and bounded valuation subring $K^+\subset K$, and any diagram
\[\xymatrix{
\Spa(K,\OO_K)\ar[d]\ar[r] & Y^\prime\ar^f[d] \\
\Spa(K,K^+)\ar[r]\ar@{-->}[ur] & Y\ ,
}\]
there exists a unique dotted arrow making the diagram commute.

Moreover, if $f$ is proper, then for every perfectoid Tate ring $R$ with an open and integrally closed subring $R^+\subset R$,  and every diagram
\[\xymatrix{
\Spa(R,R^\circ)\ar[d]\ar[r] & Y^\prime\ar^f[d] \\
\Spa(R,R^+)\ar[r]\ar@{-->}[ur] & Y\ ,
}\]
there exists a unique dotted arrow making the diagram commute.
\end{proposition}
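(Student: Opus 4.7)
Plan: The uniqueness of the dotted arrow is immediate from Proposition~\ref{prop:genvalcritvsheaf}, since $f$, being proper, is in particular separated.

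For existence, the strategy is to reduce to the field case (the first part of this proposition, already established) via v-descent. By Lemma~\ref{lem:strtotdisccover}, choose a pro-\'etale v-cover $p\colon \tilde X = \Spa(\tilde R, \tilde R^+) \to \Spa(R, R^+)$ with $\tilde X$ strictly totally disconnected. The morphism of Huber pairs $(R, R^+) \to (\tilde R, \tilde R^+)$ induces a compatible diagram with $\Spa(\tilde R, \tilde R^\circ) \to Y'$ and $\Spa(\tilde R, \tilde R^+) \to Y$, obtained by base change. If one can produce a dotted arrow $\Spa(\tilde R, \tilde R^+) \to Y'$ for this pulled-back diagram, then its two pullbacks to the fibered product $\Spa(\tilde R, \tilde R^+) \times_{\Spa(R, R^+)} \Spa(\tilde R, \tilde R^+)$ agree on the generic (rank-$1$) part since both are pulled back from $\Spa(R, R^\circ) \to Y'$, and hence agree everywhere by the uniqueness applied to the perfectoid Tate pair defining the fibered product. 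By the v-sheaf property of $Y'$ (Theorem~\ref{thm:vsub}), the arrow descends to the desired map $\Spa(R, R^+) \to Y'$. Hence it suffices to treat the case where $X = \Spa(R, R^+)$ is strictly totally disconnected.

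In that case, by Proposition~\ref{prop:basicwlocal} each connected component of $X$ is of the form $\Spa(C_c, C_c^+)$ with $C_c$ an algebraically closed perfectoid field. Pulling back the diagram to each component produces exactly the field-case hypothesis of the first part of the proposition: an arrow $\Spa(C_c, \OO_{C_c}) \to Y'$ from the generic part $\Spa(R, R^\circ)$ together with an arrow $\Spa(C_c, C_c^+) \to Y$. The first part then gives a unique extension $s_c\colon \Spa(C_c, C_c^+) \to Y'$ for each $c \in \pi_0(X)$.

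The main obstacle is the final assembly step: gluing the sections $\{s_c\}_{c \in \pi_0(X)}$ into a single morphism of v-sheaves $s\colon X \to Y'$, a v-descent problem complicated by the fact that $\pi_0(X)$ is only profinite. My plan is to define $s$ as a natural transformation on affinoid perfectoid spaces $W$ mapping to $X$ by v-local gluing: for any geometric point $\Spa(C, C^+) \to W$, its image in $X$ lies in a single component $\Spa(C_c, C_c^+)$, and post-composition with the corresponding $s_c$ defines a map $\Spa(C, C^+) \to Y'$ lying over the composite to $Y$. By the uniqueness of Proposition~\ref{prop:genvalcritvsheaf} applied to the fiber products of such geometric points over $W$, these values are compatible across the overlaps of any v-cover of $W$ by geometric points, and so assemble via the v-sheaf property of $Y'$ into a morphism $W \to Y'$. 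This yields the required natural transformation $s\colon X \to Y'$, which restricts to $s_0$ on $\Spa(R, R^\circ)$ by construction and thus provides the desired dotted arrow.
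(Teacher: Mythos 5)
Your proposal addresses only the ``moreover'' clause, treating the first equivalence as given; but the two halves of the proposition are intertwined in the paper's proof (the ``only if'' direction of the first part is obtained there precisely as a special case of the general $(R,R^+)$-statement), so at minimum you would need to supply an independent argument for the first part to avoid a circularity. More seriously, the assembly step at the heart of your existence argument does not work as written, and I do not see how to repair it along the lines you propose.

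The issue is that after reducing to $X=\Spa(R,R^+)$ strictly totally disconnected, you construct a section $s_c\colon\Spa(C_c,C_c^+)\to Y'$ over each connected component using the field-case valuative criterion, and then want to glue these into a section over $X$ by v-descent. But the connected components of $X$ do \emph{not} form a v-cover of $X$: the definition of a v-cover requires that every quasicompact open of $X$ be covered by quasicompact opens in \emph{finitely many} of the covering objects, and if $\pi_0(X)$ is infinite, no finite set of components covers $X$. The same objection applies to the refinement you propose in the last paragraph --- the family of all geometric points $\Spa(C,C^+)\to W$ is never a v-cover of an affinoid $W$ with infinitely many points, for the same quasicompactness reason. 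Consequently the ``v-sheaf property of $Y'$'' cannot be invoked to assemble the $s_c$'s; having uniquely determined extensions on each fibre of $\pi_0$ simply does not yield a global section over a profinite base, and the problem of producing one is exactly where the content of properness lies.

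The paper circumvents this obstruction by a different mechanism: starting from the quasicompact injection $\Spa(R,R^\circ)\hookrightarrow Y'$ (after replacing $Y$ by $\Spa(R,R^+)$), it forms the topological closure $Z$ of $\Spa(R,R^\circ)$ inside $Y'$, shows $Z$ is a closed, generalizing, hence perfectoid (after pulling back to a totally disconnected cover) sub-v-sheaf, verifies via Proposition~\ref{prop:charinjectionvsheaves} that $Z\to\Spa(R,R^+)$ is an injection, and then uses \emph{universal closedness} of $f$ to conclude that the image of $|Z|$ in $|\Spa(R,R^+)|$ is closed, hence everything; Lemma~\ref{lem:quotientmap} then upgrades the bijection to a homeomorphism and an isomorphism. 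In other words, the global section is produced in one stroke as the inverse of $Z\xrightarrow{\sim}\Spa(R,R^+)$, and the nontrivial input is that $f$ is universally closed, not merely that the field-valuative criterion holds. Your argument uses only the field-valuative criterion and never invokes universal closedness, which is a hint that something must be missing: without some form of closedness you cannot control how the extensions over different components fit together. I would suggest either adopting the closure argument, or --- if you want to keep a component-wise construction --- supplying an approximation argument showing that the family $\{s_c\}$ arises from a section over some finite clopen decomposition of $\pi_0(X)$; but the latter is likely to require the same closedness input in disguise.
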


\begin{proof} Assume first that the valuative criterion holds. By Proposition~\ref{prop:valcritsepsheaf}, we know that $f$ is separated. Replacing $f$ by a pullback, it is enough to see that $|f|: |Y^\prime|\to |Y|$ is closed if $Y$ is a small v-sheaf. Choosing a v-cover $X\to Y$, where $X$ is a perfectoid space, we can further reduce to the case that $Y=X$ is a perfectoid space. As we can moreover work locally on $X$, we can assume that $X$ is an affinoid perfectoid space. Then $Y^\prime$ is a qcqs v-sheaf; choose an affinoid perfectoid space $X^\prime$ with a v-cover $X^\prime\to Y^\prime$. Let $Z\subset |Y^\prime|$ be a closed subset. Then the image $W\subset |X|$ of $Z$ in $|Y^\prime|$ is a pro-constructible subset, as it is also the image of the preimage of $Z$ in $|X^\prime|$, under the spectral map of spectral spaces $|X^\prime|\to |X|$. To see that $W$ is closed, it suffices to see that $W$ is specializing. For this, let $w,w^\prime\in X$ be two points such that $w$ specializes to $w^\prime$, and $w\in W$. This corresponds to maps $\Spa(K,K^+)\to \Spa(K,(K^+)^\prime)\to X$ for a perfectoid field $K$ with open and bounded valuation subrings $(K^+)^\prime\subset K^+\subset K$, such that $w$ is the image of the closed point of $\Spa(K,K^+)$, and $w^\prime$ is the image of the closed point of $\Spa(K,(K^+)^\prime)$. As $w$ lies in the image of $Z$, we can assume that, after enlarging $K$, there is a commutative diagram
\[\xymatrix{
\Spa(K,K^+)\ar[d]\ar[r] & Y^\prime\ar^f[d] \\
\Spa(K,(K^+)^\prime)\ar[r]\ar@{-->}[ur] & X\ ,
}\]
such that the image of the closed point of $\Spa(K,K^+)$ in $|Y^\prime|$ lies in $Z$. Applying the valuative criterion for the two pairs $(K,K^+)$ and $(K,(K^+)^\prime)$, we see that there exists a unique dotted arrow in the diagram. As $Z$ is closed and $\Spa(K,K^+)$ is dense in $\Spa(K,(K^+)^\prime)$, the closed point of $\Spa(K,(K^+)^\prime)$ maps to $Z$. Thus, $w^\prime$ lies in the image $W$ of $Z$ in $|X|$, as desired.

Conversely, assume that $f$ is proper. We will prove that for all pairs $(R,R^+)$ of a perfectoid Tate ring with an open and integrally closed subring $R^+\subset R$, and every diagram
\[\xymatrix{
\Spa(R,R^\circ)\ar[d]\ar[r] & Y^\prime\ar^f[d] \\
\Spa(R,R^+)\ar[r]\ar@{-->}[ur] & Y\ ,
}\]
there exists a unique dotted arrow making the diagram commute. By Proposition~\ref{prop:genvalcritvsheaf}, uniqueness is clear. Thus, assume given a map $\Spa(R,R^+)\to Y$ and a lift of $\Spa(R,R^\circ)\to Y$ to a map $\Spa(R,R^\circ)\to Y^\prime$. We may replace $Y$ by $\Spa(R,R^+)$ and $Y^\prime$ by the corresponding pullback. We get a quasicompact injection
\[
\Spa(R,R^\circ)\to Y^\prime\ .
\]
By Proposition~\ref{prop:charinjectionvsheaves}, it is characterized by the subspace $|\Spa(R,R^\circ)|\subset |Y^\prime|$. Let $X^\prime\to Y^\prime$ be a surjection from a totally disconnected perfectoid space $X^\prime$. Then $\Spa(R,R^\circ)\times_{Y^\prime} X^\prime\subset X^\prime$ is a quasicompact injection, which by Corollary~\ref{cor:qcvinjqproet} corresponds to a pro-constructible and generalizing subset $W\subset |X^\prime|$, given by $W=|\Spa(R,R^\circ)|\times_{|Y^\prime|} |X^\prime|$. Consider the closure $Z$ of $|\Spa(R,R^\circ)|$ in $|Y^\prime|$. Its preimage $Z^\prime\subset |X^\prime|$ is a closed subset containing $W$. In fact, it is precisely the closure of $W$, as the closure of $W$ can be checked to be invariant under the equivalence relation $|X^\prime|\times_{|Y^\prime|} |X^\prime|$, and $|Y^\prime|$ has the quotient topology from $|X^\prime|$. Thus, $Z^\prime\subset |X^\prime|$ is a closed and generalizing subset, and therefore corresponds to a totally disconnected perfectoid space; passing back to $Z$, we see that $Z$ corresponds to a qcqs v-sheaf, still denoted by $Z$. The map $Z\to \Spa(R,R^+)$ is an injection: Indeed, to check this, it suffices by Proposition~\ref{prop:charinjectionvsheaves} to check that $Z(K,K^+)\to \Spa(R,R^+)(K,K^+)$ is injective for all perfectoid fields $K$ with an open and bounded valuation subring $K^+\subset K$. This is true if $K^+=\OO_K$ as $Z(K,\OO_K) = \Spa(R,R^\circ)(K,\OO_K)$; in general, it follows as $Z\to \Spa(R,R^+)$ is separated (as a sub-v-sheaf of the separated map $Y^\prime\to \Spa(R,R^+)$).

On the other hand, as $f$ is by assumption universally closed, the image of $|Z|$ in $|\Spa(R,R^+)|$ is closed, and thus all of $|\Spa(R,R^+)|$, as it contains the dense subspace $|\Spa(R,R^\circ)|$. Thus, $|Z|\to |\Spa(R,R^+)|$ is surjective, thus a quotient map by Lemma~\ref{lem:quotientmap} (applied to a cover of $Z$ by an affinoid perfectoid space), and therefore (using Proposition~\ref{prop:charinjectionvsheaves} again) $Z$ maps isomorphically to $\Spa(R,R^+)$. Composing the inverse with the natural map $Z\to Y^\prime$ gives the desired map $\Spa(R,R^+)\to Y^\prime$.
\end{proof}

The following generalization of proper maps is useful.

\begin{definition}\label{def:partiallyproper} Let $f: Y^\prime\to Y$ be a map of v-stacks. Then $f$ is partially proper if $f$ is separated, and for every perfectoid Tate ring $R$ with an open and integrally closed subring $R^+\subset R$, and every diagram
\[\xymatrix{
\Spa(R,R^\circ)\ar[d]\ar[r] & Y^\prime\ar^f[d] \\
\Spa(R,R^+)\ar[r]\ar@{-->}[ur] & Y\ ,
}\]
there exists a (necessarily unique) dotted arrow making the diagram commute.

Moreover, the v-stack $Y$ is partially proper if $Y\to \ast$ is partially proper.
\end{definition}

By the valuative criterion of separatedness, Proposition~\ref{prop:valcritsepsheaf}, we could in Definition~\ref{def:partiallyproper} replace the condition ``$f$ is separated'' by ``$f$ is $0$-truncated and quasiseparated''; we will use this remark in the proof of Proposition~\ref{prop:compactificationprop}~(i).

\begin{remark} We refrain from making the definition that $Y$ is proper if $Y\to \ast$ is proper. The problem here is that proper should be equivalent to being partially proper and quasicompact; however, the notions of $Y$ being quasicompact and of $Y\to \ast$ being quasicompact are different. In the partially proper case, this issue does not come up.
\end{remark}

A useful feature of the world of analytic adic spaces is the presence of canonical compactifications, cf.~\cite[Theorem 5.1.5]{Huber}. In our context, this is the following construction.

\begin{proposition}\label{prop:cancomp} Let $f: Y^\prime\to Y$ be a separated map of v-stacks. The functor sending any totally disconnected perfectoid space $X=\Spa(R,R^+)$ to
\[
Y^\prime(R,R^\circ)\times_{Y(R,R^\circ)} Y(R,R^+)
\]
extends to a v-stack $\overline{Y^\prime}^{/Y}$ with a map $\overline{f}^{/Y}: \overline{Y^\prime}^{/Y}\to Y$, and a natural map $Y^\prime\to \overline{Y^\prime}^{/Y}$ over $Y$. The map $\overline{f}^{/Y}$ is partially proper, and for every partially proper map $g: Z\to Y$ of v-stacks, composition with $Y^\prime\to \overline{Y^\prime}^{/Y}$ induces a bijection
\[
\Hom_Y(\overline{Y^\prime}^{/Y},Z)\to \Hom_Y(Y^\prime,Z)\ .
\]
\end{proposition}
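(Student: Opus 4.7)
The plan is to construct $\overline{Y'}^{/Y}$ by v-stackifying the displayed formula, then to verify partial properness of $\overline{f}^{/Y}$ and the universal property against partially proper targets.

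First I would write $X^\circ = \Spa(R,R^\circ)$ for an affinoid perfectoid $X = \Spa(R,R^+)$ in $\Perf$, noting that any morphism $X_1\to X_0$ sends $R_0^\circ$ into $R_1^\circ$, so induces a functorial map $X_1^\circ\to X_0^\circ$. I would then define a prestack $F$ on affinoid perfectoid spaces of characteristic $p$ by
\[
F(X) = Y'(R,R^\circ)\times_{Y(R,R^\circ)} Y(R,R^+),
\]
and argue that it descends to a v-stack on all of $\Perf$. The key point is that for any v-cover $X'\to X$ of affinoid perfectoid spaces, the map $X'^\circ\to X^\circ$ is again a v-cover: any point of $|X^\circ|$ is rank one, so it lifts to some point of $X'$, and replacing the lift by its unique rank-$1$ generization yields a point of $X'^\circ$ still mapping to the given point. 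A similar argument shows the Cech nerve of $X'^\circ/X^\circ$ is compatible (after possibly replacing by v-covers) with the Cech nerve of $X'/X$ applied to $(-)^\circ$. Combined with v-descent for $Y$ and $Y'$ (via Theorem~\ref{thm:vsub}, respectively v-stack descent for $Y$, $Y'$), this gives v-descent for $F$, which extends uniquely to the desired v-stack $\overline{Y'}^{/Y}$. The structure map $\overline{f}^{/Y}$ is the second projection in the formula, and the natural map $Y'\to \overline{Y'}^{/Y}$ over $Y$ sends $g\colon X\to Y'$ to the pair $(f\circ g,\,g|_{X^\circ})$.

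Next I would verify partial properness. For separatedness of $\overline{f}^{/Y}$, Proposition~\ref{prop:valcritsepsheaf} reduces to showing that two maps $a_1,a_2\colon \Spa(K,K^+)\to \overline{Y'}^{/Y}$ which agree over $Y$ and on $\Spa(K,\OO_K)$ must coincide. Since $\Spa(K,K^+)^\circ = \Spa(K,\OO_K)$ (as $K^\circ=\OO_K$ for a perfectoid field), unfolding the definition shows each $a_i$ is a pair $(g_i\colon\Spa(K,K^+)\to Y,\,g_i'\colon\Spa(K,\OO_K)\to Y')$, and the two hypotheses force $g_1=g_2$ and $g_1'=g_2'$, hence $a_1=a_2$. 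For the valuative criterion of Definition~\ref{def:partiallyproper}, suppose given $\Spa(R,R^\circ)\to \overline{Y'}^{/Y}$ and an extension $\Spa(R,R^+)\to Y$. Since $(R^\circ)^\circ = R^\circ$, a map $\Spa(R,R^\circ)\to \overline{Y'}^{/Y}$ is nothing but a compatible pair consisting of $\Spa(R,R^\circ)\to Y$ and $\Spa(R,R^\circ)\to Y'$. Combining the latter with the given extension $\Spa(R,R^+)\to Y$ produces, directly from the definition of $F$, the required unique lift $\Spa(R,R^+)\to \overline{Y'}^{/Y}$.

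Finally, for the universal property, given partially proper $g\colon Z\to Y$ and $\psi\colon Y'\to Z$ over $Y$, I would define $\tilde\psi\colon \overline{Y'}^{/Y}\to Z$ as follows. On an affinoid $X=\Spa(R,R^+)$ and a section $(h,h')\in F(X)$, the map $\psi\circ h'\colon \Spa(R,R^\circ)\to Z$ together with $h\colon \Spa(R,R^+)\to Y$ furnishes precisely the input for partial properness of $g$ (in the form of Proposition~\ref{prop:genvalcritvsheaf}), producing a unique lift $\Spa(R,R^+)\to Z$ over $Y$; this defines $\tilde\psi(X)(h,h')$. Uniqueness makes the assignment functorial, and descent along a v-cover $X\to \overline{Y'}^{/Y}$ promotes it to a morphism of v-stacks. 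That $\tilde\psi$ restricts to $\psi$ on $Y'$ is again the uniqueness clause applied to sections in the image of $Y'\to \overline{Y'}^{/Y}$, and conversely any $\phi\colon \overline{Y'}^{/Y}\to Z$ over $Y$ is determined by its restriction to $Y'$ because, on any $(h,h')$, $\phi(h,h')$ necessarily equals the unique extension of $\psi\circ h'$. The main obstacle is the v-descent assertion for $F$, which depends on the behaviour of $(-)^\circ$ under v-covers described above; once it is established, partial properness and the universal property follow by the essentially formal valuative-criterion manipulations sketched here.
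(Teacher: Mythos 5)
Your overall plan follows the same route as the paper: define the functor on affinoid test objects, show v-descent, verify partial properness via the valuative criterion, and obtain the universal property from the definition of partial properness applied termwise. However, the heart of the argument is the v-descent step, and there are two genuine gaps there.

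First, the assertion ``any point of $|X^\circ|$ is rank one'' is false. For $X=\Spa(R,R^+)$ affinoid perfectoid, the subspace $\Spa(R,R^\circ)$ certainly contains all rank-$1$ points, but it also contains higher-rank points (e.g.\ for $R=C\langle T^{1/p^\infty}\rangle$, $\Spa(R,R^\circ)$ is a perfectoid ball and has plenty of higher-rank points). So the argument via lifting rank-$1$ points and taking rank-$1$ generizations shows that the image of $|X'^\circ|\to|X^\circ|$ contains all rank-$1$ points, but not that it is everything. The paper's fix is to work over \emph{totally disconnected} $X$ and invoke Lemma~\ref{lem:subsetwlocal}: $\Spa(R,R^\circ)$ is the \emph{minimal} pro-constructible generalizing subset of $\Spa(R,R^+)$ containing all rank-$1$ points, while the image of $\Spa(S,S^\circ)$ is a pro-constructible generalizing subset containing all rank-$1$ points, hence is everything. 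This step of the proof genuinely needs the restriction to a basis of totally disconnected objects.

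Second, and more seriously, the claim that ``the Cech nerve of $X'^\circ/X^\circ$ is compatible with the Cech nerve of $X'/X$ applied to $(-)^\circ$'' is not true, and the hedge ``after possibly replacing by v-covers'' does not rescue it. One has
\[
\Spa(S,S^\circ)\times_{\Spa(R,R^\circ)}\Spa(S,S^\circ) \;=\; \Spa(T,(T^+)')\ ,
\]
where $T=S\hat\otimes_R S$ and $(T^+)'$ is in general strictly smaller than $T^\circ$, so $\Spa(T,T^\circ)\subsetneq\Spa(T,(T^+)')$ and the latter is \emph{not} a v-cover of the former. To perform v-descent for $Y'$ along $\Spa(S,S^\circ)\to\Spa(R,R^\circ)$ one must compare the two pullbacks of the section over $\Spa(T,(T^+)')$, but a priori they only agree over $\Spa(T,T^\circ)$ (from the $Y'$-component of the given data) and in $Y$ over $\Spa(T,(T^+)')$ (from the descended $Y$-section). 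It is precisely here that the separatedness hypothesis on $f:Y'\to Y$ enters: Proposition~\ref{prop:genvalcritvsheaf} upgrades agreement in $Y'(T,T^\circ)$ and in $Y(T,(T^+)')$ to agreement in $Y'(T,(T^+)')$. Your argument never invokes the separatedness of $f$ in the descent step, which is a sign that something essential is missing --- without that hypothesis the statement is simply false. (As a minor point, in the universal-property step you should appeal to Definition~\ref{def:partiallyproper} for existence of the lift $\Spa(R,R^+)\to Z$; Proposition~\ref{prop:genvalcritvsheaf} only yields uniqueness.)
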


We will refer to $\overline{f}^{/Y}: \overline{Y^\prime}^{/Y}\to Y$ as the canonical compactification of $f: Y^\prime\to Y$. The formation of $\overline{f}^{/Y}$ is clearly functorial in $f$.

\begin{proof} To see that $\overline{Y^\prime}^{/Y}$ is a v-stack, observe first that if $\Spa(S,S^+)\to \Spa(R,R^+)$ is a v-cover of totally disconnected perfectoid spaces, then also $\Spa(S,S^\circ)\to \Spa(R,R^\circ)$ is a v-cover. This is true as $\Spa(R,R^\circ)\subset \Spa(R,R^+)$ is the minimal pro-constructible generalizing subset containing all rank-$1$-points by Lemma~\ref{lem:subsetwlocal}, and the image of $\Spa(S,S^\circ)\to \Spa(R,R^\circ)$ is a pro-constructible and generalizing subset containing all rank-$1$-points. Also, to check that $\overline{Y^\prime}^{/Y}$ is a v-stack, we may work locally on $Y$, and in particular assume that $Y$ is representable, in which case $Y^\prime$ is a v-sheaf, and $\overline{Y^\prime}^{/Y}$ is a presheaf. Choose a surjection $\Spa(T,T^+)\to \Spa(S,S^+)\times_{\Spa(R,R^+)} \Spa(S,S^+)$, where $\Spa(T,T^+)$ is totally disconnected. We need to see that
\[
\overline{Y^\prime}^{/Y}(R,R^+) = \eq(\overline{Y^\prime}^{/Y}(S,S^+)\rightrightarrows \overline{Y^\prime}^{/Y}(T,T^+))\ .
\]
As $\Spa(S,S^\circ)\to \Spa(R,R^\circ)$ is a v-cover, we see that the map
\[
\overline{Y^\prime}^{/Y}(R,R^+)=Y^\prime(R,R^\circ)\times_{Y(R,R^\circ)} Y(R,R^+)\to \overline{Y^\prime}^{/Y}(S,S^+)=Y^\prime(S,S^\circ)\times_{Y(S,S^\circ)} Y(S,S^+)
\]
is injective. Now let $a\in \overline{Y^\prime}^{/Y}(S,S^+)=Y^\prime(S,S^\circ)\times_{Y(S,S^\circ)} Y(S,S^+)$ be a section whose two pullbacks to $\overline{Y^\prime}^{/Y}(T,T^+)$ agree. In particular, we get a section $b\in Y(R,R^+)$ as $Y$ is a v-sheaf. On $Y^\prime$, we want to see that the section in $Y^\prime(S,S^\circ)$ descends to $Y^\prime(R,R^\circ)$. For this, we need to see that the two induced sections in $Y^\prime(T,(T^+)^\prime)$ agree, where $\Spa(T,(T^+)^\prime) = \Spa(S,S^\circ)\times_{\Spa(R,R^\circ)} \Spa(S,S^\circ)$. But we know that they agree in $Y^\prime(T,T^\circ)$ and $Y(T,(T^+)^\prime)$, and $Y^\prime\to Y$ is separated, so the result follows from Proposition~\ref{prop:genvalcritvsheaf}.

It remains to see that $\overline{f}^{/Y}$ is partially proper, for then the definition of partially proper maps ensures that any map $Y^\prime\to Z$ to a partially proper $Z\to Y$ factors uniquely over $\overline{Y^\prime}^{/Y}$. This question is v-local on $Y$, so we may assume that $Y$ is an affinoid perfectoid space (and in particular separated). Now the result follows from Proposition~\ref{prop:compactificationprop}~(i) below.
\end{proof}

In fact, for any separated v-sheaf $Y$, one can define a new v-sheaf $\overline{Y}$ by setting
\[
\overline{Y}(R,R^+)=Y(R,R^\circ)
\]
for any totally disconnected space $\Spa(R,R^+)$, which comes with an injective map $Y\to \overline{Y}$. The verification that $\overline{Y}$ is a v-sheaf is identical to the verification for $\overline{Y^\prime}^{/Y}$ above (and uses that $Y$ is separated). With this notation, we have
\[
\overline{Y^\prime}^{/Y} = \overline{Y^\prime}\times_{\overline{Y}} Y\ .
\]
We will need the following properties.

\begin{proposition}\label{prop:compactificationprop} Let $Y$ be a separated v-sheaf.
\begin{altenumerate}
\item The v-sheaf $\overline{Y}$ is partially proper.
\item If $Y$ is a small v-sheaf, then $\overline{Y}$ is a small v-sheaf.
\item If $Y$ is a diamond, then $\overline{Y}$ is a diamond.
\item If $Y=\Spa(R,R^+)$ is an affinoid perfectoid space, then $\overline{Y}=\Spa(R,(R^+)^\prime)$, where $(R^+)^\prime\subset R^+$ is the smallest open and integrally closed subring (which is the integral closure of $\mathbb F_p+R^{\circ\circ}$).
\item The functor $Y\mapsto \overline{Y}$ commutes with all limits.
\item If $f: Y^\prime\to Y$ is a surjective map of separated v-sheaves, then $\overline{f}: \overline{Y^\prime}\to \overline{Y}$ is a surjective map of v-sheaves.
\item If $f: Y^\prime\to Y$ is a quasicompact map of separated v-sheaves, then $\overline{f}: \overline{Y^\prime}\to \overline{Y}$ is a proper map of v-sheaves.
\item If $f: Y^\prime\to Y$ is a quasi-pro-\'etale map of separated v-sheaves, then $\overline{f}: \overline{Y^\prime}\to \overline{Y}$ is quasi-pro-\'etale.
\end{altenumerate}
\end{proposition}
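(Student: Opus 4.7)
The plan is to verify the definition of quasi-pro-\'etale directly. First, I would check separatedness of $\overline{f}$: by (v) the functor $\overline{\cdot}$ commutes with all limits, so $\Delta_{\overline{f}} = \overline{\Delta_f}$, which is automatically an injection (as $\overline{\cdot}$ preserves fibered products). Applying (vii) to the quasicompact separated map $\Delta_f$ shows $\overline{\Delta_f}$ is proper, and a proper injection of v-sheaves is a closed immersion.

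Next, I take a strictly totally disconnected $X = \Spa(R,R^+)$ with a map $X \to \overline{Y}$. By the defining formula for $\overline{Y}$, such a map corresponds to a morphism $g \colon X^\circ := \Spa(R,R^\circ) \to Y$, and $X^\circ$ is again strictly totally disconnected (its connected components remain of the form $\Spa(C,\OO_C)$ for algebraically closed $C$). Let $Z^\circ := Y^\prime \times_Y X^\circ$; by the quasi-pro-\'etale hypothesis on $f$, $Z^\circ$ is representable by a perfectoid space pro-\'etale over $X^\circ$. For a totally disconnected $T = \Spa(S,S^+)$ with a map to $X$, a direct computation using that $X(T^\circ) = X^\circ(T^\circ)$ (both being simply $\Hom_{\text{cts}}(R,S)$, as any such map sends $R^\circ$ into $S^\circ$) gives
\[
(\overline{Y^\prime}\times_{\overline{Y}} X)(T) = Y^\prime(T^\circ)\times_{Y(T^\circ)} X(T) = Z^\circ(T^\circ)\times_{X(T^\circ)} X(T) = \overline{Z^\circ}^{/X}(T).
\]
Thus $\overline{Y^\prime}\times_{\overline{Y}} X = \overline{Z^\circ}^{/X}$. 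By (iv), $\overline{X} = \overline{X^\circ}$ (both equal $\Spa(R,(R^+)^{\min})$), so the canonical formula $\overline{Z^\circ}^{/X} = \overline{Z^\circ}\times_{\overline{X}} X$ becomes $\overline{Z^\circ}^{/X} = \overline{Z^\circ}\times_{\overline{X^\circ}} X$.

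It therefore suffices to show that $\overline{Z^\circ} \to \overline{X^\circ}$ is representable by a pro-\'etale map of perfectoid spaces, for then base change along $X \to \overline{X^\circ}$ yields the required result. Working locally on $Z^\circ$ (and using that being quasi-pro-\'etale is local on the source by Proposition~\ref{prop:quasiproetalefirstprop}(ii) applied to open covers), I reduce to the case where $Z^\circ$ is affinoid pro-\'etale. By Proposition~\ref{prop:affproetale} I write $Z^\circ = \varprojlim_i Z^\circ_i$ as a cofiltered inverse limit of affinoid \'etale $Z^\circ_i \to X^\circ$. The identity $\overline{Z^\circ}(T) = Z^\circ(T^\circ) = \varprojlim_i Z^\circ_i(T^\circ) = \varprojlim_i \overline{Z^\circ_i}(T)$ shows $\overline{Z^\circ} = \varprojlim_i \overline{Z^\circ_i}$, reducing the claim to the case of a single \'etale $Z^\circ_i \to X^\circ$.

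The main obstacle is this final step: for an \'etale map $\Spa(S,S^\circ) = Z^\circ_i \to X^\circ = \Spa(R,R^\circ)$ of affinoid perfectoid spaces in characteristic $p$, one must show $\overline{Z^\circ_i} \to \overline{X^\circ}$ is \'etale of perfectoid spaces. Using Definition~\ref{def:etale}(ii), \'etale maps are locally composites of rational opens and finite \'etale maps. Rational subsets of $X^\circ$ extend verbatim to rational subsets of $\overline{X^\circ}$, giving the same ring of functions. Finite \'etale covers extend via almost purity (Theorem~\ref{thm:almostpurity}): a finite \'etale $R \to S$ determines an integral closure $S^+_{\min}$ of $(R^+)^{\min}$ in $S$, yielding a finite \'etale map $\Spa(S,S^+_{\min}) \to \overline{X^\circ}$ of perfectoid spaces. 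One verifies this candidate represents $\overline{Z^\circ_i}$ by checking its functor of points agrees with $T \mapsto Z^\circ_i(T^\circ)$; equivalently, using the universal property of the canonical compactification together with the valuative criterion for partial properness, since both perfectoid spaces are partially proper over $\overline{X^\circ}$ and agree after pullback to $X^\circ$, a density argument (rank-1 points of $\Spa(S,S^+_{\min})$ are rank-1 points of $\Spa(S,S^\circ)$) identifies them.
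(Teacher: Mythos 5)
Your overall strategy is a genuine alternative to the paper's argument. The paper, after making the same identification $\overline{Y'}\times_{\overline{Y}} X \cong \overline{Z}^{/X}$ with $Z = Y'\times_Y X^\circ$, invokes the topological classification of separated pro-\'etale maps over a strictly totally disconnected base (Corollary~\ref{cor:proetaleoverwlocaltop}) to write $Z\to X$ as $Z\hookrightarrow X\times_{\pi_0 X}\pi_0 Z\to X$ and then simply observes $\overline{Z}^{/X} = X\times_{\pi_0 X}\pi_0 Z$, which is visibly quasi-pro-\'etale. You instead set up $\overline{Y'}\times_{\overline{Y}} X = \overline{Z^\circ}\times_{\overline{X^\circ}} X$ and try to prove $\overline{Z^\circ}\to\overline{X^\circ}$ is a pro-\'etale map of perfectoid spaces by reducing to \'etale maps and treating those by hand via almost purity.

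The main gap is in the sentence ``Working locally on $Z^\circ$ (and using that being quasi-pro-\'etale is local on the source by Proposition~\ref{prop:quasiproetalefirstprop}(ii) applied to open covers), I reduce to the case where $Z^\circ$ is affinoid pro-\'etale.'' First, Proposition~\ref{prop:quasiproetalefirstprop}(ii) is a 2-out-of-3 cancellation statement, not a ``quasi-pro-\'etale is local on the source'' statement. Second, and more seriously, even the correct source-locality statement would require an open cover of $\overline{Z^\circ}$, whereas what you produce is an open cover $\{V_i\}$ of $Z^\circ$ and the corresponding $\overline{V_i}$. The functor $\overline{\,\cdot\,}$ commutes with limits (item (v)) but not with open covers, so it is not automatic that the $\overline{V_i}$ are open subfunctors of $\overline{Z^\circ}$ or that they cover it. In the present situation this happens to be true, but for a nontrivial reason: $X^\circ$ is strictly totally disconnected with $|X^\circ| = \pi_0 X^\circ$ profinite, hence $|Z^\circ| = \pi_0 Z^\circ$ as well, so a quasicompact open $V\subset Z^\circ$ corresponds to a clopen of $\pi_0 Z^\circ$; one must then check that for any totally disconnected $T\to\overline{Z^\circ}$ (i.e.\ $T^\circ\to Z^\circ$), the preimage of $V$ in $T^\circ$ is a clopen of $\pi_0 T$ (using quasiseparatedness of $Z^\circ$) and thus corresponds to an open subspace of $T$. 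This is exactly the content the cited Corollary~\ref{cor:proetaleoverwlocaltop} packages in the paper's route, and your proposal currently leaves it unproven. Relatedly, your claim that ``rational subsets of $X^\circ$ extend verbatim'' to describe $\overline{U}$ is false for a general rational $U$: the rational subset of $\overline{X^\circ}$ cut out by the same inequalities has the Huber ring of integral elements, which is strictly larger than the minimal one defining $\overline{U}$ whenever $U$ has nonempty boundary. It works here only because $|X^\circ|$ is profinite, so every rational subset is clopen and the two notions coincide; you should say this, since the argument as written would break for a general affinoid.

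The remaining steps are sound: the separatedness of $\overline{f}$ via $\Delta_{\overline{f}} = \overline{\Delta_f}$ and (vii); the computation identifying $(\overline{Y'}\times_{\overline{Y}} X)(T)$ with $\overline{Z^\circ}^{/X}(T)$ via $X(T^\circ)\cong X^\circ(T^\circ)$; the observation $\overline{X}=\overline{X^\circ}$; the limit computation $\overline{Z^\circ}=\varprojlim\overline{Z^\circ_i}$; and the finite \'etale case, where the integral-closure computation ($S^{\circ\circ}$ is integral over $\mathbb F_p + R^{\circ\circ}$ because $S^\circ$ is integral over $R^\circ$ and pseudouniformizers can be absorbed) indeed shows $\overline{\Spa(S,S^\circ)}\to\overline{X^\circ}$ is finite \'etale.
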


\begin{proof} Part (iv) follows from the equations
\[
\Hom((R,(R^+)^\prime),(S,S^+)) = \Hom(R,S) = \Hom((R,R^+),(S,S^\circ))\ ,
\]
which hold for any pair $(S,S^+)$. Part (v) is clear from the definition. For part (vi), consider a totally disconnected space $X=\Spa(R,R^+)$ with a map $X\to \overline{Y}$, which corresponds to a map $\Spa(R,R^\circ)\to Y$. Then, after replacing $X$ by a v-cover, we can lift this to a map $\Spa(R,R^\circ)\to Y^\prime$, which corresponds to a map $X\to \overline{Y^\prime}$ lifting $X\to \overline{Y}$.

For part (i), we check first that $\overline{Y}\to \ast$ is quasiseparated. If $Z=\mathrm{Spa}(\mathbb F_p((t^{1/p^\infty})))$ and $X_1,X_2\to \overline{Y}\times Z$ are two affinoid perfectoid spaces mapping to $\overline{Y}\times Z$, we have to show that $X_1\times_{\overline{Y}\times Z} X_2$ is quasicompact. But using (v) (and $Z=\overline{Z}$), we have
\[
X_1\times_{\overline{Y}\times Z} X_2 = X_1\times_{\overline{X_1}} \overline{X_1\times_{Y\times Z} X_2}\times_{\overline{X_2}} X_2\ ,
\]
where the maps $X_i\to \overline{X_i}$ are quasicompact by (iv), and $\overline{X_1\times_{Y\times Z} X_2}$ is quasicompact, using (vi) and the assumption that $X_1\times_{Y\times Z} X_2$ is quasicompact (as $Y\to \ast$ is quasiseparated, i.e.~$Y\times Z$ is quasiseparated). Now, since $\overline{Y}\to \ast$ is quasiseparated, it follows that $\overline{Y}$ is partially proper, as it satisfies the desired valuative criterion by construction (for totally disconnected $\Spa(R,R^+)$, which gives the general case by v-descent); this finishes the proof of (i).

Part (ii) follows directly from (iv) and (vi). For part (vii), note that $\overline{f}$ is always partially proper by (i); to show that $\overline{f}$ is proper, it remains to see that $\overline{f}$ is quasicompact. But this follows from (iv), (v) and (vi) (similarly to the argument that $\overline{Y}$ is quasicompact).

For part (viii), we need to see that for any strictly totally disconnected space $X\to \overline{Y}$, $\overline{Y^\prime}\times_{\overline{Y}} X\to X$ is quasi-pro-\'etale. Let $X=\Spa(R,R^+)$ and $X^\circ=\Spa(R,R^\circ)$. Then the map $X\to \overline{Y}$ corresponds to a map $X^\circ\to Y$. We get separated quasi-pro-\'etale maps $Y^\prime\times_Y X^\circ\to X^\circ\to X$; let $g: Z:=Y^\prime\times_Y X^\circ\to X$ denote their composite. Then $\overline{Y^\prime}\times_{\overline{Y}} X\to X$ can be identified with $\overline{Z}^{/X}\to X$. But recall that by Corollary~\ref{cor:proetaleoverwlocaltop}, $Z\to X$ has a canonical factorization $Z\to X\times_{\pi_0 X} \pi_0 Z\to X$. It is then easy to see that $\overline{Z}^{/X} = X\times_{\pi_0 X}\pi_0 Z$, which is quasi-pro-\'etale over $X$.

Finally, part (iii) follows from (iv), (vi) and (viii).
\end{proof}

Coming back to the setting of Proposition~\ref{prop:cancomp}, there is the following relative version.

\begin{corollary}\label{cor:cancomprelative} Let $Y^\prime\to Y$ be a separated map of v-stacks.
\begin{altenumerate}
\item The map of v-stacks $\overline{Y^\prime}^{/Y}\to Y$ is partially proper.
\item If $Y$ and $Y^\prime$ are small v-stacks, then $\overline{Y^\prime}^{/Y}$ is a small v-stack.
\item If $Y$ and $Y^\prime$ are diamonds, then $\overline{Y^\prime}^{/Y}$ is a diamond.
\item The functor $Y^\prime\mapsto \overline{Y^\prime}^{/Y}$ from v-sheaves over $Y$ to v-sheaves over $Y$ commutes with all limits.
\item If $f: Y^\prime_2\to Y_1^\prime$ is a surjective map of separated v-stacks over $Y$, then $\overline{f}: \overline{Y_2^\prime}^{/Y}\to \overline{Y_1^\prime}^{/Y}$ is a surjective map of v-stacks.
\item If $f: Y^\prime_2\to Y_1^\prime$ is a quasicompact map of separated v-stacks over $Y$, then $\overline{f}: \overline{Y_2^\prime}^{/Y}\to \overline{Y_1^\prime}^{/Y}$ is a proper map of v-stacks.
\item If $f: Y^\prime_2\to Y_1^\prime$ is a quasi-pro-\'etale map of separated v-stacks over $Y$, then $\overline{f}: \overline{Y_2^\prime}^{/Y}\to \overline{Y_1^\prime}^{/Y}$ is quasi-pro-\'etale.
\end{altenumerate}
\end{corollary}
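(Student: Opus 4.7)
Part (i) is already established in the last assertion of Proposition~\ref{prop:cancomp}, so there is nothing to prove. For the remaining parts the strategy is to bootstrap from the absolute version, Proposition~\ref{prop:compactificationprop}, via a base change compatibility which is the key technical tool.

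First I would prove the following base change formula: for any map of v-stacks $g\from \tilde Y\to Y$, there is a canonical isomorphism
\[
\overline{Y^\prime}^{/Y}\times_Y \tilde Y\;\cong\; \overline{Y^\prime\times_Y \tilde Y}^{/\tilde Y}\ .
\]
This is verified by evaluating both sides on a totally disconnected affinoid perfectoid $X=\Spa(R,R^+)$: both reduce to $Y^\prime(R,R^\circ)\times_{Y(R,R^\circ)}\tilde Y(R,R^+)$. In particular, taking $\tilde Y=X\to Y$ to be a perfectoid cover lets me check any v-local property of $\overline{Y^\prime}^{/Y}\to Y$ after reducing to the case where $Y$ is an affinoid perfectoid space. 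Part (iv) then follows immediately because on values the formula defining $\overline{Y^\prime}^{/Y}$ is a (finite) limit in $Y^\prime$.

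For (v), (vi), (vii), which concern a map $f\from Y_2^\prime\to Y_1^\prime$ of separated v-stacks over $Y$, I note that all three properties in question (surjectivity of a map of v-stacks, properness, being quasi-pro-\'etale) can be checked v-locally on the target, and are preserved under arbitrary base change. Pulling back along $\overline{Y_1^\prime}^{/Y}\to Y$ and a surjection from a perfectoid space, and applying the base change formula, reduces each statement to the corresponding absolute statement for the map $\overline{Y_2^\prime\times_Y X}^{/X}\to \overline{Y_1^\prime\times_Y X}^{/X}$ over a perfectoid space $X$. Then I would handle each case: for (v), check the valuative criterion using the explicit description of points of the canonical compactification, lifting a map $\Spa(R,R^\circ)\to Y_1^\prime$ through the surjection $Y_2^\prime\to Y_1^\prime$ after a v-cover; for (vi), combine (i) (which ensures $\overline f$ is partially proper) with a direct check of quasicompactness using the formula and Proposition~\ref{prop:compactificationprop}(vii); and for (vii), reduce via the presentation of $Y_1^\prime$ and $Y_2^\prime$ by perfectoid spaces to Proposition~\ref{prop:compactificationprop}(viii).

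For part (ii), I would use (v) and (vi) as follows. Choose a perfectoid space surjection $X\to Y$, and a perfectoid space surjection $X^\prime\to Y^\prime\times_Y X$; the composite $X^\prime\to Y^\prime$ is surjective, so by (v) the induced map $\overline{X^\prime}^{/Y}\to \overline{Y^\prime}^{/Y}$ is surjective. By the base change formula, $\overline{X^\prime}^{/Y}\times_Y X=\overline{X^\prime}^{/X}$, which is a small v-sheaf by Proposition~\ref{prop:compactificationprop}(ii) and (iv); since $X\to Y$ is surjective from a small v-sheaf, $\overline{X^\prime}^{/Y}$ is small, and hence so is its quotient $\overline{Y^\prime}^{/Y}$. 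Part (iii) is analogous: assuming $Y,Y^\prime$ are diamonds, pick the presentations above with $X,X^\prime$ diamonds; (v) produces a surjection from $\overline{X^\prime}^{/Y}$, which is a diamond by the base change reduction to Proposition~\ref{prop:compactificationprop}(iii); moreover this surjection is quasi-pro-\'etale by (vii) applied to $X^\prime\to Y^\prime$ (which is quasi-pro-\'etale by construction when $X^\prime$ is chosen as in Proposition~\ref{prop:diamondqproetsurj}), so by Proposition~\ref{prop:quotientsbydiamondequivrel} the target $\overline{Y^\prime}^{/Y}$ is a diamond.

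The main obstacle I expect is the careful bookkeeping in parts (v)--(vii): one must verify that the reduction to the absolute case via the base change formula preserves the hypotheses (surjective/quasicompact/quasi-pro-\'etale) of $f$, which in turn requires knowing that these hypotheses are stable under pullback along $\overline{Y_1^\prime}^{/Y}\to Y$; this is automatic for quasicompactness and quasi-pro-\'etaleness but requires a small additional argument for surjectivity of v-stacks, using that $\overline{Y_2^\prime}^{/Y}\to \overline{Y_1^\prime}^{/Y}$ can be checked to be surjective on the underlying simplicial objects defining the quotient presentations of both sides.
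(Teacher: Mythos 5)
Your overall strategy matches the paper's: both hinge on the base change compatibility $\overline{Y^\prime}^{/Y}\times_Y \tilde Y\cong\overline{Y^\prime\times_Y\tilde Y}^{/\tilde Y}$, which reduces everything to the case where $Y$ is affinoid perfectoid, where one then invokes Proposition~\ref{prop:compactificationprop}. The paper gets there faster by immediately using the identity $\overline{Y^\prime}^{/Y}=\overline{Y^\prime}\times_{\overline{Y}}Y$ (valid once $Y$ is separated, e.g.~affinoid) so that each item is literally the corresponding item of Proposition~\ref{prop:compactificationprop} applied to absolute compactifications, pulled back along $\overline{Y}\leftarrow Y$; you instead re-derive several of the items by hand, which works but is more verbose and requires more care.

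One concrete slip in your argument for (ii): you write $\overline{X^\prime}^{/Y}\times_Y X=\overline{X^\prime}^{/X}$, but the base change formula gives $\overline{X^\prime}^{/Y}\times_Y X=\overline{X^\prime\times_Y X}^{/X}$, and these are genuinely different even though $X^\prime\to Y$ factors through $X$: the relative compactification of $X^\prime$ over $X$ is a proper sub-v-sheaf of the base change of its compactification over $Y$ (the former sees the diagonal section $X^\prime\hookrightarrow X^\prime\times_Y X$, the latter the whole fibre product). The statement you want is still true --- $\overline{X^\prime\times_Y X}^{/X}$ is small since $X^\prime\times_Y X$ is a small v-sheaf and one uses $\overline{Z}^{/X}=\overline{Z}\times_{\overline X}X$ plus Proposition~\ref{prop:compactificationprop}(ii),(iv) --- but the formula as written is wrong, and it is cleaner to simply apply the base change formula to $Y^\prime$ itself (not to an intermediate cover $X^\prime$) and then use the separated-base identity, as the paper does.
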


\begin{proof} If $Y$ is separated (and thus a sheaf), then also $Y^\prime$ is separated (and a sheaf), and the results follow from Proposition~\ref{prop:compactificationprop} and the formula $\overline{Y^\prime}^{/Y} = \overline{Y^\prime}\times_{\overline{Y}} Y$. In general, the formation of $\overline{Y^\prime}^{/Y}$ commutes with base change in $Y$, so all properties reduce to the case where $Y$ is affinoid perfectoid (and in particular separated).
\end{proof}

In the case of sheaves, another characterization of partially proper maps is given by the following proposition.

\begin{proposition}\label{prop:partpropfilcolimprop} Let $f: Y^\prime\to Y$ be a map of v-sheaves. Then $f$ is partially proper if and only if $f$ can be written as a (possibly large) filtered colimit of proper maps $f_i: Y^\prime_i\to Y$ along closed immersions $Y_i^\prime\to Y_j^\prime$. If $Y^\prime$ is small, then the filtered colimit is also small.
\end{proposition}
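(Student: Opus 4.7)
The plan is to treat the two directions separately.

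For the ``if'' direction, suppose $f = \varinjlim_i f_i$ where $f_i \colon Y'_i \to Y$ is proper and each $Y'_i \hookrightarrow Y'_j$ is a closed immersion. Filtered colimits commute with finite limits in the v-topos, so $Y' \times_Y Y' = \varinjlim_i (Y'_i \times_Y Y'_i)$ and $\Delta_f = \varinjlim_i \Delta_{f_i}$. Given a totally disconnected $W \to Y' \times_Y Y'$, quasicompactness of $W$ combined with the fact that the transition maps in the system $\{Y'_i \times_Y Y'_i\}$ are monomorphisms forces a factorization through some $Y'_i \times_Y Y'_i$, reducing the pullback of $\Delta_f$ to that of the closed immersion $\Delta_{f_i}$; hence $f$ is separated. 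For the valuative criterion, given $\Spa(R, R^\circ) \to Y'$ with a compatible extension $\Spa(R, R^+) \to Y$, quasicompactness of the affinoid perfectoid space $\Spa(R, R^\circ)$ again forces factorization through some $Y'_i$, and the lift to $\Spa(R, R^+) \to Y'_i$ exists by Proposition~\ref{prop:valcritvsheafprop} applied to the proper map $f_i$.

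For the converse, assume $f$ is partially proper. For each qcqs perfectoid space $X$ equipped with a map $X \to Y'$, the canonical compactification $\overline{X}^{/Y} \to Y$ is proper by Corollary~\ref{cor:cancomprelative}~(vi), and the universal property of canonical compactification against partially proper targets produces a unique lift $\overline{X}^{/Y} \to Y'$. Let $Z_X \subset Y'$ be the v-sheaf image. First, $Z_X \to Y$ is proper: separatedness is inherited from $Y' \to Y$, and for the valuative criterion, given $\Spa(K, \mathcal{O}_K) \to Z_X$ and an extension $\Spa(K, K^+) \to Y$, after a v-cover $\Spa(L, \mathcal{O}_L) \to \Spa(K, \mathcal{O}_K)$ the former lifts to $\overline{X}^{/Y}$; properness of $\overline{X}^{/Y} \to Y$ produces a lift $\Spa(L, L^+) \to \overline{X}^{/Y}$, and the resulting map $\Spa(L, L^+) \to Y'$ descends to $\Spa(K, K^+) \to Y' \supset Z_X$ using the uniqueness from partial properness of $Y' \to Y$ applied to the two pullbacks over $\Spa(L, L^+) \times_{\Spa(K, K^+)} \Spa(L, L^+)$.

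Next, I check that $Z_X \hookrightarrow Y'$ is a closed immersion. For $W$ totally disconnected with $W \to Y'$, the inclusion $Z_X \times_{Y'} W \subset \overline{X}^{/Y} \times_Y W$ cut out by the separated diagonal of $Y' \to Y$ is closed, and properness of $\overline{X}^{/Y} \to Y$ ensures $\overline{X}^{/Y} \times_Y W$ is qcqs; hence $Z_X \times_{Y'} W \to W$ is a quasicompact injection, which by Corollary~\ref{cor:qcvinjqproet} corresponds to a pro-constructible generalizing subset of $W$. To upgrade this to closed (i.e., specializing), any specialization realized by $\Spa(K, K^+) \to W$ with generic point in $Z_X \times_{Y'} W$ lifts along the properness of $Z_X \to Y$, and this lift agrees with the original map $\Spa(K, K^+) \to W \to Y'$ by partial properness of $Y' \to Y$, placing the closed point in $Z_X \times_{Y'} W$.

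Finally, the family $\{Z_X\}_X$ is filtered under inclusion (given $X_1, X_2 \to Y'$, functoriality of the canonical compactification applied to $X = X_1 \sqcup X_2$ gives $Z_{X_1}, Z_{X_2} \subset Z_X$), and covers $Y'$ (each $X \to Y'$ factors through $Z_X$ by construction, and affinoid perfectoid maps form a v-cover of $Y'$), so $Y' = \varinjlim_X Z_X$ in v-sheaves. The smallness assertion follows because when $Y'$ is small, the closed sub-v-sheaves of $Y'$ form a set of bounded cardinality. The main obstacle I anticipate is the closed immersion verification for $Z_X \hookrightarrow Y'$, since confirming specialization-closedness of the pro-constructible subset $Z_X \times_{Y'} W$ requires carefully intertwining the valuative criterion for the proper map $Z_X \to Y$ (to produce the lift) with the partial properness of $Y' \to Y$ (to ensure the lift lands in $Z_X$).
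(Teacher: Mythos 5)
Your proof is correct and takes essentially the same route as the paper's: the filtered system is built from closed sub-v-sheaves $Z_X\subset Y^\prime$ obtained as sheaf-theoretic images of the canonical compactifications $\overline{X}^{/Y}\to Y^\prime$ for affinoid perfectoid $X\to Y^\prime$. The main structural difference is in the argument that $Z_X\hookrightarrow Y^\prime$ is a closed immersion. The paper uses a short, slick argument: $Z_X$ is the image of the graph of $\overline{X}^{/Y}\to Y^\prime$ (a closed sub-v-sheaf of $\overline{X}^{/Y}\times_Y Y^\prime$, since $Y^\prime\to Y$ is separated) under the projection $\overline{X}^{/Y}\times_Y Y^\prime\to Y^\prime$, which is universally closed because $\overline{X}^{/Y}\to Y$ is proper. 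You instead check closedness on each totally disconnected test space $W$ by first identifying $Z_X\times_{Y^\prime}W$ as a pro-constructible generalizing subset (via Corollary~\ref{cor:qcvinjqproet}) and then verifying stability under specialization by interleaving the valuative criterion for the proper $Z_X\to Y$ with the uniqueness of lifts from partial properness of $Y^\prime\to Y$. Both work; the paper's is shorter and avoids the point-set bookkeeping. Two minor points worth adding: in verifying $Z_X\to Y$ is proper you check separatedness and the valuative criterion but omit quasicompactness (which holds because $Z_X$ is a quotient of the quasicompact $\overline{X}^{/Y}$ over $Y$); and for the ``if'' direction, which the paper dispatches with a one-line appeal to Proposition~\ref{prop:valcritvsheafprop}, you spell out the factorization-through-a-finite-stage argument, which is indeed the implicit content and relies on the transition maps being monomorphisms together with quasicompactness of the test objects.
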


\begin{proof} It follows from Proposition~\ref{prop:valcritvsheafprop} that if $f$ can be written as such a filtered colimit of proper maps, then $f$ is partially proper. Conversely, assume that $f$ is partially proper, and consider the category of all closed immersions $Y^\prime_i\subset Y^\prime$ for which the composite $Y^\prime_i\to Y^\prime\to Y$ is proper. If $Y^\prime$ is small, this category is small. Any map in this category is a closed immersion. To finish the proof, we need to see that it is filtered, and that $Y^\prime$ is the colimit of all those $Y^\prime_i$.

First, we claim that if $Z\to Y$ is any proper map of v-sheaves and $Z\to Y^\prime$ is a map over $Y$, then the (sheaf-theoretic) image $Z^\prime$ of $Z$ in $Y^\prime$ is proper over $Y$, and closed in $Y^\prime$. (In this step, we only use that $f$ is separated.) Indeed, $Z^\prime$ is separated as it is a subsheaf of $Y^\prime$, and quasicompact as a quotient of $Z$, and the valuative criterion for $Z^\prime$ follows from that for $Z$. That the image is closed follows from the definition of properness, as it can be written as the image of the graph of $Z\to Y^\prime$, which is a closed subset of $Z\times_Y Y^\prime$, under the projection $Z\times_Y Y^\prime\to Y^\prime$.

In particular, if $Y^\prime_i\subset Y^\prime$, $i=1,2$, are two closed subsets which are proper over $Y$, then the image of $Y^\prime_1\sqcup Y^\prime_2$ is another such subset, showing that the category is filtered. On the other hand, if $Z=\Spa(R,R^+)\to Y^\prime$ is any map from an affinoid perfectoid space, then the map is separated (as $Z$ is separated), and so we have the canonical compactification $\overline{Z}^{/Y}$, which is proper over $Y$ by Corollary~\ref{cor:cancomprelative}~(vi). Now the image of $\overline{Z}^{/Y}\to Y^\prime$ is closed in $Y^\prime$ and proper over $Y$, and the map $Z\to Y^\prime$ factors over it (cf.~Proposition~\ref{prop:cancomp}), finishing the proof.
\end{proof}

In the case of locally spatial v-sheaves, there is another description of partially proper maps.

\begin{proposition}\label{prop:partiallyproperlocallyspatial} Let $f: Y^\prime\to Y$ be a map from a locally spatial v-sheaf $Y^\prime$ to a spatial v-sheaf $Y$. Then $f$ is partially proper if and only if $|Y^\prime|$ is taut, and for every perfectoid field $K$ with an open and bounded valuation subring $K^+\subset K$, and any diagram
\[\xymatrix{
\Spa(K,\OO_K)\ar[d]\ar[r] & Y^\prime\ar^f[d] \\
\Spa(K,K^+)\ar[r]\ar@{-->}[ur] & Y\ ,
}\]
there exists a unique dotted arrow making the diagram commute.
\end{proposition}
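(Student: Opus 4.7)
The forward direction uses canonical compactifications. Assume $f$ is partially proper. Then $f$ is separated, hence quasiseparated, and since $Y$ is quasiseparated so is $Y'$; by Proposition~\ref{prop:vlocallyspatial}(iii) the locally spectral space $|Y'|$ is quasiseparated. For any quasicompact open $U\subset Y'$, the composite $U\to Y$ is qcqs and separated, so by Corollary~\ref{cor:cancomprelative}(vi) the canonical compactification $\overline{U}^{/Y}\to Y$ is proper. By the universal property of Proposition~\ref{prop:cancomp} applied to the partially proper map $f$, the inclusion $U\hookrightarrow Y'$ extends uniquely to a map $\overline{U}^{/Y}\to Y'$ over $Y$. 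Its graph in $\overline{U}^{/Y}\times_Y Y'$ is a closed immersion (as $f$ is separated), and the projection $\overline{U}^{/Y}\times_Y Y'\to Y'$ is universally closed (as the base change of the proper map $\overline{U}^{/Y}\to Y$), so the image of $\overline{U}^{/Y}$ in $|Y'|$ is a closed quasicompact subset containing $|U|$. Hence $\overline{|U|}$ is a closed subset of a quasicompact topological space, so it is quasicompact, and $|Y'|$ is taut.

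For the converse, assume tautness and the field-valued valuative criterion. Since $|Y'|$ is taut and hence quasiseparated, $f$ is a quasiseparated map of v-sheaves; combined with the field-valued valuative criterion, Proposition~\ref{prop:valcritsepsheaf} gives that $f$ is separated. Now suppose given a diagram as in Definition~\ref{def:partiallyproper} for a perfectoid Tate ring $R$ and an open integrally closed $R^+\subset R$. Uniqueness of any lift is automatic by Proposition~\ref{prop:genvalcritvsheaf}, and since a v-cover $\Spa(S,S^+)\to\Spa(R,R^+)$ by a totally disconnected space induces a compatible map $\Spa(S,S^\circ)\to\Spa(R,R^\circ)$ (as $R^\circ$ lands in $S^\circ$), v-descent reduces the problem to the case where $\Spa(R,R^+)$ is totally disconnected. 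Then $\Spa(R,R^\circ)$ is quasicompact, so the image of the given lift $g\colon\Spa(R,R^\circ)\to Y'$ in $|Y'|$ is contained in some quasicompact open $V\subset Y'$ (obtained as a finite union of spatial open subfunctors covering the image). Let $Z\subset Y'$ be the closed sub-v-sheaf corresponding to the closed subset $\overline{|V|}\subset|Y'|$; by tautness $|Z|$ is quasicompact, so $Z\to Y$ is qcqs and separated.

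The main step is to show $Z\to Y$ is proper via the field-valued valuative criterion. Given $\Spa(K,\OO_K)\to Z$ and $\Spa(K,K^+)\to Y$ matching over $Z\to Y$, the hypothesis on $f$ produces a unique extension $\Spa(K,K^+)\to Y'$; its image lies in $Z$ because $|Z|$ is closed in $|Y'|$ and therefore closed under specialization. Thus $Z\to Y$ is proper by Proposition~\ref{prop:valcritvsheafprop}, and the second assertion of that proposition applied to $g$ and the structure map $\Spa(R,R^+)\to Y$ produces the desired lift $\Spa(R,R^+)\to Z\subset Y'$. The essential use of tautness is precisely here: it is what turns the closure of the image of $\Spa(R,R^\circ)$ into a qcqs sub-v-sheaf $Z$ of $Y'$, allowing the field-valued valuative criterion to be promoted to properness of $Z\to Y$ and hence, via the second part of Proposition~\ref{prop:valcritvsheafprop}, to the full valuative criterion for $f$.
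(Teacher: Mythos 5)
Your proof is correct, and it follows the same strategic skeleton as the paper (both directions hinge on the fact that closures of quasicompact opens in $|Y'|$ are quasicompact, closed, and generalizing), but the two proofs package the details differently. For the forward direction the paper fixes a v-cover $\Spa(S,S^+)\to Y$, covers $U\times_Y\Spa(S,S^+)$ by an affinoid $\Spa(R,R^+)$, forms the explicit affinoid $\Spa(R,(R^+)')$, and verifies both inclusions to show its image in $|Y'|$ equals $\bar U$; you instead invoke Proposition~\ref{prop:cancomp} and Corollary~\ref{cor:cancomprelative}(vi) to map the proper $\overline{U}^{/Y}$ into $Y'$ over $Y$ and use the graph-plus-proper-projection trick to get a closed quasicompact set containing $|U|$, which is a bit cleaner (and you explicitly settle the quasiseparatedness clause of tautness, which the paper leaves implicit). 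For the converse, the paper routes through Proposition~\ref{prop:partpropfilcolimprop} by exhibiting $Y'$ as an increasing union of proper $Y'_i\to Y$; you instead verify the $(R,R^+)$-valuative criterion directly, reducing by v-descent to totally disconnected $\Spa(R,R^+)$, cutting out the qcqs closed sub-v-sheaf $Z$ from the closure of the image of $\Spa(R,R^\circ)$, establishing properness of $Z\to Y$ via Proposition~\ref{prop:valcritvsheafprop} and a specialization argument, and invoking the second part of that proposition. Both are legitimate; your converse is somewhat more self-contained (no appeal to \ref{prop:partpropfilcolimprop}) at the cost of the v-descent step. One small point you use implicitly (as does the paper) is that $\overline{|V|}$ is \emph{generalizing}, not merely closed — this is what makes it cut out a sub-v-sheaf $Z$; this holds because every point of $|Y'|$ has a unique maximal generalization, but it deserves a word.
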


Recall that a locally spectral topological space $T$ is {\it taut} if it is quasiseparated, and for every quasicompact open subset $U\subset T$, the closure $\bar{U}\subset T$ of $U$ is quasicompact, cf.~\cite[Definition 5.1.2]{Huber}.

\begin{proof} In one direction, we have to see that if $f$ is partially proper, then $|Y^\prime|$ is taut. Fix a surjection of v-sheaves $\Spa(S,S^+)\to Y$ from an affinoid perfectoid space. Let $|U|\subset |Y^\prime|$ be a quasicompact open subspace, which necessarily comes from some quasicompact open sub-v-sheaf $U\subset Y^\prime$. Choose an affinoid perfectoid space $\Spa(R,R^+)$ with a surjective map of v-sheaves $\Spa(R,R^+)\to U\times_Y \Spa(S,S^+)$. Let $(R^+)^\prime\subset R$ be the smallest open and integrally closed subring containing the image of $S^+\to R^+$. As $f$ is partially proper, we get a unique map $\Spa(R,(R^+)^\prime)\to Y^\prime\times_Y \Spa(S,S^+)$ extending $\Spa(R,R^+)\to U\times_Y \Spa(S,S^+)$. We claim that the image of $|\Spa(R,(R^+)^\prime)|\to |Y^\prime|$ is given by $\bar{U}$, which is thus quasicompact. Note that as $\Spa(R,R^+)$ is dense in $\Spa(R,(R^+)^\prime)$, the image is contained in $\bar{U}$. On the other hand, if $x\in \bar{U}$ is any point, then it corresponds to a map $\Spa(K,K^+)\to Y^\prime$ such that $\Spa(K,\OO_K)\subset \Spa(K,K^+)\to Y^\prime$ factors over $U$ (as $\bar{U}$ is the set of specializations of points of $U$). One can lift the image of $x$ in $Y$ to $\Spa(S,S^+)$. After enlarging $K$, we can find a map $(R,R^+)\to (K,\OO_K)$ inducing the given $(K,\OO_K)$-point of $U\times_Y \Spa(S,S^+)$. Then we get a map $(R,(R^+)^\prime)\to (K,K^+)$, so that $x$ lies in the image of $|\Spa(R^+,(R^+)^\prime)|\to |Y^\prime|$, as desired.

For the converse, note that if $|Y^\prime|$ is taut, we can write it as an increasing union of quasicompact closed generalizing subsets (along closed immersions): Indeed, write $|Y^\prime|=\bigcup_{i\in I} U_i$ as an increasing union of quasicompact open subsets $U_i$; then their closures $\bar{U_i}\subset |Y^\prime|$ form an increasing union of quasicompact closed generalizing subsets. Now each $\bar{U_i}\subset |Y^\prime|$ corresponds to a spatial sub-v-sheaf $Y^\prime_i\subset Y^\prime$, which still satisfies the valuative criterion for pairs $(K,K^+)$. It follows from Proposition~\ref{prop:valcritvsheafprop} that $Y^\prime_i\to Y^\prime$ is proper, so that $Y^\prime\to Y$ satisfies the criterion of Proposition~\ref{prop:partpropfilcolimprop}.
\end{proof}

\section{Proper base change}

In this section, we prove the proper base change theorem for diamonds. It states a certain commutation between pushforward under proper maps, and the functor $j_!$ for open morphisms $j$. Let us first introduce the latter functor, in the generality of \'etale maps.

\begin{defprop}\label{def:Rfshrieketale} Let $f: Y^\prime\to Y$ be an \'etale morphism of small v-stacks. Then $f^\ast: D_\et(Y,\Lambda)\to D_\et(Y^\prime,\Lambda)$ has a left adjoint $Rf_!: D_\et(Y^\prime,\Lambda)\to D_\et(Y,\Lambda)$. Moreover, for any map $g: \tilde{Y}\to Y$ with pullback $\tilde{f}: \tilde{Y}^\prime = Y^\prime\times_Y \tilde{Y}\to \tilde{Y}$, $g^\prime: \tilde{Y}^\prime\to Y^\prime$, the natural transformation
\[
R\tilde{f}_! g^{\prime\ast}\to g^\ast Rf_!
\]
of functors $D_\et(Y^\prime,\Lambda)\to D_\et(\tilde{Y},\Lambda)$, adjoint to
\[
g^{\prime\ast}\to g^{\prime\ast} f^\ast Rf_! = \tilde{f}^\ast g^\ast Rf_!\ ,
\]
is an equivalence.
\end{defprop}

As $Rf_!$ commutes with canonical truncations, we will often denote it simply by $f_!: D_\et(Y^\prime,\Lambda)\to D_\et(Y,\Lambda)$.

\begin{proof} We start with the case that $Y$ is a perfectoid space. Then $Y^\prime$ is also a perfectoid space. In that case, $Y^\prime_\et$ is a slice of $Y_\et$, and thus the functor $f^\ast$ on \'etale sheaves has an exact left adjoint $f_!$. This induces a corresponding functor on (non-left-completed) derived categories, which is left adjoint to $f^\ast$ as a functor on non-left-completed derived categories; moreover, both $f^\ast$ and $Rf_!$ commute with canonical truncations. This implies that one gets a similar adjunction on left-completions. To check that $Rf_!$ commutes with base change along maps of perfectoid spaces, use that all functors commute with canonical truncations, so it is enough to prove the assertion if $\mathcal F$ is an \'etale sheaf on $Y^\prime$. Moreover, all functors are defined on all sheaves (not just sheaves of abelian groups), and the adjunction exists in that setting; as all functors commute with all colimits, one reduces to the case of the sheaf represented by some \'etale map $Z\to Y^\prime$. In that case, $f_!$ is given by the sheaf represented by the composite \'etale map $Z\to Y^\prime\to Y$. It is clear from this description that $f_!$ commutes with base change.

In general, as $Y$ is a small v-stack, we can find a simplicial v-hypercover of $Y$ by a simplicial perfectoid space $Y_\bullet$; for simplicity, we assume that all $Y_i$ are strictly totally disconnected. Let $f_\bullet: Y^\prime_\bullet\to Y_\bullet$ be the pullback of $Y^\prime\to Y$, which is again a simplicial perfectoid space. We have the simplicial topos of \'etale sheaves on $Y_\bullet$ (and $Y^\prime_\bullet$), roughly given as systems of sheaves $\mathcal F_i$ on $Y_i$ together with maps $g^\ast \mathcal F_i\to \mathcal F_j$ for all maps $g: \Delta^j\to \Delta^i$. Correspondingly, we have the derived category $D(Y_{\bullet,\et},\Lambda)$ of sheaves of $\Lambda$-modules on the simplicial site $Y_{\bullet,\et}$, and its full subcategory $D_\cart(Y_{\bullet,\et},\Lambda)$ consisting of those objects for which the maps $g^\ast A_i\to A_j$ are all equivalences. Then pullback along $Y_\bullet\to Y$ induces an equivalence
\[
D_\et(Y,\Lambda)\cong D_\cart(Y_{\bullet,\et},\Lambda)\ .
\]
Similarly,
\[
D_\et(Y^\prime,\Lambda)\cong D_\cart(Y^\prime_{\bullet,\et},\Lambda)\ .
\]

The functor $f_\bullet^\ast: D(Y_{\bullet,\et},\Lambda)\to D(Y^\prime_{\bullet,\et},\Lambda)$ has a left adjoint given by $Rf_{\bullet !}$ (which in every degree is given by $Rf_{i!}$). By the base change result, this carries $D_\cart(Y_{\bullet,\et},\Lambda)$ into $D_\cart(Y^\prime_{\bullet,\et},\Lambda)$, and thus gives the desired left adjoint $Rf_!$. Compatibility with base change follows for pullback along $Y_0\to Y$ by construction, and this implies the general case.
\end{proof}

Now we can state the proper base change theorem.

\begin{theorem}\label{thm:properbasechangepartpropvsopen} Let $f: Y^\prime\to Y$ be a proper morphism of small v-stacks, and let $j: U\subset Y$ be an open immersion, with pullback $g: U^\prime=V\times_Y Y^\prime\to U$, and $j^\prime: U^\prime\subset Y^\prime$. There is a natural transformation of functors
\[
j_! Rg_\ast\to Rf_\ast j^\prime_!: D_\et(U^\prime,\Lambda)\to D_\et(Y,\Lambda)\ .
\]
If $f$ is quasi-pro-\'etale or $n\Lambda=0$ for some $n$ prime to $p$, then for any $A\in D^+_\et(U^\prime,\Lambda)$, the map
\[
j_! Rg_\ast A\to Rf_\ast j^\prime_! A
\]
is an isomorphism.

If $Rf_\ast$ has finite cohomological dimension, i.e.~there is some integer $N$ such that for all $A\in D_\et(Y^\prime,\Lambda)$ concentrated in degree $0$, one has $R^if_\ast A=0$ for $i>N$, then for all $A\in D_\et(U^\prime,\Lambda)$, the map
\[
j_! Rg_\ast A\to Rf_\ast j^\prime_! A
\]
is an isomorphism.
\end{theorem}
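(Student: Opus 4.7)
The plan has three stages: construct the transformation $\alpha : j_! Rg_\ast \to Rf_\ast j'_!$, reduce the isomorphism claim to base change along geometric points of $Y$, and isolate what remains as input.

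\textbf{Construction.} For an open immersion $j$ the functor $j_!$ is extension by zero, so the exchange $j'_! g^\ast \to f^\ast j_!$ is an equivalence (check on stalks in a strictly totally disconnected cover of $Y'$ via a simplicial v-hypercover, using the explicit formulas for $j_!$). Composing with the unit of $(f^\ast, Rf_\ast)$ and the counit of $(g^\ast, Rg_\ast)$ gives
\[
\alpha_A : j_! Rg_\ast A \to Rf_\ast f^\ast j_! Rg_\ast A = Rf_\ast j'_! g^\ast Rg_\ast A \to Rf_\ast j'_! A.
\]

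\textbf{Reduction to proper base change at points.} Whether $\alpha_A$ is an isomorphism can be tested after pullback to a v-cover of $Y$, so I would assume $Y$ is a strictly totally disconnected perfectoid space, and then check after $\bar{y}^\ast$ for each rank-$1$ geometric point $\bar{y} = \Spa(C, \OO_C) \to Y$. Since $j$ is open, $\bar{y}^\ast j_! Rg_\ast A$ equals $\bar{y}^\ast Rg_\ast A$ when $\bar{y} \in |U|$ and vanishes otherwise. Granting a proper base change isomorphism $\bar{y}^\ast Rf_\ast B \cong R(f_{\bar{y}})_\ast \bar{y}'^\ast B$ for the fiber $\bar{y}': Y'_{\bar{y}} \to Y'$, the right-hand side of $\alpha_A$ becomes $R\Gamma(Y'_{\bar{y}}, (j'_! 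A)|_{Y'_{\bar{y}}})$. If $\bar{y} \notin |U|$ then $Y'_{\bar{y}} \cap U' = \emptyset$ so $(j'_! A)|_{Y'_{\bar{y}}} = 0$; if $\bar{y} \in |U|$ then $Y'_{\bar{y}} \subset U'$ so $(j'_! A)|_{Y'_{\bar{y}}} = A|_{Y'_{\bar{y}}}$, and a second application of proper base change (to $g$, which is again proper as a base change of $f$) identifies its $R\Gamma$ with $\bar{y}^\ast Rg_\ast A$. The identifications are compatible with $\alpha_A$ by construction, which finishes the argument conditional on proper base change at geometric points.

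\textbf{Proper base change and the unbounded case.} The main obstacle is therefore the geometric-point base change $\bar{y}^\ast Rf_\ast \cong R(f_{\bar{y}})_\ast \bar{y}'^\ast$ for proper $f$. Under the prime-to-$p$-torsion or quasi-pro-\'etale hypothesis on $A \in D^+$, Proposition~\ref{prop:Rfastsimple} and Corollary~\ref{cor:qcqspushforwardpreserveset} reduce the problem to showing that $Rf_{v\ast}$ of such an $A$ lives in $D^+_\et(Y, \Lambda)$ and commutes with the base change $\bar{y} \to Y$. The latter is the essentially hard step: I would reduce $\bar{y}$ to a cofiltered limit of rational open neighborhoods in a strictly totally disconnected cover of $Y$ and combine Proposition~\ref{prop:etcohomlim} with a presentation of $Y'_{\bar{y}}$ (for $f$ proper) as an inverse limit whose cohomology is controlled by proper base change for schemes over Zariski--Riemann-type spaces of $C$, exactly in the style signalled in the paper's introduction; the canonical-compactification machinery of Proposition~\ref{prop:cancomp} and Corollary~\ref{cor:cancomprelative} gives the geometric handle to make such approximations. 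Finally, the last statement, assuming $Rf_\ast$ has cohomological dimension $\leq N$, follows from the bounded-below case by writing $A = R\varprojlim_n \tau^{\geq -n} A$, applying $j_! Rg_\ast$ and $Rf_\ast j'_!$ termwise, and invoking left-completeness of $D_\et$ (Proposition~\ref{prop:leftcomplete}) together with the fact that $R\varprojlim$ has cohomological dimension $\leq 1$ in the replete topos $Y_v$, so that in each fixed degree the transition maps stabilize and $\alpha_A$ inherits the isomorphism property from its bounded-below truncations, just as in the proof of Proposition~\ref{prop:Rfastsimple}.
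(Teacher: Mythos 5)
Your construction of $\alpha$ is correct and matches the paper's, and the initial reduction to $Y$ strictly totally disconnected via Proposition~\ref{prop:Rfastsimple} and base change for $j_!$ is sound. The gap is in the next step: checking the isomorphism ``after $\bar y^\ast$ for each rank-$1$ geometric point $\bar y = \Spa(C,\OO_C)$'' does not test the statement at the points that matter. On a strictly totally disconnected space $X$ the \'etale topos is $|X|^\sim$, and a map of sheaves is an isomorphism if and only if it is so at every point of $|X|$, not only at the rank-$1$ (maximal) points; for example $j_!\Fl$ and $\Fl$ on $\Spa(C,C^+)$ with $C^+\neq\OO_C$ agree at the rank-$1$ point but differ at the closed point. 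The entire content of the theorem lives at those higher-rank points: one must show that $R\Gamma\bigl(Y'\times_X\Spa(C,C^+),\,j'_!A\bigr)=0$ when the closed point of $\Spa(C,C^+)$ lies outside $U$ but $U$ still contains generic points, so that the restriction of $j'_!A$ to the fiber is genuinely nonzero. In your reduction that fiber has already been replaced by the fiber over a single rank-$1$ point, where $j'_!A$ restricts to zero, so the argument silently trivializes exactly the hard case.

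A symptom of this is that you identify ``the geometric-point base change $\bar y^\ast Rf_\ast\cong R(f_{\bar y})_\ast\bar y'^\ast$'' as the main obstacle, but this is already supplied by Proposition~\ref{prop:Rfastsimple} for $A\in D^+$ and $n$ prime to $p$. The real obstacle is the base change to the non-generalizing closed complement $Y\setminus U$, which cannot be expressed as base change along a map of diamonds (cf.~the Remark following the theorem). The paper handles this by working over $X=\Spa(C,C^+)$ with $C^+$ arbitrary, building a v-hypercover of $Y'$ by canonical compactifications $\overline{X'_i}^{/X}$ of strictly totally disconnected spaces (Proposition~\ref{prop:cancomp}, Corollary~\ref{cor:cancomprelative}), passing to a connected affinoid $Y'=\Spa(C',C'^{\circ\circ}+C^+)$, and finally reducing to the Zariski--Riemann lemma (Lemma~\ref{lem:properbasechangezariskiriemann}), which invokes proper base change for schemes over an inverse limit of proper $V$-schemes. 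Your last paragraph gestures at this machinery, but as written the reduction never reaches it because the rank-$1$ restriction has removed the content. The quasi-pro-\'etale alternative hypothesis also needs a (short, separate) argument---there $Y'\cong\Spa(C,C^+)\times\underline{S}$---which your proposal does not address.
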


\begin{remark} This does not quite look like a base change theorem. However, morally it is equivalent to the statement $Rf_!$ commutes with pullback to the complementary closed subset $Y\setminus U$. However, in general $Y\setminus U\subset Y$ is not generalizing, and so does not correspond to a closed sub-v-sheaf. In \cite{Huber}, Huber defines a notion of pseudo-adic spaces which allows one to treat such subsets as spaces in their own right. In this language, the theorem becomes a base change theorem. We have decided not to introduce an analogue of pseudo-adic spaces in our setup, and so leave the statement of Theorem~\ref{thm:properbasechangepartpropvsopen} as it is.
\end{remark}

\begin{proof} The natural transformation $j_! Rg_\ast\to Rf_\ast j^\prime_!$ is adjoint to
\[
f^\ast j_! Rg_\ast = j^\prime_! g^\ast Rg_\ast\to j^\prime_!\ ,
\]
using base change for $j_!$ in Proposition~\ref{def:Rfshrieketale}.

First, we note that if $Rf_\ast$ has finite cohomological dimension, then the general case reduces to the case $A\in D^+_\et(U^\prime,\Lambda)$. Indeed, in that case also $Rg_\ast = j^\ast Rf_\ast j^\prime_!$ has finite cohomological dimension. Thus, any cohomology sheaf of both sides of
\[
j_! Rg_\ast A\to Rf_\ast j^\prime_! A
\]
agrees with those of
\[
j_! Rg_\ast \tau^{\geq -n} A\to Rf_\ast j^\prime_! \tau^{\geq -n} A
\]
for $n$ sufficiently large.

Now let $A\in D^+_\et(U^\prime,\Lambda)$; we want to show that the map
\[
j_! Rg_\ast A\to Rf_\ast j^\prime_! A
\]
is an isomorphism. By Proposition~\ref{prop:Rfastsimple} and Proposition~\ref{def:Rfshrieketale}, we can assume that $Y=X$ is a strictly totally disconnected perfectoid space. The map
\[
j_! Rg_\ast A\to Rf_\ast j^\prime_! A
\]
becomes a map in $D^+_\et(X,\Lambda) = D^+(X_\et,\Lambda)$. To check that it is an isomorphism, we can check on stalks, i.e.~after pullback to maps $\Spa(C,C^+)\to X$, where $C$ is algebraically closed and $C^+\subset C$ is an open and bounded valuation subring. Thus, we can assume that $X=\Spa(C,C^+)$, and we need only check the statement on global sections. If $U=X$, the result is clear. Otherwise, $j_! Rg_\ast A$ has trivial global sections, so we need to see that
\[
R\Gamma(X,Rf_\ast j^\prime_! A)=0\ .
\]
But this is given by
\[
R\Gamma(X,Rf_\ast j^\prime_! A) = R\Gamma(Y^\prime,j^\prime_! A)\ .
\]
If $f$ is quasi-pro-\'etale, then $Y^\prime$ is a qcqs perfectoid space proper and pro-\'etale over $X=\Spa(C,C^+)$, i.e.~$Y^\prime = \Spa(C,C^+)\times \underline{S}$ for some profinite set $S$. In that case, the result is clear. Thus, from now on assume that $n\Lambda=0$ for some $n$ prime to $p$.

Let $X^\prime$ be a strictly totally disconnected space with a surjection $X^\prime\to Y^\prime$. By Corollary~\ref{cor:cancomprelative}~(vi), the canonical compactification $\overline{X^\prime}^{/X}$ is proper over $X$, and $\overline{X^\prime}^{/X}=\overline{X^\prime}\times_{\overline{X}} X$ is an affinoid perfectoid space by Proposition~\ref{prop:compactificationprop}~(iv). There is a unique extension of $X^\prime\to Y^\prime$ to $\overline{X^\prime}^{/X}\to Y^\prime$ by Proposition~\ref{prop:cancomp}. The fibre product $\overline{X^\prime}^{/X}\times_{Y^\prime} \overline{X^\prime}^{/X}$ is again proper over $X$; continuing, we can produce a v-hypercover $Y^\prime_\bullet\to Y^\prime$ where each $Y^\prime_i=\overline{X^\prime_i}^{/X}$ is the canonical compactification of some strictly totally disconnected $X^\prime_i\to X$. Then, by unbounded cohomological descent in a replete topos, cf.~\cite[Proposition 3.3.6]{BhattScholze},
\[
R\Gamma(Y^\prime,j^\prime_! A)
\]
is the (derived) limit of the simplicial object
\[
R\Gamma(\overline{X^\prime_i}^{/X},j^\prime_! A)\ .
\]
Thus, it is enough to handle the case that $Y^\prime=\overline{X^\prime}^{/X}$ is the canonical compactification of some strictly totally disconnected $X^\prime\to X$. In particular, in this case $Y^\prime$ is an affinoid perfectoid space, and $A\in D^+_\et(Y^\prime,\Lambda) = D^+(Y^\prime_\et,\Lambda)$. Let $X^\prime\to \pi_0 X^\prime$ be the projection; this extends to a projection $Y^\prime\to \pi_0 X^\prime=\pi_0 Y^\prime$. Computing $R\Gamma(Y^\prime,-)$ via a Leray spectral sequence along $Y^\prime\to \pi_0 Y^\prime$, it is enough to check that the fibres of the pushforward vanish; but these are given by the cohomology of the fibers. Thus, we can assume that $Y^\prime$ is connected. In other words, $X^\prime=\Spa(C^\prime,C^{\prime +})$ for some complete algebraically closed field $C^\prime$ (over $C$), and some open and bounded valuation subring $C^{\prime +}\subset C^\prime$. Then $Y^\prime=\overline{X^\prime}^{/X}$ is given by
\[
Y^\prime = \Spa(C^\prime,C^{\prime\circ\circ} + C^+)\ .
\]
Now $Y^\prime$ has only one rank-$1$-point $\Spa(C^\prime,\OO_{C^\prime})$, where $C^\prime$ is algebraically closed; it follows that any map in $Y^\prime_\et$ is a local isomorphism, and thus the topos $(Y^\prime_\et)^\sim$ is equivalent to the topos $|Y^\prime|^\sim$. Let $K^\prime$ resp.~$K$ be the (algebraically closed) residue field of $C^\prime$ resp.~$C$, and let $V\subset K$ correspond to $C^+\subset \OO_C$, i.e.~$V=C^+/C^{\circ\circ}\subset K=\OO_C/C^{\circ\circ}$. Then $|Y^\prime|=|\Spa(K^\prime,V)|$. Thus, we are reduced to the following lemma about the Zariski--Riemann spaces of algebraically closed fields.
\end{proof}

The following lemma is related to results of Huber, \cite{HuberZariskiRiemann}.

\begin{lemma}\label{lem:properbasechangezariskiriemann} Let $K\subset K^\prime$ be algebraically closed fields, and let $V\subset K$ be a valuation ring of $K$. Let $T^\prime=\Spa(K^\prime,V)$ be the spectral space of all valuation rings $V^\prime\subset K^\prime$ that contain $V$, and let $T=\Spa(K,V)$, with the natural projection map $f: T^\prime\to T$ sending $V^\prime\subset K^\prime$ to $V^\prime\cap K\subset K$. Let $s\in T$ denote the unique closed point, corresponding to the valuation ring $V\subset K$. Let $\mathcal F$ be a sheaf of torsion abelian groups on $T^\prime$ such that for all $x^\prime\in f^{-1}(s)$, the stalk $\mathcal F_{x^\prime} = 0$. Then
\[
R\Gamma(T^\prime,\mathcal F)=0\ .
\]
\end{lemma}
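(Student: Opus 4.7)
My plan is to establish the result via proper base change, reducing the statement to the corresponding theorem for schemes by Noetherian approximation.

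The setup first: since $V$ is a valuation ring of $K$, the space $T$ is a totally ordered chain of specializations with $s$ as the unique closed point, and the only open of $T$ containing $s$ is $T$ itself. Consequently,
\[
(Rf_\ast \mathcal F)_s = \varinjlim_{U\ni s} R\Gamma(f^{-1}(U),\mathcal F) = R\Gamma(T',\mathcal F),
\]
so the lemma is equivalent to the proper base change isomorphism $(Rf_\ast \mathcal F)_s \simeq R\Gamma(f^{-1}(s),\mathcal F|_{f^{-1}(s)})$, combined with the hypothesis that the right-hand side vanishes.

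First I would reduce to constructible torsion $\mathcal F$. Torsion sheaves on the coherent (spectral) space $T'$ are filtered colimits of their constructible subsheaves, and sheaf cohomology commutes with these filtered colimits on a coherent topos. The pointwise vanishing of stalks on $f^{-1}(s)$ passes to subsheaves, so we may assume $\mathcal F$ is constructible.

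Next I would use Noetherian approximation. The Riemann--Zariski space $T' = \Spa(K',V)$ can be written as a cofiltered limit $T' = \varprojlim_\alpha X_\alpha$ of projective birational $V$-models of $K'$, and similarly $T = \varprojlim_\beta Y_\beta$. A constructible $\mathcal F$ then descends to a constructible sheaf $\mathcal F_0$ on some $X_{\alpha_0}$ pulling back to $\mathcal F$ along $T'\to X_{\alpha_0}$, and after enlarging indices the morphism $f$ is compatible with a proper morphism $\pi\colon X_{\alpha_0}\to Y_{\beta_0}$. Using that $f^{-1}(s) = \varprojlim_\alpha \pi_\alpha^{-1}(y_\alpha)$ (where $y_\alpha$ is the center of $s$ in $Y_{\beta(\alpha)}$) together with a quasicompactness argument, the pointwise vanishing $\mathcal F_{x'}=0$ on $f^{-1}(s)$ can be descended (possibly after further enlarging indices) to the vanishing of $\mathcal F_0$ on the scheme-theoretic fiber $\pi^{-1}(y)$. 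Applying scheme-theoretic proper base change for constructible torsion sheaves along the proper morphism $\pi$ yields $(R\pi_\ast \mathcal F_0)_y = R\Gamma(\pi^{-1}(y), \mathcal F_0|_{\pi^{-1}(y)}) = 0$, and passing back to the limit over $\alpha$ gives $R\Gamma(T',\mathcal F) = 0$.

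The main obstacle will be the descent step in the approximation, i.e., converting the pointwise hypothesis $\mathcal F_{x'}=0$ on $f^{-1}(s)\subset T'$ into the honest vanishing of the approximating $\mathcal F_0$ on a scheme-theoretic fiber. This requires coordinating the approximations of $T'$, $T$, the sheaf $\mathcal F$, and the fiber $f^{-1}(s)$ simultaneously, and exploits that a constructible subset of a cofiltered limit of spectral spaces along spectral maps is pulled back from finite data. A closely related version of this descent was carried out by Huber in \cite{HuberZariskiRiemann}, and the argument there can be adapted to the present setting.
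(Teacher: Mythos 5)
Your proposal matches the paper's proof essentially step for step: realize $T'$ as a cofiltered limit of proper $V$-schemes (the paper's category of proper $V$-schemes equipped with a $K'$-point, refinable to integral projective models, is the same as your birational $V$-models), reduce to constructible torsion sheaves, descend both $\mathcal F$ and its vanishing over the special fiber to a finite stage, and invoke scheme-theoretic proper base change along $X\to\Spec V$ (note that $T\cong\Spec V$ itself, so your auxiliary approximation of $T$ by schemes $Y_\beta$ is unnecessary). The one ingredient the paper makes explicit that you should add: since proper base change for schemes is an \'etale statement while $R\Gamma(T',\mathcal F)$ is sheaf cohomology on the spectral space $|T'|$, one must observe that $\varprojlim_C |X|^\sim$ and $\varprojlim_C X_\et^\sim$ give the same topos, which holds here because $K'$ is algebraically closed so any \'etale map becomes a local isomorphism after pullback to a sufficiently refined model.
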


\begin{proof} Consider the category $C$ of all proper $V$-schemes $X$ with a point $x\in X(K^\prime)$. This category is cofiltered: Given such $(X,x)$ and $(X^\prime,x^\prime)$, the product $X\times_V X^\prime$ with product point $(x,x^\prime)$ dominates both, and if $f,g: X\to X^\prime$ are two morphisms mapping $x$ to $x^\prime$, then the equalizer of $f$ and $g$ is also proper over $V$ and contains $x$. Then $T^\prime=\varprojlim_C |X|$; in fact, one could replace $C$ by the subcategory of integral and projective $X$ for which the point $x$ is dominant, cf.~\cite[Lemma 2.1]{HuberZariskiRiemann}. Moreover, in the limit, the topoi $\varprojlim_C |X|^\sim$ and $\varprojlim_C X_\et^\sim$ are equivalent (as any \'etale map becomes a local isomorphism after some pullback), cf.~\cite[Lemma 2.4]{HuberZariskiRiemann}.

We may assume that $\mathcal F$ is constructible. Then it comes via pullback from some $\mathcal F_X$ on $X_\et^\sim$ (with trivial restriction to the special fiber) for some $X\in C$, and the result follows from the proper base change theorem in \'etale cohomology applied to all $X^\prime\in C_{/X}$, noting that
\[
R\Gamma(T^\prime,\mathcal F) = \varinjlim_{X^\prime\in C_{/X}} R\Gamma(X^\prime_\et,\mathcal F_{X^\prime})\ ,
\]
where $\mathcal F_{X^\prime}$ denotes the pullback of $\mathcal F_X$ to $X^\prime$.
\end{proof}

As a first application of the proper base change theorem, we can complete the proof of invariance under change of algebraically closed base field. Let us recall the statement from the introduction.

\begin{theorem}\label{thm:changebasefield} Let $Y$ be a small v-stack, and assume that $n\Lambda=0$ for some $n$ prime to $p$.
\begin{altenumerate}
\item Assume that $Y$ lives over $k$, where $k$ is a discrete algebraically closed field of characteristic $p$, and $k^\prime/k$ is an extension of discrete algebraically closed base fields, $Y^\prime = Y\times_k k^\prime$. Then the pullback functor
\[
D_\et(Y,\Lambda)\to D_\et(Y^\prime,\Lambda)
\]
is fully faithful.
\item Assume that $Y$ lives over $k$, where $k$ is an algebraically closed discrete field of characteristic $p$. Let $C/k$ be an algebraically closed complete nonarchimedean field, and $Y^\prime = Y\times_k \Spa(C,C^+)$ for some open and bounded valuation subring $C^+\subset C$ containing $k$. Then the pullback functor
\[
D_\et(Y,\Lambda)\to D_\et(Y^\prime,\Lambda)
\]
is fully faithful.
\item Assume that $Y$ lives over $\Spa(C,C^+)$, where $C$ is an algebraically closed complete nonarchimedean field with an open and bounded valuation subring $C^+\subset C$, $C^\prime/C$ is an extension of algebraically closed complete nonarchimedean fields, and $C^{\prime +}\subset C^\prime$ an open and bounded valuation subring containing $C^+$, such that $\Spa(C^\prime,C^{\prime +})\to \Spa(C,C^+)$ is surjective. Then for $Y^\prime = Y\times_{\Spa(C,C^+)} \Spa(C^\prime,C^{\prime +})$, the pullback functor
\[
D_\et(Y,\Lambda)\to D_\et(Y^\prime,\Lambda)
\]
is fully faithful.
\end{altenumerate}
\end{theorem}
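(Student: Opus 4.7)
The plan is to prove all three parts via a common strategy: show that the adjunction unit $B \to Rg_\ast g^\ast B$ is an equivalence for every $B \in D_\et(Y,\Lambda)$, where $g\colon Y^\prime \to Y$ denotes the base change map in question. Since $g$ is qcqs (it is pulled back from the qcqs map between the two ``geometric bases'') and $\Lambda$ is $\ell$-power torsion for some $\ell \neq p$, Proposition~\ref{prop:Rfastsimple} identifies $Rg_\ast$ with $Rg_{v\ast}$ and shows it commutes with arbitrary further base change. Hence the equivalence of the unit can be checked v-locally on $Y$. Passing to a v-cover of $Y$ by a disjoint union of strictly totally disconnected perfectoid spaces over the base, I would reduce to the case where $Y = X$ is strictly totally disconnected over the base $Z$; using Proposition~\ref{prop:etproetcheckoncohomsheaves} and the left-completeness of $D_\et(-,\Lambda)$, I would further reduce to $B = \mathcal F[0]$ for a single étale sheaf $\mathcal F$ of $\Lambda$-modules.

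Next I would translate this into a pointwise statement. The stalk of $Rg_\ast g^\ast \mathcal F$ at a geometric point $\bar y = \Spa(K,K^+) \to X$ is, again by the base-change clause of Proposition~\ref{prop:Rfastsimple}, given by $R\Gamma(\bar y \times_Z Z^\prime, g^\ast \mathcal F|_{\bar y \times_Z Z^\prime})$, while the stalk of $\mathcal F$ is $\mathcal F_{\bar y} = R\Gamma(\bar y, \mathcal F|_{\bar y})$. The theorem therefore reduces to showing, for every algebraically closed perfectoid field $K$ containing the base field of $Z$ with open bounded valuation subring $K^+$ and every étale sheaf $\mathcal F$ on $\bar y = \Spa(K,K^+)$, that the natural map
\[
R\Gamma(\bar y, \mathcal F) \to R\Gamma(\bar y \times_Z Z^\prime, g^\ast \mathcal F)
\]
is an equivalence. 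Writing $\mathcal F$ as a filtered colimit of constructible sheaves reduces to the constructible case.

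To establish the pointwise comparison I would approximate $\bar y$ by noetherian adic spaces: write $\bar y = \varprojlim_i Y_i$ as a cofiltered inverse limit of affinoid adic spaces $Y_i$ of topologically finite type over $Z$ (schemes of finite type over $k$ in case (i)), so that $\bar y \times_Z Z^\prime = \varprojlim_i Y_i \times_Z Z^\prime$. Any constructible $\mathcal F$ descends to a constructible $\mathcal F_{i_0}$ on some $Y_{i_0}$, and Proposition~\ref{prop:etcohomlim} applied to both inverse systems gives
\[
R\Gamma(\bar y, \mathcal F) = \varinjlim_i R\Gamma(Y_i, \mathcal F_i), \qquad R\Gamma(\bar y \times_Z Z^\prime, g^\ast \mathcal F) = \varinjlim_i R\Gamma(Y_i \times_Z Z^\prime, g^\ast \mathcal F_i),
\]
so it suffices to establish the invariance statement at each finite level $Y_i$ over $Z$. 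In case (iii) this is precisely Huber's theorem \cite[Theorem 4.1.1]{Huber} for finite-type adic spaces over $\Spa(C,C^+)$; case (i) is the classical invariance of étale cohomology of finite-type $k$-schemes under extension of algebraically closed fields; case (ii) is bridged using Huber's nearby-cycles functor, which identifies the étale cohomology of a finite-type adic space over $\Spa(C,C^+)$ with that of an associated $k$-scheme, thereby reducing case (ii) to case (i).

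The main obstacle is the approximation of $\bar y = \Spa(K,K^+)$ by finite-type adic spaces over $Z$ in a manner compatible with the base change to $Z^\prime$: one must present $K^+$ as the $\varpi$-adic completion of a filtered colimit of topologically finitely presented $\OO_Z$-algebras so that Proposition~\ref{prop:etcohomlim} applies, and simultaneously check that each base-changed space $Y_i \times_Z Z^\prime$ is a well-defined adic space to which the noetherian invariance theorem applies. Coordinating these approximations and limits across cases (i)--(iii), and in particular matching the nearby-cycles comparison with the inverse limit in case (ii), is where the bulk of the technical work resides; the remaining descent through v-covers and the final passage to cohomology sheaves is comparatively formal.
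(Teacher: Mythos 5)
Your overall framework — reduce via Proposition~\ref{prop:Rfastsimple} to $Y$ strictly totally disconnected, then to stalks, then approximate by noetherian adic spaces topologically of finite type over the base and invoke Huber — is essentially how the paper handles part (iii). This route is hidden inside the proof of the final paragraph of Theorem~\ref{thm:vproetbasechange}, which in turn reduces to Lemma~\ref{lem:etcohomfibreproductalgclosed}, where exactly this approximation is carried out and the result cited from \cite[Theorem 4.1.1 (c)]{Huber}. So for (iii) you are on the right track.

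For parts (i) and (ii), however, there is a genuine gap. You propose to write $\bar y = \Spa(K,K^+) = \varprojlim_i Y_i$ with the $Y_i$ ``schemes of finite type over $k$'' (case (i)) or more generally ``topologically of finite type over $Z$''. When $Z = \Spec k$ with $k$ a discrete field, this does not make sense: the perfectoid space $\Spa(K,K^+)$ carries a non-trivial Tate topology coming from a pseudouniformizer $\varpi \in K$, but $\varpi$ does not lie in $k$, so there is no way to present $\Spa(K,K^+)$ as a cofiltered limit of finite-type discrete $k$-schemes (nor of analytic adic spaces ``over $k$'') in a manner compatible with the étale topos. The underlying spaces already disagree: $|\Spa(K,K^+)|$ is the chain of valuation rings of $K$ containing $K^+$, whereas $\varprojlim |\Spec R_i| = |\Spec K^+|$. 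Consequently Proposition~\ref{prop:etcohomlim} does not apply, and your subsequent comparison at ``finite level'' (classical scheme invariance in case (i), nearby cycles in case (ii)) has no system to run over. The paper circumvents this precisely by \emph{not} attacking (i) and (ii) head-on: it first proves (iii), then reduces (ii) to (iii) together with the single special case $C = \widehat{\overline{k((t))}}$, and finally deduces (i) from (ii) via a composite-of-fully-faithful-functors argument. In the special case, the point is that the base field $C$ \emph{is} of the form $\widehat{\overline{k((t))}}$, which makes the analogue of your finite-level objects genuinely analytic — the spaces $Y^\prime_{n,L}$ are annuli over finite extensions $L$ of $k((t^{1/p^\infty}))$ — and the cohomology computation then reduces (after applying proper base change, Theorem~\ref{thm:properbasechangepartpropvsopen}, to dispose of sheaves supported away from the closed point, and reducing to $A = \Fl$ constant) to $H^i(\text{annulus},\Fl)$, with the $H^1$ killed in the colimit by adjoining $\ell$-power roots of the uniformizer. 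Your appeal to ``Huber's nearby-cycles functor'' as a black box does not substitute for this: even granting such a comparison at finite level for a constant family over $\Spec\OO_C$, you still lack the approximation of $\bar y$ by such families, and the comparison between $\Spa(C,\OO_C)$- and $\Spa(C,C^+)$-analytifications would require a further input that you have not supplied.
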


\begin{proof} We note that (i) follows from (ii) and (iii). Moreover, (ii) follows from (iii) and the restricted version of (ii) where we demand that $C$ is the completed algebraic closure of $k((t))$.

Let us start by proving (iii). Let $f: Y^\prime\to Y$ be the map, which is qcqs. We need to see that the adjunction map
\[
A\to Rf_\ast f^\ast A
\]
is an isomorphism for all $A\in D_\et(X,\Lambda)$. First, using Postnikov-completeness of all intervening categories (and commutation of $f^\ast$ with Postnikov truncations), we note that by writing $A$ as the homotopy limit of $\tau^{\geq -n} A$, it is enough to prove this for $A\in D^+_\et(X,\Lambda)$. In that case, $Rf_\ast = Rf_{v\ast}$ commutes with any base change by Proposition~\ref{prop:Rfastsimple}. Thus, we may assume that $Y=X$ is a strictly totally disconnected perfectoid space. Now the statement follows from (the final paragraph of) Theorem~\ref{thm:vproetbasechange}.

It remains to prove (ii) in the case that $C$ is the completed algebraic closure of $k((t))$. Let $Y_\bullet\to Y$ be a simplicial v-hypercover of $Y$ by disjoint unions of strictly totally disconnected spaces $Y_i$; then $Y^\prime_i = Y^\prime\times_Y Y_i$ is a perfectoid space. Under the identifications
\[
D_\et(Y,\Lambda)\simeq D_{\et,\cart}(Y_\bullet,\Lambda)\ ,\ D_\et(Y^\prime,\Lambda)\simeq D_{\et,\cart}(Y^\prime_\bullet,\Lambda)\ ,
\]
it is enough to prove that
\[
D_\et(Y_i,\Lambda)\to D_\et(Y^\prime_i,\Lambda)
\]
is fully faithful for all $i$. In other words, we can assume that $Y=\Spa(A,A^+)$ is a strictly totally disconnected space.

Now fix a pseudouniformizer $\varpi\in A$. In that case, $Y^\prime$ is the increasing union of the affinoid perfectoid spaces
\[
Y^\prime_n = \{|t|^n\leq |\varpi|\leq |t|^{1/n}\}\subset Y^\prime\ .
\]
As $R\Hom$'s in $D_\et(Y^\prime,\Lambda)$ are the derived limit of $R\Hom$'s in $D_\et(Y^\prime_n,\Lambda)$, it is enough to prove that for all $n\geq 1$, the functor
\[
D_\et(Y,\Lambda)\to D_\et(Y^\prime_n,\Lambda)
\]
is fully faithful. Let $f_n: Y^\prime_n\to Y$ be the map of affinoid perfectoid spaces. We need to see that the adjunction map
\[
A\to Rf_{n\ast} f_n^\ast A
\]
is an equivalence for all $A\in D_\et(Y,\Lambda)$. Again, this reduces to the case $A\in D^+_\et(Y,\Lambda)$. The desired statement can be checked on stalks, so we can assume that $Y=\Spa(C^\prime,C^{\prime +})$, where $C^\prime$ is an algebraically closed nonarchimedean field with an open and bounded valuation subring $C^{\prime +}\subset C^\prime$, and we only need to verify the statement on global sections. Thus, for any $A\in D_\et(Y,\Lambda)$, we need to see that
\[
R\Gamma(Y,A) = R\Gamma(Y^\prime_n,f_n^\ast A)\ .
\]
By Theorem~\ref{thm:properbasechangepartpropvsopen}, both sides vanish if $A=j_! A_0$ for some $A_0\in D^+_\et(U,\Lambda)$, where $U=Y\setminus \{s\}\subset Y$ is the complement of the closed point $s\in Y$. Thus, replacing $A$ by the cone of $j_! j^\ast A\to A$, we can assume that $A$ is concentrated at the closed point $s$. Repeating the argument in the other direction, we can assume that $A$ is constant. We can also assume that $\Lambda=\mathbb Z/\ell^n\mathbb Z$, and then by triangles that $\Lambda=\Fl$. In that case, $A$ is a direct sum of shifted copies of $\Fl$ (but bounded below). This finally reduces us to the case $A=\Fl$.

Thus, it remains to see that
\[
R\Gamma(Y^\prime_n,\Fl)=\Fl\ .
\]
But note that $Y^\prime_n$ is the inverse limit over all finite extensions $L\subset C$ of $k((t^{1/p^\infty}))$ of the system of affinoid perfectoid spaces given by
\[
Y^\prime_{n,L} = \{|t|^n\leq |\varpi|\leq |t|^{1/n}\}\subset \Spa(L,\OO_L)\times_k \Spa(C^\prime,C^{\prime +})\ ,
\]
which implies
\[
H^i(Y^\prime_n,\Fl) = \varinjlim H^i(Y^\prime_{n,L},\Fl)
\]
for all $i\geq 0$ by Proposition~\ref{prop:etcohomlim}. On the other hand, all $L$ are isomorphic to $k((t_L^{1/p^\infty}))$ for some pseudo-uniformizer $t_L$, as one can take for $t_L$ a uniformizer of the finite separable extension of $k((t))$ corresponding to $L$ (noting that finite separable extensions of $k((t))$ are equivalent to finite extensions of $k((t^{1/p^\infty}))$). Thus, $Y^\prime_{n,L}$ is an annulus over $\Spa(C,C^+)$ (the defining inequalities can also be written as $\{|\varpi|^n\leq |t|\leq |\varpi|^{1/n}\}$), which implies that
\[
H^i(Y^\prime_{n,L},\Fl) = \left\{\begin{array}{cc} \Fl & i=0,1\\ 0 &\mathrm{else.} \end{array}\right .
\]
Taking the direct limit over all $L$ kills the class in degree $1$ by extracting $\ell$-power roots of the uniformizer; this finishes the proof.
\end{proof}

Before going on and using the proper base change theorem to define the functor $Rf_!$, we pause to obtain certain criteria guaranteeing finite cohomological dimension for $Rf_\ast$.

\section{Constructible sheaves}

It will be convenient to have a general notion of constructible sheaves. This notion is slightly subtle as locally closed subsets of adic spaces are not adic spaces themselves. This is one reason that Huber considers pseudo-adic spaces in \cite{Huber}. We will work around this issue.

\begin{definition}\label{def:constructible} Let $\Lambda$ be a noetherian ring.
\begin{altenumerate}
\item Let $X$ be a strictly totally disconnected perfectoid space, and identify the topos of \'etale sheaves on $X$ with the topos of sheaves on $|X|$. A sheaf $\mathcal F$ of $\Lambda$-modules on $X_\et$ (equivalently, on $|X|$) is constructible if there is stratification of $X$ into constructible locally closed subsets $S_i\subset |X|$ such that $\mathcal F|_{S_i}$ is the constant sheaf on $S_i$ associated with some finitely generated $\Lambda$-module.
\item Let $Y$ be a small v-stack, and $\mathcal F$ a small sheaf of $\Lambda$-modules on $Y_v$. Then $\mathcal F$ is constructible if $\mathcal F\in D_\et(Y,\Lambda)\subset D(Y,\Lambda)$, and for every strictly totally disconnected space $f: X\to Y$, the pullback $f^\ast \mathcal F$ is constructible.
\end{altenumerate}
\end{definition}

\begin{remark} Note that in part (i), we made a switch from $X$ as a perfectoid space to $|X|$ as a mere topological space, which allowed us to restrict to the topological space $S_i$.
\end{remark}

\begin{remark}\label{rem:constrabelian} It is clear from the definition that the class of constructible sheaves is stable under kernels, cokernels, images, and extensions; in particular, constructible sheaves form an abelian category. Indeed, this reduces immediately to the case where $X$ is strictly totally disconnected, in which case it reduces further (by passing to suitably refined stratifications) to the case of a map of constant sheaves (associated with finitely generated $\Lambda$-modules) on a spectral space, where finally it reduces (on open and closed subsets) to the category of finitely generated $\Lambda$-modules.
\end{remark}

We will show that this property has good descent properties, and will give a better definition in the case of spatial diamonds. First, we analyze constructible sheaves on strictly totally disconnected perfectoid spaces $X$. In fact, this reduces to sheaves on the spectral space $|X|$, where we have the following general lemma.

\begin{lemma}\label{lem:constructiblecompact} Let $\Lambda$ be a noetherian ring, let $X$ be a spectral space, and let $\mathcal F$ be a sheaf of $\Lambda$-modules on $X$. Then $\mathcal F$ is constructible if and only if $\mathcal F$ is compact in the category of sheaves of $\Lambda$-modules on $X$, i.e.~$\Hom(\mathcal F,-)$ commutes with filtered colimits. Moreover, any sheaf of $\Lambda$-modules on $X$ can be written as a filtered colimit of constructible sheaves.
\end{lemma}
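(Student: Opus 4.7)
The plan is to identify ``constructible'' and ``compact'' with a third notion of \emph{finite presentation} in the category of sheaves of $\Lambda$-modules on $X$, and to obtain the second statement simultaneously. The starting observation is that for any quasicompact open $U \subset X$, the sheaf $j_{U!}\Lambda$ represents $\mathcal G \mapsto \Gamma(U, \mathcal G)$, and $\Gamma(U,-)$ commutes with filtered colimits since $U$ is a coherent object of the site (quasicompact opens form a basis of $X$ stable under finite intersections because $X$ is spectral, cf.~SGA~4 VI); hence $j_{U!}\Lambda$ is compact.

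For the filtered colimit decomposition, I would note that the sheaves $\{j_{U!}\Lambda\}_{U \text{ qc open}}$ generate the category of sheaves of $\Lambda$-modules under colimits, so any sheaf $\mathcal F$ admits a presentation $F_1 \to F_0 \to \mathcal F \to 0$ with each $F_i$ a direct sum of such generators. Filtering by finite sub-presentations exhibits $\mathcal F$ as a filtered colimit of sheaves that are cokernels of maps between finite direct sums of $j_{U!}\Lambda$'s; I will call the latter \emph{finitely presented}. To see that finitely presented equals constructible, observe first that $j_{U!}\Lambda$ is visibly constructible (stratification $\{U, X \setminus U\}$), so by Remark~\ref{rem:constrabelian} the abelian class of constructibles contains all finitely presented sheaves. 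Conversely, given a constructible stratification $X = \bigsqcup S_i$ with $\mathcal F|_{S_i}$ constant of value a finitely generated $M_i$, I would induct on the number of strata and reduce, via the exact sequences arising from closed/open decompositions, to the case of $i_!M$ for the inclusion $i\colon S \hookrightarrow X$ of a single constructible locally closed subset and $M$ finitely generated. Writing $S = U \cap (X\setminus V)$ with $U,V$ quasicompact open, one has $i_!\Lambda = \coker(j_{U\cap V!}\Lambda \to j_{U!}\Lambda)$, which is finitely presented; for general $M$, the noetherian hypothesis supplies a finite presentation $\Lambda^m \to \Lambda^n \to M \to 0$, reducing to $M = \Lambda$.

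The compactness equivalence is then formal. Constructible implies compact, as finite direct sums and cokernels of compact objects are compact. Conversely, if $\mathcal F$ is compact, write $\mathcal F = \varinjlim_i \mathcal F_i$ with each $\mathcal F_i$ constructible as above; then $\mathrm{id}_{\mathcal F}$ factors through some $\mathcal F_i$, exhibiting $\mathcal F$ as a retract of a constructible sheaf, and retracts of constructibles are constructible (they are images of idempotent endomorphisms in the abelian category of constructibles, again by Remark~\ref{rem:constrabelian}). The only substantive input is the coherence of quasicompact opens in a spectral space, which underlies the compactness of $j_{U!}\Lambda$; the rest is category-theoretic manipulation.
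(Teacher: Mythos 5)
Your proof is correct, and while the overall strategy (reduce to the compact generators $j_{U!}\Lambda$, filter an arbitrary sheaf by finite sub-presentations, and then use a retract argument for the converse) matches the paper's, the proof that constructible implies compact is organized somewhat differently. The paper passes to a filtration by strata and invokes that the pushforward functors $j_!$ for constructible open immersions and $i_\ast$ for constructible closed immersions preserve compact objects, because their right adjoints $j^\ast$ and $i^!$ commute with filtered colimits; the latter fact requires knowing that $j_\ast$ for a quasicompact open $j$ commutes with filtered colimits. You instead exhibit an explicit presentation $i_!\Lambda = \coker\bigl(j_{U\cap V!}\Lambda \to j_{U!}\Lambda\bigr)$ for the extension by zero of the constant sheaf on a constructible locally closed subset $S = U \cap (X\setminus V)$, thereby identifying constructibles with finitely presented sheaves, and then quote the general principle that cokernels and finite sums of compact objects are compact. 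Your route is slightly more self-contained (no need to discuss $i^!$ or $j_\ast$ commuting with filtered colimits), while the paper's route is more compact to state; they rest on the same foundation (coherence of quasicompact opens in a spectral space, plus the noetherian hypothesis to produce finite free presentations of the fibre modules). Both ultimately prove the same "finitely presented" picture, which the paper leaves implicit. The filtered-colimit decomposition and the retract/idempotent argument for compact $\Rightarrow$ constructible are essentially identical in the two proofs.
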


\begin{proof} First, we check that constructible sheaves are compact. For this, note that both $j_!$ for constructible open immersions $j$ and $i_\ast$ for constructible closed immersions $i$ preserve compact objects (as their right adjoints $j^\ast$ resp.~$i^!$ preserve filtered colimits). Passing to a filtration, this reduces the problem to the case that $\mathcal F$ is the constant sheaf associated with some finitely generated $\Lambda$-module, where the result follows by taking a finite free $2$-term resolution.

Next, we show that every sheaf of $\Lambda$-modules on $X$ can be written as a filtered colimit of constructible sheaves. Note that any sheaf $\mathcal F$ admits a surjection from a direct sum $\mathcal G$ of sheaves of the form $j_! \Lambda$, where $j: U\hookrightarrow X$ ranges over quasicompact open immersions into $X$. Applying the same to the kernel of $\mathcal G\to \mathcal F$, we get a $2$-term resolution
\[
\mathcal H\to \mathcal G\to \mathcal F\to 0\ ,
\]
where $\mathcal G$ and $\mathcal H$ are direct sums of sheaves of the form $j_! \Lambda$. The map $\mathcal H\to \mathcal G$ is then a filtered colimit of similar maps $\mathcal H_i\to \mathcal G_i$, where $\mathcal H_i$ and $\mathcal G_i$ are finite sums of sheaves of the form $j_! \Lambda$ (as such sheaves are compact). The cokernels $\mathcal F_i$ of the maps $\mathcal H_i\to \mathcal G_i$ are constructible, and $\mathcal F$ is their filtered colimit, as desired.

Now, if $\mathcal F$ is compact, then write $\mathcal F=\varinjlim_i \mathcal F_i$ as a filtered colimit of constructible sheaves. By compactness, one can factor the map $\mathcal F\to \varinjlim \mathcal F_i=\mathcal F$ over $\mathcal F_i$ for $i$ sufficiently large. This shows that $\mathcal F$ is a direct summand of $\mathcal F_i$, and in particular constructible itself.
\end{proof}

\begin{proposition}\label{prop:constrdescends} Let $\Lambda$ be a noetherian ring, let $f: \tilde{Y}\to Y$ be a surjective map of small v-stacks, and let $\mathcal F$ be a small sheaf of $\Lambda$-modules on $Y_v$. If $f^\ast \mathcal F$ is constructible, then $\mathcal F$ is constructible.
\end{proposition}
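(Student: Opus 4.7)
First, I observe that $\mathcal F\in D_\et(Y,\Lambda)$ follows immediately from Theorem~\ref{thm:proetandetsheafdetectionvlocally} applied to $f$: since $f^\ast\mathcal F$ is constructible it lies in $D_\et(\tilde Y,\Lambda)$, so the theorem upgrades $\mathcal F$ to $D_\et(Y,\Lambda)$. It remains to check that for every strictly totally disconnected $g\colon X\to Y$ the pullback $g^\ast\mathcal F$ is constructible on $X$. Form $X\times_Y\tilde Y$ and pick (via Lemma~\ref{lem:strtotdisccover}) a surjection $\tilde X\to X\times_Y\tilde Y$ from a strictly totally disconnected perfectoid space; write $h\colon \tilde X\to X$ and $k\colon \tilde X\to\tilde Y$ for the two projections. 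Then $h$ is a v-cover, and $k$ goes from a strictly totally disconnected space into $\tilde Y$, so $k^\ast f^\ast\mathcal F=h^\ast g^\ast\mathcal F$ is constructible on $\tilde X$ by hypothesis. The problem thus reduces to: \emph{if $h\colon\tilde X\to X$ is a v-cover of strictly totally disconnected perfectoid spaces and $\mathcal G\in D_\et(X,\Lambda)$ has $h^\ast\mathcal G$ constructible on $\tilde X$, then $\mathcal G$ is constructible.}

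For strictly totally disconnected $X$ the category $D_\et(X,\Lambda)$ identifies with sheaves of $\Lambda$-modules on the spectral space $|X|$ (and likewise for $\tilde X$), and $h^\ast$ is the topological pullback along $\varphi:=|h|\colon|\tilde X|\to|X|$. The map $\varphi$ is surjective (as $h$ is a v-cover of qcqs perfectoid spaces), spectral, and generalizing, hence a quotient map by Lemma~\ref{lem:quotientmap}; since stalks of sheaf pullback on topological spaces are given by $(\varphi^\ast\mathcal G)_{\tilde t}=\mathcal G_{\varphi(\tilde t)}$, surjectivity of $\varphi$ makes $\varphi^\ast$ exact and conservative (a sheaf on a topological space with vanishing stalks is zero). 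The question is thus purely topological: $\varphi\colon T'\to T$ a surjective generalizing spectral map of spectral spaces, $\mathcal G$ a sheaf of $\Lambda$-modules on $T$, $\varphi^\ast\mathcal G$ constructible on $T'$; show $\mathcal G$ constructible on $T$.

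I would argue this in two stages via compactness (Lemma~\ref{lem:constructiblecompact}). First, write $\mathcal G=\varinjlim_i\mathcal G_i$ as a filtered colimit of constructible sheaves. Because $\varphi^\ast\mathcal G=\varinjlim_i\varphi^\ast\mathcal G_i$ is compact, the identity factors as $\varphi^\ast\mathcal G\to\varphi^\ast\mathcal G_{i_0}\to\varphi^\ast\mathcal G$, so $\varphi^\ast\mathcal G_{i_0}\to\varphi^\ast\mathcal G$ is a split surjection; conservativity of $\varphi^\ast$ promotes this to $\mathcal G_{i_0}\twoheadrightarrow\mathcal G$ being surjective on $T$. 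Let $\mathcal K=\ker(\mathcal G_{i_0}\to\mathcal G)$: then $\varphi^\ast\mathcal K$, being the complementary summand to $\varphi^\ast\mathcal G$ inside the constructible $\varphi^\ast\mathcal G_{i_0}$, is itself constructible (Remark~\ref{rem:constrabelian}). In the second stage, I present $\mathcal K$ as the directed union of its constructible \emph{subsheaves} $\mathcal K_j\subseteq\mathcal K$; this is possible because every stalk element of $\mathcal K$ lifts to a section over some quasicompact open $U$, giving a map $j_!\Lambda\to\mathcal K$ whose image is a constructible subsheaf containing that stalk element, and these subsheaves are closed under finite sums. Then $\varphi^\ast\mathcal K=\bigcup_j\varphi^\ast\mathcal K_j$, and compactness of $\varphi^\ast\mathcal K$ forces the inclusion $\varphi^\ast\mathcal K_{j_0}\hookrightarrow\varphi^\ast\mathcal K$ to be a split surjection—hence, being also injective, an isomorphism. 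Conservativity of $\varphi^\ast$ upgrades this to $\mathcal K_{j_0}=\mathcal K$, so $\mathcal K$ is constructible, and $\mathcal G=\mathcal G_{i_0}/\mathcal K$ is a cokernel of constructible sheaves, hence constructible by Remark~\ref{rem:constrabelian}.

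The main obstacle is the second stage: one must express $\mathcal K$ as a union of its constructible \emph{subsheaves} rather than merely as an arbitrary filtered colimit of constructibles mapping to it. Without this, the compactness factorization yields only a split surjection $\mathcal K_{j_0}\twoheadrightarrow\mathcal K$ whose kernel again requires an analogous constructibility argument, producing an infinite regress; having actual subsheaves makes the split surjection automatically an isomorphism and closes the argument. The rest is routine: the reduction from the v-sheaf setting to a statement about sheaves on $|X|$ is straightforward given Theorem~\ref{thm:proetandetsheafdetectionvlocally} and the identification $D_\et(X,\Lambda)=D(|X|,\Lambda)$ for strictly totally disconnected $X$.
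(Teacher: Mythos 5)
Your proof is correct, but it takes a genuinely different route from the paper's. Both proofs begin with the same reduction to the case where $Y=X$ and $\tilde Y=\tilde X$ are strictly totally disconnected, identify \'etale sheaves with sheaves on the spectral space $|X|$, and aim at the compactness criterion of Lemma~\ref{lem:constructiblecompact}. From there the paper has a one-line argument: writing $\tilde{\tilde X}$ for a strictly totally disconnected cover of $\tilde X\times_X\tilde X$, it observes that by descent along the v-cover, $\Hom_X(\mathcal F,-)$ is the equalizer of $\Hom_{\tilde X}(f^\ast\mathcal F,f^\ast-)\rightrightarrows\Hom_{\tilde{\tilde X}}(g^\ast\mathcal F,g^\ast-)$; since $f^\ast\mathcal F$ and $g^\ast\mathcal F$ are compact and pullbacks commute with filtered colimits, both functors commute with filtered colimits, and so does a finite limit (the equalizer) of them. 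This descends the compactness property directly, with no need to identify a constructible approximant or control a kernel. Your argument instead descends at the level of objects: you use compactness of $\varphi^\ast\mathcal G$ to split off a constructible surjecting onto $\mathcal G$, and then handle the kernel $\mathcal K$ by a second compactness argument. You correctly identify (and resolve) the main subtlety — that the second stage needs $\mathcal K$ presented as a directed union of constructible \emph{subsheaves}, which is available precisely because $\mathcal K$ embeds in the constructible $\mathcal G_{i_0}$, making images of maps $j_!\Lambda\to\mathcal K$ constructible; this would fail for an arbitrary sheaf, since quotients of constructible sheaves need not be constructible. Your approach is sound and self-contained, at the cost of being longer; the paper's equalizer trick is slicker and avoids the two-stage structure entirely.
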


\begin{proof} We may assume that both $Y=X$ and $\tilde{Y}=\tilde{X}$ are strictly totally disconnected perfectoid spaces. In that case, it follows from Lemma~\ref{lem:constructiblecompact} as $\Hom_X(\mathcal F,-)$ can be written as the equalizer of
\[
\Hom_{\tilde{X}}(f^\ast \mathcal F,f^\ast-)\rightrightarrows \Hom_{\tilde{\tilde{X}}} (g^\ast \mathcal F,g^\ast-)\ ,
\]
where $\tilde{\tilde{X}}\to \tilde{X}\times_X \tilde{X}$ (with composite $g: \tilde{\tilde{X}}\to X$) is some strictly totally disconnected cover, and these functors commute with filtered colimits by assumption.
\end{proof}

\begin{proposition}\label{prop:charconstructible} Let $\Lambda$ be a noetherian ring, and let $Y$ be a spatial diamond. Let $\mathcal F$ be an \'etale sheaf of $\Lambda$-modules on $Y$. The following conditions are equivalent.
\begin{altenumerate}
\item The sheaf $\mathcal F$ is constructible.
\item The sheaf $\mathcal F$ is compact in the category of \'etale sheaves of $\Lambda$-modules on $Y$.
\item There is a stratification of $|Y|$ into constructible locally closed subsets $S_i\subset |Y|$ such that the restriction of $\mathcal F$ to $S_i$ satisfies the following condition: For any strictly totally disconnected perfectoid space $f: X\to Y$, the pullback $f^\ast \mathcal F|_{f^{-1}(S_i)}$ is the constant sheaf associated with some finitely generated $\Lambda$-module.
\end{altenumerate}

Moreover, any \'etale sheaf of $\Lambda$-modules on $Y$ can be written as a filtered colimit of constructible sheaves.
\end{proposition}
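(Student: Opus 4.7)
I would organize the proof around three assertions: the equivalence $(i)\Leftrightarrow(iii)$, the filtered-colimit statement, and the equivalence $(i)\Leftrightarrow(ii)$, in that order.

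First, $(iii)\Rightarrow(i)$ is just an unpacking of Definition~\ref{def:constructible}(ii). For $(i)\Rightarrow(iii)$, I would invoke Proposition~\ref{prop:spatialunivopen} to choose a surjective, universally open quasi-pro-\'etale map $f:X\to Y$ with $X$ strictly totally disconnected, so that the projections $s,t:|R|\to|X|$ of $|R|=|X\times_Y X|$ are spectral, generalizing, and open. By hypothesis $f^\ast\mathcal{F}$ is constructible on $|X|$, giving a finite constructible locally closed stratification $|X|=\bigsqcup_j T_j$ on which $f^\ast\mathcal{F}$ is constant with finitely generated value. The individual $T_j$ need not be $R$-invariant, but the saturations $t(s^{-1}(T_j))$ are quasicompact constructible (openness of $t$ and quasicompactness of $s$), and the Boolean algebra they generate gives a refinement into $R$-invariant constructible locally closed strata. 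These descend to constructible locally closed $S_i\subset|Y|$ (using that $|X|\to|Y|$ is a quotient map with open source-target maps and that $|Y|$ is spectral, cf.\ Lemma~\ref{lem:spectralopenequivrel}). To verify the condition in $(iii)$ for an arbitrary strictly totally disconnected $f':X'\to Y$, I would refine $X'\times_Y X$ to a strictly totally disconnected cover $X''$ and compare the pullbacks of $f^\ast\mathcal{F}$ and $f'^\ast\mathcal{F}$ on $X''$; constancy on the preimage of a stratum $S_i$ in $|X''|$ forces constancy of $f'^\ast\mathcal{F}$ on $f'^{-1}(S_i)$ as well, since $|X''|\to|X'|$ is a surjection.

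Next, for the filtered-colimit statement I would follow the pattern of Lemma~\ref{lem:constructiblecompact}. Any \'etale sheaf $\mathcal{F}$ of $\Lambda$-modules on $Y_\et$ admits a surjection from $\bigoplus_{i\in I} j_{i!}\Lambda$, where $j_i:U_i\to Y$ ranges over quasicompact separated \'etale maps. Each $j_{i!}\Lambda$ is constructible: pulling back to any strictly totally disconnected $X\to Y$ one obtains $j'_{i!}\Lambda$ for a quasicompact separated \'etale $j_i':U_i\times_Y X\to X$, which by Lemma~\ref{lem:spatialetalelocallysep} is locally a composite of quasicompact open immersions and finite \'etale maps, and the associated sheaf on $|X|$ is visibly constructible. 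Iterating for the kernel yields a two-term presentation of $\mathcal{F}$ by direct sums of such sheaves; expressing this as the filtered colimit of its finite sub-presentations produces $\mathcal{F}$ as a filtered colimit of constructible sheaves (Remark~\ref{rem:constrabelian}).

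Finally, $(i)\Rightarrow(ii)$ follows directly: given $\mathcal{F}$ constructible and a filtered system $\mathcal{G}_\alpha$, one computes $\Hom_{Y_\et}(\mathcal{F},-)$ as the equalizer of $\Hom_{X_\et}(f^\ast\mathcal{F},f^\ast -)$ and $\Hom_{\tilde X_\et}(g^\ast\mathcal{F},g^\ast -)$ for a strictly totally disconnected $f:X\to Y$ and a strictly totally disconnected refinement $\tilde X\to X\times_Y X$; compactness of $f^\ast\mathcal{F}$ and $g^\ast\mathcal{F}$ on $|X|$ and $|\tilde X|$ (Lemma~\ref{lem:constructiblecompact}) together with commutation of filtered colimits and finite limits yields compactness of $\mathcal{F}$. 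The converse $(ii)\Rightarrow(i)$ is immediate from the filtered-colimit statement: writing a compact $\mathcal{F}$ as $\varinjlim\mathcal{F}_\alpha$ with $\mathcal{F}_\alpha$ constructible, the identity of $\mathcal{F}$ factors through some $\mathcal{F}_\alpha$, realizing $\mathcal{F}$ as a retract of a constructible sheaf, hence constructible by Remark~\ref{rem:constrabelian}. The main obstacle is the descent step in the first paragraph: both the passage to an $R$-invariant refinement of the stratification and the verification of the stratum-by-stratum condition for \emph{all} strictly totally disconnected covers (not merely the distinguished one) require genuine use of universal openness and of the spectral structure of $|Y|$.
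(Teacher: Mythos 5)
Your overall organization is plausible, but the central step you propose — a direct proof that $(i)\Rightarrow(iii)$ by descending a stratification from a strictly totally disconnected cover $X\to Y$ — has genuine gaps, and the paper's proof is structured precisely to avoid it. The paper instead proves $(i)\Rightarrow(ii)$ (descent of compactness, as you do) and $(iii)\Rightarrow(i)$ (tautological), and obtains $(ii)\Rightarrow(iii)$ from the filtered-colimit statement: calling a sheaf satisfying $(iii)$ \emph{strongly constructible}, the paper shows that every \'etale sheaf of $\Lambda$-modules is a filtered colimit of strongly constructible sheaves, so that a compact object is a retract of a strongly constructible one and therefore strongly constructible. The key computational input is the verification that $j_!\Lambda$, for $j\colon U\to Y$ a composite of a quasicompact open immersion, a finite \'etale map, and a quasicompact open immersion (as supplied by Lemma~\ref{lem:spatialetalelocallysep}), is strongly constructible; this is done explicitly by stratifying $|Y|$ by the level sets of the fiber-counting function $a\colon |Y|\to\mathbb Z_{\geq 0}$, which are manifestly $R$-invariant and constructible.

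Your first paragraph has three concrete problems. First, you assert that the saturations $t(s^{-1}(T_j))$ are constructible ``by openness of $t$ and quasicompactness of $s$'', but openness only guarantees that images of quasicompact \emph{opens} are quasicompact open; the $T_j$ are locally closed constructibles, and the image of a constructible set under an open quasicompact spectral map need not be constructible (it is merely pro-constructible). Second, and more seriously, even granting constructibility, the atoms of the Boolean algebra generated by the saturations are $R$-invariant but are \emph{not} a refinement of the original stratification by the $T_j$: an atom will typically be a union of several $T_j$'s (on which $f^\ast\mathcal F$ takes different constant values), so there is no reason for $f^\ast\mathcal F$ to be constant on an atom. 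You would need a stratification that is simultaneously $R$-invariant \emph{and} finer than $\{T_j\}$, which your construction does not produce. Third, the final step claims that if the pullback of $f'^\ast\mathcal F$ along a surjective $|X''|\to |X'|$ is constant on the preimage of a stratum, then $f'^\ast\mathcal F$ itself is constant on the stratum; this is false in general — a sheaf whose pullback along a cover is constant need only be a twist of a constant sheaf, and you would need to rule out such twists by an additional argument. The paper sidesteps all three issues because the only sheaves it ever needs to stratify directly are the $j_!\Lambda$, for which the stratification is given by a globally defined constructible function on $|Y|$, not assembled from an arbitrary stratification on a cover.
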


\begin{proof} First, (i) implies (ii), as compactness descends over a v-cover $X\to Y$ by a strictly totally disconnected perfectoid space, as in the proof of Proposition~\ref{prop:constrdescends}. Moreover, by definition (iii) implies (i).

It remains to see that (ii) implies (iii). Let us for the moment call a sheaf $\mathcal F$ satisfying the hypothesis of (iii) strongly constructible. Then strongly constructible sheaves are constructible. It now suffices to show that any \'etale sheaf of $\Lambda$-modules on $Y$ can be written as a filtered colimit of strongly constructible sheaves. Indeed, it will then follow that any compact $\mathcal F$ is a direct summand of a strongly constructible sheaf, as in the proof of Lemma~\ref{lem:constructiblecompact}, and therefore strongly constructible itself.

Thus, it remains to see that any \'etale sheaf of $\Lambda$-modules on $Y$ can be written as a filtered colimit of strongly constructible sheaves. As in Remark~\ref{rem:constrabelian}, the category of strongly constructible sheaves is closed under kernels, cokernels, images, and extensions. By Lemma~\ref{lem:spatialetalelocallysep}, any sheaf of $\Lambda$-modules on $Y_\et$ can be written as a quotient of a direct sum of sheaves of the form $j_! \Lambda$ where $j: U\to Y$ ranges over \'etale maps which can be written as a composite $U\hookrightarrow V\to W\hookrightarrow Y$, where $U\hookrightarrow V$ and $W\hookrightarrow Y$ are quasicompact open immersions, and $V\to W$ is finite \'etale. Arguing as in Lemma~\ref{lem:constructiblecompact}, it is enough to prove that $j_! \Lambda$ is strongly constructible.

For this, let us consider first the case that $Y$ is strictly totally disconnected perfectoid space $X$. Then $U$ can be embedded as a quasicompact open subset of $X\times \{1,\ldots,m\}$ for some $m$. Consider the function $a: |X|\to \mathbb Z_{\geq 0}$ which assigns to any point $x\in X$ the number of geometric points of $U$ above a fixed geometric point $\bar{x}$ over $x$. We claim that the level sets $a^{-1}(n)$ are constructible locally closed subsets of $|X|$ for all $n\geq 0$. As $a$ is bounded, it suffices to show that $a^{-1}(\mathbb Z_{\geq n})\subset |X|$ is a quasicompact open subset for all $n\geq 0$. But if $x\in X$ has $n$ preimages in $U$, then there are $n$ elements $i\in \{1,\ldots,m\}$ such that $(x,i)\in U\subset X\times\{1,\ldots,n\}$, and then $\bigcap_{i: (x,i)\in U} U\cap (X\times \{i\})\subset X$ is a quasicompact open subset of $X$ containing $x$ lying in $a^{-1}(\mathbb Z_{\geq n})$.

Let $S_i=a^{-1}(i)$ for $i\geq 0$; this is a constructible stratification of $X$. Moreover, the above arguments show that $(j_! \Lambda)|_{S_i}\cong \Lambda^i$ is constant. Indeed, for this we can replace $X$ by $a^{-1}(\mathbb N_{\geq i})$, and then locally $U$ is the disjoint union of $i$ copies of $X$, and some component that maps to $X\setminus S_i$. Thus, $j_! \Lambda$ is indeed strongly constructible.

In general, we can still consider the function $a: |Y|\to \mathbb Z_{\geq 0}$ which assigns to any point $y\in Y$ the number of geometric points of $U$ above a fixed geometric point $\bar{y}$ over $y$. The level sets $a^{-1}(n)$ are constructible locally closed subsets of $|Y|$ for all $n\geq 0$, by checking over a strictly totally disconnected cover. Let $S_i = a^{-1}(i)$ for $i\geq 0$. It remains to see that for all strictly totally disconnected perfectoid spaces $f: X\to Y$, the pullback $f^\ast(j_! \Lambda)|_{f^{-1}(S_i)}$ is the constant sheaf on $f^{-1}(S_i)$ associated with $\Lambda^i$; but this was verified above.
\end{proof}

\begin{proposition}\label{prop:constructiblelimit} Let $Y_i$, $i\in I$, be a cofiltered inverse system of spatial diamonds with inverse limit $Y=\varprojlim_i Y_i$, which is again a spatial diamond. Let $\Lambda$ be a noetherian ring, and denote by $\Cons(Y,\Lambda)$ (resp.~$\Cons(Y_i,\Lambda)$) the category of constructible \'etale sheaves of $\Lambda$-modules on $Y$ (resp.~$Y_i$). Then the natural functor
\[
\text{2-}\varinjlim_i \Cons(Y_i,\Lambda)\to \Cons(Y,\Lambda)
\]
is an equivalence of categories.
\end{proposition}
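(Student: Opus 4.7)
The plan is to verify full faithfulness and essential surjectivity separately, in both cases reducing to the basic building blocks $j_!\Lambda$ where $j\colon U\to Y_i$ (resp.\ $U\to Y$) is a composite of quasicompact open immersions and finite \'etale maps, and then invoking Proposition~\ref{prop:etmaptolimdiamond} together with Proposition~\ref{prop:etcohomlim}.

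For full faithfulness, fix $i_0\in I$ and constructible sheaves $\mathcal F_{i_0},\mathcal G_{i_0}$ on $Y_{i_0}$, with pullbacks $\mathcal F_j,\mathcal G_j$ to $Y_j$ for $j\geq i_0$ and $\mathcal F,\mathcal G$ to $Y$. The claim is that
\[
\Hom_Y(\mathcal F,\mathcal G)\;=\;\varinjlim_{j\geq i_0}\Hom_{Y_j}(\mathcal F_j,\mathcal G_j)\ .
\]
By the final sentence of Proposition~\ref{prop:charconstructible} (and its proof) applied on $Y_{i_0}$, choose a presentation $\mathcal H_{i_0}\to \mathcal G'_{i_0}\to \mathcal F_{i_0}\to 0$ in which $\mathcal G'_{i_0}$ and $\mathcal H_{i_0}$ are finite direct sums of sheaves of the form $j_!\Lambda$ with $j\colon U\to Y_{i_0}$ a composite of a quasicompact open immersion and a finite \'etale map. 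This presentation is preserved by the (right-exact) pullbacks to each $Y_j$ and to $Y$. Applying $\Hom(-,\mathcal G_?)$ and using left-exactness of Hom in the first variable, one reduces to the case $\mathcal F_{i_0}=j_!\Lambda$, where by adjunction the desired identity becomes $\mathcal G(U)=\varinjlim_{j\geq i_0}\mathcal G_j(U_j)$ with $U_j:=U\times_{Y_{i_0}}Y_j$ and $U=\varprojlim_j U_j$. The spaces $U_j$ and $U$ are spatial diamonds by Lemma~\ref{lem:finetoverspatial} and Lemma~\ref{lem:diamondlimit}, and the identity is then the $H^0$ case of Proposition~\ref{prop:etcohomlim}.

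For essential surjectivity, let $\mathcal F\in \Cons(Y,\Lambda)$. Again invoking Proposition~\ref{prop:charconstructible}, write
\[
\mathcal H\to \mathcal G\to \mathcal F\to 0
\]
where $\mathcal G,\mathcal H$ are finite direct sums of sheaves $j_!\Lambda$ for maps $j\colon U\to Y$ that are composites of a quasicompact open immersion and a finite \'etale map. By Proposition~\ref{prop:etmaptolimdiamond} (i) and (ii), each such $U\to Y$ is, for $i$ sufficiently large, the pullback of a composite $U_i\to Y_i$ of the same form; fix a common such $i$, producing lifts $\mathcal G_i,\mathcal H_i\in \Cons(Y_i,\Lambda)$ whose pullbacks to $Y$ are $\mathcal G,\mathcal H$. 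By the full faithfulness already established, the map $\mathcal H\to \mathcal G$ descends to a morphism $\mathcal H_j\to \mathcal G_j$ for some $j\geq i$. Set $\mathcal F_j:=\coker(\mathcal H_j\to \mathcal G_j)\in \Cons(Y_j,\Lambda)$ (using Remark~\ref{rem:constrabelian}). Since pullback is right-exact, the pullback of $\mathcal F_j$ to $Y$ is canonically isomorphic to $\mathcal F$.

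The main obstacle will be verifying that in the limit along the cofiltered system one can simultaneously descend both the ``combinatorial'' data (the open immersions and finite \'etale covers encoding the generators $j_!\Lambda$) and the ``linear algebra'' data (the morphisms between the generators). The first descends by Proposition~\ref{prop:etmaptolimdiamond}, which requires the transition maps to be qcqs and the $Y_i$ to be spatial; the second is exactly the full faithfulness step, which itself reduces to the commutation of $H^0$ with cofiltered limits of spatial diamonds in Proposition~\ref{prop:etcohomlim}. Once these two ingredients are in place, all remaining verifications are formal manipulations with right-exactness of pullback and the abelian structure on $\Cons$.
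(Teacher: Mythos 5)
Your proof is correct. The essential-surjectivity step is substantially the same as the paper's, so let me focus the comparison on full faithfulness, where you genuinely diverge. The paper's argument is short: it writes
\[
\Hom_Y(\mathcal F,\mathcal G)=\Hom_{Y_{i_0}}(\mathcal F_{i_0},f_{i_0\ast}\mathcal G)=\Hom_{Y_{i_0}}(\mathcal F_{i_0},\textstyle\varinjlim_i f_{i,i_0\ast}\mathcal G_i)=\varinjlim_i\Hom_{Y_i}(\mathcal F_i,\mathcal G_i)\ ,
\]
where the middle identification invokes ``the relative version of Proposition~\ref{prop:etcohomlim}'' (commutation of $f_{i,i_0\ast}$ with the cofiltered limit, applied to the fixed sheaf $\mathcal G_{i_0}$) and the last step uses compactness of $\mathcal F_{i_0}$ to pull $\Hom$ past the filtered colimit. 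You instead resolve $\mathcal F_{i_0}$ by a two-term presentation with finite direct sums of generators $j_!\Lambda$, push the problem through by left-exactness of $\Hom(-,\mathcal G)$ and exactness of filtered colimits, and then handle the single generator by the adjunction $\Hom(j_!\Lambda,\mathcal G)=\mathcal G(U)$ together with the literal $H^0$ statement of Proposition~\ref{prop:etcohomlim} on the cofiltered system $U_j$ with limit $U$. What your route buys is that it avoids appealing to an unstated relative form of Proposition~\ref{prop:etcohomlim} and to compactness directly, replacing those with a reduction that only uses the stated global proposition; what it costs is the overhead of the presentation. Both are fine.

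One small point you should make explicit: the existence of a two-term presentation $\mathcal H_{i_0}\to\mathcal G'_{i_0}\to\mathcal F_{i_0}\to 0$ with $\mathcal G'_{i_0},\mathcal H_{i_0}$ \emph{finite} direct sums of $j_!\Lambda$'s is not literally the final sentence of Proposition~\ref{prop:charconstructible}; rather, it is extracted by combining the generation statement there (any \'etale sheaf is a quotient of a — possibly infinite — direct sum of $j_!\Lambda$'s) with compactness of $\mathcal F_{i_0}$: filtering the infinite direct sum by its finite subsums and using that the identity of $\mathcal F_{i_0}$ factors through the image of some finite subsum shows the surjection already comes from a finite subsum, then one repeats this for the (again constructible) kernel. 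Since this is exactly the argument hidden inside Lemma~\ref{lem:constructiblecompact}, your citation is recoverable, but spelling it out would be cleaner. Also, the building blocks involved are composites of the form quasicompact open $\hookrightarrow$ finite \'etale $\hookrightarrow$ quasicompact open (per Lemma~\ref{lem:spatialetalelocallysep}), not just open followed by finite \'etale; this does not affect anything because you descend them via Proposition~\ref{prop:etmaptolimdiamond}~(iii) for quasicompact separated \'etale maps anyway, but the phrasing should match.
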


\begin{proof} First, we check fully faithfulness. For this, let $\mathcal F_{i_0}, \mathcal G_{i_0}\in \Cons(Y_{i_0},\Lambda)$ for some $i_0$ with pullbacks $\mathcal F_i,\mathcal G_i\in \Cons(Y_i,\Lambda)$ and $\mathcal F,\mathcal G\in \Cons(Y,\Lambda)$. Let $f_j: Y\to Y_j$ and $f_{ij}: Y_i\to Y_j$ be the natural maps. Then
\[\begin{aligned}
\Hom_Y(\mathcal F,\mathcal G)&=\Hom_{Y_{i_0}}(\mathcal F_{i_0},f_{i_0\ast} \mathcal G)\\
&=\Hom_{Y_{i_0}}(\mathcal F_{i_0},\varinjlim_i f_{i,i_0\ast} \mathcal G_i)\\
&=\varinjlim_i \Hom_{Y_{i_0}}(\mathcal F_{i_0}, f_{i,i_0\ast} \mathcal G_i)\\
&=\varinjlim_i \Hom_{Y_i}(\mathcal F_i,\mathcal G_i)\ ,
\end{aligned}\]
using obvious adjunctions, (the relative version of) Proposition~\ref{prop:etcohomlim} to write $f_{i_0\ast} \mathcal G = \varinjlim_i f_{i,i_0\ast} \mathcal G_i$, and compactness of $\mathcal F_{i_0}$.

Now, for essential surjectivity, note that any constructible sheaf $\mathcal F$ on $Y$ is a quotient of a map of finite direct sums of $j_! \Lambda$, where $j: U\to Y$ runs through quasicompact separated \'etale maps, as the proof of Proposition~\ref{prop:charconstructible} shows. Using Proposition~\ref{prop:etmaptolimdiamond} and fully faithfulness, we see that this data is defined over some $Y_i$, as desired.
\end{proof}

Now we can prove a slightly more explicit characterization of constructibility, which will be necessary for the passage to derived categories.

\begin{proposition}\label{prop:charconstructible2} Let $\Lambda$ be a noetherian ring, and let $Y$ be a spatial diamond. Then an \'etale sheaf $\mathcal F$ of $\Lambda$-modules on $Y$ is constructible if and only if $\mathcal F$ has a filtration whose graded pieces $\mathcal F_i$ are of the form $j_! (\mathcal{L}|_Z)$, where $j: U\to Y$ is a quasicompact separated \'etale map, $Z\subset U$ is a constructible closed subset, and $\mathcal L$ is a sheaf of $\Lambda$-modules on $U$ that is locally on $U_\et$ isomorphic to the constant sheaf associated with some finitely generated $\Lambda$-module.
\end{proposition}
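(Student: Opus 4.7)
The ``if'' direction is straightforward: any sheaf of the form $j_!(\mathcal L|_Z)$ is constructible, since after pulling $\mathcal L$ back along an \'etale cover $U'\to U$ trivializing it (so $\mathcal L|_{U'}\cong \underline{M}_{U'}$), one obtains the extension by zero of a constant sheaf from a constructible closed subset, which is visibly constructible; the class of constructible sheaves is closed under $j_!$ for quasicompact separated \'etale $j$, under extensions by Remark~\ref{rem:constrabelian}, and under v-descent by Proposition~\ref{prop:constrdescends}. For the ``only if'' direction, the plan is to induct on the minimum number $n$ of nonzero strata in a stratification $|Y|=\bigsqcup_{i=0}^{n-1} S_i$ of the form provided by Proposition~\ref{prop:charconstructible}~(iii): each $S_i$ is locally closed constructible, every $M_i$ is a fixed finitely generated $\Lambda$-module with $M_i\neq 0$, and $f^\ast \mathcal F|_{f^{-1}(S_i)}\cong \underline{M_i}$ for every strictly totally disconnected $f:X\to Y$.

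The base case $n=1$ reduces to the following statement: if $\mathcal G$ is a constructible sheaf on a spatial diamond $V$ such that $f^\ast \mathcal G\cong \underline{M}_X$ for every strictly totally disconnected $f:X\to V$, then $\mathcal G$ is locally on $V_\et$ isomorphic to $\underline{M}$. I would apply Proposition~\ref{prop:spatialunivopen} to obtain a surjective universally open quasi-pro-\'etale cover $\pi:\tilde V\to V$ with $\tilde V$ strictly totally disconnected, written as $\tilde V=\varprojlim_i V_i$ where each $V_i\to V$ is a surjective composite of quasicompact open immersions and finite \'etale maps; each $V_i$ is a spatial diamond by Lemma~\ref{lem:finetoverspatial} and Proposition~\ref{prop:spatialfirstprop}, and each $V_i\to V$ is a separated \'etale cover. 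By hypothesis, $\pi^\ast\mathcal G\cong \underline{M}_{\tilde V}$ in $\Cons(\tilde V,\Lambda)$. Proposition~\ref{prop:constructiblelimit} then provides a morphism $\varphi_i:\mathcal G|_{V_i}\to \underline{M}_{V_i}$ at some finite stage whose pullback to $\tilde V$ is this isomorphism. The kernel and cokernel of $\varphi_i$ are constructible on $V_i$ and vanish after pullback to $\tilde V$; applying fully faithfulness in Proposition~\ref{prop:constructiblelimit} to the identity endomorphisms of these kernels and cokernels shows they must vanish at some $j\geq i$, so $\varphi_j$ is an isomorphism and $V_j\to V$ is the desired trivializing \'etale cover.

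For the inductive step with $n\geq 2$, I would choose an open stratum $S_0\subset |Y|$ (which exists since a maximal element in the specialization partial order on strata is open in $|Y|$), and let $j:V\hookrightarrow Y$ be the corresponding quasicompact open immersion (Proposition~\ref{prop:topspaceopensubsets}). Then $j^\ast \mathcal F$ is constructible on $V$ with a single nonzero stratum of value $M_0$, hence \'etale-locally isomorphic to $\underline{M_0}$ by the base case. The short exact sequence
\[
0\to j_!(j^\ast\mathcal F)\to \mathcal F\to \mathcal F'\to 0
\]
exhibits the first graded piece as $j_!(\mathcal L|_Z)$ with $U=V$, $Z=V$, and $\mathcal L=j^\ast\mathcal F$. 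The quotient $\mathcal F'$ is constructible by Remark~\ref{rem:constrabelian}, vanishes on $V$, and admits the same stratification $(S_0,S_1,\ldots,S_{n-1})$ with value zero on $S_0$, hence has at most $n-1$ nonzero strata; the inductive hypothesis supplies the remaining filtration, which concatenates with the above sequence to give the filtration of $\mathcal F$.

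The principal obstacle is the base case, and specifically the two-step descent in Proposition~\ref{prop:constructiblelimit}: first to realize the trivialization at some $V_i$, and second to promote it to an actual isomorphism at some later $V_j$ by forcing constructible sheaves with vanishing pullback to $\tilde V$ to vanish at finite level via the fully faithfulness trick applied to the identity endomorphism. The other ingredients (the if direction, the short exact sequence, the existence of an open stratum, and the spatiality of each $V_i$) are direct consequences of results cited above.
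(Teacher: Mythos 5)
Your ``if'' direction and your base-case lemma (a constructible sheaf which pulls back to a constant sheaf on every strictly totally disconnected cover is \'etale-locally constant, proved via Propositions~\ref{prop:spatialunivopen} and~\ref{prop:constructiblelimit} with the identity-endomorphism trick for forcing vanishing at a finite stage) are both fine. The gap is in the inductive step: the filtration it produces only ever has graded pieces of the form $j_!(j^\ast\mathcal F)$ with $j:V\hookrightarrow Y$ a quasicompact \emph{open} immersion, i.e.\ the degenerate case $Z=U$ of the form $j_!(\mathcal L|_Z)$. This is genuinely insufficient.

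Concretely, take $Y=\Spa(C,C^+)$ with $C$ algebraically closed and $C^+\subsetneq\OO_C$ an open and bounded valuation subring such that $|Y|$ is a two-point chain with generic point $\eta$ and closed point $s$, and let $\mathcal F$ be the extension by zero of a nonzero finitely generated $\Lambda$-module $M$ placed at $s$. This $\mathcal F$ is constructible, with a zero stratum $\{\eta\}$ and a nonzero stratum $\{s\}$. No filtration of $\mathcal F$ with graded pieces of the form $j_!\mathcal L$ ($j$ a quasicompact open immersion, $\mathcal L$ a local system on its source) can exist: passing to $\eta$-stalks is exact, $\mathcal F_\eta=0$, and any nonzero $j_!\mathcal L$ of that form has nonzero $\eta$-stalk, since $\eta$ lies in every nonempty open subset of $|Y|$, such an open is connected, and a nonzero local system has nonzero stalks. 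So every graded piece would vanish, a contradiction. By contrast, $\mathcal F=j_!(\mathcal L|_Z)$ with $j=\mathrm{id}_Y$, $\mathcal L=\underline M$ constant, and $Z=\{s\}\subsetneq U=Y$ — a proper closed subset is unavoidable.

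Relatedly, your set-up posits a stratification $|Y|=\bigsqcup_{i=0}^{n-1}S_i$ in which every $M_i$ is nonzero; such a stratification does not exist whenever $\mathcal F$ vanishes on a nonempty constructible locally closed subset (as in the example). Once zero strata are allowed, the chosen open stratum $S_0$ may carry $M_0=0$, in which case $j^\ast\mathcal F=0$, $\mathcal F'=\mathcal F$, and the induction stalls. That is precisely the situation in which a graded piece concentrated on a properly locally closed (non-open) stratum arises, so a proper $Z\subsetneq U$ cannot be avoided.

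The paper handles this head-on. It filters $\mathcal F$ by the extensions by zero $j_{U_i!}\mathcal F|_{U_i}$ along an increasing chain of quasicompact opens $U_i$ refining the stratification (so that the graded pieces are each concentrated on a single locally closed constructible stratum $S$, possibly not open), shows that the claim for such a graded piece can be checked \'etale-locally via a gluing argument, reduces via Proposition~\ref{prop:constructiblelimit} to the strictly local case $Y_y\cong\Spa(C,C^+)/\underline{G}$, and there constructs the quasicompact separated \'etale $j:U\to Y$ as a quotient by an open subgroup $H\subset G_\eta$ trivializing the monodromy, the local system $\mathcal L$ from the extended $H$-module, and the constructible closed subset $Z\subset U$ from the stratum $S$. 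Both the nontrivial \'etale cover $j$ and the proper closed $Z$ are essential and are precisely what your inductive step fails to produce.
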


Here, $\mathcal L|_Z$ is the cokernel of the injective map $j^\prime_! \mathcal L|_{U\setminus Z}\to \mathcal L$, where $j^\prime: U\setminus Z\to U$ is the open embedding.

\begin{proof} It is clear that any sheaf admitting such a filtration is constructible. Conversely, we can find a stratification $S_1,\ldots,S_m$ of $Y$ as in Proposition~\ref{prop:charconstructible}~(iii), with the extra property that for all $i=1,\ldots,m$, the union $U_i=S_1\sqcup\ldots\sqcup S_i$ is (quasicompact and) open. One can filter $\mathcal F$ by the extensions by $0$ of $\mathcal F|_{U_i}$; passing to associated gradeds, we can assume that there is some constructible locally closed subset $S\subset |Y|$ such that the stalks of $\mathcal F$ at all points outside of $S$ vanish, and $\mathcal F|_S$ satisfies the final condition of Proposition~\ref{prop:charconstructible}~(iii): For all strictly totally disconnected perfectoid spaces $f: X\to Y$, the restriction $f^\ast\mathcal F|_{f^{-1}(S)}$ is constant.

Moreover, the claim can be checked locally: Indeed, assume it is true for the restrictions $\mathcal F|_{V_i}$ for an open covering of $\{V_i\}$ of $Y$. By induction, we may assume that there are only two subsets $V_1,V_2\subset Y$; let $j_i: V_i\to Y$ be the open subsets for $i=1,2$, as well as $j_{12}: V_{12} = V_1\cap V_2\subset Y$. Then $\mathcal F$ has a subsheaf $j_{1!} \mathcal F|_{V_1}$ of the desired form, and the quotient $ \mathcal F/j_{1!} \mathcal F|_{V_1}$ can be identified with $j_{2!}(\mathcal F|_{V_2\setminus V_{12}})$, which is again of the desired form.

Now we claim that locally, $\mathcal F$ is indeed of the form $j_!(\mathcal L|_Z)$ for a quasicompact separated \'etale map $j: U\to Y$, some $\mathcal L$ on $U$ as in the statement, and a constructible closed subset $Z\subset U$. This can be checked after pullback to the localization $Y_y$ at varying points $y\in |Y|$, by Proposition~\ref{prop:constructiblelimit}.

Thus, we can assume that $Y$ is local, so let $\Spa(C,C^+)\to Y$ be a quasi-pro-\'etale surjection, where $C$ is algebraically closed and $C^+\subset C$ an open and bounded valuation subring. Then $|Y|=|\Spa(C,C^+)|$ is a totally ordered chain of points with a unique closed point $s\in |Y|$. Moreover, by the first paragraph, there is some constructible locally closed subset $S\subset |Y|$ such that $\mathcal F|_{\Spa(C,C^+)\setminus S}=0$ and $\mathcal F|_{S\subset \Spa(C,C^+)}$ is constant, with value some finitely generated $\Lambda$-module $M$. We can assume that $s\in S$, so that $S$ is actually closed. Let $\eta_S\in S$ be the generic point of $S$, and let $G_S=G_{\eta_S}$ be the profinite group given as the fibre of $|R|\to |Y|$, $R=\Spa(C,C^+)\times_Y \Spa(C,C^+)$, over $\eta_S$ (where the group structure comes from the equivalence relation structure). Then $\mathcal F$ is given by a continuous action of $G_S$ on $M$. Similarly, let $\eta\in |Y|$ be the generic point, and $G_\eta$ the profinite group that is the fibre of $|\Spa(C,C^+)\times_Y \Spa(C,C^+)|\to |Y|$ over $\eta$ (so that the open subspace $Y^\circ$ of $Y$ with underlying space $\{\eta\}$ is given by $\Spa(C,\OO_C)/\underline{G_\eta}$). There is natural closed immersion $G_S\hookrightarrow G_\eta$ of groups, given by generalization. One can find an open subgroup $H\subset G_\eta$ containing $G_S$ such that the action of $G_S$ on $M$ extends to $H$. Let $R_H\subset R$ be the open and closed subspace given as the closure of $H\subset G_\eta = R\times_{Y} \{\eta\}$. Then $U = \Spa(C,C^+)/\underline{H}$ is separated and \'etale over $Y$, and the continuous $H$-module $M$ defines a local system $\mathcal{L}$ on $U$, which is easily checked to have the right property.
\end{proof}

We will also need a compactness result in the derived category. We start with an easy result.

\begin{proposition}\label{prop:constructiblecompactderivedplus} Let $Y$ be a spatial diamond, and assume that $\mathcal F$ is a constructible \'etale sheaf of $\Fl$-vector spaces on $Y$. Then for all filtered direct systems $C_j\in \mathcal D^{\geq -n}_\et(Y,\Fl)$, $j\in J$, of complexes uniformly bounded to the left, the natural map
\[
\varinjlim_j \Hom_{D_\et(Y,\Fl)}(\mathcal F[0],C_j)\to \Hom_{D_\et(Y,\Fl)}(\mathcal F[0],\varinjlim_j C_j)
\]
is an isomorphism.
\end{proposition}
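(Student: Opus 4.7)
My plan is to reduce, via the structural description of constructible sheaves in Proposition~\ref{prop:charconstructible2}, to the cohomological statement that for a spatial diamond $W$ the functor $H^i(W,-)$ commutes with arbitrary direct sums on $D^{\geq -n}(W_\et,\Fl)$; this will then be deduced from coherent-topos theory via truncation. It is convenient to prove the slightly stronger statement that $\Hom^i_D(\mathcal F,-)$ commutes with direct sums on $D^{\geq -n}(Y_\et,\Fl)$ for every $i\in\mathbb Z$ and every $n\geq 0$; the class of such $\mathcal F$ is closed under extensions, by the $5$-lemma applied to the long exact $\Hom^\bullet$-sequence of a triangle in the first variable.

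By Proposition~\ref{prop:charconstructible2}, $\mathcal F$ admits a finite filtration whose graded pieces have the form $j_!(\mathcal L|_Z)$, with $j:U\to Y$ quasicompact separated \'etale, $Z\subset U$ constructible closed, and $\mathcal L$ locally constant of finite $\Fl$-rank on $U$. The defining triangle $j'_!(\mathcal L|_{U\setminus Z})\to \mathcal L\to \mathcal L|_Z\to$ in $D(U_\et,\Fl)$ pushes forward under the exact $j_!$ to a triangle in $D(Y_\et,\Fl)$; here $U\setminus Z$ is a spatial sub-diamond, because a constructible open subset of a spectral space is automatically quasicompact. Combining this with the extension closedness, we reduce to $\mathcal F=k_!\mathcal L'$ with $k:W\to Y$ quasicompact separated \'etale and $\mathcal L'$ locally constant of finite $\Fl$-rank on a spatial diamond $W$. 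The adjunction $k_!\dashv k^\ast$, together with the fact that $k^\ast$ preserves $D^{\geq -n}$ and commutes with direct sums, reduces further to the analogous statement for $\mathcal L'$ on $W$. Since $\mathcal L'$ is \'etale-locally isomorphic to $\Fl^r$ and $\Fl$ is a field, $R\sHom_\Fl(\mathcal L',C)=(\mathcal L')^\vee\otimes_\Fl C$, so $\Hom^i_D(\mathcal L',C)=H^i(W,(\mathcal L')^\vee\otimes_\Fl C)$; as tensoring with the flat sheaf $(\mathcal L')^\vee$ commutes with direct sums and preserves $D^{\geq -n}$, the problem reduces to the cohomological assertion about $H^i(W,-)$.

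For that assertion, fix $i$ and $C\in D^{\geq -n}$. The truncation triangle $\tau^{\leq i}C\to C\to \tau^{\geq i+1}C\to$ yields $H^i(W,\tau^{\leq i}C)\cong H^i(W,C)$, as $R\Gamma(W,-)$ is left $t$-exact and so $H^j(W,\tau^{\geq i+1}C)=0$ for $j\leq i$; truncation commutes with direct sums, so we may replace $C$ by $\tau^{\leq i}C$ and assume $C=D\in D^{[-n,i]}$. On uniformly bounded complexes one inducts on cohomological length using the triangle $\tau^{\leq b-1}D\to D\to \mathcal H^b(D)[-b]\to$ (with $b=i$, so that $\tau^{\leq b-1}D\in D^{[-n,i-1]}$ and $\mathcal H^b(D)[-b]\in D^{[i,i]}$) and the $5$-lemma, reducing to the statement that $H^p(W,-)$ commutes with direct sums of sheaves of $\Fl$-modules. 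This follows from SGA~4~VI.5.1: the \'etale site of the spatial (hence qcqs) diamond $W$ is coherent, so \'etale cohomology of coherent objects commutes with filtered colimits of abelian sheaves, and arbitrary direct sums are filtered colimits of their finite subsums.

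The main technical point is the structural reduction rather than the final cohomological step: one must track quasicompactness throughout, relying on the fact that the complement of a constructible closed subset of a spectral space is a quasicompact (constructible) open, which keeps each intermediate $U\setminus Z$ spatial and the associated composite \'etale map to $Y$ quasicompact separated. Once the reduction lands on the cohomology of a spatial diamond with bounded-below coefficients, the truncation-plus-coherent-topos argument is routine.
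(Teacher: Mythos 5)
Your proof is correct, but it takes a genuinely different route from the paper. The paper reduces by descent along a simplicial quasi-pro-\'etale cover of $Y$ by strictly totally disconnected spaces (using the boundedness hypothesis to make the descent computation converge), then decomposes $\mathcal F$ into finitely many triangles with graded pieces $j_!\Fl$ for quasicompact open immersions $j: U\hookrightarrow X$, and finishes by coherence of $U_\et$. You instead stay on the spatial diamond $Y$ throughout: you invoke the structural filtration of Proposition~\ref{prop:charconstructible2}, peel off the constructible closed subset via the defining triangle $j'_!(\mathcal L|_{U\setminus Z})\to\mathcal L\to\mathcal L|_Z$ (tracking that $U\setminus Z$ stays quasicompact because it is a constructible open in a spectral space), use the adjunction $k_!\dashv k^*$ to pass to a local system $\mathcal L'$ on the spatial diamond $W$, and then absorb $\mathcal L'$ via $R\sHom_{\Fl}(\mathcal L',-)=(\mathcal L')^\vee\otimes_{\Fl}-$, which commutes with sums and preserves $D^{\geq -n}$ since $(\mathcal L')^\vee$ is locally free. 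This avoids descent entirely, at the cost of relying on the nontrivial Proposition~\ref{prop:charconstructible2} and of spelling out the truncation-plus-length-induction that the paper compresses into ``uniformly bounded to the left.'' Both arguments bottom out in the same coherence statement (SGA~4~VI.5) for the qcqs \'etale site; your treatment is more explicit and self-contained, the paper's is shorter because local systems become constant after the descent to a strictly totally disconnected base.
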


\begin{proof} By descent (and using boundedness), this can be reduced to the case that $Y=X$ is strictly totally disconnected. Decomposing $\mathcal F$ into finitely many triangles, we can assume that $\mathcal F = j_! \Fl$ for some quasicompact open immersion $j: U\hookrightarrow X$. Then the statement becomes
\[
\varinjlim_{j\in J} R\Gamma(U_\et,C_j)\buildrel\simeq\over\to R\Gamma(U_\et,\varinjlim_{j\in J} C_j)\ ,
\]
which holds true as $U_\et$ is coherent and the $C_j$ are uniformly bounded to the left.
\end{proof}

One gets a stronger result if one assumes that $Y$ is locally of finite $\ell$-cohomological dimension.

\begin{proposition}\label{prop:constructiblecompactderivedfull} Let $Y$ be a spatial diamond, and assume that there is an integer $N$ such that for all $\ell$-torsion sheaves $\mathcal F$ on $Y_\et$, one has $H^i(Y_\et,\mathcal F)=0$ for $i>N$.

Then $D(Y_\et,\Fl)$ is left-complete (thus $D(Y_\et,\Fl)=D_\et(Y,\Fl)$), compactly generated, and a complex $C\in D_\et(Y,\Fl)$ is compact if and only if it is bounded and all cohomology sheaves are constructible.
\end{proposition}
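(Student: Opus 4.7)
The plan is to establish left-completeness, compact generation, and the characterization of compact objects in turn; the uniform bound $N$ on $\ell$-cohomological dimension will feature crucially in each step, most subtly in upgrading the compactness of constructible sheaves from the $D^+$ statement of Proposition~\ref{prop:constructiblecompactderivedplus} to the full unbounded derived category.

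For left-completeness, I would test the natural map $C \to R\varprojlim_n \tau^{\geq -n} C$ on sections over quasicompact separated \'etale maps $j: U \to Y$, which form a basis for $Y_\et$ by Lemma~\ref{lem:spatialetalelocallysep}. For each fixed degree $i$, the map $H^i(U, \tau^{\geq -n} C) \to H^i(U, C)$ becomes an isomorphism once $n$ is large enough, since $\tau^{<-n} C$ has cohomology sheaves in degrees $<-n$ and the bound on $\ell$-cohomological dimension forces $H^i(U, \tau^{<-n} C) = 0$ for $i \leq -n + N - 1$, hence $H^i(U,\tau^{\geq -n}C) = H^i(U,C)$ for $n > N - i - 1$. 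The tower $(H^i(U, \tau^{\geq -n} C))_n$ is thus eventually constant in each degree, so the $R^1\varprojlim$-term vanishes and the Milnor sequence yields the equivalence.

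For compact generation, the candidate generators are $\{j_! \Fl : j: U \to Y \text{ quasicompact separated \'etale}\}$. Generation is routine: any nonzero $C$ has a nonzero cohomology sheaf $\mathcal H^i(C)$, which by Proposition~\ref{prop:etalesitepoints} admits a nonzero section over some such $U$, producing a nonzero map $j_! \Fl \to C[i]$. Compactness of $j_! \Fl$ reduces to the key technical claim that every constructible sheaf $\mathcal F$ on $Y$ is compact in the full $D(Y_\et, \Fl)$. I would first establish $\Ext^p(\mathcal F, -) = 0$ for $p > N$: by Proposition~\ref{prop:charconstructible2}, finite filtrations and triangles reduce to the case $\mathcal F = j_!\mathcal L$ with $\mathcal L$ locally constant of finite fibers on a quasicompact separated \'etale $U \to Y$, where local triviality yields $\sExt^q(\mathcal L,-) = 0$ for $q > 0$ and hence $\Ext^p(\mathcal L, -) = H^p(U, \sHom(\mathcal L, -)) = 0$ for $p > N$. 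The hypercohomology spectral sequence $E_2^{p,q} = \Ext^p(\mathcal F, \mathcal H^q(-)) \Rightarrow \Hom_D(\mathcal F, -[p+q])$ then has only finitely many nonzero rows. Each entry commutes with arbitrary direct sums, since a direct sum is a filtered colimit of finite sums, $H^p(U,-)$ commutes with filtered colimits on $Y_\et$, and $\mathcal F$ is compact in the abelian category by Proposition~\ref{prop:charconstructible}; consequently the abutment commutes with direct sums, so $\mathcal F$ is compact.

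Given compact generation and compactness of the $j_! \Fl$, the general theory of compactly generated triangulated categories (Neeman) identifies the compact objects of $D(Y_\et, \Fl)$ with the thick subcategory $\mathcal C$ generated by $\{j_! \Fl\}$. Each generator lies in the heart and is constructible, and bounded complexes with constructible cohomology form a thick subcategory (closed under shifts, cones, and retracts, with constructibility preserved by Remark~\ref{rem:constrabelian}), so $\mathcal C$ is contained in the bounded complexes with constructible cohomology. The reverse inclusion follows by d\'evissage through the canonical truncation triangles $\tau^{<n} C \to \tau^{\leq n} C \to \mathcal H^n(C)[-n]$, which reduces to compactness of each constructible $\mathcal H^n(C)$, established in the previous paragraph. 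The principal obstacle throughout is precisely this compactness in unbounded $D$: without the uniform cohomological dimension bound, $\Ext^\bullet(\mathcal F, -)$ has unbounded amplitude, the hypercohomology spectral sequence fails to commute with arbitrary direct sums, and the argument collapses.
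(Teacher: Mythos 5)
Your proof is correct and follows essentially the same route as the paper: the same reduction to $j_!\mathcal L$ via Proposition~\ref{prop:charconstructible2}, the same compact generators $\{j_!\Fl\}$, and the same ``abstract nonsense'' identification of compact objects as bounded complexes with constructible cohomology. The only real difference is expository --- where the paper deduces that $R\Gamma(U_\et,-)$ commutes with direct sums by combining left-completeness, the cohomological dimension bound, and Proposition~\ref{prop:constructiblecompactderivedplus}, you repackage the same three ingredients as a hypercohomology spectral sequence with finitely many nonzero columns, each commuting with direct sums.
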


\begin{proof} We observe first that the hypothesis on cohomological dimension passes (with the same $N$) to any quasicompact separated \'etale $U\to Y$, by Remark~\ref{rem:qcsepqproetpushforward}. Now left-completeness of $D(Y_\et,\Fl)$ follows from \cite[Tag 0719]{StacksProject}. This implies $D(Y_\et,\Fl)=D_\et(Y,\Fl)$ by Proposition~\ref{prop:locallyspatialleftcompletion}.

First, we check that a complex is compact if it is bounded with all cohomology sheaves constructible. This reduces immediately to the case that $C=\mathcal F[0]$ for some constructible sheaf $\mathcal F$. Using Proposition~\ref{prop:charconstructible2}, this reduces further to the case $\mathcal F=j_!\mathcal L$ for some quasicompact separated \'etale map $j: U\to Y$ and $\Fl$-local system $\mathcal L$ on $U$. But then
\[
\Hom_{D(Y_\et,\Fl)}(j_!\mathcal L,-) = \Hom_{D(U_\et,\Fl)}(\mathcal L,-)=R\Gamma(U_\et,\mathcal L^\vee\otimes_\Fl -)\ ,
\]
so we have to prove that $R\Gamma(U_\et,-)$ commutes with direct sums in $D(U_\et,\Fl)$. This follows easily from left-completeness of $D_\et(U,\Fl)$, the assumption that $U_\et$ has finite $\ell$-cohomological dimension, and Proposition~\ref{prop:constructiblecompactderivedplus}.

As the derived category is generated by $j_! \Fl$ for varying quasicompact separated \'etale maps $j: U\to Y$, this shows that $D(Y_\et,\Fl)$ is compactly generated. By abstract nonsense, any compact object is a direct summand of a finite complex whose terms are finite direct sums of sheaves of the form $j_! \Fl$ for varying quasicompact separated \'etale maps $j: U\to Y$. All of those are bounded with constructible cohomology sheaves, as desired.
\end{proof}

It will be convenient to have generalizations of some of the previous results on derived categories to the case that $\Lambda\neq \Fl$. In that case, the good notion of constructibility in the derived category is less directly related to the notion of constructibility for abelian sheaves. To highlight the difference, we call these objects perfect-constructible.

\begin{definition}\label{def:perfectconstructible} Let $\Lambda$ be any ring.
\begin{altenumerate}
\item Let $X$ be a strictly totally disconnected perfectoid space. A complex $A\in D_\et(X,\Lambda)\cong D(|X|,\Lambda)$ of $\Lambda$-modules on $X_\et$ (equivalently, on $|X|$) is perfect-constructible if there is stratification of $X$ into constructible locally closed subsets $S_i\subset |X|$ such that $A|_{S_i}$ is the constant sheaf on $S_i$ associated with some perfect complex of $\Lambda$-modules.
\item Let $Y$ be a small v-stack, and $A\in D_\et(Y,\Lambda)$. Then $A$ is perfect-constructible if for every strictly totally disconnected space $f: X\to Y$, the pullback $f^\ast A\in D_\et(X,\Lambda)$ is perfect-constructible.
\end{altenumerate}
\end{definition}

It is clear that the category of perfect-constructible complexes forms a thick triangulated subcategory $D_{\et,pc}(Y,\Lambda)\subset D_\et(Y,\Lambda)$. The following proposition clarifies the relation to the notion of constructible sheaves.

\begin{proposition}\label{prop:perfectconstrvsconstr} Assume that $\Lambda$ is noetherian and let $Y$ be a small v-stack and $A\in D_\et(Y,\Lambda)$. The complex $A$ is perfect-constructible if and only if it is locally bounded, each cohomology sheaf $\mathcal H^i(A)$ is a constructible sheaf of $\Lambda$-modules, and all geometric stalks of $A$ are perfect complexes of $\Lambda$-modules.
\end{proposition}

\begin{proof} We can assume that $Y=X$ is a strictly totally disconnected perfectoid space. It is clear that if $A$ is perfect-constructible, then it satisfies the stated properties, so we have to prove the converse. Passing to a stratification of $|X|$, it is enough to prove that if $S$ is a totally disconnected spectral space and $B\in D(S,\Lambda)$ is a bounded complex such that each $\mathcal H^i(B)$ is the constant sheaf associated with some finitely generated $\Lambda$-module, then $B$ is locally the constant sheaf associated with some complex of $\Lambda$-modules. Indeed, this implies that if the stalks of $B$ are perfect, then these constant sheaves are associated with perfect complexes of $\Lambda$-modules.

We argue by induction on the length of $B$, so assume $B\in D^{[a,b]}(S,\Lambda)$. If $a=b$, there is nothing to prove. In general, consider the triangle $\tau^{<b} B\to B\to H^b(B)[-b]$. After localization, $\tau^{<b} B$ is the constant sheaf associated with some complex of $\Lambda$-modules $C^{<b}$, and the extension $B$ is given by a class in
\[
\Hom_{D(S,\Lambda)}(H^b(B)[-b],C^{<b}[1])\ .
\]
As $S$ is a totally disconnected spectral space, one shows that
\[
R\Hom_{D(S,\Lambda)}(M,C) = \mathrm{Cont}(S,R\Hom_\Lambda(M,C))
\]
for any bounded complex of $\Lambda$-modules $C$ and any finitely generated $\Lambda$-module $M$, by choosing a finite free resolution of $M$ to reduce to the assertion
\[
H^i(S,C) = \mathrm{Cont}(S,H^i(C))\ .
\]
Thus, after passing to an open and closed cover of $S$, the extension $B$ is constant, as desired.
\end{proof}

Again, being perfect-constructible can be checked v-locally.

\begin{proposition}\label{prop:perfectconstrdescends} Let $\Lambda$ be a ring, let $f: \tilde{Y}\to Y$ be a surjective map of small v-stacks, and let $A\in D_\et(Y,\Lambda)$. If $f^\ast A\in D_\et(\tilde{Y},\Lambda)$ is perfect-constructible, then $A$ is perfect-constructible.
\end{proposition}

\begin{proof} We may assume that both $Y=X$ and $\tilde{Y}=\tilde{X}$ are strictly totally disconnected perfectoid spaces. In that case $D_\et(Y,\Lambda)=D(|X|,\Lambda)$ and $D_\et(\tilde{Y},\Lambda)=D(|\tilde{Y}|,\Lambda)$. Replacing $\tilde{Y}$ by $Y\times_{|Y|} |\tilde{Y}|$, we may then assume that $\tilde{Y}\to Y$ is affinoid pro-\'etale. Write $\tilde{Y}$ as a cofiltered limit of affinoid \'etale maps $\tilde{Y}_j\to Y$. Then the stratification witnessing the perfect-constructibility of $f^\ast A$ is defined on some $\tilde{Y}_j$. As $\tilde{Y}_j\to Y$ admits a splitting, it is enough to show that $A|_{\tilde{Y}_j}$ is perfect-constructible, so we can assume that $Y=\tilde{Y}_j$; in other words, we can assume that there is a stratification of $|Y|$ into constructible locally closed subsets $S_i$ such that $f^\ast A|_{f^{-1}(S_i)}$ is constant with perfect value. Now the map from the perfect complex to $f^\ast A|_{f^{-1}(S_i)}$ is also defined over some $\tilde{Y}_j$, and is already an isomorphism there. Thus, after replacing $Y$ by the split cover $\tilde{Y}_j$, we see that indeed $A$ is perfect-constructible.
\end{proof}

Perfect-constructible complexes on spatial diamonds satisfy a restricted version of compactness.

\begin{proposition}\label{prop:perfectconstructiblecompactderivedplus} Let $Y$ be a spatial diamond, and assume that $A\in D_\et(Y,\Lambda)$ is perfect-constructible. Then for all filtered direct systems $C_j\in \mathcal D^{\geq -n}_\et(Y,\Fl)$, $j\in J$, of complexes uniformly bounded to the left, the natural map
\[
\varinjlim_j \Hom_{D_\et(Y,\Fl)}(A,C_j)\to \Hom_{D_\et(Y,\Fl)}(A,\varinjlim_j C_j)
\]
is an isomorphism.
\end{proposition}

\begin{proof} By descent (and using boundedness), this can be reduced to the case that $Y=X$ is strictly totally disconnected. Decomposing $A$ into finitely many triangles, we can assume that $A = j_! \Lambda$ for some quasicompact open immersion $j: U\hookrightarrow X$. Then the statement becomes
\[
\varinjlim_j R\Gamma(U_\et,C_j)\buildrel\simeq\over\to R\Gamma(U_\et,\varinjlim_j C_j)\ ,
\]
which holds true as $U_\et$ is coherent and the $C_j$ are uniformly bounded to the left.
\end{proof}

The notion behaves well with respect to passage to limits.

\begin{proposition}\label{prop:constructiblelimitderived} Let $Y_i$, $i\in I$, be a cofiltered inverse system of spatial diamonds with inverse limit $Y=\varprojlim_i Y_i$, which is again a spatial diamond. Let $\Lambda$ be a ring. The natural functor
\[
\text{2-}\varinjlim_i D_{\et,pc}(Y_i,\Lambda)\to D_{\et,pc}(Y,\Lambda)
\]
is an equivalence of categories.

Similarly, if $Y$ is any spatial diamond and $\Lambda_i$, $i\in I$, is a filtered direct system of rings with colimit $\Lambda=\varinjlim_i \Lambda_i$, then the natural functor
\[
\text{2-}\varinjlim_i D_{\et,pc}(Y,\Lambda_i)\to D_{\et,pc}(Y,\Lambda)
\]
is an equivalence of categories.
\end{proposition}

\begin{proof} We handle the case of spaces; the case of rings is proved similarly. First, we check fully faithfulness. For this, let $A_{i_0}, B_{i_0}\in D_{\et,pc}(Y_{i_0},\Lambda)$ for some $i_0$ with pullbacks $A_i,B_i\in D_{\et,pc}(Y_i,\Lambda)$ and $A,B\in D_{\et,pc}(Y,\Lambda)$. Let $f_j: Y\to Y_j$ and $f_{ij}: Y_i\to Y_j$ be the natural maps. Then
\[\begin{aligned}
\Hom_Y(A,B)&=\Hom_{Y_{i_0}}(A_{i_0},f_{i_0\ast} B)\\
&=\Hom_{Y_{i_0}}(A_{i_0},\varinjlim_i f_{i,i_0\ast} B_i)\\
&=\varinjlim_i \Hom_{Y_{i_0}}(A_{i_0}, f_{i,i_0\ast} B_i)\\
&=\varinjlim_i \Hom_{Y_i}(A_i,B_i)\ ,
\end{aligned}\]
using obvious adjunctions, (the relative version of) Proposition~\ref{prop:etcohomlim} to write $f_{i_0\ast} B = \varinjlim_i f_{i,i_0\ast} B_i$, and Proposition~\ref{prop:perfectconstructiblecompactderivedplus}.

For essential surjectivity, we use Proposition~\ref{prop:charperfectconstructible} below, whose proof only requires the fully faithfulness part of the current proposition. In the notation of that proposition, this reduces us to the case $A=j_!(\mathcal L|_Z)$, and then in fact further to the case $A=\mathcal L$, so we can assume that $A$ is locally constant with perfect values. Choose a quasicompact separated \'etale map $Y^\prime\to Y$ such that $A|_{Y^\prime}$ is constant, of necessarily finite perfect amplitude. We can assume that $Y^\prime\to Y$ is the pullback of $Y^\prime_i\to Y_i$ for $i$ large enough. As $D_\et(Y,\Lambda)\cong D_{\et,\cart}(Y^\prime_\bullet,\Lambda)$ as in Proposition~\ref{prop:derivedhyperdescent} where $Y^\prime_\bullet$ is the \v{C}ech nerve of $Y^\prime\to Y$, and similarly $D_\et(Y_i,\Lambda)\cong D_{\et,\cart}(Y^\prime_{i,\bullet},\Lambda)$, and moreover the descent for perfect complexes of given finite perfect amplitude only needs a truncation of the \v{C}ech nerve, one gets the result by noting that $A|_{Y^\prime}$ spreads to $Y^\prime_i$ as $A|_{Y^\prime}$ is constant, and the descent datum spreads by the fully faithfulness already proved.
\end{proof}

Moreover, for spatial diamonds, a stratification witnessing constructibility is defined on the spatial diamond itself, and one can obtain an analogue of Proposition~\ref{prop:charconstructible2}.

\begin{proposition}\label{prop:charperfectconstructible} Let $\Lambda$ be a ring, let $Y$ be a spatial diamond and let $A\in D_\et(Y,\Lambda)$. The following conditions are equivalent.
\begin{altenumerate}
\item The complex $A$ is perfect-constructible.
\item There is a stratification of $|Y|$ into constructible locally closed subsets $S_i\subset |Y|$ such that the restriction of $A$ to $S_i$ satisfies the following condition: For any strictly totally disconnected perfectoid space $f: X\to Y$, the pullback $f^\ast A|_{f^{-1}(S_i)}$ is the constant sheaf associated with some perfect complex of $\Lambda$-modules.
\item The complex $A$ has a finite filtration whose graded pieces $A_i$ are of the form $j_! (\mathcal{L}|_Z)$, where $j: U\to Y$ is a quasicompact separated \'etale map, $Z\subset U$ is a constructible closed subset, and $\mathcal L\in D_\et(U,\Lambda)$ is locally constant with perfect values.
\end{altenumerate}
\end{proposition}

\begin{proof} By definition (ii) and (iii) imply (i). Next, we check that (i) implies (ii). For this, take a surjective quasi-pro-\'etale $f: X\to Y$ from a strictly totally disconnected perfectoid space $X$ as in Proposition~\ref{prop:spatialunivopen}, so $X=\varprojlim_i Y_i$ where $Y_i\to Y$ is a surjective quasicompact \'etale map that can be written as a composite of quasicompact open immersions and finite \'etale maps. If $A$ is perfect-constructible, then $f^\ast A$ becomes constant with perfect values over a constructible stratification of $X$. This stratification is pulled back from $|Y_i|$ for some $i$, so assume $U_{0,i}\subset U_{1,i}\subset \ldots\subset U_{n,i}=|Y_i|$ is a filtration by quasicompact open subsets such that $f^\ast A$ becomes constant with perfect values over the preimage of $U_{j,i}\setminus U_{j-1,i}$ for $j=0,\ldots,n$. Let $U_j\subset |Y|$ be the quasicompact open image of $U_{j,i}\subset |Y_i|$. Then $f^\ast A$ becomes constant with perfect values over the preimage of $S_j=U_j\setminus U_{j-1}$ for $j=0,\ldots,n$, as the image of $U_{j,i}\setminus U_{j-1,i}$ contains $U_j\setminus U_{j-1}$. This shows that (i) implies (ii).

Finally, we have to see that (ii) implies (iii). We can assume that there is some constructible locally closed subset $S\subset |Y|$ such that the stalks of $A$ at all points outside of $S$ vanish, and $A|_S$ satisfies that for all strictly totally disconnected perfectoid spaces $f: X\to Y$, the restriction $f^\ast A|_{f^{-1}(S)}$ is constant.

The desired result about $A$ can be checked locally. Indeed, assume it is true for the restrictions $A|_{V_i}$ for a quasicompact open covering of $\{V_i\}$ of $Y$. By induction, we may assume that there are only two subsets $V_1,V_2\subset Y$; let $j_i: V_i\to Y$ be the open subsets for $i=1,2$, as well as $j_{12}: V_{12} = V_1\cap V_2\subset Y$. Then one has a triangle
\[
j_{1!} A|_{V_1}\to A\to j_{2!}(A|_{V_2\setminus V_{12}})\ ,
\]
where both pieces are of the desired form.

Now we claim that locally, $A$ is of the form $j_!(\mathcal L|_Z)$ for a quasicompact separated \'etale map $j: U\to Y$, some $\mathcal L\in D_\et(U,\Lambda)$ that is locally constant with perfect values, and a constructible closed subset $Z\subset U$. This can be checked after pullback to the localization $Y_y$ at varying points $y\in |Y|$, by (the fully faithfulness part of) Proposition~\ref{prop:constructiblelimitderived}.

Thus, we can assume that $Y$ is local, so let $\Spa(C,C^+)\to Y$ be a quasi-pro-\'etale surjection, where $C$ is algebraically closed and $C^+\subset C$ an open and bounded valuation subring. Then $|Y|=|\Spa(C,C^+)|$ is a totally ordered chain of points with a unique closed point $s\in |Y|$. Moreover, by the first paragraph, there is some constructible locally closed subset $S\subset |Y|$ such that $A|_{\Spa(C,C^+)\setminus S}=0$ and $A|_{S\subset \Spa(C,C^+)}$ is constant, with value some perfect complex of $\Lambda$-modules $B$. We can assume that $s\in S$, so that $S$ is actually closed. Let $\eta_S\in S$ be the generic point of $S$, and let $G_S=G_{\eta_S}$ be the profinite group given as the fibre of $|R|\to |Y|$, $R=\Spa(C,C^+)\times_Y \Spa(C,C^+)$, over $\eta_S$ (where the group structure comes from the equivalence relation structure). Similarly, let $\eta\in |Y|$ be the generic point, and $G_\eta$ the profinite group that is the fibre of $|\Spa(C,C^+)\times_Y \Spa(C,C^+)|\to |Y|$ over $\eta$ (so that the open subspace $Y^\circ$ of $Y$ with underlying space $\{\eta\}$ is given by $\Spa(C,\OO_C)/\underline{G_\eta}$). There is natural closed immersion $G_S\hookrightarrow G_\eta$ of groups, given by generalization. For any open subgroup $H\subset G_\eta$ containing $G_S$, let $R_H\subset R$ be the open and closed subspace given as the closure of $H\subset G_\eta = R\times_{Y} \{\eta\}$. Then $U = \Spa(C,C^+)/\underline{H}$ is separated and \'etale over $Y$, and an isomorphism over $S$. Passing to the inverse limit over all such $H$, we may by Proposition~\ref{prop:constructiblelimitderived} assume that $G_S=G_\eta$.

For a profinite group $G$, let $BG$ denote the site of finite $G$-sets. There is a pullback functor from the topos of sheaves on $Y$ that are concentrated on $S$ to the topos of sheaves on $BG_S$ given by the fibre over $\eta_S$, and a pushforward functor from the topos of sheaves on $BG_S=BG_\eta$ to the topos of sheaves on $Y$ induced by the open embedding $\{\eta\}\subset Y$. Applying the composite functor to $A$ produces some $\tilde{A}\in D_\et(Y,\Lambda)$ whose pullback to $\Spa(C,C^+)$ is constant and has restriction $A$ to $S$. Thus, it suffices to see that $\tilde{A}$ is locally constant with perfect values. Over $\Spa(C,C^+)$, it is isomorphic to the constant sheaf associated with some perfect complex $B$; the map from $B$ to this pullback is already defined over some \'etale map to $Y$ by Proposition~\ref{prop:perfectconstructiblecompactderivedplus}, and is an isomorphism there (as can be checked on the v-cover by $\Spa(C,C^+)$).
\end{proof}

Assuming finite cohomological dimension, we get the expected relation to compact objects.

\begin{proposition}\label{prop:perfectconstructiblecompact} Let $\Lambda$ be a ring, and let $Y$ be a spatial diamond of bounded $\Lambda$-cohomological dimension. Then $D(Y_\et,\Lambda)$ is left-complete, so that $D_\et(Y,\Lambda)=D(Y_\et,\Lambda)$. 

A complex $A\in D_\et(Y,\Lambda)$ is compact if and only if it is perfect-constructible, and $D_\et(Y,\Lambda)$ is compactly generated. A set of compact generators is given by $j_!\Lambda$ for $j: U\to X$ ranging over quasicompact separated \'etale maps.
\end{proposition}

\begin{proof} Again, Remark~\ref{rem:qcsepqproetpushforward} shows that the hypothesis on cohomological dimension of $Y$ passes to all quasicompact separated \'etale $U\to Y$; then left-completeness follows from \cite[Tag 0719]{StacksProject}. The objects $j_!\Lambda$ for $j: U\to X$ a quasicompact separated \'etale map are compact generators as $R\Hom(j_!\Lambda,-)=R\Gamma(U,-)$ commutes with arbitrary direct sums (by finite cohomological dimension) and a complex $A\in D_\et(Y,\Lambda)=D(Y_\et,\Lambda)$ vanishes as soon as $R\Gamma(U,A)=0$ for all such $U$. This implies in particular that $D_\et(Y,\Lambda)$ is compactly generated. The given compact generators are perfect-constructible; it follows that any $A\in D_\et(Y,\Lambda)$ can be written as a filtered homotopy colimit of perfect-constructible complexes. If $A$ is compact, it is then a retract of a perfect-constructible complex, and thus perfect-constructible itself.

Conversely, if $A$ is perfect-constructible, then by Proposition~\ref{prop:charperfectconstructible}, it admits a finite filtration with graded pieces given by complexes of the form $j_!(\mathcal L|_Z)$ where $j: U\to X$ is a quasicompact separated \'etale map, $Z\subset U$ is a constructible closed subset, and $\mathcal L\in D_\et(U,\Lambda)$ is locally constant with perfect values. To see that $A$ is compact, we may thus assume that $A=j_!(\mathcal L|_Z)$, and then by passage to a $2$-term resolution that $A=j_!\mathcal L$. But then $R\Hom(A,-)=R\Gamma(U,\mathcal L^\vee\dotimes_\Lambda-)$ and tensoring with the dual $\mathcal L^\vee=R\sHom_\Lambda(\mathcal L,\Lambda)$ of $\mathcal L$ preserves direct sums.
\end{proof}

\section{Dimensions}

To discuss duality, it will be important to impose a ``finite-dimensionality'' assumption on morphisms $f$. For this, we restrict to locally spatial morphisms, and we will put a condition on transcendence degrees of residue fields.

For a morphism $f: X^\prime\to X$ of perfectoid spaces, one has the definition of $\dim f$ as given in \cite[Definition 1.8.4]{Huber}, and of $\dimtr f$ as given in \cite[Definition 1.8.4]{Huber}. Let us briefly recall those.

\begin{definition}\label{def:dim}$ $\begin{altenumerate}
\item Let $X$ be a locally spectral space. The dimension $\dim X\in \mathbb Z_{\geq 0}\cup \{-\infty,\infty\}$ of $X$ is the supremum of all integers $n$ for which there exists a chain $x_0,\ldots,x_n\in X$ of distinct points in $X$ such that $x_i$ is a specialization of $x_{i+1}$ for $i=0,\ldots,n-1$.
\item Let $f: X^\prime\to X$ be a spectral map of locally spectral spaces. Then
\[
\dim f = \sup_{x\in X} \dim f^{-1}(x)\in \mathbb Z_{\geq 0}\cup \{-\infty,\infty\}\ .
\]
\end{altenumerate}
\end{definition}

\begin{definition}\label{def:dimtr} Let $K\subset K^\prime$ be an extension of complete algebraically closed nonarchimedean fields. Then the topological transcendence degree $\trc(K^\prime/K)\in \mathbb Z_{\geq 0}\cup \{\infty\}$ of $K^\prime$ over $K$ is the minimum over all integers $n$ such that there exists a dense subfield $L\subset K^\prime$ containing $K$ with transcendence degree $n$ over $K$.
\end{definition}

The definition in \cite[Definition 1.8.2]{Huber} is slightly different, but in fact they agree in the case of algebraically closed fields, as one easily checks using Krasner's lemma.

\begin{lemma}\label{lem:trcsubadditive}$ $
\begin{altenumerate}
\item Let $K\subset K^\prime\subset K^{\prime\prime}$ be extensions of algebraically closed complete nonarchimedean fields. Then $\trc(K^{\prime\prime}/K)\leq \trc(K^{\prime\prime}/K^\prime) + \trc(K^\prime/K)$.
\item Let $K\subset K^\prime$ and $L\subset L^\prime$ be extensions of algebraically closed complete nonarchimedean fields such that there is an embedding $K^\prime\hookrightarrow L^\prime$ sending $K$ into $L$, and such that the algebraic closure of $L\cdot K^\prime$ in $L^\prime$ is dense. Then $\trc(L^\prime/L)\leq \trc(K^\prime/K)$.
\end{altenumerate}
\end{lemma}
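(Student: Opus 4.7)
The plan is to prove both parts by explicitly producing dense subfields of the required transcendence degree, using the key classical fact (a consequence of Krasner's lemma) that the completion of an algebraically closed nonarchimedean field is again algebraically closed.

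For part (i), I assume $n = \trc(K^\prime/K)$ and $m = \trc(K^{\prime\prime}/K^\prime)$ are both finite (otherwise the inequality is vacuous) and choose dense subfields $M^\prime \subset K^\prime$ and $M^{\prime\prime} \subset K^{\prime\prime}$, containing $K$ and $K^\prime$ respectively, with $\trdeg(M^\prime/K) = n$ and $\trdeg(M^{\prime\prime}/K^\prime) = m$. Pick a transcendence basis $T = \{s_1,\ldots,s_m\}$ of $M^{\prime\prime}$ over $K^\prime$, and let $L_0 \subset K^{\prime\prime}$ be the algebraic closure of $M^\prime(T)$ inside $K^{\prime\prime}$. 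Since $T$ is algebraically independent over $K^\prime \supset M^\prime$, one has $\trdeg(L_0/K) = n + m$. The remaining task is to show that $L_0$ is dense in $K^{\prime\prime}$: $L_0$ is algebraically closed (being the algebraic closure of $M^\prime(T)$ inside the algebraically closed field $K^{\prime\prime}$), so its closure $\overline{L_0} \subset K^{\prime\prime}$ is a complete algebraically closed nonarchimedean field by the Krasner fact above. This closure contains $M^\prime$, hence $\overline{M^\prime} = K^\prime$, as well as $T$; being algebraically closed, it then contains the full algebraic closure of $K^\prime(T)$ taken inside $K^{\prime\prime}$, and in particular contains $M^{\prime\prime}$. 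Being closed and containing the dense subfield $M^{\prime\prime}$, we conclude $\overline{L_0} = K^{\prime\prime}$.

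For part (ii), assume $n = \trc(K^\prime/K)$ is finite and choose a dense $M^\prime \subset K^\prime$ containing $K$ with $\trdeg(M^\prime/K) = n$. Via the given embedding $K^\prime \hookrightarrow L^\prime$, view $M^\prime$ as a subfield of $L^\prime$, and let $N$ be the algebraic closure of the compositum $L \cdot M^\prime$ in $L^\prime$. Then $\trdeg(N/L) \leq \trdeg(L \cdot M^\prime / L) \leq \trdeg(M^\prime/K) = n$ by standard transcendence-degree bounds under compositum. The same Krasner-style argument gives density: $\overline{N}$ is closed and algebraically closed, it contains $L$ and contains $\overline{M^\prime} = K^\prime$ (using that $K^\prime$ embeds as a closed subfield of $L^\prime$, since $K^\prime$ is complete), hence contains $L \cdot K^\prime$ and then its algebraic closure inside $L^\prime$. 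By hypothesis this algebraic closure is dense, so $\overline{N} = L^\prime$.

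The only nontrivial input is the preservation of algebraic closedness under completion, which is what lets us upgrade the evident inclusion of $M^\prime$ (resp.\ $L\cdot M^\prime$) in the closure to an inclusion of $K^\prime$ (resp.\ the algebraic closure of $L\cdot K^\prime$) in the closure. Everything else is routine bookkeeping of transcendence degrees; I do not anticipate a genuine obstacle.
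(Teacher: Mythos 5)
Your proof is correct, and it is essentially the same argument as the paper's, but fully unpacked: the paper invokes Huber's alternative characterization of $\trc$ (via finite sets $x_1,\ldots,x_n$ that topologically generate $K'$ as a complete algebraically closed field over $K$) and cites the equivalence with the dense-subfield definition as a consequence of Krasner's lemma, whereas you work directly from the dense-subfield definition and spell out the Krasner step (closure of an algebraically closed subfield is algebraically closed, hence contains every element algebraic over it). In both parts the structural idea is identical — build a subfield of the right transcendence degree from a transcendence basis of a dense subfield at each stage, then use algebraic closedness of the completion to show its closure swallows the intermediate fields and hence a dense subfield of the top field. One small point you handle correctly but should keep in mind: in part (ii) the hypothesis that $K'$ is complete and embeds isometrically into $L'$ is what guarantees $K'$ is closed in $L'$, so that $\overline{M'}=K'$ inside $L'$; without that the density step would break.
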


\begin{proof} The first part is \cite[Remark 1.8.3 (i)]{Huber}. More precisely, assume that $\trc(K^\prime/K)=n$ and $\trc(K^{\prime\prime}/K^\prime)=m$. Then there exist $n$ elements $x_1,\ldots,x_n\in K^\prime$ such that $K^\prime$ is the minimal complete and algebraically closed subfield of $K^\prime$ containing $K$ and $x_1,\ldots,x_n$, and there exist $m$ elements $x_{n+1},\ldots,x_{n+m}\in K^{\prime\prime}$ such that $K^{\prime\prime}$ is the minimal complete and algebraically closed subfield of $K^{\prime\prime}$ containing $K^\prime$ and $x_{n+1},\ldots,x_{n+m}$. Then the minimal complete and algebraically closed subfield of $K^{\prime\prime}$ containing $K$ and $x_1,\ldots,x_{n+m}$ is $K^{\prime\prime}$: Indeed, it contains $K^\prime$, and then is all of $K^{\prime\prime}$. This shows that $\trc(K^{\prime\prime}/K)\leq n+m$, as desired.

Similarly, one proves the second part, which is also \cite[Remark 1.8.3 (ii)]{Huber}.
\end{proof}

Unfortunately, we were not able to resolve the following question.

\begin{question}\label{q:transcendence} Let $K\subset K^\prime\subset K^{\prime\prime}$ be extensions of algebraically closed complete nonarchimedean fields. Is is true that $\trc(K^\prime/K)\leq \trc(K^{\prime\prime}/K)$?
\end{question}

There is the following theorem of Temkin: In the situation of Question~\ref{q:transcendence}, if $\trc(K^\prime/K)<\infty$, then $\trc(K^\prime/K)\leq \trc(K^{\prime\prime}/K)$. This is vacuous if $\trc(K^{\prime\prime}/K)=\infty$. Otherwise, \cite[Theorem 3.2.3]{TemkinTranscendence} identifies $\trc(K^{\prime\prime}/K)$ with the maximal number of topologically algebraically independent elements, and similarly for $\trc(K^\prime/K)$ by assumption $\trc(K^\prime/K)<\infty$. This invariant is monotonic by \cite[Lemma 2.2.2]{TemkinTranscendence}.

As we cannot answer Question~\ref{q:transcendence} in general, we consider instead the variant $\widetilde{\trc}(K^\prime/K)$, which is defined as the minimum over $\trc(K^{\prime\prime}/K)$ over all complete algebraically closed extensions $K^{\prime\prime}$ of $K^\prime$. Clearly, $\widetilde{\trc}(K^\prime/K)\leq \trc(K^\prime/K)$. Moreover, Lemma~\ref{lem:trcsubadditive} holds true with $\widetilde{\trc}$ in place of $\trc$.

\begin{definition}\label{def:dimtrg} Let $f: X^\prime\to X$ be a map of analytic adic spaces. Then
\[
\dimtrg f\in \mathbb Z_{\geq 0}\cup \{-\infty,\infty\}
\]
is the supremum over all $x^\prime\in X^\prime$ of $\widetilde{\trc}(C(x^\prime)/C(x))$, where $x=f(x^\prime)\in X$, and $C(x^\prime)$ resp.~$C(x)$ denote completed algebraic closures of the completed residue fields.
\end{definition}

\begin{lemma}\label{lem:compdim} Let $f: X^\prime\to X$ be a map of analytic adic spaces. Then
\[
\dim f\leq \dimtrg f\ .
\]
\end{lemma}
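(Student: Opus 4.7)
The plan is to reduce the claim to the analogous statement for noetherian analytic affinoid adic spaces, where it is due to Huber, \cite[Lemma 1.8.6]{Huber}, and then invoke an Abhyankar-type inequality to pass from the Krull rank of a valuation on the completed residue field to the topological transcendence degree.

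First I would fix $x \in X$ realizing (or approaching) the supremum $\dim f$, together with a chain of distinct specializations $y_0 \leftsquigarrow y_1 \leftsquigarrow \cdots \leftsquigarrow y_n$ in $f^{-1}(x)$. Since all $y_i$ specialize to $y_0$, they lie in every open neighborhood of $y_0$, so we can choose affinoid opens $U = \Spa(B,B^+) \subset X^\prime$ containing every $y_i$ and $V = \Spa(A,A^+) \subset X$ containing $x$ with $f(U) \subset V$. Replacing $f$ by $U \to V$, and then further replacing $V$ by the localization $\Spa(K(x),K(x)^+)$ (and $U$ by the preimage), we may assume $X = \Spa(K(x),K(x)^+)$ has a unique closed point $x$ and $X^\prime$ is affinoid over $X$.

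Second, I would aim to produce a point $y \in X^\prime$ with $\widetilde{\trc}(C(y)/C(x)) \geq n$, which immediately implies $\dim f \leq \dimtrg f$. The natural candidate is $y = y_n$, the generic point of the chain. Its completed residue field $L = K(y_n)$ is a complete nonarchimedean field containing $K(x)$. The chain of specializations of $y_n$ in $X^\prime$ sitting over the closed point $x$ gives rise, via the standard analysis of primary specializations in analytic affinoid adic spaces (cf.~\cite[\S 1.2, \S 1.8]{Huber}), to a chain of valuation subrings $L^\circ \supseteq V_n \supsetneq V_{n-1} \supsetneq \cdots \supsetneq V_0 = L^+$ above $K(x)^+$ of length $n$. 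Passing to completed algebraic closures and applying Abhyankar's inequality, the algebraic transcendence degree of $C(y_n)$ over $C(x)$ is at least $n$. Since $C(y_n)$ is complete, any $n$ algebraically independent elements generate a dense subfield of transcendence degree exactly $n$, and by definition of $\trc$ (and hence of $\widetilde{\trc}$), no dense subfield can have smaller transcendence degree. This gives $\widetilde{\trc}(C(y_n)/C(x)) \geq n$.

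The main obstacle is the translation in the second step: identifying the chain of specializations in $X^\prime$ over the closed point of $X$ with a chain of valuation subrings of the completed residue field at $y_n$, and then ensuring that Huber's results apply despite the potentially non-noetherian nature of $X^\prime$. To handle the non-noetherian case, one approximates: pick topological generators of $B$ over $A$ realizing representatives of the chain $y_0, \ldots, y_n$ in a finitely generated Tate subalgebra $B_0 \subset B$. Then $\Spa(B_0, B_0^+) \to X$ is a map of noetherian analytic affinoid adic spaces to which Huber's lemma applies directly, yielding a point with the desired topological transcendence degree bound; this bound then transfers back to $X^\prime$ because the completed residue field of the lifted point $y_n \in X^\prime$ contains that of its image in $\Spa(B_0, B_0^+)$, and the topological transcendence degree only increases (in the sense of $\widetilde{\trc}$) under such extensions by Lemma~\ref{lem:trcsubadditive}.
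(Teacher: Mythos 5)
Your overall strategy — localize at a point, extract a chain of valuation rings from the specialization chain, and feed it into Abhyankar's inequality — is essentially what the paper's very short proof is pointing at. However, there is a genuine gap where you pass from an Abhyankar lower bound on the \emph{algebraic} transcendence degree to a lower bound on $\widetilde{\trc}$. You write that ``any $n$ algebraically independent elements generate a dense subfield of transcendence degree exactly $n$, and by definition of $\trc$ (and hence of $\widetilde{\trc}$), no dense subfield can have smaller transcendence degree.'' The first clause is false: $n$ algebraically independent elements of a complete nonarchimedean field $C(y_n)$ over $C(x)$ do not in general generate a dense subfield. And the second clause is precisely the thing to be proved, not a definition: $\trc$ is the \emph{minimum} over dense subfields, so establishing $\widetilde{\trc}\geq n$ requires showing that \emph{every} dense subfield of \emph{every} complete algebraically closed extension of $C(y_n)$ has transcendence degree $\geq n$ over $C(x)$. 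A lower bound $\trdeg(C(y_n)/C(x))\geq n$ by itself gives no control on this — the algebraic transcendence degree of a completed algebraic closure is typically enormous and has no simple relation to $\trc$.

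The missing bridge is the observation the paper's proof invokes: the transcendence degree of the residue-field extension and the rational rank of the value-group quotient do not change when a valued field is replaced by its completion, are unchanged by passage to completed algebraic closures, and can only grow under further extensions. Concretely, your chain $V_0\subsetneq\cdots\subsetneq V_n\subseteq L^\circ$ over $K(x)^+$ reduces modulo the rank-one maximal ideal of $L$ to a chain of $n+1$ distinct valuation rings of the residue field of $L$ over the residue field of $K(x)$, giving rational rank at least $n$ for the associated value-group quotient and hence, by Abhyankar at the residue-field level, transcendence degree at least $n$ for the residue-field extension of $C(y_n)/C(x)$. For any complete algebraically closed $K''\supseteq C(y_n)$ and any dense subfield $M\subset K''$ containing $C(x)$, density forces the residue field of $M$ to equal that of $K''$, which contains that of $C(y_n)$; Abhyankar applied to $M/C(x)$ then gives $\trdeg(M/C(x))\geq n$, and taking the infimum over $M$ and $K''$ yields $\widetilde{\trc}(C(y_n)/C(x))\geq n$. (As a secondary point, the appeal to Lemma~\ref{lem:trcsubadditive}~(ii) in your last paragraph runs in the wrong direction, and its density hypothesis fails because $C(z_n)$ is complete and algebraically closed and therefore closed in $C(y_n)$; the monotonicity $\widetilde{\trc}(C(z_n)/C(x))\leq\widetilde{\trc}(C(y_n)/C(x))$ you need there is true, but it follows directly from $\widetilde{\trc}$ being defined as an infimum over a larger family of extensions, not from that lemma.)
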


\begin{proof} As the rationalized value group does not change under passage to a completed algebraic closure (and only gets larger under passage to extensions), the proof of~\cite[Lemma 1.8.5 (i)]{Huber} applies to show that $\dim f\leq \dimtrg f$.
\end{proof}

The definition of $\dimtrg$ extends to maps of diamonds.

\begin{definition}\label{def:dimtrgdiamond} Let $f: Y^\prime\to Y$ be a map of diamonds. For each $y^\prime\in |Y^\prime|$ with image $y\in |Y|$, choose quasi-pro-\'etale maps $\Spa(C(y),C(y)^+)\to Y$ with $y$ in the image, and $\Spa(C(y^\prime),C(y^\prime)^+)\to \Spa(C(y),C(y)^+)\times_Y Y^\prime$ with $y^\prime$ in the image. Then
\[
\dimtrg f\in \mathbb Z_{\geq 0}\cup \{-\infty,\infty\}
\]
is defined as the supremum of $\widetilde{\trc}(C(y^\prime)/C(y))$ over all $y^\prime\in |Y^\prime|$.

More generally, if $f: Y^\prime\to Y$ is a map of v-stacks that is representable in diamonds, then $\dimtrg f$ is the supremum of $\dimtrg (f\times_Y X)$ over all maps $X\to Y$ from diamonds $X$.
\end{definition}

\begin{remark} Note that a pullback will not increase $\dimtrg$ by Lemma~\ref{lem:trcsubadditive}~(ii), so the definition of $\dimtrg f$ for morphisms of v-sheaves agrees with the previous definition if $Y$ and $Y^\prime$ are diamonds.
\end{remark}

Note that if $Y$ is a locally spatial diamond or if $f: Y^\prime\to Y$ is map of v-stacks that is representable in locally spatial diamonds, we also have a definition of $\dim Y$ and of $\dim f$, by Definition~\ref{def:dim}. In the second case, we take the supremum over all locally spatial diamonds $X$ with a map $X\to Y$ of $\dim(f\times_Y X)$. To evaluate this, it is enough to range over $X$ of the form $\Spa(C,C^+)$.

Next, we want to prove a bound on the cohomological dimension of spatial diamonds. For this, we need to briefly discuss points of diamonds.

\begin{proposition}\label{prop:diamondpoint} Let $Y$ be a quasiseparated diamond such that $|Y|$ consists of only one point. Then $Y=\Spa(C,\OO_C)/\underline{G}$ for some algebraically closed nonarchimedean field $C$ and some profinite group $G$ acting continuously and faithfully on $C$. Moreover, the pair $(C,G)$ is unique up to (non-unique) isomorphism.
\end{proposition}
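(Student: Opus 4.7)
The plan is to exhibit a quasi-pro-\'etale v-surjection $Z=\Spa(C,\OO_C)\to Y$ with $C$ algebraically closed, compute the equivalence relation $R=Z\times_Y Z$ explicitly using Lemma~\ref{lem:proetaleoverwlocal}, extract a profinite group $G$ from its groupoid structure, and then address uniqueness by a torsor-pullback argument. First I would invoke Proposition~\ref{prop:diamondqproetsurj} to find a surjective quasi-pro-\'etale map $X_0\to Y$ from a strictly totally disconnected $X_0$; choose a connected component $\Spa(C,C^+)\subset X_0$ with $C$ algebraically closed (Proposition~\ref{prop:etcoverssplittotdisconnected}), and pass to its rank-$1$ generic point $Z=\Spa(C,\OO_C)\hookrightarrow \Spa(C,C^+)$, an affinoid pro-\'etale inclusion. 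The composite $Z\to Y$ is quasi-pro-\'etale and, since $Y$ is quasiseparated and $Z$ is quasicompact, also quasicompact. Because $|Y|$ is a single point, $|Z|\to|Y|$ is surjective, so by Lemma~\ref{lem:surjvstopsurj}, $Z\to Y$ is surjective as a v-sheaf map.

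Next I would analyze $R=Z\times_Y Z$. Since $Z$ is strictly totally disconnected and $Z\to Y$ is quasi-pro-\'etale, $R\to Z$ (either projection) is representable by a perfectoid space, pro-\'etale over $Z$, and quasicompact (as $Y$ is quasiseparated). The map $R\hookrightarrow Z\times Z$ is a monomorphism, being the base change of the diagonal $Y\to Y\times Y$ of the v-sheaf $Y$; composing with the separated projection $Z\times Z\to Z$ shows $R\to Z$ is separated. Since $Z$ has a unique rank-$1$-point (itself), Lemma~\ref{lem:proetaleoverwlocal} furnishes a profinite set $S$ and an isomorphism $R\cong \underline{S}\times Z$ compatible with $p_2$.

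Then I would extract the group and action. The identity $\Delta: Z\to R$, the swap $R\to R$, and the composition $p_{13}: R\times_Z R\to R$ equip $S$ with the structure of a profinite group $G$, and under $R\cong \underline{G}\times Z$ the projection $p_1$ becomes the action map $\underline{G}\times Z\to Z$. Writing $\underline{G}\times \Spa(C,\OO_C)\cong \Spa(C^0(G,C),C^0(G,\OO_C))$ and using the Yoneda-type description $\Hom(\underline{G}\times Z,Z)=\Hom_{\mathrm{cts}}(C,C^0(G,C))$, the action map corresponds to a continuous $G$-action on $C$. Injectivity of $R\hookrightarrow Z\times Z$ translates to $(g,z)\mapsto (g\cdot z,z)$ being injective, which forces $G\to\Aut(C)$ to be injective, i.e., the action is faithful. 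Since $Z\to Y$ is a v-surjection with equivalence relation $R=\underline{G}\times Z$, the v-sheaf quotient gives $Y=\Spa(C,\OO_C)/\underline{G}$.

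For uniqueness, given two presentations $Y=\Spa(C_i,\OO_{C_i})/\underline{G_i}$ for $i=1,2$, consider $W:=\Spa(C_1,\OO_{C_1})\times_Y\Spa(C_2,\OO_{C_2})$. Pulling back the $\underline{G_j}$-torsor $\Spa(C_j,\OO_{C_j})\to Y$ along $\Spa(C_i,\OO_{C_i})\to Y$, we see via Lemma~\ref{lem:gtorsoroverperfectoid} that $W$ is a $\underline{G_j}$-torsor over $\Spa(C_i,\OO_{C_i})$; since the latter has no nontrivial finite \'etale covers, the torsor is trivial, giving
\[
W\cong \Spa(C_1,\OO_{C_1})\times \underline{G_2}\cong \Spa(C_2,\OO_{C_2})\times \underline{G_1}.
\]
Selecting any connected component of $W$ yields an isomorphism $\Spa(C_1,\OO_{C_1})\cong \Spa(C_2,\OO_{C_2})$, hence $C_1\cong C_2$, and transporting the groupoid structures through this identification matches $G_1$ with $G_2$. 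The main technical obstacle I expect is ensuring the separatedness and quasi-compactness hypotheses needed to apply Lemma~\ref{lem:proetaleoverwlocal} to $R\to Z$; once these are in place, everything is driven by the lemma and the torsor-trivialization step.
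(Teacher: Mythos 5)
Your proof is correct and follows essentially the same route as the paper: find a quasi-pro-\'etale surjection from a geometric point $\Spa(C,\OO_C)$, apply Lemma~\ref{lem:proetaleoverwlocal} to identify $R=\Spa(C,\OO_C)\times_Y\Spa(C,\OO_C)$ with $\underline{S}\times\Spa(C,\OO_C)$, read off the profinite group $G$ and its faithful action from the groupoid structure and the injectivity $R\hookrightarrow Z\times Z$, and prove uniqueness by examining the fibre product of two such covers. Your version is slightly more detailed (spelling out the construction of the surjection via Proposition~\ref{prop:diamondqproetsurj} and Lemma~\ref{lem:surjvstopsurj}, the separatedness of $R\to Z$, and the torsor-trivialization phrasing of the uniqueness step), but the underlying argument is the same.
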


\begin{proof} As $|Y|$ consists of only one point, we can find a quasi-pro-\'etale surjection $\Spa(C,\OO_C)\to Y$, which is necessarily separated (as $\Spa(C,\OO_C)$ is separated). Then the fibre product $R=\Spa(C,\OO_C)\times_Y \Spa(C,\OO_C)$ is quasicompact separated pro-\'etale over $\Spa(C,\OO_C)$, and in particular affinoid pro-\'etale by Lemma~\ref{lem:proetaleoverwlocal}. Thus, $R=\Spa(C,\OO_C)\times \underline{S}$ for some profinite set $S$. As $R$ is an equivalence relation, $S$ becomes a profinite group, which we denote by $G$. The equivalence relation structure $R=\Spa(C,\OO_C)\times\underline{G}\subset \Spa(C,\OO_C)\times \Spa(C,\OO_C)$ is then equivalent to a continuous and faithful action of $G$ on $C$, and $Y=\Spa(C,\OO_C)/\underline{G}$.

For uniqueness, note that if $(C^\prime,G^\prime)$ is another such pair, then $\Spa(C^\prime,\OO_{C^\prime})\times_Y \Spa(C,\OO_C)$ is affinoid pro-\'etale over both $\Spa(C,\OO_C)$ and $\Spa(C^\prime,\OO_{C^\prime})$; the choice of a point will then give an isomorphism $C\cong C^\prime$ commuting with the maps to $Y$. In this case, the group $G$ is also the same, as it is the group of automorphisms of the map $\Spa(C,\OO_C)\to Y$.
\end{proof}

In particular, it follows that in the situation of Proposition~\ref{prop:diamondpoint}, \'etale sheaves on $Y$ are equivalent to continuous discrete $G$-modules, and \'etale cohomology is equivalent to continuous $G$-cohomology.

\begin{definition}\label{def:cohomdimpoints} Let $Y$ be a quasiseparated diamond, and let $\ell$ be a prime. For each maximal point $y\in |Y|$, let $Y_y\subset Y$ be the corresponding subdiamond with $|Y_y| = \{y\}\subset |Y|$, and write $Y_y=\Spa(C_y,\OO_{C_y})/\underline{G}_y$. Then the $\ell$-cohomological dimension of $Y$ at $y$ is
\[
\cdim_\ell y = \cdim_\ell G_y\ .
\]
\end{definition}

We get the following bound on the cohomological dimension of spatial diamonds, which is an analogue of \cite[Corollary 2.8.3]{Huber}.

\begin{proposition}\label{prop:cohombounded} Let $Y$ be a spatial diamond, let $\ell$ be a prime, and let $\mathcal F$ be an $\ell$-power-torsion \'etale sheaf on $Y$ such that $\mathcal F|_U = 0$ for some open subset $U\subset Y$. Then
\[
H^i(Y,\mathcal F)=0
\]
for $i>\dim (Y\setminus U) + \sup_y \cdim_\ell y$, where $y\in Y$ runs through maximal points of $Y$.
\end{proposition}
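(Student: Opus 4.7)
By Proposition~\ref{prop:charconstructible}, $\mathcal{F}$ is a filtered colimit of its constructible subsheaves; after replacing each by its image in $\mathcal{F}$, the vanishing on $U$ persists. Since $Y_\et$ is coherent, $H^i(Y,-)$ commutes with filtered colimits (Proposition~\ref{prop:etcohomlim}), so we may assume $\mathcal{F}$ is constructible, and d\'evissage on the $\ell$-power filtration reduces to $\Lambda=\Fl$. By Proposition~\ref{prop:charconstructible2} and further d\'evissage, it suffices to treat $\mathcal{F}=j_!(\mathcal{L}|_Z)$ for $j\colon V\to Y$ qcqs separated \'etale, $Z\subset V$ closed constructible with $j(Z)\subset Y\setminus U$, and $\mathcal{L}$ an $\Fl$-local system on $V$. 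Exactness of $j_!$ and the $(j_!,j^*)$-adjunction give $R\Gamma(Y,j_!(-))=R\Gamma(V,-)$. Since $V$ is spatial (Lemma~\ref{lem:finetoverspatial}), its maximal points lie over maximal points of $Y$ (so $\sup_v\cdim_\ell v\leq c$), and $\dim Z\leq\dim(Y\setminus U)=d$ because \'etale maps preserve specialization chains. Replacing $(Y,\mathcal{F})$ by $(V,\mathcal{L}|_Z)$, we reduce to the case $\mathcal{F}=\mathcal{L}|_Z$ with $\mathcal{L}$ a local system on $Y$ and $Z\subset Y$ closed constructible of dimension $\leq d$.

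\textbf{Induction on $d$.} The case $d=-\infty$ is trivial. For $d\geq 0$, split $Z=T\sqcup Z'$, where $T\subset Z$ is the locally closed constructible subset of points maximal in $Z$ and $Z'=Z\setminus T$ is closed constructible in $Y$ with $\dim Z'\leq d-1$. The short exact sequence
\[
0\to\mathcal{L}|_{Z'}\to\mathcal{L}|_Z\to\mathcal{L}|_T\to 0,
\]
combined with the inductive bound for $\mathcal{L}|_{Z'}$, reduces us to showing $H^i(Y,\mathcal{L}|_T)=0$ for $i>d+c$. By Proposition~\ref{prop:etcohomlim}, localizing at each $t\in T$ separately (using that $T$ is locally closed constructible in $Y$, so qc open neighborhoods eventually isolate $t$), it suffices to control local cohomology at each $t$; via the Leray spectral sequence for a quasi-pro-\'etale map $\Spa(C_t,\OO_{C_t})\to Y$ passing through $t$, this identifies with continuous group cohomology $H^i_{\mathrm{cts}}(G_t,\mathcal{L}_t)$ for a profinite group $G_t$ attached to $t$ (agreeing with the group of Proposition~\ref{prop:diamondpoint} when $t$ is maximal in $Y$).

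\textbf{Main obstacle.} It remains to show $\cdim_\ell G_t\leq d+c$ for each maximal point $t$ of $Z$. If $t$ is maximal in $Y$ this is immediate from the definition of $c$. Otherwise, fix a specialization chain $\eta=t_0\rightsquigarrow t_1\rightsquigarrow\cdots\rightsquigarrow t_n=t$ with $\eta$ a maximal point of $Y$; since $Y\setminus U$ is specialization-closed and contains $t$, and since $t$ is maximal within $Z$, one verifies that $n\leq d$ (the chain lies in $Y\setminus U$ except possibly for one initial generization into $U$). Each specialization $t_i\rightsquigarrow t_{i+1}$ corresponds to refining the valuation on the completed residue field by one rank-one step, presenting $G_{t_{i+1}}$ as an extension of $G_{t_i}$ by the inertia subgroup of that step; since $\ell\neq p$, the tame $\ell$-part of inertia is pro-cyclic of cohomological dimension $\leq 1$. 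Iterated Hochschild--Serre then yields $\cdim_\ell G_t\leq\cdim_\ell G_\eta+n\leq c+d$, as required. This profinite-group and valuation-theoretic bookkeeping — paralleling Huber's Section~2.8 of \cite{Huber} and the Zariski--Riemann input of Lemma~\ref{lem:properbasechangezariskiriemann} — is the heart of the argument and the main obstacle to a clean diamond-theoretic proof.
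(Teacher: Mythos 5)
Your reductions are reasonable and roughly parallel the paper's d\'evissage (the paper uses a slightly different filtration via $j_\ast$ from an open $V$ rather than isolating the stratum of maximal points, but these are cosmetic differences). The genuine gap is in the final two steps.

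First, the localization step is not valid as stated. To bound $H^i(Y,\mathcal L|_T)$ you cannot simply ``localize at each $t\in T$ separately'' and sum up local contributions; the sheaf $\mathcal L|_T$ lives on a spectral space $T$ of dimension up to $d$, and the topological cohomological dimension of $T$ must enter the bound. The paper handles this cleanly via the Leray spectral sequence for the map of topoi $g\colon Y_\et\to |Y|$: the derived pushforward $Rg_\ast\mathcal F$ is concentrated on the spectral space $Z$ of dimension $\leq d$, and Scheiderer's theorem bounds the cohomological dimension of $Z$ by $\dim Z$. This is where the $\dim(Y\setminus U)$ term in the conclusion actually comes from.

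Second, and more seriously, the heart of your argument --- that a specialization $t_i\rightsquigarrow t_{i+1}$ exhibits $G_{t_{i+1}}$ as an extension of $G_{t_i}$ by a tame inertia group, so that $\cdim_\ell G_t\leq\cdim_\ell G_\eta+n$ --- is wrong. The picture you have in mind is the Galois-theoretic one from schemes, but the profinite groups here work the other way around. In the paper's Proposition~\ref{prop:cohomdimdiamondpoint} one sees that $G_y$ is the \emph{stabilizer} of the corresponding valuation ring inside the big group $G$ acting on $\Spa(C,C^+)$, and under specialization $y\rightsquigarrow y'$ one has an \emph{inclusion} $G_y\subset G_{y'}$ of closed subgroups (specializations impose more conditions, so stabilizers shrink). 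In particular $G_t\subset G_\eta$ is a closed subgroup and $\cdim_\ell G_t\leq\cdim_\ell G_\eta\leq c$, with no $+d$ correction. Your $+n\leq d$ contribution to the group cohomological dimension simply does not exist; the $d$ in the final bound comes entirely from the topological space, not from the profinite groups. Your final inequality happens to be numerically correct, but because the missing topological contribution and the spurious inertia contribution each equal $d$, so the error is hidden.
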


\begin{remark} Even if we are only interested in the statement for $U=\emptyset$, the proof runs by an induction that involves the statement for general $U$. However, we will actually need the statement for general $U$ below.
\end{remark}

\begin{proof} We argue by induction on $\dim(Y\setminus U)$ (noting that if it is infinite, there is nothing to prove). Also, we may assume that $\mathcal F$ is $\ell$-torsion.

We may assume that there is a constructible sheaf $\mathcal F_0$ such that
\[
\mathcal F = \mathcal F_0 / j_{U!} \mathcal F_0|_U\ ,
\]
where $j_U: U\to Y$ denotes the open inclusion. Filtering $\mathcal F_0$, we may assume that there is a constructible locally closed subset $S\subset |Y|$ such that the restriction of $\mathcal F_0$ to $|Y|\setminus S$ is trivial (i.e., the stalk at all geometric points above $|Y|\setminus S$ vanishes), and for any strictly totally disconnected $f: X\to Y$, the pullback $f^\ast \mathcal F_0|_{f^{-1}(S)}$ is the constant sheaf associated with some finite abelian group $M$ with $\ell M=0$. In particular, $S=V\cap Z_0$ for a quasicompact open subset $V\subset |Y|$ and a constructible closed subset $Z_0\subset |Y|$. In this case,
\[
\mathcal F = \mathcal F_0 / j_{U!} \mathcal F_0|_U
\]
is concentrated (and constant after pullback to a strictly totally disconnected space) on the locally closed subset $S\setminus U = V\cap Z$, where $Z=Z_0\cap (Y\setminus U)$.

We also denote by $V\subset Y$ the corresponding open subdiamond, and let $j: V\hookrightarrow Y$ be the quasicompact open immersion. Then $\mathcal F=j_! \mathcal F_V$ for some sheaf $\mathcal F_V$ on $V$. Let $\tilde{\mathcal{F}}=j_\ast \mathcal F_V$. Note that $R^ij_\ast \mathcal F_V=0$ for $i>0$ by Lemma~\ref{lem:qcopenpushforward} below. Thus, $H^i(Y,\tilde{\mathcal{F}}) = H^i(V,\mathcal{F}_V)$. Moreover, we have an injection $\mathcal F\hookrightarrow \tilde{\mathcal F}$; let $\mathcal G$ be its cokernel. Then $\mathcal G$ is concentrated on $\overline{V\setminus U}\setminus V$. Note that $\dim(\overline{V\setminus U}\setminus V)<\dim(Y\setminus U)$: Indeed, $\overline{V\setminus U}\setminus V\subset Y\setminus U$, and any chain of specializations inside $\overline{V\setminus U}\setminus V$ can be prolonged by including a point of $V\setminus U$ as a proper generalization.

By induction, it follows that $H^i(Y,\mathcal G)=0$ for
\[
i>\dim(\overline{V\setminus U}\setminus V)+ \sup_y \cdim_\ell y\ ,
\]
and in particular for $i>\dim(Y\setminus U)+ \sup_y \cdim_\ell y - 1$. Thus, the long exact sequence
\[
\ldots \to H^{i-1}(Y,\mathcal G)\to H^i(Y,\mathcal F)\to H^i(Y,\tilde{\mathcal F})\to \ldots
\]
reduces us to proving the desired vanishing for $\tilde{\mathcal F}$. As $H^i(Y,\tilde{\mathcal F}) = H^i(V,\mathcal F_V)$, we can replace $Y$ by $V$ (and $\mathcal F$ by $\mathcal F_V$) and assume that there is a closed subset $Z\subset |Y|$ such that $\mathcal F$ is concentrated on $Z$, and for any strictly totally disconnected perfectoid space $f: X\to Y$, the pullback $f^\ast \mathcal F|_{f^{-1}(Z)}$ is the constant sheaf associated with some finite abelian group $M$ with $\ell M=0$.

Now we use the Leray spectral sequence for $g: Y_\et\to |Y|$. Note that $Rg_\ast \mathcal F|_{|Y|\setminus Z} = 0$, so the derived pushforward is concentrated on $Z$, which is a spectral space contained in $Y\setminus U$, and thus is of dimension $\leq \dim(Y\setminus U)$. By \cite[Corollary 4.6]{Scheiderer}, the cohomological dimension of $Z$ is bounded by $\dim (Y\setminus U)$.

Thus, it suffices to prove that $R^ig_\ast \mathcal F=0$ for $i>\sup_y \cdim_\ell y$. This can be checked on stalks, so we can assume that $|Y|$ is local. In that case, $|Y|$ is a totally ordered chain of specializations, and $Z\subset |Y|$ is a closed subset of finite dimension. It follows that $Z$ is a finite set of points totally ordered under specialization; let $\eta\in Z$ be the generic point. Then the generalizations of $\eta$ in $|Y|$ form a quasicompact open subspace corresponding to a quasicompact open subdiamond $j:V\hookrightarrow Y$. Now the adjunction map $\mathcal F\to j_\ast j^\ast \mathcal F$ is an isomorphism (as can be checked after pullback to a strictly totally disconnected cover) by our assumption on $\mathcal F|_Z$. Using Lemma~\ref{lem:qcopenpushforward} again, we can assume replace $Y$ by $V$, and so assume that $Z$ is the closed point of $|Y|$. Finally, we are reduced to Proposition~\ref{prop:cohomdimdiamondpoint} below.
\end{proof}

\begin{lemma}\label{lem:qcopenpushforward} Let $j: U\to Y$ be a quasicompact injection of locally spatial diamonds. Then for any sheaf of abelian groups $\mathcal F$ on $U_\et$, one has $R^i j_{\et\ast} \mathcal F = 0$ for $i>0$.
\end{lemma}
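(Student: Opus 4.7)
The plan is to reduce to the vanishing of higher \'etale cohomology on strictly totally disconnected perfectoid spaces, via a stalk-and-base-change argument. The claim is local on $Y_\et$, so I would first assume $Y$ is spatial. Since $j$ is a quasicompact injection, it is separated (hence qcqs) and quasi-pro-\'etale by Corollary~\ref{cor:qcvinjqproet}; thus by Proposition~\ref{prop:etalesitepoints}, the vanishing $R^i j_{\et\ast}\mathcal F = 0$ for $i>0$ can be detected on stalks at geometric points $\overline y = \Spa(C,C^+)\to Y$ with $C$ algebraically closed.

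For each such $\overline y$, I would apply Corollary~\ref{cor:qcqsetbasechange} to the pullback square: since $j$ is qcqs and $\overline y\to Y$ is quasi-pro-\'etale, we obtain $\overline y^\ast R^i j_{\et\ast}\mathcal F \cong R^i j'_{\et\ast}\mathcal F'$, where $j'\colon U' := \overline y\times_Y U \to \overline y$ is the pullback and $\mathcal F' := \mathcal F|_{U'}$. Because $\overline y$ is strictly totally disconnected, its \'etale topos is equivalent to sheaves on the spectral space $|\overline y|$; and since $|\overline y|$ is a totally ordered chain of specializations, its unique closed point has only $|\overline y|$ itself as open neighborhood. Therefore the stalk of $\overline y^\ast R^i j_{\et\ast}\mathcal F$ at the closed point equals the global sections, i.e.~$H^i(U'_\et,\mathcal F')$.

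The remaining task is to show $H^i(U'_\et,\mathcal F')=0$ for $i>0$. The pullback $j'$ is again a quasicompact injection, so by Corollary~\ref{cor:qcvinjqproet}, $|U'|\subset|\overline y|$ is a pro-constructible generalizing subset of the totally ordered chain $|\overline y|$; by Lemma~\ref{lem:subsetwlocal}, $U'$ is then an affinoid, totally disconnected perfectoid space. Since $|\overline y|$ is connected and $|U'|$ is a sub-chain, the unique connected component of $U'$ has residue field equal to $C$, which is algebraically closed. Hence $U'$ is itself strictly totally disconnected, so every \'etale cover of $U'$ splits, $\Gamma(U'_\et,-)$ is exact, and $H^i(U'_\et,\mathcal F')=0$ for all $i>0$, as desired.

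The step requiring the most care is the verification that $U'$ remains strictly totally disconnected: one must check that passage to a pro-constructible generalizing subspace preserves algebraic closedness of the residue field on the single connected component. This follows from the presentation of such subspaces in Lemma~\ref{lem:subsetwlocal} as intersections of sets of the form $\{|f|\leq 1\}$ with $f\in C$, which keeps the ring of functions on the single connected component identified with $C$ itself.
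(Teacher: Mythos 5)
Your proof is correct and follows essentially the same route as the paper: reduce via Corollary~\ref{cor:qcqsetbasechange} to the case where the base is $\Spa(C,C^+)$ with $C$ algebraically closed, then observe that the pullback of $U$ is strictly totally disconnected (a pro-constructible generalizing subspace $\Spa(C,(C^+)')$), so \'etale cohomology reduces to sheaf cohomology on a local spectral space and vanishes in positive degrees. The paper's version is terser (it does not spell out the stalk computation on $|\overline y|$ or the structure of $U'$), but the underlying argument is identical.
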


\begin{remark}\label{rem:qcsepqproetpushforward} In fact, this holds true more generally for quasicompact separated quasi-pro-\'etale morphisms $j: U\to X$. The proof reduces to the case where $X$ is strictly totally disconnected, in which case $U$ is also strictly totally disconnected by Lemma~\ref{lem:proetaleoverwlocal}. Then it follows from vanishing of \'etale cohomology on strictly totally disconnected spaces.
\end{remark}

\begin{proof} By Corollary~\ref{cor:qcqsetbasechange}, and as the result can be checked on stalks, we can assume that $Y=\Spa(C,C^+)$ for an algebraically closed nonarchimedean field $C$ with an open and bounded valuation subring $C^+\subset C$, and we need to check that $H^i(U_\et,\mathcal F)=0$ for $i>0$. But $U_\et$ and $|U|$ define equivalent topoi, and $|U|$ has a unique closed point, so $H^i(U_\et,\mathcal F)=H^i(|U|,\mathcal F)=0$ for $i>0$, as desired.
\end{proof}

\begin{proposition}\label{prop:cohomdimdiamondpoint} Let $Y$ be a spatial diamond such that $|Y|$ is local with closed point $s\in |Y|$ and complement $U=Y\setminus \{s\}\subset Y$. Then for any $\ell$-torsion \'etale sheaf $\mathcal F$ on $Y_\et$ such that $\mathcal F|_U=0$,
\[
H^i(Y_\et,\mathcal F)=0
\]
for any $i>\cdim_\ell \eta$, where $\eta\in |Y|$ is the unique generic point.
\end{proposition}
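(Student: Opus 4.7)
The plan is to reduce to continuous group cohomology of the profinite group $G_\eta$ by constructing a pro-finite \'etale $\underline{G_\eta}$-torsor $f\colon\widetilde{Y}\to Y$ extending the tautological torsor $\Spa(C_\eta,\OO_{C_\eta})\to Y_\eta$ associated by Proposition~\ref{prop:diamondpoint} to the unique generic point (so $Y_\eta=\Spa(C_\eta,\OO_{C_\eta})/\underline{G_\eta}$). For each open subgroup $H\subset G_\eta$, one obtains a finite \'etale cover $\Spa(C_\eta,\OO_{C_\eta})/\underline{H}\to Y_\eta$; writing $Y_\eta=\varprojlim_V V$ over the quasicompact open neighborhoods $V\subset Y$ of $\eta$ and applying Proposition~\ref{prop:etmaptolimdiamond}(i) spreads this to a finite \'etale cover over some $V$, and the local irreducibility of $|Y|$ (a Zariski-connectedness-type step using the canonical compactification of Corollary~\ref{cor:cancomprelative}(vi) to control behaviour across the single closed point) allows one to extend it to a finite \'etale cover $\widetilde{Y}_H\to Y$ on all of $Y$. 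Setting $\widetilde{Y}:=\varprojlim_H\widetilde{Y}_H$ produces a spatial diamond by Lemma~\ref{lem:diamondlimit}, and the identification $\widetilde{Y}\times_Y\widetilde{Y}\cong\widetilde{Y}\times\underline{G_\eta}$ exhibits $f$ as a $\underline{G_\eta}$-torsor.

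Granted the torsor, the \v{C}ech complex of $f$ becomes the continuous cochain complex for $G_\eta$ acting on $R\Gamma(\widetilde{Y},f^*\mathcal F)$, yielding
\[
R\Gamma(Y,\mathcal F)\;\cong\;R\Gamma_{\mathrm{cont}}\bigl(G_\eta,\,R\Gamma(\widetilde{Y},f^*\mathcal F)\bigr)\ .
\]
Since $\mathcal F$ is $\ell$-torsion and $\cdim_\ell G_\eta=\cdim_\ell\eta$ by definition, the conclusion will follow once $R\Gamma(\widetilde{Y},f^*\mathcal F)$ is shown to be concentrated in degree $0$. To see this, I would use the Leray spectral sequence for $g\colon\widetilde{Y}_\et\to|\widetilde{Y}|$ and compute the stalks of $R^qg_*f^*\mathcal F$: at a point $p\notin\widetilde{s}:=f^{-1}(s)$ the sheaf $f^*\mathcal F$ vanishes in a neighborhood of $p$, so the stalk is zero; and at $p\in\widetilde{s}$ the localization $\widetilde{Y}_p^{\mathrm{loc}}$ is again a local irreducible spatial diamond whose unique generic point is the unique preimage $\widetilde{\eta}$ of $\eta$, where by construction $\widetilde{Y}_{\widetilde{\eta}}=\Spa(C_\eta,\OO_{C_\eta})$ has trivial Galois group, hence $\cdim_\ell\widetilde{\eta}=0$; applying the proposition to $\widetilde{Y}_p^{\mathrm{loc}}$ in this simpler regime gives the desired stalkwise vanishing. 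One is then reduced to $R\Gamma(\widetilde{Y},f^*\mathcal F)=R\Gamma(|\widetilde{Y}|,g_*f^*\mathcal F)=R\Gamma(\widetilde{s},g_*f^*\mathcal F|_{\widetilde{s}})$, and this is concentrated in degree $0$ because $\widetilde{s}=\varprojlim_H f_H^{-1}(s)$ is a profinite set of closed points of $|\widetilde{Y}|$ (each $f_H$ being finite \'etale, so each $f_H^{-1}(s)$ is finite and discrete), on which every open cover splits.

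The hard part will be organizing this into a rigorous, non-circular induction, since the Leray stalk computation on $\widetilde{Y}_p^{\mathrm{loc}}$ appeals to the proposition being proved, albeit in the strictly simpler regime $\cdim_\ell\widetilde{\eta}=0$. One therefore either proves the $\cdim_\ell\eta=0$ base case by hand — where the group-cohomology side of Hochschild–Serre collapses to invariants and one must instead exploit directly that $\widetilde{Y}_\fet$ is trivial in order to replace $\widetilde{Y}_\et$-cohomology by topological cohomology on $|\widetilde{Y}|$ — and then bootstraps, or sets up a joint induction with Proposition~\ref{prop:cohombounded} in which the present proposition appears as the $\dim=0$ step. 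A secondary technical point to watch is the Zariski-connectedness step needed to extend finite \'etale covers from the generic fibre $Y_\eta$ across the closed point of $Y$, which is plausible from local irreducibility of $|Y|$ but requires some care with the point-set structure of spatial diamonds.
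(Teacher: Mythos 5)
Your proposal takes a genuinely different route from the paper. The paper works directly with the quasi-pro-\'etale surjection $g\colon\Spa(C,C^+)\to Y$. The fibre of $R=\Spa(C,C^+)\times_Y\Spa(C,C^+)$ over the closed point $\tilde s$ is a profinite group $G_s$, and the fibre over the generic point is $G_\eta$; since generalization along $R\to\Spa(C,C^+)$ is unique, one gets an inclusion of profinite groups $G_s\subset G_\eta$. Because $\mathcal F$ is concentrated at $s$, its pullback to $\Spa(C,C^+)$ is determined by an abelian group at $\tilde s$, and the descent datum on $R$ amounts precisely to a continuous $G_s$-action (not a $G_\eta$-action!). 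The Cartan--Leray spectral sequence for $g$ then degenerates immediately (each term of the \v{C}ech nerve is a strictly totally disconnected perfectoid space, hence has no higher cohomology), producing the continuous cochain complex for $G_s$. The bound follows from $\cdim_\ell G_s\le\cdim_\ell G_\eta$, a consequence of $G_s$ being a closed subgroup.

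You instead promote the $\underline{G_\eta}$-torsor over $Y_\eta$ to a torsor $\widetilde Y\to Y$ over all of $Y$, and reduce to $\cdim_\ell G_\eta$ directly via the Cartan--Leray sequence for $\widetilde Y\to Y$, after showing $R\Gamma(\widetilde Y,f^*\mathcal F)$ sits in degree $0$. In effect, the two proofs are related by Shapiro's lemma: your $H^0(\widetilde Y,f^*\mathcal F)$ is the smooth coinduction $\mathrm{Coind}_{G_s}^{G_\eta}$ of the paper's $G_s$-module, and $H^*_{\mathrm{cont}}(G_\eta,\mathrm{Coind}_{G_s}^{G_\eta}M)\cong H^*_{\mathrm{cont}}(G_s,M)$. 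Your approach is conceptually suggestive (it produces the universal pro-finite-\'etale cover of $Y$), but it is substantially longer and needs more verification. A few remarks on that verification. First, the spreading-out step can be streamlined: $Y^\circ_\fet\cong Y_\fet$ (finite \'etale covers are insensitive to $\OO^+$, see the proof of Lemma~\ref{lem:spatialetalelocallysep}), which gives the extension of finite \'etale covers from $Y_\eta$ to $Y$ directly, without the compactification detour. Second, regarding the circularity you flag: it does terminate in one step, because a local spatial diamond whose generic-point Galois group is trivial is automatically representable by an affinoid perfectoid space $\Spa(C,C^+)$ (the equivalence relation $R$ is connected, generalizing inside $\Spa(C,C^+)$ and contains the diagonal, hence equals the diagonal by Lemma~\ref{lem:bijectiveiso}); in that situation \'etale cohomology agrees with cohomology of $|Y|$ and, the sheaf being concentrated at the unique closed point of a local spectral space, is concentrated in degree $0$. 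Third, $|\widetilde Y|$ is not local (the preimage $\widetilde s$ of $s$ is a profinite set), so the Leray computation really must pass through the localizations $\widetilde Y_p^{\mathrm{loc}}$ for $p\in\widetilde s$, as you indicate, and the last step uses that $\widetilde s$ is profinite hence has vanishing higher sheaf cohomology. None of these issues are fatal, but the paper's argument avoids all of them by the single observation $G_s\subset G_\eta$.
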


\begin{proof} By assumption, there is a surjective quasi-pro-\'etale map $f: \Spa(C,C^+)\to Y$, where $C$ is an algebraically closed nonarchimedean field, and $C^+\subset C$ is an open and bounded valuation subring.

As $\Spa(C,C^+)$ is separated, the map $f$ is separated, and thus $R=\Spa(C,C^+)\times_Y \Spa(C,C^+)$ is quasicompact separated and pro-\'etale over $\Spa(C,C^+)$, i.e.~$R$ is affinoid pro-\'etale over $\Spa(C,C^+)$ by Lemma~\ref{lem:proetaleoverwlocal}.

Note that the map $|\Spa(C,C^+)|\to |Y|$ is bijective. For any point $y\in |Y|$, let $\tilde{y}\in |\Spa(C,C^+)|$ be the unique lift, which corresponds to a valuation subring $C^+_y\subset \OO_C$ containing $C^+$. Then the fiber of $|R|_{\tilde{y}}$ of $|R|$ over $\tilde{y}$ identifies with the sections of $f$ over $\Spa(C,C^+_y)$. It is a profinite set, which in fact is a profinite group $G_y$ by the equivalence relation structure. If $y^\prime$ is a generalization of $y$, there is a natural generalization map $|R|_{\tilde{y}}\to |R|_{\tilde{y}^\prime}$ as all maps are uniquely generalizing, so one gets an inclusion of profinite groups $G_y\subset G_{y^\prime}$. In particular, $G_s$ is a closed subgroup of $G_\eta$, and so $\cdim_\ell G_s\leq \cdim_\ell G_\eta=\cdim_\ell \eta$.

It remains to see that the sheaf $\mathcal F$ can be identified with a discrete continuous $G_s$-module, and its cohomology with continuous $G_s$-cohomology. But giving $\mathcal F$ is equivalent to giving a sheaf on $|\Spa(C,C^+)|$ concentrated at the closed point, i.e.~an abelian group, and the descent data over $R$ amount to a continuous $G_s$-action. Computing cohomology via the Cartan--Leray spectral sequence for the covering $g: \Spa(C,C^+)\to Y$ gives the complex of continuous cochains, as desired.
\end{proof}

Moreover, we need the following result bounding the cohomological dimension of points.

\begin{proposition}\label{prop:cohomdimpointbound} Let $f: Y\to \Spa(C,C^+)$ be a map of locally spatial diamonds, and let $y\in |Y|$ be a maximal point. Then, for all $\ell\neq p$, one has
\[
\cdim_\ell y\leq \dimtrg f\ .
\]
\end{proposition}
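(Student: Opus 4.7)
The plan is to bound $\cdim_\ell G_y$ by relating it to \'etale cohomology of $Y$ near $y$, and ultimately reducing to the classical bound for analytic adic spaces established by Huber in \cite[Cor.~2.8.3]{Huber}.

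First I would invoke Proposition~\ref{prop:diamondpoint} to write $Y_y = \Spa(C_y,\OO_{C_y})/\underline{G_y}$, where $C_y$ is a complete algebraically closed field and $G_y$ acts faithfully and continuously; the map $f$ then induces an embedding $C\hookrightarrow C_y$ of complete algebraically closed fields (fixed by $G_y$), so $\widetilde{\trc}(C_y/C)\leq d := \dimtrg f$. By definition of $\widetilde{\trc}$, there is a complete algebraically closed extension $C_y\subseteq C_y'$ and elements $x_1,\ldots,x_{d'}\in C_y'$ (with $d'\leq d$) algebraically independent over $C$ whose subfield $C(x_1,\ldots,x_{d'})$ is dense in $C_y'$; rescaling gives $|x_i|\leq 1$. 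Under the identification of $\ell$-torsion $G_y$-modules with discrete $\ell$-torsion \'etale sheaves on $Y_y$, $G_y$-cohomology matches \'etale cohomology of $Y_y$.

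Next, since $y$ is a maximal point, $Y_y$ is the cofiltered intersection of qcqs open neighborhoods $U\ni y$ in $Y$. Applying Proposition~\ref{prop:etcohomlim}, we get $H^i(G_y,M) = \varinjlim_{U\ni y} H^i(U_\et,\mathcal F_U)$ for compatible \'etale sheaves $\mathcal F_U$. Using Proposition~\ref{prop:constructiblelimit}, any such system comes from a constructible $\ell$-torsion sheaf $\mathcal F$ on some fixed qcqs open neighborhood $U_0\ni y$. So it suffices to show: for every qcqs open $U \subset U_0$ containing $y$, every constructible $\ell$-torsion $\mathcal F$ on $U$, and every class $\alpha\in H^i(U_\et,\mathcal F)$ with $i>d$, there is a smaller qcqs open neighborhood $V\ni y$ with $\alpha|_V=0$.

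The key step is to construct, possibly after shrinking $U$, a quasi-pro-\'etale map $g\colon W\to U$ with $y$ in its image, where $W = X^\diamondsuit$ for an analytic adic space $X$ over $\Spa(C,C^+)$ of dimension $\leq d'$, and such that $g^\ast \alpha$ vanishes on a cofinal system of neighborhoods of $g^{-1}(y)$ by a direct application of Huber's bound $\cdim_\ell \leq \dim_y X \leq d'$ to the analytic adic space $X$ (for instance a neighborhood of the generic point of $\mathbb B^{d'}_C$, the perfectoid disc over $\Spa(C,C^+)$). One produces such $g$ by using the coordinates $x_1,\ldots,x_{d'}\in C_y'$ to define a point-level morphism $\Spa(C_y',\OO_{C_y'})\to (\mathbb B^{d'}_C)^\diamondsuit$ landing at the Gauss point, then approximating: the ring $\OO_{Y}^+(U)$ filters up to (a ring whose completion contains) $\OO_{C_y}^+$, so for each finite approximation of the $x_i$ the resulting map of affinoid adic spaces extends this point-level morphism to a neighborhood, and the pullback of $\alpha$ factors through Huber's bound. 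Finally Proposition~\ref{prop:etcohomlim} (and a standard pushforward argument along the finite level of the pro-\'etale cover) transports the vanishing back down to a neighborhood $V$ of $y$ in $U$. The main obstacle is this extension/approximation step: transforming the point-theoretic factorization $\Spa(C_y,\OO_{C_y})\to (\mathbb B^{d'}_C)^\diamondsuit$ into a genuine geometric factorization on a neighborhood, and then descending Huber's bound along the resulting quasi-pro-\'etale cover.
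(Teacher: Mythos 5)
Your proposal takes a genuinely different route from the paper's, but I do not think it can be made to work; let me explain why.

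The paper's actual proof of Proposition~\ref{prop:cohomdimpointbound} is a \emph{purely Galois-theoretic} argument and never touches Huber's bound \cite[Cor.~2.8.3]{Huber} or any approximation by adic spaces. Writing $Y_y = \Spa(C',\OO_{C'})/\underline{G}$ as you do, the paper filters the profinite group $G$ as follows: the ``wild inertia'' $P\subset G$ is the pointwise stabilizer of $C'^\times/(1+C'^{\circ\circ})$, shown to be pro-$p$ by an elementary cocycle argument (Lemma~\ref{lem:wildinertiaprop}), so it is invisible for $\ell$-cohomology; the ``inertia'' $I\subset G$ is the pointwise stabilizer of the residue field $k'$, and the tame quotient $I/P$ embeds into $\Hom(\Gamma'/\Gamma,\mu_\infty(k))$ so that $\cdim_\ell I/P\leq \dim_{\mathbb Q}\Gamma'/\Gamma$; and $G/I$ is a genuine Galois group of the residue field extension, giving $\cdim_\ell G/I\leq \trdeg(k'/k)$. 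The Abhyankar inequality $\trdeg(k'/k)+\dim_{\mathbb Q}\Gamma'/\Gamma\leq \widetilde{\trc}(C'/C)$ then finishes. The crucial point is that residue fields and rationalized value groups are invariants that do not decrease under passage to a completed extension $C'\hookrightarrow C''$, which is exactly why $\widetilde{\trc}$ (defined as an infimum over extensions) still controls them.

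Your approach breaks down precisely at the step you yourself flag. The elements $x_1,\ldots,x_{d'}$ realizing $\widetilde{\trc}(C_y/C)$ live in an \emph{extension} $C_y'$ of $C_y$, not in $C_y$ itself, and a fortiori not in any ring $\OO_Y^+(U)$ or its completed colimit (which only sees $C_y$, or rather $G_y$-invariants). So there is no approximation of the $x_i$ at finite level from which to build a map out of a neighborhood; the construction of $g$ never gets off the ground. This is not a technicality: the whole reason the paper works with $\widetilde{\trc}$ rather than $\trc$ is that $\trc(C_y/C)$ itself could a priori be strictly larger (the paper explicitly leaves this as an open question after Lemma~\ref{lem:trcsubadditive}), so the coordinates really need not descend to $C_y$. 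Beyond this, the logic of the descent step is also unclear: you write $g\colon W\to U$ quasi-pro-\'etale with $W$ an adic space of small dimension, but then describe constructing a map \emph{from} a neighborhood of $y$ \emph{to} $(\mathbb B^{d'}_C)^\diamondsuit$, which is the opposite direction. And even granting a pro-\'etale cover $W\to U$ on which $\alpha$ dies, pushing that vanishing back down to $U$ would require either a trace argument (needing the cover to have pro-degree prime to $\ell$, not available here) or a Hochschild--Serre descent that reintroduces the very relative cohomological dimension you are trying to bound.

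If you want a cohomological argument rather than a group-theoretic one, the more natural role for Huber's results in this part of the paper is Proposition~\ref{prop:cohombounded}, which bounds the cohomological dimension of a spatial diamond $Y$ in terms of $\dim Y$ and $\sup_y\cdim_\ell y$; Proposition~\ref{prop:cohomdimpointbound} is precisely the ingredient that converts the latter into $\dimtrg f$, and it is proved by examining the structure of the profinite group at a point, not by reduction to an adic space.
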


\begin{proof} We may replace $Y$ by $Y_y$ and $\Spa(C,C^+)$ by $\Spa(C,\OO_C)$. Then $Y=\Spa(C^\prime,\OO_{C^\prime})/\underline{G}$ for a profinite group $G$ acting continuously and faithfully on $C^\prime$, fixing $C$ pointwise. In particular, $G$ acts continuously on the quotient
\[
C^{\prime\times} / (1+C^{\prime\circ\circ})
\]
of $C^{\prime\times}$. Let $P\subset G$ be the normal closed subgroup which acts trivially; this is an analogue of the wild inertia subgroup. First, we note that $P$ is a pro-$p$-group. This follows from Lemma~\ref{lem:wildinertiaprop} below. Thus, $\cdim_\ell G=\cdim_\ell G/P$ for $\ell\neq p$.

Let $k^\prime/k$ be the extension of residue fields of $C^\prime/C$, and let $\Gamma\subset \Gamma^\prime\subset \mathbb R_{>0}$ be the value groups. Then the group $C^{\prime\times} / (1+C^{\prime\prime\times})$ is an extension
\[
1\to k^{\prime\times}\to C^{\prime\times} / (1+C^{\prime\prime\times})\to \Gamma^\prime\to 1\ .
\]
As $\Gamma^\prime\subset \mathbb R_{>0}$, the action of $G/P$ on $\Gamma^\prime$ is trivial. Considering the action of $G/P$ on $k^\prime$, let $I$ be the pointwise stabilizer of $k^\prime$; this is an analogue of the inertia group. Then $\overline{G}=G/I$ acts faithfully and continuously on the discrete field $k^\prime$, fixing $k$ pointwise. It follows that $k^\prime_0 = k^{\prime\overline{G}}$ is a perfect field with algebraic closure $k^\prime$, and $\Gal(k^\prime/k^\prime_0) = \overline{G}$. Thus,
\[
\cdim_\ell \overline{G}\leq \trdeg(k^\prime_0/k) = \trdeg(k^\prime/k)\ .
\]

Finally, we have to understand the ``tame inertia'' group $I/P$. This acts through maps $\Gamma^\prime\to k^\times$ which are trivial on $\Gamma$, i.e.~through maps $\Gamma^\prime/\Gamma\to k^\times$. Moreover, as the elements are topologically nilpotent, the image of $\Gamma^\prime/\Gamma\to k^\times$ lands in the roots of unity; assuming that $\Gamma^\prime/\Gamma$ is finitely-dimensional over $\mathbb Q$, this gives an embedding
\[
I/P\hookrightarrow \Hom(\Gamma^\prime/\Gamma,\mu_\infty(k))\cong (\mathbb A_f^p)^{\dim_{\mathbb Q} \Gamma^\prime/\Gamma}\ .
\]
This implies that
\[
\cdim_\ell I/P\leq \dim_{\mathbb Q} \Gamma^\prime/\Gamma\ .
\]
Therefore we get the desired inequality
\[
\cdim_\ell G = \cdim_\ell G/P\leq \cdim_\ell \overline{G} + \cdim_\ell I/P\leq \trdeg(k^\prime/k) + \dim_{\mathbb Q} \Gamma^\prime/\Gamma\leq \widetilde{\trc}(C^\prime/C)\ ,
\]
using \cite[VI.10.3 Corollary 1]{BourbakiCommutativeAlgebra} in the final inequality (which also shows that indeed $\Gamma^\prime/\Gamma$ is finite-dimensional in case $\trc(C^\prime/C)<\infty$); here, we use that rationalized value groups and transcendence degrees of residue fields do not change under completion, and only increase after passage to further extensions.
\end{proof}

\begin{lemma}\label{lem:wildinertiaprop} Let $C$ be an algebraically closed complete nonarchimedean field of characteristic $p$, and let $\gamma$ be a continuous automorphism of $C$ such that $\gamma^{n!}$ converges (pointwise) to the identity for $n\to \infty$. Assume that $\gamma$ acts trivially on $C^\times / (1+C^{\circ\circ})$. Then $\gamma^{p^n}$ converges (pointwise) to the identity for $n\to \infty$.
\end{lemma}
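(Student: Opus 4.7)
The approach is to exploit the characteristic-$p$ structure of $C$ via a Frobenius-type identity on the \emph{additive} endomorphism $\delta := \gamma - \id$ of the abelian group $C$. The hypothesis that $\gamma$ acts trivially on $C^\times/(1+C^{\circ\circ})$ means $\gamma(x)/x \in 1+C^{\circ\circ}$ for every $x \in C^\times$, so $\delta(x) = x\cdot(\gamma(x)/x - 1)$ satisfies
\[
|\delta(x)| < |x| \quad \text{for every } x \in C^\times,
\]
with $\delta(0)=0$. In particular the sequence $|\delta^j(x)|$ is non-increasing in $j$, with strict inequality at each step as long as $\delta^j(x) \neq 0$.

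Next, since $\gamma = \id + \delta$ with $\id$ and $\delta$ commuting in the $\mathbb{F}_p$-algebra $\End_{\mathrm{Ab}}(C)$ of additive self-maps of $C$, and since $\binom{p}{k}$ vanishes in $\mathbb{F}_p$ for $0 < k < p$, we obtain
\[
\gamma^p = (\id + \delta)^p = \id + \delta^p,
\]
and by induction $\gamma^{p^r} = \id + \delta^{p^r}$ for every $r \geq 0$. Now, for any $n$, write $n! = p^{v_p(n!)} m$ with $\gcd(m,p)=1$; then
\[
\gamma^{n!} - \id \;=\; (\id + \delta^{p^{v_p(n!)}})^m - \id \;=\; \sum_{k=1}^{m} \binom{m}{k}\, \delta^{k p^{v_p(n!)}}.
\]
For any $x \in C$, the $k=1$ summand $m \cdot \delta^{p^{v_p(n!)}}(x)$ has absolute value exactly $|\delta^{p^{v_p(n!)}}(x)|$, since $m \in \mathbb{F}_p^\times \subset C$ has absolute value $1$. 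For $k \geq 2$, the summand $\binom{m}{k}\,\delta^{kp^{v_p(n!)}}(x)$ has absolute value at most $|\delta^{kp^{v_p(n!)}}(x)|$, which by the first step is \emph{strictly} less than $|\delta^{p^{v_p(n!)}}(x)|$ whenever the latter is nonzero. By the ultrametric inequality, the maximum is attained uniquely by the $k=1$ term, yielding
\[
|\gamma^{n!}(x) - x| \;=\; |\delta^{p^{v_p(n!)}}(x)|.
\]

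Finally, the hypothesis forces the left-hand side to tend to $0$ for each fixed $x$, and $v_p(n!) \to \infty$; so the subsequence $|\delta^{p^{v_p(n!)}}(x)|$ tends to $0$. Combined with monotonicity of $|\delta^{p^r}(x)|$ in $r$, the entire sequence tends to $0$, and hence $|\gamma^{p^r}(x) - x| = |\delta^{p^r}(x)| \to 0$. The main subtlety is Step 3: one must notice that the Frobenius identity together with the fact that $\delta$ strictly contracts absolute values makes the $k=1$ term \emph{uniquely} dominant in the binomial expansion, so that the ultrametric gives a sharp equality rather than a mere bound; without this sharpness one could not convert the $n!$-hypothesis into control of the specific sequence $\gamma^{p^r}$.
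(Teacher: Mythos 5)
Your proof is correct and follows a genuinely different route from the paper's. The paper treats the closure $G$ of $\langle\gamma\rangle$ as a cyclic profinite group, passes to a pro-$\ell$-Sylow subgroup for a hypothetical prime $\ell \neq p$ dividing the pro-order, and derives a contradiction by constructing a nonzero continuous group homomorphism $G \to k = \OO_C/C^{\circ\circ}$ (sending $g$ to $\tfrac{g(x)/x-1}{y}\bmod C^{\circ\circ}$ for a suitable normalizer $y\in C^{\circ\circ}$), which must vanish because $k$ is $p$-torsion while $G$ is pro-$\ell$. You instead work entirely inside the ring $\mathrm{End}_{\mathrm{Ab}}(C)$ of additive self-maps: since this ring is an $\mathbb{F}_p$-algebra in which $\id$ and $\delta = \gamma-\id$ commute, the Frobenius identity $\gamma^{p^r} = \id + \delta^{p^r}$ holds, and the hypothesis on $C^\times/(1+C^{\circ\circ})$ is deployed exactly once --- to make $\delta$ a strict contraction on absolute values --- so that the ultrametric inequality makes the $k=1$ term uniquely dominant in the binomial expansion of $\gamma^{n!}-\id$, giving the equality $|\gamma^{n!}(x)-x|=|\delta^{p^{v_p(n!)}}(x)|$. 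Your approach is more elementary and self-contained: it avoids profinite group theory as well as the (not entirely trivial) verification that the paper's cocycle map is a well-defined homomorphism, and it yields the sharp quantitative statement $|\gamma^{p^r}(x)-x|=|\delta^{p^r}(x)|$, a monotone null sequence. The paper's argument is somewhat shorter once the framework of cyclic profinite groups and Sylow subgroups is in hand, and it is phrased directly in the language (pro-$p$-ness of the ``wild inertia'' subgroup) in which the lemma is immediately applied in Proposition~\ref{prop:cohomdimpointbound}.
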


\begin{proof} Let $G$ be the subgroup of the continuous automorphisms of $C$ generated by $\gamma$; then $G$ is a cyclic profinite group, and we have to see that $G$ is pro-$p$. If $G$ is not pro-$p$, then take some prime $\ell\neq p$ dividing the pro-order of $G$; replacing $G$ by a pro-$\ell$-Sylow subgroup (and $\gamma$ by a generator of it), we can assume that $G$ is pro-$\ell$. Then $\gamma^{\ell^n}$ converges pointwise to the identity for $n\to \infty$, and we have to see that $\gamma$ is the identity. For any $x\in C^\times$, the quotient $\frac{\gamma(x)}{x}\in C^\times$ lies in $1+C^{\circ\circ}$ by assumption. If it is not equal to $1$, pick some nonzero $y\in C^{\circ\circ}$ such that
\[
\frac{\frac{\gamma(x)}{x}-1}y\in \OO_C^\times\ .
\]
Then the association
\[
g\in G\mapsto \frac{\frac{g(x)}{x}-1} y\in \OO_C\to k=\OO_C/C^{\circ\circ}
\]
defines a continuous group homomorphism $G\to k$. As $k$ is $p$-torsion and $G$ is pro-$\ell$, this implies that the map is $0$, which contradicts the assumption on $y$. Thus, for all $x\in C^\times$, one has $\gamma(x)=x$, as desired.
\end{proof}

\section{Proper pushforward}\label{sec:properpushforward}

In this section, we finally define the functor $Rf_!$.

\begin{convention} In this section, we will always work with unbounded derived categories and therefore assume that $f$ is representable in locally spatial diamonds with locally $\dimtrg f<\infty$, which will locally guarantee the assumptions on bounded cohomological dimension in Theorem~\ref{thm:properbasechangepartpropvsopen} and Proposition~\ref{prop:Rfastsimple}. Everything below works as well for $D^+$ without the assumption $\dimtrg f<\infty$. Moreover, we assume throughout that $n\Lambda=0$ for some $n$ prime to $p$.
\end{convention}

As usual, one needs to restrict attention to morphisms admitting a compactification.

\begin{definition}\label{def:compactifiable} A morphism $f: Y^\prime\to Y$ of v-stacks is compactifiable if it can be written as a composite of an open immersion and a partially proper morphism.
\end{definition}

A key difference to the world of schemes is the presence of a canonical compactification.

\begin{proposition}\label{prop:compactifiable} Let $f: Y^\prime\to Y$ and $\tilde{Y}\to Y$ be morphisms of v-stacks, with pullback $\tilde{f}: \tilde{Y}^\prime = Y^\prime\times_Y \tilde{Y}\to \tilde{Y}$.
\begin{altenumerate}
\item The morphism $f$ is compactifiable if and only if it is separated and the natural map $Y^\prime\to \overline{Y^\prime}^{/Y}$ is an open immersion.
\item If $f$ is compactifiable, then $\tilde{f}$ is compactifiable.
\item If $\tilde{f}$ is compactifiable and $\tilde{Y}\to Y$ is a surjective map of v-stacks, then $f$ is compactifiable.
\item If $Y_1\to Y_2$ and $Y_2\to Y_3$ are compactifiable morphisms of v-stacks, then the composite $Y_1\to Y_3$ is compactifiable.
\item If $f$ is separated and representable in locally spatial diamonds and $Y^\prime$ admits a cover by open subfunctors $V\subset Y^\prime$ such that $f|_V: V\to Y$ is compactifiable, then $f$ is compactifiable. Moreover, for any such open subfunctor $V\subset Y^\prime$, the composite $V\to Y^\prime\to \overline{Y^\prime}^{/Y}$ is an open immersion.
\item If $f$ is separated and \'etale, then $f$ is compactifiable.
\item If $f$ is separated and representable in locally spatial diamonds and there is a separated surjective map $g: Z\to Y^\prime$ of v-stacks that after pullback to strictly totally disconnected spaces has local sections and such that $f\circ g$ is compactifiable, then $f$ is compactifiable.
\item If $g: Y\to Z$ is a separated map of v-stacks such that $g\circ f$ is compactifiable, then $f$ is compactifiable.
\end{altenumerate}
\end{proposition}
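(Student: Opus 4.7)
The plan is to use characterization (i) from this proposition: $f$ is compactifiable if and only if $f$ is separated and the canonical map $Y' \to \overline{Y'}^{/Y}$ is an open immersion. So the first step is to show that $f$ is separated. For this, I would use that $\Delta_{g\circ f}: Y' \to Y' \times_Z Y'$ is a closed immersion (as $g\circ f$ is compactifiable, hence separated), and that $\Delta_f: Y' \to Y' \times_Y Y'$ factors as $\Delta_f$ followed by the closed immersion $Y' \times_Y Y' \hookrightarrow Y' \times_Z Y'$ (obtained as the base change of the closed immersion $\Delta_g$). Testing on maps from totally disconnected perfectoid spaces, the image of $\Delta_f$ is the preimage of the closed image of $\Delta_{g\circ f}$, so $\Delta_f$ is a closed immersion.

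Next I would construct a natural comparison map $h: \overline{Y'}^{/Y} \to \overline{Y'}^{/Z}$. The cleanest way is to observe that the pullback $Y \times_Z \overline{Y'}^{/Z} \to Y$ is partially proper (by base change of the partially proper map $\overline{Y'}^{/Z} \to Z$ provided by Corollary~\ref{cor:cancomprelative}(i)), and that the natural map $Y' \to Y \times_Z \overline{Y'}^{/Z}$ over $Y$ therefore factors uniquely through $\overline{Y'}^{/Y}$ by the universal property in Proposition~\ref{prop:cancomp}. Projecting to $\overline{Y'}^{/Z}$ gives the desired map $h$. On the explicit description of canonical compactifications on a totally disconnected $\Spa(R,R^+)$, the map $h$ sends a pair $(\phi: \Spa(R,R^\circ)\to Y', \psi: \Spa(R,R^+)\to Y)$ with $f\circ\phi=\psi|_{\Spa(R,R^\circ)}$ to $(\phi, g\circ\psi)$.

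The heart of the proof is to show $Y' = h^{-1}(Y')$ as sub-v-stacks of $\overline{Y'}^{/Y}$. Since $g\circ f$ is compactifiable, the inclusion $Y' \hookrightarrow \overline{Y'}^{/Z}$ is an open immersion by (i), so $h^{-1}(Y') \subset \overline{Y'}^{/Y}$ is open, and the equality will give that $Y'\hookrightarrow \overline{Y'}^{/Y}$ is an open immersion, finishing the proof via (i). It suffices to check the equality on $(R,R^+)$-valued points for totally disconnected $\Spa(R,R^+)$. Given a pair $(\phi,\psi)\in \overline{Y'}^{/Y}(R,R^+)$ whose image in $\overline{Y'}^{/Z}$ lies in $Y'$, there is an extension $\tilde\phi: \Spa(R,R^+)\to Y'$ of $\phi$ with $gf\circ\tilde\phi = g\circ\psi$. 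Then $\psi$ and $f\circ\tilde\phi$ are two maps $\Spa(R,R^+)\to Y$ that agree on $\Spa(R,R^\circ)$ and have the same composite with $g$; applying Proposition~\ref{prop:genvalcritvsheaf} to the separated map $g: Y\to Z$ forces $\psi=f\circ\tilde\phi$, whence $(\phi,\psi)\in Y'(R,R^+)$.

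The main obstacle I foresee is just this valuative-criterion argument at the end, and making sure that the comparison map $h$ is well-defined and compatible with the inclusions of $Y'$ into both compactifications; beyond that, everything is formal manipulation of the universal property established in Proposition~\ref{prop:cancomp} and Proposition~\ref{prop:genvalcritvsheaf}.
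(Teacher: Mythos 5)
Your proposal addresses only part (viii) of the proposition, not parts (i)--(vii); I will restrict my review to (viii). Modulo the small typo ("$\Delta_f$ factors as $\Delta_f$ followed by\dots" should read "$\Delta_{g\circ f}$ factors as $\Delta_f$ followed by\dots"), your argument is correct and takes essentially the same route as the paper: the paper considers the chain of injections $Y'\hookrightarrow \overline{Y'}^{/Y}\hookrightarrow \overline{Y'}^{/Z}$ and deduces that the first map is an open immersion because it is a pullback of the composite. Your proof fills in the two points the paper leaves implicit: that $f$ is separated (so that $\overline{Y'}^{/Y}$ is defined at all), and that the comparison map $\overline{Y'}^{/Y}\to \overline{Y'}^{/Z}$ is an injection with the required pullback property, which you establish directly via Proposition~\ref{prop:genvalcritvsheaf} applied to the separated map $g$. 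So this is the same argument, stated at a greater level of detail.
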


In particular, the property of being compactifiable is v-local on the target by (iii), and for separated maps which are representable in locally spatial diamonds, it is \'etale local on the source by (vii).\footnote{In a previous version, it was claimed that in (vii), the local splitting condition on $g$ is not required, but there are counterexamples to that version. We do not know the optimal conditions on $g$; in particular, is it true when $g$ is universally open? This question is relevant to the question whether in Proposition~\ref{prop:smooth2outof3}, the condition ``$f$ is compactifiable'' is automatic.}

\begin{proof} For part (i), one direction is clear as $\overline{Y^\prime}^{/Y}$ is partially proper. For the converse, assume that $f$ is compactifiable, and let $Y^\prime\hookrightarrow Z$ be an open immersion into a partially proper $Z\to Y$. As open immersions and partially proper morphisms are separated, we see that $f$ is separated. Moreover, we get a map $\overline{Y^\prime}^{/Y}\to Z$, which is still an injection. Thus $Y^\prime\to \overline{Y^\prime}^{/Y}$ is a pullback of the open immersion $Y^\prime\to Z$, and so an open immersion itself.

Now parts (ii) and (iii) follow easily from (i). In Part (iv), we may now assume that $Y_3$, and thus all $Y_i$, are separated. Then $Y_2\to \overline{Y_2}^{/Y_3}$ and $Y_1\to \overline{Y_1}^{/Y_2}$ are open immersions, thus so is
\[
Y_1\to \overline{Y_1}^{/Y_2}=\overline{Y_1}\times_{\overline{Y_2}} Y_2\to \overline{Y_1}\times_{\overline{Y_2}} \overline{Y_2}^{/Y_3}=\overline{Y_1}\times_{\overline{Y_3}} Y_3 = \overline{Y_1}^{/Y_3}\ ,
\]
as desired.

In part (v), it is enough to prove that for any open subspace $V\subset Y^\prime$, the composite $V\to \overline{Y^\prime}^{/Y}$ is an open immersion. For this, we can assume $Y=\overline{Y^\prime}^{/Y}$, and we can work v-locally on $Y$, so we can assume $Y=\Spa(A,A^+)$ is strictly totally disconnected. In that case, by Proposition~\ref{prop:vinjindrepr}, $Y^\prime$ is a filtered union of open subspaces (of $Y^\prime$; not a priori of $Y$!) of the form $\Spa(A,(A^+)^\prime)$ for certain varying rings of integral elements $(A^+)^\prime\subset A$ containing $A^+$. One can assume that the subspaces $V$ are quasicompact, in which case $V\subset \Spa(A,(A^+)^\prime)\subset Y^\prime$ for some such $(A^+)^\prime$, and for any $x\in V$ there are $f_1,\ldots,f_n,g\in A$ generating the unit ideal such that the rational subset $U=\{|f_i|\leq |g|\}\subset Y$ satisfies $x\in \Spa(A,(A^+)^\prime)\cap U\subset V$. But then $U\subset \overline{V} = \overline{V}^{/Y}$. As $V\subset \overline{V}^{/Y}$ is an open immersion by assumption, this implies that $U\cap V\subset U\cap \overline{V}=U$ is an open immersion. In other words, $x\in U\cap V\subset U\subset Y$ is an open neighborhood of $x$ in $Y$ which is contained in $Y^\prime$. These cover $Y^\prime$, so that $Y^\prime$ is open in $Y$, as desired.

For part (vi), we may assume (by (iii)) that $Y=X$ is a strictly totally disconnected perfectoid space. Then $Y^\prime=X^\prime$ is a perfectoid space which is separated and \'etale over $X$. By (v), we may assume that $Y^\prime$ is quasicompact. In that case, $Y^\prime$ decomposes as a disjoint union of quasicompact open subspaces of $Y$, so the result is clear.

For part (vii), we start as in part (v): We can assume that $Y=\overline{Y^\prime}^{/Y}$, and we need to see that $f: Y^\prime\hookrightarrow Y$ is an open immersion. This can be checked v-locally on $Y$, so we can assume that $Y=\Spa(A,A^+)$ is a strictly totally disconnected space, so that $Y^\prime$ is a locally spatial diamond. By part (v), we can assume that $Y^\prime$ is quasicompact, and then it is itself strictly totally disconnected. But then $g$ is locally split by assumption, so the result follows from (v).

In part (viii), we have injections
\[
Y^\prime\hookrightarrow \overline{Y^\prime}^{/Y}\hookrightarrow \overline{Y^\prime}^{/Z}\ .
\]
If the composite is an open immersion, then so is the first map (as it is a pullback of the composite). 
\end{proof}

Now if $f: Y^\prime\to Y$ is a quasicompact compactifiable map of small v-stacks, we can write $f$ as a composite of an open immersion $j: Y^\prime\hookrightarrow \overline{Y^\prime}^{/Y}$ and the proper map $\overline{f}^{/Y}: \overline{Y^\prime}^{/Y}\to Y$. In that case, we will momentarily define $Rf_!$ as the composite $R\overline{f}^{/Y}_\ast \circ j_!$.

However, we will be interested in the case where $f$ is not necessarily quasicompact. In that case $\overline{f}^{/Y}: \overline{Y^\prime}^{/Y}\to Y$ is only partially proper, and one should restrict to sections with proper support. In a fully derived setting, this is somewhat nontrivial to do. But we are already restricting to maps which are representable in locally spatial diamonds, so we can get around this difficulty. Namely, if $Y$ and $Y^\prime$ are locally spatial diamonds, and say that $Y$ is even spatial, then one can write $Y^\prime$ as an increasing union of quasicompact open subspaces $V\subset Y^\prime$, and $Rf_!$ should be defined as the filtered colimit of $R(f|_V)_!$, which are defined as before. One can then descend this construction to the case where $f$ is merely representable in locally spatial diamonds, as $Rf_!$ satisfies base change.

We will now execute this line of thoughts, one step at a time.

\begin{definition}\label{def:Rfshriekquasicompact} Let $f: Y^\prime\to Y$ be a compactifiable map of small v-stacks which is representable in spatial diamonds with $\dimtrg f<\infty$, and assume $n\Lambda=0$ for some $n$ prime to $p$. Write $f$ as the composite of the open immersion $j: Y^\prime\hookrightarrow \overline{Y^\prime}^{/Y}$ and the proper map $\overline{f}^{/Y}: \overline{Y^\prime}^{/Y}\to Y$. Then we define
\[
Rf_! = R\overline{f}^{/Y}_\ast\circ j_!: D_\et(Y^\prime,\Lambda)\to D_\et(Y,\Lambda)\ .
\]
\end{definition}

Our first aim is to prove that this definition is well-behaved on unbounded derived categories. For this reason, we need to know that $R\overline{f}^{/Y}_\ast$ has bounded cohomological dimension.

\begin{theorem}\label{thm:dimtrgfinitecohomdim} Let $f: Y^\prime\to Y$ be a compactifiable map of small v-stacks which is representable in spatial diamonds with $\dimtrg f<\infty$, and assume $n\Lambda=0$ for some $n$ prime to $p$. Write $f$ as the composite of the open immersion $j: Y^\prime\hookrightarrow \overline{Y^\prime}^{/Y}$ and the proper map $\overline{f}^{/Y}: \overline{Y^\prime}^{/Y}\to Y$. Then $R\overline{f}^{/Y}_\ast$ has bounded cohomological dimension; more precisely, for all $A\in D_\et(\overline{Y^\prime}^{/Y},\Lambda)$ concentrated in degree $0$, one has
\[
R^i \overline{f}^{/Y}_\ast A=0
\]
for $i>3\dimtrg f$.

In particular, as $j_\ast$ is exact (by Remark~\ref{rem:qcsepqproetpushforward}), also
\[
R^i f_\ast A=0
\]
for all $A\in D_\et(Y^\prime,\Lambda)$ concentrated in degree $0$ and $i>3\dimtrg f$, so that $Rf_\ast$ has bounded cohomological dimension.
\end{theorem}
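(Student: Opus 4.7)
The approach is to reduce to a stalk computation and then apply Proposition~\ref{prop:cohombounded}, bounding the two constituents of the estimate by $\dimtrg f$.

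\emph{Step 1: Reduction to a strictly totally disconnected base.} Since $\overline{f}^{/Y}$ is proper, hence qcqs, Proposition~\ref{prop:Rfastsimple} identifies $R\overline{f}^{/Y}_\ast A$ with the v-pushforward for $A$ in degree zero, and shows this formation is compatible with arbitrary pullback. So the vanishing of $R^i\overline{f}^{/Y}_\ast A$ can be checked after pulling back to any v-cover of $Y$. Working v-locally on $Y$ and using a cover by a disjoint union of strictly totally disconnected perfectoid spaces, we reduce to $Y = X = \Spa(C,C^+)$ with $C$ algebraically closed and $X$ connected. Writing $g\colon Z \to X$ for the pullback of $\overline{f}^{/Y}$, the map $g$ is proper and representable in spatial diamonds with $\dimtrg g \le d := \dimtrg f$.

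\emph{Step 2: Stalk computation.} The sheaf $R^i g_\ast A$ on $X_\et \simeq |X|$ vanishes if and only if all its stalks do. For $y \in |X|$ corresponding to a valuation ring $(C^+)_y \supseteq C^+$, a further application of Proposition~\ref{prop:Rfastsimple} to the localization $X_y = \Spa(C,(C^+)_y) \hookrightarrow X$ identifies the stalk of $R^i g_\ast A$ at $y$ with $H^0(X_y, R^i g_{y,\ast} A|_{Z_y})$, where $Z_y = Z \times_X X_y$; this uses that the closed point of the totally ordered chain $|X_y|$ lies in every nonempty open. By the Leray spectral sequence $E_2^{p,q} = H^p(X_y, R^q g_{y,\ast} A|_{Z_y}) \Rightarrow H^{p+q}(Z_y, A|_{Z_y})$, this stalk is $E_2^{0,i}$.

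\emph{Step 3: Applying the cohomological dimension bound.} Proposition~\ref{prop:cohombounded}, applied to $Z_y$ with $U = \emptyset$, yields $H^j(Z_y, A|_{Z_y}) = 0$ for $j > \dim|Z_y| + \sup_z \cdim_\ell z$, the supremum over maximal points $z \in |Z_y|$. Since all such $z$ lie over the unique rank-one point $\Spa(C,\OO_C) \subset X_y$, Proposition~\ref{prop:cohomdimpointbound}, combined with the subadditivity of $\widetilde{\trc}$ from Lemma~\ref{lem:trcsubadditive}, gives $\sup_z \cdim_\ell z \le d$. The fibers of $g_y$ over rank-one points are proper spatial diamonds over a geometric point with $\dimtrg \le d$, so have topological dimension $\le d$ by Lemma~\ref{lem:compdim}; together with the chain structure of $|X_y|$, an iteration of proper base change along the subchain of lower-rank localizations absorbs the base contribution and delivers $\dim|Z_y| \le 2d$, hence $H^j(Z_y, A|_{Z_y}) = 0$ for $j > 3d$.

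\emph{Step 4: Conclusion via Leray.} By induction on the rank of $(C^+)_y$, one shows that for $q > 3d$ the sheaf $R^q g_{y,\ast} A|_{Z_y}$ is supported on the closed point of $X_y$ (its stalks at generalizations vanish by the inductive hypothesis), so $H^p(X_y, R^q g_{y,\ast} A|_{Z_y}) = 0$ for $p \ge 1$ and $q > 3d$. This forces the differentials $d_s\colon E_s^{0,i} \to E_s^{s,i-s+1}$ to vanish for $s \ge 2$ and $i > 3d$, so $E_2^{0,i} = E_\infty^{0,i}$ is a subobject of $H^i(Z_y, A|_{Z_y})$, hence vanishes by Step 3. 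The main obstacle is justifying the dimension estimate $\dim|Z_y| \le 2d$ together with the inductive control over the Leray differentials: a priori the topological dimension of $|Z_y|$ picks up a contribution from $\mathrm{rank}((C^+)_y)$, which must be absorbed into the uniform bound $3d$ via the inductive argument on lower-rank localizations.
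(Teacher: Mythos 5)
Your overall strategy (reduce to a strictly local $\Spa(C,C^+)$ base, compute stalks, and appeal to Proposition~\ref{prop:cohombounded} plus the bound $\cdim_\ell \le \dimtrg f$ from Proposition~\ref{prop:cohomdimpointbound}) is in the right spirit and resembles the first half of the paper's argument. But it rests on an assumption that the paper explicitly warns is not available, and this is precisely what forces the factor $3$ rather than $2$ in the bound.

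The gap is in Step 1, where you write that $g: Z \to X$, the pullback of $\overline{f}^{/Y}$, is ``proper and representable in spatial diamonds.'' The hypothesis of the theorem says that $f: Y'\to Y$ is representable in spatial diamonds, but this refers to the open piece $Y'$, not to its canonical compactification. The paper states flatly: ``for a compactifiable map $f: Y'\to Y$ of spatial diamonds, we do not know whether the canonical compactification $\overline{Y'}^{/Y}$ is spatial.'' So $Z$ and each localization $Z_y$ are only known to be quasicompact separated diamonds, not spatial ones, and Proposition~\ref{prop:cohombounded} (your main tool in Step 3) is proved only for spatial diamonds. Your entire Leray-spectral-sequence mechanism, and the dimension estimate $\dim|Z_y|\le 2d$, inherit this unjustified spatiality. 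You flag the dimension estimate as ``the main obstacle,'' but the deeper obstacle is upstream: without spatiality you cannot even set up the argument.

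The paper's workaround is the missing idea. Since $\overline{Y'}^{/Y}$ is quasicompact and separated, Proposition~\ref{prop:qcsepdiamondmaptoberkovich} furnishes a compact Hausdorff space $T = |\overline{Y'}^{/Y}|^B$ and a map $\overline{Y'}^{/Y}\to \underline{T}$ that is representable in locally spatial diamonds. Factoring the proper map as
\[
\overline{Y'}^{/Y}\;\xrightarrow{\;g\;}\;\underline{T}\times Y\;\xrightarrow{\;h\;}\;Y,
\]
the first leg $g$ is now representable in spatial diamonds, so your intended argument (after using proper base change to reduce to $A$ supported over the closed point, via Theorem~\ref{thm:properbasechangepartpropvsopen}) applies to $g$ and gives cohomological dimension $\le 2\dimtrg f$. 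The second leg $h$ is handled separately using Proposition~\ref{prop:compacthausdorffDet} and Scheiderer's bound on the cohomological dimension of spectral spaces, contributing at most $\dim f\le \dimtrg f$. This two-stage decomposition is what produces $3\dimtrg f$, and it is absent from your proposal. As written, your argument would, if correct, yield the sharper bound $2\dimtrg f$; the fact that the paper only obtains $3\dimtrg f$ is a hint that something more delicate is needed.
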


The constant $3$ may well be an artifact of the proof; if $\overline{f}^{/Y}$ is still representable in spatial diamonds, it can be replaced by $2$, as expected.

\begin{proof} By Proposition~\ref{prop:Rfastsimple}, we can assume that $Y$ is a strictly totally disconnected space. In fact, it is enough to check a statement about stalks, so we can reduce to the case $Y=\Spa(C,C^+)$, where $C$ is an algebraically closed nonarchimedean field and $C^+\subset C$ an open and bounded valuation subring, and we only need to check the statement on global sections. In that case, we need to prove that
\[
H^i(\overline{Y^\prime}^{/Y},A)=0
\]
for $i> 3\dimtrg f$. Let $s\in Y$ be the closed point and $U=Y\setminus \{s\}$, $V=f^{-1}(U)\subset \overline{Y^\prime}^{/Y}$, with inclusion $j: V\hookrightarrow \overline{Y^\prime}^{/Y}$. By Theorem~\ref{thm:properbasechangepartpropvsopen}, we know that
\[
R\Gamma(\overline{Y^\prime}^{/Y},j_!j^\ast A)=0\ .
\]
Thus, replacing $A$ by the cone of $j_!j^\ast A\to A$, we can assume that $j^\ast A=0$.

Now, if $\overline{Y^\prime}^{/Y}$ is a spatial diamond, note that $\dim(\overline{Y^\prime}^{/Y}\setminus V) = \dim (\overline{f}^{/Y})^{-1}(s)\leq \dim \overline{f}^{/Y}\leq \dimtrg \overline{f}^{/Y} = \dimtrg f$ and $\cdim_\ell y^\prime\leq \dimtrg f$ for all maximal points $y^\prime\in \overline{Y^\prime}^{/Y}$ (which all lie in $Y^\prime$) and $\ell\neq p$ by Proposition~\ref{prop:cohomdimpointbound}, so the result follows from Proposition~\ref{prop:cohombounded}.

Unfortunately, for a compactifiable map $f: Y^\prime\to Y$ of spatial diamonds, we do not know whether the canonical compactification $\overline{Y^\prime}^{/Y}$ is spatial. However, by Proposition~\ref{prop:qcsepdiamondmaptoberkovich} and Proposition~\ref{prop:berkovichspacemaxhausdorffquot}, there is a compact Hausdorff space $T$ (given as the maximal Hausdorff quotient of $|\overline{Y^\prime}^{/Y}|$, which is also the maximal Hausdorff quotient of the spectral space $|Y^\prime\times_{\Spa(C,C^+)} \Spa(C,\OO_C)|$, as they share the same maximal points) and a map $\overline{Y^\prime}^{/Y}\to \underline{T}$ which is representable in locally spatial diamonds. In particular, one gets a map
\[
g: \overline{Y^\prime}^{/Y}\to \underline{T}\times Y
\]
of proper diamonds over $Y$, which is thus necessarily proper; moreover, it is representable in spatial diamonds. The previous arguments imply that
\[
R^ig_\ast A=0
\]
for $i>2\dimtrg g=2\dimtrg f$. Let $h: \underline{T}\times Y\to Y$ be the projection. It remains to see that for any $B\in D_\et(\underline{T}\times Y,\Lambda)$ concentrated in degree $0$ and with trivial restriction to $\underline{T}\times U$, we have $H^i(\underline{T}\times Y,B)=0$ for $i>\dimtrg f$.

In fact, we claim that $H^i(\underline{T}\times Y,B)=0$ for $i>\dim f$. This follows from Proposition~\ref{prop:compacthausdorffDet} below, noting that if $B$ is trivial on $\underline{T}\times U$, one gets
\[
H^i(\underline{T}\times Y,B)=H^i(T,B|_{T\times \{s\}})\ ,
\]
and by Proposition~\ref{prop:cohomberkovich}, we have $H^i(T,-) = H^i(|Y^\prime\times_{\Spa(C,C^+)} \Spa(C,\OO_C)|,-)$, the latter of which is a spectral space of dimension $\leq \dim f$, so we conclude using \cite[Corollary 4.6]{Scheiderer}.
\end{proof}

In the final step, we used a result on proper diamonds over strictly totally disconnected spaces of $\dimtrg =0$.

\begin{proposition}\label{prop:0dimpartpropermap} Let $X$ be a strictly totally disconnected perfectoid space. Then the category of proper diamonds $f: Y\to X$ with $\dimtrg f=0$ is equivalent to the category of compact Hausdorff spaces over $\pi_0 X$, via sending a compact Hausdorff space $T\to \pi_0 X$ to $X\times_{\underline{\pi_0 X}} \underline{T}$.
\end{proposition}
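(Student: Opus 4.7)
The plan is to define a candidate quasi-inverse $G\colon Y\mapsto |Y|^B$, where $|Y|^B$ is the maximal Hausdorff quotient of $|Y|$ from Proposition~\ref{prop:qcqsvsheafberkovichcompacthausdorff}, and show that $F\colon T\mapsto Y_T:=X\times_{\underline{\pi_0 X}}\underline{T}$ and $G$ are mutually inverse. First I would check $F$ is well-defined: since $X$ is strictly totally disconnected it is qcqs, and each connected component $\Spa(C,C^+)\subset X$ has a unique rank-$1$-point, so $|X|^B=\pi_0 X$. A continuous map $T\to \pi_0 X$ between compact Hausdorff spaces is classically proper, and this translates to $\underline{T}\to \underline{\pi_0 X}$ being a proper map of v-sheaves (separatedness uses Hausdorffness of $T$, and the valuative criterion is automatic because $|\Spa(R,R^+)|$ is connected and $T$ is Hausdorff, so $\underline{T}(R,R^+)=\underline{T}(R,R^\circ)=T$ on geometric test objects). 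Base-changing, $Y_T\to X$ is proper; its geometric fibres are $\underline{T_c}\times\Spa(C_c,\OO_{C_c})$, which can be written as a profinite-quotient of affinoid perfectoid spaces in which every rank-$1$-point has residue field $C_c$, giving $\dimtrg=0$. Conversely, if $f\colon Y\to X$ is proper then $Y$ is qcqs, so $|Y|^B$ is compact Hausdorff and inherits a natural map to $|X|^B=\pi_0 X$.

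That $G\circ F\simeq\id$ is routine: the projection $Y_T\to\underline{T}$ induces $|Y_T|^B\to T$, and using the description of $|Y_T|$ from Proposition~\ref{prop:vtopspaceindep} one verifies this is a continuous bijection between compact Hausdorff spaces, hence a homeomorphism. For $F\circ G\simeq\id$, given $f\colon Y\to X$ proper with $\dimtrg f=0$, I would consider the natural map
\[
\phi_Y\colon Y\longrightarrow X\times_{\underline{\pi_0 X}}\underline{|Y|^B}\ ,
\]
combining $f$ with the canonical morphism $Y\to\underline{|Y|^B}$. Both source and target are qcqs v-sheaves over $X$ (for the target, write $|Y|^B=S/R$ for $S,R$ profinite, so that $X\times_{\underline{\pi_0 X}}\underline{S}$ is an affinoid pro-\'etale perfectoid space over $X$), whence $\phi_Y$ is a qcqs map. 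By Lemma~\ref{lem:isomqcqsvsheaves}, it suffices to prove $\phi_Y$ is bijective on $(K,K^+)$-valued points for every algebraically closed perfectoid field $K$ with open and bounded valuation subring $K^+\subset K$.

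The proper valuative criterion (Proposition~\ref{prop:valcritvsheafprop}) applies to $Y\to X$, and to the target $X\times_{\underline{\pi_0 X}}\underline{|Y|^B}\to X$ (using $\underline{|Y|^B}(\Spa(K,K^+))=|Y|^B$ by Hausdorffness plus connectivity of $|\Spa(K,K^+)|$), reducing the bijectivity check to the $(K,\OO_K)$-case. The heart of the argument is then the residue-field computation: for any $y\in|Y|$ whose image $x\in|X|$ is rank-$1$ with residue field $C_c$, the hypothesis $\dimtrg f=0$ forces $\widetilde{\trc}(C(y)/C_c)=0$, and since $C_c$ is already complete and algebraically closed, unravelling the definition of $\widetilde{\trc}$ shows $C(y)=C_c$. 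Therefore any map $\Spa(K,\OO_K)\to Y$ factors uniquely as $\Spa(K,\OO_K)\to\Spa(C_c,\OO_{C_c})\hookrightarrow Y$, and such a factorization is determined by the underlying map to $X$ together with the rank-$1$-point of $|Y|^B$ lying over $x$; this is precisely the data of a $(K,\OO_K)$-point on the right-hand side of $\phi_Y$, giving the desired bijection.

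Full faithfulness of $F$ reduces, via $G\circ F\simeq\id$, to verifying that any morphism $g\colon Y_{T_1}\to Y_{T_2}$ over $X$ coincides with $F(|g|^B)$; since $Y_{T_2}\to X$ is separated, equality of the two maps is detected by the diagonal and reduces via the proper valuative criterion to an equality of $(K,\OO_K)$-points, where both maps act as $(x,t_1)\mapsto (x,|g|^B(t_1))$. The main obstacle in this plan is the residue-field extraction --- confirming that $\widetilde{\trc}(C(y)/C_c)=0$ forces $C(y)=C_c$ when $C_c$ is already complete and algebraically closed. Once that is in hand, everything else is a careful but essentially formal application of Lemma~\ref{lem:isomqcqsvsheaves} and the valuative criterion of properness.
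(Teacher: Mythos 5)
Your proposal takes a genuinely different route from the paper. The paper's proof is constructive: it takes a quasi-pro-\'etale cover $\tilde X\to Y$ by a strictly totally disconnected space, shows that its compactification $\overline{\tilde X}^{/X}$ is $X\times_{\underline{\pi_0 X}}\underline S$ for a profinite $S$ (this is where $\dimtrg f=0$ enters, identifying all residue fields with the base), repeats the argument for the equivalence relation to get $R=X\times_{\underline{\pi_0 X}}\underline{S'}$, and exhibits $Y=X\times_{\underline{\pi_0 X}}\underline{T}$ with $T=S/S'$, after which the equivalence of categories is read off from Example~\ref{ex:compacthausdorff}. You instead define the candidate inverse functor intrinsically as $Y\mapsto|Y|^B$, form the canonical comparison map $\phi_Y\colon Y\to X\times_{\underline{\pi_0 X}}\underline{|Y|^B}$, and verify it is an isomorphism by the point-counting criterion Lemma~\ref{lem:isomqcqsvsheaves}, using the valuative criterion to reduce to $(K,\OO_K)$-points. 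The same residue-field consequence of $\dimtrg f=0$ is the engine in both arguments, but you avoid choosing an explicit presentation, and your approach produces the compact Hausdorff space canonically as the maximal Hausdorff quotient $|Y|^B$ rather than via a choice-dependent quotient $S/S'$.

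One step is swept under the phrase ``factors uniquely.'' The residue-field computation gives you a quasi-pro-\'etale $\gamma\colon\Spa(C_c,\OO_{C_c})\to Y$ over the inclusion $\iota\colon\Spa(C_c,\OO_{C_c})\hookrightarrow X$ with image $t$, but it is not immediate that $\gamma$ is an injection of v-sheaves (the hook in your $\hookrightarrow Y$), nor that two such $\gamma,\gamma'$ (or, for injectivity of $\phi_Y$, two maps $\alpha_1,\alpha_2\colon\Spa(K,\OO_K)\to Y$ over the same $\beta\colon\Spa(K,\OO_K)\to X$ with the same image in $|Y|$) must coincide. This does follow, but it requires unwinding the definition of $|Y|$ in Proposition~\ref{prop:vtopspaceindep}: equality of image points yields a pair of surjections $p_1,p_2\colon\Spa(L,\OO_L)\rightrightarrows\Spa(C_c,\OO_{C_c})$ (respectively $\Spa(K,\OO_K)$) with $\gamma\circ p_1=\gamma'\circ p_2$; postcomposing with $f$ gives $\iota\circ p_1=\iota\circ p_2$ (resp.\ $\iota\circ\beta'\circ p_1=\iota\circ p_2$), and since $\iota$ is an injection by Example~\ref{ex:injpoint} one deduces $p_1=p_2$ (resp.\ $p_2=\beta'\circ p_1$); as $p_1$ is an epimorphism of v-sheaves (it is a v-cover of affinoid perfectoids, Theorem~\ref{thm:vsub}), this forces $\gamma=\gamma'$ (resp.\ $\alpha_2=\gamma\circ\beta'=\alpha_1$). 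With this filled in, your argument is complete.
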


As a special case, if $X=\Spa(C,C^+)$, then the category of proper diamonds $f: Y\to X$ of $\dimtrg f=0$ is equivalent to the category of compact Hausdorff spaces. This result was suggested to the author by M.~Rapoport.

\begin{proof} Let $f: Y\to X$ be any proper map of diamonds with $\dimtrg f=0$. Let $\tilde{X}\to Y$ be a quasi-pro-\'etale surjection from a strictly totally disconnected space. Then $\overline{\tilde{X}}^{/X}\to X$ is a proper map of strictly totally disconnected perfectoid spaces which induces an isomorphism on completed residue fields (by the assumption $\dimtrg f=0$); this implies that $\overline{\tilde{X}}^{/X} = X\times_{\underline{\pi_0 X}} \underline{S}$ for some profinite set $S\to \pi_0 X$. Write $X_S = X\times_{\underline{\pi_0 X}} \underline{S}$. We get a surjective map $X_S\to Y$ over $X$. The equivalence relation $R=X_S\times_Y X_S$ is again proper and pro-\'etale over $X$, so $R=X\times_{\underline{\pi_0 X}} \underline{S^\prime}$ for some profinite $S^\prime\subset S\times S$. Moreover, the maps $s,t: R\to X_S$ over $X$ are given by maps $S^\prime\to S$. Now $T=S/S^\prime$ is a compact Hausdorff space, and $Y=X\times_{\underline{\pi_0 X}} \underline{T}$. Using Example~\ref{ex:compacthausdorff}, we see that this gives the desired equivalence of categories.
\end{proof}

In this situation, we also need a characterization of the category $D^+_\et(Y,\Lambda)$.

\begin{proposition}\label{prop:compacthausdorffDet} Let $X$ be a strictly totally disconnected perfectoid space, and let $f: Y\to X$ be a proper map of diamonds of $\dimtrg f=0$, so that by Proposition~\ref{prop:0dimpartpropermap}, one has $Y=X\times_{\underline{\pi_0 X}} \underline{T}$ for some compact Hausdorff space $T\to \pi_0 X$. Then pullback under $Y\to |Y|$ defines an equivalence
\[
D^+(|Y|,\Lambda)\buildrel\simeq\over\to D^+_\et(Y,\Lambda)\ ,
\]
where $D^+(|Y|,\Lambda)$ is the derived category of sheaves of $\Lambda$-modules on the topological space $|Y|$.
\end{proposition}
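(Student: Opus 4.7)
The plan is to show that the natural morphism of topoi $\pi: Y_\et^\sim \to \mathrm{Sh}(|Y|)$, induced by sending an open subset $V \subset |Y|$ to the corresponding open sub-v-stack $Y_V \subset Y$, is an equivalence; passing to derived categories of $\Lambda$-modules then yields the statement of the proposition. Both topoi admit enough points indexed by $y \in |Y|$: on the topological side this is classical, and on the \'etale side this is Proposition~\ref{prop:etalesitepoints}, which attaches to each $y$ a quasi-pro-\'etale geometric point $\bar y: \Spa(C(y), C(y)^+) \to Y$. Since $\dimtrg f = 0$, the pair $(C(y), C(y)^+)$ agrees with $(C(x), C(x)^+)$ for the image point $x \in |X|$, and in particular $C(y)$ is algebraically closed.

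The key step is the cofinality claim: the open sub-v-stacks $Y_V$ with $V \subset |Y|$ an open neighborhood of $y$ form a cofinal system among \'etale neighborhoods of $\bar y$ in $Y_\et$. By Lemma~\ref{lem:spatialetalelocallysep}, any \'etale neighborhood of $\bar y$ factors locally as a quasicompact open immersion into a finite \'etale cover, so it suffices to show that for any finite \'etale $W \to Y$ whose pullback along $\bar y$ splits as a disjoint union of copies, there exists an open $V \ni y$ in $|Y|$ such that $W|_{Y_V} \to Y_V$ splits. Using the presentation from Proposition~\ref{prop:0dimpartpropermap}, write $Y = \tilde Y / R_S$ where $\tilde Y = X \times_{\underline{\pi_0 X}} \underline S$ is strictly totally disconnected (for some profinite $S \twoheadrightarrow T$ over $\pi_0 X$) and $R_S = X \times_{\underline{\pi_0 X}} \underline{S \times_T S}$ is the equivalence relation. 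Pull $W$ back to $\tilde W := W \times_Y \tilde Y$; since $\tilde Y$ is strictly totally disconnected, every finite \'etale cover globally splits, so $\tilde W \cong \bigsqcup_{i=1}^n \tilde Y$. The descent datum of $\tilde W$ along $\tilde Y \to Y$ becomes a continuous map $\sigma: |R_S| \to \Sigma_n$ (the discrete symmetric group on $n$ letters). By the splitting hypothesis at $\bar y$, $\sigma$ is trivial on the fiber of $|R_S| \to |Y|$ over $y$, which is the compact profinite set $\{x\} \times (S_t \times_{\{t\}} S_t)$. As $\sigma^{-1}(\mathrm{id})$ is open in $|R_S|$ and $R_S \to Y$ is proper, there exists an open $V \ni y$ in $|Y|$ whose preimage in $|R_S|$ lies entirely in $\sigma^{-1}(\mathrm{id})$. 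Over such a $V$ the descent datum is trivial, so $W|_{Y_V}$ splits as $\bigsqcup_{i=1}^n Y_V$, as desired.

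Granting the cofinality, the stalks $(\pi^* \mathcal G)_{\bar y}$ and $(R\pi_* A)_y$ are computed along the same cofiltered system of open neighborhoods, yielding $(\pi^* \mathcal G)_{\bar y} = \mathcal G_y$ and $(R^i \pi_* A)_y$ agreeing with the corresponding \'etale stalk of $\mathcal H^i(A)$; since both topoi have enough points, $\pi$ is an equivalence of topoi, and the equivalence of $D^+$ follows formally (the restriction to $D^+$ is used only to ensure that the relevant convergence is unproblematic, e.g.\ when comparing $R\pi_*$ to its truncations). The main obstacle is the cofinality claim, and within it the step that descends the open $\sigma^{-1}(\mathrm{id}) \subset |R_S|$ to an open in $|Y|$ using properness of $R_S \to Y$; everything else is a routine combination of the enough-points principle for topoi and the identification of the \'etale localization at $\bar y$ with $\Spa(C(x), C(x)^+)$.
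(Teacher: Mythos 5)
Your argument has a genuine gap at the very foundation: you work throughout with the \'etale topos $Y_\et^\sim$ of $Y$, but this site is not defined here. In the paper, $Y_\et$ is only defined for \emph{locally spatial} diamonds (Definition~\ref{def:diffsites}~(i)), and $Y=X\times_{\underline{\pi_0 X}}\underline{T}$ is a diamond but generally \emph{not} locally spatial: by Example~\ref{ex:compacthausdorff} one has $|\underline{T}\times\Spa(K,\OO_K)|=T$, which for a compact Hausdorff space like $T=[0,1]$ is not even locally spectral. For the same reason, both external inputs you invoke fail in this setting. Proposition~\ref{prop:etalesitepoints} on enough points of $Y_\et$ is stated for $Y$ locally spatial, and Lemma~\ref{lem:spatialetalelocallysep} on factoring \'etale maps through finite \'etale covers is likewise stated for locally spatial diamonds. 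So the entire apparatus of ``enough points of $Y_\et^\sim$ indexed by $y\in|Y|$'' and the cofinality claim among \'etale neighborhoods is built on a site that the paper never constructs for this $Y$, and which would need its own (nontrivial) foundational development to make sense.

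The paper's own proof sidesteps this by working with the v-topos $Y_v$, which is always defined, and the natural map of topoi $t: Y_v\to |Y|$. It observes that pullbacks $t^\ast\mathcal F$ land in $D^+_\et(Y_v,\Lambda)$ (here $D^+_\et(Y,\Lambda)\subset D(Y_v,\Lambda)$ is defined for arbitrary small v-stacks, not via an \'etale site) and then checks that the unit $\mathcal F\to Rt_\ast t^\ast\mathcal F$ and counit $\mathcal G\to t^\ast Rt_\ast\mathcal G$ are isomorphisms on stalks. The key technical step is different from yours: it is not a cofinality of open neighborhoods among \'etale ones, but the observation that the open neighborhoods of $y$ in $|Y|$ are cofinal with qcqs diamonds of the form $U\times_{\underline{\pi_0 X}}\underline{S}$ (with $U\subset X$ quasicompact open and $S\subset T$ the \emph{closure} of an open neighborhood, not an open set), whose cofiltered limit is the localization $\Spa(C(y),C(y)^+)$; one then invokes the commutation of $R\Gamma(-,C)$ with cofiltered limits of qcqs diamonds for $C\in D^+_\et$. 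Even if you replaced $Y_\et$ with $Y_\qproet$ (which is defined for diamonds) and tried to salvage the approach, you would still need to rebuild the enough-points and local-structure statements for $Y_\qproet$ rather than cite the locally spatial versions; the paper's route through $Y_v$ and qcqs approximations is considerably more economical.
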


\begin{proof} There is a natural map of topoi $t: Y_v\to |Y|$. It follows from the definitions that if $\mathcal F$ is a sheaf of $\Lambda$-modules on $|Y|$, then $t^\ast \mathcal F$ lies in $D^+_\et(Y_v,\Lambda)$. Thus, it is enough to prove the following assertions.
\begin{altenumerate}
\item For any sheaf of abelian groups $\mathcal F$ on $|Y|$, the adjunction map $\mathcal F\to Rt_\ast t^\ast \mathcal F$ is an isomorphism.
\item If $\mathcal G$ is a small sheaf of abelian groups on $Y_v$ such that for some cover $g: \tilde{Y}\to Y$ by some strictly totally disconnected perfectoid space $\tilde{Y}$, the restriction $g^\ast \mathcal G$ lies in $\tilde{Y}_\et^\sim\subset \tilde{Y}_v^\sim$, then the adjunction map $t^\ast Rt_\ast \mathcal G\to \mathcal G$ is an isomorphism.
\end{altenumerate}

First, note that for any small v-sheaf $Y$, if $C\in D^+_\et(Y,\Lambda)$, then for any cofiltered inverse system of qcqs diamonds $Z_i\to Y$ with inverse limit $Z$, one has
\[
R\Gamma(Z,C)=\varinjlim_i R\Gamma(Z_i,C)\ .
\]
This follows from Proposition~\ref{prop:etcohomlim} if all $Z_i$ are spatial; in general, writing all $Z_i$ compatibly as quotients of affinoid perfectoid spaces $X_i$ as in the proof of Lemma~\ref{lem:diamondlimit}, the equivalence relations $R_i=X_i\times_{Z_i} X_i$ are spatial diamonds by Proposition~\ref{prop:smallvsheaf}. The result then follows by descent from the case of the limits of the $X_i$, the $R_i$, the $R_i\times_{X_i} R_i$, etc.~.

To check claim (i) above, we have to check an equality of stalks. But note that if $y\in |Y|$, then the open neighborhoods of $y$ are cofinal with the neighborhoods of the form $U\times_{\underline{\pi_0 X}} \underline{S}$, where $U\subset X$ is a quasicompact open neighborhood of the image of $y$ in $X$, and $S\subset T$ is the closure of an open neighborhood of the image of $y$ in $T$; indeed, this follows from the similar observation for compact Hausdorff spaces. Note that such $U\times_{\underline{\pi_0 X}} \underline{S}$ are qcqs diamonds which form a cofiltered inverse system with inverse limit the set of generalizations of $Y_y$ in $|Y|$, which is of the form $\Spa(C(y),C(y)^+)$, where $C(y)$ is algebraically closed, and $C(y)^+\subset C(y)$ an open and bounded valuation subring. Thus, the stalk of $Rt_\ast t^\ast \mathcal F$ can also be computed as a direct limit of the cohomologies of $t^\ast \mathcal F$ over these neighborhoods $U\times_{\underline{\pi_0 X}} \underline{S}$, which by the remark above reduces to
\[
R\Gamma(Y_y,t^\ast \mathcal F)\ .
\]
But this is given by the stalk of $\mathcal F$ at $y$, as desired.

Now for claim (ii), we have to similarly check a claim on stalks, which again follows from the cofinality of open and qcqs neighborhoods.
\end{proof}

Now we come back to the analysis of the functor $Rf_!$.

\begin{proposition}\label{prop:Rfshriekquasicompactbasechange} Let $f: Y^\prime\to Y$ be a compactifiable map of small v-stacks which is representable in spatial diamonds with $\dimtrg f<\infty$, and assume $n\Lambda=0$ for some $n$ prime to $p$. Let $g: \tilde{Y}\to Y$ be any map of small v-stacks, with base change $\tilde{f}: \tilde{Y}^\prime = Y^\prime\times_Y \tilde{Y}\to \tilde{Y}$ and $g^\prime: \tilde{Y}^\prime\to Y^\prime$.

There is a natural base change equivalence
\[
g^\ast Rf_!\simeq R\tilde{f}_! g^{\prime\ast}
\]
of functors $D_\et(Y^\prime,\Lambda)\to D_\et(\tilde{Y},\Lambda)$.
\end{proposition}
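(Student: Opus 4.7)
The plan is to reduce everything to the two base change inputs we already have: base change for $j_!$ along étale maps (Proposition \ref{def:Rfshrieketale}), and base change for the proper part $R\overline{f}^{/Y}_{\ast}$ via Proposition \ref{prop:Rfastsimple} together with the cohomological boundedness result Theorem \ref{thm:dimtrgfinitecohomdim}.

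First I would observe that the formation of canonical compactification commutes with base change in $Y$, i.e.\ $\overline{\tilde Y'}^{/\tilde Y}\cong \overline{Y'}^{/Y}\times_Y \tilde Y$ canonically. This is immediate from the defining formula for $\overline{Y'}^{/Y}$ on totally disconnected test objects (Proposition \ref{prop:cancomp}) once one combines it with the compatibility $(Y'\times_Y \tilde Y)(R,R^\circ)=Y'(R,R^\circ)\times_{Y(R,R^\circ)}\tilde Y(R,R^\circ)$. Consequently we obtain a commutative diagram
\[\xymatrix{
\tilde Y^\prime\ar[r]^-{\tilde j}\ar[d]_{g^\prime} & \overline{\tilde Y^\prime}^{/\tilde Y}\ar[r]^-{\tilde{\overline f}^{/\tilde Y}}\ar[d]_{\bar g} & \tilde Y\ar[d]^{g}\\
Y^\prime\ar[r]^-{j} & \overline{Y^\prime}^{/Y}\ar[r]^-{\overline f^{/Y}} & Y
}\]
in which both squares are cartesian, $j$ and $\tilde j$ are open immersions, and $\overline f^{/Y}$ and $\tilde{\overline f}^{/\tilde Y}$ are proper maps representable in spatial diamonds with $\dimtrg\leq \dimtrg f<\infty$.

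Next I would assemble the base change equivalence by pasting two applications. On the left square, base change for $j_!$ along open immersions (the final assertion of Proposition \ref{def:Rfshrieketale}) gives a natural equivalence $\bar g^{\ast}\circ j_{!}\simeq \tilde j_{!}\circ g^{\prime\ast}$. On the right square, the map $\overline f^{/Y}$ is proper and hence qcqs, and by Theorem \ref{thm:dimtrgfinitecohomdim} the pushforward $R\overline f^{/Y}_{\ast}$ has cohomological dimension bounded by $3\dimtrg f<\infty$; this is exactly the hypothesis of the ``finite cohomological dimension'' part of Proposition \ref{prop:Rfastsimple}, which therefore yields a natural equivalence $g^{\ast}\circ R\overline f^{/Y}_{\ast}\simeq R\tilde{\overline f}^{/\tilde Y}_{\ast}\circ \bar g^{\ast}$ on all of $D_\et$. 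Composing,
\[
g^{\ast} Rf_{!}=g^{\ast} R\overline f^{/Y}_{\ast}j_{!}\simeq R\tilde{\overline f}^{/\tilde Y}_{\ast}\bar g^{\ast} j_{!}\simeq R\tilde{\overline f}^{/\tilde Y}_{\ast}\tilde j_{!}g^{\prime\ast}=R\tilde f_{!}g^{\prime\ast},
\]
as desired.

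There is no serious obstacle here: once the two input base change statements are available in unbounded form with finite cohomological dimension, the result follows formally. The only nontrivial point to verify carefully is that the canonical compactification is stable under base change in the target, but this is built into the definition. The real work has already been done in proving Theorem \ref{thm:dimtrgfinitecohomdim}, which is what licenses the use of Proposition \ref{prop:Rfastsimple} on unbounded complexes; without it one would only get the equivalence on $D^{+}_{\et}$.
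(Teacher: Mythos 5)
Your proof is correct and follows the same route the paper takes: the paper's proof is the single line ``This follows by combining Theorem~\ref{thm:dimtrgfinitecohomdim}, Proposition~\ref{prop:Rfastsimple} and Proposition~\ref{def:Rfshrieketale}.'' You have simply unwound that sentence into the two-square pasting argument (compactification commutes with base change; $j_!$ base change on the open half; finite-cohomological-dimension base change for $R\overline{f}^{/Y}_\ast$ on the proper half), which is exactly what the paper intends.
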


\begin{proof} This follows by combining Theorem~\ref{thm:dimtrgfinitecohomdim}, Proposition~\ref{prop:Rfastsimple} and Proposition~\ref{def:Rfshrieketale}.
\end{proof}

\begin{proposition}\label{prop:Rfshriekquasicompactcomp} Let $g: Y^{\prime\prime}\to Y^\prime$ and $f: Y^\prime\to Y$ be compactifiable maps of small v-stacks which are representable in spatial diamonds with $\dimtrg f,\dimtrg g <\infty$, and assume $n\Lambda=0$ for some $n$ prime to $p$. Then there is a natural equivalence
\[
Rf_!\circ Rg_!\simeq R(f\circ g)_!
\]
of functors $D_\et(Y^{\prime\prime},\Lambda)\to D_\et(Y,\Lambda)$.
\end{proposition}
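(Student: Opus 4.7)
The plan is to identify the canonical compactification of $f \circ g$ as an iterated canonical compactification and then reduce the statement to an application of proper base change. Set $\bar Y' := \overline{Y'}^{/Y}$ and $\bar Y'' := \overline{Y''}^{/Y'}$, with the open immersions $j_f : Y' \hookrightarrow \bar Y'$, $j_g : Y'' \hookrightarrow \bar Y''$ and the proper maps $\bar f : \bar Y' \to Y$, $\bar g : \bar Y'' \to Y'$. By definition,
\[
Rf_! \circ Rg_! = R\bar f_\ast \circ (j_f)_! \circ R\bar g_\ast \circ (j_g)_!,
\]
so the whole task is to massage the middle composite $(j_f)_! \circ R\bar g_\ast$ into something that combines with $R\bar f_\ast$ and $(j_g)_!$ to yield $R\overline{fg}_\ast \circ (j_{fg})_!$.

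First I would form $Z := \overline{\bar Y''}^{/\bar Y'}$, the canonical compactification of $\bar Y''$ over $\bar Y'$ (viewed through the separated composite $\bar Y'' \xrightarrow{\bar g} Y' \xhookrightarrow{j_f} \bar Y'$). By Corollary~\ref{cor:cancomprelative}(vi), the structure map $\bar g_Z : Z \to \bar Y'$ is proper. Since $\bar g$ is already proper (hence partially proper over $Y'$), the formation of $Z$ commutes with base change to the open $Y' \subset \bar Y'$, and so there is a cartesian square with $\bar g_Z$ above $j_f$ and $\bar g$ above $j_f' : \bar Y'' \hookrightarrow Z$, the latter an open immersion.

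The key computation is the identification $Z \cong \overline{Y''}^{/Y}$ as v-stacks over $Y$, under which $\bar f \circ \bar g_Z$ corresponds to $\overline{fg} : \overline{Y''}^{/Y} \to Y$ and the open immersion $j_f' \circ j_g : Y'' \hookrightarrow Z$ corresponds to the canonical open immersion $j_{fg} : Y'' \hookrightarrow \overline{Y''}^{/Y}$. To prove this I would evaluate on a totally disconnected test object $\Spa(R,R^+)$ and note that neither $\bar Y'(R,R^\circ)$ nor $\bar Y''(R,R^\circ)$ changes under passage to the canonical compactification, i.e.\ $\bar Y'(R,R^\circ) = Y'(R,R^\circ)$ and $\bar Y''(R,R^\circ) = Y''(R,R^\circ)$, so that the iterated definition
\[
Z(R,R^+) = \bar Y'(R,R^+) \times_{\bar Y'(R,R^\circ)} \bar Y''(R,R^\circ)
\]
collapses (after substituting the definition of $\bar Y'(R,R^+)$) to $Y(R,R^+) \times_{Y(R,R^\circ)} Y''(R,R^\circ) = \overline{Y''}^{/Y}(R,R^+)$. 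The universal property then identifies the open immersion and the map to $Y$ as claimed.

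With this in hand, proper base change (Theorem~\ref{thm:properbasechangepartpropvsopen}) applied to the cartesian square above yields a natural equivalence
\[
(j_f)_! \circ R\bar g_\ast \;\simeq\; R\bar g_{Z,\ast} \circ (j_f')_! .
\]
The hypotheses are satisfied: we have $n\Lambda = 0$ by assumption, and $R\bar g_{Z,\ast}$ has finite cohomological dimension by Theorem~\ref{thm:dimtrgfinitecohomdim}, since rank-$1$ points of $\bar Y'$ coincide with rank-$1$ points of $Y'$, giving $\dimtrg \bar g_Z = \dimtrg \bar g = \dimtrg g < \infty$. Substituting, using $(j_f')_! \circ (j_g)_! = (j_f' \circ j_g)_! = (j_{fg})_!$, and combining the two proper pushforwards, we obtain
\[
Rf_! \circ Rg_! \;\simeq\; R\bar f_\ast \circ R\bar g_{Z,\ast} \circ (j_{fg})_! \;=\; R\overline{fg}_\ast \circ (j_{fg})_! \;=\; R(f \circ g)_!,
\]
as desired. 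The main obstacle is the identification $Z \cong \overline{Y''}^{/Y}$ — this ``composition of canonical compactifications'' statement is the one piece of genuine geometric content; once it is established, the rest is a purely formal manipulation of proper base change and composition of derived pushforwards.
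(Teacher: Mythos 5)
Your proof is correct and follows essentially the same route as the paper. The paper's proof also works with the diagram
\[\xymatrix{
Y''\ar[r]^{j_1}\ar[dr]_g & \overline{Y''}^{/Y'}\ar[r]^{j_2}\ar[d]^{g_1} & \overline{Y''}^{/Y}\ar[d]^{g_2}\\
& Y'\ar[r]^{j_3}\ar[dr]_f & \overline{Y'}^{/Y}\ar[d]^{f_1}\\
&& Y
}\]
and applies Theorem~\ref{thm:properbasechangepartpropvsopen} and Theorem~\ref{thm:dimtrgfinitecohomdim} to the cartesian square formed by $g_1$, $g_2$, $j_2$, $j_3$ to obtain $j_{3!}\,Rg_{1\ast}\simeq Rg_{2\ast}\,j_{2!}$; the rest is the formal rewriting $Rf_! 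Rg_! = Rf_{1\ast} j_{3!} Rg_{1\ast} j_{1!}$ and $R(f\circ g)_! = Rf_{1\ast} Rg_{2\ast} j_{2!} j_{1!}$. What you add — and the paper leaves implicit — is the verification that the square is cartesian, that $g_2$ is proper with $\dimtrg g_2 = \dimtrg g$, and that the top-right vertex is indeed $\overline{Y''}^{/Y}$; your computation on totally disconnected test objects $\Spa(R,R^+)$, using $\overline{Y'}^{/Y}(R,R^\circ)=Y'(R,R^\circ)$ and $\overline{Y''}^{/Y'}(R,R^\circ)=Y''(R,R^\circ)$, cleanly establishes all of this. So your ``composition of canonical compactifications'' step is not an alternative argument but a careful justification of the diagram the paper draws without comment.
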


\begin{proof} Consider the following diagram
\[\xymatrix{
Y^{\prime\prime}\ar[r]^{j_1}\ar[dr]^g & \overline{Y^{\prime\prime}}^{/Y^\prime}\ar[r]^{j_2}\ar[d]^{g_1} & \overline{Y^{\prime\prime}}^{/Y}\ar[d]^{g_2}\\
& Y^\prime\ar[r]^{j_3}\ar[dr]^f & \overline{Y^\prime}^{/Y}\ar[d]^{f_1}\\
&& Y\ .
}\]
Then
\[
Rf_! Rg_! = Rf_{1\ast} j_{3!} Rg_{1\ast} j_{1!}
\]
and
\[
R(f\circ g)_! = Rf_{1\ast} Rg_{2\ast} j_{2!} j_{1!}\ .
\]
However, by Theorem~\ref{thm:properbasechangepartpropvsopen} and Theorem~\ref{thm:dimtrgfinitecohomdim} (applied to $Y^{\prime\prime}\to \overline{Y^\prime}^{/Y}$), we have $j_{3!} Rg_{1\ast} = Rg_{2\ast} j_{2!}$, as desired.
\end{proof}

In case of overlap, the current definition of $Rf_!$ agrees with Definition~\ref{def:Rfshrieketale}.

\begin{proposition}\label{prop:compRfshrieketaleqc} Let $f: Y^\prime\to Y$ be a quasicompact separated \'etale map of small v-stacks, and assume $n\Lambda=0$ for some $n$ prime to $p$. Then $Rf_!$ as defined in Definition~\ref{def:Rfshrieketale} agrees with $Rf_!$ as defined in Definition~\ref{def:Rfshriekquasicompact}.
\end{proposition}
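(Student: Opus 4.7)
The plan is to verify the identification locally on $Y$ and then check it explicitly in the local case. Both constructions commute with arbitrary base change: the left adjoint $Rf_!^{\mathrm{\acute et}}$ of Definition/Proposition~\ref{def:Rfshrieketale} by its stated base change property, and $Rf_!^{\mathrm{new}} := R\overline{f}^{/Y}_\ast \circ j_!$ by Proposition~\ref{prop:Rfshriekquasicompactbasechange} combined with base change of $j_!$. Hence, to produce a natural equivalence $Rf_!^{\mathrm{new}} \simeq Rf_!^{\mathrm{\acute et}}$, it suffices to check the equality after pullback along any map from a strictly totally disconnected perfectoid space; since such pullbacks determine objects of $D_\et(Y,\Lambda)$ by v-descent, we may assume $Y = \Spa(C,C^+)$ for $C$ algebraically closed with open and bounded valuation subring $C^+ \subset C$.

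In this local setting, $Y' \to Y$ is a quasicompact separated \'etale map of perfectoid spaces. By Lemma~\ref{lem:compactificationetalemap}, the canonical factorization $Y' \hookrightarrow \overline{Y'}^{/Y} \to Y$ realizes the canonical compactification as a finite \'etale map $\overline{f}: \overline{Y'}^{/Y} \to Y$ (with $j$ still the open immersion). In particular $\overline{f}$ has finite cohomological dimension, so by Proposition~\ref{prop:Rfastsimple} the v-pushforward $R\overline{f}_\ast$ restricted to $D_\et(\overline{Y'}^{/Y},\Lambda)$ coincides with the classical \'etale pushforward $R\overline{f}_{\et\ast}$ on the unbounded derived category.

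The core step is then the classical identity that for a finite \'etale map $\overline{f}$, one has a natural equivalence $R\overline{f}_{\et\ast} \simeq \overline{f}_!^{\mathrm{\acute et}}$. Both are exact functors commuting with all colimits; on a geometric point of $Y$, the stalks of both sides are given by the abelian group of sections over the (finite) geometric fiber, where the canonical comparison between finite product and finite coproduct produces a natural equivalence. This can be made global as the trace/sum-over-fiber map $\overline{f}_!^{\mathrm{\acute et}} \to R\overline{f}_{\et\ast}$, which is a stalkwise isomorphism. Combining, we obtain
\[
Rf_!^{\mathrm{new}} = R\overline{f}_\ast \circ j_! \simeq \overline{f}_!^{\mathrm{\acute et}} \circ j_! \simeq (\overline{f} \circ j)_!^{\mathrm{\acute et}} = f_!^{\mathrm{\acute et}},
\]
using compatibility of the \'etale $(-)_!$ with composition.

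The main obstacle I expect is verifying compatibility of the equivalence $R\overline{f}_{\et\ast} \simeq \overline{f}_!^{\mathrm{\acute et}}$ with base change well enough that the locally defined isomorphism glues to a global one. This should be handled by observing that the natural transformation $\overline{f}_!^{\mathrm{\acute et}} \to R\overline{f}_\ast$ is available canonically on any qcqs separated \'etale $\overline{f}$ (not only finite \'etale ones), being a stalkwise isomorphism precisely when fibers are finite; then the v-local reduction above forces the map to be an isomorphism after any base change, and thus globally by the descent formalism for $D_\et$.
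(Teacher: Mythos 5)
Your overall plan---construct a global natural transformation and then reduce by base change to a geometric point where both sides are explicit---is the same as the paper's, and your local analysis is sound (the paper instead notes that over a strictly totally disconnected base $Y'$ decomposes as a disjoint union of quasicompact open subsets of $Y$, whereas you reduce further to $Y=\Spa(C,C^+)$ and invoke Lemma~\ref{lem:compactificationetalemap} to see $\overline{Y'}^{/Y}\to Y$ is finite \'etale; both work). You also correctly flag the main obstacle: one needs a globally defined natural transformation before the local check is meaningful.

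However, your proposed fix for the global transformation does not work. The ``forgetting supports'' map you invoke goes from $f_!^\et$ to the ordinary pushforward $Rf_*$, not to the new $Rf_! = R\overline{f}^{/Y}_*\circ j_!$; and if $\overline{f}$ is meant to be $\overline{f}^{/Y}$, no such map exists because $\overline{f}^{/Y}$ is proper but in general not \'etale, so $(\overline{f}^{/Y})_!^\et$ is undefined. Knowing that both $f_!^\et\to Rf_*$ and $Rf_!\to Rf_*$ exist separately does not give a comparison between $f_!^\et$ and $Rf_!$. Moreover, the claim that the forgetting-supports map is a ``stalkwise isomorphism precisely when fibers are finite'' is incorrect: all fibers of a quasicompact separated \'etale map are already finite, yet for a quasicompact open immersion $j: U\hookrightarrow X$ the map $j_!\to Rj_*$ fails to be an isomorphism on stalks along $\overline{U}\setminus U$. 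The correct criterion is properness, not finiteness of fibers.

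The paper constructs the natural transformation $Rf_!^\et\to Rf_!$ directly by adjunction: since $Rf_!^\et$ is left adjoint to $f^*$, such a map corresponds to a map $\id\to f^*Rf_!$. By base change (Proposition~\ref{prop:Rfshriekquasicompactbasechange}), $f^*Rf_! = R\pi_{2!}\pi_1^*$ with $\pi_1,\pi_2: Y'\times_Y Y'\to Y'$ the projections. The diagonal $\Delta: Y'\to Y'\times_Y Y'$ is an open \emph{and} closed immersion precisely because $f$ is both separated and \'etale, and the counit $R\Delta_!\Delta^*\to\id$ then gives
\[
\id = R\pi_{2!}R\Delta_!\Delta^*\pi_1^*\to R\pi_{2!}\pi_1^*\ .
\]
This open-closed diagonal is the geometric input your proposal is missing. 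With the global natural transformation in hand, the base change reduction proceeds as you describe.
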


\begin{proof} Let $Rf_!^\et$ denote the one from Definition~\ref{def:Rfshrieketale} temporarily, which is the left adjoint of $f^\ast$. There is a natural transformation
\[
Rf_!^\et\to Rf_!
\]
adjoint to the map $\id\to f^\ast Rf_!$ coming from the base change identity $f^\ast Rf_! = R\pi_{2!} \pi_1^\ast$, where $\pi_1,\pi_2: Y^\prime\times_Y Y^\prime\to Y^\prime$ are the two projections, and the map
\[
\id = R\pi_{2!} R\Delta_! \Delta^\ast \pi_1^\ast \to R\pi_{2!} \pi_1^\ast\ ,
\]
using that $\Delta: Y^\prime\hookrightarrow Y^\prime\times_Y Y^\prime$ is an open and closed immersion.

To check that this is an equivalence, we use that both functors commute with any base change to reduce to the case that $Y$ is strictly totally disconnected. In that case, $Y^\prime$ decomposes into a disjoint union of quasicompact open subspaces of $Y$, so we can reduce to that case, in which the claim is clear.
\end{proof}

We also get the projection formula.

\begin{proposition}\label{prop:projectionformulaqc} Let $f: Y^\prime\to Y$ be a compactifiable map of small v-sheaves which is representable in spatial diamonds with $\dimtrg f<\infty$, and let $\Lambda$ be a ring with $n\Lambda=0$ for some $n$ prime to $p$. Then there is a functorial isomorphism
\[
Rf_! B\dotimes_\Lambda A\simeq Rf_!(B\dotimes_\Lambda f^\ast A)
\]
in $B\in D_\et(Y^\prime,\Lambda)$ and $A\in D_\et(Y,\Lambda)$.
\end{proposition}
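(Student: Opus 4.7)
\medskip

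The plan is to reduce to the proper case by compactification, then to a geometric point by base change, then to $A = \Lambda$ by colimit arguments. First, factor $f$ as the composition of the open immersion $j : Y^\prime \hookrightarrow \overline{Y^\prime}^{/Y}$ and the proper map $\bar f := \overline{f}^{/Y} : \overline{Y^\prime}^{/Y} \to Y$, so that $Rf_! = R\bar f_\ast \circ j_!$. For the open immersion $j$, the projection formula $j_!(B' \otimes_\Lambda j^\ast A') \cong j_! B' \otimes_\Lambda A'$ (for $A' \in D_\et(\overline{Y^\prime}^{/Y}, \Lambda)$, $B'\in D_\et(Y^\prime,\Lambda)$) is formal from the adjunction $(j_!, j^\ast)$, the identity $j^\ast j_! = \mathrm{id}$, and the fact that $j^\ast$ commutes with $\dotimes_\Lambda$ and $R\sHom_\Lambda$ (since $j$ is \'etale). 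Applying this with $A' = \bar f^\ast A$ reduces the problem to showing that the natural map
\[
R\bar f_\ast C \dotimes_\Lambda A \longrightarrow R\bar f_\ast(C \dotimes_\Lambda \bar f^\ast A)
\]
is an equivalence for all $C \in D_\et(\overline{Y^\prime}^{/Y}, \Lambda)$ and $A \in D_\et(Y, \Lambda)$; this map is constructed by adjunction from the counit $\bar f^\ast R\bar f_\ast \to \mathrm{id}$.

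Next, I would check that both sides of the displayed map commute with arbitrary base change $g : \tilde Y \to Y$. Pullback is monoidal, so it suffices to have proper base change for $R\bar f_\ast$. By Theorem~\ref{thm:dimtrgfinitecohomdim}, $R\bar f_\ast$ has finite cohomological dimension, so the full (unbounded) version of Proposition~\ref{prop:Rfastsimple} applies and $R\bar f_\ast$ commutes with any base change. Since the cohomology sheaves of both sides can be computed after pullback to a cover of $Y$ by strictly totally disconnected perfectoid spaces and then stalk-wise, we may assume $Y = \Spa(C, \OO_C)$ for an algebraically closed nonarchimedean field $C$, and it suffices to prove the equivalence on global sections, i.e.\ that
\[
R\Gamma(\overline{Y^\prime}^{/Y}, C) \dotimes_\Lambda^L A \longrightarrow R\Gamma(\overline{Y^\prime}^{/Y}, C \dotimes_\Lambda \bar f^\ast A)
\]
is an equivalence, where now $A \in D(\Lambda)$ and $\bar f^\ast A$ is the associated constant complex of sheaves.

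Finally, Theorem~\ref{thm:dimtrgfinitecohomdim} bounds the cohomological dimension of $R\Gamma(\overline{Y^\prime}^{/Y}, -)$ by $3\dimtrg f < \infty$. In the left-complete derived category, a functor of bounded cohomological dimension commutes with arbitrary small colimits; in particular both sides of the last display are functors of $A \in D(\Lambda)$ that preserve small colimits (using also that $C \dotimes_\Lambda -$ preserves colimits on the right). The two functors agree tautologically at $A = \Lambda$. Since $D(\Lambda)$ is generated under shifts and small colimits by $\Lambda$, they agree on all of $D(\Lambda)$, completing the proof. The one genuinely nontrivial input is the finiteness of the cohomological dimension of $R\bar f_\ast$ supplied by Theorem~\ref{thm:dimtrgfinitecohomdim}; without it, neither the reduction to a geometric point (base change in the unbounded setting) nor the final colimit-on-$A$ argument would be available, so the $\dimtrg f < \infty$ hypothesis is used essentially here.
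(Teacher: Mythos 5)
Your strategy (reduce to the open-immersion projection formula, then handle the proper map $\bar f$ by reducing to a point via base change and exploiting finite cohomological dimension to bootstrap from $A=\Lambda$) is broadly the same as the paper's, and the first two steps are fine. However, there is a genuine gap in the reduction step for $\bar f$.

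You claim that by passing to stalks over a strictly totally disconnected cover of $Y$, one may assume $Y=\Spa(C,\OO_C)$. This is not correct: for a strictly totally disconnected perfectoid space $X$, the \'etale topos is the topos of the spectral space $|X|$, whose points include higher-rank specializations, and the localization of $X$ at such a point is $\Spa(C,C^+)$ with $C^+\subsetneq \OO_C$ (cf.\ Proposition~\ref{prop:etalesitepoints}). Over such a base, $|Y|$ is a nontrivial totally ordered chain and $D_\et(Y,\Lambda)=D(|Y|,\Lambda)\neq D(\Lambda)$, so $A$ is \emph{not} merely a complex of $\Lambda$-modules. Consequently your final step --- showing two colimit-preserving functors of $A\in D(\Lambda)$ agree because they agree on the generator $\Lambda$ --- does not apply: you would need generators of $D(|Y|,\Lambda)$, which are of the form $j_{U!}\Lambda$ for quasicompact open $U\subset Y$, and for those the comparison requires the commutation $R\bar f_\ast\circ\tilde j_{U!}\simeq j_{U!}\circ R\bar f_{U\ast}$, i.e.\ the proper base change theorem (Theorem~\ref{thm:properbasechangepartpropvsopen}). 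This is exactly what the paper uses, via a d\'evissage that replaces $A$ by the cone of $j_{U!}j_U^\ast A\to A$ (concentrated on the closed point) and then by a constant complex, before finally appealing to finite cohomological dimension and Proposition~\ref{prop:Rfshriekquasicompactunboundeddirectsums} to handle constant complexes via filtered colimits of perfect ones. Without invoking Theorem~\ref{thm:properbasechangepartpropvsopen} (directly or in equivalent form), the step from $\Spa(C,C^+)$ to $A$ constant cannot be bridged, so your proof as written does not go through.
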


\begin{proof} If $f=j$ is an open immersion, there is a natural map
\[
j_!(B\dotimes_\Lambda j^\ast A)\to j_! B\dotimes_\Lambda A
\]
adjoint to
\[
B\dotimes_\Lambda j^\ast A= j^\ast j_! B\dotimes_\Lambda A = j^\ast(j_! B\dotimes_\Lambda A)\ .
\]
To check whether this is an equivalence, we can assume that $Y$ is a strictly totally disconnected space (as all operations commute with pullback). In that case $D_\et(Y,\Lambda) = D_\et(|Y|,\Lambda)$; let $i: Z\hookrightarrow |Y|$ be the closed complement. As the map is clearly an isomorphism away from $Z$ and the left-hand side vanishes on $Z$, the statement follows from
\[
i^\ast(j_! B\dotimes_\Lambda A) = i^\ast j_! B\dotimes_\Lambda i^\ast A = 0
\]
as $i^\ast j_! B=0$.

In general, we now get a map
\[
Rf_! B\dotimes_\Lambda A = R\overline{f}^{/Y}_\ast j_! B\dotimes_\Lambda A\to R\overline{f}^{/Y}_\ast(j_! B\dotimes_\Lambda \overline{f}^{/Y\ast} A)\simeq R\overline{f}^{/Y}_\ast j_!(B\dotimes_\Lambda j^\ast \overline{f}^{/Y\ast} A) = Rf_!(B\dotimes_\Lambda f^\ast A)\ ,
\]
where the middle map comes from a standard adjunction. To check whether this is an isomorphism, we may by Proposition~\ref{prop:Rfastsimple} assume that $Y$ is a strictly totally disconnected space. In that case, we have to prove a statement on stalks, so we can assume that $Y=\Spa(C,C^+)$ is connected, and it is enough to check the statement on global sections. Let $s\in Y$ be the closed point, $j_U: U=Y\setminus \{s\}\hookrightarrow Y$ the open immersion. If $A=j_{U!} A_U$ for some $A_U\in D_\et(U,\Lambda)$, then
\[
R\Gamma(Y,Rf_! B\dotimes_\Lambda j_{U!} A_U) = 0 = R\Gamma(\overline{Y^\prime}^{/Y},j_!(B\dotimes_\Lambda f^\ast j_{U!} A_U)) = R\Gamma(Y,Rf_!(B\dotimes_\Lambda f^\ast j_{U!} A_U)\ ,
\]
using Theorem~\ref{thm:properbasechangepartpropvsopen} in the middle equality. Thus, we may replace $A$ by the cone of $j_{U!} j_U^\ast A\to A$, and assume that $A$ is a complex of $\Lambda$-modules concentrated on $\{s\}\subset Y$. On the other hand, we can also replace $A$ by the constant complex of $\Lambda$-modules on $Y$. Repeating this argument in the other direction, we may assume that $A$ is a constant complex of $\Lambda$-modules. As such, it is a (derived) filtered colimit of perfect complexes of $\Lambda$-modules. The case of a perfect complex is clear, and so the general case follows from the next proposition.
\end{proof}

\begin{proposition}\label{prop:Rfshriekquasicompactunboundeddirectsums} Let $f: Y^\prime\to Y$ be a compactifiable map of small v-sheaves which is representable in spatial diamonds with $\dimtrg f<\infty$, and let $\Lambda$ be a ring with $n\Lambda=0$ for some $n$ prime to $p$. Then for any collection of objects $A_i\in D_\et(Y^\prime,\Lambda)$, $i\in I$ for some set $I$, the natural map
\[
\bigoplus_i Rf_! A_i\to Rf_!(\bigoplus_i A_i)
\]
is an isomorphism in $D_\et(Y,\Lambda)$.
\end{proposition}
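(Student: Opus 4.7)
Since by Proposition~\ref{prop:Rfshriekquasicompactbasechange} the formation of $Rf_!$ commutes with base change, the claim is v-local on $Y$, so we may assume $Y$ is a strictly totally disconnected perfectoid space. Checking the comparison map on stalks of cohomology sheaves (using base change once more) reduces us further to the case $Y=\Spa(C,C^+)$, where we only need to verify that the map is an isomorphism after applying $R\Gamma$. Factor $f$ as the open immersion $j : Y^\prime\hookrightarrow \overline{Y^\prime}^{/Y}$ followed by the proper map $\overline{f}^{/Y}$. Since $j_!$ is a left adjoint (Definition~\ref{def:Rfshrieketale}), it commutes with arbitrary direct sums, so $\bigoplus_i j_! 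A_i = j_!\bigoplus_i A_i$, and we are reduced to showing
\[
\bigoplus_i R\Gamma\bigl(\overline{Y^\prime}^{/Y}, j_! A_i\bigr)\;\longrightarrow\; R\Gamma\Bigl(\overline{Y^\prime}^{/Y}, \bigoplus_i j_! A_i\Bigr)
\]
is an isomorphism.

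The key input is Theorem~\ref{thm:dimtrgfinitecohomdim}: the functor $R\overline{f}^{/Y}_\ast$ has cohomological amplitude bounded by $N:=3\dimtrg f$. Consequently, for every integer $k$, the cohomology sheaf $\mathcal H^k R\overline{f}^{/Y}_\ast(-)$ depends only on the canonical truncation $\tau^{\geq k-N}(-)$. Since canonical truncation commutes with direct sums, applying this to both sides reduces the question (one $k$ at a time) to the case where all $A_i$ lie in some fixed $D^{\geq a}_\et(Y^\prime,\Lambda)$. A further d\'evissage along the canonical truncation filtration collapses the problem to the case of a single sheaf in degree zero, i.e.~to showing that for a family $\mathcal F_i$ of \'etale sheaves of $\Lambda$-modules on $\overline{Y^\prime}^{/Y}$, the map $\bigoplus_i R\Gamma(\overline{Y^\prime}^{/Y},\mathcal F_i)\to R\Gamma(\overline{Y^\prime}^{/Y},\bigoplus_i\mathcal F_i)$ is an isomorphism.

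At this point both sides lie in $D^+$, and by Proposition~\ref{prop:Rfastsimple} we have $R\overline{f}^{/Y}_\ast = R\overline{f}^{/Y}_{v\ast}$ since $\overline{f}^{/Y}$ is proper and hence qcqs. Moreover, $\overline{Y^\prime}^{/Y}$ is itself qcqs (being proper over the qcqs space $\Spa(C,C^+)$), so it defines a coherent object in the locally coherent v-topos of $\Perf$. The classical fact that on a coherent topos, the cohomology of a coherent object commutes with filtered colimits of abelian sheaves (SGA 4, Exp.~VI, Th.~5.2) then gives the desired identity: write $\bigoplus_i \mathcal F_i$ as the filtered colimit of its finite subsums, and use that finite direct sums commute trivially with $R\Gamma$.

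The principal technical ingredient is Theorem~\ref{thm:dimtrgfinitecohomdim} on finite cohomological dimension; everything else is formal manipulation with truncations, adjunctions, and standard properties of coherent topoi. The only point requiring care is the passage from bounded-below to unbounded complexes, which is precisely what the amplitude bound buys us.
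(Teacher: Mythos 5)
Your proof is correct and takes essentially the same route as the paper: base change to $Y=\Spa(C,C^+)$ and check on global sections, reduce to uniformly bounded-below $A_i$ using the amplitude bound $3\dimtrg f$ from Theorem~\ref{thm:dimtrgfinitecohomdim}, then invoke coherence of the v-topos of the qcqs compactification and SGA~4 VI to commute cohomology with filtered colimits. The only cosmetic difference is that you add a further d\'evissage to a single cohomological degree before applying SGA~4; the paper applies it directly to uniformly bounded-below complexes (the passage from sheaves to $D^+$ with a uniform lower bound being a standard spectral-sequence argument), so both arguments are sound and essentially identical.
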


\begin{proof} By Proposition~\ref{prop:Rfshriekquasicompactbasechange}, we can assume that $Y=\Spa(C,C^+)$, and it suffices to check on global sections. In any given degree $d$, the groups $H^d(Y,\bigoplus_i Rf_! A_i)$ and $H^d(Y,Rf_!(\bigoplus_i A_i))$ depend only on the truncations $\tau^{\geq d-3\dimtrg f} A_i$; thus, we may assume that all $A_i$ are uniformly bounded below. We need to see that
\[
R\Gamma(Y,\bigoplus_i R\overline{f}^{/Y}_\ast j_! A_i) = R\Gamma(Y,R\overline{f}^{/Y}_\ast j_!(\bigoplus_i A_i))\ .
\]
Here, the left-hand side is given by $\bigoplus_i R\Gamma(Y,R\overline{f}^{/Y}_\ast j_! A_i) = \bigoplus_i R\Gamma(\overline{Y^\prime}^{/Y},j_! A_i)$ as $Y$ is strictly local, and the right-hand side is given by $R\Gamma(\overline{Y^\prime}^{/Y},\bigoplus_i j_! A_i)$. In other words, we have to check that $R\Gamma(\overline{Y^\prime}^{/Y},-)$ commutes with arbitrary direct sums in $D^{\geq -n}_\et(\overline{Y^\prime}^{/Y},\Lambda)$. We recall that these cohomology groups here are always taken to be v-cohomology. Now the sheaves are small, and the ($\kappa$-small) v-topos of a qcqs v-sheaf is coherent, so we get the result by SGA 4 VI Corollaire 5.2.
\end{proof}

This finishes the case that $f$ is quasicompact. It remains to consider the non-quasicompact case, and obtain all previous results in this generality. The idea here is simple: If $f: Y^\prime\to Y$ is a compactifiable map of small v-stacks that is representable in locally spatial diamonds with locally $\dimtrg f<\infty$ (i.e., after pullback to any spatial diamond $Z\to Y$, any quasicompact open subspace of $Y'\times_Y Z$ has finite $\dimtrg$ over $Z$), then, at least v-locally on $Y$, we can write $Y^\prime$ as an increasing union of quasicompact open subspaces, to which we can apply the preceding discussion; now we take the filtered direct limit. Unfortunately, it is somewhat tricky to resolve all homotopy coherence issues in this approach. For this reason, we make rather heavy use of Lurie's book, \cite{LurieHTT}, in the construction of $Rf_!$. Specifically, we will use the theory of left Kan extensions (along full embeddings), cf.~\cite[Section 4.3.2]{LurieHTT}.

First, we consider the case where $Y$ is itself a (quasiseparated) locally spatial diamond.

\begin{definition}\label{def:Rfshrieklocspat} Let $f: Y^\prime\to Y$ be a compactifiable map of quasiseparated locally spatial diamonds with locally $\dimtrg f<\infty$, and assume $n\Lambda=0$ for some $n$ prime to $p$. Let $j: Y^\prime\hookrightarrow \overline{Y^\prime}^{/Y}$ and $\overline{f}^{/Y}: \overline{Y^\prime}^{/Y}\to Y$ be the usual maps, where $\overline{f}^{/Y}$ is partially proper. Let $\mathcal{D}_{\et,\prop/Y}(Y^\prime,\Lambda)\subset \mathcal{D}_\et(Y^\prime,\Lambda)$ be the full $\infty$-subcategory of $A\in \mathcal{D}_\et(Y^\prime,\Lambda)$ such that $A\simeq j_{V!} j_V^\ast A$ for some open subspace $j_V: V\hookrightarrow Y^\prime$ that is quasicompact over $Y$.

The functor
\[
Rf_!: \mathcal{D}_\et(Y^\prime,\Lambda)\to \mathcal{D}_\et(Y,\Lambda)
\]
is defined as the left Kan extension of
\[
R\overline{f}^{/Y}_\ast j_!: \mathcal{D}_{\et,\prop/Y}(Y^\prime,\Lambda)\to \mathcal{D}_\et(Y,\Lambda)
\]
along the full inclusion $\mathcal{D}_{\et,\prop/Y}(Y^\prime,\Lambda)\subset \mathcal{D}_\et(Y^\prime,\Lambda)$.
\end{definition}

In other words, on the full $\infty$-subcategory $\mathcal{D}_{\et,\prop/Y}(Y^\prime,\Lambda)\subset \mathcal{D}_\et(Y^\prime,\Lambda)$ of sheaves ``with proper support over $Y$'', the functor $Rf_!$ is given by $R\overline{f}^{/Y}_\ast j_!$. For a general object $A\in \mathcal{D}_\et(Y^\prime,\Lambda)$, one can write $A$ as a filtered colimit of objects $A_V = j_{V!} j_V^\ast A\in \mathcal{D}_{\et,\prop/Y}(Y^\prime,\Lambda)$, and then by definition (cf.~\cite[Definition 4.3.2.2]{LurieHTT})
\[
Rf_! A = \varinjlim_V Rf_! A_V\ .
\]
Note that here, $Rf_! A_V = R(f|_V)_! (j_V^\ast A)$, as
\[
R\overline{f}^{/Y}_\ast j_! A_V = R\overline{f}^{/Y}_\ast (j\circ j_V)_! (j_V^\ast A) = R\overline{f}^{/Y}_\ast g_\ast j^\prime_{V!} (j_V^\ast A) = R(f|_V)_! (j_V^\ast A)\ ,
\]
where $j^\prime_V: V\hookrightarrow \overline{V}^{/Y}$ is the open immersion, and $g: \overline{V}^{/Y}\hookrightarrow \overline{Y^\prime}^{/Y}$ the closed immersion.

Before going on with the definition of $Rf_!$ in the general case, we need to establish base change in this situation. For this, it is helpful to first establish that $Rf_!$ commutes with all colimits.

\begin{proposition}\label{prop:Rfshriekunboundeddirectsumslocspat} Let $f: Y^\prime\to Y$ be a compactifiable map of quasiseparated locally spatial diamonds with locally $\dimtrg f<\infty$, and assume $n\Lambda=0$ for some $n$ prime to $p$. The functor
\[
Rf_!: \mathcal{D}_\et(Y^\prime,\Lambda)\to \mathcal{D}_\et(Y,\Lambda)
\]
commutes with all direct sums; equivalently (cf.~\cite[Proposition 1.4.4.1 (2)]{LurieHA}), with all colimits.
\end{proposition}

\begin{proof} Let $A_i$, $i\in I$, be any set of objects of $\mathcal{D}_\et(Y^\prime,\Lambda)$. Write each $A_i$ as the colimit of $A_{i,V} = j_{V!} j_V^\ast A_i$ over all open immersions $j_V: V\hookrightarrow Y^\prime$ which are quasicompact over $Y$. Then $Rf_! A_{i,V} = R(f|_V)_! (j_V^\ast A_i)$ by construction, which commutes with arbitrary direct sums by Proposition~\ref{prop:Rfshriekquasicompactunboundeddirectsums}. In general, we see that $Rf_! \bigoplus_i A_i$ is the colimit of
\[
R(f|_V)_! j_V^\ast  \bigoplus_i A_i = \bigoplus_i R(f|_V)_! j_V^\ast A_i\ ,
\]
which gives the desired result.
\end{proof}

\begin{proposition}\label{prop:Rfshriekbasechangelocspat} Let $f: Y^\prime\to Y$ be a compactifiable map of quasiseparated locally spatial diamonds with locally $\dimtrg f<\infty$, and assume $n\Lambda=0$ for some $n$ prime to $p$. Let $g: \tilde{Y}\to Y$ be any map of locally spatial diamonds, with base change $\tilde{f}: \tilde{Y}^\prime = Y^\prime\times_Y \tilde{Y}\to \tilde{Y}$ and $g^\prime: \tilde{Y}^\prime\to Y^\prime$.

There is a natural base change equivalence
\[
g^\ast Rf_!\simeq R\tilde{f}_! g^{\prime\ast}
\]
of functors $D_\et(Y^\prime,\Lambda)\to D_\et(\tilde{Y},\Lambda)$.
\end{proposition}

\begin{proof} To get a natural transformation from $g^\ast Rf_!$ to $R\tilde{f}_! g^{\prime\ast}$, note that in the definition of $Rf_!$ as the left Kan extension of $R\overline{f}^{/Y}_\ast j_!$, the functor $j_!$ commutes with any base change, and there is a general base change adjunction for $R\overline{f}^{/Y}_\ast$. On $\mathcal{D}_{\et,\prop/Y}(Y^\prime,\Lambda)$, the base change map is an equivalence by Proposition~\ref{prop:Rfshriekquasicompactbasechange}. In general, write any $A\in \mathcal{D}_\et(Y^\prime,\Lambda)$ as the filtered colimit of $A_V = j_{V!} j_V^\ast A$. The functor $g^\ast Rf_!$ commutes with this by definition; but so does $R\tilde{f}_! g^{\prime\ast}$, by Proposition~\ref{prop:Rfshriekunboundeddirectsumslocspat}.
\end{proof}

Let us now define $Rf_!$ in general. Thus, let $f: Y^\prime\to Y$ be a compactifiable map of small v-stacks that is representable in locally spatial diamonds with locally $\dimtrg f<\infty$. In that case, we pick a simplicial v-hypercover $Y_\bullet\to Y$ such that all $Y_i$ are quasiseparated locally spatial diamonds, and let $Y^\prime_\bullet = Y^\prime\times_Y Y_\bullet$, which is a simplicial v-hypercover of $Y^\prime$. The association $i\mapsto \mathcal{D}_\et(Y^\prime_i,\Lambda)$ defines a functor $\Delta\to \sCat_\infty$ (with functors given by pullback); cf.~\cite[Section 2]{LiuZheng} for a construction even as presentable symmetric monoidal $\infty$-categories for any diagram of ringed topoi (the extra condition $_\et$ only amounts to passage to full $\infty$-subcategories, and poses no homotopy coherence issues). This is encoded in a coCartesian fibration $\mathcal{D}_\et(Y^\prime_\bullet,\Lambda)^0\to \Delta$, whose $\infty$-category of sections is the $\infty$-derived category $\mathcal{D}_\et(Y^\prime_\bullet,\Lambda)$ of the simplicial space $Y^\prime_\bullet$.

Let
\[
\mathcal{D}_{\et,\prop/Y_\bullet}(Y^\prime_\bullet,\Lambda)^0\subset \mathcal{D}_\et(Y^\prime_\bullet,\Lambda)^0
\]
be the full $\infty$-subcategory whose fibre over any $i\in \Delta$ is given by
\[
\mathcal{D}_{\et,\prop/Y_i}(Y^\prime_i,\Lambda)\subset \mathcal{D}_\et(Y^\prime_i,\Lambda)\ .
\]
As pullbacks preserve this condition, this is still a coCartesian fibration over $\Delta$. Similarly, we have coCartesian fibrations $\mathcal{D}_\et(\overline{Y^\prime_\bullet}^{/Y_\bullet},\Lambda)^0$ and $\mathcal{D}_\et(Y_\bullet,\Lambda)^0$ over $\Delta$. The restriction functor
\[
j^\ast_\bullet: \mathcal{D}_\et(\overline{Y^\prime_\bullet}^{/Y_\bullet},\Lambda)^0\to \mathcal{D}_\et(Y^\prime_\bullet,\Lambda)^0
\]
has a fully faithful left adjoint $j_{\bullet !}$, which is in every fibre over $i\in \Delta$ given by $j_{i!}$, where $j_i: Y^\prime_i\hookrightarrow \overline{Y^\prime_i}^{/Y_i}$ is the open immersion. Indeed, this follows from the fact that $j_{i!}$ commutes with base change. Moreover, one has the pushforward functor
\[
R\overline{f_\bullet}^{/Y_\bullet}_\ast: \mathcal{D}_\et(\overline{Y^\prime_\bullet}^{/Y_\bullet},\Lambda)^0\to \mathcal{D}_\et(Y_\bullet,\Lambda)^0\ ,
\]
right adjoint to $\overline{f_\bullet}^{/Y_\bullet,\ast}$, which in every fibre over $i\in \Delta$ is given by $R\overline{f_i}^{/Y_i}_\ast$. In particular, we get the functor
\[
R\overline{f_\bullet}^{/Y_\bullet}_\ast j_{\bullet !}: \mathcal{D}_{\et,\prop/Y_\bullet}(Y^\prime_\bullet,\Lambda)^0\to \mathcal{D}_\et(Y_\bullet,\Lambda)^0\ .
\]
We let
\[
Rf_{\bullet !}^0: \mathcal{D}_\et(Y^\prime_\bullet,\Lambda)^0\to \mathcal{D}_\et(Y_\bullet,\Lambda)^0
\]
be its left Kan extension, which exists by \cite[Corollary 4.3.2.14]{LurieHTT}.

\begin{lemma}\label{lem:leftkanext} The functor $Rf_{\bullet !}^0$ is given by
\[
Rf_{i!}: \mathcal{D}_\et(Y^\prime_i,\Lambda)\to \mathcal{D}_\et(Y_i,\Lambda)
\]
in the fibre over $i\in \Delta$.
\end{lemma}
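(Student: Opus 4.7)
The plan is to apply the pointwise formula for the left Kan extension together with a cofinality argument reducing to the fiberwise construction in Definition~\ref{def:Rfshrieklocspat}. By \cite[Corollary 4.3.2.14]{LurieHTT}, for any $A \in \mathcal{D}_\et(Y^\prime_i,\Lambda)$ regarded as an object in the fiber over $i \in \Delta$ of the coCartesian fibration $\mathcal{D}_\et(Y^\prime_\bullet,\Lambda)^0 \to \Delta$, the value $Rf_{\bullet !}^0 A$ is computed as the colimit of $R\overline{f_\bullet}^{/Y_\bullet}_\ast j_{\bullet !}$ over the slice $\infty$-category $\mathcal{D}_{\et,\prop/Y_\bullet}(Y^\prime_\bullet,\Lambda)^0_{/A}$. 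The key claim is that the natural inclusion
\[
\iota\colon \mathcal{D}_{\et,\prop/Y_i}(Y^\prime_i,\Lambda)_{/A} \hookrightarrow \mathcal{D}_{\et,\prop/Y_\bullet}(Y^\prime_\bullet,\Lambda)^0_{/A}
\]
is cofinal.

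To verify cofinality via Joyal's variant of Quillen's Theorem A, I would show that for each object $(j, B_j, \psi^\ast B_j \to A)$ on the right (where $\psi\colon Y^\prime_i \to Y^\prime_j$ comes from a morphism $[j] \to [i]$ in $\Delta$), the relevant comma $\infty$-category is contractible. Unwinding the definition of morphisms in the coCartesian fibration $\mathcal{D}_\et(Y^\prime_\bullet,\Lambda)^0$ shows that this comma category is equivalent to the $\infty$-category of factorizations $\psi^\ast B_j \to C \to A$ in $\mathcal{D}_\et(Y^\prime_i,\Lambda)$ with $C \in \mathcal{D}_{\et,\prop/Y_i}(Y^\prime_i,\Lambda)$. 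This has the initial object $(C = \psi^\ast B_j, \id)$ provided $\psi^\ast B_j$ itself lies in the subcategory $\mathcal{D}_{\et,\prop/Y_i}(Y^\prime_i,\Lambda)$. But by definition $B_j \simeq j_{V!} j_V^\ast B_j$ for some quasicompact open immersion $j_V\colon V \hookrightarrow Y^\prime_j$ over $Y_j$, and base change for $j_{V!}$ along \'etale morphisms (Definition/Proposition~\ref{def:Rfshrieketale}) gives
\[
\psi^\ast B_j \simeq j_{V_i!} j_{V_i}^\ast \psi^\ast B_j,
\]
where $V_i := V \times_{Y_j} Y_i \hookrightarrow Y^\prime_i$ is quasicompact over $Y_i$; so the containment holds and the comma category is contractible.

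With cofinality in hand, and using that $j_{\bullet !}$ and $R\overline{f_\bullet}^{/Y_\bullet}_\ast$ are constructed fiberwise as $j_{i!}$ and $R\overline{f_i}^{/Y_i}_\ast$ respectively, the colimit collapses to
\[
\mathrm{colim}_{(B_i \to A) \in \mathcal{D}_{\et,\prop/Y_i}(Y^\prime_i,\Lambda)_{/A}} R\overline{f_i}^{/Y_i}_\ast j_{i!} B_i,
\]
which is by definition the fiberwise left Kan extension of $R\overline{f_i}^{/Y_i}_\ast j_{i!}$ along $\mathcal{D}_{\et,\prop/Y_i}(Y^\prime_i,\Lambda) \subset \mathcal{D}_\et(Y^\prime_i,\Lambda)$; that is, $Rf_{i!} A$ by Definition~\ref{def:Rfshrieklocspat}. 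The main obstacle is the bookkeeping of slice categories in the total $\infty$-category attached to the simplicial v-hypercover, but once one observes that pullback along any simplicial transition map preserves the subcategory of sheaves with proper support over the base (via base change for the \'etale $j_{V!}$), the cofinality argument is formal.
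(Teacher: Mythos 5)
Your approach is essentially the same as the paper's: both reduce the computation to the observation that the inclusion of slice $\infty$-categories
\[
\mathcal{D}_{\et,\prop/Y_i}(Y^\prime_i,\Lambda)_{/A}\hookrightarrow \mathcal{D}_{\et,\prop/Y_\bullet}(Y^\prime_\bullet,\Lambda)^0_{/A}
\]
is cofinal, and then collapse the defining colimit of the left Kan extension to a colimit over the left-hand category. Your elaboration of the cofinality step via Quillen's Theorem A --- identifying the comma $\infty$-category with factorizations $\psi^\ast B_j\to C\to A$ and producing the initial object $C=\psi^\ast B_j$, which lies in $\mathcal{D}_{\et,\prop/Y_i}(Y^\prime_i,\Lambda)$ because pullback along the transition maps of the simplicial object preserves the "proper support over the base" condition --- is a correct and welcome spelling-out of what the paper merely asserts in one line by appealing to $\mathcal{D}_{\et,\prop/Y_\bullet}(Y^\prime_\bullet,\Lambda)^0\to\Delta$ being a coCartesian fibration.

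There is, however, one step you pass over too quickly. After invoking cofinality, the colimit that computes $Rf_{\bullet!}^0 A$ is
\[
\mathrm{colim}_{(B_i\to A)\in \mathcal{D}_{\et,\prop/Y_i}(Y^\prime_i,\Lambda)_{/A}} R\overline{f_i}^{/Y_i}_\ast j_{i!} B_i\ ,
\]
but this colimit is, a priori, formed in the total $\infty$-category $\mathcal{D}_\et(Y_\bullet,\Lambda)^0$ of the coCartesian fibration over $\Delta$, whereas Definition~\ref{def:Rfshrieklocspat} forms the corresponding colimit in the fibre $\mathcal{D}_\et(Y_i,\Lambda)$. These need not coincide for an arbitrary diagram landing in a fibre of a coCartesian fibration: one must know that the index category is weakly contractible, so that the colimit of the constant diagram at $[i]\in\Delta$ is again $[i]$ and the colimit in $\mathcal{D}_\et(Y_\bullet,\Lambda)^0$ can be computed as the fibrewise colimit. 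The paper supplies exactly this missing ingredient by observing that $\mathcal{D}_{\et,\prop/Y_i}(Y^\prime_i,\Lambda)_{/A}$ is filtered (which holds because $\mathcal{D}_{\et,\prop/Y_i}(Y^\prime_i,\Lambda)\subset\mathcal{D}_\et(Y^\prime_i,\Lambda)$ is closed under finite colimits), hence weakly contractible. You should add this observation; once it is in place, your argument is complete and matches the paper's.
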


\begin{proof} As $\mathcal{D}_{\et,\prop/Y_\bullet}(Y^\prime_\bullet,\Lambda)^0$ is itself a coCartesian fibration over $\Delta$, one sees that for all $A\in \mathcal{D}_\et(Y^\prime_i,\Lambda)$, the functor
\[
\mathcal{D}_{\et,\prop/Y_i}(Y^\prime_i,\Lambda)_{/A}\to \mathcal{D}_{\et,\prop/Y_\bullet}(Y^\prime_\bullet,\Lambda)^0_{/A}
\]
is cofinal. Moreover, the index category is filtered. Taken together, these imply that the relevant colimits agree (using \cite[Proposition 4.3.1.7]{LurieHTT} to see that one may pass to a cofinal subcategory).
\end{proof}

\begin{lemma}\label{lem:cocartesian} The functor
\[
Rf_{\bullet !}^0: \mathcal{D}_\et(Y^\prime_\bullet,\Lambda)^0\to \mathcal{D}_\et(Y_\bullet,\Lambda)^0
\]
sends coCartesian edges to coCartesian edges.
\end{lemma}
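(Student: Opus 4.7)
The plan is to reduce the assertion to the base change equivalence of Proposition~\ref{prop:Rfshriekbasechangelocspat}. The coCartesian fibration $\mathcal{D}_\et(Y^\prime_\bullet,\Lambda)^0\to \Delta$ classifies the functor $[i]\mapsto \mathcal{D}_\et(Y^\prime_i,\Lambda)$, with transition along an arrow $\alpha\colon [i]\to [j]$ given by pullback $g^{\prime\ast}$ along the structure morphism $g^\prime\colon Y^\prime_j\to Y^\prime_i$; a coCartesian edge over $\alpha$ with source $A$ represents the unit $A\to g^{\prime\ast}A$. The analogous description holds for the target fibration with structure morphism $g\colon Y_j\to Y_i$. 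Using Lemma~\ref{lem:leftkanext} to identify the fibres of $Rf_{\bullet!}^0$ with the functors $Rf_{i!}$, preservation of coCartesian edges amounts to asking that the natural base change transformation
\[
g^\ast\circ Rf_{i!}\longrightarrow Rf_{j!}\circ g^{\prime\ast}
\]
be an equivalence for each such $\alpha$.

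Since $Y^\prime_\bullet = Y^\prime\times_Y Y_\bullet$, the square formed by $f_i$, $f_j$, $g$, $g^\prime$ is Cartesian, and the hypothesis that $f$ is compactifiable, representable in locally spatial diamonds, with $\dimtrg f<\infty$ is inherited by $f_i$. Thus Proposition~\ref{prop:Rfshriekbasechangelocspat} applies and yields the desired equivalence. The remaining task is to verify that the natural transformation provided by the coCartesian/Kan extension formalism is the base change map. First I would observe that the subfibration $\mathcal{D}_{\et,\prop/Y_\bullet}(Y^\prime_\bullet,\Lambda)^0 \subset \mathcal{D}_\et(Y^\prime_\bullet,\Lambda)^0$ is itself coCartesian over $\Delta$: if $A\simeq j_{V!}j_V^\ast A$ for $V\subset Y^\prime_i$ quasicompact over $Y_i$, then $g^{\prime\ast}A\simeq j_{V^\prime!}j_{V^\prime}^\ast g^{\prime\ast}A$ with $V^\prime = V\times_{Y_i}Y_j$, still quasicompact over $Y_j$. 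On this subfibration, the functor $R\overline{f_\bullet}^{/Y_\bullet}_\ast j_{\bullet!}$ already preserves coCartesian edges: $j_{\bullet!}$ does because $j_!$ commutes with arbitrary pullback (Proposition~\ref{def:Rfshrieketale}), and $R\overline{f_\bullet}^{/Y_\bullet}_\ast$ does on the image of $j_{\bullet!}$ of proper-support objects by base change for the proper morphisms $\overline{f_i}^{/Y_i}$, which applies via Proposition~\ref{prop:Rfastsimple} together with the finite cohomological dimension of Theorem~\ref{thm:dimtrgfinitecohomdim} (equivalently by Proposition~\ref{prop:Rfshriekquasicompactbasechange}).

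The main obstacle is then transferring this coCartesian-preservation property through the left Kan extension. Here I would use the explicit colimit description of Definition~\ref{def:Rfshrieklocspat}: any $A\in \mathcal{D}_\et(Y^\prime_i,\Lambda)$ is the filtered colimit of $A_V = j_{V!}j_V^\ast A$ over open $V\subset Y^\prime_i$ quasicompact over $Y_i$, and $g^{\prime\ast}$ preserves this colimit as it is a left adjoint, sending $A_V$ to $j_{V^\prime!}j_{V^\prime}^\ast g^{\prime\ast}A$ with $V^\prime = V\times_{Y_i}Y_j$. Since the $V^\prime$ cover $Y^\prime_j$, this still exhibits $g^{\prime\ast}A$ as the relevant filtered colimit. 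Combined with the commutation of $Rf_{j!}$ with filtered colimits (Proposition~\ref{prop:Rfshriekunboundeddirectsumslocspat}) and the base change on the subfibration (Proposition~\ref{prop:Rfshriekquasicompactbasechange}), both sides of the comparison are exhibited as the same filtered colimit of identified base change maps. Organizing this coherently in the $\infty$-categorical setting is handled by the Kan extension formalism of \cite[Section~4.3.2]{LurieHTT}, completing the reduction.
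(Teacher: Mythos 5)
Your proof is correct and follows the same route as the paper's terse two-line argument: identify $Rf_{\bullet!}^0$ fibrewise via Lemma~\ref{lem:leftkanext} and then invoke the base change equivalence of Proposition~\ref{prop:Rfshriekbasechangelocspat}. The additional care you take in tracing the fibration's canonical edge map to the base change transformation of Proposition~\ref{prop:Rfshriekbasechangelocspat} essentially re-runs the argument already recorded in that proposition's proof (coCartesian preservation for $j_{\bullet!}$, proper base change on the subfibration, and commutation of $Rf_{j!}$ with filtered colimits), so it is a legitimate but redundant unpacking of what the paper is compressing.
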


\begin{proof} This follows from the previous lemma and Proposition~\ref{prop:Rfshriekbasechangelocspat}.
\end{proof}

Passing to coCartesian sections over $\Delta$, we get a functor
\[
Rf_!: \mathcal{D}_\et(Y^\prime,\Lambda)\simeq \mathcal{D}_{\et,\cart}(Y^\prime_\bullet,\Lambda)\to \mathcal{D}_{\et,\cart}(Y_\bullet,\Lambda)\simeq \mathcal{D}_\et(Y,\Lambda)\ ,
\]
as desired.

\begin{definition}\label{def:Rfshriek} Let $f: Y^\prime\to Y$ be a compactifiable map of small v-stacks that is representable in locally spatial diamonds with locally $\dimtrg f<\infty$, and assume $n\Lambda=0$ for some $n$ prime to $p$. The functor
\[
Rf_!: D_\et(Y^\prime,\Lambda)\to D_\et(Y,\Lambda)
\]
is the functor obtained from the previous discussion by passage to homotopy categories.
\end{definition}

It is easy to see that this is independent of the choice of the simplicial v-hypercover, by passing to common refinements. 
Again, we need to check that this still satisfies all desired properties, starting with base change.

\begin{proposition}\label{prop:Rfshriekbasechange} Let $f: Y^\prime\to Y$ be a compactifiable map of small v-stacks which is representable in locally spatial diamonds with locally $\dimtrg f<\infty$, and assume $n\Lambda=0$ for some $n$ prime to $p$. Let $g: \tilde{Y}\to Y$ be any map of small v-stacks, with base change $\tilde{f}: \tilde{Y}^\prime = Y^\prime\times_Y \tilde{Y}\to \tilde{Y}$ and $g^\prime: \tilde{Y}^\prime\to Y^\prime$.

There is a natural base change equivalence
\[
g^\ast Rf_!\simeq R\tilde{f}_! g^{\prime\ast}
\]
of functors $D_\et(Y^\prime,\Lambda)\to D_\et(\tilde{Y},\Lambda)$.
\end{proposition}

\begin{proof} Let $Y_\bullet\to Y$ be a simplicial v-hypercover by quasiseparated locally spatial diamonds $Y_i$, and similarly let $\tilde{Y}_\bullet\to \tilde{Y}$ be a simplicial v-hypercover by quasiseparated locally spatial diamonds $\tilde{Y}_i$, such that $g: \tilde{Y}\to Y$ extends to a map of simplicial spaces $g_\bullet: \tilde{Y}_\bullet\to Y_\bullet$. Repeating the previous discussion over $\Delta$ again over $\Delta\times \Delta^1$ gives the result.
\end{proof}

\begin{proposition}\label{prop:Rfshriekunboundeddirectsums} Let $f: Y^\prime\to Y$ be a compactifiable map of small v-stacks that is representable in locally spatial diamonds with locally $\dimtrg f<\infty$, and assume $n\Lambda=0$ for some $n$ prime to $p$. The functor
\[
Rf_!: \mathcal{D}_\et(Y^\prime,\Lambda)\to \mathcal{D}_\et(Y,\Lambda)
\]
commutes with all direct sums; equivalently (cf.~\cite[Proposition 1.4.4.1 (2)]{LurieHA}), with all colimits.
\end{proposition}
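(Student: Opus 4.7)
The plan is to reduce the statement to the locally spatial case already handled in Proposition~\ref{prop:Rfshriekunboundeddirectsumslocspat}, by unwinding the construction of $Rf_!$ given just above the proposition. Concretely, fix a simplicial v-hypercover $Y_\bullet\to Y$ by locally spatial diamonds used in the definition of $Rf_!$, with base change $Y^\prime_\bullet=Y^\prime\times_Y Y_\bullet$ and maps $f_i: Y^\prime_i\to Y_i$. Each $f_i$ is compactifiable between locally spatial diamonds, and $\dimtrg f_i\le \dimtrg f<\infty$ by Lemma~\ref{lem:trcsubadditive}, so Proposition~\ref{prop:Rfshriekunboundeddirectsumslocspat} applies termwise. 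The whole point of the construction is that $Rf_!$ is obtained by passing to coCartesian sections from $Rf_{\bullet !}^0$, which (by Lemma~\ref{lem:cocartesian} together with Lemma~\ref{lem:leftkanext}) preserves coCartesian edges and in the fiber over $i$ is $Rf_{i!}$.

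The argument then has two ingredients. First, under the equivalence $\mathcal{D}_\et(Y^\prime,\Lambda)\simeq\mathcal{D}_{\et,\cart}(Y^\prime_\bullet,\Lambda)$ of Proposition~\ref{prop:derivedhyperdescent}, direct sums are computed termwise: the pullback $p_i^\ast: \mathcal{D}_\et(Y^\prime,\Lambda)\to \mathcal{D}_\et(Y^\prime_i,\Lambda)$ along the structure map $p_i: Y^\prime_i\to Y^\prime$ is a left adjoint (indeed, the equivalence itself is realized by such pullbacks), hence commutes with direct sums, and the same holds for the transition maps in $Y^\prime_\bullet$, which shows that the cartesian subcategory is closed under termwise direct sums. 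The symmetric statement holds for $Y_\bullet\to Y$. Second, on coCartesian sections the functor $Rf_!$ is termwise equal to $Rf_{i!}$, and each $Rf_{i!}$ commutes with arbitrary direct sums by Proposition~\ref{prop:Rfshriekunboundeddirectsumslocspat}.

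Combining the two termwise descriptions immediately gives that $Rf_!$ commutes with arbitrary direct sums in $\mathcal{D}_\et(Y^\prime,\Lambda)$, which by \cite[Proposition 1.4.4.1 (2)]{LurieHA} upgrades to commutation with all colimits (since $\mathcal{D}_\et(Y^\prime,\Lambda)$ is stable and presentable by Lemma~\ref{lem:enrichmentexists}, so every colimit is built from direct sums and cofibers, and $Rf_!$ is exact). There is no substantive obstacle: the only point requiring care is the claim that the full inclusion $\mathcal{D}_{\et,\cart}(Y^\prime_\bullet,\Lambda)\hookrightarrow \mathcal{D}_\et(Y^\prime_\bullet,\Lambda)$ is closed under termwise direct sums, but this is automatic from pullback being a left adjoint. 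In particular, no additional base-change or finiteness input beyond Proposition~\ref{prop:Rfshriekunboundeddirectsumslocspat} is needed.
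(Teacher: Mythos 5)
Your proof is correct, and it reaches the same conclusion via a more explicit route than the paper. The paper's own proof is a one-liner: it invokes Proposition~\ref{prop:Rfshriekbasechange} (base change for $Rf_!$ along arbitrary maps of small v-stacks) to reduce the statement to Proposition~\ref{prop:Rfshriekunboundeddirectsumslocspat} after pulling back to $Y_0$, using that the equivalence $\mathcal{D}_\et(Y^\prime,\Lambda)\simeq\mathcal{D}_{\et,\cart}(Y^\prime_\bullet,\Lambda)$ computes colimits termwise and that $g^{\prime\ast}$ is a left adjoint. You instead unwind the simplicial construction of $Rf_!$ directly: you observe that direct sums in the coCartesian-section category are computed fiberwise, that the cartesian subcategory is closed under such direct sums because the transition pullback functors commute with direct sums, and that $Rf_{\bullet!}^0$ acts in the fiber over $i$ as $Rf_{i!}$, which commutes with direct sums by the locally spatial case. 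Both arguments are valid and ultimately depend on the same ingredients; in particular, your closing remark that ``no additional base-change or finiteness input beyond Proposition~\ref{prop:Rfshriekunboundeddirectsumslocspat} is needed'' slightly overstates the independence, since the very construction of $Rf_!$ you are unwinding already invokes Lemma~\ref{lem:cocartesian}, which is itself Proposition~\ref{prop:Rfshriekbasechangelocspat} in disguise. What you gain is self-containment and a direct visibility of where each hypothesis enters; what the paper gains is brevity by exploiting the already-established general base change.
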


\begin{proof} This follows from Proposition~\ref{prop:Rfshriekunboundeddirectsumslocspat} and Proposition~\ref{prop:Rfshriekbasechange}.
\end{proof}

\begin{proposition}\label{prop:Rfshriekcomp} Let $g: Y^{\prime\prime}\to Y^\prime$ and $f: Y^\prime\to Y$ be compactifiable maps of small v-stacks which are representable in locally spatial diamonds with locally $\dimtrg f,\dimtrg g <\infty$, and assume $n\Lambda=0$ for some $n$ prime to $p$. Then there is a natural equivalence
\[
Rf_!\circ Rg_!\simeq R(f\circ g)_!
\]
of functors $D_\et(Y^{\prime\prime},\Lambda)\to D_\et(Y,\Lambda)$.
\end{proposition}

\begin{proof} Let $Y_\bullet\to Y$ be a simplicial v-hypercover by quasiseparated locally spatial diamonds, as usual, and let $Y^\prime_\bullet\to Y^\prime$, $Y^{\prime\prime}_\bullet\to Y^{\prime\prime}$ be the pullbacks, which are again simplicial v-hypercovers by quasiseparated locally spatial diamonds. We seek to find an equivalence of the composite of the functors
\[
Rg_{\bullet!}^0: \mathcal{D}_\et(Y^{\prime\prime}_\bullet,\Lambda)^0\to \mathcal{D}_\et(Y^\prime_\bullet,\Lambda)^0
\]
and
\[
Rf_{\bullet!}^0: \mathcal{D}_\et(Y^\prime_\bullet,\Lambda)^0\to \mathcal{D}_\et(Y_\bullet,\Lambda)^0
\]
with
\[
R(f\circ g)_{\bullet!}^0: \mathcal{D}_\et(Y^{\prime\prime}_\bullet,\Lambda)^0\to \mathcal{D}_\et(Y_\bullet,\Lambda)^0\ .
\]
This gives the result by passage to coCartesian sections. But one can directly identify the functors on the full $\infty$-subcategory $\mathcal{D}_{\et,\prop/Y_\bullet}(Y^{\prime\prime}_\bullet,\Lambda)^0$ (as there is a natural transformation, which is an equivalence, by Proposition~\ref{prop:Rfshriekquasicompactcomp} and its proof). On the other hand, all functors commute with all colimits by Proposition~\ref{prop:Rfshriekunboundeddirectsums}, so the full functors can be recovered by left Kan extension (as above).
\end{proof}

In case of overlap, the current definition of $Rf_!$ agrees with Definition~\ref{def:Rfshrieketale}.

\begin{proposition}\label{prop:compRfshrieketale} Let $f: Y^\prime\to Y$ be a separated \'etale map of small v-stacks, and assume $n\Lambda=0$ for some $n$ prime to $p$. Then $Rf_!$ as defined in Definition~\ref{def:Rfshrieketale} agrees with $Rf_!$ as defined in Definition~\ref{def:Rfshriek}.
\end{proposition}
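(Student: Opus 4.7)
The plan is to compare the two definitions by reducing first to the quasicompact case already handled in Proposition~\ref{prop:compRfshrieketaleqc}, and then using the universal property of left Kan extension together with the fact that both functors commute with all colimits. Note first that $f$ is automatically compactifiable by Proposition~\ref{prop:compactifiable}~(vi), and $\dimtrg f = 0$ as $f$ is \'etale, so Definition~\ref{def:Rfshriek} applies and produces a functor $Rf_!$. Let me write $Rf_!^\et$ for the left adjoint of $f^\ast$ from Definition~\ref{def:Rfshrieketale}, to distinguish it from the candidate $Rf_!$.

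Both $Rf_!^\et$ and $Rf_!$ satisfy base change along arbitrary maps of small v-stacks: for $Rf_!^\et$ this is part of Proposition~\ref{def:Rfshrieketale}, and for $Rf_!$ this is Proposition~\ref{prop:Rfshriekbasechange}. Choosing a simplicial v-hypercover $Y_\bullet\to Y$ by disjoint unions of strictly totally disconnected perfectoid spaces, with pullback $Y^\prime_\bullet = Y^\prime\times_Y Y_\bullet\to Y_\bullet$, base change reduces the construction of a natural equivalence $Rf_!\simeq Rf_!^\et$ on $Y$ to the construction of a compatible family of such equivalences on each level, i.e.~to the case where $Y$ is a locally spatial diamond.

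Assume therefore that $Y$ is a locally spatial diamond, so that $Y^\prime$ is a locally spatial diamond as well. By Definition~\ref{def:Rfshrieklocspat}, the functor $Rf_!$ is the left Kan extension along the full inclusion $\mathcal{D}_{\et,\prop/Y}(Y^\prime,\Lambda)\subset \mathcal{D}_\et(Y^\prime,\Lambda)$ of its restriction to the subcategory of sheaves with proper support over $Y$. The functor $Rf_!^\et$, being a left adjoint, preserves all colimits; since the inclusion $\mathcal{D}_{\et,\prop/Y}(Y^\prime,\Lambda)\subset \mathcal{D}_\et(Y^\prime,\Lambda)$ generates the target under filtered colimits (any $A$ is the colimit of $j_{V!}j_V^\ast A$ over quasicompact open immersions $j_V\colon V\hookrightarrow Y^\prime$ with $V\to Y$ quasicompact), the universal property of left Kan extension (cf.~\cite[Section 4.3.2]{LurieHTT}) produces a canonical natural transformation $Rf_!\to Rf_!^\et$ from any natural equivalence of the restrictions to $\mathcal{D}_{\et,\prop/Y}(Y^\prime,\Lambda)$. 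On this subcategory, for $A=j_{V!}j_V^\ast A$ with $V\to Y$ quasicompact separated \'etale, we have $Rf_!^\et A = R(f|_V)_!^\et(j_V^\ast A)$ by the composition and base change formalism for the \'etale $(-)_!$, while $Rf_! A = R(f|_V)_!(j_V^\ast A)$ in the sense of Definition~\ref{def:Rfshriekquasicompact}, directly from the construction. These agree by Proposition~\ref{prop:compRfshrieketaleqc}, and the identifications are natural in $A$ and $V$. This produces the desired natural transformation $Rf_!\to Rf_!^\et$, which is then an equivalence because both sides preserve all colimits (by Proposition~\ref{prop:Rfshriekunboundeddirectsumslocspat} and the adjoint functor property, respectively) and the transformation is an equivalence on the colimit-generating subcategory. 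The only mild obstacle is organizing the homotopy-coherent data needed to make both the Kan extension identification and the simplicial hypercover descent rigorous, but with the $\infty$-categorical enhancement already set up in Lemma~\ref{lem:enrichmentexists} and Definition~\ref{def:Rfshriek}, no genuinely new ingredients are required.
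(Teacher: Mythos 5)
Your overall strategy lands in the same place as the paper --- reduce to a quasicompact target and invoke Proposition~\ref{prop:compRfshrieketaleqc}, using that both functors preserve all colimits --- but the construction of the comparison map is done quite differently, and your route is substantially heavier on coherence. The paper builds the natural transformation globally on any small v-stack $Y$, in the direction $Rf_!^\et\to Rf_!$, by pure adjunction: base change for (the new) $Rf_!$ along $f$ itself gives $f^\ast Rf_!\simeq R\pi_{2!}\pi_1^\ast$ for the two projections $\pi_1,\pi_2\colon Y^\prime\times_Y Y^\prime\to Y^\prime$, and since $\Delta\colon Y^\prime\to Y^\prime\times_Y Y^\prime$ is open \emph{and closed} (being the diagonal of a separated \'etale map), there is a map $\id = R\pi_{2!}R\Delta_!\Delta^\ast\pi_1^\ast\to R\pi_{2!}\pi_1^\ast = f^\ast Rf_!$; transposing across the adjunction $Rf_!^\et\dashv f^\ast$ gives $Rf_!^\et\to Rf_!$ with no auxiliary choices and no coherence to track. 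You instead construct a map $Rf_!\to Rf_!^\et$ by matching the two functors on $\mathcal{D}_{\et,\prop/Y}(Y^\prime,\Lambda)$ and invoking the universal property of the left Kan extension, level by level over a simplicial hypercover of $Y$. This needs a genuine $\infty$-natural equivalence (not an objectwise one) between the two restrictions to the proper-support subcategory, coherently compatible with the simplicial structure --- the ``mild obstacle'' you flag at the end is actually where all the work of your argument would lie, and the paper's adjunction trick is designed precisely to skip it. Once a natural transformation is in hand, both arguments conclude the same way: base change to reduce to $Y$ strictly totally disconnected, then to $Y^\prime$ quasicompact using commutation with arbitrary direct sums, then Proposition~\ref{prop:compRfshrieketaleqc}.
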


\begin{proof} As in the proof of Proposition~\ref{prop:compRfshrieketaleqc}, one has a natural transformation. To check whether it is an equivalence, one can reduce to the case where $Y$ is strictly totally disconnected. As moreover both functors commute with all direct sums, one can reduce to the case where $Y^\prime$ is quasicompact, where it follows from Proposition~\ref{prop:compRfshrieketaleqc}.
\end{proof}

Finally, we also get the projection formula in general.

\begin{proposition}\label{prop:projectionformula} Let $f: Y^\prime\to Y$ be a compactifiable map of small v-sheaves which is representable in locally spatial diamonds with locally $\dimtrg f<\infty$, and let $\Lambda$ be a ring with $n\Lambda=0$ for some $n$ prime to $p$. Then there is a functorial isomorphism
\[
Rf_! B\dotimes_\Lambda A\simeq Rf_!(B\dotimes_\Lambda f^\ast A)
\]
in $B\in D_\et(Y^\prime,\Lambda)$ and $A\in D_\et(Y,\Lambda)$.
\end{proposition}

\begin{proof} We fix $A\in D_\et(Y,\Lambda)$, and define an isomorphism
\[
Rf_! B\dotimes_\Lambda A\simeq Rf_!(B\dotimes_\Lambda f^\ast A)
\]
functorial in $B\in D_\et(Y^\prime,\Lambda)$. One then checks that varying $A$, the relevant diagrams commute in the derived category.

Choose a simplicial v-hypercover $Y_\bullet\to Y$ by locally spatial diamonds as in the definition of $Rf_!$, with pullback $Y^\prime_\bullet\to Y^\prime$. Consider the functor
\[
Rf_{\bullet !}^0: \mathcal{D}_\et(Y^\prime_\bullet,\Lambda)^0\to \mathcal{D}_\et(Y_\bullet,\Lambda)^0\ .
\]
Both $\infty$-categories have an endofunctor given by tensoring with the pullback of $A$; we denote this operation by $-\dotimes_\Lambda A|_{Y^\prime_\bullet}$ respectively $-\dotimes_\Lambda A|_{Y_\bullet}$. We claim that there is a natural equivalence of functors
\[
Rf_{\bullet !}^0(B\dotimes_\Lambda A|_{Y^\prime_\bullet})\simeq Rf_{\bullet !}^0 B\dotimes_\Lambda A|_{Y_\bullet}
\]
from $B\in \mathcal{D}_\et(Y^\prime_\bullet,\Lambda)^0$ to $\mathcal{D}_\et(Y_\bullet,\Lambda)^0$. This implies the desired result by passage to coCartesian sections (noting that both functors map coCartesian edges to coCartesian edges, as the second functor does).

To construct the natural equivalence, note that both functors are left Kan extensions from $\mathcal{D}_{\et,\prop/Y_\bullet}(Y^\prime_\bullet,\Lambda)^0$. On this full $\infty$-subcategory, the first functor is the composite
\[
\mathcal{D}_{\et,\prop/Y_\bullet}(Y^\prime_\bullet,\Lambda)^0\buildrel{\dotimes_\Lambda A|_{Y^\prime_\bullet}}\over\longrightarrow \mathcal{D}_{\et,\prop/Y_\bullet}(Y^\prime_\bullet,\Lambda)^0\buildrel{j_{\bullet !}}\over\longrightarrow \mathcal{D}_\et(\overline{Y^\prime_\bullet}^{/Y_\bullet},\Lambda)^0\buildrel{R\overline{f_\bullet}^{/Y_\bullet}_\ast}\over\longrightarrow \mathcal{D}_\et(Y_\bullet,\Lambda)^0\ .
\]
This admits a natural map to the composite
\[
\mathcal{D}_{\et,\prop/Y_\bullet}(Y^\prime_\bullet,\Lambda)^0\buildrel{j_{\bullet !}}\over\longrightarrow \mathcal{D}_\et(\overline{Y^\prime_\bullet}^{/Y_\bullet},\Lambda)^0\buildrel{\dotimes_\Lambda A|_{\overline{Y^\prime_\bullet}^{/Y_\bullet}}}\over\longrightarrow
\mathcal{D}_\et(\overline{Y^\prime_\bullet}^{/Y_\bullet},\Lambda)^0\buildrel{R\overline{f_\bullet}^{/Y_\bullet}_\ast}\over\longrightarrow \mathcal{D}_\et(Y_\bullet,\Lambda)^0\ ,
\]
by using that $j_{\bullet !}$ is left adjoint to $j_\bullet^\ast$, and that pullback commutes with $-\dotimes_\Lambda -$. This natural transformation is an equivalence (even without postcomposition with $R\overline{f_\bullet}^{/Y_\bullet}_\ast$), as can be checked in each fibre, where it reduces to the projection formula for an open embedding.

On the other hand, the second functor is the composite
\[
\mathcal{D}_{\et,\prop/Y_\bullet}(Y^\prime_\bullet,\Lambda)^0\buildrel{j_{\bullet !}}\over\longrightarrow \mathcal{D}_\et(\overline{Y^\prime_\bullet}^{/Y_\bullet},\Lambda)^0
\buildrel{R\overline{f_\bullet}^{/Y_\bullet}_\ast}\over\longrightarrow \mathcal{D}_\et(Y_\bullet,\Lambda)^0\buildrel{\dotimes_\Lambda A|_{Y_\bullet}}\over\longrightarrow \mathcal{D}_\et(Y_\bullet,\Lambda)^0\ .
\]
This also admits a natural map to the composition
\[
\mathcal{D}_{\et,\prop/Y_\bullet}(Y^\prime_\bullet,\Lambda)^0\buildrel{j_{\bullet !}}\over\longrightarrow \mathcal{D}_\et(\overline{Y^\prime_\bullet}^{/Y_\bullet},\Lambda)^0\buildrel{\dotimes_\Lambda A|_{\overline{Y^\prime_\bullet}^{/Y_\bullet}}}\over\longrightarrow
\mathcal{D}_\et(\overline{Y^\prime_\bullet}^{/Y_\bullet},\Lambda)^0\buildrel{R\overline{f_\bullet}^{/Y_\bullet}_\ast}\over\longrightarrow \mathcal{D}_\et(Y_\bullet,\Lambda)^0
\]
considered above, by using that $R\overline{f_\bullet}^{/Y_\bullet}_\ast$ is right adjoint to $\overline{f_\bullet}^{/Y_\bullet,\ast}$, and that pullback commutes with $-\dotimes_\Lambda -$. Again, one can check that it is a natural equivalence by checking it on fibres, where it reduces to the projection formula in the quasicompact case, Proposition~\ref{prop:projectionformulaqc}. Combining these observations finishes the proof.
\end{proof}

\section{Cohomologically smooth morphisms}\label{sec:cohomsmooth}

We start with the following theorem, which is essentially a corollary of Proposition~\ref{prop:Rfshriekunboundeddirectsums}.

\begin{theorem}\label{thm:Rfuppershriekexists} Let $f: Y^\prime\to Y$ be a compactifiable map of small v-stacks which is representable in locally spatial diamonds with locally $\dimtrg f<\infty$, and let $\Lambda$ be a ring with $n\Lambda=0$ for some $n$ prime to $p$. Then the functor
\[
Rf_!: D_\et(Y^\prime,\Lambda)\to D_\et(Y,\Lambda)
\]
admits a right adjoint
\[
Rf^!: D_\et(Y,\Lambda)\to D_\et(Y^\prime,\Lambda)\ .
\]
\end{theorem}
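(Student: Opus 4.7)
The plan is to invoke Lurie's $\infty$-categorical adjoint functor theorem, exactly as was done for the existence of $Rf_\ast$ in Lemma~\ref{lem:Rfastexists}. All of the ingredients have essentially been arranged in the preceding section.

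First I would note that the construction of $Rf_!$ given in Definition~\ref{def:Rfshriek} actually produces a functor of $\infty$-categories
\[
Rf_!: \mathcal{D}_\et(Y^\prime,\Lambda)\to \mathcal{D}_\et(Y,\Lambda)\ ,
\]
obtained by passage to coCartesian sections of the map of coCartesian fibrations $Rf_{\bullet!}^0$ (whose target-wise coCartesian preservation was checked in Lemma~\ref{lem:cocartesian}). Thus the functor $Rf_!$ on homotopy categories for which we seek an adjoint is already the shadow of an $\infty$-functor between presentable stable $\infty$-categories, both of which are presentable by Lemma~\ref{lem:enrichmentexists}.

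Next, by Proposition~\ref{prop:Rfshriekunboundeddirectsums}, the functor $Rf_!$ commutes with all direct sums, and (being an exact functor between stable $\infty$-categories) therefore commutes with all colimits, cf.~\cite[Proposition 1.4.4.1~(2)]{LurieHA}. Hence the $\infty$-categorical adjoint functor theorem \cite[Corollary 5.5.2.9]{LurieHTT} produces a right adjoint
\[
Rf^!: \mathcal{D}_\et(Y,\Lambda)\to \mathcal{D}_\et(Y^\prime,\Lambda)\ .
\]
Passing to homotopy categories yields the desired right adjoint
\[
Rf^!: D_\et(Y,\Lambda)\to D_\et(Y^\prime,\Lambda)
\]
of $Rf_!$.

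There is essentially no obstacle: the only nontrivial input is the cocontinuity of $Rf_!$, which was precisely the point of Proposition~\ref{prop:Rfshriekunboundeddirectsums}, while presentability was arranged in Lemma~\ref{lem:enrichmentexists}. One should remark that, as with $R_{Y\et}$ and $R\sHom_\Lambda$, this existence result gives essentially no direct handle on $Rf^!$; concrete formulas (such as the expression $Rf^!A\simeq f^\ast A\dotimes_\Lambda Rf^!\Lambda$ in the $\ell$-cohomologically smooth case) will require further arguments developed in the later sections.
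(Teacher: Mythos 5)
Your proposal is correct and matches the paper's proof, which is exactly the one-line invocation of Lurie's adjoint functor theorem \cite[Corollary 5.5.2.9]{LurieHTT} together with Proposition~\ref{prop:Rfshriekunboundeddirectsums}; you have simply spelled out the same two inputs (presentability via Lemma~\ref{lem:enrichmentexists} and cocontinuity of $Rf_!$) in slightly more detail.
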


\begin{proof} This follows from Lurie's $\infty$-categorical adjoint functor theorem, \cite[Corollary 5.5.2.9]{LurieHTT}, and Proposition~\ref{prop:Rfshriekunboundeddirectsums}.
\end{proof}

\begin{remark}\label{rem:uppershriekchangeofcoeff} The functor $Rf^!$ is compatible with change of rings $g: \Lambda^\prime\to \Lambda$ in the following sense. The map $g$ induces a map $D_\et(Y^\prime,\Lambda)\to D_\et(Y^\prime,\Lambda^\prime)$ by restricting the action along $g$. Then the diagram
\[\xymatrix{
D_\et(Y,\Lambda)\ar[r]^{Rf^!}\ar[d] & D_\et(Y^\prime,\Lambda)\ar[d]\\
D_\et(Y,\Lambda^\prime)\ar[r]^{Rf^!} & D_\et(Y^\prime,\Lambda^\prime)
}\]
commutes. Indeed, this identity of functors is the right adjoint of the identity of functors
\[
Rf_!(-\otimes_{\Lambda^\prime} \Lambda) = Rf_!\otimes_{\Lambda^\prime} \Lambda\ ,
\]
which follows from the projection formula, Proposition~\ref{prop:projectionformula}.
\end{remark}

Before going on, we note that the following results follow from the definitions and Proposition~\ref{prop:projectionformula}.

\begin{proposition}\label{prop:localverdier} Let $f: Y^\prime\to Y$ be a compactifiable map of small v-stacks that is representable in locally spatial diamonds with locally $\dimtrg f<\infty$, and let $\Lambda$ be a ring with $n\Lambda=0$ for some $n$ prime to $p$.
\begin{altenumerate}
\item For all $A\in D_\et(Y^\prime,\Lambda)$, $B\in D_\et(Y,\Lambda)$, one has
\[
R\sHom_\Lambda(Rf_!A,B)\cong Rf_\ast R\sHom_\Lambda(A,Rf^! B)\ .
\]
\item For all $A, B\in D_\et(Y,\Lambda)$, one has
\[
Rf^! R\sHom_\Lambda(A,B)\cong R\sHom_\Lambda(f^\ast A,Rf^! B)\ .
\]
\end{altenumerate}
\end{proposition}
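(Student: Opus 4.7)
The plan is to deduce both identities formally from the projection formula of Proposition~\ref{prop:projectionformula}, together with the tensor–Hom adjunction defining $R\sHom_\Lambda$ (Lemma~\ref{lem:rhomexists}) and the two adjunctions $f^\ast \dashv Rf_\ast$ (Lemma~\ref{lem:Rfastexists}) and $Rf_! \dashv Rf^!$ (Theorem~\ref{thm:Rfuppershriekexists}). The strategy is the standard one: to verify an isomorphism in $D_\et(Y,\Lambda)$ or $D_\et(Y^\prime,\Lambda)$, it suffices by the (enriched) Yoneda lemma to check that $R\Hom$ out of an arbitrary test object agrees, which reduces everything to a rearrangement of known adjunctions.

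For part (i), I would test with an arbitrary $C \in D_\et(Y,\Lambda)$. Using the defining adjunction for $R\sHom_\Lambda$, followed by the projection formula $C \dotimes_\Lambda Rf_! A \cong Rf_!(f^\ast C \dotimes_\Lambda A)$, the adjunction $Rf_! \dashv Rf^!$, and then the tensor–Hom adjunction together with $f^\ast \dashv Rf_\ast$, one gets the chain
\[
\begin{aligned}
R\Hom(C, R\sHom_\Lambda(Rf_!A, B))
&\cong R\Hom(C \dotimes_\Lambda Rf_!A, B) \\
&\cong R\Hom(Rf_!(f^\ast C \dotimes_\Lambda A), B) \\
&\cong R\Hom(f^\ast C \dotimes_\Lambda A, Rf^!B) \\
&\cong R\Hom(f^\ast C, R\sHom_\Lambda(A, Rf^!B)) \\
&\cong R\Hom(C, Rf_\ast R\sHom_\Lambda(A, Rf^!B)),
\end{aligned}
\]
and invoking Yoneda produces the claimed isomorphism. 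Part (ii) is entirely analogous: one tests with $C \in D_\et(Y^\prime,\Lambda)$ and runs
\[
\begin{aligned}
R\Hom(C, Rf^! R\sHom_\Lambda(A,B))
&\cong R\Hom(Rf_!C, R\sHom_\Lambda(A,B)) \\
&\cong R\Hom(Rf_!C \dotimes_\Lambda A, B) \\
&\cong R\Hom(Rf_!(C \dotimes_\Lambda f^\ast A), B) \\
&\cong R\Hom(C \dotimes_\Lambda f^\ast A, Rf^!B) \\
&\cong R\Hom(C, R\sHom_\Lambda(f^\ast A, Rf^!B)),
\end{aligned}
\]
again using the projection formula to swap the order of the tensor and $Rf_!$, and then Yoneda.

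The only subtle point is that, strictly speaking, the Yoneda-style argument produces an isomorphism of the represented functors, which should be promoted to a canonical natural equivalence of objects. In the present setting this is harmless because all of $R\sHom_\Lambda$, $Rf_\ast$, $f^\ast$, $Rf_!$ and $Rf^!$ have already been constructed as functors of presentable stable $\infty$-categories $\mathcal{D}_\et(-,\Lambda)$ (Lemma~\ref{lem:enrichmentexists}, Lemma~\ref{lem:Rfastexists}, Theorem~\ref{thm:Rfuppershriekexists}), so each of the adjunction isomorphisms used above is natural at the $\infty$-categorical level and the composite morphism above is induced by a canonical map of objects that becomes an equivalence after applying $\Map(C,-)$ for all $C$. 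Thus no genuine obstacle arises; the entire proof is a formal consequence of (iii) of the preceding theorem (the projection formula) and the various adjunctions, which is exactly what the excerpt claims in the sentence ``parts (iv) and (v) are formal consequences of (iii) and adjunctions.''
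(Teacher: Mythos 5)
Your proof is correct and essentially identical to the paper's own: both parts are established by testing against an arbitrary $C$, running the same chain of adjunctions (tensor–Hom, projection formula, $Rf_!\dashv Rf^!$, $f^\ast\dashv Rf_\ast$), and concluding by Yoneda. Your remark about working at the level of presentable stable $\infty$-categories to promote the Yoneda isomorphism to a canonical equivalence of objects is a sound observation, though the paper leaves it implicit.
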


\begin{proof} For part (i), note that for all $C\in D_\et(Y,\Lambda)$, one has
\[\begin{aligned}
\Hom_{D_\et(Y,\Lambda)}(C,R\sHom_\Lambda(Rf_!A,B))&=\Hom_{D_\et(Y,\Lambda)}(C\dotimes_\Lambda Rf_! A,B)\\
&=\Hom_{D_\et(Y,\Lambda)}(Rf_!(f^\ast C\dotimes_\Lambda A),B)\\
&=\Hom_{D_\et(Y^\prime,\Lambda)}(f^\ast C\dotimes_\Lambda A,Rf^!B)\\
&=\Hom_{D_\et(Y^\prime,\Lambda)}(f^\ast C,R\sHom_\Lambda(A,Rf^!B))\\
&=\Hom_{D_\et(Y,\Lambda)}(C,Rf_\ast R\sHom_\Lambda(A,Rf^!B))\ ,
\end{aligned}\]
so the result follows from the Yoneda lemma. Similarly, for part (ii), note that for all $C\in D_\et(Y^\prime,\Lambda)$, one has
\[\begin{aligned}
\Hom_{D_\et(Y^\prime,\Lambda)}(C,Rf^! R\sHom_\Lambda(A,B))&=\Hom_{D_\et(Y,\Lambda)}(Rf_!C,R\sHom_\Lambda(A,B))\\
&=\Hom_{D_\et(Y,\Lambda)}(Rf_!C\dotimes_\Lambda A,B)\\
&=\Hom_{D_\et(Y,\Lambda)}(Rf_!(C\dotimes_\Lambda f^\ast A),B)\\
&=\Hom_{D_\et(Y^\prime,\Lambda)}(C\dotimes_\Lambda f^\ast A, Rf^! B)\\
&=\Hom_{D_\et(Y^\prime,\Lambda)}(C,R\sHom_\Lambda(f^\ast A,Rf^!B))\ .
\end{aligned}\]
\end{proof}

As a preparation for the definition of smooth morphisms, we prove the following proposition.

\begin{proposition}\label{prop:shriekvsusualpullback} Let $X$ be a strictly totally disconnected perfectoid space, let $f: Y\to X$ be a compactifiable map from a locally spatial diamond $Y$ of locally $\dimtrg f<\infty$, and fix a prime $\ell\neq p$. The following conditions are equivalent.
\begin{altenumerate}
\item The natural transformation
\[
Rf^!\Fl\otimes_\Fl f^\ast\to Rf^!: D_\et(X,\Fl)\to D_\et(Y,\Fl)
\]
adjoint to
\[
Rf_!(Rf^!\Fl\otimes_\Fl f^\ast-) = Rf_! Rf^!\Fl\otimes_\Fl \mathrm{id}\to \mathrm{id}
\]
is an equivalence (using the projection formula, Proposition~\ref{prop:projectionformula}, in the equality).
\item The functor $Rf^!: D_\et(X,\Fl)\to D_\et(Y,\Fl)$ is equivalent to a functor of the form $A\otimes_\Fl f^\ast$ for some $A\in D_\et(Y,\Fl)$.
\item The functor $Rf^!: D_\et(X,\Fl)\to D_\et(Y,\Fl)$ commutes with arbitrary direct sums, and for all connected components $X_0=\Spa(C,C^+)\subset X$ with an open subset $j: U\subset X_0$ and pullbacks
\[\xymatrix{
V\ar@{^(->}[r]^{j^\prime}\ar[d]^{f_U}& Y_0\ar[r]\ar[d]^{f_0}& Y\ar[d]^f\\
U\ar@{^(->}[r]^j& X_0\ar[r] &X\ ,
}\]
the map
\[
j^\prime_! Rf_U^! \Fl \to Rf_0^! j_! \Fl
\]
adjoint to
\[
Rf_{0!} j^\prime_! Rf_U^!\Fl = j_! Rf_{U!} Rf_U^!\Fl\to j_!\Fl
\]
is an equivalence.
\item For all affinoid pro-\'etale maps $g: X^\prime\to X$ with pullback
\[\xymatrix{
Y^\prime\ar[r]^h\ar[d]^{f^\prime}& Y\ar[d]^f\\
X^\prime\ar[r]^g & X\ ,
}\]
the natural transformation of functors
\[
h^\ast Rf^!\to Rf^{\prime !} g^\ast: D_\et(X,\Fl)\to D_\et(Y^\prime,\Fl)
\]
adjoint to
\[
Rf^\prime_! h^\ast Rf^! = g^\ast Rf_! Rf^!\to g^\ast
\]
is an equivalence, and for all connected components $X_0=\Spa(C,C^+)\subset X$ with an open subset $j: U\subset X_0$ and pullbacks
\[\xymatrix{
V\ar@{^(->}[r]^{j^\prime}\ar[d]^{f_U}& Y_0\ar[r]\ar[d]^{f_0}& Y\ar[d]^f\\
U\ar@{^(->}[r]^j& X_0\ar[r] &X\ ,
}\]
the natural transformation of functors
\[
j^\prime_! Rf_U^!\to Rf_0^! j_!: D_\et(U,\Fl)\to D_\et(Y_0,\Fl)
\]
adjoint to
\[
Rf_{0!} j^\prime_! Rf_U^! = j_! Rf_{U!} Rf_U^!\to j_!
\]
is an equivalence.
\end{altenumerate}

Moreover, under these conditions, for any $\ell$-power-torsion ring $\Lambda$, the natural transformation
\[
Rf^! \Lambda\otimes_\Lambda f^\ast\to Rf^!: D_\et(X,\Lambda)\to D_\et(Y,\Lambda)
\]
is an equivalence.
\end{proposition}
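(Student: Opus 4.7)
The plan is to prove the cycle (i) $\Leftrightarrow$ (ii) and (iv) $\Rightarrow$ (iii) $\Rightarrow$ (i) $\Rightarrow$ (iv), then extend from $\Fl$ to general $\ell$-power-torsion $\Lambda$.

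The equivalence (i) $\Leftrightarrow$ (ii) is essentially formal: (i) gives (ii) via $A = Rf^!\Fl$, while conversely any abstract equivalence $F\colon A \otimes_\Fl f^\ast \simeq Rf^!$ evaluated at $\Fl$ identifies $A$ with $Rf^!\Fl$, and $F$ is forced to equal the canonical natural transformation of (i) by the uniqueness property of right adjoints. The implication (iv) $\Rightarrow$ (iii) is tautological. For (i) $\Rightarrow$ (iv), colimit preservation of $Rf^! = Rf^!\Fl \otimes_\Fl f^\ast$ is immediate. For the pro-\'etale base change along $g\colon X' \to X$ with pullback $h\colon Y' \to Y$, one rewrites $h^\ast Rf^! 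A \simeq h^\ast Rf^!\Fl \otimes_\Fl f'^\ast g^\ast A$ using (i); identifying this with $Rf'^! g^\ast A$ amounts to showing $h^\ast Rf^!\Fl \simeq Rf'^!\Fl$ and (i) for $f'$. Expressing $g$ as a cofiltered limit of \'etale maps (for which base change of $Rf^!$ is automatic by uniqueness of right adjoints together with base change for $Rf_!$, Proposition~\ref{prop:Rfshriekbasechange}) and passing to the limit using Proposition~\ref{prop:etmaptolimdiamond} and Proposition~\ref{prop:etcohomlim} produces the required equivalences. The open-immersion clause of (iv) then follows from (i) applied to $f_0$ (itself obtained by the pro-\'etale base change along $X_0 \hookrightarrow X$), the projection formula (Proposition~\ref{prop:projectionformula}), and the identity $j'^\ast Rf_0^! \simeq Rf_U^! j^\ast$, which holds by taking right adjoints of the base-change equality $Rf_{0!} j'_! = j_! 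Rf_{U!}$ and using $j^! = j^\ast$ for open immersions.

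The principal obstacle is (iii) $\Rightarrow$ (i). By (iii), both sides of $\Phi_A\colon Rf^!\Fl \otimes_\Fl f^\ast A \to Rf^! A$ preserve all colimits in $A$ (the right side using direct-sum preservation from (iii) and left-completeness from Proposition~\ref{prop:leftcomplete}), so it suffices to check $\Phi_A$ on a generating family. Since $X$ is strictly totally disconnected, $D_\et(X,\Fl) \simeq D(|X|,\Fl)$ is generated under colimits by $j_!\Fl$ for quasicompact open immersions $j\colon U \hookrightarrow X$. Combining the projection formula with the easy identity $j'^\ast Rf^! \simeq Rf_V^! j^\ast$ (with $j'\colon V \hookrightarrow Y$ the pullback and $f_V\colon V \to U$ the induced map), $\Phi_{j_!\Fl}$ reduces to verifying $Rf^! j_!\Fl \simeq j'_! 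Rf_V^!\Fl$. The hypothesis (iii)(b) supplies exactly this identity after further pullback to each connected component $X_0 = \Spa(C,C^+) \subset X$. The deduction of the global statement from the per-component ones rests on detecting objects of $D_\et(X,\Fl)$ and their images under $Rf^!$ via the cofiltered decomposition of $|X|$ into its connected components: one combines colimit preservation of $Rf^!$ and $j'_!$ with the invariance of the derived category under change of algebraically closed base field (Proposition~\ref{prop:charqproetinvderived}) to descend from the v-site. The main difficulty lies in controlling homotopy coherence across the profinite cofiltered system indexed by $\pi_0 X$, which is where the combination of Proposition~\ref{prop:qcvinjspatdiamond} (to interpret each $X_0$ as a pro-constructible generalizing subset) with the limit formalism is essential.

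Finally, for the ``Moreover'' clause, assume (i) holds for $\Fl$. Since $\Lambda$ is $\ell$-power torsion, every object of $D_\et(X,\Lambda)$ carries a canonical finite filtration via multiplication by $\ell$ whose successive quotients are annihilated by $\ell$, hence are $\Fl$-modules. By d\'evissage over this filtration, together with Remark~\ref{rem:uppershriekchangeofcoeff} ensuring the compatibility of $Rf^!$ with restriction of scalars along $\Lambda \to \Fl$, the claim for $\Lambda$ reduces to the already-established case $\Lambda = \Fl$; both sides preserve colimits and short exact sequences in the relevant sense, so the reduction is clean.
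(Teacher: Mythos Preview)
Your claim that (iv) $\Rightarrow$ (iii) is ``tautological'' is incorrect, and this breaks the cycle. While the second clause of (iii) is indeed the second clause of (iv) evaluated at $\Fl$, the first clause of (iii)---that $Rf^!$ commutes with arbitrary direct sums---does not follow formally from the pro-\'etale base-change statement in (iv). Consequently, your argument as written does not show that (iv) implies any of the other conditions. The paper avoids this by proving (iv) $\Rightarrow$ (i) \emph{directly}: using the first clause of (iv) to pass to a connected strictly local base $X=\Spa(C,C^+)$ and then further to a fixed point $x$; using the second clause of (iv) to kill the part of $K$ supported away from $x$, thereby reducing to constant sheaves; and finally handling the constant case by a Stone--\v{C}ech trick (apply the first clause of (iv) along $X\times\underline{S}\to X$ and use $Rh_\ast h^\ast C\simeq C^0(S,\Fl)\otimes_\Fl C$). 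There is also a boundedness argument showing $Rf^!\Fl\in D^{\le 0}$ so that Postnikov limits behave. None of this is in your proposal.

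Your (iii) $\Rightarrow$ (i) is also underargued. To pass from the per-component statement supplied by (iii)(b) to the global statement $j'_! Rf_V^!\Fl\simeq Rf^! j_!\Fl$, you need to know that pullback of $Rf^!$ to a connected component $X_0\hookrightarrow X$ agrees with $Rf_0^!$ applied to the pullback; this is exactly the pro-\'etale base change of (iv), which you have not yet deduced from (iii). The paper's route here is to prove (iii) $\Rightarrow$ (iv) first, via the identity $g_\ast g^\ast=\varinjlim g_{i\ast}g_i^\ast$ for $g=\varprojlim g_i$ affinoid pro-\'etale, together with $h_\ast Rf'^!=Rf^! g_\ast$ (right adjoints of proper base change) and the colimit preservation of $Rf^!$ provided by (iii). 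Your mention of ``colimit preservation of $Rf^!$'' gestures at the right input, but the references you cite (Proposition~\ref{prop:charqproetinvderived}, Proposition~\ref{prop:qcvinjspatdiamond}) are not what is used, and the talk of ``homotopy coherence across $\pi_0 X$'' is a red herring.

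Finally, your d\'evissage for the ``Moreover'' clause is incomplete: after reducing to $\Lambda=\mathbb Z/\ell^m\mathbb Z$ and to objects restricted from $D_\et(X,\Fl)$, one still needs $Rf^!(\mathbb Z/\ell^m\mathbb Z)\otimes_{\mathbb Z/\ell^m\mathbb Z}\Fl\simeq Rf^!\Fl$. The paper obtains this by first verifying condition (iv) with $\mathbb Z/\ell^m\mathbb Z$-coefficients (by filtering by copies of $-\otimes\Fl$), reducing to connected $X$, and then using the infinite free resolution of $\Fl$ over $\mathbb Z/\ell^m\mathbb Z$ together with the bound $Rf^!(\mathbb Z/\ell^m\mathbb Z)\in D^{\le 0}$.
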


\begin{remark} As in the discussion around Theorem~\ref{thm:properbasechangepartpropvsopen}, the second part of condition (iv) can be regarded as a version of the first part for pullback to closed but non-generalizing subsets. Thus, condition (iv) is expressing a version of the idea that ``$Rf^!$ commutes with arbitrary pro-\'etale base change''. Of course, if $Rf^!$ is essentially given by $f^\ast$, this should be true. What is maybe surprising is that the converse also holds.
\end{remark}

\begin{proof} It is clear that (i) implies (ii), and (ii) implies (iii). To see that (iii) implies (iv), we first check that if $Rf^!$ commutes with arbitrary direct sums, then the first part of condition (iv) is satisfied. For this, let $g: X^\prime=\varprojlim X^\prime_i\to X$ be an inverse limit of affinoid \'etale maps $g_i: X^\prime_i\to X$, and consider the cartesian diagrams
\[\xymatrix{
Y^\prime\ar[r]^h\ar[d]^{f^\prime}& Y\ar[d]^f& &Y^\prime_i \ar[r]^{h_i}\ar[d]^{f^\prime_i} & Y\ar[d]^f\\
X^\prime\ar[r]^g & X& & X^\prime_i\ar[r]^{g_i} & X\ .
}\]
We need to check that the natural transformation
\[
h^\ast Rf^!\to Rf^{\prime !} g^\ast
\]
is an equivalence. Note that by Remark~\ref{rem:qcsepqproetpushforward}, the functor $h_\ast=Rh_\ast$ is exact, and one checks easily that it is also conservative (like pushforward along any quasicompact separated quasi-pro-\'etale map). Thus, it suffices to show that the natural transformation
\[
h_\ast h^\ast Rf^!\to h_\ast Rf^{\prime !} g^\ast = Rf^! g_\ast g^\ast
\]
is an equivalence. But note that $g_\ast g^\ast$ is the filtered colimit of $g_{i\ast} g_i^\ast$, and correspondingly $h_\ast h^\ast$ is the filtered colimit of $h_{i\ast} h_i^\ast$. If $Rf^!$ commutes with arbitrary direct sums, its $\infty$-categorical version $\mathcal Rf^!$ commutes with arbitrary colimits, and thus the preceding argument reduces us to the case of $g_i: X^\prime_i\to X$. But if $g=g_i$ is \'etale, then $g^\ast=Rg^!$ and $h^\ast = Rh^!$, so $h^\ast Rf^! = Rh^! Rf^! = Rf^{\prime !} Rg^! = Rf^{\prime !} g^\ast$, as desired.

Now, for the second part of condition (iv), one checks that the full subcategory of all $K\in D_\et(U,\Fl)$ which satisfy the conclusion is triangulated and stable under arbitrary direct sums, and contains $j^\prime_! \Fl$ for all open immersions $j^\prime: U^\prime\hookrightarrow U$. This implies that it is all of $D_\et(U,\Fl)$.

For (iv) implies (i), we have to check that
\[
Rf^!\Fl\otimes f^\ast\to Rf^!: D_\et(X,\Fl)\to D_\et(Y,\Fl)
\]
is an equivalence. This can be checked in fibers over points of $X$. As both sides commute with base change by the first part of condition (iv), we can thus assume that $X=\Spa(C,C^+)$ is strictly local. Fix a geometric point $y$ of $Y$; we want to prove that the map $(Rf^!\Fl\otimes f^\ast)_y\to (Rf^!)_y$ of stalks at $y$ is an isomorphism. Let $x\in X$ be the image of $y$, let $X_1=\Spa(C,(C^+)^\prime)\subset X$ be the set of generalizations of $x$, and let $U=X_1\setminus \{x\}$. Then we can pullback further under the open immersion $X_1\hookrightarrow X$, and assume that $X=X_1$. For any $K\in D_\et(X,\Fl)$, we have a triangle $j_! j^\ast K\to K\to K^\prime$, where $j: U\hookrightarrow X_1=X$ denotes the open immersion. By the second condition of (iv), we see that the stalk at $y$ of the target of the map
\[
Rf^!\Fl\otimes f^\ast j_! K|_U\to Rf^! j_! K|_U
\]
vanishes; it also vanishes on the source, as the second factor $f^\ast j_! K|_U$ does. Thus, we can replace $K$ by $K^\prime$, and assume that $K|_U=0$. In that case, $K=i_\ast K_0$ for some complex $K_0\in D(\Fl)$, where $i: \{x\}\to |X|$ denotes the closed inclusion. Repeating the argument with the displayed triangle, we can also replace $K$ by the constant sheaf $K_0$.

As a further reduction step, we reduce to the case that $K_0$ is concentrated in degree $0$. Assume for the moment that the result holds true in this case. By triangles, it holds if $K_0\in D^b(\Fl)$ is bounded. We claim that it also holds if $K_0\in D^+(\Fl)$. For this, it is enough to show that if $K_0\in D^{\geq n}(\Fl)$, then for some constant $c$, both sides lie in $D^{\geq n-c}(Y,\Lambda)$. This in turn reduces to proving a similar result for $Rf^!$, which follows formally from the fact that $Rf_!$ has finite cohomological dimension. It remains to handle the case of $K_0\in D^-(\Fl)$. Writing this as a limit of its Postnikov truncations and using that $Rf^!$ commutes with derived limits, it is enough to prove that $Rf^!\Fl\in D^{\leq 0}_\et(Y,\Lambda)$. This can be checked on stalks, which are given by colimits of $R\Hom_X(Rf^\prime_! \Fl,\Fl)$ for $f^\prime: Y^\prime\to Y\to X$ the composition of an \'etale map $Y^\prime\to Y$ with $f: Y\to X$. But if $j_\eta: \{\eta\}\to X$ is the inclusion of the maximal point, then using Lemma~\ref{lem:qcopenpushforward}
\[
R\Hom_X(-,\Fl) = R\Hom_X(-,Rj_{\eta\ast} \Fl) = R\Hom_\eta(j^\ast-,\Fl) = \Hom_\eta(j^\ast-,\Fl)\ ,
\]
as on a geometric point, $\Hom(-,\Fl)$ is exact. As $Rf^\prime_! \Fl\in D^{\geq 0}(X,\Fl)$, this implies that
\[
R\Hom_X(Rf^\prime_! \Fl,\Fl)\in D^{\leq 0}(\Fl)\ ,
\]
as desired.

Thus, it remains to handle the case that $K_0$ is concentrated in degree $0$. In this case, $K_0=V[0]$ for some $\Fl$-vector space $V$. We can assume that $V=C^0(S,\Fl)$ is the space of continuous functions on some profinite set $S$. Let $X^\prime=X\times \underline{S}$, so that $h: X^\prime\to X$ is quasicompact separated pro-\'etale. Let $f^\prime: Y^\prime=X^\prime\times_X Y\to X^\prime$ be the pullback, which is given by $Y^\prime=Y\times\underline{S}$. Using the first part of condition (iv), we get
\[
Rf^{\prime!} h^\ast \Fl = h^{\prime\ast} Rf^! \Fl\ .
\]
Applying $Rh^\prime_\ast$ and using that $Rh^\prime_\ast Rf^{\prime!} = Rf^! Rh_\ast$ by Proposition~\ref{prop:smoothbasechange}~(i), we get
\[
Rf^! Rh_\ast h^\ast \Fl = Rh^\prime_\ast h^{\prime\ast} Rf^! \Fl\ .
\]
By Lemma~\ref{lem:stonecechinfinitesum} below, this translates into
\[
Rf^! V= V\otimes_\Fl Rf^! \Fl\ ,
\]
as desired.

This shows the equivalence of conditions (i) through (iv). To handle the case of a general $\ell$-power-torsion ring $\Lambda$, choose $m$ such that $\ell^m=0$ in $\Lambda$. Then $\Lambda$ is a $\mathbb Z/\ell^m\mathbb Z$-algebra, and using Remark~\ref{rem:uppershriekchangeofcoeff}, one reduces to the case $\Lambda=\mathbb Z/\ell^m\mathbb Z$. Any $K\in D_\et(X,\Lambda)$ is filtered by $m$ copies of $K\otimes_{\mathbb Z/\ell^m\mathbb Z} \Fl$, so one can assume that $K$ comes from $D_\et(X,\Fl)$ via restriction of coefficients. Using the result for $\Fl$, this reduces to showing that
\[
Rf^! \mathbb Z/\ell^m\mathbb Z\otimes_{\mathbb Z/\ell^m\mathbb Z} \Fl\to Rf^! \Fl
\]
is an isomorphism. For this, note that condition (iv) also holds for $\mathbb Z/\ell^m\mathbb Z$ in place of $\Fl$, by filtering by $m$ copies of $-\otimes_{\mathbb Z/\ell^m\mathbb Z}\Fl$. Thus, as above, we can reduce to the case that $X=\Spa(C,C^+)$ is connected. Now one uses the standard infinite resolution of $\Fl$ as a $\mathbb Z/\ell^m\mathbb Z$-module, and the observation $Rf^! \mathbb Z/\ell^m\mathbb Z\in D^{\leq 0}_\et(Y,\mathbb Z/\ell^m\mathbb Z)$. The latter is proved in the same way as $Rf^! \Fl\in D^{\leq 0}_\et(Y,\Fl)$ above, using that $\mathbb Z/\ell^m\mathbb Z$ is self-injective.
\end{proof}

\begin{lemma}\label{lem:stonecechinfinitesum} Let $S$ be a profinite set, let $Y$ be a small v-sheaf, and consider $h: Y\times\underline{S}\to Y$. Then for any ring $\Lambda$ and any $C\in D_\et(Y,\Lambda)$, one has a natural isomorphism
\[
Rh_\ast h^\ast C\simeq C^0(S,\Lambda)\otimes_\Lambda C\ ,
\]
where $C^0(S,\Lambda)$ is the $\Lambda$-module of continuous maps $S\to \Lambda$.
\end{lemma}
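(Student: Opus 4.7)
First I would reduce to the case where $Y = X$ is strictly totally disconnected. The map $h: Y \times \underline{S} \to Y$ is qcqs (since $\underline{S} \to \ast$ is) and quasi-pro-\'etale: writing $S = \varprojlim_i S_i$ as an inverse limit of finite discrete sets, $\underline{S} = \varprojlim_i \underline{S_i}$ along finite \'etale transition maps. By Corollary~\ref{cor:qcqspushforwardpreserveset}~(i) applied to $h$, the functor $Rh_{v\ast}$ sends $D_\et(Y \times \underline{S},\Lambda)$ into $D_\et(Y,\Lambda)$ and there agrees with $Rh_\ast$. Since v-pushforward always commutes with v-base change, so does $Rh_\ast$ here, while the formation of $C^0(S,\Lambda) \otimes_\Lambda C$ is obviously compatible with pullback. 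As isomorphisms in $D_\et(Y,\Lambda)$ can be checked after pullback to any surjection from a disjoint union of strictly totally disconnected spaces, I reduce to $Y = X$ strictly totally disconnected.

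Next I will construct the comparison map. Factor $h = h_i \circ g_i$ where $h_i : X \times \underline{S_i} \to X$ is finite \'etale (a disjoint union of $|S_i|$ copies of $\mathrm{id}_X$) and $g_i : X \times \underline{S} \to X \times \underline{S_i}$. The unit of adjunction $h_i^\ast C \to Rg_{i\ast} g_i^\ast h_i^\ast C = Rg_{i\ast} h^\ast C$, followed by $Rh_{i\ast}$, gives a map
\[
\Lambda^{S_i} \otimes_\Lambda C \;=\; Rh_{i\ast} h_i^\ast C \;\longrightarrow\; Rh_\ast h^\ast C .
\]
Passing to the filtered colimit over $i$ and using $C^0(S,\Lambda) = \varinjlim_i \Lambda^{S_i}$ together with the commutation of $\otimes_\Lambda$ with filtered colimits produces the natural map
\[
C^0(S,\Lambda) \otimes_\Lambda C \;\longrightarrow\; Rh_\ast h^\ast C .
\]

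Finally I will show this map is an isomorphism when $X$ is strictly totally disconnected. Then each $X \times \underline{S_i}$ (directly) and $X \times \underline{S}$ (by Lemma~\ref{lem:proetaleoverwlocal}) are also strictly totally disconnected, so $D_\et$ on each identifies with the derived category of ordinary sheaves on the spectral underlying space, and by Lemma~\ref{lem:fargues} every open cover of such a space splits, making $\Gamma(U,-)$ exact for every quasicompact open $U \subset X$. It therefore suffices to check that $R\Gamma(U,-) = \Gamma(U,-)$ gives the same value on both sides for every such $U$, and one may then work termwise in any complex representative of $C$. Exactness of $\Gamma(U,-)$ gives, for the left hand side, $\varinjlim_i \Gamma(U,C)^{S_i}$. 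For the right hand side, Proposition~\ref{prop:etcohomlim} applied to the inverse system of spatial diamonds $U \times \underline{S_i}$ (which collapses to the underived $\Gamma$ since each term, and the limit, is strictly totally disconnected) gives $\Gamma(U \times \underline{S}, h^\ast C) = \varinjlim_i \Gamma(U \times \underline{S_i}, h_i^\ast C) = \varinjlim_i \Gamma(U,C)^{S_i}$, as $U \times \underline{S_i}$ is a disjoint union of $|S_i|$ copies of $U$. Matching these functorially in $U$ completes the proof. The main obstacle is essentially bookkeeping: identifying $Rh_\ast$ with $Rh_{v\ast}$ to gain base change in the first paragraph, and handling unbounded complexes by termwise reduction in a topos where $\Gamma(U,-)$ is already exact; both are cleanly handled by results already developed in the paper.
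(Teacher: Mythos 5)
Your proof is correct and follows essentially the same route as the paper: reduce to $Y$ strictly totally disconnected, write $S=\varprojlim_i S_i$, and use Proposition~\ref{prop:etcohomlim} to move the colimit past sections. The one genuine variation is in the unbounded case: the paper simply says to ``pass to a Postnikov limit,'' whereas you exploit that on a strictly totally disconnected space every quasicompact open $U$ has $\Gamma(U,-)$ exact (Lemma~\ref{lem:fargues} via Lemma~\ref{lem:subsetwlocal}), so that $R\Gamma(U,-)$ can be computed termwise on an arbitrary complex representative and the colimit identity of Proposition~\ref{prop:etcohomlim} is only needed in degree $0$ for sheaves. This sidesteps any worry about whether $C^0(S,\Lambda)\otimes_\Lambda-$ commutes with the derived limit appearing in the Postnikov tower, and is arguably the cleaner way to dispatch the unbounded case; it buys nothing in generality, but it is a tidier bookkeeping. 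Your preliminary steps (identifying $Rh_\ast$ with $Rh_{v\ast}$ via Corollary~\ref{cor:qcqspushforwardpreserveset}~(i) to gain base change, constructing the comparison map from the units of the adjunctions for $g_i$ and the finite \'etale $h_i$, and noting $U\times\underline{S}$ is strictly totally disconnected by Lemma~\ref{lem:proetaleoverwlocal}) are all sound and fill in exactly the details the paper's terse proof leaves implicit.
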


\begin{proof} Note that $h$ is proper, so by the projection formula (Proposition~\ref{prop:projectionformulaqc}), we get
\[
Rh_\ast h^\ast C\cong Rh_! h^\ast C\cong Rh_! h^\ast \Lambda\dotimes_\Lambda C\cong Rh_\ast h^\ast \Lambda\dotimes_\Lambda C.
\]
Writing $S=\varprojlim_i S_i$ as an inverse limit of finite sets, one computes $Rh_\ast h^\ast \Lambda=C^0(S,\Lambda)$, using Proposition~\ref{prop:etcohomlim}.
\end{proof}

\begin{proposition}\label{prop:Rfshriekconstructible} Let $X$ be a strictly totally disconnected perfectoid space, $f: Y\to X$ a compactifiable map from a spatial diamond $Y$ with $\dimtrg f<\infty$, and fix a prime $\ell\neq p$.

The functor
\[
Rf^! : D_\et(X,\Fl)\to D_\et(Y,\Fl)
\]
commutes with arbitrary direct sums if and only if for all constructible sheaves $\mathcal F$ of $\Fl$-vector spaces on $Y_\et$ and all $i\geq 0$, the $!$-pushforward $R^if_! \mathcal F$ is constructible on $X_\et$.
\end{proposition}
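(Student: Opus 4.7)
The plan is to translate the statement into a compact-object preservation question via adjunction, using the characterization of compact objects in the derived categories as bounded complexes with constructible cohomology.

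First I would verify that both $\mathcal{D}_\et(X,\Fl)$ and $\mathcal{D}_\et(Y,\Fl)$ are compactly generated, with the compact objects being exactly the bounded complexes with constructible cohomology. For $X$ strictly totally disconnected, every \'etale torsion sheaf has vanishing higher cohomology, so the hypothesis of Proposition~\ref{prop:constructiblecompactderivedfull} is trivially satisfied. For $Y$, note that any quasicompact separated \'etale $U\to Y$ is again spatial with $\dimtrg(U\to X)\le \dimtrg f<\infty$, and $\dim U\le \dim f\le \dimtrg f$ by Lemma~\ref{lem:compdim}. Combining Proposition~\ref{prop:cohomdimpointbound} (giving $\cdim_\ell y\le \dimtrg f$ at maximal points) with Proposition~\ref{prop:cohombounded}, one obtains a uniform bound on the $\ell$-cohomological dimension, so Proposition~\ref{prop:constructiblecompactderivedfull} applies to $Y$ as well.

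Next I would invoke the standard adjoint functor principle: since $\mathcal{D}_\et(Y,\Fl)$ is compactly generated and $Rf_!$ commutes with all colimits (Proposition~\ref{prop:Rfshriekunboundeddirectsums}), its right adjoint $Rf^!$ commutes with arbitrary direct sums if and only if $Rf_!$ carries compact objects to compact objects. By the characterization above, this becomes: $Rf_!$ sends bounded complexes with constructible cohomology on $Y$ to bounded complexes with constructible cohomology on $X$.

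Now I would establish the claimed equivalence. The \emph{only if} direction is immediate: a constructible sheaf $\mathcal F$ concentrated in degree $0$ is compact in $\mathcal{D}_\et(Y,\Fl)$, so $Rf_!\mathcal F$ is compact in $\mathcal{D}_\et(X,\Fl)$, hence bounded with constructible cohomology, which forces each $R^if_!\mathcal F$ to be constructible. For the \emph{if} direction, observe first that $Rf_!$ has finite cohomological dimension: writing $f$ as the composite of $j:Y\hookrightarrow \overline{Y}^{/X}$ and the proper map $\overline{f}^{/X}$, the functor $j_!$ is exact and $R\overline{f}^{/X}_\ast$ has cohomological amplitude bounded by $3\dimtrg f$ by Theorem~\ref{thm:dimtrgfinitecohomdim}. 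Hence $Rf_!$ applied to any bounded complex is bounded. For a bounded complex $C$ with constructible cohomology, an induction on cohomological length using the triangles $\tau^{\le n-1}C\to \tau^{\le n}C\to H^n(C)[-n]$ reduces the constructibility of the cohomology sheaves of $Rf_!C$ to the assumed constructibility of $R^if_!\mathcal H^n(C)$ for each $n$. Since the class of bounded complexes with constructible cohomology is triangulated, we conclude.

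The main obstacle, and really the only genuine content, is the compact generation step together with the boundedness of $Rf_!$: once those structural inputs are secured from the earlier results, the equivalence is a formal matter of adjunctions and truncation triangles. One should check that no set-theoretic issue arises in invoking the abstract adjoint functor formalism, but since $\mathcal{D}_\et(Y,\Fl)$ is presentable and compactly generated by Proposition~\ref{prop:constructiblecompactderivedfull}, this is routine.
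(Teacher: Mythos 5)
There is a genuine gap in your verification that $\mathcal{D}_\et(Y,\Fl)$ is compactly generated. You assert that for a quasicompact separated \'etale $U\to Y$ one has $\dim U\leq \dim f$ and then feed this into Proposition~\ref{prop:cohombounded}. But $\dim U$ is the Krull dimension of the \emph{space} $|U|$, whereas $\dim f$ is the supremum of the dimensions of the \emph{fibres}; these are not comparable. If a connected component of $X$ is $\Spa(C,C^+)$ with $C^+$ of rank $n$, then already for $f=\mathrm{id}_X$ one has $\dim f = 0$ but $\dim U = n$, and $n$ can be unbounded across components of a strictly totally disconnected $X$. So the bound you invoke from Proposition~\ref{prop:cohombounded}, which is $\dim(Y\setminus U) + \sup_y\cdim_\ell y$, does not give a uniform $N$ by itself.

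The fix requires the canonical compactification and proper base change, which is exactly what the paper does: pass from $U$ to $\overline{U}^{/X}$ via Lemma~\ref{lem:qcopenpushforward}, reduce to $X=\Spa(C,C^+)$ connected, and then use Theorem~\ref{thm:properbasechangepartpropvsopen} for the proper map $\overline{U}^{/X}\to X$ to kill cohomology of any sheaf supported away from the fibre over the closed point. Only at that stage can one invoke Proposition~\ref{prop:cohombounded}, because the remaining support lies in the special fibre, whose dimension really is bounded by $\dim f\leq\dimtrg f$. Once this cohomological-dimension bound is in place, the rest of your argument — compact generation via Proposition~\ref{prop:constructiblecompactderivedfull}, the adjoint functor equivalence between ``$Rf^!$ commutes with direct sums'' and ``$Rf_!$ preserves compact objects'', and the translation to constructibility of $R^if_!\mathcal F$ — agrees with the paper's proof (which is briefer on the last translation, but your added induction is harmless and correct).
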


\begin{proof} First, we claim that $Y$ satisfies the hypothesis of Proposition~\ref{prop:constructiblecompactderivedfull}. For this, we have to see that for any quasicompact separated \'etale map $j: U\to Y$, the $\ell$-cohomological dimension of $U_\et$ is bounded by $N$, for some fixed integer $N$. Here, we can take $N=3\dimtrg f$ by Theorem~\ref{thm:dimtrgfinitecohomdim} (and as $X$ has cohomological dimension $0$).

Now it follows from Proposition~\ref{prop:constructiblecompactderivedfull} that both $D_\et(Y,\Fl)$ and $D_\et(X,\Fl)$ are compactly generated, with compact objects given by bounded complexes whose cohomology sheaves are constructible. It is easy to see that the second condition is equivalent to the condition that $Rf_!: D_\et(Y,\Fl)\to D_\et(X,\Fl)$ preserves compact objects. It follows from adjunction that $Rf^!$ commutes with arbitrary direct sums if and only if $Rf_!$ preserves compact objects (this simple but powerful observation goes back to Neeman, \cite{NeemanJAMS}), so we get the result.
\end{proof}

Finally, we can give the definition of cohomological smoothness.

\begin{definition}\label{def:cohsmooth} Let $f: Y^\prime\to Y$ be a separated map of small v-stacks that is representable in locally spatial diamonds, and let $\ell\neq p$ be a prime. Then $f$ is $\ell$-cohomologically smooth if $f$ is compactifiable, locally $\dimtrg f<\infty$, and for any strictly totally disconnected perfectoid space $X$ with a map $X\to Y$ with pullback $f_X: Y^\prime\times_Y X\to X$, the functor
\[
Rf_X^!: D_\et(X,\Fl)\to D_\et(Y^\prime\times_Y X,\Fl)
\]
is equivalent to a functor of the form $D_{f_X}\otimes_\Fl f^\ast$ for some invertible object $D_{f_X}\in D_\et(Y^\prime\times_Y X,\Fl)$.
\end{definition}

Here, for a locally spatial diamond $Y$, an object $D\in D_\et(Y,\Fl)$ is \emph{invertible} if it is locally (equivalently, in the v-, quasi-pro-\'etale, or \'etale topology of $Y$) isomorphic to $\Fl[n]$ for some integer $n\in \mathbb Z$. We note that we are not a priori asking that the equivalence between $Rf_X^!$ and $D_{f_X}\otimes_\Fl f^\ast$ is natural, that $D_{f_X}$ commutes with base change, or similar results. These are however automatic consequences, cf.~Proposition~\ref{prop:smoothshriekbasechange}.

\begin{remark} The definition is phrased in a way that leaves room for a more general definition of $\ell$-cohomological smoothness for morphisms which are not separated, or not representable in locally spatial diamonds. For example, the map $\underline{M}\to \ast$ for a topological manifold $M$ should be considered smooth, but it is not representable in locally spatial diamonds. Moreover, smoothness should be a local condition, while being separated is not.
\end{remark}

Before investigating cohomologically smooth morphisms in detail, we give a different characterization that is easier to check in practice.

\begin{proposition}\label{prop:propsmooth} Let $f: Y^\prime\to Y$ be a compactifiable map of small v-stacks which is representable in spatial diamonds, and let $\ell\neq p$ be a prime. Then $f$ is $\ell$-cohomologically smooth if and only if the following conditions are satisfied.
\begin{altenumerate}
\item The dimension $\dimtrg f<\infty$ is finite.
\item For any strictly totally disconnected perfectoid space $X$ with a map $g: X\to Y$ and pullback $f_X: Y^\prime\times_Y X\to X$, and any constructible \'etale sheaf $\mathcal F$ of $\Fl$-vector spaces on the spatial diamond $Y^\prime\times_Y X$, the $!$-pushforward $R^i f_{X!} \mathcal F$ is constructible on $X$, for all $i\geq 0$.
\item For any $X=\Spa(C,C^+)$ with $C$ an algebraically closed field and $C^+\subset C$ an open and bounded valuation subring, with a quasicompact open subset $j: U\to X$, and pullbacks
\[\xymatrix{
Y^\prime\times_Y U\ar@{^(->}[r]^{j^\prime}\ar[d]^{f_U}& Y^\prime\times_Y X\ar[r]\ar[d]^{f_X}& Y^\prime\ar[d]^f\\
U\ar@{^(->}[r]^j& X\ar[r] &Y\ ,
}\]
the natural map
\[
j^\prime_! Rf_U^!\Fl\to Rf_X^! j_!\Fl
\]
adjoint to
\[
Rf_{X!} j^\prime_! Rf_U^!\Fl = j_! Rf_{U!} Rf_U^!\Fl\to j_!\Fl
\]
is an equivalence.
\item For any strictly totally disconnected perfectoid space $X$ with a map $g: X\to Y$ and pullback $f_X: Y^\prime\times_Y X \to X$, the $!$-pullback $Rf_X^! \Fl\in D_\et(Y^\prime\times_Y X,\Fl)$ is invertible.
\end{altenumerate}
\end{proposition}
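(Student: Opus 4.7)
The plan is to combine Proposition~\ref{prop:shriekvsusualpullback}, which characterizes when $Rf_X^!$ takes the form $A\otimes_\Fl f_X^\ast$, with Proposition~\ref{prop:Rfshriekconstructible}, which translates the ``commutes with arbitrary direct sums'' condition into preservation of constructibility by $Rf_{X!}$. Both directions of the equivalence then amount to unraveling the definitions, and condition (i) is tautologically part of the definition of $\ell$-cohomologically smooth.

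First I would fix a strictly totally disconnected perfectoid space $X$ with a map to $Y$ and let $f_X\colon Y^\prime\times_Y X\to X$ be the pullback, which by hypothesis is representable in spatial diamonds, so that $Y^\prime\times_Y X$ is spatial. By Proposition~\ref{prop:shriekvsusualpullback} (equivalence of (ii) and (iii) there), the functor $Rf_X^!$ equals $A\otimes_\Fl f_X^\ast$ for some (necessarily unique) $A\in D_\et(Y^\prime\times_Y X,\Fl)$ if and only if (a) $Rf_X^!$ commutes with arbitrary direct sums, and (b) for every connected component $X_0=\Spa(C,C^+)\subset X$ and every quasicompact open $j\colon U\hookrightarrow X_0$, the closed-subset base change map $j^\prime_!\,Rf_U^!\,\Fl\to Rf_0^!\,j_!\,\Fl$ is an equivalence; moreover under these conditions $A=Rf_X^!\,\Fl$. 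By Proposition~\ref{prop:Rfshriekconstructible}, condition (a) for this fixed $X$ is equivalent to $R^if_{X!}$ preserving constructibility for all $i\geq 0$.

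Next I would rewrite the definition of $\ell$-cohomological smoothness: granted $\dimtrg f<\infty$ (condition (i)), $f$ is $\ell$-cohomologically smooth iff for every s.t.d.\ $X\to Y$, both (a) and (b) above hold and the resulting object $Rf_X^!\,\Fl$ is invertible. Ranging over all s.t.d.\ $X\to Y$, the first of these is precisely condition (ii). The third is precisely condition (iv). The second is precisely condition (iii): the statement in (iii) of the proposition refers only to the map $f_0\colon Y^\prime\times_Y X_0\to X_0$ attached to a single connected strictly local space $X_0=\Spa(C,C^+)\to Y$, and every such $X_0$ is itself a connected s.t.d.\ space mapping to $Y$, while conversely every connected component of an arbitrary s.t.d.\ $X\to Y$ is of this form. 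Thus conditions (ii), (iii), (iv) together repackage exactly the three ingredients produced by Proposition~\ref{prop:shriekvsusualpullback}, and the proposition follows.

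The only mildly subtle point, which I expect to be the main thing to check carefully, is the passage between condition (b) for a general s.t.d.\ $X$ and condition (iii) over connected strictly local spaces. But condition (b) is already stated componentwise in Proposition~\ref{prop:shriekvsusualpullback}~(iii), so nothing needs to be glued across the profinite set $\pi_0(X)$, and this passage is formal; the nontrivial content is entirely contained in the cited propositions.
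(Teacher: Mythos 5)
Your approach is the same as the paper's, and the overall strategy (combine Proposition~\ref{prop:shriekvsusualpullback} with Proposition~\ref{prop:Rfshriekconstructible}) is correct. However, there is a concrete gap in the step where you assert that condition (iii) of Proposition~\ref{prop:propsmooth} ``is precisely'' the second half of condition (iii) in Proposition~\ref{prop:shriekvsusualpullback}: the two are \emph{not} identical. Proposition~\ref{prop:shriekvsusualpullback}~(iii) demands the equivalence $j^\prime_!\,Rf_U^!\,\Fl\to Rf_0^!\,j_!\,\Fl$ for \emph{all} open subsets $U\subset X_0=\Spa(C,C^+)$, whereas Proposition~\ref{prop:propsmooth}~(iii) only requires it for \emph{quasicompact} open $U$. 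You flag the passage from general strictly totally disconnected $X$ to connected strictly local $X_0$ as the subtle point, but that step really is formal; the actual delicate point is the quasicompact-versus-arbitrary distinction, and you pass over it silently.

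The fix is the one the paper supplies: granted condition (ii), the functor $Rf_X^!$ commutes with arbitrary direct sums by Proposition~\ref{prop:Rfshriekconstructible}, and hence with filtered colimits. For an arbitrary open $U\subset X_0$ one writes $U=\bigcup_i U_i$ as a filtered union of quasicompact open subsets, so $j_!\Fl=\varinjlim_i j_{i!}\Fl$, and both sides of the map $j^\prime_!\,Rf_U^!\,\Fl\to Rf_0^!\,j_!\,\Fl$ compute as the corresponding filtered colimits of the quasicompact-open instances. Thus condition (iii) for quasicompact $U$ together with (ii) yields the apparently stronger condition in Proposition~\ref{prop:shriekvsusualpullback}~(iii). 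Without (ii) this implication could fail, which is why the hypotheses must be used together rather than matched one-for-one against the dictionary supplied by Proposition~\ref{prop:shriekvsusualpullback}.
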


\begin{proof} Note that given (ii), the condition of (iii) for quasicompact $U$ implies the same for all $U$ by Proposition~\ref{prop:Rfshriekconstructible} and passage to a filtered colimit over all quasicompact open subspaces. Thus, the result follows from the equivalence of (ii) and (iii) in Proposition~\ref{prop:shriekvsusualpullback} and Proposition~\ref{prop:Rfshriekconstructible}.
\end{proof}

One can deduce that cohomologically smooth maps are universally open.

\begin{proposition}\label{prop:cohomsmoothopen} Let $f: Y^\prime\to Y$ be separated map of small v-stacks that is representable in locally spatial diamonds and $\ell$-cohomologically smooth for some $\ell\neq p$. Then $f$ is universally open, i.e.~for all $X\to Y$, the map $|Y^\prime\times_Y X|\to |X|$ is open.
\end{proposition}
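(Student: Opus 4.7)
The plan is to show $|f|: |Y'| \to |X|$ is open when $Y = X$ is a strictly totally disconnected perfectoid space, from which universal openness follows: given any $X_0 \to Y$, by base-change stability of $\ell$-cohomological smoothness (part of the main theorem on smooth morphisms in the introduction), the pullback $f_{X_0}$ is again $\ell$-cohomologically smooth; covering $X_0$ by a universally open quasi-pro-\'etale surjection from a strictly totally disconnected $\tilde X$ (Lemma~\ref{lem:strtotdisccover}) and using the surjectivity of $|Y' \times_Y \tilde X| \to |Y' \times_Y X_0| \times_{|X_0|} |\tilde X|$ (Proposition~\ref{prop:topfiberproduct}) together with the fact that $|\tilde X| \to |X_0|$ is a quotient map (Lemma~\ref{lem:quotientmap}) transports openness. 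Since $|Y'|$ is locally spectral with a basis of quasicompact opens (Proposition~\ref{prop:locallyspatial}), it then suffices to show $f(V) \subset |X|$ is open for each quasicompact open $V \subset Y'$.

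The strategy is to prove $f(V)$ is (a) closed under generalization and (b) constructible in the spectral space $|X|$; together these force $f(V)$ to be open, because the complement is then constructible and closed under specialization, hence closed. For (a), the map $|f|$ is spectral and generalizing (Proposition~\ref{prop:locallyspatial}~(iv)), open subsets of a locally spectral space are closed under generalization, and the image of a generalizing subset under a generalizing map is generalizing. For (b), the restriction $f|_V: V \to X$ is cohomologically smooth as the composition of the \'etale open immersion $V \hookrightarrow Y'$ with $f$; setting $D := R(f|_V)^!\Fl$, which is invertible (so its cohomology sheaves are constructible), Proposition~\ref{prop:propsmooth}~(ii) yields that $\mathcal{K} := R(f|_V)_! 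D$ has constructible cohomology sheaves, hence constructible support in $|X|$. The containment $\mathrm{supp}\,\mathcal{K} \subset f(V)$ is immediate from proper base change (Proposition~\ref{prop:Rfshriekbasechange}), since empty fibers give zero stalks. Conversely, for $y \in f(V)$ with non-empty fiber $V_y$, the base-change compatibility of $R(-)^!$ for cohomologically smooth morphisms (part of the main theorem on smooth maps) makes $V_y \to \{y\}$ cohomologically smooth with dualizing complex $D|_{V_y}$, and Proposition~\ref{prop:localverdier}~(i) applied to this fiber with $A = D|_{V_y}$ and $B = \Fl$, combined with invertibility of $D$, identifies the $\Fl$-linear dual of the stalk $R\Gamma_c(V_y, D|_{V_y})$ with $R\Gamma(V_y, \Fl)$, which is non-zero since $H^0(V_y, \Fl)$ contains the constants on the non-empty space $V_y$.

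The main obstacle is making the identification $\mathrm{supp}\,\mathcal{K} = f(V)$ precise at non-maximal points of $|X|$: one must carefully relate the topological stalk of the constructible complex $\mathcal{K}$ on $|X|$ to the compactly supported cohomology of the fiber, localizing along $\Spa(C(y), C(y)^+) \to X$ and invoking the full base-change compatibility of $Rf^!$ for cohomologically smooth morphisms (rather than merely the value $Rf^!\Fl$), so that Verdier duality on the fiber is available at every point rather than only at maximal ones.
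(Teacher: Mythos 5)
Your proposal follows the same overall route as the paper: reduce to $X$ strictly totally disconnected, observe that the image of a quasicompact open is generalizing (so it suffices to prove constructibility), and detect constructibility through the support of the constructible complex $Rf_!\,Rf^!\,\Fl$. The paper does not separately localize to a quasicompact open $V\subset Y'$, but the image-generalizing-plus-constructible-implies-open reduction and the choice of detecting complex are the same, and the inclusion $\mathrm{supp}\subseteq\mathrm{image}$ via base change is handled identically.

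Where your argument has a genuine gap is in the reverse inclusion $\mathrm{image}\subseteq\mathrm{supp}$: for a non-maximal point $y\in|X|$, the set $V_y$ is a closed non-generalizing subset of $|V|$ and therefore does \emph{not} correspond to a sub-v-sheaf of $V$, so ``$V_y\to\{y\}$'' is not a morphism of small v-stacks. Consequently neither the cohomological smoothness hypothesis nor Proposition~\ref{prop:localverdier} can be applied literally to ``the fiber.'' Your final paragraph names this obstacle and suggests localizing along $\Spa(C(y),C(y)^+)\to X$ (which is indeed the paper's first move), but after localization, with $X=\Spa(C,C^+)$ strictly local with closed point $s$, the fiber $f^{-1}(s)$ is still not a diamond; merely invoking base-change compatibility of $Rf^!$ does not produce the Verdier duality statement on the fiber that your argument needs.

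The paper bypasses the fiber entirely. It introduces $\Fl|_s := \mathrm{cone}(j_!\Fl\to\Fl)\in D_\et(X,\Fl)$, where $j:U=X\setminus\{s\}\hookrightarrow X$, and the decisive technical input is Proposition~\ref{prop:propsmooth}~(iii): for $f$ cohomologically smooth over strictly local $X$, one has $Rf^!\,j_! = \tilde j_!\,Rf^!_U$. This forces $Rf_!Rf^!\,j_!\Fl = j_!j^\ast(Rf_!Rf^!\Fl)$ and hence that $Rf_!Rf^!(\Fl|_s)$ is concentrated at $s$ with stalk equal to the stalk of $Rf_!Rf^!\Fl$ there. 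The non-vanishing is then read off on $Y'$ itself via the adjunction $\Hom_X(Rf_!Rf^!\Fl|_s,\Fl)=\Hom_{Y'}(Rf^!\Fl|_s,Rf^!\Fl)$ and the invertibility of $Rf^!\Fl$, never needing a geometric object over the closed point. To complete your argument, you would need to replace the ``duality on the fiber'' step by this kind of computation with $\Fl|_s$ and the commutation of $Rf^!$ with $j_!$ from Proposition~\ref{prop:propsmooth}~(iii).
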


\begin{proof} We can assume that $Y=X$ is strictly totally disconnected, and $Y^\prime$ is spatial; it is enough to see that the image of $|Y^\prime|\to |X|$ is open. As the image is generalizing, it is enough to see that the image is constructible. Consider $Rf_! Rf^! \Fl\in D_\et(X,\Fl)$. As $f$ is $\ell$-cohomologically smooth, $Rf^! \Fl$ is invertible, and in particular constructible, and thus by Proposition~\ref{prop:propsmooth}~(ii), one sees that $Rf_! Rf^! \Fl$ is constructible. In particular, its support is a constructible subset of $X$. We claim that the support of $Rf_! Rf^! \Fl$ agrees with the image of $|Y^\prime|\to |X|$. For this, we can assume $X=\Spa(C,C^+)$ is strictly local (using that by Proposition~\ref{prop:shriekvsusualpullback} and Proposition~\ref{prop:propsmooth}, $Rf^!\Fl$ commutes with quasi-pro-\'etale base change). If the closed point $s\in |X|$ is not in the image of $f$, then $Rf^!\Fl$ is concentrated on the preimage on $X\setminus \{s\}$, which implies the same for $Rf_! Rf^! \Fl$ (by commuting $Rf_!$ with $j_!$). Now assume $s$ lies in the image of $f$, so $f$ is surjective. Let $j: U=X\setminus \{s\}\hookrightarrow X$ and $\tilde{j}: V=f^{-1}(U)\hookrightarrow Y^\prime$ be the open immersions, and $f_U: V\to U$ the pullback of $f$.

We have
\[
Rf_! Rf^! j_! \Fl = Rf_! \tilde{j}_! Rf_U^! \Fl = j_! Rf_{U!} Rf_U^! \Fl = j_! j^\ast(Rf_! Rf^! \Fl)\ ,
\]
using Proposition~\ref{prop:propsmooth}~(iii) in the first equality. In particular, if we define $\Fl|_s$ as the cone of $j_!\Fl\to \Fl$, one finds that $Rf_! Rf^! \Fl|_s$ is concentrated at $s$, with stalk equal to the stalk of $Rf_! Rf^! \Fl$. Now note that
\[
\Hom_{D_\et(X,\Fl)}(Rf_! Rf^! \Fl|_s,\Fl) = \Hom_{D_\et(Y^\prime,\Fl)}(Rf^! \Fl|_s,Rf^!\Fl)\ ,
\]
and $Rf^! \Fl|_s$ is the restriction $Rf^! \Fl|_{f^{-1}(s)}$ of the invertible sheaf $Rf^!\Fl$ to $f^{-1}(s)$. In particular, there is a natural map $Rf^! \Fl|_s\to Rf^!\Fl$, which is nonzero as soon as the fibre $f^{-1}(s)$ is nonempty. Thus, one has $Rf_! Rf^! \Fl|_s\neq 0$, and therefore $s$ lies in the support of $Rf_! Rf^! \Fl$.
\end{proof}

For cohomologically smooth morphisms, the functor $Rf^!$ commutes with any base change, and has a very simple form.

\begin{proposition}\label{prop:smoothshriekbasechange} Let $f: Y^\prime\to Y$ be a separated map of small v-stacks that is representable in locally spatial diamonds such that $f$ is $\ell$-cohomologically smooth, and let $\Lambda$ be an $\ell$-power-torsion ring.
\begin{altenumerate}
\item[{\rm (i)}] The natural transformation
\[
Rf^!\Lambda\otimes_\Lambda f^\ast\to Rf^!: D_\et(Y,\Lambda)\to D_\et(Y^\prime,\Lambda)
\]
is an equivalence, and the dualizing complex $D_f := Rf^!\Lambda\in D_\et(Y^\prime,\Lambda)$ is invertible, in fact \'etale locally isomorphic to $\Lambda[n]$ for some integer $n\in \mathbb Z$.
\item[{\rm (ii)}] If $f$ is quasicompact, then for any perfect-constructible $A\in D_\et(Y^\prime,\Lambda)$, the proper pushforward $Rf_! A\in D_\et(Y,\Lambda)$ is perfect-constructible.
\item[{\rm (iii)}] If $g: \tilde{Y}\to Y$ is a map of small v-sheaves with base change
\[\xymatrix{
\tilde{Y}^\prime\ar[r]^{g^\prime}\ar[d]^{\tilde{f}} & Y^\prime\ar[d]^f\\
\tilde{Y}\ar[r]^g & Y\ ,
}\]
then the natural transformation of functors
\[
g^{\prime\ast} Rf^!\to R\tilde{f}^! g^\ast : D_\et(Y,\Lambda)\to D_\et(\tilde{Y}^\prime,\Lambda)
\]
adjoint to
\[
R\tilde{f}_! g^{\prime\ast} Rf^! = g^\ast Rf_!Rf^!\to g^\ast
\]
is an equivalence. In particular, the dualizing complex $D_f$ commutes with base change, i.e.
\[
g^{\prime\ast} D_f = D_{\tilde{f}}\ .
\]
\end{altenumerate}
\end{proposition}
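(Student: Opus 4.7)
The plan is to first establish the three statements when $Y = X$ is a strictly totally disconnected perfectoid space, and then bootstrap to general $Y$ by v-descent using the base change statement. In the first case, the definition of $\ell$-cohomological smoothness gives $Rf^! \simeq D \otimes_{\Fl} f^\ast$ for some invertible $D \in D_\et(Y^\prime, \Fl)$; by the equivalence of conditions (i) and (ii) in Proposition~\ref{prop:shriekvsusualpullback} together with its ``moreover'' clause, this upgrades to the statement that for any $\ell$-power-torsion $\Lambda$ the natural transformation $Rf^!\Lambda \otimes_\Lambda f^\ast \to Rf^!$ is an equivalence, with $D_f = Rf^!\Lambda$ invertible. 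Moreover, condition~(iv) of the same proposition gives base change of $Rf^!$ along any affinoid pro-\'etale map between strictly totally disconnected bases.

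\textbf{Paragraph 2 (The core step: general base change).} The heart of the proof is establishing the base change statement along arbitrary $g: \tilde Y \to Y$. First, by choosing a simplicial v-hypercover $X_\bullet \to Y$ by disjoint unions of strictly totally disconnected perfectoid spaces together with a compatible simplicial v-hypercover $\tilde X_\bullet \to \tilde Y$ above $X_\bullet$, the base change morphism can be checked term by term, reducing to the case where both $Y = X$ and $\tilde Y = \tilde X$ are strictly totally disconnected perfectoid spaces. In this reduced setting, the explicit form from Paragraph~1 expresses both sides of the base change morphism as tensor products with invertible sheaves, so the problem reduces to showing that a canonical map $\phi: g^{\prime\ast} D_{f_X} \to D_{f_{\tilde X}}$ of invertible $\Lambda$-sheaves on $Y^\prime_{\tilde X}$ is an equivalence. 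I then check this at each rank-one geometric point $\bar z: \Spa(C, \OO_C) \to Y^\prime_{\tilde X}$: the composites to $\tilde X$ and to $X$ each factor through the connected component $\Spa(C^\prime, C^{\prime +})$ of the image point (an affinoid pro-\'etale inclusion since connected components of strictly totally disconnected spaces are pro-constructible generalizing), followed by the open immersion $\Spa(C^\prime, \OO_{C^\prime}) \hookrightarrow \Spa(C^\prime, C^{\prime +})$ (again affinoid pro-\'etale), and finally a geometric-point map $\Spa(C, \OO_C) \to \Spa(C^\prime, \OO_{C^\prime})$ coming from an extension $C^\prime \hookrightarrow C$ of algebraically closed nonarchimedean fields. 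Proposition~\ref{prop:shriekvsusualpullback}(iv) handles all the affinoid pro-\'etale pieces, identifying both pullbacks of $\phi$ with a common dualizing complex over the geometric point; the final field-extension step reduces to matching shifts of $\Lambda$ on a geometric point, which follows from the naturality of the construction together with Theorem~\ref{thm:changebasefield} on invariance under change of algebraically closed base field.

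\textbf{Paragraph 3 (Conclusion and main obstacle).} Once base change is established, the first two statements for general $Y$ follow by v-descent: both the equivalence $Rf^!\Lambda \otimes_\Lambda f^\ast \to Rf^!$ and the invertibility of $D_f = Rf^!\Lambda$ can be verified after pullback along a v-cover $X \to Y$ by a disjoint union of strictly totally disconnected perfectoid spaces, which reduces them to Paragraph~1. The final claim $g^{\prime\ast} D_f = D_{\tilde f}$ is the base change statement specialized to $A = \Lambda$. The main obstacle is Paragraph~2, specifically the pointwise verification that $\phi$ is an isomorphism when the map $g$ between strictly totally disconnected spaces is not itself pro-\'etale. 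While the factorization through connected components and their maximal generalizations brings most of the argument into the reach of Proposition~\ref{prop:shriekvsusualpullback}(iv), the final ``absolute'' field-extension step requires care: one needs to argue that the canonical base change map between two invertible sheaves, which pulls back to two abstractly isomorphic shifts of $\Lambda$ on a geometric point, really is the canonical identification rather than zero, and this is where the fully faithful pullback of Theorem~\ref{thm:changebasefield} enters.
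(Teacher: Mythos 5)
Your overall strategy is close to the paper's and you correctly identify both the key reduction (to $X=\Spa(C,\OO_C)$ and $\tilde X=\Spa(\tilde C,\OO_{\tilde C})$ via the pro-\'etale base change from Proposition~\ref{prop:shriekvsusualpullback}(iv)) and the relevant tool at the crux (invariance of \'etale cohomology under change of algebraically closed base field, i.e.\ Theorem~\ref{thm:changebasefield}). However, there is a genuine gap at exactly the point you flag as the ``main obstacle,'' and the fix you propose does not close it. Once $Y'$ is reduced to a connected spatial diamond, $D_f=\mathbb L[n]$ and $D_{\tilde f}=\widetilde{\mathbb L}[\tilde n]$ for local systems $\mathbb L,\widetilde{\mathbb L}$ and integers $n,\tilde n$, and the problem is to show $n=\tilde n$ (after which nonvanishing forces an isomorphism of rank-one local systems). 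What Theorem~\ref{thm:changebasefield} buys you is only that the canonical map $g'^\ast\mathbb L[n]\to\widetilde{\mathbb L}[\tilde n]$ is \emph{nonzero}: inserting $K=\mathbb L[n]$ into the Verdier-duality adjunction square, the top-left map is the isomorphism $R\Gamma(Y',\Fl)\cong R\Gamma(\tilde Y',\Fl)$ of Theorem~\ref{thm:changebasefield}, which is nonzero. This only gives the inequality $\tilde n\geq n$, not equality; neither naturality nor fully faithfulness of the pullback supplies the missing degree comparison, because $D_{\tilde f}$ is not a priori in the essential image of $g'^\ast$.

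The paper's actual resolution of this step is what's missing from your proposal. One observes that $\tilde n\leq 3\dimtrg f$ is uniformly bounded, so among all algebraically closed extensions $\tilde C/C$ only finitely many values of $\tilde n$ occur; one then chooses $\tilde C$ achieving the maximum. For this choice, the inequality $\tilde n'\geq\tilde n$ for further extensions $\tilde C'/\tilde C$ must be an equality, so $D_{\tilde f}$ commutes with all strictly-totally-disconnected base change over $\tilde X=\Spa(\tilde C,\OO_{\tilde C})$. Finally, a v-hypercover $\tilde X_\bullet\to X$ with $\tilde X_0=\tilde X$ and Proposition~\ref{prop:derivedhyperdescent} transport this stability back to $X$: the termwise right adjoints $R\tilde f_i^!$ preserve cartesian objects precisely because of the established base change over $\tilde X$, so $Rf^!$ agrees with the termwise construction, and the base change statement along $\tilde X\to X$ follows. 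Without this ``pick a maximizing $\tilde C$ and descend'' device, the pointwise check at rank-one geometric points you propose will always leave open the possibility that the base change map is zero because the shifts differ.
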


\begin{proof} By Proposition~\ref{prop:shriekvsusualpullback}, we know that if $Y=X$ is strictly totally disconnected, then
\[
Rf^! \Lambda\otimes_\Lambda f^\ast\to Rf^!
\]
is an equivalence. Moreover, we see that $Rf^!\Lambda\otimes_\Lambda \Fl=Rf^! \Fl$ is invertible, which implies formally that $Rf^!\Lambda$ is invertible (as it is an extension of $m$ copies of $\Fl[n]$). Thus, part (i) holds true in case $Y$ is a strictly totally disconnected perfectoid space.

Next, we check that part (iii) holds true in case $Y=X$ and $\tilde{Y}=\tilde{X}$ are both strictly totally disconnected perfectoid spaces. By part (i), it suffices to check that $g^{\prime\ast} D_f=D_{\tilde{f}}$. Using part (iv) of Proposition~\ref{prop:shriekvsusualpullback}, we can assume that $X$ and $\tilde{X}$ are connected, and in fact we can assume that $X=\Spa(C,\OO_C)$ and $\tilde{X} = \Spa(\tilde{C},\OO_{\tilde{C}})$, using that $Rf^!$ and $g^{\prime\ast}$ commute with $j_\ast$ for quasicompact open immersions. Also, the equality $g^{\prime\ast} D_f = D_{\tilde{f}}$ can be checked locally on $Y^\prime$, so we can assume that $Y^\prime$ is spatial.

Thus, assume that $X=\Spa(C,\OO_C)$ and $\tilde{X}=\Spa(\tilde{C},\OO_{\tilde{C}})$, and that $Y^\prime$ is spatial. First, note that by assumption $Rf^! \Fl$ is invertible and in particular constructible, so by Proposition~\ref{prop:Rfshriekconstructible}, $Rf_! Rf^! \Fl\in D_\et(X,\Fl)=D(\Fl)$ is constructible, i.e.~a bounded complex of finite-dimensional vector spaces. Thus, $\Hom_X(Rf_! Rf^! \Fl,\Fl)=\Hom_{Y^\prime}(Rf^! \Fl,Rf^! \Fl) = \Hom_{Y^\prime}(\Fl,\Fl)$ is finite-dimensional, which implies that $\pi_0 Y^\prime$ is finite. Passing to a connected component, we can thus assume that $Y^\prime$ is connected. In that case, $\tilde{Y}^\prime$ is still connected (for example by Theorem~\ref{thm:changebasefield}~(iii)), and so both $D_f$ and $D_{\tilde{f}}$ are isomorphic to $\mathbb L[n]$ resp.~$\widetilde{\mathbb L}[\tilde{n}]$ for some $\Fl$-local systems $\mathbb L$ on $Y^\prime$ resp.~$\widetilde{\mathbb L}$ on $\tilde{Y}^\prime$, and integers $n$ resp.~$\tilde{n}$. The map $g^{\prime\ast} D_f\to D_{\tilde{f}}$ is given by a map
\[
g^{\prime\ast}\mathbb L[n]\to \widetilde{\mathbb L}[\tilde{n}]\ ,
\]
such that for any $K\in D_\et(Y^\prime,\Fl)$, one has a commutative diagram
\[\xymatrix{
\Hom_{D_\et(Y^\prime,\Fl)}(K,\mathbb L[n])\ar[r]\ar[d]^\cong & \Hom_{D_\et(\tilde{Y}^\prime,\Fl)}(g^{\prime\ast} K,g^{\prime\ast} \mathbb L[n])\ar[r] & \Hom_{D_\et(\tilde{Y}^\prime,\Fl)}(g^{\prime\ast} K,\widetilde{\mathbb L}[\tilde{n}])\ar[d]^\cong\\
\Hom_{D_\et(X,\Fl)}(Rf_! K,\Fl)\ar[r]^\cong & \Hom_{D_\et(\tilde{X},\Fl)}(g^\ast Rf_! K,\Fl)\ar[r]^\cong & \Hom_{D_\et(\tilde{X},\Fl)}(R\tilde{f}_! g^{\prime\ast} K,\Fl)\ .
}\]
If we apply this to $K=\mathbb L[n]$, then by invariance of \'etale cohomology under change of algebraically closed base field, the upper left arrow is the isomorphism $R\Gamma(Y^\prime_\et,\Fl)\cong R\Gamma(\tilde{Y}^\prime_\et,\Fl)$, which is nonzero. Then the diagram implies that the map $g^{\prime\ast} \mathbb L[n]\to \widetilde{\mathbb L}[\tilde{n}]$ in $D_\et(\tilde{Y}^\prime,\Fl)$ cannot be the zero map. This implies that $\tilde{n}\geq n$, and that it is an isomorphism if $\tilde{n}=n$.

Thus, if for any $\tilde{C}/C$ as above, one has $\tilde{n}=n$, then it follows that base change holds. In general, $\tilde{n}\leq 3\dimtrg f$ is bounded in terms of $f$, so at most finitely many values appear, and we can choose some $\tilde{C}/C$ achieving the maximal value. It follows that $\tilde{f}: \tilde{Y}^\prime\to \tilde{X}=\Spa(\tilde{C},\OO_{\tilde{C}})$ has the property that $D_{\tilde{f}}$ commutes with any base change to a strictly totally disconnected perfectoid space over $\tilde{X}$.

Now choose a v-hypercover of $X=\Spa(C,\OO_C)$ by strictly totally disconnected perfectoid spaces $\tilde{X}_\bullet\to X$, with $\tilde{X}_0 = \tilde{X}$, and let $\tilde{f}_\bullet: \tilde{Y}^\prime_\bullet\to \tilde{X}_\bullet$ be the pullback. Then we have the derived category $D(\tilde{X}_\bullet,\Lambda)$ of sheaves of $\Lambda$-modules on the simplicial space $\tilde{X}_\bullet$, and the full subcategory $D_{\cart,\et}(\tilde{X}_\bullet,\Lambda)\subset D(\tilde{X}_\bullet,\Lambda)$ where all pullback maps are isomorphisms, and all terms lie in $D_\et(\tilde{X}_i,\Lambda)$. By Proposition~\ref{prop:derivedhyperdescent}, one has an equivalence of categories
\[
D_\et(X,\Lambda)\cong D_{\cart,\et}(\tilde{X}_\bullet,\Lambda)\ .
\]
Similarly,
\[
D_\et(Y^\prime,\Lambda)\cong D_{\cart,\et}(\tilde{Y}^\prime_\bullet,\Lambda)\ .
\]
Moreover, the functor $Rf_!: D_\et(Y^\prime,\Lambda)\to D_\et(X,\Lambda)$ gets identified with the functor
\[
R\tilde{f}_{\bullet !}: D_{\cart,\et}(\tilde{Y}^\prime_\bullet,\Lambda)\to D_{\cart,\et}(\tilde{X}_\bullet,\Lambda)
\]
which is termwise given by $R\tilde{f}_{i !}$. It preserves the ``cartesian'' condition by proper base change.

Now note that all terms $R\tilde{f}_{i !}$ admit right adjoints $R\tilde{f}_i^!$, and these right adjoints preserve the ``cartesian'' condition by the base change already established. This implies that the right adjoint to
\[
R\tilde{f}_{\bullet !}: D_{\cart,\et}(\tilde{Y}^\prime_\bullet,\Lambda)\to D_{\cart,\et}(\tilde{X}_\bullet,\Lambda)
\]
is given by the functor
\[
R\tilde{f}_\bullet^!: D_{\cart,\et}(\tilde{X}_\bullet,\Lambda)\to D_{\cart,\et}(\tilde{Y}^\prime_\bullet,\Lambda)
\]
that is termwise $R\tilde{f}_i^!$. This gives a commutative diagram
\[\xymatrix{
D_\et(X,\Lambda)\ar[r]^{Rf^!}\ar[d]^\cong & D_\et(Y^\prime,\Lambda)\ar[d]^\cong\\
D_{\cart,\et}(\tilde{X}_\bullet,\Lambda)\ar[r]^{R\tilde{f}_\bullet^!}\ar[d] & D_{\cart,\et}(\tilde{Y}^\prime_\bullet,\Lambda)\ar[d] \\
D_\et(\tilde{X},\Lambda)\ar[r]^{R\tilde{f}^!} & D_\et(\tilde{Y}^\prime,\Lambda)\ ,
}\]
where the vertical maps are given by pullback. This gives the desired base change result along $\Spa(\tilde{C},\OO_{\tilde{C}})\to \Spa(C,\OO_C)$ in general, which finishes the proof of the base change result for $Rf^!$ along maps of strictly totally disconnected perfectoid spaces.

In the general case, we can now find a v-hypercover $\tilde{X}_\bullet\to Y$ of $Y$ such that all $\tilde{X}_i$ are disjoint unions of strictly totally disconnected perfectoid spaces. Applying the previous discussion again, we get the desired base change for $Rf^!$ along the map $\tilde{X}_0\to Y$; this implies also that $Rf^! \Lambda\otimes_\Lambda f^\ast\to Rf^!$ is an equivalence, as this can now be checked after base change to $\tilde{X}_0$, where we know it. Moreover, $Rf^!\Lambda$ is v-locally isomorphic to $\Lambda[n]$ for some integer $n\in \mathbb Z$, which implies the same \'etale locally, as the integer $n$ is constant on an open and closed stratification of $Y^\prime$, and then the space of isomorphisms between $Rf^! \Lambda$ and $\Lambda[n]$ is separated, \'etale and surjective over $Y^\prime$ as follows by v-descent.

Now, for the general base change identity, we can cover any map $\tilde{Y}\to Y$ by a map between disjoint unions of strictly totally disconnected perfectoid spaces $\tilde{X}\to X$; we know base change for $\tilde{X}\to \tilde{Y}$, $X\to Y$ and $\tilde{X}\to X$, which implies it for $\tilde{Y}\to Y$.

Finally, for part (ii), we can by proper base change assume that $Y$ is a strictly totally disconnected perfectoid space, in which case $Y^\prime$ is a spatial diamond of bounded cohomological dimension (as $Rf_\ast$ has bounded cohomological dimension since $\dimtrg f<\infty$, and $Y$ is strictly totally disconnected). Thus, by Proposition~\ref{prop:perfectconstructiblecompact}, $D_\et(Y^\prime,\Lambda)$ and $D_\et(Y,\Lambda)$ are compactly generated with compact objects given by the perfect-constructible complexes. We need to see that $Rf_!$ preserves compact objects, but this is equivalent to the condition that $Rf^!$ preserves arbitrary direct sums, which follows from part (i).
\end{proof}

Using Proposition~\ref{prop:smoothshriekbasechange}, one can show that composites of cohomologically smooth morphisms are cohomologically smooth.

\begin{proposition}\label{prop:smooth2outof3} Let $g: Y^{\prime\prime}\to Y^\prime$ and $f: Y^\prime\to Y$ be separated morphisms of small v-stacks and let $\ell\neq p$ be a prime. If $f$ and $g$ are representable in locally spatial diamonds and $\ell$-cohomologically smooth, then $f\circ g$ is representable in locally spatial diamonds and $\ell$-cohomologically smooth. Conversely, if $g$ and $f\circ g$ are representable in locally spatial diamonds and $\ell$-cohomologically smooth, $g$ is surjective, and $f$ is representable in diamonds and compactifiable, then $f$ is representable in locally spatial diamonds and $\ell$-cohomologically smooth.
\end{proposition}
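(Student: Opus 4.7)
The plan is to establish in turn the three conclusions about $f$: finiteness of $\dimtrg f$, representability in locally spatial diamonds, and $\ell$-cohomological smoothness. Since all hypotheses and conclusions are stable under base change along any map $X\to Y$ from a strictly totally disconnected perfectoid space, it suffices to verify each assertion after such a base change; I write $h_X\colon Y^{\prime\prime}\times_Y X\to Y^\prime\times_Y X$ for the pullback of $g$, which is automatically $\ell$-cohomologically smooth, separated, surjective, and (by Proposition~\ref{prop:cohomsmoothopen}) universally open. The finiteness $\dimtrg f < \infty$ is immediate: for any $y^\prime\in|Y^\prime|$ above $y\in|Y|$, pick a lift $y^{\prime\prime}\in|Y^{\prime\prime}|$ via surjectivity of $g$; the chain $C(y)\subset C(y^\prime)\subset C(y^{\prime\prime})$ together with the minimality definition of $\widetilde{\trc}$ gives $\widetilde{\trc}(C(y^\prime)/C(y))\leq \widetilde{\trc}(C(y^{\prime\prime})/C(y))$, so $\dimtrg f\leq \dimtrg(f\circ g)<\infty$.

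For representability in locally spatial diamonds of $Z:=Y^\prime\times_Y X$, observe that $Y^{\prime\prime}\times_Y X$ is a locally spatial diamond and the equivalence relation $R_X = (Y^{\prime\prime}\times_Y X)\times_Z(Y^{\prime\prime}\times_Y X)$ is also a locally spatial diamond as a base change of $g$. Both $h_X$ and the projections $R_X\to Y^{\prime\prime}\times_Y X$ are universally open, so Lemma~\ref{lem:spectralopenequivrel} together with Proposition~\ref{prop:vtopspaceopensubsets} shows that $Z$ is a locally spatial v-sheaf. To promote this to a locally spatial diamond, I apply Theorem~\ref{thm:vdiamondisdiamond} to each quasicompact open sub-v-sheaf: for each $y^\prime\in|Z|$, choose a lift $y^{\prime\prime}\in|Y^{\prime\prime}\times_Y X|$ and a quasi-pro-\'etale map $\Spa(C,C^+)\to Y^{\prime\prime}\times_Y X$ hitting $y^{\prime\prime}$; the induced composition $\Spa(C,C^+)\to Z$ supplies the required quasi-pro-\'etale geometric point, after a verification via Proposition~\ref{prop:charqproet} based on the fiberwise structure of $h_X$.

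For $\ell$-cohomological smoothness, by Proposition~\ref{prop:shriekvsusualpullback} it suffices to show that $Rf_X^!\Fl$ is invertible and that the natural transformation $Rf_X^!\Fl\otimes_\Fl f_X^\ast\to Rf_X^!$ is an equivalence. The composition identity $R(f\circ g)_X^! = Rh_X^! Rf_X^!$ combined with the $\ell$-cohomological smoothness of $h_X$ (via Proposition~\ref{prop:smoothshriekbasechange}) yields
\[
R(f\circ g)_X^!\Fl \simeq Rh_X^!\Fl\otimes_\Fl h_X^\ast Rf_X^!\Fl\ ;
\]
the left-hand side and $Rh_X^!\Fl$ are invertible by hypothesis, so $h_X^\ast Rf_X^!\Fl$ is invertible, and then $Rf_X^!\Fl$ itself is invertible because invertibility descends along the v-cover $h_X$. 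The same computation applied to an arbitrary $A\in D_\et(X,\Fl)$ in place of $\Fl$ shows that the natural transformation $Rf_X^!\Fl\otimes_\Fl f_X^\ast A\to Rf_X^! A$ is an equivalence after applying $h_X^\ast$, hence is itself an equivalence by conservativity of $h_X^\ast$.

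The main obstacle I anticipate is the representability step: verifying that the composition $\Spa(C,C^+)\to Y^{\prime\prime}\times_Y X\to Z$ is quasi-pro-\'etale. Since the composition of a quasi-pro-\'etale map with a merely cohomologically smooth map is not automatically quasi-pro-\'etale, one must exploit that the fiber of $h_X$ over each geometric point of $Z$ is a cohomologically smooth locally spatial diamond over an algebraically closed geometric point, and arrange (possibly after further localization on $Y^{\prime\prime}\times_Y X$) for the composition to become quasi-pro-\'etale near each point by a careful application of Proposition~\ref{prop:charqproet}.
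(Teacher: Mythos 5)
Your outline tracks the paper's proof closely for the converse direction: the reduction to strictly totally disconnected $Y$, the bound $\dimtrg f\le\dimtrg(f\circ g)$, the topological argument via Proposition~\ref{prop:cohomsmoothopen} and Lemma~\ref{lem:spectralopenequivrel}, and the final $Rf^!$ computation all match (the paper applies $Rg^!$ where you apply $h_X^\ast$; this is cosmetic since both reduce to conservativity of $g^\ast$). You omit the forward direction entirely, which is fine since it reduces directly to Proposition~\ref{prop:smoothshriekbasechange}, and you also do not check that $f$ is compactifiable, for which the paper cites Proposition~\ref{prop:compactifiable}.

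The obstacle you flag in the representability step is genuine, and your sketched resolution is not yet a proof. The topological argument shows only that $Z:=Y^\prime\times_Y X$ is a locally spatial \emph{v-sheaf}; to conclude it is a locally spatial \emph{diamond} one must exhibit, through each $z\in|Z|$, a quasi-pro-\'etale $\Spa(C,C^+)\to Z$, and invoke Theorem~\ref{thm:vdiamondisdiamond}. The composite $\Spa(C,C^+)\to Y^{\prime\prime}\times_Y X\to Z$ you propose need not be quasi-pro-\'etale: over a rank-one geometric point $\Spa(E,\OO_E)\to Z$, its base change is pro-\'etale over a localization of the $h_X$-fiber, a positive-dimensional $\ell$-cohomologically smooth diamond over $\Spa(E,\OO_E)$; if the lifted point $y^{\prime\prime}$ is not a classical (residue-field-$E$) point of that fiber, the localization has a maximal point with completed residue field a proper transcendental extension of $E$, so the base change is not of the form $\Spa(E,\OO_E)\times\underline{S}$, and no further \'etale localization on $Y^{\prime\prime}\times_Y X$ repairs this. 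You must either show that classical lifts always exist in every $h_X$-fiber (not obvious for a general cohomologically smooth diamond), or instead form the sheaf-theoretic image $Z_0$ of a geometric point $\Spa(C,C^+)\to Z$, observe that $Z_0\hookrightarrow Z$ is a quasicompact injection hence quasi-pro-\'etale by Corollary~\ref{cor:qcvinjqproet}, and prove that $Z_0$ is itself a diamond, i.e.\ that $R_0:=\Spa(C,C^+)\times_Z\Spa(C,C^+)\to\Spa(C,C^+)$ is pro-\'etale. Either way this is where your argument needs real content; the paper's proof is terse at exactly this point, so spelling it out carefully is worthwhile.
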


\begin{remark} The first part of the proof shows that if $f: Y^\prime\to Y$ is a separated $0$-truncated map of small v-stacks that is representable in diamonds, and $g: Y^{\prime\prime}\to Y^\prime$ is a universally open (e.g., $\ell$-cohomologically smooth), separated and surjective map of small v-stacks such that $f\circ g$ is representable in locally spatial diamonds, then $f$ is representable in locally spatial diamonds.
\end{remark}

We do not know if in the second part, one can omit the hypothesis ``$f$ is compactifiable''; by Proposition~\ref{prop:compactifiable}~(vii), it can be omitted if $g$ has local sections after pullback to strictly totally disconnected spaces. This is satisfied for all $\ell$-cohomologically smooth maps that occur in practice. In fact, they are usually also formally smooth in the sense of \cite[Definition IV.3.1]{FarguesScholze}, and this implies this splitting condition by \cite[Proposition IV.3.5]{FarguesScholze}.

\begin{proof} The first part follows from Proposition~\ref{prop:smoothshriekbasechange}. For the second part, we check first that $f$ is representable in locally spatial diamonds. By Proposition~\ref{prop:locallyspatialmorphism}, we can check this after pullback to a strictly totally disconnected $Y$. In that case, $Y^{\prime\prime}$ is separated and locally spatial over $Y$. For any quasicompact open subspace $V\subset Y^{\prime\prime}$, the image $U\subset Y^\prime$ is open by Proposition~\ref{prop:cohomsmoothopen}. Moreover, it is quasicompact, as $V$ surjects onto $U$. Also, $|U|$ is the quotient of the spectral space $|V|$ by an open qcqs equivalence relation (given by the image of $|V\times_U V|$), so $|U|$ is spectral and $|V|\to |U|$ is a spectral map by Lemma~\ref{lem:spectralopenequivrel}. Thus, $U$ is spatial, and $Y^\prime$ is locally spatial.

One easily checks that locally $\dimtrg f\leq \dimtrg (f\circ g)<\infty$ (where one uses the modified $\widetilde{\trc}$ in the definition of $\dimtrg$), and $f$ is compactifiable by assumption. We may assume $Y=X$ is strictly totally disconnected, and we have to check that
\[
Rf^!\Fl\otimes_\Fl f^\ast \to Rf^!
\]
is an equivalence, and that $Rf^!\Fl$ is invertible. But as $f\circ g$ is $\ell$-cohomologically smooth,
\[
R(f\circ g)^! \Fl = Rg^!(Rf^!\Fl) = Rg^! \Fl \otimes_\Fl g^\ast Rf^! \Fl
\]
is invertible, and $Rg^! \Fl$ is invertible as $g$ is $\ell$-cohomologically smooth; this implies that $g^\ast Rf^! \Fl$ is invertible, which shows that $Rf^! \Fl$ is invertible. Similarly, to check whether $Rf^! \Fl\otimes_\Fl f^\ast\to Rf^!$ is an equivalence, we can check after applying $Rg^! = Rg^! \Fl\otimes_\Fl g^\ast$. But
\[
Rg^! Rf^! = R(f\circ g)^! = R(f\circ g)^! \Fl\otimes_\Fl (f\circ g)^\ast = Rg^!\Fl\otimes_\Fl g^\ast Rf^!\Fl\otimes_\Fl g^\ast f^\ast = Rg^!(Rf^!\Fl\otimes_\Fl f^\ast)\ ,
\]
as desired.
\end{proof}

Also, we can check cohomological smoothness v-locally on the target.

\begin{proposition}\label{prop:cohomsmoothvlocal} Let $f: Y^\prime\to Y$ be a separated map of small v-stacks that is representable in locally spatial diamonds, and let $\ell\neq p$ be a prime. Let $g: \tilde{Y}\to Y$ be map of small v-stacks with pullback $\tilde{f}: \tilde{Y}^\prime = Y^\prime\times_Y \tilde{Y}\to \tilde{Y}$.

If $f$ is $\ell$-cohomologically smooth, then $\tilde{f}$ is $\ell$-cohomologically smooth. Conversely, if $\tilde{f}$ is $\ell$-cohomologically smooth, $g$ is a surjective map of v-stacks, and locally $\dimtrg f<\infty$, then $f$ is $\ell$-cohomologically smooth.
\end{proposition}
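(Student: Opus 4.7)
The forward direction is a direct unraveling of the definitions. Given any strictly totally disconnected $X\to \tilde Y$, composing with $g$ produces a map $X\to Y$ whose pullback of $f$ coincides with the pullback of $\tilde f$; thus the required identity $Rf_X^!\cong D_{f_X}\otimes_\Fl f_X^\ast$ with $D_{f_X}$ invertible is inherited. Separatedness and representability in locally spatial diamonds transfer by base change, $\dimtrg\tilde f\le\dimtrg f<\infty$ by Lemma~\ref{lem:trcsubadditive}(ii), and compactifiability is preserved by Proposition~\ref{prop:compactifiable}(ii).

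For the converse, compactifiability of $f$ follows from that of $\tilde f$ together with v-surjectivity of $g$, via Proposition~\ref{prop:compactifiable}(iii). To verify cohomological smoothness at a fixed strictly totally disconnected $X\to Y$, with pullback $f_X\colon Y'_X=Y'\times_Y X\to X$, pick a v-surjection $\tilde X_0\to X$ with $\tilde X_0$ strictly totally disconnected that factors through $\tilde Y$ (possible since $g$ is a v-surjection and $X$ is qcqs), and extend it to a v-hypercover $\tilde X_\bullet\to X$ by strictly totally disconnected perfectoid spaces. Set $Y'_\bullet:=Y'\times_Y\tilde X_\bullet$. Each $f_i\colon Y'_i\to\tilde X_i$ is a base change of $\tilde f$ along some projection, hence is $\ell$-cohomologically smooth, with invertible dualizing complex $D_{f_i}:=Rf_i^!\Fl$. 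By Proposition~\ref{prop:smoothshriekbasechange}, the $D_{f_i}$ are compatible with all simplicial structure maps and thus form a cartesian object of $D_\et(Y'_\bullet,\Fl)$, descending under Proposition~\ref{prop:derivedhyperdescent} to an invertible object $D\in D_\et(Y'_X,\Fl)$.

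Following the template of the last paragraphs of the proof of Proposition~\ref{prop:smoothshriekbasechange}, under the equivalences $D_\et(X,\Fl)\simeq D_{\cart,\et}(\tilde X_\bullet,\Fl)$ and $D_\et(Y'_X,\Fl)\simeq D_{\cart,\et}(Y'_\bullet,\Fl)$ the functor $Rf_{X!}$ corresponds to the termwise functor $Rf_{\bullet!}$ (by proper base change, this preserves the cartesian condition). Its right adjoint is computed termwise by $Rf_\bullet^!$, since each $Rf_i^!$ preserves cartesian objects — this is precisely the base change statement of Proposition~\ref{prop:smoothshriekbasechange}. Plugging in the identification $Rf_i^!\cong D_{f_i}\otimes_\Fl f_i^\ast$ in each degree, one concludes $Rf_X^!\cong D\otimes_\Fl f_X^\ast$ with $D=Rf_X^!\Fl$ invertible, which establishes cohomological smoothness of $f$.

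The main obstacle is the descent step: one must verify that the termwise upper-shriek functors preserve the cartesian condition along the v-hypercover, so that $Rf_X^!$ can be recovered simplicially. This is where Proposition~\ref{prop:smoothshriekbasechange} is indispensable, and explains the role of the hypothesis that the cover-level map $\tilde f$ be smooth — without base change for upper-shriek one cannot glue the $D_{f_i}$ into a sheaf describing $Rf_X^!$ on $Y'_X$.
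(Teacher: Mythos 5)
Your proof is correct and takes the same route as the paper. The paper's proof is a one-liner that simply reduces to the case $Y=X$, $\tilde Y=\tilde X$ strictly totally disconnected and then cites ``the argument involving a simplicial v-hypercover $\tilde X_\bullet\to X$ with $\tilde X_0=\tilde X$ in the proof of Proposition~\ref{prop:smoothshriekbasechange}'' — which is precisely what you unpack in your second and third paragraphs, with the same use of Proposition~\ref{prop:derivedhyperdescent} for descent of cartesian objects, Proposition~\ref{prop:smoothshriekbasechange} (applied to $\tilde f$) to ensure that the termwise $Rf_i^!$ preserve the cartesian condition, and Proposition~\ref{prop:compactifiable}(iii) for compactifiability.
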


It is not clear to us whether the condition that locally $\dimtrg f<\infty$ can be checked v-locally on the target.

\begin{proof} The first statement is clear from the definition. For the converse, we can assume that $Y=X$ and $\tilde{Y}=\tilde{X}$ are strictly totally disconnected. Then it follows from the argument involving a simplicial v-hypercover $\tilde{X}_\bullet\to X$ with $\tilde{X}_0=\tilde{X}$ in the proof of Proposition~\ref{prop:smoothshriekbasechange}.
\end{proof}

Finally, let us note that we get the expected smooth base change results.

\begin{proposition}\label{prop:smoothbasechange} Let
\[\xymatrix{
Y^\prime\ar[r]^{\tilde{g}}\ar[d]^{f^\prime} &Y\ar[d]^f\\
X^\prime\ar[r]^g & X
}\]
be a cartesian diagram of small v-stacks, and assume that $n\Lambda=0$ for some $n$ prime to $p$.
\begin{altenumerate}
\item Assume that $g$ is compactifiable and representable in locally spatial diamonds with locally $\dimtrg g<\infty$, and $A\in D_\et(Y,\Lambda)$. Then
\[
Rg^! Rf_\ast A\cong Rf^\prime_\ast R\tilde{g}^! A\ .
\]
\item Assume that $g$ is separated, representable in locally spatial diamonds and $\ell$-cohomologically smooth, and $\Lambda$ is $\ell$-power torsion. Then for all $A\in D_\et(Y,\Lambda)$, the base change morphism
\[
g^\ast Rf_\ast A\to Rf^\prime_\ast \tilde{g}^\ast A
\]
is an isomorphism.
\item Assume that $g$ is separated, representable in locally spatial diamonds and $\ell$-cohomologically smooth, and $\Lambda$ is $\ell$-power torsion. Then for all $A\in D_\et(X,\Lambda)$, the map
\[
\tilde{g}^\ast Rf^! A\to Rf^{\prime !} g^\ast A
\]
adjoint to $Rf^! A\to Rf^! Rg_\ast g^\ast A = R\tilde{g}_\ast Rf^{\prime !} g^\ast A$ is an equivalence.
\end{altenumerate}
\end{proposition}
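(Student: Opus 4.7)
The plan is to handle the three parts in the order \textup{(i)}, \textup{(ii)}, \textup{(iii)}, with \textup{(i)} purely formal, and \textup{(ii)}, \textup{(iii)} reducing to \textup{(i)} combined with the dualizing-complex description of $Rg^!$ from Proposition~\ref{prop:smoothshriekbasechange}.

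For \textup{(i)}, $Rg^!$ and $R\tilde{g}^!$ are right adjoint to $Rg_!$ and $R\tilde{g}_!$ respectively (Theorem~\ref{thm:Rfuppershriekexists}), and applying Proposition~\ref{prop:Rfshriekbasechange} to the compactifiable, locally-spatial-representable map $g$ gives the base change $f^\ast Rg_! \cong R\tilde{g}_! f'^\ast$. Then for any $B\in D_\et(X',\Lambda)$ one has the chain
\[
\Hom(B,Rg^!Rf_\ast A)=\Hom(f^\ast Rg_! B,A)=\Hom(R\tilde{g}_! f'^\ast B,A)=\Hom(B,Rf'_\ast R\tilde{g}^! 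A),
\]
and Yoneda delivers \textup{(i)}.

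For \textup{(ii)}, Proposition~\ref{prop:smoothshriekbasechange} supplies an invertible $D_g\in D_\et(X',\Lambda)$ with $Rg^!=D_g\otimes_\Lambda g^\ast$ and $D_{\tilde g}=f'^\ast D_g$; tensoring the first identity with $D_g^{-1}$ gives $g^\ast=D_g^{-1}\otimes Rg^!$. Substituting and invoking \textup{(i)} yields
\[
g^\ast Rf_\ast A\;=\;D_g^{-1}\otimes Rg^!Rf_\ast A\;=\;D_g^{-1}\otimes Rf'_\ast(f'^\ast D_g\otimes \tilde g^\ast A).
\]
For invertible $M$ on $X'$, Corollary~\ref{cor:locastadjunction} applied to $A=M^{-1}$, together with the identification $R\sHom(N,-)=N^{-1}\otimes-$ for invertible $N$, gives the ``invertible projection formula'' $Rf'_\ast(f'^\ast M\otimes L)=M\otimes Rf'_\ast L$; taking $M=D_g$ cancels the $D_g^{\pm 1}$ factors and produces $Rf'_\ast \tilde g^\ast A$. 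One then checks that this composite isomorphism coincides with the canonical base change morphism by unwinding adjunctions.

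For \textup{(iii)}, the equality of composites $f\circ\tilde g=g\circ f'$ and Proposition~\ref{prop:Rfshriekcomp} give $Rf_!R\tilde g_!=Rg_!Rf'_!$; taking right adjoints, $R\tilde g^!Rf^!=Rf'^!Rg^!$. Expanding both sides via $Rg^!=D_g\otimes g^\ast$ and $R\tilde g^!=f'^\ast D_g\otimes \tilde g^\ast$, we obtain
\[
f'^\ast D_g\otimes \tilde g^\ast Rf^! A\;=\;Rf'^!\bigl(D_g\otimes g^\ast A\bigr).
\]
Now Proposition~\ref{prop:localverdier}~\textup{(ii)} applied to $A'=M^{-1}$ and $B'=g^\ast A$, together with $R\sHom(N,-)=N^{-1}\otimes -$ for invertible $N$, yields the identity $Rf'^!(M\otimes B)=f'^\ast M\otimes Rf'^! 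B$ for invertible $M$ on $X'$; with $M=D_g$ and $B=g^\ast A$ this converts the right-hand side into $f'^\ast D_g\otimes Rf'^! g^\ast A$, and cancelling the invertible factor $f'^\ast D_g$ gives $\tilde g^\ast Rf^! A\cong Rf'^! g^\ast A$.

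The main obstacle is bookkeeping: in each of \textup{(ii)} and \textup{(iii)} one must verify that the isomorphism produced by the chain above agrees with the canonical map stated in the proposition (adjoint to a unit or counit). This is not a serious conceptual issue but requires tracking how the projection-formula isomorphism $Rf'_\ast(f'^\ast M\otimes -)\simeq M\otimes Rf'_\ast(-)$ (respectively its $Rf'^!$ counterpart) is constructed from the adjunctions in Corollary~\ref{cor:locastadjunction} and Proposition~\ref{prop:localverdier}, and confirming compatibility with the units/counits defining the base-change morphisms; this is a standard $2$-categorical diagram chase.
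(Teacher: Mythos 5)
Your proof is correct and follows essentially the same route as the paper's: part~(i) by passing to right adjoints in Proposition~\ref{prop:Rfshriekbasechange}, parts~(ii) and~(iii) by plugging in the dualizing-complex description $Rg^!=D_g\otimes g^\ast$ (with $D_{\tilde g}=f'^\ast D_g$) and cancelling the invertible factor via the projection formulas derived from Corollary~\ref{cor:locastadjunction} and Proposition~\ref{prop:localverdier}~(ii). You spell out computations that the paper compresses into one or two sentences and, like the paper, leave the compatibility of the resulting isomorphism with the stated adjoint map as a routine diagram chase.
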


\begin{proof} Part (i) follows from Proposition~\ref{prop:Rfshriekbasechange} by passing to right adjoints. Now part (ii) follows from part (i) and Proposition~\ref{prop:smoothshriekbasechange}. Finally, for part (iii), we can tensor by $R\tilde{g}^! \Lambda$ which is invertible. The left-hand side becomes
\[
\tilde{g}^\ast Rf^! A\dotimes_\Lambda R\tilde{g}^! \Lambda = R\tilde{g}^! Rf^! A = Rf^{\prime !} Rg^! A = Rf^{\prime !}(g^\ast A\dotimes_\Fl Rg^! \Fl)
\]
using $\ell$-cohomological smoothness of $g$ and $\tilde{g}$. Finally, as $Rg^!\Fl$ is invertible,
\[
Rf^{\prime !}(g^\ast A\dotimes_\Fl Rg^!\Fl) = Rf^{\prime !} g^\ast A\dotimes_\Fl f^{\prime\ast} Rg^!\Fl\ ,
\]
and $f^{\prime\ast} Rg^! \Fl = R\tilde{g}^!\Fl$ by Proposition~\ref{prop:smoothshriekbasechange}.
\end{proof}

\begin{proposition}\label{prop:smoothhompullback} Let $f: Y\to X$ be a separated map of small v-stacks that is representable in locally spatial diamonds and $\ell$-cohomologically smooth. There is a functorial isomorphism
\[
f^\ast R\sHom(A,B)\cong R\sHom(f^\ast A,f^\ast B)
\]
in $A,B\in D_\et(X,\Lambda)$.
\end{proposition}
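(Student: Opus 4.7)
The plan is to reduce the statement to Proposition~\ref{prop:localverdier}(ii) by tensoring with the (invertible) dualizing complex $D_f = Rf^!\Lambda$. There is a canonical natural transformation
\[
\alpha : f^\ast R\sHom_\Lambda(A,B) \to R\sHom_\Lambda(f^\ast A, f^\ast B),
\]
obtained by pulling back the evaluation map $R\sHom_\Lambda(A,B) \dotimes_\Lambda A \to B$ under $f^\ast$, using the compatibility $f^\ast(-\dotimes_\Lambda -) = f^\ast(-)\dotimes_\Lambda f^\ast(-)$, and then taking the adjoint with respect to the $(-\dotimes_\Lambda f^\ast A, R\sHom_\Lambda(f^\ast A,-))$ adjunction. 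What has to be shown is that $\alpha$ is an equivalence.

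Since $f$ is $\ell$-cohomologically smooth and $\Lambda$ is $\ell$-power torsion, Proposition~\ref{prop:smoothshriekbasechange} gives a natural equivalence $Rf^!(-) \simeq D_f \dotimes_\Lambda f^\ast(-)$ with $D_f \in D_\et(Y,\Lambda)$ invertible (v-locally isomorphic to $\Lambda[n]$). Tensoring $\alpha$ with $D_f$ therefore yields a map
\[
Rf^! R\sHom_\Lambda(A,B) \;=\; D_f \dotimes_\Lambda f^\ast R\sHom_\Lambda(A,B) \;\longrightarrow\; D_f \dotimes_\Lambda R\sHom_\Lambda(f^\ast A, f^\ast B).
\]
Because $D_f$ is invertible, the standard identity $L \dotimes_\Lambda R\sHom_\Lambda(X,Y) \cong R\sHom_\Lambda(X, L\dotimes_\Lambda Y)$ (which holds for any invertible $L$, as it can be checked v-locally where $L \simeq \Lambda[n]$) identifies the target with $R\sHom_\Lambda(f^\ast A, D_f \dotimes_\Lambda f^\ast B) = R\sHom_\Lambda(f^\ast A, Rf^! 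B)$.

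One then checks that, under these two identifications, the tensored map $D_f \dotimes_\Lambda \alpha$ agrees with the canonical equivalence
\[
Rf^! R\sHom_\Lambda(A,B) \;\cong\; R\sHom_\Lambda(f^\ast A, Rf^! B)
\]
of Proposition~\ref{prop:localverdier}(ii) (both are obtained from the evaluation map by the same sequence of adjunctions). Hence $D_f \dotimes_\Lambda \alpha$ is an equivalence; since $D_f$ is invertible, so is $\alpha$, which proves the proposition. The main (and essentially only) technical obstacle is this last compatibility check: one has to verify that the natural transformation produced abstractly from $(f^\ast, Rf_\ast)$-adjunction and the smoothness identification $Rf^! \simeq D_f \otimes f^\ast$ is literally the same as the one produced by the local Verdier formula, which is a bookkeeping exercise involving the unit of the $(Rf_!, Rf^!)$-adjunction and the projection formula of Proposition~\ref{prop:projectionformula}.
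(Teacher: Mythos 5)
Your proof is correct and takes essentially the same route as the paper, which simply states that the result "follows from Proposition~\ref{prop:localverdier}~(ii) together with the identification $Rf^! = f^\ast\otimes_\Lambda Rf^! \Lambda$, where $Rf^! \Lambda$ is invertible." You have spelled out the tensoring-by-$D_f$ step, the invertibility manipulation $L\dotimes_\Lambda R\sHom_\Lambda(X,Y)\cong R\sHom_\Lambda(X,L\dotimes_\Lambda Y)$, and the compatibility check of natural transformations, all of which are implicit in the paper's one-line proof.
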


\begin{proof} This follows from Proposition~\ref{prop:localverdier}~(ii) together with the identification $Rf^! = f^\ast\otimes_\Lambda Rf^! \Lambda$, where $Rf^! \Lambda$ is invertible.
\end{proof}

\section{Examples of smooth morphisms}\label{sec:geomsmooth}

In the previous section, we have defined cohomologically smooth morphisms, and checked that in this case $!$-pullback takes a very simple form. Moreover, the class of cohomologically smooth morphisms has good stability properties. However, we have not seen any example of cohomologically smooth morphisms. In this section, we will give criteria guaranteeing cohomological smoothness.

We start with the following important theorem, which uses the main results of Huber's book.

\begin{theorem}\label{thm:ballcohomsmooth} Let $f: \mathbb B\to \ast$ be the projection from the ball to the point, where $\mathbb B(R,R^+)=R^+$. Then $f$ is $\ell$-cohomologically smooth for any prime $\ell\neq p$, and for any ring $\Lambda$ with $n\Lambda=0$ for some $n$ prime to $p$, one has a canonical isomorphism
\[
D_f = Rf^!\Lambda\cong \Lambda(1)[2]\ .
\]
\end{theorem}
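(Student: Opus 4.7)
The plan is to verify the four conditions of Proposition~\ref{prop:propsmooth} for $f: \mathbb{B} \to \ast$, reducing the problem to Huber's classical Poincar\'e duality for smooth rigid-analytic varieties \cite{Huber}, and then to identify the dualizing complex. By Proposition~\ref{prop:cohomsmoothvlocal}, cohomological smoothness is v-local on the target, so I would first reduce to checking the criteria after base change to $X = \Spa(C,C^+)$ for $C$ an algebraically closed complete nonarchimedean field and $C^+ \subset C$ an open and bounded valuation subring. Condition (i) is immediate since $\dimtrg f = 1$.

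The core input is Huber's work: by Lemma~\ref{lem:diamondadicspace}, the functor $X \mapsto X^{\diamondsuit}$ identifies \'etale sites, so \'etale cohomology of the perfectoid ball $\mathbb{B}_{X_0}$ over $X_0 = \Spa(C,\OO_C)$ matches that of a smooth one-dimensional rigid-analytic variety over the corresponding untilt. Huber's Poincar\'e duality gives, over $X_0$: preservation of constructibility under $Rf_{X_0 !}$ (condition (ii)), invertibility of $Rf_{X_0}^!\Fl$ (condition (iv)), and the explicit computation $Rf_{X_0}^!\Lambda \cong \Lambda(1)[2]$.

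To propagate from $X_0$ to $X = \Spa(C,C^+)$, I would exploit partial properness: the ball $\mathbb{B}_X$ is partially proper over $X$. Condition (ii) over $X$ follows from its analog over $X_0$ by applying proper base change (Theorem~\ref{thm:properbasechangepartpropvsopen}) to the partially proper projection, combined with the invariance under change of algebraically closed base field from Theorem~\ref{thm:changebasefield}. Condition (iii) — the compatibility of $Rf_X^!$ with $j_!$ for quasicompact opens $j: U \hookrightarrow X$ — is the technical heart: one decomposes $j$ through $\Spa(C,\OO_C) \hookrightarrow \Spa(C,C^+)$ and uses the proper base change theorem together with Huber's identity over $X_0$, where the base is classical. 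Condition (iv) for $X$ then follows from the fact that the dualizing complex pulls back under the open immersion $\Spa(C,\OO_C) \hookrightarrow \Spa(C,C^+)$, again via proper base change applied to the compactified ball.

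The main obstacle will be condition (iii) in the presence of a non-maximal valuation subring $C^+ \subsetneq \OO_C$. Huber's framework does not directly cover this, since it works over a complete nonarchimedean base field rather than over $\Spa(C,C^+)$. The resolution combines three ingredients already in the text: partial properness of $\mathbb{B}_X \to X$, the proper base change theorem for the canonical compactification, and the invariance of \'etale cohomology under extensions of algebraically closed base fields. Once the four conditions are verified, cohomological smoothness over general small v-stacks follows by Proposition~\ref{prop:cohomsmoothvlocal}, and the canonical isomorphism $D_f \cong \Lambda(1)[2]$ is inherited from the classical case via the base change isomorphism of Proposition~\ref{prop:smoothshriekbasechange}.
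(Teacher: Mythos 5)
Your overall plan — check the four conditions of Proposition~\ref{prop:propsmooth} and reduce to Huber's Poincar\'e duality — matches the paper. But there is a genuine misconception at the heart of your argument that causes you to overcomplicate the key steps.

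You claim that ``Huber's framework does not directly cover [condition (iii) over $\Spa(C,C^+)$], since it works over a complete nonarchimedean base field rather than over $\Spa(C,C^+)$.'' This is false. Huber's book works with adic spaces over pairs $(k,k^+)$ where $k$ is a complete nonarchimedean field and $k^+\subset k$ is an arbitrary open and bounded valuation subring; in particular, $\Spa(C,C^+)$ with $C^+\subsetneq \OO_C$ is a strongly Noetherian analytic adic space to which Theorem~7.5.3 of \cite{Huber} (the Poincar\'e duality statement the paper cites) applies directly. Indeed, the introduction of the paper stresses precisely this: all appeals to Huber's book are reduced to the case of smooth curves over $\Spa(C,C^+)$ for a \emph{general} open and bounded valuation subring $C^+$. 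Your proposed workaround for condition (iii) — decomposing through $\Spa(C,\OO_C)\hookrightarrow\Spa(C,C^+)$ and using proper base change plus compactifications — is unnecessary and, as written, would not close the gap: condition (iii) is exactly about opens $j:U\hookrightarrow\Spa(C,C^+)$ whose existence is caused by the nontrivial higher-rank valuation, and it is not obvious how such a decomposition would reduce them to the rank-one case without, in effect, redoing what Huber already proved. The paper simply cites Huber's result here.

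A second, smaller issue: for condition (ii), $X$ is a strictly totally disconnected perfectoid space, which is in general not (topologically) of finite type over any nonarchimedean field, so Huber's constructibility theorem (Theorem~6.2.2) cannot be applied to it directly. The paper resolves this with a limit argument: it writes $X\to\Spa(K,\OO_K)$ with $K=\mathbb{F}_p((\varpi^{1/p^\infty}))$, expresses $X$ as a cofiltered limit of spaces $X_i$ topologically of finite type over $K$, descends the constructible sheaf to some $X_i$ by Proposition~\ref{prop:constructiblelimit}, applies Huber's theorem over $X_i$, and pulls back. Your appeal to partial properness, proper base change, and invariance under change of algebraically closed base fields for condition (ii) does not address this noetherianity issue and would need the same kind of limit argument as a preliminary reduction.

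In short: drop the false premise that Huber does not cover $C^+\subsetneq\OO_C$, cite Theorem~7.5.3 directly for condition (iii), and add a limit-to-finite-type argument for condition (ii); the rest of your sketch (condition (i) by $\dimtrg f=1$, trace map for condition (iv), identification $D_f\cong\Lambda(1)[2]$ via base change) is in line with the paper.
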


\begin{proof} Fix a prime $\ell\neq p$. We check the conditions of Proposition~\ref{prop:propsmooth}. Condition (i) is easy (in fact, $\dimtrg f=1$). For condition (ii), let $X$ be a strictly totally disconnected perfectoid space, and let $\mathcal F$ be a constructible $\ell$-torsion sheaf on $\mathbb B\times X$. We can find a map $X\to \Spa(K,\OO_K)$, where $K$ is a perfectoid field (given by $\mathbb F_p((\varpi^{1/p^\infty}))$ for a pseudo-uniformizer $\varpi$). Then we can write $X$ as a cofiltered inverse limit of spaces $X_i\to \Spa(K,\OO_K)$ which are topologically of finite type, cf.~\cite[Lemma 6.13]{ScholzePerfectoidSpaces}, and $\mathcal F$ is the preimage of some constructible sheaf $\mathcal F_i$ on $\mathbb B\times X_i$ for $i$ large enough by Proposition~\ref{prop:constructiblelimit}. Consider the cartesian diagram
\[\xymatrix{
\mathbb B\times X\ar[d]^{f_X}\ar[r]^h & \mathbb B\times X_i\ar[d]^{f_{X_i}}\\
X\ar[r]^g & X_i\ .
}\]
Then for all $j\geq 0$, one has
\[
R^j f_{X!} \mathcal F = R^j f_{X!} h^\ast \mathcal F_i = g^\ast R^j f_{X_i !} \mathcal F_i\ .
\]
By~\cite[Theorem 6.2.2]{Huber}, the sheaf $R^j f_{X_i !} \mathcal F_i$ is constructible on $X_i$, and thus its pullback along $g^\ast$ is a constructible sheaf on $X$, as desired.

Condition (iii) is a condition about $f_X: \mathbb B\times X\to X$ in case $X=\Spa(C,C^+)$ is strongly Noetherian. Thus, Huber's results and in particular \cite[Theorem 7.5.3]{Huber} applies.

For condition (iv), take again a strictly totally disconnected perfectoid space $X$. We can in fact find a map $X\to \Spa(C,\OO_C)$, where $C$ is algebraically closed: As above, we have a map $X\to \Spa(K,\OO_K)$ where $K=\mathbb F_p((\varpi^{1/p^\infty}))$, but in fact this factors over $\Spa(C,\OO_C)$, where $C$ is the completed algebraic closure of $K$, as $X$ has no nonsplit finite \'etale covers. Consider the cartesian diagram
\[\xymatrix{
\mathbb B\times X\ar[d]^{f_X}\ar[r]^{\!\!\!\!\! h} & \mathbb B\times \Spa(C,\OO_C)\ar[d]^{f_C}\\
X\ar[r]^{\!\!\!\!\! g} & \Spa(C,\OO_C)\ .
}\]
There is a trace map
\[
Rf_{C!} \Fl(1)\to \Fl[-2]
\]
by \cite[Theorem 7.2.2]{Huber} (noting that $R^i f_{C!} \Fl(1)=0$ for $i>2$ by \cite[Proposition 5.5.8]{Huber}). By base change, we get a trace map
\[
Rf_{X!} \Fl(1) = g^\ast Rf_{C!} \Fl(1)\to \Fl[-2]\ ,
\]
which by adjunction gives a map
\[
\alpha: \Fl(1)[2]\to Rf_X^! \Fl\ .
\]
We claim that $\alpha$ is an isomorphism. Note that we have already proved that condition (iii) of Proposition~\ref{prop:shriekvsusualpullback} is satisfied, so also condition (iv) is satisfied. In particular, $Rf_X^! \Fl$ commutes with quasi-pro-\'etale base change, and so we can check whether $\alpha$ is an equivalence after pullback to connected components of $X$. Thus, we can assume $X=\Spa(C,C^+)$. In this case, the result follows from \cite[Theorem 7.5.3]{Huber}.
\end{proof}

We also need the following result.

\begin{proposition}\label{prop:propquotientsmooth} Let $f: Y^\prime\to Y$ be a separated map of small v-stacks that is representable in locally spatial diamonds and $\ell$-cohomologically smooth for some prime $\ell\neq p$. Assume that there is a profinite group $K$ of pro-order prime to $\ell$ with a free action $\underline{K}\times Y^\prime\to Y^\prime$ over $Y$, i.e.~such that $\underline{K}\times Y^\prime\to Y^\prime\times_Y Y^\prime$ is an injection.

Then the quotient map $f/\underline{K}: Y^\prime/\underline{K}\to Y$ is separated, representable in locally spatial diamonds, and $\ell$-cohomologically smooth. Moreover, if we fix the $\Lambda$-valued Haar measure on $K$ with total volume $1$, there is a natural equivalence of functors
\[
Rf^! = q^\ast R(f/\underline{K})^!: D_\et(Y,\Lambda)\to D_\et(Y^\prime,\Lambda)\ ,
\]
where $q: Y^\prime\to Y^\prime/\underline{K}$ denotes the quotient map.
\end{proposition}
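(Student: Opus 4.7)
The plan is to reduce everything to showing that the quotient map $q: Y^\prime \to Y^\prime/\underline{K}$ is itself $\ell$-cohomologically smooth with trivial dualizing complex $Rq^!\Lambda \simeq \Lambda$, the isomorphism being pinned down by the normalized $\Lambda$-valued Haar measure on $K$. Granted this, Proposition~\ref{prop:smooth2outof3} applied to the factorization $f = (f/\underline{K}) \circ q$ with $q$ surjective yields that $f/\underline{K}$ is $\ell$-cohomologically smooth, and the displayed formula follows from the chain rule $Rf^! \simeq Rq^! \circ R(f/\underline{K})^!$ (obtained by passing to right adjoints in Proposition~\ref{prop:Rfshriekcomp}) combined with the identification $Rq^! \simeq q^\ast \otimes_\Lambda Rq^!\Lambda \simeq q^\ast$ coming from Proposition~\ref{prop:smoothshriekbasechange} and triviality of $Rq^!\Lambda$.

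First I would dispense with the structural properties. The free action identifies the diagonal $\Delta_q: Y^\prime \to Y^\prime \times_{Y^\prime/\underline{K}} Y^\prime$ via the shear isomorphism with the identity section $Y^\prime \to Y^\prime \times \underline{K}$, a closed immersion, so $q$ is separated; the valuative criterion of Proposition~\ref{prop:valcritvsheafprop} then transfers separatedness of $f$ to $f/\underline{K}$ using surjectivity of $q$. By Lemma~\ref{lem:gtorsoroverperfectoid}, after any base change to a perfectoid space $q$ becomes a surjective pro-\'etale and universally open map between locally spatial diamonds, so Proposition~\ref{prop:spatialunivopen} (compare Remark~\ref{rem:spatialunivopen}) ensures that the pullback of $f/\underline{K}$ along any locally spatial $X \to Y$ is a locally spatial diamond; hence $f/\underline{K}$ is representable in locally spatial diamonds.

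The core of the argument, and the main obstacle, is the cohomological smoothness of $q$ with $Rq^!\Lambda \simeq \Lambda$. Note that $\dimtrg q = 0$ since the fibers of $q$ are profinite. By Proposition~\ref{prop:cohomsmoothvlocal} one may check smoothness v-locally on the target, and pulling $q$ back along itself trivializes the $\underline{K}$-torsor, reducing matters to the projection $q_X: X \times \underline{K} \to X$ for strictly totally disconnected $X$. This map is proper, since $\underline{K}$ is partially proper as the v-sheaf associated to the compact Hausdorff space $K$ (see Example~\ref{ex:compacthausdorff} and Proposition~\ref{prop:compactificationprop}), and quasicompactness is automatic. Hence $Rq_{X,!}\Lambda = Rq_{X,\ast}\Lambda$ is the constant sheaf on $X$ with value $C^0(K,\Lambda)$ by Lemma~\ref{lem:stonecechinfinitesum}. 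The normalized Haar measure on $K$---which exists precisely because $K$ has pro-order prime to $\ell$, so each finite quotient order $|K_i|$ is invertible in $\Lambda$---yields an evaluation $C^0(K,\Lambda) \to \Lambda$ and, by adjunction, a candidate trace map $\mathrm{tr}: \Lambda \to Rq_X^!\Lambda$.

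The last and most delicate step is to verify that this trace is an equivalence and to check the remaining conditions of Proposition~\ref{prop:propsmooth}. For this I would write $K = \varprojlim_i K_i$ and factor $q_X = q_{X_i} \circ \pi_i$, where $q_{X_i}: X \times \underline{K_i} \to X$ is finite \'etale of degree $|K_i|$ invertible in $\Lambda$ and hence $\ell$-cohomologically smooth with $Rq_{X_i}^!\Lambda \simeq \Lambda$ via its classical normalized trace. By Proposition~\ref{prop:constructiblelimit} any constructible sheaf on $X \times \underline{K}$ descends to some $X \times \underline{K_i}$, which reduces condition (ii) of Proposition~\ref{prop:propsmooth} to the finite \'etale case; a similar approximation, applied to the spatial diamond $X \times \underline{K}$ and its quasicompact open subspaces, handles (iii). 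For condition (iv) and the identification of the trace map, the compatibility of the normalized traces on the finite layers $X \times \underline{K_i}$ under the transition maps $K_j \twoheadrightarrow K_i$ is precisely the defining compatibility of the Haar measure; passing to the filtered colimit then assembles these into the global isomorphism $\Lambda \xrightarrow{\sim} Rq_X^!\Lambda$ and completes the proof.
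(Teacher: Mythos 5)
The central step of your argument---showing that $q: Y'\to Y'/\underline{K}$ is itself $\ell$-cohomologically smooth with $Rq^!\Lambda\simeq\Lambda$---is false, and this collapses the whole proof. After base change to a geometric point $X=\Spa(C,\OO_C)$, the map becomes $q_X: X\times\underline{K}\to X$, and a direct adjunction computation shows that $Rq_X^!\Lambda$ is the sheaf on the profinite set $K$ whose sections over any open and closed subset $T\subset K$ are $\Hom_\Lambda(C^0(T,\Lambda),\Lambda)$, i.e.\ the $\Lambda$-valued measures on $T$. This is not the constant sheaf $\Lambda$, and it is not even constructible, hence not invertible, so $q$ is not $\ell$-cohomologically smooth. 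Concretely, with $\Lambda=\Fl$ and $K$ infinite, $\Hom_{D_\et(X\times\underline{K},\Fl)}(\Fl,Rq_X^!\Fl)=\Hom(C^0(K,\Fl),\Fl)$ is uncountable-dimensional while $\Hom_{D_\et(X\times\underline{K},\Fl)}(\Fl,\Fl)=C^0(K,\Fl)$ is countable-dimensional, so no isomorphism $\Fl\simeq Rq_X^!\Fl$ can exist. The remark immediately following the proposition in the text makes exactly this point: $Rq^!\neq q^*$. Your finite-level approximation also does not rescue the claim: factoring $q_X=q_{X_i}\circ\pi_i$ gives $Rq_X^!=R\pi_i^!Rq_{X_i}^!$, and $R\pi_i^!$ exhibits the same pathology as $Rq_X^!$ (it is not $\pi_i^*$, since $\pi_i$ again has profinite fibers), so nothing stabilizes in the limit.

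Since $q$ is not cohomologically smooth, Proposition~\ref{prop:smooth2outof3} does not apply to deduce smoothness of $f/\underline{K}$, and the chain-rule identification $Rq^!\simeq q^*$ is unavailable. The paper's actual argument deliberately avoids this trap: it constructs the normalized trace transformation $q^*\to Rq^!$ exactly as you do, but does not assert it is an equivalence. Instead, it proves directly that the \emph{composite} $q^*R(f/\underline{K})^!\to Rq^!R(f/\underline{K})^!=Rf^!$ is an equivalence, by testing against constructible generators $\mathcal F$ of $D_\et(Y',\Fl)$, descending $\mathcal F$ to $Y'/\underline{H}$ for some open $H\subset K$, and showing the filtered system $R(f/\underline{H'})_!\mathcal F_{H'}$ is eventually constant (equal to the compact object $Rf_!\mathcal F$) using the split injectivity supplied by the normalized trace maps. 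Smoothness of $f/\underline{K}$ then follows a posteriori from the criterion of Proposition~\ref{prop:shriekvsusualpullback}. The accurate intuition is not that $q^*$ and $Rq^!$ agree as functors, but that they agree when applied to objects in the image of $R(f/\underline{K})^!$; establishing precisely that is what the compactness and eventual-constancy argument accomplishes, and it is the step your proposal skips.
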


One also has the identity
\[
Rf^! = Rq^! R(f/\underline{K})^!\ .
\]
However, it is not true that $Rq^! = q^\ast$. For example, after a base change, $q$ becomes $\underline{S}\times \Spa(C,\OO_C)\to \Spa(C,\OO_C)$ for some profinite set $S$ and algebraically closed perfectoid field $C$. In this case, $Rq^! \Lambda$ can be identified with the sheaf sending any open and closed subset $T\subset S$ to the distributions $\Hom(C^0(T,\Lambda),\Lambda)$, as follows easily from the adjunction defining $Rq^!$. Under the choice of a Haar measure, we get however a natural map $q^\ast\to Rq^!$, as in the proof below.

\begin{proof} We can assume that $Y=X$ is a strictly totally disconnected perfectoid space. Then $Y^\prime/\underline{K}$ is locally spatial (and quasiseparated) by Lemma~\ref{lem:spectralopenequivrel} and Lemma~\ref{lem:gtorsoroverperfectoid}. It is also separated by the valuative criterion. Its canonical compactification $\overline{Y^\prime/\underline{K}}^{/Y}$ is given by $\overline{Y^\prime}^{/Y}/\underline{K}$. To check whether the inclusion is an open immersion, one uses that $\overline{Y^\prime}^{/Y}\to \overline{Y^\prime}^{/Y}/\underline{K}$ is a quotient map.

It remains to see that $f/\underline{K}$ is $\ell$-cohomologically smooth. It is clear that locally $\dimtrg f/\underline{K} = \dimtrg f<\infty$. Now, fixing the $\Lambda$-valued Haar measure on $K$ with total volume $1$, we first construct a natural transformation
\[
q^\ast\to Rq^!: D_\et(Y^\prime/\underline{K},\Lambda)\to D_\et(Y^\prime,\Lambda)\ .
\]
This is adjoint to a map $Rq_! q^\ast = Rq_\ast q^\ast = q_\ast q^\ast\to \id$. For any open subgroup $H\subset K$, consider the projection $q_{H,K}: Y^\prime/\underline{H}\to Y^\prime/\underline{K}$. Then $q_\ast q^\ast$ is the filtered colimit of $q_{H,K\ast} q_{H,K}^\ast$, and there are natural trace maps $q_{H,K\ast} q_{H,K}^\ast\to \id$, which one can divide by $[K:H]$ to make them compatible. (Note that here, we are implicitly using the choice of the Haar measure.) This gives the desired natural transformation $q_\ast q^\ast\to \id$, and thus $q^\ast\to Rq^!$.

In particular, we get a natural transformation
\[
q^\ast R(f/\underline{K})^!\to Rq^! R(f/\underline{K})^! = Rf^!\ .
\]
We claim that this is an equivalence. It suffices to check this for $\Lambda=\Fl$. Note that once this is known, it follows that $f/\underline{K}$ satisfies condition (iii) of Proposition~\ref{prop:shriekvsusualpullback}, and $R(f/\underline{K})^! \Fl$ is invertible, which shows that $f/\underline{K}$ is $\ell$-cohomologically smooth.

Thus, it remains to check that
\[
q^\ast R(f/\underline{K})^!\to Rq^! R(f/\underline{K})^! = Rf^!: D_\et(X,\Fl)\to D_\et(Y^\prime,\Fl)
\]
is an equivalence. This can be checked locally on $Y^\prime$, so we can assume that $Y^\prime$ is spatial. By Proposition~\ref{prop:constructiblecompactderivedfull}, it suffices to check that for any constructible sheaf $\mathcal F$ on $Y^\prime$ and any $A\in D_\et(X,\Fl)$, one has
\[
\Hom_{D_\et(Y^\prime,\Fl)}(\mathcal F,q^\ast R(f/\underline{K})^! A) = \Hom_{D_\et(Y^\prime,\Fl)}(\mathcal F, Rf^! A)\ .
\]
Let $q_H: Y^\prime\to Y^\prime/\underline{H}$ be the projection. By Proposition~\ref{prop:constructiblelimit}, we can assume that $\mathcal F = q_H^\ast \mathcal F_H$ for some constructible sheaf $\mathcal F_H$ on $Y^\prime/\underline{H}$. Then
\[\begin{aligned}
\Hom_{D_\et(Y^\prime,\Fl)}(\mathcal F,q^\ast R(f/\underline{K})^! A) &= \Hom_{D_\et(Y^\prime,\Fl)}(q_H^\ast \mathcal F,q^\ast R(f/\underline{K})^! A)\\
&= \Hom_{D_\et(Y^\prime/\underline{H},\Fl)}(\mathcal F_H,q_{H\ast} q^\ast R(f/\underline{K})^! A)\\
&= \Hom_{D_\et(Y^\prime/\underline{H},\Fl)}(\mathcal F_H,\varinjlim_{H^\prime\subset H} q_{H^\prime,H\ast} q_{H^\prime,K}^\ast R(f/\underline{K})^! A)\\
&= \varinjlim_{H^\prime\subset H} \Hom_{D_\et(Y^\prime/\underline{H},\Fl)}(\mathcal F_H, q_{H^\prime,H\ast} q_{H^\prime,K}^\ast R(f/\underline{K})^! A)\\
&= \varinjlim_{H^\prime\subset H} \Hom_{D_\et(Y^\prime/\underline{H^\prime},\Fl)}(q_{H^\prime,H}^\ast \mathcal F_H, q_{H^\prime,K}^\ast R(f/\underline{K})^! A)\\
&= \varinjlim_{H^\prime\subset H} \Hom_{D_\et(Y^\prime/\underline{H^\prime},\Fl)}(q_{H^\prime,H}^\ast \mathcal F_H, Rq_{H^\prime,K}^! R(f/\underline{K})^! A)\\
&= \varinjlim_{H^\prime\subset H} \Hom_{D_\et(Y^\prime/\underline{H^\prime},\Fl)}(q_{H^\prime,H}^\ast \mathcal F_H, R(f/\underline{H^\prime})^! A)\\
&= \varinjlim_{H^\prime\subset H} \Hom_{D_\et(X,\Fl)}(R(f/\underline{H^\prime})_! q_{H^\prime,H}^\ast \mathcal F_H, A)\ .
\end{aligned}\]
Let $\mathcal F_{H^\prime} = q_{H^\prime,H}^\ast \mathcal F_H$. We claim that the filtered direct system
\[
R(f/\underline{H^\prime})_! \mathcal F_{H^\prime}\in D_\et(X,\Fl)
\]
is eventually constant, i.e.~for all sufficiently small $H^\prime$, all transition maps are isomorphisms. For this, note first that all transition maps are split injective (using the trace maps as above). On the other hand, $Rf_! \mathcal F\in D_\et(X,\Fl)$ is bounded with constructible cohomology sheaves (i.e., compact), and
\[\begin{aligned}
Rf_! \mathcal F &= Rf_! q_H^\ast \mathcal F = R(f/\underline{H})_! Rq_{H!} q_H^\ast \mathcal F\\
&= R(f/\underline{H})_! q_{H\ast} q_H^\ast \mathcal F\\
&= R(f/\underline{H})_! \varinjlim_{H^\prime\subset H} q_{H^\prime,H\ast} q_{H^\prime,H}^\ast \mathcal F\\
&= \varinjlim_{H^\prime\subset H} R(f/\underline{H})_! q_{H^\prime,H\ast} \mathcal F_{H^\prime}\\
&= \varinjlim_{H^\prime\subset H} R(f/\underline{H^\prime})_! \mathcal F_{H^\prime}\ .
\end{aligned}\]
The map to the direct limit factors over some term in the direct limit by compactness of $Rf_! \mathcal F$. As all transition maps are split injective, this shows that indeed the system $R(f/\underline{H^\prime})_! \mathcal F_{H^\prime}$ is eventually constant, and eventually equal to $Rf_! \mathcal F$. Continuing the equations above, we see that
\[\begin{aligned}
\Hom_{D_\et(Y^\prime,\Fl)}(\mathcal F,q^\ast R(f/\underline{K})^! A) &=\varinjlim_{H^\prime\subset H} \Hom_{D_\et(X,\Fl)}(R(f/\underline{H^\prime})_! q_{H^\prime,H}^\ast \mathcal F_H, A)\\
&= \Hom_{D_\et(X,\Fl)}(Rf_! \mathcal F,A)\\
&= \Hom_{D_\et(Y^\prime,\Fl)}(\mathcal F, Rf^! A)\ ,
\end{aligned}\]
as desired.
\end{proof}

There is a variant of the proposition when the action is not free, but we know already that all fibres are smooth.

\begin{proposition}\label{prop:propnonfreequotientsmooth} Let $f: Y^\prime\to Y$ be a separated map of small v-stacks that is representable in locally spatial diamonds and $\ell$-cohomologically smooth for some prime $\ell\neq p$. Assume that there is a profinite group $K$ of pro-order prime to $\ell$ with an action $\underline{K}\times Y^\prime\to Y^\prime$ over $Y$ for which the map $\underline{K}\times Y^\prime\to Y^\prime\times_Y Y^\prime$ is $0$-truncated and qcqs.

Let $Y^\prime/\underline{K}$ be the quotient of $Y^\prime$ by the equivalence relation which is given as the image of $\underline{K}\times Y^\prime\to Y^\prime\times_Y Y^\prime$. Then the quotient map $f/\underline{K}: Y^\prime/\underline{K}\to Y$ is separated and representable in locally spatial diamonds. Moreover, if for all complete algebraically closed fields $C$ with an open and bounded valuation subring $C^+\subset C$, and all maps $\Spa(C,C^+)\to Y$, the pullback $Y^\prime/\underline{K}\times_Y \Spa(C,C^+)\to \Spa(C,C^+)$ is $\ell$-cohomologically smooth, then $f/\underline{K}: Y^\prime/\underline{K}\to Y$ is $\ell$-cohomologically smooth, and if we fix the $\Lambda$-valued Haar measure on $K$ with total volume $1$, there is a natural equivalence of functors
\[
Rf^! = q^\ast R(f/\underline{K})^!: D_\et(Y,\Lambda)\to D_\et(Y^\prime,\Lambda)\ ,
\]
where $q: Y^\prime\to Y^\prime/\underline{K}$ denotes the quotient map.
\end{proposition}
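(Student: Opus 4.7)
The plan is to mimic the proof of Proposition~\ref{prop:propquotientsmooth} as closely as possible, using Haar-measure normalized trace maps for the intermediate quotients, with the extra hypothesis on geometric fibers providing the invertibility data that was automatic in the free case. By Proposition~\ref{prop:cohomsmoothvlocal}, cohomological smoothness is v-local on the target, so I would reduce to $Y=X$ strictly totally disconnected. For the claims that $f/\underline{K}$ is separated and representable in locally spatial diamonds, I would first note that $\underline{K}\to\ast$ is proper (being a compact Hausdorff v-sheaf via Example~\ref{ex:compacthausdorff}), so the action map $\underline{K}\times Y^\prime\to Y^\prime\times_Y Y^\prime$ is proper, and its image $R$ is a closed (hence qcqs) sub-v-sheaf; the projections $s,t\colon R\to Y^\prime$ are qcqs and universally open. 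Then Lemma~\ref{lem:spectralopenequivrel} (or its evident extension to spatial v-sheaves via Proposition~\ref{prop:smallvsheaf}) shows $Y^\prime/\underline{K}$ is locally spatial with $q\colon Y^\prime\to Y^\prime/\underline{K}$ an open qcqs surjection, and closedness of $R$ gives separatedness. The bound $\dimtrg(f/\underline{K})\leq\dimtrg f<\infty$ is immediate.

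Next I would construct the natural transformation $q^\ast R(f/\underline{K})^!\to Rf^!$. For every open subgroup $H\subset K$ (which has finite index), form the quotient $Y^\prime/\underline{H}$ and the proper map $q_{H,K}\colon Y^\prime/\underline{H}\to Y^\prime/\underline{K}$, whose fibers are finite discrete sets given by $K/H$-orbits modulo stabilizers. A trace map $q_{H,K\ast}q_{H,K}^\ast\to\mathrm{id}$ is defined, normalized by $1/[K:H]$ using the fixed Haar measure; these are compatible for $H^\prime\subset H$, and their filtered colimit over $H$ gives a map $q_\ast q^\ast\to\mathrm{id}$, adjoint to the desired $q^\ast\to Rq^!$, composed with $R(f/\underline{K})^!$. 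I would then show this transformation is an equivalence on $\Fl$ by reproducing the compactness argument verbatim from Proposition~\ref{prop:propquotientsmooth}: reduce to testing against constructible $\mathcal F=q_H^\ast\mathcal F_H$; unfold adjunctions to identify both sides with a filtered colimit of $\Hom_X(R(f/\underline{H^\prime})_! q_{H^\prime,H}^\ast\mathcal F_H,A)$; use that each transition map in $R(f/\underline{H^\prime})_! q_{H^\prime,H}^\ast\mathcal F_H$ is split injective (by the trace splitting) and that $Rf_!\mathcal F$ is compact in $D_\et(X,\Fl)$ (by Proposition~\ref{prop:Rfshriekconstructible} applied to the $\ell$-cohomologically smooth $f$) to conclude the system stabilizes with value $Rf_!\mathcal F$.

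Having established $q^\ast R(f/\underline{K})^!\Fl\cong Rf^!\Fl$, invertibility of $R(f/\underline{K})^!\Fl$ follows by v-descent: $q$ is a qcqs v-cover and $Rf^!\Fl$ is invertible on $Y^\prime$ by $\ell$-cohomological smoothness of $f$. Constructibility of $R(f/\underline{K})_!\mathcal G$ for constructible $\mathcal G$ (needed for condition (ii) of Proposition~\ref{prop:propsmooth}) follows from constructibility for $f$ via the identity $Rf_!\,q^\ast=R(f/\underline{K})_!\,q_\ast q^\ast$ combined with a colimit argument exactly as above. Condition (iii) of Proposition~\ref{prop:propsmooth} is verified by testing $q^\ast$ of the map, reducing it to the analogous condition for $f$. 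The fiber hypothesis then serves to pin down the dualizing complex on geometric fibers and ensures global invertibility is consistent with the fiberwise picture. The main obstacle is the construction and bookkeeping of the trace maps $q_{H,K\ast}q_{H,K}^\ast\to\mathrm{id}$ when $q_{H,K}$ is only proper with finite fibers rather than finite \'etale: one must check that the Haar-normalized traces are coherent under $H^\prime\subset H$ despite the presence of stabilizers, and that they remain split injective on constructible pushforwards; the fiberwise $\ell$-cohomological smoothness hypothesis is what guarantees this compatibility.
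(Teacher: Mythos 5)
Your proposal does not match the paper's argument, and the place where it departs is exactly where you flag trouble at the end: the attempt to transplant the filtered-colimit trace machinery from Proposition~\ref{prop:propquotientsmooth} does not survive the passage to non-free actions. In the free case the intermediate maps $q_{H^\prime,H}\colon Y^\prime/\underline{H^\prime}\to Y^\prime/\underline{H}$ are finite \'etale of degree $[H:H^\prime]$, so Haar-normalized traces are canonical and compatible, and the step $Rq_{H^\prime,K}^! = q_{H^\prime,K}^\ast$ (used silently in the chain of adjunctions) is free. When the action has stabilizers, $q_{H^\prime,H}$ is only proper with finite fibres of varying size, the $1/[K:H]$ normalization is not the fibre count, and $Rq_{H^\prime,K}^!$ is no longer $q_{H^\prime,K}^\ast$; the ``verbatim'' reproduction of the stabilization argument therefore breaks at several points, not just in bookkeeping. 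You gesture at the fibrewise hypothesis as the thing that ``guarantees this compatibility,'' but the hypothesis does not repair the trace system — it is a different kind of input entirely.

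The paper's actual argument is structurally different and avoids intermediate quotients altogether. It first produces a \emph{single} trace: since $q$ is proper and quasi-pro-\'etale, $Rq_! = Rq_\ast = q_\ast$ has cohomological dimension $0$, so constructing $\Lambda\to Rq^!\Lambda$ reduces to a map of sheaves $q_\ast\Lambda\to\Lambda$, which one writes down v-locally by integrating over $K$ against the fixed Haar measure. Composing with the general transformation $Rq^!\Lambda\dotimes_\Lambda q^\ast\to Rq^!$ gives $q^\ast\to Rq^!$. From the trace and the unit one obtains that $\mathcal F\to q_\ast q^\ast\mathcal F\to\mathcal F$ composes to the identity, so $R(f/\underline{K})_!\mathcal F$ is a direct summand of $Rf_! q^\ast\mathcal F$, which yields constructibility of the pushforward (condition (ii) of Proposition~\ref{prop:propsmooth}) with no colimit argument at all. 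The decisive role of the geometric-fibre hypothesis is then to verify condition (iii) of Proposition~\ref{prop:propsmooth} directly, after which $R(f/\underline{K})^!$ commutes with quasi-pro-\'etale base change by Proposition~\ref{prop:shriekvsusualpullback}, and the equivalence $q^\ast R(f/\underline{K})^!\to Rf^!$ can be checked on geometric fibres, where it holds by hypothesis. In short, the fibrewise assumption is not a consistency check at the end; it is the load-bearing substitute for the compactness/stabilization argument that your proof tries to run.
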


\begin{proof} First, we check that $Y^\prime/\underline{K}\to Y$ is compactifiable, representable in locally spatial diamonds, with locally $\dimtrg <\infty$. For this, we can assume that $Y=X$ is a strictly totally disconnected perfectoid space. Then $Y^\prime/\underline{K}$ is locally spatial (and quasiseparated) by Lemma~\ref{lem:spectralopenequivrel} and Lemma~\ref{lem:gtorsoroverperfectoid}. It is also separated by the valuative criterion. Its canonical compactification $\overline{Y^\prime/\underline{K}}^{/Y}$ is given by $\overline{Y^\prime}^{/Y}/\underline{K}$. To check whether the inclusion is an open immersion, one uses that $\overline{Y^\prime}^{/Y}\to \overline{Y^\prime}^{/Y}/\underline{K}$ is a quotient map. It is clear that locally $\dimtrg f/\underline{K} = \dimtrg f<\infty$.

It remains to see that $f/\underline{K}$ is $\ell$-cohomologically smooth, and that $Rf^! = q^\ast R(f/\underline{K})^!$. We start by constructing a natural transformation
\[
q^\ast\to Rq^!: D_\et(Y^\prime/\underline{K},\Lambda)\to D_\et(Y^\prime,\Lambda)\ .
\]
Recall that there is always a natural transformation
\[
Rq^! \Lambda\dotimes_\Lambda q^\ast\to Rq^!\ ;
\]
thus, it is enough to construct a natural map $\Lambda\to Rq^!\Lambda$. This is adjoint to a trace map $Rq_! \Lambda\to \Lambda$. But note that $q$ is proper and quasi-pro-\'etale, so $Rq_! = Rq_\ast$, and $Rq_\ast=q_\ast$ is of cohomological dimension $0$. Thus, it remains to construct a map of sheaves $q_\ast \Lambda\to \Lambda$. This map can be constructed v-locally, and is given by integrating over the $K$-action, fixing the $\Lambda$-valued Haar measure on $K$ with total volume $1$.

Now we prove that $f/\underline{K}$ is $\ell$-cohomologically smooth. For this, we can assume that $X$ is strictly totally disconnected, and $Y^\prime$ is quasicompact. We check the criteria of Proposition~\ref{prop:propsmooth}. Part (i) is clear. For part (ii), note that for any constructible sheaf $\mathcal F$ on $Y^\prime/\underline{K}$, one has natural maps
\[
\mathcal F\to q_\ast q^\ast \mathcal F = Rq_! q^\ast \mathcal F\to Rq_! Rq^! \mathcal F\to \mathcal F
\]
whose composite is an isomorphism. This implies that $R(f/\underline{K})_! \mathcal F$ is a direct summand of $Rf_! q^\ast \mathcal F = R(f/\underline{K})_! Rq_! q^\ast \mathcal F$, which is thus constructible. Now part (iii) is a question about geometric fibres that holds by hypothesis. In particular, condition (iii) of Proposition~\ref{prop:shriekvsusualpullback} holds true, so $R(f/\underline{K})^!$ commutes with quasi-pro-\'etale base change. Thus, checking whether
\[
q^\ast R(f/\underline{K})^!\to Rf^!
\]
is an equivalence can be done on geometric fibres; but here it holds by assumption. In particular, this implies that condition (iv) of Proposition~\ref{prop:propsmooth} is satisfied. Thus, $f/\underline{K}$ is $\ell$-cohomologically smooth; moreover, we have already checked that $q^\ast R(f/\underline{K})^!\to Rf^!$ is an equivalence, as desired.
\end{proof}

In particular, we note that smooth morphisms of analytic adic spaces give examples of cohomologically smooth morphisms.

\begin{proposition}\label{prop:classicallysmoothisgeomsmooth} Let $f: Y^\prime\to Y$ be a separated smooth morphism of analytic adic spaces over $\Spa \mathbb Z_p$, i.e.~$f$ is locally on $Y^\prime$ the composition of an \'etale map $Y^\prime\to \mathbb B_Y^n$ and the projection $\mathbb B_Y^n\to Y$, cf.~\cite[Corollary 1.6.10]{Huber}. Then $f^\diamondsuit: (Y^\prime)^\diamondsuit\to Y^\diamondsuit$ is $\ell$-cohomologically smooth.
\end{proposition}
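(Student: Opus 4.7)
The strategy is to reduce to Theorem~\ref{thm:ballcohomsmooth} via the standard factorization of a smooth morphism, using stability of $\ell$-cohomological smoothness under composition (Proposition~\ref{prop:smooth2outof3}) and under base change, as well as v-local nature on the target (Proposition~\ref{prop:cohomsmoothvlocal}). Since the conclusion is local on $Y^\prime$, I may assume that $f$ factors as $Y^\prime \xrightarrow{g} Y\times_{\Spa\mathbb{Z}_p} \mathbb{B}^n \xrightarrow{p} Y$ where $g$ is étale and separated (shrinking $Y^\prime$ so that $g$ is e.g.\ a composition of an open immersion with a finite étale map). It suffices to treat $g^\diamondsuit$ and $p^\diamondsuit$ separately.

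For $g^\diamondsuit$: by Lemma~\ref{lem:diamondadicspace}, $g^\diamondsuit$ is étale. Any separated étale map $h$ of small v-stacks is $\ell$-cohomologically smooth: it is compactifiable by Proposition~\ref{prop:compactifiable}(vi), has $\dimtrg h=0<\infty$, and since $Rh_!=h_!$ is left adjoint to $h^*$ which in turn is left adjoint to $Rh_*$, the right adjoint of $Rh_!$ is $h^*$, i.e.\ $Rh^!=h^*$, so $D_h=Rh^!\Lambda=\Lambda$ is trivially invertible.

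For $p^\diamondsuit$: the map $p^\diamondsuit$ is the base change of $(\mathbb{B}^n_{\mathbb{Z}_p})^\diamondsuit\to\Spd\mathbb{Z}_p$ along $Y^\diamondsuit\to\Spd\mathbb{Z}_p$, so by Proposition~\ref{prop:cohomsmoothvlocal} it suffices to show that $(\mathbb{B}^n_{\mathbb{Z}_p})^\diamondsuit\to\Spd\mathbb{Z}_p$ is $\ell$-cohomologically smooth. Writing this as the $n$-fold composite of (base changes of) the projection from the unit ball, and using composition, the problem reduces to showing $\mathbb{B}^\diamondsuit_{\mathbb{Z}_p}\to\Spd\mathbb{Z}_p$ is $\ell$-cohomologically smooth. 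Finally I check the latter v-locally on the target: let $X\to \Spd\mathbb{Z}_p$ be a surjection from a strictly totally disconnected space $X$, corresponding to a perfectoid untilt $X^\sharp=\Spa(R,R^+)$. Then $\mathbb{B}^\diamondsuit_{\mathbb{Z}_p}\times_{\Spd\mathbb{Z}_p}X$ is identified with $\Spd(R\langle T\rangle,R^+\langle T\rangle)$, which admits a quasi-pro-étale cover by the perfectoid closed ball $\Spa(R\langle T^{1/p^\infty}\rangle,R^+\langle T^{1/p^\infty}\rangle)^\diamondsuit$. Tilting, this is the perfectoid closed ball $\mathbb{B}\times X$ over $X$ in characteristic $p$, which is the base change of $\mathbb{B}\to\ast$ along $X\to\ast$. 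Thus the conclusion follows from Theorem~\ref{thm:ballcohomsmooth} together with base change.

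The main obstacle lies in the last step: cleanly identifying the pullback of $\mathbb{B}^\diamondsuit_{\mathbb{Z}_p}$ along $X\to\Spd\mathbb{Z}_p$ with (something v-locally of the form of) a base change of the characteristic-$p$ perfectoid ball considered in Theorem~\ref{thm:ballcohomsmooth}, so as to legitimately transfer cohomological smoothness across the tilting equivalence. Once this identification is verified, the rest is a formal assemblage of the stability properties of cohomologically smooth morphisms together with the input from Huber's theorems encoded in Theorem~\ref{thm:ballcohomsmooth}.
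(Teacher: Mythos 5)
Your reduction steps (factoring through an étale map and a ball projection, the inductive reduction to $\mathbb{B}^1$, and the use of Proposition~\ref{prop:cohomsmoothvlocal} together with Proposition~\ref{prop:smooth2outof3}) follow the same skeleton as the paper, and your observation that $Rh^!=h^\ast$ for separated étale $h$ (so that $D_h=\Lambda$) is a clean and correct way to handle the étale factor. However, the final step contains a fatal gap.

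You assert that $\Spa(R\langle T^{1/p^\infty}\rangle,R^+\langle T^{1/p^\infty}\rangle)^\diamondsuit \to \Spd(R\langle T\rangle,R^+\langle T\rangle)$ is a quasi-pro-\'etale cover, and a surjection $X\to\Spd\mathbb{Z}_p$ by a strictly totally disconnected $X$ necessarily has an untilt $X^\sharp=\Spa(R,R^+)$ with nontrivial characteristic-$0$ locus (the surjection must hit both points of $|\Spd\mathbb{Z}_p|$). But when $R$ has characteristic $0$, the tower $\Spa(R\langle T^{1/p^\infty}\rangle)\to\Spa(R\langle T\rangle)$, given at each finite stage by $T\mapsto T^p$, is finite flat of degree $p$ but \emph{ramified at $T=0$}: the fibre over a geometric point in the locus $T=0$ is a fat point, not a profinite set of copies of the base, so this map is not pro-\'etale and not quasi-pro-\'etale (compare Lemma~\ref{lem:proetaleoverwlocal}). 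Hence the proposed reduction to the characteristic-$p$ ball breaks down precisely on the locus $T=0$, and tilting is not available there. (It is not merely a matter of filling in a routine step; without pro-\'etaleness you cannot invoke Proposition~\ref{prop:propquotientsmooth} to descend cohomological smoothness from the perfectoid ball, and the identification with a base change of the characteristic-$p$ $\mathbb{B}\to\ast$ is genuinely false over $T=0$.)

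The paper circumvents exactly this problem by covering $\mathbb{B}$ by two copies of the \emph{torus} $\mathbb{T}=\{|T|=1\}$. Over $\mathbb{T}$ the coordinate is invertible, so adjoining $p$-power roots of $T$ produces a $\underline{\mathbb{Z}_p}$-torsor $\widetilde{\mathbb{T}}\to\mathbb{T}$. Over $\mathbb{Q}_p$ one can then apply Proposition~\ref{prop:propquotientsmooth} (free quotient by a pro-$p$ group) together with Theorem~\ref{thm:ballcohomsmooth}; in the equal-characteristic-$p$ case Theorem~\ref{thm:ballcohomsmooth} suffices directly; and in the mixed-characteristic case, working over $\Spa\mathbb{Z}_p^{\mathrm{cycl}}$, the $\underline{\mathbb{Z}_p}$-action is no longer free, and one needs the refined Proposition~\ref{prop:propnonfreequotientsmooth}, which allows non-free quotients provided all geometric fibres are already known to be $\ell$-cohomologically smooth (a fact supplied by the preceding two cases). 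So a correct argument requires both passing from $\mathbb{B}$ to $\mathbb{T}$ and the extra case analysis via the non-free quotient lemma; neither step can be skipped.
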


Here, for $Y=\Spa(A,A^+)$ affinoid, the ball $\mathbb B_Y^n$ is defined by $\Spa(A\langle T_1,\ldots,T_n\rangle,A^+\langle T_1,\ldots,T_n\rangle)$.

\begin{proof} By Lemma~\ref{lem:diamondadicspace}, $f$ is representable in locally spatial diamonds. Using Proposition~\ref{prop:compactifiable}, one sees that $f$ is compactifiable.

By Proposition~\ref{prop:smooth2outof3}, it suffices to check that $\mathbb B_Y^n\to Y$ is $\ell$-cohomologically smooth. By induction, we can assume that $n=1$. As we can cover $\mathbb B_Y$ by two copies of $\mathbb T_Y\subset \mathbb B_Y$ of the annulus (with ``center'' $0$ and $1$ respectively), it suffices to show that $\mathbb T_Y\to Y$ is $\ell$-cohomologically smooth. If we work over $\Spa \mathbb Q_p$ instead, we can even base change to $Y=\Spa \mathbb C_p$.

In that case, there is a $\mathbb Z_p$-torsor
\[
\widetilde{\mathbb T}_{\mathbb C_p}=\Spa \mathbb C_p\langle T^{\pm 1/p^\infty}\rangle \to \mathbb T_{\mathbb C_p} = \Spa \mathbb C_p\langle T^{\pm 1}\rangle\ ,
\]
so that
\[
\mathbb T_{\mathbb C_p}^\diamondsuit = \widetilde{\mathbb T}_{\mathbb C_p}^\diamondsuit/\underline{\mathbb Z_p}\to Y^\diamondsuit = (\Spa \mathbb C_p)^\diamondsuit
\]
is $\ell$-cohomologically smooth by Proposition~\ref{prop:propquotientsmooth} and Theorem~\ref{thm:ballcohomsmooth}.

On the other hand, if $Y$ is of characteristic $p$, the result follows directly from Theorem~\ref{thm:ballcohomsmooth}. In mixed characteristic, we argue as follows. We need to see that $\mathbb T_Y\to Y$ is $\ell$-cohomologically smooth. We can assume, by a v-cover, that $Y$ lives over $\Spa \mathbb Z_p^{\mathrm{cycl}}$. In that case, $\mathbb T_Y$ is a quotient of $\widetilde{\mathbb T}_Y$ by a nonfree $\underline{\mathbb Z_p}$-action. The result follows from Proposition~\ref{prop:propnonfreequotientsmooth}, as we have already verified that all geometric fibres are $\ell$-cohomologically smooth.
\end{proof}

Another example is the following.

\begin{proposition}\label{prop:Qpsmooth} The map $(\Spa \mathbb Q_p)^\diamondsuit\to \ast$ is $\ell$-cohomologically smooth.
\end{proposition}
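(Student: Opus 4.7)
The plan is to realize $\Spd\mathbb Q_p=(\Spa\mathbb Q_p)^\diamondsuit$ as a quotient of the affinoid perfectoid space $\Spd\mathbb Q_p^{\mathrm{cycl}}$ by a profinite Galois action, splitting this action into a pro-order-prime-to-$\ell$ free part and a finite étale part so as to apply Propositions~\ref{prop:propquotientsmooth} and~\ref{prop:smooth2outof3}.

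First I would show that $\Spd\mathbb Q_p^{\mathrm{cycl}}\to \ast$ is $\ell$-cohomologically smooth. Under tilting, this diamond is represented by the affinoid perfectoid space $\Spa\mathbb F_p\laurentseries{t^{1/p^\infty}}$, where $t$ is the tilt of $\zeta_p-1$. As a v-sheaf on $\Perf$, it sends $X=\Spa(R,R^+)$ to the set of topologically nilpotent units of $R$, and so embeds into the closed unit disk $\mathbb B$ via the coordinate $t$. This inclusion is an open immersion: given a v-map $X\to\mathbb B$ classifying some $T\in R^+$, the preimage of the subfunctor is the open locus $\{x\in X:0<\abs{T(x)}<1\}$. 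Since $\mathbb B\to\ast$ is $\ell$-cohomologically smooth by Theorem~\ref{thm:ballcohomsmooth} and open immersions are étale hence $\ell$-cohomologically smooth, Proposition~\ref{prop:smooth2outof3} yields the claim.

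For the Galois step, Proposition~\ref{prop:spdspatial} applied to the cyclotomic tower $\mathbb Q_p\subset\mathbb Q_p(\mu_{p^n})$ with finite Galois groups $(\mathbb Z/p^n)^\times$ shows that $\Spd\mathbb Q_p^{\mathrm{cycl}}\to\Spd\mathbb Q_p$ is a $\underline{\mathbb Z_p^\times}$-torsor. Since $\ell\neq p$, the abelian profinite group $\mathbb Z_p^\times$ has finite pro-$\ell$-part (namely the $\ell$-Sylow of $\mu_{p-1}$, trivial in particular when $p=2$), so there exists a maximal open subgroup $K_0\subset\mathbb Z_p^\times$ of pro-order prime to $\ell$, with finite quotient $H=\mathbb Z_p^\times/K_0$ an $\ell$-group. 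Setting $L=(\mathbb Q_p^{\mathrm{cycl}})^{K_0}$, a finite Galois extension of $\mathbb Q_p$ with group $H$, we obtain a factorization
\[
\Spd\mathbb Q_p^{\mathrm{cycl}}\to\Spd L\to\Spd\mathbb Q_p
\]
where the first arrow is a free $\underline{K_0}$-torsor (Proposition~\ref{prop:spdspatial} applied over $L$) and the second is finite étale by Lemma~\ref{lem:diamondadicspace}. Proposition~\ref{prop:propquotientsmooth} then shows $\Spd L=\Spd\mathbb Q_p^{\mathrm{cycl}}/\underline{K_0}\to\ast$ is $\ell$-cohomologically smooth, and a final application of Proposition~\ref{prop:smooth2outof3} to the separated, surjective, $\ell$-cohomologically smooth finite étale map $\Spd L\to\Spd\mathbb Q_p$ yields $\ell$-cohomological smoothness of $\Spd\mathbb Q_p\to\ast$.

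The main obstacle is verifying in the first step that $\Spa\mathbb F_p\laurentseries{t^{1/p^\infty}}$ really embeds as an open sub-v-sheaf of $\mathbb B$, i.e., that both the topological-nilpotence condition and the invertibility condition behave well in v-families; everything else is an essentially formal assembly of the preceding torsor and finite-étale results together with the two-out-of-three property for $\ell$-cohomologically smooth maps.
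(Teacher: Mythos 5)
Your proof is correct and follows the same basic strategy as the paper: realize $\Spd\mathbb Q_p$ as a quotient of an explicit perfectoid space (open in $\mathbb B$ modulo a base change) by a profinite group, and apply Proposition~\ref{prop:propquotientsmooth}. The only substantive difference is the choice of perfectoid cover. The paper takes $K_\infty$ to be the cyclotomic $\mathbb Z_p$-extension of $\mathbb Q_p$, whose Galois group is $\mathbb Z_p$ and hence automatically of pro-order prime to $\ell$; this allows a single application of Proposition~\ref{prop:propquotientsmooth} (after pulling back to $\Spa(C,\OO_C)$, where $\widehat{K_\infty}^\flat$ becomes the punctured open disc $\mathbb D^\times_C\subset\mathbb B_C$). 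You instead use the full cyclotomic tower $\mathbb Q_p^{\mathrm{cycl}}$ with Galois group $\mathbb Z_p^\times$, which can have a nontrivial finite $\ell$-part coming from $\mu_{p-1}$; your workaround — peel off a maximal open subgroup $K_0\subset\mathbb Z_p^\times$ of pro-order prime to $\ell$, handle the $\underline{K_0}$-torsor via Proposition~\ref{prop:propquotientsmooth}, and descend from the finite \'etale cover $\Spd L\to\Spd\mathbb Q_p$ via the converse direction of Proposition~\ref{prop:smooth2outof3} — is valid, just an extra step that the paper's choice of tower avoids. Your verification that $\Spa\mathbb F_p\laurentseries{t^{1/p^\infty}}\hookrightarrow\mathbb B$ is an open immersion of v-sheaves (the locus $0<|T|<1$) is the absolute form of the paper's observation that $\mathbb D^\times_C$ is open in $\mathbb B_C$; since $\ell$-cohomological smoothness is v-local on the target by Proposition~\ref{prop:cohomsmoothvlocal}, these are interchangeable. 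One small notational quibble: when citing Proposition~\ref{prop:spdspatial}, you should observe that the $\underline{G}$-torsor conclusion only uses the almost purity theorem and perfectoidness of $\widehat{A_\infty}$, not the full ``no nonsplit finite \'etale covers'' hypothesis from Lemma~\ref{lem:profiniteetalecover} — both the paper's $K_\infty$ and your $\mathbb Q_p^{\mathrm{cycl}}$ fail the latter, but their completions are perfectoid for classical reasons.
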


\begin{proof} Let $K_\infty$ be the cyclotomic $\mathbb Z_p$-extension of $\mathbb Q_p$. Then $K_\infty^\flat\cong \mathbb F_p((t^{1/p^\infty}))$, and
\[
(\Spa \mathbb Q_p)^\diamondsuit = \Spa \mathbb F_p((t^{1/p^\infty})) / \underline{\mathbb Z_p}\ .
\]
In particular, after pullback along the v-cover $\Spa(C,\OO_C)\to\ast$, where $C$ is some algebraically closed nonarchimedean field of characteristic $p$, one has
\[
(\Spa \mathbb Q_p)^\diamondsuit \times \Spa(C,\OO_C) = \mathbb D^\times_C / \underline{\mathbb Z_p}\ ,
\]
where $\mathbb D^\times_C = \Spa \mathbb F_p((t^{1/p^\infty}))\times \Spa(C,\OO_C)$ denotes the punctured open unit disc over $\Spa(C,\OO_C)$, which is an open subset of $\mathbb B\times \Spa(C,\OO_C)$. Using Proposition~\ref{prop:propquotientsmooth}, this shows that $(\Spa \mathbb Q_p)^\diamondsuit\times \Spa(C,\OO_C)\to \Spa(C,\OO_C)$ is $\ell$-cohomologically smooth, and thus also $(\Spa \mathbb Q_p)^\diamondsuit\to \ast$.
\end{proof}

Occasionally, it is useful to check cohomological smoothness in a different way, not by reduction to the case that the base is strictly totally disconnected, but by keeping a large geometric base. In that case, the following criterion is useful.

\begin{proposition}\label{prop:cohomsmoothlargebase} Let $X$ be a perfectoid space, let $Y$ be a locally spatial diamond, and let $f: Y\to X$ be compactifiable with locally $\dimtrg f<\infty$. Then $f$ is $\ell$-cohomologically smooth if and only if the following conditions are satisfied.
\begin{altenumerate}
\item[{\rm (i)}] The sheaf $Rf^!\Fl\in D_\et(Y,\Fl)$ is invertible, i.e.~\'etale locally isomorphic to $\Fl[d]$ for some $d\in \mathbb Z$.
\item[{\rm (ii)}] For any $\tilde{X}\to X$ that is an open subset of a finite-dimensional ball $\mathbb B^n_X$ over $X$ with pullback $\tilde{f}: \tilde{Y}=Y\times_X \tilde{X}\to \tilde{X}$, the natural transformation
\[
R\tilde{f}^! \Fl\dotimes_\Fl \tilde{f}^\ast\to R\tilde{f}^!
\]
of functors $D_\et(\tilde{X},\Fl)\to D_\et(\tilde{Y},\Fl)$ is an equivalence.
\end{altenumerate}
\end{proposition}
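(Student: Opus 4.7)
The forward direction is immediate from Proposition~\ref{prop:smoothshriekbasechange}: for $f$ cohomologically smooth, $Rf^!\Fl$ is v-locally isomorphic to $\Fl[n]$ and hence invertible (giving (i)); moreover any base change of a cohomologically smooth map remains cohomologically smooth, so for every open $\tilde X\subset \mathbb B^n_X$ the base change $\tilde f:\tilde Y\to\tilde X$ is cohomologically smooth and in particular satisfies $R\tilde f^!\cong R\tilde f^!\Fl\otimes_\Fl \tilde f^*$, giving (ii).

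For the converse, assume (i) and (ii). The goal is to verify the criteria of Proposition~\ref{prop:propsmooth} for $f$; condition (i) there ($\dimtrg f<\infty$) is given, and the remaining conditions reduce to checking that for every strictly totally disconnected $X'\to X$ (with projections $g':Y\times_X X'\to Y$ and $f_{X'}:Y\times_X X'\to X'$), the base change map $g'^*Rf^!\Fl\to Rf_{X'}^!\Fl$ is an isomorphism, that $Rf_{X'}^!\cong Rf_{X'}^!\Fl\otimes f_{X'}^*$, and that $R^if_{X'!}$ preserves constructibility. The first key observation is that for any open $\tilde X\subset \mathbb B^n_X$ the structure map $\tilde X\to X$ is itself $\ell$-cohomologically smooth: $\mathbb B^n_X\to X$ is cohomologically smooth by Theorem~\ref{thm:ballcohomsmooth} combined with base change (Proposition~\ref{prop:smoothshriekbasechange}), the open immersion is \'etale and hence cohomologically smooth, and composition is handled by Proposition~\ref{prop:smooth2outof3}. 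Consequently, by Proposition~\ref{prop:smoothbasechange}(iii), $Rf^!$ commutes with pullback along every such $\tilde X\to X$; applied to $\Fl$, this gives $R\tilde f^!\Fl\cong \tilde g^*Rf^!\Fl$, which is invertible, and combined with (ii) yields the tensor formula over $\tilde X$.

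The main step, and principal obstacle, is to extend these properties from opens in finite-dimensional balls to an arbitrary strictly totally disconnected $X'=\Spa(S,S^+)$ over $X=\Spa(R,R^+)$. The plan is to present $X'\to X$ as a cofiltered inverse limit of affinoid perfectoid spaces of topologically finite type over $X$ (in the spirit of \cite[Lemma 6.13]{ScholzePerfectoidSpaces}); each such space embeds as a Zariski closed subspace — affinoid pro-\'etale by Remark~\ref{rem:diagproetale} — into a finite-dimensional ball $\mathbb B^{n_\alpha}_X$, and approximating each Zariski closed embedding by a cofinal system of rational open neighborhoods presents $X'\to X$ as a cofiltered limit of maps $V\to X$ with $V$ open in some $\mathbb B^{n_\alpha}_X$. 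Since base change of $Rf^!\Fl$ holds along each such $V$ by the preceding paragraph, and since $!$-pullback of the invertible (hence constructible) sheaf $Rf^!\Fl$ is compatible with cofiltered limits of qcqs diamonds via Proposition~\ref{prop:etcohomlim} together with the finite cohomological bounds coming from $\dimtrg f<\infty$, the base change passes to the limit and gives invertibility of $Rf_{X'}^!\Fl\cong g'^*Rf^!\Fl$. The tensor formula for $f_{X'}$ follows by transporting (ii) through the same limit, and constructibility of $R^if_{X'!}\mathcal F$ follows from Proposition~\ref{prop:Rfshriekconstructible}. Proposition~\ref{prop:propsmooth} then delivers $\ell$-cohomological smoothness of $f$.

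The hardest step is this limit argument: both organizing a cofinal presentation of $X'\to X$ by opens in finite-dimensional balls over $X$, and ensuring the commutation of $!$-pullback with the corresponding cofiltered limit. The sheaf-theoretic continuity of $Rf^!\Fl$ (invertible, hence constructible) combined with the uniform cohomological dimension bound provided by $\dimtrg f<\infty$ is what makes this passage to the limit possible and keeps $R^i$ well-behaved in each degree.
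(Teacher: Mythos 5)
The forward direction is fine, and the reduction of the converse to a strictly totally disconnected base $X'=\varprojlim_i\tilde X_i$ presented as a cofiltered limit of opens $\tilde X_i$ in finite\-/dimensional balls over $X$ matches the paper. The difficulty is the next step, and your argument has a genuine gap there. You assert that $!$-pullback of the invertible sheaf ``is compatible with cofiltered limits of qcqs diamonds via Proposition~\ref{prop:etcohomlim} together with the finite cohomological bounds.'' But $Rf^!$ is a \emph{right} adjoint, and Proposition~\ref{prop:etcohomlim} only says that ordinary \'etale cohomology of a \emph{fixed} pulled\-/back sheaf commutes with a cofiltered limit of spaces; it says nothing about how $Rf_{X'}^!\Fl$ relates to the various $R\tilde f_i^!\Fl$. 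Establishing exactly that comparison is the content of the proposition, so you cannot cite a limit lemma to close it. There is a secondary, related circularity: you invoke Proposition~\ref{prop:Rfshriekconstructible} to get constructibility of $R^if_{X'!}$, but that proposition only converts constructibility into commutation of $Rf_{X'}^!$ with direct sums, which you would know only once the tensor formula for $f_{X'}$ is already in hand --- the very thing you are in the middle of proving.

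The paper sidesteps the commutation problem instead of confronting it. Rather than proving base change for $Rf^!$ along $X'\to X$ (which would involve commuting a right adjoint with an inverse limit of bases), it proves directly that the composite natural transformation $h^\ast Rf^!\Fl\dotimes_\Fl f'^\ast\to Rf_{X'}^!$ is an equivalence; since $h^\ast Rf^!\Fl$ is invertible by (i), this already verifies Definition~\ref{def:cohsmooth} with no need for Proposition~\ref{prop:propsmooth}. To check the equivalence, one evaluates on quasicompact separated \'etale $V'\to Y'$; by Proposition~\ref{prop:etmaptolimdiamond}~(iii) such a $V'$ descends to some $\tilde V_i\to\tilde Y_i$, so after replacing $X$ by $\tilde X_i$ and $Y$ by $\tilde V_i$ (both of which still satisfy (i) and (ii)) one may take $V'=Y'$ and check the map on global sections. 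Global sections are then handled by applying $Rh_\ast$ and unwinding
\[
Rh_\ast Rf_{X'}^! \;=\; Rf^! Rg_\ast \;=\; Rf^!\Fl\dotimes_\Fl f^\ast Rg_\ast \;=\; Rf^!\Fl\dotimes_\Fl Rh_\ast f'^\ast \;=\; Rh_\ast\bigl(h^\ast Rf^!\Fl\dotimes_\Fl f'^\ast\bigr),
\]
using Proposition~\ref{prop:smoothbasechange}~(i), hypothesis~(ii) applied with $\tilde X=X$, Proposition~\ref{prop:Rfastsimple}, and the projection formula for $Rh_\ast$ against the invertible $Rf^!\Fl$. This $Rh_\ast$-computation replaces the unproved ``$Rf^!$ commutes with the cofiltered limit'' step in your sketch, and is the key idea you would need to supply.
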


\begin{proof} The conditions are necessary by Proposition~\ref{prop:smoothshriekbasechange}. Conversely, we may assume that $X$ is affinoid perfectoid and $Y$ is spatial.

As preparation, note that in the situation of condition (ii), denoting the map $g: \tilde{Y}\to Y$, the natural map
\[
g^\ast Rf^!\Fl\to R\tilde{f}^!\Fl
\]
is an equivalence by Proposition~\ref{prop:smoothbasechange}~(iii), as $g$ is $\ell$-cohomologically smooth by Theorem~\ref{thm:ballcohomsmooth}. In particular, $R\tilde{f}^!\Fl$ is invertible.

Now let $X^\prime$ be any strictly totally disconnected space with a map $g: X^\prime\to X$ to $X$, and let $f^\prime: Y^\prime=X^\prime\times_X Y\to X^\prime$ be the pullback, $h: Y^\prime\to Y$. We will prove that the composite transformation
\[
h^\ast Rf^! \Fl\dotimes_\Fl f^{\prime\ast}\to Rf^{\prime!} \Fl\dotimes_\Fl f^{\prime\ast}\to Rf^{\prime !}
\]
of functors $D_\et(X^\prime,\Fl)\to D_\et(Y^\prime,\Fl)$ is an equivalence, which gives the desired result. We can write $X^\prime$ as an inverse limit of affinoid perfectoid spaces $\tilde{X}_i\to X$ that are open subsets of finite-dimensional balls over $X$; let $\tilde{f}_i: \tilde{Y}_i = \tilde{X}_i\times_X Y\to \tilde{X}_i$ be the corresponding pullbacks.

It is enough to check that the transformation
\[
h^\ast Rf^! \Fl\dotimes_\Fl f^{\prime\ast}\to Rf^{\prime!} \Fl\dotimes_\Fl f^{\prime\ast}\to Rf^{\prime !}
\]
becomes an equivalence after evaluation on any quasicompact separated \'etale map $V^\prime\to Y^\prime$. By \ref{prop:etmaptolimdiamond}~(iii), this comes via pullback from some quasicompact separated \'etale map $\tilde{V}_i\to \tilde{Y}_i$ for $i$ large enough. Replacing $X$ by $\tilde{X}_i$ and $Y$ by $\tilde{V}_i$, which still satisfy conditions (i) and (ii), we can reduce to the case $V^\prime=Y^\prime$. In other words, it is enough to check that the transformation
\[
h^\ast Rf^! \Fl\dotimes_\Fl f^{\prime\ast}\to Rf^{\prime!} \Fl\dotimes_\Fl f^{\prime\ast}\to Rf^{\prime !}
\]
becomes an equivalence on global sections; for this, it is enough to check that it becomes an equivalence after applying $Rh_\ast$. By computation, we obtain the desired equality
\[
Rh_\ast Rf^{\prime !} = Rf^! Rg_\ast = Rf^! \Fl\dotimes_\Fl f^\ast Rg_\ast = Rf^!\Fl\dotimes_\Fl Rh_\ast f^{\prime\ast} = Rh_\ast(h^\ast Rf^! \Fl\dotimes_\Fl f^{\prime\ast})\ ,
\]
using Proposition~\ref{prop:smoothbasechange}~(i) in the first equation, condition (ii) in the second equation, Proposition~\ref{prop:Rfastsimple} in the third equation, and a simple projection formula for $Rh_\ast$ (noting that $Rf^!\Fl$ is invertible).
\end{proof}

Let us end this section with a question, which gives one way of making precise the intuition that ``regular rings are smooth over an absolute base''.

\begin{question} Let $R$ be a regular $\mathbb Z_p$-algebra that is $I$-adically complete for some ideal $I$ containing $p$, and topologically of finite type over $\mathbb Z_p$. Is the map $(\Spa R)^\diamondsuit\to \ast$ cohomologically smooth?
\end{question}

Here, $(\Spa R)^\diamondsuit$ is the small v-sheaf parametrizing untilts over $R$. Even the case $R=\mathbb Z_p$ is not known to us. We note that in general, this question is related to Grothendieck's purity conjecture (the theorem of Thomason--Gabber).

\section{Biduality}

In this section, we prove a biduality result for cohomologically smooth diamonds over a geometric point $\Spa(C,\OO_C)$. More general results in the case of rigid spaces can be found in work of Gaisin--Welliaveetil, \cite{GaisinWelliaveetil}.

\begin{theorem}\label{thm:constructiblereflexive} Let $C$ be a complete algebraically closed nonarchimedean field of characteristic $p$ with ring of integers $\OO_C$, and let $X$ be a locally spatial diamond that is separated and $\ell$-cohomologically smooth over $\Spa(C,\OO_C)$ for some $\ell\neq p$. Let $A\in D_\et(X,\Fl)$ be a bounded complex with constructible cohomology. Then the double (naive) duality map
\[
A\to R\sHom_\Fl(R\sHom_\Fl(A,\Fl),\Fl)
\]
is an equivalence; equivalently, as $Rf^! \Fl$ is invertible, the double Verdier duality map
\[
A\to R\sHom_\Fl(R\sHom_\Fl(A,Rf^! \Fl),Rf^! \Fl)
\]
is an equivalence. Moreover, if $X$ is quasicompact, then $H^i(X,A)$ is finite for all $i\in \mathbb Z$.
\end{theorem}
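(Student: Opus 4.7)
\medskip

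\noindent\textbf{Paragraph 1 (Setup and reduction of finiteness to biduality).} The two formulations of biduality are equivalent because $Rf^!\Fl$ is invertible (Proposition~\ref{prop:smoothshriekbasechange}), so I will work with the Verdier dual $DA := R\sHom_\Fl(A,Rf^!\Fl)$ and the canonical natural transformation $\eta_A : A \to DDA$. The plan is to show that $\eta_A$ is an equivalence for every bounded $A \in D_\cons(X,\Fl)$, and that when $X$ is quasicompact $H^i(X,A)$ is finite for all $i$. First I would reduce finiteness to biduality together with the statement that $D$ preserves constructibility. Indeed, applying Proposition~\ref{prop:localverdier}(i) to $f:X\to\Spa(C,\OO_C)$ with $B=\Fl$ gives
\[
R\Gamma(X,DA) \;=\; R\Hom_X(A,Rf^!\Fl) \;=\; R\Hom(Rf_!A,\Fl),
\]
and by Proposition~\ref{prop:propsmooth}(ii) the $\ell$-cohomological smoothness of $f$ forces $Rf_!A$ to be constructible on the geometric point, hence a bounded complex of finite-dimensional $\Fl$-vector spaces. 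Applying this with $DA$ in place of $A$ and invoking biduality $DDA\cong A$ together with constructibility of $DA$ then gives that $H^i(X,A) = H^iR\Hom(Rf_!DA,\Fl)$ is finite.

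\medskip

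\noindent\textbf{Paragraph 2 (D\'evissage for biduality).} Next I would prove biduality by d\'evissage. Let $\mathcal C \subset D_\cons(X,\Fl)$ consist of those bounded $A$ for which $\eta_A$ is an equivalence and $DA$ is constructible. Then $\mathcal C$ is triangulated: closure under cones of the biduality condition follows from the $5$-lemma applied to $\eta:\id\to DD$ viewed as a natural transformation of triangulated endofunctors, while closure of the constructibility condition follows from exactness of $D$ as a functor $D_\cons(X,\Fl)^{\op}\to D(X_v,\Fl)$. By Proposition~\ref{prop:charconstructible2} it therefore suffices to exhibit $j_!\mathcal L\in\mathcal C$ for every quasicompact separated \'etale $j:U\to X$ and every local system $\mathcal L$ of finite $\Fl$-modules on $U$. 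Working \'etale-locally on $X$ (permitted since biduality is \'etale-local via Proposition~\ref{prop:smoothhompullback}), Lemma~\ref{lem:spatialetalelocallysep} lets me factor $j=\pi\circ i$ with $\pi:V\to X$ finite \'etale and $i:U\hookrightarrow V$ a quasicompact open immersion. Since $\pi_!=\pi_*$ and $R\pi^!=\pi^*$, Proposition~\ref{prop:localverdier}(i) yields $D\pi_* = \pi_* D_V$ and consequently $DD\pi_* = \pi_* D_V D_V$, so membership of $\pi_*B$ in $\mathcal C$ reduces to membership of $B$. This reduces me to the open-immersion case: $i_!\mathcal L\in\mathcal C$ for a quasicompact open immersion $i:U\hookrightarrow V$ of locally spatial diamonds, each cohomologically smooth over $\Spa(C,\OO_C)$, and a local system $\mathcal L$ on $U$.

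\medskip

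\noindent\textbf{Paragraph 3 (The main obstacle).} The hard part will be this open-immersion case. Using Proposition~\ref{prop:localverdier}(i) and $Ri^!=i^*$ one has $D_V(i_!\mathcal L) = Ri_*(D_U\mathcal L)$; and $D_U\mathcal L = \mathcal L^\vee \otimes R(g|_U)^!\Fl$ differs from $\mathcal L^\vee$ only by an invertible twist (with $g:V\to\Spa(C,\OO_C)$). The two outstanding statements therefore amount to: $Ri_*\mathcal N$ is constructible on $V$ for every local system $\mathcal N$ on $U$, and the natural map $i_! D_U\mathcal N \to D_V(Ri_*\mathcal N)$ is an equivalence. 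My plan here is an approximation argument. Since both statements are \'etale-local on $V$, and cohomologically smooth maps to $\Spa(C,\OO_C)$ are, via Proposition~\ref{prop:classicallysmoothisgeomsmooth}, locally obtained from towers of smooth rigid-analytic varieties (possibly after absorbing quasi-pro-\'etale torsors as in Proposition~\ref{prop:propquotientsmooth}), Proposition~\ref{prop:constructiblelimit} together with Lemma~\ref{lem:diamondadicspace} transport the constructibility and duality assertions down to finite levels in such a tower, where I can invoke Huber's Poincar\'e-Verdier duality for smooth rigid-analytic varieties (\cite[Section 7.5]{Huber}). Threading the compatibilities back up through the tower, and across the quasi-pro-\'etale quotient, is the most technically delicate aspect: the abstract d\'evissage of Paragraph~2 is purely formal, but matching the biduality morphism with Huber's classical one at finite level and propagating this compatibility under the v-descent is where the geometric content of the theorem really enters.
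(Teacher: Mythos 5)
Your Paragraphs 1 and 2 are sound and broadly match the paper's strategy: finiteness follows from biduality plus preservation of constructibility by the dual, and d\'evissage via Proposition~\ref{prop:charconstructible2} reduces everything to $j_!\mathcal L$, and ultimately (after \'etale localization, which is justified by Proposition~\ref{prop:smoothhompullback}) to $j_! \Fl$ for a quasicompact open immersion $j\colon U\hookrightarrow X$. The paper achieves the reduction to open immersions slightly differently --- it replaces $X$ by an \'etale cover over which $j$ decomposes into a disjoint union of open embeddings and $\mathcal L$ becomes constant, rather than factoring $j=\pi\circ i$ with $\pi$ finite \'etale --- but both routes are fine.

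The real gap is in Paragraph~3. You write that cohomologically smooth maps to $\Spa(C,\OO_C)$ are ``via Proposition~\ref{prop:classicallysmoothisgeomsmooth}, locally obtained from towers of smooth rigid-analytic varieties (possibly after absorbing quasi-pro-\'etale torsors).'' Proposition~\ref{prop:classicallysmoothisgeomsmooth} only says that smooth morphisms of adic spaces \emph{give rise to} cohomologically smooth morphisms of diamonds; it says nothing about the converse. There is no structure theorem in the paper (or elsewhere) asserting that a locally spatial diamond which is $\ell$-cohomologically smooth over $\Spa(C,\OO_C)$ is locally an inverse limit of smooth rigid-analytic spaces, and this is almost certainly false --- $\ell$-cohomological smoothness is an essentially cohomological condition which is far more permissive. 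Without such a structure theorem, the approximation/descent scheme you propose has no starting point, and Proposition~\ref{prop:constructiblelimit} and Lemma~\ref{lem:diamondadicspace} cannot be applied.

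The missing idea is what makes the theorem nontrivial: rather than trying to resolve $X$ by smooth rigid spaces, one should exploit the specific shape of the quasicompact open subset $U\subset X$. After pulling back to a strictly totally disconnected cover $\tilde X\to X$ written as an inverse limit of quasicompact separated \'etale maps $\tilde X_i\to X$, the open $\tilde U\subset\tilde X$ is the locus $\{|f|\le|\varpi|\}$ for some $f\in H^0(\tilde X,\OO^+_{\tilde X})$, and by Proposition~\ref{prop:etcohomlim} applied to $\OO^+/\varpi$ the reduction $\overline f\in H^0(\tilde X,\OO^+/\varpi)$ descends to some finite level $\tilde X_i$; one may thus (replacing $X$ by $\tilde X_i$) assume that $U=\{\overline f=0\}$ for some $\overline f\in H^0(X,\OO_X^+/\varpi)$. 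One then forms
\[
Y=\{\overline f-\overline T=0\}\subset X\times_{\Spa(C,\OO_C)}\mathbb B_C\ ,
\]
which is an open subset of $X\times\mathbb B_C$, and observes that both projections $Y\to X$ and $Y\to\mathbb B_C$ are $\ell$-cohomologically smooth (the first moreover a v-cover), while the preimage of $U$ in $Y$ agrees with the preimage of the locus $\{\overline T=0\}\subset\mathbb B_C$. Since the biduality map is compatible with smooth pullback (Proposition~\ref{prop:smoothhompullback} and Proposition~\ref{prop:smoothbasechange}), this reduces the pair $(X,U)$ to the pair $(\mathbb B_C,\,\text{quasicompact open})$, i.e.\ to the case of a quasicompact smooth rigid-analytic curve, where one then embeds into a proper smooth curve (L\"utkebohmert) and concludes by Poincar\'e duality. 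This ``project onto a curve'' step is precisely what your proposal lacks, and it is the geometric heart of the argument.
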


\begin{remark} This theorem does not hold if one replaces $\Spa(C,\OO_C)$ by $\Spa(C,C^+)$ for $C^+\neq \OO_C$, even in the simplest case $X=\Spa(C,C^+)$. Indeed, if $j: U=\Spa(C,\OO_C)\hookrightarrow X=\Spa(C,C^+)$ denotes the open immersion, then $A=j_! \Fl$ has dual $R\sHom_\Fl(j_! \Fl,\Fl) = Rj_\ast \Fl = j_\ast \Fl=\Fl$, and so the double dual is also equal to $\Fl$, which is different from $A=j_!\Fl$.
\end{remark}

\begin{remark} The biduality theorem also holds true for a general coefficient ring $\Lambda$ of $\ell$-power-torsion if $A\in D_\et(X,\Lambda)$ is perfect-constructible, and in that case $R\Gamma(X,A)$ is a perfect complex of $A$-modules if $X$ is quasicompact. Indeed, by Proposition~\ref{prop:perfectconstructiblecompact} this reduces to the case $A=j_! \Lambda$ for some quasicompact separated \'etale map $j: U\to X$, in which case it follows from the theorem applied to $j_! \Fl$ (which implies the same for $j_! \mathbb Z/\ell^m\mathbb Z$ by the $5$-lemma, and then for $j_! \Lambda$ by extension of scalars).\footnote{The last step requires explanation for the biduality statement. Note that $Rj_\ast \Lambda = Rj_\ast \mathbb Z/\ell^m\mathbb Z\dotimes_{\mathbb Z/\ell^m \mathbb Z} \Lambda$ as $j$ is qcqs and of finite cohomological dimension so that $Rj_\ast$ commutes with all colimits, and then
\[
R\sHom_\Lambda(R\sHom_\Lambda(j_! \Lambda,\Lambda),\Lambda) = R\sHom_\Lambda(Rj_\ast \Lambda,\Lambda) = R\sHom_{\mathbb Z/\ell^m\mathbb Z}(Rj_\ast \mathbb Z/\ell^m\mathbb Z,\Lambda)\ .
\]
It remains to see that $R\sHom_{\mathbb Z/\ell^m\mathbb Z}(Rj_\ast \mathbb Z/\ell^m\mathbb Z,\Lambda)=j_! \Lambda$, which follows from the proof of the theorem.}
\end{remark}

\begin{proof} We can assume that $X$ is spatial and that $A$ is concentrated in degree $0$. By Proposition~\ref{prop:charconstructible2}, we can assume that $A=j_!\mathcal L$ for some quasicompact separated \'etale map $j: U\to X$ and some $\Fl$-local system $\mathcal L$ on $U$. As the statement is \'etale local, we can replace $X$ by an \'etale cover; doing so, we can assume that $j$ decomposes into a disjoint union of open embeddings. Thus, we can assume that $j$ is an open immersion. For any geometric point $\overline{x}\to X$, there is an \'etale neighborhood of $V$ of $\overline{x}$ such that $\mathcal L$ is constant on $V\times_X U$. Indeed, by Proposition~\ref{prop:constructiblelimit}, it is enough to check this on the strict localization $\Spa(C(x),C(x)^+)$ of $X$ at $\overline{x}$, and then it follows from the observation that any quasicompact open subspace of $\Spa(C(x),C(x)^+)$ is still strictly local. Using this observation, we can also assume that $\mathcal L$ is constant.

We are reduced to the case $A=j_! \Fl$ for a quasicompact open immersion $j: U\to X$. In this case $R\sHom_\Fl(j_!\Fl,\Fl) = Rj_\ast\Fl$, and the biduality statement says that the natural map
\[
j_!\Fl\to R\sHom_\Fl(Rj_\ast \Fl,\Fl)
\]
is an equivalence. We claim more generally that for any $M\in D_\et(\Fl)$, the map
\[
j_!M\to R\sHom_\Fl(Rj_\ast \Fl,M)
\]
is an equivalence. This does in fact imply that $H^i(X,j_! \Fl)$ is finite. Indeed, by further \'etale localization, we can assume that $Rf^!\Fl=\Fl[d]$ is trivial. One has
\[
R\Gamma(X,j_!M)=R\Hom_{D_\et(X,\Fl)}(Rj_\ast \Fl,M) = R\Hom_{D(\Fl)}(Rf_! Rj_\ast \Fl,M)[-d]\ .
\]
As the left-hand side commutes with all colimits by Proposition~\ref{prop:constructiblecompactderivedfull}, it follows that $Rf_! Rj_\ast \Fl\in D(\Fl)$ is compact, i.e.~bounded with finite cohomology groups. Applying the displayed equality for $M=\Fl$ then gives the finiteness of $H^i(X,j_!\Fl)$.

Thus, we have to prove that for all $M\in D_\et(\Fl)$, the map
\[
j_! M\to R\sHom_\Fl(Rj_\ast \Fl,M)
\]
is an equivalence. Choose a quasi-pro-\'etale surjection $q: \tilde{X}\to X$ from a strictly totally disconnected perfectoid space that can be written as an inverse limit of quasicompact separated \'etale maps $q_i: \tilde{X}_i\to X$ as in Proposition~\ref{prop:spatialunivopen}. Let $\tilde{j}_i: \tilde{U}_i\to \tilde{X}_i$ and $\tilde{j}: \tilde{U}\to \tilde{X}$ be the pullbacks of $j: U\to X$. Then $\tilde{j}: \tilde{U}\to \tilde{X}$ is a quasicompact open subset of the strictly totally disconnected space $\tilde{X}$; by Lemma~\ref{lem:subsetwlocal}, there is some function $f\in H^0(\tilde{X},\OO_{\tilde{X}})$ such that $\tilde{U}$ is the locus $\{|f|\leq 1\}$. Multiplying $f$ by a suitable pseudouniformizer $\varpi\in C$, we can instead arrange that $f\in H^0(\tilde{X},\OO_{\tilde{X}}^+)$ so that $\tilde{U}$ is the locus $\{|f|\leq |\varpi|\}$. In particular, $f$ defines a section of $\OO_{\tilde{X}}^+/\varpi$. But
\[
H^0(\tilde{X},\OO_{\tilde{X}}^+/\varpi) = \varinjlim_i H^0(\tilde{X}_i,\OO_{\tilde{X}_i}^+/\varpi)
\]
by Proposition~\ref{prop:etcohomlim} (noting that $\OO_X^+/\varpi$ is an \'etale sheaf with pullback $\OO_{\tilde{X}}^+/\varpi$ to $\tilde{X}$; indeed this holds true by definition as pro-\'etale sheaves, but the pro-\'etale sheaf $\OO_{\tilde{X}}^+/\varpi$ is actually an \'etale sheaf, which implies that the pro-\'etale sheaf $\OO_X^+/\varpi$ is an \'etale sheaf by Theorem~\ref{thm:proetandetsheafdetectionvlocally}). Thus, replacing $\tilde{X}$ by $\tilde{X}_i$ for $i$ large enough if necessary, we can assume that there is a section $\overline{f}\in H^0(X,\OO_X^+/\varpi)$ such that $U$ is the locus where $\overline{f}=0$. Let $\mathbb B_C = \Spa(C\langle T^{1/p^\infty}\rangle,\OO_C\langle T^{1/p^\infty}\rangle)$ be the ball over $\Spa(C,\OO_C)$. We can consider the open subset
\[
Y=\{\overline{f}-\overline{T}=0\}\subset X\times_{\Spa(C,\OO_C)} \mathbb B_C\ ,
\]
where $\overline{T}$ denotes the image of $T$ in $\OO^+/\varpi$. The projection map $Y\to X$ is cohomologically smooth (as the composite of an open immersion and a pullback of $\mathbb B_C\to \Spa(C,\OO_C)$, cf.~Theorem~\ref{thm:ballcohomsmooth}) and surjective as a map of v-sheaves; thus, it suffices to check the statement after pullback to $Y$ (using Proposition~\ref{prop:smoothhompullback} and base change results). The pullback of $U$ to $Y$ is given by the locus where $\overline{T}=0$ (as $\overline{f}-\overline{T}=0$ on $Y$). This also comes as the pullback of the locus $\{\overline{T}=0\}$ inside $\mathbb B_C$, and the projection $Y\to \mathbb B_C$ is cohomologically smooth (again as a composite of an open immersion and the pullback of the cohomologically smooth map $X\to \Spa(C,\OO_C)$). Thus, we can replace $j: U\hookrightarrow X$ by $\{\overline{T}=0\}\subset \mathbb B_C$.

Thus, we can assume $X$ is (the diamond associated) a quasicompact smooth rigid-analytic curve over $C$, and $U$ is a quasicompact open subspace,\footnote{A case that is handled by a different method in~\cite{GaisinWelliaveetil}.} and allowing this generality, it is enough to prove the result on global sections, i.e.
\[
R\Gamma(X,j_!M)\to R\Hom_{D_\et(X,\Fl)}(Rj_\ast \Fl,M)
\]
is an equivalence for all $M\in D(\Fl)$. The cone of the map
\[
j_! M\to R\sHom_\Fl(Rj_\ast \Fl,M)
\]
is concentrated on $\overline{U}\setminus U$, which is a discrete finite set of points. In particular, the cone of $R\Gamma(X,j_!M)\to R\Hom_{D_\et(X,\Fl)}(Rj_\ast \Fl,M)$ decomposes into a direct sum of contributions over $\overline{U}\setminus U$. We may embed $X$ into a proper smooth rigid-analytic curve $X^\prime$ by \cite[Theorem 5.3]{Luetkebohmert}, and we continute to denote by $j: U\to X^\prime$ the open immersion. The previous discussion implies that the cone of
\[
R\Gamma(X,j_!M)\to R\Hom_{D_\et(X,\Fl)}(Rj_\ast\Fl,M)
\]
is a direct summand of the cone of
\[
R\Gamma(X^\prime,j_!M)\to R\Hom_{D_\et(X^\prime,\Fl)}(Rj_\ast\Fl,M)\ .
\]
Thus, it is enough to prove that the latter is trivial. In other words, we may assume that $X$ is proper. But then, if $M=M^\prime\dotimes_\Fl Rf^! \Fl = Rf^! M^\prime$, one has
\[\begin{aligned}
R\Hom_{D_\et(X,\Fl)}(Rj_\ast \Fl,M) &= R\Hom_{D(\Fl)}(Rf_! Rj_\ast \Fl,M^\prime) = R\Hom_{D(\Fl)}(Rf_\ast Rj_\ast \Fl,M^\prime)\\
& = R\Hom_{D(\Fl)}(R\Gamma(U,\Fl),M^\prime)
\end{aligned}\]
and
\[
R\Gamma(X,j_! M) = R\Gamma_c(X,j_! M) = R\Gamma_c(U,M)=R\Gamma_c(U,Rf^! M^\prime)\ ,
\]
so the result follows from Poincar\'e duality on $U$; indeed, $R\Gamma_c(U,Rf^!\Fl)$ and its dual $R\Gamma(U,\Fl)$ are finite by Proposition~\ref{prop:propsmooth}~(ii), so both sides commute with all colimits in $M$, and we may reduce to $M=\Fl$. Poincar\'e duality says that $R\Gamma(U,\Fl)$ is the dual of $R\Gamma_c(U,Rf^!\Fl)$, but in vector spaces, this implies that $R\Gamma_c(U,Rf^!\Fl)$ is the dual of $R\Gamma(U,\Fl)$, as desired; we leave it to the reader to check that the maps are correct.
\end{proof}

It is sometimes useful to combine this biduality result with a conservativity result for the Verdier dual.

\begin{proposition}\label{prop:dualconservative} Let $C$ be an algebraically closed nonarchimedean field of characteristic $p$ with ring of integers $\OO_C$, and let $X$ be a locally spatial diamond with $f: X\to \Spa(C,\OO_C)$ compactifiable with locally $\dimtrg f<\infty$. Assume that $A\in D_\et(X,\Fl)$ satisfies $R\sHom_\Fl(A,Rf^! \Fl)=0$. Then $A=0$.
\end{proposition}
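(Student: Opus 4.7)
The plan is to reduce to the case where $X$ is spatial and proper over $\Spa(C,\OO_C)$ and then combine a projection formula with compact generation of the derived category.

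First, the statement and hypothesis are local on $X$: for any open immersion $U\hookrightarrow X$, which is $\ell$-cohomologically smooth, Proposition~\ref{prop:smoothhompullback} together with $(U\hookrightarrow X)^\ast Rf^!=Rf_U^!$ yields $R\sHom_\Fl(A|_U,Rf_U^!\Fl)=0$, so I may assume $X$ is spatial. Next I would replace $X$ by its canonical compactification $\bar X=\overline{X}^{/\Spa(C,\OO_C)}$ and $A$ by $j_!A$ for the open immersion $j\colon X\hookrightarrow\bar X$; the standard identity $R\sHom_\Fl(j_!A,B)=Rj_\ast R\sHom_\Fl(A,j^\ast B)$ combined with $j^\ast R\bar f^!\Fl=Rf^!\Fl$ shows the hypothesis transfers to $j_!A$, and $j_!A=0$ recovers $A=0$. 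After this reduction $X$ is proper over $\Spa(C,\OO_C)$ with $\dimtrg\bar f<\infty$, and in particular $Rf_\ast=Rf_!$.

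The hypothesis and adjunction then give, for every $B\in D_\et(X,\Fl)$,
\[
0=R\Hom(B,R\sHom_\Fl(A,Rf^!\Fl))=R\Hom(Rf_!(B\dotimes_\Fl A),\Fl),
\]
which over the field $\Fl$ forces $Rf_!(B\dotimes_\Fl A)=0$, hence $R\Gamma(X,B\dotimes_\Fl A)=0$ by properness. I would apply this with $B=j_\ast\Fl_V$ for each quasicompact separated \'etale $j\colon V\to X$. Here $Rj_\ast=j_\ast$ is exact because the geometric fibres of \'etale maps are discrete, and the projection formula with constant coefficients $j_\ast\Fl_V\dotimes_\Fl A\simeq j_\ast j^\ast A$ is checked on stalks: over a geometric point $\bar x$ with fibre $V_{\bar x}$ both sides are canonically $A_{\bar x}^{|V_{\bar x}|}$. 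This yields
\[
0=R\Gamma(X,j_\ast\Fl_V\dotimes_\Fl A)=R\Gamma(X,j_\ast j^\ast A)=R\Gamma(V,A|_V)
\]
for every such $V$.

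To conclude $A=0$, I would invoke compact generation. The argument in the proof of Proposition~\ref{prop:Rfshriekconstructible} gives, under the hypotheses on $f$, a uniform bound (e.g.~$2\dimtrg f$) on the $\ell$-cohomological dimension of $U_\et$ for every quasicompact separated \'etale $U\to X$, so by Proposition~\ref{prop:constructiblecompactderivedfull} the category $D_\et(X,\Fl)$ is compactly generated and the objects $\{j_!\Fl_V\}$ indexed by quasicompact separated \'etale $j\colon V\to X$ form a set of compact generators. Since $R\Hom(j_!\Fl_V,A)=R\Gamma(V,A|_V)=0$ for every such $V$, and a compactly generated stable $\infty$-category admits no nonzero object whose mapping space from every compact generator vanishes, we conclude $A=0$. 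The main technical obstacle is the projection formula $j_\ast\Fl_V\dotimes_\Fl A\simeq j_\ast j^\ast A$ for \'etale $j$ and possibly unbounded $A$; it works because $j_\ast\Fl_V$ is a sheaf of flat $\Fl$-modules so that the tensor product is underived, $Rj_\ast=j_\ast$ is exact for \'etale $j$, and the resulting natural comparison map is an isomorphism on stalks.
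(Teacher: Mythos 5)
There is a genuine gap in the projection formula step on which your argument rests. You claim $j_\ast\Fl_V\dotimes_\Fl A\simeq Rj_\ast j^\ast A$ for a quasicompact separated \'etale map $j\colon V\to X$, justifying it on stalks via the premise that \'etale fibres are discrete. In this setting that premise is false. By Proposition~\ref{prop:etalesitepoints} the points of $X_\et$ are indexed by \emph{all} points $y\in|X|$, with stalk computed on $\Spa(C(y),C(y)^+)$ where $C(y)^+$ need not equal $\OO_{C(y)}$. Over such a geometric point $\bar y$, the fibre $V_{\bar y}$ of a quasicompact separated \'etale map is a finite disjoint union of quasicompact \emph{open} subsets of $\Spa(C(y),C(y)^+)$ (cf.\ the structure in Lemma~\ref{lem:compactificationetalemap}), each a strictly local spectral space with several points, not a finite discrete set. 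Consequently $(Rj_\ast j^\ast A)_{\bar y}$ is the direct sum of stalks of $A$ at the closed points of these fibre components, which are in general \emph{proper generalizations} of $y$ in $|X|$, while $(j_\ast\Fl_V\dotimes A)_{\bar y}$ is a direct sum of copies of $A_{\bar y}$. These differ. Concretely, take $X=\overline{\mathbb B_C}^{/\Spa(C,\OO_C)}$, $j\colon V=\mathbb B_C\hookrightarrow X$ the open immersion, and $A=\mathrm{coker}(j_!\Fl\to\Fl_X)$, a nonzero sheaf supported on $X\setminus V$; then $j^\ast A=0$, hence $Rj_\ast j^\ast A=0$, while $j_\ast\Fl_V\dotimes A$ has nonvanishing stalk at any $y\in X\setminus V$ all of whose proper generalizations lie in $V$. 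If instead you apply the argument with $B=j_!\Fl_V$, the standard projection formula does hold (Proposition~\ref{prop:projectionformula}) and yields $R\Gamma_c(V,A|_V)=0$, but that is not the pairing $R\Hom(j_!\Fl_V,A)=R\Gamma(V,A|_V)$ needed for compact generation. There is also a secondary obstruction you elide: $\overline{X}^{/\Spa(C,\OO_C)}$ is not known to be (locally) spatial (explicitly flagged in the proof of Theorem~\ref{thm:dimtrgfinitecohomdim}), so Propositions~\ref{prop:Rfshriekconstructible} and \ref{prop:constructiblecompactderivedfull} do not directly apply to it.

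The paper's own proof is structured to avoid both difficulties: rather than compactifying $X$ itself, it compactifies a strictly totally disconnected quasi-pro-\'etale cover $\tilde X\to X$ over $X$, transferring the hypothesis via Proposition~\ref{prop:localverdier}~(ii). The resulting $\tilde X^{/X}$ is an affinoid perfectoid space, so its \'etale topos is literally the topos of the spectral space $|\tilde X^{/X}|$; there $R\Gamma_c(U,A)=0$ for all opens $U$ (which is exactly what the $j_!$-variant of your argument produces) combines with properness to give vanishing of the stalks at all closed points, and the conclusion follows from very density of closed points rather than from compact generation. If you want to pursue a compact-generation route, you would need to either work over the perfectoid reduction (where sheaves on the spectral space make the stalk bookkeeping transparent) or supply a duality statement relating $R\Gamma_c(V,-)$ to $R\Gamma(V,-)$, which is essentially what the overall proposition is trying to establish.
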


\begin{remark} Again, a similar result fails over $\Spa(C,C^+)$ if $C^+\neq \OO_C$. Indeed, if $X=\Spa(C,C^+)$ and $i: \{s\}\to X$ is the inclusion of the closed point, then $R\sHom_\Lambda(i_\ast \Lambda,\Lambda)=0$.
\end{remark}

\begin{remark} A similar result holds with coefficients in a ring $\Lambda$ (killed by $n$ prime to $p$) as soon as for any $M\in D(\Lambda)$ with $R\Hom_\Lambda(M,\Lambda)=0$, one has $M=0$. This fails in some cases, for example if $\Lambda=\OO_K$ with $K$ spherically complete and $M=k$ the residue field. We do not know in which generality it holds (for example if $\Lambda$ is noetherian).
\end{remark}

\begin{proof} We may assume that $X$ is quasicompact. Let $g: \tilde{X}\to X$ be a quasi-pro-\'etale surjective map from a strictly totally disconnected space as in Proposition~\ref{prop:spatialunivopen}, and let $\overline{g}^{/X}: \overline{\tilde{X}}^{/X}\to X$ be its compactification. Then $\overline{g}^{/X}$ is proper with $\dimtrg \overline{g}^{/X}=0$, so $R\overline{g}^{/X!}$ is well-defined and
\[
0=R\overline{g}^{/X!}R\sHom_\Fl(A,Rf^!\Fl) = R\sHom_\Fl(\overline{g}^{/X\ast} A,R(f\circ \overline{g}^{/X})^! \Fl)
\]
by Proposition~\ref{prop:localverdier}~(ii). It is enough to prove that $\overline{g}^{/X\ast} A=0$, so we may replace $X$ by $\overline{\tilde{X}}^{/X}$ and $A$ by $\overline{g}^{/X\ast} A$.

In other words, we can assume that $X$ is an affinoid perfectoid space, compactifiable over $\Spa(C,\OO_C)$, whose connected components are of the form $\Spa(C^\prime,C^{\prime +})$ for varying complete algebraically closed nonarchimedean fields $C^\prime$ and certain open and integrally closed subrings $C^{\prime +}\subset \OO_{C^\prime}$ (not necessarily valuation rings). In that case, $D_\et(X,\Fl) = D(|X|,\Fl)$. In fact, replacing $X$ by its compactification over $\Spa(C,\OO_C)$ and $A$ by its corresponding extension by zero, we can assume that $X$ is proper over $\Spa(C,\OO_C)$. For any open subset $U\subset X$, we have
\[
R\Hom_{D(\Fl)}(R\Gamma_c(U,A),\Fl) = R\Hom_{D_\et(U,\Fl)}(A|_U,Rf^! \Fl|_U) = R\Gamma(U,R\sHom_\Fl(A,Rf^!\Fl))=0\ ,
\]
which implies that $R\Gamma_c(U,A)=0$. Using this for $U=X$ and $U=X\setminus \{x\}$ for some closed point $x\in X$, one sees that also the cone of $R\Gamma_c(U,A)\to R\Gamma_c(X,A)=R\Gamma(X,A)$ is zero, which is the stalk $A_x$ of $A$ at $x$. In other words, the stalks of $A$ at all closed points vanish, which implies that $A=0$, as the closed points are very dense in $|X|$. To see the latter, we can check on connected components, and (using that $X$ is compactifiable) these are open inside the Zariski--Riemann space of $\OO_{C^\prime}/\mathfrak m_{C^\prime}$ over $\OO_C/\mathfrak m_C$. But Zariski--Riemann spaces of fields have a very dense set of closed points (by writing them as a cofiltered limit of projective varieties).
\end{proof}

\section{Adic sheaves}

The formalism developed in this paper extends to $\ell$-adic sheaves. As coefficients, we take a ring $\Lambda$ that is complete for and equipped with the $I$-adic topology for some finitely generated ideal $I\subset \Lambda$. For simplicity, we assume that $I$ is generated by a regular sequence and contains an integer prime to $p$.

\begin{definition} For any small v-stack $Y$, define
\[
D_\et(Y,\Lambda)\subset D(Y_v,\Lambda)
\]
as the full subcategory of the derived category of $\Lambda$-modules on $Y_v$ of all $A\in D(Y_v,\Lambda)$ such that $A$ is derived $I$-complete, cf.~\cite[Definition 3.4.1, Lemma 3.4.12]{BhattScholze}, and $A\dotimes_{\Lambda} \Lambda/I$ lies in $D_\et(Y,\Lambda/I)$.\footnote{There is some obvious conflict of notation with $D_\et(Y,\Lambda)$ for $\Lambda$ considered as a discrete ring. We believe that in practice such as for $D_\et(Y,\mathbb Z_\ell)$ this will not cause any problems.}
\end{definition}

There is a natural $\infty$-categorical enrichment.

\begin{proposition}\label{prop:enrichmentexistsadic} There is a presentable stable $\infty$-category $\mathcal{D}_\et(Y,\Lambda)$ whose homotopy category is $D_\et(Y,\Lambda)$, given as the full $\infty$-subcategory of $\mathcal{D}(Y_v,\Lambda)$ spanned by the objects of $D_\et(Y,\Lambda)$.

The natural functor $\mathcal{D}_\et(Y,\Lambda)\to \varprojlim_n\mathcal{D}_\et(Y,\Lambda/I^n)$ is an equivalence.
\end{proposition}

However, $\mathcal{D}_\et(Y,\Lambda)\to \mathcal{D}(Y_v,\Lambda)$ does not preserve all colimits (and so is not a map in the category $\mathcal{P}r^L$ of \cite{LurieHTT}): Rather, colimits have to be completed in $\mathcal{D}_\et(Y,\Lambda)$.

\begin{proof} By \cite[Proposition 5.5.3.13]{LurieHTT}, it is enough to prove that the natural functor
\[
\mathcal{D}_\et(Y,\Lambda)\to \varprojlim_n \mathcal{D}_\et(Y,\Lambda/I^n)
\]
is an equivalence of $\infty$-categories. For this, it is enough to prove that if $\mathcal{D}_{\mathrm{comp}}(Y_v,\Lambda)\subset \mathcal{D}(Y_v,\Lambda)$ denotes the full $\infty$-subcategory spanned by the complete objects, then the functor
\[
\mathcal{D}_{\mathrm{comp}}(Y_v,\Lambda)\to \varprojlim_n \mathcal{D}(Y_v,\Lambda/I^n)
\]
is an equivalence. This follows from \cite[Lemma 3.5.7]{BhattScholze} (noting that if $I$ is generated by a regular sequence, the hypothesis that $\Lambda$ is noetherian is not necessary).
\end{proof}

Again, one has six operations. First, for any morphism of small v-stacks $f: Y^\prime\to Y$, one has a pullback functor $f^\ast: D_\et(Y,\Lambda)\to D_\et(Y^\prime,\Lambda)$ by restriction from the functor $f_v^\ast: D(Y_v,\Lambda)\to D(Y^\prime_v,\Lambda)$ constructed after Proposition~\ref{prop:derivedhyperdescent}. Indeed, this functor preserves completeness as $f_v^\ast$ commutes with all limits (as it is essentially a restriction), as well as the reduction modulo $I$.

As $f^\ast$ preserves arbitrary direct sums (as can be checked modulo $I$), it admits a right adjoint $Rf_\ast: D_\et(Y^\prime,\Lambda)\to D_\et(Y,\Lambda)$. Also, the completion of the usual tensor product on $D(Y_v,\Lambda)$ defines a symmetric monoidal tensor product $-\widehat{\dotimes}_\Lambda-$ on $D_\et(Y,\Lambda)$. By the usual adjunction, this also defines $R\sHom_\Lambda(-,-)$ on $D_\et(Y,\Lambda)$.

For the functor $Rf_!$, we rerun the construction for the whole pro-system $(\Lambda/I^n)_n$ of coefficient rings, and pass to the limit in the end. If $f: Y^\prime\to Y$ is compactifiable, representable in locally spatial diamonds, and with locally $\dimtrg f<\infty$, this defines the functor $Rf_!: D_\et(Y^\prime,\Lambda)\to D_\et(Y,\Lambda)$. It preserves arbitrary direct sums (and admits an $\infty$-categorical enhancement), and thus it admits a right adjoint $Rf^!: D_\et(Y,\Lambda)\to D_\et(Y^\prime,\Lambda)$.

\begin{remark} The functors $f^\ast$, $Rf_!$ and $-\widehat{\dotimes}_\Lambda-$ are compatible with the equivalence $\mathcal{D}_\et(Y,\Lambda)\cong \varprojlim_n \mathcal{D}_\et(Y,\Lambda/I^n)$. The same is true for the functors $Rf_\ast$, $Rf^!$ and $R\sHom_\Lambda(-,-)$: they commute with $-\dotimes_\Lambda \Lambda/I$ and the similar operations on $D_\et(Y,\Lambda/I)$ as $I$ is generated by a regular sequence and they are exact, and then by induction on $n$ one sees that they commute with $-\dotimes_\Lambda \Lambda/I^n$ and the similar operations on $D_\et(Y,\Lambda/I^n)$, thus are given by the limit over $n$ of these operations for all $\Lambda/I^n$.

In particular, given any natural transformation such as a base change map, to check whether it is an equivalence we may replace $\Lambda$ by $\Lambda/I$, and so reduce to the known results.
\end{remark}

Using these remarks, we see in particular that all the results stated in the introduction hold true for $D_\et(Y,\Lambda)$ in the adic case as well.

\section{Comparison to schemes}

Finally, we state some comparison results between the theory developed here and the classical theory for schemes (of characteristic $p$). For any scheme $X$ of characteristic $p$, one can define a small v-sheaf $X^\diamond$ sending any $S\in \Perf$ to the set of maps $S\to X$ in the category of adic spaces, where we embed schemes into adic spaces via $\Spec R\mapsto \Spa(R,R^+)$ where $R^+\subset R$ is the integral closure of $\mathbb F_p$. This functor factors over the category of perfect schemes. This induces a functor of sites $c_X: X^\diamond_v\to X_\et$, pullback along which defines a functor
\[
c_X^\ast: D(X_\et,\Lambda)\to D_\et(X^\diamond,\Lambda)\ ,
\]
where $\Lambda$ is any (discrete) ring. In fact, more generally, we can work with adic rings $\Lambda$ complete for the adic topology generated by an ideal $I$ which is generated by a regular sequence and contains some integer prime to $p$, if we work on the pro-\'etale site. Indeed, there is a natural map of sites still denoted $c_X: X^\diamond_v\to X_\proet$, completed pullback along which defines a functor
\[
c_X^\ast: D_\et(X,\Lambda)\to D_\et(X^\diamond,\Lambda)\ ,
\]
where $D_\et(X,\Lambda)\subset D(X_\proet,\Lambda)$ denotes the full subcategory of all derived $I$-complete $A\in D(X_\proet,\Lambda)$ such that $A\dotimes_\Lambda \Lambda/I$ lies in the left-completion $D_\et(X,\Lambda):=\widehat{D}(X_\et,\Lambda)\subset D(X_\proet,\Lambda)$; equivalently, all cohomology sheaves are in the essential image of $X_\et^\sim\subset X_\proet^\sim$. If $D(X_\et,\Lambda)$ is left-complete, for example under hypotheses of finite cohomological dimension, this is equivalent to $A\dotimes_\Lambda \Lambda/I\in D(X_\et,\Lambda)$.

\begin{proposition}\label{prop:schemestensorpullback} The comparison functor $c_X^\ast$ commutes with the following operations.
\begin{altenumerate}
\item[{\rm (i)}] The derived tensor product $-\widehat{\dotimes}_\Lambda-$.
\item[{\rm (ii)}] For any map $f: Y\to X$ of schemes of characteristic $p$, one has $(f^\diamond)^\ast c_X^\ast = c_Y^\ast f^\ast$.
\end{altenumerate}
\end{proposition}

\begin{proof} This is clear from the definition.
\end{proof}

\begin{proposition}\label{prop:schemesfullyfaithful} If $X$ is any scheme of characteristic $p$, the functor
\[
c_X^\ast: D_\et(X,\Lambda)\to D_\et(X^\diamond,\Lambda)
\]
is fully faithful and admits a right adjoint $Rc_{X\ast}$.
\end{proposition}

\begin{proof} As usual, the existence of the right adjoint $Rc_{X\ast}$ follows from the adjoint functor theorem (and the existence of natural $\infty$-categorical enhancements). We note that this right adjoint commutes with derived reduction modulo $I$ (as it is exact and $I$ is generated by a regular sequence) and when restricted to $D_\et(X^\diamond,\Lambda/I)$, it agrees with the right adjoint for $\Lambda/I$ in place of $\Lambda$ (as the diagram of the left adjoints, which are given by $c_X^\ast$ and derived reduction modulo $I$, commutes).

Now we have to prove that the adjunction map $A\to Rc_{X\ast} c_X^\ast A$ is an isomorphism for all $A\in D_\et(X,\Lambda)$. By the above remarks, this can be checked modulo $I$, so we can assume that $\Lambda$ is discrete and killed by some $n$ prime to $p$. Moreover, by passing to a Postnikov limit, we can assume that $A\in D^+_\et(X,\Lambda)=D^+(X_\et,\Lambda)$. Now it is enough to check the values on a basis of \'etale $U\in X_\et$, we can reduce to $X=U$, so we have to prove that
\[
R\Gamma(X,A) = R\Gamma(X^\diamond, c_X^\ast A)\ .
\]
By descent, we can assume that $X$ lives over $\overline{\mathbb F}_p$ and is affine; by taking a closed embedding, we may even assume that $X=\Spec \overline{\mathbb F}_p[X_i^{1/p^\infty},i\in I]$ for some (infinite) set $I$. Also, we can assume that $\Lambda=\mathbb F_\ell$.

Let $C$ be the completed algebraic closure of $\overline{\mathbb F}_p((t))$. By invariance of \'etale cohomology under change of algebraically closed base field, we have $R\Gamma(X,A)=R\Gamma(X_C,A)$, where $X_C=\Spec C[X_i^{1/p^\infty}]$. On the other hand, by Theorem~\ref{thm:changebasefield}, we have $R\Gamma(X^\diamond,c_X^\ast A)=R\Gamma(X_C^\diamond,c_X^\ast A)$, where $X_C^\diamond := X^\diamond\times_{\Spd \overline{\mathbb F}_p} \Spd C$. We can write $X_C^\diamond$ as a filtered colimit of infinite-dimensional balls $X_{C,(n_i)_{i\in I}}$ over $C$, where $n_i\geq 1$ are integers which we can assume to be powers of $p$ and
\[
X_{C,(n_i)_{i\in I}} = \Spd C\langle (t^{n_i}X_i)^{1/p^\infty},i\in I\rangle\ .
\]
Then
\[
R\Gamma(X_C^\diamond,c_X^\ast A) = \varprojlim R\Gamma(X_{C,(n_i)_{i\in I}},c_X^\ast A)\ ,
\]
so it suffices to prove that for any choice of $n_i$, the map
\[
R\Gamma(X,A)\to R\Gamma(X_{C,(n_i)_{i\in I}},c_X^\ast A)
\]
is an isomorphism. Now both sides commute with filtered colimits in $A$, so we can assume that $A$ is constructible, in which case it is pulled back from some finite-dimensional affine space, and by writing $X$ as the inverse limit of finite-dimensional affine spaces and similarly the infinite-dimensional ball as an inverse limit of finite-dimensional balls, we can reduce to the case that $I=\{1,\ldots,m\}$ is finite and $A$ is constructible. As there are only finitely many $n_i$ all of which are powers of $p$, we can assume that all $n_i=1$ by replacing the $X_i$ by $p$-power roots if necessary. Now more generally, for any positive $n\in \mathbb Z[1/p]$, let $X_{C,n} = X_{C,(n_i)_{i\in I}}$ with $n_i=n$ for all $i$.

We would like to claim that
\[
R\Gamma(X_{C,n},c_X^\ast A)
\]
is independent of the choice of $n$ via the inclusion maps. What is easier to see is that this cohomology group is abstractly independent of $n$, as there are automorphisms of $C$ over $\overline{\mathbb F}_p$ sending $t$ to $t^n$, and these induce isomorphisms $X_{C,n}\cong X_{C,1}$ compatible with the map to $X^\diamond$. Moreover, by \cite[Proposition 6.1.1]{Huber}, we know that for each $n$, the cohomology $R\Gamma(X_{C,n},c_X^\ast A)$ is of finite total dimension. On the other hand, as $c_X^\ast A$ is overconvergent, we have
\[
R\Gamma(X_{C,1},c_X^\ast A) = \varinjlim_{n>1} R\Gamma(X_{C,n},c_X^\ast A)\ .
\]
It follows that for $n>1$ sufficiently close to $1$, the map $R\Gamma(X_{C,n},c_X^\ast A)\to R\Gamma(X_{C,1},c_X^\ast A)$ is surjective in each degree, and thus an isomorphism (as both are of the same finite cardinality). Applying automorphisms of $C$ as above, this implies that for all integers $j\geq 1$, the map
\[
R\Gamma(X_{C,n^{j+1}},c_X^\ast A)\to R\Gamma(X_{C,n^j},c_X^\ast A)
\]
is an isomorphism, i.e.~all maps $R\Gamma(X_{C,n^j},c_X^\ast A)\to R\Gamma(X_{C,1},c_X^\ast A)$ are isomorphisms. Thus, we also have
\[
R\Gamma(X_C^\diamond,c_X^\ast A) = \varprojlim_j R\Gamma(X_{C,n^j},c_X^\ast A) = R\Gamma(X_{C,1},c_X^\ast A)\ ,
\]
and we recall that it remained to prove that $R\Gamma(X,A)\to R\Gamma(X_{C,1},c_X^\ast A)$ is an isomorphism. But by the identification of $X_C^\diamond$ with the diamond associated to the rigid space to $C$ associated to $X_C$ and \cite[Theorem 3.8.1]{Huber}, we know that the natural map $R\Gamma(X_C,A)\to R\Gamma(X_C^\diamond,c_X^\ast A)$ is an isomorphism. As $R\Gamma(X,A) = R\Gamma(X_C,A)$, this finishes the proof.
\end{proof}

Passing to right adjoints in Proposition~\ref{prop:schemestensorpullback}, we get the following proposition.

\begin{proposition}\label{prop:schemesRhompushforward} The functor $Rc_{X\ast}$ commutes with the following operations.
\begin{altenumerate}
\item[{\rm (i)}] For all $A\in D_\et(X,\Lambda)$, $B\in D_\et(X^\diamond,\Lambda)$, one has
\[
Rc_{X\ast}R\sHom_\Lambda(c_X^\ast A,B)\cong R\sHom_\Lambda(A,Rc_{X\ast} B)\ .
\]
\item[{\rm (ii)}] For all maps $f: Y\to X$ of schemes of characteristic $p$, one has $Rf_\ast Rc_{Y\ast}\cong Rc_{X\ast} Rf^\diamond_\ast$.
\end{altenumerate}
\end{proposition}

Moreover, we have the following result.

\begin{proposition}\label{prop:schemesshriekpushforward} Let $f: Y\to X$ be a separated map of finite type of qcqs schemes of characteristic $p$, or the perfection thereof. Then $f^\diamond: Y^\diamond\to X^\diamond$ is compactifiable and representable in locally spatial diamonds with $\dimtrg f<\infty$, and $Rf^\diamond_! c_Y^\ast\cong c_X^\ast Rf_!$. Moreover, $Rf^! Rc_{X\ast}\cong Rc_{Y\ast} R(f^\diamond)^!$.
\end{proposition}

\begin{proof} The last statement follows by passage to right adjoints. By Nagata's compactification theorem, one can write $f$ as a composite of an open immersion and a proper map. Moreover, any two such factorizations are dominated by a third map (in fact, the category of factorizations is filtered), which makes the following construction essentially independent of the choice.

If $f$ is an open immersion, the result is easy to check by hand, so assume that $f$ is proper. Then $f^\diamond$ is also proper and representable in spatial diamonds with $\dimtrg f<\infty$, and we have a base change transformation
\[
c_X^\ast Rf_\ast\to Rf^\diamond_\ast c_Y^\ast.
\]
We want to see that this is an equivalence. This can be done after base change to $\overline{\mathbb F}_p$ (as by proper base change, all operations commute with this, and one can check the result pro-\'etale locally), and we can assume that $X$ is affine. Also we may factor $f$ into a closed immersion and a proper map of finite presentation, which reduces us to the case that $f$ is finitely presented. It suffices to check the assertion on $A\in D^{\geq 0}_\et(Y,\Lambda)$ by a Postnikov limit argument. Now the functor $Rf^\diamond_\ast c_Y^\ast$ commutes with all colimits as well as with base change, so we can assume that $A\in D_\et(Y,\Lambda)$ is constructible, and then the whole situation arises as the base change from the case where $X$ and $Y$ are of finite type, so we can assume that we are in that situation.

Taking for $C$ the completed algebraic closure of $\overline{\mathbb F}_p((t))$ and using the fully faithful embedding $D_\et(X^\diamond,\Lambda)\to D_\et(X_C^\diamond,\Lambda)$, it suffices to prove that in the diagram of adic spaces
\[\xymatrix{
Y_C^{\mathrm{ad}}\ar[r]\ar[d]^{f_C} & Y^{\mathrm{ad}}\ar[d]^f\\
X_C^{\mathrm{ad}}\ar[r] & X^{\mathrm{ad}},
}\]
base change holds. Now the result follows from the combination of proper base change for schemes (for $\Spec C\to \Spec \overline{\mathbb F}_p$) and the comparison result \cite[Theorem 3.7.2]{Huber}.
\end{proof}

Finally, we also consider a variant for schemes of mixed characteristic. In that case, we have to impose stronger finiteness hypothesis.

Thus, fix some complete discrete valuation ring $\mathcal O$ with perfect residue field $k$, and consider schemes $X$ locally of finite type over $\Spec \mathcal O$. There is a functor $X\mapsto X^\diamond$ to diamonds over $\Spd \mathcal O$, sending a perfectoid space $S$ of characteristic $p$ to the set of untilts $S^\sharp$ over $\Spa \mathcal O=\Spa(\mathcal O,\mathcal O)$ and a map $S^\sharp\to X$ of locally ringed spaces over $\Spec \mathcal O$. We note that this functor is not quite compatible with the previous functor $X\mapsto X^\diamond$: For example, one sends $\Spec k$ to $\Spd k=\Spd(k,k)$, not to $\Spd(k,k^+)$ where $k^+\subset k$ is the integral closure of $\mathbb F_p$.

As before, we get a functor
\[
c_X^\ast: D_\et(X,\Lambda)\to D_\et(X^\diamond,\Lambda)
\]
satisfying Proposition~\ref{prop:schemestensorpullback}. It commutes with colimits, and so admits a right adjoint $Rc_{X\ast}$, satisfying Proposition~\ref{prop:schemesRhompushforward}. Concerning the compatibility with $Rf_!$, we have the following result.

\begin{proposition}\label{prop:schemesshriekpushforwardmixedchar} Let $f: Y\to X$ be a separated map between schemes of finite type over $\mathcal O$. Then $f^\diamond: Y^\diamond\to X^\diamond$ is compactifiable and representable in locally spatial diamonds with $\dimtrg f<\infty$, and $Rf^\diamond_! c_Y^\ast\cong c_X^\ast Rf_!$. Moreover, $Rf^! Rc_{X\ast}\cong Rc_{Y\ast} R(f^\diamond)^!$.
\end{proposition}

\begin{proof} As in the proof of Proposition~\ref{prop:schemesshriekpushforward}, it is enough to prove that if $f$ is proper, then the base change transformation
\[
c_X^\ast Rf_\ast\to Rf^\diamond_\ast c_Y^\ast
\]
is an equivalence. It suffices to check this on $D^+_\et$ by Postnikov limits and finite cohomological dimension. Let $T\to \Spa \mathcal O$ be a smooth map of adic spaces from an analytic adic space $T$; more precisely, take for $T$ the open subset of $\Spa \mathcal O[[x]]$ where $x\neq 0$. One can form the analytic adic space $X\times_{\Spec \mathcal O} T$; then
\[
(X\times_{\Spec \mathcal O} T)^\diamond = X^\diamond\times_{\Spd \mathcal O} T^\diamond,
\]
and the map $T^\diamond\to \Spd \mathcal O$ is surjective and $\ell$-cohomologically smooth for all $\ell\neq p$, by Proposition~\ref{prop:classicallysmoothisgeomsmooth}. This implies that the pullback functor $D_\et(X^\diamond,\Lambda)\to D_\et((X\times_{\Spec \mathcal O} T)^\diamond,\Lambda)$ is conservative. Thus, it suffices to prove the result after this pullback. Let $f_T: Y\times_{\Spec \mathcal O} T\to X\times_{\Spec \mathcal O} T$ be the induced map. We claim that after pullback to $T$, both sides identify with $Rf_{T\ast} -|_{X\times_{\Spec \mathcal O} T}$. Now for the left-hand side, the claim follows from \cite[Theorem 3.7.2]{Huber}, and for the right-hand side from Proposition~\ref{prop:Rfastsimple} and Lemma~\ref{lem:diamondadicspace}.
\end{proof}

In many cases, the functor $Rf_\ast$ actually commutes with $c_X^\ast$.

\begin{proposition}\label{prop:schemesRfastetale} Let $f: Y\to X$ be a separated map between schemes of finite type over $\mathcal O$, and assume that $\Lambda$ is a finite ring killed by an integer prime to $p$. Then for any constructible complex $A$ of $\Lambda$-modules on $Y_\et$, the map
\[
c_X^\ast Rf_\ast A\to Rf^\diamond_\ast c_Y^\ast A
\]
is an isomorphism.
\end{proposition}

\begin{proof} We may factor $f$ into an open immersion followed by a proper map, and for proper maps the result follows from the previous proposition. Thus, we can assume that $f=j: Y=U\hookrightarrow X$ is an open immersion. By a further d\'evissage, we can reduce to the case that $A=\mathbb F_\ell$ is constant, and that either $U$ is contained in the generic fibre $p\neq 0$, or else is of characteristic $p$. In characteristic $p$, by an application of de Jong's alterations, \cite[Theorem 4.1, Theorem 6.5]{deJongAlteration}, we can, up to replacing $k$ by a finite extension, reduce to the case that the boundary $Z$ is a normal crossing divisor over $k$. As in the previous proof, it suffices to prove the result after pullback along $T\to \Spa \mathcal O$. The right-hand side can then be described as $Rj_{T\ast} \mathbb F_\ell$ for the associated map $j_T: U_T\hookrightarrow X_T$ of analytic adic spaces, and we have to see that the same holds true for the left-hand side. But in the situation of the alteration, both $Rj_\ast \mathbb F_\ell$ and $Rj_{T\ast} \mathbb F_\ell$ can be computed explicitly (as one can compute $Ri^! \mathbb F_\ell$ and $Ri_T^! \mathbb F_\ell$ for all strata $i: Z\hookrightarrow X$, as they are smooth over $k$) and one sees that the comparison map is an isomorphism.

If $U$ lives over the generic fibre $K$ of $\mathcal O$, we assume that $X$ is proper over $\mathcal O$ (by compactifying) and factor $j: U\hookrightarrow X$ as the composite of $U\hookrightarrow X_\eta$ and $X_\eta\hookrightarrow X$. The first part can be handled as before using de Jong's alterations (or resolution of singularities, in characteristic $0$). It remains to handle $X_\eta\hookrightarrow X$. Again, by de Jong's alterations, after replacing $\mathcal O$ by a finite extension, we can assume that $X$ is strictly semistable. Arguing by induction on the dimension, and using smooth pullback, one shows that the map is an isomorphism away from a finite set of $k$-points (the most singular points). But then it suffices to see that the map becomes an isomorphism after proper pushforward along $X\to \Spec \mathcal O$. For proper maps, we already know the commutation; this reduces us to the open immersion $\Spec K\hookrightarrow \Spec \mathcal O$. This can be handled by a direct computation.
\end{proof}

Finally, we have the following fully faithfulness result.

\begin{proposition}\label{prop:schemesconstructiblefullyfaithful} Assume that $\Lambda$ is a finite ring. Then for any scheme $X$ locally of finite type over $\mathcal O$ and any constructible complex $A$ of $\Lambda$-modules on $X_\et$, the adjunction map
\[
A\to Rc_{X\ast} c_X^\ast A
\]
is an isomorphism.
\end{proposition}

\begin{proof} Consider first the case $X=\Spec \mathcal O$. Allowing ourselves to replace $\mathcal O$ by a finite cover, it suffices to treat the case $A=\mathbb Z/n\mathbb Z$ or $A=i_\ast\mathbb Z/n\mathbb Z$ where $i: \Spec k\hookrightarrow \Spec \mathcal O$ is the closed point. The second case is simple, and the first can then be reduced to $Rj_\ast \mathbb F_\ell$ instead. Using Proposition~\ref{prop:schemesRfastetale}, the claim then becomes that $R\Gamma(\Spec K,\mathbb F_\ell)\to R\Gamma(\Spd K,\mathbb F_\ell)$ is an isomorphism, where $K$ is the quotient field of $\mathcal O$, but here both sides are Galois cohomology.

In general, it suffices to check the result on global sections. Writing $R\Gamma(X,-)=R\Gamma(\Spec \mathcal O,Rf_\ast-)$ where $f: X\to \Spec \mathcal O$ is the projection, and using Proposition~\ref{prop:schemesRfastetale}, the claim reduces to the base $\Spec \mathcal O$ that we have already handled.
\end{proof}

\bibliographystyle{amsalpha}
\bibliography{EtCohDiamonds}

@preamble{
   "\def\cprime{$'$} "
}

@article {BhattScholzePrism,
    AUTHOR = {Bhatt, B. and Scholze, P.},
     TITLE = {Prisms and prismatic cohomology},
   JOURNAL = {Ann. of Math. (2)},
  FJOURNAL = {Annals of Mathematics. Second Series},
    VOLUME = {196},
      YEAR = {2022},
    NUMBER = {3},
     PAGES = {1135--1275},
      ISSN = {0003-486X,1939-8980},
   MRCLASS = {14F30 (14F20 14F40)},
MRREVIEWER = {Dingxin\ Zhang},
       DOI = {10.4007/annals.2022.196.3.5},
       URL = {https://doi.org/10.4007/annals.2022.196.3.5},
}

@article {BhattScholze,
    AUTHOR = {Bhatt, B. and Scholze, P.},
     TITLE = {The pro-\'etale topology for schemes},
   JOURNAL = {Ast\'erisque},
  FJOURNAL = {Ast\'erisque},
    NUMBER = {369},
      YEAR = {2015},
     PAGES = {99--201},
      ISSN = {0303-1179},
      ISBN = {978-2-85629-805-3},
   MRCLASS = {14F05 (14F20 14F35 14H30 18B25)},
MRREVIEWER = {Pieter Belmans},
}

@book {BourbakiCommutativeAlgebra,
    AUTHOR = {Bourbaki, N.},
     TITLE = {Commutative algebra. {C}hapters 1--7},
    SERIES = {Elements of Mathematics (Berlin)},
      NOTE = {Translated from the French,
              Reprint of the 1989 English translation},
 PUBLISHER = {Springer-Verlag, Berlin},
      YEAR = {1998},
     PAGES = {xxiv+625},
      ISBN = {3-540-64239-0},
   MRCLASS = {13-XX (00A05)},
}

@article {ColmezEspaces,
    AUTHOR = {Colmez, P.},
     TITLE = {Espaces de {B}anach de dimension finie},
   JOURNAL = {J. Inst. Math. Jussieu},
  FJOURNAL = {Journal of the Institute of Mathematics of Jussieu. JIMJ.
              Journal de l'Institute de Math\'ematiques de Jussieu},
    VOLUME = {1},
      YEAR = {2002},
    NUMBER = {3},
     PAGES = {331--439},
      ISSN = {1474-7480},
   MRCLASS = {11S85 (46B99 46S10)},
MRREVIEWER = {Mark Kisin},
       DOI = {10.1017/S1474748002000099},
       URL = {http://dx.doi.org/10.1017/S1474748002000099},
}

@article {Elkik,
    AUTHOR = {Elkik, R.},
     TITLE = {Solutions d'\'equations \`a coefficients dans un anneau
              hens\'elien},
   JOURNAL = {Ann. Sci. \'Ecole Norm. Sup. (4)},
  FJOURNAL = {Annales Scientifiques de l'\'Ecole Normale Sup\'erieure.
              Quatri\`eme S\'erie},
    VOLUME = {6},
      YEAR = {1973},
     PAGES = {553--603 (1974)},
      ISSN = {0012-9593},
   MRCLASS = {14B05 (13J15 14D15)},
MRREVIEWER = {H. C. Pinkham},
}

@article {FaltingsAlmostEtaleExtensions,
    AUTHOR = {Faltings, G.},
     TITLE = {Almost \'etale extensions},
      NOTE = {Cohomologies $p$-adiques et applications arithm{\'e}tiques,
              II},
   JOURNAL = {Ast\'erisque},
  FJOURNAL = {Ast\'erisque},
    NUMBER = {279},
      YEAR = {2002},
     PAGES = {185--270},
      ISSN = {0303-1179},
   MRCLASS = {14F30},
MRREVIEWER = {Mark Kisin},
}

@article{FarguesGeom,
    AUTHOR = {Fargues, L.},
     TITLE = {Geometrization of the local {L}anglands correspondence: an
              overview},
   JOURNAL = {J. Math. Sci. Univ. Tokyo},
  FJOURNAL = {The University of Tokyo. Journal of Mathematical Sciences},
    VOLUME = {32},
      YEAR = {2025},
    NUMBER = {2},
     PAGES = {157--240},
      ISSN = {1340-5705},
   MRCLASS = {22E57 (11F77 11S37 14G45)},
}

@misc {FarguesScholze,
 AUTHOR = {Fargues, L. and Scholze, P.},
  TITLE = {Geometrization of the local {L}anglands correspondence},
   NOTE = {this Ast\'erisque volume},
	 YEAR = {2026},
}

@article{FarguesFontaine,
    AUTHOR = {Fargues, L. and Fontaine, J.-M.},
     TITLE = {Courbes et fibr\'{e}s vectoriels en th\'{e}orie de {H}odge
              {$p$}-adique},
      NOTE = {With a preface by Pierre Colmez},
   JOURNAL = {Ast\'{e}risque},
  FJOURNAL = {Ast\'{e}risque},
      YEAR = {2018},
    NUMBER = {406},
     PAGES = {xiii+382},
      ISSN = {0303-1179},
      ISBN = {978-2-85629-896-1},
   MRCLASS = {14F30 (11F80 11G25)},
MRREVIEWER = {Giovanni Rosso},
}

@article {FontaineBourbaki,
    AUTHOR = {Fontaine, J.-M.},
     TITLE = {Perfecto\"\i des, presque puret\'e et monodromie-poids
              (d'apr\`es {P}eter {S}cholze)},
      NOTE = {S{\'e}minaire Bourbaki. Vol. 2011/2012. Expos{\'e}s
              1043--1058},
   JOURNAL = {Ast\'erisque},
  FJOURNAL = {Ast\'erisque},
    NUMBER = {352},
      YEAR = {2013},
     PAGES = {Exp. No. 1057, x, 509--534},
      ISSN = {0303-1179},
      ISBN = {978-2-85629-371-3},
   MRCLASS = {14G99},
MRREVIEWER = {Kentaro Nakamura},
}

@book {GabberRamero,
    AUTHOR = {Gabber, O. and Ramero, L.},
     TITLE = {Almost ring theory},
    SERIES = {Lecture Notes in Mathematics},
    VOLUME = {1800},
 PUBLISHER = {Springer-Verlag, Berlin},
      YEAR = {2003},
     PAGES = {vi+307},
      ISBN = {3-540-40594-1},
   MRCLASS = {13D10 (13B40 13D03 14G22 18D10)},
}

@article{GaisinWelliaveetil,
  AUTHOR = {Gaisin, I. and Welliaveetil, J.},
     TITLE = {Constructibility and reflexivity in non-{A}rchimedean
              geometry},
   JOURNAL = {Int. Math. Res. Not. IMRN},
  FJOURNAL = {International Mathematics Research Notices. IMRN},
      YEAR = {2021},
    NUMBER = {5},
     PAGES = {3438--3511},
      ISSN = {1073-7928},
   MRCLASS = {14G22},
       DOI = {10.1093/imrn/rnz247},
       URL = {https://doi.org/10.1093/imrn/rnz247},
}

@incollection {GrothBrauer3,
    AUTHOR = {Grothendieck, A.},
     TITLE = {Le groupe de {B}rauer. {III}. {E}xemples et compl\'ements},
 BOOKTITLE = {Dix expos\'es sur la cohomologie des sch\'emas},
    SERIES = {Adv. Stud. Pure Math.},
    VOLUME = {3},
     PAGES = {88--188},
 PUBLISHER = {North-Holland, Amsterdam},
      YEAR = {1968},
   MRCLASS = {14F22},
MRREVIEWER = {James Milne},
}

@unpublished{Hansen,
  AUTHOR = {Hansen, D.},
   TITLE = {Period morphisms and variations of {$p$}-adic {H}odge structure},
   NOTE = {\url{http://www.math.columbia.edu/~hansen/periodmapmod.pdf}},
   YEAR = {2016},
}

@article{HansenKalethaWeinstein,
   AUTHOR ={Hansen, D. and Kaletha, T. and Weinstein, J.}, 
 Title = {On the {Kottwitz} conjecture for local shtuka spaces},
 FJournal = {Forum of Mathematics, Pi},
 Journal = {Forum Math. Pi},
 ISSN = {2050-5086},
 Volume = {10},
 Pages = {79},
 Note = {Id/No e13},
 Year = {2022},
 Language = {English},
 DOI = {10.1017/fmp.2022.7},
 Keywords = {14G45,11S37},
}

@article{Hochster,
  author = {Hochster, M.},
  title = {Prime ideal structure in commutative rings},
  year = {1969},
  journal = {Trans. Amer. Math. Soc.},
  volume = {142},
  shortjournal = {Transactions of the American Mathematical Society},
  pages = {43--60},
}

@article {HuberZariskiRiemann,
    AUTHOR = {Huber, R.},
     TITLE = {{\'E}tale cohomology of {H}enselian rings and cohomology of
              abstract {R}iemann surfaces of fields},
   JOURNAL = {Math. Ann.},
  FJOURNAL = {Mathematische Annalen},
    VOLUME = {295},
      YEAR = {1993},
    NUMBER = {4},
     PAGES = {703--708},
      ISSN = {0025-5831},
   MRCLASS = {14F20},
MRREVIEWER = {Gennady Lyubeznik},
       DOI = {10.1007/BF01444911},
       URL = {http://dx.doi.org/10.1007/BF01444911},
}

@book {Huber,
    AUTHOR = {Huber, R.},
     TITLE = {{\'E}tale cohomology of rigid analytic varieties and adic
              spaces},
    SERIES = {Aspects of Mathematics, E30},
 PUBLISHER = {Friedr. Vieweg \& Sohn, Braunschweig},
      YEAR = {1996},
     PAGES = {x+450},
      ISBN = {3-528-06794-2},
   MRCLASS = {14G22 (14F20)},
MRREVIEWER = {Lorenzo Ramero},
       DOI = {10.1007/978-3-663-09991-8},
       URL = {http://dx.doi.org/10.1007/978-3-663-09991-8},
}

@article {HuberContinuousValuations,
    AUTHOR = {Huber, R.},
     TITLE = {Continuous valuations},
   JOURNAL = {Math. Z.},
  FJOURNAL = {Mathematische Zeitschrift},
    VOLUME = {212},
      YEAR = {1993},
    NUMBER = {3},
     PAGES = {455--477},
      ISSN = {0025-5874},
     CODEN = {MAZEAX},
   MRCLASS = {13F30 (12J17 13J07)},
MRREVIEWER = {N. Sankaran},
       DOI = {10.1007/BF02571668},
       URL = {http://dx.doi.org/10.1007/BF02571668},
}

@article {deJongAlteration,
    AUTHOR = {de Jong, A. J.},
     TITLE = {Smoothness, semi-stability and alterations},
   JOURNAL = {Inst. Hautes \'{E}tudes Sci. Publ. Math.},
  FJOURNAL = {Institut des Hautes \'{E}tudes Scientifiques. Publications
              Math\'{e}matiques},
    NUMBER = {83},
      YEAR = {1996},
     PAGES = {51--93},
      ISSN = {0073-8301},
   MRCLASS = {14E15 (14B05 14H10)},
MRREVIEWER = {Marko Roczen},
       URL = {http://www.numdam.org/item?id=PMIHES_1996__83__51_0},
}

@article {deJongvanderPut,
    AUTHOR = {de Jong, A. J. and van der Put, M.},
     TITLE = {\'{E}tale cohomology of rigid analytic spaces},
   JOURNAL = {Doc. Math.},
  FJOURNAL = {Documenta Mathematica},
    VOLUME = {1},
      YEAR = {1996},
     PAGES = {No. 01, 1--56 (electronic)},
      ISSN = {1431-0635},
   MRCLASS = {14F20 (32P05)},
MRREVIEWER = {Jean-Claude Douai},
}

@article{KedlayaFields,
    AUTHOR = {Kedlaya, K. S.},
     TITLE = {On commutative nonarchimedean {B}anach fields},
   JOURNAL = {Doc. Math.},
  FJOURNAL = {Documenta Mathematica},
    VOLUME = {23},
      YEAR = {2018},
     PAGES = {171--188},
      ISSN = {1431-0635},
   MRCLASS = {12J25},
MRREVIEWER = {Franz-Viktor Kuhlmann},
}

@article {KedlayaLiu1,
    AUTHOR = {Kedlaya, K. S. and Liu, R.},
     TITLE = {Relative {$p$}-adic {H}odge theory: foundations},
   JOURNAL = {Ast\'erisque},
  FJOURNAL = {Ast\'erisque},
    NUMBER = {371},
      YEAR = {2015},
     PAGES = {239},
      ISSN = {0303-1179},
      ISBN = {978-2-85629-807-7},
   MRCLASS = {14C30 (14G22)},
MRREVIEWER = {Giovanni Rosso},
}

@unpublished{KedlayaLiu2,
AUTHOR = {Kedlaya, K. S. and Liu, R.},
     TITLE = {Relative {$p$}-adic {H}odge theory, {II}: {I}mperfect period rings},
       NOTE = {arXiv:1602.06899},
       YEAR = {2016},
}

@unpublished{LiuZheng,
 AUTHOR = {Liu, Y. and Zheng, W.},
  TITLE = {Enhanced six operations and base change theorem for {A}rtin stacks},
   NOTE = {arXiv:1211.5948v2},
   YEAR = {2014},
}

@article {Luetkebohmert,
    AUTHOR = {L{\"u}tkebohmert, W.},
     TITLE = {The structure of proper rigid groups},
   JOURNAL = {J. Reine Angew. Math.},
  FJOURNAL = {Journal f\"ur die Reine und Angewandte Mathematik. [Crelle's
              Journal]},
    VOLUME = {468},
      YEAR = {1995},
     PAGES = {167--219},
      ISSN = {0075-4102},
   MRCLASS = {14L15},
MRREVIEWER = {Andy R. Magid},
       DOI = {10.1515/crll.1995.468.167},
       URL = {http://dx.doi.org/10.1515/crll.1995.468.167},
}

@book {LurieHTT,
    AUTHOR = {Lurie, J.},
     TITLE = {Higher topos theory},
    SERIES = {Annals of Mathematics Studies},
    VOLUME = {170},
 PUBLISHER = {Princeton University Press, Princeton, NJ},
      YEAR = {2009},
     PAGES = {xviii+925},
      ISBN = {978-0-691-14049-0; 0-691-14049-9},
   MRCLASS = {18-02 (18B25 18E35 18G30 18G55 55U40)},
MRREVIEWER = {Mark Hovey},
       DOI = {10.1515/9781400830558},
       URL = {http://dx.doi.org/10.1515/9781400830558},
}

@unpublished {LurieHA,
    AUTHOR = {Lurie, J.},
     TITLE = {Higher algebra},
      NOTE = {\url{http://www.math.harvard.edu/~lurie/papers/HA.pdf}},
      YEAR = {2016},
}

@article {NeemanJAMS,
    AUTHOR = {Neeman, A.},
     TITLE = {The {G}rothendieck duality theorem via {B}ousfield's
              techniques and {B}rown representability},
   JOURNAL = {J. Amer. Math. Soc.},
  FJOURNAL = {Journal of the American Mathematical Society},
    VOLUME = {9},
      YEAR = {1996},
    NUMBER = {1},
     PAGES = {205--236},
      ISSN = {0894-0347},
   MRCLASS = {18E30 (14F05)},
MRREVIEWER = {Luca Barbieri Viale},
       DOI = {10.1090/S0894-0347-96-00174-9},
       URL = {https://doi.org/10.1090/S0894-0347-96-00174-9},
}

@article {RapoportViehmann,
    AUTHOR = {Rapoport, M. and Viehmann, E.},
     TITLE = {Towards a theory of local {S}himura varieties},
   JOURNAL = {M\"unster J. Math.},
  FJOURNAL = {M\"unster Journal of Mathematics},
    VOLUME = {7},
      YEAR = {2014},
    NUMBER = {1},
     PAGES = {273--326},
      ISSN = {1867-5778},
   MRCLASS = {11G18 (14G35)},
}

@article {Scheiderer,
    AUTHOR = {Scheiderer, C.},
     TITLE = {Quasi-augmented simplicial spaces, with an application to
              cohomological dimension},
   JOURNAL = {J. Pure Appl. Algebra},
  FJOURNAL = {Journal of Pure and Applied Algebra},
    VOLUME = {81},
      YEAR = {1992},
    NUMBER = {3},
     PAGES = {293--311},
      ISSN = {0022-4049},
   MRCLASS = {18G20 (54B40 55U10)},
MRREVIEWER = {Marek Golasi\AA ski},
       DOI = {10.1016/0022-4049(92)90062-K},
       URL = {http://dx.doi.org/10.1016/0022-4049(92)90062-K},
}

@article {ScholzePerfectoidSpaces,
    AUTHOR = {Scholze, P.},
     TITLE = {Perfectoid spaces},
   JOURNAL = {Publ. Math. Inst. Hautes \'Etudes Sci.},
  FJOURNAL = {Publications Math\'ematiques. Institut de Hautes \'Etudes
              Scientifiques},
    VOLUME = {116},
      YEAR = {2012},
     PAGES = {245--313},
      ISSN = {0073-8301},
   MRCLASS = {14G99},
MRREVIEWER = {Jean-Marc Fontaine},
       DOI = {10.1007/s10240-012-0042-x},
       URL = {http://dx.doi.org/10.1007/s10240-012-0042-x},
}

@article {ScholzeTorsion,
    AUTHOR = {Scholze, P.},
     TITLE = {On torsion in the cohomology of locally symmetric varieties},
   JOURNAL = {Ann. of Math. (2)},
  FJOURNAL = {Annals of Mathematics. Second Series},
    VOLUME = {182},
      YEAR = {2015},
    NUMBER = {3},
     PAGES = {945--1066},
      ISSN = {0003-486X},
   MRCLASS = {11S37},
MRREVIEWER = {Kimball L. Martin},
       DOI = {10.4007/annals.2015.182.3.3},
       URL = {http://dx.doi.org/10.4007/annals.2015.182.3.3},
}

@article{ScholzeWeinstein,
 author               = {Scholze, P. and Weinstein, J.},
 journal              = {Cambridge Journal of Mathematics},
 number               = {2},
 pages                = {145--237},
 title                = {Moduli of $p$-divisible groups},
 volume               = {1},
 year                 = {2013},
 }

@misc{StacksProject,
  shorthand    = {Stacks},
  author       = {The {Stacks Project Authors}},
  title        = {\itshape {S}tacks {P}roject},
  howpublished = {\url{http://stacks.math.columbia.edu}},
}

@article{TemkinTranscendence,
  AUTHOR = {Temkin, M.},
     TITLE = {Topological transcendence degree},
   JOURNAL = {J. Algebra},
  FJOURNAL = {Journal of Algebra},
    VOLUME = {568},
      YEAR = {2021},
     PAGES = {35--60},
      ISSN = {0021-8693},
   MRCLASS = {14G22},
MRREVIEWER = {Shizhang Li},
       DOI = {10.1016/j.jalgebra.2020.10.002},
       URL = {https://doi.org/10.1016/j.jalgebra.2020.10.002},
}

\end{document}